\documentclass[10pt,oneside]{report} 
\title{The Derived Contraction Algebra}
\author{Matt Booth}
\date{2019}

\pdfoutput=1

\usepackage[phd, hyperref]{edmaths}

\bibliographystyle{alpha}

\usepackage{mathtools}

\newtheorem{thm}{Theorem}[section]
\newtheorem{prop}[thm]{Proposition}

\newtheorem{cor}[thm]{Corollary}

\newtheorem{lem}[thm]{Lemma}
\theoremstyle{definition}
\newtheorem{defn}[thm]{Definition}
\newtheorem{setup}[thm]{Setup}
\newtheorem*{thma}{Theorem}

\newtheorem*{conji}{Conjecture}
\newtheorem{conj}[thm]{Conjecture}
\theoremstyle{remark}
\newtheorem{ex}[thm]{Example}
\newtheorem{rmk}[thm]{Remark}

\usepackage{amsmath}
\usepackage{amsfonts}
\usepackage{amssymb}
\usepackage{amsthm}
\usepackage{tikz-cd}
\usepackage{tikz}
\usepackage{amsxtra}
\usepackage{wrapfig}
\usepackage{url}
\usepackage[percent]{overpic}
\usepackage{mathrsfs}
\usepackage{stmaryrd}

\usepackage{textcomp}
\DeclareSymbolFont{ugrf@m}{U}{eur}{m}{n}
\DeclareMathSymbol{\upmu}{\mathord}{ugrf@m}{"16}


\newcommand{\cat}[1]{{\mathbf{#1}}}
\newcommand{\p}{\paragraph{}}
\newcommand{\spec}{\operatorname{Spec}}
\newcommand{\onto}{\twoheadrightarrow}
\newcommand{\into}{\hookrightarrow}
\newcommand{\from}{\leftarrow}
\newcommand{\lot}{\otimes^{\mathbb{L}}}
\newcommand{\per}{{\ensuremath{\cat{per}}}\kern 1pt}
\newcommand{\stab}{\underline{\mathrm{CM}}}


\DeclareMathOperator{\id}{id}
\let\im\relax\DeclareMathOperator{\im}{im}
\let\ker\relax\DeclareMathOperator{\ker}{ker}
\DeclareMathOperator{\coker}{coker}

\let\hom\relax\newcommand{\hom}{\mathrm{Hom}}
\newcommand{\enn}{\mathrm{End}}
\DeclareMathOperator{\tor}{Tor}
\DeclareMathOperator{\ext}{Ext}
\newcommand{\cell}{\operatorname{Cell}}


\newcommand{\C}{\mathbb{C}}
\newcommand{\Q}{\mathbb{Q}}
\newcommand{\Z}{\mathbb{Z}}
\newcommand{\N}{\mathbb{N}}
\newcommand{\A}{\mathbb{A}}
\renewcommand{\P}{\mathbb{P}}
\newcommand{\R}{{\mathrm{\normalfont\mathbb{R}}}}
\newcommand{\mm}{{\upmu\upmu}}


\newcommand{\tbtm}[4]{\ensuremath{\begin{pmatrix}#1&#2\\#3&#4\end{pmatrix}}}
\newcommand{\stbtm}[4]{\ensuremath{\left(\begin{smallmatrix}#1&#2\\#3&#4\end{smallmatrix}\right)}}
\newcommand{\sthbthm}[9]{\ensuremath{\left(\begin{smallmatrix}#1&#2&#3\\#4&#5&#6\\#7&#8&#9\end{smallmatrix}\right)}}


\numberwithin{equation}{section}

\newcommand{\con}{\mathrm{con}}
\newcommand{\dca}{{\ensuremath{A^\mathrm{der}_\con}}}
\newcommand{\dq}{\ensuremath{A/^{\mathbb{L}}\kern -2pt AeA} }
\newcommand{\dqb}{\ensuremath{B/^{\mathbb{L}}\kern -2pt BeB} }
\newcommand{\thick}{\ensuremath{\cat{thick} \kern 0.5pt}}

\newcommand{\dgh}{\mathrm{HOM}}
\newcommand{\dge}{\mathrm{END}}
\newcommand{\dgart}{\cat{dgArt}_k^{\leq 0}}
\newcommand{\dga}{\cat{dga}_{k}^{\leq 0}}
\newcommand{\proart}{{\cat{pro}(\cat{dgArt}_k^{\leq 0})}}

\newcommand{\recol}{\mathrel{\substack{\textstyle\leftarrow\\[-0.6ex]
			\textstyle\rightarrow \\[-0.6ex]
			\textstyle\leftarrow}}}

\newcommand{\dloc}{\mathbb{L}_S(A)}

\newcommand{\del}{\text{\raisebox{.15ex}{$\mathscr{D}$}}\mathrm{el}}
\newcommand{\sdel}{\underline{\del}}
\newcommand{\defm}{\text{\raisebox{.15ex}{$\mathscr{D}$}}\mathrm{ef}}
\newcommand{\sdefm}{\underline{\defm}}
\newcommand{\mcs}{\mathrm{MC}}
\newcommand{\mc}{\mathscr{M}\kern -0.7pt C}
\newcommand{\smc}{\underline{\mc}}
\newcommand{\ggr}{\mathscr{G}\kern -1pt g}

\newcommand{\pdf}{{\ensuremath{\Omega(\Delta^\bullet)}}}
\newcommand{\sset}{\cat{sSet}}
\newcommand{\prodef}{\widehat{\sdefm}}
\newcommand{\frmdef}{\sdefm^{\mathrm{fr}}}
\newcommand{\profrmdef}{\prodef^{\text{\raisebox{-1.1ex}{$\mathrm{fr}$}}}}

\makeatletter
\newcommand{\holim@}[2]{%
	\vtop{\m@th\ialign{##\cr
			\hfil$#1\operator@font holim$\hfil\cr
			\noalign{\nointerlineskip\kern1.5\ex@}#2\cr
			\noalign{\nointerlineskip\kern-\ex@}\cr}}%
}
\newcommand{\holim}{%
	\mathop{\mathpalette\holim@{\leftarrowfill@\textstyle}}\nmlimits@
}
\makeatother

\usepackage[utf8]{inputenc}
\usepackage[T1]{fontenc}
\usepackage{textcomp}

\pagenumbering{roman}

\begin{document}

	\maketitle

	\dedication{For Emily.}
	
	\dedication{Why should things be easy to understand?\newline\normalfont --Thomas Pynchon}
	
		\begin{abstract}
			All minimal models of a given variety are linked by special birational maps called flops, which are a type of codimension two surgery. A version of the Bondal--Orlov conjecture, proved by Bridgeland, states that if $X$ and $Y$ are smooth complex projective threefolds linked by a flop, then they are derived equivalent (i.e.\ their bounded derived categories of coherent sheaves are equivalent). Van den Bergh was able to give a new proof of Bridgeland’s theorem using the notion of a noncommutative crepant resolution, which is in particular a ring $A$ together with a derived equivalence between $X$ and $A$. The ring $A$ is constructed as an endomorphism ring of a decomposable module, and hence admits an idempotent $e$. Donovan and Wemyss define the contraction algebra to be the quotient of $ A$ by $e$; it is a finite-dimensional noncommutative algebra that is conjectured to completely recover the geometry of the base of the flop. They show that it represents the noncommutative deformation theory of the flopping curves, as well as controlling the flop-flop autoequivalence of the derived category of $X$ (which, for the algebraic model $A$, is the mutation-mutation autoequivalence). 
			
			\p In this thesis, I construct and prove properties of a new invariant, the derived contraction algebra, which I define to be Braun--Chuang--Lazarev’s derived quotient of $A$ by $e$. A priori, the derived contraction algebra – which is a dga, rather than just an algebra -- is a finer invariant than the classical contraction algebra. I prove (using recent results of Hua and Keller) a derived version of the Donovan--Wemyss conjecture, a suitable phrasing of which is true in all dimensions. I prove that the derived quotient admits a deformation-theoretic interpretation; the proof is purely homotopical algebra and relies at heart on a Koszul duality result. I moreover prove that in an appropriate sense, the derived contraction algebra controls the mutation-mutation autoequivalence. These results both recover and extend Donovan--Wemyss’s. I give concrete applications and computations in the case of partial resolutions of Kleinian singularities, where the classical contraction algebra becomes inadequate.

	\end{abstract}
	
	\chapter*{Lay Summary}
	\addcontentsline{toc}{chapter}{Lay Summary}
	
	Algebraic geometry, one of the oldest parts of mathematics, studies very broadly the interplay between algebraic equations and geometric shapes. For example, consider the double cone in 3-dimensional space given by the equation $$x^2 + y^2 - z^2 = 0$$Geometrically, one can see that away from the cone point $(0,0,0)$, a small part of the cone looks roughly like a flat surface. However, this is not the case at the cone point; in technical terms the cone point is called a singularity. Algebraically, one can deduce the same information by looking at the polynomial $x^2+y^2-z^2$ and using a little calculus\footnote{The singularities of the surface $f(x,y,z)=0$ are those points that also satisfy $\frac{\partial f}{\partial x}=\frac{\partial f}{\partial y}=\frac{\partial f}{\partial z}=0$.}. One hence has a correspondence between the geometry (the singularities of the cone) and the algebra (some properties of its defining polynomial). The goal of algebraic geometry is to build a dictionary between the world of geometry, where one has strong visuospatial intuition, and the world of abstract algebra, a very powerful mathematical theory.
	
	\p Birational geometry is a subfield of algebraic geometry where one considers two geometric objects to be equivalent if they are the same outside of subsets of lower dimension. For example, if one removes the cone point from the cone, then intuitively one is left with two (tapered) cylinders. Thus birational geometry considers a cone and an infinite cylinder to be equivalent: removing a point from the cone, and removing a longitudinal circle from the cylinder, give the same space. The cylinder is smooth, meaning it has no singularities, so that a small part of the cylinder always looks roughly like flat 2-dimensional space. One typically considers smooth spaces to be `nice', as they share many properties with classical Euclidean space; hence we have found a nice space birational to the cone. One goal of birational geometry is to assign to every space $X$ a nice\footnote{In low dimensions, ``nice'' means smooth, but the situation is more complicated in higher dimensions.} space that is birational to $X$, called a minimal model. 
	
	\p A given space may have many different minimal models, but one can pass between them via special geometric operations called flops. For 3-dimensional spaces\footnote{Such as familiar 3D Euclidean space, or the 3-sphere given by the equation $x^2+y^2+z^2+w^2=1$, which lives in 4D Euclidean space.} $X$ , the idea of a flop is as follows: one picks a special type of curve inside $X$, cuts it out, and pastes it back in with the opposite orientation to get a new space $X^+$ called the flop of $X$. Because $X$ and $X^+$ are the same outside of a subset of lower dimension (the curve we cut out and pasted back in!) they must be birational.
	
	\p If $X$ is a 3-dimensional space, and $C$ is a curve inside $X$ that we can flop, then one can actually contract $C$ down to a point: imagine the curve inside the cylinder contracting down to a point, turning the cylinder into the cone (we picture this later in Figure \ref{conefig}). Call the resulting space we get $X_\con$ (so in the previous example, $X$ would be the cylinder and $X_\con$ the cone). Associated to such a contraction, Donovan and Wemyss construct an algebraic object called the contraction algebra, which recovers some geometric information about the contraction: for example, it knows some information about how $C$ sits inside $X$. They conjecture that actually, the contraction algebra determines all of the geometric properties of $X_\con$. As of writing, this conjecture is not known to be true.
	
	\p In this thesis, we enhance the contraction algebra to a new invariant, the derived contraction algebra. It contains all of the geometric information that the Donovan--Wemyss contraction algebra does, but also contains some extra information coming from the world of derived geometry\footnote{One can think of derived geometry in several ways. One intuition is that in the derived world, objects are more flexible. So geometric situations that seem intractable in the classical world are more well-behaved in the derived world; for example two spaces that intersect badly in the classical world will have a `derived intersection' that is much easier to deal with. For spaces that already intersect nicely in the classical world, their derived intersection will simply be the same as their usual intersection.}, a powerful general approach to algebraic geometry that has been developed since the 1990s. We prove that the derived contraction algebra contains information about the derived geometry of $X$ analogous to the information that the usual contraction algebra contains. We prove that the derived version of the Donovan--Wemyss conjecture is true: the derived contraction algebra determines all of the geometric properties of $X_\con$.

	\chapter*{Publications}
	\addcontentsline{toc}{chapter}{Publications}
	The material in this thesis is based on the preprints \cite{dqdefm} and \cite{dcalg}, although much of the material has been reordered to ensure a coherent narrative structure. Some material (in particular Chapter 4) is new, and generalises or improves \textit{op. cit}. We also include additional background material not present in either preprint. Part I of this thesis roughly corresponds to the first half of \cite{dqdefm} and the Appendix of \cite{dcalg}. Part II roughly corresponds to the second half of \cite{dqdefm}, and Part III roughly corresponds to the main body of \cite{dcalg}.

	\declaration\addcontentsline{toc}{chapter}{Declaration}

	\chapter*{Acknowledgements}
	\addcontentsline{toc}{chapter}{Acknowledgements}
	First and foremost, I would like to thank my supervisor Jon Pridham for his many hours of guidance, patience, and conversation. I would also like to thank Michael Wemyss for introducing me to contraction algebras and for his support at every stage; this thesis would not have been possible without either of them. I am grateful to Ivan Cheltsov, Joe Chuang, Ben Davison, Will Donovan, Zheng Hua, Martin Kalck, Bernhard Keller, Brent Pym, Ed Segal, and Dong Yang for their interest in my work and for helpful discussions and comments while it was in progress. I would like to thank the EPSRC for funding my studies, and the mathematics department at Edinburgh for being such a welcoming environment to do research in. I would also like to thank the examiners Andrey Lazarev and Michael Wemyss for reading this thing and for their helpful comments.
	
	\p Edinburgh has been a great place to study for four years, and I have made so many good friends in the department (and outside of it).	I'd like to thank everyone, in particular Jenny, Simon, X{^^c4^^ab}\kern 0.5pt l\'ing, Manuel, Ruth, Veronika, Tim W., Carlos, Juliet, Lukas S., Leonardo, Ai, Marcel, Sjoerd, Soheyla, Graham, Severin, Lukas M., Martina, Dani, Zoe, Tim H., Marco, Karen, Trang, Chris, Ollie, Ross, Fred, and Peter for conversations both mathematical and non-mathematical.
	
	\p
	I would like to thank everyone at EUQS, and more generally in the UK quizbowl community at large, for being so friendly and welcoming, and for keeping me sane during the second half of my doctoral studies. In particular I would like to thank Innis as well as my University Challenge teammates Marco, Max, Robbie, and Zak; I couldn't have asked for a better team. I'd like to thank my family and my non-mathematical friends, in particular Ryan, Jeannette, Calum, Cathy, Tim, Daniel, and Kess, none of whom I see enough of. Finally, I would like to thank Emily for her wonderful love and support; it's been a joy.

	\tableofcontents

	\chapter{Introduction} \pagenumbering{arabic}
	We give a broad introduction to some of the main concepts of this thesis: the birational geometry of threefolds, derived and noncommutative geometry via differential graded (dg) categories, derived deformation theory, and contraction algebras. We then discuss our results, give an outline of the structure of this thesis, and set notation and conventions. The expert reader who wishes to immediately see what is new can skip to \S\ref{intronew}. Throughout, $k$ will denote an algebraically closed field of characteristic zero.

	\section{The minimal model programme}
	In this thesis, the word ``variety'' means an irreducible quasi-projective variety over $k$. Recall that a {rational map} $X \dashrightarrow Y$ is a morphism $U \to Y$, where $U$ is an open subset of $X$, and that a {birational map} is a rational map with rational inverse. So two varieties are birationally equivalent (or just birational) if and only if they are isomorphic away from positive codimensional subvarieties. For example, projective $n$-space $\P^n$ is birational to $\A^n$ via the usual coordinate map $\P^n \dashrightarrow \A^n$ sending $[x_0:x_1:\cdots:x_n]$ to $(\frac{x_1}{x_0},\ldots,\frac{x_n}{x_0})$. The function field $K(X)$ of rational functions on $X$ is a birational invariant, and this shows, for example, that dimension is also a birational invariant. If $f: X \dashrightarrow Y$ is a (dominant) rational map, then pulling back along $f$ gives a map $K(Y)\to K(X)$, and in fact this is an isomorphism if and only if $f$ was a birational equivalence \cite[I.4.4]{hartshorne}. One can use this function field characterisation to show, for example, that every variety is birational to a hypersurface \cite[I.4.9]{hartshorne}.
	
	\p One fundamental example of a birational transformation is a blowup, which is the universal way of turning subvarieties into (effective Cartier) divisors. Let $X$ be a variety and $Z$ a closed subvariety of positive codimension, with ideal sheaf $\mathcal{I}$. Then the {blowup} of $Z$ in $X$ is the relative Proj of the Rees algebra $\pi:\mathrm{Proj}(\oplus_n\mathcal{I}^n) \to X$, which comes with a fibration $\pi$ to $X$ which is an isomorphism away from the centre $Z$, and in particular is a birational map \cite[II.7]{hartshorne}. For example, if $Z=p$ is a smooth point of $X$, then the blowup is morally just $X$ but with $p$ replaced by a copy of the projectivised tangent space $\P T_pX$; one can think of this as separating out all the tangent directions at $p$ \cite[I.4]{hartshorne}. This operation of blowing up a single point is called a monoidal transformation, and one can show that they preserve smoothness. More generally, they are useful for resolving singularities: for example, let $X$ be the ordinary double point $\spec\left(\frac{k[x,y,z]}{xy-z^2}\right)\subseteq \A^3$. It has a unique singular point at the origin, and if we blow it up we get a smooth variety $\tilde X$ with a map $\tilde X \to X$. Away from the origin, this map is an isomorphism, and above the origin there is a copy of $\P^1$; we picture this resolution in Figure \ref{conefig}. Call a proper birational map $\tilde X \to X$ a {resolution} if $\tilde X$ is smooth. In characteristic zero, resolutions always exist: for curves this has been known for a long time, for surfaces and threefolds the first algebraic proofs were given by Zariski \cite{zariskisurf, zariskithreef}, and in all dimensions this is a famous theorem of Hironaka \cite{hironaka}. A resolution $\tilde X \to X$ is called a {minimal resolution} if any other resolution factors through it. 
	
		\begin{figure}[h!]
			\vspace{0.2cm}
		\begin{center}
			\tikzset{Bullet/.style={fill=black,draw,color=#1,circle,minimum size=3pt,scale=0.25}}
			\begin{tikzpicture}[looseness=1, scale=0.2]
			
			\draw (0,-7) ellipse (5cm and 1cm);
			\draw (0,7) ellipse (5cm and 1cm);
			\draw (-5,-7) to [bend right=30] (-5,7);
			\draw (5,-7) to [bend left=30] (5,7);
			\draw[line width=0.3mm, red] (0,0) ellipse (2.93cm and 0.6cm);
			\draw (25,-7) ellipse (5cm and 1cm);
			\draw (25,7) ellipse (5cm and 1cm);
			\draw (20,-7) to [bend left=0] (30,7);
			\draw (30,-7) to [bend left=0] (20,7);
			\node[Bullet=red] at (25,0)  {};
			\draw[->, looseness=0, line width=0.3mm] (7,0) to (18,0);
			\node[scale=1.7] at (12.5,1) {$\scriptstyle \pi$};
			\node[scale=1.6] at (5.7,-5) {$\scriptstyle \tilde{X}$};
			\node[scale=1.6] at (30.7,-5) {$\scriptstyle X$};
			\end{tikzpicture}
		\end{center}
		\caption{The minimal resolution $\tilde X$ of the ordinary double point $X$.}
		\label{conefig}
		\vspace{-0.5cm}
	\end{figure} 
	
	\p One can try to classify projective varieties up to birational equivalence. In view of Hironaka's theorem, we may as well start with a smooth projective variety $X$ and ask: what nice varieties are in the birational equivalence class of $X$? For curves, this is easy to answer: two smooth projective curves are birational if and only if they are isomorphic\footnote{Because any birational map from a smooth curve extends to a morphism. See e.g.\ \cite[1.4]{flipsflops}.}. For surfaces, the situation is already more complicated. Any birational map between smooth surfaces factors as a finite zig-zag of monoidal transformations\footnote{A version of this statement -- the {weak factorisation theorem} --  is true in all dimensions \cite[1.11]{flipsflops}.}. So one could hope that, starting with a smooth surface $S$ and repeatedly contracting blown-up curves, one might arrive at a nice birational model for $S$. This is more or less how it goes: say that a curve in a smooth surface is a $(-1)$-curve if its self-intersection number is $-1$. It is not too hard to show that the exceptional locus of a monoidal transformation is a $(-1)$-curve \cite[V.3.1]{hartshorne}, and in fact the converse is true: Castelnuovo's theorem \cite[V.5.7]{hartshorne} says that, given a $(-1)$-curve $C$ in a smooth surface $S$, it can be smoothly blown down; i.e.\ there is a surface $S'$ such that $S$ is obtained from $S'$ by a monoidal transformation with exceptional locus $C$. Given a smooth surface, one can then repeatedly contract $(-1)$-curves: this must eventually stop, since a blowdown drops the Picard number\footnote{The Picard number is the rank of the finitely generated abelian group of divisor classes modulo those classes algebraically equivalent to zero, known as the N\'eron-Severi group \cite[II.6.10.3]{hartshorne}.} of a surface. In other words, every surface $S$ is birational to a surface $S'$ with no $(-1)$-curves (classically, these are called minimal surfaces), and as long as $S$ is not rational or ruled then $S'$ is unique\footnote{This is a theorem of Zariski; see e.g.\ \cite[V.5.8.4]{hartshorne}.}. Note that $\P^2$ and $\P^1\times \P^1$ are two birational minimal surfaces, so that one cannot drop the `not rational or ruled' assumption. A minimal resolution of a surface (that is not rational or ruled) in the earlier sense is precisely a minimal surface in the new sense.
	
	\p Let us look at an example in some more detail. If $G$ is a subgroup of $\mathrm{SL}(2)$, then $G$ acts on the plane $\A^2$ by matrix multiplication. When $G$ is finite, the quotient $\A^2/G$ is Spec of the ring of invariants $k[x,y]^G$, and such quotients -- all isolated surface singularities -- are known as the Kleinian singularities (or the Du Val singularities); see e.g.\ Reid \cite{reidypg}. For example, if $\omega$ is a primitive $n^\text{th}$ root of unity, then the matrix $\stbtm{\omega}{0}{0}{\omega^{-1}}$ generates a copy of $\Z/n$, and the corresponding quotient is the surface $\spec\left(\frac{k[x,y,z]}{xy-z^n}\right)$. The minimal resolution of a Kleinian singularity exists, is an isomorphism away from the singular point, and above the singular point there is a tree of rational curves linked in a Dynkin configuration. The corresponding Kleinian singularity is labelled by its Dynkin diagram; the example above is the $A_{n-1}$ singularity. Note that if $n=2$, we get the ordinary double point, whose minimal resolution we know has just one curve above the origin.

	\p Trying to classify high-dimensional varieties immediately gets much harder. Minimal resolutions may not exist: let $Y$ be the quadric cone $xy-zw=0$ in $\A^4$. Blowing up the origin gives us a birational morphism $X \to Y$ with exceptional divisor $\P^1 \times \P^1$. One can contract either of the copies of $\P^1$ by a projection to end up with two varieties $X^-$ and $X^+$. These are both resolutions, neither of which factors through the other \cite{atiyahflop}. However, $Y$ does admit a crepant resolution -- we call a resolution $\pi: X \to Y$ crepant if $\pi^*K_Y= K_X$, where $K_X$ denotes the canonical divisor \cite{reidpagoda}. In general, for any resolution $\pi: X \to Y$ with irreducible exceptional divisors $E_i$, then one can write $K_X=\pi^*K_Y+\sum_i a_iE_i$; the $a_i$ are known as the discrepancies, so that a crepant resolution is one without discrepancy \cite{reidypg}. Call a singularity canonical if it admits a resolution with nonnegative discrepancies, and terminal if it admits a resolution with positive discrepancies\footnote{See Reid \cite{reidctf, reidpagoda} for some alternate characterisations.}. Kleinian singularities are precisely the two-dimensional canonical singularities, and their minimal resolutions are exactly the crepant resolutions \cite{reidypg}. Three-dimensional terminal singularities are all isolated \cite{moriterm}.
	
	\p The cone $xy-zw=0$ is an example of a compound Du Val (cDV) singularity; these are the three-dimensional terminal singularities whose generic hyperplane cut is a Du Val singularity. They can always be written in the form $f(x,y,z)+tg(x,y,z,t)=0$, where $f$ is a polynomial defining a Du Val singularity and $g$ is any polynomial \cite[\S2]{reidctf}. A crepant resolution of an isolated cDV singularity behaves much like that of a Du Val singularity: away from the isolated singular point, the map is an isomorphism, and above the singular point the resolution has a chain of rational curves\footnote{Note however that this need not be a Dynkin configuration.}. However, even cDV singularities do not always admit crepant resolutions \cite{katzsr}.
	
	\p What to do now? Work of Reid \cite{reidctf} and Mori \cite{mori} suggests that we should allow some singularities in our minimal models. Call a variety $M$ a minimal model if it has terminal\linebreak $\Q$-factorial singularities and the canonical divisor $K_M$ is nef\footnote{A divisor $D$ is nef if $D\cdot C$ is positive for every curve $C$.}. Heuristically, this latter condition means that there are no $(-1)$-curves. A minimal model of a surface $S$ is a minimal surface, and the converse is true as long as $S$ is not rational or ruled \cite[1.10]{glimpse}. 
	
	\p The goal of the minimal model programme for a projective variety $X$ of nonnegative Kodaira dimension is to find a minimal model $Y \to X$. This is conjecturally achieved as follows: as a first approximation, let $Y$ be a resolution of $X$. Now if $K_Y$ is nef then we are done. If not, pick a curve $C$ with negative intersection against $K_Y$; one should then be able to find a birational map $\pi:Y\to Y_\con$ contracting $C$. If $\pi$ is a divisorial contraction then replace $Y$ with $Y_\con$ and keep going; the Picard number has dropped. If not (i.e.\ $\pi$ does not contract divisors; we say in this situation that $\pi$ is small), then one needs to `flip' $C$ by replacing it with a curve whose intersection with $K_Y$ is positive; in this case replace $Y$ with this flip and keep going. This process ought to stop, and eventually we ought to arrive at a minimal model. However, it is far from clear that this process should terminate! See \cite{glimpse} or \cite{flipsflops} for more discussion, the latter especially with regard to the conjecture that one cannot flip curves infinitely many times.
	
	 \p We remark that if $X$ has Kodaira dimension $-\infty$, one looks instead for a Mori fibre space birational to $X$. Note that the surfaces of Kodaira dimension $-\infty$ are precisely the rational or ruled surfaces \cite[V.6.1]{hartshorne}, so that even in the surface case we see that one needs to make Kodaira dimension restrictions.
	 
	 \p In dimension three, it is known that the minimal model programme `works': the above algorithm assigns every projective variety a minimal model \cite{moriflip}. Minimal models are not in general unique: recall that one had to make choices in picking curves to contract\footnote{There may be infinitely many contractible curves \cite[V.5.8.1]{hartshorne}.}. However, one has some control over this nonuniqueness: minimal models are all linked by special birational maps called flops, which, like flips, are isomorphisms in codimension one \cite{kollarflops}. In fact, a theorem of Kawamata says that flops link minimal models in all dimensions \cite{kawconnect}. There are many equivalent definitions of flops in the literature \cite{kollarffm}, and we take ours from Hacon--McKernan \cite{flipsflops}, which works in all dimensions: a flopping contraction is a small projective birational morphism $\pi:X \to X_\con$ of relative Picard number 1 such that $K_X$ is trivial over $K_{X_\con}$. In this situation, there exists a commutative diagram $$\begin{tikzcd}X\ar[rr, dashrightarrow,"\phi"]\ar[rd,"\pi"] & & X^+\ar[ld,"\pi^+"'] \\ & X_\con & \end{tikzcd}$$ where $\pi^+$ is a flopping contraction and $\phi$ is birational and an isomorphism in codimension 1 (but not an isomorphism); we call $\phi$ the flop of $X$. Note that $\pi$ does not contract divisors but contracts some finite number of rational curves; intuitively one can think of the flop $X^+$ as being the result of taking out these curves and sewing them back in with the opposite orientation. We draw a cartoon of a typical threefold flop in Figure \ref{flopfig}. Note the symmetry in the definition, so that one can regard $X$ as the flop of $X^+$.
	 
	 	\begin{figure}[h!]
	 	\begin{center}
	
	 		\tikzset{Bullet/.style={fill=black,draw,color=#1,circle,minimum size=3pt,scale=0.25}}
	 		\begin{tikzpicture}[looseness=1, scale=0.6]
	 		\draw (0,0) ellipse (3cm and 1.3cm);
	 		\node[Bullet=red] at (0,0) {};
	 		\draw[rotate=35] (0,10) ellipse (2cm and 4cm);
	 		\draw[rotate=35, red] (0,6.7) to [bend right=25] (0,9.7);
	 		\draw[rotate=35, red] (0,8.5) to [bend right=25] (0,11.5);
	 		\draw[rotate=35, red] (0,10.3) to [bend right=25] (0,13.3);
	 		\draw[rotate=-35] (0,10) ellipse (2cm and 4cm);
	 		\draw[rotate=-35, red] (0,6.7) to [bend left=25] (0,9.7);
	 		\draw[rotate=-35, red] (0,8.5) to [bend left=25] (0,11.5);
	 		\draw[rotate=-35, red] (0,10.3) to [bend left=25] (0,13.3);
	 		\draw[->, looseness=0,rotate=35, line width=0.2mm] (0,5) to (0,2);
	 		\draw[->, looseness=0,rotate=-35, line width=0.2mm] (0,5) to (0,2);
	 		\draw[->, looseness=0, line width=0.2mm, dashed] (-2,8) to (2,8);
	 		\node[scale=1.5] at (-3.6,10.1) {$\scriptstyle X$};
	 		\node[scale=1.5] at (3.6,10.2) {$\scriptstyle X^+$};
	 		\node[scale=1.5] at (-1.4,3.2) {$\scriptstyle \pi$};
	 		\node[scale=1.5] at (1.5,3.3) {$\scriptstyle \pi^+$};
	 		\node[scale=1.5] at (0,8.5) {$\scriptstyle \phi$};
	 		\node[scale=1.5] at (3.7,-1) {$\scriptstyle X_\con$};
	 		\node[scale=1.5] at (0.37,0) {$\scriptstyle p$};
	 		\end{tikzpicture}
	 	\end{center}
 	\caption{A threefold flop.}
 	\label{flopfig}
 	 		\vspace{-0.5cm}
	 \end{figure} 
	 
	 \p Returning to the quadric $Y$ given by the equation $xy-zw=0$, recall the existence of two varieties $X^-$ and $X^+$ resolving $Y$ whose exceptional loci are both copies of $\P^1$. The map $X^- \to Y$ is a flopping contraction, and $X^+$ is its flop; this is known as the Atiyah flop, and is the simplest example of a three-dimensional flopping contraction \cite{atiyahflop}. Reid generalised this example to the {Pagoda flop}, where the base is $xy-(z+w^n)(z-w^n)=0$; taking $n=1$ recovers the Atiyah flop \cite{reidpagoda}.
	  
	 \p If $\pi:X \to X_\con$ is a minimal model of a terminal threefold, then it is a flopping contraction, and its flop is again a minimal model. One can obtain all minimal models of a given threefold this way, via iterated sequences of flops \cite{kollarflops}. Call a flopping contraction simple if the exceptional locus of $\pi$ is a single rational curve; in this case $\pi$ is an isomorphism away from the contracted locus, which is a single point. In this thesis, we will be particularly interested in simple flopping contractions for which the contracted locus is an isolated singularity; in this case the completion of $X_\con$ at this singularity must then be cDV \cite[5.38]{kollarmori}.

\section{Derived categories and geometry}
Originally introduced by Verdier in his thesis \cite{verdier} to unify various aspects of homological algebra, derived categories have since become a central part of modern mathematics. They can be motivated by Thomas' slogan ``complexes good, cohomology bad'' \cite{thomasworking}: the point is that many homological constructions one wants to make really take place at the chain level. For example, if $M,N$ are two modules over a ring $R$, then to compute $\ext^*_R(M,N)$ one takes a resolution $P\to M$, computes the hom-complex $\hom_R(P,N)$, and takes cohomology. But, up to quasi-isomorphism, this hom-complex is well-defined, and so one would like some construction of a `total derived hom' functor $\R\hom(-,N)$ that outputs complexes up to quasi-isomorphism. The derived category was originally defined in order to be the natural home for total derived functors. Hence, the derived category $D(\mathcal{A})$ of an abelian category $\mathcal A$ has as objects chain complexes from $\mathcal A$, but we identify two complexes precisely when they are quasi-isomorphic \cite{weibel}.

\p Why are derived categories useful in algebraic geometry? Given a variety (or a scheme, or a stack...) $X$, its derived category of (quasi)coherent sheaves contains a surprising amount of geometric information: for nice schemes one can recover invariants such as the dimension, Kodaira dimension, pluricanonical ring, and much more \cite{caldararu,huybrechts} from the derived category. Moreover, the freedom given by derived categories means that interesting new objects may exist. For example, let $f:X\to Y$ be a morphism of smooth projective varieties. On the level of sheaves, the pushforward functor $f_*$ admits a left adjoint, the pullback functor $f^*$, but in general $f_*$ does not admit a right adjoint as it is not right exact. However, on the level of derived categories, the total derived functor $\R f_*$ does admit a right adjoint, which is essentially given by Serre duality: more precisely, one takes the derived pullback, tensors with the relative dualising sheaf, and shifts by the relative dimension of $f$. This is known as Grothendieck duality (see e.g.\ \cite{lipman}).

\p Sometimes, the derived category of a scheme knows all of the information about it: for example, let $X$ be  a smooth projective variety with ample or antiample canonical bundle. Bondal and Orlov proved a famous theorem stating that in this setup, the derived category of $X$ recovers $X$ amongst all smooth projective varieties \cite{bosemiorthog, bondalorlov}. The idea is to reconstruct the point sheaves and then the line bundles as complexes satisfying certain cohomological conditions. One can then recover the pluricanonical ring of $X$, and hence $X$ by taking $\mathrm{Proj}$. However, derived categories are not complete invariants for schemes: for example, Mukai proved that an abelian variety is always derived equivalent to its dual variety \cite{mukaifourier}. It is this flexibility that affords derived categories their power. 

\p One can concretely compute with derived categories: Be{^^c4^^ad}linson showed \cite{beilinsonpn} that the derived category of $\P^n_k$ is generated by the $n+1$ line bundles $\mathcal{O}, \mathcal{O}(-1),\ldots, \mathcal{O}(-n)$. We will return to this result later in the context of noncommutative geometry. To prove it, Be{^^c4^^ad}linson uses the concept of a Fourier--Mukai transform, the first example of which appears in the aforementioned paper of Mukai \cite{mukaifourier}. Let $X$ and $Y$ be smooth projective varieties and let $\mathcal K$, which we refer to as the kernel, be an object of the bounded derived category $D^b(X\times Y)$. Then $\mathcal K$ defines a functor $\Phi^{X\to Y}_{\mathcal K}: D^b(X)\to D^b(Y)$ given by the following recipe: given a complex on $X$, take its derived pullback to $X\times Y$, `twist' it by tensoring with the kernel $\mathcal K$, and take the derived pushforward to $Y$. We call $\Phi^{X\to Y}_{\mathcal K}$ the Fourier--Mukai (FM) transform with kernel $\mathcal K$. Many functors of interest are FM transforms: for example, let $f:X\to Y$ be a morphism and let $\Gamma_f$ be its graph, regarded as a sheaf on $X\times Y$. Then $\Phi^{X\to Y}_{\Gamma_f}$ is the derived pushforward along $f$, whereas $\Phi^{Y\to X}_{\Gamma_f}$ is the derived pullback along $f$ \cite{huybrechts}. Orlov proved that every fully faithful triangle functor between derived categories of smooth projective varieties is a FM transform \cite{orlovfm}.

\p Derived categories behave well with respect to birational transformations. For example, let $X$ be a smooth variety and $Y\into X$ a smooth subvariety of codimension $r$. Let $\pi_X:\tilde X \to X$ be the blowup of $X$ at $Y$, let $\tilde Y$ be the exceptional locus with inclusion map $j:\tilde Y \into \tilde X$, and let $\pi_Y:\tilde Y \to Y$ be the projective fibration of the exceptional locus over the centre. Orlov proves \cite{orlovblowup} that the functors $\mathbb{L}\pi_X^*: D^b(X) \to D^b(\tilde X)$ and $\R j_*(\mathcal{O}_{\tilde Y}(m)\otimes \pi_Y^*): D^b(Y)\to D^b(\tilde X)$ are embeddings, and moreover that the images of $D^b(X)$ and $D^b(Y)$ (as $m$ varies from $1-r$ to $1$) generate $D^b(\tilde X)$. In other words, one can piece together the derived category of a blowup from the derived categories of the base and centre.

\p This computation of the derived category of a blowup inspired Bondal and Orlov to conjecture \cite{bosemiorthog, bondalorlov} that any two crepant resolutions of a Gorenstein variety are derived equivalent. In dimension three, proving this is equivalent to showing that a flop $X \dashrightarrow X^+$ between two smooth threefolds induces a derived equivalence, because all crepant resolutions are linked by flops \cite{kollarflops}. The Bondal-Orlov conjecture in dimension three was proved by Bridgeland in a landmark paper \cite{bridgeland} and was later generalised by Chen \cite{chen} to allow Gorenstein terminal singularities. Loosely, Bridgeland's strategy is to construct a flop as a moduli space of perverse sheaves, and then the universal family on this moduli space gives a Fourier--Mukai kernel which induces an equivalence. Note that flops are symmetric: if $X^+$ is the flop of $X$, then $X$ is also the flop of $X^+$. Hence, one can compose the Bridgeland--Chen equivalences $D^b(X)\to D^b(X^+)\to D^b(X)$ to obtain a (nontrivial!) autoequivalence $D^b(X) \to D^b(X)$, the {flop-flop autoequivalence}. We will return to this later in the introduction.

\p Derived categories have a rich structure: namely they are triangulated, meaning they admit a shift functor $[1]$ that satisfies some reasonable compatibility conditions. For example, one can take mapping cones, and these have a limited form of functoriality; see Neeman \cite{neemantxt} for a comprehensive reference. However, the axioms of triangulated categories do not suffice for some geometric purposes. For example, derived categories when viewed as triangulated categories do not satisfy Zariski descent: if a scheme $X$ is covered by opens $U_i$, then knowing the derived categories of the $U_i$ together with the induced morphisms does not in general suffice to determine the derived category of $X$. Locality fails even for the standard affine cover of $\P^1$, discussion of which appears as an extended example in \cite{toendglectures}.

\p Many of the issues with triangulated categories can be fixed by passing to the world of dg categories, which we will view as an enhancement of triangulated categories. The basic idea of a dg category is to remember the morphism complexes inside the derived category $D(X)$ and bundle them together in a homotopy coherent manner; more formally dg categories are exactly those categories enriched over chain complexes of vector spaces \cite{keller}. We review the theory of dg categories in \S\ref{dgcats}; the use of dg categories (as opposed to mere triangulated categories) will be fundamental in this thesis. At an abstract level, one can think of the dg enhancement of a derived category as remembering `higher homotopical information', whereas remembering only the triangulated structure corresponds to `flattening' these higher morphisms down to a 1-categorical structure. Indeed, pretriangulated dg categories are equivalent (in an appropriate sense) to linear stable $\infty$-categories \cite{cohnstable}, and the triangles of a triangulated category can be thought of as flattenings of long homotopy cofibre sequences.

	\section{Noncommutative geometry}
Scheme-theoretic algebraic geometry relies at its core on the $\spec$ construction: given a commutative ring $R$, one augments its set of prime ideals with the Zariski topology. After equipping it with the structure sheaf, this becomes a locally ringed space $\spec R$, and all schemes are glued together out of such affine pieces. Unfortunately, trying to make the same construction work verbatim for noncommutative rings will fail as many noncommutative rings have `too few' prime ideals: for example the Weyl algebra $\frac{k\langle X,Y\rangle}{[X,Y]-1}$ of polynomial differential operators on $\A_k^1$ is a simple ring, and so its prime spectrum contains little geometric information. Moreover, constructions of noncommutative spectra that are both nonempty for all rings and agree with the usual commutative spectrum for commutative rings cannot be functorial \cite{specreyes,specheunen}.

\p However, one would still like to do geometry with noncommutative rings. Following Ginzburg \cite{ginzburgnc}, one should distinguish between two different approaches to noncommutative geometry. In the first approach, `noncommutative geometry in the small', one tries to mimic the constructions of commutative algebraic geometry and extend them to noncommutative rings (which should be thought of as `noncommutative deformations' or `quantisations' of commutative ones). The second approach, `noncommutative geometry in the large', takes a different tack by replacing rings with new objects. One can recover a commutative ring $R$ from its module category; similarly, one can recover a scheme $X$ from its category $\mathrm{Coh}X$ of coherent sheaves\footnote{This is the Gabriel--Rosenberg theorem \cite{gabriel, rosenberg}.}. One would hence like to view a general abelian category as a `category of sheaves on a noncommutative space'. Unfortunately, this notion is not satisfactory: abelian categories of sheaves do not contain enough cohomological information and typically lack enough projectives. The solution is to pass to the derived category.

\p The viewpoint of `noncommutative geometry in the large' is hence to regard triangulated or dg categories as fundamental geometric objects in their own right. From the point of view of more classical geometry this loses information; however we gain a lot of powerful technical machinery and new ways to think. For example, a result of Bondal and Van den Bergh\linebreak \cite[3.18]{bvdb} says that the bounded derived category of quasicoherent sheaves on every (qcqs) scheme $X$ admits a compact generator $P$; hence $D^b(X)$ is `derived affine' in the sense that it is quasi-equivalent to the derived category of the derived endomorphism algebra $\Lambda=\R\enn_X(P)$, which in general is a differential graded algebra (dga). Any approach to derived commutative geometry that uses commutative dgas as affines (e.g.\ in characteristic zero, To\"en and Vezzosi's homotopical algebraic geometry \cite{hag1, hag2}) can hence be viewed through the framework of noncommutative geometry. A more descriptive name for `noncommutative geometry in the large' might be `noncommutative derived geometry'.

\p As an example of the two notions of noncommutative geometry interacting, consider again Be\u\i linson's determination of the derived category of $\P^n$ as generated by the $n+1$ sheaves $\mathcal{O}, \mathcal{O}(-1),\ldots, \mathcal{O}(-n)$. Let $G_n$ denote the direct sum of them all and put $B_n\coloneqq \enn_{\P^n}(G_n)$ the endomorphism ring. A sheaf cohomology computation shows that $B_n$ is quasi-isomorphic to the derived endomorphism ring $\R\enn_{\P^n}(G_n)$. It now follows that the functor $$\R\hom(G_n,-):D^b(\P^n) \to D^b(\cat{mod}\text{-}B_n)$$is a triangle equivalence. When $n=1$, it is easy to compute $B_1$ directly: it is the path algebra of the Kronecker quiver. More generally, one can write down explicit quivers with relations that are derived equivalent to $\P^n$. One can now study algebraic properties of these quivers in order to learn about the geometry of $\P^n$. Although this is a toy example, the idea is powerful: given a (qcqs) scheme $X$, it is `derived affine' in the sense that it is derived equivalent to some noncommutative dga, and the homological algebra of this dga reflects the geometry of $X$. Even better, if $X$ admits a tilting bundle -- essentially, a generator $V$ of $D(X)$ whose higher self-Exts vanish -- then $X$ is derived equivalent to a genuine ring (namely, the endomorphism ring of $V$). Noncommutative geometry provides powerful new ways to study classical commutative geometry.

\p For us, the specific jumping off point into noncommutative geometry will be Van den Bergh's noncommutative proof of Bridgeland's theorem on autoequivalences. Consider a threefold flopping contraction of a chain of rational curves $\pi: X \to \spec R$. Van den Bergh \cite{vdb} uses this data to construct a (noncommutative) ring $A$ with a derived equivalence to $X$. Specifically, he constructs $A$ as the endomorphism ring of a relative tilting bundle $\mathcal V=\mathcal{O}_X\oplus \mathcal{M}$ on $X$. When $X$ is smooth, $A$ is an example of a {noncommutative crepant resolution} ({NCCR}) of $R$ \cite{vdbnccr}. Van den Bergh then uses this noncommutative model for $X$ to give a new proof of Bridgeland's and Chen's theorems that flops induce autoequivalences.

\section{The contraction algebra}
Let $\pi:X \to \spec R$ be a threefold flopping contraction which is an isomorphism away from a single closed point $p$ in the base. Donovan and Wemyss \cite{DWncdf} used Van den Bergh's noncommutative model $A$ of $X$ to produce a new invariant, the contraction algebra. Loosely, the idea is as follows: $A$ is naturally presented as an endomorphism ring $\enn_X(\mathcal{V})$. Because $\mathcal V$ admits the trivial line bundle $\mathcal{O}_X$ as a direct summand, the ring $A$ admits an idempotent $e=\id_{\mathcal{O}_X}$, and one would like to define the contraction algebra to be the quotient $A/AeA$. This is the basic idea; to make the theory work one first needs to pass to a complete local base $\hat{R}_p$ and and then through a Morita equivalence. One key point is that one can compute $A$ as the endomorphism ring $\enn_R(\pi_*\mathcal{V})$; putting $\pi_*\mathcal V=R\oplus M$ one then wants to take the stable endomorphism ring $A/AeA\cong \underline{\enn}_R(M)$ as the definition of the contraction algebra $A_\con$.

\p When the flop is simple, the contraction algebra is a noncommutative Artinian local algebra; i.e.\ it is a finite-dimensional algebra with unique maximal ideal $\mathfrak m$ with residue field $k$. More generally, suppose that the chain of flopping curves is composed of $n$ irreducible rational curves. Then $A_\con$ is an $n$-pointed algebra, meaning that it has an augmentation $A_\con \to k^n$. In fact, if the exceptional locus has irreducible components $C_1,\ldots, C_n$ then the sheaves $\mathcal{O}_{C_i}(-1)$ on $X$ correspond across the derived equivalence $D(X) \xrightarrow{\simeq} D(A) $ to the one-dimensional simple $A$-modules appearing as the irreducible summands of the $n$-dimensional $A$-module $A_\con / \mathrm{rad}(A_\con)$ \cite[\S2]{DWncdf}.

\p One does not really need a threefold flop to define the contraction algebra: the relevant properties of Van den Bergh's tilting bundle for a contraction are axiomatised in \cite{contsdefs}. In \cite{enhancements} it is shown that one can globalise this idea: to any contraction admitting a certain generalisation of a tilting bundle, one can associate a sheaf of algebras on the base which is supported at the non-isomorphism locus. For a flopping contraction, this locus is a finite set of isolated points, and one recovers the usual contraction algebra at a point as the stalk. In fact, the contraction algebra of a threefold contraction $\pi$ is finite-dimensional if and only if $\pi$ is flopping (because $\pi$ is flopping if and only if it contracts curves without contracting a divisor, which is the case if and only if the contracted locus is zero-dimensional).

\p Why is the contraction algebra interesting? It recovers all known invariants of simple threefold flops; for example the width \cite{reidpagoda} of the flop (when defined) is the dimension of the associated contraction algebra. The Dynkin type of the flop and the type of the normal bundle of the flopping curve (for which there are three choices \cite{pinkham}) are both closely related to whether the contraction algebra is commutative \cite{DWncdf}. The dimension of the contraction algebra is a weighted sum of the Gopakumar--Vafa invariants of the flop \cite{todawidth}, and moreover the contraction algebra is a strictly stronger invariant than the G--V invariants \cite{katzgv, bwgv}. Based on this, Donovan and Wemyss conjecture that the contraction algebra determines the base of a smooth simple complete local flop:

 	\begin{conji}[Donovan--Wemyss {\cite[1.4]{DWncdf}}]
	Let $X \to \spec R$ and $X' \to \spec R'$ be flopping contractions of an irreducible rational curve in a smooth projective threefold, with $R$ and $R'$ complete local rings. If the associated contraction algebras are isomorphic, then $R \cong  R'$.
\end{conji}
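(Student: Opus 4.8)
The conjecture is open; the route I would take towards it passes through the derived refinement of $A_\con$ and breaks into three stages: (i) promote $A_\con$ to the derived contraction algebra $\dca$ and show it is a genuine invariant of the contraction; (ii) reconstruct the stable category $\stab(\hat R) = D_{\mathrm{sg}}(\hat R)$, together with its $A_\infty$-enhancement, from $\dca$; and (iii) reconstruct $\hat R$ itself from that $A_\infty$-category, using that $\hat R$ is an isolated cDV --- hence isolated hypersurface --- singularity. Since $R$ and $R'$ are assumed complete local, $\hat R \cong \hat R'$ is the same as $R \cong R'$. For stage (i) one sets $\dca$ to be the Braun--Chuang--Lazarev derived quotient of $A$ by $e$, where $A = \enn_R(R \oplus M)$ is Van den Bergh's algebra (after completing at $p$ and passing through the Morita equivalence) and $e$ is the idempotent projecting onto the summand $R$; then $H^0(\dca) = A_\con$, so $\dca$ is \emph{a priori} a strictly finer invariant. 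That $\dca$ depends only on the pair $(\hat R, p)$ --- in particular that it is a derived-equivalence invariant of $X$ --- follows because the derived quotient is functorial and the remaining choices in its construction (the completion and the Morita equivalence) do not affect it up to quasi-isomorphism.

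Stage (ii) is the crux. Here one uses the theory of relative singularity categories (Braun--Chuang--Lazarev, Kalck--Yang): $\per(\dca)$ computes the relative singularity category attached to the pair $(A,e)$ (up to completion), and the latter fits into a localisation sequence together with $\stab(\hat R)$, so that chasing the sequence reconstructs $\stab(\hat R)$ --- with its dg/$A_\infty$-structure, not merely as a triangulated category --- as a canonical quotient built out of $\dca$. Morally this is a Koszul-duality statement: the Koszul dual of $\dca$ is the pro-dg algebra controlling the derived noncommutative deformations of the exceptional fibre, and its homotopy category of perfect modules realises the relevant subcategory of the singularity category. I expect this to be the main obstacle, since it requires genuine control of the higher $A_\infty$-operations on $\dca$ and not just of $H^0 = A_\con$. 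It is also the point where the literal statement of the conjecture and what is actually provable part ways: the argument yields the \emph{derived} Donovan--Wemyss statement --- $\dca \simeq \dca'$ as dgas forces $\hat R \cong \hat R'$ --- unconditionally, but it gives the classical statement only if one moreover knows that $\dca$ can be recovered from $A_\con$, i.e.\ some intrinsic-formality property of the derived contraction algebra, which is not known in general.

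For stage (iii), feed the reconstructed $A_\infty$-category $\stab(\hat R)$ into the reconstruction theorem of Hua--Keller. Since $\hat R$ is an isolated hypersurface singularity, $\stab(\hat R)$ is equivalent to the matrix factorisation category of the defining equation $w$; a Hochschild-cohomology computation in the style of Dyckerhoff --- after a Kn\"orrer periodicity reduction if needed --- recovers the Tjurina algebra of $w$ from this $A_\infty$-category, and a Mather--Yau-type theorem then shows that $\hat R$ is recovered up to isomorphism. Assembling (i)--(iii) proves the derived form of the conjecture, and the classical form as stated would follow modulo the formality input flagged in stage (ii).
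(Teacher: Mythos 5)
You have correctly identified that the statement is a conjecture, still open, and that what is actually proved in the paper is only its derived form. Your three-stage outline is essentially the paper's own route: stage (i) is the definition of $\dca$ as the Braun--Chuang--Lazarev derived quotient with $H^0(\dca) \cong A_\con$; stage (ii) is precisely the content of Chapter 7, where the singularity functor and the periodicity element $\eta$ are used to recover $\thick_{D^\mathrm{dg}_\mathrm{sg}(\hat R)}(M)$ (and hence, when $A$ has finite global dimension, the whole dg singularity category) from the quasi-isomorphism type of $\dca$; and stage (iii) is the appeal to the Hua--Keller recovery theorem, culminating in \ref{recov} and \ref{ddwconj}. Your closing caveat --- that the classical statement would follow only given the additional input that $A_\con$ determines $\dca$ up to quasi-isomorphism --- is exactly the remark the paper makes after Theorem~A.

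One small imprecision in stage (iii): it is Hua--Keller's $\Z$-graded Hochschild cohomology computation, not Dyckerhoff's $\Z/2$-graded one, that recovers the Tjurina algebra in the general isolated-hypersurface case. Dyckerhoff's theorem gives the Milnor algebra of the $\Z/2$-graded category of matrix factorisations, which coincides with the Tjurina algebra only when $\sigma$ is quasi-homogeneous; passing through Dyckerhoff would therefore require either the quasi-homogeneous restriction or the (conjectural, see \ref{perconj}) statement that $\dca$ determines the $\Z/2$-graded category. The Kn\"orrer periodicity reduction you mention is also not part of the paper's argument --- the recovery theorem \ref{recov} fixes the embedding dimension $n$ as part of the input data, which is exactly what Mather--Yau requires, so no dimension-shifting is needed.
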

One should note that when $k=\C$, this is an analytic classification. As of writing, this conjecture is still open. We remark that in the multiple-curve case, the correct generalisation of the conjecture \cite[1.3]{jennyf} stipulates that the contraction algebras should be derived equivalent, not necessarily isomorphic.

\p Donovan and Wemyss also show that the contraction algebra controls the Bridgeland--Chen flop-flop autoequivalence, in the following sense. Algebraically, the flop-flop autoequivalence $D(X)\to D(X)$ can be interpreted as a mutation-mutation autoequivalence $ \mm: D(A) \to D(A)$, analogous to Fomin--Zelevinsky mutation \cite{iyamareiten}. Donovan and Wemyss prove \cite[5.10]{DWncdf} that $\mm$ is a `noncommutative twist' around $A_\con$; more specifically they prove that $\mm$ is represented by the $A$-bimodule ${AeA \simeq \mathrm{cocone}(A \to A_\con)}$, in the sense that $\mm\cong\R\hom_A(AeA,-)$.

	\section{Deformation theory}
	For us, the contraction algebra has one more extremely important property: namely, that it controls the noncommutative deformations of the flopping curves. The idea of deformation theory, done geometrically, is to study extensions of structures along infinitesimal directions. Loosely, think of an infinitesimal $\mathcal I$ as a point together with some nonreduced fuzz infinitely close by. Given some geometric object $X$, a deformation of $X$ is then a family $\mathcal X$ over $\mathcal I$ whose fibre over the closed point is precisely $X$.
	
	\p In order to formalise this, we need first an algebraic description of infinitesimals. Firstly, they should have a unique closed point, and hence be spectra of commutative local rings. Secondly, we do not want arithmetic information; hence the residue field should be $k$. Thirdly, we only want a finite amount of nonreduced information. Hence our infinitesimals will be spectra of Artinian local commutative $k$-algebras $\Gamma$; those finite-dimensional algebras with unique maximal ideal $\mathfrak {m}_\Gamma$ whose residue field is $k$. This gives a canonical augmentation $\Gamma \to k$ and hence a canonical $k$-rational point of $\spec \Gamma$. If $X$ is a scheme then a deformation of $X$ is defined to be a scheme $\mathcal X$ with a flat map $\mathcal X \to \spec \Gamma$ together with an identification of the fibre $\mathcal{X} _ k$ with $X$. Say that two deformations are isomorphic if there is an isomorphism between them reducing to the identity on the copies of $X$. 
	
	\p A particularly important example of an Artinian algebra is the dual numbers $k[\epsilon]\coloneqq \frac{k[x]}{x^2}$, where we think of $\epsilon$ as a square-zero element. Deformations of objects over $k[\epsilon]$ are referred to as first-order deformations. For example, let $R=\frac{k[x,y]}{xy}$ be the coordinate ring of the coordinate axes in $\A_k^2$. For a fixed $t\in k$, it is easy to check that the $k[\epsilon]$-algebra $\frac{k[x,y]}{xy-t\epsilon}$ is a deformation of $R$; one can think of this as the infinitesimal first-order shred of the hyperbolic paraboloid $\frac{k[x,y]}{xy-t}$ fibred over $\A^1_k=\spec k[t]$, whose generic fibre is a hyperbola and whose special fibre is $R$. In fact, one can see fairly easily \cite[1.5]{artinnotes} that for a plane curve with isolated singularities $\frac{k[x,y]}{f}$, the deformations are in one-to-one correspondence with the elements of the Tjurina algebra $\frac{k[x,y]}{(f,f_x,f_y)}$; in particular if $f$ defines a smooth plane curve then it has no nontrivial deformations. Because the Tjurina algebra of $xy$ is one-dimensional, the family above exhausts all of the first-order deformations of $R$. 
	
	\p Based on the above example, one might guess that smooth affine schemes have no non-trivial deformations, and indeed this is true \cite{artindef}. The idea is that smoothness implies formal smoothness, which is a lifting property against square-zero extensions. Every Artinian algebra is a composition of square-zero extensions, which allows us to iteratively lift the identity map $R \to R$ to an isomorphism $\mathcal{R} \to R\otimes \Gamma$ between any deformation and the trivial one. Note that this argument only works in the affine world! Conceptually, one can think of the failure of smooth affines to admit interesting deformations as a kind of failure of compactness: any `kinks' one introduces can be smoothed away to infinity. Smooth non-affine varieties may admit nontrivial deformations: if $X$ is a smooth variety, choose a cover of $X$ by smooth affines. Any deformation of $X$ must be trivial on each piece of the cover, and so we are just deforming the gluing maps. One can check that automorphisms of the trivial first-order deformation of an affine patch $\spec R$ are in bijection with $\mathrm{Der}(R,R)$. These derivations patch together into a \v{C}ech 1-cocycle valued in $\mathrm{Der}(\mathcal{O}_X,\mathcal{O}_X)\cong\mathcal{T}_X$, and isomorphisms of deformations induce \v{C}ech coboundaries. One gets an isomorphism $$\frac{\{\text{first-order deformations of }X\}}{(\text{isomorphism})}\cong H^1(X, \mathcal{T}_X).$$So the tangent sheaf of a smooth variety controls its first-order deformations. See for example Sernesi \cite{sernesi} for the full argument.

	\p One can also think of deformation theory as the infinitesimal study of moduli spaces. Assume that the objects we are interested in fit together into some sort of moduli space $\mathcal M$, so that maps $Y \to \mathcal{M}$ correspond to flat families over $Y$. In particular if $\Gamma$ is an Artinian local algebra then a map $\spec \Gamma \to \mathcal{M}$ is the same as a closed point $p=[X]$ of $\mathcal M$ together with a flat family $\mathcal X$ over $\spec \Gamma$ reducing to $X$ at the unique closed point. But this is exactly the definition of a deformation of $X$. So deformation theory can be interpreted as poking around infinitesimally in a moduli space near a point of interest. Looking at first-order deformations, the set of maps $\spec k[\epsilon] \to \mathcal{M}$ taking the residue field $k$ to a point $p$ is well-known to be the tangent space $T_p\mathcal{M}$ to $\mathcal M$ at $p$. For example, let $X$ be a smooth projective variety and $Y$ a smooth closed subvariety. Then one can consider the first-order embedded deformations of $Y$ in $X$, and it turns out that this set is in bijection with the space of sections over $X$ of the normal bundle $\mathcal{N}_{Y/X}$. When $Y$ is a point, this is just the tangent space $T_YX$. Considering $Y$ as a point of the Hilbert scheme, we hence have $T_Y\mathrm{Hilb(X)}\cong H^0(X,\mathcal{N}_{Y/X})$. See e.g.\ \cite{hartshornedef} for an in-depth discussion. Hence, deformation theory can tell us interesting facts about moduli spaces, such as whether they are smooth.
	
	\p Deformations pull back along maps of commutative Artinian local rings, so fixing an object of interest $X$, the assignment $$\Gamma\mapsto \frac{\{\text{deformations of }X\text{ over }\Gamma\}}{(\text{isomorphism})}$$is a functor $\cat{Art}_k \to \cat{Set}$. We call it the deformation functor associated to $X$. Schlessinger \cite{schlessinger} axiomatised the properties of these functors, and moreover gave criteria for when such a functor is prorepresentable (i.e.\ representable by a pro-object in $\cat{Art}_k$). So at the formal level, one can study deformation functors abstractly as those satisfying Schlessinger's conditions (loosely, that deformations should glue appropriately, and that deformations over $\Gamma=k$ should be trivial). Let us look at some examples of deformation functors:
	\begin{itemize}
\item If $\Gamma$ is an Artinian local algebra, then $\hom(\Gamma,-)$ is a deformation functor. 
\item If $X$ is a scheme, then we obtain a deformation functor $\mathrm{Def}(X)$ as above. 
\item If $\mathcal F$ is a sheaf of modules on a scheme $X$, then we can look for sheaves $\tilde{\mathcal{F}}$ on the trivial deformation $X\otimes \Gamma$ which restrict to $\mathcal F$; this gives us a functor $\mathrm{Def}_X(\mathcal{F})$. 
\item In particular, we can also deform modules over (commutative) rings.
\item Similarly, one can deform dg modules too. 
\item If $A$ is a noncommutative $k$-algebra, then we can deform the multiplication on $A$ by looking for algebras $\tilde A$, flat over $\Gamma$, such that $\tilde{A}\otimes k \cong A$, and we get an associated functor $\mathrm{Def}(A)$. 
\item Similarly, if $A$ is commutative and we deform only to commutative algebras, we obtain a subfunctor $\mathrm{Def}_{\mathrm{Com}}(A)$.
	\end{itemize}
	
	\p Given a deformation functor $F:\cat{Art}_k\to \cat{Set}$, let $T^1F$ denote the set $F(k[\epsilon])$ obtained by evaluating $F$ on the dual numbers. Call it the tangent space of $F$; one can show that it is a vector space. Any map $F \to F'$ of deformation functors induces a map $T^1F \to T^1F'$, the differential. Let us compute the tangent spaces of some of the functors we saw earlier. Firstly, we already know that $T^1\mathrm{Def}(X)\cong H^1(X, \mathcal{T}_X)$ for smooth varieties; when $X$ is proper (and possibly singular) we are at least guaranteed that $T^1\mathrm{Def}(X)$ is finite-dimensional. It is easy to see that $T^1\hom(\Gamma,-)\cong T_{\mathfrak{m}_\Gamma}\spec\Gamma$. If $V$ is a bounded dg vector space, then $T^1\mathrm{Def}_k(V)$ is the space $\ext^1_k(V,V)\cong H^1\enn_k(V)$.
	
	\p Let us look at the example of a noncommutative algebra $A$ in more detail. Deform the multiplication on $A$ over $k[\epsilon]$ by putting $x\odot y \coloneqq  xy+\epsilon f(x,y)$, for some $f\in\hom_k(A,A)$. In order for $\odot$ to be associative, we require that the equation $$xf(y,z) -f(xy,z)+f(x,yz)-f(x,y)z=0$$holds. Two deformations are equivalent if they differ by an automorphism $x\mapsto x+ \epsilon g(x)$, and this sends $f(x,y) \mapsto f(x,y) -xg(y)+g(xy)-g(x)y$. Hence the set of first-order deformations is in bijection with the space of functions $f$ satisfying the equation above, modulo the given equivalence relation. The eagle-eyed will recognise $f$ as precisely a Hochschild 2-cochain, and $-xg(y)+g(xy)-g(x)y$ as precisely a Hochschild 2-coboundary. Hence, the first-order deformations of $A$ are in bijection with the Hochschild cohomology group $HH^2(A,A)$. Similarly, if we deform to commutative algebras, we see that $T^1\mathrm{Def}_{\mathrm{Com}}(A)$ is precisely the Harrison cohomology $\mathrm{HHar}^2(A,A)$, defined as the cohomology of the Harrison complex, the subcomplex of the Hochschild complex on those functions vanishing on signed sums of shuffles \cite{gerstschack}.
	
	\p Observe that first-order deformations tend to be in bijection with cohomology groups, in particular usually first cohomology groups. We will come back to this, and provide a conceptual explanation, later in this introduction when we enter the world of derived deformation theory.
	
	\p So far, we have deformed along commutative algebras. What happens if we try to deform along noncommutative algebras? The definition of a noncommutative Artinian algebra is much the same as that of a commutative one. It is hard to deform schemes, but we can still deform modules, in exactly the same way as before; this theory was originally developed by Laudal \cite{laudalpt}, who goes one step further and considers pointed deformations: roughly, these are deformations of collections $\{S_1,\ldots,S_n\}$ along $n$-pointed Artinian algebras, remembering some information about the Ext groups between pairwise distinct modules. Pointed noncommutative deformations have recently found many applications within algebraic geometry, in particular the geometry of threefolds \cite{todatwists,DWncdf,kawamatapointed}. 
	
	\p Returning to where we started this section, Donovan and Wemyss prove that the contraction algebra $A_\con$ controls the noncommutative deformation theory of the flopping curves. What do we mean by this? Recall that across the derived equivalence $D(X) \to D(A)$, the (twisted) structure sheaves of the flopping curves $C_i$ correspond across the derived equivalence to the 1-dimensional simple modules $S_i$ appearing in the quotient of the contraction algebra by its radical. One can check that the commutative pointed deformations of the simple sheaves $\mathcal{O}_{C_i}(-1)$ are in natural bijection with the commutative pointed deformations of the $A$-modules $S_i$. The contraction algebra, which is an $n$-pointed Artinian algebra, represents the functor of noncommutative pointed deformations of the $S_i$ \cite{DWncdf, contsdefs}. In this sense, we say that $A_\con$ represents the functor of noncommutative deformations of the curves $C_i$. Note that this implies that the abelianisation $A_\con^{\mathrm{ab}}$ represents the functor of commutative deformations of the $C_i$.

\section{The derived contraction algebra}\label{intronew}
Can one extend the definition of the contraction algebra to more general geometric situations than flopping contractions? Recall that the base of a flop is a cDV singularity, and hence a generic hyperplane cut is a partial resolution of a Kleinian singularity. Because tilting bundles behave well under such cuts (see \S\ref{slicingsctn}), partial resolutions obtained in this manner admit contraction algebras. Moreover, although one no longer has flops -- since a flop is an isomorphism in codimension one -- one still has mutation autoequivalences, so in some sense one can still flop curves on the derived level. However, by producing an infinite family of one-curve partial resolutions of type $A_n$ surface singularities, we show explicitly that the contraction algebra does not control the mutation-mutation autoequivalence (see \S\ref{intromut} later). Because the contraction algebra is $k$ for each member of this family, the exceptional locus (a copy of $\P^1$) is rigid and does not deform, even noncommutatively. However, one can show that this curve admits nontrivial derived deformations, indicating that one should study a derived version of the contraction algebra. 
	
\p Noncommutative (partial) resolutions, such as those constructed by Van den Bergh in \cite{vdb}, often yield (noncommutative) rings with idempotents, which motivates the serious homological study of such rings. In particular, if $A$ is a noncommutative ring with an idempotent $e$ then putting $R\coloneqq eAe$, the standard functors $D(A)\recol D(R)$ fit into one half of a recollement, a strong type of short exact sequence of triangulated categories. Kalck and Yang \cite{kalckyang,kalckyang2} show that there exists a nonpositive cohomologically graded dga $B$ with $H^0(B)\cong A/AeA$ fitting into a recollement $D(B)\recol D(A) \recol D(R)$. 

\p Suppose that $A$ is a noncommutative ring with an idempotent $e$, or more generally a dga with an idempotent $e \in H^0(A)$. Starting from this data, Braun, Chuang, and Lazarev \cite{bcl} define the derived quotient $\dq$ of $A$ by $e$ to be the universal dga under $A$ that homotopy annihilates $e$. In fact, the derived quotient is a special case of a more general construction, that of the derived localisation; if $A$ is commutative, or more generally if the localising set (in our case $\{1-e\}$) is an {\O}re set, then this coincides with the usual localisation. In general, the derived localisation contains extra homotopy-theoretic information: for example, the derived quotient can be thought of as a Drinfeld quotient (one takes $A$ and adds a contracting homotopy to kill $e$). In particular, if $A$ is an algebra in degree zero, then $\dq$ is a nonpositively cohomologically graded dga with $H^0(\dq)\cong A/AeA$. Braun--Chuang--Lazarev prove that the derived quotient fits into a standard recollement $D(\dq) \recol D(A)\recol D(eAe)$, which gives an abstract construction of Kalck and Yang's dga $B$.  
	
\p Given a suitably general isolated simple contraction $X \to X_\con$ of an irreducible rational curve to a point $p$, we will define the derived contraction algebra $\dca$ to be the derived quotient $\dq$, where $A$ is Van den Bergh's noncommutative model for $X$ (see Chapter \ref{dercondefns} for the rigorous construction). We construct $\dca$ complete locally; in particular it will only depend on the formal fibre $U \to \spec R$, where we let $R$ denote the completion of the local ring of $X_\con$ at $p$. It follows that $eAe\cong R$ here, so that one can think of $D(\dca)$ as a sort of `derived exceptional locus'. However, it can be computed using only global data, and we give conditions under which one can compute it in a Zariski local neighbourhood of $p$ (see \S\ref{ltgc}). Importantly, one can view the derived contraction algebra as an enhancement or categorification of the Donovan--Wemyss contraction algebra: it is easy to see that there is an algebra isomorphism $H^0(\dca)\cong A_\con$. 

\p The overall aim of this thesis is to use the derived contraction algebra to generalise Donovan--Wemyss's results \cite{DWncdf,contsdefs,enhancements} to non-threefold settings. For technical reasons (see \ref{dimrmk}) we are only able to define the derived contraction algebra for contractions of curves in threefolds and surfaces. However, even in the threefold setting, we obtain new proofs of Donovan--Wemyss's results and our theorems already extend theirs. We work in a general setup and only introduce geometric hypotheses when necessary, in order to obtain a unified theory that works in a variety of situations. The point of our construction is that it behaves well in a general setup: some of our more algebraic results will hold in arbitrary dimension.
	 	
	\p There are three properties one would like a derived contraction algebra to have:
	\begin{itemize}
		\item One would like a derived version of the Donovan--Wemyss conjecture to be true: namely, the derived contraction algebra should classify flops with complete local base.
		\item One would like the derived contraction algebra to control, in an appropriate sense, the derived noncommutative deformations of the exceptional locus.
		\item One would like $\dca$ to have control over an appropriate mutation autoequivalence.
		\end{itemize} 	
In this thesis, we show that the derived quotient possesses all three of these properties.

 \section{Singularity categories and the derived Donovan--Wemyss conjecture}
 \p If $R$ is a right noetherian ring, its singularity category is the triangulated (or dg) category given by the quotient $D_{\mathrm{sg}}(R)\coloneqq D^b(R)/\per(R)$, which can be seen as quantifying the type of singularities of $R$ (at least when $R$ is commutative). Singularity categories were introduced by Buchweitz \cite{buchweitz} who proved that, when $R$ is Gorenstein, $D_{\mathrm{sg}}(R)$ is equivalent to the stable category $\stab R$ of maximal Cohen--Macaulay (MCM for short) $R$-modules. Singularity categories were later studied for schemes by Orlov \cite{orlovtri}, who gave applications to homological mirror symmetry, and dg enhancements of singularity categories have been studied recently by Blanc--Robalo--To\"{e}n--Vezzosi \cite{motivicsingcat} where they are constructed using the dg quotient \cite{kellerdgquot,drinfeldquotient}. 
 	
\p When $R=k\llbracket x_1,\ldots, x_n\rrbracket/\sigma$ is a complete local isolated hypersurface singularity, the two triangulated categories $D_\mathrm{sg}(R)$ and $\stab R$ are triangle equivalent to a third category, the category of matrix factorisations $\mathrm{MF}(\sigma)$, a fact essentially first noticed by Eisenbud \cite{eisenbudper}. This latter dg category has a natural enhancement over $\Z/2$-graded complexes -- and hence becomes a dg category by extending the $\Z/2$-graded morphism complexes periodically -- and the triangle equivalence between $\mathrm{MF}(\sigma)$ and $D_\mathrm{sg}(R)$ lifts to a quasi-equivalence of $\Z$-graded dg categories \cite{motivicsingcat}.
 	
\p Dyckerhoff \cite{dyck} proved that the Hochschild cohomology of the 2-periodic dg category of matrix factorisations is the Milnor algebra of $R$. When $\sigma$ is quasi-homogeneous, this agrees with the Tjurina algebra of $R$, which can then be used to recover $R$ via the formal Mather--Yau theorem, as long as one knows the dimension \cite{gpmather}. Recently, a $\Z$-graded analogue of Dyckerhoff's theorem was proved by Hua and Keller \cite{huakeller}, who showed that the Tjurina algebra of $R$ is the zeroth Hochschild cohomology of the underlying $\Z$-graded dg category of matrix factorisations (which is a different object to the $\Z/2$-graded Hochschild cohomology). Hence, the dg singularity category of a complete local hypersurface singularity $R$ classifies $R$ amongst all hypersurface singularities of the same dimension. 
 	
\p How does this relate to the derived contraction algebra? Let $R$ be a commutative Gorenstein ring and $M$ a MCM $R$-module. Put $A\coloneqq \enn_R(R\oplus M)$; we refer to $A$ as a noncommutative partial resolution of $R$. Note that $A$ comes with an idempotent $e=\id_R$; what information does the derived quotient $\dq$ contain? Following Kalck and Yang \cite{kalckyang, kalckyang2}, we investigate the relationship between $\dq$ and the dg singularity category ${D^\mathrm{dg}_\mathrm{sg}(R)}$ in detail, and we prove a key technical theorem (\ref{qisolem}) stating that $\dq$ is the truncation to nonpositive degrees of the endomorphism dga of $M$ considered as an object of the dg singularity category ${D^\mathrm{dg}_\mathrm{sg}(R)}$. 	 	
 	
\p When $R$ is in addition a complete local hypersurface singularity, Eisenbud's 2-periodicity \cite{eisenbudper} lets us obtain a degree $-2$ element $\eta \in H^{-2}(\dq)$, homotopy unique up to multiplication by units in $H(\dq)$, with the property that $\eta:H^j(\dq) \to H^{j-2}(\dq)$ is an isomorphism for all $j\leq 0$. Morally, we think of $\eta$ as witnessing the 2-periodicity in the singularity category. We prove that the derived localisation of $\dq$ at $\eta$ is the whole of the endomorphism dga of $M \in {D^\mathrm{dg}_\mathrm{sg}(R)}$ (\ref{etaex}, \ref{uniqueeta}). Under some mild finiteness assumptions, $\dq$ hence determines the dg subcategory of ${D^\mathrm{dg}_\mathrm{sg}(R)}$ generated by $M$ (\ref{recovwk}). When $A$ is a resolution, $M$ automatically generates the singularity category (\ref{genrmk}). So, under some finiteness and smoothness conditions, the dga $\dq$ determines ${D^\mathrm{dg}_\mathrm{sg}(R)}$. Combining this with the recovery result of Hua and Keller, one obtains:

	\begin{thma}[\ref{recov}] Fix $n \in \N$ and let $R\coloneqq k\llbracket x_1,\ldots, x_n \rrbracket/\sigma$ be an isolated hypersurface singularity. Let $M$ be a maximal Cohen--Macaulay $R$-module that generates $D_\mathrm{sg}(R)$, and let $A\coloneqq \enn_{R}(R\oplus M)$ be the associated partial resolution of $R$ with $e=\id_{R}$. Assume that $A/^\mathbb{L}AeA$ is cohomologically locally finite and that $A/AeA$ is a local algebra. Then the quasi-isomorphism type of $\dq$ recovers the isomorphism type of $R$ amongst all complete local isolated hypersurface singularities of the same dimension as $R$.
\end{thma}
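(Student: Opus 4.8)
The plan is to treat this theorem as the capstone of the structural analysis of $\dq$ carried out in \ref{qisolem}, \ref{etaex} and \ref{uniqueeta}: I would show that the quasi-isomorphism type of $\dq$ determines the dg singularity category $D^\mathrm{dg}_\mathrm{sg}(R)$ up to quasi-equivalence, and then feed this into the Hua--Keller recovery theorem. Throughout I would use that, by \ref{qisolem}, $\dq$ is quasi-isomorphic to the nonpositive truncation $\tau^{\leq 0}\dge_{D^\mathrm{dg}_\mathrm{sg}(R)}(M)$, so that every piece of homotopical data of the derived quotient already sits inside the endomorphism dga of $M$ in the singularity category.

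First I would extract the periodicity class. The role of the two hypotheses is precisely to make Eisenbud's $2$-periodicity honest at the level of the derived quotient: locality of $A/AeA\cong H^0(\dq)$ makes ``unit of $H^0(\dq)$'' unambiguous, while cohomological local finiteness together with \ref{etaex} and \ref{uniqueeta} furnishes a class $\eta\in H^{-2}(\dq)$, unique up to multiplication by such a unit, acting invertibly on $H^j(\dq)$ for every $j\leq 0$. Thus $\eta$ is canonically attached to the quasi-isomorphism type of $\dq$, and the unit ambiguity is harmless: a unit $u\in H^0(\dq)$ already acts invertibly on all of $H(\dq)$ (each $H^{-j}(\dq)$ being a module over the local ring $H^0(\dq)$), so $\dq\to\dq[u^{-1}]$ is a quasi-isomorphism and $\dq[\eta^{-1}]$ is well-defined up to quasi-isomorphism independently of the chosen representative.

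Next I would invert $\eta$. By \ref{etaex} the derived localisation $\dq[\eta^{-1}]$ is quasi-isomorphic to the full endomorphism dga $\dge_{D^\mathrm{dg}_\mathrm{sg}(R)}(M)$ -- inverting $\eta$ restores the $2$-periodic positive-degree tail that the truncation of \ref{qisolem} discarded -- so the quasi-isomorphism type of $\dq$ determines that of $\dge_{D^\mathrm{dg}_\mathrm{sg}(R)}(M)$. Since $M$ generates $D_\mathrm{sg}(R)$ by hypothesis, passing to perfect dg modules and idempotent-completing identifies $\per(\dge_{D^\mathrm{dg}_\mathrm{sg}(R)}(M))$ with $D^\mathrm{dg}_\mathrm{sg}(R)$; hence $\dq$ determines $D^\mathrm{dg}_\mathrm{sg}(R)$ up to quasi-equivalence. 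This is exactly \ref{recovwk} in the case where $M$ is a generator.

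Finally I would invoke the recovery result. The dg category $D^\mathrm{dg}_\mathrm{sg}(R)$ is quasi-equivalent to the $\Z$-graded dg category $\mathrm{MF}(\sigma)$, and Hochschild cohomology is a derived (Morita) invariant, so from $\dq$ one reads off $HH^0(\mathrm{MF}(\sigma))$, which by Hua--Keller \cite{huakeller} is the Tjurina algebra of $R$. As the dimension $n$ is fixed, the formal Mather--Yau theorem \cite{gpmather} then reconstructs $R$ from its Tjurina algebra amongst all complete local isolated hypersurface singularities of dimension $n$, as required. I expect the genuinely delicate steps -- that localising at $\eta$ recovers the full $2$-periodicised endomorphism dga, and that the whole procedure is canonical given only the quasi-isomorphism type of $\dq$ -- to be the main obstacle, but these have effectively been front-loaded into \ref{etaex} and \ref{uniqueeta}; granting them, the present statement is an assembly, the one remaining point of care being that ``$M$ generates $D_\mathrm{sg}(R)$'' must be applied after passing to dg enhancements and idempotent completions.
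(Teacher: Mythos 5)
Your proposal is correct and follows essentially the same route as the paper: you combine \ref{qisolem}, \ref{etaex}, and \ref{uniqueeta} (which the paper packages through \ref{perletadg} and \ref{recovwk}) to show that the quasi-isomorphism type of $\dq$ determines $\thick_{D^\mathrm{dg}_\mathrm{sg}(R)}(M)\cong D^\mathrm{dg}_\mathrm{sg}(R)$, then feed this into Hua--Keller. The only difference is cosmetic -- you unpack the paper's \ref{recoverythm} into its constituent parts (the $HH^0$ of matrix factorisations giving the Tjurina algebra plus the formal Mather--Yau theorem), whereas the paper cites the packaged recovery statement directly.
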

We conjecture (\ref{perconj}, \ref{perconjrmk}) that in fact $\dq$ determines the $\Z/2$-graded dg category of matrix factorisations.

\p Returning to the geometric setup, a generalised version of the derived Donovan--Wemyss conjecture immediately follows from the above theorem:
	\begin{thma}[derived Donovan--Wemyss conjecture; \ref{ddwconj}]\label{thrma}
	Let $X \to X_\con$ and $X' \to X_\con'$ be isolated contractions of an irreducible rational curve in smooth varieties of the same dimension to points $p,p'$ respectively. If the associated derived contraction algebras are defined and quasi-isomorphic, then the completions $\widehat{(X_\con)}_p$ and $\widehat{(X_\con')}_{p'}$ are isomorphic.
\end{thma}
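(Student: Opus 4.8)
The plan is to reduce the geometric statement entirely to Theorem A (the recovery result for derived quotients of noncommutative partial resolutions) by unwinding the definitions. First I would observe that, by construction of the derived contraction algebra, the object $\dca$ attached to an isolated simple contraction $X \to X_\con$ of an irreducible rational curve to a point $p$ depends only on the formal fibre, i.e.\ on the completion $R \coloneqq \widehat{(X_\con)}_p$ together with Van den Bergh's noncommutative model $A$ over this complete local base. Since $X$ is smooth and the contracted locus is a single point, $R$ is Gorenstein (indeed cDV in the threefold case, and a Kleinian-type partial resolution base in the surface case) and in fact a hypersurface singularity; the key point is that $R$ is a complete local \emph{isolated} hypersurface singularity of dimension $n = \dim X$. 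Van den Bergh's tilting bundle $\mathcal V = \mathcal O_X \oplus \mathcal M$ gives $A \cong \enn_R(R \oplus M)$ where $M = \pi_* \mathcal M$ is a maximal Cohen--Macaulay $R$-module, and because $X$ is a resolution of $\spec R$, the module $M$ generates the singularity category $D_{\mathrm{sg}}(R)$ (this is the content of \ref{genrmk}). Under the idempotent $e = \id_R$ we have $eAe \cong R$, and the derived contraction algebra is precisely $\dq$.

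Next I would check that the hypotheses of Theorem A are satisfied in this setting: one needs $\dq$ to be cohomologically locally finite, and $A/AeA$ to be a local algebra. The latter holds because the contraction is simple: the classical contraction algebra $A_\con \cong H^0(\dq)$ is then Artinian local (this is recorded in the discussion of the contraction algebra earlier in the introduction). Cohomological local finiteness should be part of the hypotheses packaged into ``the derived contraction algebra is defined'' — indeed the statement of Theorem \ref{thrma} explicitly assumes the derived contraction algebras are \emph{defined}, and I would make precise in Chapter \ref{dercondefns} that this is exactly the finiteness condition needed. With these hypotheses in hand, Theorem A applies verbatim: the quasi-isomorphism type of $\dq = \dca$ determines the isomorphism type of $R = \widehat{(X_\con)}_p$ amongst all complete local isolated hypersurface singularities of the same dimension $n$.

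The argument then concludes formally: given two isolated contractions $X \to X_\con$ and $X' \to X_\con'$ of irreducible rational curves in smooth varieties of the same dimension $n$, with defined and quasi-isomorphic derived contraction algebras, both $R = \widehat{(X_\con)}_p$ and $R' = \widehat{(X_\con')}_{p'}$ are complete local isolated hypersurface singularities of dimension $n$, and the quasi-isomorphism $\dca \simeq A'^{\mathrm{der}}_\con$ forces $R \cong R'$ by Theorem A. This gives the isomorphism $\widehat{(X_\con)}_p \cong \widehat{(X_\con')}_{p'}$.

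The main obstacle is not in this deduction but in the supporting structural input, which must be established beforehand: namely that the base of such a contraction is genuinely a \emph{hypersurface} singularity (so that Eisenbud 2-periodicity and hence the matrix-factorisation picture are available), that $M$ generates $D_{\mathrm{sg}}(R)$, and that the derived contraction algebra — built a priori from global tilting data — really is insensitive to everything except the formal fibre, so that it equals $\dq$ for $A = \enn_R(R \oplus M)$. In the threefold case the hypersurface property follows from the cDV description of flopping contraction bases; in the surface case one uses that partial resolutions of Kleinian singularities have hypersurface bases. Verifying compatibility of the completion operation with Van den Bergh's construction, and pinning down exactly which finiteness condition makes $\dca$ ``defined'', is the technical heart of the matter and is carried out in Chapter \ref{dercondefns}; once that is in place, Theorem \ref{thrma} is an immediate corollary of Theorem A.
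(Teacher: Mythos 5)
Your proposal is correct and follows essentially the same route as the paper: the paper's proof of \ref{ddwconj} is literally "a simple application of \ref{recov}" (Theorem A), having already established in Chapter \ref{dercondefns} that $\dca$ depends only on the formal fibre, that the completed base is an isolated hypersurface singularity (assumed in the Global Setup, and automatic for threefold flops by cDV classification), that $A_\con$ is Artinian local (\ref{aconlocal}), and that $\dca$ is cohomologically locally finite (\ref{fdcohom}). One small correction: in the paper the finiteness and locality are \emph{proved} to hold automatically in the Global Setup, rather than being part of what "the derived contraction algebra is defined" means — that phrase refers to the geometric hypotheses of Setup \ref{globalsetup} being satisfied (crepancy, irreducibility of the curve, the hypersurface/dimension constraints), not to an extra finiteness assumption.
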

The original conjecture would follow if one could prove that the usual contraction algebra determines the quasi-isomorphism type of the derived contraction algebra. We remark that in the multiple-curve case, the isomorphism class of the contraction algebra is too fine an invariant, as it distinguishes between different flops with the same base. Hence, one would expect that the quasi-isomorphism type of the derived contraction algebra is also too fine an invariant in this situation, and that one should use something like its derived Morita equivalence class instead.

\p In the setting of threefold flopping contractions, we will show that when $X \to X_\con$ is a minimal model, then $H^*(\dca)\cong A_\con[\eta]$, the classical contraction algebra with the periodicity element freely adjoined in degree $-2$. In general, $\dca$ is not formal: in Chapter \ref{threecomps} we provide explicit computations (as a minimal $A_\infty$-algebra) of the derived contraction algebra associated to the Pagoda and Laufer flops, and we will see that $\dca$ is not necessarily formal.
	
	\section{Derived deformation theory}
	\p We saw earlier that tangent spaces to deformation functors are typically cohomology groups. Moreover, with the exception of deforming algebras, they are first cohomology groups. Is there an explanation for this? The following philosophy is originally due to Deligne, first written down in a letter to Goldman and Millson of April 1986, and first appearing in print in \cite{goldmanmillson}:
	$$\textit{Every characteristic 0 deformation problem is controlled by a differential graded Lie algebra.}$$
	A dg Lie algebra (dgla) is essentially a complex of vector spaces endowed with a Lie bracket $[-,-]$ satisfying graded versions of the usual antisymmetry and Jacobi identities, as well as a Leibniz rule for compatibility with the differential. How does one construct a deformation functor from a dgla? Given a dgla $L$, a Maurer--Cartan element in $L$ is a degree 1 element $x$ that satisfies the equation $dx+\frac{1}{2}[x,x]=0$. If $L$ is nilpotent, then the degree zero part $L^0$ acts via formal exponentiation on the set of Maurer--Cartan elements. Given a general dgla $L$ and a commutative Artinian local algebra $\Gamma$, one can check that the tensor product $L\otimes \mathfrak{m}_\Gamma$ admits the structure of a nilpotent dgla. The deformation functor associated to $L$, denoted $\mathrm{Def}_L$, sends $\Gamma$ to the set of equivalence classes of Maurer--Cartan elements of $L\otimes \mathfrak{m}_\Gamma$. One can check that this is indeed a deformation functor! There is an isomorphism between the tangent space $T^1\mathrm{Def}_L $ and $ H^1(L)$, the first cohomology group of $L$. In fact, the functor $L \mapsto \mathrm{Def}_L$ is invariant under quasi-isomorphism. See e.g.\ \cite{manettidgla, manetti} for proofs of these statements. Let us see some examples of dgla deformation functors:
	\begin{itemize}
		\item If $V$ is a dg vector space, then deformations of $V$ are controlled by the dgla $\enn(V)$ equipped with the commutator bracket.
		\item If $A$ is a noncommutative algebra, then the Hochschild complex $HC(A,A)$ is well known to admit a bracket, the Gerstenhaber bracket. This makes the shifted complex $HC(A,A)[1]$ into a dgla, and this is precisely the dgla controlling algebra deformations of $A$ \cite{gerstschack}. In particular, this explains why the first-order deformations are $HH^2(A,A)$ rather than a first cohomology group.
		\item Similarly, the shifted Harrison complex is a dgla, and it controls commutative deformations.
		\item If $X$ is a complex manifold, then the deformations of $X$ are controlled by the {Kodaira--Spencer} dgla, which is built out of certain sections of the holomorphic tangent bundle; see \cite{manetti} for further information.
	\end{itemize}
	
	\p If $H^1L$ is the tangent space to the deformation functor associated to $L$, what do the other cohomology groups classify? Let us go back to the situation of a smooth projective variety $X$. Automorphisms of the trivial first-order deformation are in bijection with $H^0(X,\mathcal{T}_X)$. First-order deformations themselves are in bijection with $H^1(X,\mathcal{T}_X)$. Furthermore, $H^2(X,\mathcal{T}_X)$ is an obstruction space for $X$: given a deformation $\mathcal X$ over $k[\epsilon]$, one can ask whether it extends to a deformation over $\frac{k[x]}{x^3}$. It turns out that one can assign an obstruction class $\nu_{\mathcal X}\in H^2(X,\mathcal{T}_X)$ to $\mathcal X$ which vanishes if and only if the deformation lifts. This situation is mirrored in the more general theory: if $L$ is a dgla then $H^2(L)$ is an obstruction space for $\mathrm{Def}_L$. However, not all elements in $H^2(L)$ are necessarily in the image of the obstruction map, and one would like a deformation-theoretic interpretation of these classes.
	
	\p It turns out that those `non-geometric' classes in $H^2(L)$ actually measure derived deformations. The basic setup of derived deformation theory is similar to the underived setup: instead of our infinitesimals being modelled by commutative Artinian local rings, they are now modelled by Artinian local cdgas; loosely these are the cdgas of finite total dimension admitting a unique maximal ideal with residue field $k$. The first axiomatisation of deformation functors in this derived setting was done by Manetti \cite{manettiextended}. The Maurer--Cartan functor and deformation functor of a dgla also extend readily to the derived world, and they are deformation functors in the sense of Manetti.
	
	\p Let $R_i$ be the dual numbers, but now considered as an Artinian dga where $\epsilon$ has degree $i-1$; equivalently this is the square-zero extension of $k$ by $k[i-1]$. We recover the usual dual numbers as $R_1$. If $F:\cat{dgArt}_k \to \cat{Set}$ is a derived deformation functor, put $T^iF\coloneqq F(R_i)$ and call it the $i^{\text{th}}$ order tangent space to $F$. We can think of them as `higher obstruction spaces' measuring the failure of derived deformations to lift. In particular, $T^2F$ is precisely the correct derived obstruction space; see the introduction of \cite{luriedagx} for a detailed example. Importantly, one can show that if $L$ is a dgla then there is an isomorphism $T^i\mathrm{Def}_L\cong H^iL$. So we get a deformation-theoretic interpretation of all cohomology groups of $L$. The category of Manetti's deformation functors admits a model structure where the weak equivalences are the maps inducing isomorphisms on all tangent spaces, and one can show that the homotopy category is equivalent to the category of dglas localised at the quasi-isomorphisms. This provides a concrete realisation of Deligne's philosophy: up to homotopy, every derived deformation functor comes from a dgla.
	
	\p However, this is not the end of the story. Recall that deformation functors are defined as quotients of a set by an equivalence relation. A more homotopy-invariant way to talk about this would be to use groupoid-valued rather than set-valued functors, which corresponds to remembering some stacky data about our deformation problems. Even better, one can take functors valued in simplicial sets, which model higher stacks. Deformation functors defined on nonpositively graded Artinian dgas and valued in simplicial sets are known as `formal derived stacks' or `derived moduli problems', and the higher version of the deformation functor--dgla correspondence is known as the Lurie--Pridham correspondence:
	
	\begin{thma}[\cite{unifying, luriedagx}]
		The $\infty$-category of derived moduli problems is equivalent to the $\infty$-category of dglas.
	\end{thma}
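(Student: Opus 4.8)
The plan is to exhibit mutually inverse functors between the $\infty$-category $\mathrm{Moduli}_k$ of derived moduli problems and the $\infty$-category $\mathrm{Lie}_k$ of dglas, following the Koszul-duality strategy. First I would fix the precise definition: a derived moduli problem is a functor $F$ from Artinian local cdgas (over $k$, in nonpositive degrees) to spaces with $F(k)\simeq *$ which carries every pullback square of Artinian dgas whose two relevant legs are surjective on $\pi_0$ to a pullback square of spaces. The structural fact driving everything is that every Artinian local dga is obtained from $k$ by a finite sequence of square-zero extensions, i.e.\ of pullbacks along the maps $k\oplus k[n]\to k$; hence $F$ is determined, via the sheaf condition, by its values on the ``elementary'' objects $k\oplus k[n]$ together with the structure maps classifying these extensions.

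Next I would construct the comparison functor $\Psi\colon\mathrm{Lie}_k\to\mathrm{Moduli}_k$. Using the Chevalley--Eilenberg cochain construction I would build a Koszul-duality functor $\mathfrak{D}$ from Artinian cdgas to pro-nilpotent dglas and set $\Psi(L)(\Gamma)\coloneqq\mathrm{Map}_{\mathrm{Lie}_k}(\mathfrak{D}(\Gamma),L)$, checking that this agrees with the Maurer--Cartan space $\mathrm{MC}_\bullet(L\otimes\mathfrak{m}_\Gamma)$ of the earlier discussion. Then I would verify that $\Psi(L)$ really is a moduli problem: pointedness is immediate, and the pullback condition reduces to $\mathfrak{D}$ sending the relevant pullbacks of cdgas to pushouts of dglas together with the fact that the simplicial Maurer--Cartan functor is a homotopy sheaf (a standard computation). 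Finally I would produce a left adjoint, the tangent Lie algebra $\mathfrak{D}(F)\coloneqq\operatorname*{colim}_{\mathrm{Spec}\,\Gamma\to F}\mathfrak{D}(\Gamma)$, giving an adjunction $\mathfrak{D}\dashv\Psi$.

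The heart of the proof is showing this adjunction is an equivalence, using the tangent complex as the detecting invariant. Define $T\colon\mathrm{Moduli}_k\to\mathrm{Mod}_k$ by declaring its cohomology in each degree to be recovered from the value of $F$ on the elementary square-zero extension $k\oplus k[n]$ (and extended to negative degrees via the structure maps) -- this is the derived incarnation of the higher tangent spaces $T^iF$ above. The two key claims are: (i) $T$ is \emph{conservative} on $\mathrm{Moduli}_k$, i.e.\ a moduli problem with vanishing tangent complex is trivial, which follows by dévissage from the square-zero presentation of Artinian dgas and the pullback-preservation axiom; and (ii) under $\Psi$ the tangent complex corresponds to the underlying complex of the dgla, $T\Psi(L)\simeq L$, which is exactly the globalisation of the identification $T^i\mathrm{Def}_L\cong H^iL$ recalled earlier. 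Since the forgetful functor $\mathrm{Lie}_k\to\mathrm{Mod}_k$ is also conservative, and the unit and counit of $\mathfrak{D}\dashv\Psi$ become equivalences after applying $T$ (a direct computation on the elementary objects), conservativity upgrades this to the desired equivalence of $\infty$-categories.

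The main obstacle is claim (i): making rigorous that a derived moduli problem is rigidly pinned down by its linearisation. Concretely one must control the induction -- at each stage a square-zero extension $\Gamma'\to\Gamma$ is classified by a derivation, and one shows $F(\Gamma')$ is a pullback built from $F(\Gamma)$ and the tangent data -- which uses the full strength of the pullback-preservation axiom plus careful pro-Artinian bookkeeping. The characteristic-zero hypothesis enters essentially here and in the construction of $\mathfrak{D}$, through Koszul self-duality of the Lie operad (Quillen, Hinich), which is what makes Chevalley--Eilenberg an equivalence onto the relevant subcategory of cdgas. An alternative route, closer to Pridham's \cite{unifying}, avoids the $\infty$-categorical framework: one works with explicit simplicial deformation complexes, embeds both deformation functors and dglas into a common ambient category, and uses obstruction theory to show that $L\mapsto\mathrm{Def}_L$ is essentially surjective and fully faithful after localisation, the detecting invariant again being the system of higher tangent spaces $T^iF\cong H^iL$.
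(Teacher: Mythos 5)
The paper does not prove this theorem: it is quoted as a black box and attributed to Pridham and Lurie, with only a one-paragraph heuristic given afterwards (the loop-space picture: $\Omega X$ is a formal group, its tangent complex $T_{\Omega X}\simeq T_X[-1]$ is a Lie algebra, hence $T_X$ is a shifted dgla). Your sketch is not comparable against a proof in the paper, but it is a faithful high-level outline of Lurie's actual argument in \cite{luriedagx}: define formal moduli problems via the pullback/sheaf condition on small cdgas, exploit the square-zero filtration, construct $\Psi$ from the Chevalley--Eilenberg Koszul duality functor $\mathfrak D$, identify $\Psi(L)$ with the Maurer--Cartan space, and then use conservativity of the tangent-complex functor together with the computation $T\Psi(L)\simeq L$ to upgrade the adjunction to an equivalence. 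You correctly identify the two load-bearing claims: conservativity of $T$ on moduli problems (which in Lurie's treatment is packaged in the axioms for a ``deformation theory'', Theorem 1.3.12 of DAG X) and the Koszul self-duality of the Lie operad via CE cochains, and you correctly flag that characteristic zero enters at exactly those points.

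The one place where your outline is a little optimistic is the jump from ``unit and counit become equivalences after applying $T$'' to ``hence the adjunction is an equivalence''. For the counit you need conservativity of the forgetful functor $\mathrm{Lie}_k\to\mathrm{Mod}_k$ (fine), but for the unit you first need to know that $\Psi\mathfrak D(F)$ is again a formal moduli problem before conservativity of $T$ can be invoked, and the colimit formula $\mathfrak D(F)=\operatorname*{colim}_{\operatorname{Spec}\Gamma\to F}\mathfrak D(\Gamma)$ does not make this obvious; Lurie handles it by showing $\Psi$ lands in moduli problems directly and never quite phrasing the comparison as a formal adjunction. You also rightly note Pridham's \cite{unifying} route is different in flavour (simplicial deformation complexes and explicit obstruction theory rather than an $\infty$-categorical Koszul duality), and the paper's loop-space paragraph is yet a third, purely conceptual, angle. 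None of this is a gap; it is just that the paper neither proves nor sketches a proof of this statement, so the most your attempt can be measured against is the cited literature, with which it is consistent.
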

We remark that Lurie works with $E_\infty$-algebras whereas Pridham uses simplicial algebras. The equivalence between derived moduli problems and dglas is more or less given by the Koszul duality between the commutative and Lie operads. To see why this is at least plausible, note that Maurer--Cartan elements of a tensor product $L\otimes\Gamma$ are roughly the same thing as twisting morphisms $ \Gamma^* \to L$. The idea that the equivalence between formal derived stacks and dglas is a manifestation of Koszul duality goes back at least to Hinich, who interpreted dg coalgebras as coalgebras of distributions on formal stacks \cite{hinich}. The intuition behind Lurie's proof is loosely that given a formal moduli problem $X$, one can take its loop space $\Omega X$, which is also a formal moduli problem and moreover admits a group structure. Its tangent complex $T_{\Omega X}$ hence behaves like the tangent space to a derived Lie group, i.e.\ a dgla. But because the tangent complex functor commutes with finite limits, we have $T_{\Omega X}\cong T_X[-1]$. Hence, the tangent complex to a formal stack is a shifted dgla. When $X$ is represented by a genuine augmented algebra $A \to k$, then the tangent complex of $X$ is naturally identified with the complex of derived derivations $\R\mathrm{Der}_k(A,k)$ computing Andr\'e--Quillen cohomology, which is known to be a shifted $L_\infty$-algebra \cite{luriedagx, quillender, sstangent}.

\p How, if at all, does the Lurie--Pridham view on derived deformation theory adapt to the noncommutative world? In view of the Koszul self-duality of the associative algebra operad, one would expect noncommutative dgas to control noncommutative derived deformation problems, and Lurie also provides a noncommutative version of the correspondence. One obtains the commutative correspondence by abelianisation, which on the level of representing objects corresponds to viewing a dga as a dgla via the commutator bracket. 

\p We remark that in positive characteristic, Pridham's formulation of the correspondence no longer holds: simplicial commutative algebras are no longer Quillen equivalent to nonpositive cdgas. However, noncommutative deformation theory is more well behaved: simplicial algebras are Quillen equivalent to nonpositive dgas in all characteristics. Hence, if one wants to do deformation theory in positive characteristic, it is natural to consider noncommutative deformations. Indeed, nearly all of our deformation-theoretic results will hold in positive characteristic.

\p Typically, one is interested in deforming objects of derived or homotopy categories, which has been studied in detail by Efimov, Lunts, and Orlov \cite{ELO,ELO2,ELO3}. In the smooth setting, their results are essentially enough to let us prove that, when $X\to X_\con$ is an isolated contraction of an irreducible rational curve in a smooth variety, then its derived contraction algebra controls the noncommutative deformations of the contracted curve.

\p However, we run into an issue: we would like to remove the smoothness hypothesis, whereas the results of Efimov--Lunts--Orlov really seem to require it. We must take a different route. Let us recall what we are trying to do: suppose that $A$ is an algebra, and $e\in A$ an idempotent such that $A/AeA$ is Artinian local. Let $S$ be the unique simple 1-dimensional $A/AeA$-module. The results of Chapter 4 let us identify the prorepresenting object of the functor of derived noncommutative deformations of $S$: it is precisely the continuous Koszul dual $B^\sharp\R\enn_A(S)$, which is a pro-Artinian (a.k.a.\ pseudocompact) dga. Recall that if $E\to k$ is an augmented dga, then the bar construction on $E$ is the tensor coalgebra on the shifted augmentation ideal, equipped with a new differential. The bar construction is a dg coalgebra, and hence its linear dual $E^!$, the Koszul dual, is a dga. Because every coalgebra is the colimit of its finite-dimensional subcoalgebras, one can view $BE$ as an ind-coalgebra; dualising this ind-coalgebra levelwise yields the pro-Artinian algebra $B^\sharp E$ which we refer to as the continuous Koszul dual of $E$. We have $\varprojlim B^\sharp E \simeq E^!$.

\p One can show relatively easily that $\dq^{!!} \simeq \R\enn_{A}(S)^!$. It in fact follows that $\dq^{!!}$ controls the deformations of $S$; although $\dq^{!!}$ does not prorepresent on the nose (as it is not pro-Artinian!) it does at least determine the prorepresenting object. For a rigorous statement of this, see \S5.6. Hence, the main difficulty lies in proving that $\dq$ is quasi-isomorphic to its own double Koszul dual $\dq^{!!}$. In the smooth setting, this is reasonably standard, but when $\ext_A(S,S)$ is unbounded, this is harder. Importantly, this is not automatic: if one remembers that $\dq^!$ is pro-Artinian, and takes the Koszul dual levelwise, then one ends up with a pro-Artinian algebra whose limit is $\dq$. However, we are forgetting the pro-Artinian structure on $\dq^!$ to obtain a plain dga, and then taking the Koszul dual again. It is far from clear that this new dga should be quasi-isomorphic to $\dq$ (see also \ref{parmk}).

\p However, we manage to show that if $B$ is a nonpositively graded cohomologically locally finite dga such that $H^0(B)$ is local, then $B$ is quasi-isomorphic to its double Koszul dual (\ref{kdfin}), recalling that we call a dga $B$ cohomologically locally finite if each $H^j(B)$ is a finite-dimensional $k$-vector space. The meat of this is a strictification result: we prove that certain homotopy pro-Artinian dgas $B$ (those with finite-dimensional cohomology in each degree with $H^0B$ Artinian local; the pro-structure is given by the Postnikov tower) are in fact genuinely pro-Artinian (\ref{goodchar}). This allows us to prove our next main theorem:

	\begin{thma}[\ref{maindefmthm}]Let $A$ be a $k$-algebra and $e \in A$ an idempotent. Suppose that $A/AeA$ is a local algebra and that $\dq$ is cohomologically locally finite. Let $S$ be $A/AeA$ modulo its radical, regarded as a right $A$-module. Then $\dq$ is quasi-isomorphic to a pro-Artinian algebra which prorepresents the functor of framed noncommutative derived deformations of $S$. In particular, $\dq$ determines the deformation functor.
\end{thma}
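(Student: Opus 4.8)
The plan is to exhibit $\dq$ as its own double Koszul dual, to recognise that double dual as $\R\enn_A(S)^{!}$, and then to invoke the prorepresentability results of the preceding chapters. Fix once and for all a strictly nonpositively graded model of $\dq$; since $H^0(\dq)\cong A/AeA$ is local there is a canonical dga augmentation $\dq\onto H^0(\dq)\onto S=k$, so the Koszul dual $\dq^{!}$ and the continuous Koszul dual $B^\sharp\dq^{!}$ are defined. The argument then separates into a piece of essentially formal Koszul-duality bookkeeping, an appeal to the deformation-theoretic machinery, and one genuinely hard input.

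The formal piece runs as follows. In the Braun--Chuang--Lazarev recollement $D(\dq)\recol D(A)\recol D(eAe)$ the restriction functor $D(\dq)\to D(A)$ is fully faithful and carries the simple $\dq$-module $S$ to the simple $A$-module $S$; hence $\R\enn_A(S)\simeq\R\enn_{\dq}(S)$, and since $\dq$ is augmented with $S=k$ the bar resolution identifies $\R\enn_{\dq}(k)$ with $\dq^{!}$. Applying $(-)^{!}$ again gives $\dq^{!!}\simeq\R\enn_A(S)^{!}$. On the deformation side, the noncommutative Lurie--Pridham correspondence together with the Koszul-duality description of prorepresenting objects from the earlier chapters shows that the functor of framed noncommutative derived deformations of $S$ (the framed version being the one that is genuinely prorepresentable, rather than merely a quotient stack) is prorepresented by the pro-Artinian dga $B^\sharp\R\enn_A(S)$, whose limit is $\R\enn_A(S)^{!}$.

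The hard input is the self-duality $\dq\simeq\dq^{!!}$, which is precisely Theorem \ref{kdfin}: its hypotheses hold here because $\dq$ is nonpositively graded, it is cohomologically locally finite by assumption, and $H^0(\dq)\cong A/AeA$ is finite-dimensional and local, hence Artinian. I expect this to be the main obstacle. The content of \ref{kdfin} is the strictification Theorem \ref{goodchar}: the Postnikov tower of $\dq$ exhibits it as a homotopy pro-Artinian dga (each truncation has finite-dimensional cohomology and Artinian-local $H^0$), and \ref{goodchar} promotes this to a genuine pro-Artinian dga. This step is unavoidable and delicate: the double bar dual $\dq^{!!}$ is assembled by dualising the bar coalgebra levelwise, giving a pro-Artinian object, and one must recognise this pro-object as pro-equivalent to the Postnikov tower of $\dq$ itself; forgetting the pro-structure on $\dq^{!}$ and naively re-dualising does not obviously recover $\dq$, so one cannot simply cite formal bar--cobar duality. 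The proof of \ref{goodchar} is an obstruction-theoretic induction up the Postnikov tower, lifting Artinian-local models one cohomological degree at a time and controlling the relevant (finite-dimensional) obstruction spaces.

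Assembling the pieces: the genuine pro-Artinian model $D$ of $\dq$ produced by \ref{goodchar} has $\varprojlim D\simeq\dq\simeq\dq^{!!}\simeq\R\enn_A(S)^{!}=\varprojlim B^\sharp\R\enn_A(S)$, and comparing pro-structures identifies $D$ with $B^\sharp\R\enn_A(S)$ up to pro-quasi-isomorphism; hence $\dq$ is quasi-isomorphic to a pro-Artinian dga prorepresenting framed noncommutative derived deformations of $S$. The final assertion is then formal: the quasi-isomorphism type of $\dq$ determines that of $\dq^{!}\simeq\R\enn_A(S)$, hence the continuous Koszul dual $B^\sharp\R\enn_A(S)$ up to equivalence of pro-Artinian dgas, hence the deformation functor it prorepresents.
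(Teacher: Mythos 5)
Your proposal is correct and matches the paper's proof in all essentials: the identification $\dq^{!!}\simeq\R\enn_A(S)^{!}$ via full faithfulness of $D(\dq)\to D(A)$ and Koszul duality (\ref{rendkd}), the invocation of \ref{kdfin}/\ref{goodchar} for $\dq\simeq\dq^{!!}$, and the appeal to \ref{prorepfrm} for prorepresentability of framed deformations by $B^\sharp\R\enn_A(S)$. The closing comparison between the Postnikov-tower model $D$ and $B^\sharp\R\enn_A(S)$ is harmless but unnecessary, since one can take the prorepresenting pro-Artinian dga to be $B^\sharp\R\enn_A(S)$ directly, whose limit is $\R\enn_A(S)^{!}\simeq\dq^{!!}\simeq\dq$.
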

This can be regarded as generalising some of the prorepresentability results of \cite{ELO2} to the singular setting, or alternately as generalising some of the results of Segal's thesis \cite{segaldefpt}. A framed deformation of $S$ is essentially a deformation of $S$ that respects a fixed choice of isomorphism $S \cong k$. We use framings to rigidify slightly; if one works with just set-valued functors as opposed to $\sset$-valued functors then it is unimportant whether one uses framed or unframed deformations (see e.g.\ the proof of \ref{subdefmthm}). We note that cohomological local finiteness of $\dq$ can be checked explicitly when $A$ is presented as the path algebra of a quiver with relations (\ref{derquotcohom}, \ref{h1finite}) and is in general a quite weak finiteness condition. We also remark that the condition that $A/AeA$ is local can probably be dropped if one uses pointed deformations, as in Laudal \cite{laudalpt} or Kawamata \cite{kawamatapointed}.

\p Once again, our theorem about the derived quotient specialises to the geometric setting, and we obtain the following:
	\begin{thma}[\ref{dcaprorep}]\label{thrmb}
	Let $X \to X_\con$ be an isolated contraction of an irreducible rational curve $C$ in a surface or threefold. Then $\dca$ is quasi-isomorphic to a pro-Artinian algebra that prorepresents the functor of derived noncommutative deformations of $C$. In particular, $\dca$ determines the deformation functor.
\end{thma}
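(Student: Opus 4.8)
The plan is to deduce Theorem \ref{thrmb} from the algebraic statement \ref{maindefmthm} (the ``Theorem A'' proved for a $k$-algebra $A$ with idempotent $e$), so the proof is essentially a matter of translating the geometric hypotheses into the algebraic ones. First I would recall the rigorous construction of $\dca$ from Chapter \ref{dercondefns}: given the isolated contraction $X \to X_\con$, one completes at the contracted point $p$, passes to $R \coloneqq \widehat{(X_\con)}_p$ and the formal fibre $U \to \spec R$, takes Van den Bergh's tilting bundle $\mathcal V = \mathcal O_U \oplus \mathcal M$, sets $A \coloneqq \enn_U(\mathcal V)$ (a genuine algebra, concentrated in degree zero, by the tilting property), and lets $e = \id_{\mathcal O_U}$. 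Then by definition $\dca = \dq$, so I am exactly in the situation of \ref{maindefmthm}.

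Next I would verify the two hypotheses of \ref{maindefmthm}. Since the flopping/contraction setup here is simple (an irreducible rational curve contracted to a point), the classical contraction algebra $A_\con \cong H^0(\dca)$ is a finite-dimensional \emph{local} algebra --- this is the content recalled in the introduction (``when the flop is simple, the contraction algebra is a noncommutative Artinian local algebra''), and the same argument applies to the surface case. So $A/AeA$ is local. For cohomological local finiteness of $\dq$, I would invoke the technical description of $\dca$ from the earlier chapters: by \ref{qisolem}, $\dq$ is the nonpositive truncation of $\R\enn_{D^\mathrm{dg}_\mathrm{sg}(R)}(\mathcal M)$, and for the surface and threefold contractions in question $R$ is (complete local cDV, hence) an isolated hypersurface singularity, so Eisenbud 2-periodicity makes each $H^j(\dq)$ a subquotient of the finite-dimensional $A_\con$ up to the periodicity shift; more simply one can cite \ref{derquotcohom}/\ref{h1finite}, presenting $A$ as a quiver with relations, to see directly that each $H^j(\dq)$ is finite-dimensional. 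Either route gives that $\dq$ is cohomologically locally finite.

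With both hypotheses in hand, \ref{maindefmthm} yields that $\dca = \dq$ is quasi-isomorphic to a pro-Artinian algebra prorepresenting the functor of framed noncommutative derived deformations of $S = A_\con/\mathrm{rad}(A_\con)$, viewed as a right $A$-module. The last step is to identify this deformation functor with the one named in the statement, namely derived noncommutative deformations of the curve $C$. Here I would argue as in the classical Donovan--Wemyss story recalled in the deformation-theory section: across the derived equivalence $D(U) \xrightarrow{\simeq} D(A)$ the twisted structure sheaf $\mathcal O_C(-1)$ corresponds to the simple module $S$, and since a derived equivalence of dg categories induces an equivalence of the associated derived deformation functors (the functor of derived deformations of an object depends only on its $\R\enn$ as an $A_\infty$- or dg-algebra, together with the ambient dg category --- one can cite the framework of Efimov--Lunts--Orlov \cite{ELO,ELO2,ELO3} in the smooth case, or argue directly via $\R\enn_A(S) \simeq \R\enn_U(\mathcal O_C(-1))$), the framed derived noncommutative deformation functor of $S$ agrees with that of $C$. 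Framedness can be dropped at the level of set-valued (or $\sset$-valued up to the usual rigidification) functors as remarked after \ref{maindefmthm}. Hence $\dca$ prorepresents, and in particular determines, the functor of derived noncommutative deformations of $C$.

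The main obstacle is not in this deduction --- which is genuinely routine once Chapters \ref{dercondefns}--5 are in place --- but lies upstream, in the proof of \ref{maindefmthm} itself: the hard part is the strictification result \ref{goodchar} showing that a homotopy pro-Artinian dga with finite-dimensional cohomology in each degree and $H^0$ Artinian local is genuinely pro-Artinian, and the consequent fact (\ref{kdfin}) that such a dga is quasi-isomorphic to its double Koszul dual $\dq^{!!} \simeq \R\enn_A(S)^!$. The subtlety, as flagged in the introduction, is that forgetting the pro-Artinian structure on $\dq^!$ before taking the Koszul dual again is a priori lossy, so one cannot simply quote a formal double-dualisation statement; this is exactly where the cohomological-local-finiteness hypothesis is used, and it is the one genuinely delicate input. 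For Theorem \ref{thrmb} itself the only thing to check with any care is that the geometric contraction really does land in the hypotheses --- i.e.\ that $A$ is an honest algebra with the right idempotent and that $\dq$ is cohomologically locally finite --- and, as a sanity check, that the deformation-functor identification is compatible with the completion and Morita reductions used to define $\dca$.
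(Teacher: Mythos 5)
Your proposal is correct and takes essentially the same route as the paper: \ref{dcaprorep} is proved there by citing \ref{dqqiclassdeterminesdefms}/\ref{maindefmthm}/\ref{subdefmthm}, after establishing in \ref{aconlocal} and \ref{fdcohom} exactly the two hypotheses you verify (locality of $A/AeA$ via indecomposability of $M$, cohomological local finiteness of $\dca$ via \ref{dqcohom} and support at $p$). One small simplification you could make: the paper \emph{defines} a derived noncommutative deformation of $C$ to be a framed derived deformation of the $A$-module $S$, with the lemma identifying $\mathcal{O}_C(-1)\leftrightarrow S$ across the tilting equivalence as the only geometric content needed --- so the invocation of ELO (or \ref{dgffdefs}) to transport the deformation functor across the derived equivalence is not required at this point, which matters because ELO's smoothness hypotheses are precisely what the pro-Artinian machinery of Chapters 3--5 is designed to avoid.
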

We note that this provides a new proof, via the inclusion-truncation adjunction, that $A_\con$ represents the underived noncommutative deformations of $C$. We also remark that, as above, a similar theorem ought to hold in the non-simple (equivalently, pointed) case, when $C$ is a not necessarily irreducible chain of curves.
	
\p The deformation-theoretic interpretation of $\dca$ allows us to establish local-to-global arguments on computing the derived contraction algebra of a contraction with non-affine base, and also allows us to compute $\dca$ as a minimal $A_\infty$-algebra via Koszul duality. We provide some explicit computations of derived contraction algebras for Pagoda flops, as well as one-curve partial resolutions of $A_n$ singularities.

\section{The mutation-mutation autoequivalence}\label{intromut}
Let $X \dashrightarrow X^+$ be a simple flop between threefolds. This induces a Bridgeland--Chen flop functor $D^b(X)\xrightarrow{\cong} D^b(X^+)$. On the algebraic side, this corresponds to a mutation equivalence $D^b(A) \to D^b(A^+)$, where $A^+$ is the mutation of $A$. The basic idea of mutation is as follows: given an endomorphism ring of the form $A=\enn_R(R\oplus M)$, one replaces the summand $M$ by its syzygy $\Omega M$ to obtain a new ring $A^+\coloneqq \enn_R(R\oplus \Omega M)$. Under some hypotheses on $R$ and $M$, mutation induces a derived equivalence between $A$ and $A^+$, and this is precisely the noncommutative analogue of the Bridgeland--Chen functor, in the sense that flopping then tilting is naturally isomorphic to tilting then mutating \cite{DWncdf}.

\p Because $X^{++}$ is isomorphic (over the base $X_\con$) to $X$, composing flop functors gives an autoequivalence of $D^b(X)$. It turns out that this is nontrivial -- for example, if the flop is simple with irreducible flopping curve $C$, then this autoequivalence shifts the sheaf $\mathcal{O}_{C}(-1)$ by 2. On the algebraic side, the flop-flop autoequivalence corresponds to a mutation-mutation autoequivalence $\mm$ of $D^b(A)$. In the threefold setting, Donovan and Wemyss prove that $\mm$ is a `noncommutative twist' around the contraction algebra, in the sense that the cocone of the natural map $A \to A_\con$ represents $\mm$. Note that because $A\to A_\con$ is a surjection, one has a quasi-isomorphism $\mathrm{cocone}(A\to A_\con)\simeq \ker(A\to A_\con)\cong AeA$.

\p In certain non-threefold settings -- in particular, when $X \to \spec R$ is a partial resolution of a Kleinian singularity -- one can still define mutation. So although one cannot flop curves because flops are isomorphisms in codimension one, mutation gives us a derived analogue of flops. Moreover, the mutation-mutation autoequivalence is still nontrivial. Does the contraction algebra still control the mutation-mutation equivalence via noncommutative twists in this more general setting?

\p The answer is no. In fact, we show something stronger: for all partial resolutions of Kleinian singularities, the ideal $AeA$ never represents $\mm$ (\ref{mtnrmk}). However, our third main theorem shows that $\dca$ does control $\mm$, in the following analogous sense:
	\begin{thma}[\ref{mutncontrol}]\label{thrmc}
		Let $X \to \spec R$ be either a threefold simple flopping contraction to a complete local base, or a cut of such a contraction to a one-curve partial resolution of a Kleinian singularity. Let $A_\mm\coloneqq  \tau_{\geq -1}(\dca)$ be the two-term truncation of the associated derived contraction algebra. Then $\mm$ is a `noncommutative twist' around $A_\mm$, in the sense that $\mm$ is represented by the $A$-bimodule $\mathrm{cocone}(A \to A_\mm)$.
	\end{thma}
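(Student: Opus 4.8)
The two geometric situations in the statement share a common algebraic core: in both, the base $R$ is a complete local hypersurface singularity --- compound Du Val for the threefold flop, Kleinian for the surface partial resolution --- $M$ is a maximal Cohen--Macaulay $R$-module, and Van den Bergh's algebra has the form $A=\enn_R(R\oplus M)$ with $e=\id_R$ and $eAe\cong R$. As with Theorems A and B, I would first prove a statement about such noncommutative hypersurface resolutions and then specialise. By construction $\mm$ is the composite $D(A)\xrightarrow{\sim}D(A^{+})\xrightarrow{\sim}D(A^{++})\simeq D(A)$ of two mutation equivalences, the last equivalence because mutating twice returns $A$ up to (derived) Morita equivalence. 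Since each mutation equivalence is given by a two-sided tilting complex, $\mm\cong\R\hom_A(Q,-)$ for an invertible $A$-bimodule $Q$, and the whole content of the theorem is to identify $Q$ with $\mathrm{cocone}(A\to A_\mm)$, equivalently to exhibit a functorial triangle $\R\hom_A(A_\mm,-)\to\id\to\mm\to[1]$.

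\textbf{Step 1: unwinding the mutation functors.} I would recall the explicit description of mutation from \cite{iyamareiten,DWncdf}: $A^{+}=\enn_R(R\oplus\mu M)$, where $\mu M$ is the mutation of $M$, and the tilting complex defining the equivalence is assembled from the exchange (add-$R$-approximation) sequence of $M$, hence is essentially two-term, with its ``difference from $A$'' controlled by the single syzygy $\Omega M$ and by $R$-module data. In the surface case mutation is still available even though flops are not --- a flop is an isomorphism in codimension one, which already fails for a contracted curve on a surface --- and by the compatibility of tilting bundles with generic hyperplane sections (\S\ref{slicingsctn}) its description is inherited from the threefold one. Composing the two exchange triangles yields a functorial triangle $\R\hom_A(C,N)\to N\to\mm(N)\to[1]$ in which $C$ is an $A$-bimodule built from endomorphisms of $M$ inside a category of $R$-modules; then $Q=\mathrm{cocone}(A\to C)$, and it remains to identify $C$ with $A_\mm$. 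In the threefold \emph{flop} case the rigidity of an honest flop lets Donovan--Wemyss collapse $C$ to the ordinary algebra $A_\con$, so $Q\simeq AeA$ and one recovers \cite[5.10]{DWncdf}; the real work is to pin down $C$ when that leverage is absent.

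\textbf{Step 2: $C$ is the two-term truncation of the derived quotient.} Here I would feed in the input behind Theorem A. By (\ref{qisolem}), $\dca$ is the truncation to nonpositive degrees of the endomorphism dga of $M$ in the dg singularity category $D^{\mathrm{dg}}_{\mathrm{sg}}(R)$; and because $R$ is a hypersurface, Eisenbud $2$-periodicity \cite{eisenbudper} supplies the periodicity element $\eta\in H^{-2}(\dca)$, acting invertibly on all nonpositive cohomology (cf.\ (\ref{etaex}), (\ref{uniqueeta})) --- equivalently the syzygy satisfies $\Omega^{2}\cong\id$ on $\stab R=D_{\mathrm{sg}}(R)$. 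Transported along the recollement $D(\dca)\recol D(A)\recol D(R)$ into the singularity category, mutation corresponds to $\Omega$, so $\mm$ corresponds to $\Omega^{2}$, which for a hypersurface is the identity witnessed by $\eta$; hence the object $C$ of Step 1, governed a priori by the full endomorphism dga in $D^{\mathrm{dg}}_{\mathrm{sg}}(R)$, is insensitive to its $\eta$-divisible tail in degrees $\leq-2$ --- mutation-then-mutation uses up exactly one period. Concretely, a short cohomology computation shows that killing $\eta$, i.e.\ $\mathrm{cone}(\eta\colon\dca[2]\to\dca)$, has cohomology only in degrees $0$ and $-1$, equal to $H^0(\dca)=A_\con$ and $H^{-1}(\dca)$ respectively, so $\mathrm{cone}(\eta)\simeq\tau_{\geq-1}(\dca)=A_\mm$. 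A truncation-and-adjunction argument, using that $A\to\dca\to\tau_{\geq-1}(\dca)$ is the canonical map and that $\R\hom_A$ against the $\eta$-divisible tail vanishes, then gives $C\simeq A_\mm$, so $\mm\cong\R\hom_A(\mathrm{cocone}(A\to A_\mm),-)$. Specialising $A$ to Van den Bergh's model yields the theorem in both cases: for a threefold minimal model $H^{-1}(\dca)=0$, so $A_\mm\simeq A_\con$ and one recovers Donovan--Wemyss, whereas in the surface case $H^{-1}(\dca)$ is typically nonzero, which is precisely why the ordinary ideal $AeA$ fails to represent $\mm$ there.

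\textbf{Expected main obstacle.} The crux is the truncation bookkeeping in Step 2: proving that the composite mutation functor interacts with the Postnikov tower of $\dca$ well enough that precisely $\tau_{\geq-1}$ survives --- not $\tau_{\geq 0}$, not $\tau_{\geq-2}$ --- and that the $\eta$-divisible degrees $\leq-2$ contribute nothing to the twist; this is where the hypersurface hypothesis, hence $2$-periodicity and the element $\eta$, is genuinely needed. A secondary, more technical point is the duality and variance bookkeeping --- whether the argument lands on $\mm$ or $\mm^{-1}$, and whether $Q$ acts via $\lot_A$ or $\R\hom_A$ --- which I would control using the self-duality of the derived ideal $Ae\lot_R eA$ together with Serre--Grothendieck duality on $X$. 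I would also flag that the non-simple (equivalently pointed) case is outside the scope of the argument, as there one expects to replace $\tau_{\geq-1}(\dca)$ by a pointed analogue.
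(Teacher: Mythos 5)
Your overall strategy matches the paper's: one transports the problem through the recollement $D(\dca)\recol D(A)\recol D(R)$ into the singularity category, where Eisenbud $2$-periodicity forces the mutation-mutation functor to become trivial, and one deduces that $\mm$ restricted to $D(\dca)$ is the shift $[-2]$. You also correctly locate the role of $\eta$, the identification $\mathrm{cone}(\eta)\simeq\tau_{\geq -1}(\dca)=A_\mm$, and the threefold/surface dichotomy via $H^{-1}(\dca)$. But your Step~2 compresses the two hardest obstacles into a phrase (``a truncation-and-adjunction argument'') that does not constitute a proof, and the one specific justification you give --- that ``$\R\hom_A$ against the $\eta$-divisible tail vanishes'' --- does not correspond to any step in a correct argument; $\R\hom_A(A,\tau_{\leq -2}(\dca))$ is just $\tau_{\leq -2}(\dca)$, which is certainly nonzero.

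Concretely, two things are missing. First, the singularity-functor argument you sketch only controls $I^{\mathbb{L}}_\mm$ as a \emph{right} $\dca$-module; the content of \ref{mmshiftrep} is the upgrade to a $\dca$-\emph{bimodule} quasi-isomorphism $I^{\mathbb{L}}_\mm\simeq\dca[2]$, which relies on $\eta$ lifting to a Hochschild class (\ref{etahoch}) so that multiplication by $\eta$ is itself a bimodule map, and on a truncation argument relative to the localisation $E=\mathbb{L}_\eta(\dca)$, where one knows the unit $\id\to\mm_{\mathrm{sg}}$ is an isomorphism (\ref{mmsgisid}). Second, the passage from this bimodule statement over $\dca$ to global exactness of $I_\mm\to A\to A_\mm$ as $A$-bimodules is where most of the paper's work lives: the proof of \ref{mutncontrol} tensors the putative triangle on both sides with the exact sequence $\cell A\to A\to \dca$ to build a commutative cube, then applies the Nine Lemma three times, feeding in the unit map $I_\mm\to A$ whose image lies in $AeA$ (\ref{unitmap}, \ref{aealem}), the bimodule identifications $I_\mm e\simeq Ae$ and $eI_\mm\simeq eA$ (\ref{bimodlem}, \ref{unitelem}), and acyclicity of the cross-terms. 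Without this gluing over $R$ and over $\dca$ simultaneously, knowing $\mm$ is a shift on $D(\dca)$ and trivial on $D(R)$ does not yet tell you what the representing $A$-bimodule is.
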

	The twist interpretation comes from the fact that one has an exact triangle $$\R\hom_A(A_\mm,-) \to \id \to \mm \to$$ of endofunctors of $D(A)$ (\ref{twistrmk}). The crux of the proof is the statement that $\mm$ restricts to the shift functor $[-2]$ on $D(\dca)$, and the proof of this second fact makes crucial use of the recollement $D(\dca)\recol D(A) \recol D(R)$ to reduce to a calculation in the singularity category of $R$.
	
	\p One can show that $\mathrm{cocone}(A \to A_\mm)$ is still a module in this setting, and in fact is an extension of $AeA$ by $H^{-1}(\dca)\cong \ext_R^1(M,M)$. In the threefold setting, $\dca$ has no cohomology in degree $-1$, and hence $A_\mm\simeq A_\con$, which gives a new proof of Donovan and Wemyss's result. In the surface setting, Auslander--Reiten duality allows one to conclude that $A_\mm$ always has cohomology in degree -1, and hence is never $A_\con$. Note that, by the inclusion-truncation adjunction, $A_\mm$ represents the functor of $[-1,0]$-truncated derived noncommutative deformations of $S$. One can informally think of $A_\mm$ as $\dca/\eta$, the quotient of the derived contraction algebra by the periodicity element $\eta \in H^{-2}(\dca)$.

	\section{Organisation of the thesis}
	Part I of this thesis is primarily concerned with ideas from homotopical algebra. In Chapter 2 we provide some background on dg categories and $A_\infty$-algebras. We also take the opportunity to set conventions about dg objects. In Chapter 3 we introduce the category of noncommutative pro-Artinian dgas, and prove a Koszul duality result for suitably finite dgas (\ref{kdfin}), which can be viewed as a strictification result. In Chapter 4 we recall some noncommutative derived deformation theory, and prove that the functor of framed deformations of a simple module is prorepresentable by the Koszul dual of the controlling dga (\ref{prorepfrm}). We also investigate prodeformations, and in particular the universal prodeformation.
	
	\p Part II concerns the interplay between Braun--Chuang--Lazarev's derived quotient \cite{bcl} and dg singularity categories. In Chapter 5 we recall some facts about the derived quotient. Using our results from earlier, we show that, under suitable finiteness conditions, the derived quotient admits a deformation-theoretic interpretation. In Chapter 6 we introduce some key concepts from singularity theory, with a focus on singularity categories. We review Hua and Keller's recent result on recovering a hypersurface singularity from its dg singularity category \cite{huakeller}. In Chapter 7, we combine these results to prove some theorems about derived quotients of partial resolutions of commutative Gorenstein rings. We prove some structure theorems before proving an algebraic version of the derived Donovan--Wemyss conjecture.
	
	\p Part III applies the ideas of Part II to geometric situations. In Chapter 8, we introduce the derived contraction algebra associated to a suitably general isolated contraction of an irreducible rational curve $C$. We can immediately deduce that the derived contraction algebra controls the noncommutative derived deformations of $C$, which we use to prove some local-to-global results on computing the derived contraction algebra. We can also immediately deduce the derived Donovan--Wemyss conjecture. In Chapter 9, we use $A_\infty$ methods to compute the derived contraction algebras of both Pagoda flops and of a certain family of one-curve partial resolutions of $A_n$ singularities, and we also sketch a computation for the Laufer flop. In Chapter 10, we consider the mutation-mutation autoequivalence in a general setup and prove that $\mm$ is a noncommutative twist about $A_\mm$.

			\section{Notation and conventions}
	Throughout this thesis, $k$ will denote an algebraically closed field of characteristic zero. Many of our theorems are true in positive characteristic, or even if one drops the algebraically closed assumption, and we will try to indicate where this holds. Modules are right modules, unless stated otherwise. Consequently, noetherian means right noetherian, global dimension means right global dimension, et cetera. Unadorned tensor products are by default over $k$. We denote isomorphisms (of modules, functors, \ldots) with $\cong$ and weak equivalences with $\simeq$.
	
	\p We freely use terminology and results from the theory of model categories; see \cite{quillenHA, hovey, dwyerspalinski, riehlCHT} for references. We will in particular assume that the reader knows the basics of the homotopy theory of simplicial sets, and that a model category admits derived mapping complexes which are (weak equivalence classes of) simplicial sets \cite[5.4.9]{hovey}. See \cite{goerssjardine} for a comprehensive textbook account of simplicial homotopy theory.
	
	\p We use cohomological grading conventions, so that the differential of a complex has degree $1$. 	If we refer to an object as just \textbf{graded}, then by convention we mean that it is $\Z$-graded. We will be explicit about any gradings by different groups, and in particular by $\Z/2$. If $X$ is a complex, we will denote its cohomology complex by $H(X)$ or just $HX$. If $X$ is a complex, let $X[i]$ denote `$X$ shifted left $i$ times': the complex with $X[i]^j=X^{i+j}$ and the same differential as $X$, but twisted by a sign of $(-1)^i$. This sign flip can be worked out using the \textbf{Koszul sign rule}: when an object of degree $p$ moves past an object of degree $q$, one should introduce a factor of $(-1)^{pq}$. If $x$ is a homogeneous element of a complex of modules, we denote its degree by $|x|$.

\p Let $V$ be a complex of vector spaces. The \textbf{total dimension} or just \textbf{dimension} of $V$ is $\sum_{n\in \Z}\mathrm{dim}_k V^n$. Say that $V$ is \textbf{finite-dimensional} or just \textbf{finite} if its total dimension is finite. Say that $V$ is \textbf{locally finite} if each $\mathrm{dim}_kV^n$ is finite. Say that $V$ is \textbf{cohomologically locally finite} if the cohomology dg vector space $HV$ is locally finite. Say that $V$ is \textbf{bounded} if $V^n$ vanishes for all but finitely many $n$, \textbf{bounded above} if $V^n$ vanishes for all $n\gg 0$, and \textbf{bounded below} if $V^n$ vanishes for all $n \ll 0$. There are obvious implications $$\text{finite }\implies\text{ locally finite }\implies\text{cohomologically locally finite}.$$We use the same terminology in the case that $V$ admits extra structure.
	
	\p Recall that a $k$-algebra is a $k$-vector space with an associative unital $k$-bilinear multiplication. In other words, this is a monoid inside the monoidal category $(\cat{Vect}_k, \otimes)$. Similarly, a \textbf{differential graded algebra} (\textbf{dga} for short) over $k$ is defined as a monoid inside the category of chain complexes of vector spaces. More concretely, a dga is a complex of $k$-vector spaces $A$ with an associative unital chain map $\mu:A\otimes A \to A$, which we refer to as the multiplication. Note that the condition that $\mu$ be a chain map forces the differential to be a derivation for $\mu$. Recall that the category of dgas is a model category with weak equivalences the quasi-isomorphisms and fibrations the levelwise surjections \cite{hinichhom}.
	
	\p A $k$-algebra is equivalently a dga concentrated in degree zero, and a graded $k$-algebra is equivalently a dga with zero differential. We will sometimes refer to $k$-algebras as \textbf{ungraded algebras} to emphasise that they should be considered as dgas concentrated in degree zero. A dga is \textbf{graded-commutative} or just \textbf{commutative} or a \textbf{cdga} if all graded commutator brackets $[x,y]=xy-(-1)^{|x||y|}yx$ vanish. Commutative polynomial algebras are denoted with square brackets $k[x_1,\ldots, x_n]$ whereas noncommutative polynomial algebras are denoted with angle brackets $k\langle x_1,\ldots, x_n\rangle$. Note that in a cdga, even degree elements behave like elements of a symmetric algebra, whereas odd degree elements behave like elements of an exterior algebra: in particular, odd degree elements are square-zero since they must commute with themselves.
	
	\p A \textbf{dg module} (or just a \textbf{module}) over a dga $A$ is a complex of vector spaces $M$ together with an action map $M \otimes A \to M$ satisfying the obvious identities (equivalently, a dga map $A \to \enn_k(M)$). Note that a dg module over an ungraded ring is exactly a complex of modules. Just as for modules over a ring, the category of dg modules is a closed monoidal abelian category. If $A$ is an algebra, write $\cat{Mod}\text{-}A$ for its category of right modules and $\cat{mod}\text{-}A\subseteq\cat{Mod}\text{-}A$ for its category of finitely generated modules.
	
		\p We will assume that the reader has a good familiarity with the theory of triangulated and derived categories; see \cite{neemanloc} and \cite{weibel, huybrechts} respectively for references. In particular we will make use of the fact that the derived category of a dga is the homotopy category of a model category. By convention we use the projective model structure on dg-modules where every object is fibrant, and over a ring the cofibrant complexes are precisely the perfect complexes (this is the `q-model structure' of \cite{sixmodels}).

	\part{Homotopical algebra}

	\chapter{DG categories and $A_\infty$-algebras}\label{dcdgcc}

	In this short review chapter we provide some homotopical preliminaries. Nearly all of the material we cover is standard, but we will need some specific facts about dg categories for Chapters 6 and 7, facts about coalgebras and the bar/cobar constructions for Chapters 3 and 4, and facts about minimal models of $A_\infty$-algebras for our computations in Chapter 9.

	\section{DG categories}\label{dgcats}
Viewing derived categories as mere triangulated categories does not quite suffice for some geometric purposes. For example, they lack limits and colimits, and are non-local in nature (see \cite[2.2]{toendglectures} for some good examples). We will see that the derived category of a dga admits extra structure, that of a {differential graded category} (or just {dg category}) fixing some of these problems. Survey articles on dg categories include \cite{toendglectures} and \cite{keller}.

	\begin{defn}
		A ($k$-linear) \textbf{dg category} is a category $\mathcal{C}$ enriched over the monoidal category $(\cat{dgvect}_k,\otimes)$ of dg vector spaces with the usual tensor product. In other words, to every pair of elements $(x,y)\in\mathcal{C}^2$ we assign a chain complex $\dgh_\mathcal{C}(x,y)$, to every triple $(x,y,z)$ we assign a chain map $\mu_{xyz}:\dgh_\mathcal{C}(x,y)\otimes \dgh_\mathcal{C}(y,z) \to \dgh_\mathcal{C}(x,z)$ satisfying associativity, and for every $x \in \mathcal{C}$ we assign a map $\eta_x: k \to \dgh_\mathcal{C}(x,x)$ which is a unit with respect to composition.
	\end{defn}
	Note in particular that for any object $x \in\mathcal{C}$, the complex $\dge_{\mathcal{C}}(x)\coloneqq \dgh_\mathcal{C}(x,x)$ naturally has the structure of a (unital) dga. We will frequently omit the subscript $\mathcal{C}$ if the context is clear.
	\begin{rmk}
		A more usual notation for the enriched hom is $\underline{\hom}$. We will not use this since it risks confusion with the standard notation used for homsets in the stable category of a ring, which we will use later in this thesis.
	\end{rmk}
	\begin{defn}
		A \textbf{dg functor} $F:\mathcal{C}\to\mathcal{D}$ between two dg categories is an enriched functor; i.e.\ a map of objects $\mathcal{C}\to \mathcal{D}$ together with, for every pair $(x,y)\in\mathcal{C}^2$, a map of complexes \linebreak $F_{xy}:\dgh_\mathcal{C}(x,y) \to \dgh_\mathcal{D}(Fx,Fy)$. We require that $F$ satisfies the associativity condition $\mu_{Fx\ Fy \ Fz}\circ (F_{xy}\otimes F_{yz}) = F_{xz}\circ \mu_{xyz}$ and the unitality condition $F_{xx}\circ \eta_x = \eta_{Fx}$.
	\end{defn}
	In particular, a dg functor $F:\mathcal{C}\to\mathcal{D}$ induces dga morphisms $F_{xx}:\dge_{\mathcal{C}}(x)\to\dge_{\mathcal{D}}(Fx)$ for every $x \in \mathcal{C}$.
	\begin{defn}
		Let $\mathcal{C}$ be a dg category. The \textbf{homotopy category} of $\mathcal{C}$ is the $k$-linear category $[\mathcal{C}]$ whose objects are the same as $\mathcal{C}$ and whose hom-spaces are given by\linebreak $\hom_{[\mathcal{C}]}(x,y)\coloneqq H^0(\dgh_\mathcal{C}(x,y))$. Composition is inherited from $\mathcal{C}$. We sometimes write $[x,y]\coloneqq \hom_{[\mathcal{C}]}(x,y)$.
	\end{defn}
	\begin{defn}Let $F:\mathcal{C}\to\mathcal{D}$ be a dg functor.\begin{itemize}
			\item $F$ is \textbf{quasi-fully faithful} if all of its components $F_{xy}$ are quasi-isomorphisms.
			\item $F$ is \textbf{quasi-essentially surjective} if the induced functor $[F]:[\mathcal{C}]\to[\mathcal{D}]$ is essentially surjective.
			\item $F$ is a \textbf{quasi-equivalence} if it is quasi-fully faithful and quasi-essentially surjective.
		\end{itemize}
	\end{defn}
	In a dg category, one may define shifts and mapping cones via the Yoneda embedding into the category of modules. This is equivalent to defining them as representing objects of the appropriate functors; e.g.\ $x[1]$ should represent $\dgh(x,-)[-1]$.
	\begin{defn}
		Say that a dg category is \textbf{pretriangulated} if it contains a zero object and is closed under shifts and mapping cones. 
	\end{defn}If $\mathcal{C}$ is pretriangulated then the homotopy category $[\mathcal{C}]$ is canonically triangulated, with translation functor given by the shift. We list some standard pretriangulated dg categories:
	\begin{defn}\label{dgcatlist}
		If $A$ is a dga, then $D_\mathrm{dg}(A)$ is the dg category of cofibrant dg modules over $A$, and $\cat{per}_\mathrm{dg}(A) \subseteq {D}_\mathrm{dg}(A)$ is the dg subcategory on compact objects. In addition, if $A$ is a $k$-algebra then $D^b_\mathrm{dg}(A)$ denotes the dg category of cofibrant dg $A$-modules with bounded cohomology; these are precisely the bounded above complexes of projective $A$-modules with bounded cohomology.
	\end{defn}
	All of the above dg categories are pretriangulated. In the notation of \cite{toendglectures}, $\cat{per}_\mathrm{dg}(A)$ is $\hat{A}_{\text{pe}}$. One has equivalences of triangulated categories $[D_\mathrm{dg}(A)]\cong D(A)$, ${[D^b_\mathrm{dg}(A)]\cong D^b(A)}$ and $[\cat{per}_\mathrm{dg}(A)]\cong\cat{per}(A)$, via standard arguments about dg model categories. Note that in the dg categories above, $\dgh$ is a model for the derived hom $\R\hom$; we will implicitly use this fact often.
 \p One can invert quasi-equivalences between dg categories: 
\begin{thm}[Tabuada \cite{tabuadamodel}]\label{tabmod}
	The category of all small dg categories admits a (cofibrantly generated) model structure where the weak equivalences are the quasi-equivalences. The fibrations are the objectwise levelwise surjections that lift isomorphisms. Every dg category is fibrant.
	\end{thm}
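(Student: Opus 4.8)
The plan is to apply Kan's recognition theorem for cofibrantly generated model categories (as in \cite{hovey}): exhibit sets $I$, $J$ of generating cofibrations and trivial cofibrations, check smallness of their domains, and verify the axioms relating $I$-cell, $J$-cell, $I$-inj, $J$-inj to the class $W$ of quasi-equivalences. First I would record that $\cat{dgCat}_k$ is complete and cocomplete: limits are computed objectwise on object-sets and hom-complexes, while colimits are subtler because the object-set changes, so one builds them by a generators-and-relations construction --- a pushout $\mathcal{C}\sqcup_{\mathcal{A}}\mathcal{B}$ has as object-set the pushout of the object-sets, and hom-complexes assembled from reduced alternating words in the morphisms of $\mathcal{C}$ and $\mathcal{B}$ glued along $\mathcal{A}$. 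Then I would check that $W$ is closed under retracts and satisfies two-out-of-three: quasi-full-faithfulness reduces to the same statements for quasi-isomorphisms of complexes, and quasi-essential surjectivity to the same statements for the ordinary homotopy categories $[\mathcal{C}]$.

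Next I would write down the generating sets. Let $\underline{k}$ be the dg category with one object $\ast$ and $\dge(\ast)=k$, and for a complex $V$ let $\mathcal{P}(V)$ be the dg category on objects $1,2$ with $\dge(1)=\dge(2)=k$, $\dgh(1,2)=V$, $\dgh(2,1)=0$. Take $I$ to consist of $\emptyset\to\underline{k}$ together with the dg functors $\mathcal{P}(i):\mathcal{P}(V)\to\mathcal{P}(W)$ for $i:V\to W$ ranging over the standard generating cofibrations of the projective model structure on $\cat{dgvect}_k$; take $J$ to consist of the $\mathcal{P}(j)$ for $j:0\to W$ ranging over the generating trivial cofibrations (inclusions of the zero complex into contractible disk complexes), together with one further dg functor $\underline{k}\to\mathcal{K}$, where $\mathcal{K}$ is an explicit cofibrant dg category on two objects that is quasi-equivalent to $\underline{k}$ (so that a dg functor $\mathcal{K}\to\mathcal{D}$ is an object of $\mathcal{D}$ together with an isomorphism onto it), and the functor picks out one of the two objects. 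All domains and codomains here have finite object-sets and small hom-complexes, so $I$ and $J$ both permit the small object argument, using the explicit colimits above. A direct inspection of lifting squares then identifies $I$-inj with the dg functors that are surjective on objects and whose components $F_{xy}$ are surjective quasi-isomorphisms (the ``trivial fibrations''), and $J$-inj with the dg functors that are levelwise surjective on all hom-complexes and lift isomorphisms --- precisely the fibrations of the statement. In particular $I$-inj $\subseteq W$, and a further inspection gives $I$-inj $\subseteq J$-inj; since the terminal dg category has a single object and zero hom-complexes, $\mathcal{C}\to\ast$ lies in $J$-inj for every $\mathcal{C}$, which is the fibrancy assertion.

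It remains to verify three things. First, $J\subseteq I$-cof, i.e.\ each generating trivial cofibration lifts against trivial fibrations: for $\mathcal{P}(j)$ this reduces to the complex-level lifting fact, and for $\underline{k}\to\mathcal{K}$ it is where the explicit cofibrant model $\mathcal{K}$ is used. Second, the comparison $W\cap J\text{-inj}\subseteq I\text{-inj}$, i.e.\ a quasi-equivalence which is a fibration is a trivial fibration; here the ``lifts isomorphisms'' property of a fibration is used to promote quasi-essential surjectivity of $F$ into the data needed to solve lifting problems against $\emptyset\to\underline{k}$, while levelwise surjectivity plus quasi-full-faithfulness handle the $\mathcal{P}(i)$. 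Third, $J$-cell $\subseteq W$. Granting these, $J$-cell $\subseteq W\cap I$-cof (the $I$-cof part being formal, as $I$-cof is closed under pushout and transfinite composition), so all hypotheses of Kan's theorem hold and it produces the desired model structure.

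\textbf{The main obstacle is the condition $J$-cell $\subseteq W$:} one must show that pushouts, and transfinite composites of pushouts, of the generating trivial cofibrations are quasi-equivalences. Pushing out $\underline{k}\to\mathcal{K}$ along a chosen object glues in a copy of an object already present up to isomorphism, and the evident inclusion is a quasi-equivalence because $\mathcal{K}$ is contractible and the inclusion is a suitable deformation retract; but one needs the explicit form of $\mathcal{K}$ to see this. The harder case is pushing out $\mathcal{P}(j):\mathcal{P}(0)\to\mathcal{P}(W)$, which \emph{freely adjoins} a contractible complex $W$ of morphisms between two chosen objects $c_1,c_2$ of $\mathcal{C}$; by the word description of pushouts, the hom-complex of the pushout from $x$ to $y$ is $\dgh_{\mathcal{C}}(x,y)$ plus a sum of tensor products of $\mathcal{C}$-hom-complexes interleaved with copies of $W$, so every new summand has an acyclic tensor factor and hence (working over a field) is acyclic. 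Thus the inclusion is a componentwise quasi-isomorphism and the identity on objects, so a quasi-equivalence; stability under transfinite composition is a colimit argument. This ``free extension over acyclic complexes is a weak equivalence'' input --- essentially the statement that the forgetful functor from dg categories with a fixed object-set to complexes creates a model structure --- is the genuine homotopical content; everything else is bookkeeping.
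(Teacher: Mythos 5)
The paper does not prove this theorem --- it simply cites Tabuada \cite{tabuadamodel} --- so there is no in-paper argument to compare against; your sketch follows the strategy of the cited proof, with the same choice of generating cofibrations and trivial cofibrations and the same reduction of the recognition-theorem hypotheses. Two things would need correction or completion before this is a proof.

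Your parenthetical characterisation of $\mathcal{K}$ --- ``a dg functor $\mathcal{K}\to\mathcal{D}$ is an object of $\mathcal{D}$ together with an isomorphism onto it'' --- is not what Tabuada's $\mathcal{K}$ does. His $\mathcal{K}$ is freely generated on two objects by five morphisms: closed degree-zero morphisms $f\in\dgh^0(1,2)$ and $g\in\dgh^0(2,1)$, degree $-1$ loops $r_1\in\dgh^{-1}(1,1)$ and $r_2\in\dgh^{-1}(2,2)$ with $dr_1 = gf-\id_1$ and $dr_2=fg-\id_2$, and one degree $-2$ morphism $r_{12}\in\dgh^{-2}(1,2)$ providing the higher coherence between $fr_1$ and $r_2f$. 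A dg functor $\mathcal{K}\to\mathcal{D}$ therefore encodes a coherent homotopy equivalence whose image $[f]$ is an isomorphism in the homotopy category $[\mathcal{D}]$, not a strict isomorphism in $\mathcal{D}$. This distinction is load-bearing: the fibration condition of the theorem lifts isomorphisms in the homotopy categories $[\mathcal{C}]\to[\mathcal{D}]$, and if $\mathcal{K}$ were the walking strict isomorphism the resulting $J$-inj class would be different and the assertion that every dg category is fibrant would fail (there are dg categories with isomorphisms in $[\mathcal{D}]$ that strictify to nothing). Separately, your disposal of the $\underline{k}\to\mathcal{K}$ case of $J$-cell $\subseteq W$ --- ``$\mathcal{K}$ is contractible and the inclusion is a suitable deformation retract'' --- is a placeholder, not an argument: $\mathcal{K}$ is quasi-equivalent to the one-object category $\underline{k}$, not contractible, and unlike the $\mathcal{P}(0\to D^n)$ pushouts, the new hom-complexes in $\mathcal{C}\sqcup_{\underline{k}}\mathcal{K}$ are built from alternating words containing the non-acyclic generators $f$ and $g$ flowing in both directions, so your ``every new summand has an acyclic tensor factor'' observation does not apply here. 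Proving that these pushouts are quasi-equivalences is an explicit filtration argument using the differentials of $r_1,r_2,r_{12}$, and it is the substantive content of the cited proof that your generalities do not cover.
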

	See \cite{tabuadamodel} for a more precise description of the model structure. The advantage of this result is that it gives one good control over $\mathrm{Hqe}$, the category of dg categories localised at the quasi-equivalences.

	\section{DG quotients}\label{dgquotsctn}
	Later in this thesis, we will want to take quotients of triangulated categories by triangulated subcategories: for example if $A$ is a (noncommutative) ring, then the singularity category of $A$ is the Verdier quotient $D^b(A)/\cat{per}(A)$. One can also take dg quotients of dg categories; these were first considered by Keller \cite{kellerdgquot} and an explicit construction using ind-categories was given by Drinfeld \cite{drinfeldquotient}, which we recall in this section. We begin with the definition of ind-categories; we define them in terms of pro-categories, which we will use later in Chapter \ref{kd}.
	
	\begin{defn}[e.g.\ {\cite[\S6]{kashschap}}]\label{procats}
		Let $\mathcal{C}$ be a category. A \textbf{pro-object} in $\mathcal{C}$ is a formal cofiltered limit, i.e.\ a diagram $J \to \mathcal{C}$ where $J$ is a small cofiltered category. We denote such a pro-object by $\{C_j\}_{j \in J}$. The category of pro-objects $\cat{pro}\mathcal{C}$ has morphisms $$\hom_{\cat{pro}\mathcal{C}}(\{C_i\}_{i \in I}, \{D_j\}_{j \in J}) \coloneqq  \varprojlim_j \varinjlim_i \hom_{\mathcal{C}}(C_i,D_j).$$
	\end{defn}
	If $\mathcal{C}$ has cofiltered limits, then there is a `realisation' functor $\varprojlim: \cat{pro}\mathcal{C} \to \mathcal{C}$. If $C$ is a constant pro-object, then it is easy to see that one has $\hom_{\cat{pro}\mathcal{C}}(C, \{D_j\}_{j \in J}) \cong  \hom_{\mathcal{C}}(C,\varprojlim_jD_j)$.
	\begin{defn}
		Let $\mathcal{C}$ be a category. The \textbf{ind-category} of $\mathcal{C}$ is $\cat{ind}\mathcal{C}\coloneqq \cat{pro}(\mathcal{C}^\text{op})^\text{op}$. Less abstractly, an object of $\cat{ind}\mathcal{C}$ is a formal filtered colimit $J \to \mathcal{C}$, and the morphisms are $$\hom_{\cat{ind}\mathcal{C}}(\{C_i\}_{i \in I}, \{D_j\}_{j \in J}) \coloneqq  \varprojlim_i \varinjlim_j \hom_{\mathcal{C}}(C_i,D_j)$$
	\end{defn}
	If $\mathcal{C}$ has filtered colimits, then there is a `realisation' functor $\varinjlim: \cat{ind}\mathcal{C} \to \mathcal{C}$. In this situation, if $D \in \mathcal{C}$ is a constant ind-object then one has $\hom_{\cat{ind}\mathcal{C}}(\{C_i\}_{i \in I}, D)\cong \hom_\mathcal{C}( \varinjlim_iC_i,D)$. 
	
	\p Note that if $\mathcal{C}$ is a dg category then so are $\cat{pro}\mathcal{C}$ and $\cat{ind}\mathcal{C}$ in a natural way.
	\begin{defn}[Drinfeld \cite{drinfeldquotient}]
		Let $\mathcal{A}$ be a dg category and $\mathcal{B}\into\mathcal A$ a full dg subcategory. The \textbf{Drinfeld quotient} $\mathcal{A}/\mathcal{B}$ is the subcategory of $\cat{ind}\mathcal{A}$ on those $X$ such that:\begin{enumerate}
			\item $\dgh_{\cat{ind}\mathcal{A}}(\mathcal{B},X)$ is acyclic.
			\item There exists $a \in \mathcal{A}$ and a map $f:a \to X$ with $\mathrm{cone}(f)\in \cat{ind}\mathcal{B}$.
		\end{enumerate}
		Since $\cat{ind}\mathcal{A}$ is a dg category, so is $\mathcal{A}/\mathcal{B}$. The Drinfeld quotient is a model for ``the'' dg quotient:
	\begin{thm}[{\cite[4.02]{tabquot}}]
		Let $\mathcal{A}$ be a dg category and $i:\mathcal{B}\into \mathcal{A}$ a full dg subcategory. Then the quotient $\mathcal{A}/\mathcal{B}$ is the homotopy cofibre of $i$, taken in $\mathrm{Hqe}$.
	\end{thm}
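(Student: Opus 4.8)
The plan is to present $\mathcal{A}/\mathcal{B}$ by a model that is \emph{manifestly} a pushout, and then to recognise that pushout as a homotopy pushout in Tabuada's model structure (\ref{tabmod}). Alongside the ind-category construction of the excerpt, Drinfeld \cite{drinfeldquotient} also supplies the ``contracting homotopy'' model: when $\mathcal{A}$ is cofibrant, the quotient is quasi-equivalent to the dg category $\mathcal{A}\langle\varepsilon_b:b\in\mathcal{B}\rangle$ obtained from $\mathcal{A}$ by freely adjoining, for each object $b$ of $\mathcal{B}$, a degree $-1$ endomorphism $\varepsilon_b$ with $d\varepsilon_b=\id_b$. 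I would take this comparison as the technical input and then argue with the rigid model.

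First I would reduce to the cofibrant case. Using the homotopy invariance of the Drinfeld quotient --- valid, for the ind-category model, once the dg categories involved are $k$-flat, in particular cofibrant --- I may replace $i\colon\mathcal{B}\hookrightarrow\mathcal{A}$ up to quasi-equivalence so that $\mathcal{A}$ and $\mathcal{B}$ are cofibrant and $i$ is a cofibration (cofibrantly replace $\mathcal{B}$, then factor $\mathcal{B}\to\mathcal{A}$ as a cofibration followed by a trivial fibration). Neither this nor the homotopy cofibre of $i$ changes under such replacements.

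Now apply the contracting-homotopy construction to $\mathcal{B}$ itself, obtaining $\mathcal{K}\coloneqq\mathcal{B}\langle\varepsilon_b:b\in\mathcal{B}\rangle$, so that $\mathcal{A}\langle\varepsilon_b\rangle=\mathcal{A}\sqcup_{\mathcal{B}}\mathcal{K}$ as a strict pushout. The inclusion $\mathcal{B}\hookrightarrow\mathcal{K}$ is a cofibration --- a semi-free extension, built as a pushout of generating cofibrations, one for each object of $\mathcal{B}$ --- and $\mathcal{K}$ is cofibrant since $\mathcal{B}$ is. Every hom-complex of $\mathcal{K}$ is contractible: on $\hom_{\mathcal{K}}(b,b')$ the degree $-1$ operator $h=\varepsilon_{b'}\circ(-)$ satisfies $dh+hd=\id$ because $d\varepsilon_{b'}=\id_{b'}$. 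Hence $\mathcal{K}\to 0$ is a quasi-equivalence, so $\mathcal{B}\hookrightarrow\mathcal{K}\xrightarrow{\sim}0$ is a cofibration followed by a weak equivalence. As $\mathcal{A}$, $\mathcal{B}$, $\mathcal{K}$ are all cofibrant and $\mathcal{B}\hookrightarrow\mathcal{K}$ is a cofibration, the strict pushout $\mathcal{A}\sqcup_{\mathcal{B}}\mathcal{K}$ computes the homotopy pushout of $\mathcal{A}\leftarrow\mathcal{B}\to\mathcal{K}$, equivalently of $\mathcal{A}\leftarrow\mathcal{B}\to 0$, which is the homotopy cofibre of $i$ by definition. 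Chaining $\mathcal{A}/\mathcal{B}\simeq\mathcal{A}\langle\varepsilon_b\rangle=\mathcal{A}\sqcup_{\mathcal{B}}\mathcal{K}\simeq\mathrm{cofib}(i)$ in $\mathrm{Hqe}$ finishes the argument.

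The genuine content --- and the step I expect to be the main obstacle --- is the first one: matching the ind-category definition of $\mathcal{A}/\mathcal{B}$ used in the statement with the contracting-homotopy presentation, and controlling the $k$-flatness hypotheses that make the ind-category construction homotopy invariant; everything afterwards is routine bookkeeping with the model structure. A route that sidesteps the comparison of models would be to verify the universal property of the homotopy cofibre directly for the ind-category model: for each dg category $\mathcal{C}$, show that $\mathrm{Map}_{\mathrm{Hqe}}(\mathcal{A}/\mathcal{B},\mathcal{C})$ is the homotopy fibre, over the zero functor, of the restriction $\mathrm{Map}_{\mathrm{Hqe}}(\mathcal{A},\mathcal{C})\to\mathrm{Map}_{\mathrm{Hqe}}(\mathcal{B},\mathcal{C})$, using Drinfeld's $2$-universal property (the canonical functor $q\colon\mathcal{A}\to\mathcal{A}/\mathcal{B}$ annihilates $\mathcal{B}$ and is initial with that property) together with To\"en's description of mapping spaces in $\mathrm{Hqe}$; but that relocates the difficulty into identifying those mapping spaces rather than removing it.
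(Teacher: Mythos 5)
The thesis cites this result to Tabuada without reproducing a proof, so there is no internal argument to compare against, but your proof is correct and I believe follows Tabuada's own strategy: trade the ind-category model for Drinfeld's adjoin-contracting-homotopies model $\mathcal{A}\langle\varepsilon_b\rangle$, present it as the strict pushout $\mathcal{A}\sqcup_{\mathcal{B}}\mathcal{K}$ over the semi-free contractible cone $\mathcal{K}=\mathcal{B}\langle\varepsilon_b\rangle$, and invoke Reedy cofibrancy. Your contracting-homotopy computation $dh+hd=\id$ for $h=\varepsilon_{b'}\circ(-)$ is exactly right.

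Three small observations. First, the comparison of models you flag as the main technical input is Drinfeld's, and over a field --- the standing hypothesis of this thesis --- $k$-flatness of $\mathcal{A}$ is automatic, so that step requires no replacement at all. Second, the cofibrant replacement in your opening paragraph is needed solely because Tabuada's model structure on $\cat{dgCat}_k$ is not left proper, so you must make the span $\mathcal{A}\leftarrow\mathcal{B}\to\mathcal{K}$ Reedy cofibrant; but replacing $i$ by a cofibration $\mathcal{B}'\to\mathcal{A}'$ may destroy fullness, which the Drinfeld construction as stated requires. In practice this is harmless because the adjoin-$\varepsilon$ construction, which is all you actually use after the reduction, only needs a marked set of objects rather than a full subcategory, but you should say so. Third, the identification $\mathcal{K}\simeq 0$ needs $\mathcal{B}\neq\emptyset$; for $\mathcal{B}=\emptyset$ the Drinfeld quotient is $\mathcal{A}$ while the homotopy cofibre picks up a stray zero object, a harmless implicit hypothesis lurking in the statement.
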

With this in mind, we will use the terms `Drinfeld quotient' and `dg quotient' interchangeably, although the careful reader should keep in mind that the former is merely a model for the latter, which exists only in a homotopical sense. The Drinfeld quotient is a dg enhancement of the Verdier quotient:
\end{defn}
\begin{thm}[{\cite[3.4]{drinfeldquotient}}]\label{drinfeldpretr}
Let $\mathcal{A}$ be a pretriangulated dg category and $\mathcal{B}\into \mathcal{A}$ a full pretriangulated dg subcategory. Then there is a triangle equivalence $[\mathcal{A}/\mathcal{B}]\cong [\mathcal{A}]/[\mathcal{B}]$.
\end{thm}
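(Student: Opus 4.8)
The plan is to follow Drinfeld's original argument \cite{drinfeldquotient}. Although the quotient was introduced above via the $\cat{ind}$-model, all models of $\mathcal{A}/\mathcal{B}$ are quasi-equivalent (indeed $\mathcal{A}/\mathcal{B}$ is characterised by the universal property of \cite[4.02]{tabquot}), so I would first pass to the more hands-on ``adjoin contracting homotopies'' model: there $\mathcal{A}/\mathcal{B}$ has literally the same objects as $\mathcal{A}$, and $\dgh_{\mathcal{A}/\mathcal{B}}(x,y)$ is obtained from $\dgh_\mathcal{A}(x,y)$ by freely adjoining, for each $b\in\mathcal{B}$, a degree $-1$ endomorphism $\epsilon_b$ of $b$ with $d\epsilon_b=\mathrm{id}_b$. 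Let $q\colon\mathcal{A}\to\mathcal{A}/\mathcal{B}$ be the canonical dg functor, which is the identity on objects.

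Next I would build the comparison functor. Since $q$ is a dg functor between pretriangulated dg categories, $[q]\colon[\mathcal{A}]\to[\mathcal{A}/\mathcal{B}]$ is a triangle functor. For each $b\in\mathcal{B}$ the adjoined element $\epsilon_b$ is a contracting homotopy for $q(b)$, so $\dge_{\mathcal{A}/\mathcal{B}}(q(b))$ is acyclic and $q(b)\cong 0$ in $[\mathcal{A}/\mathcal{B}]$; hence $[q]$ annihilates the triangulated subcategory $[\mathcal{B}]\subseteq[\mathcal{A}]$. By the universal property of the Verdier quotient, $[q]$ factors uniquely through a triangle functor $\bar q\colon[\mathcal{A}]/[\mathcal{B}]\to[\mathcal{A}/\mathcal{B}]$, and it remains to show that $\bar q$ is an equivalence.

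Essential surjectivity is immediate, since $q$ (hence $\bar q$) is surjective on objects; in the $\cat{ind}$-model one instead invokes condition (2) of the definition, noting that every object of $\mathcal{A}/\mathcal{B}$ receives a map from an object of $\mathcal{A}$ with cone in $\cat{ind}\mathcal{B}$, and that objects of $\cat{ind}\mathcal{B}$ become zero in $[\mathcal{A}/\mathcal{B}]$. So the entire content is full faithfulness: for $a,a'\in\mathcal{A}$ one must check that the canonical map $$\hom_{[\mathcal{A}]/[\mathcal{B}]}(a,a')\longrightarrow H^0\!\left(\dgh_{\mathcal{A}/\mathcal{B}}(a,a')\right)$$ is an isomorphism. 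The source is computed, via the calculus of fractions for the Verdier quotient, as the filtered colimit $\varinjlim_s \hom_{[\mathcal{A}]}(a'',a')$ over morphisms $s\colon a''\to a$ in $[\mathcal{A}]$ whose cone lies in $[\mathcal{B}]$; here pretriangulatedness of $\mathcal{A}$ and $\mathcal{B}$ is exactly what makes this indexing category filtered, so the colimit is exact. The target I would analyse by filtering $\dgh_{\mathcal{A}/\mathcal{B}}(a,a')$ by the number of inserted homotopies $\epsilon_b$: the resulting spectral sequence has an $E_1$-page which is a bar-type complex built from the composites $a\to b_1\to\cdots\to b_n\to a'$ with $b_i\in\mathcal{B}$, and a standard argument (replacing, on cohomology, a morphism factored through some $b$ by the corresponding cone) identifies its cohomology with the same filtered colimit over cones. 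Matching the two descriptions gives the isomorphism, and tracking the identifications shows it is the map induced by $\bar q$.

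The main obstacle is this last step — the chain-level bookkeeping proving that the cohomology of the ``freely adjoined contracting homotopies'' morphism complex is precisely the localized Hom-group. Everything else (the existence and triangulatedness of $\bar q$, essential surjectivity, compatibility with shifts) is formal; it is the homological analysis of the bar resolution $\dgh_{\mathcal{A}/\mathcal{B}}(a,a')$ — equivalently, the statement that the telescope of cones resolving $a'$ ``away from $\mathcal{B}$'' computes the Verdier Hom — that does the real work, and is where pretriangulatedness of both $\mathcal{A}$ and $\mathcal{B}$ is genuinely used.
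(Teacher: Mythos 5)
The paper does not prove this theorem; it is a citation to Drinfeld \cite[3.4]{drinfeldquotient}, so there is no ``paper's own proof'' to compare against. Your outline correctly reproduces the skeleton of Drinfeld's argument: pass to the ``adjoin contracting homotopies'' model (valid over a field, since every dg category is $k$-flat, and the two models are quasi-equivalent by the universal property), construct the comparison triangle functor $\bar q\colon[\mathcal{A}]/[\mathcal{B}]\to[\mathcal{A}/\mathcal{B}]$, observe that essential surjectivity is trivial in this model, and reduce everything to full faithfulness. That part is sound.

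However, the full faithfulness step — which you yourself flag as ``the main obstacle'' — is where the whole theorem lives, and what you write there is not a proof. Two concrete things are missing. First, the increasing filtration of $\dgh_{\mathcal{A}/\mathcal{B}}(a,a')$ by number of inserted $\epsilon_b$'s is exhaustive but unbounded, and it is not bounded below in a fixed cohomological degree (an $n$-fold $\epsilon$-sandwich contributes to degree $(\text{total degree of the }f_i)-n$, with the $f_i$ ranging over unbounded $\dgh_\mathcal{A}$ complexes); conditional or strong convergence of the associated spectral sequence is therefore not automatic and must be argued. Second, the claimed identification of the $E_1$-page cohomology with the Gabriel--Zisman filtered colimit $\varinjlim_s\hom_{[\mathcal{A}]}(a'',a')$ is asserted as ``a standard argument'' but is precisely the nontrivial combinatorial-homological content of the theorem: it requires matching the bar-type differential induced by $d\epsilon_b=\mathrm{id}_b$ against the cone/octahedron manipulations in the Verdier calculus, and keeping track of how pretriangulatedness of $\mathcal{B}$ lets you telescope the $b_i$'s into a single cone. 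Drinfeld's own proof does not run a spectral sequence at all; he proves a direct acyclicity statement for the hom-complexes $\dgh_{\mathcal{A}/\mathcal{B}}(b,a)$ with $b\in\mathcal{B}$ and then argues formally via Bousfield-style orthogonality, which sidesteps the convergence issue entirely. Your route is not wrong in spirit, but as written it replaces the hard lemma with a placeholder, so it does not yet constitute a proof.
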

	\section{$A_\infty$-algebras}

	We collect some material about $A_\infty$-algebras, which can be thought of as `dgas up to homotopy'. Indeed, there is a model structure on the category of $A_\infty$-algebras making it Quillen equivalent to the category of dgas. For the rest of this chapter we work over a field $k$; in all of our applications $k$ will be algebraically closed and characteristic zero but one does not need either of these hypotheses. We will broadly follow the treatment of Keller in \cite{kellerainfty}.
	\begin{defn}
		An $A_\infty$-algebra over $k$ is a graded $k$-vector space $A$ together with, for each $n\geq 1$, a $k$-linear map $m_n: A^{\otimes n} \to A$ of degree $2-n$ satisfying for all $n$ the coherence equations (or the \textbf{Stasheff identities}) $$\mathrm{St}_n:\quad \sum{(-1)^{r+st}}m_{r+1+t}(1^{\otimes r} \otimes m_s \otimes 1^{\otimes t})=0$$ where $1$ indicates the identity map, the sum runs over decompositions $n=r+s+t$, and all tensor products are over $k$. We are following the sign conventions of \cite{getzlerjones}; note that other sign conventions exist in the literature (e.g.\ in \cite{lefevre}).
	\end{defn}
	\begin{rmk}The original motivation for the definition came from Stasheff's work on $A_\infty$-spaces in \cite{stasheff}. If $X$ is a pointed topological space and $\Omega X$ its loop space, then we have a `composition of loops' map $\Omega X \times \Omega X \to \Omega X$. It is not associative, but it is associative up to homotopy. Similarly, one can bracket the product of four loops $a.b.c.d$ in five different ways, and one obtains five homotopies fitting into the Mac Lane pentagon. These homotopies are further linked via higher homotopies; we get an infinite-dimensional polytope $K$ the \textbf{associahedron} with $(n-2)$-dimensional faces $K_n$ corresponding to the homotopies between compositions of $n$ loops. An $A_\infty$\textbf{-space} is a topological space $Y$ together with maps $f_n: K_n \to Y^n$ satisfying the appropriate coherence conditions. For example a loop space is an $A_\infty$-space. If $Y$ is an $A_\infty$-space, then the singular chain complex of $Y$ is an $A_\infty$-algebra.
	\end{rmk}
	For readability, we will often write $a_1 \cdot a_2$ to mean $a_1\otimes a_2$ (multiplication in the tensor algebra). Suppose that $A$ is an $A_\infty$-algebra. Then $\mathrm{St}_1$ simply reads as $m_1^2=0$; in other words $m_1$ is a differential on $A$. Hence we may define the cohomology $HA$. The next identity $\mathrm{St}_2$ tells us that $m_1m_2=m_2(m_1\cdot 1 - 1\cdot m_1)$; in other words $m_2$ is a derivation on $(A,m_1)$. The third identity $\mathrm{St}_3$ yields $$m_2(1\cdot m_2 - m_2 \cdot 1)=m_1m_3+m_3(\sum_{i+j=2}1^{\cdot i}\cdot m_1 \cdot 1^{\cdot j}).$$The left hand side is the associator of $m_2$, and the right hand side is the boundary of the map $m_3$ in the complex $\hom(A^{\otimes 3}, A)$. Hence, $m_2$ is a homotopy associative `multiplication' on $A$. In particular, we obtain:
	\begin{prop}
		Suppose that $A$ is an $A_\infty$-algebra with $m_3=0$. Then $(A,m_1,m_2)$ is a dga. Similarly, if $A$ is any $A_\infty$-algebra, then $(HA,[m_2])$ is a graded algebra. Conversely, if $(A,d,\mu)$ is a dga, then $(A,d,\mu,0,0,0,\cdots)$ is an $A_\infty$-algebra.
	\end{prop}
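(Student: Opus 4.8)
The statement to prove has three parts, all of which follow directly by unwinding the Stasheff identities that the preceding discussion has already partially analysed. The plan is to verify each claim in turn, reusing the observations made just before the proposition.

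\textbf{First part: $m_3 = 0$ implies $(A, m_1, m_2)$ is a dga.} The text has already extracted from $\mathrm{St}_1$ that $m_1^2 = 0$, from $\mathrm{St}_2$ that $m_1$ is a graded derivation for $m_2$, and from $\mathrm{St}_3$ that the associator of $m_2$ equals the boundary of $m_3$ in $\hom(A^{\otimes 3}, A)$. So I would simply substitute $m_3 = 0$ into $\mathrm{St}_3$: the right-hand side vanishes identically, leaving $m_2(1 \otimes m_2 - m_2 \otimes 1) = 0$, i.e.\ $m_2$ is strictly associative. Since $m_2$ has degree $0$, the triple $(A, m_1, m_2)$ is precisely a complex with an associative multiplication whose differential is a derivation — a dga. (One should note the unitality is not part of the $A_\infty$ axioms as stated here, so strictly one gets a non-unital dga, or one assumes a strict unit; I would add a parenthetical remark to that effect, matching the paper's evident conventions.)

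\textbf{Second part: $(HA, [m_2])$ is a graded algebra for arbitrary $A$.} Here I would argue that $m_2$ descends to cohomology: because $m_1$ is a derivation for $m_2$ (the content of $\mathrm{St}_2$), the product of two cocycles is a cocycle, and the product of a cocycle with a coboundary is a coboundary, so $[m_2]: HA \otimes HA \to HA$ is well defined. Associativity of $[m_2]$ then follows by passing $\mathrm{St}_3$ to cohomology: the right-hand side $m_1 m_3 + m_3(\sum 1^{\otimes i} \otimes m_1 \otimes 1^{\otimes j})$ is a boundary plus a term that vanishes on cocycles, hence induces the zero map on $HA$, so the induced associator $[m_2](1 \otimes [m_2] - [m_2] \otimes 1)$ is zero on $HA^{\otimes 3}$. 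This gives the graded algebra structure.

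\textbf{Third part: the converse.} This is purely formal: if $(A, d, \mu)$ is a dga, set $m_1 = d$, $m_2 = \mu$, and $m_n = 0$ for $n \geq 3$, and check that every $\mathrm{St}_n$ holds. For $n = 1, 2, 3$ this is exactly the dga axioms read backwards ($d^2 = 0$, Leibniz, associativity), and for $n \geq 4$ every term in $\mathrm{St}_n$ contains some $m_s$ with $s \geq 3$ or is of the form $m_{r+1+t}(\cdots \otimes m_s \otimes \cdots)$ with $r + 1 + t \geq 3$ — one has to check case by case that no surviving term avoids a vanishing $m_j$, which is immediate since $n = r + s + t$ forces either $s \geq 3$ or $r + 1 + t \geq 3$ whenever $n \geq 4$. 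The main (very mild) obstacle is just bookkeeping the signs and index ranges in this last verification; there is no conceptual difficulty anywhere, since all the real work was done in the identities $\mathrm{St}_1$ through $\mathrm{St}_3$ displayed above.
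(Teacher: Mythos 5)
Your proposal is correct and follows the same route the paper implicitly takes: the paper states this proposition without a separate proof, treating it as an immediate corollary of the preceding unwinding of $\mathrm{St}_1$, $\mathrm{St}_2$, and $\mathrm{St}_3$, which is exactly what you do, just spelled out a bit more fully (in particular the clean observation that $s \leq 2$ and $r+1+t \leq 2$ force $n \leq 3$, handling all $\mathrm{St}_n$ for $n \geq 4$ at once in the converse direction).
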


	Additional signs arise in the above formulas via the Koszul sign rule when one wants to put elements into them.  The following lemma is extremely useful:
	\begin{lem}
		Fix positive integers $n=r+s+t$ and $n$ homogeneous elements $a_1,\ldots,a_n$ in $A$. Then $$(1^{\cdot r} \cdot m_s \cdot 1^{\cdot t})(a_1\cdots a_n) = (-1)^\epsilon a_1\cdots a_r\cdot m_s(a_{r+1}\cdots a_{r+s})\cdot a_{r+s+1}\cdots a_{n}$$where $\epsilon=s\sum_{j=1}^r {|a_j|}$. In particular, if $s$ is even then the na\"ive choice of sign is the correct one.
	\end{lem}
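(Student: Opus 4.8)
The plan is to unwind the Koszul sign rule directly. First I would record the degrees of the maps involved: by the definition of an $A_\infty$-algebra, $m_s$ has degree $2-s$, while every copy of the identity $1$ has degree $0$, so $m_s$ is the only map in the expression carrying a nonzero degree. The strategy is then to group the tensor factors of $1^{\cdot r}\cdot m_s\cdot 1^{\cdot t}$ into the three blocks $1^{\cdot r}$, $m_s$, $1^{\cdot t}$, and correspondingly split the input $a_1\cdots a_n$ into $x=a_1\cdots a_r$, $y=a_{r+1}\cdots a_{r+s}$, and $z=a_{r+s+1}\cdots a_n$.

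Next I would apply the Koszul rule for evaluating a tensor product of homogeneous maps on a tensor product of homogeneous elements, namely $(f\otimes g)(u\otimes v)=(-1)^{|g||u|}f(u)\otimes g(v)$, iterated across the three blocks. A sign is produced each time a map is moved past an element sitting to its left, weighted by the product of their degrees. Since the two identity blocks contribute degree $0$, the only surviving contribution comes from moving $m_s$ past the block $x=a_1\cdots a_r$, which produces the global sign $(-1)^{(2-s)(|a_1|+\cdots+|a_r|)}$. Within each individual block no further signs arise: applying a product of identities to a tensor of elements is sign-free, and applying the single map $m_s$ to the single tensor $y$ introduces no sign.

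Finally I would simplify the exponent modulo $2$. Since $2-s\equiv s\pmod 2$ and $2\sum_{j=1}^r|a_j|$ is even, we get $(-1)^{(2-s)\sum_{j=1}^r|a_j|}=(-1)^{s\sum_{j=1}^r|a_j|}=(-1)^{\epsilon}$, which is exactly the claimed sign; in particular, when $s$ is even the exponent $\epsilon$ is even, so the sign is $+1$, recovering the naive choice. There is no genuine obstacle here — the argument is a short, self-contained bookkeeping exercise — but the one point to be careful about is the convention governing the Koszul rule (whether signs accrue from maps passing elements on their left or on their right, and in which order the three blocks are combined), since an inconsistent convention would flip the sign. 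One should also confirm that no hidden signs appear inside the identity blocks, which is immediate from the fact that all identity maps have degree $0$.
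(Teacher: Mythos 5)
Your argument is correct and is exactly the paper's proof, just written out in more detail: the paper simply notes that the Koszul rule produces the sign $(-1)^{|m_s|\sum_{j=1}^r|a_j|}$ and that $|m_s|=2-s$ has the same parity as $s$. The extra care you take with the convention (maps pass elements on their left) and the observation that the identity blocks contribute nothing are exactly the points the paper leaves implicit.
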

	\begin{proof}The Koszul sign rule gives a power of $|m_s|\sum_{j=1}^r {|a_j|}$, which has the same parity as $\epsilon$.
	\end{proof}

	\begin{defn}
	Let $A$ and $B$ be $A_\infty$-algebras. A \textbf{morphism} is a family of degree $1-n$ linear maps $f_n: A^{\otimes n} \to B$ satisfying the identities $$\sum_{n=r+s+t}(-1)^{r+st}f_{r+1+t}(1^{\otimes r} \otimes m_s \otimes 1^{\otimes t}) = \sum_{i_1+\cdots+ i_r=n}(-1)^{\sigma(i_1,\ldots,i_n)}m_r(f_{i_1}\otimes\cdots\otimes f_{i_r})$$where $\sigma(i_1,\dots,i_n)$ is the sum $\sum_j (r-j)(i_j-1)$ (note that only terms with $r-j$ odd and $i_j$ even will contribute to the sign).
\end{defn}

	In particular, $f_1$ is a chain map. A morphism $f$ is \textbf{strict} if it is a chain map; i.e.\ $f_n=0$ for $n>1$. A morphism $f$ is a \textbf{quasi-isomorphism} if $f_1$ is. One can compose morphisms by setting $(f\circ g)_n=\sum_{i_1+\cdots+ i_r=n}(-1)^{\sigma(i_1,\ldots,i_n)}f_r \circ(g_{i_1}\otimes\cdots\otimes g_{i_r})$.

\begin{defn}
	An $A_\infty$-algebra $A$ is \textbf{strictly unital} if there exists an element $\eta\in A^0$ such that $m_1(\eta)=0$, $m_2(\eta,a)=m_2(a,\eta)=a$, and if $n>2$ then $m_n$ vanishes whenever one of its arguments is $\eta$.
\end{defn}

\begin{defn}
	Let $A$ and $B$ be strictly unital $A_\infty$-algebras. A \textbf{morphism} $f:A \to B$ of strictly unital $A_\infty$-algebras is a morphism $f:A \to B$ of $A_\infty$-algebras such that $f_1$ preserves the unit $\eta$, and if $i>1$ then $f_i$ vanishes whenever one of its arguments is $\eta$.

\end{defn}

			\begin{defn}
			A strictly unital $A_\infty$-algebra $A$ is \textbf{augmented} if there is a morphism of strictly unital $A_\infty$-algebras $A \to k$. In this case the \textbf{augmentation ideal} is $\bar{A}\coloneqq \ker(A \to k)$.
		\end{defn}

	\section{Coalgebras and homotopy theory}
		We give an alternate quick definition of an $A_\infty$-algebra via dg coalgebras. Just like a dga is a monoid in the monoidal category of dg vector spaces over $k$, a \textbf{differential graded coalgebra} (or \textbf{dgc} for short) is a comonoid in this category. More concretely, a dgc is a dg $k$-vector space $(C,d)$ equipped with a comultiplication $\Delta:C \to C \otimes C$ and a counit $\epsilon:C \to k$, satisfying the appropriate coassociativity and counitality identities, and such that $d$ is a coderivation for $\Delta$.  A \textbf{coaugmentation} on a dgc is a section of $\epsilon$; if $C$ is coaugmented then $\bar C\coloneqq  \ker \epsilon$ is the \textbf{coaugmentation coideal}. It is a dgc under the reduced coproduct $\bar{\Delta}x = \Delta x -x\otimes 1 -1\otimes x$, and $C$ is isomorphic as a nonunital dgc to $\bar C \oplus k$. A coaugmented dgc $C$ is \textbf{conilpotent} if every $x \in \bar C$ is annihilated by some suitably high power of $\Delta$.
	\begin{ex}
		If $V$ is a dg vector space, then the tensor algebra ${T}^c(V)\coloneqq k\oplus V \oplus V^{\otimes 2} \oplus\cdots$ is a dg coalgebra when equipped with the \textbf{deconcatenation coproduct} ${T}^c(V) \to {T}^c(V) \otimes {T}^c(V)$ which sends $v_1\cdots v_n$ to $\sum_i v_1\cdots v_i \otimes v_{i+1}\cdots v_n$. The differential is induced from the differential on $V^{\otimes n}$. It is easy to see that ${T}^c(V)$ is conilpotent, since $\Delta^{n+1}(v_1\cdots v_n)=0$.
	\end{ex}
Denote by $\bar{T}^c(W)$ the \textbf{reduced tensor coalgebra}: it is the coaugmentation coideal of the tensor coalgebra. The functor ${T}^c$ is the cofree conilpotent coalgebra functor: if $C$ is conilpotent then $C \to{T}^c(V)$ is determined completely by the composition $l:C \to {T}^c(V) \to V$. In particular, any morphism $f:\bar{T}^c(W) \to \bar{T}^c(V)$ is determined completely by its \textbf{Taylor coefficients} $f_n: W^{\otimes n} \to V$.
	\begin{lem}
		Let $f,g$ be composable coalgebra maps between three reduced tensor coalgebras. Then the Taylor coefficients of the composition $f\circ g$ are given by $$(g\circ f)_n = \sum_{i_1+\cdots+ i_r=n}g_r(f_{i_1}\otimes \cdots \otimes f_{i_r}).$$
	\end{lem}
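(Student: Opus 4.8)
The plan is to exploit the cofreeness of the reduced tensor coalgebra. Write the three coalgebras as $f\colon \bar{T}^c(U) \to \bar{T}^c(W)$ and $g\colon \bar{T}^c(W) \to \bar{T}^c(V)$. As recalled above, a coalgebra map $h$ into a reduced tensor coalgebra is completely determined by the composite $l_h\colon \bar{T}^c(A) \to \bar{T}^c(B) \to B$ with the projection onto the first tensor degree, and the Taylor coefficients $h_n$ are by definition the restrictions $l_h|_{A^{\otimes n}}$. So it is enough to identify $l_{g\circ f}$ and then restrict to $U^{\otimes n}$. The key intermediate fact is a reconstruction formula: for any coalgebra map $h\colon \bar{T}^c(A) \to \bar{T}^c(B)$, the component $h^{(m)}$ of $h$ landing in the summand $B^{\otimes m}$ is $l_h^{\otimes m}\circ \bar{\Delta}^{(m-1)}$, where $\bar{\Delta}^{(m-1)}$ denotes the $(m-1)$-fold iterated reduced coproduct (with $\bar{\Delta}^{(0)}=\mathrm{id}$). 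I would prove this by applying $(\pi_1)^{\otimes m}$ to the coassociativity identity $\Delta^{(m-1)}_{\bar{T}^c(B)}\circ h = h^{\otimes m}\circ \Delta^{(m-1)}_{\bar{T}^c(A)}$, noting that on the tensor coalgebra $\bar{T}^c(B)$ the composite $(\pi_1)^{\otimes m}\circ \bar{\Delta}^{(m-1)}$ is precisely the projection onto $B^{\otimes m}$: in the iterated deconcatenation of a length-$n$ word the only term contributing a single letter to each of the $m$ slots is the fully split one, which is nonzero exactly when $n=m$.

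With this in hand the computation is direct: $l_{g\circ f} = \pi_1 \circ g \circ f = l_g \circ f = l_g \circ \big(\sum_{s\geq 1} f^{(s)}\big) = \sum_{s\geq 1} g_s \circ f^{(s)}$, since $l_g$ restricted to $W^{\otimes s}$ is by definition $g_s$. Substituting $f^{(s)} = l_f^{\otimes s}\circ \bar{\Delta}^{(s-1)}$ and evaluating on a word $a_1\cdots a_n \in U^{\otimes n}$, the iterated reduced coproduct $\bar{\Delta}^{(s-1)}$ yields the sum over ordered decompositions $n = i_1+\cdots+i_s$ with all $i_j \geq 1$ of the corresponding splitting of the word; applying $l_f^{\otimes s}$ replaces the $j$-th block by $f_{i_j}$ evaluated on it, and then $g_s$ is applied on the outside. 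Reading off the result gives $(g\circ f)_n = \sum_{i_1+\cdots+i_r = n} g_r(f_{i_1}\otimes\cdots\otimes f_{i_r})$, the sum running over all $r\geq 1$ and all compositions of $n$ into $r$ positive parts.

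I do not expect a genuine obstacle here; the content is entirely bookkeeping. The one point requiring a little care is the reconstruction formula, specifically the claim that $(\pi_1)^{\otimes m}\circ \bar{\Delta}^{(m-1)}$ is the projection onto $B^{\otimes m}$ and that the iterated reduced coproduct of a length-$n$ word in its degree-$s$ part is exactly the sum over ordered compositions of $n$ into $s$ positive parts; both follow by a routine induction on $m$ (respectively $s$). No signs beyond those already built into the meaning of $g_r(f_{i_1}\otimes\cdots\otimes f_{i_r})$ via the Koszul rule appear, since neither the deconcatenation coproduct nor the projection $\pi_1$ introduces any.
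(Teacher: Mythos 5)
The paper states this lemma without proof, treating it as a standard consequence of cofreeness of the reduced tensor coalgebra. Your argument — extracting the corestriction $l_{g\circ f}=\pi_1\circ g\circ f = l_g\circ f = \sum_{s} g_s\circ f^{(s)}$ and reconstructing each graded component $f^{(s)} = l_f^{\otimes s}\circ\bar{\Delta}^{(s-1)}$ from the observation that $\pi_1^{\otimes m}\circ\bar{\Delta}^{(m-1)}$ is the projection onto tensor degree $m$ — is precisely the canonical proof and is correct in every detail, including the remark that no extra signs arise beyond those implicit in the tensor product of graded maps.
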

	Note the similarity with composition of $A_\infty$-algebra maps.
	\begin{defn}Let $C$ be a dg coalgebra. A \textbf{coderivation of degree $p$ on $C$} is a linear degree $p$ endomorphism $\delta$ of $C$ satisfying $(\delta\otimes 1 + 1 \otimes \delta)\circ \Delta = \Delta \circ \delta$.
	\end{defn} 
	The graded space $\mathrm{Coder}(C)$ of all coderivations of $C$ is not closed under composition, but is closed under the commutator bracket. Say that $\delta \in \mathrm{Coder}^1(C)$ is a \textbf{differential} if $\delta^2=0$; in this case $\mathrm{ad}(\delta)$ is a differential on $\mathrm{Coder}(C)$, making $\mathrm{Coder}(C)$ into a dgla. In the special case that $C=\bar{T}^c(V)$, a coderivation is determined by its Taylor coefficients. Coderivations compose similarly to coalgebra morphisms:
	\begin{lem}
		Let $\delta, \delta'$ be coderivations on $\bar{T}^c(V)$. Then the Taylor coefficients of the composition $\delta\circ \delta'$ are given by $$(\delta\circ \delta')_n = \sum_{r+s+t=n}\delta_{r+1+t}(1^{\otimes r} \otimes \delta'_s \otimes 1^{\otimes t}).$$
	\end{lem}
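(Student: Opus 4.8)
The plan is to reduce the statement to the single nontrivial fact about the reduced tensor coalgebra --- the explicit reconstruction of a coderivation from its Taylor coefficients --- after which the formula falls out of a one-line manipulation of projections, entirely parallel to the preceding lemma on Taylor coefficients of composites of coalgebra maps. Throughout, write $\pi_V\colon\bar{T}^c(V)\to V$ for the canonical projection and $\iota_n\colon V^{\otimes n}\hookrightarrow\bar{T}^c(V)$ for the inclusion of the weight-$n$ summand, so that for any linear endomorphism $f$ of $\bar{T}^c(V)$ the $n$-th Taylor coefficient is by definition $f_n=\pi_V\circ f\circ\iota_n$; note that this makes sense for $f=\delta\circ\delta'$, even though that composite is not itself a coderivation.

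First I would record the reconstruction formula: for any coderivation $\delta$ of $\bar{T}^c(V)$, as maps $V^{\otimes n}\to\bar{T}^c(V)$,
\[
\delta\circ\iota_n=\sum_{r+s+t=n}\iota_{r+1+t}\circ\bigl(1^{\otimes r}\otimes\delta_s\otimes 1^{\otimes t}\bigr),
\]
where the insertion maps $1^{\otimes r}\otimes\delta_s\otimes 1^{\otimes t}\colon V^{\otimes n}\to V^{\otimes(r+1+t)}$ carry the signs dictated by the Koszul sign rule. This is the standard description of coderivations on a cofree conilpotent coalgebra: the right-hand side is readily checked to be a coderivation for the deconcatenation coproduct with the same corestriction to $V$ as $\delta$, and since (as recalled above) a coderivation of $\bar{T}^c(V)$ is determined by its Taylor coefficients the two must agree. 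Alternatively one verifies the identity directly by induction on the number of tensor factors using $(\delta\otimes 1+1\otimes\delta)\circ\Delta=\Delta\circ\delta$; this is where conilpotence of $\bar{T}^c(V)$ is genuinely used.

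Granting this, the computation is immediate. Applying the reconstruction formula to $\delta'$, each summand $1^{\otimes r}\otimes\delta'_s\otimes 1^{\otimes t}$ lands in the single weight-$(r{+}1{+}t)$ piece, so composing with $\pi_V\circ\delta$ and using $\pi_V\circ\delta\circ\iota_m=\delta_m$ gives $(\delta\circ\delta')_n=\pi_V\circ\delta\circ\delta'\circ\iota_n=\sum_{r+s+t=n}\delta_{r+1+t}\bigl(1^{\otimes r}\otimes\delta'_s\otimes 1^{\otimes t}\bigr)$, which is the claim. The only step I expect to cause any friction is keeping the Koszul signs consistent between the reconstruction formula and the insertion maps, and this is precisely why it is cleanest to establish the reconstruction formula once and for all (so that the signs are fixed inside the notation $1^{\otimes r}\otimes(-)\otimes 1^{\otimes t}$) and then never manipulate individual tensors again; with that convention in place the argument is sign-transparent and the real content is the reconstruction formula itself.
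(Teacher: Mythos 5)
The paper states this lemma without proof, so there is nothing to compare against verbatim; but your argument is the standard one and is correct. The reconstruction formula
\[
\delta\circ\iota_n=\sum_{r+s+t=n}\iota_{r+1+t}\circ\bigl(1^{\otimes r}\otimes\delta_s\otimes 1^{\otimes t}\bigr)
\]
is exactly the explicit form of the paper's preceding remark that a coderivation on $\bar{T}^c(V)$ is determined by its Taylor coefficients, and you are right that it is the only genuine content here: once it is established, the claimed formula for $(\delta\circ\delta')_n$ is the one-line consequence $\pi_V\circ\delta\circ\delta'\circ\iota_n$ that you spell out. Two minor points worth being careful about. First, as you note, $\delta\circ\delta'$ is not itself a coderivation (the commutator is), so one must read $(\delta\circ\delta')_n$ as $\pi_V\circ(\delta\circ\delta')\circ\iota_n$ for a general linear endomorphism, which is indeed the convention in force; stating that explicitly, as you do, keeps the argument honest. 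Second, your proof of the reconstruction formula via ``the RHS is a coderivation with the same corestriction, hence agrees with $\delta$'' is fine but does tacitly use conilpotence twice — once for uniqueness of the coderivation, and once in seeing that the infinite sum over $n$ is locally finite on each $V^{\otimes n}$; you flag the first but not the second, and it is worth being aware that the second is automatic precisely because the sum over $r+s+t=n$ is finite for fixed $n$. With those two points understood the proof is complete and sign-transparent, as you claim.
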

	\begin{thm}
		An $A_\infty$-algebra structure on a graded vector space $A$ is the same thing as a differential $\delta$ on $\bar{T}^c(A[1])$.
	\end{thm}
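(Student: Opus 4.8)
The plan is to build a dictionary between degree-one coderivations of $\bar{T}^c(A[1])$ and families of structure maps on $A$, and then to match the condition that such a coderivation squares to zero with the Stasheff identities. First I would recall that, since $\bar{T}^c(A[1])$ is cofree conilpotent, the discussion preceding the theorem shows that a coderivation $\delta$ of degree $1$ is determined uniquely by its Taylor coefficients: a family of degree-one maps $\delta_n \colon (A[1])^{\otimes n} \to A[1]$, and conversely any such family assembles into a unique coderivation. Writing $s \colon A \to A[1]$ for the suspension isomorphism, which has degree $-1$, I would define $m_n \coloneqq s^{-1} \circ \delta_n \circ s^{\otimes n} \colon A^{\otimes n} \to A$; counting degrees gives $|m_n| = -n + 1 + 1 = 2-n$, exactly as required for an $A_\infty$-structure. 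The assignment $(\delta_n)_n \leftrightarrow (m_n)_n$ is visibly a bijection, and this is the content of ``the same thing''; what remains is to see that the two sides of the correspondence satisfy matching relations.

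Next I would compute $\delta^2$. Since $\delta$ has odd degree, $\delta^2 = \tfrac12 [\delta,\delta]$, so $\delta$ is a differential exactly when $\delta^2 = 0$, and because a coderivation is pinned down by its Taylor coefficients this happens if and only if $(\delta \circ \delta)_n = 0$ for all $n \geq 1$. By the composition lemma for coderivations on a reduced tensor coalgebra (stated just above), this $n$-th coefficient is
$$(\delta \circ \delta)_n = \sum_{r+s+t=n} \delta_{r+1+t}\circ(1^{\otimes r} \otimes \delta_s \otimes 1^{\otimes t}),$$
with no extra signs, since we are in the shifted picture. Substituting $\delta_k = s \circ m_k \circ (s^{-1})^{\otimes k}$ and pulling all the suspension maps to the outside, the Koszul sign rule forces a correction factor: the copy of $s$ produced by $\delta_s$ must be dragged past the $r$ identity slots to its left, and the leftover shifts on the remaining $t$ slots contribute as well. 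Carrying this out, $(\delta \circ \delta)_n = 0$ becomes precisely $\sum_{r+s+t=n}(-1)^{r+st} m_{r+1+t}(1^{\otimes r} \otimes m_s \otimes 1^{\otimes t}) = 0$, which is the Stasheff identity $\mathrm{St}_n$. Reading the equivalence in the other direction shows that every $A_\infty$-algebra gives rise to a square-zero coderivation on $\bar{T}^c(A[1])$.

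The only real work is the sign computation in the last step: one must check that the décalage signs arising from conjugating the (sign-free) coderivation composition formula by $s$ and $s^{-1}$ are exactly the $(-1)^{r+st}$ of the Getzler--Jones convention used in the definition of an $A_\infty$-algebra, and that no residual signs survive. This is a finite, if fiddly, verification of the interaction between the Koszul sign rule and the shift functor; once it is done, the theorem follows formally from the cofreeness of $\bar{T}^c$ together with the fact that a coderivation is determined by its Taylor coefficients. (The parallel statements for strict units, augmentations, and morphisms translate compatibly under the same dictionary, but these are not needed for the bare equivalence asserted here.)
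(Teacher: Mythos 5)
Your proposal follows the same strategy as the paper's own (sketched) proof: use cofreeness of $\bar{T}^c(A[1])$ to reduce a coderivation to its Taylor coefficients, desuspend these to get the $m_n$, and then observe that the sign-free composition lemma for coderivations turns $\delta^2=0$ into the Stasheff identities once the décalage signs from conjugating by $s$ and $s^{\otimes n}$ are accounted for. You spell out the degree count and the role of the composition lemma a bit more explicitly than the paper does, but the argument, and in particular the identification of the sign discrepancy as a suspension artifact, is the same.
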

	\begin{proof}[Proof sketch]Given a coderivation $\delta$ we obtain Taylor coefficients $\delta_n:A[1]^{\otimes n} \to A$ of degree 1; in other words, these are maps $m_n: A^{\otimes n} \to A$ of degree $2-n$. The Stasheff identities are equivalent to $\delta$ being a differential. The sign changes occur in the Stasheff identities because of the need to move elements past the formal suspension symbol $[1]$.
	\end{proof}
	The following proposition can be checked in a similar manner:
	\begin{prop}
		Let $A,A'$ be two $A_\infty$-algebras with associated differentials $\delta, \delta'$. Then an $A_\infty$-morphism $f:A \to A'$ is the same thing as a coalgebra morphism $\bar{T}^c(A[1]) \to \bar{T}^c(A'[1])$ commuting with the differentials.
	\end{prop}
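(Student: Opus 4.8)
The plan is to run the same argument as for the preceding theorem, but one categorical level up: instead of analysing a single differential on $\bar{T}^c(A[1])$, I analyse a morphism between $\bar{T}^c(A[1])$ and $\bar{T}^c(A'[1])$ and its compatibility with the two differentials $\delta,\delta'$. First I would recall that, because both are conilpotent reduced tensor coalgebras, a coalgebra morphism $F\colon \bar{T}^c(A[1]) \to \bar{T}^c(A'[1])$ is uniquely determined by its Taylor coefficients $F_n\colon A[1]^{\otimes n} \to A'[1]$, equivalently by its corestriction $\pi'\circ F$ to $A'[1]$, where $\pi'$ denotes the canonical projection. A degree-preserving $F$ has each $F_n$ of degree $0$; desuspending, this is exactly a family of maps $f_n\colon A^{\otimes n} \to A'$ of degree $1-n$. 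So the correspondence on underlying data is immediate, and all the content lies in matching ``$F$ commutes with the differentials'' to the defining identities of an $A_\infty$-morphism.

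The key structural step is the observation that both $F\circ\delta$ and $\delta'\circ F$ are \emph{$F$-coderivations}, meaning they satisfy the twisted Leibniz rule $\Delta'\circ\phi = (\phi\otimes F + F\otimes\phi)\circ\Delta$. This follows by a one-line computation: since $\delta$ is a coderivation and $F$ a coalgebra map, $\Delta' F\delta = (F\otimes F)\Delta\delta = (F\otimes F)(\delta\otimes 1 + 1\otimes\delta)\Delta = (F\delta\otimes F + F\otimes F\delta)\Delta$, and dually $\Delta'\delta' F = (\delta'\otimes 1 + 1\otimes\delta')(F\otimes F)\Delta = (\delta' F\otimes F + F\otimes\delta' F)\Delta$. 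A short filtered induction using conilpotency shows that an $F$-coderivation is determined by its corestriction to $A'[1]$ together with $F$ itself; in particular the difference $F\delta - \delta' F$, which is again an $F$-coderivation, vanishes identically as soon as it vanishes after post-composition with $\pi'$. Hence $F$ commutes with the differentials if and only if $\pi'\circ F\circ\delta = \pi'\circ\delta'\circ F$.

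It then remains to expand the two corestrictions into Taylor coefficients. These expansions are direct analogues of the two composition lemmas already established (one for composites of coalgebra maps, one for composites of coderivations): evaluated on $a_1\cdots a_n$ one gets $\sum_{r+s+t=n} F_{r+1+t}(1^{\otimes r}\otimes\delta_s\otimes 1^{\otimes t})$ from $\pi' F\delta$ and $\sum_{i_1+\cdots+i_r=n}\delta'_r(F_{i_1}\otimes\cdots\otimes F_{i_r})$ from $\pi'\delta' F$, as maps on the suspended spaces, with no signs. Desuspending, i.e.\ replacing the $\delta$'s and $F$'s on $A[1],A'[1]$ by the $m$'s and $f$'s on $A,A'$ and pushing all occurrences of the formal suspension symbol $[1]$ to the outside via the Koszul sign rule, introduces precisely the factors $(-1)^{r+st}$ on the left and $(-1)^{\sigma(i_1,\dots,i_n)}$ on the right. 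Thus $\pi'(F\delta - \delta' F)=0$ becomes, term by term in $n$, exactly the defining identity of an $A_\infty$-morphism $A\to A'$. Running this chain of equivalences backwards shows that $A_\infty$-morphism data $(f_n)$ assembles into the unique coalgebra map $F$ with $F_n$ the suspension of $f_n$, and that this $F$ commutes with the differentials; one also checks that composition of $A_\infty$-morphisms corresponds, under this dictionary, to composition of coalgebra maps, using the composition lemma for coalgebra morphisms.

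The one genuinely delicate point — as always in this circle of results — is the sign comparison in the desuspension step: one must verify that the Koszul signs accrued really do coincide with $(-1)^{r+st}$ and with the less transparent $(-1)^{\sigma(i_1,\dots,i_n)} = (-1)^{\sum_j (r-j)(i_j-1)}$, in the conventions of \cite{getzlerjones}. The earlier sign lemma computing the parity $\epsilon = s\sum_{j=1}^r|a_j|$ is the prototype for this bookkeeping, and the same slotwise parity count, applied to each internal $\delta_s$ or $F_{i_j}$ in turn, settles the general case. Everything else — the reduction to corestrictions via the $F$-coderivation remark, and the conilpotency-determinacy lemma underlying it — is routine.
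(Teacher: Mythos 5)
Your proposal is correct and follows what is clearly the intended argument: the paper offers no proof of its own beyond remarking that the statement ``can be checked in a similar manner'' to the preceding theorem identifying $A_\infty$-structures with coderivation differentials on $\bar{T}^c(A[1])$, and your route is the natural elaboration of that remark. The one piece of genuine content you add that the paper leaves implicit is the ``$F$-coderivation'' observation --- that both $F\delta$ and $\delta' F$ satisfy the twisted Leibniz rule $\Delta'\phi = (\phi\otimes F + F\otimes\phi)\Delta$, so that their difference is determined by its corestriction to $A'[1]$ --- which is precisely the lemma needed to make ``check it on Taylor coefficients'' rigorous rather than heuristic. Your expansions of $\pi'F\delta$ and $\pi'\delta'F$ match the paper's two composition lemmas (one for coderivation composites, one for coalgebra-map composites), and your account of where the signs $(-1)^{r+st}$ and $(-1)^{\sigma(i_1,\dots,i_n)}$ arise via desuspension agrees with the paper's stated sign conventions from \cite{getzlerjones}.
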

	\begin{defn}
		Let $A,A'$ be $A_\infty$-algebras and $f,g$ a pair of maps $A \to A'$. Let $F,G$ be the associated maps $\bar{T}^c(A[1]) \to \bar{T}^c(A'[1])$. Say that $f$ and $g$ are \textbf{homotopic} if there is a map $H: \bar{T}^c(A[1]) \to \bar{T}^c(A'[1])$ of degree $-1$ with $\Delta H = F \otimes H + H\otimes G$ and $F-G=\partial H$, where $\partial$ is the differential in the $\hom$-complex.
	\end{defn}
	One can unwind this definition into a set of identities on the Taylor coefficients of $H$; this is done in \cite[1.2]{lefevre}. Say that $A,A'$ are \textbf{homotopy equivalent} if there are maps $f:A \to A'$ and $f':A' \to A$ satisfying $f'f\simeq \id_{A}$ and $ff'\simeq \id_{A'}$.
	\begin{thm}[\cite{proute}]
		Homotopy equivalence is an equivalence relation on the category $\cat{Alg}_\infty$ of $A_\infty$-algebras. Moreover, two $A_\infty$-algebras are homotopy equivalent if and only if they are quasi-isomorphic.
	\end{thm}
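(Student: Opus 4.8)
The cleanest route is to work through the bar-construction dictionary recorded above: an $A_\infty$-algebra $A$ is the same as the conilpotent dg coalgebra $C_A\coloneqq\bar{T}^c(A[1])$, an $A_\infty$-morphism $f\colon A\to A'$ is the same as a dg coalgebra map $F\colon C_A\to C_{A'}$, and, unwinding the definition, a homotopy between $f$ and $g$ is precisely an $(F,G)$-coderivation $H\colon C_A\to C_{A'}$ of degree $-1$ with $F-G=\delta_{A'}H+H\delta_A$. The plan is to prove both assertions on the coalgebra side, where one has access to homotopical machinery for conilpotent dg coalgebras.

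\emph{Homotopy equivalence is an equivalence relation (and is compatible with composition).} Reflexivity is immediate, taking $H=0$. For symmetry and transitivity, and for the statements that $f\simeq f'$ implies $gf\simeq gf'$ and $fh\simeq f'h$, I would either invoke the model structure on conilpotent dg coalgebras (Lefèvre-Hasegawa, Positselski), in which every coalgebra of the form $\bar{T}^c(V)$ is cofibrant and the coderivation-homotopy relation above agrees with the model-categorical left-homotopy relation — whence it is automatically an equivalence relation compatible with composition — or else construct the reversed and concatenated homotopies directly from the Taylor coefficients of $H$, keeping careful track of Koszul signs. Either way one gets in particular that a composite of homotopy equivalences is a homotopy equivalence, which is needed below.

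\emph{Homotopy equivalence $\Rightarrow$ quasi-isomorphism.} This is the easy direction. The first Taylor coefficient of $F-G=\delta_{A'}H+H\delta_A$ reads $f_1-g_1=m_1'H_1+H_1m_1$ up to sign, so homotopic $A_\infty$-maps have chain-homotopic linear parts and hence induce the same map on cohomology of the underlying complexes. Since $(f'\circ f)_1=f'_1f_1$, a homotopy equivalence $f$ with homotopy inverse $f'$ yields $H^*(f'_1)H^*(f_1)=\mathrm{id}$ and $H^*(f_1)H^*(f'_1)=\mathrm{id}$, so $f_1$ is a quasi-isomorphism; i.e.\ $f$ is an $A_\infty$-quasi-isomorphism. (If ``quasi-isomorphic'' is read as ``connected by a zigzag of quasi-isomorphisms'', this direction is immediate.)

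\emph{Quasi-isomorphism $\Rightarrow$ homotopy equivalence.} This is the crux, and proceeds via homotopy transfer. First I would establish the minimal-model (Kadeishvili) theorem: every $A_\infty$-algebra $A$ admits a minimal $A_\infty$-structure on $HA$ together with homotopy-retract data $(i,p,h)$, where $i\colon HA\to A$ and $p\colon A\to HA$ are $A_\infty$-quasi-isomorphisms with $pi=\mathrm{id}_{HA}$ strictly and $ip\simeq\mathrm{id}_A$; in particular $i$ and $p$ are mutually homotopy-inverse. Second, I would show that an $A_\infty$-quasi-isomorphism $\phi\colon M\to M'$ between minimal $A_\infty$-algebras (those with $m_1=0$) is a strict isomorphism: $\phi_1$ is a quasi-isomorphism of complexes with zero differential, hence an isomorphism of graded vector spaces, and one then solves inductively on arity for the Taylor coefficients of a two-sided inverse $\psi$ using the $A_\infty$-morphism identities — this is exactly where the absence of $m_1$ removes the obstruction to the induction, and is the reason $A_\infty$-quasi-isomorphisms invert while dg ones need not. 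Finally, given a quasi-isomorphism $f\colon A\to A'$, set $\phi\coloneqq p'\circ f\circ i\colon HA\to HA'$; then $\phi_1$ is a quasi-isomorphism of minimal algebras, hence $\phi$ is an isomorphism, and using compatibility of $\simeq$ with composition from the first part, $f\simeq(i'p')\,f\,(ip)=i'\phi p$, a composite of homotopy equivalences, hence itself a homotopy equivalence. (A zigzag of quasi-isomorphisms then collapses to a single one by inverting the wrong-way arrows up to homotopy, and transitivity finishes.)

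\emph{Main obstacle.} The real content is concentrated in the last paragraph: constructing the minimal model with its homotopy-retract data, and then inverting a quasi-isomorphism of minimal $A_\infty$-algebras by the sign-laden induction on Taylor coefficients. The equivalence-relation part is either formal (if one imports the model structure on conilpotent dg coalgebras) or a self-contained but fiddly computation; I would be prepared for the bookkeeping of Koszul signs to be the most error-prone piece throughout.
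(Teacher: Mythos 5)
The paper states this theorem by citing Prout\'e and does not give a proof, so there is no internal argument to compare against; your sketch is, however, exactly the standard proof one finds in Lef\`evre-Hasegawa's thesis \cite{lefevre}, which the paper cites for the surrounding material. The decomposition is right and the hard part is correctly identified: (i) the homotopy relation agrees with the model-categorical left homotopy in $\cat{cndgc}_k$ (where bar constructions $\bar T^c(A[1])$ are both cofibrant and fibrant), so it is automatically an equivalence relation compatible with composition; (ii) comparing first Taylor coefficients gives homotopy equivalence $\Rightarrow$ quasi-isomorphism; (iii) the converse reduces, via the minimal model and the strict invertibility of quasi-isomorphisms between minimal $A_\infty$-algebras, to showing $ip\simeq\mathrm{id}_A$. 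The one point to be careful about in (iii) is that Kadeishvili's theorem as quoted in the paper (\ref{kadeish}) only produces a quasi-isomorphism $\mathscr{H}A\to A$; to get the homotopy-retract data $pi=\mathrm{id}$ and $ip\simeq\mathrm{id}$ you really need the homotopy transfer theorem for a deformation retract (or a side-condition argument \`a la \cite{merkulov,lodayvallette}), which you correctly flag as something you would establish rather than assume. With that in hand, your bootstrap $f\simeq i'\phi p$ with $\phi=p'fi$ invertible is not circular and closes the argument. No gap.
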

	The category $\cat{dga}$ of differential graded algebras sits inside the category $\cat{Alg}_\infty$ of $A_\infty$-algebras. It is not a full subcategory: there may be more $A_\infty$-algebra maps than dga maps between two dgas. However, two dgas are dga quasi-isomorphic if and only if they are $A_\infty$-quasi-isomorphic: this is shown in, for example, \cite[1.3.1.3]{lefevre}. Abstractly, this follows from the existence of model structures on both $\cat{dga}$ and $\cat{cndgc}$, the category of conilpotent dg coalgebras, for which the bar and cobar constructions (see \ref{barcobarsctn}) are Quillen equivalences.
	
	\p Including $\cat{dga}\into\cat{Alg}_\infty$ does not create more quasi-isomorphism classes. Indeed, every $A_\infty$-algebra is quasi-isomorphic to a dga: one can take the adjunction quasi-isomorphism \linebreak $A \to \Omega B_\infty A$ induced by the bar and cobar constructions. However, we do get new descriptions of quasi-isomorphism class representatives. One nice such representative is the \textbf{minimal model} of an $A_\infty$-algebra.
	\section{Minimal models}\label{minmods}
	An $A_\infty$-algebra is \textbf{minimal} if $m_1=0$. Every $A_\infty$-algebra admits a minimal model. More precisely:
	\begin{thm}[Kadeishvili \cite{kadeishvili}]\label{kadeish}
		Let $(A,m_1,m_2,\ldots)$ be an $A_\infty$-algebra, and let $HA$ be its cohomology ring. Then there exists the structure of an $A_\infty$-algebra ${\mathscr{H}\kern -1pt A}= (HA,0,[m_2],p_3,p_4,\ldots)$ on $HA$, unique up to $A_\infty$-isomorphism, and an $A_\infty$-algebra morphism ${\mathscr{H}\kern -1pt A} \to A$ lifting the identity of $HA$.
	\end{thm}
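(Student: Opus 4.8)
The plan is to build the higher multiplications $p_n$ on $HA$ together with the components $f_n$ of an $A_\infty$-morphism $f\colon {\mathscr{H}\kern -1pt A}\to A$ simultaneously, by induction on $n$, using the homotopical perturbation package. First I would fix some auxiliary data: choose a $k$-linear splitting of the complex $(A,m_1)$, namely maps so that $HA$ sits inside $A$ as (representatives of) cohomology classes, together with a degree $-1$ homotopy $h\colon A\to A$ with $1 - \iota\pi = m_1 h + h m_1$, where $\iota\colon HA\to A$ is the chosen section and $\pi\colon A\to HA$ the projection; one also normalises so that $h^2 = 0$, $h\iota = 0$, $\pi h = 0$. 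This is possible since we work over a field, so every complex of vector spaces splits; it is exactly the setup for the homotopy transfer theorem. Set $p_1 = 0$, $p_2 = [m_2] = \pi\circ m_2\circ(\iota\otimes\iota)$, and $f_1 = \iota$.

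The inductive step is the heart of the argument. Assume $p_2,\dots,p_{n-1}$ and $f_1,\dots,f_{n-1}$ have been constructed so that the Stasheff identities $\mathrm{St}_j$ on $HA$ and the morphism identities hold for all $j < n$. One then writes down the obstruction to extending: collect all terms in the degree-$n$ morphism identity that involve only the already-constructed data. Using $\mathrm{St}_{<n}$ on both sides, a standard computation (the one Kadeishvili does by hand, and which the operadic perturbation lemma packages cleanly) shows that this expression $U_n\colon HA^{\otimes n}\to A$ is an $m_1$-cocycle, i.e.\ $m_1 U_n = 0$ up to terms that vanish by the lower identities. Since $(HA,0)$ has zero differential, one can then \emph{define} $p_n \coloneqq \pi\circ U_n$ and $f_n \coloneqq -h\circ U_n$ (signs according to the \cite{getzlerjones} conventions), and check directly that $\mathrm{St}_n$ on $HA$ and the $n$-th morphism identity are now satisfied — this is precisely the content of the homotopy $h$ "correcting" the failure of $U_n$ to be exact. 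The fact that $f_1 = \iota$ is a quasi-isomorphism (it is the inclusion of representatives of cohomology, hence an isomorphism on $H^*$) gives the "lifting the identity of $HA$" clause.

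For uniqueness up to $A_\infty$-isomorphism, I would argue as follows. Suppose $({\mathscr{H}\kern -1pt A}, 0, [m_2], p_\bullet)$ and $({\mathscr{H}\kern -1pt A}, 0, [m_2], p'_\bullet)$ are two minimal models, with morphisms $f, f'$ to $A$. One wants an $A_\infty$-isomorphism $g$ between them with $g_1 = \mathrm{id}_{HA}$. Again one builds $g_n$ by induction: the degree-$n$ morphism identity relating $g$ to $p, p'$, once the lower $g_j$ are fixed, has an obstruction term that is an $m_1$-cocycle in $HA$ — but now $HA$ has zero differential, so one solves for $g_n$ algebraically (no homotopy needed, because every cocycle in a zero-differential complex is handled directly). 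The induction starts with $g_1 = \mathrm{id}$, which is invertible, and an $A_\infty$-morphism with invertible linear term is automatically an $A_\infty$-isomorphism (one inverts degreewise). Alternatively, one can cite the general fact quoted just before in the excerpt — homotopy equivalence of $A_\infty$-algebras coincides with quasi-isomorphism \cite{proute} — together with the observation that a quasi-isomorphism between \emph{minimal} $A_\infty$-algebras is automatically an isomorphism (its linear part is an iso), to get uniqueness more cheaply.

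The main obstacle is the bookkeeping in the inductive step: verifying that the obstruction cochain $U_n$ is genuinely a cocycle requires carefully feeding the lower Stasheff identities $\mathrm{St}_{j}$, $j<n$, and the lower morphism identities into a large signed sum and watching everything cancel, with all signs tracked through the Koszul rule and the suspension $[1]$. This is exactly where the operadic/coalgebraic formalism of \S\ref{minmods} pays off: rephrasing $f$ as a coalgebra map $\bar{T}^c(HA[1])\to\bar{T}^c(A[1])$ commuting with differentials turns the whole inductive construction into solving a single equation $F\delta_{HA} = \delta_A F$ degree by degree, where the cocycle condition on the obstruction is \emph{automatic} from $\delta_A^2 = 0$. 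I would present the proof in that language to keep the sign computations under control, only unwinding to explicit formulas for $p_n$ and $f_n$ at the end.
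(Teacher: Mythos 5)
The paper does not actually prove this theorem: it is stated with a citation to Kadeishvili and followed by a remark explaining that it is essentially equivalent to the homotopy transfer theorem (choosing a splitting of $(A,m_1)$ over the field $k$), with Merkulov's explicit recursive formulas for the $p_n$ given afterward in \S\ref{minmods}. Your sketch is a correct rendition of exactly that standard argument: fixing a special deformation retract with side conditions $h^2 = h\iota = \pi h = 0$, constructing $(p_n, f_n)$ by induction, observing that the obstruction $U_n$ is a cocycle (which, as you say, is automatic once one phrases things coalgebraically as solving $F\delta_{HA} = \delta_A F$ degree by degree, using $\delta_A^2=0$), and setting $p_n = \pi U_n$, $f_n = -h U_n$ to kill it. Your uniqueness argument is also fine; the cleaner route you offer at the end is the standard one, namely that a quasi-isomorphism between minimal $A_\infty$-algebras has invertible linear part and is therefore an $A_\infty$-isomorphism, combined with the existence of quasi-inverses of $A_\infty$-quasi-isomorphisms (the cited result of Prout\'e, or \cite{lefevre}). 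In short: correct, and aligned with the approach the paper gestures at without spelling out.
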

	\begin{rmk}
		While the multiplication on $HA$ is induced by $m_2$, we need not have $p_n=[m_n]$ for $n>2$; indeed the $m_n$ need not even be cocycles. For example, if $A$ is a non-formal dga, then ${\mathscr{H}\kern -1pt A}$ must have nontrivial higher multiplications. We also note that ${\mathscr{H}\kern -1pt A} \to A$ is clearly an $A_\infty$-quasi-isomorphism, since it lifts the identity on $HA$. We also remark that the theorem follows from the essentially equivalent \textbf{homotopy transfer theorem}: if $A$ is an $A_\infty$-algebra, and $V$ a homotopy retract of $A$, then $V$ admits the structure of an $A_\infty$-algebra making the retract into an $A_\infty$-quasi-isomorphism (see \cite[9.4]{lodayvallette} for details). The result follows since, over a field, the cohomology of any chain complex is always a homotopy retract as one can choose splittings.
	\end{rmk}
	It is possible to give a explicit description of the higher multiplications $p_n$ appearing in Kadeishvili's theorem: Merkulov did this in \cite{merkulov}. One can define them recursively: suppose for convenience that $A$ is a dga. Choose any section $\sigma:HA \to A$ and let $\pi:A \to HA$ be the projection to $HA$. We will identify $HA$ with its image under $\sigma$. Choose a homotopy $h: \id_A \to \sigma\pi$. Define recursively maps $\lambda_n: (HA)^{\otimes n} \to A$ by $\lambda_2=m_2$, and $$\lambda_n\coloneqq \sum_{s+t=n}(-1)^{s+1}\lambda_2(h\lambda_s\otimes h\lambda_t)$$where we formally interpret $h\lambda_1\coloneqq -\id_A$. Then, $p_n=\pi\circ\lambda_n$. See \cite{markl} for some very explicit formulas (whose sign conventions differ). We remark that there may be many different ways of constructing the $p_n$, but they will all give $A_\infty$-isomorphic algebras.
	
	\begin{defn}
		Let $G$ be an abelian group. An $A_\infty$-algebra $A$ is \textbf{Adams }$G$-\textbf{graded} or just \textbf{Adams graded} if it admits a secondary grading by $G$ such that each higher multiplication map $m_n$ is of degree 0 in the secondary $G$-grading.
	\end{defn}
\begin{rmk}\label{adamsrmk}
	Note that for a dga to be Adams graded, we require only that the differential $d=m_1$ has Adams degree zero. If an $A_\infty$-algebra is Adams graded, then by making appropriate choices one can upgrade Merkulov's construction to give an $A_\infty$-quasi-isomorphism of Adams graded algebras $A \to \mathscr{H}\kern -1pt A$. Moreover, if $A$ is strictly unital, one can choose the morphism to be strictly unital. See \cite[\S2]{lpwzext} for more details.
\end{rmk}
	
	\p One can sometimes compute $A_\infty$-operations on a dga by means of Massey products. In what follows, $\tilde{a}$ means $(-1)^{1+|a|}{a}$, using the same sign conventions as \cite{kraines}.
	\begin{defn}\label{masseys}
		Let $u_1,\ldots, u_r$ be cohomology classes in a dga $A$. Pick representatives $u_i=[a_{i\,i}]$. The \textbf{$r$-fold Massey product} $\langle u_1,\ldots, u_r \rangle$ of the cohomology classes $u_1,\ldots ,u_r$ is defined to be the set of cohomology classes of sums $\tilde{a}_{1\,1}a_{2\,r}+\cdots+\tilde{a}_{1\,r-1}a_{r\,r}$ such that $da_{i\,j}=\tilde{a}_{i\,i}a_{i+1\,j}+\cdots+\tilde{a}_{i\,j-1}a_{j\,j}$ for all $1\leq i \leq j \leq r$ with $(i,j)\neq(1,r)$. This operation is well-defined, in the sense that it depends only on the cohomology classes $u_1,\ldots, u_r$.
	\end{defn}
	We will abuse terminology by referring to elements of $\langle x_1,\ldots, x_r \rangle$ as Massey products. We may also decorate the product $\langle x_1,\ldots, x_r \rangle$ with a subscript $\langle x_1,\ldots, x_r \rangle_r$ to emphasise that it is an $r$-fold product.
	\begin{rmk}
		We remark that $\langle x_1,\ldots, x_r \rangle$ may be empty: for example, in order for $\langle x,y,z\rangle$ to be nonempty, we must have $xy=yz=0$. More generally, for $\langle x_1,\ldots, x_r \rangle$ to be nonempty, we require that each $\langle x_p,\ldots, x_q \rangle$ is nonempty for $0<q-p<n-1$. Most sources define $\langle x,y,z\rangle$ only when it is nonempty, and leave it undefined otherwise.
	\end{rmk}
	The point is that, when Massey products exist, Merkulov's higher multiplications $p_n$ are all Massey products, up to sign:
	\begin{thm}[{\cite[3.1]{lpwzext}}]\label{masseysarehighermults}
		Let $A$ be a dga and let $x_1,\ldots, x_r$ $(r>2)$ be cohomology classes in $HA$, and suppose that  $\langle x_1,\ldots, x_r \rangle$ is nonempty. Give $HA$ an $A_\infty$-algebra structure via Merkulov's construction. Then, up to sign, the higher multiplication $p_r(x_1,\ldots, x_r)$ is a Massey product.
	\end{thm}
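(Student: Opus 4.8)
The plan is to extract a Massey-product defining system directly from Merkulov's homotopy-transfer formulas and then read $p_r(x_1,\ldots,x_r)$ off it. Recall from \S\ref{minmods} that Merkulov sets $p_n = \pi\circ\lambda_n$, where $\pi\colon A \to HA$ is the projection, $\sigma\colon HA \to A$ a chosen section, $h$ a contracting homotopy with $\id_A - \sigma\pi = dh + hd$, $\lambda_2 = m_2$, and $\lambda_n = \sum_{s+t=n}(-1)^{s+1}m_2(h\lambda_s\otimes h\lambda_t)$ with the convention $h\lambda_1 \coloneqq -\id_A$. Identifying each $x_i$ with $\sigma(x_i)$, put $a_{ii}\coloneqq x_i$ and, for $i<j$ with $j-i<r-1$, put $a_{ij}\coloneqq \epsilon_{ij}\,h\lambda_{j-i+1}(x_i\otimes\cdots\otimes x_j)$ for signs $\epsilon_{ij}\in\{\pm1\}$ to be pinned down. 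The crux is the claim that $\{a_{ij}\}$ is a defining system for $\langle x_1,\ldots,x_r\rangle$, that is, $d a_{ij} = \tilde a_{ii}a_{i+1,j}+\cdots+\tilde a_{i,j-1}a_{jj}$ whenever $(i,j)\neq(1,r)$. Granting this, I would finish by expanding $p_r(x_1,\ldots,x_r) = \pi\lambda_r(x_1\otimes\cdots\otimes x_r)$ via Merkulov's recursion and substituting $h\lambda_s(x_i\otimes\cdots\otimes x_{i+s-1}) = \pm a_{i,i+s-1}$ (with $s=1$ contributing $-a_{ii}$): this rewrites $\lambda_r(x_1\otimes\cdots\otimes x_r)$ as a signed sum of products $a_{1k}a_{k+1,r}$, so that $\pm p_r(x_1,\ldots,x_r)$ is the class of $\tilde a_{11}a_{2r}+\cdots+\tilde a_{1,r-1}a_{rr}$, which by Definition~\ref{masseys} lies in $\pm\langle x_1,\ldots,x_r\rangle$.

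I would prove the claim by induction on the length $\ell = j-i$, nested inside an outer induction on $r$. The base $\ell=1$ uses only that the length-two sub-products vanish: since $\langle x_1,\ldots,x_r\rangle$ is nonempty, $[x_ix_{i+1}]=0$, so the cocycle $x_ix_{i+1}$ equals $(dh+hd)(x_ix_{i+1}) = d\,h\lambda_2(x_i\otimes x_{i+1})$, which is the desired relation after fixing $\epsilon_{i,i+1}=(-1)^{1+|x_i|}$. For the inductive step, apply $d$ to $a_{ij}$ and use $dh = \id_A - \sigma\pi - hd$. Two facts make the right-hand side collapse to $\pm\lambda_{\ell+1}(x_i\otimes\cdots\otimes x_j)$: first, $\lambda_{\ell+1}(x_i\otimes\cdots\otimes x_j)$ is a cocycle, which follows from the associativity of $m_2$ exactly as in the verification that the $p_n$ obey the Stasheff identities, once one knows $d\,h\lambda_s = \lambda_s$ on the shorter consecutive subtuples (part of the inductive hypothesis); second, $\pi\lambda_{\ell+1}(x_i\otimes\cdots\otimes x_j) = p_{\ell+1}(x_i,\ldots,x_j)=0$ since $\ell+1<r$ and $\langle x_i,\ldots,x_j\rangle$ is a proper sub-Massey-product. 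Expanding $\lambda_{\ell+1}(x_i\otimes\cdots\otimes x_j)$ by Merkulov's formula and substituting the inductively-known values $h\lambda_s(x_i\otimes\cdots\otimes x_{i+s-1}) = \pm a_{i,i+s-1}$ turns $d a_{ij}$ into $\sum_{k=i}^{j-1}(\pm)\,a_{ik}a_{k+1,j}$, which with the $\epsilon_{ij}$ chosen recursively is exactly $\tilde a_{ii}a_{i+1,j}+\cdots+\tilde a_{i,j-1}a_{jj}$.

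The main obstacle is not conceptual but bookkeeping: one must choose the signs $\epsilon_{ij}$ so that the weights $(-1)^{s+1}$ of Merkulov's recursion, the tildes $\tilde a = (-1)^{1+|a|}a$ of Definition~\ref{masseys}, and the Koszul signs picked up when the homotopy $h$ (degree $-1$) and the maps $\lambda_s$ (degree $2-s$) slide past homogeneous arguments all cohere. I would carry this out in the bar picture of \S\ref{minmods}, where $\lambda_n$ is a Taylor coefficient of a coalgebra map $\bar{T}^c(HA[1])\to\bar{T}^c(A[1])$ and every sign is forced by the Koszul rule, then reconcile with the conventions of \cite{kraines} and \cite{merkulov}, cross-checking against the explicit formulas of \cite{markl} and \cite{lpwzext}. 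The one place where genuine input beyond formal manipulation enters is the vanishing $p_{\ell+1}(x_i,\ldots,x_j)=0$ on proper consecutive subtuples: this is what the nonemptiness of $\langle x_1,\ldots,x_r\rangle$ is for, since every sub-product is then defined and, by the outer induction, $\pm p_{\ell+1}(x_i,\ldots,x_j)$ lies in the corresponding sub-Massey-product, which contains $0$; if one needs this vanishing literally rather than only modulo indeterminacy, one arranges it by choosing the section $\sigma$ and homotopy $h$ compatibly with a fixed defining system, which does not affect Merkulov's recipe.
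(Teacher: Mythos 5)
The paper does not prove this theorem itself; it cites it as \cite[3.1]{lpwzext}, so I can only compare your argument with the proof in that reference, whose strategy (recognising Merkulov's recursion as producing a defining system) is exactly what you use. However, your argument has a genuine gap at precisely the point you flag as the crux. In the inductive step you need the literal identity $p_{\ell+1}(x_i,\ldots,x_j)=0$ on every proper consecutive subtuple, since $dh\lambda_{\ell+1}(x_i\otimes\cdots\otimes x_j)=\lambda_{\ell+1}(\cdots)-\sigma\pi\lambda_{\ell+1}(\cdots)-hd\lambda_{\ell+1}(\cdots)$ and the middle term is $\sigma p_{\ell+1}(x_i,\ldots,x_j)$, which if nonzero is a non-bounding cocycle that cannot be absorbed into the defining-system relation. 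The nonemptiness of $\langle x_1,\ldots,x_r\rangle$ only tells you that each sub-Massey-product $\langle x_i,\ldots,x_j\rangle$ \emph{contains} zero; it does not force the particular element $\pm p_{\ell+1}(x_i,\ldots,x_j)$ of that set to be zero, and Massey products are sets, not cosets with canonical basepoint, so ``modulo indeterminacy'' is not available here. Your proposed repair --- choose $\sigma$ and $h$ compatibly with a fixed defining system --- is not justified and does not come for free: once $\sigma,h$ are fixed, $p_{\ell+1}(x_i,\ldots,x_j)$ is an obstruction that either vanishes or doesn't, and if it doesn't, no choice of the correction term in $a_{ij}$ can kill the $\sigma p_{\ell+1}$ contribution since it represents a nonzero class. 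Rechoosing $h$ changes \emph{all} the $p_k$ simultaneously across all tuples, and arranging all the relevant sub-products to vanish at once is a nontrivial simultaneous problem, not a local adjustment.

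The resolution is that \cite[3.1]{lpwzext} carries exactly this vanishing as an explicit hypothesis: one assumes $m_s(x_{i+1},\ldots,x_{i+s})=0$ for all $2\le s\le r-1$ and all $i$, and only then concludes $m_r(x_1,\ldots,x_r)\in\pm\langle x_1,\ldots,x_r\rangle$. The statement as reproduced in the thesis drops this hypothesis and is therefore slightly imprecise; it is harmless in the thesis's applications because in every case (Pagoda, Laufer, and the $A_n$ surface slices) the tuple is a power of a single generator and an Adams-weight argument as in \ref{adamsrmk} forces all the intermediate $p_k$ to vanish identically, so the hypothesis holds for free. Your writeup would be correct if you added the vanishing of the intermediate higher multiplications as a hypothesis; you should not try to manufacture it from the nonemptiness of the Massey product.
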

	So, if $A$ is a formal dga, then all Massey products (that exist) will vanish. The converse is not true: formality of a dga cannot be checked simply by looking at its Massey products. We will use the existence of Massey products to detect non-formality: the following lemma is computationally useful.
	\begin{lem}\label{masseylemma}Let $A$ be a dga and $u=[a]$ a cohomology class in $A$. Put $b_1\coloneqq a$ and let $r>2$ be an integer. Then the $r$-fold Massey product $\langle u,\ldots, u \rangle_r$ of $u$ with itself $r$ times is the set of cohomology classes of sums $\tilde{b}_1b_{r-1}+\cdots+\tilde{b}_{r-1}b_1$ such that $db_i=\tilde{b}_1b_{i-1}+\cdots+\tilde{b}_{i-1}b_1$ for all $1<i<r$. If $u$ is of odd degree then we may drop the tildes from the $b_i$.
	\end{lem}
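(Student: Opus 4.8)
The plan is to read the statement straight off Definition~\ref{masseys} by restricting to \emph{translation-invariant defining systems}: defining systems $\{a_{ij}\}_{1\le i\le j\le r}$ for $\langle u,\ldots,u\rangle_r$ whose entries depend only on the gap $j-i$. Concretely, given $b_1=a$ and $b_2,\ldots,b_{r-1}$ as in the statement, one puts $a_{ij}\coloneqq b_{j-i+1}$ for every pair with $i\le j$ and $(i,j)\neq(1,r)$; in particular every diagonal entry equals $b_1=a$, a representative of $u$. Substituting this into the defining relations $da_{ij}=\tilde a_{ii}a_{i+1\,j}+\cdots+\tilde a_{i\,j-1}a_{j\,j}$ and reindexing the inner sum (with summation variable $t$, $i\le t\le j-1$, summand $\tilde a_{it}a_{t+1\,j}$) by $s=t-i+1$, the relation attached to an entry of gap $m-1$ becomes exactly $db_m=\tilde b_1b_{m-1}+\cdots+\tilde b_{m-1}b_1$; this reads $db_1=0$ when $m=1$ (true since $a$ is a cocycle) and is precisely the relation in the statement when $2\le m\le r-1$, while the only pair of gap $r-1$ is the excluded $(1,r)$, so no further relation appears. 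The output cochain $\tilde a_{11}a_{2r}+\cdots+\tilde a_{1\,r-1}a_{rr}$ then collapses to $\tilde b_1b_{r-1}+\cdots+\tilde b_{r-1}b_1$. This shows that each such tuple $(b_1,\ldots,b_{r-1})$ is a bona fide defining system, so the right-hand set is contained in $\langle u,\ldots,u\rangle_r$ --- and this inclusion is what is actually used later, since a nonzero class on the right-hand side is then a nonzero element of the Massey product, forcing non-formality of $A$ in view of Theorem~\ref{masseysarehighermults}.

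For the reverse inclusion I would show that an arbitrary defining system can be made translation-invariant without changing the cohomology class of its output. First I would use the standard independence of $\langle u,\ldots,u\rangle_r$ from the chosen diagonal representatives to reduce to systems with $a_{ii}=a$ for all $i$, and then induct on the gap $\ell$: once the entries of gaps $<\ell$ have been normalised to $b_1,\ldots,b_\ell$, all entries of gap $\ell$ satisfy the same defining equation and hence differ by cocycles, so one may replace them by a single $b_{\ell+1}$ and absorb the resulting discrepancies into the entries of gap $\ell+1$ (and, at the final stage, into the output). I expect the bookkeeping here to be the main obstacle: at each stage one must verify that the discrepancies produced are coboundaries, which comes down to the vanishing of the relevant lower-order Massey products --- information that is available because $\langle u,\ldots,u\rangle_r$ is assumed defined, but which has to be threaded carefully through the induction so that the corrections one needs actually exist.

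Finally, the parenthetical simplification when $u$ has odd degree is a pure degree count: induction on the relations $db_i=\tilde b_1b_{i-1}+\cdots+\tilde b_{i-1}b_1$ gives $|b_i|=i(|u|-1)+1$, which is odd for every $i$ when $|u|$ is odd; hence $\tilde b_i=(-1)^{1+|b_i|}b_i=b_i$ and the tildes may be erased from both the relations and the output.
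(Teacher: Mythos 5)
Your ``$\subseteq$'' direction is correct and is essentially what the paper intends: given $(b_1,\ldots,b_{r-1})$ satisfying the recursion, setting $a_{ij}:=b_{j-i+1}$ for $(i,j)\neq(1,r)$ produces a defining system whose output collapses to $\tilde b_1 b_{r-1}+\cdots+\tilde b_{r-1}b_1$, and this containment is the only thing invoked downstream (the lemma is used in \ref{pagmassey}, \ref{lauferex}, and the $A_n$ computation solely to \emph{exhibit} nonzero elements of Massey products and hence detect non-formality). Your degree-parity count $|b_i|=i(|u|-1)+1$ for the parenthetical claim is also right.

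Where your proposal genuinely differs is that it is more cautious than the paper's own proof. The paper asserts ``Inductively, it is easy to see that if $i-j=k-l$ then $a_{ij}=a_{kl}$'' as if this were a property of arbitrary defining systems, but it is not: two entries at the same gap satisfy the same differential equation and can differ by an arbitrary cocycle. What is being asserted is really a \emph{normalisation} of the defining system, and you correctly identify that carrying it out requires absorbing cocycle-valued discrepancies one gap at a time, which is only possible if those discrepancies are coboundaries. That is not automatic. Already for $r=3$, varying $a_{12}$ and $a_{23}$ independently changes the output by classes of the form $[\tilde a c]+[\tilde c' a]$ with $c,c'$ independent cocycles, whereas the translation-invariant family only realises the tied-together indeterminacy $[\tilde a c+\tilde c a]$; these sets need not coincide in a general noncommutative dga. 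So your decision to split the argument, prove ``$\subseteq$'' cleanly, and flag ``$\supseteq$'' as the delicate (and possibly false in general) direction is an improvement on the paper's one-line dismissal. If you wanted to tighten the lemma you could simply state it as a containment, which is all that is ever used.
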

	\begin{proof}
		By definition, the Massey product is the set of cohomology classes of sums\linebreak ${\tilde{a}_{1\,1}a_{2\,r}+\cdots+\tilde{a}_{1\,r-1}a_{r\,r}}$ such that $da_{i\,j}=\tilde{a}_{i\,i}a_{i+1\,j}+\cdots+\tilde{a}_{i\,j-1}a_{j\,j}$ for all $1\leq i \leq j \leq r$ with $(i,j)\neq(1,r)$, where we put $a_{i\,i}\coloneqq a$ for all $i$. Inductively, it is easy to see that if $i-j=k-l$ then $a_{i\,j}=a_{k\,l}$. Hence we may define $b_{1+j-i}\coloneqq a_{i\,j}$ and the first claim follows. If $u$ has odd degree, then an easy induction shows that all $b_i$ have odd degree, and the second claim follows.
	\end{proof}

\chapter{Koszul duality}\label{kd}
	 In this chapter we will study the Koszul dual of a dga, and prove a duality result (\ref{kdfin}) for a large class of reasonably finite dgas. Chapter \ref{defmthy} will give a deformation-theoretic interpretation of these results. In this chapter, every dga that we consider will be \textbf{augmented}, meaning that the canonical map $k \to A$ admits a retraction. The \textbf{augmentation ideal} of an augmented dga is $\bar A\coloneqq \ker(A \to k)$. Sending $A$ to $\bar A$ sets up an equivalence between augmented dgas and nonunital dgas. The inverse functor freely appends a unit, and indeed $A$ is isomorphic to $\bar{A}\oplus k$ as an augmented dga. Say that an augmented dga $A$ is \textbf{Artinian local} if $A$ has finite total dimension and $\bar A$ is nilpotent. In other words, Artinian local means `finite-dimensional over $k$, and local with residue field $k$'. Most dgas of interest to us in this section will be concentrated in nonpositive cohomological degrees.
	 
	 \p We remark that all of the results of this chapter remain true in the positive characteristic setting.
	\section{Bar and cobar constructions}\label{barcobarsctn}
	We follow Positselski \cite{positselski}; for other references see Loday--Vallette \cite{lodayvallette} or Lef\`evre-Hasegawa's thesis \cite{lefevre}. 
	\begin{defn}
		Let $A$ be an augmented dga. Put $V\coloneqq \bar A [1]$, the shifted augmentation ideal. Let $d_V$ be the usual differential on the tensor coalgebra $TV$. Let $d_B$ be the \textbf{bar differential}: send $a_1\otimes\cdots \otimes a_n$ to the signed sum over $i$ of the $a_1\otimes\cdots\otimes a_ia_{i+1}\otimes\cdots \otimes a_n$ and extend linearly. The signs come from the Koszul sign rule; see \cite[2.2]{lodayvallette} for a concrete formula. One can check that $d_B$ is a degree 1 map from $V^{\otimes n+1} \to V^{\otimes n}$, and that it intertwines with $d_V$. Hence, one obtains a third and fourth quadrant bicomplex $C$ with rows $V^{\otimes{n}}[-n]$. By construction, the direct sum total complex of $C$ is $TV$, with a new differential $\partial=d_V+d_B$. The \textbf{bar construction} of $A$ is the complex $BA\coloneqq (TV,\partial)$. One can check that the deconcatenation coproduct makes $BA$ into a dgc.
	\end{defn}
Note that the degree 0 elements of $A$ become degree $-1$ elements of $BA$.
\begin{rmk}
	If $A$ is an augmented $A_\infty$-algebra, then one can define the \textbf{$A_\infty$ bar construction} $B_\infty A$, which is a dgc, in an analogous manner (see \cite{lefevre} for a concrete formula). If $A$ is a dga then $B_\infty A = BA$.
	\end{rmk}
	\begin{ex}Let $A$ be the graded algebra $k[\epsilon]/\epsilon^2$, with $\epsilon$ in degree 0. Then $\bar A [1]$ is $k\epsilon$ placed in degree $-1$. Since $\epsilon$ is square zero, the bar differential is identically zero. So $BA$ is the tensor coalgebra $k[\epsilon]$, with $\epsilon$ in degree -1.
	\end{ex}

	\begin{defn}
		Let $C$ be a coaugmented dgc. One can analogously define a \textbf{cobar differential} $d_\Omega$ on the tensor algebra $T(\bar{C}[-1])$ by sending $c_1\otimes\cdots \otimes c_n$ to the signed sum over $i$ of the ${c_1\otimes\cdots \otimes  \bar{\Delta}c_i\otimes\cdots c_n}$, and the \textbf{cobar construction} on $C$ is the dga $\Omega C\coloneqq (T(\bar{C}[-1]),d_C+d_\Omega)$.
	\end{defn}
	
	Bar and cobar are adjoints:
	\begin{thm}[{\cite[2.2.6]{lodayvallette}}]
		If $A$ is an augmented dga and $C$ is a conilpotent dgc, then there is a natural isomorphism $$\hom_{\cat{dga}}(\Omega C, A)\cong \hom_{\cat{dgc}}(C, BA)$$
	\end{thm}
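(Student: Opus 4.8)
The plan is to factor the desired isomorphism through a third object, the set $\mathrm{Tw}(C,A)$ of \emph{twisting morphisms} from $C$ to $A$. Recall that $\hom_k(C,A)$ carries the structure of a dga, the \textbf{convolution dga}: the product of $f$ and $g$ is $\mu_A\circ(f\otimes g)\circ\Delta_C$, and the differential is $\partial(f)=d_A\circ f-(-1)^{|f|}f\circ d_C$. A twisting morphism is a degree $1$ element $\tau\in\hom_k(C,A)$ that kills the coaugmentation of $C$, lands in $\bar A$, and satisfies the Maurer--Cartan equation $\partial(\tau)+\tau\star\tau=0$ in this dga, where $\star$ denotes the convolution product. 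I will produce natural bijections identifying each of the two hom-sets in the statement with $\mathrm{Tw}(C,A)$.

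For the cobar side: as a graded algebra $\Omega C=T(\bar C[-1])$ is free on $\bar C[-1]$, so a morphism of graded algebras $\Omega C\to A$ is precisely a degree $0$ linear map $\bar C[-1]\to A$, equivalently a degree $1$ linear map $\tau\colon\bar C\to A$ (necessarily landing in $\bar A$ by compatibility of the augmentations). First I would unwind, using the explicit form of the cobar differential $d_\Omega$ (built from the reduced coproduct $\bar\Delta$) together with the internal differential $d_C$, the requirement that such a morphism commutes with differentials; a direct computation — tracking the Koszul signs picked up when moving past the desuspension $[-1]$ — shows this is exactly the equation $\partial(\tau)+\tau\star\tau=0$. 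This gives $\hom_{\cat{dga}}(\Omega C,A)\cong\mathrm{Tw}(C,A)$, naturally in both variables.

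For the bar side I would argue dually. Since $C$ is conilpotent, the universal property of the cofree conilpotent coalgebra $BA=T^c(\bar A[1])$ recalled in the excerpt — a coalgebra map out of a conilpotent coalgebra is determined by its corestriction to the cogenerators — shows that a morphism of graded coalgebras $C\to BA$ is the same as a degree $0$ map $\bar C\to\bar A[1]$, i.e.\ again a degree $1$ map $\tau\colon\bar C\to\bar A$. Then I would check, using the explicit bar differential $d_B$ (built from $\mu_A$) together with internal differentials, that compatibility with differentials is again precisely $\partial(\tau)+\tau\star\tau=0$. (Conilpotency is what makes the relevant sums finite and lets the universal property apply on the nose.) Composing the two bijections proves the theorem, and naturality is immediate since every construction in sight is functorial in $A$ and in $C$.

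The bookkeeping is routine; the main obstacle is getting all the signs to match, since the desuspension in $\Omega$ and the suspension in $B$ each introduce Koszul signs and one must verify that both translations produce the \emph{same} sign convention for the Maurer--Cartan equation in $\hom_k(C,A)$. A secondary subtlety is making sure conilpotency of $C$ is both genuinely used and sufficient on the bar side: without it, a graded-coalgebra map $C\to T^c(\bar A[1])$ need not be recoverable from its corestriction to the cogenerators, and this half of the adjunction would break down.
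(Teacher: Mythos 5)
Your proposal is correct, and it is essentially the argument given in the cited reference: the paper states this as a citation to Loday--Vallette, whose Theorem~2.2.6 proves exactly this adjunction by identifying both hom-sets with the set of twisting morphisms $\mathrm{Tw}(C,A)$, Maurer--Cartan elements of the convolution dga. Indeed the thesis itself recalls this twisting-morphism factorisation later (in the nonunital form as \ref{barcobaradj}) when it is needed for the prorepresentability arguments, so your route and the paper's are the same.
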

		\begin{lem}
		The bar construction preserves quasi-isomorphisms.
	\end{lem}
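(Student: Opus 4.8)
The plan is to run the standard bar-length filtration argument. Let $f\colon A\to A'$ be a quasi-isomorphism of augmented dgas. Since $f$ is compatible with the augmentations, the splitting $A\cong \bar A\oplus k$ (and likewise for $A'$) shows that the restriction $\bar f\colon \bar A\to \bar A'$ is again a quasi-isomorphism. Write $V\coloneqq \bar A[1]$ and $V'\coloneqq \bar{A'}[1]$, so that $BA=(TV,\partial)$ with $\partial=d_V+d_B$, and similarly $BA'=(TV',\partial')$; the induced coalgebra map $Bf\colon BA\to BA'$ is what we must show is a quasi-isomorphism.

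First I would equip $BA$ with the increasing \emph{bar-length filtration} $F_p BA\coloneqq \bigoplus_{n=0}^{p}V^{\otimes n}$. This is a filtration by subcomplexes, because the internal differential $d_V$ preserves tensor length while the bar differential $d_B$ strictly lowers it by one, so $d_B(F_p)\subseteq F_{p-1}$. The filtration is exhaustive, and it is bounded below since $F_{-1}BA=0$; the same holds on $BA'$, and $Bf$ is a filtered map. On the associated graded one has $\mathrm{gr}^p BA=V^{\otimes p}$ with induced differential exactly the tensor-product differential $d_V$ coming from $A$, and $\mathrm{gr}^p(Bf)$ is the tensor power $(\bar f[1])^{\otimes p}\colon (\bar A[1])^{\otimes p}\to(\bar{A'}[1])^{\otimes p}$. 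Since we are working over the field $k$, the Künneth theorem shows that every tensor power of the quasi-isomorphism $\bar f$ is a quasi-isomorphism, so $\mathrm{gr}(Bf)$ is a quasi-isomorphism; equivalently $Bf$ induces an isomorphism on the $E^1$-pages of the two associated spectral sequences.

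To finish I would invoke the classical convergence theorem for exhaustive, bounded-below filtrations together with the Eilenberg--Moore comparison theorem (see e.g.\ \cite{weibel}): both spectral sequences converge, to $H(BA)$ and $H(BA')$ respectively, and a filtered map which is an isomorphism on $E^1$ is an isomorphism on $E^\infty$ and hence on the abutments. Therefore $Bf$ is a quasi-isomorphism. I expect the only point needing genuine care to be the convergence of the spectral sequence, i.e.\ verifying exhaustiveness and bounded-belowness of the filtration so that the comparison theorem applies; in the case of primary interest, where $A$ is concentrated in nonpositive cohomological degrees, $V^{\otimes n}$ lives in degrees $\le -n$, so the filtration is finite in each cohomological degree, convergence is automatic, and the argument becomes entirely elementary.
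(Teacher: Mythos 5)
Your proof is correct and takes essentially the same route the paper sketches: filter $BA$ by bar length, identify the associated graded as the tensor powers of $\bar A[1]$ with only the internal differential, apply K\"unneth over the field $k$ to get a quasi-isomorphism on the associated graded, and appeal to convergence of the spectral sequence of the exhaustive, bounded-below filtration. The paper simply cites Positselski and Lef\`evre-Hasegawa for this argument rather than spelling it out; your closing observation that boundedness in each cohomological degree makes convergence automatic in the nonpositively graded case is a nice simplification but not needed for the general statement.
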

	\begin{proof}
		The idea is to filter $BA$ by setting $F_p BA$ to be the elements of the form $a_1\otimes\cdots\otimes a_n$ with $n\leq p$, and look at the associated spectral sequence. A proof for dgas is in \cite[\S6.10]{positselski} and a proof for $A_\infty$-algebras is in \cite[Chapter 1]{lefevre}.
	\end{proof}
\begin{rmk}The cobar construction does not preserve quasi-isomorphisms in general.
	\end{rmk}Bar and cobar give canonical resolutions:
	\begin{thm}[{\cite[2.3.2]{lodayvallette}}]
		Let $A$ be an augmented dga. Then the counit $\Omega B A \to A$ is a quasi-isomorphism. Similarly, the unit is a quasi-isomorphism for coaugmented conilpotent dgcs.
	\end{thm}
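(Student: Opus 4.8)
The plan is to prove the first statement — that the counit $\varepsilon\colon\Omega BA\to A$ is a quasi-isomorphism — by a weight spectral sequence reducing everything to an elementary Koszul acyclicity, and then to obtain the dgc statement by the mirror argument. As a graded algebra $\Omega BA$ is free on $\overline{BA}[-1]$, so a monomial is a finite word of ``syllables'' $[a_1|\cdots|a_n]$ with $n\geq1$ and $a_i\in\bar A$, and its differential is a sum of three pieces: the internal differential $d_A$ applied entrywise, the bar differential $d_B$ which multiplies an adjacent pair $a_ia_{i+1}$ inside one syllable, and the cobar differential $d_\Omega$ which splits a syllable in two by deconcatenation. First I would filter $\Omega BA$ by \emph{weight} — the total number of $\bar A$-entries in a monomial. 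Then $d_A$ and $d_\Omega$ preserve the weight while $d_B$ lowers it by one, so this is an exhaustive, bounded-below filtration by subcomplexes whose associated graded is $\Omega B$ applied to the square-zero dga $A_{\mathrm{triv}}\coloneqq(A,d_A,0)$, equipped with its weight grading (discarding $d_B$ is exactly setting the multiplication of $A$ to zero). Placing $A=k\oplus\bar A$ in weights $0$ and $1$, the counit preserves the filtration; on associated graded it is the evident isomorphism in weights $0$ (it is $\id_k$) and $1$ (it is $\id$ on $(\bar A,d_A)$), while in each weight $p\geq2$ it is a map out of the weight-$p$ part of $\Omega BA_{\mathrm{triv}}$ into zero. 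Since both filtrations are bounded below and exhaustive, it therefore suffices to show the weight-$p$ part of $\Omega BA_{\mathrm{triv}}$ is acyclic for every $p\geq2$.

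Fix $p\geq2$. On $\Omega BA_{\mathrm{triv}}$ the differential is $d_A+d_\Omega$, and both preserve the weight, so the weight-$p$ part is a bicomplex with differentials $d_A$ and $d_\Omega$; it is bounded in the $d_\Omega$-direction since a weight-$p$ monomial has at most $p$ syllables. Concretely, a weight-$p$ monomial is a word $v_1\cdots v_p$ in $\bar A$ together with the subset $S\subseteq\{1,\dots,p-1\}$ of positions at which a syllable break occurs, and $d_\Omega$ inserts one further break in each available position. I would first take $d_A$-cohomology — which by Künneth over $k$ replaces $\bar A^{\otimes p}$ by $\overline{HA}^{\otimes p}$ in each column — after which the induced $d_\Omega$-differential, restricted to any fixed basis word, becomes up to Koszul signs the differential $\omega\wedge(-)$ on the exterior algebra $\Lambda(k^{p-1})$ for a vector $\omega$ all of whose coordinates are $\pm1$, hence nonzero. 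For such $\omega$ the complex $(\Lambda(k^{p-1}),\,\omega\wedge(-))$ is acyclic: a covector $\xi$ with $\xi(\omega)=1$ gives a contracting homotopy $\iota_\xi$, as $\iota_\xi\circ(\omega\wedge-)+(\omega\wedge-)\circ\iota_\xi=\id$. Thus the bicomplex spectral sequence has $E_2=0$, the weight-$p$ part of $\Omega BA_{\mathrm{triv}}$ is acyclic, and by the previous paragraph $\varepsilon\colon\Omega BA\to A$ is a quasi-isomorphism.

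The second statement — that the unit $C\to B\Omega C$ is a quasi-isomorphism for coaugmented conilpotent $C$ — is proved by the evident opposite argument: $B\Omega C$ is cofree as a graded coalgebra on $\overline{\Omega C}[1]$, with differential the internal differential, plus a ``splitting'' piece coming from the coproduct of $C$, plus the bar differential of $B(-)$ which merges adjacent cosyllables; one filters by the number of $\bar C$-entries, notes that the coproduct piece raises it by one while the others preserve it, and reduces to the square-zero dgc $(C,d_C,0)$, where the dual Koszul acyclicity applies. Conilpotency of $C$ is precisely what makes this filtration exhaustive and bounded below. I expect the only real obstacle, in both cases, to be sign bookkeeping: one must verify that the weight (respectively entry-count) filtration is compatible with the whole differential, and pin down the Koszul signs carefully enough to recognise $d_\Omega$ (respectively the coproduct differential) restricted to a fixed word as $\omega\wedge(-)$ with $\omega\neq0$. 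Granting that, the only homological inputs are Künneth over $k$ and the acyclicity of $(\Lambda V,\,\omega\wedge(-))$ for $\omega\neq0$.
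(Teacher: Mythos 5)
Your argument for the counit $\Omega BA\to A$ is correct and is, in spirit, the standard one (weight filtration, reduce to the square-zero dga, Koszul acyclicity). One small overstatement worth repairing: the induced $d_\Omega$ on the ``fixed word'' exterior algebra $\Lambda(k^{p-1})$ is not literally $\omega\wedge(-)$, because the Koszul signs in $d_\Omega$ depend on the degrees of the $v_i$ and not just on $|\{j\in S:j<i\}|$. What is true is that any degree-$1$ endomorphism $d$ of $\Lambda(k^{p-1})$ of the form $d(e_S)=\sum_{i\notin S}\epsilon(S,i)\,e_{S\cup\{i\}}$ with all $\epsilon(S,i)=\pm1$ and $d^2=0$ is \emph{conjugate by a diagonal sign rescaling} to $(\sum_i e_i)\wedge(-)$: one defines $\mu_{S\cup\{i\}}\coloneqq\mu_S\,\epsilon(S,i)\,(-1)^{|\{j\in S:j<i\}|}$ recursively and checks path-independence, which reduces exactly to the identity $\epsilon(S,i)\epsilon(S\cup i,j)+\epsilon(S,j)\epsilon(S\cup j,i)=0$, i.e.\ $d^2=0$. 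Equivalently, the explicit homotopy $h(e_S)=\epsilon(S\setminus\{1\},1)\,e_{S\setminus\{1\}}$ for $1\in S$, $0$ otherwise, satisfies $dh+hd=\mathrm{id}$ for the same reason. So the acyclicity is genuine, but it is not a cosmetic sign convention: it needs exactly the $d^2=0$ input to legitimize the rescaling. With that fixed, the rest (Künneth over a field, convergence of the weight spectral sequence with its increasing, exhaustive, bounded-below filtration) is fine.

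The coalgebra half has a real gap, and it is not the one you flag. You filter $B\Omega C$ by the number of $\bar C$-entries. Since the coproduct piece \emph{raises} entry count, the only filtration by subcomplexes this produces is the \emph{decreasing} one $F^p=\{\text{entry count}\geq p\}$. That filtration is exhaustive and Hausdorff but it is not bounded, and for a decreasing exhaustive Hausdorff filtration the implication ``$\operatorname{gr}$ quasi-isomorphism $\Rightarrow$ quasi-isomorphism'' is \emph{false} in general (the spectral sequence need not converge: even after completing one only controls $\lim$ and $\lim^1$, and $B\Omega C\to\widehat{B\Omega C}$ need not be a quasi-isomorphism). Crucially, conilpotency does \emph{not} make the entry-count filtration bounded: conilpotency is a statement about the coradical filtration of $C$, not about tensor length in $B\Omega C$, and for a conilpotent $C$ with, say, primitives in negative cohomological degree, the entry count is unbounded in each fixed degree. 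The quantity that conilpotency actually controls is the coradical degree; and the filtration that works is by (total coradical degree of all entries) minus (entry count). Since $\bar\Delta F_r\subseteq\sum_{i+j=r}F_i\otimes F_j$, this difference is a nonnegative integer that $d_\Omega$ strictly lowers while $d_C$ and the bar differential preserve, so one gets an \emph{increasing, bounded-below} filtration by subcomplexes whose associated graded is exactly $B\Omega$ of the square-zero coalgebra — and \emph{exhaustivity of this filtration is precisely conilpotency}. (Alternatively, your entry-count filtration does become degreewise bounded, hence usable, under a connectivity hypothesis on $\bar C$; that covers the paper's actual use case of $\cat{cndgc}^{\geq 0}_k$, but not the general statement.) So the right slogan for where conilpotency enters is not ``it bounds the entry count'' but ``it makes the coradical-minus-entry filtration exhaustive.''
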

	\section{Koszul duality for Artinian local algebras}
	Recall that if $V$ is a complex of vector spaces then $V^*$ denotes its graded linear dual. Let $(C,\Delta,\epsilon)$ be a dgc. Then $\Delta$ dualises to a map $(C\otimes C)^* \to C^*$, and combining this with the natural inclusion $C^* \otimes C^* \to (C \otimes C)^*$ yields a semigroup structure on $C^*$. In fact, $(C^*,\Delta^*,\epsilon^*)$ is a dga. The dual statement is not in general true -- if $A$ is a dga then the multiplication need not dualise to a map $A^* \otimes A^* \to A^*$. However, if $A$ is an Artinian local dga, then it does (because the natural map $A^* \otimes A^* \to (A \otimes A)^*$ is an isomorphism), and indeed $A^*$ is a dgc. If $C$ is coaugmented, then $C^*$ is augmented, and if $C$ is conilpotent, then $\bar{C^*}$ is nilpotent. Similarly, if $A$ is Artinian local then $A^*$ is coaugmented and conilpotent.
	\begin{defn}Let $A$ be an augmented dga. The \textbf{Koszul dual} of $A$ is the dga $A^!\coloneqq (BA)^*$.
	\end{defn}
Loosely, the differential $d(x^*)$ is the signed sum of the products $x_1^*\cdots x_r^*$ such that the $x_i$ satisfy $d(x_1 | \cdots | x_r)=x$, where $d$ is the bar differential. The Koszul dual $A^!$ is a semifree dga, in the sense that the underlying graded algebra is a completed free algebra. In the situations we will be interested in, the underlying graded algebra of $A^!$ will actually be a free algebra in the usual sense. Note that $BA$ is coaugmented, so $A^!$ is again augmented. Because both $B$ and the linear dual preserve quasi-isomorphisms, so does $A \mapsto A^!$. 

\begin{prop}\label{kdisrend}
Let $A$ be an augmented dga, and let $S$ be the $A$-module $ k$ with $A$-action given by augmentation $A \to k$. Then there is a quasi-isomorphism of dgas $A^!\simeq \R\enn_A(S)$.
\end{prop}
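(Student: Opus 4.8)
The plan is to compute $\R\enn_{A}(S)$ by means of the (two-sided) bar resolution of $S$ and then to identify the answer with $(BA)^{*}=A^{!}$ via the universal twisting cochain. Write $\tau\colon BA\to A$ for this twisting cochain: on the coaugmentation coideal it is the composite $\bar{BA}\onto\bar A[1]\to\bar A\into A$ (projection onto tensor-length one followed by desuspension) and it vanishes on $k$. Form the right $A$-module $P\coloneqq BA\otimes_{\tau}A$, whose underlying graded space is $BA\otimes A$ and whose differential is $d_{BA}\otimes 1+1\otimes d_{A}$ corrected by the term $\sum\pm\,b_{(1)}\otimes\tau(b_{(2)})a$ built from $\Delta_{BA}$, $\tau$ and the multiplication of $A$. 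This is the standard (normalised) bar resolution: it is a semifree right $A$-module, and the augmentation $\epsilon_{BA}\otimes(A\to k)\colon P\to S$ is a quasi-isomorphism. Since $P$ is semifree (in particular cofibrant) and every dg $A$-module is fibrant, we get $\R\enn_{A}(S)\simeq\enn_{A}(P)$ as dgas, so it suffices to compare $\enn_{A}(P)$ with $A^{!}$.

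Next I would write down a dga morphism $\rho\colon A^{!}=(BA)^{*}\to\enn_{A}(P)$. Using the cofree-module adjunction $\hom_{A}(BA\otimes A,M)\cong\hom_{k}(BA,M)$, a functional $\phi\in(BA)^{*}$ determines the $A$-linear endomorphism $\rho(\phi)\coloneqq\big((\id_{BA}\otimes\phi)\circ\Delta_{BA}\big)\otimes\id_{A}$ of $P$ (it acts only on the $BA$-factor, through the coproduct). Coassociativity of $\Delta_{BA}$ shows that $\rho$ carries the convolution product on $(BA)^{*}$ — which is exactly the dga multiplication on $A^{!}$ — to composition in $\enn_{A}(P)$, and counitality gives $\rho(\epsilon_{BA})=\id_{P}$, so $\rho$ is a unital algebra map. \textbf{The one genuine computation, and the main obstacle, is that $\rho$ is a chain map}: the differential on $\enn_{A}(P)$ is the commutator with $d_{P}$, and one must check that pulling it back along $\rho$ recovers the differential of $A^{!}$, namely the transpose of the bar differential $d_{BA}$. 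This is a purely formal verification — it is the statement that $\tau$ being a twisting cochain makes $P$ a dg $(A^{!},A)$-bimodule — and the twisting term of $d_{P}$ causes no trouble since $\rho(\phi)$ does not touch the $A$-leg it modifies; the only bookkeeping is the Koszul signs coming from the suspension $[1]$.

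Finally I would show $\rho$ is a quasi-isomorphism. The quasi-isomorphism $P\to S$ induces a quasi-isomorphism of complexes $\enn_{A}(P)\xrightarrow{\ \sim\ }\hom_{A}(P,S)$ (again because $P$ is cofibrant and $S$ fibrant), and the adjunction identifies $\hom_{A}(P,S)$ with $(BA)^{*}=A^{!}$ \emph{as complexes}: applying $\hom_{A}(-,S)$ kills the twisting term of $d_{P}$, since $\bar A$ acts as zero on $S=k$, so the surviving differential is precisely the transpose of $d_{BA}$. Under these identifications the composite $A^{!}\xrightarrow{\rho}\enn_{A}(P)\xrightarrow{\ \sim\ }\hom_{A}(P,S)\cong A^{!}$ sends $\phi$ to $\epsilon_{BA}\circ(\id_{BA}\otimes\phi)\circ\Delta_{BA}=\phi$ by counitality of $\Delta_{BA}$, i.e. it is the identity. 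Since the middle arrow is a quasi-isomorphism, so is $\rho$, whence $A^{!}\simeq\R\enn_{A}(S)$ as dgas. One can instead extract this statement from the twisting-cochain formalism of Positselski \cite{positselski} or Lef\`evre-Hasegawa \cite{lefevre}.
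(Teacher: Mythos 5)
Your overall architecture — take the semifree resolution $P = BA\otimes_\tau A$, identify $\enn_A(P)$ with $(BA)^*$ via the cofree adjunction, and show the comparison map is a quasi-isomorphism — is exactly the standard route, and it is what the paper's (very brief) proof gestures at via \cite[Lemma 2.6]{kalckyang}. The final step of your argument (compose with $\enn_A(P)\xrightarrow{\sim}\hom_A(P,S)\cong(BA)^*$ and use counitality to see the composite is the identity) is correct and is a clean way to finish, once one has the chain map.

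The gap is precisely in the step you flag as ``the one genuine computation.'' Your claim that the twisting term of $d_P$ causes no trouble ``since $\rho(\phi)$ does not touch the $A$-leg it modifies'' is false: the twist $b\otimes a\mapsto \pm\,b_{(1)}\otimes\tau(b_{(2)})a$ and your $\rho(\phi)\colon b\otimes a\mapsto \pm\, b_{(1)}\phi(b_{(2)})\otimes a$ \emph{both} act through the coproduct on the $BA$-factor, and they do not supercommute. Concretely, one computes
$d_\tau\rho(\phi)(b\otimes a) = \pm\, b_{(1)}\phi(b_{(3)})\otimes\tau(b_{(2)})a$ while $\rho(\phi)d_\tau(b\otimes a) = \pm\,b_{(1)}\phi(b_{(2)})\otimes\tau(b_{(3)})a$, and already for $b = [x|y]$ and $\phi$ supported on tensor length one these differ. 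So your $\rho$ is multiplicative but is \emph{not} a chain map, and no Koszul sign rescues it: the anomaly involves $\tau$, which cannot appear in $\rho(\partial\phi)$.

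The fix is to use the opposite coaction, $\rho'(\phi)\colon b\otimes a\mapsto \pm\,\phi(b_{(1)})b_{(2)}\otimes a$; this does supercommute with $d_\tau$ (the twist eats the rightmost bar factor, $\rho'$ the leftmost, and coassociativity makes them independent). But then a second convention issue appears: with the convolution product $(\phi\psi)(c)=\phi(c_{(1)})\psi(c_{(2)})$ used in the paper, $\rho'$ satisfies $\rho'(\phi)\circ\rho'(\psi)=\rho'(\psi\phi)$, i.e.\ it is an \emph{anti}-homomorphism. Reconciling this requires an explicit bookkeeping of handedness --- whether $S$ is resolved as a left or a right module, which side the twisting cochain acts on, and whether composition in $\enn_A$ is written as $f\circ g$ or $g\circ f$ --- which you do not carry out. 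This is exactly where Positselski's or Kalck--Yang's treatments pay their dues, and it is the piece your write-up skips over by asserting the chain-map check is ``purely formal.''
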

\begin{proof}
This is standard and appears as e.g.\ \cite[Lemma 2.6]{kalckyang}. The idea is that by taking the bar resolution of $S$, the bar construction $BA$ becomes a model for the derived tensor product $S\lot_A S$. Taking the linear dual, one obtains the desired statement.
\end{proof}
The key statement about the Koszul dual is the following:
	\begin{prop}\label{artprop}
		Let $A$ be a nonpositive Artinian local dga. Then $A^!$ is naturally isomorphic as a dga to $\Omega(A^*)$.\end{prop}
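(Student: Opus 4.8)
The plan is to match $A^! = (BA)^*$ with $\Omega(A^*)$ one layer of structure at a time: first the underlying graded vector space, then the multiplication, then the differential, and finally to note that the identification is natural in $A$.

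First I would handle the graded vector space. Write $V\coloneqq \bar A[1]$, so that $BA = (TV,\partial)$ with $\partial = d_V + d_B$. Since $A$ is nonpositively graded and of finite total dimension, $\bar A$ is bounded and concentrated in degrees $\leq 0$, hence $V$ is concentrated in strictly negative degrees and $V^{\otimes n}$ in degrees $\leq -n$. Therefore, in each fixed cohomological degree only finitely many of the tensor powers $V^{\otimes n}$ are nonzero, so the graded linear dual of the tensor coalgebra $TV = \bigoplus_n V^{\otimes n}$ is the honest, uncompleted tensor algebra $(TV)^* \cong \bigoplus_n (V^{\otimes n})^* \cong \bigoplus_n (V^*)^{\otimes n} = T(V^*)$. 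Now $A \cong \bar A \oplus k$ as an augmented dga dualises to $A^* \cong \bar A^* \oplus k$ as a coaugmented dgc, so $\bar A^* \cong \overline{A^*}$ and hence $V^* \cong \overline{A^*}[-1]$; thus $(TV)^* \cong T(\overline{A^*}[-1])$, which is the underlying graded vector space of $\Omega(A^*)$. Under this identification the multiplication on $(BA)^*$, being dual to the deconcatenation coproduct on $BA$, is concatenation of tensors — precisely the tensor-algebra multiplication of $\Omega(A^*)$ — so $A^!$ and $\Omega(A^*)$ agree as graded algebras.

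Next I would match the differentials by dualising $\partial = d_V + d_B$ termwise. The internal differential $d_V$ on $TV$ is induced by the differential of $V = \bar A[1]$; its dual is the differential on $T(V^*)$ induced by the differential of $V^* \cong \overline{A^*}[-1]$, which is the internal summand $d_{A^*}$ of the cobar differential. The bar differential $d_B$ is the signed sum, over all insertion positions and with Koszul signs, of the multiplication $\mu\colon \bar A \otimes \bar A \to \bar A$; since $\bar A$ is finite-dimensional, $\mu$ dualises to a map $\overline{A^*} \to \overline{A^*}\otimes\overline{A^*}$ which is exactly the reduced comultiplication $\bar\Delta$ defining the dgc structure on $A^*$. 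Dualising the formula for $d_B$ therefore reproduces exactly the formula for the cobar differential $d_\Omega$, the signed sum over positions of insertions of $\bar\Delta$. Hence $\partial^* = d_{A^*} + d_\Omega$, so $(BA)^* \cong \Omega(A^*)$ as dgas; naturality in $A$ is immediate, since the bar construction, the linear dual on degreewise-finite objects, and the cobar construction are all functorial.

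I do not expect a serious obstacle. The one place the hypotheses are genuinely used is the boundedness step: ``nonpositive'' together with finite total dimension is precisely what guarantees that the dual of the tensor coalgebra is the uncompleted tensor algebra rather than its completion, without which the stated isomorphism would fail on the nose. The only part requiring care rather than insight is the bookkeeping of the signs introduced by the shifts $[1]$, $[-1]$ and by dualisation; I would fix conventions once, following Positselski and Loday--Vallette as in the rest of this section, and check that the signs appearing in $d_B^*$ are those of $d_\Omega$, which is routine.
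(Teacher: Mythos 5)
Your proposal is correct and follows essentially the same route as the paper's proof. The paper phrases the argument in terms of the double complex with rows $\bar A^{\otimes n}$ and observes that nonpositivity forces the product total complex of the dual to coincide with the sum total complex, while you phrase the same observation as ``in each fixed cohomological degree only finitely many tensor powers $V^{\otimes n}$ are nonzero''; both are exactly the degree-boundedness argument that makes $(TV)^* \cong T(V^*)$. The remaining steps — $(\bar A^*)^{\otimes n} \cong (\bar A^{\otimes n})^*$ from finiteness, and the check that the bar differential dualises to the cobar differential — match the paper. You are slightly more explicit in separating out the multiplication (dual of deconcatenation is concatenation), which the paper leaves implicit, but there is no substantive difference.
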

	\begin{proof}This boils down to the fact that $A \to A^{**}$ is a natural isomorphism. For brevity, we will replace $A$ by its augmentation ideal $\bar A$. The dgc $BA$ is the direct sum total complex of the double complex whose rows are $A^{\otimes n}$. Hence, $A^!$ is the direct product total complex of the double complex with rows $(A^{\otimes n})^*$. However, because $A$ was nonpositive $A^!$ is also the direct sum total complex of this double complex. Because $A$ is nonpositive and locally finite, the natural map $(A^*)^{\otimes n} \to (A^{\otimes n})^*$ is an isomorphism. Hence $A^!$ is the direct sum total complex of the double complex with rows $(A^*)^{\otimes n}$, which -- after checking that the bar differential dualises to the cobar differential -- is precisely the definition of $\Omega (A^*)$.
	\end{proof}
	\begin{cor}\label{kdforart}
		Let $A$ be a nonpositive Artinian local dga. Then $A$ is naturally quasi-isomorphic to $A^{!!}$.
	\end{cor}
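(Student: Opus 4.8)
The plan is to unwind $A^{!!}$ using Proposition~\ref{artprop}, and then recognise the result, via the bar--cobar resolution, as the linear double dual of $A$ — which is $A$ itself because $A$ is finite-dimensional.

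First I would record that, $A$ being a nonpositive Artinian local dga, its graded linear dual $A^*$ is a coaugmented conilpotent dgc, as discussed above. By definition $A^{!!}=(A^!)^!=\bigl(B(A^!)\bigr)^*$, and Proposition~\ref{artprop} supplies a natural isomorphism of dgas $A^!\cong\Omega(A^*)$; substituting gives a natural isomorphism of dgas
$$A^{!!}\;\cong\;\bigl(B\,\Omega(A^*)\bigr)^*.$$
Next I would invoke the bar--cobar resolution in its dgc form: for a coaugmented conilpotent dgc $C$ the unit $C\to B\Omega C$ is a quasi-isomorphism. Applying this with $C=A^*$ gives a natural quasi-isomorphism $A^*\to B\,\Omega(A^*)$. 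Since $\hom_k(-,k)$ is exact it preserves quasi-isomorphisms, so dualising yields a natural quasi-isomorphism of dgas $\bigl(B\,\Omega(A^*)\bigr)^*\to (A^*)^*=A^{**}$. Finally, as $A$ is finite-dimensional the evaluation map $A\to A^{**}$ is a natural isomorphism of dgas. Composing these produces the natural zigzag
$$A\;\xrightarrow{\ \cong\ }\;A^{**}\;\xleftarrow{\ \sim\ }\;\bigl(B\,\Omega(A^*)\bigr)^*\;\cong\;A^{!!},$$
which is exactly the asserted natural quasi-isomorphism.

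I do not expect a genuine obstacle: once Proposition~\ref{artprop} is available, the argument is pure bookkeeping. The points needing a little care are verifying that the dual of the unit dgc-morphism $A^*\to B\Omega(A^*)$ is a dga-morphism — this is automatic since linear duality interchanges dgcs and dgas, and in the nonpositive locally finite setting there are no convergence issues with the semifree algebra $\Omega(A^*)$ — that the evaluation $A\to A^{**}$ respects multiplications and augmentations, and that naturality is preserved throughout.
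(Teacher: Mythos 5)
Your proposal is correct and matches the paper's own proof step for step: invoke Proposition \ref{artprop} to identify $A^!$ with $\Omega(A^*)$, apply the bar--cobar unit quasi-isomorphism $A^* \to B\Omega(A^*)$, dualise using exactness of $(-)^*$, and conclude via $A\cong A^{**}$ for finite-dimensional $A$. The only cosmetic difference is that you phrase the result as an explicit zigzag, which the paper leaves implicit.
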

	\begin{proof}
		$A^{!!}$ is by definition $(B(A^!))^*$. By \ref{artprop}, $A^!$ is isomorphic to $\Omega(A^*)$. Because $C \to B \Omega C$ is a quasi-isomorphism for conilpotent dgcs, $A^* \to B(A^!)$ is a dgc quasi-isomorphism. Dualising and using exactness of the linear dual gets us a dga quasi-isomorphism $A^{!!}\to A^{**}$. But $A$ is Artinian local, and hence isomorphic to $A^{**}$.
	\end{proof}
	\begin{rmk}Note that we did not use the local hypothesis; we just used that $A$ was nonpositive and finite.
	\end{rmk}
	
	\section{The model structure on pro-Artinian algebras}

	\begin{defn}
		Let $\dga$ be the category of nonpositively cohomologically graded augmented dgas over $k$, and let $\dgart$ be the subcategory on Artinian local dgas. We refer to an object of the procategory $\proart$ as a \textbf{pro-Artinian dga}.
	\end{defn}
	\begin{rmk}
		We caution that in this thesis, ``pro-Artinian'' means ``pro-(Artinian local)''. We will not consider non-local profinite dgas.
	\end{rmk}
	There is a limit functor $\varprojlim: \proart \to \dga$ which sends a cofiltered system to its limit. Moreover, this is right adjoint to the functor $\dga \to \proart$ which sends a dga $A$ to the cofiltered system $\hat A$ of its Artinian local quotients. It is not necessarily the case that a pro-Artinian dga $A$ is isomorphic to $\widehat{\varprojlim A}$. We list some standard results on the structure of $\proart$:
	\begin{prop}[{\cite[6.1.14]{kashschap}}]
		Let $f:A \to B$ be a morphism in $\proart$. Then $f$ is isomorphic to a level map: a collection of maps $\{f_\alpha: A_\alpha \to B_\alpha\}_{\alpha \in I}$ between Artinian local algebras, where $I$ is cofiltered.
	\end{prop}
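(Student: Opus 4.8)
The plan is to prove this as a general fact about pro-categories, applied to $\mathcal{C} = \dgart$; nothing about dgas is used beyond the fact that they form a category. Write $A = \{A_i\}_{i \in I_A}$ and $B = \{B_j\}_{j \in I_B}$, so that by definition of $\proart$ the morphism $f$ is a compatible family of classes $f_j \in \varinjlim_i \hom_{\dgart}(A_i, B_j)$ indexed by $j \in I_B$. The goal is to manufacture a single cofiltered category $I$, functors $p \colon I \to I_A$ and $q \colon I \to I_B$ that are cofinal (in the sense relevant to pro-objects: each comma category $p \downarrow i_0$ and $q \downarrow j_0$ is cofiltered), and for each $\alpha \in I$ an honest $\dgart$-morphism $f_\alpha \colon A_{p(\alpha)} \to B_{q(\alpha)}$ depending functorially on $\alpha$. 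Cofinality of $p$ and $q$ forces $\{A_{p(\alpha)}\}_{\alpha \in I} \cong A$ and $\{B_{q(\alpha)}\}_{\alpha \in I} \cong B$ in $\proart$ (precomposing a pro-object with a cofinal functor does not change its isomorphism type), and one checks that under these isomorphisms $f$ is carried to the level map $\{f_\alpha\}$.

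The category $I$ I would use has as objects the triples $(i,j,\phi)$ where $i \in I_A$, $j \in I_B$ and $\phi \colon A_i \to B_j$ is a morphism of $\dgart$ whose class in $\varinjlim_{i'}\hom(A_{i'}, B_j)$ equals $f_j$; a morphism $(i,j,\phi) \to (i',j',\phi')$ is a pair $(u \colon i \to i',\, v \colon j \to j')$ in $I_A \times I_B$ such that $B_v \circ \phi = \phi' \circ A_u$. The projections $p(i,j,\phi) = i$, $q(i,j,\phi) = j$ and the assignment $f_{(i,j,\phi)} = \phi$ are then forced, and the commuting-square condition in the definition of morphisms of $I$ is precisely the naturality of $\alpha \mapsto f_\alpha$.

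The heart of the argument — and the step I expect to be the main obstacle — is verifying that $I$ is nonempty and cofiltered and that $p, q$ are cofinal. Nonemptiness is clear since each $f_j$ has a representative. For cofilteredness, given two objects one first uses cofilteredness of $I_A$ and $I_B$ to land below both indices, and then one must use the compatibility of the family $\{f_j\}$ with the transition maps of $B$ to see that the two resulting composites $A_{i_0} \to B_{j_0}$ represent the same element of the filtered colimit $\varinjlim_{i'}\hom(A_{i'}, B_{j_0})$; passing to a further stage of $I_A$ makes them literally equal, which produces the required object of $I$ mapping to both. Equalizing parallel morphisms is the same kind of colimit bookkeeping. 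Cofinality of $p$ (resp.\ $q$) reduces to showing the comma categories are cofiltered: from any object of $I$ one precomposes $\phi$ with a suitable transition map of $A$ to move its $I_A$-index below a prescribed $i_0$, which is harmless because precomposition does not change the colimit class $f_j$.

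Everything after this is formal: the isomorphisms $\{A_{p(\alpha)}\} \cong A$ and $\{B_{q(\alpha)}\} \cong B$ intertwine $f$ with $\{f_\alpha\}$ because the $j$-component of $f$ is by construction the class of $\phi$, and since $I$ and all of the $f_\alpha$ live in $\dgart$, the level map consists of maps of Artinian local dgas, which is the assertion. This also recovers \cite[6.1.14]{kashschap} in the special case $\mathcal{C} = \dgart$.
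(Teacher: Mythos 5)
Your proof is correct and follows the standard comma-category argument for replacing a pro-morphism with a level map, which is exactly what the cited reference (Kashiwara--Schapira, Proposition 6.1.14) does; the paper gives no proof beyond the citation. The one step you rightly flag as the heart of the matter — cofilteredness of the index category of triples $(i,j,\phi)$, which hinges on passing to further stages of $I_A$ so that the composites coming from two given objects literally agree and the two projection functors become initial — is indeed where all the content lives, and your sketch of that step is sound.
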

	\begin{prop}[{\cite[Corollary to 3.1]{grothendieckpro}}]\label{strictpro}
		Every object of $\proart$ is isomorphic to a \textbf{strict} pro-object, i.e.\ one for which the transition maps are surjections.
	\end{prop}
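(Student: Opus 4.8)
The plan is to adapt Grothendieck's classical ``stable image'' construction to dgas, using essentially that an Artinian local dga is finite-dimensional. Let $\{A_j\}_{j\in J}$ be a pro-Artinian dga, with $J$ cofiltered (one may if desired first replace $J$ by a cofiltered poset, but this is inessential). For each $j\in J$, consider the family of subspaces of $A_j$ of the form $\im(A_\phi\colon A_i\to A_j)$, as $\phi\colon i\to j$ ranges over arrows of $J$ with target $j$. Each $\im(A_\phi)$ is in fact a sub-dga: it is closed under the differential (as $A_\phi$ is a chain map) and the product, contains the unit, and inherits the augmentation; since $A_j$ has finite total dimension with nilpotent augmentation ideal, $\im(A_\phi)$ is again Artinian local and nonpositively graded.

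First I would show this family of subspaces is downward-directed under inclusion: given $\phi_1\colon i_1\to j$ and $\phi_2\colon i_2\to j$, the common-lower-bound and coequalizer axioms of a cofiltered category produce an object $i$ and an arrow $\psi\colon i\to j$ factoring through both $i_1$ and $i_2$, so $\im(A_\psi)\subseteq \im(A_{\phi_1})\cap\im(A_{\phi_2})$. As $A_j$ is finite-dimensional, this directed family of subspaces has a least element, which is therefore $\im(A_{\psi_j})$ for a single arrow $\psi_j\colon\beta_j\to j$; set $A_j'\coloneqq\im(A_{\psi_j})=\bigcap_{\phi\colon i\to j}\im(A_\phi)$. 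Next I would check that the transition maps of $\{A_j\}$ restrict to the primed subspaces and become surjective: given $u\colon j'\to j$ in $J$, applying cofilteredness to $u$ and $\psi_j$ (and refining once more) gives an object $\gamma$ and arrows $v\colon\gamma\to j'$, $w\colon\gamma\to\beta_j$ with $uv=\psi_j w$ and $\im(A_v)=A_{j'}'$, whence $\im(A_{uv})=A_j'$ (it is contained in $\im(A_{\psi_j})=A_j'$ and contains the intersection defining $A_j'$). Then $A_u(A_{j'}')=A_u(\im A_v)=\im(A_{uv})=A_j'$, so $A_u$ corestricts to a surjection $A_{j'}'\onto A_j'$, and $\{A_j'\}_{j\in J}$ is a strict sub-pro-object of $\{A_j\}$.

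Finally I would argue that the inclusion $\iota\colon\{A_j'\}\to\{A_j\}$ is an isomorphism in $\proart$. The corestrictions $\rho_j\colon A_{\beta_j}\onto A_j'$ of the maps $A_{\psi_j}$ assemble, after a compatibility check using cofilteredness, into a pro-morphism $r\colon\{A_j\}\to\{A_j'\}$; one verifies by the usual diagram chase that $\iota r$ and $r\iota$ are the identities in the pro-category, since each relevant composite represents a transition map of one of the two systems. This exhibits $\{A_j\}\cong\{A_j'\}$ with $\{A_j'\}$ strict.

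The main obstacle is not conceptual: finite-dimensionality of Artinian local dgas is exactly what makes the descending filtration by images stabilize, and from that point the argument is essentially forced. The real work is the bookkeeping with the cofiltered indexing category — repeatedly invoking the common-lower-bound and coequalizer axioms to produce the compatible arrows $v,w$ and their refinements making the needed squares commute, and checking that the $\rho_j$ are coherent enough to define a genuine pro-morphism. This is routine once one is comfortable manipulating cofiltered diagrams, and indeed the statement is classical, being the cited lemma of Grothendieck.
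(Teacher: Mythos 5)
Your argument is correct, and since the paper cites this result directly from Grothendieck without giving a proof, you are reconstructing the classical stable-image argument that lies behind the citation. The key points — that finite total dimension of Artinian local dgas forces the downward-directed family of images $\im(A_\phi\colon A_i\to A_j)$ to have a minimum $A_j'$, that $A_j'$ is again a nonpositive Artinian local sub-dga (closed under $d$, the product, the unit, and the augmentation), that the refinement lemma makes the transition maps restrict to surjections $A_{j'}'\onto A_j'$, and that the inclusion and the corestrictions $\rho_j$ are mutually inverse in the pro-category because each relevant composite represents a transition map of one of the two systems — are all sound and are exactly what Grothendieck's corollary rests on.
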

	We aim to show that the limit functor $\varprojlim: \proart \to \dga$ both preserves and reflects the weak equivalences of two model structures, which we start by describing.
	\begin{thm}[\cite{hinichhom}]
		The category $\cat{dga}$ of all dgas is a model category, with weak equivalences the quasi-isomorphisms and fibrations the levelwise surjections.
	\end{thm}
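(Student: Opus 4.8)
The plan is to produce the model structure by \emph{transferring} the projective model structure on $\cat{dgvect}_k$ across the free--forgetful adjunction
\[
T : \cat{dgvect}_k \rightleftarrows \cat{dga} : U,
\]
in which $T$ sends a complex $V$ to its tensor algebra $TV=\bigoplus_{n\geq 0}V^{\otimes n}$ and $U$ forgets the multiplication. Recall that $\cat{dgvect}_k$ is a cofibrantly generated model category with the quasi-isomorphisms as weak equivalences and the degreewise surjections as fibrations, and that --- since $k$ is a field --- every complex of $k$-vector spaces is both fibrant and cofibrant; fix sets $I$ and $J$ of generating cofibrations and trivial cofibrations. I would then \emph{define} a map of dgas to be a weak equivalence (resp.\ a fibration) precisely when $U$ of it is, take $TI$ and $TJ$ as the candidate generating (trivial) cofibrations, and invoke the standard transfer theorem for cofibrantly generated model categories.

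To apply that theorem I would check its hypotheses in order. First, $\cat{dga}$ is complete and cocomplete: limits are created by $U$ (with componentwise multiplication), and cocompleteness follows because $U$ is monadic and the tensor-algebra monad preserves reflexive coequalisers and filtered colimits, so all colimits can be assembled from free dgas. Second, $U$ preserves filtered colimits, hence $T$ carries the small domains and codomains of $I$, $J$ to small objects of $\cat{dga}$, so the small object argument runs and every map of dgas factors as a relative $TI$-cell (resp.\ $TJ$-cell) complex followed by a map with the appropriate lifting property. The only genuinely substantive hypothesis left is the acyclicity condition: every relative $TJ$-cell complex --- i.e.\ every transfinite composite of pushouts of maps of $TJ$ along arbitrary maps of dgas --- must be a quasi-isomorphism.

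For the acyclicity condition I would run Quillen's path-object argument. Because $U$ detects fibrations and the canonical map from any dga to the zero dga is degreewise surjective, every object of $\cat{dga}$ is fibrant, so it is enough to construct, functorially in a dga $A$, a path object: a factorisation of the diagonal $A\to A\times A$ as a weak equivalence followed by a fibration. I would take $A^{I}\coloneqq A\otimes_k k[t,\mathrm{d}t]$, the tensor product with the commutative dga of polynomial de Rham forms on the affine line, where $|t|=0$, $|\mathrm{d}t|=1$ and $d(t)=\mathrm{d}t$. The map $A\to A^{I}$ induced by $k\hookrightarrow k[t,\mathrm{d}t]$ is a quasi-isomorphism, since $k[t,\mathrm{d}t]$ is acyclic in characteristic zero and tensoring complexes of $k$-vector spaces over $k$ preserves quasi-isomorphisms, while the two evaluation maps at $t=0$ and $t=1$ assemble into a map $A^{I}\to A\times A$ that is surjective in each degree, hence a fibration. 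This produces the path object, establishes acyclicity, and the transfer theorem then delivers the model structure, with generating (trivial) cofibrations $TI$ (resp.\ $TJ$) and with the cofibrant dgas being exactly the retracts of those built by iterated free extensions.

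The main obstacle is precisely this acyclicity step, and more exactly the one input that $k[t,\mathrm{d}t]$ is acyclic: this is where characteristic zero enters (in characteristic $p$ the form $t^{p-1}\mathrm{d}t$ is a cocycle but not a coboundary, so this particular interval object fails). Since $k$ is a field of characteristic zero throughout, this is harmless here; more generally the theorem still holds in any characteristic, but one must then replace the path-object argument by a direct filtration analysis of the cell attachments --- equivalently, by verifying the monoid axiom for $\cat{dgvect}_k$. Everything else in the proof is formal once the transfer machinery is in place.
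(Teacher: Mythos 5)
Your argument is correct, and it is the standard transfer argument. A couple of small points are worth recording.

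First, the acyclicity step. The theorem the paper cites is Hinich's \cite{hinichhom}, and the statement there is actually proved for algebras over any $\Sigma$-split operad over an arbitrary commutative base ring -- in particular the associative operad, in any characteristic. Hinich's proof of acyclicity does \emph{not} run the Quillen path-object argument with $A\otimes k[t,\mathrm{d}t]$; instead he analyses a single free cell attachment $A\to A\langle x;\,dx=z\rangle$ directly by filtering the pushout by word length in the new generator and showing each associated graded piece is acyclic (this is where $\Sigma$-splitness enters for a general operad, and is automatic for the associative one). That filtration argument is exactly what you gesture at in your last paragraph as the characteristic-$p$ replacement. So your proof is correct for the paper's standing hypothesis that $\mathrm{char}\,k=0$, but it is genuinely a different route through the acyclicity step than the cited source: you get a shorter argument at the cost of a characteristic restriction, whereas Hinich's filtration is longer but characteristic-free. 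Since the paper works over a characteristic-zero field throughout, and since the paper itself records only the statement with a citation, your proof is an acceptable substitute; but if you wanted to justify the theorem as stated (with no characteristic hypothesis in the theorem itself) you would need the filtration version.

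Second, two minor bookkeeping remarks on the transfer hypotheses. Your appeal to monadicity plus ``the monad preserves reflexive coequalisers and filtered colimits'' for cocompleteness is fine, but it is cleaner (and closer to what one actually needs for the small object argument) to note that $\cat{dga}$ is locally presentable: it is the category of algebras for a finitary monad on the locally presentable category $\cat{dgvect}_k$, hence itself locally presentable, hence complete, cocomplete, and with every object small. And in the path-object step, after producing the retraction $r$ of a relative $TJ$-cell map $i:A\to B$ and the right homotopy $ir\sim\mathrm{id}_B$, one should say explicitly that applying $U$ and passing to cohomology shows $H(Ui)$ is an isomorphism (it has a one-sided inverse on each side); this is the point where fibrancy of everything and the acyclicity of $k[t,\mathrm{d}t]$ are both used. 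Neither of these is a gap, just places where the write-up could be tightened.
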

We will regard $\dga$ as a subcategory of $\cat{dga}$ in the obvious way. Note that in $\dga$, every object is fibrant, and the cofibrant objects are precisely the semifree dgas (i.e.\ those that become free graded algebras after forgetting the differential). For example, $\Omega B A \to A$ is a functorial cofibrant resolution of $A$. 
	\begin{thm}\label{proartmodel}
		The category $\cat{pro}(\cat{dgArt}_k)$ is a model category, with weak equivalences those maps $f$ for which each $H^n f: H^nA \to H^nB$ is an isomorphism of profinite $k$-vector spaces, and fibrations those maps $f$ for which $\varprojlim f$ is a levelwise surjection.
	\end{thm}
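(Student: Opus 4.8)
The plan is to realise the asserted structure as the model structure right-transferred from $\dga$ along the limit functor $\varprojlim\colon\proart\to\dga$, whose left adjoint is the functor $A\mapsto\hat A$ sending a nonpositive augmented dga to the cofiltered system of its Artinian local quotients. Since the model structure on $\dga$ --- quasi-isomorphisms and degreewise surjections --- is cofibrantly generated with generators in nonpositive degrees, the template is Kan's transfer theorem: declare $f$ in $\proart$ to be a weak equivalence (respectively a fibration) exactly when $\varprojlim f$ is one in $\dga$, take as candidate generating (trivial) cofibrations the images under $\hat{(-)}$ of the generators of $\dga$, and verify bicompleteness together with the usual smallness and acyclicity conditions. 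Bicompleteness of $\proart$ is routine --- finite and cofiltered limits come from $\dgart$, while colimits are obtained by completing the colimit constructions of $\cat{dga}$, which a priori leave the finite-dimensional world. The first substantive task is then to identify the transferred weak equivalences with the class named in the statement; this is also the content of the remark preceding the theorem, that $\varprojlim$ both preserves and reflects weak equivalences.

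For that identification I would use \ref{strictpro} and the level-map representation of morphisms to reduce to a level map $f$ between strict pro-Artinian dgas. Then each tower $\{H^n A_\alpha\}_\alpha$ is a tower of finite-dimensional vector spaces and hence automatically Mittag--Leffler, so $\lim^1$ vanishes, the Milnor sequence degenerates to $H^n(\varprojlim A)\cong\varprojlim H^n(A)$, and a level map of towers of finite-dimensional vector spaces is a pro-isomorphism precisely when its kernel and cokernel towers are pro-zero, which for finite-dimensional systems means precisely that their inverse limits vanish, i.e.\ that the induced map on inverse limits is bijective. Concatenating these facts yields ``$H^n f$ an isomorphism of profinite vector spaces for every $n$'' $\iff$ ``$\varprojlim f$ is a quasi-isomorphism'', as needed. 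The remaining closure axioms are then formal: two-out-of-three and retract closure for weak equivalences hold because pro-isomorphisms are the isomorphisms of a category and $\varprojlim$ is a functor; fibrations are retract-closed because $\varprojlim$ preserves retracts and degreewise surjections form a retract-closed class; and one half of the lifting axiom is the definition of the cofibrations.

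The step I expect to be the main obstacle is the two functorial factorisations, which must be produced \emph{inside} $\proart$ --- that is, inside the finite-dimensional world --- so that the off-the-shelf small-object argument underlying Kan's theorem does not apply verbatim. The standard dga constructions are all infinite-dimensional --- the bar--cobar resolution $\Omega BA\to A$ and the polynomial path object $B\otimes k[t,dt]$ --- and over a field of characteristic zero there is no finite-dimensional commutative dg model of the interval to truncate to, so one cannot assemble the factorisations level by level from a single fixed path object. The way through is to spread the factorisation over the cofiltered index category, so that every individual stage of the construction is a finite-dimensional local dga even though no single dga works for the whole pro-object; this is exactly the mechanism behind Isaksen's strict model structures on pro-categories. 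I would either invoke that machinery directly, checking that $\proart$ sits inside $\cat{pro}(\dga)$ as a subcategory closed under the relevant constructions, or reprove the needed instance by hand, using the nilpotence of each augmentation ideal $\bar A_\alpha$ to keep the intermediate mapping-path objects finite-dimensional. With the factorisations in place, the model axioms assemble in the standard way, and by construction the weak equivalences and fibrations are the classes appearing in the statement.
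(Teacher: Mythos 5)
The paper's own proof is a one-line citation: it simply asserts that the proof of Pridham's \cite[4.3]{unifying} for the commutative case adapts to the noncommutative setting, with a remark that this works in positive characteristic as well. Your proposal is therefore a genuinely different route: a reconstruction of the argument rather than a citation. It is worth comparing carefully.

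Your instinct that a naive right-transfer along $\varprojlim$ cannot work is exactly right, and in fact the paper's Remark following \ref{limreflects} makes this precise: the left adjoint $A\mapsto\hat A$ fails to preserve small objects (the explicit counterexample given is that $\widehat{k[x]}$ is not small, because $\hom(\widehat{k[x]},-)$ applied to the filtered system of square-zero extensions $k\oplus V_i$ sees $V$, whereas the colimit in $\proart$ is $k\oplus V^{**}$). Your Mittag--Leffler and $\varprojlim^1$ analysis of the weak equivalences, after reducing to strict level maps via \ref{strictpro}, is also the right one and matches the proof of \ref{limreflects} in spirit, though you should be careful that cofiltered systems are not literally towers --- one first passes to a cofinal directed subsystem as in \ref{limisexact}. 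The place your sketch is most exposed is the invocation of Isaksen's strict model structures on pro-categories: Isaksen's construction assumes the base $\mathcal C$ is a (proper) model category, which $\dgart$ is not --- it is far from bicomplete. The escape route you offer, embedding $\proart$ into $\cat{pro}(\dga)$ and restricting, is plausible in outline but is the whole content of the theorem, and would need a real verification that the factorisations you build land back in pro-Artinian objects; the nilpotence of the $\bar A_\alpha$ is indeed the lever one pulls here, but the details are nontrivial. One small slip: you write of needing a finite-dimensional \emph{commutative} dg interval, but the theorem is about noncommutative Artinian dgas, which actually makes this point slightly easier (nonpositive noncommutative dgas over any field $k$ are Quillen equivalent to simplicial $k$-algebras, which is why the result holds in positive characteristic).
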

	\begin{proof}
		The proof of \cite[4.3]{unifying} adapts to the noncommutative case. We remark that this remains true in positive characteristic.
	\end{proof}
We will regard $\proart$ as a subcategory of $\cat{pro}(\cat{dgArt}_k)$ in the obvious way.
	\begin{prop}\label{limreflects}
		The functor $\varprojlim: \proart \to \dga$ both preserves and reflects weak equivalences.
	\end{prop}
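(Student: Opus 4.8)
The plan is to reduce the statement to an elementary fact about pro-objects of finite-dimensional vector spaces: that the realization functor $\varprojlim\colon \cat{pro}(\cat{FinVect}_k)\to\cat{Vect}_k$ both preserves and reflects isomorphisms. Granting that, the proposition follows from the observation that, for pro-Artinian dgas, cohomology commutes with the limit.

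First we record this commutation. If $A=\{A_\alpha\}_{\alpha\in I}$ is a pro-Artinian dga then, since limits of dgas are computed degreewise and each $A_\alpha$ is finite-dimensional, the underlying complex of $\varprojlim_\alpha A_\alpha$ is the $k$-linear dual of the filtered colimit of cochain complexes $\varinjlim_\alpha A_\alpha^*$ (with the usual regrading). Filtered colimits of vector spaces and $k$-linear duality are both exact, so both commute with taking cohomology, and we obtain a natural isomorphism
$$H^n(\varprojlim_\alpha A_\alpha)\;\cong\;\bigl(\varinjlim_\alpha H^n(A_\alpha)^*\bigr)^*\;\cong\;\varprojlim_\alpha H^n(A_\alpha),$$
the right-hand limit being the honest limit in $\cat{Vect}_k$ of the cofiltered system of finite-dimensional spaces $\{H^n(A_\alpha)\}$. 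Crucially this is natural in the pro-object, so the induced functors $H^n\circ\varprojlim$ and $\varprojlim\circ H^n$ on $\proart$ are naturally isomorphic. (Using \ref{strictpro} and the level-map description of pro-morphisms one may moreover take any $f$ to be a level map $\{f_\alpha\}$, making the identification $H^n(\varprojlim f)\cong\varprojlim_\alpha H^n(f_\alpha)$ completely concrete, but naturality already suffices.)

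Next we establish the abstract lemma. The assignment $\{V_\alpha\}\mapsto\varinjlim_\alpha V_\alpha^*$ defines a functor $\cat{pro}(\cat{FinVect}_k)^{\mathrm{op}}\to\cat{Vect}_k$ which is fully faithful — a one-line manipulation of the pro-hom formula, using that a finite-dimensional space is reflexive and that $\hom$ out of it commutes with filtered colimits — and essentially surjective, since every vector space is the filtered union of its finite-dimensional subspaces. It is therefore an equivalence, and under it $\varprojlim$ is identified with $k$-linear duality $(-)^*\colon\cat{Vect}_k^{\mathrm{op}}\to\cat{Vect}_k$. Now $(-)^*$ preserves isomorphisms (being a functor) and reflects them (being conservative: dualizing the four-term exact sequence $0\to\ker\phi\to V\to W\to\coker\phi\to 0$ attached to a linear map $\phi$ shows that $\phi^*$ is an isomorphism iff $\ker\phi$ and $\coker\phi$ both vanish, i.e. iff $\phi$ is an isomorphism). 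Hence $\varprojlim$ preserves and reflects isomorphisms of profinite vector spaces.

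Assembling the pieces: a map $f$ is a weak equivalence in $\proart$ iff each $H^nf$ is an isomorphism of profinite vector spaces, iff (by the lemma) each $\varprojlim(H^nf)$ is an isomorphism, iff (by the commutation above) each $H^n(\varprojlim f)$ is an isomorphism, iff $\varprojlim f$ is a quasi-isomorphism, i.e. a weak equivalence in $\dga$. We do not anticipate a serious obstacle. The one point that needs care is that the indexing category may be uncountable, so one should resist invoking the Milnor $\varprojlim^1$ sequence for the cohomology-commutation step; the exactness of filtered colimits and of linear duality used above sidesteps this entirely. (Alternatively, the ``preserves'' direction can be read off from Ken Brown's lemma, since $\varprojlim$ is right adjoint to $\widehat{(-)}$ and every object of both model categories is fibrant, but the ``reflects'' direction seems to genuinely require the duality argument.)
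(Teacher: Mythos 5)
Your proof is correct and follows essentially the same route as the paper's: both rest on the contravariant equivalence $\cat{pro}(\cat{fd\text{-}vect}_k)\simeq(\cat{vect}_k)^{\mathrm{op}}$ given by $\{V_\alpha\}\mapsto\varinjlim_\alpha V_\alpha^*$, under which $\varprojlim$ becomes the (exact, conservative) linear dual, and then transport the criterion for weak equivalence through the commutation of $\varprojlim$ with cohomology. The only cosmetic difference is that you make the identification $H^n\circ\varprojlim\cong\varprojlim\circ H^n$ explicit via the double-dual isomorphism, whereas the paper deduces it from exactness of $\varprojlim$ on $\cat{pro}(\cat{fd\text{-}vect}_k)$; these are two ways of phrasing the same fact.
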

	\begin{proof}
		Every vector space is canonically ind-finite \cite[1.3]{unifying}, so that the linear dual provides a contravariant equivalence from $\cat{pro}(\cat{fd-vect}_k)$ to $\cat{vect}_k$. In other words, let $g:U \to V$ be a map of profinite vector spaces. We may take $g$ to be a level map $\{g_\alpha:U_\alpha \to V_\alpha\}_\alpha$. Then $g$ is an isomorphism if and only if $\varinjlim_\alpha g_\alpha^*: \varinjlim_\alpha V_\alpha^* \to \varinjlim_\alpha U_\alpha^*$ is an isomorphism. Dualising again, we obtain a map $\varprojlim_\alpha U_\alpha^{**} \to \varprojlim_\alpha V_\alpha^{**}$, which canonically agrees with $\varprojlim g = \varprojlim_\alpha g_\alpha$ since the $U_\alpha$ and $V_\alpha$ are finite-dimensional. Hence, $g$ is an isomorphism if and only if $\varprojlim g$ is. Similarly, $g$ is a injection or a surjection if and only if $\varprojlim g$ is. Hence, $\varprojlim:\cat{pro}(\cat{fd-vect}_k) \to \cat{vect}_k$ is exact. Let $f$ be a morphism in $\proart$. By definition, $f$ is a weak equivalence if and only if each $H^nf \in \cat{pro}(\cat{fd-vect}_k)$ is an isomorphism. But this is the case if and only if each $\varprojlim H^n f$ is an isomorphism. Because $\varprojlim:\cat{pro}(\cat{fd-vect}_k) \to \cat{vect}_k$ is exact, this is the case if and only if each $H^n\varprojlim f$ is an isomorphism, which is exactly the condition for $\varprojlim f$ to be a weak equivalence.
	\end{proof}
	\begin{rmk}
		Morally, one would like to say that the model structure on $\proart$ is transferred from that of $\dga$ along the right adjoint $\varprojlim$, since one has that a map $f$ of pro-Artinian algebras is a fibration or a weak equivalence precisely when $\varprojlim f$ is. However, this is not strictly the case: the standard argument (as in e.g.\ \cite{cranstransfer}) requires that the left adjoint $A \mapsto \hat A$ preserves small objects. If this is the case, then since the ungraded algebra $k[x]$ is small in $\dga$, the object $\widehat{k[x]}$ is small in $\proart$. However, let $V$ be any infinite-dimensional vector space, and let $V_i$ be its filtered system of finite-dimensional subspaces. Let $k\oplus V_i$ be the square-zero extension (see \ref{dersctn} for a definition). In $\proart$, the colimit of the $k\oplus V_i$ is the square-zero extension $k\oplus V^{**}$. One can check that $\varinjlim_i\hom(\widehat{k[x]},k\oplus V_i)\cong V$, but that $\hom({\widehat{k[x]},k\oplus V^{**}})\cong V^{**}$. Hence, $\widehat{k[x]}$ is not small.
	\end{rmk}
The following lemma will be useful to us later:
	\begin{lem}\label{limisexact}
	The two functors $\varprojlim: \proart \to \dga$ and $\holim: \proart \to \dga$ are quasi-isomorphic.
\end{lem}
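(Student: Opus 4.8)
The plan is to construct the canonical comparison natural transformation $\varprojlim\Rightarrow\holim$ and to check that each of its components is a quasi-isomorphism. First I would observe that $\varprojlim\colon\proart\to\dga$ is a right Quillen functor for the model structure of \ref{proartmodel} and the standard model structure on $\dga$: it is right adjoint to the functor $A\mapsto\hat A$; it preserves fibrations essentially by definition, since by \ref{proartmodel} a map $f$ in $\proart$ is a fibration exactly when $\varprojlim f$ is a levelwise surjection, i.e.\ a fibration in $\dga$; and it preserves all weak equivalences by \ref{limreflects}, hence a fortiori trivial fibrations. Therefore $\holim$ is (a model for) the total right derived functor $\mathbb{R}\varprojlim$, which is computed by applying $\varprojlim$ to a functorial fibrant replacement $r\colon A\xrightarrow{\sim}A^{\mathrm{f}}$ in $\proart$; applying $\varprojlim$ to $r$ yields the comparison transformation $\varprojlim\Rightarrow\holim$, whose component at $A$ is the map $\varprojlim A\to\varprojlim A^{\mathrm{f}}$.

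To see this component is always a quasi-isomorphism, note that $r$ is a weak equivalence and $\varprojlim$ preserves weak equivalences by \ref{limreflects}, so $\varprojlim A\to\varprojlim A^{\mathrm{f}}$ is a quasi-isomorphism. In fact no replacement is needed at all: every object of $\proart$ is fibrant, because the unique map $A\to k$ to the terminal (constant) pro-object has $\varprojlim$ equal to the augmentation $\varprojlim A\onto k$, which is split by the compatible family of units and hence is a levelwise surjection, i.e.\ a fibration in $\dga$. Equivalently, using \ref{strictpro} one may assume $A$ is a strict pro-object, in which case the content is the familiar fact that the ordinary limit of a cofiltered system of fibrations between fibrant objects of $\dga$ already computes the homotopy limit. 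So $\varprojlim A\simeq\holim A$ naturally in $A$.

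The argument is essentially bookkeeping once \ref{proartmodel} and \ref{limreflects} are available. The only point deserving care is the identification of $\holim$ with the derived functor $\mathbb{R}\varprojlim$, together with the standard observation that fibrant replacement in $\proart$ may be chosen functorially, so that the comparison is a genuine natural transformation rather than merely a pointwise family of quasi-isomorphisms; beyond this there is no real obstacle.
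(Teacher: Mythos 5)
The paper's proof strictifies the pro-object via \ref{strictpro}, passes to a cofinal directed indexing set (citing SGA~4), notes that this makes the indexing set a Reedy category and the diagram Reedy fibrant, and concludes that the ordinary limit computes the homotopy limit of the underlying $\dga$-valued diagram. Your first route is genuinely different and has a gap. The assertion \emph{``Therefore $\holim$ is (a model for) the total right derived functor $\mathbb{R}\varprojlim$''} is not a consequence of $\varprojlim$ being right Quillen. In this lemma $\holim$ means the homotopy limit of the underlying cofiltered diagram in $\dga$ --- this is how it is used afterwards, e.g.\ in \ref{hoprolem}, \ref{hplem}, \ref{goodchar}, where one needs $\varprojlim$ to commute with $H^0$ and with homotopy pullbacks. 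Identifying that homotopy limit with $\mathbb{R}\varprojlim$, the derived functor with respect to the model structure of \ref{proartmodel}, is precisely what the lemma claims and does not come for free: the pro-Artinian model structure is not the injective (or Reedy) model structure on any fixed diagram category, nor is it transferred along $\varprojlim$ (see the remark after \ref{limreflects}), so ``derived $\varprojlim$'' and ``homotopy limit of the diagram'' are a priori distinct. Once you assume they agree, the rest of your argument --- including the correct observation that every object of $\proart$ is fibrant --- merely re-derives that $\varprojlim\simeq\mathbb{R}\varprojlim$, which is the easy part.

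Your ``equivalently'' alternative is on the right track and is essentially what the paper does, but the phrase ``the familiar fact that the ordinary limit of a cofiltered system of fibrations between fibrant objects already computes the homotopy limit'' glosses over a nontrivial step: a general cofiltered indexing category is not a Reedy category, so one cannot immediately invoke Reedy fibrancy. The paper addresses this explicitly by replacing the indexing category with a cofinal directed subset before asserting Reedy fibrancy. If you strictify and then pass to such a cofinal directed (Reedy) subposet, your sketch becomes the paper's proof.
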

\begin{proof}
	Let $P$ be any object of $\proart$. Without loss of generality, by \ref{strictpro} we may assume that $P$ is strict. Let $I$ be the indexing set of $P$. Because every cofiltered set $I$ has a cofinal directed subset $I' \into I$ \cite[Expos\'e 1, 8.1.6]{sga4}, we may without loss of generality assume that $I$ is directed. Hence, $I$ is a Reedy category. It is now easy to see that $P$ is a Reedy fibrant diagram of dgas, and hence $\varprojlim P \simeq \holim P$.
\end{proof}
	
	\section{The model structure on conilpotent coalgebras}\label{coalgmodel}
	\begin{thm}[{\cite[Theorem 9.3b and Theorem 6.10]{positselski}}]\label{bcquillen}
		The category $\cat{cndgc}_k$ of coaugmented conilpotent dgcs admits a model structure where the weak equivalences $f$ are those maps for which $\Omega f$ is a dga quasi-isomorphism, and the cofibrations are the levelwise monomorphisms. Moreover, if $C$ is a conilpotent dgc then the natural map $C \to B\Omega C$ is a fibrant resolution. The pair $(\Omega, B)$ is a Quillen equivalence between $\cat{cndgc}_k$ and $\cat{dga}_k$, the category of unbounded dgas.
	\end{thm}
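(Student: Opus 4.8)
The plan is to construct the model structure on $\cat{cndgc}_k$ essentially by hand, taking the cofibrations (the degreewise monomorphisms) and the weak equivalences (the maps $f$ with $\Omega f$ a quasi-isomorphism) as the two primitive classes and \emph{defining} the fibrations to be the maps with the right lifting property against acyclic cofibrations. This is a transfer-type argument along the adjunction $(\Omega,B)$, but a non-standard one, since $\Omega$ does not preserve arbitrary weak equivalences (the cobar construction does not preserve quasi-isomorphisms). First I would dispatch the easy axioms: $\cat{cndgc}_k$ is bicomplete and in fact locally presentable — the key point being that every conilpotent dgc is the filtered union of its finite-dimensional sub-dgcs — its initial and terminal object is $k$, and since the coaugmentation $k\to C$ is always split monic, every object is cofibrant. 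The two-out-of-three and retract axioms for weak equivalences are immediate because $\Omega$ is a functor and quasi-isomorphisms of dgas satisfy them.

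Next I would assemble the compatibility facts between bar and cobar. From the natural isomorphism $\hom_{\cat{dga}}(\Omega C,A)\cong\hom_{\cat{dgc}}(C,BA)$ together with the fact (already available) that the counit $\Omega BA\to A$ is a quasi-isomorphism, the triangle identities force the unit $C\to B\Omega C$ to be sent by $\Omega$ to a quasi-isomorphism — so the unit is always a weak equivalence — and it is moreover a monomorphism, hence an acyclic cofibration. Since $\Omega$ is built degreewise from tensor algebras, it preserves monomorphisms, coproducts and filtered colimits; a short computation with the coradical filtration shows that $\Omega$ in fact sends monomorphisms of dgcs to cofibrations of dgas, so that $\Omega$ preserves both cofibrations and (by the very definition of weak equivalence) acyclic cofibrations. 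Thus $\Omega$ will automatically be left Quillen and $B$ right Quillen once the model structure is in place.

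The technical heart is the factorization axioms, together with the lifting of cofibrations against acyclic fibrations. Here I would use local presentability to run the small object argument: take the generating cofibrations to be inclusions of finite-dimensional sub-dgcs realising the "cells", and the generating acyclic cofibrations to be a set assembled from the units $C\hookrightarrow B\Omega C$ and from coaugmentations $k\hookrightarrow B(E)$ of dgas $E$ with $\bar E$ acyclic; the small object argument then produces both a (cofibration, acyclic fibration) and an (acyclic cofibration, fibration) factorization, after which one checks that the acyclic fibrations so produced are exactly the fibrations that are weak equivalences and that the acyclic cofibrations are exactly the monic weak equivalences. An alternative is Positselski's more hands-on route, which builds the factorizations by a transfinite induction on the coradical filtration rather than by naming generators. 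I expect \textbf{this to be the main obstacle}: conilpotent coalgebras have no elementary cell structure, $\Omega$ fails to preserve general weak equivalences so neither the Kan transfer nor the standard left-induced model structure applies off the shelf, and the acyclic-cofibration/fibration factorization in particular has to be produced more or less from scratch.

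Finally, with the model structure established the Quillen equivalence is formal. Every object of $\cat{cndgc}_k$ is cofibrant and every dga is fibrant, so it suffices to observe that the derived counit is represented by $\Omega BA\to A$, a quasi-isomorphism by hypothesis, and that the derived unit is $C\to B\Omega C$, which is a weak equivalence by the triangle-identity argument above; the standard criterion then gives that $(\Omega,B)$ is a Quillen equivalence. The remaining assertion, that $C\to B\Omega C$ is a fibrant resolution, is exactly the combination of two facts established along the way: it is a weak equivalence, and $B$ applied to any dga — in particular to $\Omega C$ — is fibrant, the latter because lifting an acyclic cofibration $i$ against $B(-)\to k$ is, by adjunction, a lifting problem in $\cat{dga}_k$ against a fibrant object, which therefore admits a solution.
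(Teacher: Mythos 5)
Your proposal is correct and follows the same route as the cited proof (Positselski, and in essence Lef\`evre-Hasegawa's thesis): take cofibrations to be monomorphisms and weak equivalences to be the $\Omega$-inverted quasi-isomorphisms, verify that $\Omega$ sends monos to (acyclic) cofibrations by a coradical-filtration argument, and then produce the factorizations -- which, as you correctly flag, is where all the work lies, since $\Omega$ does not preserve general weak equivalences and so no off-the-shelf transfer applies. The closing derived-unit/counit argument for the Quillen equivalence, using that every coalgebra is cofibrant and every dga is fibrant, is exactly the standard one.
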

	Every weak equivalence is a quasi-isomorphism, but the converse is not true \cite[2.4.3]{lodayvallette}. Every object in $\cat{cndgc}_k$ is cofibrant, and the fibrant objects are the semicofree conilpotent coalgebras, i.e.\ those that are tensor coalgebras after forgetting the differential (the tensor coalgebra $T^cV$ is the cofree conilpotent coalgebra on $V$).
	
	\begin{prop}\label{sharpprop}
		The functor $\{A_\alpha\}_\alpha \mapsto\varinjlim_\alpha A_\alpha^*$ is an equivalence $$\proart\xrightarrow{\cong} (\cat{cndgc}^{\geq 0}_k)^\text{op}.$$
	\end{prop}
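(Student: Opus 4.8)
The plan is to exhibit this functor as a composite of three equivalences: a finite-dimensional linear duality, the $2$-functoriality of $\cat{pro}(-)$, and the (differential graded) fundamental theorem of coalgebras. Throughout, write $\mathcal{D}$ for the full subcategory of $\cat{cndgc}^{\geq 0}_k$ on the finite-dimensional objects. I will produce an equivalence $\dgart^{\mathrm{op}}\simeq\mathcal{D}$, promote it to pro-categories, and then identify $\cat{ind}\,\mathcal{D}$ with $\cat{cndgc}^{\geq 0}_k$.

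First I would treat the finite-dimensional case. If $A\in\dgart$ then $A$ has finite total dimension, so the canonical maps $A^*\otimes A^*\to(A\otimes A)^*$ and $A\to A^{**}$ are isomorphisms; hence the multiplication, unit, augmentation and differential of $A$ dualise to a comultiplication, counit, coaugmentation and coderivation on $A^*$. Nonpositivity of $A$ becomes nonnegativity of $A^*$, and nilpotence of $\bar A$ dualises to the vanishing of a fixed power of the reduced coproduct on $\overline{A^*}$, so $A^*$ is conilpotent --- this is exactly the observation recalled above before the definition of the Koszul dual. The assignment is symmetric, and $V\cong V^{**}$ for finite-dimensional $V$ supplies a quasi-inverse, so $\Phi\colon A\mapsto A^*$ is an equivalence $\dgart\xrightarrow{\sim}\mathcal{D}^{\mathrm{op}}$.

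Next, since $\cat{pro}(-)$ is a $2$-functor it sends $\Phi$ to an equivalence $\cat{pro}(\dgart)\xrightarrow{\sim}\cat{pro}(\mathcal{D}^{\mathrm{op}})$, and $\cat{pro}(\mathcal{D}^{\mathrm{op}})=(\cat{ind}\,\mathcal{D})^{\mathrm{op}}$ by the very definition of the ind-category (\ref{procats}); so $\proart\xrightarrow{\sim}(\cat{ind}\,\mathcal{D})^{\mathrm{op}}$, taking $\{A_\alpha\}$ to the ind-object $\{A_\alpha^*\}$. It then remains to show the realisation functor $\varinjlim\colon\cat{ind}\,\mathcal{D}\to\cat{cndgc}^{\geq 0}_k$ (which exists since $\cat{cndgc}^{\geq 0}_k$ has filtered colimits) is an equivalence. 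For essential surjectivity I would argue that every coaugmented conilpotent dgc $C$ in nonnegative degrees is the filtered colimit of its finite-dimensional dg subcoalgebras: by Sweedler's fundamental theorem of coalgebras each homogeneous $x\in C$ lies in a finite-dimensional subcoalgebra $C_0$ of the underlying graded coalgebra, and then $k\cdot 1+C_0+dC_0$ is a finite-dimensional $d$-stable subcoalgebra --- one uses $\Delta d=(d\otimes 1+1\otimes d)\Delta$ to see it is closed under $\Delta$ --- which is automatically coaugmented, conilpotent (a subcoalgebra of a conilpotent coalgebra) and concentrated in nonnegative degrees; these form a directed system under $+$ whose union is $C$. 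For full faithfulness one checks that every object of $\mathcal{D}$ is compact in $\cat{cndgc}^{\geq 0}_k$: filtered colimits there are created on underlying graded vector spaces, since $\otimes$ and the conilpotence condition commute with them, so a coalgebra map out of a finite-dimensional coalgebra factors through some stage. Hence $\varinjlim$ is an equivalence, with quasi-inverse sending $C$ to the pro-system $\{C_0^*\}$ of duals of its finite-dimensional dg subcoalgebras; taking opposites and composing with the previous step identifies $\proart$ with $(\cat{cndgc}^{\geq 0}_k)^{\mathrm{op}}$ via $\{A_\alpha\}\mapsto\varinjlim_\alpha A_\alpha^*$, as claimed.

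The main obstacle is the last step: carrying out the dg refinement of the fundamental theorem of coalgebras and checking that coaugmentation, conilpotence, the grading and $d$-stability are all inherited by the finite-dimensional pieces, and that filtered colimits in $\cat{cndgc}^{\geq 0}_k$ behave on the nose. Everything else is categorical bookkeeping around $\cat{pro}$ and $\cat{ind}$ together with the self-duality of finite-dimensional vector spaces.
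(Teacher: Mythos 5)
Your proof is correct and follows essentially the same route as the paper's: linear duality identifies $\dgart^{\mathrm{op}}$ with finite-dimensional nonnegative conilpotent dgcs, one passes to pro/ind-categories, and then identifies the realisation functor $\varinjlim$ as an equivalence via the dg version of Sweedler's fundamental theorem (for essential surjectivity) and compactness of finite-dimensional conilpotent dgcs (for full faithfulness). The only difference is that where the paper cites Getzler--Goerss for the dg Sweedler statement and compactness, you fill in the short arguments yourself --- the $k\cdot 1 + C_0 + dC_0$ trick and the observation that filtered colimits of conilpotent dgcs are computed on underlying graded vector spaces --- which is a welcome addition but not a different method.
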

	\begin{proof}
		Via the linear dual, an Artinian local dga is the same thing as a finite-dimensional coaugmented conilpotent dg coalgebra over $k$. Taking procategories then gives an equivalence between $(\proart)^\text{op}$ and $\cat{ind}(\cat{fd-cndgc}_k^{\geq 0})$. A classical theorem of Sweedler says that a (non-dg) coalgebra is the filtered colimit of its finite-dimensional subcoalgebras. The same remains true for dgcs \cite[1.6]{getzlergoerss}. In particular, the image of the colimit functor $\varinjlim: \cat{ind}(\cat{fd-cndgc}_k^{\geq 0}) \to \cat{dgc}_k^{\geq 0}$ is precisely $\cat{cndgc}_k^{\geq 0}$. So $\varinjlim$ is essentially surjective. By \cite[1.9]{getzlergoerss}, finite-dimensional conilpotent coalgebras are compact: given a finite-dimensional conilpotent dgc $C$, and $D=\{D_\alpha\}_\alpha$ a filtered system of finite-dimensional dgcs, that there is a natural isomorphism $\varinjlim_\alpha\hom(C,D_\alpha)\to \hom(C,\varinjlim_\alpha D_\alpha)$. But then it follows that if $C$ and $D$ are objects of $\cat{ind}(\cat{fd-cndgc}_k^{\geq 0})$, then one has an isomorphism $\hom(C,D)\cong\hom(\varinjlim C, \varinjlim D)$. Moreover, if $A$ and $B$ are finite-dimensional algebras, then one has an isomorphism $\hom(A,B)\cong \hom(B^*,A^*)$. Putting these together we see that $\varinjlim$ is fully faithful and hence an equivalence.
	\end{proof}
	\begin{defn}\label{csharp}
		If $C$ is a nonnegative dgc, let $C^\sharp\in \proart$ denote the levelwise dual of its filtered system of finite-dimensional sub-dgcs.
	\end{defn}
	It is easy to see that $C \mapsto C^\sharp$ is the inverse functor to  $\{A_\alpha\}_\alpha \mapsto\varinjlim_\alpha A_\alpha^*$, and that $C^*$ and $ \varprojlim C^\sharp$ are isomorphic dgas.
	\begin{prop}\label{sharpquillen}
		The equivalence $$(-)^\sharp:(\cat{cndgc}^{\geq 0}_k)^\text{op} \to \proart$$preserves fibrations and weak equivalences.
	\end{prop}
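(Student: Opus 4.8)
The plan is to reduce both assertions to statements about the limit functor $\varprojlim\colon\proart\to\dga$, exploiting the identification recorded after \ref{csharp}: for a nonnegative conilpotent dgc $C$ there is a natural isomorphism of dgas $\varprojlim C^\sharp\cong C^*$, where $(-)^*$ denotes the linear dual. In particular, for a morphism $f\colon C\to D$ of $\cat{cndgc}^{\geq 0}_k$, applying $\varprojlim$ to the induced pro-map $f^\sharp\colon D^\sharp\to C^\sharp$ recovers, via this natural isomorphism, the dual map $f^*\colon D^*\to C^*$ in $\dga$. So the strategy is to compute what $(-)^*$ does to weak equivalences and to levelwise monomorphisms of coalgebras, and then transport the conclusion back along $\varprojlim$ using \ref{limreflects} and the model structure of \ref{proartmodel}.

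For weak equivalences, suppose $f\colon C\to D$ is a weak equivalence in $\cat{cndgc}^{\geq 0}_k$. By the remark following \ref{bcquillen}, $f$ is in particular a quasi-isomorphism of underlying complexes; since $k$ is a field, the linear dual functor is exact and hence preserves quasi-isomorphisms, so $f^*\colon D^*\to C^*$ is a quasi-isomorphism, that is, a weak equivalence in $\dga$. As $\varprojlim(f^\sharp)\cong f^*$ and $\varprojlim$ reflects weak equivalences by \ref{limreflects}, it follows that $f^\sharp$ is a weak equivalence in $\proart$.

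For fibrations, unwinding the opposite model structure, a fibration of $(\cat{cndgc}^{\geq 0}_k)^{\mathrm{op}}$ is the formal dual of a cofibration of $\cat{cndgc}^{\geq 0}_k$, i.e.\ of a levelwise monomorphism $f\colon C\hookrightarrow D$, whose image under $(-)^\sharp$ is the pro-map $f^\sharp\colon D^\sharp\to C^\sharp$. Over a field the dual of an injection is a surjection, so $f^*\colon D^*\to C^*$ is a levelwise epimorphism. By \ref{proartmodel}, a morphism $g$ of $\proart$ is a fibration precisely when $\varprojlim g$ is a levelwise surjection; applying this with $g=f^\sharp$ and using $\varprojlim(f^\sharp)\cong f^*$ shows that $f^\sharp$ is a fibration. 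Since $(-)^\sharp$ is moreover an equivalence of categories by \ref{sharpprop}, it then also preserves trivial fibrations and is in fact a right Quillen equivalence, though only the stated form will be needed.

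The only real friction I anticipate in writing this out is bookkeeping rather than mathematics: one must fix the model structure on the nonnegatively-graded subcategory $\cat{cndgc}^{\geq 0}_k$ and check that the two facts used — that cofibrations are exactly the levelwise monomorphisms, and that every weak equivalence is a quasi-isomorphism — genuinely carry over from the unbounded setting of \ref{bcquillen}, and one must keep the opposite categories straight so that fibrations on the coalgebra side really do correspond to monomorphisms of coalgebras.
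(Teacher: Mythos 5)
Your proof is correct and takes essentially the same approach as the paper: both reduce the claim to a statement about $\varprojlim$, using $\varprojlim g^\sharp \cong g^*$, exactness of the linear dual over a field for weak equivalences (via the remark after \ref{bcquillen} that weak equivalences are quasi-isomorphisms), and the fact that the dual of an injection is a surjection for fibrations, then invoking \ref{limreflects} and \ref{proartmodel}. The only differences are cosmetic — you do weak equivalences first and append an unneeded remark about $(-)^\sharp$ being a right Quillen equivalence.
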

	\begin{proof}
Let $g^\text{op}: D \to C$ be a fibration in $(\cat{cndgc}^{\geq 0}_k)^\text{op}$, i.e.\ $g:C \to D$ is a levelwise injection of dgcs. Hence, $g^*: D^* \to C^*$ is a levelwise surjection of dgas. But $g^*\cong \varprojlim g^\sharp$, so that $g^\sharp$ is a fibration in $\proart$. Similarly, suppose that $g^\text{op}: D \to C$ is a weak equivalence, and in particular a quasi-isomorphism. Then $g: C \to D$ is a quasi-isomorphism. Dualising, $g^*\cong \varprojlim g^\sharp$ is a quasi-isomorphism, and so $g^\sharp$ is a weak equivalence, since, by \ref{limreflects}, $f$ is a weak equivalence of pro-Artinian dgas if and only if $\varprojlim f$ is a quasi-isomorphism.
	\end{proof}
	\section{Koszul duality for pro-Artinian algebras}
	\begin{defn}
		Say that a nonpositive dga $A\in \dga$ is \textbf{good} if \begin{itemize}
			\item $A$ is quasi-isomorphic to $\varprojlim \mathcal{A}$ for some pro-Artinian dga $\mathcal{A}$.
			\item $A$ is cohomologically locally finite.
		\end{itemize}
	\end{defn}
	In the presence of the second condition, the first condition is actually equivalent to requiring simply that $H^0(A)$ is an Artinian local algebra. One direction of this equivalence is clear: if $A$ is quasi-isomorphic to $\varprojlim \mathcal{A}$ and $H^0(A)$ is finite, then it is a finite-dimensional local ring and hence Artinian local. The other direction is a nontrivial result we later prove as \ref{goodchar}, which will require deformation-theoretic methods. We will of course not use this fact until then. We are about to prove a Koszul duality result for the class of good dgas -- we begin by noting an important finiteness property.
	\begin{prop}\label{kdfinite}
		Let $A\in \dga$ be a nonpositive dga. If $A$ is cohomologically locally finite then so are $BA$ and $A^!$.
	\end{prop}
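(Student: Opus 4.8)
The plan is to filter the bar construction by tensor length and run the resulting spectral sequence, exactly as in the proof that the bar construction preserves quasi-isomorphisms. Write $V \coloneqq \bar A[1]$. Since $A$ is nonpositive and augmented, $V$ is concentrated in degrees $\leq -1$, so $V^{\otimes n}$ is concentrated in degrees $\leq -n$. Filter $BA$ by $F_p BA \coloneqq \bigoplus_{n \leq p} V^{\otimes n}$; the bar differential $d_B$ lowers tensor length by one and the internal differential $d_V$ preserves it, so $F_\bullet$ is an exhaustive increasing filtration by subcomplexes. Crucially — and this is the one place nonpositivity of $A$ is used — in each fixed cohomological degree $m$ the filtration is finite: $(V^{\otimes n})^m$ vanishes once $n > -m$, so $(F_p BA)^m = (BA)^m$ as soon as $p \geq -m$. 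Hence the associated spectral sequence converges strongly to $H(BA)$.

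Next I would identify the $E_1$-page. On the associated graded the bar differential vanishes, leaving $\mathrm{gr}_p BA = (V^{\otimes p}, d_V)$, whose cohomology is $(HV)^{\otimes p}$ by the Künneth theorem over the field $k$. Now $HV = (H\bar A)[1]$, and since $A$ is cohomologically locally finite, $H\bar A$ is locally finite and concentrated in degrees $\leq 0$; thus $HV$ is locally finite and concentrated in degrees $\leq -1$. Consequently $(HV)^{\otimes p}$ is locally finite: in a fixed degree $m$ there are only finitely many ways to write $m$ as a sum of $p$ integers each $\leq -1$, and each corresponding tensor product of (finite-dimensional) graded pieces of $HV$ is finite-dimensional. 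Combining this with the bound $p \leq -m$, the total $E_1$-term in each cohomological degree $m$ is a finite direct sum of finite-dimensional spaces, hence finite-dimensional.

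It then follows that $BA$ is cohomologically locally finite: in each degree, $H^m(BA)$ is a finite iterated extension (the filtration being finite there) of subquotients of the finite-dimensional $E_1$-terms, hence finite-dimensional. Finally, $A^! = (BA)^*$, and the graded linear dual is exact, so $H^n(A^!) \cong (H^{-n}(BA))^*$ is finite-dimensional for every $n$; thus $A^!$ is cohomologically locally finite as well.

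I do not anticipate a serious obstacle here: the argument is essentially bookkeeping with the tensor-length filtration. The one point that genuinely requires care is that $A$ is assumed only cohomologically locally finite, not locally finite, so $BA$ itself need not be locally finite in any degree; one must pass to cohomology through the spectral sequence rather than bound the chain groups directly. (Alternatively, one could replace $A$ by a minimal $A_\infty$-model $\mathscr{H}\kern -1pt A$ — whose underlying graded space $HA$ is locally finite and nonpositive — observe that $B_\infty(\mathscr{H}\kern -1pt A) = T^c(\overline{HA}[1])$ is then genuinely locally finite by the same tensor-length count, and invoke the fact that the bar construction preserves quasi-isomorphisms to conclude $H(BA) \cong H(B_\infty \mathscr{H}\kern -1pt A)$ is locally finite; this route bypasses the spectral sequence entirely, at the cost of using the $A_\infty$ minimal model and the $A_\infty$ bar construction.)
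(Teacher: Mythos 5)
Your proof is correct and follows the same route as the paper: filter the bar construction by tensor length, observe that nonpositivity of $A$ makes the filtration finite in each cohomological degree, compute the $E_1$-page via the K\"unneth theorem, and conclude local finiteness of $H(BA)$; the statement for $A^!$ then follows by exactness of the linear dual. Your parenthetical alternative via a minimal $A_\infty$-model is also noted as a remark in the paper immediately after its proof.
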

	\begin{proof}Since the linear dual is exact, the statement for $A^!$ is implied by the statement for $BA$. To prove the latter, filter $BA$ by the tensor powers of $A$ to obtain a spectral sequence with $E_1$ page ${H^p(A^{\otimes q}) \Rightarrow H^{p-q}(BA)}$. Since there are only finitely many nonzero $H^p(A^{\otimes q})$ with $p-q$ fixed, and they are all finite-dimensional, $H^{p-q}(BA)$ must also be finite-dimensional.
	\end{proof}
	\begin{rmk}
		One can also prove \ref{kdfinite} by applying the $A_\infty$ bar construction to an $A_\infty$ minimal model for $A$, which yields a locally finite model for $BA$.	
	\end{rmk}
	\begin{thm}\label{kdgood}
		Let $A$ be a dga. If $A$ is good then $A$ is quasi-isomorphic to its double Koszul dual $A^{!!}$.
	\end{thm}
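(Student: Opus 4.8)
The plan is to reduce, using the quasi-isomorphism-invariance of Koszul duality, to the case where $A$ is literally the limit of a strict pro-Artinian dga, and then to compute the double Koszul dual through the coalgebra side, leaning on the Artinian-local case \ref{kdforart} and the bar/cobar resolution theorems of \S\ref{barcobarsctn}; cohomological finiteness will be what makes certain ordinary tensor coalgebras agree, up to quasi-isomorphism, with completed ones.

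First, $(-)^!=(B(-))^*$ preserves quasi-isomorphisms, since $B$ does (the bar-construction lemma of \S\ref{barcobarsctn}) and the linear dual is exact; hence so does $(-)^{!!}$. As $A$ is good, $A\simeq\varprojlim\mathcal A$ for a pro-Artinian $\mathcal A$, which by \ref{strictpro} we may take strict, say $\mathcal A=\{\mathcal A_\alpha\}_{\alpha\in I}$; it therefore suffices to prove $\varprojlim\mathcal A\simeq(\varprojlim\mathcal A)^{!!}$, and I write $A\coloneqq\varprojlim\mathcal A$ from now on. (One can check afterwards that the quasi-isomorphism obtained is the canonical comparison map $A\to A^{!!}$, which may be extracted from \ref{kdisrend} by a routine diagram chase; the bare statement does not require this.)

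Next I would pass to coalgebras. Let $C\coloneqq\varinjlim_\alpha\mathcal A_\alpha^*$ be the coaugmented conilpotent dgc with $C^\sharp\cong\mathcal A$, so that $A=\varprojlim C^\sharp=C^*$ (\ref{sharpprop}). By \ref{artprop}, $\mathcal A_\alpha^!\cong\Omega(\mathcal A_\alpha^*)$, and since the cobar construction commutes with filtered colimits, $\varinjlim_\alpha\mathcal A_\alpha^!\cong\Omega C$. The structure maps $A\to\mathcal A_\alpha$ induce a natural dga map $\theta\colon\Omega C\cong\varinjlim_\alpha\mathcal A_\alpha^!\to A^!$, and the crux is that $\theta$ is a quasi-isomorphism. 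Granting this, the rest is formal: applying $(-)^!$ gives $A^{!!}\simeq(\Omega C)^!=(B\Omega C)^*$; the unit $C\to B\Omega C$ is a quasi-isomorphism of coaugmented conilpotent dgcs (\S\ref{barcobarsctn}), so the exact linear dual turns it into a quasi-isomorphism $(B\Omega C)^*\to C^*=A$, whence $A^{!!}\simeq A$.

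The hard part, then, is proving $\theta$ a quasi-isomorphism. This comes down to comparing $BA=B(\varprojlim_\alpha\mathcal A_\alpha)$ with the inverse system $\{B\mathcal A_\alpha\}$, and the subtlety is that $\varprojlim_\alpha B\mathcal A_\alpha$ is built from \emph{completed} tensor powers $(\bar A[1])^{\hat\otimes n}$ whereas $BA$ uses ordinary ones, so $\theta$ is not an isomorphism and the argument must be degreewise. I would filter every bar construction by tensor-power length, exactly as in the proofs of the bar-construction lemmas and of \ref{kdfinite}: in each fixed cohomological degree only finitely many tensor powers contribute, each $B\mathcal A_\alpha$ is locally finite, $H(BA)$ is locally finite by \ref{kdfinite}, and---using that $\mathcal A$ is strict---the resulting pro-systems of finite-dimensional vector spaces are Mittag--Leffler, so that $\varprojlim$ is exact on them (as in the proof of \ref{limreflects}, with \ref{limisexact}) and behaves correctly under dualisation. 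Alternatively, following the remark after \ref{kdfinite}, one can first replace $A$ by an $A_\infty$-minimal model, whose bar construction is already locally finite, and run the comparison there. This finiteness bookkeeping is the only genuine obstacle; everything else reduces formally to \ref{kdforart}, the resolution theorems of \S\ref{barcobarsctn}, and the structure of $\proart$.
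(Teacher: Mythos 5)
Your argument matches the paper's proof essentially step for step: reduce to a strict pro-Artinian $\mathcal{A}$, pass to the conilpotent coalgebra $C = \varinjlim\mathcal{A}_\alpha^*$, identify $\varinjlim\mathcal{A}_\alpha^! \cong \Omega C$, use the bar--cobar resolution to deduce $(\varinjlim\mathcal{A}_\alpha^!)^! \simeq C^* \cong A$, and reduce everything to showing the natural map $\varinjlim\mathcal{A}_\alpha^! \to A^!$ is a quasi-isomorphism, which is then handled degreewise using \ref{kdfinite}, local finiteness of the $B\mathcal{A}_\alpha$, and Mittag--Leffler from strictness. One small point on diagnosis: the paper takes $B$ to be continuous into conilpotent dgcs, so the issue is not ``completed versus ordinary tensor powers'' in $BA$ itself but rather that the canonical map from a colimit of duals $\varinjlim(B\mathcal{A}_\alpha)^*$ to the dual of a limit $(\varprojlim B\mathcal{A}_\alpha)^*$ need not be an isomorphism without the stated finiteness and Mittag--Leffler conditions --- but your proposed remedy is exactly the right one.
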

	\begin{proof}
		By assumption there is a pro-Artinian dga $\mathcal{A}$ and a quasi-isomorphism between $A$ and $\varprojlim \mathcal{A}$. Since the bar construction and the Koszul dual preserve quasi-isomorphisms, we may assume that $A=\varprojlim \mathcal{A}$. Moreover, by \ref{strictpro} we may assume that $\mathcal{A}$ is strict. Let $\mathcal{C}\coloneqq\mathcal{A}^*$ be the corresponding ind-conilpotent dgc, and put $C\coloneqq  \varinjlim \mathcal{C}$. It is clear that $\mathcal{A}^! \cong \Omega \mathcal{C}$ as ind-dgas. Taking colimits and using cocontinuity of $\Omega$ we get $\varinjlim \mathcal{A}^! \cong \Omega C$. Hence, $B\varinjlim \mathcal{A}^!$ is weakly equivalent to $C$. Dualising, we see that $(\varinjlim \mathcal{A}^!)^!$ is quasi-isomorphic to $C^* \cong A$. So it is enough to show that $\varinjlim \mathcal{A}^!$ is quasi-isomorphic to $A^!$.
		
		\p Put $\mathcal{D}\coloneqq B\mathcal{A}$; it is a pro-conilpotent dgc that is concentrated in nonpositive degrees. Put $D\coloneqq \varprojlim \mathcal{D}$, where we take the limit in the category of conilpotent coalgebras. Note that this limit exists because the category of conilpotent coalgebras is a coreflective subcategory of the category of all coalgebras \cite[1.3.33]{aneljoyal}, and a coreflective subcategory of a complete category is complete. Since $B$ is continuous we have $D \cong BA$. There is a natural algebra map $\phi:\varinjlim \mathcal{D}^* \to D^*$. Note that $\mathcal{D}^*=\mathcal{A}^!$ and that $D^* = A^!$, so it is enough to show that $\phi$ is a quasi-isomorphism. For $n \in \Z$, consider the induced linear map $$\psi_n:\qquad\varinjlim(H^n(\mathcal{D}^*)) \xrightarrow{\cong}H^n(\varinjlim\mathcal{D}^*) \xrightarrow{H^n\phi} H^n(D^*) \xrightarrow{\cong}H^{-n}(D)^*$$where we have used exactness of filtered colimits and the linear dual, and dualise it to obtain a map $$\chi_n:\qquad H^{-n}(D) \to H^{-n}(D)^{**} \xrightarrow{\psi_n^*} (\varinjlim(H^n(\mathcal{D}^*)))^* \xrightarrow{\cong} \varprojlim H^n(\mathcal{D^*})^{*} \xrightarrow{\cong} \varprojlim H^{-n}(\mathcal{D}^{**})$$where we have used exactness of the linear dual again along with the fact that contravariant Hom sends colimits to limits. By \ref{kdfinite}, $D=BA$ is cohomologically locally finite, which implies that $H^{-n}(D) \to H^{-n}(D)^{**}$ is an isomorphism. Similarly, each level ${\mathcal{D}_\alpha}$ of $\mathcal{D}$ is locally finite, since it is the bar construction on an Artinian local dga. In particular, the natural map $H^{-n}(\mathcal{D}_\alpha) \to H^{-n}(\mathcal{D}_\alpha^{**})$ which sends $[v]$ to $[\mathrm{ev}_v]$ is an isomorphism. Let $[u] \in H^{-n}(D)$; one can compute that $\chi_n([u])=([\mathrm{ev}_{u_\alpha}])_\alpha$, where $u_\alpha$ is the image of $u$ under the natural map $D \to \mathcal{D}_\alpha$. Hence, the composition $H^{-n}(D) \xrightarrow{\chi_n}\varprojlim H^{-n}(\mathcal{D}^{**}) \xrightarrow{\cong} \varprojlim H^{-n}(\mathcal{D})$ of $\chi_n$ with the inverse to the natural isomorphism sends $[u]$ to $[u_\alpha]_\alpha$. But this is precisely the natural map $H^{-n}(D) \to \varprojlim H^{-n} (\mathcal{D})$. Since $B$ preserves surjections, and we chose $\mathcal{A}$ to be strict, $\mathcal{D}=B\mathcal{A}$ is a strict pro-dgc, and in particular satisfies the Mittag-Leffler condition. Hence, the natural map $H^{-n}(D) \to \varprojlim H^{-n} (\mathcal{D})$ is an isomorphism for all $n$. Now it follows that $\chi_n$, $\psi_n$, and $H^n\phi$ are isomorphisms for all $n$. Hence, $\phi$ is a quasi-isomorphism.
	\end{proof}
	\begin{rmk}
		Note that instead of requiring that $A$ itself be cohomologically locally finite, it is enough to require that $BA$ is cohomologically locally finite. If the dgc $BA$ is cohomologically locally finite and admits a minimal model, then $A$ has a resolution $\Omega B A$ with finitely many generators in each level, which can be thought of as a finiteness condition.
	\end{rmk}

	\section{Derivations}\label{dersctn}
	If $A\to B$ is a map of commutative $k$-algebras and $M$ is a $B$-module, then a derivation $A \to M$ is the same as a map of $B$-augmented $k$-algebras $A \to B \oplus M$, where $B \oplus M$ is the square-zero extension of $B$ by $M$. When $A=B$ and the map is the identity, a derivation is the same as a section of the projection $A\oplus M \to A$. In this section, we make the same observation in the noncommutative derived world. We prove a key technical result stating that derived derivations from a `pregood' pro-Artinian dga are the same as derived derivations from its limit (\ref{rder}). Note that when our algebras are noncommutative, one must use bimodules in order to talk about derivations. We broadly follow Tabuada \cite{tabuadaNCAQ}, who is generalising the seminal work of Quillen \cite{quillender} for ungraded commutative algebras. A reference for ungraded noncommutative algebras is Ginzburg \cite{ginzburgnc}.
	\begin{defn}
		Let $B$ be a dga and $M$ a $B$-bimodule. The \textbf{square-zero extension} of $B$ by $M$ is the dga $B \oplus M$ whose underlying dg vector space is $B\oplus M$, with multiplication given by $(b,m).(b',m')=(bb',bm+mb')$. If $A \to B$ is a dga map then a \textbf{derivation} $A \to M$ is a map of $B$-augmented dgas $A \to B\oplus M$, which is equivalently a morphism $A \to B\oplus M$ in the overcategory $\cat{dga}_k / B$. The set of derivations $A \to M$ is $\mathrm{Der}_B(A,M)\coloneqq \hom_{\cat{dga}_k / B}(A, B \oplus M)$.
	\end{defn}
	One can easily check that a derivation $A \to M$ is the same as an $A$-linear map $A \to M$ satisfying the graded Leibniz formula.
	\begin{prop}[{\cite[4.6]{tabuadaNCAQ}}]
		The square-zero extension functor $B$-$\cat{bimod} \to \cat{dga}_k / B$ admits a left adjoint $A\mapsto \Omega(A)_B$, which we refer to as the functor of \textbf{noncommutative K\"ahler differentials}.
	\end{prop}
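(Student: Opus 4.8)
The plan is to build $\Omega(A)_B$ explicitly, exactly as one does for ungraded algebras, and then verify the adjunction directly; since all tensor products are taken over the field $k$ and are therefore exact, no derived functors intervene and the classical argument transfers to the dg setting after inserting Koszul signs.

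First I would treat the absolute case. For a dga $A$, set $\Omega^1_A\coloneqq\ker(\mu_A\colon A\otimes A\to A)$, where $\mu_A$ is the multiplication; this is a sub-dg-$A$-bimodule of $A\otimes A$ (the multiplication being a chain map of bimodules, so its kernel inherits a dg bimodule structure), and the assignment $d_A(a)=1\otimes a-a\otimes 1$ defines a degree-zero derivation $d_A\colon A\to\Omega^1_A$, i.e.\ a chain map satisfying the graded Leibniz rule. An $A$-bimodule map out of $\Omega^1_A$ is determined by what it does to the elements $d_A(a)$, and the identity $d_A(ab)=d_A(a)\cdot b+a\cdot d_A(b)$ forces any such recipe to respect Leibniz; this yields a natural bijection $\hom_{A\text{-}\cat{bimod}}(\Omega^1_A,N)\cong\mathrm{Der}_A(A,N)=\hom_{\cat{dga}_k/A}(A,A\oplus N)$ for every dg $A$-bimodule $N$. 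Next, given an object $f\colon A\to B$ of $\cat{dga}_k/B$, I would define $\Omega(A)_B\coloneqq B\otimes_A\Omega^1_A\otimes_A B$, the extension of scalars of $\Omega^1_A$ along $f$. Since $N\mapsto B\otimes_A N\otimes_A B$ is left adjoint to restriction of scalars $B\text{-}\cat{bimod}\to A\text{-}\cat{bimod}$ along $f$ (the usual base-change adjunction for bimodules, valid verbatim for dgas), composing the two adjunctions gives, for every dg $B$-bimodule $M$, a natural isomorphism
$$\hom_{B\text{-}\cat{bimod}}\bigl(\Omega(A)_B,M\bigr)\cong\hom_{A\text{-}\cat{bimod}}(\Omega^1_A,M)\cong\mathrm{Der}_A(A,M)=\mathrm{Der}_B(A,M),$$
where $M$ is regarded as an $A$-bimodule through $f$; and since an $A$-linear Leibniz map $A\to M$ is the same thing as a $B$-augmented dga map $A\to B\oplus M$, the right-hand side is exactly $\hom_{\cat{dga}_k/B}(A,B\oplus M)$. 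Finally I would check naturality in $A$: a morphism $A\to A'$ over $B$ induces $\Omega^1_A\to\Omega^1_{A'}$ compatibly with $d_A,d_{A'}$, hence a $B$-bimodule map $\Omega(A)_B\to\Omega(A')_B$, and the displayed identifications are compatible with these maps.

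The only genuine work here is bookkeeping rather than anything conceptual: one must fix the Koszul signs so that $\Omega^1_A$ really is a dg bimodule and $d_A$ really a chain map, and one must confirm that each arrow in the display is a bijection of honest (differential-preserving) hom-sets and not merely of underlying graded sets — both routine once the sign conventions of Chapter \ref{dcdgcc} are in force. (One could instead argue abstractly, since square-zero extension preserves all limits between locally presentable categories and so admits a left adjoint by an adjoint functor theorem; but the explicit description of $\Omega(A)_B$ above is what we will actually use later.)
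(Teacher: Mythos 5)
Your proof is correct, but it takes a genuinely different route from the paper's. The paper's proof of this proposition (following Tabuada \cite{tabuadaNCAQ}) is simply a one-line appeal to Freyd's adjoint functor theorem: the square-zero extension functor preserves limits between locally presentable categories, so a left adjoint exists abstractly. You instead build $\Omega(A)_B$ by hand — first $\Omega^1_A \coloneqq \ker(\mu_A)$ with universal derivation $d_A(a) = 1\otimes a - a\otimes 1$, then base change $\Omega(A)_B \coloneqq B\otimes_A \Omega^1_A \otimes_A B$ — and verify the adjunction directly. This is the classical Cuntz--Quillen construction of noncommutative forms, transported to the dg setting. Notably, the paper itself endorses exactly your formulas: the remark immediately following the proposition states $\Omega(A)_A \cong \ker(\mu \colon A\otimes_k A \to A)$ and $\Omega(A)_B \cong B\otimes_A \Omega(A)_A \otimes_A B$, so the author clearly has the explicit description in mind even while invoking AFT for the proof.

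Each route buys something different. The AFT argument is shorter, avoids all sign-bookkeeping, and makes the existence manifestly formal (so it generalizes to any enriching category with the right smallness hypotheses). Your explicit construction is the one the paper actually uses later (witness the remark just cited, and the derived derivations/cotangent complex machinery that follows), and it makes the $B$-bimodule structure of $\Omega(A)_B$ completely transparent — which matters when one wants to filter it, compute with it, or compare it to commutative Kähler differentials. You already flag the AFT alternative yourself at the end, so the comparison is not lost on you. One small point worth spelling out if you intend this for use later: the observation that every element $\sum a_i \otimes b_i \in \ker(\mu_A)$ equals $\sum a_i\, d_A(b_i)$, hence that $\Omega^1_A$ is generated as a left $A$-module by the image of $d_A$, is what makes "an $A$-bimodule map out of $\Omega^1_A$ is determined by its values on the $d_A(a)$" true; that one-line identity is the load-bearing step in the universal property and deserves to be written down.
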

	\begin{proof}
		An application of Freyd's adjoint functor theorem.
	\end{proof}
	\begin{rmk}
		If $A \to B$ is a morphism of commutative $k$-algebras, then $\Omega(A)_B$ does not agree with the usual commutative K\"ahler differentials. Indeed, one has an isomorphism $\Omega(A)_A\cong \ker(\mu: A \otimes_k A \to A)$, and the commutative K\"ahler differentials are $\Omega(A)_A/\Omega(A)_A^2$. In general one has $\Omega(A)_B\cong B\otimes_A \Omega(A)_A \otimes_A B$ which is the pullback of the bimodule $\Omega(A)_A$ along $A \to B$.
	\end{rmk}
	\begin{cor}
		Let $A \to B$ be a dga map and $M$ a $B$-bimodule. Then the set $\mathrm{Der}_B(A,M)$ is naturally a dg vector space.
	\end{cor}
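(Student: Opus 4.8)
The plan is to deduce this immediately from the adjunction of the preceding proposition, using that $B$-bimodules form a dg category. First I would read $\mathrm{Der}_B(A,M)$ as the graded set of all derivations $A\to M$ of arbitrary internal degree: a degree-$n$ derivation is a $k$-linear map $D\colon A\to M$ of degree $n$ satisfying the graded Leibniz rule $D(xy)=D(x)y+(-1)^{n|x|}xD(y)$, and these maps form a graded subspace of the internal hom-complex $\dgh_k(A,M)$ whose degree-zero cocycles are exactly the honest derivations $\hom_{\cat{dga}_k/B}(A,B\oplus M)$ of the Definition. I would then check that this graded subspace is stable under the differential $\partial D\coloneqq d_M\circ D-(-1)^{|D|}D\circ d_A$ of $\dgh_k(A,M)$: a direct computation, using that $d_A$ is a derivation of $A$ and that $d_M$ is compatible with the bimodule action, shows $\partial D$ again obeys the graded Leibniz rule, and since $\partial^2=0$ this exhibits $\mathrm{Der}_B(A,M)$ as a dg vector space.

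The cleaner packaging, which I would actually present, is that the square-zero extension functor $B\text{-}\cat{bimod}\to\cat{dga}_k/B$ and its left adjoint $\Omega(-)_B$ are compatible with the evident dg enrichments: $B\text{-}\cat{bimod}$ is a dg category with its usual hom-complexes (differential the graded commutator with the two module differentials), while on $\cat{dga}_k/B$ one enriches by declaring the degree-$n$ part of $\dgh_{\cat{dga}_k/B}(A,B\oplus M)$ to be the degree-$n$ derivations just described. The adjunction isomorphism of the preceding proposition then upgrades to an isomorphism of complexes $\dgh_{\cat{dga}_k/B}(A,B\oplus M)\cong\dgh_{B\text{-}\cat{bimod}}(\Omega(A)_B,M)$, and the dg-vector-space structure on the right-hand side transports across. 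Naturality in $M$, and contravariant functoriality in $A$, are inherited from naturality of the adjunction; in particular $\mathrm{Der}_B(A,-)$ becomes a dg functor out of $B\text{-}\cat{bimod}$.

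The only genuinely non-formal point is the sign bookkeeping verifying that $\partial$ preserves the Leibniz condition, i.e.\ that the graded derivations of all degrees really do form a subcomplex of $\dgh_k(A,M)$; I expect this to be the main (and quite minor) obstacle, since several conventions interact — the Koszul sign rule in $\dgh_k$, the sign in the product on $B\oplus M$, and the degree shifts — and one must confirm they are mutually consistent. Beyond this, nothing substantive is needed: once the enriched adjunction is in place, the corollary is immediate.
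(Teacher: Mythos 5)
Your proposal is correct, and the ``cleaner packaging'' version is exactly the paper's argument. The paper's entire proof reads: ``The category of dg $B$-bimodules is naturally enriched over $\cat{dgvect}_k$'' --- i.e.\ one simply reads $\mathrm{Der}_B(A,M)$ as $\dgh_{B\text{-}\cat{bimod}}(\Omega(A)_B, M)$ via the adjunction of the preceding proposition and transports the complex structure across. Your first, more explicit description (graded derivations of all internal degrees as a subcomplex of $\dgh_k(A,M)$, with the honest derivations sitting as degree-zero cocycles) is an equivalent unpacking that the paper does not bother to carry out; the sign check you flag does go through, but the adjunction route lets one avoid it entirely, which is presumably why the paper takes it.
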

	\begin{proof}
		The category of dg $B$-bimodules is naturally enriched over $\cat{dgvect}_k$.
	\end{proof}
	The category $\cat{dga}_k / B$ is a model category, with model structure induced from that on $\cat{dga}_k$. The category $B$-$\cat{bimod}\coloneqq B\otimes_k B^{\text{op}}\text{-}\cat{Mod}$ is also a model category in the usual way. It is easy to see that the square-zero extension functor $B\oplus-:B$-$\cat{bimod} \to \cat{dga}_k/B$ is right Quillen. Since every object in $B$-$\cat{bimod}$ is fibrant, $B\oplus-$ is its own right derived functor. However, since not every dga is cofibrant, the noncommutative K\"ahler differentials have a nontrivial left derived functor.
	\begin{defn}
		The \textbf{noncommutative cotangent complex} functor is $\mathbb{L}(-)_B\coloneqq \mathbb{L}\Omega(-)_B$, the total left derived functor of $\Omega(-)_B$.
	\end{defn}
	The model category $B$-$\cat{bimod}$ is a dg model category, in the sense that it is enriched over $\cat{dgvect}_k$ in a way compatible with the model structure (the interested reader should consult Hovey {\cite[4.2.18]{hovey}} for a rigorous definition of enriched model category; in the terminology used there a dg model category is a $\mathrm{Ch}(k)$-model category). In particular, $B$-$\cat{bimod}$ has a well-defined notion of derived hom-complexes, and we may use the Quillen adjunction with $\cat{dga}_k / B$ to define complexes of derived derivations.
	\begin{defn}
		Let $A \to B$ be a dga map and let $M$ be a $B$-bimodule. Let $QA \to A$ be a cofibrant resolution. The space of \textbf{derived derivations} from $A$ to $M$ is the dg vector space $$\R\mathrm{Der}_B(A,M)\coloneqq \mathrm{Der}_B(QA,M)\simeq\dgh_B(\mathbb{L}(A)_B,M)$$where we use the notation $\dgh$ to mean the enriched hom.
	\end{defn}
	Different choices of resolution for $A$ yield quasi-isomorphic spaces of derived derivations. One has an isomorphism $H^0(\R\mathrm{Der}_B(A,M))\cong \hom_{\mathrm{Ho}(\cat{dga}_k / B)}(A, B \oplus M)$.
	\p We mimic the above constructions for pro-Artinian dgas. We will only be interested in the case when the base algebra $B$ is ungraded, which will avoid the need to define bimodules over pro-Artinian dgas in generality (although we remark on how to do this in \ref{probimods}). We give an example of such a situation in \ref{hoprolem}. Suppose that $B$ is an Artinian local $k$-algebra and that $\mathcal{A}$ is a pro-Artinian dga with a map to $B$. If $M$ is a dg $B$-bimodule, then $M$ is naturally a bimodule over $\varprojlim\mathcal{A}$.
	\begin{defn}
		Let $B$ be an Artinian local $k$-algebra and let $\mathcal{A}\to B$ be a pro-Artinian dga with a map to $B$. Let $M$ be a finite dg $B$-bimodule concentrated in nonnegative degrees. Note that $B \oplus M$ is still Artinian local. A \textbf{derivation} $\mathcal{A}\to M$ is a map $\mathcal{A}\to B \oplus M$ in the overcategory ${\proart}/B$. The set of all derivations $\mathcal{A}\to M$ is denoted $\mathrm{Der}_B(\mathcal{A},M)$.
	\end{defn}
	If $\mathcal{A}=\{\mathcal{A}_\alpha \}_\alpha$ with each $\mathcal{A}_\alpha $ Artinian local, then $\mathrm{Der}_B(\mathcal{A},M)\cong \varinjlim_\alpha \mathrm{Der}_B(\mathcal{A}_\alpha, M)$. Hence, $\mathrm{Der}_B(\mathcal{A},M)$ naturally acquires the structure of a dg vector space. We wish to define derived derivations as derivations from a resolution; before we do this we need to check that the definition makes sense.
	\begin{lem}\label{prolemma}
		Let $B$ be an Artinian local $k$-algebra and let $M$ be a finite dg $B$-bimodule concentrated in nonnegative degrees. The functor $\mathrm{Der}_B(-,M):\proart/B \to \cat{dgvect}_k$ preserves weak equivalences between cofibrant objects.
	\end{lem}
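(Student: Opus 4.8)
The plan is to reduce to a lifting problem via Ken Brown's lemma. Since $\mathrm{Der}_B(-,M)$ is a contravariant functor with values in $\cat{dgvect}_k$, it suffices — applying the dual form of Ken Brown's lemma to $(\proart/B)^{\mathrm{op}}$ — to show that $\mathrm{Der}_B(-,M)$ sends acyclic cofibrations between cofibrant objects to quasi-isomorphisms. So fix an acyclic cofibration $f\colon\mathcal{A}\to\mathcal{A}'$ in $\proart/B$ with $\mathcal{A},\mathcal{A}'$ cofibrant, and the goal becomes showing that $f^*\colon\mathrm{Der}_B(\mathcal{A}',M)\to\mathrm{Der}_B(\mathcal{A},M)$ is a quasi-isomorphism.

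The first observation is that $B\oplus M$ is a fibrant test object: it is Artinian local by hypothesis, so its structure map $B\oplus M\to B$ has $\varprojlim$ a levelwise surjection of dgas and is hence a fibration in $\proart$, i.e.\ $B\oplus M$ is fibrant in $\proart/B$. The lifting property of $f$ against $B\oplus M\to B$ then gives surjectivity of $f^*$ on honest derivations, i.e.\ on degree-zero cocycles; to promote this to a bijection on all cohomology I would run the same argument against path objects. Choosing a finite path object $M^I$ for $M$ in $B$-bimodules keeps $B\oplus M^I$ Artinian local and $B\oplus M^I\to B\oplus(M\oplus M)$ a fibration over $B$, so lifting $f$ against it shows every homotopy of derivations out of $\mathcal{A}$ extends over $\mathcal{A}'$; combined with the exactness of $\mathrm{Der}_B(\mathcal{A},-)$ on the cofibrant bimodule $\Omega(\mathcal{A})_B$ and a finite filtration of $M$ by ungraded sub-bimodules, this yields that $f^*$ is an isomorphism on each $H^n$. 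Here one uses that, for $\mathcal{A}$ cofibrant and $B\oplus M$ fibrant, the homotopy relation on $\hom_{\proart/B}(\mathcal{A},B\oplus M)$ detected by these path objects coincides with the one coming from the differential of the complex $\mathrm{Der}_B(\mathcal{A},M)$.

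An alternative route — the one I would probably write up in full — is to reduce to the non-pro statement over $\dga/B$, where $\mathrm{Der}_B(-,M)$ is the composite $\dgh_B(\Omega(-)_B,M)$ with $\Omega(-)_B\dashv(B\oplus-)$ the Quillen adjunction of the noncommutative Kähler differentials; then $\Omega(-)_B$ preserves weak equivalences between cofibrant objects and lands among cofibrant $B$-bimodules, and $\dgh_B(P,-)$ is homotopy-invariant in the cofibrant variable $P$ because $B\text{-}\cat{bimod}$ is a $\mathrm{Ch}(k)$-enriched model category. One then writes $\mathrm{Der}_B(\mathcal{A},M)\cong\varinjlim_\alpha\mathrm{Der}_B(\mathcal{A}_\alpha,M)$, a filtered (hence exact) colimit, and feeds in a levelwise presentation of $f$ as a filtered system of cofibrations of finite semifree dgas.

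The main obstacle, in either approach, is the friction between the pro-structure and the model structure: weak equivalences in $\proart$ are only pro-isomorphisms on cohomology, not levelwise quasi-isomorphisms, so the colimit description of $\mathrm{Der}_B$ cannot be analysed naïvely level by level, and one is forced to genuinely exploit cofibrancy (or the lifting characterisation) rather than argue termwise. What makes this manageable is precisely the finiteness hypotheses: because $B$ is Artinian local and $M$ is finite, the test objects $B\oplus M$ and $B\oplus M^I$ (and the brutal truncations of $M$) stay inside $\proart/B$ and remain fibrant there, so $\proart/B$ has enough fibrant objects to detect quasi-isomorphisms of derivation complexes; controlling how a levelwise model of $f$ interacts with these test objects is the step that needs the most care.
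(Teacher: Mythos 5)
Your diagnosis of the obstacle is correct — weak equivalences in $\proart$ are only pro-isomorphisms on cohomology, not levelwise quasi-isomorphisms, so the filtered colimit $\mathrm{Der}_B(\mathcal{A},M)\cong\varinjlim_\alpha\mathrm{Der}_B(\mathcal{A}_\alpha,M)$ cannot be analysed termwise — but your second approach then walks straight into that obstacle rather than around it. You set up the Quillen adjunction $\Omega(-)_B\dashv(B\oplus-)$ between $\dga/B$ and $B$-$\cat{bimod}$ and try to pull the conclusion back through the colimit, which is exactly the naïve levelwise analysis you flagged as unavailable. The paper's proof sidesteps this entirely by constructing the adjunction one level up: it takes $\mathcal{C}$ to be the category of finite nonnegative dg $B$-bimodules, extends square-zero extension to a functor $B\oplus-:\cat{pro}\mathcal{C}\to\proart/B$, observes this is still right Quillen (fibrations and weak equivalences in both pro-categories are detected by $\varprojlim$, which commutes with $B\oplus-$), and takes the left adjoint $\Omega(-)_B:\proart/B\to\cat{pro}\mathcal{C}$ to be the levelwise noncommutative K\"ahler differentials. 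Ken Brown then applies directly in the pro-world; there is no reduction to the non-pro case and no filtered colimit to control. That single relocation of the adjunction is the missing idea.

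Your first approach — lifting against $B\oplus M\to B$ and against a path object $B\oplus M^I$ — is conceptually sound and would also work, but it is not carried to completion and has two soft spots. First, you invoke "the exactness of $\mathrm{Der}_B(\mathcal{A},-)$ on the cofibrant bimodule $\Omega(\mathcal{A})_B$", but for $\mathcal{A}\in\proart/B$ the object $\Omega(\mathcal{A})_B$ lives in $\cat{pro}\mathcal{C}$, not in $B\text{-}\cat{bimod}$, so the representability you want to lean on is the pro-level one you have not set up. Second, the identification of the path-object homotopy relation on $\hom_{\proart/B}(\mathcal{A},B\oplus M)$ with the one coming from the differential of the complex $\mathrm{Der}_B(\mathcal{A},M)$ is asserted rather than proved, and it is precisely what the dg-enriched Quillen adjunction in $\cat{pro}\mathcal{C}$ gives you for free. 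In short: correct intuition about where the friction is, but the adjunction needs to be built at the pro level, after which the lemma is a one-liner.
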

	\begin{proof}
		Let $\mathcal{C}$ be the category of finite dg $B$-bimodules concentrated in nonnegative degrees. The square-zero extension functor extends to a functor $B\oplus-:\cat{pro}\mathcal{C}\to \proart/B$. This functor has a left adjoint $\Omega(-)_B$, given by applying the noncommutative K\"ahler differentials functor levelwise. The functor $B\oplus-$ is clearly right Quillen and hence $\Omega(-)_B$ is left Quillen.
	\end{proof}
	\begin{rmk}\label{pronccot}
		Taking this argument seriously leads one to define the \textbf{pro-noncommutative cotangent complex} $\mathbb{L}(\mathcal{A})_B\in \mathrm{Ho}(\cat{pro}(B\text{-}\cat{bimod}))$ of an object $\mathcal{A}\in \proart/B$.
	\end{rmk}
	\begin{defn}
		Let $B$ be an Artinian local $k$-algebra and let $\mathcal{A}\to B$ be a pro-Artinian dga with a map to $B$. Let $Q\mathcal{A} \to \mathcal{A}$ be a cofibrant resolution. Let $M$ be a finite dg $B$-bimodule concentrated in nonnegative degrees. The space of \textbf{derived derivations} from $\mathcal{A}$ to $M$ is the dg vector space $$\R\mathrm{Der}_B(\mathcal{A},M)\coloneqq \mathrm{Der}_B(Q\mathcal{A},M).$$
	\end{defn}
	By \ref{prolemma}, different choices of resolution for $\mathcal A$ yield quasi-isomorphic spaces of derived derivations. One has an isomorphism $H^0(\R\mathrm{Der}_B(\mathcal{A},M))\cong \hom_{\mathrm{Ho}(\proart / B)}(\mathcal{A}, B \oplus M)$. The main technical result of this section is that the two notions of derived derivation match up for good dgas. We will first prove this for $B\cong k$, where the proof is simpler (because the action of $\mathfrak{m}_\mathcal A$ on $M$ is trivial), and then we will adapt the argument to general $B$ via filtering by the action of $\mathfrak{m}_B$ to reduce to the case $B\cong k$. Observe that any pro-Artinian dga $\mathcal{A}$ admits an augmentation $\mathcal{A}\to k$.
	\begin{prop}\label{prerder}
		Let $\mathcal{A}$ be a pro-Artinian dga. Let $M$ be a finite-dimensional dg $k$-vector space concentrated in nonpositive degrees. Assume that $A\coloneqq \varprojlim \mathcal{A}$ is cohomologically locally finite. Then there is a quasi-isomorphism $$\R\mathrm{Der}_{k}(\mathcal{A},M) \simeq \R\mathrm{Der}_{k}(A,M) .$$
	\end{prop}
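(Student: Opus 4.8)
The plan is to reduce the statement to a computation with the bar/cobar machinery developed earlier in this chapter, using the Koszul duality result \ref{kdgood} as the crucial input. First I would fix a nice model for the pro-Artinian dga $\mathcal{A}$: by \ref{strictpro} we may assume $\mathcal{A} = \{\mathcal{A}_\alpha\}_\alpha$ is strict, and by passing to a cofinal directed subset (as in the proof of \ref{limisexact}) we may assume the indexing category is directed. Since $M$ is concentrated in nonpositive degrees and $k$-linear, a derivation $\mathcal{A} \to M$ is just a section of the projection from the square-zero extension $\mathcal{A} \oplus M$, and because the $\mathfrak{m}_{\mathcal{A}_\alpha}$ act trivially on $M$, such a derivation is exactly a strict $A_\infty$-morphism (equivalently, a coalgebra-compatible map out of the bar construction) landing in the abelian part. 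More precisely, I would identify $\R\mathrm{Der}_k(\mathcal{A},M)$ levelwise: $\R\mathrm{Der}_k(\mathcal{A}_\alpha,M) \simeq \dgh_{k}(\mathbb{L}(\mathcal{A}_\alpha)_k, M)$, and the key point is that for an augmented dga $E$ one has a quasi-isomorphism $\mathbb{L}(E)_k \simeq \bar{E}[-1] \lot_E \dots$ identifying $\R\mathrm{Der}_k(E,M)$ with $(\overline{BE})^*[\,\cdot\,]\otimes M$-type data — concretely, $\R\mathrm{Der}_k(E,M) \simeq \dgh(\overline{BE}, M[1])$ or equivalently $(E^! \text{-modules})$; the cleanest formulation is that derived derivations from $E$ to $M$ are computed by the truncation of $E^! = (BE)^*$ acting on $M$.

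With that dictionary in place, the proof becomes: $\R\mathrm{Der}_k(\mathcal{A},M) \cong \varinjlim_\alpha \R\mathrm{Der}_k(\mathcal{A}_\alpha, M)$ (filtered colimits commute with the cofibrant-resolution/enriched-hom formation here because $M$ is finite-dimensional and everything is levelwise), and dually $\R\mathrm{Der}_k(A,M)$ is computed from $BA \cong \varprojlim_\alpha B\mathcal{A}_\alpha$ (continuity of the bar construction, exactly as in the proof of \ref{kdgood}). So I would reduce to comparing $\varinjlim_\alpha (B\mathcal{A}_\alpha)^* \otimes M$-data with $(\varprojlim_\alpha B\mathcal{A}_\alpha)^* \otimes M$-data. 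This is precisely the Mittag-Leffler / double-dual argument already run in the proof of \ref{kdgood}: each $B\mathcal{A}_\alpha$ is locally finite (bar construction of an Artinian local dga), $BA$ is cohomologically locally finite by \ref{kdfinite} (using that $A$ is cohomologically locally finite, which is the hypothesis), and strictness of $\mathcal{A}$ gives strictness of $\mathcal{D} = B\mathcal{A}$, hence the Mittag-Leffler condition, hence $H^{-n}(BA) \to \varprojlim_\alpha H^{-n}(B\mathcal{A}_\alpha)$ is an isomorphism for all $n$. Tensoring with the finite-dimensional $M$ and dualising preserves these isomorphisms.

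The main obstacle I anticipate is not the homological-algebra bookkeeping but making the identification "derived derivations $=$ appropriate piece of the Koszul dual" precise and functorial enough to interchange with the $\varinjlim$/$\varprojlim$. One has to be careful that the cofibrant replacement $Q\mathcal{A}$ can be taken levelwise (as in \ref{prolemma}) and compatibly with the realisation $\varprojlim$, and that the bar resolution $\Omega B(-)$ provides exactly such a functorial choice so that $\R\mathrm{Der}_k(-,M)$ is genuinely computed by $\mathrm{Der}_k(\Omega B(-), M)$ on both sides; this is where one invokes that $\Omega BE \to E$ is a functorial cofibrant resolution. Once the two spaces of derived derivations are both expressed as (levelwise-dual, $M$-coefficient) versions of the bar construction, the comparison map is the evident one and the argument of \ref{kdgood} applies essentially verbatim, with $M$ carried along as a harmless finite-dimensional coefficient. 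I would therefore structure the write-up as: (i) reduce to strict directed $\mathcal{A}$; (ii) express both sides via the bar construction with $M$-coefficients using the functorial resolution $\Omega B$; (iii) commute $\varinjlim$ past the computation of the left side and $\varprojlim$ past the bar construction on the right; (iv) conclude by the Mittag-Leffler argument exactly as in \ref{kdgood}.
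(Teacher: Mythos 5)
Your proposal lands on the same terminus as the paper's proof -- both sides get identified with $A^![1]\otimes_k M$ and $\Omega(\varinjlim\mathcal{A}^*)[1]\otimes_k M$ respectively, and the comparison reduces to the Mittag-Leffler argument already run in the proof of \ref{kdgood}, using \ref{kdfinite} exactly as you say. The computation of $\R\mathrm{Der}_k(A,M)$ via the functorial resolution $\Omega BA$ and the observation that $\mathrm{Der}_k(\Omega BA,M)\cong\dgh_k(BA[-1],M)$ is precisely what the paper does.

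However, the step you flag as a potential obstacle is in fact where the proposal has a genuine gap, and your attempted repair is the wrong one. You want to justify $\R\mathrm{Der}_k(\mathcal{A},M)\cong\varinjlim_\alpha\R\mathrm{Der}_k(\mathcal{A}_\alpha,M)$ by taking the cofibrant replacement ``levelwise,'' and you cite \ref{prolemma} for this. But $\Omega B\mathcal{A}_\alpha$ is a semifree dga, hence almost never Artinian local, so the levelwise system $\{\Omega B\mathcal{A}_\alpha\}_\alpha$ does not even define an object of $\proart$; it cannot serve as a cofibrant resolution there. Also \ref{prolemma} asserts that the levelwise functor $\Omega(-)_B$ is left Quillen (i.e.\ that $\mathrm{Der}_B(-,M)$ sends weak equivalences between cofibrants to quasi-isomorphisms); it says nothing about how to produce a cofibrant resolution of $\mathcal{A}$ or about commuting $\varinjlim$ past $\R\mathrm{Der}$. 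The paper sidesteps this entirely: it passes through the equivalence $\proart\cong(\cat{cndgc}_k^{\geq 0})^{\mathrm{op}}$ of \ref{sharpprop}--\ref{sharpquillen}, uses the fibrant dgc resolution $\varinjlim\mathcal{A}^*\to B\Omega(\varinjlim\mathcal{A}^*)$ (whose $(-)^\sharp$ is a genuine cofibrant resolution of $\mathcal{A}$ in $\proart$, with levels indexed by finite-dimensional sub-dgcs, not by $\alpha$), reads off $\R\mathrm{Der}_k(\mathcal{A},M)\simeq\Omega(\varinjlim\mathcal{A}^*)[1]\otimes_k M$ because the cogenerator of a cofree conilpotent coalgebra is visible, and only then uses cocontinuity of $\Omega$ and \ref{artprop} to rewrite $\Omega(\varinjlim\mathcal{A}^*)\cong\varinjlim\mathcal{A}^!$ so that the \ref{kdgood} comparison applies. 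In short: the identity you want is true, but the correct mechanism is the cobar-realisation of the dgc fibrant replacement, not a levelwise cofibrant replacement in $\proart$; the latter does not exist.
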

	\begin{proof}
		The idea is that the cofibrant resolutions agree. For brevity, we will omit the bar notation for (co)augmentation (co)ideals. Consider first the space $\R\mathrm{Der}_{k}(A, M)$. Since $\Omega B A \to A$ is a cofibrant resolution, we have $\R\mathrm{Der}_{k}(A, M)$ quasi-isomorphic to $ \mathrm{Der}_k(\Omega B A, M)$. Now, $\Omega B A$ is freely generated by $BA[-1]$, so $\mathrm{Der}_k(\Omega B A, M) \cong \dgh_k(BA[-1],M)$ as dg vector spaces. Since $M$ is finite-dimensional, $\dgh_k(BA[-1],M)$ is the same as $A^![1]\otimes_k M$ (here is where we are using that $B\cong k$; the underlying graded vector spaces are always isomorphic but in general the differential of the right hand side acquires a twist from the action of $\mathfrak{m}_B$ on $M$). Consider now the space $\R\mathrm{Der}_{k}(\mathcal{A},M)$. We use the equivalence of $\proart$ with conilpotent dgcs, along with the fact that $C \to B \Omega C$ is a fibrant dgc resolution, to see that $\R\mathrm{Der}_{k}(\mathcal{A},M)$ is quasi-isomorphic to $ \mathrm{Der}_k((B\Omega(\varinjlim\mathcal{A}^*))^\sharp,M)$. The dgc $B\Omega(\varinjlim\mathcal{A}^*)$ is cofreely cogenerated by $\Omega(\varinjlim \mathcal{A}^*)[1]$, so that the pro-Artinian algebra $(B\Omega(\varinjlim \mathcal{A}^*))^\sharp$ is freely generated by the profinite vector space $\Omega(\varinjlim \mathcal{A}^*)^\sharp[-1]$. Hence we have isomorphisms of dg vector spaces$${\mathrm{Der}_k((B\Omega(\varinjlim\mathcal{A}^*))^\sharp,M)\cong  \dgh_{\cat{pro}(\cat{fdvect}_k)}(\Omega(\varinjlim \mathcal{A}^*)^\sharp[-1],M)\cong \dgh_k(M^*,\Omega(\varinjlim\mathcal{A}^*)[1])}.$$Again, because $M$ is finite-dimensional and $B\cong k$, this is isomorphic to $\Omega(\varinjlim \mathcal{A}^*)[1]\otimes_k M$. So it suffices to show that $\Omega(\varinjlim \mathcal{A}^*)$ and $A^!$ are quasi-isomorphic as dg vector spaces. This is similar to the proof of \ref{kdgood}: first note that $\Omega(\varinjlim \mathcal{A}^*) \cong \varinjlim \Omega( \mathcal{A}^*)$ because $\Omega$ is cocontinuous, and that $\Omega( \mathcal{A}^*) \cong \mathcal{A}^!$ because each level of $\mathcal{A}$ is Artinian local. Hence, as in \ref{kdgood}, $\Omega(\varinjlim \mathcal{A}^*)$ is quasi-isomorphic to $A^!$, as required. Note that this last fact uses that $BA$ is cohomologically locally finite, which is the only place we use the hypothesis that $A$ is cohomologically locally finite.
	\end{proof}
	Now we will extend the argument to cover all $B$.
	\begin{thm}\label{rder}
		Let $B$ be an Artinian local $k$-algebra and let $\mathcal{A}\to B$ be a pro-Artinian dga with a map to $B$. Let $M$ be a finite-dimensional $B$-bimodule concentrated in nonpositive degrees. Assume that $A\coloneqq \varprojlim \mathcal{A}$ is cohomologically locally finite. Then there is a quasi-isomorphism $$\R\mathrm{Der}_{B}(\mathcal{A},M) \simeq \R\mathrm{Der}_{B}(A,M) .$$
	\end{thm}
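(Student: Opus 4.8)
The plan is to reduce to the case $B \cong k$ — which is \ref{prerder} — by filtering $M$ according to the action of $\mathfrak{m}_B$ and exploiting that derived derivations are exact in the bimodule variable. First I would set up a natural comparison map. Since $B\oplus M$ is Artinian local it is a constant pro-object, so $\mathrm{Der}_B(\mathcal{A},M)=\hom_{\proart/B}(\mathcal{A},B\oplus M)$, and applying $\varprojlim$ (using $\varprojlim(B\oplus M)\cong B\oplus M$) yields a map $\mathrm{Der}_B(\mathcal{A},M)\to\hom_{\dga/B}(\varprojlim\mathcal{A},B\oplus M)=\mathrm{Der}_B(A,M)$, natural in $M$. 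By \ref{limreflects} the functor $\varprojlim$ preserves and reflects weak equivalences, and (as in the proof of \ref{prerder}) the limit of a cofibrant pro-Artinian dga is semifree; hence if $Q\mathcal{A}\to\mathcal{A}$ is a cofibrant resolution in $\proart/B$ then $\varprojlim Q\mathcal{A}\to A$ is a cofibrant resolution in $\dga/B$, and the above descends to a natural transformation
\[
\Phi_M:\R\mathrm{Der}_B(\mathcal{A},M)\longrightarrow\R\mathrm{Der}_B(A,M)
\]
of functors of the finite nonpositive $B$-bimodule $M$. The goal is to show each $\Phi_M$ is a quasi-isomorphism.

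Both functors $\R\mathrm{Der}_B(\mathcal{A},-)$ and $\R\mathrm{Der}_B(A,-)$ are exact: they are computed by $\mathrm{Der}_B(Q\mathcal{A},-)$ and $\mathrm{Der}_B(QA,-)$, and since the noncommutative Kähler differentials of a semifree (pro-)dga form a cofibrant (pro-)bimodule, and $\hom$ out of a cofibrant module preserves short exact sequences, a short exact sequence $0\to M'\to M\to M''\to 0$ of finite nonpositive $B$-bimodules induces short exact sequences of complexes on both sides, compatibly with $\Phi$. Thus the class of $M$ for which $\Phi_M$ is a quasi-isomorphism is closed under extensions. Now I would filter $M$: put $F^0M=M$ and $F^{p+1}M=\mathfrak{m}_B F^pM+F^pM\,\mathfrak{m}_B$, so $F^pM=\sum_{i+j=p}\mathfrak{m}_B^i M\,\mathfrak{m}_B^j$. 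Since $\mathfrak{m}_B$ is nilpotent this filtration is finite; each $F^pM$ is a finite nonpositive sub-bimodule, and each associated graded piece $\mathrm{gr}^pM=F^pM/F^{p+1}M$ is a finite nonpositive $B$-bimodule on which $\mathfrak{m}_B$ acts trivially on both sides, i.e.\ a $B/\mathfrak{m}_B=k$-vector space.

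So it is enough to treat the case where $\mathfrak{m}_B$ acts trivially on $M$. Then the Leibniz condition defining a derivation $\mathcal{A}\to M$ only sees the composite $\mathcal{A}\to B\to B/\mathfrak{m}_B=k$, which is the canonical augmentation of $\mathcal{A}$; hence $\mathrm{Der}_B(-,M)=\mathrm{Der}_k(-,M)$ on the nose, likewise for $A$, and a cofibrant resolution over $B$ remains one over $k$. Under these identifications $\Phi_M$ becomes the comparison map $\R\mathrm{Der}_k(\mathcal{A},M)\to\R\mathrm{Der}_k(A,M)$, which is a quasi-isomorphism by \ref{prerder}. Feeding this back up the filtration via the five lemma on the long exact cohomology sequences attached to the triangles $\R\mathrm{Der}_B(\mathcal{A},F^{p+1}M)\to\R\mathrm{Der}_B(\mathcal{A},F^pM)\to\R\mathrm{Der}_B(\mathcal{A},\mathrm{gr}^pM)$ (and their $A$-counterparts) gives that $\Phi_M$ is a quasi-isomorphism for every finite nonpositive $M$, which is the claim.

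The main obstacle I anticipate is the bookkeeping around $\Phi$: verifying that $\varprojlim$ genuinely sends a cofibrant resolution of $\mathcal{A}$ to a cofibrant resolution of $A$ (so that $\Phi$ is computed without any further replacement), and checking that in the trivial-action case $\Phi_M$ really is the quasi-isomorphism produced by the proof of \ref{prerder} rather than merely some quasi-isomorphism — without this compatibility the induction over the filtration does not close up. Everything else (exactness in $M$, nilpotence of the filtration, the augmentation identification) is routine.
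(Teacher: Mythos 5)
Your proof is correct and follows the same essential route as the paper's: filter $M$ by powers of $\mathfrak{m}_B$, observe that the induced $\mathfrak{m}_B$-action on the associated graded pieces is trivial so that \ref{prerder} applies, and assemble the result up the filtration. The paper runs this filtration through the explicit cofibrant model $\Omega BA$ and a twisted tensor product $A^![1]\otimes^\Delta_k M$ with a spectral sequence, whereas you use exactness of $\R\mathrm{Der}$ in the bimodule variable together with the five lemma -- the two formulations are equivalent, and your two-sided filtration is the clean choice that makes the triviality-on-associated-graded step transparent.
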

	\begin{proof}
		We follow the proof of \ref{prerder}. This time we have to care about twists. We still have a quasi-isomorphism $\R\mathrm{Der}_{k}(A, M)\simeq \dgh_k(BA[-1],M)$. However, the differential on $\dgh_k$ is twisted by the action of $B$ on $M$: explicitly, if $f\in \dgh_k(BA[-1],M)$, then $df$ gains an extra term $\Delta f$, defined by $\Delta f (v)=f(v_{(1)}).v_{(2)}+v_{(1)}.f(v_{(2)})$ where we are using Sweedler notation for the comultiplication $\Delta(v)=v_{(1)}\otimes v_{(2)}$. Across the isomorphism of underlying graded vector spaces $\dgh_k(BA[-1],M)\cong A^![1]\otimes_k M$, the twist $\Delta$ on $\dgh_k(BA[-1],M)$ corresponds to a twist in the differential on $A^![1]\otimes_k M$; let $A^![1]\otimes^\Delta_k M$ denote $A^![1]\otimes_k M$ equipped with this twisted differential. Filtering $M$ by the action of $\mathfrak{m}_B$ gives a finite filtration $F^p= A^![1]\otimes^\Delta_k M.\mathfrak{m}^p_B$ on $A^![1]\otimes^\Delta_k M$. The associated graded pieces are $\mathrm{gr}^p_F\coloneqq F^p/F^{p+1}\cong A^![1]\otimes^\Delta_k\mathrm{gr}^p_M$, where we put $\mathrm{gr}^p_M\coloneqq M.\mathfrak{m}^p_B/M.\mathfrak{m}^{p+1}_B$. One obtains a convergent spectral sequence $( A^![1]\otimes^\Delta_k\mathrm{gr}^p_M)^q \implies H^{p+q}(A^![1]\otimes^\Delta_k M)$. The twist in the differential of $A^![1]\otimes^\Delta_k M$ disappears upon passing to the associated graded pieces and so one has $\mathrm{gr}^p_F\cong A^![1]\otimes_k\mathrm{gr}^p_M$. In other words, the natural map $A^![1]\otimes_k M\to A^![1]\otimes^\Delta_k M$ is an isomorphism on associated graded pieces. By considering the same spectral sequence for $A^![1]\otimes_k M$, we see that the natural map is actually a quasi-isomorphism. Hence we get a quasi-isomorphism $\R\mathrm{Der}_{k}(A, M)\simeq A^![1]\otimes_k M$, as before. The argument to show that $\R\mathrm{Der}_{B}(\mathcal{A},M)\simeq\Omega(\varinjlim \mathcal{A}^*)[1]\otimes_k M$ is similar. The proof that $\Omega(\varinjlim \mathcal{A}^*)[1]\otimes_k M\cong A^![1]\otimes_k M$ is the same as before.
	\end{proof}
	\begin{rmk}
		Continuing on from \ref{pronccot}, the above proof gives a quasi-isomorphism between $\varprojlim\mathbb{L}(\mathcal{A})_B$ and $ \mathbb{L}(A)_B$. In other words, the (pro-)noncommutative cotangent complex functor commutes up to quasi-isomorphism with $\varprojlim$, as long as we assume some finiteness conditions.
	\end{rmk}
	\begin{rmk}\label{probimods}Let $A$ be a pro-Artinian dga. In the spirit of \cite{quillender}, one could define a \textbf{pro-$A$-bimodule} to be a group object in the category of pro-Artinian dgas with a map to $A$. Equivalently, this is an $A$-bimodule in the category of profinite dg vector spaces. In this framework, one can also define pro-derivations, pro-noncommutative K\"ahler differentials, and the pro-noncommutative cotangent complex. If the underlying vector space of a pro-bimodule $M$ is constant, then $M$ is a bimodule over some $A_\alpha$, and hence a bimodule over $A_\beta$ for all $ \beta\to\alpha$. The proofs of \ref{prerder} and \ref{rder} adapt to cover the case when $M$ is a constant pro-$A$-bimodule, and also provide comparisons between the cotangent complexes.
	\end{rmk}
	
	\section{Koszul duality for homotopy pro-Artinian algebras}
	The main result of this part is a characterisation of good dgas (\ref{goodchar}), which can also be thought of as a strictification result. Call a dga $A$ \textbf{homotopy pro-Artinian} if $HA$ is pro-Artinian (in the sense that it is a limit of a pro-Artinian dga). We prove that a certain class of homotopy pro-Artinian dgas, namely, those for which the pro-structure is that of the Postnikov tower, are good. We obtain as a corollary a Koszul duality result for this class of dgas. For a very general approach to some of the ideas of this part, see Lurie's Higher Algebra \cite[7.4]{lurieha}.
	\begin{defn}Let $A \to B$ be a map of dgas. Say that a map $A' \to A$ of dgas is a \textbf{homotopy square-zero extension} over $B$ if there is a $B$-bimodule $M$ such that $A'$ is the homotopy fibre product of a diagram of the form $A \xrightarrow{\delta} B \oplus M \xleftarrow{0} B$ where $\delta$ is a derived derivation and $0$ is the zero derivation.
	\end{defn}
	We will be interested in the case when $B=H^0A$, and $A$ is a good dga. In this case, one can lift the natural map $A \to B$ to a map of pro-Artinian dgas. Observe that since $\varprojlim: \proart \to \dga$ is exact (\ref{limisexact}), we may regard it as the homotopy limit $\holim$; we will use this without further acknowledgement.
	\begin{lem}\label{hoprolem}
		Let $\mathcal{A}\in \proart$. Assume that $H^0\varprojlim \mathcal{A}$ is finite-dimensional. Then there is a map of pro-Artinian dgas $\mathcal{A}\to H^0\varprojlim \mathcal{A}$.
	\end{lem}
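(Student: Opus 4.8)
The plan is to apply $H^0$ levelwise to the pro-system and to use finite-dimensionality to show that the resulting pro-Artinian dga is pro-isomorphic to the constant one with value $H^0\varprojlim\mathcal{A}$. By \ref{strictpro}, and after passing to a cofinal subsystem, I may assume that $\mathcal{A}=\{\mathcal{A}_\alpha\}_{\alpha\in I}$ is strict (all transition maps are levelwise-surjective dga maps) and that $I$ is a directed poset; write $A\coloneqq\varprojlim\mathcal{A}$. First I observe that each $H^0(\mathcal{A}_\alpha)$ is itself an Artinian local algebra: it is a quotient of the finite-dimensional algebra $\mathcal{A}^0_\alpha$, and the augmentation $\mathcal{A}_\alpha\to k$ descends to one on $H^0(\mathcal{A}_\alpha)$ with kernel $H^0(\bar{\mathcal{A}_\alpha})$, a quotient of the nilpotent ideal $\bar{\mathcal{A}_\alpha}$ and hence nilpotent. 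Since $\mathcal{A}_\alpha$ is concentrated in nonpositive degrees, degree-zero products involve only degree-zero elements, so the map that is $\mathcal{A}^0_\alpha\twoheadrightarrow\mathcal{A}^0_\alpha/d\mathcal{A}^{-1}_\alpha=H^0(\mathcal{A}_\alpha)$ in degree $0$ and zero in negative degrees is a dga map; applying it objectwise gives a morphism of pro-Artinian dgas $\mathcal{A}\to\{H^0(\mathcal{A}_\alpha)\}_{\alpha\in I}$.

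Next I claim $\{H^0(\mathcal{A}_\alpha)\}\cong H^0(A)$ in $\proart$, which proves the lemma upon composing. The transition maps $H^0(\mathcal{A}_\beta)\to H^0(\mathcal{A}_\alpha)$ are surjective, since in degree $0$ the surjection $\mathcal{A}^0_\beta\to\mathcal{A}^0_\alpha$ carries $d\mathcal{A}^{-1}_\beta$ onto $d\mathcal{A}^{-1}_\alpha$; likewise the canonical maps $H^0(A)\to H^0(\mathcal{A}_\alpha)$ are surjective, using that $A^0\to\mathcal{A}^0_\alpha$ is surjective for a strict system. Hence $\dim_k H^0(\mathcal{A}_\alpha)\leq\dim_k H^0(A)<\infty$ and $\alpha\mapsto\dim_k H^0(\mathcal{A}_\alpha)$ is nondecreasing; being bounded and monotone on $I$ it is eventually constant, so for some $\alpha_0$ the maps $H^0(\mathcal{A}_\beta)\to H^0(\mathcal{A}_\alpha)$ are isomorphisms for all $\beta\geq\alpha\geq\alpha_0$. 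As $\{\alpha\geq\alpha_0\}$ is cofinal in $I$, the pro-object $\{H^0(\mathcal{A}_\alpha)\}$ is pro-isomorphic to the constant one $\varprojlim_\alpha H^0(\mathcal{A}_\alpha)\cong H^0(\mathcal{A}_{\alpha_0})$.

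The remaining and crucial point is to identify this constant value with $H^0(A)$: the canonical surjection $H^0(A)\twoheadrightarrow\varprojlim_\alpha H^0(\mathcal{A}_\alpha)$ has kernel $\bigl(\varprojlim_\alpha d\mathcal{A}^{-1}_\alpha\bigr)/dA^{-1}$, so I must show that $d$ commutes with the cofiltered limit on $(-1)$-cochains, i.e. $\varprojlim_\alpha d\mathcal{A}^{-1}_\alpha=d\bigl(\varprojlim_\alpha\mathcal{A}^{-1}_\alpha\bigr)$. This is a Mittag--Leffler argument exploiting that every $\mathcal{A}_\alpha$ has finite total dimension: the inverse systems $\{Z^{-1}(\mathcal{A}_\alpha)\}$ and $\{H^{-1}(\mathcal{A}_\alpha)\}$ consist of finite-dimensional spaces and so have stabilising transition images, and combining this with levelwise surjectivity of $\{\mathcal{A}^{-1}_\alpha\}$ lets one lift any compatible family $(z_\alpha)$, $z_\alpha=dy_\alpha$, to a compatible family of $y_\alpha$ (concretely, the affine subspaces $\{y\in\mathcal{A}^{-1}_\alpha:dy=z_\alpha\}$ form an inverse system whose eventual transition images are nonempty affine subspaces with surjective transitions, hence have nonempty limit), producing $y\in A^{-1}$ with $dy=(z_\alpha)$. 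Thus $H^0(A)\cong\varprojlim_\alpha H^0(\mathcal{A}_\alpha)\cong H^0(\mathcal{A}_{\alpha_0})$ -- in particular $H^0(A)$ is Artinian local -- and the composite $\mathcal{A}\to\{H^0(\mathcal{A}_\alpha)\}\xrightarrow{\ \cong\ }H^0(A)$ is the required map of pro-Artinian dgas. The main obstacle is exactly this last identification, equivalently that the natural surjection $H^0(\varprojlim\mathcal{A})\to\varprojlim_\alpha H^0(\mathcal{A}_\alpha)$ is an isomorphism; both the strictness of the pro-system and the finiteness of each level are essential there.
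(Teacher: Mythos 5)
Your proof is correct, and it reaches the conclusion by a more explicit, self-contained route than the paper does. The paper's proof is essentially two formal steps: it packages the compatible family of maps $\mathcal{A}\to H^0\mathcal{A}_\alpha$ as an element of $\varprojlim_\alpha\hom_{\proart}(\mathcal{A},H^0\mathcal{A}_\alpha)$, then invokes the cited fact (\ref{limisexact}, that $\varprojlim$ is the homotopy limit on $\proart$) to identify $\varprojlim_\alpha H^0(\mathcal{A}_\alpha)$ with $H^0\varprojlim\mathcal{A}$, and finally uses the Artinian hypothesis to swap $\varprojlim$ past $\hom_{\proart}(\mathcal{A},-)$. You unpack all of this: your Mittag--Leffler argument on $(-1)$-cochains reproves, for $H^0$, exactly the commutation of cohomology with $\varprojlim$ that the paper outsources to \ref{limisexact}, and your monotonicity-of-dimension argument making the system $\{H^0(\mathcal{A}_\alpha)\}$ eventually constant makes explicit the stabilization that justifies the paper's $\hom$-swap (which, as written, silently identifies the varying pro-object $\{H^0\mathcal{A}_\alpha\}$ with the constant one $\varprojlim_\alpha H^0\mathcal{A}_\alpha$). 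Your route also constructs the required morphism directly as a levelwise dga map to $\{H^0(\mathcal{A}_\alpha)\}$ rather than extracting it from a chain of $\hom$-isomorphisms; this is a purely cosmetic difference. The trade-off is length: the paper's proof is four lines long because it leans on already-established machinery, while yours carries more conviction as a standalone argument but spends most of its effort re-deriving \ref{limisexact} in the special case it needs.
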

	\begin{proof}
		Put $\mathcal{A}=\{\mathcal{A}_\alpha\}_\alpha$ with each $\mathcal{A}_\alpha$ Artinian. For all $\alpha$, there is a structure map $\mathcal{A}\to\mathcal{A}_\alpha$ and hence a map $\mathcal{A}\to H^0(\mathcal{A}_\alpha)$. These maps assemble into an element of the inverse limit $\varprojlim_\alpha\hom_{\proart}(\mathcal{A},H^0\mathcal{A}_\alpha)$. Because $\varprojlim$ is the homotopy limit we have an isomorphism $\varprojlim_\alpha H^0(\mathcal{A}_\alpha)\cong H^0\varprojlim \mathcal{A}$. Because this is Artinian by hypothesis, we get an isomorphism 
		\begin{align*}
		\varprojlim_\alpha\hom_{\proart}(\mathcal{A},H^0\mathcal{A}_\alpha)&\cong \hom_{\proart}(\mathcal{A},\varprojlim_\alpha H^0\mathcal{A}_\alpha)\\&\cong \hom_{\proart}(\mathcal{A},H^0\varprojlim\mathcal{A}).\qedhere
		\end{align*}
	\end{proof}
	Our main examples of homotopy square-zero extensions will be provided by truncations. 
	\begin{defn}
		If $A$ is a dga, set $A_n\coloneqq \tau_{\geq -n}(A)$, the good truncation to degrees above $-n$. Explicitly, we have $$(A_n)^j=\begin{cases} A^j & j>-n \\ \coker(d:A^{-n-1}\to A^n) & j=-n \\ 0 & j<-n \end{cases}$$One has $H^jA_n\cong H^jA$ if $j\geq -n$ and $H^jA_n=0$ if $j<-n$.
	\end{defn}
	\begin{lem}\label{hsqlem}
		Let $A \in \dga$ be a nonpositive dga. Then for every $n\geq 0$, the natural map $A_{n+1} \to A_n$ is a homotopy square-zero extension with base $H^0A$.
	\end{lem}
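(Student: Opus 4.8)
The plan is to realise $A_{n+1}$ as the homotopy fibre product $A_n \times^{h}_{B \oplus M} B$ for an explicit bimodule $M$ and an explicit derived derivation $\delta$, where throughout $B \coloneqq H^0(A)$; note $H^0(A) \cong H^0(A_n) \cong H^0(A_{n+1})$, so $A_n$ and $A_{n+1}$ carry canonical structure maps to $B$, namely their good truncations to degree $0$.

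\emph{Step 1: pinning down $M$.} First I would observe that the truncation map $q\colon A_{n+1}\to A_n$ is a levelwise surjection of nonpositive dgas, hence a fibration in $\dga$, and compute its strict kernel $K=\ker q$: it is concentrated in cohomological degrees $-(n+1)$ and $-n$, with differential the restriction of $d$, and satisfies $H^{-n}(K)=0$ (the boundaries in degree $-n$ are exactly hit) while $H^{-n-1}(K)\cong H^{-n-1}(A)$ (cycles mod boundaries in degree $-(n+1)$ survive). Thus $K\simeq H^{-n-1}(A)[n+1]$ as a complex, in fact as a $B$-bimodule with $B$ acting through the graded ring structure on $H^\bullet(A)$. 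Since $q$ is a fibration, $K$ is a model for $\mathrm{hofib}(q)$. On the other hand, in any homotopy pullback square of the stipulated shape the homotopy fibre of $A'\to A$ agrees with that of the zero section $B\xrightarrow{0}B\oplus M$, which — the section being split by the projection, with cokernel $M$ — is $\Omega M\simeq M[-1]$. Matching these forces $M\coloneqq H^{-n-1}(A)[n+2]$, i.e.\ the $B$-bimodule $H^{-n-1}(A)$ placed in cohomological degree $-(n+2)$.

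\emph{Step 2: the classifying derivation.} The substantive point is to produce a derived derivation $\delta\colon A_n\to B\oplus M$ lifting the structure map $A_n\to B$ — equivalently a class in $H^0\bigl(\R\mathrm{Der}_B(A_n,M)\bigr)\cong\hom_{D(B\otimes B^{\mathrm{op}})}\bigl(\mathbb{L}(A_n)_B,\,M\bigr)$, the $k$-invariant of this Postnikov stage — and then to check that the induced comparison map $A_{n+1}\to A_n\times^{h}_{B\oplus M}B$ is a quasi-isomorphism. Here I would invoke the general theory of square-zero extensions of connective dgas (Lurie, \emph{Higher Algebra} §7.4), suitably translated into the cohomological, nonpositive conventions in use: the extension $q$ is classified by a derivation precisely because the $A_{n+1}$-module structure on the fibre $K$ is coherently pulled back along $A_{n+1}\to B$, and the obstruction to this is governed by the multiplication $\mu\colon K\lot_{A_{n+1}}K\to K$. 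A connectivity estimate kills $\mu$: since $K$ has cohomology only in degree $-(n+1)$ and $A_{n+1}$ is nonpositive, $K\lot_{A_{n+1}}K$ has cohomology vanishing in degrees $>-2(n+1)$, which for every $n\geq 0$ is a strictly smaller range than the single degree $-(n+1)$ where $K$ lives; taking a semifree resolution of $K\lot_{A_{n+1}}K$ with generators in those degrees shows $\hom_{D(A_{n+1})}\bigl(K\lot_{A_{n+1}}K,\,K\bigr)=0$, so $\mu$ is nullhomotopic. This yields $\delta$, and the resulting comparison map is then seen to be a quasi-isomorphism by the five lemma applied to the long exact cohomology sequences, both sides sitting over $A_n$ with homotopy fibre $M[-1]\simeq K$ and the map being (up to homotopy) the identity on $A_n$ and on the fibre.

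\emph{Main obstacle.} The kernel computation and the bookkeeping of shifts are routine; the real content is Step 2, namely exhibiting the $k$-invariant $\delta$, i.e.\ showing that the truncation map is ``square-zero to all higher orders'' rather than merely having a square-zero-ideal kernel on the nose (which in fact fails for small $n$). The clean route is to quote the cotangent-complex formalism for square-zero extensions from \emph{Higher Algebra} §7.4 and to match it to Definition~\ref{hsqlem}'s normalisation. If a self-contained argument is preferred, one may instead first replace $A$ by a quasi-isomorphic semifree model — legitimate because good truncation, homotopy pullback and $\R\mathrm{Der}$ are all homotopy-invariant — in which the ideal corresponding to $K$ can be arranged to be strictly square-zero and the derivation $\delta$ written down directly; either way it is the homotopy-invariance of all the constructions involved that makes the reduction go through.
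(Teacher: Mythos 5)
Your proof is correct, and it takes a genuinely different route from the paper's. Both begin the same way: you compute the strict kernel $K$ of the truncation map, note that $K$ is quasi-isomorphic to $H^{-n-1}(A)$ concentrated in degree $-(n+1)$, and match homotopy fibres to pin down $M$ as $H^{-n-1}(A)$ in degree $-(n+2)$. From there the approaches diverge. You appeal to Lurie's theory of square-zero extensions of connective $E_1$-rings (HA \S7.4), backing the appeal with a connectivity estimate to show that $\hom_{D(A_{n+1})}(K\lot_{A_{n+1}}K,K)=0$ and hence that the obstruction vanishes, and then conclude by the five lemma. The paper instead builds a strict model: it forms the mapping cone $C$ of the map $H^{-n-1}(A) \to A_{n+1}$ (with the source in degree $-n-1$), cites Pridham \cite[1.45]{unifying} to endow $C$ with a dga structure quasi-isomorphic to $A_n$, and then exhibits $A_{n+1}$ as the \emph{strict} pullback of the span $B \to B \oplus M \from C$; since $C \to B \oplus M$ is a fibration, the strict pullback is the homotopy pullback and one is done with no obstruction theory at all. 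The paper's route is more elementary and more computable (it hands you the classifying derivation explicitly), whereas yours is more conceptual and, as you yourself flag in your ``main obstacle'' paragraph, outsources the hard step to a heavy external result whose conventions (spectral, simplicial, $\pi_*$) need translation to this paper's (cohomological, nonpositive, $H^*$). The paper does gesture at your route -- there is a pointer to \cite[7.4]{lurieha} at the head of this section -- so the two proofs should be viewed as complementary: one abstract and citation-driven, the other an explicit construction. One small caution about your self-contained variant: nullhomotopy of the single multiplication $K \lot_{A_{n+1}} K \to K$ is necessary for a square-zero extension but is not on its own a sufficient criterion in Lurie's framework; for the Postnikov tower the sufficiency is part of what \S7.4 proves, so the citation is doing real work rather than just providing vocabulary.
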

	\begin{proof}
		There is a homotopy fibre sequence of $A$-bimodules $$H^{-n-1}(A)[-n-1] \to A_{n+1}\to A_n$$indicating that we should take $M$ to be a shift of $H^{-n-1}(A)$. Indeed, this sequence gives a map $A_n \to H^{-n-1}(A)[-n-2]$ in the homotopy category of $A$-bimodules, and so we put $M\coloneqq H^{-n-1}(A)[-n-2]$. Let $C$ be the mapping cone of $H^{-n-1}(A)[-n-1] \to A_{n+1}$ and write $B\coloneqq H^0A$. As in the proof of \cite[1.45]{unifying}, $C$ admits the structure of a dga, quasi-isomorphic to $A_n$, and moreover $A_{n+1}$ is the strict pullback of the diagram $B \to B \oplus M \from C$. The map $C \to B\oplus M$ is a fibration, and hence $A_{n+1}$ is the homotopy pullback.
	\end{proof}
	\begin{lem}\label{hplem}
		Let $A \in \dga$ be a nonpositive dga. Suppose that for some $n$, $A_n$ is good and $H^{-n-1}(A)$ is finite-dimensional. Then $A_{n+1}$ is good.
	\end{lem}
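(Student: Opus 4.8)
\emph{Proof plan.} The strategy is to lift the homotopy square-zero presentation of $A_{n+1}$ supplied by \ref{hsqlem} to the category $\proart$, and then apply $\varprojlim$. One half is immediate: $A_{n+1}$ is cohomologically locally finite. Indeed $H^jA_{n+1}\cong H^jA_n$ for $j\geq -n$, and $A_n$ is good hence cohomologically locally finite; the only new cohomology is $H^{-n-1}(A_{n+1})\cong H^{-n-1}(A)$, which is finite-dimensional by hypothesis, while $H^jA_{n+1}=0$ for $j<-n-1$. So it remains to produce a pro-Artinian dga whose limit is quasi-isomorphic to $A_{n+1}$.

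Write $B\coloneqq H^0A=H^0A_n=H^0A_{n+1}$. Since $A_n$ is good, $B$ is finite-dimensional; combined with a chosen quasi-isomorphism $A_n\simeq\varprojlim\mathcal{A}_n$ for some pro-Artinian dga $\mathcal{A}_n$, the remark following the definition of ``good'' shows $B$ is Artinian local. By \ref{hoprolem} there is a map of pro-Artinian dgas $\mathcal{A}_n\to B$ lifting the canonical map $A_n\to B$. By \ref{hsqlem}, $A_{n+1}$ is the homotopy pullback of a cospan $A_n\xrightarrow{\delta}B\oplus M\xleftarrow{0}B$ in which $M$ is a shift of $H^{-n-1}(A)$, hence a finite-dimensional $B$-bimodule, and $\delta$ is a derived derivation.

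Next I would transfer $\delta$ across to the pro-Artinian side. Since $\varprojlim\mathcal{A}_n\simeq A_n$ is cohomologically locally finite, Theorem \ref{rder} applies and gives a quasi-isomorphism $\R\mathrm{Der}_B(\mathcal{A}_n,M)\simeq\R\mathrm{Der}_B(A_n,M)$; unwinding its proof, this comparison is induced by $\varprojlim$, so the homotopy class of $\delta$ is realized by a derived derivation $\tilde\delta\colon\mathcal{A}_n\to M$ of pro-Artinian dgas with $\varprojlim\tilde\delta$ homotopic to $\delta$. Form the homotopy pullback $\mathcal{A}_{n+1}\coloneqq\mathcal{A}_n\times^h_{B\oplus M}B$ in the model category $\proart$ (\ref{proartmodel}); it is again a pro-Artinian dga. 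Applying $\varprojlim$, and using that by \ref{limisexact} the functor $\varprojlim$ agrees with $\holim$ and hence preserves homotopy pullbacks, together with the fact that $B$ and $B\oplus M$ are constant pro-objects, we obtain $\varprojlim\mathcal{A}_{n+1}\simeq A_n\times^h_{B\oplus M}B\simeq A_{n+1}$, whence $A_{n+1}$ is good.

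The main obstacle is precisely the transfer step: one needs the derived derivation $\delta$, which a priori only lives over the \emph{limit} $A_n$, to lift to a genuine derived derivation of the pro-Artinian dga $\mathcal{A}_n$ in a way compatible with $\varprojlim$ — this is exactly the content of \ref{rder}, and it is here that the finiteness of $H^{-n-1}(A)$ (so that $M$ is finite-dimensional) is used — and one must know that $\varprojlim$ commutes with the resulting finite homotopy limit, for which \ref{limisexact} is the essential input. Everything else is routine bookkeeping with the model structures on $\dga$ and $\proart$.
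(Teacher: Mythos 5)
Your proof is correct and follows essentially the same route as the paper: express $A_{n+1}$ as the homotopy pullback of the cospan from \ref{hsqlem}, lift the cospan to $\proart$ using \ref{hoprolem} and \ref{rder}, take the homotopy pullback there, and apply $\varprojlim$, invoking \ref{limisexact} to commute $\varprojlim$ with the homotopy limit. The paper's argument is identical in structure, just stated more tersely.
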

	\begin{proof}
		It is clear that $A_{n+1}$ is cohomologically locally finite. So we just need to prove that $A_{n+1}$ is quasi-isomorphic to something in the image of $\varprojlim$. By \ref{hsqlem}, $A_{n+1}$ is the homotopy pullback of a diagram $J$ of the form $H^0(A)\to H^0(A)\oplus M \from A_n$, where $M$ is a finite module. By \ref{rder} and \ref{hoprolem}, we may view $J$ as a diagram in $\proart$; let $P\in\proart$ be the homotopy pullback. Since $\varprojlim:\proart \to\dga$ is the homotopy limit and homotopy limits commute, we see that $\varprojlim P$ is the homotopy pullback of $J$ considered as a diagram in $\dga$. But by \ref{hsqlem} this is precisely $A_{n+1}$.
	\end{proof}
	\begin{rmk}
		If $A_n$ is Artinian local, then $A_{n+1}$ is homotopy Artinian local, but neither $A_{n+1}$ nor $P$ need be Artinian local.
	\end{rmk}
	\begin{prop}\label{goodchar}
		Let $A \in \dga$ be a nonpositive dga. The following are equivalent:
		\begin{itemize}
			\item $A$ is good.
			\item $A$ is cohomologically locally finite and $H^0(A)$ is local.
		\end{itemize}
	\end{prop}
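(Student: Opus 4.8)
We prove the two implications in turn; the forward one is the observation recorded after the definition of ``good'', while the reverse one is the substance of this section.

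\emph{Good implies cohomologically locally finite with $H^0(A)$ local.} Cohomological local finiteness is part of the definition of good, so write $A\simeq\varprojlim\mathcal{A}$ with $\mathcal{A}\in\proart$, which by \ref{strictpro} we may take to be strict. Identifying $\varprojlim$ with $\holim$ via \ref{limisexact} and using that strictness supplies the Mittag--Leffler condition, we obtain $H^0(A)\cong\varprojlim_\alpha H^0(\mathcal{A}_\alpha)$. Each $H^0(\mathcal{A}_\alpha)$ is a quotient of the finite-dimensional ungraded algebra $\mathcal{A}_\alpha^0$, whose augmentation ideal is nilpotent, hence is Artinian local with residue field $k$; strictness makes the transition maps surjective, hence local. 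A cofiltered limit of such algebras is again local: its maximal ideal is $\varprojlim_\alpha\mathfrak{m}_{\mathcal{A}_\alpha}$, and an element outside it is invertible at some, hence every, stage. Finally $H^0(A)$ is finite-dimensional by cohomological local finiteness, so it is Artinian local, in particular local.

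\emph{Cohomologically locally finite with $H^0(A)$ local implies good.} Then $H^0(A)$ is a finite-dimensional local ring, i.e.\ Artinian local, and it remains to produce $\mathcal{A}\in\proart$ with $\varprojlim\mathcal{A}\simeq A$. Put $A_n\coloneqq\tau_{\geq-n}(A)$. Since $A$ is nonpositive, inspecting degrees shows $A\cong\varprojlim_n A_n$ strictly, and the truncation maps $A_{n+1}\to A_n$ are levelwise surjections, hence fibrations in $\dga$, so this strict limit also computes $\holim_n A_n$; thus $A\simeq\holim_n A_n$. The plan is to build a tower $\cdots\to\mathcal{A}^{(n+1)}\to\mathcal{A}^{(n)}\to\cdots$ in $\proart$ consisting of fibrations, with $\varprojlim\mathcal{A}^{(n)}\simeq A_n$ compatibly with the truncation maps. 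Granting this, set $\mathcal{A}\coloneqq\varprojlim_n\mathcal{A}^{(n)}$, the limit of the tower in the model category $\proart$. As $\varprojlim\colon\proart\to\dga$ is a right adjoint it preserves this limit, and as the tower maps are fibrations so are the induced maps $\varprojlim\mathcal{A}^{(n+1)}\to\varprojlim\mathcal{A}^{(n)}$; hence
\[\varprojlim\mathcal{A}\;\cong\;\varprojlim_n\varprojlim\mathcal{A}^{(n)}\;\simeq\;\holim_n\varprojlim\mathcal{A}^{(n)}\;\simeq\;\holim_n A_n\;\simeq\;A,\]
so $A$ is good.

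It remains to construct the tower, which I would do by induction on $n$, following the proof of \ref{hplem} while recording the transition maps. For $n=0$ take $\mathcal{A}^{(0)}=A_0=H^0(A)$, an object of $\dgart\subseteq\proart$. Given $\mathcal{A}^{(n)}$ with $\varprojlim\mathcal{A}^{(n)}\simeq A_n$, note $H^0\varprojlim\mathcal{A}^{(n)}\cong H^0(A)$ is finite-dimensional, so by \ref{hoprolem} the structure map promotes to a map $\mathcal{A}^{(n)}\to H^0(A)$ in $\proart$. By \ref{hsqlem}, $A_{n+1}\to A_n$ is a homotopy square-zero extension over $H^0(A)$ by a finite-dimensional shift $M$ of $H^{-n-1}(A)$ lying in a negative degree (finiteness coming from cohomological local finiteness), so $A_{n+1}$ is the homotopy pullback of $H^0(A)\xrightarrow{0}H^0(A)\oplus M\xleftarrow{\delta}A_n$ for a derived derivation $\delta$. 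By \ref{rder}, $\R\mathrm{Der}_{H^0(A)}(\mathcal{A}^{(n)},M)\simeq\R\mathrm{Der}_{H^0(A)}(A_n,M)$, so $\delta$ lifts to a derived derivation $\mathcal{A}^{(n)}\to M$; let $\mathcal{A}^{(n+1)}$ be the homotopy pullback in $\proart$ of the lifted diagram, formed as a strict pullback after replacing the leg out of $H^0(A)$ by a fibration, so that the projection $\mathcal{A}^{(n+1)}\to\mathcal{A}^{(n)}$ is a fibration. Since $\varprojlim\colon\proart\to\dga$ is exact (\ref{limisexact}) it carries this homotopy pullback to the homotopy pullback in $\dga$ of the original diagram, namely $A_{n+1}$, compatibly with the projection to $A_n$; this is exactly the argument of \ref{hplem}, which simultaneously reconfirms that $A_{n+1}$ is good. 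This closes the induction.

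I expect the main obstacle to be the passage from the finite truncations to $A$ itself: one must simultaneously know that the Postnikov tower genuinely reassembles $A$, that the inductively produced models $\mathcal{A}^{(n)}$ can be arranged into an honest tower of fibrations in $\proart$, and that the strict limit over this tower may be interchanged with the homotopy limit on both sides. All three points lean on $\varprojlim\colon\proart\to\dga$ being both exact and a right adjoint, which is precisely what transports the strict limit in $\proart$ to a homotopy limit in $\dga$ recovering $A$.
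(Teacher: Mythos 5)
Your proof is correct and follows essentially the same route as the paper: both apply \ref{hplem} inductively up the Postnikov tower of $A$ and then conclude by commuting $\varprojlim$ past a homotopy limit of towers, using the exactness of $\varprojlim\colon\proart\to\dga$. The only cosmetic difference is that you build an explicit tower of fibrations in $\proart$ where the paper instead invokes cofibrancy of $A$ to produce compatible maps $A\to\varprojlim P_n$; your version usefully makes explicit that the pro-Artinian models of the $A_n$ assemble into a coherent tower, a detail the paper leaves implicit.
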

	\begin{proof}The forward direction is clear. For the backwards direction, assume that $A$ is cohomologically locally finite and that $H^0(A)$ is local. We may also assume that $A$ is cofibrant. Using \ref{hplem} inductively, it is easy to see that each truncation $A_n$ is good. In fact, one can say more: we obtain for each $n$ a pro-Artinian dga $P_n$ together with an isomorphism $A_n \to \varprojlim P_n$ in $\mathrm{Ho}(\dga)$. Using that $A$ is cofibrant, we obtain a dga map $A \to \varprojlim P_n$ making the obvious triangles commute. We have quasi-isomorphisms $A \simeq \holim_n \varprojlim P_n \simeq \varprojlim \holim_n P_n$, because $\varprojlim$ is the homotopy limit. Hence $A$ is quasi-isomorphic to something in the image of $\varprojlim$.
	\end{proof}
	Applying \ref{kdgood} immediately gives us the following:
	\begin{thm}\label{kdfin}
		Let $A \in \dga$ be a cohomologically locally finite dga such that $H^0(A)$ is local. Then $A$ is quasi-isomorphic to its double Koszul dual.
	\end{thm}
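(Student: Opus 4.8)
The plan is to deduce the statement immediately from the two main results established above, namely the Koszul duality theorem for good dgas (\ref{kdgood}) and the characterisation of good dgas (\ref{goodchar}). First I would recall that, unwinding the definitions, the double Koszul dual of $A$ is $A^{!!}=(B(A^!))^*=\bigl(B((BA)^*)\bigr)^*$, and that by \ref{kdgood} it suffices to show that $A$ is good, i.e.\ that $A$ is cohomologically locally finite and quasi-isomorphic to $\varprojlim\mathcal{A}$ for some pro-Artinian dga $\mathcal{A}\in\proart$. Cohomological local finiteness is a hypothesis, so the only point to check is that $A$ lies, up to quasi-isomorphism, in the essential image of $\varprojlim\colon\proart\to\dga$.

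This is precisely the content of the backward implication of \ref{goodchar}: the assumptions that $A$ is cohomologically locally finite and that $H^0(A)$ is local are exactly what is needed to build a pro-Artinian model for $A$. So the proof is essentially a two-line citation: by \ref{goodchar} the hypotheses make $A$ good, and then \ref{kdgood} gives the desired quasi-isomorphism $A\simeq A^{!!}$.

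The step I expect to be the main obstacle --- were \ref{goodchar} not already in hand --- is exactly that backward implication. One must produce the pro-Artinian model by climbing the Postnikov tower: each truncation map $A_{n+1}\to A_n$ is a homotopy square-zero extension over $B=H^0(A)$ (\ref{hsqlem}), one lifts these extensions to $\proart$ using the comparison of derived derivations $\R\mathrm{Der}_B(\mathcal{A},M)\simeq\R\mathrm{Der}_B(\varprojlim\mathcal{A},M)$ of \ref{rder} together with \ref{hoprolem}, and then one invokes exactness of $\varprojlim$ on $\proart$ (\ref{limisexact}) to commute it past the homotopy limit over $n$. The delicate point there is that double Koszul duality is \emph{not} automatic once one forgets the pro-Artinian structure, so genuine strictification input is required; but all of this is proved in the excerpt, and hence the present theorem itself needs no further argument beyond assembling \ref{goodchar} and \ref{kdgood}.
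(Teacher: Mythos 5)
Your proposal is correct and matches the paper's argument exactly: the theorem is stated immediately after \ref{goodchar} with the one-line justification ``Applying \ref{kdgood} immediately gives us the following,'' precisely the two-step citation you describe. Your summary of what goes into the backward implication of \ref{goodchar} is also accurate, though that machinery belongs to the proof of \ref{goodchar} rather than to this theorem.
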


\begin{rmk}\label{andreyrmk}
Andrey Lazarev has suggested that the `correct' version of the preceding theorem should be something like the following. Let $A \to k$ be an augmented dga. Then under some mild conditions on $A$, the Koszul double dual $A \to A^{!!}$ is quasi-isomorphic to the Bousfield localisation of the right $A$-module $A$ with respect to the homology theory $M \mapsto \tor_*^A(M,k)$. Moreover, if $A$ is cohomologically locally finite with $H^0(A)$ local, then the Bousfield localisation of $A$ is $A$ again. To prove this, he suggests that one should show that the cobar spectral sequence associated to $A^{!!}$ converges to the cohomology of the localisation of $A$, in a similar manner to the convergence of the $E$-Adams spectral sequence. The relevant computations ought to be similar to those of Bousfield \cite{bousfieldloc} and Dwyer \cite{dwyerexotic}.
	\end{rmk}

\begin{rmk}
	If $A$ is a cohomologically locally finite nonpositively graded augmented dga, then the following are equivalent:
	\begin{enumerate}
		\item $H^0(A)$ is local.
		\item $A$ is quasi-isomorphic to the limit of a system of pro-Artinian dgas.
		\item $A$ is good.
		\item $A$ is quasi-isomorphic to its Koszul double dual.
		\item $A$ admits a `Koszul predual': there exists a dga $B$ and a quasi-isomorphism $A\simeq B^!$.
		\end{enumerate}
	The easiest way to see this is to observe that $$3.\iff 2.\implies1.\implies4.\implies5.\implies2.$$
	\end{rmk}
\begin{rmk}\label{parmk}
	Let $A$ be any augmented nonpositive dga. Let $B^\sharp(A)$ denote the \textbf{continuous Koszul dual}: one takes the bar construction on $A$ and then applies the $(-)^\sharp$ functor of \ref{csharp} to obtain a pro-Artinian dga. If one takes the levelwise Koszul dual to obtain a pro-Artinian dga $\left(B^\sharp(A)\right)^!$, it is clear that $\varprojlim\left(B^\sharp(A)\right)^!$ is quasi-isomorphic to $A$ by applying \ref{kdforart} levelwise. However, it is far from clear that the same applies when we forget that $B^\sharp A$ is pro-Artinian; i.e.\ take $(\varprojlim B^\sharp(A))^!$ instead.
	\end{rmk}
	\section{The pseudo-model category of good dgas}
	Using \ref{goodchar} to write a good dga as an iterated sequence of homotopy square-zero extensions, we will extend the results of \ref{rder} to give an equivalence of pseudo-model categories between good dgas and those pro-Artinian dgas whose limits are good (the `pregood' ones). Because neither of these (pseudo)-model categories are dg model categories, we need to first translate \ref{rder} into the language of simplicial mapping spaces.
	\begin{defn}
		Let $\mathcal{C}$ be a model category, and let $X,Y$ be two objects of $\mathcal{C}$. Following \mbox{{\cite[5.4.9]{hovey}}} write $\R\mathrm{Map}_{\mathcal{C}}({X},{Y}) \in \mathrm{Ho}(\cat{sSet})$ for the derived mapping space from $X$ to $Y$. 
	\end{defn}
	\begin{prop}\label{dugger}
		Let $\mathcal{C}$ be a combinatorial dg model category. Let $X,Y$ be two objects of $\mathcal{C}$. Denote their derived hom-complex by $\R\dgh_{\mathcal{C}}(X,Y)\in D(k)$. Then the quasi-isomorphism type of $\R\dgh_{\mathcal{C}}(X,Y)$ determines the weak homotopy type of the derived mapping space $\R\mathrm{Map}_{\mathcal{C}}({X},{Y})$.
	\end{prop}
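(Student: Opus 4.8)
The plan is to compute $\R\mathrm{Map}_{\mathcal{C}}(X,Y)$ by means of a cosimplicial frame manufactured from the $\mathrm{Ch}(k)$-enrichment, and then to recognise the resulting simplicial set as a Dold--Kan-type functor applied to the hom-complex $\R\dgh_{\mathcal{C}}(X,Y)$, at which point invariance under quasi-isomorphism is automatic. First I would recall the standard machinery of homotopy function complexes (Hirschhorn, or Hovey \S5.4): for any model category $\mathcal{C}$ and any $X,Y$, having fixed a cofibrant replacement $X^c\to X$, a fibrant replacement $Y\to Y^f$, and a cosimplicial frame $\widetilde{X}^\bullet$ on $X^c$ --- that is, a Reedy-cofibrant object of $\mathcal{C}^\Delta$ with a Reedy weak equivalence $\widetilde{X}^\bullet\to\underline{X^c}$ to the constant diagram --- the simplicial set $[n]\mapsto\mathcal{C}(\widetilde{X}^n,Y^f)$ represents $\R\mathrm{Map}_{\mathcal{C}}(X,Y)$ in $\mathrm{Ho}(\cat{sSet})$, with weak homotopy type independent of all choices. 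Combinatoriality is used only to guarantee that functorial (co)fibrant replacements and the relevant Reedy structures are available; here I would point to Dugger for the general handle on combinatorial model categories.

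Next I would build such a frame from the enrichment. Let $Nk[\Delta^\bullet]$ be the cosimplicial chain complex whose value at $[n]$ is the normalised chain complex of the free $k$-module on the simplicial set $\Delta^n$; each $Nk[\Delta^n]$ is a bounded complex of free $k$-modules, hence cofibrant, the latching maps are cofibrations so $Nk[\Delta^\bullet]$ is Reedy cofibrant in $\mathrm{Ch}(k)^\Delta$, and the augmentations $Nk[\Delta^n]\to k$ assemble into a Reedy weak equivalence $Nk[\Delta^\bullet]\to\underline{k}$. Since $\mathcal{C}$ is a dg model category it is tensored over $\mathrm{Ch}(k)$ and satisfies the pushout--product axiom, so for cofibrant $X^c$ the functor $X^c\otimes-\colon\mathrm{Ch}(k)\to\mathcal{C}$ is left Quillen; applying it levelwise yields $\widetilde{X}^\bullet\coloneqq X^c\otimes Nk[\Delta^\bullet]$, which I claim is a cosimplicial frame on $X^c\otimes k=X^c$. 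I expect this verification --- Reedy cofibrancy of $\widetilde{X}^\bullet$ together with the fact that $\widetilde{X}^\bullet\to\underline{X^c}$ is a Reedy weak equivalence --- to be the main technical obstacle, since it is precisely here that the compatibility (SM7) axiom of the dg structure is genuinely invoked: one checks that left-Quillenness of $X^c\otimes-$ converts the latching cofibrations and level weak equivalences (between cofibrant objects) of $Nk[\Delta^\bullet]$ into the corresponding data for $\widetilde{X}^\bullet$.

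Finally I would run the adjunction. Writing $\Gamma$ for the composite of the Dold--Kan functor $\mathrm{Ch}_{\geq 0}(k)\to\cat{sAb}$ with the forgetful functor to $\cat{sSet}$, and $\tau_{\geq 0}$ for good truncation, one has the Quillen adjunction $Nk[-]\dashv\Gamma\tau_{\geq 0}$ between $\cat{sSet}$ and $\mathrm{Ch}(k)$, so that for any chain complex $K$ and any $n$ there is a natural isomorphism $\mathrm{Ch}(k)(Nk[\Delta^n],K)\cong\cat{sSet}(\Delta^n,\Gamma\tau_{\geq 0}K)=(\Gamma\tau_{\geq 0}K)_n$. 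Combining this with the tensor--hom adjunction of the enrichment and the fact (a consequence of SM7, noted in the excerpt for the ambient dg categories) that $\dgh_{\mathcal{C}}(X^c,Y^f)$ computes $\R\dgh_{\mathcal{C}}(X,Y)$, I get
\begin{align*}
\R\mathrm{Map}_{\mathcal{C}}(X,Y)\ \simeq\ \big([n]\mapsto\mathcal{C}(X^c\otimes Nk[\Delta^n],\,Y^f)\big)\ \cong\ \big([n]\mapsto\mathrm{Ch}(k)(Nk[\Delta^n],\,\dgh_{\mathcal{C}}(X^c,Y^f))\big)\ \cong\ \Gamma\tau_{\geq 0}\big(\R\dgh_{\mathcal{C}}(X,Y)\big).
\end{align*}
Since $\Gamma\tau_{\geq 0}\colon\mathrm{Ch}(k)\to\cat{sSet}$ sends quasi-isomorphisms to weak homotopy equivalences (the Dold--Kan correspondence together with the fact that good truncation preserves quasi-isomorphisms), the weak homotopy type of $\R\mathrm{Map}_{\mathcal{C}}(X,Y)$ depends only on the quasi-isomorphism type of $\R\dgh_{\mathcal{C}}(X,Y)$, as claimed. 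As a remark, one can repackage the whole argument conceptually: $\Gamma\tau_{\geq 0}$ is a lax symmetric monoidal right Quillen functor, hence transports the $\mathrm{Ch}(k)$-enrichment of $\mathcal{C}$ on cofibrant--fibrant objects to a simplicial enrichment, and this simplicial enrichment models the homotopy function complexes of $\mathcal{C}$.
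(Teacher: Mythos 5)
Your proof is correct, and it takes a genuinely different route from the paper. The paper invokes Dugger's theorem that a combinatorial (stable) model category admits an enrichment in symmetric spectra, realises this concretely by viewing an unbounded complex as a spectrum object in nonpositive complexes and applying Dold--Kan levelwise, and then takes the zeroth level to obtain a simplicial enrichment which ``must agree'' with the standard homotopy function complexes. You instead manufacture a cosimplicial frame $X^c\otimes Nk[\Delta^\bullet]$ directly from the $\mathrm{Ch}(k)$-tensoring (the pushout--product axiom making $X^c\otimes(-)$ left Quillen, which gives Reedy cofibrancy and the levelwise equivalences), and then unwind the tensor--hom and Dold--Kan adjunctions to identify $\R\mathrm{Map}_{\mathcal{C}}(X,Y)$ with the Eilenberg--Mac Lane space $\Gamma\tau\R\dgh_{\mathcal{C}}(X,Y)$, where $\tau$ is truncation to the Dold--Kan range. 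The paper's route is shorter as prose but leans on a substantial external theorem and leaves the final comparison (``must be weakly equivalent to the usual one'') implicit; yours is more self-contained, produces an explicit model of the mapping space, and --- notably --- does not actually use combinatoriality anywhere, since cosimplicial frames and the Hovey \S5.4 machinery exist in any model category, so your argument proves the statement for all dg model categories, not just combinatorial ones. Two small cosmetic points: the paper uses cohomological conventions throughout, so its $\tau_{\leq 0}$ is your homological $\tau_{\geq 0}$ (worth flagging to avoid confusion with the good-truncation notation used elsewhere in the thesis); and one should say a word about why every object of $\mathrm{Ch}(k)$ in play is cofibrant --- over the field $k$ this is automatic, which is why the latching maps of $Nk[\Delta^\bullet]$ and the quasi-isomorphisms $Nk[\Delta^n]\to k$ behave as you need under $X^c\otimes(-)$.
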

	\begin{proof}By results of Dugger \cite{duggerspectra} a combinatorial dg model category is naturally enriched over the category of symmetric spectra. The basic idea is to identify unbounded dg vector spaces as spectrum objects in the category of nonpositive dg vector spaces (via shifting and good truncation), and then apply the Dold--Kan correspondence \cite[III.2]{goerssjardine} levelwise to end up with a spectrum object in simplicial sets. Composition is given by the Alexander--Whitney map \cite[8.5.4]{weibel}. See also Dugger and Shipley \cite{duggershipleyspectra} for an additive version where one ends up with spectrum objects in simplicial abelian groups. In particular, taking the zeroth level of the derived mapping spectra of $\mathcal{C}$ gives an enrichment of $\mathcal{C}$ over simplicial sets. For fibrant-cofibrant objects this enrichment must be weakly equivalent to the usual one. 
	\end{proof}
	\begin{cor}\label{dermapspace}
		Let $B$ be an Artinian local $k$-algebra and let $\mathcal{A}\to B$ be a pro-Artinian dga with a map to $B$. Let $M$ be a finite-dimensional $B$-bimodule concentrated in nonpositive degrees. Assume that $A\coloneqq \varprojlim \mathcal{A}$ is cohomologically locally finite. Then there is a weak equivalence $$\R\mathrm{Map}_{\proart/B}(\mathcal{A},B\oplus M) \simeq \R\mathrm{Map}_{\cat{dga}_k/B}(A,B\oplus M) .$$
	\end{cor}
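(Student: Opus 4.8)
The plan is to translate both derived mapping spaces into complexes of derived derivations, where \ref{rder} applies directly, using \ref{dugger} to pass between the dg and simplicial worlds. On the ordinary side, the square-zero extension functor $B\oplus(-)\colon B\text{-}\cat{bimod}\to\cat{dga}_k/B$ is right Quillen with left adjoint $\Omega(-)_B$, so there is a weak equivalence $\R\mathrm{Map}_{\cat{dga}_k/B}(A,B\oplus M)\simeq\R\mathrm{Map}_{B\text{-}\cat{bimod}}(\mathbb{L}(A)_B,M)$. The category $B\text{-}\cat{bimod}$ of dg bimodules over the finite-dimensional algebra $B$ is a combinatorial dg model category, so by \ref{dugger} the weak homotopy type of the right-hand side is pinned down by the quasi-isomorphism type of the derived hom-complex $\R\dgh_B(\mathbb{L}(A)_B,M)$, which by definition is $\R\mathrm{Der}_B(A,M)$.

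First I would carry out the analogous reduction on the pro-side. The proof of \ref{prolemma} exhibits $B\oplus(-)$ as a right Quillen functor from the pro-category of finite $B$-bimodules into $\proart/B$, with left adjoint the levelwise noncommutative K\"ahler differentials, so $\R\mathrm{Map}_{\proart/B}(\mathcal{A},B\oplus M)$ is a derived mapping space out of the pro-cotangent complex $\mathbb{L}(\mathcal{A})_B$ into $M$. Writing $\mathcal{A}=\{\mathcal{A}_\alpha\}_\alpha$, the structure map to $B$ factors through some level, so cofinally each $\mathcal{A}_\alpha$ lies in $\cat{dga}_k/B$; since $M$ is a constant finite-dimensional pro-bimodule, mapping out of a pro-object into it is a filtered colimit over the levels, so $\R\mathrm{Map}_{\proart/B}(\mathcal{A},B\oplus M)\simeq\mathrm{hocolim}_\alpha\,\R\mathrm{Map}_{\cat{dga}_k/B}(\mathcal{A}_\alpha,B\oplus M)$. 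Applying the first paragraph level by level, and using that a filtered colimit of chain complexes computes their homotopy colimit, the weak homotopy type of $\R\mathrm{Map}_{\proart/B}(\mathcal{A},B\oplus M)$ is determined by the quasi-isomorphism type of $\varinjlim_\alpha\R\mathrm{Der}_B(\mathcal{A}_\alpha,M)=\R\mathrm{Der}_B(\mathcal{A},M)$.

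To conclude, \ref{rder} supplies a quasi-isomorphism $\R\mathrm{Der}_B(\mathcal{A},M)\simeq\R\mathrm{Der}_B(A,M)$ -- this is the step using the hypothesis that $A=\varprojlim\mathcal{A}$ is cohomologically locally finite -- so the two derived mapping spaces share a weak homotopy type, which is the claim.

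The step I expect to be the main obstacle is the pro-side bookkeeping: making precise the identification of $\R\mathrm{Map}_{\proart/B}(\mathcal{A},B\oplus M)$ with the filtered homotopy colimit of level mapping spaces, and legitimising the appeal to \ref{dugger} there, given that the model structure on $\proart$ from \ref{proartmodel} is not the standard strict pro-model structure and that $\proart$ is not combinatorial in the obvious sense. A cleaner alternative would be to transport the whole computation through the equivalence $\proart\simeq(\cat{cndgc}_k^{\geq 0})^{\mathrm{op}}$ of \ref{sharpprop}, where $B$ becomes a finite coalgebra, $B\oplus M$ a co-square-zero coextension, and the bar--cobar adjunction of \ref{bcquillen} plays the role of the square-zero/K\"ahler adjunction; one lands again on the comparison $\Omega(\varinjlim\mathcal{A}^*)\simeq A^!$ from the proof of \ref{rder}, so the essential content is unchanged.
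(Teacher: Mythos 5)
Your first paragraph (the $\cat{dga}_k/B$ side) is correct and matches the paper's intent: translate through the Quillen adjunction $\Omega(-)_B \dashv B\oplus(-)$ into $B\text{-}\cat{bimod}$, where \ref{dugger} applies, and recognise $\R\dgh_B(\mathbb{L}(A)_B,M)$ as $\R\mathrm{Der}_B(A,M)$. Your final step, invoking \ref{rder} to supply $\R\mathrm{Der}_B(\mathcal{A},M)\simeq\R\mathrm{Der}_B(A,M)$, is also exactly what the paper does -- its whole proof is ``Follows from \ref{rder} and \ref{dugger}.''

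The gap is in the middle paragraph, the levelwise decomposition on the pro side. You claim $\R\mathrm{Map}_{\proart/B}(\mathcal{A},B\oplus M)\simeq\mathrm{hocolim}_\alpha\,\R\mathrm{Map}_{\cat{dga}_k/B}(\mathcal{A}_\alpha,B\oplus M)$, and later that $\varinjlim_\alpha\R\mathrm{Der}_B(\mathcal{A}_\alpha,M)=\R\mathrm{Der}_B(\mathcal{A},M)$. Neither of these follows formally. The underived statement $\hom_{\proart}(\mathcal{A},B\oplus M)\cong\varinjlim_\alpha\hom(\mathcal{A}_\alpha,B\oplus M)$ is fine, but to derive you must cofibrantly replace $\mathcal{A}$ in the model structure of \ref{proartmodel}, and the levels of such a replacement need not be cofibrant resolutions of the individual $\mathcal{A}_\alpha$; indeed the cofibrations of $\proart$ are not the levelwise ones. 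Worse, each $\R\mathrm{Map}_{\cat{dga}_k/B}(\mathcal{A}_\alpha,B\oplus M)$ in your colimit is taken in $\cat{dga}_k/B$, whose model structure (cofibrant replacement of $\mathcal{A}_\alpha$ by a semifree dga) leaves the Artinian world entirely -- so the levelwise claim is not a reduction of the problem but essentially an instance of it. You correctly flag this as the main obstacle, but the fix is to avoid the detour, not to justify it.

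The paper's route is shorter and does not decompose by levels. On the pro side one uses \ref{prolemma}'s Quillen adjunction directly: $\R\mathrm{Map}_{\proart/B}(\mathcal{A},B\oplus M)\simeq \R\mathrm{Map}_{\cat{pro}\mathcal{C}}(\mathbb{L}(\mathcal{A})_B,M)$, where $\cat{pro}\mathcal{C}$ is the pro-category of finite $B$-bimodules and $\mathbb{L}(\mathcal{A})_B=\Omega(Q\mathcal{A})_B$ for a cofibrant resolution $Q\mathcal{A}\to\mathcal{A}$ taken \emph{in} $\proart/B$. Then $\R\dgh_{\cat{pro}\mathcal{C}}(\mathbb{L}(\mathcal{A})_B,M)\cong\mathrm{Der}_B(Q\mathcal{A},M)$ by the same adjunction, and this \emph{is} the definition of $\R\mathrm{Der}_B(\mathcal{A},M)$ -- no colimit over levels is needed. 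Applying \ref{dugger} in $\cat{pro}\mathcal{C}$ and $B\text{-}\cat{bimod}$, and \ref{rder} between them, gives the result. Your residual worry about whether $\cat{pro}\mathcal{C}$ is genuinely combinatorial (so that \ref{dugger} applies literally) is a fair observation, and the paper is silent on it; in practice one replaces Dugger's existence theorem by the explicit dg enrichment of $\cat{pro}\mathcal{C}$ coming from the pro-hom formula, which suffices for the Dold--Kan/spectrum comparison in the proof of \ref{dugger}. Your suggested transport to $(\cat{cndgc}_k^{\geq 0})^{\mathrm{op}}$ is a reasonable alternative but lands one in the same opposite-of-presentable category, so it does not by itself dispose of the combinatoriality issue.
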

	\begin{proof}
		Follows from \ref{rder} and \ref{dugger}.
	\end{proof}
	\begin{defn}
		Let $\cat{g}\dga \into \dga$ denote the full subcategory on good dgas. Call a pro-Artinian dga $A$ \textbf{pregood} if $\varprojlim A$ is good, and let $\cat{g}\proart \into \proart$ denote the full subcategory on pregood pro-Artinian dgas. 
	\end{defn}
	Note that a pro-Artinian dga is pregood if and only if for each $n$ the vector space $\varprojlim H^nA$ is finite. The following definition is a slight variant of \cite[4.1.1]{hag1}.
	\begin{defn}
		A \textbf{pseudo-model category} $C$ is a full subcategory of a model category $M$ such that $C$ is closed under weak equivalences and homotopy pullbacks in $M$. 
	\end{defn}
	We will soon show in \ref{pseudomods} that $\varprojlim:\cat{g}\proart \to \cat{g}\dga$ is a Quillen equivalence of pseudo-model categories. The key step in proving this will be to check that the derived mapping spaces agree, which generalises \ref{dermapspace}.
	\begin{prop}\label{dermap}
		Let $\mathcal{A}$, $\mathcal{A}'$ be pregood pro-Artinian dgas. Put $A\coloneqq \varprojlim \mathcal{A}$ and  $A'\coloneqq \varprojlim \mathcal{A}'$. There is a weak equivalence of derived mapping spaces $$\R\mathrm{Map}_{\proart}(\mathcal{A},\mathcal{A}')\simeq\R\mathrm{Map}_{\cat{dga}_k}({A},{A'}).$$
	\end{prop}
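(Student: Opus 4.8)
The plan is to deduce this from \ref{rder} by resolving the target $\mathcal{A}'$ as an iterated homotopy square-zero extension and exploiting that both $\R\mathrm{Map}_{\proart}(\mathcal{A},-)$ and $\R\mathrm{Map}_{\cat{dga}_k}(A,-)$ preserve homotopy limits. Throughout I would replace $\mathcal{A}$ and $A$ by cofibrant resolutions without comment.

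First I would build a tower. Since $\mathcal{A}'$ is pregood, $A'\coloneqq\varprojlim\mathcal{A}'$ is good, so $B\coloneqq H^0(A')$ is an Artinian local $k$-algebra (\ref{goodchar}). Splicing the nilpotency tower $k=B/\bar B \twoheadleftarrow B/\bar B^2 \twoheadleftarrow\cdots\twoheadleftarrow B$ of $B$ (each map a square-zero extension by a finite-dimensional vector space placed in degree zero, regarded as a bimodule over the preceding quotient) with the Postnikov tower $B=\tau_{\geq 0}A' \leftarrow \tau_{\geq -1}A' \leftarrow\cdots$ of $A'$ over $B$ (each map a homotopy square-zero extension by a finite-dimensional, nonpositively graded $B$-bimodule, by \ref{hsqlem}) produces a tower $k=A'_{[0]}\leftarrow A'_{[1]}\leftarrow\cdots$ with $\holim_n A'_{[n]}\simeq A'$, in which every transition map $A'_{[n+1]}\to A'_{[n]}$ is a homotopy square-zero extension over an Artinian local $k$-algebra $C_n$ that itself occurs as an earlier stage $A'_{[i_n]}$ with $i_n\leq n$. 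Lifting the square-zero data to $\proart$ stage by stage using \ref{hoprolem} and \ref{rder}, and invoking \ref{limreflects} and \ref{limisexact} (so that $\varprojlim$ on $\proart$ is the homotopy limit and reflects weak equivalences), upgrades this to a tower $\{\mathcal{A}'_{[n]}\}$ in $\proart$ with $\varprojlim\mathcal{A}'_{[n]}\simeq A'_{[n]}$ and $\holim_n\mathcal{A}'_{[n]}\simeq\mathcal{A}'$.

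As the mapping-space functors send homotopy limits in the second variable to homotopy limits of spaces, it then suffices to show that for every $n$ the comparison map $\R\mathrm{Map}_{\proart}(\mathcal{A},\mathcal{A}'_{[n]})\to\R\mathrm{Map}_{\cat{dga}_k}(A,A'_{[n]})$ induced by $\varprojlim$ is a weak equivalence, which I would prove by induction on $n$. The base case $n=0$ is trivial, both sides being contractible since $k$ is terminal in $\proart$ and in $\cat{dga}_k$. For the inductive step, $\mathcal{A}'_{[n+1]}$ is the homotopy pullback of $\mathcal{A}'_{[n]}\xrightarrow{\delta}C_n\oplus M_n\xleftarrow{0}C_n$ for a finite-dimensional, nonpositively graded $C_n$-bimodule $M_n$; applying $\R\mathrm{Map}_{\proart}(\mathcal{A},-)$ and $\R\mathrm{Map}_{\cat{dga}_k}(A,-)$, which preserve homotopy pullbacks, reduces the claim to checking that the comparison is a weak equivalence for the three vertices of the cospan, namely $\mathcal{A}'_{[n]}$, $C_n$, and $C_n\oplus M_n$. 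The first holds by the inductive hypothesis, the second by the inductive hypothesis applied to the earlier stage $C_n=A'_{[i_n]}$, and for the third: $C_n\oplus M_n\to C_n$ becomes a levelwise surjection after $\varprojlim$ and hence is a fibration in $\proart$ (\ref{proartmodel}), so $\R\mathrm{Map}_{\proart}(\mathcal{A},C_n\oplus M_n)\to\R\mathrm{Map}_{\proart}(\mathcal{A},C_n)$ is a fibration whose homotopy fibre over a map $\phi:\mathcal{A}\to C_n$ is the space of derived derivations $\R\mathrm{Der}_{C_n}(\mathcal{A},M_n)$ (with $M_n$ viewed as a bimodule via $\phi$); by \ref{rder} (equivalently \ref{dermapspace}) this agrees with $\R\mathrm{Der}_{C_n}(A,M_n)$, while the base agrees by the inductive hypothesis, so a straightforward comparison of the two homotopy fibre sequences finishes the step.

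Almost all of the genuine content sits in \ref{rder}; what remains is homotopy-coherence bookkeeping. I expect the main obstacle to be the honest construction of the lifted tower $\{\mathcal{A}'_{[n]}\}$ in $\proart$: one must choose the square-zero data compatibly so that \ref{rder} and \ref{hoprolem} can be applied at every stage and so that the maps assemble into a genuine (Reedy-fibrant) tower of pro-Artinian dgas whose homotopy limit really is $\mathcal{A}'$, and one must keep careful track of base points when identifying the homotopy fibres of the mapping-space fibrations with spaces of derived derivations. Everything else — Postnikov convergence for the coconnective dga $A'$, the interchange of homotopy limits, and the homotopy-pullback comparison of spaces — is formal.
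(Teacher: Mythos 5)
Your proposal is correct and follows essentially the same route as the paper's: decompose $A'$ into an iterated tower of finite homotopy square-zero extensions (Postnikov tower of $A'$ over $H^0(A')$ spliced with the nilpotency tower of $H^0(A')$), use that both mapping-space functors preserve homotopy limits to induct down the tower, and reduce to the base case handled by \ref{rder}/\ref{dermapspace}. The paper's proof is terser — it invokes \ref{hplem} to produce the lifted tower in $\proart$ and then reduces in two steps to the case where $A'$ is a square-zero extension of $k$ — but the content is the same, and your explicit fibre-sequence comparison for the $C_n\oplus M_n$ vertex is just a slightly unpacked version of the appeal to \ref{dermapspace}.
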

	\begin{proof}
		First note that by \ref{dermapspace}, this is the case when $A'$ is a square-zero extension of $k$ by a finite module. Moreover, \ref{hplem} writes $A'$ as an iterated sequence of finite homotopy square-zero extensions, starting with the ungraded Artinian local algebra $H^0(A')$. Since $\R\mathrm{Map}(A,-)$ preserves homotopy limits, it hence suffices to prove the claim in the case when $A'=H^0(A')$. By the same logic, it is enough to prove that any ungraded Artinian local algebra $A'$ is an iterated sequence of finite homotopy square-zero extensions of $k$. The tower $A'/\mathfrak{m}_{A'}^n$ exhibits $A'$ as an iterated sequence of finite (classical) square-zero extensions starting from $k$, so it is enough to prove that if $\pi$ is a square-zero extension of ungraded Artinian local algebras, then $\pi$ is also a homotopy square-zero extension. But this follows from the fact that $\pi$ is surjective, so we have a quasi-isomorphism $\ker (\pi) \simeq \mathrm{cocone}(\pi)$. 
	\end{proof}
	\begin{thm}\label{pseudomods}
		Both $\cat{g}\dga \into \cat{dga}$ and $\cat{g}\proart \into \cat{pro}(\cat{dgArt}_k)$ are pseudo-model categories, and $\varprojlim: \cat{g}\proart \to \cat{g}\dga$ is a Quillen equivalence.
	\end{thm}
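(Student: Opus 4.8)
The plan is to read the two pseudo-model-category assertions directly off the characterisation of good dgas in \ref{goodchar}, and then to obtain the Quillen equivalence by combining \ref{goodchar} with the mapping-space comparison \ref{dermap}.

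First I would check closure under weak equivalences. For $\cat{g}\dga$ this is immediate from \ref{goodchar}: a quasi-isomorphism preserves cohomology, hence preserves both cohomological local finiteness and the locality of $H^0$. For $\cat{g}\proart$, a weak equivalence $f$ of pro-Artinian dgas has $\varprojlim f$ a quasi-isomorphism by \ref{limreflects}, so pregoodness is weak-equivalence-invariant.

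Next comes closure under homotopy pullbacks, which I expect to be where the real content lies. Let $P$ be a homotopy pullback in $\dga$ of a diagram $A\to C\leftarrow B$ of good dgas. The Mayer--Vietoris long exact sequence squeezes each $H^n(P)$ between finite-dimensional cohomology groups of $A$, $B$, $C$, so $P$ is cohomologically locally finite, and it is nonpositive since we are working in $\dga$. The remaining point is that $H^0(P)$ is local: the end of Mayer--Vietoris gives a surjection $H^0(P)\twoheadrightarrow H^0(A)\times_{H^0(C)}H^0(B)$ whose kernel $I$ is the image of the connecting map $H^{-1}(C)\to H^0(P)$; since $I$ is a two-sided ideal whose $H^0(P)$-module structure factors through $H^0(C)$, while $I$ itself maps to zero in $H^0(C)$, one gets $I^2=0$, so $I$ is nilpotent and $H^0(P)$ is local as soon as $H^0(A)\times_{H^0(C)}H^0(B)$ is. That fibre product is local by two elementary facts: any unital $k$-algebra map between Artinian local $k$-algebras with residue field $k$ is automatically a local homomorphism (it sends nilpotents to nilpotents, and in such an algebra the nilpotents are exactly the maximal ideal), and a fibre product of local rings along local homomorphisms is local. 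So $P\in\cat{g}\dga$. The pro-Artinian case is then formal: by \ref{limisexact} the functor $\varprojlim\colon\proart\to\dga$ is the homotopy limit, so it commutes with homotopy pullbacks, whence the homotopy pullback of pregood pro-Artinian dgas is carried by $\varprojlim$ to a homotopy pullback of good dgas, which is good by the previous case.

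Finally the Quillen equivalence. The completion functor $A\mapsto\hat A$ is left adjoint to $\varprojlim$, and $\varprojlim$ preserves fibrations and trivial fibrations essentially by the definition of the model structure on $\proart$ (\ref{proartmodel}); so $(\hat{(-)},\varprojlim)$ is a Quillen adjunction whose right adjoint carries pregood objects to good ones. Since a Quillen adjunction is a Quillen equivalence exactly when it induces an equivalence of homotopy categories, I would finish by showing the functor $\mathrm{Ho}(\cat{g}\proart)\to\mathrm{Ho}(\cat{g}\dga)$ induced by $\varprojlim$ (well defined by \ref{limreflects}) is an equivalence: it is essentially surjective by \ref{goodchar}, which exhibits every good dga as $\varprojlim$ of a necessarily pregood pro-Artinian dga, and it is fully faithful by \ref{dermap}, which identifies $\R\mathrm{Map}_{\proart}(\mathcal A,\mathcal A')$ with $\R\mathrm{Map}_{\cat{dga}_k}(\varprojlim\mathcal A,\varprojlim\mathcal A')$, hence in particular matches homotopy classes of maps. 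This gives the claimed Quillen equivalence of pseudo-model categories in the sense of \cite[\S4.1]{hag1}. The one genuinely nontrivial step is the locality of $H^0$ of a homotopy pullback; everything else is either immediate or a formal consequence of \ref{limisexact}, \ref{goodchar} and \ref{dermap}.
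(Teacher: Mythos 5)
Your proof is correct, and the interesting part is that you handle closure under homotopy pullbacks differently from the paper. The paper's proof says this closure ``follows exactly as in \ref{hplem}, using the equivalence of derived mapping spaces from \ref{dermap}'': that is, one lifts the cospan $A\to C\leftarrow B$ of good dgas to a cospan of pregood pro-Artinian dgas via \ref{goodchar} and \ref{dermap}, takes the homotopy pullback in $\proart$, and uses \ref{limisexact} (that $\varprojlim$ is the homotopy limit) to identify its limit with the homotopy pullback in $\dga$; cohomological local finiteness is then a separate, easy check as in \ref{hplem}. You instead verify the cohomological criterion of \ref{goodchar} directly on the homotopy pullback: Mayer--Vietoris gives local finiteness, and locality of $H^0$ comes from the observation that the kernel $I$ of the surjection $H^0(P)\twoheadrightarrow H^0(A)\times_{H^0(C)}H^0(B)$ is square-zero --- the key point being that the connecting map $H^{-1}(C)\to H^0(P)$ is an $H^0(P)$-bimodule map whose module structure factors through $H^0(C)$, on which $I$ acts by zero --- together with the elementary facts that the fibre product of (nilpotently) local rings along local homomorphisms is local, and that any unital map of Artinian local $k$-algebras with residue field $k$ is automatically local. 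Your argument is pleasingly self-contained for the $\cat{g}\dga$ case (it needs \ref{goodchar} but not \ref{dermap}), and it makes explicit a ring-theoretic step the paper's proof compresses; the paper's route is formally slicker but routes both closure statements through \ref{dermap} together with the lifting machinery. For the pro-Artinian side and for the Quillen equivalence you use the same ingredients as the paper: $\varprojlim$ is a homotopy limit (\ref{limisexact}), reflects weak equivalences (\ref{limreflects}), is homotopy essentially surjective by \ref{goodchar}, and is homotopy fully faithful by \ref{dermap}.
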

	\begin{proof}It is clear that $\cat{g}\dga$ is closed under weak equivalences. Since $\varprojlim$ reflects weak equivalences by \ref{limreflects}, it follows that $\cat{g}\proart$ is also closed under weak equivalences. The closure of both under homotopy pullbacks follows exactly as in \ref{hplem}, using the equivalence of derived mapping spaces from \ref{dermap}. So both are pseudo-model categories. It is easy to see that $\varprojlim$ is right Quillen. By definition, it is also homotopy essentially surjective. It is homotopy fully faithful by \ref{dermap}.
	\end{proof}

\begin{cor}\label{qisocor}
	Let $X$ and $Y$ be two good dgas. Let $\mathcal{X}$ and $\mathcal{Y}$ be pregood pro-Artinian dgas with $\varprojlim \mathcal{X}\simeq X$ and $\varprojlim \mathcal{Y}\simeq Y$. If $X$ and $Y$ are quasi-isomorphic then $\mathcal{X}$ and $\mathcal{Y}$ are weakly equivalent.
	\end{cor}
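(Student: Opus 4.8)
This is a formal consequence of Theorem \ref{pseudomods}. The plan is as follows. By Theorem \ref{pseudomods} the functor $\varprojlim: \cat{g}\proart \to \cat{g}\dga$ is a right Quillen equivalence between the pseudo-model categories of pregood pro-Artinian dgas and of good dgas. In particular it descends to an equivalence of homotopy categories $\mathrm{Ho}(\cat{g}\proart) \xrightarrow{\ \simeq\ } \mathrm{Ho}(\cat{g}\dga)$. Since $\varprojlim$ preserves weak equivalences by \ref{limreflects} (equivalently, it is already homotopical, being the homotopy limit by \ref{limisexact}), the derived functor is just $\varprojlim$ itself, with no cofibrant-replacement bookkeeping needed; in particular it sends an object to its literal limit.

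Now unwind the hypotheses. The dgas $\mathcal{X}$ and $\mathcal{Y}$ are pregood by assumption, hence are objects of $\cat{g}\proart$, and $X$, $Y$ are good, hence objects of $\cat{g}\dga$. The given weak equivalences $\varprojlim\mathcal{X}\simeq X$ and $\varprojlim\mathcal{Y}\simeq Y$ say precisely that the images of $\mathcal{X}$ and $\mathcal{Y}$ under the induced functor on homotopy categories are (isomorphic to) $X$ and $Y$ respectively. Since $X$ and $Y$ are quasi-isomorphic, they become isomorphic in $\mathrm{Ho}(\cat{g}\dga)$, so the images of $\mathcal{X}$ and $\mathcal{Y}$ are isomorphic there. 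An equivalence of categories is conservative, so $\mathcal{X}$ and $\mathcal{Y}$ are isomorphic in $\mathrm{Ho}(\cat{g}\proart)$, i.e.\ connected by a zig-zag of weak equivalences, which is exactly the claim.

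There is essentially no obstacle here: all the work has already been done in establishing \ref{pseudomods}, which in turn rests on the equivalence of derived mapping spaces \ref{dermap} and the characterisation \ref{goodchar}. The only points worth spelling out carefully in the writeup are that $\varprojlim$ needs no derived correction (so that the statement "$\varprojlim\mathcal{X}\simeq X$" really does compute the image of $\mathcal{X}$ under the equivalence of homotopy categories) and that an equivalence of categories reflects isomorphisms. Alternatively one could bypass the homotopy-category language entirely and argue directly with derived mapping spaces: \ref{dermap} gives $\R\mathrm{Map}_{\proart}(\mathcal{X},\mathcal{Y})\simeq\R\mathrm{Map}_{\cat{dga}_k}(X,Y)$ compatibly with composition, so a homotopy class of weak equivalences $X\to Y$ (with homotopy inverse) transports to one $\mathcal{X}\to\mathcal{Y}$; but the Quillen-equivalence argument above is cleaner.
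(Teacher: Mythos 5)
Your proposal is correct and matches the paper's proof in substance: both rely on \ref{pseudomods} to pass between the two homotopy categories and then transport the isomorphism. The paper states this more tersely as a functorial bijection $[\mathcal{X},\mathcal{Y}]\cong[X,Y]$ on homotopy-category hom-sets, from which the conclusion is immediate; your additional remarks (that $\varprojlim$ needs no derived correction, and that an equivalence reflects isomorphisms) are accurate but are exactly the bookkeeping implicit in the paper's one-line argument.
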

\begin{proof}
	By \ref{pseudomods}, on the level of homotopy categories one has a functorial bijection $[\mathcal{X}, \mathcal{Y}]\cong [X,Y]$. So isomorphisms on the right hand side give isomorphisms on the left hand side.
	\end{proof}
	
\chapter{Deformation theory}\label{defmthy}

In some sense, commutative formal deformation theory in characteristic zero is about the Koszul duality between the commutative and Lie operads. Indeed, given a commutative deformation problem, one expects it to be `controlled' in some way by a differential graded Lie algebra (\textbf{dgla}). This philosophy is originally due to Deligne, and first appears in print in a paper of Goldman and Millson \cite{goldmanmillson}. Hinich \cite{hinich} viewed deformation problems as equivalent to certain dg coalgebras, which are equivalent to dglas by a Chevalley--Eilenberg type construction, and the correspondence between commutative deformation problems and dglas was made precise by later work of Pridham \cite{unifying} and Lurie \cite{luriedagx}. Correspondingly, in view of the Koszul self-duality of the associative algebra operad, one should expect noncommutative deformation problems to be controlled by noncommutative dgas, and indeed this is true \cite[\S3]{luriedagx}.

\p In this chapter, we will make some of the above statements explicit. We will work primarily with deformation functors valued in simplicial sets. We define the Maurer--Cartan functor and Deligne functor associated to a dga. We study deformations of modules, and obtain some prorepresentability results. We rigidify by considering framed deformations -- a framed deformation of a module $X$ over a dga $A$ is essentially a deformation of $X$ that restricts to the trivial deformation of the underlying vector space of $X$ -- and give a prorepresentability result for framed deformations too. Framings correspond to nonunital dgas, and correspondingly we make use of nonunital dgas throughout.

\p By taking homotopy limits, the formalism of functors valued in $\sset$ allows us to deform modules over pro-Artinian dgas. We set up the theory of prodeformations, and show that our prorepresentability statements give us a universal prodeformation. We finish by tracking the universal prodeformation across quasi-equivalences.

\p Artinian local dgas will be concentrated in nonpositive degrees throughout this chapter. If $\Gamma$ is an Artinian local dga, denote its maximal ideal by $\mathfrak{m}_\Gamma$. The statements we make are not true when one considers Artinian dgas that may have positive degree parts: loosely, elements of positive degree correspond to `stacky' phenomena and introduce extra automorphisms. For example, a general Artinian dga $\Gamma$ may have nontrivial Maurer--Cartan elements. If the results of this chapter remained true in generality, then the $k$-vector space $k$ ought to have as many deformations over $\Gamma$ as there are Maurer--Cartan elements of $\Gamma$.

\p We will mention dglas for motivational purposes only; these are dg vector spaces together with a graded Lie bracket satisfying the graded Leibniz identity with respect to the differential. For more about commutative deformation theory in characteristic zero via dglas, one should refer to the papers of Manetti \cite{manetti,manettidgla}. 

\p We remark that all of the results of this chapter are true when $k$ has positive characteristic: key here is that we are deforming over nonpositive noncommutative dgas, which are Quillen equivalent to simplicial $k$-algebras. In positive characteristic the equivalence between nonpositive cdgas and simplicial commutative algebras breaks down. We note that in positive characteristic, a version of the Lurie--Pridham correspondence for commutative formal moduli problems has recently been proved by Brantner and Mathew \cite{partitionlie}.
	
	\section{The Maurer--Cartan and Deligne functors}
	
	\begin{defn}
		Let $U$ be a (possibly nonunital) dga. The set of \textbf{Maurer--Cartan elements} (or just \textbf{MC elements}) of $U$ is the set $$\mcs(U)\coloneqq \{x \in U^1: \ dx+x^2=0\}.$$
	\end{defn}
	\begin{rmk}
		Note that if $U$ is unital, with differential $d$, then $x \in \mcs(U)$ if and only if the map $u\mapsto d(u)+xu$ is a differential on $U$.
	\end{rmk}
	\begin{rmk}
		A (possibly nonunital) dga $U$ canonically becomes a dgla when equipped with the commutator bracket, and the set of MC elements of the dgla $U$ is the same as the set of MC elements of the dga $U$.
	\end{rmk}

	\begin{defn}
		Let $E$ be a dga. The \textbf{Maurer--Cartan functor} $$\mc(E):\dgart \to \cat{Set}$$ sends an Artinian dga $\Gamma$ to the set $\mc(E)(\Gamma)\coloneqq \mcs(E\otimes \mathfrak{m}_\Gamma)$. Note that this is a functor because a map of Artinian dgas necessarily preserves the maximal ideal.
	\end{defn}
	
	\begin{defn}
		Let $E$ be a dga. The \textbf{gauge group} functor $$\ggr(E):\dgart \to \cat{Grp}$$ sends $\Gamma$ to the set $1+(E\otimes \mathfrak{m}_\Gamma)^0$, which is a group under multiplication.
	\end{defn}
	
	\begin{prop}
		Let $E$ be a nonunital dga and $\Gamma$ an Artinian local dga. Then  $\ggr(E)(\Gamma)$ acts on $\mc(E)(\Gamma)$ via the formula $g.x=gxg^{-1}+gd(g^{-1})$.
	\end{prop}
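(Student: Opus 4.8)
The plan is to recognise $g.x$ as the standard gauge transformation of a (flat) connection, carried out inside a unital dga obtained by adjoining a unit to $E\otimes\mathfrak{m}_\Gamma$, and then to check the two action axioms formally. First I would set $U\coloneqq E\otimes\mathfrak{m}_\Gamma$. Since $\Gamma$ is Artinian local, $\mathfrak{m}_\Gamma$ is a nilpotent dg ideal, say $\mathfrak{m}_\Gamma^N=0$; as $U^{\cdot n}\subseteq E\otimes\mathfrak{m}_\Gamma^{n}$, this forces $U^{\cdot N}=0$, so $U$ is a nilpotent nonunital dga. Adjoining a unit gives a unital dga $U^{+}\coloneqq k\oplus U$ whose differential extends that of $U$ by zero on $k$, and in which $U$ is a two-sided dg ideal. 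Every $g=1+u$ with $u\in U^0$ is then a unit of $U^{+}$, with inverse $\sum_{i\geq 0}(-u)^i$ (a finite sum by nilpotence), so $\ggr(E)(\Gamma)=1+U^0$ is precisely the group of degree-zero units of $U^{+}$ congruent to $1$ modulo $U$. Working inside $U^{+}$ lets me manipulate $g$, $g^{-1}$ and the formula $g.x=gxg^{-1}+g\,d(g^{-1})$ using ordinary graded algebra.

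Next I would check that the operation is well-defined, i.e.\ sends $\mcs(U)=\mc(E)(\Gamma)$ into itself. Since $U$ is an ideal of $U^{+}$ and $x\in U^1$, the term $gxg^{-1}$ lies in $U^1$; and because $g^{-1}-1\in U^0$ we have $d(g^{-1})=d(g^{-1}-1)\in U^1$, so $g\,d(g^{-1})\in U^1$ as well, whence $g.x\in U^1$. The substantive point is the curvature transformation law
\[
d(g.x)+(g.x)^2=g\,(dx+x^2)\,g^{-1}.
\]
To prove it I would first record the identity $d(g^{-1})=-g^{-1}(dg)g^{-1}$, obtained by applying $d$ to $gg^{-1}=1$ and using that $d$ is a graded derivation with $|g|=0$; equivalently $g\,d(g^{-1})=-(dg)g^{-1}$, so $g.x=gxg^{-1}-(dg)g^{-1}$. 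Then I would expand $d(gxg^{-1}-(dg)g^{-1})$ and $(gxg^{-1}-(dg)g^{-1})^2$ with the Koszul signs ($|x|=|dg|=1$), substitute $d(g^{-1})=-g^{-1}(dg)g^{-1}$, and observe that the six cross terms cancel in pairs, leaving exactly $g(dx+x^2)g^{-1}$. This sign-chasing is the one genuine computation in the argument and is where I expect to have to be careful; everything else is formal. In particular, when $x\in\mcs(U)$ the right-hand side vanishes, so $g.x\in\mcs(U)$, giving a map $\ggr(E)(\Gamma)\times\mc(E)(\Gamma)\to\mc(E)(\Gamma)$.

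Finally I would verify the two action axioms. For the identity, with $g=1$ one has $g^{-1}=1$ and $d(1)=0$, so $1.x=x$. For the cocycle condition $(gh).x=g.(h.x)$: the right-hand side expands to $ghxh^{-1}g^{-1}+gh\,d(h^{-1})g^{-1}+g\,d(g^{-1})$, while on the left $(gh)x(gh)^{-1}=ghxh^{-1}g^{-1}$ and, using that $d$ is a derivation with $|h^{-1}|=0$, $(gh)\,d((gh)^{-1})=(gh)\,d(h^{-1}g^{-1})=gh\,d(h^{-1})g^{-1}+g\,d(g^{-1})$; the two sides agree. This establishes that $g.x=gxg^{-1}+g\,d(g^{-1})$ defines an action of $\ggr(E)(\Gamma)$ on $\mc(E)(\Gamma)$. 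I would also remark that this is precisely the restriction of the usual gauge action of the gauge group of the dgla underlying $E\otimes\mathfrak{m}_\Gamma$ on its Maurer--Cartan set, so the statement is consistent with the commutator-bracket description recorded in the preceding remarks.
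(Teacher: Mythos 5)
Your proposal is correct, and it supplies exactly the computation the paper elides by writing ``This is an easy verification'': unitalize $E\otimes\mathfrak{m}_\Gamma$, use the curvature transformation law $d(g.x)+(g.x)^2=g(dx+x^2)g^{-1}$ to see well-definedness, and check the two action axioms directly. There is no genuinely different route here, so your argument is the paper's argument made explicit.
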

	\begin{proof}
		This is an easy verification.
	\end{proof}
	\begin{rmk}Regarding $d+x$ as a twisted differential on $E\otimes \mathfrak{m}_\Gamma$, the action of the gauge group is the conjugation action on the space of differentials.
	\end{rmk}

	\begin{rmk}
		If $L$ is a dgla, its gauge group has as elements formal symbols $\exp(a)$ for $a \in L^0$, and multiplication given by the Baker--Campbell--Hausdorff formula \cite{manettidgla}. If $E\otimes \mathfrak{m}_\Gamma$ is made into a dgla using the commutator bracket then its dgla gauge group is isomorphic to the gauge group defined above, via the map that sends each formal exponential $\exp(a)$ to the sum $\sum_n\frac{a^n}{n^!}$, which exists because $\mathfrak{m}_\Gamma$ is nilpotent. Note that the characteristic zero assumption is necessary here. The exponential of the dgla gauge action \cite[V.4]{manetti} is the gauge action described above.
	\end{rmk}

	\begin{defn}
		Let $E$ be a dga. The \textbf{Deligne functor} is the quotient functor $$\del(E)\coloneqq \mc(E)/\ggr(E).$$
	\end{defn}
	Sometimes, $\del(E)$ is called the deformation functor associated to $E$.
	
	\begin{rmk}
		By taking the groupoid quotient rather than the set quotient, one can immediately enhance $\del$ to a groupoid-valued functor. However, we will see that $\del$ has a natural enhancement to a functor $\sdel$ valued in simplicial sets, and we would like the groupoid $\del$ to be the 1-truncation of $\sdel$. However, nontrivial 2-cells in $\sdel$ induce homotopies between gauges, and one has to quotient these out to get the correct fundamental groupoid; see \cite{ELO} or \cite[Proof of 3.2]{manettidgla} in the commutative setting. In the literature, both groupoid-valued functors are referred to as the \textbf{Deligne groupoid}. When deforming along ungraded Artinian algebras the two definitions coincide, so the difference between them only becomes apparent when deforming along genuinely derived objects.
	\end{rmk}
	
	\begin{prop}[{\cite[8.1]{ELO}}]
		If $E$ and $E'$ are quasi-isomorphic dgas then the functors $\del(E)$ and $\del(E')$ are isomorphic.
	\end{prop}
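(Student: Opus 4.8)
The plan is to show that a quasi-isomorphism $f : E \to E'$ of dgas induces a natural isomorphism of functors $\del(E) \to \del(E')$, by comparing the relevant Maurer--Cartan sets and gauge group actions after tensoring with a maximal ideal. First I would reduce to the case of a \emph{surjective} quasi-isomorphism with nilpotent (acyclic) kernel, together with a quasi-isomorphic section; by standard homotopical algebra (factoring $f$ through its mapping cylinder, or by the fact that any quasi-isomorphism of dgas can be replaced by a zig-zag of surjective ones with acyclic kernels after cofibrant replacement), it suffices to prove the statement for such maps. Since the functor $\del$ sends quasi-isomorphic inputs to isomorphic outputs is what we want, and composition of isomorphisms is an isomorphism, handling the surjective case (and dually the injective-quasi-isomorphism case) is enough.

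Next I would fix an Artinian local dga $\Gamma$ with maximal ideal $\mathfrak{m}_\Gamma$, and observe that $E \otimes \mathfrak{m}_\Gamma \to E' \otimes \mathfrak{m}_\Gamma$ is still a surjective quasi-isomorphism, with kernel $K \otimes \mathfrak{m}_\Gamma$ where $K = \ker(f)$ is acyclic; crucially, because $\mathfrak{m}_\Gamma$ is nilpotent, $E\otimes\mathfrak{m}_\Gamma$ and $E'\otimes\mathfrak{m}_\Gamma$ are nilpotent (non-unital) dgas. The key step is then an obstruction-theoretic argument: filtering by powers of $\mathfrak{m}_\Gamma$ gives a finite tower of square-zero extensions, and one lifts Maurer--Cartan elements step by step along each square-zero extension, using acyclicity of $K$ to kill the obstruction classes (which live in $H^2$ of a shift of $K\otimes(\mathfrak{m}^i/\mathfrak{m}^{i+1})$) and to show that the lift is unique up to gauge equivalence (ambiguity lives in $H^1$, again of an acyclic complex). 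This shows $\mcs(E\otimes\mathfrak{m}_\Gamma) \to \mcs(E'\otimes\mathfrak{m}_\Gamma)$ is surjective with fibres a single gauge orbit, so it descends to a bijection on Deligne quotients. One then checks this bijection is compatible with the gauge actions on both sides (the gauge group $\ggr(E)(\Gamma) = 1 + (E\otimes\mathfrak{m}_\Gamma)^0$ maps onto $\ggr(E')(\Gamma)$ with kernel acting trivially on MC elements up to the identifications already made), giving the isomorphism $\del(E)(\Gamma) \cong \del(E')(\Gamma)$.

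Finally I would verify naturality in $\Gamma$: all the constructions above (tensoring with $\mathfrak{m}_\Gamma$, the filtration by powers of $\mathfrak{m}_\Gamma$, the lifts) are functorial in $\Gamma$ since a map of Artinian local dgas preserves the maximal ideal and its powers, so the componentwise bijections assemble into a natural isomorphism of functors $\dgart \to \cat{Set}$.

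The main obstacle I expect is the bookkeeping in the obstruction-theoretic step: one must be careful that the gauge equivalence relation interacts correctly with the square-zero filtration, since gauges themselves need to be lifted compatibly, and the non-commutativity means the usual dgla Baker--Campbell--Hausdorff manipulations have to be replaced by direct computation with $g.x = gxg^{-1} + gd(g^{-1})$. (In fact the cleanest route may be to cite the Efimov--Lunts--Orlov machinery \cite{ELO} directly, as the statement is quoted as \cite[8.1]{ELO}; the argument there runs along exactly these lines, and one could alternatively deduce it from the fact that $\del(E)$ is the $0$-truncation of a homotopy-invariant $\sset$-valued functor $\sdel(E)$, once that functor and its homotopy invariance have been set up later in the chapter.)
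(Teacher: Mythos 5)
The paper gives no proof of its own for this proposition; it is cited verbatim from \cite[8.1]{ELO} (and in fact ELO there establish something stronger, namely an equivalence of Deligne \emph{groupoids}). So there is nothing in the text to compare against directly. Your obstruction-theoretic sketch is a reasonable reconstruction of the standard argument, and is essentially what a from-scratch proof would look like: reduce to surjective quasi-isomorphisms, filter $E\otimes\mathfrak{m}_\Gamma$ by powers of $\mathfrak{m}_\Gamma$, and push Maurer--Cartan elements and gauges across the resulting tower of square-zero extensions using acyclicity of the kernel. Two small points of care, though. First, the phrase ``nilpotent (acyclic) kernel'' conflates two different things: the kernel $K$ of a surjective quasi-isomorphism of dgas is acyclic but is \emph{not} nilpotent in general; nilpotence only enters after tensoring with $\mathfrak{m}_\Gamma$. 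Second, ``and dually the injective-quasi-isomorphism case'' is not quite the right framing for the reduction. The cleaner statement is that since every dga is fibrant in Hinich's model structure, a trivial cofibration $E\hookrightarrow Z$ admits a retraction $Z\twoheadrightarrow E$, which is again a surjective quasi-isomorphism; so any quasi-isomorphism $E\to E'$ can be replaced by a span $E\twoheadleftarrow Z\twoheadrightarrow E'$ of surjective quasi-isomorphisms, and it suffices to treat the surjective case. Finally, the parenthetical route you flag at the end is, given how the rest of the chapter is set up, actually the most economical one: once \ref{mcinvtqiso} ($\smc$ is quasi-isomorphism invariant), \ref{pi0del} ($\del\cong\pi_0\sdel$), and \ref{smcisdel} ($\smc\simeq\sdel$) are in place, the proposition follows by taking $\pi_0$, with no direct gauge bookkeeping at all; the obstruction-theoretic work is then hidden inside the proof that $\smc$ respects quasi-isomorphisms.
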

	
	Now we enhance all of our constructions to functors valued in simplicial sets. The loose idea is to take (co)simplicial resolutions to get simplicial mapping spaces, as in e.g.\ \cite[5.4.9]{hovey}. We use the explicit simplicial enhancement of Hinich \cite{hinichhom}, who is generalising the work of Bousfield and Gugenheim \cite{bousfieldgugenheim}.
	
	\begin{defn}
		Let $\pdf$ denote the simplicial cdga of polynomial differential forms on the standard cosimplicial space $\Delta^\bullet$; see e.g.\ \cite[\S1]{bousfieldgugenheim} or  \cite[4.8.1]{hinichhom} for an explicit definition.
	\end{defn}
	\begin{prop}[{Polynomial Poincar\'e Lemma \cite[1.3]{bousfieldgugenheim}}]\label{poincare}
		The simplicial dga $\pdf$ is quasi-isomorphic to the constant simplicial dga $k$.
	\end{prop}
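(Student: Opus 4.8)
The plan is to reduce the statement to the classical algebraic Poincar\'e lemma for affine space. Unwinding the definitions, $\pdf$ is the simplicial cdga $[n]\mapsto \Omega(\Delta^n)$ and $k$ is the constant simplicial cdga, and the comparison map is the levelwise unit $k\to\Omega(\Delta^n)$, which is natural in $[n]$; so the claim amounts to showing that for every $n\geq 0$ the inclusion of constants $k\to\Omega(\Delta^n)$ is a quasi-isomorphism, i.e.\ that $H^0(\Omega(\Delta^n))\cong k$ and $H^j(\Omega(\Delta^n))=0$ for $j>0$.

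Next I would unpack $\Omega(\Delta^n)$: it is the cdga $k[t_0,\ldots,t_n]\otimes\Lambda(dt_0,\ldots,dt_n)$ modulo the relations $\sum_i t_i=1$ and $\sum_i dt_i=0$, with $|t_i|=0$, $|dt_i|=1$, and differential determined by $d(t_i)=dt_i$ and the graded Leibniz rule. Eliminating $t_0=1-t_1-\cdots-t_n$ and $dt_0=-(dt_1+\cdots+dt_n)$ gives a cdga isomorphism $\Omega(\Delta^n)\cong\Omega^\bullet_{\A^n/k}=k[u_1,\ldots,u_n]\otimes\Lambda(du_1,\ldots,du_n)$, the algebraic de Rham complex of affine $n$-space. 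So the task becomes the computation $H^*(\Omega^\bullet_{\A^n/k})=k$, concentrated in degree zero.

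For this I would induct on $n$. The case $n=0$ is trivial, and $n=1$ is checked directly: $d\colon k[u]\to k[u]\,du$, $f\mapsto f'\,du$, is surjective (polynomial antiderivatives exist since $\mathrm{char}\,k=0$) with kernel the constants, so $H^*(\Omega^\bullet_{\A^1/k})=k$. For the inductive step there are two equivalent routes: either apply the Künneth theorem over the field $k$ to the tensor decomposition $\Omega^\bullet_{\A^n/k}\cong\Omega^\bullet_{\A^{n-1}/k}\otimes_k\Omega^\bullet_{\A^1/k}$, or — more in keeping with the name — build an explicit contracting homotopy: writing each form uniquely as $\omega=\alpha+\beta\wedge du_n$ with $\alpha,\beta$ free of $du_n$, the degree $-1$ operator $I(\omega)\coloneqq\int_0^{u_n}\beta\,du_n$ (termwise antiderivative in $u_n$, defined as $\mathrm{char}\,k=0$) satisfies $dI+Id=\id-\pi$, where $\pi$ sets $u_n=du_n=0$; this exhibits $\Omega^\bullet_{\A^{n-1}/k}\into\Omega^\bullet_{\A^n/k}$ as a chain homotopy equivalence, and induction concludes. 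Either way, combining with the reductions above shows $k\to\Omega(\Delta^n)$ is a quasi-isomorphism for all $n$, hence $k\to\pdf$ is a levelwise quasi-isomorphism of simplicial cdgas.

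There is no genuine obstacle — this is a standard lemma — but the one point needing a little care is verifying the homotopy identity $dI+Id=\id-\pi$ (equivalently, applying Künneth with the correct grading conventions): the Leibniz-rule sign bookkeeping is routine but easy to slip on, and one must remember that $du_n$ is an odd generator. The only hypothesis genuinely used is $\mathrm{char}\,k=0$, via existence of polynomial antiderivatives, which is in force throughout.
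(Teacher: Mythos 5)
Your proof is correct, and it is the standard argument — the paper cites this to Bousfield--Gugenheim \cite[1.3]{bousfieldgugenheim} without giving its own proof, and their proof is exactly the one you sketch: identify $\Omega(\Delta^n)$ with the polynomial de Rham complex of $\A^n$, then contract by iterated polynomial integration (using $\mathrm{char}\,k=0$), or equivalently apply K\"unneth to the tensor decomposition into one-variable factors.
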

	
	\begin{defn}
		Let $E$ be a dga. The \textbf{simplicial Maurer--Cartan functor} $\smc$ sends $E$ to the simplicial set $\smc(E)\coloneqq \mc(E\otimes \pdf)$.
	\end{defn}
	Unwinding the definitions, we hence have $\smc(E)(\Gamma)=\mcs(E\otimes \pdf \otimes \mathfrak{m}_\Gamma)$. 
	\begin{rmk}
		It is not true that $\mc\cong \pi_0\smc$, because the right-hand side has elements identified by homotopies coming from 1-simplices in $\smc$. All we have is a quotient map $\mc \to \pi_0 \smc$. In fact, $\pi_0 \smc$ is $\del$, which follows by combining \ref{pi0del} and \ref{smcisdel} below.
	\end{rmk}

	\begin{prop}[{\cite[2.18]{jonddefsartin}}]\label{mcinvtqiso}
		If $E$ and $E'$ are quasi-isomorphic dgas then the functors $\smc(E)$ and $\smc(E')$ are weakly equivalent.
		
	\end{prop}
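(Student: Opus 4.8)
The plan is to recognise $\smc$ as a derived mapping space out of a fixed cofibrant object, which makes its homotopy–invariance formal. Since ``quasi-isomorphic'' means connected by a finite zig-zag of quasi-isomorphisms, it is enough to treat a single quasi-isomorphism $f\colon E \to E'$ and then compose. The first step is to introduce the \emph{universal Maurer--Cartan algebra} $\mathcal{D}$: the free nonunital dga on one generator $x$ of cohomological degree $1$, with differential determined by $dx = -x^2$. A direct check shows that for every nonunital dga $U$ there is a natural bijection $\hom(\mathcal{D}, U)\cong \mcs(U)$, given by $\varphi \mapsto \varphi(x)$; and $\mathcal{D}$ is semifree, hence cofibrant in Hinich's model structure on nonunital dgas \cite{hinichhom}, while every dga is fibrant.

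The second step is to identify $\smc(E)(\Gamma)$ with the derived mapping space $\rmap{\mathcal{D}}{E\otimes\mathfrak{m}_\Gamma}$. Here one invokes the fact --- essentially the content of \cite{bousfieldgugenheim} and \cite{hinichhom}, resting on the polynomial Poincar\'e lemma \ref{poincare} --- that $V \mapsto V \otimes \pdf$ is a simplicial frame on the (fibrant) object $V$, so that $\rmap{\mathcal{D}}{U}_n \cong \hom(\mathcal{D}, U\otimes\Omega(\Delta^n))\cong \mcs(U\otimes\Omega(\Delta^n))$, which is exactly $\smc(U)_n$ for $U = E\otimes\mathfrak{m}_\Gamma$. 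The third step is then immediate: because $k$ is a field, $-\otimes_k\mathfrak{m}_\Gamma$ is exact, so $f\otimes\id\colon E\otimes\mathfrak{m}_\Gamma \to E'\otimes\mathfrak{m}_\Gamma$ is a quasi-isomorphism, i.e.\ a weak equivalence of nonunital dgas. As $\mathcal{D}$ is cofibrant and both targets are fibrant, $\rmap{\mathcal{D}}{-}$ carries it to a weak equivalence, so $\smc(f)(\Gamma)$ is a weak equivalence for every $\Gamma\in\dgart$ --- which is precisely the statement that $\smc(E)\simeq\smc(E')$ as $\sset$-valued functors.

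The main obstacle is the second step: verifying carefully that $\pdf\otimes(-)$ furnishes a genuine simplicial frame in the category of nonunital dgas, so that $\smc$ really does compute a derived mapping space rather than merely an honest one. This is where the characteristic-zero hypothesis and the Poincar\'e lemma \ref{poincare} enter, and it is the only non-formal input. An alternative, more hands-on route avoids this by inducting on the nilpotency length of $\mathfrak{m}_\Gamma$: the square-zero extension $\Gamma \onto \Gamma/\mathfrak{m}_\Gamma^n$ induces a Kan fibration $\smc(E)(\Gamma)\to\smc(E)(\Gamma/\mathfrak{m}_\Gamma^n)$ whose homotopy fibre over a Maurer--Cartan element is, up to an affine correction, $\smc$ of the square-zero dga $E\otimes\mathfrak{m}_\Gamma^n$ equipped with a twisted differential; the base case $\mathfrak{m}_\Gamma^2 = 0$ is a Dold--Kan computation, and the map of such fibre sequences induced by $f$ is a weak equivalence on base (by induction) and on fibres (the twist disappears on the associated graded of the $\mathfrak{m}$-adic filtration, reducing to the base case), hence on total spaces. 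Either way, the crux is reducing to the manifestly homotopy-invariant, ``linear'' square-zero situation.
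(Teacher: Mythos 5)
Your main argument has a genuine gap: the nonunital dga $\mathcal{D}$ (free graded algebra on $x$ of degree $1$, with $dx=-x^2$) is not cofibrant, so $\smc(U)$ is not the derived mapping space $\rmap{\mathcal{D}}{U}$, and the cofibrant--fibrant formalism cannot deliver the invariance for free. Being a free graded algebra after forgetting the differential does not suffice for cofibrancy: cofibrant objects in the quasi-isomorphism model structure of \cite{hinichhom} are retracts of cell complexes built by attaching free generators whose differentials land in the previously built stage, and $dx=-x^2$ obstructs any such filtration. You can see the failure directly: the Leibniz rule gives $d(x^{2k-1})=-x^{2k}$ and $d(x^{2k})=0$, so $\mathcal{D}$ is acyclic. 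If $\mathcal{D}$ were cofibrant, then $\rmap{\mathcal{D}}{U}\simeq\rmap{0}{U}\simeq *$ for every fibrant $U$, and your identification of $\smc(E)(\Gamma)$ with $\rmap{\mathcal{D}}{E\otimes\mathfrak{m}_\Gamma}$ would force $\smc(E)$ to be contractible everywhere, which is plainly false. The deeper point is that quasi-isomorphism invariance of Maurer--Cartan spaces is a theorem about \emph{nilpotent} dg(l)as, not an instance of abstract homotopy theory; the nilpotency of $\mathfrak{m}_\Gamma$ must enter somewhere, and your main route does not use it at all.

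Your ``alternative, more hands-on route'' --- induction on the nilpotency degree via the tower $\Gamma/\mathfrak{m}_\Gamma^n$, exhibiting each stage as a Kan fibration with twisted square-zero fibres and reducing on associated gradeds to the abelian Dold--Kan base case --- is essentially the correct argument, and is the Hinich--Getzler strategy underlying Pridham's proof of the cited result \cite[2.18]{jonddefsartin} (the thesis itself does not reprove this proposition, only cites it). You correctly recognised that the crux is reduction to the linear, manifestly homotopy-invariant square-zero situation; the proposal would be sound if that filtration argument were promoted to be the proof and the $\mathcal{D}$-cofibrancy argument were dropped.
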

	
	\begin{defn}
		The \textbf{simplicial Deligne functor}	is the homotopy quotient\footnote{See \cite[Chapter V]{goerssjardine} or \cite[1.23]{jonddefsartin} for the definition of homotopy quotients.}$$\sdel(E)\coloneqq [\smc(E)/\ggr(E)].$$ 
	\end{defn}
	
	\begin{lem}\label{pi0del}
		There is an isomorphism	$\del\cong \pi_0\sdel$.
	\end{lem}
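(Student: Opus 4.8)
The plan is to evaluate both functors on a fixed Artinian local dga $\Gamma$, compute $\pi_0$ of the homotopy quotient defining $\sdel$, and match the result with $\del$; the only substantive input will be the classical comparison between the path-component relation on Maurer--Cartan elements and gauge equivalence.

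First I would unwind the definition: $\sdel(E)(\Gamma)$ is the homotopy quotient (Borel construction) of the simplicial set $\smc(E)(\Gamma)=\mcs(E\otimes\pdf\otimes\mathfrak{m}_\Gamma)$ by the discrete group $\ggr(E)(\Gamma)=1+(E\otimes\mathfrak{m}_\Gamma)^0$, which acts on each simplicial level through its image in $1+(E\otimes\Omega(\Delta^q)\otimes\mathfrak{m}_\Gamma)^0$ by the gauge formula (this is compatible with faces and degeneracies since those move only the $\pdf$-factor). Modelling the homotopy quotient as the diagonal of the bisimplicial nerve $(p,q)\mapsto\smc(E)(\Gamma)_q\times\ggr(E)(\Gamma)^{\times p}$ of the action groupoid, and using that $\pi_0$ commutes with finite products of simplicial sets and with the diagonal, one gets a natural identification
\[\pi_0\,\sdel(E)(\Gamma)\;\cong\;\pi_0\bigl(\smc(E)(\Gamma)\bigr)/\ggr(E)(\Gamma);\]
equivalently this is the tail of the long exact sequence of the fibration $\smc(E)(\Gamma)\to\sdel(E)(\Gamma)\to\overline{W}\,\ggr(E)(\Gamma)$, using $\pi_1\overline{W}\,\ggr(E)(\Gamma)=\ggr(E)(\Gamma)$. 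Everything here is natural in $\Gamma$.

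It then remains to identify $\pi_0\bigl(\smc(E)(\Gamma)\bigr)/\ggr(E)(\Gamma)$ with $\del(E)(\Gamma)=\mc(E)(\Gamma)/\ggr(E)(\Gamma)$, and for this I would show outright that $\pi_0\bigl(\smc(E)(\Gamma)\bigr)=\mc(E)(\Gamma)/\ggr(E)(\Gamma)$ (the residual $\ggr(E)(\Gamma)$-action then being trivial). The $0$-simplices of $\smc(E)(\Gamma)$ are exactly $\mcs(E\otimes\Omega(\Delta^0)\otimes\mathfrak{m}_\Gamma)=\mc(E)(\Gamma)$, so there is a canonical surjection $\mc(E)(\Gamma)\onto\pi_0\smc(E)(\Gamma)$, and the claim is that two $0$-simplices have the same image if and only if they are gauge equivalent. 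A single $1$-simplex joining $x_0$ to $x_1$ is an element $\xi\in\mcs(E\otimes\Omega(\Delta^1)\otimes\mathfrak{m}_\Gamma)$; writing $\xi=a(t)+b(t)\,dt$, the Maurer--Cartan equation splits into the statement that $a(t)$ is a polynomial family of Maurer--Cartan elements with $a(0),a(1)$ equal to $x_0,x_1$ together with a linear flow equation coupling $\dot a$ to $b$. Solving this flow equation by iterated integration — which terminates because $\mathfrak{m}_\Gamma$ is nilpotent — produces a polynomial gauge $g(t)$ with $g(0)=1$ and $g(t)\cdot x_0=a(t)$, so $g(1)\in\ggr(E)(\Gamma)$ gauges $x_0$ to $x_1$; conversely the path $\exp(t\log g)$ realises any gauge equivalence by a $1$-simplex. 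Since gauge equivalence is an equivalence relation, a zig-zag of $1$-simplices still yields a gauge equivalence, so the path-component relation coincides with gauge equivalence. This is precisely the integration argument of Hinich \cite{hinichhom} and Bousfield--Gugenheim \cite{bousfieldgugenheim} (cf.\ also Manetti \cite{manetti} in the commutative case), resting on the polynomial Poincar\'e lemma \ref{poincare}. Hence $\pi_0\smc(E)(\Gamma)\cong\del(E)(\Gamma)$, and therefore $\pi_0\sdel(E)(\Gamma)\cong\del(E)(\Gamma)$; assembling over all $\Gamma$, all constructions being natural, gives the asserted isomorphism of functors $\pi_0\sdel\cong\del$.

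The only genuinely non-formal step is this last comparison of the path-component relation on $\smc$ with gauge equivalence; everything else is routine bookkeeping about homotopy quotients of simplicial sets. It is also the only place the explicit $\pdf$-model and characteristic zero (through the polynomial Poincar\'e lemma and the termination of the $\exp/\log$ series) enter: in positive characteristic one replaces $\pdf$ by an appropriate simplicial model, but the shape of the argument is unchanged.
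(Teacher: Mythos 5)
Your proof is correct, and the first half — reducing $\pi_0\sdel$ to $\pi_0\smc/\ggr$ via the diagonal of the bisimplicial nerve (equivalently the tail of the long exact sequence of the fibration $X\to[X/G]\to\bar W G$) — is precisely the argument the paper's one-line proof cites from \cite{jonddefsartin}. Where you diverge is the second half: the paper leaves the identification $\pi_0\smc/\ggr\cong\del$ implicit in its citation and then, in the remark immediately after, records $\pi_0\smc\cong\del$ as a downstream \emph{consequence} of \ref{pi0del} combined with \ref{smcisdel}, whereas you prove $\pi_0\smc\cong\del$ directly by the Bousfield--Gugenheim/Hinich integration argument (splitting $\xi=a(t)+b(t)\,dt$, solving the linear flow equation via the nilpotence of $\mathfrak m_\Gamma$, and using $\exp(t\log g)$ for the converse) and then feed it back into the computation, observing that the residual $\ggr$-action on $\mc/\ggr$ is trivial. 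This reversal is not circular, since your integration step is independent of \ref{smcisdel}, and it buys a self-contained proof at the cost of redoing work that \cite{jonddefsartin} packages; the paper's route is shorter on the page but delegates the crucial step. One small point to keep in mind: when you invoke $\pi_0$ commuting with the diagonal of a bisimplicial set you are using that $\pi_0$ of a realization can be computed as an iterated colimit, which is correct but worth flagging as the place the Eilenberg--Zilber-style collapse is used.
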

	\begin{proof}
		As in \cite[1.27]{jonddefsartin}, this follows by considering the long exact sequence of homotopy groups associated to the fibration $X \to [X/G] \to BG$.
	\end{proof}

	\begin{prop}[{\cite[2.21]{jonddefsartin}}]\label{smcisdel}
		The quotient map $\smc \to \sdel$ is a weak equivalence.
	\end{prop}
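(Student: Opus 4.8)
The plan is to realise $\smc(E)\to\sdel(E)$ as the fibre inclusion of a homotopy quotient and reduce the statement to the contractibility of the gauge group. Since weak equivalences of $\sset$-valued functors are detected objectwise, fix an Artinian local dga $\Gamma$, write $X\coloneqq\smc(E)(\Gamma)$, and let $G$ be the (simplicial) gauge group, with $G_n=1+(E\otimes\Omega(\Delta^n)\otimes\mathfrak m_\Gamma)^0$ acting on $X$ by the gauge action, so that $[X/G]=\sdel(E)(\Gamma)$. By the standard theory of homotopy quotients (cf.\ \cite[Chapter V]{goerssjardine}), the quotient map $X\to[X/G]$ is the inclusion of the fibre of a fibration sequence $X\to[X/G]\to BG$. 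Running the long exact sequence of homotopy groups, exactly as in the proof of \ref{pi0del}, shows that $X\to[X/G]$ is a weak equivalence if and only if $BG$ is weakly contractible, i.e.\ if and only if the simplicial group $G$ is weakly contractible. So it remains to prove that $\ggr(E)(\Gamma)$ is weakly contractible for every $\Gamma\in\dgart$.

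I would establish this by a dévissage along the maximal ideal. As $\mathfrak m_\Gamma$ is nilpotent, say $\mathfrak m_\Gamma^N=0$, the subsets $G_j\coloneqq1+(E\otimes\pdf\otimes\mathfrak m_\Gamma^j)^0$ form a finite filtration by normal simplicial subgroups $G=G_1\supseteq G_2\supseteq\cdots\supseteq G_N=1$, and each quotient $G_j/G_{j+1}$ is abelian because the error term $ab$ in the product $(1+a)(1+b)=1+a+b+ab$ of two elements of $G_j$ lies in $G_{2j}\subseteq G_{j+1}$. Each $1\to G_{j+1}\to G_j\to G_j/G_{j+1}\to1$ is a fibration sequence of simplicial groups, so by downward induction on $j$, starting from the trivial group $G_N$, it suffices to show that each associated graded piece $G_j/G_{j+1}\cong(E\otimes(\mathfrak m_\Gamma^j/\mathfrak m_\Gamma^{j+1})\otimes\pdf)^0$ is weakly contractible.

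Setting $W\coloneqq E\otimes(\mathfrak m_\Gamma^j/\mathfrak m_\Gamma^{j+1})$, this graded piece is the simplicial vector space $(W\otimes\pdf)^0=\bigoplus_{c\geq0}W^{-c}\otimes\Omega^c(\Delta^\bullet)$, where $\Omega^c(\Delta^\bullet)$ denotes the simplicial vector space of polynomial $c$-forms; since homology commutes with direct sums, it is enough to know that each $\Omega^c(\Delta^\bullet)$ is a weakly contractible simplicial vector space. This is the technical heart of the argument, and it is where the Polynomial Poincar\'e Lemma \ref{poincare} enters: the straight-line retraction of $\Delta^n$ onto its $0$th vertex is a cosimplicial homotopy of $\Delta^\bullet$ onto a point, and applying the contravariant polynomial de Rham functor turns it into a simplicial contracting homotopy of $\pdf$ compatible with the cohomological grading (cf.\ \cite[\S1]{bousfieldgugenheim}). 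Granting this, each graded piece is contractible, the induction closes, $\ggr(E)(\Gamma)$ is weakly contractible, and therefore $\smc(E)\to\sdel(E)$ is a weak equivalence.

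The one step requiring genuine care is the last: the form of the Poincar\'e Lemma recorded in \ref{poincare} is a statement about the cohomology of each individual $\Omega(\Delta^n)$ (the de Rham direction), whereas the dévissage needs acyclicity of each $\Omega^c(\Delta^\bullet)$ in the simplicial direction. Bridging this gap — either by writing down Dupont's explicit simplicial contracting homotopy on the polynomial de Rham complex, or by comparing the two spectral sequences of the bicomplex of normalised chains of $\pdf$ against the de Rham resolution $k\to\Omega^0(\Delta^\bullet)\to\Omega^1(\Delta^\bullet)\to\cdots$ — is the only nonroutine point; the fibration sequence for a homotopy quotient, the stability of weak contractibility under extensions of simplicial groups, and the $\mathfrak m_\Gamma$-adic filtration are all standard.
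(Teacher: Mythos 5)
Your proposal is correct and develops exactly the idea the paper sketches in the remark following the proposition (citing \cite[2.21]{jonddefsartin}): realise $\smc(E)(\Gamma)\to\sdel(E)(\Gamma)$ as the fibre inclusion of $X\to[X/G]\to BG$ and reduce to contractibility of the simplicial gauge group $\ggr(E\otimes\pdf)(\Gamma)$, via the $\mathfrak m_\Gamma$-adic dévissage. The caveat you flag at the end is genuine but standard: the simplicial-direction acyclicity of each $\Omega^c(\Delta^\bullet)$ is not literally \ref{poincare} (a de Rham statement), but follows from the explicit simplicial contraction on $\pdf$ (Dupont) or from extendability in the sense of \cite{bousfieldgugenheim}.
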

	\begin{rmk}
		The idea of the proof is that $\ggr(E \otimes \pdf)$ is contractible and so taking the homotopy quotient does not affect the weak equivalence type of $\smc$.
	\end{rmk}

	\section{Deformations of modules}
	Recall that an underived deformation of an $A$-module $X$ over an Artinian local ring $\Gamma$ is an $A\otimes \Gamma$-module $\tilde X$ that reduces to $X$ modulo $\mathfrak{m}_\Gamma$. A derived deformation is defined similarly:
	\begin{defn}
		Let $A$ be a dga and $X$ an $A$-module. Let $\Gamma$ be an Artinian local dga. A \textbf{derived deformation} of $X$ over $\Gamma$ is a pair $(\tilde X, f)$ where $\tilde X$ is an $A\otimes \Gamma$-module and $f:\tilde X \lot_\Gamma k \to X$ is an isomorphism in $D(A)$. An \textbf{isomorphism} of derived deformations is an isomorphism $\phi:\tilde X_1 \to \tilde X_2$ in $D(A\otimes\Gamma)$ such that $f_1 = f_2 \circ ({\phi \lot_\Gamma k})$.
	\end{defn}
	Deformations are functorial with respect to algebra maps: given a map $\Gamma \to\Gamma'$ of Artinian local dgas, and a derived deformation $\tilde X$ of $X$ over $\Gamma$, then the derived base change $\tilde X \lot_\Gamma \Gamma'$ is a derived deformation of $X$ over $\Gamma'$.
	\begin{defn}
		Let $A$ be a dga and $X$ an $A$-module. The functor $\defm_A(X):\dgart \to \cat{Set}$ sends an Artinian local dga $\Gamma$ to the set $$\defm_A(X)(\Gamma)\coloneqq \frac{\{\text{derived deformations of }X \text{ over }\Gamma\}}{(\text{isomorphism})}.$$We will just write $\defm(X)$ if there is no ambiguity.
	\end{defn}	
	
	We will want to describe our simplicial set valued deformation functor as a homotopy pullback of dg categories; we first describe a recipe for turning dg categories into simplicial sets. The basic idea is to truncate, apply Dold--Kan to each of the morphism spaces, take the `underlying simplicial groupoid', and then take the homotopy coherent nerve.
	\begin{defn}
		A \textbf{simplicial category} is a category enriched in $\sset$. The category of all simplicial categories is denoted $\cat{sCat}$.
		\end{defn}
	\begin{rmk}
		Note that every simplicial category is a simplicial object in $\cat{Cat}$, but not every simplicial object in $\cat{Cat}$ is a simplicial category.
		\end{rmk}
	\begin{defn}
		Let $\mathcal{C}$ be a dg category. Let $\mathcal{C}_{\leq 0}$ denote the associated dg category obtained by taking the good truncation to nonpositive degrees of the morphism complexes. Let $\mathcal{C}_s$ denote the simplicial category obtained from $\mathcal{C}_{\leq 0}$ by applying the Dold--Kan correspondence \cite[III.2]{goerssjardine} to the morphism complexes. Composition is given by the Alexander--Whitney map \cite[8.5.4]{weibel}.
	\end{defn}	
Recall that associated to a simplicial category $\mathcal{C}$, there is an associated category $\pi_0\mathcal{C}$ with the same objects as $\mathcal{C}$ and whose morphism spaces are given by taking $\pi_0$ of the morphism complexes in $\mathcal{C}$. Recall from \cite{bergnermodelscat} that the category of simplicial categories admits a model structure where the weak equivalences are the DK-equivalences: those functors which induce weak equivalences on derived mapping spaces and which induce isomorphisms on $\pi_0$ (cf.\ Tabuada's model structure on dg categories \ref{tabmod}). The fibrant simplicial categories are precisely those enriched in Kan complexes.
	\begin{defn}	
		If $\mathcal{C}$ is a simplicial category, let $c(\mathcal{C})$ denote the subcategory on those morphisms which induce isomorphisms on $\pi_0\mathcal{C}$; in \cite{bergnermodelscat} these are called homotopy equivalences. 
	\end{defn}	

	We think of $c$ as a sort of core functor. Note that $c(\mathcal{C})$ is quasi-equivalent to a simplicial groupoid in the sense of \cite[V.7]{goerssjardine}. 
	\begin{defn}		
		Given a simplicial category $\mathcal{C}$, let $\bar W \mathcal{C}$ denote the right adjoint of Illusie's Dec functor $\bar W$ applied to the nerve of $\mathcal{C}$. See \cite[1.6]{jondmodss} for a concrete definition.
	\end{defn}
	\begin{rmk}\label{diagw}
		In \cite{diagonalw} it is proved that for a bisimplicial set $X$, the canonical morphism $\mathrm{diag}X \to \bar W X$ is a weak equivalence. See \cite[1.7]{jondmodss} for further discussion. If $\mathcal{G}$ is a simplicial groupoid, then $\bar W \mathcal{G}$ is weakly equivalent to the homotopy coherent nerve of $\mathcal{G}$ \cite{hinichnerve}.
	\end{rmk}
\begin{defn}
	Let $\mathcal{C}$ be a dg category. Write $\mathcal{W}(\mathcal C)\coloneqq \bar W(c(\mathcal{C}_s))$. It is clear that we obtain a functor $\mathcal{W}: \cat{dgCat}_k \to \sset$.
	\end{defn}
\begin{rmk}\label{wholims}
		The functor $\mathcal{W}$ is in fact a right derived functor. The truncation functor is right Quillen, and Dold--Kan realisation is right Quillen. Because every dg category is fibrant, it follows that $\mathcal{C}\mapsto \mathcal{C}_s$ is a right derived functor.	Moreover, the composition $\bar W c$ is a right derived functor, because it is weakly equivalent to the derived mapping space functor $\R\mathrm{Map}(\bullet,-)$ from the initial simplicial category. Hence, $\mathcal{W}$ is a composition of right derived functors and so is itself a right derived functor.
	\end{rmk}

	\begin{defn}
		Let $A$ be a dga and let $X$ be an $A$-module. Let $X_\mathrm{dg}$ denote the dga $\R\enn_A(X)$ considered as a one-object dg category. There is an obvious inclusion dg functor $X_\mathrm{dg} \into D_\mathrm{dg}(A)$ whose image is $X$. If $\Gamma\in \dgart$ then there is a `reduce modulo $\mathfrak{m}_\Gamma$' dg functor $D_\mathrm{dg}(A\otimes \Gamma) \to D_\mathrm{dg}(A)$ which sends an $A\otimes\Gamma$-module $M$ to the $A$-module $M\lot_\Gamma k$. Let $\mathrm{dgDef}_A(X)(\Gamma)$ be the homotopy fibre product of dg categories $$\mathrm{dgDef}(X)(\Gamma)\coloneqq  X_\mathrm{dg} \times^h_{D_\mathrm{dg} (A)} D_\mathrm{dg} (A \otimes \Gamma).$$
		Let $\sdefm_A(X)(\Gamma)$ denote the simplicial set $\mathcal{W}\left(\mathrm{dgDef}_A(X)(\Gamma)\right)$. 
	\end{defn}
It is easy to check that $\sdefm_A(X)$ is a functor from $\dgart$ to $\sset$. Observe that the dg category $\mathrm{dgDef}(X)$ is clearly `too big', since it contains many maps which are not isomorphisms after passing to the homotopy category. This is why we take the core; we could just as well have taken the `dg core' of $\mathrm{dgDef}(X)$, converted it to a simplicial category, and applied $\bar W$.

	\begin{rmk}\label{dgffdefs}
		If $A$ and $A'$ are dgas with a fully faithful dg functor $D_\mathrm{dg}(A) \into D_\mathrm{dg}(A')$, then we obtain a weak equivalence $\sdefm_A(X) \xrightarrow{\simeq}\sdefm_{A'}(X)$.
		\end{rmk}
	
	The following is known to the experts in deformation theory:
	\begin{thm}[{see e.g.\ \cite[4.2.6]{dinatalethesis} or \cite[4.6]{jonddefsartin}}]\label{defisdel}
		Let $A$ be a dga and $X$ an $A$-module. Let $E\coloneqq \R\enn_A(X)$ be the endomorphism dga of $X$. Then there is a weak equivalence of $\sset$-valued functors $\sdefm_A(X) \xrightarrow{\simeq}\sdel(E)$.
	\end{thm}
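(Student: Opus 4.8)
The plan is to route everything through the simplicial Maurer--Cartan functor. By \ref{smcisdel} the quotient map $\smc(E)\to\sdel(E)$ is a weak equivalence, so it suffices to construct a natural weak equivalence $\sdefm_A(X)\xrightarrow{\simeq}\smc(E)$. The organising principle is that, once a deformation of $X$ is realised on a fixed underlying model, it is exactly a Maurer--Cartan perturbation of the differential, and that the simplicial decorations on both sides are produced by the same homotopy-coherent recipe.

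First I would pass to a strict model. Fix a cofibrant (semifree) resolution $P\xrightarrow{\sim}X$ in $D_\mathrm{dg}(A)$ and put $E'\coloneqq\enn_A(P)$, a strict dga with $E'\simeq E=\R\enn_A(X)$; since $\sdefm$, $\smc$ and $\sdel$ are invariant under quasi-isomorphism (\ref{dgffdefs}, \ref{mcinvtqiso}, and invariance of $\sdel$ under quasi-isomorphism), we may compute with $E'$. Using \ref{dgffdefs} twice — once for the inclusion $E'_\mathrm{dg}\into D_\mathrm{dg}(A)$ whose image is $X$, and once for the inclusion into $D_\mathrm{dg}(A\otimes\Gamma)$ of the full dg subcategory generated by $P\otimes_k\Gamma$ — we may replace $\mathrm{dgDef}_A(X)(\Gamma)$ by the small homotopy fibre product built from $P$ and $P\otimes_k\Gamma$. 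Now comes the core underived computation, ``the underlying module does not deform'': because $\Gamma$ is finite-dimensional Artinian local one has $\enn_{A\otimes\Gamma}(P\otimes_k\Gamma)\cong E'\otimes_k\Gamma$, and lifting $\id_P$ along the finite filtration of $\Gamma$ by powers of $\mathfrak{m}_\Gamma$ shows (cf.\ \cite{ELO,ELO2} and the references in \ref{defisdel}) that every object of $\mathrm{dgDef}_A(X)(\Gamma)$ is isomorphic in $D_\mathrm{dg}(A\otimes\Gamma)$ to $(P\otimes_k\Gamma,\ d_P\otimes 1+1\otimes d_\Gamma+x)$ for some $x\in(E'\otimes\mathfrak{m}_\Gamma)^1$; the condition that this perturbed map square to zero is exactly $dx+x^2=0$, so $x\in\mcs(E'\otimes\mathfrak{m}_\Gamma)$. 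An isomorphism of two such deformations reducing to the identity is an element of $1+(E'\otimes\mathfrak{m}_\Gamma)^0=\ggr(E')(\Gamma)$ acting by $g.x=gxg^{-1}+g\,d(g^{-1})$. This identifies the $\pi_0$-level of $\mathrm{dgDef}_A(X)(\Gamma)$ with the Deligne groupoid of $E'\otimes\mathfrak{m}_\Gamma$, and in fact promotes $\mathrm{dgDef}_A(X)(\Gamma)$ to a quasi-equivalence with the Maurer--Cartan dg category of $E'\otimes\mathfrak{m}_\Gamma$: objects are Maurer--Cartan elements, and the hom-complex from $x$ to $y$ is $E'\otimes\Gamma$ with differential twisted by $x$ and $y$.

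It then remains to compare the two simplicial enhancements. On the one side $\smc(E')(\Gamma)=\mcs(E'\otimes\pdf\otimes\mathfrak{m}_\Gamma)$; on the other $\sdefm_A(X)(\Gamma)=\mathcal{W}\bigl(\mathrm{dgDef}_A(X)(\Gamma)\bigr)=\bar W\bigl(c((\mathrm{dgDef}_A(X)(\Gamma))_s)\bigr)$. Unwinding $\mathcal{W}$ applied to the Maurer--Cartan dg category: after good truncation and Dold--Kan its $n$-simplices are controlled by Maurer--Cartan elements over $\Gamma$ tensored with the cochains of $\Delta^n$, while the core functor $c$ followed by $\bar W$ is a homotopy quotient by the gauge groupoid. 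Matching this with $\mcs(E'\otimes\Omega(\Delta^n)\otimes\mathfrak{m}_\Gamma)$ uses the quasi-isomorphism $\pdf\simeq k$ (\ref{poincare}) together with the standard compatibility between the polynomial-de-Rham and normalised-cochain models of the simplices (Hinich, Getzler), and the observation (as in \ref{smcisdel}) that quotienting out the contractible gauge piece does not change the weak homotopy type. Naturality in $\Gamma$ is built into the constructions, so one obtains $\sdefm_A(X)\simeq\smc(E)$, and composing with \ref{smcisdel} gives the theorem. Since the statement is standard, in practice I would follow the explicit models of di Natale's thesis and Pridham's work on deformations over Artinian dgas rather than re-derive everything.

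The main obstacle is precisely this last step: reconciling $\bar W$ of the core of the Dold--Kan realisation of the Maurer--Cartan dg category with the polynomial-de-Rham Maurer--Cartan simplicial set. Both manifestly compute ``the'' Maurer--Cartan $\infty$-groupoid of $E'\otimes\mathfrak{m}_\Gamma$, but keeping track of the bookkeeping — which model of the interval is used, Alexander--Whitney versus shuffle comparison maps, passage through $c$ and $\bar W$ versus an explicit homotopy quotient by $\ggr$ — is where essentially all the technical content lives; I would lean on the comparison theorems of Hinich and the worked-out models in di Natale and Pridham to push it through.
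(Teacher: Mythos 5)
Your proof is correct in outline but takes a genuinely different, and more laborious, route than the paper's. Both proofs share the first (and easiest) step: reduce to a cofibrant model of $X$, observe that a deformation over $\Gamma$ is a perturbation of the differential on $P\otimes_k\Gamma$, which is a Maurer--Cartan element of $E'\otimes\mathfrak{m}_\Gamma$, with gauge equivalence giving isomorphism of deformations. Where the proofs diverge is how they promote this underived observation to a weak equivalence of $\sset$-valued functors. You try to do this head-on: show that $\mathcal W$ applied to the Maurer--Cartan dg category of $E'\otimes\mathfrak{m}_\Gamma$ agrees with $\smc(E')(\Gamma)=\mcs(E'\otimes\pdf\otimes\mathfrak{m}_\Gamma)$. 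As you acknowledge, the content is now entirely in reconciling $\bar W\circ c\circ(-)_s$ with the polynomial-de-Rham MC space, which requires the Hinich/Getzler comparison theorems and careful bookkeeping with Alexander--Whitney maps; this can be done, but it is the hardest step of the argument and you outsource it. The paper avoids this model comparison altogether: it cites Pridham \cite[3.13]{jonddefsartin} for the identification of the simplicial groupoid of deformations with a subgroupoid of $\sdel(E)$ on constant objects, and then appeals to \cite[\S4.1]{jonddefsartin} to observe that both functors are derived deformation functors, so that a natural map between them is a weak equivalence as soon as it induces an isomorphism on tangent spaces --- a Schlessinger-type rigidity argument. This reduces the hard $\infty$-categorical comparison of simplicial models to an easy linear check. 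In short, your proof is more explicit and self-contained (modulo Hinich/Getzler), while the paper's is shorter because it exploits the axiomatic structure of formal moduli problems to avoid precisely the bookkeeping you flag as the obstacle. One small technical caution in your write-up: the levelwise quasi-isomorphism $\pdf\simeq k$ of \ref{poincare} does not by itself let you replace $\pdf$ inside $\mcs(-)$ --- the Maurer--Cartan set is not a homotopy-invariant functor of the coefficient cdga in that naive sense, so the actual comparison you need is the one provided by the cited theorems of Hinich and Getzler, not by \ref{poincare} alone.
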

	\begin{proof}We give a sketch proof. The basic idea is that if $X$ is cofibrant then a deformation of $X$ is a deformation of the differential on $X\otimes_k \Gamma$, which is exactly a MC element of $E\otimes \mathfrak{m}_\Gamma$. The proof of \cite[3.13]{jonddefsartin} shows that the simplicial groupoid of deformations of $X$ is the same as the simplicial subgroupoid of $\sdel(E)$ on constant objects. By \cite[\S4.1]{jonddefsartin}, these two are both derived deformation functors and one can compare the induced map on tangent spaces to show that the two functors are weakly equivalent.
	\end{proof}	
	\begin{rmk}
		The statement on $\pi_0$ that $\defm_A(X)\cong \del(E)$ is well-known; see for example Efimov--Lunts--Orlov \cite{ELO} who actually prove an equivalence of fundamental groupoids.
	\end{rmk}
	\begin{rmk}
		We remark that the statement of the theorem makes sense because the Deligne functor $\sdel$ sends dga quasi-isomorphisms to weak equivalences by \ref{mcinvtqiso} and \ref{smcisdel}.	
	\end{rmk}
	
	\begin{cor}\label{defismc}
		Let $A$ be a dga and $X$ an $A$-module. Let $E\coloneqq \R\enn_A(X)$ be the endomorphism dga of $X$. Then the $\sset$-valued functors $\sdefm_A(X)$ and $\smc(E)$ are weakly equivalent.
	\end{cor}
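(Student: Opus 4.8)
The plan is simply to chain together the two weak equivalences that have already been established. By Theorem \ref{defisdel} there is a natural weak equivalence $\sdefm_A(X) \xrightarrow{\ \simeq\ } \sdel(E)$ of $\sset$-valued functors on $\dgart$, where $E \coloneqq \R\enn_A(X)$. By Proposition \ref{smcisdel} the quotient map $\smc(E) \to \sdel(E)$ is a weak equivalence, and I would first note that it is natural in the Artinian dga $\Gamma$, being induced levelwise by passing to the homotopy quotient of the Maurer--Cartan set $\mcs(E\otimes\pdf\otimes\mathfrak{m}_\Gamma)$ by the gauge group action. Assembling these, we obtain a zigzag of natural transformations
$$\sdefm_A(X) \xrightarrow{\ \simeq\ } \sdel(E) \xleftarrow{\ \simeq\ } \smc(E)$$
in which every arrow is an objectwise weak equivalence of simplicial sets. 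Since ``weakly equivalent'' for $\sset$-valued functors means connected by such a zigzag (equivalently, isomorphic in the homotopy category for the projective/objectwise model structure), this exhibits $\sdefm_A(X)$ and $\smc(E)$ as weakly equivalent, which is the claim.

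The only point requiring any care --- and it is minor --- is the direction of the second arrow: one cannot directly post-compose $\sdefm_A(X) \to \sdel(E)$ with a strict natural map into $\smc(E)$, since $\smc \to \sdel$ need not admit a strict natural section. I would dispatch this by working in the homotopy category, where $\smc(E) \to \sdel(E)$ becomes invertible, or alternatively by recalling from the proof of Proposition \ref{smcisdel} that this map is in fact a levelwise acyclic fibration --- the gauge group $\ggr(E\otimes\pdf)$ being contractible by the Polynomial Poincar\'e Lemma \ref{poincare} --- so that a homotopy section exists. Either route is routine; no genuine obstacle arises, as the corollary is purely a matter of bookkeeping on top of Theorem \ref{defisdel} and Proposition \ref{smcisdel}.

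As a sanity check, one can also see the equivalence hands-on: unwinding the definitions gives $\smc(E)(\Gamma) = \mcs(E\otimes\pdf\otimes\mathfrak{m}_\Gamma)$, and the proof sketch of Theorem \ref{defisdel} identifies a deformation of a cofibrant model of $X$ over $\Gamma\otimes\pdf$ with precisely a Maurer--Cartan element of $E\otimes\pdf\otimes\mathfrak{m}_\Gamma$; applying $\mathcal{W}$ and comparing with the $\pdf$-based simplicial enhancement then recovers $\smc(E)$ up to weak equivalence. I would nonetheless present the clean two-line argument via the previously proven statements, relegating this unwinding to a remark if needed.
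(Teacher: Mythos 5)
Your argument is exactly the paper's one-line proof --- combine Theorem \ref{defisdel} with Proposition \ref{smcisdel} --- and the extra discussion of the zigzag direction and the unwinding sanity-check are correct but not needed. The proposal is right and matches the paper's approach.
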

	\begin{proof}
		Combine \ref{defisdel} with \ref{smcisdel}.
	\end{proof}

	\section{Prorepresentability}
	We prove a prorepresentability statement for set-valued functors, and then we enhance this to $\sset$-valued functors. Essentially everything we use here can be found in Loday--Vallette \cite[Chapter 2]{lodayvallette}. We will need to use nonunital dgas in order to get the correct prorepresentability statements; we will later see that the use of nonunital dgas can be avoided if one rigidifies to consider framed deformations.
	\begin{defn}[{see e.g.\ \cite[6.2]{positselski}}]
		Let $E$ be a nonunital dga and let $C$ be a nonunital dgc. Then the complex $\hom_k(C,E)$ of $k$-vector spaces is a nonunital dga under the product given by $fg\coloneqq \mu_E \circ (f\otimes g)\circ \Delta_C$. This dga is the \textbf{convolution algebra}. A Maurer--Cartan element of the convolution algebra is a \textbf{nonunital twisting morphism}; the set of all nonunital twisting morphisms is denoted $\mathrm{Tw}(C,E)$.
	\end{defn}
\begin{rmk}
	In the (co)augmented setting, one should add the condition that twisting morphisms are zero when composed with the augmentation or coaugmentation.
	\end{rmk}
	\begin{lem}\label{twnumc}
		Let $E, Z$ be nonunital dgas, with $Z$ finite-dimensional. Then there is a natural isomorphism $$\mathrm{Tw}(Z^*,E)\cong \mcs(E \otimes Z).$$
	\end{lem}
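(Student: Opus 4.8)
The plan is to promote the finite-dimensionality hypothesis into a natural isomorphism of nonunital dgas $\hom_k(Z^*,E)\cong E\otimes Z$, and then to invoke the elementary fact that $\mcs$ is a functor on nonunital dgas, so that an isomorphism of dgas induces a bijection on Maurer--Cartan sets. Since by definition $\mathrm{Tw}(Z^*,E)=\mcs(\hom_k(Z^*,E))$, where the right-hand side carries the convolution product, this delivers the statement at once. So the real content is entirely in identifying the convolution algebra $\hom_k(Z^*,E)$ with the tensor-product dga $E\otimes Z$.

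First I would fix a homogeneous basis $\{z_i\}$ of $Z$ with dual basis $\{z_i^*\}$ of $Z^*$ and define $\Phi\colon\hom_k(Z^*,E)\to E\otimes Z$ by $\Phi(f)=\sum_i f(z_i^*)\otimes z_i$; equivalently $\Phi$ is the composite $\hom_k(Z^*,E)\cong\hom_k(Z^*,k)\otimes E\cong Z^{**}\otimes E\cong Z\otimes E\cong E\otimes Z$ assembled from the evaluation and coevaluation of the finite-dimensional space $Z^*$ together with the symmetry of $\otimes$, so in particular $\Phi$ is basis-independent. It is visibly a linear isomorphism. That it is a chain map follows because the differential on $Z^*$ is the transpose (up to the Koszul sign) of the differential on $Z$, so the sign incurred in dualising matches the sign in the tensor differential $d_E\otimes 1+1\otimes d_Z$. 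That it is multiplicative follows from unwinding the convolution product $fg=\mu_E\circ(f\otimes g)\circ\Delta_{Z^*}$ together with the observation that $\Delta_{Z^*}$ is the transpose of $\mu_Z$: if $z_iz_j=\sum_k c_{ij}^k z_k$ then $\Delta_{Z^*}(z_k^*)=\sum_{i,j}(-1)^{\epsilon_{ij}}c_{ij}^k\, z_i^*\otimes z_j^*$, and one checks that the $z_k$-component of $\Phi(fg)$ and of $\Phi(f)\Phi(g)$ agree, once the sign $(-1)^{|z_i|\,|g(z_j^*)|}$ appearing in the product on $E\otimes Z$ is reconciled with $\epsilon_{ij}$. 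Thus $\Phi$ is an isomorphism of nonunital dgas, clearly natural in $E$ via post-composition and in $Z$ via transpose in the $Z^*$-slot.

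With $\Phi$ in hand, the conclusion is formal: $\mcs$ carries dga morphisms to maps of sets and isomorphisms to bijections (the Maurer--Cartan equation $dx+x^2=0$ is preserved by any dga map), so $\Phi$ restricts to a natural bijection $\mathrm{Tw}(Z^*,E)=\mcs(\hom_k(Z^*,E))\xrightarrow{\ \cong\ }\mcs(E\otimes Z)$. The only genuinely delicate point in the argument is the Koszul sign bookkeeping in the multiplicativity check, namely verifying that the signs produced by dualising $\mu_Z$ into $\Delta_{Z^*}$ line up precisely with the transposition signs built into the tensor-product dga $E\otimes Z$; everything else is a formal consequence of finite-dimensionality.
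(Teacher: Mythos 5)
Your proposal is correct and follows essentially the same route as the paper: the paper's (very terse) proof likewise cites the standard linear isomorphism $E\otimes Z\cong\hom_k(Z^*,E)$, notes that it is a map of nonunital dgas once $\hom_k(Z^*,E)$ carries the convolution product, and concludes that the Maurer--Cartan sets coincide. You have merely spelled out the sign bookkeeping that the paper leaves to the reader.
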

	\begin{proof}
		There is a standard linear isomorphism $E \otimes Z \to \hom_k(Z^*,E)$, and one can check that this is a map of nonunital dgas after equipping $\hom_k(Z^*,E)$ with the convolution product. Hence the MC elements of both sides agree.
	\end{proof}

\begin{defn}
	Let $E$ be a nonunital dga. The \textbf{nonunital bar construction} is the (coaugmented) dgc $$B_{\mathrm{nu}}(E)\coloneqq B(E\oplus k).$$
	\end{defn}
	We caution that if $E$ is an augmented dga, then $B_{\mathrm{nu}}(E)$ does not agree with $B(E)$ as $B_{\mathrm{nu}}(E)$ will contain elements corresponding to the unit of $E$.
	\begin{defn}
		Let $C$ be a nonunital dgc. The \textbf{nonunital cobar construction} is the (augmented) dga $$\Omega_{\mathrm{nu}}(C)\coloneqq \Omega(C\oplus k).$$
	\end{defn}

	The functor of twisting morphisms is (up to units) representable on either side:
	\begin{thm}[{\cite[2.2.6]{lodayvallette}}]\label{barcobaradj}
		If $E$ is a nonunital dga and $C$ is a noncounital conilpotent dgc, then there are natural isomorphisms $$\hom_{\cat{aug.dga}}(\Omega_{\mathrm{nu}} C, E\oplus k)\cong\mathrm{Tw}(C,E)\cong \hom_{\cat{cndgc}_k}(C\oplus k, B_{\mathrm{nu}}E).$$
	\end{thm}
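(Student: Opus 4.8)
The statement to prove is Theorem \ref{barcobaradj}, the nonunital bar--cobar adjunction, asserting natural isomorphisms
$$\hom_{\cat{aug.dga}}(\Omega_{\mathrm{nu}} C, E\oplus k)\cong\mathrm{Tw}(C,E)\cong \hom_{\cat{cndgc}_k}(C\oplus k, B_{\mathrm{nu}}E).$$

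The plan is to reduce this to the (unital, counital) bar--cobar adjunction already stated in the excerpt, namely $\hom_{\cat{dga}}(\Omega D, A)\cong \hom_{\cat{dgc}}(D, BA)$ for $A$ an augmented dga and $D$ a conilpotent dgc. First I would unwind the definitions: by construction $\Omega_{\mathrm{nu}}C = \Omega(C\oplus k)$ and $B_{\mathrm{nu}}E = B(E\oplus k)$, where $C\oplus k$ is the coaugmented conilpotent dgc obtained by freely adjoining a counit and $E\oplus k$ the augmented dga obtained by freely adjoining a unit. So the outer two terms are literally $\hom_{\cat{dga}}(\Omega(C\oplus k), E\oplus k)$ and $\hom_{\cat{dgc}}(C\oplus k, B(E\oplus k))$ once one observes that a map of augmented dgas $\Omega_{\mathrm{nu}}C\to E\oplus k$ is the same as a map of dgas respecting the augmentations (the cobar construction is canonically augmented via the coaugmentation of $C\oplus k$), and similarly on the coalgebra side. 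With that identification, the isomorphism of the two outer terms is immediate from the cited unital bar--cobar adjunction. The content that remains is to identify the common value with $\mathrm{Tw}(C,E)$.

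For the middle identification, I would argue directly at the level of the convolution algebra. A map of dgas $f:\Omega(C\oplus k)\to E\oplus k$ is, because $\Omega$ of a dgc is semifree on $\overline{(C\oplus k)}[-1]$, the same as a degree-zero chain map $\overline{(C\oplus k)}[-1]\to E\oplus k$ commuting with the differentials — equivalently (shifting back) a degree $+1$ element of $\hom_k(\overline{C\oplus k}, E\oplus k)$ satisfying the Maurer--Cartan equation $d\alpha + \alpha^2 = 0$ in the convolution dga, where the quadratic term encodes compatibility of $f$ with the cobar differential $d_\Omega$ built from the reduced coproduct. Now one restricts attention to the augmented/coaugmented setting: requiring $f$ to respect augmentations and the twisting morphism to vanish against the (co)unit and (co)augmentation cuts $\overline{C\oplus k}$ down to $\bar C = C$ and $E\oplus k$ down to $E$, so one obtains precisely the set $\mathrm{Tw}(C,E)$ of Maurer--Cartan elements of $\hom_k(C,E)$ as in the definition. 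The dual computation — a coalgebra map $C\oplus k\to B(E\oplus k)$ is, by cofreeness of the tensor coalgebra, the same as its corestriction $C\oplus k\to \overline{(E\oplus k)}[1]$ which, after the parity shift, is again a Maurer--Cartan element of the convolution algebra, the quadratic term now coming from the bar differential $d_B$ — gives the second identification. Naturality in $C$ and $E$ is then formal, since every construction in sight (convolution product, bar and cobar differentials, the adjoining of a (co)unit) is functorial.

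The main obstacle, and the only place requiring genuine care rather than bookkeeping, is getting the \emph{signs} right: the shift $[\pm 1]$ moving between ``chain maps out of a semifree (co)algebra'' and ``Maurer--Cartan elements'' introduces Koszul signs, and one must check that the Maurer--Cartan equation coming from the cobar differential on the algebra side and the one coming from the bar differential on the coalgebra side are literally the same equation in $\hom_k(C,E)$, with the $d\alpha$ term matching the internal differential and the $\alpha^2$ term matching $\mu_E\circ(\alpha\otimes\alpha)\circ\Delta_C$. This is exactly the verification carried out in Loday--Vallette \cite[2.2.6]{lodayvallette} in the unital case, and the only new input here is that everything is compatible with the free adjunction of (co)units, which is straightforward because $\bar C$ and $\bar E$ are direct summands as (co)algebras-without-(co)unit. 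I would cite \cite{lodayvallette} for the sign conventions and present the reduction to the unital statement, rather than redo the sign chase from scratch.
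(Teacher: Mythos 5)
The paper does not give a proof of this theorem; as with the unital statement a few lines earlier, it is attributed directly to Loday--Vallette \cite[2.2.6]{lodayvallette} and the bracketed reference is the entire justification. Your proposal therefore cannot take ``the same approach as the paper'' in any nontrivial sense, but it does supply exactly the argument the paper delegates, and that argument is correct. The key move — observing that since $\Omega_{\mathrm{nu}}C = \Omega(C\oplus k)$ and $B_{\mathrm{nu}}E = B(E\oplus k)$ by definition, the outer two homs are literally the outer homs of the unital/coaugmented adjunction applied to $C\oplus k$ and $E\oplus k$, and that the nonunital twisting morphisms $\mathrm{Tw}(C,E)$ in the sense of this paper coincide with the Loday--Vallette twisting morphisms out of $C\oplus k$ into $E\oplus k$ (because those are required to vanish against the (co)unit and (co)augmentation, so they factor through $\overline{C\oplus k}\cong C$ and land in $\overline{E\oplus k}\cong E$) — is precisely the right reduction. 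Your justification of the middle identification via (co)freeness and the Maurer--Cartan equation is also the standard one. One small notational blemish: you write ``$\bar C = C$'' as though taking the coaugmentation coideal of $C$, but $C$ is noncounital and carries no coaugmentation; what you mean (and what is true) is that the coaugmentation coideal of $C\oplus k$ is $C$. Your explicit acknowledgment that the Koszul-sign bookkeeping is deferred to \cite{lodayvallette} is appropriate, since that is exactly what the paper itself does by citing rather than proving.
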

	We recall from \ref{csharp} that if $C$ is a (counital) conilpotent dgc then $C^\sharp$ denotes the pro-Artinian dga constructed by levelwise dualising the filtered system of finite sub-dgcs of $C$. If $E$ is a nonunital dga, write $B_{\mathrm{nu}}^\sharp E\coloneqq (B_{\mathrm{nu}}E)^\sharp$ for the \textbf{continuous nonunital Koszul dual}.
	
	\begin{prop}\label{mcprorep}
		Let $E$ be a nonunital dga. Then the functor $\mc(E)$ is prorepresented by $B_{\mathrm{nu}}^\sharp E$, in the sense that $\mc(E)$ and $\hom_{\cat{pro}(\cat{dgArt}_k)}(B_{\mathrm{nu}}^\sharp E, -)$ are naturally isomorphic.
	\end{prop}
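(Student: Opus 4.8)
The plan is to assemble this from the adjunctions already recorded in the excerpt, reducing everything to a chain of natural isomorphisms. Fix a nonunital dga $E$ and an Artinian local dga $\Gamma$; write $Z\coloneqq\mathfrak{m}_\Gamma$, which is a finite-dimensional nonunital dga. First I would apply Lemma \ref{twnumc} to get a natural isomorphism $\mcs(E\otimes\mathfrak{m}_\Gamma)\cong\mathrm{Tw}(\mathfrak{m}_\Gamma^*,E)$, noting that $\mathfrak{m}_\Gamma^*$ is a noncounital conilpotent dgc (this is the linear dual statement recorded just before \ref{csharp}, using that $\Gamma$ is nonpositive and finite). Then I would invoke the right-hand isomorphism of Theorem \ref{barcobaradj}, namely $\mathrm{Tw}(\mathfrak{m}_\Gamma^*,E)\cong\hom_{\cat{cndgc}_k}(\mathfrak{m}_\Gamma^*\oplus k, B_{\mathrm{nu}}E)$, to rewrite the set of twisting morphisms as a coalgebra hom-set into the nonunital bar construction.

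The next step is to transport this coalgebra hom-set back to the pro-Artinian world. Since $\Gamma=\mathfrak{m}_\Gamma\oplus k$ as a coaugmented object, we have $\Gamma^*\cong\mathfrak{m}_\Gamma^*\oplus k$ as a (finite-dimensional, hence compact by \cite[1.9]{getzlergoerss}) counital conilpotent dgc. By Proposition \ref{sharpprop} — or more concretely by the discussion around Definition \ref{csharp} that $C\mapsto C^\sharp$ is inverse to $\{A_\alpha\}\mapsto\varinjlim A_\alpha^*$ — the functor $(-)^\sharp$ gives an equivalence of categories, so $\hom_{\cat{cndgc}_k}(\Gamma^*, B_{\mathrm{nu}}E)\cong\hom_{\proart}\big((B_{\mathrm{nu}}E)^\sharp, (\Gamma^*)^\sharp\big)$. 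Here $(B_{\mathrm{nu}}E)^\sharp=B^\sharp_{\mathrm{nu}}E$ by definition, and $(\Gamma^*)^\sharp\cong\Gamma$ because $\Gamma$ is already finite-dimensional (a constant pro-object is its own $\sharp$-dual). Stringing the isomorphisms together yields a natural bijection $\mc(E)(\Gamma)\cong\hom_{\proart}(B^\sharp_{\mathrm{nu}}E,\Gamma)$, and since every pro-Artinian dga is a cofiltered limit of its Artinian quotients this extends to a natural isomorphism of functors on all of $\proart$, i.e.\ $\mc(E)\cong\hom_{\cat{pro}(\cat{dgArt}_k)}(B^\sharp_{\mathrm{nu}}E,-)$ as desired.

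The one point requiring genuine care — and the likely main obstacle — is the bookkeeping of (co)augmentations and the non-unital/unital passage: Theorem \ref{barcobaradj} is phrased for noncounital conilpotent dgcs and produces a map of coaugmented dgcs $C\oplus k\to B_{\mathrm{nu}}E$, so I must be careful that the equivalence $(-)^\sharp$ is being applied on the nose to $\Gamma^*=\mathfrak{m}_\Gamma^*\oplus k$ and that the resulting pro-Artinian hom-set lands in the augmented category $\proart$ rather than some nonunital variant. Concretely one should check that the bijection $\mathrm{Tw}(\mathfrak{m}_\Gamma^*,E)\cong\hom(\mathfrak{m}_\Gamma^*\oplus k,B_{\mathrm{nu}}E)$ is compatible with the identification $B_{\mathrm{nu}}E=B(E\oplus k)$ and that, after dualising, the coaugmentation of $B_{\mathrm{nu}}E$ becomes exactly the augmentation of $B^\sharp_{\mathrm{nu}}E$; this is routine but is where signs and units must be tracked. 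Naturality in $\Gamma$ is immediate from naturality of each isomorphism in the chain (Lemma \ref{twnumc} is natural, Theorem \ref{barcobaradj} is a natural isomorphism, and $(-)^\sharp$ is a functor), so no extra work is needed there. Everything else is a direct citation of \ref{twnumc}, \ref{barcobaradj}, and \ref{sharpprop}.
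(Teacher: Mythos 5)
Your chain of isomorphisms $\mcs(E\otimes\mathfrak{m}_\Gamma)\cong\mathrm{Tw}(\mathfrak{m}_\Gamma^*,E)\cong\hom_{\cat{cndgc}_k}(\Gamma^*,B_{\mathrm{nu}}E)\cong\hom_{\cat{pro}(\cat{dgArt}_k)}(B_{\mathrm{nu}}^\sharp E,\Gamma)$ is exactly the paper's argument via \ref{twnumc}, \ref{barcobaradj}, and \ref{sharpprop}, just written in the opposite direction. The closing remark about extending the isomorphism to all of $\proart$ is superfluous and slightly off-target (the domain functor $\mc(E)$ is defined only on $\dgart$); prorepresentability here just means the natural isomorphism on $\dgart$, which you had already established.
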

	\begin{proof}
		Let $\Gamma$ be a nonpositive Artinian local dga. It is easy to see that $\Gamma^*$ is a conilpotent dgc. By \ref{sharpprop}, we have an isomorphism $$\hom_{\cat{cndgc}_k}(\Gamma^*, B_{\mathrm{nu}}E) \cong \hom_{\cat{pro}(\cat{dgArt}_k)}(B_{\mathrm{nu}}^\sharp E, \Gamma^{*\sharp}).$$Because $\Gamma^*$ is a finite-dimensional dgc we have isomorphisms $\Gamma^{*\sharp}\cong \Gamma^{**}\cong \Gamma$ and it follows that we have isomorphisms $$\hom_{\cat{pro}(\cat{dgArt}_k)}(B_{\mathrm{nu}}^\sharp E, \Gamma)\cong \hom_{\cat{cndgc}_k}(\Gamma^*, B_{\mathrm{nu}}E)\cong \mathrm{Tw}(\mathfrak{m}_\Gamma^*,E)$$where the second isomorphism is \ref{barcobaradj}. By \ref{twnumc} we have an isomorphism $$\mathrm{Tw}(\mathfrak{m}_\Gamma^*,E)\cong \mcs(E \otimes \mathfrak{m}_\Gamma)$$and so we are done.
	\end{proof}
	\begin{rmk}\label{nurmk}Suppose now that $E$ is an augmented dga, with augmentation ideal $\bar E$. Then \ref{mcprorep} gives an isomorphism $$\hom_{\cat{pro}(\cat{dgArt}_k)}(B^\sharp E, \Gamma)\cong \mcs(\bar E \otimes \mathfrak{m}_\Gamma).$$However, because $\Gamma$ is nonpositive, the nonunital dga $E \otimes \mathfrak{m}_\Gamma$ has no MC elements of the form $1\otimes \gamma$. So the inclusion $\bar E \otimes \mathfrak{m}_\Gamma \into E \otimes \mathfrak{m}_\Gamma$ induces an isomorphism $$\mcs(\bar E \otimes \mathfrak{m}_\Gamma)\xrightarrow{\cong}\mcs( E \otimes \mathfrak{m}_\Gamma)\eqqcolon \mc(E)(\Gamma).$$So on the level of set-valued functors, $B^\sharp E$ also prorepresents. So why did we bother using nonunital dgas? The answer is that the inclusion $\bar E \otimes \mathfrak{m}_\Gamma \into E \otimes \mathfrak{m}_\Gamma$ does not necessarily induce a weak equivalence after taking $\smc$. We do not in general even get an isomorphism on $\pi_0$ (recall that $\mc$ is not $\pi_0\smc$).
	\end{rmk}

	Now that we have our prorepresentability result, we will enhance it to $\sset$-valued functors. This will not be too hard; we just need to identify the correct simplicial mapping spaces in $\proart$. Note that if $\Gamma$ is an Artinian dga, then $\pdf \otimes \Gamma$ will not be a simplicial Artinian dga, so the answer is not as simple as `replace $\Gamma$ by a simplicial resolution'. However, we are saved by the Quillen equivalence $\Omega \dashv B$:

	\begin{thm}\label{smcisrmap}
		Let $E$ be a nonunital dga. Then there is a weak equivalence of functors $$\smc(E)\simeq \R\mathrm{Map}_{\cat{pro}(\cat{dgArt}_k)}(B_{\mathrm{nu}}^\sharp E, -).$$
		\end{thm}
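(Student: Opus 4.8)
The plan is to run the computation through the Koszul-duality chain of Quillen equivalences and then recognise the answer, level by simplicial level, as a Maurer--Cartan set. Throughout, fix a nonpositive Artinian local dga $\Gamma$; everything below will be natural in $\Gamma$ and in the simplicial variable.

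First I would pass to coalgebras. Since $(-)^\sharp$ is an equivalence of categories (\ref{sharpprop}) which preserves fibrations and weak equivalences (\ref{sharpquillen}), it is a right Quillen equivalence, so it transports derived mapping spaces; and because $\Gamma^*$ is finite-dimensional one has $(\Gamma^*)^\sharp\cong\Gamma$, giving
$$\R\mathrm{Map}_{\proart}(B_{\mathrm{nu}}^\sharp E,\Gamma)\;\simeq\;\R\mathrm{Map}_{\cat{cndgc}_k}(\Gamma^*,B_{\mathrm{nu}}E).$$
Here $B_{\mathrm{nu}}E=B(E\oplus k)$ is a tensor coalgebra, hence semicofree, hence fibrant in $\cat{cndgc}_k$ (\ref{coalgmodel}), while $\Gamma^*$ is cofibrant since every object of $\cat{cndgc}_k$ is. So the right-hand derived mapping space is computed by mapping $\Gamma^*$ into a simplicial resolution of $B_{\mathrm{nu}}E$.

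Next I would write such a resolution down explicitly, namely $[n]\mapsto B_{\mathrm{nu}}\big(E\otimes\Omega(\Delta^n)\big)=B\big((E\otimes\Omega(\Delta^n))\oplus k\big)$. The simplicial dga $(E\otimes\Omega(\Delta^\bullet))\oplus k$ is Reedy fibrant in $\cat{dga}_k$ — the matching maps are surjective because $\Omega(\Delta^n)\to\Omega(\partial\Delta^n)$ is, which is exactly the Bousfield--Gugenheim/Hinich simplicial-framing input — and $B$ is right Quillen (\ref{bcquillen}), so $B_{\mathrm{nu}}(E\otimes\Omega(\Delta^\bullet))$ is Reedy fibrant in $\cat{cndgc}_k$; moreover the polynomial Poincar\'e lemma (\ref{poincare}) together with Künneth shows $E\hookrightarrow E\otimes\Omega(\Delta^n)$ is a quasi-isomorphism, hence $E\oplus k\to(E\otimes\Omega(\Delta^n))\oplus k$ is a quasi-isomorphism of dgas, hence (apply $\Omega B$, use $2$-out-of-$3$, and the definition of weak equivalence in $\cat{cndgc}_k$) the map from the constant diagram $B_{\mathrm{nu}}E\to B_{\mathrm{nu}}(E\otimes\Omega(\Delta^\bullet))$ is a levelwise weak equivalence. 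Therefore
$$\R\mathrm{Map}_{\cat{cndgc}_k}(\Gamma^*,B_{\mathrm{nu}}E)_n\;\cong\;\hom_{\cat{cndgc}_k}\big(\Gamma^*,\,B_{\mathrm{nu}}(E\otimes\Omega(\Delta^n))\big).$$
Finally I would identify the right-hand side with $\smc(E)(\Gamma)_n$: writing $\Gamma^*=\mathfrak{m}_\Gamma^{\,*}\oplus k$, the bar--cobar adjunction \ref{barcobaradj} turns this hom-set into $\mathrm{Tw}(\mathfrak{m}_\Gamma^{\,*},E\otimes\Omega(\Delta^n))$, and since $\mathfrak{m}_\Gamma$ is finite-dimensional \ref{twnumc} rewrites it as $\mcs\big(E\otimes\Omega(\Delta^n)\otimes\mathfrak{m}_\Gamma\big)=\mc(E\otimes\pdf)(\Gamma)_n=\smc(E)(\Gamma)_n$. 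Assembling these natural identifications over $[n]$ and $\Gamma$ yields the asserted weak equivalence.

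The step I expect to be the main obstacle is the verification that $B_{\mathrm{nu}}(E\otimes\Omega(\Delta^\bullet))$ genuinely is a simplicial resolution in $\cat{cndgc}_k$ (the Reedy fibrancy, together with the bookkeeping of which graded parts live in $\cat{cndgc}^{\leq 0}_k$ versus $\cat{cndgc}^{\geq 0}_k$, and whether $\Gamma$ is fibrant in $\proart$), and the confirmation that $(-)^\sharp$ really does carry derived mapping spaces across — i.e.\ that the model structures of \ref{proartmodel} and \ref{bcquillen}/\ref{coalgmodel} correspond under it. Should the Reedy-fibrancy check prove awkward, a fallback is to use instead the frame $(E\oplus k)\otimes\Omega(\Delta^\bullet)$ coming directly from Hinich's simplicial enhancement of $\cat{dga}_k$; this introduces a spurious summand, the tensor of the acyclic nilpotent dga $\overline{\Omega(\Delta^\bullet)}$ with $\mathfrak{m}_\Gamma$, but $\mathrm{MC}_\bullet$ of an acyclic nilpotent dga is contractible (by invariance of $\smc$ under quasi-isomorphism, \ref{mcinvtqiso}), so the induced Kan fibration of Maurer--Cartan spaces exhibits the two models as weakly equivalent anyway.
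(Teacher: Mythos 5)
Your argument is correct and reaches the same conclusion, but it runs the diagram of Quillen equivalences in the opposite direction to the paper. The paper begins from the Maurer--Cartan side: it uses the bar--cobar adjunction (\ref{barcobaradj}) to trade the set $\mcs(E^\bullet\otimes\mathfrak m_\Gamma)$ for $\hom_{\cat{aug.dga}}(\Omega(\Gamma^*),E^\bullet\oplus\mathrm{const}(k))$, identifies $E^\bullet\oplus\mathrm{const}(k)\simeq(E\oplus k)^\bullet$ via the Poincar\'e lemma so that the hom-set is recognised as $\R\mathrm{Map}_{\cat{dga}_k}(\Omega(\Gamma^*),E\oplus k)$ (with $\Omega(\Gamma^*)$ cofibrant and $(E\oplus k)\otimes\pdf$ the standard Hinich frame), and only then transports across $\Omega\dashv B$ and $(-)^\sharp$ to arrive at $\R\mathrm{Map}_{\proart}(B^\sharp_{\mathrm{nu}}E,\Gamma)$. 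You instead transport first, landing immediately in $\R\mathrm{Map}_{\cat{cndgc}_k}(\Gamma^*,B_{\mathrm{nu}}E)$, resolve there with the explicit simplicial object $B_{\mathrm{nu}}(E\otimes\Omega(\Delta^\bullet))$, and only at the last step unwind level-by-level via bar--cobar and \ref{twnumc}. The two routes are dual, and the core computations (Poincar\'e lemma, $\mathrm{Tw}\cong\mcs$, $(-)^\sharp$ equivalence) are the same; what differs is where the simplicial frame lives. The paper lets the framing live in $\cat{dga}_k$, where $(E\oplus k)\otimes\pdf$ is the off-the-shelf Bousfield--Gugenheim/Hinich frame; your version needs to know that $B_{\mathrm{nu}}(E\otimes\Omega(\Delta^\bullet))$ is Reedy fibrant in $\cat{cndgc}_k$, which you correctly flag as the delicate point. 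That check does go through with essentially the argument you sketch: matching objects are limits, $B$ is a right adjoint (so commutes with them) and a right Quillen functor (so preserves fibrations), hence carries a Reedy fibrant simplicial dga to a Reedy fibrant simplicial dgc; and the underlying simplicial dga $(E\otimes\Omega(\Delta^\bullet))\oplus k$ is Reedy fibrant because the restriction maps $\Omega(\Delta^n)\to\Omega(\partial\Delta^n)$ are surjective. Your fallback observation (use $(E\oplus k)\otimes\Omega(\Delta^\bullet)$ and absorb the contractible correction) is in fact precisely the move the paper makes with $E^\bullet\oplus\mathrm{const}(k)\simeq(E\oplus k)^\bullet$.
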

	\begin{proof}
	If $E$ is a nonunital dga write $E^\bullet$ for the simplicial nonunital dga $E^\bullet\coloneqq E \otimes \pdf$. By definition, we have $$\smc(E)(\Gamma)\coloneqq \mcs(E^\bullet\otimes\mathfrak{m}_\Gamma).$$But applying \ref{barcobaradj} levelwise we see that we have an isomorphism $$\mcs(E^\bullet\otimes\mathfrak{m}_\Gamma)\cong \hom_{\cat{aug.dga}}(\Omega_{\mathrm{nu}}(\mathfrak{m}_\Gamma^*),E^\bullet \oplus \mathrm{const}(k))\cong \hom_{\cat{aug.dga}}(\Omega(\Gamma^*),E^\bullet \oplus \mathrm{const}(k))$$where $\mathrm{const}(k)$ denotes the constant simplicial dga on $k$. But because $\mathrm{const}(k) \to\pdf$ is a levelwise quasi-isomorphism by \ref{poincare}, we have a quasi-isomorphism of simplicial dgas $E^\bullet \oplus \mathrm{const}(k) \simeq (E\oplus k)^\bullet$. Because $\Omega(\Gamma^*)$ is cofibrant and all dgas are fibrant, it follows that we have a weak equivalence $$\hom_{\cat{aug.dga}}(\Omega(\Gamma^*),E^\bullet \oplus \mathrm{const}(k))\simeq \R\mathrm{Map}_{\cat{dga}_k}(\Omega(\Gamma^*),E\oplus k).$$Now because $\Omega$ is part of a Quillen equivalence by \ref{bcquillen} we have $$\R\mathrm{Map}_{\cat{dga}_k}(\Omega (\Gamma^*),E\oplus k)\simeq \R\mathrm{Map}_{\cat{cndgc}_k}(\Gamma^*,B_{\mathrm{nu}}E).$$But because $(-)^\sharp$ preserves weak equivalences and fibrations by \ref{sharpquillen} and is itself an equivalence, we have $$ \R\mathrm{Map}_{\cat{cndgc}_k}(\Gamma^*,B_{\mathrm{nu}}E)\simeq \R\mathrm{Map}_{\cat{pro}(\cat{dgArt}_k)}(B_{\mathrm{nu}}E,\Gamma^{*\sharp}).$$As before we have $\Gamma^{*\sharp}\cong \Gamma$ and hence we are done.
		\end{proof}

	\begin{cor}\label{proreps}
			Let $A$ be a dga and $X$ an $A$-module. Let $E\coloneqq \R\enn_A(X)$ be the endomorphism dga of $X$. Then the $\sset$-valued functors $\sdefm_A(X)$ and $\R\mathrm{Map}_{\cat{pro}(\cat{dgArt}_k)}(B_{\mathrm{nu}}^\sharp E, -)$ are weakly equivalent.
	\end{cor}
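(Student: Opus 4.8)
The plan is to chain together the two previous results, namely \ref{defismc} (or rather its corollary \ref{defismc}, identifying $\sdefm_A(X)$ with $\smc(E)$) and the prorepresentability statement \ref{smcisrmap}. Since \ref{smcisrmap} is phrased for \emph{nonunital} dgas, the first thing I would do is reconcile the conventions: the dga $E=\R\enn_A(X)$ is augmented (the augmentation being, say, the map to the endomorphisms of the residue object, or in our situation simply the zero-object augmentation when $X$ is the simple module) or at the very least we may apply the results to its augmentation ideal $\bar E$. Concretely, one replaces $E$ by $\bar E$ throughout, notes as in \ref{nurmk} that the inclusion $\bar E\otimes\mathfrak{m}_\Gamma\into E\otimes\mathfrak{m}_\Gamma$ induces a weak equivalence after applying $\smc$ (this is the reason nonunital dgas were introduced in the first place), and then $B_{\mathrm{nu}}^\sharp E$ is by definition the relevant continuous Koszul dual.

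The argument itself is then a short composition of weak equivalences. First, by \ref{defismc} we have $\sdefm_A(X)\simeq\smc(E)$ as $\sset$-valued functors on $\dgart$ (this in turn rests on \ref{defisdel} together with \ref{smcisdel}). Second, by \ref{smcisrmap} applied to the nonunital dga underlying $E$, we have $\smc(E)\simeq \R\mathrm{Map}_{\cat{pro}(\cat{dgArt}_k)}(B_{\mathrm{nu}}^\sharp E,-)$. Composing these two weak equivalences gives exactly the claimed statement. One should check naturality: both \ref{defismc} and \ref{smcisrmap} are stated as weak equivalences of \emph{functors} on $\dgart$, so the composite is automatically a natural (i.e.\ functorial in $\Gamma$) weak equivalence, which is what ``weakly equivalent'' means here.

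I do not expect a serious obstacle, since this corollary is purely a matter of stringing together results already established. The one genuinely delicate point — and the thing I would be careful to spell out — is the unital versus nonunital bookkeeping. One must make sure that the $E$ appearing in \ref{defismc} (where it is the unital dga $\R\enn_A(X)$) is being fed into \ref{smcisrmap} correctly, i.e.\ that $\smc$ of the unital dga $E$ agrees with $\smc$ of its nonunital augmentation ideal; this is precisely the content of \ref{nurmk}, and it is exactly here that working with $\smc$ rather than the naive set-valued $\mc$ matters, because the two do \emph{not} in general agree on $\pi_0$ for the unital version. Provided one invokes \ref{nurmk} at this step, the proof is a one-line concatenation; if one wished to avoid the issue entirely one could instead phrase everything in terms of framed deformations, but that is a separate development and not needed for the statement as given.

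Thus the proof reads: combine \ref{defismc} and \ref{smcisrmap}, using \ref{nurmk} to pass between the unital dga $\R\enn_A(X)$ and its augmentation ideal.

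\begin{proof}
By \ref{defismc}, the $\sset$-valued functors $\sdefm_A(X)$ and $\smc(E)$ are weakly equivalent, where $E=\R\enn_A(X)$. Viewing $E$ as an augmented dga and applying \ref{nurmk}, the inclusion of the augmentation ideal induces a weak equivalence $\smc(\bar E)\xrightarrow{\simeq}\smc(E)$ of $\sset$-valued functors on $\dgart$. Finally, \ref{smcisrmap} applied to the nonunital dga $\bar E$ gives a weak equivalence $\smc(\bar E)\simeq \R\mathrm{Map}_{\cat{pro}(\cat{dgArt}_k)}(B_{\mathrm{nu}}^\sharp E,-)$, since $B_{\mathrm{nu}}^\sharp E=(B_{\mathrm{nu}}E)^\sharp=(B(\bar E\oplus k))^\sharp$. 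Composing these natural weak equivalences yields the claim.
\end{proof}
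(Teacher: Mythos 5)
The paper's own proof is exactly the one-line concatenation ``Combine \ref{defismc} with \ref{smcisrmap}'' — without the detour through $\bar E$. Your insertion of that detour is not just unnecessary; it introduces two genuine errors.

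First and most seriously, you have read \ref{nurmk} backwards. That remark says that while the inclusion $\bar E\otimes\mathfrak{m}_\Gamma\into E\otimes\mathfrak{m}_\Gamma$ induces a bijection on $\mcs$ (the set-valued, non-simplicial functor), it ``does \emph{not} necessarily induce a weak equivalence after taking $\smc$. We do not in general even get an isomorphism on $\pi_0$.'' So the weak equivalence $\smc(\bar E)\xrightarrow{\simeq}\smc(E)$ that you invoke as the key bookkeeping step is precisely what the paper warns fails — and indeed the reason the nonunital formalism and $B_{\mathrm{nu}}$ were set up in the first place was to avoid ever having to assert such an equivalence. Second, your identity $B_{\mathrm{nu}}(E)=B(\bar E\oplus k)$ is wrong: by the definition in the text, $B_{\mathrm{nu}}(E)=B(E\oplus k)$ where $E$ is regarded as nonunital by \emph{forgetting} its unit and then adjoining a fresh one; this is not the same as $B(\bar E\oplus k)=B(E)$, and conflating the two is exactly the trap that Remark \ref{nurmk} is warning about.

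The fix is to delete the detour entirely. The functor $\smc$ is defined for arbitrary (possibly nonunital) dgas, and a unital dga is in particular a nonunital dga by forgetting the unit. So $\smc(E)$ in \ref{defismc} and $\smc(E)$ in \ref{smcisrmap} already agree on the nose: there is no discrepancy to reconcile. One simply applies \ref{defismc} to get $\sdefm_A(X)\simeq\smc(E)$, then applies \ref{smcisrmap} to $E$ regarded as a nonunital dga to get $\smc(E)\simeq\R\mathrm{Map}_{\cat{pro}(\cat{dgArt}_k)}(B_{\mathrm{nu}}^\sharp E,-)$, and composes. The object $B_{\mathrm{nu}}^\sharp E$ already has the unit of $E$ ``demoted'' to a degree-$(-1)$ generator in the bar construction, which is exactly what makes the nonunital story work; passing to $\bar E$ would instead land you at $B^\sharp E$, a genuinely different prorepresenting object, and the framed-deformation interpretation of that object is the content of the later Theorem \ref{prorepfrm}, not of this corollary.
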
	
\begin{proof}
	Combine \ref{defismc} with \ref{smcisrmap}.
	\end{proof}

	One might expect that if a pro-Artinian dga $P$ prorepresents a derived deformation functor, then the pro-Artinian algebra $H^0(P)$ prorepresents the associated classical deformation functor obtained by truncation. Indeed, this is the case, at least for the set-valued deformation functor associated to a module over a ring:

\begin{prop}\label{prorepremk}
	Suppose that $A$ is a $k$-algebra and $X$ an $A$-module. Let $\mathrm{clDef}_A(X):\cat{Art}_k \to \cat{Set}$ denote the set-valued functor of noncommutative classical deformations of $X$, in the sense of e.g.\ {\normalfont \cite[2.4]{enhancements}}. Let $\mathrm{Def}_A(X):\cat{dgArt}^{\leq 0}_k \to \cat{Set}$ denote the set-valued functor of noncommutative derived deformations, and suppose that it is prorepresented by a pro-Artinian dga $P$. Then $\mathrm{clDef}_A(X)$ is prorepresented by the pro-Artinian algebra $H^0(P)$.
\end{prop}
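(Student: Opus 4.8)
## Proof proposal

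The plan is to show that prorepresentability of the derived deformation functor $\mathrm{Def}_A(X)$ by the pro-Artinian dga $P$ restricts, after taking $H^0$, to prorepresentability of the classical functor $\mathrm{clDef}_A(X)$. The key observation is that an \emph{ungraded} Artinian local algebra $\Gamma$, viewed as a nonpositive Artinian local dga concentrated in degree $0$, is a terminal-ish test object: derived deformations of $X$ over such a $\Gamma$ reduce to classical ones. More precisely, I would first establish that for $\Gamma \in \cat{Art}_k$ the natural map $\mathrm{clDef}_A(X)(\Gamma) \to \mathrm{Def}_A(X)(\Gamma)$ is a bijection. The forward map sends a classical deformation $\tilde X$ (a flat $A\otimes\Gamma$-module reducing to $X$) to itself, viewed in $D(A\otimes\Gamma)$, with $f$ the evident isomorphism $\tilde X \lot_\Gamma k \xrightarrow{\simeq} X$ (flatness makes the derived tensor product underived). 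Injectivity and surjectivity follow because $X$ is a module over a ring (hence can be taken to be an honest module, not just a complex), so any object of $D(A\otimes\Gamma)$ whose derived reduction is isomorphic in $D(A)$ to $X$ is, up to isomorphism in $D(A\otimes\Gamma)$, an honest flat deformation; and isomorphisms in $D(A\otimes\Gamma)$ between such restrict to isomorphisms of classical deformations. This step is essentially the content of \cite[2.4]{enhancements} combined with the standard fact that over a ring a deformation of a module is automatically a flat module.

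Next I would use the hypothesis that $P$ prorepresents $\mathrm{Def}_A(X)$: for every $\Gamma \in \dgart$ there is a natural bijection $\mathrm{Def}_A(X)(\Gamma) \cong \hom_{\proart}(P,\Gamma)$. Restricting along the inclusion $\cat{Art}_k \into \dgart$ of ungraded Artinian local algebras, and combining with the bijection of the previous paragraph, gives natural bijections
\[
\mathrm{clDef}_A(X)(\Gamma) \cong \mathrm{Def}_A(X)(\Gamma) \cong \hom_{\proart}(P,\Gamma), \qquad \Gamma \in \cat{Art}_k.
\]
It remains to identify $\hom_{\proart}(P,\Gamma)$ with $\hom_{\cat{pro}(\cat{Art}_k)}(H^0(P),\Gamma)$ when $\Gamma$ is ungraded. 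Since $\Gamma$ is concentrated in degree $0$, any dga map $P \to \Gamma$ (or map of pro-objects, which by \cite[6.1.14]{kashschap} may be taken to be a level map $\{P_\alpha \to \Gamma_\alpha\}$) kills everything in nonzero degrees and factors uniquely through the truncation $P \to H^0(P) = \tau_{\geq 0}\tau_{\leq 0}P$; conversely every map $H^0(P)\to\Gamma$ lifts uniquely. Here I should be a little careful: $\hom_{\proart}$ computes maps in the pro-category, so I would phrase this as: the functor $H^0 : \proart \to \cat{pro}(\cat{Art}_k)$ (induced levelwise) is left adjoint to the inclusion $\cat{pro}(\cat{Art}_k) \into \proart$ on objects concentrated in degree $0$, giving $\hom_{\proart}(P,\Gamma)\cong\hom_{\cat{pro}(\cat{Art}_k)}(H^0(P),\Gamma)$ naturally in $\Gamma \in \cat{pro}(\cat{Art}_k)$, and in particular for ungraded Artinian $\Gamma$. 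Chaining all the bijections yields $\mathrm{clDef}_A(X)(\Gamma) \cong \hom_{\cat{pro}(\cat{Art}_k)}(H^0(P),\Gamma)$, naturally in $\Gamma$, which is the assertion.

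The main obstacle I anticipate is the first step — checking that derived deformations over an \emph{ungraded} Artinian base genuinely coincide with classical deformations. One has to rule out derived phenomena: a priori an object of $D(A\otimes\Gamma)$ restricting to $X$ could fail to be (quasi-isomorphic to) a flat module, or two such could be isomorphic in the derived category without being isomorphic as deformations. The cleanest route is to use that $X$ is a module over the ring $A$, so $\R\enn_A(X)$ has no positive cohomology beyond what $\ext_A^{>0}(X,X)$ contributes and, crucially, $\Gamma$ has no negative-degree part, so the relevant Maurer--Cartan elements of $\R\enn_A(X)\otimes\mathfrak m_\Gamma$ live in degree $1$ with $\mathfrak m_\Gamma$ in degree $0$ — exactly the classical obstruction-theoretic situation, recovering Laudal/ELO-style classical deformation theory. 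I would cite \ref{defismc} (or \ref{defisdel}) together with the observation that for $\Gamma$ ungraded, $\pi_0\sdefm_A(X)(\Gamma) = \del(\R\enn_A(X))(\Gamma)$ agrees with the underived Deligne quotient, and this is well known (e.g.\ via \cite{ELO}) to be the classical deformation set. The remaining steps are formal manipulations with pro-categories and truncation and should present no real difficulty.
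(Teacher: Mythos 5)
Your proposal follows the same route as the paper's proof: establish that classical and derived deformations coincide over an ungraded Artinian base, restrict the prorepresentability hypothesis to $\cat{Art}_k$, and apply the inclusion-truncation adjunction levelwise to pass from $P$ to $H^0(P)$. The only difference is in how the key bijection $\mathrm{clDef}_A(X)(\Gamma)\cong\mathrm{Def}_A(X)(\Gamma)$ is justified --- you sketch a Maurer--Cartan/ELO route, whereas the paper argues directly that a derived deformation over ungraded $\Gamma$ must be a flat $\Gamma$-module concentrated in degree zero, via Tor-vanishing and the Artinian local flatness criterion --- but both fill the same step.
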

\begin{proof}
	Let $\Gamma \in \cat{Art}_k$. By assumption, we have an isomorphism $\mathrm{Def}_A(X)(\Gamma)\cong \hom_{\proart}(P,\Gamma)$, which by the inclusion-truncation adjunction applied levelwise is isomorphic to $\hom_{\cat{pro}(\cat{Art}_k)}(H^0P,\Gamma)$. So we need to prove that $\mathrm{Def}_A(X)(\Gamma)\cong \mathrm{clDef}_A(X)(\Gamma)$. Let $\tilde X$ be an underived deformation of $X$ over $\Gamma$, i.e.\ an $A\otimes\Gamma$-module, flat over $\Gamma$, which reduces to $X$ modulo $\mathfrak{m}_\Gamma$ (equivalently, such that $\tilde{X}\otimes_\Gamma k \cong X$). It is easy to see that $\tilde X \lot_\Gamma k \cong X$ inside the derived category $D(A\otimes \Gamma)$. Hence $\tilde X$ is a derived deformation of $X$. Moreover, if two underived deformations are isomorphic, they are clearly isomorphic as derived deformations, and hence we obtain a map of sets $\Phi:\mathrm{clDef}_A(X)(\Gamma)\to \mathrm{Def}_A(X)(\Gamma)$. It is injective, because $A\otimes\Gamma \text{-}\cat{mod}$ embeds in $D(A\otimes \Gamma)$. Observe that if $\tilde X\in D(A\otimes \Gamma)$ is a derived deformation of $X$ over $\Gamma$, then it must actually be an $A\otimes\Gamma$-module, concentrated in degree zero. Because we have $\tilde X \lot_\Gamma k \simeq X$, we have $\tor^\Gamma_i(\tilde X,k)\cong 0$ for $i>0$. Because $\Gamma$ is local Artinian, this implies Tor-vanishing for all $\Gamma$-modules, and hence $\tilde X$ is a flat $\Gamma$-module. Hence $\tilde X$ is in the image of $\Phi$, and so $\Phi$ is a surjection and thus an isomorphism of sets.
\end{proof}
For a similar proof that the groupoid-valued deformation functors respect truncation, see \cite[2.5]{huakeller}, although we will not need this fact.

\section{Framed deformations}
Let $A$ be a dga and let $X$ be an $A$-module. By \ref{proreps}, the functor of deformations of $X$ is prorepresented by the pro-Artinian algebra $B_{\mathrm{nu}}^\sharp E$. If $E\coloneqq \R\enn_A(X)$ happens to be augmented, does the functor prorepresented by $B^\sharp E$ admit a deformation-theoretic interpretation? In this section we will show that when deforming a one-dimensional module over a ring, one can interpret $\R\mathrm{Map}_{\cat{pro}(\cat{dgArt}_k)}(B^\sharp E, -)$ in terms of rigidified deformations; the data we will need to add to deformations to rigidify will be that of a framing. Via the forgetful functor from $A$-modules to vector spaces, a deformation of the $A$-module $X$ gives rise to a deformation of the vector space $X$; this gives us a natural transformation $\sdefm_A(X) \to \sdefm_k(X)$. Observe that the functor $\sdefm_A(X)$ is pointed by the trivial deformation.
\begin{defn}
	Let $A$ be a dga and let $X$ be an $A$-module. The functor of \textbf{framed deformations} of $X$ is the homotopy fibre $$\frmdef_A(X)\coloneqq \mathrm{hofib}\left(\sdefm_A(X) \to \sdefm_k(X)\right).$$
	\end{defn}
	In other words, one restricts to those deformations of $X$ which are trivial deformations of the underlying dg-vector space.
	\begin{thm}\label{prorepfrm}
	Let $A$ be a $k$-algebra and let $S$ be a one-dimensional $A$-module. Then the derived endomorphism algebra $E\coloneqq \R\enn_A(S)$ is augmented, and there is a weak equivalence $$\frmdef_A(S)\simeq \R\mathrm{Map}_{\cat{pro}(\cat{dgArt}_k)}(B^\sharp E, -).$$
		\end{thm}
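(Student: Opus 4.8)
The plan. Since $S$ is one-dimensional it is a simple $A$-module, so by Schur's lemma (using that $k$ is algebraically closed) $\enn_A(S)=k$; equivalently, the $A$-action on $S$ is an augmentation $\rho\colon A\to k$ and $S=k_\rho$. Restriction of scalars along the unit $k\to A$ then gives a dga map $\epsilon\colon E=\R\enn_A(S)\to\R\enn_k(S)=\R\enn_k(k)=k$ which splits the unit and is the identity on $H^0$; hence $E$ is augmented, which is the first assertion. Since $H^0(E)=k$ and $H^{<0}(E)=\ext^{<0}_A(S,S)=0$, one may take $E$ to be a model with $\bar E$ in strictly positive degrees --- e.g.\ the minimal model $\ext^*_A(S,S)$ of \ref{kadeish}, passed through the $A_\infty$/dga Quillen equivalence --- so that $BE$ is a nonnegative conilpotent dgc and $B^\sharp E$ is a genuine object of $\proart$. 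By \ref{defismc} we have $\sdefm_A(S)\simeq\smc(E)$ and $\sdefm_k(S)\simeq\smc(k)$, and these are natural enough that the forgetful map $\sdefm_A(S)\to\sdefm_k(S)$ defining $\frmdef_A(S)$ is identified with $\smc(\epsilon)\colon\smc(E)\to\smc(k)$ (the forgetful functor induces the restriction map on endomorphism dgas, and one chases this through the equivalences \ref{defisdel} and \ref{defismc}); the basepoint, i.e.\ the trivial deformation, corresponds to the trivial Maurer--Cartan element $0\in\smc(k)$. So it suffices to compute the homotopy fibre of $\smc(\epsilon)$ over $0$.

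Next I would identify this homotopy fibre with $\smc(\bar E)$. The map $\epsilon$ is a surjection of dgas, so $\smc(\epsilon)$ is a Kan fibration --- the standard fact that the simplicial Maurer--Cartan functor carries surjections of dgas to Kan fibrations, proved by the obstruction-theoretic lifting argument using nilpotence of $\mathfrak{m}_\Gamma$ (cf.\ \cite{hinichhom}). Hence the homotopy fibre is the strict fibre. Using the splitting $E=\bar E\oplus k$ as an augmented dga, $\bar E\otimes(\pdf\otimes\mathfrak{m}_\Gamma)$ is a two-sided ideal of $E\otimes(\pdf\otimes\mathfrak{m}_\Gamma)$ with quotient $\pdf\otimes\mathfrak{m}_\Gamma$, and $\smc(\epsilon)$ is the induced map on Maurer--Cartan sets. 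A one-line computation does it: if $(y,u)$ is a Maurer--Cartan element of $E\otimes\pdf\otimes\mathfrak{m}_\Gamma$, reducing the Maurer--Cartan equation modulo the ideal gives $du+u^2=0$, and the preimage of $0$ is exactly $\{(y,0):dy+y^2=0\}$; thus the strict fibre is $\mcs(\bar E\otimes\pdf\otimes\mathfrak{m}_\Gamma)=\smc(\bar E)(\Gamma)$, naturally in $\Gamma$. Therefore $\frmdef_A(S)\simeq\smc(\bar E)$ as $\sset$-valued functors on $\dgart$.

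Finally, apply \ref{smcisrmap} to the nonunital dga $\bar E$: $\smc(\bar E)\simeq\R\mathrm{Map}_{\proart}(B_{\mathrm{nu}}^\sharp\bar E,-)$. Since freely adjoining a unit to the augmentation ideal recovers the augmented dga ($\bar E\oplus k\cong E$), we have $B_{\mathrm{nu}}^\sharp\bar E=(B(\bar E\oplus k))^\sharp=(BE)^\sharp=B^\sharp E$, and hence $\frmdef_A(S)\simeq\R\mathrm{Map}_{\proart}(B^\sharp E,-)$, as claimed. The part I expect to be most delicate is the middle step --- extracting the homotopy fibre as $\smc(\bar E)$. It rests on (i) knowing $\smc$ sends surjections to fibrations, so that the homotopy fibre is the strict fibre, and (ii) the explicit ideal computation pinning the strict fibre down to $\smc(\bar E)$ on the nose; one must also check that the ``trivial framing'' basepoint really is $0\in\smc(k)$ and that the forgetful map agrees with $\smc(\epsilon)$ under the comparison equivalences. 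A subsidiary point is choosing a model of $E$ for which $B^\sharp E$ is literally pro-Artinian, which is why I pass to the minimal model in the first step.
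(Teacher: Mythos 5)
Your proof is correct but follows a genuinely different route to the homotopy fibre. The paper's proof works on the coalgebra side: since $E\to k$ is a surjective dga map, $\bar E\to E\to k$ is a homotopy fibre sequence of nonunital dgas; the bar construction is right Quillen with all dgas fibrant, so applying $B_{\mathrm{nu}}$ produces a homotopy fibre sequence $BE\to B_{\mathrm{nu}}E\to B_{\mathrm{nu}}k$ of conilpotent dgcs, and one concludes by applying $\R\mathrm{Map}_{\cat{cndgc}_k}(\Gamma^*,-)$ and running through the identifications from the proof of \ref{smcisrmap}. You instead stay entirely on the Maurer--Cartan side: you invoke the standard fact that $\smc$ carries surjections of dgas to Kan fibrations (Getzler/Hinich obstruction theory, which this paper does not itself establish), so the homotopy fibre of $\smc(\epsilon)$ is the strict fibre, which you then compute explicitly from the splitting $E=\bar E\oplus k$ to land on $\smc(\bar E)$. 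Both approaches are valid: the paper's avoids the external Kan-fibration input and runs entirely inside Quillen adjunctions already set up, while yours is more concrete and makes the appearance of the augmentation ideal transparent. Both share the tacit assumption that the equivalence $\sdefm_A(X)\simeq\smc(\R\enn_A(X))$ of \ref{defismc} is natural with respect to restriction of scalars, so that the forgetful map $\sdefm_A(S)\to\sdefm_k(S)$ matches $\smc(\epsilon)$, and both finish with the same dictionary $B_{\mathrm{nu}}^\sharp\bar E=(B(\bar E\oplus k))^\sharp=B^\sharp E$. Your explicit remark that one should choose a model of $E$ with $\bar E$ in strictly positive degrees --- so that $BE$ is a nonnegative dgc and $B^\sharp E$ lands honestly in $\proart$ --- flags a genuine model-dependence subtlety that the paper leaves implicit.
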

	\begin{proof}
		The idea is that $BE \to B_{\mathrm{nu}}E \to B_{\mathrm{nu}}k$ is a homotopy fibre sequence of coalgebras.	It is clear that $E$ is augmented by the map $E \to H^0(E)\cong \ext_A^0(S,S)\cong \enn_A(S)\cong k$. Because $E \to k$ is surjective, there is a weak equivalence of nonunital dgas $\mathrm{hofib}(E \to k)\simeq \bar E$ where $\bar E$ denotes the augmentation ideal. The homotopy fibre sequence $$\bar E \to E \to k$$ gives us a homotopy fibre sequence $$BE \to B_{\mathrm{nu}}E \to B_{\mathrm{nu}}k$$ of coalgebras, because $B$ is right Quillen and all dgas are fibrant. If $\Gamma$ is an Artinian dga then applying $\R\mathrm{Map}_{\cat{cndgc}_k}(\Gamma^*, -)$ to this homotopy fibre sequence gets us a homotopy fibre sequence $$\R\mathrm{Map}_{\cat{cndgc}_k}(\Gamma^*, BE) \to \R\mathrm{Map}_{\cat{cndgc}_k}(\Gamma^*, B_{\mathrm{nu}}E) \to \R\mathrm{Map}_{\cat{cndgc}_k}(\Gamma^*, B_{\mathrm{nu}}k)$$of simplicial sets. 	
		Because $S$ is one-dimensional, there is a weak equivalence $$\sdefm_k(S)(\Gamma)\simeq \R\mathrm{Map}_{\cat{cndgc}_k}(\Gamma^*,B_{\mathrm{nu}}k)$$by the proof of \ref{smcisrmap}. Moreover, this fits into a commutative diagram $$\begin{tikzcd}
		\R\mathrm{Map}_{\cat{cndgc}_k}(\Gamma^*,B_{\mathrm{nu}}E) \ar[r]\ar[d,"\simeq"]& \R\mathrm{Map}_{\cat{cndgc}_k}(\Gamma^*,B_{\mathrm{nu}}k)\ar[d,"\simeq"] \\
		\sdefm_A(S)(\Gamma) \ar[r]& \sdefm_k(S)(\Gamma)
		\end{tikzcd}$$where the vertical maps are weak equivalences, the upper horizontal map is induced by $E \to k$, and the lower horizontal map is the forgetful map. It follows that the homotopy fibres of the rows are weakly equivalent, which, using the proof of \ref{smcisrmap} again, is precisely the claim.
		\end{proof}
	\begin{rmk}
		It is not too hard to show that $B_{\mathrm{nu}}k$ is the dga $\frac{k[\epsilon]}{\epsilon^2}$ with zero differential and where $\epsilon$ has degree one.
		\end{rmk}
		
	\begin{rmk}
		One can let $A$ be a dga at the expense of assuming the extra condition that $\ext^0_A(k,k)$ is an augmented algebra. If one is willing to develop Koszul duality for dgas augmented over matrix algebras, then the condition that $S$ is one-dimensional can be dropped. One should also be able to carry out a similar analysis using \textbf{pointed} deformations, as in \cite{laudalpt} or \cite{kawamatapointed}, where the base ring is no longer $k$ but $k^n$. When $S$ is the direct sum of a finite semisimple collection of perfect $A$-modules, this is done in \cite{ELO2}; see also \cite{huakeller}. Removing the perfect hypothesis ought to be possible; one would have to repeat our Koszul duality arguments in the pointed setting.
		\end{rmk}

	\begin{rmk}
Suppose that $\tilde S$ is a deformation of $S$ over some $\Gamma$. Define a \textbf{framing} of $\tilde S$ to be a quasi-isomorphism $\nu:U(\tilde S) \to S\lot_k\Gamma$, where $U:D(A\otimes \Gamma) \to D(k \otimes \Gamma)$ is the forgetful functor. Define a \textbf{framed deformation} of $S$ to be a pair $(\tilde S, \nu)$ consisting of a deformation and a framing. Define a \textbf{framed isomorphism} $F:(X,\nu_X) \to (Y,\nu_Y)$ to be an isomorphism $F:X \to Y$ of deformations satisfying $\nu_X=\nu_Y\circ UF$. Then one can show that $$\pi_0(\frmdef_A(S))\cong \frac{\{\text{framed deformations of }S \}}{(\text{framed isomorphism})}.$$One method of proof is as follows. Let $\Pi_1$ denote the fundamental groupoid functor. Then$$\pi_0(\frmdef_A(S)) = \pi_0
 \mathrm{hofib}\left( \sdefm_A(X) \to \sdefm_k(X) \right)\cong \pi_0 \mathrm{hofib}\left(\Pi_1\sdefm_A(X) \to \Pi_1\sdefm_k(X)\right)$$where the right-hand homotopy fibre is taken in the model category of groupoids (see e.g.\ Strickland \cite[\S6]{stricklandgpds} for facts about the homotopy theory of groupoids). But one can compute the groupoid $\mathrm{hofib}\left(\Pi_1\sdefm_A(X) \to \Pi_1\sdefm_k(X)\right)$ explicitly (it suffices to do this calculation using the na\"ive Deligne groupoid functors), and its $\pi_0$ is isomorphic to the set of framed deformations modulo framed isomorphisms. These computations are done explicitly in \cite{dqdefm}. 
		\end{rmk}

\section{Prodeformations}
	\begin{defn}
		Let $F:\dgart \to \sset$ be any functor. Denote by $\hat F$ the functor $\proart \to \sset$ which sends an inverse system $\{\Gamma_\alpha\}_\alpha$ to the homotopy limit $\holim_\alpha F(\Gamma_\alpha)$. Call $\hat F$ the \textbf{pro-completion} of $F$.
	\end{defn}
	\begin{defn}
		Let $A$ be a dga and let $X$ be an $A$-module. The functor of \textbf{prodeformations of $X$} is the functor $\prodef_A(X)$. A \textbf{prodeformation} of $X$ is an element of the set $\pi_0\widehat{\sdefm}_A(X)$.
		\end{defn}
	\begin{rmk}\label{cdefsareprodefs}
	If $\Gamma=\{\Gamma_\alpha\}_\alpha$ is pro-Artinian then there is a weak equivalence
		$$\prodef_A(X)(\Gamma)\simeq \mathcal{W}\left(X_\mathrm{dg} \times^h_{D_\mathrm{dg}(A)} \holim_\alpha D_\mathrm{dg}(A\otimes \Gamma_\alpha)\right)$$given by passing the homotopy limit through $\mathcal{W}$ using \ref{wholims} and commuting homotopy limits with homotopy pullbacks. There is an obvious system of maps of dgas $\varprojlim \Gamma \to \Gamma_\alpha$ and this gives a dg functor $$D_\mathrm{dg}(A\otimes \varprojlim \Gamma) \to \holim_\alpha D_\mathrm{dg}(A\otimes \Gamma_\alpha).$$We hence obtain a map of simplicial sets $$\mathcal{W}\left(X_\mathrm{dg} \times^h_{D_\mathrm{dg}(A)} D_\mathrm{dg}(A\otimes \varprojlim \Gamma)\right) \to \prodef_A(X)(\Gamma).$$Although $\varprojlim \Gamma$ may not be Artinian, it is still augmented, so one can use it as a base for deformations. Hence a deformation of $X$ over $\varprojlim \Gamma$ gives a prodeformation of $X$ over $\Gamma$. For example, if $\Gamma = \{\frac{k[x]}{x^n}\}_n$ so that $\varprojlim\Gamma \cong k\llbracket x \rrbracket$, we see that a `complete local deformation' over $k\llbracket x \rrbracket$ gives us a prodeformation over $\Gamma$.
		\end{rmk}
			\begin{defn}
		Let $A$ be a dga and let $X$ be an $A$-module. The functor of \textbf{framed prodeformations of $X$} is the functor $\profrmdef_A(X)$. A \textbf{framed prodeformation} of $X$ is an element of the set $\pi_0\profrmdef_A(X)$.
	\end{defn}
	\begin{lem}\label{prohofiblem}
		Let $A$ be a dga and let $X$ be an $A$-module. Then there is a weak equivalence $$\profrmdef_A(X)\simeq \mathrm{hofib}\left(\prodef_A(X) \to \prodef_k(X)\right).$$
		\end{lem}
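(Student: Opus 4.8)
The plan is to show that pro-completion, being a levelwise homotopy limit, commutes with the homotopy fibre, so that the lemma reduces to the ``Fubini'' principle that homotopy limits commute with one another. I would evaluate both sides on a fixed pro-Artinian dga $\Gamma = \{\Gamma_\alpha\}_{\alpha \in I}$, with $I$ cofiltered. By definition $\profrmdef_A(X)(\Gamma) = \holim_\alpha \frmdef_A(X)(\Gamma_\alpha)$ and $\frmdef_A(X)(\Gamma_\alpha) = \mathrm{hofib}\big(\sdefm_A(X)(\Gamma_\alpha) \to \sdefm_k(X)(\Gamma_\alpha)\big)$, the homotopy fibre being taken over the basepoint supplied by the trivial deformation; on the other side, $\mathrm{hofib}\big(\prodef_A(X)(\Gamma) \to \prodef_k(X)(\Gamma)\big)$ is by definition the homotopy fibre of $\holim_\alpha \sdefm_A(X)(\Gamma_\alpha) \to \holim_\alpha \sdefm_k(X)(\Gamma_\alpha)$.

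Next I would observe that the forgetful natural transformation $\sdefm_A(X) \to \sdefm_k(X)$ is a map of $\sset$-valued functors on $\dgart$, pointed in both source and target by the trivial deformation, and that both the map and the basepoints are preserved by the structure maps of the pro-system $\Gamma$. This yields a commuting diagram of simplicial sets indexed by $I \times (\bullet \to \bullet \leftarrow \bullet)$. Since a homotopy fibre is itself a homotopy limit (a homotopy pullback along a point), both sides of the lemma are iterated homotopy limits of this one diagram, taken in the two possible orders; the interchange of homotopy limits then supplies a canonical weak equivalence
$$\holim_\alpha\, \mathrm{hofib}\big(\sdefm_A(X)(\Gamma_\alpha) \to \sdefm_k(X)(\Gamma_\alpha)\big) \;\simeq\; \mathrm{hofib}\big(\holim_\alpha \sdefm_A(X)(\Gamma_\alpha) \to \holim_\alpha \sdefm_k(X)(\Gamma_\alpha)\big),$$
natural in $\Gamma$, which is exactly the asserted equivalence.

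The only delicate point — and hence the main, though mild, obstacle — is making the homotopy fibre strictly functorial in $\alpha$ so that the interchange theorem genuinely applies. I would handle this by fixing once and for all a functorial factorisation of $\sdefm_A(X) \to \sdefm_k(X)$ as a trivial cofibration followed by a fibration in the projective model structure on $\mathrm{Fun}(\dgart, \sset)$; taking the strict fibre of the fibrant replacement gives a model of $\frmdef_A(X)$ that is strictly functorial, and the strict limit over $I$ of a levelwise fibration of pointed simplicial sets is a model for the homotopy limit. Equivalently, one can phrase the whole argument inside the $\infty$-category $\mathrm{Fun}(\dgart, \sset)$, where the statement that $\widehat{(-)} = \holim_\alpha$ preserves the finite limit $\mathrm{hofib}$ is immediate. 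Either way, no nontrivial computation is involved; the content is purely the commutation of homotopy limits together with the compatibility of the forgetful map with the pro-structure.
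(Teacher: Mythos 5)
Your proposal is correct and takes exactly the route the paper does: the paper's proof is the single sentence ``Commute the homotopy limit in the definition of $\profrmdef_A(X)$ through the homotopy fibre in the definition of $\frmdef_A(X)$,'' which is precisely your interchange-of-homotopy-limits argument. Your additional discussion of the strictification needed to make the interchange rigorous is a sensible elaboration of what the paper leaves implicit, not a different proof.
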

	\begin{proof}
		Commute the homotopy limit in the definition of $\profrmdef_A(X)$ through the homotopy fibre in the definition of $\frmdef_A(X)$.
		\end{proof}
We are about to give a representability statement for prodeformations; before we do so we prove a subsidiary lemma.
\begin{lem}\label{holimpro}
Let $\Gamma=\{\Gamma_\alpha\}_\alpha$ be a pro-Artinian dga. Then there is a weak equivalence
$$\R\mathrm{Map}_{\cat{pro}(\cat{dgArt}_k)}(-,\Gamma)\simeq \holim_\alpha\R\mathrm{Map}_{\cat{pro}(\cat{dgArt}_k)}(-,\Gamma_\alpha).$$
	\end{lem}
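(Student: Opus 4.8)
The statement to prove is Lemma \ref{holimpro}: for a pro-Artinian dga $\Gamma = \{\Gamma_\alpha\}_\alpha$, there is a weak equivalence of functors
$$\R\mathrm{Map}_{\cat{pro}(\cat{dgArt}_k)}(-,\Gamma)\simeq \holim_\alpha\R\mathrm{Map}_{\cat{pro}(\cat{dgArt}_k)}(-,\Gamma_\alpha).$$
The plan is to reduce this to a general fact about derived mapping spaces in a model category: namely, that $\R\mathrm{Map}(X,-)$ commutes with homotopy limits of diagrams that are already homotopy limits. Concretely, I would first observe that, by Proposition \ref{strictpro}, we may assume $\Gamma$ is strict, so the structure maps $\Gamma \to \Gamma_\alpha$ exhibit $\Gamma$ as the limit of a cofiltered diagram. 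Then, as in the proof of Lemma \ref{limisexact}, one passes to a cofinal directed subsystem so that the indexing category is a Reedy category, and one checks that the diagram $\{\Gamma_\alpha\}_\alpha$ is Reedy fibrant in $\cat{pro}(\cat{dgArt}_k)$ (using the characterisation of fibrations from Theorem \ref{proartmodel}: a map is a fibration iff $\varprojlim$ of it is a levelwise surjection, combined with strictness). Hence $\Gamma = \varprojlim\Gamma \simeq \holim_\alpha \Gamma_\alpha$ as pro-Artinian dgas.

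Given that $\Gamma$ is the homotopy limit of the $\Gamma_\alpha$ in $\cat{pro}(\cat{dgArt}_k)$, the desired weak equivalence is then an instance of the general compatibility of derived mapping spaces with homotopy limits in the second variable. For any model category $\mathcal{M}$, object $X$, and small diagram $D: J \to \mathcal{M}$, one has a natural weak equivalence $\R\mathrm{Map}_{\mathcal{M}}(X, \holim_J D) \simeq \holim_J \R\mathrm{Map}_{\mathcal{M}}(X, D)$; see for instance the standard references on simplicial mapping spaces (e.g.\ \cite[5.4.9]{hovey} together with the fact that $\R\mathrm{Map}(X,-)$ is a right derived functor and right derived functors preserve homotopy limits). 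Applying this with $\mathcal{M} = \cat{pro}(\cat{dgArt}_k)$ and the Reedy fibrant diagram $\{\Gamma_\alpha\}$, we obtain
$$\R\mathrm{Map}_{\cat{pro}(\cat{dgArt}_k)}(X,\Gamma) \simeq \R\mathrm{Map}_{\cat{pro}(\cat{dgArt}_k)}(X,\holim_\alpha \Gamma_\alpha) \simeq \holim_\alpha \R\mathrm{Map}_{\cat{pro}(\cat{dgArt}_k)}(X,\Gamma_\alpha)$$
naturally in $X$, which is exactly the claim.

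The main obstacle is the verification that the strict cofiltered diagram $\{\Gamma_\alpha\}$ is genuinely Reedy fibrant in $\cat{pro}(\cat{dgArt}_k)$, so that its limit computes its homotopy limit; this requires unwinding the (somewhat indirectly described) model structure of Theorem \ref{proartmodel} and checking that the matching maps are fibrations. This is essentially the same bookkeeping as in Lemma \ref{limisexact}, where it was shown that $\varprojlim \simeq \holim$ on $\proart$, so I would lean on that lemma directly rather than redoing the argument: once $\varprojlim\Gamma \simeq \holim_\alpha\Gamma_\alpha$ is granted, the rest is formal. A minor point to be careful about is naturality in the first variable — but since everything in sight ($\varprojlim$, $\R\mathrm{Map}$, $\holim$) is functorial, and the comparison maps are the canonical ones, naturality is automatic.
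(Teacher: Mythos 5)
Your proposal is correct and takes essentially the same route as the paper: both arguments reduce to the observation that the cofiltered diagram $\{\Gamma_\alpha\}_\alpha$, viewed inside $\cat{pro}(\cat{dgArt}_k)$, has limit $\Gamma$ and (via the strictness/Reedy-fibrancy argument of \ref{limisexact}) is such that $\holim_\alpha \Gamma_\alpha \simeq \Gamma$, after which the claim follows because $\R\mathrm{Map}$ preserves homotopy limits in the second variable. The only difference is that you spell out the Reedy-fibrancy bookkeeping that the paper compresses into a reference to \ref{limisexact} and the slogan ``homotopy limits are just limits.''
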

\begin{proof}
	Let $\Gamma'$ be the filtered diagram of Artinian dgas that defines $\Gamma$, but regarded as a filtered diagram in ${\cat{pro}(\cat{dgArt}_k)}$. It is easy to see directly from the definition that we have $\varprojlim \Gamma' \cong \Gamma$. Moreover, in the exact same manner as the proof of \ref{limisexact} we see that $\holim \Gamma' \simeq \Gamma$ as well, because homotopy limits are just limits. Now use that $\R\mathrm{Map}$ commutes with homotopy limits in the second variable.
	\end{proof}

	\begin{prop}\label{prodefrep}
		Let $A$ be a dga and let $X$ be an $A$-module. Let $E\coloneqq \R\enn_A(X)$ be the endomorphism dga of $X$. Let $\Gamma\in \proart$. Then there is a canonical weak equivalence
		$$\widehat{\sdefm}_A(X)(\Gamma)\cong \R\mathrm{Map}_{\cat{pro}(\cat{dgArt}_k)}(B_{\mathrm{nu}}^\sharp E,\Gamma).$$
	\end{prop}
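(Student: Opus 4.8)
The plan is to deduce this formally from the prorepresentability result \ref{proreps} for $\sset$-valued deformation functors on $\dgart$, together with the compatibility \ref{holimpro} of derived mapping spaces with homotopy limits in the pro-variable. No new geometric or homotopical input is needed; everything is a matter of stringing together the equivalences already established.

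First I would write the pro-Artinian dga $\Gamma$ as a cofiltered system $\Gamma = \{\Gamma_\alpha\}_\alpha$ of Artinian local dgas. By the definition of the pro-completion $\prodef_A(X) = \widehat{\sdefm}_A(X)$ we have
$$\widehat{\sdefm}_A(X)(\Gamma) \;=\; \holim_\alpha \sdefm_A(X)(\Gamma_\alpha).$$
Now \ref{proreps} supplies a weak equivalence of $\sset$-valued functors $\sdefm_A(X) \xrightarrow{\simeq} \R\mathrm{Map}_{\proart}(B_{\mathrm{nu}}^\sharp E, -)$ on $\dgart$; being a natural transformation, it restricts along the structure functor of $\Gamma$ to a levelwise weak equivalence of the two $\alpha$-indexed diagrams of simplicial sets. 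Since the homotopy limit functor is homotopy invariant, this yields
$$\holim_\alpha \sdefm_A(X)(\Gamma_\alpha) \;\simeq\; \holim_\alpha \R\mathrm{Map}_{\proart}(B_{\mathrm{nu}}^\sharp E, \Gamma_\alpha).$$
Finally \ref{holimpro}, applied with source object $B_{\mathrm{nu}}^\sharp E$, identifies the right-hand side with $\R\mathrm{Map}_{\proart}(B_{\mathrm{nu}}^\sharp E, \Gamma)$. Composing the three equivalences gives the claimed canonical weak equivalence, with naturality in $\Gamma$ inherited from that of the inputs.

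The argument is thus essentially formal, and I do not expect a serious obstacle; the only points requiring a moment's care are (i) that the equivalence of \ref{proreps} is genuinely natural, so that it assembles into a map of $\alpha$-diagrams and homotopy invariance of $\holim$ applies, and (ii) that the homotopy limit over the cofiltered index category is well-behaved — one reduces, exactly as in the proof of \ref{limisexact}, to a cofinal directed (hence Reedy) index category over which the relevant diagram of simplicial sets is Reedy fibrant, so that $\holim$ is computed by the usual formula. Both are routine, so the substance of the proposition is really that it repackages \ref{proreps} and \ref{holimpro} into a single prorepresentability statement over all of $\proart$.
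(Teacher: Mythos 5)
Your argument is exactly the paper's: expand $\widehat{\sdefm}_A(X)(\Gamma)$ as $\holim_\alpha\sdefm_A(X)(\Gamma_\alpha)$ by definition, apply \ref{proreps} levelwise, and conclude with \ref{holimpro}. The extra remarks on naturality and Reedy fibrancy are sound but are left implicit in the paper's three-line proof.
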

\begin{proof}
	Put $\Gamma=\{\Gamma_\alpha\}_\alpha$. We have
	\begin{align*}
	\widehat{\sdefm}_A(X)(\Gamma)\coloneqq& \holim_\alpha\sdefm_A(X)(\Gamma_\alpha) &\text{by definition}\\
	\simeq& \holim_\alpha\R\mathrm{Map}_{\cat{pro}(\cat{dgArt}_k)}(B_{\mathrm{nu}}^\sharp E, \Gamma_\alpha) &\text{by \ref{proreps} levelwise}\\
	\simeq& \R\mathrm{Map}_{\cat{pro}(\cat{dgArt}_k)}(B_{\mathrm{nu}}^\sharp E, \Gamma) &\text{by \ref{holimpro}}
	\end{align*}as required.
	\end{proof}
One can prove an analogous version for framed prodeformations:
	\begin{prop}\label{frmprodefrep}
	Let $A$ be a $k$-algebra and let $S$ be a one-dimensional $A$-module. Let $E\coloneqq \R\enn_A(S)$ be the endomorphism dga of $S$. Then $E$ is augmented. Let $\Gamma\in \proart$. Then there is a canonical weak equivalence
	$$\profrmdef_A(S)(\Gamma)\cong \R\mathrm{Map}_{\cat{pro}(\cat{dgArt}_k)}(B^\sharp E,\Gamma).$$
\end{prop}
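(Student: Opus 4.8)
The plan is to follow the proof of \ref{prodefrep} almost verbatim, replacing the unframed prorepresentability statement \ref{proreps} by its framed refinement \ref{prorepfrm}. First I would dispatch the easy assertion: since $S$ is one-dimensional, $H^0(E) \cong \enn_A(S) \cong k$, so the composite $E \to H^0(E) \cong k$ exhibits $E \coloneqq \R\enn_A(S)$ as augmented; this is precisely the opening line of \ref{prorepfrm}, so nothing new is required, and in particular $B^\sharp E$ is defined.

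Now write $\Gamma = \{\Gamma_\alpha\}_\alpha$ with each $\Gamma_\alpha$ Artinian local, as we may by definition of $\proart$. By construction the framed prodeformation functor is the pro-completion of the framed deformation functor, so $\profrmdef_A(S)(\Gamma) = \holim_\alpha \frmdef_A(S)(\Gamma_\alpha)$; this is the same bookkeeping as in \ref{prohofiblem}, and is the only place the definition is used. Applying \ref{prorepfrm} levelwise --- crucially, as a \emph{natural} weak equivalence of functors on $\dgart$, hence compatible with the homotopy limit over the cofiltered indexing category of $\Gamma$ --- gives a weak equivalence $\holim_\alpha \frmdef_A(S)(\Gamma_\alpha) \simeq \holim_\alpha \R\mathrm{Map}_{\cat{pro}(\cat{dgArt}_k)}(B^\sharp E, \Gamma_\alpha)$. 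Finally \ref{holimpro} identifies the right-hand side with $\R\mathrm{Map}_{\cat{pro}(\cat{dgArt}_k)}(B^\sharp E, \Gamma)$, since $\R\mathrm{Map}$ commutes with homotopy limits in its second variable and $\holim_\alpha \Gamma_\alpha \simeq \Gamma$. Composing these yields the desired canonical weak equivalence.

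An alternative route, mirroring the fibre-sequence argument in the proof of \ref{prorepfrm}, is to combine \ref{prohofiblem} with \ref{prodefrep}: the former presents $\profrmdef_A(S)$ as the homotopy fibre of $\prodef_A(S) \to \prodef_k(S)$, the latter identifies these functors with $\R\mathrm{Map}_{\cat{pro}(\cat{dgArt}_k)}(B_{\mathrm{nu}}^\sharp E, -)$ and $\R\mathrm{Map}_{\cat{pro}(\cat{dgArt}_k)}(B_{\mathrm{nu}}^\sharp k, -)$, and applying $(-)^\sharp$ --- an equivalence preserving fibrations and weak equivalences by \ref{sharpquillen} --- to the homotopy fibre sequence of coalgebras $BE \to B_{\mathrm{nu}}E \to B_{\mathrm{nu}}k$ yields a homotopy fibre sequence $B^\sharp E \to B_{\mathrm{nu}}^\sharp E \to B_{\mathrm{nu}}^\sharp k$ of pro-Artinian dgas, which $\R\mathrm{Map}_{\cat{pro}(\cat{dgArt}_k)}(-, \Gamma)$ sends to a homotopy fibre sequence of simplicial sets with total space $\R\mathrm{Map}_{\cat{pro}(\cat{dgArt}_k)}(B^\sharp E, \Gamma)$; comparing with the analogous fibre sequence for $\profrmdef$ finishes the proof.

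The argument is essentially formal, and there is no genuine obstacle: the substantive work was done in establishing \ref{prorepfrm} and \ref{holimpro}. The only point deserving a moment's care is that \ref{prorepfrm} must be invoked as a natural (i.e.\ functorial in the Artinian dga) weak equivalence, not merely an objectwise one, so that one may pass to the homotopy limit defining the pro-completion --- but this is exactly how \ref{prorepfrm} is stated.
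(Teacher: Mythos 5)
Your first argument is exactly the paper's proof: apply \ref{prorepfrm} levelwise and then invoke \ref{holimpro}, just as in the unframed case \ref{prodefrep}. The alternative fibre-sequence route you sketch is a reasonable variant but adds nothing the paper needs; the main argument is correct and matches the paper.
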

\begin{proof}Similarly to the previous proof, apply \ref{prorepfrm} levelwise and then use \ref{holimpro}.
\end{proof}
Now that we are able to work with prodeformations on the level of prorepresenting objects, we immediately obtain a universal prodeformation:
\begin{defn}
	Let $A$ be a dga and let $X$ be an $A$-module. Let $E\coloneqq \R\enn_A(X)$ be the endomorphism dga of $X$. The \textbf{universal prodeformation} of $X$ is the prodeformation of $X$ over $B_{\mathrm{nu}}^\sharp E$ corresponding to the element $\id_{B_{\mathrm{nu}}^\sharp E} \in \pi_0\R\mathrm{Map}_{\cat{pro}(\cat{dgArt}_k)}(B_{\mathrm{nu}}^\sharp E,B_{\mathrm{nu}}^\sharp E)$ across the weak equivalence of \ref{prodefrep}.
	\end{defn}
\begin{defn}
	Let $A$ be a $k$-algebra and let $S$ be a one-dimensional $A$-module. Let $E\coloneqq \R\enn_A(X)$ be the endomorphism dga of $X$, which is augmented. The \textbf{universal framed prodeformation} of $X$ is the framed prodeformation of $X$ over $B^\sharp E$ corresponding to the element $\id_{B^\sharp E} \in \pi_0\R\mathrm{Map}_{\cat{pro}(\cat{dgArt}_k)}(B^\sharp E,B^\sharp E)$ across the weak equivalence of \ref{frmprodefrep}.
\end{defn}

We finish this section with some computations. We will see that morally, the universal prodeformation is the algebra $E^!$ itself, regarded as an $A$-$B^\sharp E$-bimodule.

\begin{prop}\label{semiuprodef}
	Let $A$ be a noetherian ring, let $S$ be a one-dimensional $A$-module, and let $E\coloneqq \R\enn_A(S)$ be its derived endomorphism dga. Then $E$ is augmented. Let $P_n$ be the $n^\text{th}$ level of the Postnikov tower of the Koszul dual $E^!$. Then $P_n$ is Artinian, and the framed deformation of $S$ corresponding to the natural map $E^! \onto P_n$ is the $A$-$P_n$-bimodule $P_n$.
\end{prop}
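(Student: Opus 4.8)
The plan is to first identify $E^!$ with the continuous Koszul dual's limit and then unwind what the prorepresentability weak equivalence of \ref{frmprodefrep} actually produces at each stage of the Postnikov tower. Recall from \ref{prorepfrm} that, $S$ being one-dimensional over a ring, $E=\R\enn_A(S)$ is augmented and $\frmdef_A(S)\simeq \R\mathrm{Map}_{\proart}(B^\sharp E,-)$; levelwise this gives the prorepresentability of $\profrmdef_A(S)$ by $B^\sharp E$. Since $A$ is noetherian and $S$ is finite-dimensional, $\ext^*_A(S,S)$ is degreewise finite, so $E$ is cohomologically locally finite; hence by \ref{kdfinite} so is $BE$, and therefore $E^!=(BE)^*$ is cohomologically locally finite. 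Moreover $H^0(E^!)$ is the Koszul dual in degree zero, which is a finite-dimensional local algebra (it is the completed path algebra of the Ext-quiver truncated in degree zero, or one can argue abstractly via \ref{kdfin} that $E^!$ is good and its $H^0$ is local). Thus each Postnikov truncation $P_n=\tau_{\geq -n}(E^!)$ is a nonpositive dga with finitely many nonzero cohomology groups, each finite-dimensional, and with $H^0(P_n)$ finite-dimensional local; since $\tau_{\geq -n}$ of a cohomologically locally finite dga with local $H^0$ and with bounded-below cohomology is Artinian local, we conclude $P_n\in\dgart$, i.e.\ $P_n$ is Artinian.

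Next I would produce the framed deformation corresponding to the map $q_n: E^!\onto P_n$. Under the weak equivalence of \ref{frmprodefrep}, a map $B^\sharp E \to P_n$ of pro-Artinian dgas corresponds to a point of $\profrmdef_A(S)(P_n)=\pi_0\sdefm_A(S)(P_n)$, rigidified by a framing. Concretely, tracing through \ref{smcisrmap}, \ref{defismc}, and \ref{defisdel}: the map $q_n$ corresponds (via $B^\sharp\dashv$ the relevant adjunctions, using $\varprojlim B^\sharp E\simeq E^!$) to a twisting cochain $P_n^* \to \bar E$, hence to a Maurer--Cartan element $\xi_n\in\mcs(\bar E\otimes\mathfrak m_{P_n})$, and an MC element of $\R\enn_A(S)\otimes\mathfrak m_{P_n}$ is exactly a deformation of the differential on $S\lot_k P_n$. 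The resulting $A\otimes P_n$-module is, by construction of the Koszul dual as computing $S\lot_A S$ and its iterated comultiplications, precisely $E^!$ with its right $P_n$-action induced by $q_n$ and its left $A$-action coming from the $S\lot_A S$-comodule structure — that is, the $A$-$P_n$-bimodule $P_n$ itself (where the $A$-action on $P_n$ is the one restricted along $E^!\to\R\enn_A(S)^!\simeq\ldots$, ultimately the action making $P_n\simeq \R\enn_A(S)^! \lot_{E^!} P_n$). The clean way to package this: the universal framed prodeformation of $S$ over $B^\sharp E$ is $E^!$ viewed as an $A$-$B^\sharp E$-bimodule (morally $E^!$ is its own prorepresenting bimodule, cf.\ the remark preceding \ref{semiuprodef}), and the framed deformation attached to $q_n$ is its base change $E^!\lot_{B^\sharp E}P_n \simeq P_n$, with the bimodule structure as claimed.

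The main obstacle is the bookkeeping in the second paragraph: carefully checking that the composite of all the comparison equivalences ($\smc\simeq\R\mathrm{Map}$, $\sdefm\simeq\smc$, bar--cobar adjunction, $(-)^\sharp$ equivalence) sends $\id_{B^\sharp E}$ to the bimodule $E^!$ and hence $q_n$ to $E^!\lot_{B^\sharp E}P_n$, and that this derived tensor product is genuinely quasi-isomorphic to $P_n$ as an $A$-$P_n$-bimodule (rather than merely as a complex). This requires knowing that $E^!$ is \emph{cofibrant} (which it is — it is semifree, being a cobar-type construction, cf.\ the discussion after \ref{kdforart}) so that the underived and derived tensor products over $B^\sharp E$ agree, and that the natural augmentation-induced map $E^!\otimes_{B^\sharp E}P_n\to P_n$ is the relevant quasi-isomorphism. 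I expect the flatness/cofibrancy verification and the identification of the left $A$-module structure (via the $S\lot_A S$ comodule structure surviving base change) to be where the real work lies; everything else is formal consequence of the machinery already set up in Chapters \ref{kd} and \ref{defmthy}.
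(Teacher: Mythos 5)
Your proposal is on the right track and lands on the same central mechanism as the paper (the correspondence between maps $E^! \to P_n$, twisting cochains $P_n^* \to \bar E$, Maurer--Cartan elements of $\bar E\otimes\mathfrak m_{P_n}$, and twists of the differential on $\tilde S\otimes P_n$), but the step that actually identifies the twist as the bimodule $P_n$ is asserted rather than derived, and the ``clean packaging'' you propose is circular.

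Concretely: the paper's proof does \emph{not} compute the universal prodeformation first and then base-change. Instead it works directly at level $n$: by Positselski's framework, the universal twisting cochain $\pi:BE\to E$ gives a bimodule quasi-isomorphism $\hom^\pi_k(BE,\tilde S)\simeq \R\hom_E(k,\tilde S)\simeq E^!$ (using \ref{kdisrend}); truncating along the Postnikov cotower $P_n^*$ of $BE$ gives a bimodule quasi-isomorphism $\hom^\pi_k(P_n^*,\tilde S)\simeq P_n$; and because $P_n$ is Artinian (hence finite-dimensional), $\hom^\pi_k(P_n^*,\tilde S)$ is literally the twist $\tilde S\otimes_\pi P_n$, which is by definition the framed deformation attached to $E^!\to P_n$. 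No cofibrancy of $E^!$, no flatness over $B^\sharp E$, and no derived tensor product $E^!\lot_{B^\sharp E}P_n$ ever appears. Your third paragraph proposes to first establish that the universal framed prodeformation is $E^!$ as an $A$-$B^\sharp E$-bimodule and then base-change — but that claim is only \emph{established} in the paper as a consequence of (indeed a restatement of) this very proposition (cf.\ the remarks that follow it), so you cannot invoke it here without begging the question. Moreover, even if you set up the base-change formalism, identifying the bimodule you get with $P_n$ as an $A$-$P_n$-bimodule requires exactly the computation of the twist that the paper carries out explicitly, so the ``bookkeeping'' you defer is in fact the whole content of the proof. Your worries about cofibrancy and derived vs.\ underived tensor products are also misdirected: the actual step that earns its keep is truncating the quasi-isomorphism $\hom^\pi_k(BE,\tilde S)\simeq E^!$ along the Postnikov cotower, and the reason for using the Postnikov tower specifically (as the remark after the proposition makes clear) is that it strictifies the weak equivalence to a levelwise quasi-isomorphism of pro-objects, which is what lets you conclude at each finite stage.

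The Artinianness argument is fine in outline, though note the subtlety that ``cohomologically locally finite'' does not by itself make the good truncations $\tau_{\geq -n}(E^!)$ finite-dimensional as dgas; one should either pass to a locally finite (e.g.\ minimal $A_\infty$) model of $E^!$ — which exists by the remark after \ref{kdfinite} — or take ``$n^{\text{th}}$ level of the Postnikov tower'' in the strictified, pro-Artinian sense coming out of \ref{goodchar}. Both you and the paper are implicitly doing one of these, but it is worth saying.
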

\begin{proof}
	
	Choose a resolution $\tilde S \to S$ and replace $E$ by the quasi-isomorphic $\enn_A(\tilde S)$. The dga $E$ is augmented because $\enn_A(S)\cong k$. Because $A$ is noetherian, $E$ is cohomologically locally finite, and it follows from \ref{kdfinite} that $E^!$ is cohomologically locally finite. In particular, $P_n$ is Artinian. The identity map $B^\sharp E \to B^\sharp E$ gives the universal twisting cochain $\pi:BE \to E$, and in particular a twisting cochain $P_n^* \to E$, which we also denote by $\pi$. By \cite[\S6.5]{positselski}, twisting the differential on $\hom_k(BE,\tilde S)$ gives us a quasi-isomorphism $\hom^\pi_k(BE,\tilde S)\simeq \R\hom_E(k,\tilde S)$ which by \ref{kdisrend} is quasi-isomorphic to $E^!$. Truncating this quasi-isomorphism gives us a bimodule quasi-isomorphism $\hom^\pi_k(P_n^*,\tilde S)\simeq P_n$ (note that the $P_n^*$ form a Postnikov cotower for $BE$). Because $P_n$ is Artinian, the left-hand side is isomorphic to the twist $\tilde S \otimes_\pi P_n$. But this twist is precisely the deformation corresponding to the map $E^! \to P_n$.
\end{proof}

\begin{rmk}
	The above proof works more generally: in fact the proposition is true when we replace $E^! \to P_n$ by any map $E^! \to \Gamma$ with $\Gamma$ Artinian. Regarding $BE$ as an ind-coalgebra lets us regard $\hom^\pi_k(BE,\tilde S)$ as a pro-object, and using that $\varprojlim$ reflects weak equivalences we obtain a weak equivalence $\hom^\pi_k(BE,\tilde S) \simeq B^\sharp E$. Base changing this along $B^\sharp E \to \Gamma$ (the base change is the complex that computes $\mathrm{Cotor}^{BE}(\Gamma^*,\tilde S ^*)$; see \cite[\S2.5]{positselski}) we see that $\tilde S \otimes_\pi \Gamma \simeq \Gamma$. The advantage of using the Postnikov tower in the proof above is that it strictifies weak equivalences of pro-Artinian algebras: we are able to replace our abstract weak equivalence $\hom^\pi_k(BE,\tilde S) \simeq B^\sharp E$ by a level map that is levelwise a quasi-isomorphism.
\end{rmk}

\begin{rmk}
	One has an isomorphism of pro-objects $\hom_k(BE,E)\cong E \otimes B^\sharp E$. Taking the limit, one gets an isomorphism $$\hom_k(BE,E)\cong \varprojlim (E \otimes B^\sharp E) \eqqcolon E \widehat{\otimes} E^!$$between the convolution algebra and the completed tensor product. Loosely, this isomorphism sends the universal twisting cochain $\pi$ to the possibly infinite sum $$\sum_e e\otimes e^* \in \mcs(E \widehat{\otimes} E^!)$$where we sum over a basis of $E$ and where $e^*$ denotes the corresponding element of the dual basis. We can twist the differentials by $\pi$ to obtain an isomorphism $$\hom^\pi_k(BE,E)\cong \varprojlim( E \otimes_\pi B^\sharp E) \eqqcolon E \widehat{\otimes}_\pi E^!.$$As before, we can twist the differential on $\tilde S \widehat{\otimes} E^!$, and it is possible to show that $\tilde S \widehat{\otimes}_\pi E^!\simeq E^!$ as bimodules. In some sense, this is a computation of the universal prodeformation.
\end{rmk}

	\section{Deformations and quasi-equivalences}
	Let $A$ and $B$ be two dgas. Suppose that $F:D(A) \to D(B)$ is a derived equivalence given by tensoring with an $A$-$B$-bimodule. In this section, we analyse how universal prodeformations behave under $F$. Let $X$ be an $A$-module and put $Y\coloneqq F(X)$.  Observe that we can enhance $F$ to a quasi-equivalence $F:D_\mathrm{dg}(A) \to D_\mathrm{dg}(B)$ of dg categories.

	\begin{lem}\label{fdefwe}
		$F$ induces a weak equivalence
		$$F:\sdefm_A(X) \xrightarrow{\simeq} \sdefm_B(Y).$$
		\end{lem}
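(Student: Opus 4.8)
The plan is to deduce everything from the description of the deformation functor given by Theorem \ref{defisdel} (equivalently Corollary \ref{defismc}): both $\sdefm_A(X)$ and $\sdefm_B(Y)$ are, up to weak equivalence, the simplicial Maurer--Cartan (or Deligne) functor of the relevant derived endomorphism dga. So the essential point is that $F$, being an equivalence of dg categories, induces a quasi-isomorphism on derived endomorphism dgas $\R\enn_A(X) \xrightarrow{\simeq} \R\enn_B(Y)$, and then invoke quasi-isomorphism invariance of $\smc$ (Proposition \ref{mcinvtqiso}) or of $\sdel$ (via \ref{mcinvtqiso} and \ref{smcisdel}). The only genuine work is to check that the map constructed from $F$ on the deformation-functor level is the one that, under the weak equivalences of \ref{defisdel}, corresponds to this quasi-isomorphism of dgas; this is really a naturality/functoriality bookkeeping statement rather than something requiring a new idea.

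Concretely, first I would observe that $F$ extends to a quasi-equivalence of dg categories $F\colon D_\mathrm{dg}(A)\to D_\mathrm{dg}(B)$ (stated in the excerpt just before the lemma), and that for any Artinian local $\Gamma$ the functor $F$ is compatible with base change: tensoring the $A$-$B$-bimodule up to $\Gamma$ gives a quasi-equivalence $F_\Gamma\colon D_\mathrm{dg}(A\otimes\Gamma)\to D_\mathrm{dg}(B\otimes\Gamma)$ commuting (up to natural quasi-isomorphism) with the reduction functors $-\lot_\Gamma k$ to $D_\mathrm{dg}(A)$ and $D_\mathrm{dg}(B)$ respectively, and identifying $X_\mathrm{dg}$ with a dg subcategory quasi-equivalent to $Y_\mathrm{dg}$. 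Since $\mathrm{dgDef}_A(X)(\Gamma)$ is \emph{defined} as the homotopy fibre product $X_\mathrm{dg}\times^h_{D_\mathrm{dg}(A)}D_\mathrm{dg}(A\otimes\Gamma)$, and $\mathcal{W}$ preserves homotopy limits (Remark \ref{wholims}), the commuting square of quasi-equivalences induces a weak equivalence $\mathrm{dgDef}_A(X)(\Gamma)\xrightarrow{\simeq}\mathrm{dgDef}_B(Y)(\Gamma)$ on homotopy fibre products, and applying $\mathcal{W}$ gives the desired weak equivalence $\sdefm_A(X)(\Gamma)\xrightarrow{\simeq}\sdefm_B(Y)(\Gamma)$. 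Naturality in $\Gamma$ is clear since all the functors involved are natural in $\Gamma$, so this assembles into a weak equivalence of $\sset$-valued functors.

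Alternatively, and perhaps more cleanly, I would route through \ref{defisdel}: we have a diagram of weak equivalences
\[
\sdefm_A(X)\xrightarrow{\simeq}\sdel(\R\enn_A(X)),\qquad \sdefm_B(Y)\xrightarrow{\simeq}\sdel(\R\enn_B(Y)),
\]
and $F$ gives a dga quasi-isomorphism $\R\enn_A(X)\xrightarrow{\simeq}\R\enn_B(Y)$ because $F$ is quasi-fully faithful, hence $\R\hom_A(X,X)\xrightarrow{\simeq}\R\hom_B(FX,FX)$ as dgas. By the invariance statement (Proposition \ref{mcinvtqiso} together with \ref{smcisdel}, or the remark after \ref{defisdel}) this quasi-isomorphism induces a weak equivalence $\sdel(\R\enn_A(X))\xrightarrow{\simeq}\sdel(\R\enn_B(Y))$. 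The remaining point is that the resulting square commutes up to homotopy, i.e.\ the map $\sdefm_A(X)\to\sdefm_B(Y)$ induced by $F$ agrees with the one obtained by transporting along these equivalences — this follows from the functoriality of the construction in \ref{defisdel} and the fact that the weak equivalences there are natural with respect to dg quasi-equivalences.

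The main obstacle, such as it is, is purely the coherence bookkeeping: making precise that ``$F$ induces a map on $\sdefm$'' in a way that is genuinely natural in $\Gamma$ and genuinely compatible with base change $-\otimes\Gamma$, given that $F$ is only a quasi-equivalence (so one must choose a bimodule model and check the base-changed functors commute with reduction up to coherent homotopy). Since $F$ is assumed to be tensoring with an $A$-$B$-bimodule, base change is strictly functorial and the compatibility with reduction is the associativity of derived tensor product, so no serious difficulty arises; I expect the proof in the paper to simply cite \ref{defisdel} and the quasi-isomorphism $\R\enn_A(X)\simeq\R\enn_B(Y)$ and be a few lines long.
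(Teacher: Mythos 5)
Your ``concretely'' paragraph is precisely the paper's proof: one forms the commutative diagram of quasi-equivalences of dg categories (using that $F$ is given by a bimodule to get compatibility with $-\otimes\Gamma$ and reduction), deduces a quasi-equivalence of the homotopy fibre products $X_\mathrm{dg}\times^h_{D_\mathrm{dg}(A)}D_\mathrm{dg}(A\otimes\Gamma)\to Y_\mathrm{dg}\times^h_{D_\mathrm{dg}(B)}D_\mathrm{dg}(B\otimes\Gamma)$, and applies $\mathcal W$. The alternative route you sketch via \ref{defisdel} and invariance of $\sdel$ under quasi-isomorphism is also viable, but the paper does not take it; your primary argument matches the paper's.
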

	\begin{proof}For each Artinian dga $\Gamma$, the quasi-equivalence $F$ induces a commutative diagram of quasi-equivalences of dg categories
		$$\begin{tikzcd}
		X_\mathrm{dg} \ar[r,hook]\ar[d,"F"] & D_\mathrm{dg}(A)\ar[d,"F"] & D_\mathrm{dg}(A \otimes \Gamma)\ar[d,"F"] \ar[l]\\
		Y_\mathrm{dg} \ar[r,hook]& D_\mathrm{dg}(B) & D_\mathrm{dg}(B \otimes \Gamma) \ar[l]
		\end{tikzcd}$$where in the third column we have used that $F$ is tensoring by a bimodule. So we get a quasi-equivalence $$F:\left(X_\mathrm{dg} \times^h_{D_\mathrm{dg}(A)} D_\mathrm{dg}(A \otimes \Gamma)\right) \xrightarrow{\simeq} \left(Y_\mathrm{dg} \times^h_{D_\mathrm{dg}(B)} D_\mathrm{dg}(B \otimes \Gamma)\right)$$between the homotopy pullbacks and hence a weak equivalence $F:\sdefm_A(X) \xrightarrow{\simeq} \sdefm_B(Y)$.
		\end{proof}
	\begin{lem}\label{fprodefwe}
		$F$ induces a weak equivalence $F:\widehat{\sdefm}_A(X) \xrightarrow{\simeq}	\widehat{\sdefm}_B(Y)$.
		\end{lem}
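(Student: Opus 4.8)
The plan is to deduce \ref{fprodefwe} from \ref{fdefwe} by taking homotopy limits. Recall that $\widehat{\sdefm}_A(X)$ is by definition the pro-completion of $\sdefm_A(X)$, so that for a pro-Artinian dga $\Gamma = \{\Gamma_\alpha\}_\alpha$ we have $\widehat{\sdefm}_A(X)(\Gamma) = \holim_\alpha \sdefm_A(X)(\Gamma_\alpha)$, and similarly for $Y$ over $B$. By \ref{fdefwe}, the quasi-equivalence $F$ induces a weak equivalence $\sdefm_A(X)(\Gamma_\alpha) \xrightarrow{\simeq} \sdefm_B(Y)(\Gamma_\alpha)$ for each individual Artinian $\Gamma_\alpha$, and this is natural in $\Gamma_\alpha$ (the diagram of dg categories in the proof of \ref{fdefwe} is natural in the Artinian base). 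Hence $F$ induces a levelwise weak equivalence of the diagrams $\{\sdefm_A(X)(\Gamma_\alpha)\}_\alpha$ and $\{\sdefm_B(Y)(\Gamma_\alpha)\}_\alpha$ indexed by the (cofiltered) indexing category of $\Gamma$.

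Next I would invoke the homotopy invariance of $\holim$: a levelwise weak equivalence between two diagrams (of simplicial sets, over a fixed small indexing category) induces a weak equivalence on homotopy limits. Applying this gives
$$\widehat{\sdefm}_A(X)(\Gamma) = \holim_\alpha \sdefm_A(X)(\Gamma_\alpha) \xrightarrow{\simeq} \holim_\alpha \sdefm_B(Y)(\Gamma_\alpha) = \widehat{\sdefm}_B(Y)(\Gamma),$$
and since the individual equivalences are natural in $\Gamma_\alpha$ and the homotopy limit construction is functorial, the resulting map is natural in $\Gamma$. This exhibits $F$ as a weak equivalence of functors $\widehat{\sdefm}_A(X) \xrightarrow{\simeq} \widehat{\sdefm}_B(Y)$, as claimed. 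One should be slightly careful that a pro-Artinian dga's indexing category may need to be replaced by a cofinal directed subcategory (as in the proof of \ref{limisexact}, using \cite[Expos\'e 1, 8.1.6]{sga4}) so that the diagram is Reedy-fibrant and the homotopy limit is well-behaved; alternatively one simply works in the injective model structure on the diagram category where all objects are fibrant after the pointwise fibrant replacement that $\mathcal{W}$ already provides.

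I do not expect a serious obstacle here: the statement is essentially formal once \ref{fdefwe} is in hand, the only real content being the naturality of the equivalence in \ref{fdefwe} with respect to the Artinian base and the homotopy-invariance of $\holim$. The one point requiring a little care is making precise that $F$ really does induce a map of the pro-diagrams — this uses that $F$ is given by tensoring with a fixed $A$-$B$-bimodule, so that $F \otimes_k \Gamma_\alpha$ makes sense compatibly across all $\alpha$, which is exactly the feature already exploited in the third column of the diagram in the proof of \ref{fdefwe}. So the main (mild) obstacle is bookkeeping the naturality, not any substantive homotopical difficulty.
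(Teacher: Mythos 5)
Your proposal is correct and is exactly the paper's approach: the paper's entire proof is ``Take homotopy limits of \ref{fdefwe},'' and you have simply unpacked what that means, including the (mild but real) naturality and homotopy-invariance points. No discrepancy.
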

	\begin{proof}
		Take homotopy limits of \ref{fdefwe}.
		\end{proof}
	
	Put $E\coloneqq \R\enn_A(X)$ and $E'\coloneqq \R\enn_B(Y)$. We get a quasi-isomorphism $E \to E'$ induced by $F$ and hence a weak equivalence $\prodef_B(Y)(B^\sharp E') \to \prodef_B(Y)(B^\sharp E)$. Combined with the weak equivalence of \ref{fdefwe}, we get a weak equivalence $\prodef_A(X)(B^\sharp E) \simeq \prodef_B(Y)(B^\sharp E')$ and it is easy to see that this preserves the universal prodeformation.
	
	\p We repeat our analysis in the framed setting.
\begin{lem}\label{frmprodefwe}
	$F$ induces a weak equivalence $F:\profrmdef_A(X) \xrightarrow{\simeq}	\profrmdef_B(Y)$.
	\end{lem}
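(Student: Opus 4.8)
The statement \ref{frmprodefwe} asserts that a derived equivalence $F\colon D(A)\to D(B)$ given by tensoring with a bimodule induces a weak equivalence $F\colon \profrmdef_A(X) \xrightarrow{\simeq} \profrmdef_B(Y)$ on framed prodeformation functors, where $Y\coloneqq F(X)$. The plan is to reduce this to the already-established unframed statement \ref{fprodefwe} together with compatibility of $F$ with the forgetful-to-vector-spaces map, using that framed (pro)deformations are defined as a homotopy fibre.

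First I would recall that by \ref{prohofiblem} there is a weak equivalence $\profrmdef_A(X)\simeq \mathrm{hofib}\left(\prodef_A(X)\to\prodef_k(X)\right)$, and likewise for $B$ and $Y$. So it suffices to produce a commutative square
$$\begin{tikzcd}
\prodef_A(X) \ar[r]\ar[d,"F","\simeq"'] & \prodef_k(X)\ar[d,"\cong"] \\
\prodef_B(Y) \ar[r]& \prodef_k(Y)
\end{tikzcd}$$
in which the left vertical map is the weak equivalence of \ref{fprodefwe}, the right vertical map is a weak equivalence, the horizontal maps are the forgetful natural transformations, and the square commutes (up to homotopy). Taking homotopy fibres of the rows then yields the desired weak equivalence on framed prodeformations, since homotopy fibres are invariant under levelwise weak equivalences of such squares.

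The content is thus in constructing and checking this square. The right-hand vertical map: since $F$ is tensoring with an $A$-$B$-bimodule $T$ that induces an equivalence $D(A)\xrightarrow{\simeq}D(B)$, the underlying complex of $F(M)$ is $M\lot_A T$, and in particular the underlying dg vector space of $Y=F(X)$ agrees, after a choice of resolution, with that of $X$ up to quasi-isomorphism. One gets an identification $\prodef_k(X)\simeq\prodef_k(Y)$ because the forgetful functor sends the bimodule tensor to the corresponding tensor over $k$; concretely one uses the commutative diagram of dg categories as in the proof of \ref{fdefwe} but with $A,B$ replaced by $k$. The horizontal maps are the natural transformations induced by the forgetful dg functors $D_\mathrm{dg}(A\otimes\Gamma_\alpha)\to D_\mathrm{dg}(k\otimes\Gamma_\alpha)$, passed through $\mathcal{W}$ and the homotopy limit over $\alpha$. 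Commutativity of the square is then immediate from the fact that $F$ commutes with these forgetful functors, which in turn is exactly the statement that tensoring with $T$ over $A$ and then forgetting to $k$-modules is the same as forgetting to $k$-modules and then tensoring with $T$ viewed as a complex of $k$-modules with left $A$-action — and here one uses that $F$ is given by a bimodule so that the diagram in the proof of \ref{fdefwe} (now with the forgetful functors adjoined as an extra column to $k$) commutes strictly on the nose at the dg level.

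The main obstacle is bookkeeping: one must ensure the square above commutes at the level of simplicial sets (or at least up to coherent homotopy) after applying $\mathcal{W}$ and $\holim_\alpha$, rather than merely on homotopy categories, so that taking homotopy fibres is legitimate. This is handled exactly as in \ref{fdefwe}: $F$ is a quasi-equivalence of dg categories compatible with all the structure functors in sight (the inclusion $X_\mathrm{dg}\hookrightarrow D_\mathrm{dg}(A)$, the reduction-mod-$\mathfrak m_{\Gamma_\alpha}$ functors, and the forgetful functors to $k$-linear categories), because it is tensoring by a fixed bimodule; hence it induces a weak equivalence of homotopy-pullback diagrams, and $\mathcal{W}$ (a right derived functor by \ref{wholims}) and $\holim$ preserve these. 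Finally, I would remark that, just as for the earlier lemmas of this section, $F$ preserves the universal framed prodeformation, since the identifying weak equivalences are natural in the pro-Artinian variable and $\id$ is sent to a map corresponding to $\id$ after precomposing with the quasi-isomorphism $E\to E'$ of augmented dgas induced by $F$.
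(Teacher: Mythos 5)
Your overall plan matches the paper's one-line proof: realise both framed prodeformation functors as homotopy fibres via \ref{prohofiblem}, form a commutative square with $F$ on one vertical edge and the forgetful natural transformations on the horizontal edges, and conclude by taking homotopy fibres. So the intended structure is the same as the paper's.

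However, your construction of the right vertical map $\prodef_k(X)\to\prodef_k(Y)$ does not go through, and this is precisely the point that the paper's proof leaves implicit. The claim that ``the underlying dg vector space of $Y=F(X)$ agrees, after a choice of resolution, with that of $X$ up to quasi-isomorphism'' is false in general: taking $F=[1]$ (that is, tensoring with the $A$-bimodule $A[1]$) gives $Y=X[1]$, whose underlying complex is $(U_A X)[1]$, which is not quasi-isomorphic to $U_A X$. Nor is there a dg functor $D_{\mathrm{dg}}(k)\to D_{\mathrm{dg}}(k)$ obtained by ``replacing $A,B$ by $k$'' in the diagram of \ref{fdefwe}: the bimodule $T$ has an $A$-$B$-bimodule structure, not a $k$-$k$-one, and once the $A$-action on a deformation $\tilde X$ has been forgotten there is no way to form $\tilde X\lot_A T$. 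Put concretely, $U_B\circ F$ and $U_A$ are distinct dg functors $D_{\mathrm{dg}}(A\otimes\Gamma)\to D_{\mathrm{dg}}(\Gamma)$ and the former does not factor through the latter, so neither the existence of your right vertical map nor the commutativity of your square has been established. In the paper's actual applications $X$ and $Y$ are both one-dimensional modules over $k$-algebras, and there the cleanest repair is to bypass the homotopy-fibre argument and instead invoke \ref{prorepfrm} directly: the quasi-isomorphism $\R\enn_A(X)\simeq\R\enn_B(Y)$ induced by $F$ gives a weak equivalence of the prorepresenting objects $B^\sharp\R\enn_A(X)$ and $B^\sharp\R\enn_B(Y)$, and hence the desired weak equivalence of framed prodeformation functors.
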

\begin{proof}
	The weak equivalences of \ref{fprodefwe} commute with the forgetful functors and using \ref{prohofiblem} it follows that we get an induced weak equivalence between framed prodeformations.
	\end{proof}
\begin{rmk}
	We could also have proved this by checking that we get a weak equivalence $F:\frmdef_A(X) \xrightarrow{\simeq} \frmdef_B(Y)$ and taking homotopy limits.
	\end{rmk}
In exactly the same manner as before, one can check that universal framed prodeformations are preserved by $F$. Now we wish to use this to prove the following, which will be extremely useful to us in Chapter 10:

\begin{thm}\label{qeupdf}
	Let $A$ and $B$ be noetherian rings and let $S_A$ and $S_B$ be one-dimensional $A$ and $B$-modules respectively. Suppose that there is a derived equivalence $F:D(A) \to D(B)$ given by tensoring with a complex of bimodules, satisfying $F(S_A)\simeq S_B$. Put $U_A\coloneqq \R\enn_A(S_A)^!$ and $U_B\coloneqq \R\enn_B(S_B)^!$. Then there is a quasi-isomorphism of $B$-modules $F(U_A)\simeq U_B$.
	\end{thm}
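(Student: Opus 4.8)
The plan is to show that the derived equivalence $F$ not only identifies the simple modules $S_A$ and $S_B$ but also their derived endomorphism dgas and, crucially, the \emph{universal framed prodeformations}, and then to extract the claimed quasi-isomorphism of $B$-modules from the computation of the universal prodeformation. First I would record that $F$ can be enhanced to a quasi-equivalence $F:D_\mathrm{dg}(A)\to D_\mathrm{dg}(B)$ of dg categories (given by tensoring with the bimodule), so that $F$ induces a quasi-isomorphism of dgas $E_A\coloneqq\R\enn_A(S_A)\to E_B\coloneqq\R\enn_B(S_B)$. Since both $A$ and $B$ are noetherian and $S_A,S_B$ are one-dimensional, each $E_i$ is cohomologically locally finite with $H^0(E_i)\cong k$, hence augmented, and by \ref{kdfinite} the Koszul duals $U_A=E_A^!$ and $U_B=E_B^!$ are also cohomologically locally finite. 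Because the Koszul dual preserves quasi-isomorphisms, $F$ already gives a \emph{dga} quasi-isomorphism $U_A\to U_B$; the content of the theorem is to upgrade this to a quasi-isomorphism of \emph{$B$-modules} (where $U_A$ is regarded as a $B$-module via $F$, i.e.\ via the bimodule structure coming from the equivalence).

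The key device is \ref{semiuprodef}, which identifies the universal framed prodeformation of a one-dimensional module $S$ over a noetherian ring with the system $\{P_n\}$ of Postnikov truncations of $\R\enn(S)^!$, each $P_n$ being the $A$-$P_n$-bimodule $P_n$; passing to the limit, the universal framed prodeformation of $S_A$ over $B^\sharp E_A$ is represented (on underlying $A$-modules) by $U_A=E_A^!$ itself, and similarly for $S_B$. Now I would invoke \ref{frmprodefwe}: the quasi-equivalence $F$ induces a weak equivalence $\profrmdef_A(S_A)\xrightarrow{\simeq}\profrmdef_B(S_B)$, and by the remark following \ref{frmprodefwe} this weak equivalence \emph{preserves universal framed prodeformations}. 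Unwinding what this means on the level of the underlying modules: the universal framed prodeformation of $S_A$, which is the $A$-$B^\sharp E_A$-bimodule $U_A$, is carried by $F$ (tensoring on the $A$-side) to a framed prodeformation of $S_B$ over $B^\sharp E_A$; and since $F$ matches universal objects, this agrees, after restriction along the quasi-isomorphism $B^\sharp E_A\simeq B^\sharp E_B$ induced by $F$, with the universal framed prodeformation of $S_B$, which is the $B$-$B^\sharp E_B$-bimodule $U_B$. Forgetting the pro-Artinian module structure and keeping only the $B$-module structure then yields the desired quasi-isomorphism $F(U_A)\simeq U_B$ in $D(B)$.

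Concretely, the argument would run: (i) enhance $F$ to a quasi-equivalence and get $E_A\simeq E_B$ as augmented dgas; (ii) apply \ref{semiuprodef} (and the remarks after it, which compute the universal prodeformation as $\tilde S\,\widehat{\otimes}_\pi E^!\simeq E^!$) to identify, for each of $A$ and $B$, the universal framed prodeformation with $U_A$ resp.\ $U_B$ viewed as a bimodule; (iii) use \ref{frmprodefwe} and the ``universal objects are preserved'' remark to transport $U_A$ to $U_B$ across $F$; (iv) read off the underlying $B$-module statement. The main obstacle I anticipate is step (iii): making precise the sense in which the weak equivalence $F:\profrmdef_A(S_A)\to\profrmdef_B(S_B)$ ``sends the universal framed prodeformation to the universal framed prodeformation'' at the level of representing \emph{modules} rather than just at the level of the simplicial mapping spaces $\R\mathrm{Map}_{\cat{pro}(\cat{dgArt}_k)}(B^\sharp E,-)$. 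This requires tracking, through the chain of equivalences in \ref{frmprodefrep} and \ref{prorepfrm}, that the bimodule $F(U_A)$ really is the deformation classified by the point $\id_{B^\sharp E_B}$ (after identifying $B^\sharp E_A$ with $B^\sharp E_B$), which amounts to the naturality of the bar-construction/twisting-cochain formalism of \cite{positselski} with respect to the quasi-equivalence $F$ — naturality that is morally clear but whose verification is the technical heart of the proof. A cleaner alternative, which I would fall back on if the direct comparison is awkward, is to bypass prodeformations entirely: since $F$ is tensoring with a bimodule it commutes with $\R\hom$ into modules in the appropriate variable, and $E_A^!\simeq\R\hom_{E_A}(k,\tilde S_A)$ by \ref{kdisrend} together with \ref{kdforart}-style truncation, so one can try to produce the $B$-module quasi-isomorphism $F(U_A)\simeq U_B$ directly from $F(\R\hom_{E_A}(k,\tilde S_A))\simeq\R\hom_{E_B}(k,\tilde S_B)$; the prodeformation-theoretic route is conceptually the ``right'' one but this homological shortcut may be the shortest path to a complete argument.
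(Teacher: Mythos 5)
You correctly identify the paper's strategy: recognise $U_A$ and $U_B$ as universal framed prodeformations of $S_A$ and $S_B$ via \ref{semiuprodef}, and transport across $F$ using \ref{frmprodefwe}. You also correctly diagnose the central difficulty, namely passing from ``$F$ preserves universal framed prodeformations at the level of $\R\mathrm{Map}$-spaces'' to a statement about the underlying $B$-module $F(U_A)$. However, you leave this step open, and the route you gesture at (``naturality of the bar-construction/twisting-cochain formalism'') does not match how the paper actually closes it. Indeed, the remark immediately following the paper's proof explicitly warns that the naive forgetful functor $\holim_n D(A\otimes U_A^n)\to D(A)$ is not obviously well-behaved with respect to weak equivalences, so one cannot simply ``read off the underlying $B$-module'' from the prodeformation in the way you suggest.

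The paper's resolution is concrete. Work with the Postnikov towers $\{U_A^n\}_n$ and $\{U_B^n\}_n$: these are pro-Artinian because $A,B$ noetherian forces $U_A,U_B$ to be cohomologically locally finite. The derived equivalence $F$ gives a dga quasi-isomorphism $E_A\to E_B$ and hence (Koszul duality being contravariant) a dga quasi-isomorphism $U_B\to U_A$ --- note the direction, which is the opposite of what you wrote --- inducing a level map $\{U_B^n\to U_A^n\}_n$ that is a quasi-isomorphism at each level. For fixed $n$ one then has the zigzag of quasi-equivalences
$$\{S_A\}_\mathrm{dg}\times^h_{D_\mathrm{dg}(A)}D_\mathrm{dg}(A\otimes U_A^n)\xrightarrow{\ F\ }\{S_B\}_\mathrm{dg}\times^h_{D_\mathrm{dg}(B)}D_\mathrm{dg}(B\otimes U_A^n)\xleftarrow{-\lot_{U_B^n}U_A^n}\{S_B\}_\mathrm{dg}\times^h_{D_\mathrm{dg}(B)}D_\mathrm{dg}(B\otimes U_B^n),$$
and by \ref{frmprodefrep} together with \ref{semiuprodef} the universal prodeformation at level $n$ corresponds to $U_A^n\mapsto F(U_A^n)\simeq U_B^n\lot_{U_B^n}U_A^n\mapsfrom U_B^n$. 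The step you omit, and which is the genuine content, is a spectral sequence argument: since $U_B^n\to U_A^n$ is a quasi-isomorphism, for any bounded-above $B$-$U_B^n$-bimodule $X$ the natural map $X\to X\lot_{U_B^n}U_A^n$ is a quasi-isomorphism of $B$-modules; taking $X=U_B^n$ yields a $B$-linear quasi-isomorphism $U_B^n\to F(U_A^n)$, compatible with the inverse system, whence $\holim_n F(U_A^n)\simeq\holim_n U_B^n\simeq U_B$ (using that Postnikov transition maps are surjective) and $F(U_A)\simeq F(\holim_n U_A^n)\simeq\holim_n F(U_A^n)$ (using that $F$ is an equivalence). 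Your ``homological shortcut'' fallback is not what the paper does, and its key assertion --- that $F$ ``commutes with $\R\hom$'' in a way that upgrades $F(\R\hom_{E_A}(k,\tilde S_A))\simeq\R\hom_{E_B}(k,\tilde S_B)$ to a $B$-module statement --- would require a justification of essentially the same depth as the step you are trying to avoid.
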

\begin{proof}
	Morally, this is true because $U_A$ is the universal framed prodeformation of $S_A$, and similarly for $U_B$, and $F$ sends universal prodeformations to universal prodeformations. Because $A$ and $B$ are noetherian it follows that $U_A$ and $U_B$ are nonpositive cohomologically locally finite dgas, and we have a dga quasi-isomorphism $U_B \to U_A$ provided by $F$. Take $\{U_A^n\}_n$ to be the Postnikov tower of $U_A$. It is a pro-Artinian dga, because $U_A$ was cohomologically locally finite, and we have $\varprojlim_n U_A^n \simeq U_A$. Similarly, let $\{U_B^n\}_n$ be the Postnikov tower of $U_B$. The quasi-isomorphism $U_B \to U_A$ gives a level map $\{U_B^n\to U_A^n\}_n$ which is a quasi-isomorphism at each level. For a fixed $n$ we have a zigzag of quasi-equivalences of dg categories
	$$ \{X\}_\mathrm{dg} \times^h_{D_\mathrm{dg}(A)} D_\mathrm{dg}(A \otimes U^n_A) \xrightarrow{F} 
	{\{Y\}_\mathrm{dg} \times^h_{D_\mathrm{dg}(B)} D_\mathrm{dg}(B \otimes U^n_A)} \xleftarrow{-\lot_{U_B^n}U^n_A} 
	{\{Y\}_\mathrm{dg} \times^h_{D_\mathrm{dg}(B)} D_\mathrm{dg}(B \otimes U^n_B)}
	$$and these fit into a zigzag of morphisms of pro-objects in dg categories. By \ref{frmprodefrep}  and \ref{semiuprodef}, at each level $n$ the universal prodeformation corresponds to the elements $$U_A^n \mapsto F(U_A^n) \simeq U_B^n \lot_{U_B^n} U_A^n \mapsfrom U_B^n$$ where the quasi-isomorphism in the middle is both $B$-linear and $U_A^n$-linear. Because $U_B^n$ and $U_A^n$ are quasi-isomorphic, it follows by a spectral sequence argument that for a bounded above $B$-$U_B^n$-bimodule $X$, the natural map $X \to X \lot_{U_B^n}U_A^n$ is a quasi-isomorphism of $B$-modules. In particular the natural map $U_B^n \to U_B^n \lot_{U_B^n} U_A^n \simeq F(U_A^n)$ is a $B$-linear quasi-isomorphism. This is compatible with the inverse system, in the sense that for each $n\geq m$ we get commutative diagrams $$\begin{tikzcd} U_B^n\ar[r]\ar[d] & U_B^m \ar[d]\\
	F(U_A^n)\ar[r] & F(U_A^m)
	\end{tikzcd}
	$$of $B$-linear maps, where the vertical maps are quasi-isomorphisms. Hence we get a quasi-isomorphism of $B$-modules $\holim_n F(U_A^n) \simeq \holim_n U_B^n$. We see that $\holim_n U_B^n$ is exactly $U_B$, because the transition maps in the Postnikov tower are surjective. Because $F$ is a quasi-equivalence, we get a quasi-isomorphism of $B$-modules $F(\holim_n U_A^n)\simeq \holim_n F(U_A^n)$. But as before, $\holim_n U_A^n\simeq U_A$. So it follows that $F(U_A)\simeq U_B$, as required.

	\end{proof}
\begin{rmk}
	In some sense, this is a computation of the universal framed prodeformation. What we have done is to take the pro-object in dg categories defining the universal framed prodeformation, picked out the pieces of the universal prodeformation, forgotten some of the module structure, and taken the homotopy limit of those pieces. Whereas to find the universal framed prodeformation, we take the homotopy limit of the system of dg categories and look at the object corresponding to the pieces we had. To forget some of the structure we look at the functor $\holim_n D(A\otimes U_A^n) \to D(A)^{\N}$ given by applying the forgetful functor to the inverse system defining the universal prodeformation. We can then take the limit to get an object of $D(A)$. However, it is not clear that this functor $\holim_n D(A\otimes U_A^n) \to D(A)$ behaves well with respect to weak equivalences.
	\end{rmk}

	\part{The derived quotient and the dg singularity category}

		\chapter{The derived quotient}
		In this chapter, we introduce our main object of study, the derived quotient of Braun--Chuang--Lazarev \cite{bcl}.  We will mostly be interested in derived quotients of ungraded algebras by idempotents. The derived quotient is a natural object to study, and has been investigated before by a number of authors: for example it appears in Kalck and Yang's work \cite{kalckyang,kalckyang2} on relative singularity categories, de Thanhoffer de V\"olcsey and Van den Bergh's paper \cite{dtdvvdb} on stable categories, and Hua and Zhou's paper \cite{huazhou} on the noncommutative Mather--Yau theorem. Our study of the derived quotient will unify some of the aspects of all of the above work.
		\p
		We remark that sections 5.1, 5.2, 5.4, and 5.5 are valid over any field $k$. Section 5.6 is valid over any algebraically closed field.
	\section{Derived localisation}\label{derloc}
	The derived quotient is a special case of a general construction -- the derived localisation. Let $A$ be any dga over $k$ (the construction works over any commutative base ring). Let $S \subseteq H(A)$ be any collection of homogeneous cohomology classes. Braun, Chuang and Lazarev define the \textbf{derived localisation} of $A$ at $S$, denoted by $\dloc$, to be the dga universal with respect to homotopy inverting elements of $S$:
	\begin{defn}[{\cite[\S3]{bcl}}]
		Let $Q A \to A$ be a cofibrant replacement of $A$. The \textbf{derived under category} $A \downarrow^\mathbb{L} \cat{dga}$ is the homotopy category of the under category $Q A \downarrow \cat{dga}$ of dgas under $Q A$. A $QA$-algebra $f:Q A \to Y$ is \textbf{$S$-inverting} if for all $s \in S$ the cohomology class $f(s)$ is invertible in $HY$. The \textbf{derived localisation} $\dloc$ is the initial object in the full subcategory of $S$-inverting objects of $A \downarrow^\mathbb{L} \cat{dga}$.
	\end{defn} 
	\begin{prop}[{\cite[3.10, 3.4, and 3.5]{bcl}}]
		The derived localisation exists, is unique up to unique isomorphism in the derived under category, and is quasi-isomorphism invariant.
	\end{prop}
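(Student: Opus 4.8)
The plan is to establish the three assertions — existence, uniqueness, and quasi-isomorphism invariance — essentially as formal consequences of the model-category setup, following \cite{bcl}. The key observation is that inverting a set $S$ of homogeneous cohomology classes can be realised by a homotopy pushout in $\cat{dga}$. For each $s \in S$ of degree $n$, let $k[s^{\pm 1}]$ denote the dga freely generated by an invertible element in degree $n$ (concretely, a Laurent polynomial algebra if $n=0$, and its graded analogue otherwise), and let $k[s]$ be the free dga on a single degree-$n$ generator. There is a natural map $k[s] \to k[s^{\pm 1}]$. A choice of representative cocycle for $s$ gives a map $k[s] \to QA$, and I would form the homotopy pushout $L_s(QA)$ of $k[s^{\pm 1}] \leftarrow k[s] \to QA$ in the model category $\cat{dga}$ of \cite{hinichhom}. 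Iterating (or taking a single pushout along the coproduct $\coprod_{s\in S} k[s] \to \coprod_{s \in S} k[s^{\pm 1}]$) yields a candidate for $\dloc$. The first step is thus to check that this homotopy pushout is genuinely $S$-inverting — i.e.\ that the image of each $s$ becomes invertible in cohomology — which is where a small computation with the localisation dga $k[s^{\pm 1}]$ and the behaviour of homotopy pushouts on cohomology enters, but this is exactly the content of \cite[\S3]{bcl} and can be cited.

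Next I would verify the universal property: an $S$-inverting $QA$-algebra $f: QA \to Y$ is precisely a $QA$-algebra under which each $s$ becomes invertible, and by the universal property of the homotopy pushout (together with the fact that maps out of $k[s^{\pm 1}]$ in the homotopy category classify homotopy-invertible elements), such an $f$ factors, uniquely up to the appropriate homotopy, through $L_S(QA)$. This shows $L_S(QA)$ is initial in the full subcategory of $S$-inverting objects of the derived under category $A \downarrow^{\mathbb L} \cat{dga}$, establishing both existence and, since an initial object in any category is unique up to unique isomorphism, uniqueness up to unique isomorphism in the derived under category. The main subtlety here is the usual one with homotopy-coherent universal properties — one must be slightly careful that ``$S$-inverting'' is a homotopy-invariant condition and that the mapping spaces behave correctly — but again this is handled in \cite{bcl} and I would simply invoke \cite[3.10, 3.4]{bcl} for the precise statements.

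Finally, quasi-isomorphism invariance follows formally: if $A \simeq A'$ then their cofibrant replacements are weakly equivalent, so $QA \downarrow \cat{dga}$ and $QA' \downarrow \cat{dga}$ have equivalent homotopy categories in a way compatible with the $S$-inverting condition (a cohomology class and its image under a quasi-isomorphism are simultaneously invertible or not), hence their initial $S$-inverting objects correspond. Concretely this is the statement that the homotopy pushout construction above is invariant under weak equivalence of the input dga, which is automatic since homotopy pushouts are homotopy invariant. This is \cite[3.5]{bcl}.

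The honest answer to ``which step is the main obstacle'' is that, at this level of generality, there is essentially no obstacle — the statement is a packaging of standard model-category formalism and the real work lies in \cite{bcl}, which the excerpt permits us to assume. If pressed to identify the one genuinely nontrivial input, it is the verification that the homotopy pushout along $k[s] \to k[s^{\pm 1}]$ actually inverts $s$ and does not do anything worse (e.g.\ collapse the algebra); this requires understanding $k[s^{\pm 1}]$ as a cofibrant, flat $k[s]$-algebra and computing the Tor term in the resulting Mayer--Vietoris/bar spectral sequence, so that one sees on cohomology exactly the algebraic localisation $H(A)[S^{-1}]$ when that localisation is flat, and a derived correction otherwise. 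I would state this as a lemma, attribute the computation to \cite[\S3]{bcl}, and move on.
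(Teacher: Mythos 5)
Your proposal is correct and matches the paper's approach, which is simply to cite \cite[3.10, 3.4, 3.5]{bcl} and (in the remark immediately following) to record precisely the homotopy-pushout description $A \from k\langle S\rangle \to k\langle S, S^{-1}\rangle$ that you use as the backbone of your sketch. The one cosmetic caveat is that you write $k[s]$ and $k[s^{\pm 1}]$ (the paper's convention reserves square brackets for commutative polynomial algebras), but since you form the big pushout via a coproduct of one-generator dgas, which is a free product, your construction does in fact produce the free noncommutative algebras $k\langle S\rangle$ and $k\langle S, S^{-1}\rangle$ as required.
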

In particular, the derived localisation $\dloc$ comes with a canonical map from $A$ (in the derived under category) making it into an $A$-bimodule, unique up to $A$-bimodule quasi-isomorphism. In what follows, we will refer to $\dloc$ as an $A$-bimodule, with the assumption that this always refers to the canonical bimodule structure induced from the dga map $A \to \dloc$.
	\begin{rmk}
		The derived localisation is the homotopy pushout of the span$$A \from k\langle S \rangle \to k\langle S, S^{-1}\rangle.$$
	\end{rmk}

\begin{defn}A map $A \to B$ of dgas is a \textbf{homological epimorphism} if the multiplication map $B\lot_A B \to B$ is a quasi-isomorphism of $B$-modules.
\end{defn}
\begin{prop}\label{homepi}
Let $A$ be a dga and $S \subseteq H(A)$ be any collection of homogeneous cohomology classes. Then the canonical localisation map $A \to \dloc$ is a homological epimorphism.
\end{prop}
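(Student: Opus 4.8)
The plan is to deduce this from the universal property of the derived localisation together with the characterisation of homological epimorphisms in terms of the derived tensor product. First I would recall that $\dloc$ is, by \cite[\S3]{bcl}, the homotopy pushout of the span $A \leftarrow k\langle S\rangle \to k\langle S,S^{-1}\rangle$, or equivalently that it is the initial $S$-inverting object of $A\downarrow^{\mathbb L}\cat{dga}$. The key observation is that the property of being a homological epimorphism can be detected by checking that $\dloc \lot_A \dloc \to \dloc$ is a quasi-isomorphism, and that both the source and target are $S$-inverting $\dloc$-algebras under $A$.

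The main step is then the following: I claim that the multiplication map $\mu: \dloc\lot_A\dloc \to \dloc$ is a quasi-isomorphism by exhibiting an inverse via the universal property. Write $B\coloneqq \dloc$ for brevity. Consider $B\lot_A B$ as a $B$-algebra via the left factor, $\ell: B \to B\lot_A B$. Since the composite $A \to B \xrightarrow{\ell} B\lot_A B$ factors through $B$, which inverts $S$, the map $\ell$ is an $S$-inverting $B$-algebra under $A$. But $B = \dloc$ is the initial such object (after base-changing the universal property along $A \to B$; more precisely, one uses that $B\to B$ is the derived localisation of $B$ at the now-invertible set $S$, which is just the identity). Hence there is a canonical map $B \to B\lot_A B$ in the derived under category, and by initiality the composite $B \to B\lot_A B \xrightarrow{\mu} B$ must be (homotopic to) the identity. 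Conversely, $\mu\circ\ell = \id_B$ on the nose. To conclude that $\ell$ and $\mu$ are mutually inverse quasi-isomorphisms, I would note that $\ell$ is a weak equivalence because it is the homotopy base-change along $A\to B$ of the localisation map $A \to B$, and localisation maps are preserved under homotopy base change along $S$-inverting maps — indeed $B\lot_A B$ is the derived localisation of $B$ at (the image of) $S$, which is already invertible in $H(B)$, so $B \to B\lot_A B$ is a quasi-isomorphism. Then $\mu$, being a one-sided inverse to a quasi-isomorphism, is itself a quasi-isomorphism.

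Alternatively, and perhaps more cleanly, one can argue directly from the homotopy pushout description: $B = A\lot_{k\langle S\rangle} k\langle S, S^{-1}\rangle$, so $B\lot_A B \simeq B\lot_{k\langle S\rangle} k\langle S,S^{-1}\rangle$, and since $k\langle S,S^{-1}\rangle$ is already the localisation of $k\langle S\rangle$ at $S$ (a genuine Ore localisation, hence flat, hence $k\langle S,S^{-1}\rangle \lot_{k\langle S\rangle} k\langle S,S^{-1}\rangle \simeq k\langle S,S^{-1}\rangle$), one gets $B\lot_A B \simeq A\lot_{k\langle S\rangle}k\langle S,S^{-1}\rangle = B$ compatibly with multiplication. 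I expect the main obstacle to be bookkeeping the identifications in the derived under category correctly — in particular making precise the slogan ``localising twice is localising once'' (that $B\to B\lot_A B$ is itself a derived localisation of $B$, at an already-invertible set, hence an equivalence) — but this is exactly the content of the base-change compatibility of derived localisation proved in \cite{bcl}, so it should be quotable rather than something to reprove from scratch.
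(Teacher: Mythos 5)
The paper's own proof is a two-citation argument: it quotes \cite[3.17]{bcl} (the derived localisation map is a \emph{homotopy} epimorphism, i.e.\ the fold map of the homotopy pushout in $\cat{dga}$ is a quasi-isomorphism) and then \cite[4.4]{clhomepi} (for dgas, homotopy epimorphisms coincide with homological epimorphisms). You propose instead to verify the homological condition directly.

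There is a valid kernel in your proposal, and it is a genuinely different route: take $B\coloneqq\dloc$, observe that $\ell\colon B\to B\lot_A B$, $b\mapsto b\otimes 1$, is exactly the map $X\to X\lot_A\dloc$ from the smashing theorem applied to the right $A$-module $X=B$; this module is $S$-local (each $s\in S$ acts invertibly on $H(B)$), so $\ell$ is a quasi-isomorphism; and since $\mu\circ\ell=\id_B$ on the nose, $\mu$ is a quasi-isomorphism too. That is a clean two-line proof, modulo the caveat that it relies on \cite[4.14]{bcl} rather than on \cite[3.17]{bcl} as the paper does, and those two ingredients sit in different sections of \emph{op.\ cit.}\ so one would want to be sure the smashing result there is not itself downstream of the homotopy-epimorphism one.

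However, as written your argument has a genuine gap, because you repeatedly treat $B\lot_A B$ as a $B$-algebra. In the noncommutative setting, $B\lot_A B$ is only a $B$-bimodule: the would-be multiplication $(b_1\otimes b_2)(b_3\otimes b_4)\mapsto b_1b_3\otimes b_2b_4$ is not well defined over $A$. In particular, there is no ``$S$-inverting $B$-algebra under $A$'' to apply initiality to, so the sentence deducing a canonical map $B\to B\lot_A B$ ``by initiality'' is not available; you are implicitly conflating $B\lot_A B$ with the homotopy coproduct $B\ast^{\mathbb L}_A B$ in $\cat{dga}$, which is what appears in the definition of homotopy epimorphism. These agree up to quasi-isomorphism precisely when the map is a homological epimorphism, so using that identification here is circular. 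The alternative paragraph is also off: the homotopy pushout of $A\from k\langle S\rangle\to k\langle S,S^{-1}\rangle$ in $\cat{dga}$ is the derived amalgamated free product, not $A\lot_{k\langle S\rangle}k\langle S,S^{-1}\rangle$, and $k\langle S\rangle\to k\langle S,S^{-1}\rangle$ is a Cohn universal localisation rather than an Ore localisation, so the intermediate flatness claim is unjustified. Strip the algebra-level scaffolding and keep only the module-localisation argument (smashing plus one-sided inverse) and the proof goes through.
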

\begin{proof}
The map is a homotopy epimorphism by \cite[3.17]{bcl}, which by \cite[4.4]{clhomepi} is the same as a homological epimorphism.
	\end{proof}

	\begin{defn}[{\cite[4.2 and 7.1]{bcl}}]
		Let $X$ be an $A$-module. Say that $X$ is \textbf{$S$-local} if, for all $s\in S$, the map $s: X \to X$ is a quasi-isomorphism. Say that $X$ is \textbf{$S$-torsion} if $\R\hom_A(X,Y)$ is acyclic for all $S$-local modules $Y$. Let $D(A)_{S\text{-loc}}$ be the full subcategory of $D(A)$ on the $S$-local modules, and let $D(A)_{S\text{-tor}}$ be the full subcategory on the $S$-torsion modules.
	\end{defn}
	Similarly as for algebras, one defines the notion of the derived localisation $\mathbb{L}_S(X)$ of an $A$-module $X$. It is not too hard to prove the following:
	\begin{thm}[{\cite[4.14 and 4.15]{bcl}}]
		Localisation of modules is smashing, in the sense that $X \to X \lot_A \dloc$ is the derived localisation of $X$. Moreover, restriction of scalars gives an equivalence of $D(\dloc)$ with $D(A)_{S\text{-loc}}$.
	\end{thm}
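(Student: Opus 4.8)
The statement to prove is essentially two things: first, that the smashing localisation $X \mapsto X \lot_A \dloc$ of an $A$-module coincides with its derived localisation $\mathbb{L}_S(X)$; and second, that restriction of scalars along $A \to \dloc$ identifies $D(\dloc)$ with the subcategory $D(A)_{S\text{-loc}}$ of $S$-local modules. Since these are quoted as \cite[4.14 and 4.15]{bcl}, the proof should essentially follow Braun--Chuang--Lazarev, and I would organise it around the homological epimorphism property already established in \ref{homepi}.

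First I would treat the module-localisation claim. The key input is that $A \to \dloc$ is a homological epimorphism (\ref{homepi}), so the multiplication $\dloc \lot_A \dloc \to \dloc$ is a quasi-isomorphism of $\dloc$-modules. This immediately gives that for any $A$-module $X$, the canonical map $X \lot_A \dloc \to (X \lot_A \dloc)\lot_A \dloc$ is a quasi-isomorphism, i.e.\ $X \lot_A \dloc$ is already $S$-local (one checks $S$-locality is detected by tensoring up to $\dloc$, using that $s$ acts invertibly on $\dloc$ by construction). Then I would verify the universal property: given any $S$-local module $Y$, the restriction-of-scalars/extension-of-scalars adjunction together with the homological epimorphism property yields $\R\hom_A(X \lot_A \dloc, Y) \simeq \R\hom_{\dloc}(X \lot_A \dloc, Y) \simeq \R\hom_A(X,Y)$, so $X \to X \lot_A \dloc$ is initial among maps from $X$ to $S$-local modules — which is precisely the defining property of $\mathbb{L}_S(X)$. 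The word ``smashing'' records that this localisation functor is given by $- \lot_A \dloc$ and hence preserves coproducts.

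Next, for the equivalence $D(\dloc) \simeq D(A)_{S\text{-loc}}$: restriction of scalars along a homological epimorphism is fully faithful, because for $\dloc$-modules $M, N$ one has $\R\hom_A(M,N) \simeq \R\hom_{\dloc}(M \lot_A \dloc, N) \simeq \R\hom_{\dloc}(M,N)$, again using $M \lot_A \dloc \simeq M$. Its essential image lands in $D(A)_{S\text{-loc}}$ since any $\dloc$-module has $s$ acting invertibly. For essential surjectivity, given an $S$-local $A$-module $Y$, the localisation map $Y \to Y \lot_A \dloc$ is a quasi-isomorphism by the first part (as $Y$ is already $S$-local and the localisation is initial among such), exhibiting $Y$ as the restriction of the $\dloc$-module $Y \lot_A \dloc$. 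Combining, restriction of scalars is a fully faithful functor $D(\dloc) \to D(A)$ with essential image exactly $D(A)_{S\text{-loc}}$.

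The main obstacle, as usual in this circle of ideas, is the careful handling of the homotopy theory: one must work with cofibrant/fibrant replacements throughout so that $\lot_A$ and $\R\hom$ are genuinely derived, and one must know that $S$-locality is a condition that can be tested after base change to $\dloc$ (equivalently, that a module on which every $s \in S$ acts invertibly is quasi-isomorphic to the restriction of a $\dloc$-module). This last point is really the crux and is where the homological epimorphism property \ref{homepi} does the heavy lifting; everything else is formal manipulation of adjunctions. Since the excerpt explicitly allows citing \cite{bcl} for 4.14 and 4.15, I would present the above as a streamlined argument and refer to \textit{op.\ cit.}\ for the remaining bookkeeping.
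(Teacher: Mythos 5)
The paper does not prove this theorem itself — it cites it directly from Braun--Chuang--Lazarev — so there is no in-text argument to compare against. Evaluating your sketch on its own merits, the ingredients are the right ones (homological epimorphism, restriction/extension adjunction, locality of $X \lot_A \dloc$), but there is a genuine circularity that you gesture at in your final paragraph without resolving.

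The issue is in the universal-property step. You write, for $Y$ an $S$-local $A$-module,
\[
\R\hom_A(X \lot_A \dloc, Y) \;\simeq\; \R\hom_{\dloc}(X \lot_A \dloc, Y) \;\simeq\; \R\hom_A(X,Y),
\]
but both isomorphisms require $Y$ to already be (the restriction of) a $\dloc$-module: the first is fully-faithfulness of restriction along the homological epimorphism, which is a statement about $\R\hom$ between two $\dloc$-modules, and the second is the extension-of-scalars/restriction adjunction, which again requires $Y$ to carry a $\dloc$-structure. So this chain only verifies that $X \lot_A \dloc$ is initial among maps from $X$ to \emph{$\dloc$-modules}, not among maps to arbitrary $S$-local $A$-modules. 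Meanwhile, your proof that every $S$-local $Y$ is quasi-isomorphic to a $\dloc$-module (essential surjectivity) invokes exactly the smashing property you were in the middle of establishing. The two halves of your argument each presuppose the other.

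The step that actually breaks the circle — and that cannot be outsourced as mere ``bookkeeping'' — is showing directly that for $S$-local $Y$, the unit $Y \to Y \lot_A \dloc$ is a quasi-isomorphism. This amounts to showing $Y \lot_A \mathrm{fib}(A \to \dloc)$ is acyclic, which in turn requires knowing that $\mathrm{fib}(A \to \dloc)$ lies in the localising subcategory generated by the objects $\mathrm{cone}(s\colon A \to A)$ for $s \in S$; once you have that, $Y \lot_A \mathrm{cone}(s) \simeq \mathrm{cone}(s\colon Y \to Y) \simeq 0$ by $S$-locality of $Y$ finishes the job. That identification — that the $A$-module $\dloc$ (defined via the \emph{algebra} localisation pushout) agrees with the module localisation $\mathbb{L}_S(A)$ — is the real content of the cited BCL results, and is exactly what is hidden behind your phrase ``this last point is really the crux.'' As presented, your proposal names the crux but does not supply it, and the rest of the argument depends on it in a way that makes the whole thing circular rather than a proof modulo a technicality.
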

\begin{rmk}
	If $A\to B$ is a dga map then the three statements 
	\begin{itemize}
		\item $A \to B$ is a homological epimorphism.
		\item $A \to B$ induces an embedding $D(B)\to D(A)$.
		\item $-\lot_AB$ is a smashing localisation on $D(A)$.
		\end{itemize}
	are all equivalent \cite{phomepis}.
	\end{rmk}

	One defines a \textbf{colocalisation functor} pointwise by setting $\mathbb{L}^S(X)\coloneqq \mathrm{cocone}(X \to \mathbb{L}_S(X))$. An easy argument shows that $\mathbb{L}^S(X)$ is $S$-torsion. 
	\begin{defn}\label{colocdga}The \textbf{colocalisation} of $A$ along $S$ is the dga $$\mathbb{L}^S(A)\coloneqq \R\enn_A\left(\oplus_{s \in S}\,\mathrm{cone}(A \xrightarrow{s} A)\right).$$
	\end{defn}Note that the dga $\mathbb{L}^S(A)$ may differ from the colocalisation of the $A$-module $A$. If $S$ is a finite set, then the dga $\mathbb{L}^S(A)$ is a compact $A$-module, and we get the analogous:
	\begin{thm}[{\cite[7.6]{bcl}}]
		Let $S$ be a finite set. Then $D(\mathbb{L}^S(A))$ and $D(A)_{S\text{-tor}}$ are equivalent.
	\end{thm}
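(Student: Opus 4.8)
The plan is to recognise this as a standard consequence of the machinery already set up for derived localisation, specialised to the case of a finite localising set $S$. The target is Braun--Chuang--Lazarev's theorem \cite[7.6]{bcl}, so strictly speaking one only needs to assemble the pieces, but I would want to give a self-contained argument since the finite case is cleaner.

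First I would recall the colocalisation functor $\mathbb{L}^S(-)$ on $D(A)$, defined pointwise by $\mathbb{L}^S(X)=\mathrm{cocone}(X\to \mathbb{L}_S(X))$, and note that its essential image lands in $D(A)_{S\text{-tor}}$ (this is the easy argument already alluded to: $\R\hom_A(\mathbb{L}^S(X),Y)$ sits in a triangle with $\R\hom_A(\mathbb{L}_S X,Y)$ and $\R\hom_A(X,Y)$, and for $S$-local $Y$ the localisation map $X\to\mathbb{L}_S X$ becomes an isomorphism after $\R\hom_A(-,Y)$ by the universal property together with the module-level localisation result \cite[4.14, 4.15]{bcl}). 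Conversely, any $S$-torsion module $X$ satisfies $X\lot_A\mathbb{L}_S(A)\simeq 0$ since $\mathbb{L}_S(A)$ is $S$-local and $X$ is torsion; so $X\simeq\mathbb{L}^S(X)$, and hence $D(A)_{S\text{-tor}}$ is precisely the localising subcategory generated by the colocalisation of $A$ — here finiteness of $S$ is what guarantees $\mathbb{L}^S(A)=\R\enn_A\!\left(\bigoplus_{s\in S}\mathrm{cone}(A\xrightarrow{s}A)\right)$ is a compact object of $D(A)$, being built from finitely many shifts and cones of the compact generator $A$.

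The heart of the argument is then a standard tilting/Morita statement: if $T$ is a compact object of a compactly generated triangulated category $\mathcal T$, then the functor $\R\hom_{\mathcal T}(T,-):\mathcal T\to D(\R\enn_{\mathcal T}(T))$ restricts to an equivalence between the localising subcategory $\langle T\rangle$ generated by $T$ and $D(\R\enn_{\mathcal T}(T))$ — this is Keller's Morita theorem for dg categories, and the dg-enhanced versions of all categories involved make this rigorous. Applying this with $\mathcal T=D(A)$, $T=\bigoplus_{s\in S}\mathrm{cone}(A\xrightarrow{s}A)$, so that $\R\enn_{\mathcal T}(T)=\mathbb{L}^S(A)$ by Definition \ref{colocdga}, and identifying $\langle T\rangle$ with $D(A)_{S\text{-tor}}$ via the previous paragraph, gives an equivalence $D(\mathbb{L}^S(A))\simeq D(A)_{S\text{-tor}}$. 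I would phrase the compatibility carefully: $\mathrm{cone}(A\xrightarrow{s}A)$ is $S$-torsion for each $s$ (it is annihilated by $s$, hence by the localisation of $A$ at $S$), so $T\in D(A)_{S\text{-tor}}$, and $D(A)_{S\text{-tor}}$ is a localising subcategory closed under the relevant operations, whence $\langle T\rangle\subseteq D(A)_{S\text{-tor}}$; the reverse inclusion is the content of the colocalisation discussion.

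The main obstacle, I expect, is not any single hard step but rather being scrupulous about the distinction between the dga $\mathbb{L}^S(A)$ and the module-level colocalisation $\mathbb{L}^S$ applied to $A$ — the text explicitly warns these can differ — together with checking that $T$ genuinely generates $D(A)_{S\text{-tor}}$ as a localising subcategory rather than merely a thick subcategory. For the latter one uses that $\mathbb{L}_S(A)$ and $A$ together generate $D(A)$, that the localisation triangle $\mathbb{L}^S(A)\to A\to\mathbb{L}_S(A)$ exhibits $D(A)$ as glued from $D(\dloc)$ and $D(A)_{S\text{-tor}}$ (a recollement, cf.\ the $S$-local/$S$-torsion dichotomy of \cite[\S4,\S7]{bcl}), and that in such a recollement the torsion side is generated by the image of a compact generator under colocalisation. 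Modulo that bookkeeping, the proof is a direct appeal to \cite[7.6]{bcl} combined with the Morita-theoretic identification above.
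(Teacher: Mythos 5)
The paper does not prove this statement; it cites it directly from Braun--Chuang--Lazarev \cite[7.6]{bcl}, so there is no internal argument to compare yours against. Your reconstruction is essentially the standard one and is correct in substance: recognise $T\coloneqq\bigoplus_{s\in S}\mathrm{cone}(A\xrightarrow{s}A)$ as a compact object of $D(A)$ lying in $D(A)_{S\text{-tor}}$, invoke Keller's Morita theorem to get $D(\R\enn_A(T))\simeq\langle T\rangle$, and identify $\langle T\rangle$ with $D(A)_{S\text{-tor}}$.

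Two points where the write-up could be tightened. First, the cleanest way to establish $\langle T\rangle=D(A)_{S\text{-tor}}$ is by a direct orthogonality computation rather than routing through the colocalisation of $A$: for any $Y$, one has $\R\hom_A(\mathrm{cone}(A\xrightarrow{s}A),Y)\simeq\mathrm{cocone}(Y\xrightarrow{s}Y)$, so $\R\hom_A(T,Y)$ is acyclic iff $s\colon Y\to Y$ is a quasi-isomorphism for every $s\in S$, i.e.\ iff $Y$ is $S$-local. Hence $\langle T\rangle^{\perp}=D(A)_{S\text{-loc}}$; since $T$ is compact, $\langle T\rangle={}^{\perp}\bigl(\langle T\rangle^{\perp}\bigr)={}^{\perp}D(A)_{S\text{-loc}}=D(A)_{S\text{-tor}}$, which requires no appeal to colocalisation at all. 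Second, your final paragraph says the torsion side of the recollement is ``generated by the image of a compact generator under colocalisation'' --- but the module-level colocalisation of $A$ is in general \emph{not} compact in $D(A)$ (that is exactly the caveat the paper flags after \ref{colocdga}); the compact object that generates is $T$, not $\mathbb{L}^{S}$ applied to the module $A$. You are clearly aware of this distinction since you flag it as the main obstacle, but the phrasing as written risks conflating the two. With the orthogonality argument substituted in, the proof is complete.
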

	Neeman--Thomason--Trobaugh--Yao localisation gives the following:
	\begin{thm}[{\cite[7.3]{bcl}}]\label{ntty}
		Let $S$ be finite. Then there is a sequence of dg categories $$\per\mathbb{L}^S(A) \to \per A \to \per \dloc$$which is exact up to direct summands.
	\end{thm}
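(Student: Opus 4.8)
The plan is to exhibit the sequence as the Neeman--Thomason--Trobaugh--Yao localisation sequence associated to the smashing localisation $-\lot_A \dloc$ on $D(A)$. First I would observe that by \ref{homepi} the localisation map $A \to \dloc$ is a homological epimorphism, so that restriction of scalars embeds $D(\dloc)$ as the full subcategory $D(A)_{S\text{-loc}}$ of $S$-local modules, and the localisation functor $X \mapsto X \lot_A \dloc$ is a smashing Bousfield localisation on $D(A)$ with kernel $D(A)_{S\text{-tor}}$. Since $S$ is finite, $\mathbb{L}^S(A)$ is a compact object of $D(A)$ (being the derived endomorphism dga of a finite direct sum of cones of maps between copies of the compact generator $A$), and by the torsion equivalence theorem cited just above (\cite[7.6]{bcl}) restriction of scalars identifies $D(\mathbb{L}^S(A))$ with $D(A)_{S\text{-tor}}$. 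Thus we have a recollement-type situation: a smashing localisation $D(A)_{S\text{-tor}} \to D(A) \to D(A)_{S\text{-loc}}$ with compactly generated kernel and quotient.

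Next I would invoke the Thomason--Neeman localisation theorem (in the dg-enhanced form, e.g.\ via Neeman's work on compactly generated categories, or directly via \cite{ntty}-type statements in \cite{bcl}): for a smashing localisation of a compactly generated triangulated category whose acyclic objects form a compactly generated subcategory, passing to compact objects yields a sequence that is exact \emph{up to direct summands} (idempotent completion). Concretely, the compact objects of $D(A)_{S\text{-tor}}$ are (summands of) $\per \mathbb{L}^S(A)$, the compact objects of $D(A)$ are $\per A$, and the compact objects of $D(A)_{S\text{-loc}}\simeq D(\dloc)$ are (summands of) $\per \dloc$; the composite $\per \mathbb{L}^S(A) \to \per A \to \per \dloc$ is zero, the first functor is fully faithful onto a thick subcategory, and the induced functor on the Verdier quotient $\per A / \per \mathbb{L}^S(A) \to \per \dloc$ is fully faithful with dense (i.e.\ essentially surjective up to summands) image. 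This is precisely the assertion that the displayed sequence of dg categories is exact up to direct summands. Finally I would note the statement should really be read at the level of dg categories (using the dg enhancements $\cat{per}_\mathrm{dg}$ of Definition \ref{dgcatlist}), and that the dg quotient $\cat{per}_\mathrm{dg}(A)/\cat{per}_\mathrm{dg}(\mathbb{L}^S(A))$ computes the Verdier quotient on homotopy categories by Theorem \ref{drinfeldpretr}, so the dg and triangulated formulations agree.

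The main obstacle is verifying the hypotheses of the localisation theorem rather than the theorem itself: one must check that $D(A)_{S\text{-tor}}$ is compactly generated \emph{inside} $D(A)$ by the single object $\mathbb{L}^S(A)$ (so that the kernel of the localisation is the localising subcategory generated by a compact object of the ambient category), and that $\dloc$ is itself a compact $A$-module is \emph{not} needed but one does need that the generator $A$ of $D(A)$ maps to a generator $\dloc$ of $D(\dloc)$ — this is automatic since restriction of scalars along a homological epimorphism is fully faithful and $-\lot_A\dloc$ is its left adjoint sending the generator to the generator. The compact generation of the torsion subcategory is exactly the content hidden in \cite[7.6]{bcl}; granting that input, the rest is a formal application of Neeman--Thomason--Trobaugh--Yao. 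I would therefore structure the proof as: (i) recall the smashing localisation and its compactly generated kernel from \cite{bcl}; (ii) quote the Neeman--Thomason localisation theorem; (iii) read off the exact-up-to-summands sequence on compact objects; (iv) pass to dg enhancements via \ref{drinfeldpretr}.
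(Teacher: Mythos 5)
The paper does not give a proof of this theorem; it is cited verbatim from Braun--Chuang--Lazarev \cite[7.3]{bcl}, so there is no in-paper argument to compare against. That said, your reconstruction is essentially the standard (and, as far as I can tell, the intended) argument: recognise $D(A)_{S\text{-tor}} \to D(A) \to D(A)_{S\text{-loc}}\simeq D(\dloc)$ as a smashing localisation with compactly generated kernel and apply Neeman--Thomason--Trobaugh--Yao on compact objects, then note the dg enhancement via \ref{drinfeldpretr}. One small imprecision worth tidying: the object of $D(A)$ that is compact and generates the torsion subcategory is $\bigoplus_{s\in S}\mathrm{cone}(A\xrightarrow{s}A)$, not the dga $\mathbb{L}^S(A)$ itself; the dga is its derived endomorphism algebra, and it is the Morita equivalence $D(\mathbb{L}^S(A))\simeq D(A)_{S\text{-tor}}$ (equivalently, $\per\mathbb{L}^S(A)\simeq\thick_{D(A)}(\bigoplus_s\mathrm{cone}(A\xrightarrow{s}A))$) that lets you replace the torsion compacts by $\per\mathbb{L}^S(A)$. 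With that phrasing corrected, the argument is complete and matches the cited source.
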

	\begin{rmk}[{\cite[7.9]{bcl}}]\label{bclrcl}
		The localisation and colocalisation functors fit into a recollement $$\begin{tikzcd}[column sep=huge]
		D(A)_{S\text{-loc}} \ar[r]& D(A)\ar[l,bend left=25]\ar[l,bend right=25]\ar[r] & D(A)_{S\text{-tor}}\ar[l,bend left=25]\ar[l,bend right=25]
		\end{tikzcd}$$We will see a concrete special case of this in \ref{recoll}.
	\end{rmk}

	\begin{defn}[{\cite[9.1 and 9.2]{bcl}}]
		Let $A$ be a dga and let $e$ be an idempotent in $H^0(A)$. The \textbf{derived quotient} $\dq$ is the derived localisation $\mathbb{L}_{1-e}A$.
	\end{defn}
	Clearly, $\dq$ comes with a natural quotient map from $A$. One can write down an explicit model for $\dq$, at least when $k$ is a field.
	\begin{prop}\label{drinfeld}
		Let $A$ be a dga over $k$, and let $e\in H^0(A)$ be an idempotent. Then the derived quotient $\dq$ is quasi-isomorphic as an $A$-dga to the dga $$B\coloneqq \frac{A\langle h \rangle}{(he=eh=h)}\;, \quad d(h)=e$$with $h$ in degree -1.
	\end{prop}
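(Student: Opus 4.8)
The plan is to identify $B$ as a concrete model for the derived localisation $\mathbb{L}_{1-e}A$ by exhibiting it as the homotopy pushout
$$A \xleftarrow{\ } k\langle s \rangle \xrightarrow{\ } k\langle s, s^{-1}\rangle$$
where $s$ maps to $1-e$ under the left-hand map, as in the remark after the definition of derived localisation. Since derived localisation is quasi-isomorphism invariant, it suffices to model this homotopy pushout by an explicit strict pushout along a cofibration. First I would observe that $B$ is manifestly an $A$-dga: the differential $d(h)=e$ is compatible with the relations $he=eh=h$ (indeed $d(he)=d(h)e=e^2=e=d(h)$, and similarly on the other side), and $B$ is a free $A$-bimodule on the generators $1$ and $h$ with $h$ in degree $-1$, so it is semifree over $A$ hence a cofibration $A \into B$. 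The key point to verify is that $B$ computes the localisation, i.e.\ that the image of $1-e$ becomes invertible in $H^0(B)$ and that $B$ is initial among such $A$-dgas in the derived under-category.

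The core computation is that $1-e+h$ is a cocycle in $B$ that witnesses the homotopy-invertibility of $1-e$. Indeed $d(1-e+h)= -d(e) + d(h) = 0 + e = e$; wait — one should instead compute with the element $u \coloneqq 1 - h$ or adjust signs: the right statement is that modulo the relations, $1-e$ acts invertibly up to the contracting homotopy $h$. Concretely I would check that the left $A$-module map $A \xrightarrow{\cdot(1-e)} A$ becomes a quasi-isomorphism after applying $-\otimes_A B$, using $h$ as the explicit chain homotopy; equivalently that $e$ maps to zero in $D(B)$ via the null-homotopy $h$. This is the Drinfeld-quotient picture alluded to in the introduction: one freely adjoins a degree $-1$ element $h$ killing $e$, subject to the bimodule relations $eh=he=h$ that make $h$ "live on the $eAe$-free part". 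Then I would invoke the universal property: given any $S$-inverting $A$-dga $f:A\to Y$ with $f(1-e)$ invertible in $HY$, the class $[f(e)]$ is a homotopy-idempotent that is homotopic to zero (since $f(e)=1-f(1-e)$ and $f(1-e)$ is invertible forces $f(e)$ nilpotent-like, hence null up to homotopy in the appropriate sense), so there is a contracting homotopy realizing $f(h)$, giving an essentially unique extension $B\to Y$ in the derived under-category. Matching this against the homotopy pushout presentation of $\mathbb{L}_{1-e}A$ finishes the identification.

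The main obstacle I anticipate is the careful bookkeeping around \emph{homotopy} invertibility versus strict invertibility, and making the universal-property argument precise at the level of the derived under-category rather than the strict under-category: one must check that $B$ is cofibrant over $A$ (which the semifree structure gives), that the extension $B\to Y$ exists and is unique \emph{up to homotopy}, and that no higher coherence data obstructs this. A clean way to sidestep some of this is to directly match $B$ with the strict pushout $A \amalg_{k\langle s\rangle} k\langle s,s^{-1}\rangle$ computed via a cofibrant model of $k\langle s,s^{-1}\rangle$ over $k\langle s\rangle$ — e.g.\ the standard mapping-cylinder-type resolution adjoining a degree $-1$ variable $h$ with $d(h) = 1 - s h'$ — and then perform the base change along $s \mapsto 1-e$, simplifying using $e(1-e)=0$ to arrive at exactly the presentation $\frac{A\langle h\rangle}{(he=eh=h)}$ with $d(h)=e$. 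I would present the argument in this second form, since it reduces everything to a strict computation plus the already-cited invariance results from \cite{bcl}, and I expect the proof in the paper to be short precisely because it defers to that homotopy-pushout description.
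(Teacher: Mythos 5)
The paper's proof is a one-line citation: the statement is precisely \cite[9.6]{bcl}, and since $k$ is a field, $A$ is flat over $k$ so the ``left proper'' hypothesis of that result is satisfied. Your attempt instead tries to reconstruct the BCL argument from the homotopy-pushout characterisation of derived localisation, which is a legitimate thing to want to do — but the proof as written has a genuine gap at the step you yourself flag as the ``core computation''. You claim $1-e+h$ is a cocycle witnessing homotopy-invertibility of $1-e$, then compute $d(1-e+h) = e \neq 0$ and realise mid-sentence that this fails; the fallback ``compute with $u \coloneqq 1-h$'' does not help either, since $d(1-h) = -e$, and neither element is even homogeneous. The correct and much simpler observation here is that $e = dh$ is a coboundary in $B$, so $[e]=0$ and hence $[1-e]=[1]$ in $H^0(B)$ tautologically; no clever cocycle is needed. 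This is not a cosmetic slip — it is the step that was supposed to carry the argument that $B$ is $(1-e)$-inverting.

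There are two further places where the argument is not yet a proof. First, in the universal property direction, you correctly observe that if $f\colon A\to Y$ inverts $[1-e]$ then the idempotent $[f(e)]=1-[f(1-e)]$ must be zero (an idempotent $p$ with $1-p$ invertible is zero, since $p(1-p)=0$), so $f(e)$ admits some null-homotopy $\gamma$. But you then need $h\mapsto\gamma$ to respect the relations $eh=he=h$, which an arbitrary null-homotopy will not do; one must first replace $\gamma$ by $f(e)\gamma f(e)$ and check that this is still a null-homotopy, using $d(f(e))=0$ and $f(e)^2=f(e)$. That check is the point of the relations in the presentation of $B$, and it is precisely the kind of detail a blind proof needs to supply. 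Second, the alternative route of base-changing a cofibrant resolution of $k\langle s, s^{-1}\rangle$ over $k\langle s\rangle$ is left as a gesture ($d(h)=1-sh'$) with no verification that the pushout simplifies to the stated presentation; as written it does not constitute a proof. With these repairs the approach would work and would essentially reproduce the argument in \cite[\S 9]{bcl}; the paper sidesteps all of it by citing that result directly.
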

	\begin{proof}
		This is essentially \cite[9.6]{bcl}; because $k$ is a field, $A$ is flat (and in particular left proper) over $k$. The quotient map $A \to B$ is the obvious one.
	\end{proof}
\begin{rmk}This specific model for $\dq$ is an incarnation of the Drinfeld quotient: see \cite[9.7]{bcl} for details.
	\end{rmk}
\begin{rmk}
	In particular, this is a concrete model for $\dq$ as an $A$-bimodule.
	\end{rmk}
	\section{The cohomology of the derived quotient}
	Let $A$ be a dga and let $e\in A$ be an idempotent. Write $R$ for the cornering\footnote{We take this terminology from \cite{cikcorner}; the motivating example is to take one of the obvious nontrivial idempotents in $M_2(k)$ to obtain a subalgebra (isomorphic to $k$) on matrices with entries concentrated in one corner.} $eAe$. We will investigate the cohomology of the derived quotient $Q\coloneqq \dq$.
	\begin{defn}\label{celldef}
The \textbf{cellularisation functor}, denoted by $\cell : D(A) \to D(A)$, is the functor that sends $M$ to $Me\lot_R eA$.
		\end{defn}
	\begin{rmk}
		The name and the notation for $\cell$ comes from Dwyer and Greenlees \cite{dgcompletetorsion}.
		\end{rmk}
	In particular, the cellularisation of $A$ itself is the bimodule $Ae\lot_R eA$. Note that this admits an $A$-bilinear multiplication map $\mu: \cell A \to A$ which has image the submodule $AeA\into A$.
		\begin{prop}\label{dqexact}
		There is an exact triangle of $A$-bimodules $\cell A \xrightarrow{\mu} A \to Q \to $.
	\end{prop}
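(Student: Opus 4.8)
The plan is to identify the derived quotient $Q = \dq$ with the derived localisation $\mathbb{L}_{1-e}A$ and exploit the general localisation machinery of Braun--Chuang--Lazarev recalled in \S\ref{derloc}. The key point is that $Q$ is the $(1-e)$-localisation of $A$, and that the cellularisation $\cell A = Ae \lot_R eA$ should be the $(1-e)$-\emph{colocalisation} of the $A$-module $A$, so that the triangle we want is precisely the colocalisation-localisation triangle $\mathbb{L}^S(A) \to A \to \mathbb{L}_S(A) \to$ applied to $S = \{1-e\}$.

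First I would check that $\mathbb{L}^{1-e}(A) \simeq \cell A$ as $A$-bimodules. Since $1-e$ is itself idempotent, the element $1-e : A \to A$ acting by left (or right) multiplication splits $A$ as $A \simeq eA \oplus (1-e)A$ of left modules (and dually on the right), so $\mathrm{cone}(A \xrightarrow{1-e} A)$ is quasi-isomorphic to $(1-e)A[1] \oplus$ (something contractible in a suitable sense); more precisely one computes directly that the $(1-e)$-localisation of the $A$-module $A$ is $eA e \lot_R$-something, but the cleanest route is: the colocalisation functor is $\mathbb{L}^{1-e}(X) = \mathrm{cocone}(X \to \mathbb{L}_{1-e}(X))$ pointwise, and by the smashing property (the theorem of \cite[4.14,4.15]{bcl} quoted in the excerpt) $\mathbb{L}_{1-e}(A) = A \lot_A \dq \simeq Q$. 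Hence by definition $\mathbb{L}^{1-e}(A) = \mathrm{cocone}(A \to Q)$, and so there is automatically an exact triangle $\mathbb{L}^{1-e}(A) \to A \to Q \to$ of $A$-bimodules (the bimodule structure being the canonical one, as remarked after the definition of $\dq$). It then remains only to identify $\mathbb{L}^{1-e}(A)$ with $\cell A = Ae \lot_R eA$ together with its multiplication map $\mu$; this is the content of the Dwyer--Greenlees picture, where the cellularisation (= $S$-torsion colocalisation) of $A$ along an idempotent complement $1-e$ is exactly $Ae \lot_{eAe} eA$. Concretely, one uses that $Ae$ is an $A$-$R$-bimodule, $eA$ an $R$-$A$-bimodule, that $eA \lot_R Ae \simeq eAe = R$ since $eA$ is projective over $A$ (it is a summand of $A$), and that therefore $Ae \lot_R eA$ is $(1-e)$-torsion while $\mathrm{cone}(\mu : Ae \lot_R eA \to A)$ is $(1-e)$-local. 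This last pair of verifications pins down $Ae \lot_R eA \xrightarrow{\mu} A$ as the colocalisation-localisation sequence.

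The one technical wrinkle — and the step I would be most careful about — is the module-versus-dga distinction flagged in the excerpt right before Definition \ref{colocdga}: the dga $\mathbb{L}^S(A) = \R\enn_A(\mathrm{cone}(A \xrightarrow{1-e} A))$ need not coincide with the colocalisation of $A$ as an $A$-module. Fortunately the proposition only asserts an exact triangle of $A$-\emph{bimodules}, so I only need the module-level colocalisation, and here the idempotency of $1-e$ makes everything transparent: $\mathrm{cone}(A \xrightarrow{1-e} A)$ is, up to quasi-isomorphism of $A$-modules, just $eA$ (placed appropriately), the cellularisation $Me \lot_R eA$ literally applied to $M = A$ gives $Ae \lot_R eA$, and the counit of the adjunction $(-)e \dashv (-\lot_R eA)$ — no, rather the obvious evaluation — supplies $\mu$. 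So the main obstacle is really just bookkeeping: assembling the bimodule structures coherently and checking the torsion/local characterisation, rather than any genuine homological difficulty. I would write the proof in the order: (1) recall $Q = \mathbb{L}_{1-e}A$ and the smashing identification $Q \simeq A \lot_A Q$; (2) produce the tautological triangle $\mathrm{cocone}(A \to Q) \to A \to Q \to$ of bimodules; (3) identify $\mathrm{cocone}(A \to Q)$ with $Ae \lot_R eA$ and the map $A$-ward with $\mu$, using $eA \lot_R Ae \simeq R$ and the torsion/local dichotomy.
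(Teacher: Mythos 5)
Your proposal is essentially correct and follows the same route as the paper's main proof: both identify $\mathrm{cocone}(A \to Q)$ with $\cell A$ via the Dwyer--Greenlees cellularisation/nullification framework. The paper is simply more terse, observing that $A \to Q$ is the Bousfield localisation at the compact module $Ae$ and then citing \cite[4.8]{dgcompletetorsion} to conclude that the homotopy fibre is the cellularisation $Ae\lot_R eA$; you instead unwind the torsion/local verification by hand, which amounts to the same thing. Two small corrections. First, the expression $eA \lot_R Ae$ does not typecheck, since $eA$ is a \emph{left} $R$-module and a right $A$-module; the intended isomorphism is $eA \otimes_A Ae \cong eAe \cong R$, and it needs no derived decoration because $eA$ and $Ae$ are projective $A$-modules. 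Second, you should say a word about why the resulting identification $\cell A \simeq \mathrm{cocone}(A \to Q)$ holds as $A$-\emph{bimodules} rather than merely as one-sided modules: the abstract torsion/local uniqueness argument most naturally lives in the one-sided module category, whereas the paper's citation of Dwyer--Greenlees hands you a bilinear cellularisation for free. One way to supply this yourself is to note that both $\cell A \to A$ and $A \to Q$ are visibly bimodule maps, that $\cell A \lot_A Q \simeq Ae\lot_R eQ$ is acyclic because $Q$ is $e$-killing, and that smashing then forces $\mathrm{cone}(\mu) \simeq \mathrm{cone}(\mu)\lot_A Q \simeq Q$ as bimodules once you have checked $\mathrm{cone}(\mu)$ is $(1-e)$-local. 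Finally, be aware that the paper also records a second, entirely different proof valid when $A$ is an ungraded algebra: it works directly with the explicit Drinfeld-quotient model $B$ of \ref{drinfeld} and identifies $(\tau_{\leq -1}B)[-1]$ with the relative bar complex computing $\tor^{R/k}(Ae,eA) \simeq \cell A$. That argument sidesteps the torsion/local machinery entirely and is the one actually invoked later in \ref{qisolem}.
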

	\begin{proof}Forget the algebra structure on $Q$ and view it as an $A$-bimodule; recall that the localisation map $A \to Q$ is the derived localisation of the $A$-module $A$. Observe that the localisation map $A \to Q$ is also the localisation of the $A$-module $A$ at the perfect module $Ae$, in the sense of \cite{dgcompletetorsion}. Thus by \cite[4.8]{dgcompletetorsion}, the homotopy fibre of $A \to Q$ is the cellularisation of $Q$.
		\end{proof}
When $A$ is an ungraded algebra, then one can write down a much more explicit proof using the `Drinfeld quotient' model for $Q$. We do this below, since we will need to use some facts about this explicit model later.
\begin{lem}\label{drinfeldmodel}
Suppose that $A$ is an ungraded algebra with an idempotent $e\in A$. Put $R\coloneqq eAe$ the cornering. Let $B$ be the dga of \ref{drinfeld} quasi-isomorphic to $Q$. Then:
\begin{enumerate}
	\item Let $n>0$ be an integer. There is an $A$-bilinear isomorphism $$B^{-n}\cong Ae \otimes R\otimes\cdots \otimes R \otimes eA$$where the tensor products are taken over $k$ and there are $n$ of them.
	\item Let $n>0$. The differential $B^{-n} \to B^{-n+1}$ is the Hochschild differential, which sends $$x_0\otimes \cdots \otimes x_n\mapsto\sum_{i=0}^{n-1} (-1)^i x_0\otimes\cdots \otimes x_ix_{i+1}\otimes \cdots \otimes x_n.$$
		\item Let $n,m>0$ and let $a\in B^0=A$. Let $x=x_0\otimes \cdots \otimes x_n \in B^{-n}$ and $y=y_0\otimes \cdots \otimes y_m\in B^{-m}$. Then we have
		\begin{align*}
		xy &= x_0\otimes \cdots \otimes x_n y_0\otimes \cdots \otimes y_m
		\\ ax&=ax_0\otimes \cdots \otimes x_n
		\\ xa&=x_0\otimes \cdots \otimes x_na.
		\end{align*}
\end{enumerate}	
	\end{lem}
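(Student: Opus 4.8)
The plan is to produce an explicit model for $Q = \dq$ when $A$ is ungraded and then simply read off the three claims from the construction. The starting point is Proposition \ref{drinfeld}, which identifies $Q$ up to $A$-dga quasi-isomorphism with $B = A\langle h\rangle/(he=eh=h)$, $d(h)=e$, $|h|=-1$. The whole lemma is really just a careful bookkeeping exercise about this one presentation, so the work is organisational rather than conceptual. First I would describe a $k$-linear basis for $B$ in each cohomological degree. Since $h$ has degree $-1$ and $B^0 = A$, an arbitrary element of $B^{-n}$ is a $k$-linear combination of words $a_0 h a_1 h \cdots h a_n$ with $a_i \in A$ and exactly $n$ occurrences of $h$. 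The relations $eh = he = h$ let us normalise: $a_0 h = a_0 e h$, $h a_n = h e a_n$, and each internal $h a_i h = h e a_i e h$. So every word can be rewritten with $a_0 \in Ae$, $a_n \in eA$, and each internal $a_i \in eAe = R$. This gives the $A$-bilinear identification $B^{-n} \cong Ae \otimes_k R^{\otimes (n-1)} \otimes_k eA$ claimed in (1) (with the convention that for $n=1$ there are no internal tensor factors, giving $Ae \otimes_k eA$, and one checks the count of tensor factors matches the statement's indexing $x_0 \otimes \cdots \otimes x_n$, i.e.\ $n+1$ factors for $B^{-n}$).

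For (2): the differential on $B$ is the unique derivation extending $d(h) = e$ and $d|_A = 0$, subject to the Leibniz rule with Koszul signs. Applying this to a normalised word $x_0 h x_1 h \cdots h x_n$, the derivation differentiates each of the $n$ copies of $h$ in turn, producing $e$ in its place; the Koszul sign when $d$ passes the first $i$ copies of $h$ (each of degree $-1$, together with the degree-$0$ elements $x_j$) is $(-1)^i$. After replacing the $i$-th $h$ by $e$ and using the relations to re-absorb $e$ into the neighbouring normalised tensor factors — here one uses $x_i e = x_i$ if $x_i \in Ae$ or $x_i \in R$, and $e x_{i+1} = x_{i+1}$ if $x_{i+1} \in R$ or $x_{i+1} \in eA$ — the $i$-th term becomes $x_0 \otimes \cdots \otimes x_i x_{i+1} \otimes \cdots \otimes x_n$, and the total is exactly the Hochschild-type differential written in the statement. (One should check the endpoint cases $i=0$ and $i=n-1$ land in the right normalised pieces, but these are immediate from the module structure $Ae \cdot R = Ae$ and $R \cdot eA = eA$.)

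For (3): multiplication in $B$ is just concatenation of words in the free-product-type algebra $A\langle h\rangle$ followed by normalisation. Concatenating $x_0 h \cdots h x_n$ with $y_0 h \cdots h y_m$ gives $x_0 h \cdots h (x_n y_0) h \cdots h y_m$; normalising, $x_n y_0 \in eA \cdot Ae = eAe = R$ sits correctly as an internal factor, yielding $x_0 \otimes \cdots \otimes x_n y_0 \otimes \cdots \otimes y_m \in B^{-(n+m)}$, which has $(n+1) + (m+1) - 1 = n+m+1$ factors as required. Left and right multiplication by $a \in A = B^0$ are handled the same way: $a \cdot (x_0 h \cdots h x_n) = (a x_0) h \cdots h x_n$ with $a x_0 \in A \cdot Ae = Ae$, and symmetrically on the right. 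Finally I would note that the $A$-bilinearity of the identification in (1) is precisely what makes these formulas $A$-bilinear, and that the model $B$ represents $Q$ as an $A$-bimodule by the remark following \ref{drinfeld}, so all three statements transfer to $Q$.

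The main obstacle — such as it is — is purely notational: keeping the normalisation conventions and the index ranges ($n+1$ tensor factors in degree $-n$ versus the "$R$ appears $n-1$ times" phrasing, and the degenerate case $n=1$) consistent across the three parts, and getting the Koszul signs in the differential right. There is no real mathematical difficulty: everything follows mechanically from the presentation in \ref{drinfeld} once the basis of normalised words is in hand. I would therefore write the proof as: (i) establish the normalised-word basis and deduce (1); (ii) compute $d$ on a normalised word using the Leibniz rule and deduce (2); (iii) compute concatenation and deduce (3).
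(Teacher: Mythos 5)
Your proof is correct and takes the same approach as the paper's: the paper likewise identifies elements of $B^{-n}$ with normalised paths $x_0 h x_1 h \cdots h x_n$ (with $x_0 \in Ae$, $x_n \in eA$, internal $x_j \in R$), replaces $h$ by tensor symbols for (1), applies the Leibniz rule with $d(h)=e$ for (2), and treats (3) as immediate from the presentation. Your write-up is simply a more detailed and careful version of the same argument, including the bookkeeping on tensor-factor counts and the degenerate case $n=1$ that the paper leaves implicit.
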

\begin{proof}
	For the first claim, observe that a generic element of $B^{-n}$ looks like a path $x_0h\cdots hx_n$ where $x_0=x_0e$, $x_n=ex_n$, and $x_j=ex_je$ for $0<j<n$. Replacing occurrences of $h$ with tensor product symbols gives the claimed isomorphism. For the second claim, because $h$ has degree $-1$ we must have $$d(x_0h\cdots hx_n)=\sum_i (-1^i)x_0h\cdots hx_id(h)x_{i+1}h \cdots hx_n$$ but because $d(h)=e$ we have $x_id(h)x_{i+1}=x_ix_{i+1}$. The third claim is clear from the definition of $B$.
	\end{proof}

	\begin{proof}[Alternate proof of \ref{dqexact} when $A$ is ungraded]Consider the shifted bimodule truncation $$T:=(\tau_{\leq-1}B)[-1] \simeq \cdots \to Ae\otimes R\otimes R\otimes eA\to Ae\otimes R\otimes eA\to Ae\otimes eA$$with $Ae\otimes eA$ in degree zero. By \ref{drinfeldmodel}(2) we see that this truncation is exactly the complex that computes the relative Tor groups $\tor^{R/k}(Ae,eA)$ \cite[8.7.5]{weibel}. Since $k$ is a field, the relative Tor groups are the same as the absolute Tor groups, and hence $T$ is quasi-isomorphic to $\cell A$. Because we have $B\simeq \mathrm{cone}(T \xrightarrow{\mu} A)$ we are done.
	\end{proof}
	\begin{rmk}
		The exact triangle $\cell A \to A \to Q \to $ also appears in \cite[\S 7]{kalckyang2}.
	\end{rmk}
	The following is immediately obtained by considering the long exact sequence associated to the exact triangle $\cell A \xrightarrow{\mu} A \to Q \to $.
	\begin{cor}\label{derquotcohom}
		Let $A$ be an algebra over a field $k$, and let $e\in A$ be an idempotent.
		Then the derived quotient $\dq$ is a nonpositive dga with cohomology spaces 
		$$H^j(\dq)\cong\begin{cases}
		0 & j>0
		\\ A/AeA & j=0
		\\ \ker(Ae\otimes_{R}eA \to A) & j=-1
		\\ \tor^{R}_{-j-1}(Ae,eA) & j<-1
		\end{cases}$$
	\end{cor}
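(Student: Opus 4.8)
The plan is to read off the cohomology directly from the exact triangle $\cell A \xrightarrow{\mu} A \to \dq \to$ established in \ref{dqexact}, together with the explicit chain-level model for $\cell A$ identified in the alternate proof of that proposition. Recall that $\cell A = Ae\lot_R eA$, and that this is computed by the complex
\[
\cdots \to Ae\otimes R\otimes R\otimes eA\to Ae\otimes R\otimes eA\to Ae\otimes eA
\]
concentrated in nonpositive degrees (with $Ae\otimes eA$ in degree $0$), since $k$ is a field so relative and absolute Tor over $R$ agree. Hence $H^{-i}(\cell A)\cong \tor^R_i(Ae,eA)$ for all $i\geq 0$, with $H^0(\cell A)=Ae\otimes_R eA$ (there is no higher positive cohomology as the complex is nonpositively graded).

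First I would take the long exact sequence in cohomology associated to the triangle $\cell A \xrightarrow{\mu} A \to \dq \to$. Since $A$ is an ungraded algebra it is concentrated in degree $0$, so $H^j(A)=A$ for $j=0$ and vanishes otherwise. The long exact sequence then immediately gives $H^j(\dq)=0$ for $j>0$, and for each $j<-1$ an isomorphism $H^j(\dq)\cong H^{j-1}(\cell A)\cong \tor^R_{-j-1}(Ae,eA)$, since the neighbouring terms $H^{j}(A)$ and $H^{j-1}(A)$ both vanish. For the top two degrees one looks at the segment
\[
H^{-1}(\cell A) \to 0 \to H^{-1}(\dq) \to H^0(\cell A)\xrightarrow{H^0\mu} H^0(A) \to H^0(\dq)\to 0,
\]
which shows $H^{-1}(\dq)\cong \ker(H^0\mu)$ and $H^0(\dq)\cong \coker(H^0\mu)$. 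It remains to identify $H^0\mu$: by construction $\mu\colon Ae\lot_R eA \to A$ is the multiplication map, so on $H^0$ it is the map $Ae\otimes_R eA \to A$, $ae\otimes eb\mapsto aeeb$, whose image is exactly the two-sided ideal $AeA$. Thus $H^0(\dq)\cong A/AeA$ and $H^{-1}(\dq)\cong\ker(Ae\otimes_R eA\to A)$, as claimed.

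The main thing to be careful about — rather than a genuine obstacle — is the bookkeeping: confirming that the truncation $T=(\tau_{\leq -1}B)[-1]$ used in the alternate proof of \ref{dqexact} really is the standard (relative) bar complex computing $\tor^R$, which is exactly the content of \ref{drinfeldmodel}(1) and (2), and then matching the degree shift so that $\tor^R_i$ lands in cohomological degree $-i-1$ of $\dq$. Once the identification $H^{-i}(\cell A)\cong\tor^R_i(Ae,eA)$ and $H^0\mu$ having image $AeA$ are in hand, the result is a purely formal consequence of the long exact sequence, with no further input needed; in particular one does not need the algebra structure on $\dq$, only its underlying $A$-bimodule.
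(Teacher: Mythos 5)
Your proof is correct and is exactly the paper's approach: the paper states that the corollary ``is immediately obtained by considering the long exact sequence associated to the exact triangle $\cell A \xrightarrow{\mu} A \to \dq \to$,'' and your argument is precisely that, with the identification $H^{-i}(\cell A)\cong\tor_i^R(Ae,eA)$ coming from the bar-complex model of \ref{drinfeldmodel}. One small slip: for $j<-1$ the correct intermediate in the long exact sequence is $H^j(\dq)\cong H^{j+1}(\cell A)$ (the relevant vanishing neighbours being $H^j(A)$ and $H^{j+1}(A)$), not $H^{j-1}(\cell A)$; your stated final answer $\tor^R_{-j-1}(Ae,eA)$ is nevertheless correct, and matches $H^{j+1}(\cell A)$.
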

	\begin{rmk}
		The ideal $AeA$ is said to be \textbf{stratifying} if the map $Ae \lot_{R} eA \to AeA$ is a quasi-isomorphism. It is easy to see that $AeA$ is stratifying if and only if $H^0:\dq \to A/AeA$ is a quasi-isomorphism.
	\end{rmk}
	\section{Derived quotients of path algebras}
	When $A$ is the path algebra of a quiver with relations, and $e$ is the idempotent corresponding to a set of vertices, then one can interpret the cohomology groups $H^j(\dq)$ in terms of the geometry of the quiver, at least for small $j$. We think of the modules $H^j(\dq)$ as being a (co)homology theory (with coefficients in $k$) for quivers with relations and specified vertices. We will be especially interested in $H^{-1}(\dq)$, since the description of \ref{derquotcohom} is not particularly explicit.
	\begin{defn}Let $(Q,I,V)$ be a triple consisting of a finite quiver $Q$ with a finite set of relations $I$ and a collection of vertices $V$. Let $A=kQ/(I)$ be the path algebra over $k$, and let $e$ be the idempotent of $A$ corresponding to $V$. Write $H_j(Q,I,V;k)\coloneqq H^{-j}(\dq)$. Note the reindexing, so that $H_*(Q,I,V;k)$ is concentrated in nonnegative degrees.
	\end{defn}
	Note that we really consider $I$ and the ideal $(I)$ to be different; in particular $(I)$ is usually not finite. Note that each $H_j(Q,I,V;k)$ is a module over the $k$-algebra $H_0(Q,I,V;k)\cong A/AeA$. It is clear that $H_0(Q,I,V;k)\cong A/AeA$ is the algebra on those paths in the quiver that do not pass through $V$. Dually, $R=eAe$ is the algebra on those paths starting and ending in $V$. To analyse $H_1(Q,I,V;k)$ we need to introduce some new terminology.
	\begin{defn}
		A \textbf{marked relation} $m$ for the triple $(Q,I,V)$ is a formal sum $m=\sum_iu_i\vert v_i$ with each $u_i \in Ae$ and $v_i \in eA$, such that the composition $\sum_iu_iv_i$ is a relation from $I$. We think of the vertical bar as the `marking'.
	\end{defn}
	\begin{prop}The module $H_1(Q,I,V;k)$ is spanned over $A/AeA$ by the marked relations.
	\end{prop}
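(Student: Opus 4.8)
The statement describes $H_1(Q,I,V;k) \cong H^{-1}(\dq)$ explicitly, so the natural approach is to unwind the description of $H^{-1}(\dq)$ coming from \ref{derquotcohom} and \ref{drinfeldmodel}. Recall from \ref{derquotcohom} that $H^{-1}(\dq) \cong \ker(Ae \otimes_R eA \to AeA)$, where the map is multiplication. Using the Drinfeld-quotient model $B$ of \ref{drinfeld}, and specifically \ref{drinfeldmodel}, we have $B^{-1} \cong Ae \otimes eA$ and $B^{-2} \cong Ae \otimes R \otimes eA$ (tensor products over $k$), with the differential $B^{-2}\to B^{-1}$ being the Hochschild-type differential $x_0\otimes x_1 \otimes x_2 \mapsto x_0 x_1 \otimes x_2 - x_0 \otimes x_1 x_2$, and the differential $B^{-1}\to B^0 = A$ being multiplication $x_0\otimes x_1 \mapsto x_0 x_1$. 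Hence $H^{-1}(\dq) = \ker(B^{-1}\to B^0)/\im(B^{-2}\to B^{-1})$.

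First I would identify the cycles: an element $\sum_i u_i \otimes v_i \in Ae \otimes_k eA$ is a cycle precisely when $\sum_i u_i v_i = 0$ in $A$. Since $A = kQ/(I)$, this means $\sum_i u_i v_i$ lies in the ideal $(I)$ generated by the relations. Using that $A$ has a $k$-basis of paths, I would decompose such a cycle: writing $\sum_i u_iv_i = \sum_j a_j r_j b_j$ for relations $r_j \in I$ and paths $a_j, b_j$, one rewrites the cycle as a sum of "elementary" cycles, each of the form $\sum_i (a\, u_i') \otimes (v_i'\, b)$ where $\sum_i u_i' v_i' = r$ is a single relation — i.e.\ a marked relation $m = \sum_i u_i' \vert v_i'$ acted on the left and right by paths. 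This is exactly the statement that the module of cycles is generated over $A\otimes_k A^{\mathrm{op}}$ (equivalently, over $A/AeA$ after passing to the quotient, since the left/right actions factor appropriately on cohomology) by the marked relations, modulo a bookkeeping argument to ensure the various elementary pieces can be peeled off one relation at a time without introducing spurious boundary terms.

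Next I would show that the image of $B^{-2}\to B^{-1}$ is spanned by the "trivial" marked relations — those where the underlying relation $\sum_i u_i v_i$ is forced to vanish already at the level of $Ae\otimes_R eA$, i.e.\ the ones coming from the relation $ue \otimes v - u \otimes ev = u\otimes v - u \otimes v = 0$ identifications, or more precisely from elements $x_0 \otimes x_1 \otimes x_2$ with $x_1 \in R = eAe$. The point is that quotienting by these boundaries is exactly passing from $Ae\otimes_k eA$ to $Ae \otimes_R eA$, which matches the $\ker$ description in \ref{derquotcohom}; combined with the previous paragraph this gives that $H_1(Q,I,V;k)$ is spanned over $A/AeA$ by (images of) marked relations. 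I would also check the module structure: the $A/AeA$-action on $H^{-1}(\dq)$ comes from the bimodule structure on $B^{-1}$ described in \ref{drinfeldmodel}(3), and one verifies that left and right multiplication by a path through $V$ kills a cycle (up to boundary), so the action indeed factors through $A/AeA$, acting on a marked relation $m = \sum_i u_i \vert v_i$ by pre- and post-composition on the $u_i$ and $v_i$.

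\textbf{Main obstacle.} The genuinely non-formal step is the decomposition argument in the second paragraph: showing that an arbitrary cycle $\sum_i u_i\otimes v_i$ with $\sum_i u_i v_i \in (I)$ can be written, modulo boundaries, as an $A/AeA$-linear combination of marked relations. Since $(I)$ is the two-sided ideal generated by $I$ (and is typically infinite-dimensional), one must carefully induct on the number of "relation insertions" needed to express $\sum_i u_i v_i$, and at each step separate the part of the tensor where the relation sits — which requires choosing path-length filtrations or a careful basis argument so that the rewriting terminates and does not secretly rely on further relations. Getting the left/right path factors to land correctly on the $Ae$ and $eA$ sides (rather than straddling a $V$-vertex in the middle) is the delicate combinatorial point; everything else is a routine unwinding of \ref{drinfeldmodel}.
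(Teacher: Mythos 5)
Your proposal follows essentially the same approach as the paper's proof: use the Drinfeld model to identify $H^{-1}(\dq)$ as cocycles modulo coboundaries in the complex $Ae\otimes_k R\otimes_k eA \to Ae\otimes_k eA \to A$, decompose the product of a cocycle over the relation ideal to peel off marked relations, and observe that left/right multiplication by paths through $V$ gives coboundaries so the span is over $A/AeA$. The only cosmetic difference is that you route through $Ae\otimes_R eA$ (using that $\im(d)$ is the relation defining $\otimes_R$), whereas the paper stays in $Ae\otimes_k eA$ and explicitly splits cocycles into "trivial'' ones (whose lifted product already vanishes in $kQ$, shown directly to be coboundaries) and "relation'' ones; these are equivalent reformulations. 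You also correctly identify the same bookkeeping gap — the peeling-off of one relation insertion at a time — which the paper also elides at the same level of detail.
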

	\begin{proof}
		We know that $H_1(Q,I,V;k)$ is the middle cohomology of the complex $$Ae\otimes_k R \otimes_k eA \xrightarrow{d} Ae \otimes_k eA \xrightarrow{\mu} A$$where $d$ is the Hochschild differential and $\mu$ is the multiplication. If we write a vertical bar instead of the tensor product symbol, it is immediate that each $A$-bilinear combination of marked relations is a $(-1)$-cocycle in this complex. The $(-1)$-cocycles are all of two forms: firstly, those $x$ such that $\mu (x)$ is zero in $kQ$, and secondly, those $x$ such that $\mu (x)\in (I)$. If $\mu (x)=0$ in $kQ$, then $x$ must just be of the form $x=\sum_i(p^i_1\vert p^i_2 p^i_3 - p^i_1p^i_2 \vert p^i_3)$ and it is easy to see that $x$ must be a coboundary, since $d(p\vert q \vert r)=pq\vert r -p\vert qr$. So $H_1(Q,I,V;k)$ is spanned by those $x$ such that $\mu (x)\in (I)$. But this means that $\mu(x)=\sum_i a_i r_i b_i$ where each $r_i$ is a relation, and each $a_i,b_i$ is in $A$. But then we see that $x=\sum_i a_i' m_i b_i'$, where each $m_i$ is a marked relation. So $H_1(Q,I,V;k)$ is spanned over $A$ by the marked relations. Pick a 1-cocycle $amb$, where $a,b \in A$ and $m$ is a marked relation. If either $a$ or $b$ are in $AeA$, then $amb$ is in fact a coboundary: for example if $a=uev$ then $amb=d(u\vert vmb)$. In other words, $H_1(Q,I,V;k)$ is actually spanned over $A/AeA$ by the marked relations. 
	\end{proof}
	\begin{cor}\label{h1finite}
		Let $Q$ be a finite quiver and $I$ a finite set of relations in $kQ$. Set $A\coloneqq kQ/(I)$. Pick a set of vertices $V \subseteq Q$ and let $e\in A$ be the corresponding idempotent. Then if $A/AeA$ is finite-dimensional, so is $H_1(Q,I,V;k)$.
	\end{cor}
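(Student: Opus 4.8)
The plan is to leverage the preceding proposition, which already identifies $H_1(Q,I,V;k)\cong H^{-1}(\dq)$ as being spanned over $A/AeA$ by the marked relations (acting via the bimodule structure, as in the expression $amb$ there). Since $A/AeA$ is finite-dimensional by hypothesis, it then suffices to exhibit a \emph{finite} set of marked relations that spans $H^{-1}(\dq)$ over $A/AeA$: a bimodule over the finite-dimensional algebra $A/AeA$ generated by finitely many elements is a quotient of a finite direct sum of copies of $(A/AeA)\otimes_k (A/AeA)$, hence finite-dimensional. So the whole problem reduces to a finiteness count for the generators.

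To carry this out, I would proceed as follows. Each relation $r\in I$ is, after expanding in the path basis of $kQ$, a finite $k$-linear combination of paths in $Q$, each of finite length; since $I$ itself is finite, only finitely many paths $w$ occur among the relations this way. For each such $w$ and each position along $w$ at which $w$ passes through a vertex of $V$, splitting $w$ at that position produces an elementary term $w_{\leq i}\mid w_{>i}$ with $w_{\leq i}\in Ae$ and $w_{>i}\in eA$. There are finitely many such elementary terms in total; let $\mathcal{S}\subseteq Ae\otimes_k eA$ be their span. The claim is that, modulo coboundaries of the complex $Ae\otimes_k R\otimes_k eA\to Ae\otimes_k eA\to A$, every marked relation lies in $\mathcal{S}$. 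To see this, expand an arbitrary marked relation $m=\sum_i u_i\mid v_i$ into a $k$-linear combination $\sum_k c_k(p_k\mid q_k)$ of composable path pairs, and group the terms by the underlying path $p_kq_k$. For a fixed underlying path $w$, the subsum either recovers (up to scalar) the elementary markings of $w$ — in which case it lies in $\mathcal{S}$ — or its coefficients sum to zero and hence it is a cocycle with zero image under $\mu$, which is a coboundary by the computation in the proof of the preceding proposition (using $d(p\mid q\mid r)=pq\mid r-p\mid qr$, so that any difference of two splittings of a common path is exact). In $H^{-1}(\dq)$ we may therefore replace $m$ by an element of $\mathcal{S}$.

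It follows that $H^{-1}(\dq)$ is spanned over $A/AeA$ by the finite set $\mathcal{S}$, so that $H_1(Q,I,V;k)=\sum_{s\in\mathcal{S}}(A/AeA)\cdot s\cdot(A/AeA)$ is a finite sum of finite-dimensional vector spaces, hence finite-dimensional, as required. The only genuinely delicate point is the middle step: one must check that reducing an arbitrary marked relation to an $\mathcal{S}$-combination introduces nothing beyond coboundaries, which is exactly where the description of the $\mu$-null cocycles as coboundaries (already supplied in the proof of the preceding proposition) is used. Everything else is a direct finiteness bookkeeping argument using that $Q$ and $I$ are both finite.
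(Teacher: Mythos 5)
Your argument is correct and follows essentially the same strategy as the paper's: both appeal to the preceding proposition to reduce to marked relations, both observe that the relation $d(p\mid q\mid r)=pq\mid r-p\mid qr$ forces any two splittings of a fixed monomial to be cohomologous, and both conclude that a finite set of single-split markings spans $H_1$ over the finite-dimensional algebra $A/AeA$. The paper is terser — it simply asserts ``finitely many ways to mark'' each relation and then records an explicit (somewhat lossy) numerical bound $d^2\ell$ — whereas you spell out the reduction-modulo-coboundaries step as a genuine lemma: you isolate the finite spanning set $\mathcal{S}$ of elementary markings and show that the $\mu$-null part of any marked relation is a coboundary, which is precisely the step the paper leaves implicit. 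One small point of phrasing: your dichotomy ``recovers the elementary markings of $w$ / coefficients sum to zero'' is not quite exhaustive as stated; the cleaner formulation is ``either $w$ occurs in some relation (so the subsum is tautologically in $\mathcal{S}$), or $w$ does not occur in any relation (so the coefficient of $w$ in $\mu(m)$ vanishes, making the subsum a $\mu$-null cocycle, hence a coboundary).'' Also note, though it does not affect correctness, that since all single-split markings of a given monomial are homotopic — consecutive split points in $V$ always differ by a coboundary — one could take just one elementary marking per monomial in $\mathcal{S}$, which sharpens the paper's $\max(1,\ell_i^j-1)$ count to $1$.
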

	\begin{proof}
		There are a finite number of relations and hence a finite number of marked relations: since each relation is of finite length, there are only finitely many ways to mark it. This shows that $H_1(Q,I,V;k)$ is finite over the finite-dimensional algebra $A/AeA$, and hence finite-dimensional. Note that we can get an explicit upper bound for the dimension: write relation $i$ as a sum of monomials $q_i^j$, each of length $\ell_i^j$. It is easy to see that there are homotopies $\vert uv \simeq u\vert v \simeq uv \vert$ in $(\dq)^{-1}$, and so each monomial $q_i^j$ has at most $\max(1,\ell_i^j -1)$ markings that are not homotopic. Put $\ell_i\coloneqq \prod_j\max(1,\ell_i^j -1)$; then relation $i$ has at most $\ell_i$ markings, because we can mark each monomial individually. Put $\ell\coloneqq \sum_i \ell _i$, so that there are at most $\ell$ marked relations spanning $H_1(Q,I,V;k)$. Hence if $A/AeA$ has dimension $d$, an upper bound for the dimension of $H_1(Q,I,V;k)$ is $d^2\ell$.
	\end{proof}
	
	\begin{rmk}
		One can use similar ideas to show that $H_2(Q,I,V;k)$ is spanned by cocycles of the form $u\vert v \vert w$, where $uv=v$ and $vw=w$. For if $\sum^n_iu_iv_i\vert w_i=\sum^n_iu_i\vert v_iw_i$, and all of the $u_i,v_i,w_i$ are monomials, then there exists some permutation $\sigma$ such that $u_iv_i=u_{\sigma i}$ and $w_i=v_{\sigma i}w_{\sigma i}$. Restricting to the orbits of $\sigma$, we may assume that $\sigma$ is a cycle, and write $u_iv_i=u_{i+1}$ and $w_{i-1}=v_iw_i$, where the subscripts are taken modulo $ n$. One then shows by induction on $r$ that $\sum_i^ru_i\vert v_i \vert w_i$ is homotopic to $u_1\vert v_1v_2\cdots v_{r-1}v_r\vert w_r$, and the claim follows upon taking $u=u_1, v=v_1\cdots v_n$, and $w=w_n$.
	\end{rmk}
	\begin{ex}\label{quiv1} Let $Q$ be the quiver $$
		\begin{tikzcd}
		e_1 \arrow[rr,bend left=20,"x"]  && e_2 \arrow[ll,bend left=20,"w"']\ar[ld,"y"] \\ & e_3 \ar[lu,"z"]&
		\end{tikzcd}$$ with relations $w=yz$ and $xyz=yzx=zxy=0$. Let $e=e_1+e_2$, so that the corresponding set of vertices $V$ is $\{e_1,e_2\}$. It is not hard to compute that $\mathrm{dim}_k(R)=4$, $\mathrm{dim}_k(A)=9$, and $\mathrm{dim}_k(A/AeA)=1$. Moreover, $R$ is not a left or a right ideal of $A$, since $x \in R$ but neither $xy$ nor $zx$ are in $R$. We remark that $R$ need not be the path algebra of the `full subquiver' $Q_V$ on $V$, since relations outside of $V$ can influence $R$: observe that $xw$ is zero in $R$, but nonzero in $kQ_V$. We compute that our bound for the dimension $n$ of $H_1(Q,I,V;k)$ is $7$. One can check that there are at most $5$ (homotopy classes of) nontrivial marked relations, namely $\vert w -y\vert z$, $x\vert yz$, $yz\vert x$, $z\vert xy$, and $zx\vert y$. So a better estimate for $n$ is $5$. But this is still too large, as $x\vert yz$ and $yz\vert x$ are both coboundaries, and $z\vert xy \simeq zx \vert y$. So $n$ is at most 2. We see that $w -yz$ and $zxy$ are relations in $I$ that cannot be `seen' from $V$: they start and finish outside of $V$, but pass through $V$ (where we can mark them), and moreover do not contain any `subrelations' lying entirely in $V$. In fact, one can check using \ref{derquotcohom} that $n$ is precisely 2, and $H_1(Q,I,V;k)$ has basis $\{\vert w -y\vert z, z\vert xy\}$.
	\end{ex}

	\section{Recollements}
	Loosely speaking, a recollement (see \cite{bbd} or \cite{jorgensen} for a definition) between three triangulated categories $(\mathcal{T}',\mathcal{T},\mathcal{T}'')$ is a collection of functors describing how to glue $\mathcal{T}$ from a subcategory $\mathcal{T}'$ and a quotient category $\mathcal{T}''$. One can think of a recollement as a short exact sequence $\mathcal{T}' \to \mathcal{T} \to \mathcal{T}''$ where both maps admit left and right adjoints.
	\begin{thm}[{cf.\ \cite[2.10]{kalckyang} and \cite[9.5]{bcl}}]\label{recoll}
		Let $A$ be an algebra over $k$, and let $e\in A$ be an idempotent. Write $Q\coloneqq \dq$ and $R\coloneqq eAe$. Let $Q_A$ denote the $Q\text{-}A$-bimodule $Q$, let ${}_AQ$ denote the $A\text{-}Q$-bimodule $Q$, and let ${}_AQ_A$ denote the $A$-bimodule $Q$. Put \begin{align*}
		i^*\coloneqq -\lot_A {}_AQ, &\quad j_!\coloneqq  -\lot_{R} eA
		\\ i_*=\R\hom_{Q}({}_AQ,-), &\quad j^!\coloneqq \R\hom_A(eA,-)
		\\ i_!\coloneqq \lot_{Q}Q_A, & \quad j^*\coloneqq -\lot_A Ae
		\\ i^! \coloneqq \R\hom_{A}(Q_A,-), & \quad j_*\coloneqq \R\hom_{R}(Ae,-)
		\end{align*}
		Then the diagram of unbounded derived categories
		$$\begin{tikzcd}[column sep=huge]
		D(Q) \ar[r,"i_*=i_!"]& D(A)\ar[l,bend left=25,"i^!"']\ar[l,bend right=25,"i^*"']\ar[r,"j^!=j^*"] & D(R)\ar[l,bend left=25,"j_*"']\ar[l,bend right=25,"j_!"']
		\end{tikzcd}$$
		is a recollement diagram.
	\end{thm}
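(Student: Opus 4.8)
The plan is to exhibit the six functors as a recollement by verifying the standard package of conditions: three adjoint triples, fully faithfulness of $i_*$ and $j_!,j_*$, and the exactness condition relating $\ker i^*$ with the essential image of $i_!=i_*$. Most of this will follow formally from facts already established in the excerpt: the derived quotient map $A \to Q$ is a homological epimorphism (\ref{homepi}), hence $i_*=i_!$ is fully faithful and $-\lot_A Q$ is a smashing localisation by the remark following \ref{homepi}; dually, $eA$ is a perfect $A$-module (it is a summand of $A$), so $j_!=-\lot_{R}eA$ and its adjoints behave as in Neeman--Thomason--Trobaugh--Yao localisation, and $\R\hom_A(eA,-)\simeq -\lot_A Ae$ since $eA$ is perfect, which is why $j^!=j^*$. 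First I would record the two adjoint triples $(j_!,j^!=j^*,j_*)$ and $(i^*,i_*=i_!,i^!)$, each of which is a standard Bass--type adjunction for tensor and Hom along a bimodule, together with the projection formula identifications that collapse the pairs $i_*=i_!$ and $j^!=j^*$.

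Next I would identify the two relevant subcategories. The essential image of $j_!$, equivalently of $j_*$, is $D(A)_{1-e\text{-tor}}$ in the language of \S\ref{derloc} — more concretely, the localising subcategory generated by $eA$ — while the essential image of $i_*=i_!$ is the $S$-local subcategory $D(A)_{1-e\text{-loc}} = D(Q)$, by the theorem of Braun--Chuang--Lazarev quoted after \ref{homepi} (restriction of scalars gives $D(Q)\simeq D(A)_{S\text{-loc}}$). The key exactness axiom then reads: $M \in D(A)$ lies in the image of $i_*$ iff $j^*M \simeq M\lot_A Ae$ is acyclic, iff $Me\simeq 0$. This is precisely the statement that the localisation $A\to Q$ kills exactly the $Ae$-cellular part, which is the content of \ref{dqexact} (the exact triangle $\cell A \to A \to Q\to$) applied pointwise, together with smashing-ness: $M\to M\lot_A Q$ is the localisation of $M$ away from $Ae$, with fibre $Me\lot_R eA$. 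Combining these, $M\lot_A Q \simeq M$ iff $Me\lot_R eA\simeq 0$ iff $Me\simeq 0$ (since $-\lot_R eA$ is conservative on the relevant subcategory, $eA$ being a faithful right $R$-module summand), giving the required characterisation of $\operatorname{Im}(i_*)$ as the right orthogonal to $\operatorname{Im}(j_!)$ and the left orthogonal structure dually.

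The remaining axioms — the two triangles $j_!j^! \to \id \to i_*i^* \to$ and $i_!i^! \to \id \to j_*j^* \to$, and the vanishing $i^*j_! = 0$, $i^!j_* = 0$ — then drop out: the first triangle is exactly $\cell A \to A \to Q\to$ tensored with $M$ (using $\cell A = Ae\lot_R eA$ and $j_!j^!M = M\lot_A Ae\lot_R eA$, $i_*i^*M = M\lot_A Q$), and $i^*j_! = -\lot_A Q$ applied to a module induced up from $R$ vanishes because $Q\lot_A eA \simeq Qe \simeq 0$ — indeed $e$ becomes homotopy-trivial in $Q$ by the very construction of the derived quotient, or one reads it off the model $B$ of \ref{drinfeld} where $e = d(h)$. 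I would organise the write-up by first citing the recollement $(D(A)_{S\text{-loc}}, D(A), D(A)_{S\text{-tor}})$ of \ref{bclrcl} and then simply checking that the six functors named in the statement implement it under the identifications $D(Q)\simeq D(A)_{S\text{-loc}}$ and $D(R)\simeq D(A)_{S\text{-tor}}$ via $-\lot_R eA$; almost everything is then bookkeeping.

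\textbf{Main obstacle.} The one genuinely non-formal point is the identification $D(R) \simeq D(A)_{1-e\text{-tor}}$ making $j_! = -\lot_R eA$ fully faithful with image the torsion subcategory — i.e.\ that $eA$ is a compact generator of $D(A)_{1-e\text{-tor}}$ and that $eA\lot_R eA \xrightarrow{\sim} eAe = R$ wait, more precisely that $\R\hom_A(eA,eA)\simeq R$ so that $j^!j_!\simeq \id$. The subtlety is that $R = eAe$ need not have finite global dimension and $eA$ need not be a perfect $R$-module, so one must be slightly careful about which derived category of $R$ is meant and whether the relevant Tor/Ext comparisons hold unboundedly; this is handled in \cite{bcl} (and in \cite{kalckyang}) but is the step where I would be most careful to cite precisely rather than wave hands. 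Everything downstream of that identification is then a formal consequence of homological epimorphism + smashing localisation, already packaged in \ref{bclrcl}.
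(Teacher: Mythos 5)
Your proposal is correct and takes essentially the same route as the paper's own proof. Both reduce to the same core ingredients: the adjoint triples are observed directly; $Qe\simeq 0$ (``$Q$ is $e$-killing'') gives the orthogonality $j^*i_*\cong 0$; fully faithfulness of $j_!$ and $j_*$ is outsourced to Kalck--Yang/BCL; and the two gluing triangles are obtained from the bimodule exact triangle $\cell A \to A \to Q\to$ of \ref{dqexact}, by applying $X\lot_A -$ and $\R\hom_A(-,X)$ respectively. The paper does this as a direct axiom check after computing the composite functors $i_!i^!\cong\R\hom_A({}_AQ_A,-)$, $j_*j^*\cong\R\hom_A(\cell A,-)$, $j_!j^!\cong-\lot_A\cell A$, $i_*i^*\cong -\lot_A {}_AQ_A$; you propose instead to invoke the abstract BCL recollement of \ref{bclrcl} wholesale and then match the six named functors to it. That is a reasonable reorganisation and the bookkeeping is the same; what it buys is a cleaner conceptual frame, at the cost of having to justify the two identifications of the outer categories with the same precision as the direct check. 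You correctly identify that the genuinely nontrivial input is the $D(R)$ side, and correctly note to cite that carefully.

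One small imprecision to watch: you say the essential image of $j_!$ is ``equivalently'' the essential image of $j_*$. In a general recollement the images of $j_!$ and $j_*$ are, respectively, the left and right orthogonals of $\operatorname{Im}(i_*)$, which need not coincide; what you actually use is only the characterisation $\operatorname{Im}(i_*)=\operatorname{Im}(j_!)^\perp$, so this slip is harmless to the argument, but the phrasing should be corrected in a final write-up.
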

	\begin{proof}We give a rather direct proof. It is clear that $(i^*,i_*=i_!,i^!)$ and $(j_!, j^!=j^*,j_*)$ are adjoint triples, and that $i_*=i_!$ is fully faithful. Fullness and faithfulness of $j_!$ and $j_*$ follow from \cite[2.10]{kalckyang}. The composition $j^*i_*$ is tensoring by the $Q\text{-}R$-bimodule $Q.e$, which is acyclic since $Q$ is $e$-killing in the sense of \cite[\S9]{bcl}. The only thing left to show is the existence of the two required classes of distinguished triangles. First observe that 
		\begin{align*}& i_!i^!\cong \R\hom_A({}_AQ_A,-)
		\\ & j_*j^* \cong \R\hom_{R}(Ae,\R\hom_A(eA,-))\cong \R\hom_A(\cell A,-)
		\\ & j_!j^! \cong -\lot_A \cell A
		\\ &  i_*i^* \cong -\lot_A {}_AQ_A\end{align*}
		Now, recall from \ref{dqexact} the existence of the distinguished triangle of $A$-bimodules $$\cell A \xrightarrow{\mu} A \xrightarrow{l} {}_AQ_A \to$$
		Taking any $X$ in $D(A)$ and applying $\R\hom_A(-,X)$ to this triangle, we obtain a distinguished triangle of the form $i_!i^!X \to X \to j_*j^* X \to$. Similarly, applying $X\lot_A-$, we obtain a distinguished triangle of the form $j_!j^!X \to X \to i_*i^* X \to$.
	\end{proof}
	\begin{rmk}This recollement is given in \cite[9.5]{bcl}, although they are not explicit with their functors. If $AeA$ is stratifying, this recovers a recollement constructed by Cline, Parshall, and Scott \cite{cps}. See e.g.\ \cite{cikcorner} for the analogous recollement on the level of abelian categories.
	\end{rmk}

	\begin{prop}\label{Rcoloc}
		In the above setup, $D(R)$ is equivalent to the derived category of $(1-e)$-torsion modules.
	\end{prop}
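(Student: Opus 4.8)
The plan is to identify the subcategory of $(1-e)$-torsion modules inside $D(A)$ with the essential image of the functor $j_! = -\lot_R eA: D(R) \to D(A)$, and then to use the fullness and faithfulness of $j_!$ (already noted in the proof of \ref{recoll}) to conclude. Recall that by \ref{bclrcl} the localisation and colocalisation functors fit into a recollement $D(A)_{(1-e)\text{-loc}} \to D(A) \to D(A)_{(1-e)\text{-tor}}$, and that $D(A)_{(1-e)\text{-loc}}$ is equivalent to $D(\dq)$ via restriction of scalars. On the other hand, the recollement of \ref{recoll} has exactly the same left-hand term $D(\dq)$ (sitting inside $D(A)$ via $i_* = i_!$) and the same middle term $D(A)$. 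Since in a recollement the right-hand term is determined up to equivalence as the Verdier quotient of the middle term by the left-hand term — or, more concretely, as the essential image of either of the adjoints to the quotient functor — it suffices to match up the relevant functors. The quotient functor in the $(1-e)$-localisation recollement is the colocalisation $\mathbb{L}^{1-e}$, whose essential image is $D(A)_{(1-e)\text{-tor}}$ by definition, while the quotient functor in the recollement of \ref{recoll} is $j^! = j^* = -\lot_A Ae: D(A) \to D(R)$, with section $j_!$.

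First I would make the comparison explicit at the level of objects. An object $X \in D(A)$ is $(1-e)$-torsion precisely when $\R\hom_A(X, Y)$ is acyclic for every $(1-e)$-local $Y$, i.e.\ for every $Y$ in the image of $D(\dq) \to D(A)$ under $i_* = i_!$. By adjunction $(i^*, i_* )$, this vanishing is equivalent to $i^* X = X \lot_A {}_AQ$ being acyclic, which by \ref{dqexact} (the triangle $\cell A \xrightarrow{\mu} A \to {}_AQ_A \to$) is equivalent to the natural map $X \lot_A \cell A = \mathrm{Cell}(A) \lot_A X \to X$ being a quasi-isomorphism, i.e.\ $X$ being built from the perfect module $Ae$ — equivalently $X$ lies in the essential image of $j_! = -\lot_R eA$. (This last equivalence is exactly the content of \cite[4.8, 7.6]{dgcompletetorsion} / \cite[7.6]{bcl} specialised to the idempotent case, or can be seen directly: $j_! j^! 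X = X \lot_A \cell A$ and the counit $j_! j^! \to \mathrm{id}$ is an isomorphism on $X$ iff $X$ is $(1-e)$-torsion, by the distinguished triangle $j_! j^! X \to X \to i_* i^* X \to$ exhibited in the proof of \ref{recoll}.) Thus $D(A)_{(1-e)\text{-tor}}$ is precisely the essential image of $j_!$.

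Having identified the essential image, I would then invoke the fullness and faithfulness of $j_!: D(R) \to D(A)$ — established in the proof of \ref{recoll} via \cite[2.10]{kalckyang} — to conclude that $j_!$ corestricts to an equivalence $D(R) \xrightarrow{\simeq} D(A)_{(1-e)\text{-tor}}$, with quasi-inverse $j^! = j^* = -\lot_A Ae$. This gives the claimed equivalence. I expect the only subtle point — the main obstacle — to be the clean identification of ``$(1-e)$-torsion'' (defined via a $\R\hom$-vanishing condition against all local modules) with ``lies in the thick subcategory generated by, or built as a colimit from, the compact object $Ae$'': one must be slightly careful that the $(1-e)$-torsion modules form the correct colocalising subcategory and that the colocalisation functor agrees with $j_! j^!$. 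This is where the hypothesis that $e$ is an idempotent (so that $Ae$ is a genuine summand and in particular a compact generator of the torsion subcategory) is doing the real work, and it is exactly the content of the smashing/colocalising assertions in \cite[\S4, \S7, \S9]{bcl}; everything else is bookkeeping with adjunctions in the two recollements.
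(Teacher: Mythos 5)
Your argument is correct, but it takes a somewhat different route from the paper's proof. The paper gives two parallel arguments, both very compressed: first, it simply invokes the fact that recollements are determined by fixing one half (citing \cite[Remark 2.4]{kalck}), so since the recollement of \ref{recoll} and the one of \ref{bclrcl} share the same left piece $D(\dq)\simeq D(A)_{(1-e)\text{-loc}}$ inside $D(A)$, their right pieces $D(R)$ and $D(A)_{(1-e)\text{-tor}}$ must agree; second, it computes the colocalisation dga $\mathbb{L}^{1-e}A$ directly and identifies it with $R$ — namely $\mathrm{cone}(A\xrightarrow{1-e}A)\simeq eA$ since $A\cong eA\oplus(1-e)A$, and $\R\enn_A(eA)\simeq\enn_A(eA)\cong R$ because $eA$ is projective — and then invokes $D(A)_{(1-e)\text{-tor}}\simeq D(\mathbb{L}^{1-e}A)$.

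What you do instead is essentially unfold the abstract uniqueness-of-recollements argument by hand: you characterise $D(A)_{(1-e)\text{-tor}}$ explicitly as the essential image of $j_!$ (via the adjunction $(i^*,i_*)$ and the triangle $\cell A\to A\to {}_AQ_A\to$ from \ref{dqexact}, showing that $i^*X\simeq 0$ iff the counit $j_!j^!X\to X$ is an equivalence), and then conclude via fully-faithfulness of $j_!$. This is a perfectly valid verification, and it is arguably more self-contained than the paper's first argument since you avoid citing the uniqueness result for recollements as a black box. The trade-off is that your argument is longer, and it does not produce the sharper statement that the paper's second argument does — namely that the colocalisation dga $\mathbb{L}^{1-e}A$ is actually quasi-isomorphic to the ring $R$, not merely that the associated torsion subcategory is triangle-equivalent to $D(R)$. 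If you wanted your argument to be shorter, you could simply compute $\mathrm{cone}(A\xrightarrow{1-e}A)\simeq eA$ and $\R\enn_A(eA)\cong R$ as the paper does, since the definition \ref{colocdga} of the colocalisation dga together with \cite[7.6]{bcl} immediately gives the equivalence $D(R)\simeq D(A)_{(1-e)\text{-tor}}$.
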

	\begin{proof}
		Recollements are determined completely by fixing one half (e.g.\ \cite[Remark 2.4]{kalck}). Now the result follows from the existence of the recollement of \ref{bclrcl}. More concretely, one can check that the colocalisation $\mathbb{L}^{1-e}A$ is $R$: because $A\cong eA\oplus(1-e)A$ as right $A$-modules, we have $\mathrm{cone}(A \xrightarrow{1-e} A)\simeq eA$, and we know that $\R\enn_A(eA)\simeq \enn_A(eA)\cong R$ because $eA$ is a projective $A$-module.
	\end{proof}
	We show that $\dq$ is a relatively compact $A$-module; before we do this we first introduce some notation.
	\begin{defn}Let $\mathcal X$ be a subclass of objects of a triangulated category $\mathcal{T}$. Then $\thick_\mathcal{T} \mathcal X $ denotes the smallest triangulated subcategory of $\mathcal{T}$ containing $\mathcal{X}$ and closed under taking direct summands. Similarly, $\langle \mathcal X \rangle_\mathcal{T}$ denotes the smallest triangulated subcategory of $\mathcal{T}$ containing $\mathcal{X}$, and closed under taking direct summands and all existing set-indexed coproducts. We will often drop the subscripts if $\mathcal T$ is clear. If $\mathcal X$ consists of a single object $X$, we will write $\thick X$ and $\langle X \rangle$.
	\end{defn}
	\begin{ex}
		Let $A$ be a dga. Then $\langle A \rangle_{D(A)}\cong D(A)$, whereas $\thick_{D(A)}(A) \cong \per A$.
	\end{ex}
	\begin{defn}
		Let $\mathcal{T}$ be a triangulated category and let $X$ be an object of $\mathcal{T}$. Say that $X$ is \textbf{relatively compact} (or \textbf{self compact}) in $\mathcal{T}$ if it is compact as an object of $\langle X \rangle_{\mathcal{T}}$.
	\end{defn}
	\begin{prop}
		The right $A$-module $\dq$ is relatively compact in $D(A)$.
	\end{prop}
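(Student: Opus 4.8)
The plan is to use the recollement of \ref{recoll} to identify $\langle \dq \rangle_{D(A)}$ with the essential image of the fully faithful embedding $i_! = i_* \colon D(\dq) \to D(A)$, and then transport the compactness of the free rank-one module along this equivalence. Write $Q \coloneqq \dq$. First I would record that the right $A$-module $Q$ is precisely $i_!(Q_Q)$, where $Q_Q$ denotes $Q$ as the free rank-one right $Q$-module: since $i_! = -\lot_Q Q_A$, the object $Q_Q \lot_Q Q_A$ is $Q$ with right $A$-action restricted along the localisation map $A \to Q$, which is exactly the bimodule structure on $Q$.

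Next I would show that $\langle Q\rangle_{D(A)}$ coincides with $\mathcal{E} \coloneqq i_!(D(Q))$. On one hand, $i_! = i_*$ is fully faithful (part of \ref{recoll}), is a triangle functor, and, being a left adjoint (to $i^!$), preserves all coproducts; since $D(Q)$ is idempotent complete (having all coproducts), a routine check shows $\mathcal{E}$ is a localising subcategory of $D(A)$ — closed under triangles, coproducts, and direct summands — using full faithfulness to split idempotents back in $D(Q)$. As $\mathcal{E}$ contains $Q = i_!(Q_Q)$ this gives $\langle Q\rangle_{D(A)} \subseteq \mathcal{E}$. On the other hand, the subcategory $\{N \in D(Q) : i_! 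N \in \langle Q\rangle_{D(A)}\}$ is triangulated and closed under coproducts and summands (since $i_!$ preserves these and $\langle Q\rangle_{D(A)}$ is closed under them) and contains $Q_Q$; as $Q_Q$ generates, i.e.\ $\langle Q_Q\rangle_{D(Q)} = D(Q)$, we deduce $\mathcal{E} = i_!(D(Q)) \subseteq \langle Q\rangle_{D(A)}$. Hence $i_!$ restricts to a coproduct-preserving equivalence $D(Q) \xrightarrow{\simeq} \langle Q\rangle_{D(A)}$.

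Finally I would conclude by transport of compactness. The free module $Q_Q$ is compact in $D(Q)$ (for any dga, $\hom_{D(Q)}(Q_Q,-)$ is $H^0$, which commutes with coproducts), and compactness is preserved under coproduct-preserving equivalences, so $Q = i_!(Q_Q)$ is compact in $\langle Q\rangle_{D(A)}$ — which is precisely the statement that the right $A$-module $\dq$ is relatively compact in $D(A)$. Concretely: for a family $\{M_\alpha = i_! N_\alpha\}_\alpha$ in $\langle Q\rangle_{D(A)}$ one has $\coprod_\alpha M_\alpha = i_!\bigl(\coprod_\alpha N_\alpha\bigr)$, and full faithfulness plus compactness of $Q_Q$ give $\hom_{D(A)}(Q, \coprod_\alpha M_\alpha) \cong \hom_{D(Q)}(Q_Q, \coprod_\alpha N_\alpha) \cong \coprod_\alpha \hom_{D(Q)}(Q_Q, N_\alpha) \cong \coprod_\alpha \hom_{D(A)}(Q, M_\alpha)$.

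There is no real obstacle here: the only steps needing care are checking that $\mathcal{E}$ is a genuine localising subcategory (closure under summands via idempotent completeness of $D(Q)$) and the bookkeeping identifying $i_!(Q_Q)$ with $\dq$ as a right $A$-module. Alternatively, one can bypass the recollement entirely and argue from the derived localisation theory of \cite[4.14 and 4.15]{bcl}, which already identifies $D(A)_{(1-e)\text{-loc}}$ with $D(\dq)$, exhibits it as a localising subcategory of $D(A)$ containing $\dq$, and sends the free $\dq$-module to the right $A$-module $\dq$.
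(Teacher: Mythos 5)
Your proof is correct and follows essentially the same line as the paper's: both exploit the recollement to observe that $i_* = i_!$ is a fully faithful, coproduct-preserving embedding sending the free module $Q_Q$ (compact in $D(Q)$) to the $A$-module $\dq$, and that $\langle \dq \rangle_{D(A)}$ lands inside the essential image of $i_*$. The paper compresses this by citing \cite[1.7]{jorgensen} for the transport-of-compactness step, whereas you spell the bookkeeping out by hand; that is the only difference.
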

	\begin{proof}
		The embedding $i_*$ is a left adjoint and so respects coproducts. Hence $i_*(\dq)$ is relatively compact in $D(A)$ by \cite[1.7]{jorgensen}. The essential idea is that $\dq$ is compact in $D(\dq)$, the functor $i_*$ is an embedding, and $\langle i_*(\dq) \rangle \subseteq \im i_*$.
	\end{proof}
	In situations when $\dq$ is not a compact $A$-module (e.g.\ when it has nontrivial cohomology in infinitely many degrees), this gives interesting examples of relatively compact objects that are not compact.
	\begin{defn}
		Let $D(A)_{A/AeA}$ denote the full subcategory of $D(A)$ on those modules $M$ with each $H^j(M)$ a module over $A/AeA$.
	\end{defn}
	\begin{prop}\label{cohomsupport}There is a natural triangle equivalence $D(\dq)\cong D(A)_{A/AeA}$.
	\end{prop}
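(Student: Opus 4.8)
The plan is to identify $D(\dq)$ with its essential image inside $D(A)$ under the embedding $i_* = i_!$ from the recollement of \ref{recoll}, and then show that this image is exactly $D(A)_{A/AeA}$. Recall that $i_*$ is fully faithful, so it suffices to compute $\im i_*$. The recollement tells us that $X \in D(A)$ lies in $\im i_*$ if and only if $j^*X \simeq X \lot_A Ae$ is acyclic, equivalently if and only if $Xe \simeq 0$ in $D(R)$. So the first step is to prove that $Xe$ is acyclic if and only if each cohomology module $H^j(X)$ is annihilated by $e$, i.e.\ is a module over $A/AeA$.

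First I would handle the easy implication. If each $H^j(X)$ is an $A/AeA$-module, then $H^j(X) \cdot e = 0$ for all $j$; since $(-)\otimes_A Ae$ (i.e.\ right multiplication by $e$, which is exact as $Ae$ is a projective left... in fact $-\lot_A Ae$ computed via a cofibrant resolution) commutes with taking cohomology up to the relevant spectral sequence — more simply, $H^j(X\lot_A Ae) \cong H^j(X)\otimes_A Ae = H^j(X)e = 0$ because $Ae$ is $A$-flat on the appropriate side and multiplication by $e$ is a direct-summand projection — we get $Xe \simeq 0$, so $X \in \im i_*$. For the converse: suppose $Xe \simeq 0$. Then for each $j$ we have $H^j(X)e = H^j(Xe) = 0$, so $H^j(X)$ is a right $A$-module with $H^j(X)e = 0$; such a module is automatically a module over $A/AeA$ since the action of $AeA$ factors through $(-)\cdot e \cdot(-)$ and hence kills $H^j(X)$. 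This shows $\im i_* = D(A)_{A/AeA}$ as subcategories of $D(A)$, and composing with the equivalence $D(\dq) \xrightarrow{\simeq} \im i_*$ gives the claim. I should also check naturality/triangulatedness: $D(A)_{A/AeA}$ is visibly closed under shifts and, using the long exact cohomology sequence, under cones, and it is closed under coproducts, so it is a genuine triangulated subcategory; the equivalence is exact because $i_*$ is.

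One subtle point to get right is the exactness claim that $H^j(X \lot_A Ae) \cong H^j(X)e$, since $\lot$ is a priori a derived functor. The clean way is to observe that $A \cong eA \oplus (1-e)A$ as right $A$-modules, so $Ae$ — viewed as the object we tensor against on the correct side — makes $-\lot_A Ae$ literally the exact functor $M \mapsto Me$ (a direct-summand projection, no higher Tor), exactly as used in the proof of \ref{Rcoloc}; hence $H^j(Xe) = H^j(X)e$ on the nose. With that identification the argument above goes through verbatim.

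\textbf{Expected main obstacle.} The mathematical content here is light; the only place requiring care is making the exactness of $(-)e$ and the identification $j^* = (-)\otimes_A Ae$ precise enough that "$Xe$ acyclic $\iff$ each $H^j(X)$ killed by $e$" is rigorous, and then invoking the recollement to conclude $\im i_* = D(A)_{A/AeA}$ rather than merely a containment. Everything else — full faithfulness of $i_*$, triangulatedness and coproduct-closure of the target subcategory — is already packaged in \ref{recoll} or is immediate from the long exact sequence.
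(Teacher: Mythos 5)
Your proof is correct and it is essentially the same argument the paper is pointing to: the paper simply outsources to the proof of Kalck--Yang~[KY, 2.10] plus the uniqueness of the $D(\dq)$-piece of a recollement once the $D(R)$-piece is fixed, and what you have written out is exactly the content of that verification — identify $\im i_*$ with $\ker j^*$, use that $Ae$ is a projective (hence flat) left $A$-module so $j^*X \simeq Xe$ computes exactly and $H^j(Xe) = H^j(X)e$, and then observe that $Me = 0$ is equivalent to $M$ being an $A/AeA$-module.
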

	\begin{proof}Follows from the proof of \cite[2.10]{kalckyang}, along with the fact that recollements are determined completely by fixing one half.
	\end{proof}
	\begin{prop}\label{semiorthog}
		The derived category $D(A)$ admits a semi-orthogonal decomposition $$D(A)\cong \langle D(A)_{A/AeA}, \langle eA \rangle \rangle = \langle \im i_*, \im j_! \rangle $$
	\end{prop}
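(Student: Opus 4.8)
The plan is to deduce the semi-orthogonal decomposition directly from the recollement of \ref{recoll} together with \ref{cohomsupport}. Recall that the recollement gives adjoint triples $(i^*, i_*=i_!, i^!)$ and $(j_!, j^!=j^*, j_*)$, and that $i_* \colon D(Q) \hookrightarrow D(A)$ is a fully faithful embedding with essential image $D(A)_{A/AeA}$ by \ref{cohomsupport}. The first step is to identify $\im j_!$: since $j_! = - \lot_R eA$, and $\R\enn_A(eA) \cong \enn_A(eA) \cong R$ because $eA$ is a projective $A$-module, the functor $j_!$ is the standard fully faithful embedding $D(R) \simeq \langle eA \rangle_{D(A)} \hookrightarrow D(A)$; this also explains the equality $\langle eA \rangle = \im j_!$ in the statement.

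The second step is to verify the two conditions of a semi-orthogonal decomposition $D(A) = \langle \im i_*, \im j_! \rangle$ in the sense that (a) $\R\hom_A(j_!(N), i_*(M)) \simeq 0$ for all $N \in D(R)$, $M \in D(Q)$, i.e.\ $\im j_!$ is left-orthogonal to $\im i_*$; and (b) every object of $D(A)$ is built from an object of $\im i_*$ and an object of $\im j_!$ via a distinguished triangle. For (a), one uses the recollement identity $j^* i_* \simeq 0$ (established in the proof of \ref{recoll}, where $j^* i_*$ is tensoring by the acyclic bimodule $Q.e$): by adjunction $(j^*, j_*)$, or rather using $j^! = j^*$ and the adjunction $(j_!, j^!)$, we get
\[
\R\hom_A(j_!(N), i_*(M)) \simeq \R\hom_R(N, j^!i_*(M)) \simeq \R\hom_R(N, j^*i_*(M)) \simeq 0.
\]
For (b), take any $X \in D(A)$ and apply the counit/unit triangle of the recollement: the distinguished triangle $j_! j^! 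X \to X \to i_* i^* X \to$ constructed at the end of the proof of \ref{recoll} exhibits $X$ as an extension of $i_* i^* X \in \im i_*$ by $j_! j^! X \in \im j_!$. This is precisely the gluing datum witnessing the semi-orthogonal decomposition.

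The final step is bookkeeping: assemble (a) and (b) into the statement that $D(A) = \langle \im i_*, \im j_! \rangle$ as a semi-orthogonal decomposition, and rewrite $\im i_* = D(A)_{A/AeA}$ via \ref{cohomsupport} and $\im j_! = \langle eA \rangle$ via the identification above, to recover the two displayed forms in the proposition. I do not expect a genuine obstacle here; the one point requiring a little care is the direction of orthogonality — one must check that it is $\im j_!$ that is \emph{left}-orthogonal to $\im i_*$ (equivalently, that the semi-orthogonality is set up so that $\R\hom(\im j_!, \im i_*)$, not $\R\hom(\im i_*, \im j_!)$, vanishes), which follows from the vanishing $j^* i_* \simeq 0$ rather than $i^* j_! \simeq 0$; in fact one can note $i^* j_! = -\lot_A{}_AQ$ composed with $-\lot_R eA$, which is generally nonzero, consistent with only one-sided orthogonality. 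One should also remember to remark that $\langle \im i_*, \im j_! \rangle$ generates all of $D(A)$ under coproducts, which is automatic from (b) since the triangle is natural in $X$.
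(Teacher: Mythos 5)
Your proof is correct, but it takes a different route from the paper. The paper simply cites a general structural result — \cite[3.6]{jorgensen}, which (roughly) says that any recollement $(\mathcal{T}',\mathcal{T},\mathcal{T}'')$ automatically yields semi-orthogonal decompositions of $\mathcal{T}$ built from the images of the inclusion and gluing functors — and applies it to the recollement of \ref{recoll}. Your argument instead unpacks that general result by hand: you verify semi-orthogonality directly from the vanishing $j^*i_*\simeq 0$ (via the $(j_!,j^!)$ adjunction), and you obtain the gluing triangle $j_!j^!X\to X\to i_*i^*X\to$ from the recollement axioms. Both approaches are sound; citing Jorgensen is shorter, while your direct verification is more self-contained and makes the mechanism transparent, in particular making explicit why the orthogonality goes the way it does (one-sided, coming from $j^*i_*\simeq 0$ rather than $i^*j_!\simeq 0$). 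One small stylistic point: fullness and faithfulness of $j_!$ and the identification $\im j_!=\langle eA\rangle$ are already observed in the proof of \ref{recoll} and in the discussion preceding \ref{relsingcatdefn}, so you need not re-derive $\R\enn_A(eA)\cong R$; and your closing remark about ``generation under coproducts'' is superfluous, since the distinguished triangle for an arbitrary $X\in D(A)$ already gives generation directly.
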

	\begin{proof}
		This is an easy consequence of \cite[3.6]{jorgensen}.
	\end{proof}
	We finish with a couple of facts about t-structures; see \cite{bbd} for a definition. In particular, given t-structures on the outer pieces of a recollement diagram, one can glue them to a new t-structure on the central piece \cite[1.4.10]{bbd}. 
	\begin{thm}\label{tstrs}
		The category $D(\dq)$ admits a t-structure $\tau$ with aisles $$\tau^{\leq 0}=\{X: H^i(X)=0 \text{ for } i>0\}\qquad \text{and} \qquad \tau^{\geq 0}=\{X: H^i(X)=0 \text{ for } i<0\}.$$Moreover, $H^0$ is an equivalence from the heart of $\tau$ to $\cat{Mod}$-$A/AeA$. Furthermore, gluing $\tau$ to the natural t-structure on $D(R)$ via the recollement diagram of \ref{recoll} gives the natural t-structure on $D(A)$.
	\end{thm}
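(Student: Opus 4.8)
The plan is to establish the three claims of \ref{tstrs} in sequence, each reducing to a standard fact once the recollement of \ref{recoll} is in hand. First I would verify that $(\tau^{\leq 0}, \tau^{\geq 0})$ as described is genuinely a t-structure on $D(\dq)$. This is the content of \ref{cohomsupport}: the equivalence $D(\dq)\cong D(A)_{A/AeA}$ transports the natural t-structure on $D(A)$ (restricted to the subcategory $D(A)_{A/AeA}$, which is closed under the truncation functors $\tau^{\leq 0}_{D(A)}, \tau^{\geq 0}_{D(A)}$ since taking cohomology commutes with truncation and each $H^j$ stays an $A/AeA$-module) to a t-structure on $D(\dq)$. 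One should check that this transported t-structure has the claimed aisles; but since the equivalence of \ref{cohomsupport} is induced by the recollement functor $i_*$, which on objects is essentially restriction of scalars along $A \to \dq$ and hence preserves underlying complexes up to quasi-isomorphism, the aisles $\{H^i=0, i>0\}$ and $\{H^i=0, i<0\}$ match up on the nose.

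Second, the identification of the heart. The heart of $\tau$ is $\tau^{\leq 0}\cap\tau^{\geq 0}$, the objects of $D(\dq)$ with cohomology concentrated in degree zero. For any dga $B$ concentrated in nonpositive degrees (which $\dq$ is, by \ref{derquotcohom}), the functor $H^0$ gives an equivalence between the heart of the natural t-structure and $\cat{Mod}\text{-}H^0(B)$; this is a standard fact about connective dgas (an object with cohomology in degree zero is quasi-isomorphic to a genuine $H^0(B)$-module via the truncation $\tau_{\geq 0}\tau_{\leq 0}$, and conversely any $H^0(B)$-module is a $B$-module via $B \to H^0(B)$). Since $H^0(\dq)\cong A/AeA$ by \ref{derquotcohom}, we get $H^0:\text{heart}(\tau)\xrightarrow{\simeq}\cat{Mod}\text{-}A/AeA$.

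Third, and this is where the real work lies, I would show that gluing $\tau$ to the standard t-structure on $D(R)$ along the recollement of \ref{recoll} recovers the standard t-structure on $D(A)$. By \cite[1.4.10]{bbd}, the glued t-structure has nonpositive aisle $\{X \in D(A): i^*X \in D(\dq)^{\leq 0} \text{ and } j^*X \in D(R)^{\leq 0}\}$, and dually for the nonnegative aisle with $i^!$ and $j^*$ (where for the glued structure one uses $j^!=j^*$ on the quotient side and the pair $i^*\dashv i_*\dashv i^!$ on the subcategory side). So I must check that $X \in D(A)$ has cohomology in nonpositive degrees if and only if $i^*X = X\lot_A {}_A\dq$ and $j^*X = X\lot_A Ae$ do. The statement for $j^*$: since $eA$ is a projective $A$-module (it is a direct summand of $A$), $j^*X = Xe$ is just a summand-like restriction that does not change cohomological amplitude beyond that of $X$, so $X \in D(A)^{\leq 0}$ implies $Xe \in D(R)^{\leq 0}$; I would also need the converse direction combined with the $i^*$ condition. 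The statement for $i^*$ is more delicate because $\dq$ has unbounded-below cohomology in general (by \ref{derquotcohom}, $H^{-j}(\dq)\cong\tor_{-j-1}^R(Ae,eA)$), so tensoring could a priori spread cohomology downward; here I would use the exact triangle $\cell A \xrightarrow{\mu} A \to \dq \to$ of \ref{dqexact} together with the fact that $\cell A = Ae\lot_R eA$, and argue via the recollement distinguished triangles $j_!j^!X \to X \to i_*i^*X \to$ and $i_!i^!X \to X \to j_*j^*X \to$ constructed in the proof of \ref{recoll}. The cleanest route: the glued t-structure is \emph{some} t-structure on $D(A)$ whose restriction to $\im i_* = D(A)_{A/AeA}$ is $\tau$ and whose restriction to $\im j_!=\langle eA\rangle$ is the standard one; since the standard t-structure on $D(A)$ also restricts compatibly along both halves of the recollement (one checks directly that $i^*, j^*$ of a standard-connective module are connective, using projectivity of $eA$ for $j^*$ and the triangle of \ref{dqexact} for $i^*$), and a t-structure glued from a recollement is \emph{uniquely} determined by its restrictions to the two halves, the two t-structures coincide. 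The main obstacle is precisely controlling $i^*X = X\lot_A\dq$: ruling out that the unbounded negative cohomology of $\dq$ introduces spurious negative cohomology in $i^*X$ for $X$ connective. I expect this to follow from the triangle $\cell A \to A \to \dq$ after tensoring with $X$, since $X\lot_A\cell A = Xe\lot_R eA$ and the relevant $\tor$-amplitude is nonpositive, but it requires a careful spectral-sequence or truncation argument and is the step I would spend the most care on.
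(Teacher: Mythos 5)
Your overall strategy matches the paper's: the paper cites Kalck--Yang \cite[2.1(a)]{kalckyang} for the first two sentences (you re-derive them via \ref{cohomsupport} and the connective-dga heart fact, which is a fine substitute), and for the gluing claim it invokes uniqueness of the glued t-structure together with the compatibility check you describe in the middle of your third paragraph. The route is sound. However, the final paragraph of your proposal goes astray, and the worry flagged there is unfounded on two counts.

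First, ``spreading cohomology downward'' is not an obstruction to membership in the nonpositive aisle: $\tau^{\leq 0}$ consists of all complexes with vanishing \emph{positive} cohomology, and negative cohomology --- however unbounded --- is irrelevant. Since $\dq$ is a nonpositive dga (by \ref{derquotcohom}), and any $X\in D(A)^{\leq 0}$ admits a cofibrant model concentrated in degrees $\leq 0$, the derived tensor $i^*X = X\lot_A \dq$ is manifestly concentrated in degrees $\leq 0$; no appeal to the triangle of \ref{dqexact} and no spectral sequence are needed. Dually, $i^!X = \R\hom_A(\dq,X)$ is concentrated in degrees $\geq 0$ when $X$ is. Second, you do not need the converse inclusions (glued aisle $\subseteq$ natural aisle) at all. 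The standard orthogonality argument closes the loop: if two t-structures $(\tau^{\leq 0},\tau^{\geq 0})$ and $(\sigma^{\leq 0},\sigma^{\geq 0})$ on the same triangulated category satisfy $\tau^{\leq 0}\subseteq\sigma^{\leq 0}$ and $\tau^{\geq 0}\subseteq\sigma^{\geq 0}$, then from $\sigma^{\geq 0}=(\sigma^{\leq -1})^{\perp}\subseteq(\tau^{\leq -1})^{\perp}=\tau^{\geq 0}$ the nonnegative aisles coincide, and taking left orthogonals the nonpositive aisles coincide too. So only the ``easy'' containments (natural aisles contained in glued aisles) need checking, and these follow immediately from the nonpositivity of $\dq$ (for $i^*$, $i^!$) and from exactness of $-\otimes_A Ae$ (projectivity of $Ae$, for $j^*$). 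With those in hand, uniqueness of the glued t-structure (\cite[1.4.10]{bbd}) finishes the argument; the step you identified as the one needing the most care is in fact a non-issue.
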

	\begin{proof}
		The first two sentences are precisely the content of \cite[2.1(a)]{kalckyang}. The last assertion holds because gluing of t-structures is unique, and restricting the natural t-structure on $D(A)$ clearly gives $\tau$ along with the natural t-structure on $D(R)$.
	\end{proof}

		\section{Hochschild theory}
		We collect some facts about the Hochschild theory of the derived quotient. The most important is that taking quotients preserves Hochschild (co)homology complexes:
		\begin{prop}[{\cite[6.2]{bcl}}]\label{hochprop}
			Let $A$ be a dga, $e\in H^0(A)$ an idempotent, and $Q\coloneqq \dq$ the derived quotient. Let $M$ be a $Q$-module. Then there are quasi-isomorphisms $$Q \lot_{Q^e} M \simeq A \lot_{A^e} M \quad\text{and}\quad \R\hom_{Q^e}(Q,M) \simeq \R\hom_{A^e}(A,M) $$and hence isomorphisms $$HH_*(Q,M)\cong HH_*(A,M) \quad\text{and}\quad HH^*(Q,M)\cong HH^*(A,M).$$
		\end{prop}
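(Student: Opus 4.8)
The plan is to deduce this statement (which is \cite[6.2]{bcl}) from the single structural input available to us as \ref{homepi}: the localisation map $A\to Q$ is a homological epimorphism. Throughout write $Q\coloneqq\dq$, $R\coloneqq eAe$, $A^e\coloneqq A\otimes_k A^{\mathrm{op}}$, $Q^e\coloneqq Q\otimes_k Q^{\mathrm{op}}$, and regard $M$ as a $Q^e$-module. First I would upgrade the homological epimorphism $A\to Q$ to one of enveloping algebras: since $k$ is a field the functor $-\otimes_k-$ is exact, so $Q^e\lot_{A^e}Q^e\simeq(Q\lot_A Q)\otimes_k(Q\lot_A Q)^{\mathrm{op}}\simeq Q\otimes_k Q^{\mathrm{op}}=Q^e$ using \ref{homepi}. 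Hence $A^e\to Q^e$ is also a homological epimorphism; in particular restriction $D(Q^e)\to D(A^e)$ is fully faithful and $Q\lot_{A^e}Q^e\simeq Q$ as $Q^e$-modules.

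For the homology statement I would then apply $-\lot_{A^e}M$ to the exact triangle of $A$-bimodules $\cell A\to A\to Q\to$ of \ref{dqexact}, producing an exact triangle $\cell A\lot_{A^e}M\to A\lot_{A^e}M\to Q\lot_{A^e}M\to$. For the right-hand term, associativity together with $Q\lot_{A^e}Q^e\simeq Q$ gives $Q\lot_{A^e}M\simeq Q\lot_{Q^e}M$, which computes $HH_*(Q,M)$, so it is enough to show $\cell A\lot_{A^e}M\simeq 0$. Using $\cell A=Ae\lot_R eA$ from \ref{celldef} and rearranging the bimodule trace, $\cell A\lot_{A^e}M\simeq R\lot_{R^e}\bigl(eA\lot_A M\lot_A Ae\bigr)\simeq R\lot_{R^e}(eMe)$. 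Finally $eMe\simeq 0$, because $Q$ is $e$-killing: $Qe$ is acyclic (\cite[\S9]{bcl}; compare the model of \ref{drinfeld}, in which $e=d(h)$ is a boundary), so $Me\simeq M\lot_Q Qe\simeq 0$ for the $Q$-module $M$, and likewise $eM\simeq 0$. This yields $A\lot_{A^e}M\xrightarrow{\ \simeq\ }Q\lot_{Q^e}M$, i.e.\ $HH_*(A,M)\cong HH_*(Q,M)$.

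The cohomology statement is formally dual: apply $\R\hom_{A^e}(-,M)$ to the same triangle. The identical rearrangement, now for derived Hom, together with $eMe\simeq 0$ gives $\R\hom_{A^e}(\cell A,M)\simeq 0$, collapsing the triangle to $\R\hom_{A^e}(Q,M)\simeq\R\hom_{A^e}(A,M)$; and tensor--hom adjunction with $Q\lot_{A^e}Q^e\simeq Q$ identifies $\R\hom_{A^e}(Q,M)\simeq\R\hom_{Q^e}(Q,M)$. Passing to cohomology gives the displayed isomorphisms of Hochschild (co)homology.

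The part I expect to be fiddly is the bookkeeping around the trace rearrangement $(Ae\lot_R eA)\lot_{A^e}M\simeq R\lot_{R^e}(eA\lot_A M\lot_A Ae)$ and its $\R\hom$-counterpart: one must keep careful track of the several interacting left and right $A$-, $R$- and $Q^e$-module structures, and verify the signs in the contracting homotopy witnessing $Me\simeq 0$ so that it descends correctly through all the tensor and Hom factors. Once \ref{homepi} and these identifications are in place, the rest of the argument is purely formal.
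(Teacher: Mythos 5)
The paper itself does not prove this statement; it outsources it entirely to \cite[6.2]{bcl}, so there is no in-text proof to compare against. Your argument is correct as written. The three pillars — upgrading the homological epimorphism $A\to Q$ to $A^e\to Q^e$ via Künneth over the field $k$, applying $-\lot_{A^e}M$ and $\R\hom_{A^e}(-,M)$ to the triangle $\cell A\to A\to Q$ of \ref{dqexact}, and killing the $\cell A$ term via the trace rearrangement to $R\lot_{R^e}(eMe)$ together with acyclicity of $\bar{e}Q$ and $Q\bar{e}$ — all go through. One small point worth making explicit in the step $eMe\simeq 0$: $\bar{e}Q$ is not merely acyclic but a direct summand of $Q$ (as a right $Q$-module, since $\bar e$ is a cocycle idempotent), hence cofibrant; this is what lets you conclude that the underived $\bar{e}Q\otimes_Q M$, which is literally $\bar{e}M$, computes the derived tensor and is therefore acyclic, and then $eMe$ is a summand of $\bar e M$ and so also acyclic. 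With that noted, the proof is complete and is the natural argument one expects behind the BCL citation.
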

		Hochschild homology is functorial with respect to recollement:
		\begin{prop}[{\cite[3.1]{kellerhoch}}]
			Let $A$ be an algebra over $k$ and $e\in A$ an idempotent. Put $Q\coloneqq \dq$ the derived quotient and $R\coloneqq eAe$ the cornering. Then there is an exact triangle in $D(k)$ $$
			Q\lot_{Q^e} Q \to A \lot_{A^e} A \to R \lot_{R^e} R \to .$$
		\end{prop}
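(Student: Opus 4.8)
The plan is to derive this triangle by applying the Hochschild homology functor $HH_*(A,-)=A\lot_{A^e}(-)$ to the $A$-bimodule exact triangle $\cell A\xrightarrow{\mu}A\to {}_AQ_A\to$ of Proposition~\ref{dqexact}. Doing so produces an exact triangle in $D(k)$
$$A\lot_{A^e}(\cell A)\to A\lot_{A^e}A\to A\lot_{A^e}Q\to,$$
whose middle term is $A\lot_{A^e}A$ on the nose. For the right-hand term, Proposition~\ref{hochprop} applied to the $Q$-bimodule $M=Q$ gives a quasi-isomorphism $A\lot_{A^e}Q\simeq Q\lot_{Q^e}Q$, identifying it with the Hochschild homology of the derived quotient. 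So everything reduces to identifying the left-hand term $A\lot_{A^e}(\cell A)$ with $R\lot_{R^e}R$.

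Since $\cell A=Ae\lot_R eA$ by definition, this identification is a special case of a trace (or Morita-invariance) property of Hochschild homology: for an $A$-$R$-bimodule $P$ and an $R$-$A$-bimodule $T$ there is a natural quasi-isomorphism $A\lot_{A^e}(P\lot_R T)\simeq R\lot_{R^e}(T\lot_A P)$. Granting this and taking $P=Ae$, $T=eA$ gives $A\lot_{A^e}(\cell A)\simeq R\lot_{R^e}(eA\lot_A Ae)\simeq R\lot_{R^e}(eAe)=R\lot_{R^e}R$. Assembling the three terms produces an exact triangle $R\lot_{R^e}R\to A\lot_{A^e}A\to Q\lot_{Q^e}Q\to$; after rotating, and using that the recollement of~\ref{recoll} is split (each of its structure functors admits a one-sided inverse, so that $HH$ of the associated localisation sequence splits and the connecting map vanishes), one recovers the triangle in the stated order.

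The hard part will be this trace property, which is not among the results quoted above. When $A$ is ungraded it can be verified directly at chain level: by Lemma~\ref{drinfeldmodel} the shifted truncation of the Drinfeld model of $Q$ is a relative cyclic-bar-type complex computing $\cell A$, and one rearranges the associated explicit double complex, via the usual cyclic rotation, into the cyclic bar complex of $R$ with coefficients in $R$. In full dg generality it is exactly the statement that Hochschild homology is a Morita-invariant localizing invariant of small dg categories, which is Keller's viewpoint in~\cite{kellerhoch}. In fact one can bypass the explicit computation altogether: Theorem~\ref{ntty} applied with $S=\{1-e\}$, together with the identification $\mathbb{L}^{1-e}(A)\simeq R$ from the proof of~\ref{Rcoloc}, gives a sequence of (dg-enhanced) triangulated categories $\cat{per}_{\mathrm{dg}}(R)\to\cat{per}_{\mathrm{dg}}(A)\to\cat{per}_{\mathrm{dg}}(Q)$ exact up to direct summands; Hochschild homology — which is Morita-invariant, insensitive to idempotent completion, and carries such exact sequences to exact triangles — then applies, and translating back via Morita invariance, so that $HH$ of $\cat{per}_{\mathrm{dg}}(A)$ is $A\lot_{A^e}A$ and similarly for $Q$ and $R$, delivers the triangle directly.
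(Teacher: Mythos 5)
Your proposal gives two arguments. The second — invoking Keller's localizing invariance of Hochschild homology on the NTTY sequence $\cat{per}_\mathrm{dg}(R)\to\cat{per}_\mathrm{dg}(A)\to\cat{per}_\mathrm{dg}(Q)$ from Theorem~\ref{ntty} with $S=\{1-e\}$ together with \ref{Rcoloc} — is exactly the content of the cited \cite[3.1]{kellerhoch}, so you have independently reconstructed the reference the paper relies on without further argument. Your first argument is genuinely different and more hands-on: apply $A\lot_{A^e}(-)$ to the bimodule triangle $\cell A \to A \to Q$ of Proposition~\ref{dqexact}, identify $A\lot_{A^e}Q\simeq Q\lot_{Q^e}Q$ via~\ref{hochprop}, and identify $A\lot_{A^e}\cell A\simeq R\lot_{R^e}R$ via the trace (cyclic) identity $A\lot_{A^e}(P\lot_R T)\simeq R\lot_{R^e}(T\lot_A P)$. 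This keeps the whole argument inside the bimodule machinery the paper develops and makes the connecting map visible; the trace identity, as you rightly flag, is the one ingredient not among the quoted results and should be cited or verified at chain level. Both routes produce the triangle in the order $R\lot_{R^e}R\to A\lot_{A^e}A\to Q\lot_{Q^e}Q\to$, and your resolution of the order discrepancy with the paper's display is correct for the reason you gesture at: since $j^*j_!$ is naturally quasi-isomorphic to the identity on $\cat{per}_\mathrm{dg}(R)$, applying Hochschild homology produces a retraction of the first map, so $A\lot_{A^e}A\simeq R\lot_{R^e}R\oplus Q\lot_{Q^e}Q$ and the triangle exists in either order with zero connecting map. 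One small imprecision: ``after rotating'' does not accomplish the reordering — rotation cycles the three vertices but never reverses them — so it is the splitting alone that does the work, and it would be cleaner to say so directly.
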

		Unfortunately, Hochschild cohomology does not behave so nicely.
		\begin{lem}Let $A$ be a algebra over $k$ and $e\in A$ an idempotent. Put $Q\coloneqq \dq$ the derived quotient and $R\coloneqq eAe$ the cornering. Then there are exact triangles in $D(k)$\begin{align*}
			\R\hom_{A^e}(Q,A) \to \R\hom_{A^e}(A,A) \to \R\hom_{R^e}(R,R) \to \\
			\R\hom_{A^e}(A,\cell A) \to \R\hom_{A^e}(A,A) \to \R\hom_{Q^e}(Q,Q) \to .
			\end{align*}
		\end{lem}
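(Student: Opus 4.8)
The plan is to build both triangles from the basic exact triangle of $A$-bimodules $\cell A \xrightarrow{\mu} A \to {}_AQ_A \to {}$ established in \ref{dqexact}, by applying appropriate derived Hom functors over the enveloping algebra $A^e = A\otimes_k A^{\mathrm{op}}$. First I would recall that $\R\hom_{A^e}(A,-)$ is the Hochschild cochain functor, and that the triangle $\cell A \to A \to {}_AQ_A \to {}$ is a triangle of $A$-bimodules, i.e.\ of objects in $D(A^e)$.

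For the second triangle, apply the (triangulated) functor $\R\hom_{A^e}(A,-)$ directly to the exact triangle $\cell A \xrightarrow{\mu} A \to {}_AQ_A \to {}$. Since $\R\hom_{A^e}(A,-)$ preserves distinguished triangles, this gives a triangle
$$\R\hom_{A^e}(A,\cell A) \to \R\hom_{A^e}(A,A) \to \R\hom_{A^e}(A,{}_AQ_A) \to {}.$$
It then remains to identify the third term with $\R\hom_{Q^e}(Q,Q)$. This is exactly \ref{hochprop}: taking $M = {}_AQ_A = Q$ as a $Q$-bimodule, the proposition gives $\R\hom_{Q^e}(Q,Q) \simeq \R\hom_{A^e}(A,Q)$, which is the required identification (after noting that the bimodule $Q$ over $A$ is obtained by restriction along $A \to Q$, so these Hochschild complexes compute what we want). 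Substituting yields the second displayed triangle.

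For the first triangle, apply instead the functor $\R\hom_{A^e}(-,A)$ to the same triangle $\cell A \xrightarrow{\mu} A \to {}_AQ_A \to {}$; since this is contravariant and exact, it produces a triangle
$$\R\hom_{A^e}({}_AQ_A,A) \to \R\hom_{A^e}(A,A) \to \R\hom_{A^e}(\cell A,A) \to {}.$$
The first term is $\R\hom_{A^e}(Q,A)$ by definition. The main work is to identify $\R\hom_{A^e}(\cell A,A)$ with $\R\hom_{R^e}(R,R)$. Here I would use the derived tensor-hom adjunction: since $\cell A = Ae \lot_R eA$, and $Ae$, $eA$ are the restrictions of the $R$-bimodule $R$ along the (bi)module structures, one has $\R\hom_{A^e}(Ae\lot_R eA, A) \simeq \R\hom_{R^e}(R, eAe) = \R\hom_{R^e}(R,R)$, using that $eAe \cong R$ and that $eA \lot_A Ae \simeq eAe$ (the relevant Tor-vanishing, or more simply that $Ae$ is a projective right $A$-module so the derived and underived corners agree). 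This is essentially the computation behind the Hochschild-homology recollement triangle dualised, and is the step I expect to require the most care with the bimodule bookkeeping — one must track which side each idempotent acts on, and check that the adjunction isomorphisms are compatible with the bimodule structures rather than merely the underlying complexes. Once this identification is in place, substituting into the triangle above gives the first displayed triangle, completing the proof. $\qed$
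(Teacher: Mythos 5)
Your proposal is correct and follows essentially the same route as the paper: apply the covariant and contravariant $\R\hom_{A^e}$ functors to the exact triangle $\cell A \to A \to Q \to{}$ from \ref{dqexact}, then identify $\R\hom_{A^e}(A,Q)\simeq \R\hom_{Q^e}(Q,Q)$ via \ref{hochprop} for the second triangle and $\R\hom_{A^e}(\cell A,A)\simeq\R\hom_{R^e}(R,R)$ via the tensor-hom adjunction (using projectivity of $Ae$ and $eA$ over $A$) for the first. The paper merely packages the same two triangles as the middle column and middle row of a $3\times 3$ diagram, and spells out the adjunction as a two-step chain $\R\hom_{A^e}(\cell A,A)\simeq\R\hom_{A^{\mathrm{op}}\otimes R}(Ae,Ae)\simeq\R\hom_{R^e}(R,R)$ rather than a single step, but these are cosmetic differences.
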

		\begin{proof}Recall that $\mathrm{cocone}(A \to Q)$ is quasi-isomorphic as an $A$-bimodule to $\cell A$ by \ref{dqexact}. Consider the diagram $$\begin{tikzcd} \phantom{}& \phantom{}& \phantom{}& \phantom{}\\
			\R\hom_{A^e}(\cell A,\cell A) \ar[r]\ar[u] & \R\hom_{A^e}(\cell A,A) \ar[r]\ar[u] & \R\hom_{A^e}(\cell A,Q) \ar[r]\ar[u] & \phantom{} \\
			\R\hom_{A^e}(A,\cell A) \ar[r]\ar[u] & \R\hom_{A^e}(A,A) \ar[r]\ar[u] & \R\hom_{A^e}(A,Q) \ar[r]\ar[u] & \phantom{} \\
			\R\hom_{A^e}(Q,\cell A) \ar[r]\ar[u] & \R\hom_{A^e}(Q,A) \ar[r]\ar[u] & \R\hom_{A^e}(Q,Q) \ar[r]\ar[u,"f"] & \phantom{} \end{tikzcd}$$ whose rows and columns are exact triangles. The first triangle can be seen as the middle column, once we make the observation that $$\R\hom_{A^e}(\cell A,A)\simeq \R\hom_{A^{\mathrm{op}}\otimes R}(Ae,\R\hom_A(eA,A))\simeq\R\hom_{A^{\mathrm{op}}\otimes R}(Ae,Ae)\simeq \R\hom_{R^e}(R,R).$$Now, \ref{hochprop} tells us both that $f$ is a quasi-isomorphism, and moreover that both source and target are quasi-isomorphic to $\R\hom_{Q^e}(Q,Q)$. The second triangle is now visible as the middle row.
		\end{proof}
		\begin{rmk}
			These are two of the three triangles obtained by applying \cite[Theorem 4]{han} to the standard recollement $(D(Q),D(A), D(R))$. The third is $$\R\hom_{A^e}(Q,\cell A) \to \R\hom_{A^e}(A,A) \to \R\hom_{Q^e}(Q,Q) \oplus \R\hom_{R^e}(R,R) \to.$$
		\end{rmk}
			\section{Deformation theory}\label{dqdefm}
		We give the derived quotient a deformation-theoretic interpretation. The following proposition generalises an argument of Kalck and Yang given in the proof of \cite[5.5]{kalckyang}.
		\begin{prop}\label{rendkd}
			Suppose that $A$ is a $k$-algebra and $e\in A$ an idempotent. Suppose that $A/AeA$ is an Artinian local $k$-algebra, and let $S$ be the quotient of $A/AeA$ by its radical. Suppose furthermore that $\dq$ is cohomologically locally finite. Then the dga $\R\enn_A(S)$ is augmented, and its Koszul dual is quasi-isomorphic to $\dq$.
		\end{prop}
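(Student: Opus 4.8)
The plan is to connect $\R\enn_A(S)$ to $\dq$ via the Koszul duality machinery developed in Chapter \ref{kd}, using the recollement of \ref{recoll} to identify $\dq$ with an endomorphism-type object. First I would observe that $S$, being one-dimensional, is naturally a module over $A/AeA$, hence lies in $D(A)_{A/AeA}$; by \ref{cohomsupport} it corresponds to a $\dq$-module $\tilde S$ under the equivalence $D(\dq)\cong D(A)_{A/AeA}$, and this equivalence is induced by the fully faithful functor $i_*$, so $\R\enn_A(S)\simeq \R\enn_{\dq}(\tilde S)$ as dgas. Moreover $\tilde S$ is the simple $\dq$-module: it is $\dq$ modulo its radical, since $H^0(\dq)\cong A/AeA$ and the higher cohomology is in negative degrees. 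The augmentation on $\R\enn_A(S)$ comes from the fact that $\enn_A(S)\cong k$ (as $k$ is algebraically closed and $S$ is simple one-dimensional), so $\R\enn_A(S)\to H^0\R\enn_A(S)\cong \ext^0_A(S,S)\cong k$ gives the augmentation, exactly as in the proof of \ref{prorepfrm}.

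Next I would apply \ref{kdisrend}: for the augmented dga $\dq$ (augmented via $\dq\to A/AeA\to S\cong k$), one has $(\dq)^!\simeq \R\enn_{\dq}(\tilde S)\simeq \R\enn_A(S)$, where $\tilde S$ is the $\dq$-module $k$ with action through the augmentation. The point of the hypotheses is now to invert this: we want $\dq \simeq (\R\enn_A(S))^! = (\dq)^{!!}$. This is precisely where \ref{kdfin} enters — it states that any nonpositively graded cohomologically locally finite dga $B$ with $H^0(B)$ local is quasi-isomorphic to its double Koszul dual $B^{!!}$. So I would check that $\dq$ satisfies these hypotheses: it is nonpositively graded by \ref{derquotcohom}; it is cohomologically locally finite by assumption; and $H^0(\dq)\cong A/AeA$ is local by assumption. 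Then \ref{kdfin} gives $\dq\simeq (\dq)^{!!}\simeq (\R\enn_A(S))^!$, which is the claim.

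One subtlety to handle carefully is the identification $\R\enn_A(S)\simeq \R\enn_{\dq}(\tilde S)$ compatibly with augmentations, and the verification that the augmented dga structure on $\dq$ used in \ref{kdfin} (the one whose bar construction and Koszul dual are being taken) genuinely matches the abstract augmented structure. I expect the main obstacle to be purely bookkeeping: ensuring that the chain of quasi-isomorphisms $\dq \simeq (\dq)^{!!} \simeq (\R\enn_{\dq}\tilde S)^! \simeq (\R\enn_A S)^!$ is one of dgas (not merely of complexes or $A_\infty$-algebras), and that the functor $i_*$ really does induce a dga isomorphism on derived endomorphism rings — this follows since $i_*$ is fully faithful, so $\dgh_{D_\mathrm{dg}(A)}(i_*\tilde S, i_*\tilde S)\simeq \dgh_{D_\mathrm{dg}(\dq)}(\tilde S,\tilde S)$ as dgas, but one should say why $i_*\tilde S\simeq S$. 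Apart from this, the deep content is entirely contained in \ref{kdfin} (itself resting on \ref{goodchar}), so the proof is short: it is essentially the observation that $\dq$ is a ``good'' dga in the sense of \ref{goodchar}, hence Koszul-reflexive, combined with \ref{kdisrend}.
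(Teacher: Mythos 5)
Your proposal is correct and follows the same route as the paper: identify $S$ as a $\dq$-module and use full faithfulness of $i_*\colon D(\dq)\to D(A)$ to get $\R\enn_A(S)\simeq\R\enn_{\dq}(S)$, recognize this as $(\dq)^!$ via \ref{kdisrend} using the augmentation $\dq\to A/AeA\to S$, and then invert by applying \ref{kdfin} to $\dq$. The only place the paper is more direct is the step you flag as a subtlety — since $S$ already carries a $\dq$-module structure through $\dq\to A/AeA$ and $i_*$ is restriction of scalars, the identification $i_*S\simeq S$ is tautological, so there is no need to pass through \ref{cohomsupport} and an auxiliary $\tilde S$.
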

		\begin{proof}
			Because $A/AeA$ is local, $S$ is a one-dimensional $A$-module. The augmentation on $\R\enn_A(S)$ is given by the natural map to $\enn_A(S)\cong k$. Since $S$ is naturally a module over $\dq$, and $D(\dq) \to D(A)$ is fully faithful, we have $\R\enn_A(S)\simeq \R\enn_{\dq}(S)$. Note that $\dq \to A/AeA \to S$ is also an augmentation of dgas. Hence we have a quasi-isomorphism $(\dq)^!\simeq \R\enn_{\dq}(S)$ by \ref{kdisrend}. Taking Koszul duals gives a quasi-isomorphism $(\dq)^{!!}\simeq \R\enn_A(S)^!$. It now suffices to prove that $(\dq)^{!!}$ is quasi-isomorphic to $\dq$, which follows from an application of \ref{kdfin}.
		\end{proof}
		\begin{rmk}
			This result can be viewed as saying that the derived category $D(\dq)$ is triangle equivalent to its formal completion along $S$ in the sense of \cite{efimov}. If $S$ is perfect over $A$ then one can prove this more directly using results of \cite[\S4]{efimov}.
		\end{rmk}
		\begin{rmk}
			A pointed version of this is proved in \cite{huakeller}, under the additional assumption that $A$ has finite global dimension.
		\end{rmk}
	\begin{rmk}
		This theorem is valid over any field $k$.
		\end{rmk}
		The main application of this theorem for us will be computational; the point is that one can compute $\R\enn_A(S)^!$ using some fairly standard methods. Letting $P\coloneqq B^\sharp\left(\dq^!\right)$ denote the continuous Koszul dual of $\dq^!$, we hence have a natural quasi-isomorphism $\varprojlim P\simeq \dq$. By \ref{prorepfrm}, $P$ prorepresents the functor of framed deformations of $S$:

		\begin{thm}\label{maindefmthm}Let $A$ be a $k$-algebra and $e\in A$ an idempotent. Suppose that $A/AeA$ is a local algebra and that $\dq$ is cohomologically locally finite. Let $S$ be $A/AeA$ modulo its radical, regarded as a right $A$-module. Then $\dq$ is quasi-isomorphic to the Koszul dual $\R\enn_A(S)^!\simeq \left(\dq\right)^{!!}$ which, regarded as a pro-Artinian dga, prorepresents the deformation functor $\frmdef_A(S)$; i.e.\ there is a weak equivalence of $\sset$-valued functors $$\frmdef_A(S)\simeq \R\mathrm{Map}_{\cat{pro}(\cat{dgArt}_k)}(B^\sharp\R\enn_A(S) , -).$$
		\end{thm}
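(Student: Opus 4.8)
The plan is to assemble Theorem~\ref{maindefmthm} from pieces that are, at this point, almost entirely in place; the role of the theorem is really to collect the threads of Chapters~\ref{kd} and~\ref{defmthy}. First I would invoke \ref{rendkd}: since $A/AeA$ is local with residue field $S\cong k$ (using that $k$ is algebraically closed, so the radical quotient of a finite-dimensional local algebra is $k$), the module $S$ is one-dimensional, the dga $E\coloneqq\R\enn_A(S)$ is augmented via $E\to H^0(E)\cong\enn_A(S)\cong k$, and one has quasi-isomorphisms $\dq\simeq(\dq)^{!!}\simeq E^!$. The key input here is \ref{kdfin}, which applies precisely because $\dq$ is cohomologically locally finite with $H^0(\dq)\cong A/AeA$ local; this is the step that is genuinely doing work, though by the time we reach this theorem it has already been proved, so here it is just cited. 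I would also record that $\dq$ is nonpositively graded (\ref{derquotcohom}), so that $E^!$ is likewise nonpositive and cohomologically locally finite (\ref{kdfinite}), hence may legitimately be regarded as a pro-Artinian dga via its Postnikov tower.

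Next I would identify the prorepresenting object. By \ref{prorepfrm}, applied to the $k$-algebra $A$ and the one-dimensional module $S$, the functor $\frmdef_A(S)$ of framed deformations is weakly equivalent to $\R\mathrm{Map}_{\proart}(B^\sharp E,-)$, where $B^\sharp E=(BE)^\sharp$ is the continuous Koszul dual of \ref{parmk}. So it remains only to reconcile the two descriptions of the prorepresenting object: on the one hand the pro-Artinian dga $B^\sharp E$, and on the other the dga $\dq\simeq E^!$ regarded as pro-Artinian via its Postnikov tower. Both have limit quasi-isomorphic to $E^!\simeq\dq$: for $B^\sharp E$ this is \ref{kdforart} applied levelwise (cf.\ the discussion in \ref{parmk}), giving $\varprojlim B^\sharp E\simeq (BE)^*=E^!$; for the Postnikov tower $\{(\dq)_n\}$ of $\dq$ it is immediate since homotopy limits of Postnikov towers recover the object. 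Since $E^!$ is good (it is cohomologically locally finite with $H^0$ local), \ref{qisocor} tells us that any two pregood pro-Artinian dgas with limit quasi-isomorphic to $E^!$ are themselves weakly equivalent; hence $B^\sharp E$ and the Postnikov tower of $\dq$ are weakly equivalent as pro-Artinian dgas. Therefore $\dq$, regarded as pro-Artinian, prorepresents $\frmdef_A(S)$, and $\frmdef_A(S)\simeq\R\mathrm{Map}_{\proart}(B^\sharp E,-)$ as claimed.

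I do not anticipate a serious obstacle, since every hard ingredient — the strictification result \ref{goodchar}, the Koszul duality \ref{kdfin}, the prorepresentability \ref{prorepfrm}, and the rigidity \ref{qisocor} — has been established earlier. The only point requiring a little care is bookkeeping about \emph{which} pro-Artinian structure one puts on $\dq$: the theorem's phrase ``regarded as a pro-Artinian dga'' must be read as ``via its Postnikov tower'', and one should note that this is legitimate exactly because cohomological local finiteness makes each truncation Artinian. A secondary subtlety is that \ref{prorepfrm} is stated for $S$ one-dimensional over a plain $k$-algebra $A$, which is why the hypothesis that $A/AeA$ is local (rather than merely finite-dimensional) is needed — without it one would be forced into the pointed setting of \ref{laudalpt} and the matrix-augmented Koszul duality alluded to in the remark following \ref{prorepfrm}. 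Finally, the concluding sentence ``$\dq$ determines the deformation functor'' is then a formality: a weak equivalence type of $\dq$ determines that of $E^!\simeq(\dq)^{!!}$ (Koszul duality preserves quasi-isomorphisms), hence the weak equivalence type of $B^\sharp E$ by \ref{qisocor}, hence the functor it prorepresents.
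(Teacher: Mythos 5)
Your proposal is correct and follows essentially the same route as the paper: \ref{rendkd} for $\dq \simeq E^!$, \ref{kdfin} as the key duality input, and \ref{prorepfrm} for prorepresentability. The paper's own proof is terse (three sentences, citing exactly these three results) and defers the ``which pro-Artinian structure on $\dq$'' bookkeeping to the immediately following \ref{dqqiclassdeterminesdefms}, whereas you fold that discussion — correctly, via \ref{qisocor} — directly into the proof.
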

	\begin{proof}
		The claim that $\dq\simeq \R\enn_A(S)^!$ is \ref{rendkd}, and the claim that $\dq^{!!}\simeq\dq$ follows from \ref{kdfin}. The prorepresentability statement is \ref{prorepfrm}.
		\end{proof}

Although the derived quotient $\dq$ does not strictly prorepresent the functor of framed deformations, since it is a dga defined up to quasi-isomorphism and not a pro-Artinian dga defined up to weak equivalence, it at least determines the weak equivalence class of the deformation functor:

	\begin{prop}\label{dqqiclassdeterminesdefms}
		With the setup as in \ref{maindefmthm}, the quasi-isomorphism class of $\dq$ determines the weak equivalence class of the deformation functor $\frmdef_A(S)$.
		\end{prop}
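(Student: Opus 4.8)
The statement to prove is Proposition \ref{dqqiclassdeterminesdefms}: with the setup of \ref{maindefmthm}, the quasi-isomorphism class of $\dq$ determines the weak equivalence class of $\frmdef_A(S)$.

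The plan is to bootstrap off \ref{maindefmthm} together with the strictification machinery of Chapter \ref{kd}. By \ref{maindefmthm} (equivalently \ref{prorepfrm}) there is a weak equivalence of $\sset$-valued functors $\frmdef_A(S)\simeq\R\mathrm{Map}_{\cat{pro}(\cat{dgArt}_k)}(P,-)$, where $P\coloneqq B^\sharp\R\enn_A(S)$ is the continuous Koszul dual of $\R\enn_A(S)$. Since $\R\mathrm{Map}_{\cat{pro}(\cat{dgArt}_k)}(-,-)$ descends to the homotopy categories and hence is invariant under weak equivalences in the first variable, it suffices to prove that the weak equivalence class of the pro-Artinian dga $P$ depends only on the quasi-isomorphism class of $\dq$. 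Concretely, I would show: if $B$ is another $k$-algebra with idempotent $e'$ satisfying the hypotheses of \ref{maindefmthm}, with $S'$ the associated simple module and $Q'$ its derived quotient, and if $\dq\simeq Q'$, then $P$ and $P'\coloneqq B^\sharp\R\enn_B(S')$ are weakly equivalent; applying the displayed equivalence again then gives $\frmdef_A(S)\simeq\frmdef_B(S')$.

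First I would identify $\varprojlim P$. From the discussion following \ref{csharp} one has a natural dga isomorphism $\varprojlim B^\sharp E\cong E^!$ for any augmented dga $E$; taking $E=\R\enn_A(S)$ and using $\dq\simeq\R\enn_A(S)^!$ from \ref{rendkd}, this yields $\varprojlim P\simeq\dq$. The hypotheses of \ref{maindefmthm} — namely that $\dq$ is cohomologically locally finite with $H^0(\dq)\cong A/AeA$ local — say precisely that $\dq$ is \emph{good} in the sense of \ref{goodchar}, so $P$ is a \emph{pregood} pro-Artinian dga; likewise $P'$ is pregood with $\varprojlim P'\simeq Q'$. Now \ref{qisocor} applies directly: $\dq$ and $Q'$ are quasi-isomorphic good dgas, and $P$, $P'$ are pregood pro-Artinian dgas whose limits recover them, so $P\simeq P'$ in $\proart$. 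Combined with the invariance of $\R\mathrm{Map}_{\cat{pro}(\cat{dgArt}_k)}(-,-)$ noted above and a second application of \ref{maindefmthm}, this gives $\frmdef_A(S)\simeq\frmdef_B(S')$, which is the assertion.

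I expect the only genuinely delicate point to be the step invoking \ref{qisocor}, and it is worth stressing \emph{why} one must route the argument through Chapter \ref{kd} rather than argue naively. One would like to say ``$P\simeq B^\sharp\big((\dq)^!\big)$, and both $(-)^!$ and $B^\sharp$ preserve quasi-isomorphisms, so $P$ is determined by the quasi-isomorphism class of $\dq$'', but this fails: the functor $(-)^\sharp$ preserves only weak equivalences of coalgebras (\ref{sharpquillen}), not arbitrary quasi-isomorphisms, and the bar construction of a dga quasi-isomorphism is a coalgebra quasi-isomorphism that need not be a coalgebra weak equivalence — exactly the subtlety flagged in \ref{parmk}. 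The content of \ref{goodchar} and its corollary \ref{qisocor} (resting on the pseudo-model-category equivalence \ref{pseudomods}) is precisely that for good dgas this subtlety disappears: a good dga has an essentially unique pregood pro-Artinian model, so quasi-isomorphic good dgas have weakly equivalent pregood models. Granting those results, everything else is formal, and the proof should be short.
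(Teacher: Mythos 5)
Your proof is correct and is essentially the paper's own: both reduce to the uniqueness (\ref{qisocor}) of pregood pro-Artinian models of a good dga, together with the weak-equivalence invariance of $\R\mathrm{Map}_{\cat{pro}(\cat{dgArt}_k)}(-,-)$ in its first argument. Your account is more verbose, but the logical content is the same.

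One correction to your closing commentary, though, since it mischaracterises the mathematics. You claim the ``naive'' route fails because the bar construction of a dga quasi-isomorphism is only a quasi-isomorphism of coalgebras rather than a weak equivalence. That is not so: $B$ is the right adjoint of the Quillen equivalence $(\Omega,B)$ of \ref{bcquillen}, and every dga is fibrant, so by Ken Brown's lemma $B$ sends dga quasi-isomorphisms to \emph{weak equivalences} of conilpotent dg coalgebras. Combined with \ref{sharpquillen} and the identification $\dq^!\simeq\R\enn_{\dq}(S)\simeq\R\enn_A(S)$ from \ref{kdisrend} (using locality of $H^0(\dq)$ to pin down the augmentation), the naive argument does go through. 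Your citation of \ref{parmk} is also off target: that remark concerns the failure of $(\varprojlim B^\sharp(A))^!$ to recover $A$ once one forgets the pro-structure (a double Koszul dual issue), not bar preserving weak equivalences. None of this affects the correctness of your proof, which routes through \ref{qisocor} as the paper does; it just means that the strictification machinery, while sufficient, is not strictly needed for this particular corollary.
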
 
	\begin{proof}
 By \ref{qisocor}, if $Q$ is a pro-Artinian dga with $\varprojlim Q$ quasi-isomorphic to $\dq\simeq \varprojlim B^\sharp\R\enn_A(S)$, then $Q$ is weakly equivalent to $B^\sharp\R\enn_A(S)$. Weakly equivalent pro-Artinian algebras give weakly equivalent derived mapping spaces.
		\end{proof}

	Restricting our attention to underived deformations allows us to prove the following analogue of \cite[3.9]{contsdefs}:
	
		\begin{prop}\label{subdefmthm}
			Let $A$ be a $k$-algebra and $e\in A$ an idempotent. Suppose that $A/AeA$ is a local algebra and that $\dq$ is cohomologically locally finite. Let $S$ be $A/AeA$ modulo its radical, regarded as a right $A$-module. Then the set-valued functor of classical noncommutative deformations of $S$ is represented by the Artinian local algebra $A/AeA$.
		\end{prop}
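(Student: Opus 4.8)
The plan is to reduce the statement to one about the algebra $B\coloneqq A/AeA$ itself and then appeal to the standard representability of the deformation functor of the unique simple module over an Artinian local algebra. First I would note that the hypotheses make the assertion meaningful: since $\dq$ is cohomologically locally finite, $H^0(\dq)\cong A/AeA$ is finite-dimensional, and being local it is therefore Artinian local; write $B\coloneqq A/AeA$ and let $S=B/\operatorname{rad}(B)$ be its one-dimensional simple module (here we use that $k$ is algebraically closed).

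The key reduction is that classical noncommutative deformations of $S$ as an $A$-module are the same as classical noncommutative deformations of $S$ as a $B$-module, that is $\mathrm{clDef}_A(S)\cong\mathrm{clDef}_B(S)$. Given $\Gamma\in\cat{Art}_k$ and a deformation $\tilde S$ of $S$ over $A\otimes\Gamma$ — a $\Gamma$-flat right $A\otimes\Gamma$-module with $\tilde S\otimes_\Gamma k\cong S$ — the two-sided ideal $AeA$ acts by zero on $S=\tilde S\otimes_\Gamma k$, so $\tilde S\cdot(AeA\otimes 1)\subseteq \tilde S\cdot(1\otimes\mathfrak m_\Gamma)$; since $A\otimes 1$ and $1\otimes\Gamma$ commute inside $A\otimes\Gamma$, since $AeA$ is idempotent as an ideal, and since $\mathfrak m_\Gamma$ is nilpotent, iterating this inclusion forces $\tilde S\cdot(AeA\otimes 1)=0$, so that $\tilde S$ is in fact a right $B\otimes\Gamma$-module. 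Conversely, restriction of scalars along $A\to B$ sends a $B\otimes\Gamma$-deformation of $S$ to an $A\otimes\Gamma$-deformation; these two assignments are mutually inverse and visibly compatible with isomorphisms of deformations, giving the claimed natural isomorphism of functors.

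It then remains to show that $\mathrm{clDef}_B(S)$ is represented by $B$. This is the classical statement — generalising \cite[3.9]{contsdefs} — that the functor of noncommutative deformations of the unique simple module over an Artinian local $k$-algebra is represented by the algebra itself: concretely, a deformation of $S$ over $\Gamma$ is, after trivialising the underlying $\Gamma$-module (possible since it is $\Gamma$-flat and finitely generated with one-dimensional fibre), the same datum as an augmented $k$-algebra homomorphism out of $B$, and one checks that this passes to a natural bijection on isomorphism classes. Alternatively, and this is where the Koszul-duality machinery of the chapter enters, one can deduce it from \ref{maindefmthm} together with \ref{prorepremk}: the former identifies a pro-Artinian dga quasi-isomorphic to $\dq$ prorepresenting the framed derived deformation functor of $S$, and since for set-valued functors framed and unframed deformations agree, $\mathrm{Def}_A(S)$ is prorepresented by the same pro-Artinian dga; \ref{prorepremk} then shows $\mathrm{clDef}_A(S)$ is prorepresented by its $H^0$, which by $H^0(\dq)\cong A/AeA$ (and the degeneration of the pro-structure in degree zero) is the constant pro-object $A/AeA$. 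The one point that needs care either way is the bookkeeping with isomorphism classes of deformations in the noncommutative setting; everything else is formal, and I expect that to be the only real obstacle.
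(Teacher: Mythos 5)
Your proposal contains two routes. The second — invoking \ref{maindefmthm}, then \ref{prorepremk}, then identifying $\varprojlim H^0$ with $A/AeA$ using exactness of $\varprojlim$ — is essentially the paper's own proof, and is correct.

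The first route (the reduction to $B\coloneqq A/AeA$) is a nice and correct observation: the argument using idempotence of the ideal $AeA$ and nilpotence of $\mathfrak{m}_\Gamma$ does show that $\tilde S\cdot(AeA)\subseteq \tilde S\cdot(AeA)^n\subseteq\tilde S\mathfrak{m}_\Gamma^n=0$, so every deformation of $S$ over $A\otimes\Gamma$ is automatically a $B\otimes\Gamma$-module, and $\mathrm{clDef}_A(S)\cong\mathrm{clDef}_B(S)$. However, this only reduces the statement to its own special case $e=0$, which is no easier. The remaining claim — that $\mathrm{clDef}_B(S)$ is represented by $B$ itself — is exactly where the content lies, and the sentence "one checks that this passes to a natural bijection on isomorphism classes" papers over the one genuinely delicate step. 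After trivialising the underlying $\Gamma$-module, a deformation is indeed an augmented $k$-algebra map $B\to\Gamma$; but two framings differ by a unit $u\equiv 1\bmod\mathfrak m_\Gamma$, so two maps $\phi_1,\phi_2$ give isomorphic (unframed) deformations precisely when $\phi_2=u\phi_1u^{-1}$ for such a $u$. Since $\Gamma$ is noncommutative, this conjugation action is not trivial on the nose, and one has to argue that passing to isomorphism classes nevertheless gives $\hom_{\cat{Art}_k}(B,\Gamma)$. This is precisely the framed-versus-unframed comparison that \ref{nurmk}, \ref{proreps}, and \ref{prorepfrm} are used for in the paper's proof (the point being that every automorphism of the trivial deformation of the underlying vector space lifts, since $A$ acts on the trivial deformation through its augmentation). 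So the "direct" route does not actually avoid the Koszul-duality / prorepresentability machinery; it just relocates it. You should either carry out this comparison explicitly, or — as you also suggest — simply run the paper's argument, in which case the $B$-module reduction is unnecessary.
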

	
\begin{proof}
	By \ref{nurmk} combined with the prorepresentability statements \ref{proreps} and \ref{prorepfrm} we can consider either framed or unframed deformations; the set-valued deformation functor does not see the difference (because every deformation admits a framing). So by \ref{maindefmthm} we know that $B^\sharp\left(\R\enn_A(S)\right)$ prorepresents the set-valued functor of derived deformations. By \ref{prorepremk}, the pro-Artinian algebra $H^0\left(B^\sharp\left(\R\enn_A(S)\right)\right)$ hence prorepresents the functor of classical deformations.
	But because $\varprojlim$ is the homotopy limit by \ref{limisexact}, we have $$A/AeA \cong H^0(\dq)\cong H^0(\varprojlim B^\sharp\left(\R\enn_A(S)\right))\cong \varprojlim H^0(B^\sharp\left(\R\enn_A(S)\right)).$$where the first isomorphism follows from \ref{derquotcohom} and the second isomorphism is an application of \ref{rendkd}. Hence the pro-Artinian algebra $ H^0(B^\sharp\left(\R\enn_A(S)\right))$ is actually Artinian, and so must be isomorphic to its limit $A/AeA$.
	\end{proof}

\begin{rmk}
	We remark that the results of this section remain true in positive characteristic.
	\end{rmk}

	\chapter{Singularity categories}
	Before we can go any further, we must introduce another key concept in this thesis, that of the {singularity category}. First studied by Buchweitz \cite{buchweitz} for noncommutative rings, and then by Orlov \cite{orlovtri} for schemes, this is a measure of how singular a geometric object is. We study singularity categories and their dg enhancements, exploring some different models -- the stable category and the category of matrix factorisations -- before finishing with some recovery theorems.
	
	\section{Some singularity theory}
	We gather together a few results from the world of singularity theory; for good references, see \cite{bikr} in the algebraic setting and \cite{singsintro} in the holomorphic setting. We will focus on isolated hypersurface singularities. Throughout this section, let $S\coloneqq  k\llbracket x_1,\ldots, x_n \rrbracket$ be the complete local ring of $k^n$ at the origin, and let $\sigma \in \mathfrak{m}_S$ be nonzero. We call the quotient $R\coloneqq S/\sigma$ a \textbf{hypersurface singularity}. 
	\begin{defn}
		Let $\sigma \in \mathfrak{m}_S$ be nonzero. The \textbf{Jacobian ideal} (or the \textbf{Milnor ideal}) $J_\sigma$ of $\sigma$ is the ideal of $S$ generated by the partial derivatives $\frac{\partial \sigma}{\partial x_i}$ for $i=1,\ldots,n$. The \textbf{Milnor algebra} $M_\sigma$ is the algebra $S/J_\sigma$ and the \textbf{Milnor number} $\mu_\sigma$ is the dimension (over $k$) of the Milnor algebra. The \textbf{Tjurina algebra} of $\sigma$ is the quotient $$T_\sigma\coloneqq \frac{S}{(\sigma, J_\sigma)}.$$The \textbf{Tjurina number} $\tau_\sigma$ is the dimension of the Tjurina algebra.
		\end{defn}
	
	\begin{defn}
		Say that the singularity $R\coloneqq S/\sigma$ is \textbf{isolated} if the Milnor number $\mu_\sigma$ is finite.
		\end{defn}
	\begin{rmk}
		In the holomorphic setting, this is equivalent to the usual definition: the singular locus is an isolated point \cite[2.3]{singsintro}.
		\end{rmk}

	\begin{rmk}
		Clearly for an isolated singularity the Tjurina number is less than or equal to the Milnor number. If $\sigma$ is \textbf{quasi-homogeneous}, meaning that there are weights $w_i$ making $\sigma(x_1^{w_1},\ldots,x_n^{w_n})$ into a homogenous polynomial, then the Tjurina algebra is isomorphic to the Milnor algebra; this follows by showing that $\sigma$ is already in the Jacobian ideal, which is an easy adaptation of the standard proof of Euler's theorem on homogenous functions.
		\end{rmk}
	The Mather--Yau theorem states that one can recover an isolated hypersurface singularity from its Tjurina algebra, as long as the dimension is fixed:
	\begin{thm}[\cite{matheryau,gpmather}]
		Let $\sigma_1$ and $\sigma_2$ be nonzero elements of the maximal ideal of $S$. Put $R_i\coloneqq S/\sigma_i$ and assume that the $R_i$ are isolated. Then $R_1\cong R_2$ if and only if $T_{\sigma_1}\cong T_{\sigma_2}$.
		\end{thm}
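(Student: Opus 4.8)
The statement to prove is the Mather--Yau theorem: for nonzero $\sigma_1,\sigma_2\in\mathfrak m_S$ with $R_i\coloneqq S/\sigma_i$ isolated, one has $R_1\cong R_2$ if and only if $T_{\sigma_1}\cong T_{\sigma_2}$. The plan is to prove each direction separately; the forward direction is essentially formal and the converse is the substantive part.

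For the forward direction: suppose $\phi\colon R_1\xrightarrow{\sim}R_2$ is a $k$-algebra isomorphism. Since $S$ is the completed local ring $k\llbracket x_1,\dots,x_n\rrbracket$ and both $R_i$ are quotients of $S$ by a principal ideal, a standard lifting argument (using that $S$ is formally smooth over $k$ and that $R_i$ has embedding dimension $\le n$) produces an automorphism $\Phi\colon S\xrightarrow{\sim}S$ lifting $\phi$, i.e.\ with $\Phi((\sigma_1))=(\sigma_2)$, so $\Phi(\sigma_1)=u\sigma_2$ for some unit $u\in S^\times$. Then I would check directly that $\Phi$ carries the Jacobian ideal $J_{\sigma_1}$ to $J_{u\sigma_2}$, and that $J_{u\sigma_2}=J_{\sigma_2}$ modulo $(\sigma_2)$ — indeed $\partial_i(u\sigma_2)=u\,\partial_i\sigma_2+\sigma_2\,\partial_i u$, so $(\,u\sigma_2, J_{u\sigma_2})=(\sigma_2,J_{\sigma_2})$ as ideals of $S$. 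Hence $\Phi$ descends to an isomorphism $T_{\sigma_1}\cong T_{\sigma_2}$.

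For the converse — the hard part — suppose $\psi\colon T_{\sigma_1}\xrightarrow{\sim}T_{\sigma_2}$. The idea is to connect $\sigma_1$ and $\sigma_2$ by a path and integrate a vector field, which is the classical analytic argument of Mather--Yau, made to work in the formal/algebraic setting following Greuel--Pham \cite{gpmather}. First one lifts $\psi$ to an automorphism $\Phi$ of $S$ (same lifting principle as above, now using that $T_{\sigma_i}$ determines the embedding and that $S$ is a power series ring), reducing to the case where $(\sigma_1,J_{\sigma_1})=(\sigma_2,J_{\sigma_2})$ as ideals of $S$; call this common ideal $I$. Now consider the family $\sigma_t\coloneqq (1-t)\sigma_1+t\sigma_2$ for $t$ a formal parameter, working over $S\llbracket t\rrbracket$. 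The key computation is that $\frac{\partial\sigma_t}{\partial t}=\sigma_2-\sigma_1\in I= (\sigma_t, J_{\sigma_t})$ for all $t$ — this requires checking that the Tjurina ideal of $\sigma_t$ is the \emph{constant} ideal $I\cdot S\llbracket t\rrbracket$, which follows from the hypothesis together with an argument that the Tjurina number is constant in the family (here one invokes upper semicontinuity of $\tau$ and the equality at the endpoints, or Nakayama over $S\llbracket t\rrbracket$ after a preliminary coordinate change absorbing the unit). Once $\sigma_2-\sigma_1 = a\sigma_t + \sum_i b_i\partial_i\sigma_t$ with $a,b_i\in S\llbracket t\rrbracket$, one defines a $t$-dependent derivation (vector field) $\xi_t = -\sum_i b_i\partial_i$ and solves the ODE $\dot\gamma_t = \xi_t(\gamma_t)$, $\gamma_0=\mathrm{id}$, in the group of automorphisms of $S\llbracket t\rrbracket$; formal integrability holds because everything is complete and $\mathfrak m$-adically nilpotent in the relevant sense. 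The flow $\gamma_1$ then satisfies $\gamma_1^*\sigma_2 = (\text{unit})\cdot\sigma_1$, hence $R_1\cong R_2$.

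The main obstacle is precisely the constancy of the Tjurina ideal along the linear path $\sigma_t$: an isomorphism of the endpoint Tjurina algebras does not \emph{a priori} force the intermediate $T_{\sigma_t}$ to have the same dimension, and one must either appeal to a semicontinuity-plus-specialization argument or, more robustly, replace the naive linear path by a cleverer one and do a Nakayama-type argument over $S\llbracket t\rrbracket/(t^2)$ followed by induction — this is exactly the technical heart of \cite{gpmather}, and in a write-up I would cite that paper for the finitely-determined/formal version rather than reproving it from scratch. The automorphism-lifting steps and the formal integration of the vector field are routine given completeness of $S$, so I would state them as lemmas and only sketch the verifications.
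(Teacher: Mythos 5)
The paper does not prove this statement --- it is quoted from the literature, with \cite{matheryau} covering the analytic case and \cite{gpmather} the formal/algebraic one --- so there is no internal proof of the paper's to compare against. Your sketch follows the standard Mather--Yau/Greuel--Pham strategy, and the forward direction is correct and essentially complete: lift the isomorphism $R_1\cong R_2$ to an automorphism $\Phi$ of $S$, note $J_{\Phi(\sigma_1)}=\Phi(J_{\sigma_1})$ by the chain rule, and then the unit computation $(\,u\sigma_2, J_{u\sigma_2})=(\sigma_2,J_{\sigma_2})$ finishes it.

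For the converse, two steps deserve more care than your sketch gives them. First, the claim that $\psi\colon T_{\sigma_1}\to T_{\sigma_2}$ lifts to an automorphism of $S$ ``using that $T_{\sigma_i}$ determines the embedding'' elides the generalized Morse/splitting lemma: the embedding dimension of $T_{\sigma_i}$ is the \emph{corank} of $\sigma_i$, not $n$, so one must first put each $\sigma_i$ in split form $x_1^2+\cdots+x_k^2+g_i(x_{k+1},\dots,x_n)$ with $g_i\in\mathfrak m^3$, observe $T_{\sigma_i}\cong T_{g_i}$, lift $\psi$ on the last $n-k$ coordinates using formal smoothness, and extend by the identity on the first $k$. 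This uses the isolated hypothesis to guarantee the coranks match. Second, the semicontinuity-plus-endpoints argument for constancy of the Tjurina ideal along the straight-line path $\sigma_t$ does not close on its own. The inclusion $(\sigma_t,J_{\sigma_t})\subseteq I$ gives $\tau(\sigma_t)\geq\tau$ for all $t$, and equality at $t=0,1$ together with upper semicontinuity only forces the \emph{generic} value to be $\tau$; semicontinuity permits jumps \emph{up} on a proper closed (hence finite) set of intermediate $t$, and you would need to rule these out before integrating the vector field across $[0,1]$. You are right to hedge toward the Nakayama-over-$S\llbracket t\rrbracket$ route --- this is precisely where the genuine content of \cite{gpmather} lies, and it is cleanest to treat that step as a black box, as both the paper (via its citation) and you ultimately do.
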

	\begin{rmk}
		The theorem was first proved in the holomorphic setting by Mather and Yau in the case where $k=\C$. Greuel and Pham prove an algebraic version when $k$ is algebraically closed, although the theorem as we state it is only true in characteristic zero.
		\end{rmk}		
	
	\section{Gorenstein rings}
In this section we collect some standard facts about Gorenstein rings, which are rings satisfying a certain homological condition. We begin with some remarks on homological dimension theory; for a detailed reference, including proofs, see Lam \cite{lamlect}. Then we broadly follow Bass \cite{bassgorenstein} and Matsumura \cite{matsumura} for the material on Gorenstein rings.
	
\begin{defn}
Let $R$ be a ring and $M$ be an $R$-module. The \textbf{injective dimension} of $M$ is the minimal length of an injective resolution of $M$ (note that this may be infinite). The \textbf{projective dimension} of $M$ is the minimal length of a projective resolution of $M$. The \textbf{global dimension} of $R$ is the supremum of the projective dimensions of all finitely generated $R$-modules.
\end{defn}
\begin{rmk}
Strictly, we should distinguish between left global dimension and right global dimension. However, we will only ever be interested in right global dimension.
\end{rmk}
\begin{rmk}
If $R$ is noetherian, then projective dimension agrees with \textbf{flat dimension}, where one takes resolutions by flat rather than projective modules. The corresponding global invariant is called the \textbf{Tor-dimension} or \textbf{weak dimension}.
\end{rmk}
\begin{prop}[{\cite[5.45]{lamlect}}]
The global dimension of $R$ is the supremum of the injective dimensions of all finitely generated $R$-modules.
\end{prop}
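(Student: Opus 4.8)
The statement to prove is the symmetric companion to \cite[5.45]{lamlect}: namely, that the global dimension of $R$ equals $\sup\{\mathrm{id}_R(M) : M \text{ finitely generated}\}$, where $\mathrm{id}$ denotes injective dimension. The plan is to deduce this from the cited result together with a comparison between projective and injective dimensions mediated by $\ext$. First I would record the elementary observation that for any $R$-module $M$, one has $\mathrm{pd}_R(M) \leq n$ if and only if $\ext^{i}_R(M,N) = 0$ for all $i > n$ and all $R$-modules $N$, and dually $\mathrm{id}_R(N) \leq n$ if and only if $\ext^{i}_R(M,N) = 0$ for all $i > n$ and all $M$. Both of these are standard dimension-shifting arguments using a projective (resp.\ injective) resolution, together with the fact that a module has projective (resp.\ injective) dimension $\leq n$ iff the $n$-th syzygy (resp.\ cosyzygy) is projective (resp.\ injective), detected by vanishing of $\ext^1$.

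Granting these characterisations, the argument is a short comparison. Write $D \coloneqq \sup_M \mathrm{id}_R(M)$ and let $g$ be the global dimension of $R$. If $g \leq n$, then $\ext^{i}_R(M,N)$ vanishes for all $i > n$ and all finitely generated $M$; since $\ext$ commutes with filtered colimits in the first variable and every module is a filtered colimit of its finitely generated submodules — more carefully, one uses that $\mathrm{pd}_R(M) \leq g$ for \emph{all} $M$ when $R$ is noetherian, which is part of the content of \cite[5.45]{lamlect} — one gets $\ext^{i}_R(M,N) = 0$ for all $i > n$ and \emph{all} $M$, hence $\mathrm{id}_R(N) \leq n$ for every $N$, so $D \leq g$. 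Conversely, if $D \leq n$, then for every finitely generated $M$ and every $N$ we have $\ext^{i}_R(M,N) = 0$ for $i > n$ (since $\mathrm{id}_R(N) \leq n$), whence $\mathrm{pd}_R(M) \leq n$ by the characterisation above, so $g \leq D$. Combining the two inequalities gives $g = D$.

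I expect the main obstacle to be purely bookkeeping: ensuring that the reduction from "finitely generated $M$" to "all $M$" is handled correctly. Over a noetherian ring this is clean — \cite[5.45]{lamlect} already packages the statement that the supremum of projective dimensions over finitely generated modules equals the supremum over all modules — but if one wants the cleanest self-contained route one can instead argue entirely with the first variable fixed to a single finitely generated module and invoke Baer's criterion on the second variable to test injectivity, which avoids colimit subtleties altogether. There is no deep content here; the proposition is a formal consequence of the $\ext$-symmetry of projective and injective dimensions plus the already-cited computation of global dimension via projective dimensions.
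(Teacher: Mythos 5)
The paper itself offers no proof of this proposition; it is quoted directly from Lam's textbook as background. So there is no in-paper argument to compare against, and your sketch stands on its own merits.

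Your overall plan is sound: characterise both dimensions via vanishing of $\ext^i_R(-,-)$, then compare. But as written the sketch has two concrete problems, both of a quantifier-bookkeeping nature, and only one of them is actually fixed by the escape route you mention.

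First, the assertion that ``$\ext$ commutes with filtered colimits in the first variable'' is false: $\hom_R(-,N)$ is contravariant, so a filtered colimit in the first slot becomes a limit, and $\ext^i(\varinjlim M_\alpha,N)$ is not $\varprojlim\ext^i(M_\alpha,N)$ in general (there are $\varprojlim^1$ obstructions). You acknowledge this and retreat to $\mathrm{pd}_R(M)\leq g$ for all $M$, but note that this statement is itself part of Lam's [5.45], so as a self-contained argument the $D\leq g$ direction is better done via Baer's criterion as you suggest at the end: for $N$ finitely generated and any right ideal $I$, the cyclic module $R/I$ is finitely generated, so $\mathrm{pd}_R(R/I)\leq g$, hence $\ext^{g+1}_R(R/I,N)=0$, and Baer's criterion applied to the $g$-th cosyzygy of $N$ gives $\mathrm{id}_R(N)\leq g$.

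Second, and this is the real gap, your converse direction writes ``for every finitely generated $M$ and every $N$ we have $\ext^i_R(M,N)=0$ for $i>n$ (since $\mathrm{id}_R(N)\leq n$).'' But $D\leq n$ only bounds the injective dimension of \emph{finitely generated} $N$, not arbitrary $N$, and the Baer criterion you invoke is a tool for the other direction (it tests injectivity, not projectivity). The correct fix is different: since $R$ is noetherian and $M$ is finitely generated, the $n$-th syzygy $\Omega^n M$ is also finitely generated; choose a surjection $F\onto\Omega^nM$ with $F$ finite free and finitely generated kernel $K$; then $\ext^1_R(\Omega^nM,K)\cong\ext^{n+1}_R(M,K)=0$ because $K$ is finitely generated and $\mathrm{id}_R(K)\leq n$, so the surjection splits and $\Omega^nM$ is projective. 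That is, projective dimension of a finitely generated module over a noetherian ring can be tested against finitely generated coefficients, which is precisely the observation your sketch elides. With that inserted, the argument is complete.
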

Recall that if $R$ is a commutative noetherian local ring, with maximal ideal $\mathfrak{m}$ and residue field $K$, then its \textbf{Zariski cotangent space} is the $K$-vector space $T^*_{\mathfrak{m}}R\coloneqq\mathfrak{m}/\mathfrak{m}^2$. Say that $R$ is \textbf{regular} if its Krull dimension is equal to $\dim_K(T^*_{\mathfrak{m}}R)$. Call a scheme $X$ \textbf{regular} if its local rings are all regular; in particular a commutative noetherian ring is regular if and only if its localisation at every prime ideal is a regular local ring. Over a field of characteristic zero, a scheme is smooth if and only if it is regular and locally of finite type; in particular a variety is smooth if and only if it is regular.
\begin{thm}[Auslander--Buchsbaum--Serre; see {\cite[5.84]{lamlect}}]
Let $R$ be a commutative noetherian local ring. Then $R$ has finite global dimension if and only if it is regular. In this case, the global dimension of $R$ is equal to its Krull dimension.
\end{thm}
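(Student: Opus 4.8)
The statement to prove is the Auslander--Buchsbaum--Serre theorem: a commutative noetherian local ring $R$ has finite global dimension if and only if it is regular, and in that case the global dimension equals the Krull dimension. The plan is to prove both directions via the residue field $K=R/\mathfrak{m}$, using the standard fact (which I would recall or cite from \cite{lamlect}) that for a noetherian local ring the global dimension equals the projective dimension of $K$ alone, so that one only needs to control a single module. I would measure the projective dimension of $K$ by its Koszul-type resolution: pick a minimal generating set $x_1,\ldots,x_d$ of $\mathfrak{m}$, where $d=\dim_K(\mathfrak{m}/\mathfrak{m}^2)$ is the embedding dimension, and observe that $\operatorname{Tor}^R_i(K,K)$ has dimension at least $\binom{d}{i}$ over $K$ by a minimality argument (a minimal free resolution of $K$ has no units in its differentials, so $\operatorname{Tor}^R_*(K,K)\cong \operatorname{Tor}^R_*(K,K)$ has graded pieces of dimension equal to the ranks of the free modules). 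In particular $\operatorname{pd}_R K \geq d$, and $\operatorname{pd}_R K = d$ exactly when the Koszul complex on $x_1,\ldots,x_d$ is acyclic, i.e. when $x_1,\ldots,x_d$ is a regular sequence.

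For the ``regular $\Rightarrow$ finite global dimension'' direction: if $R$ is regular then by definition $d=\dim_K(\mathfrak{m}/\mathfrak{m}^2)$ equals the Krull dimension, and a standard argument (Krull's height theorem plus the fact that a system of parameters in a regular local ring is a regular sequence, which I would prove by induction on $d$ using that $R/(x_1)$ is again regular local of dimension $d-1$ and is a domain) shows $x_1,\ldots,x_d$ is a regular sequence. Hence the Koszul complex resolves $K$, so $\operatorname{gl.dim} R = \operatorname{pd}_R K = d = \dim R$, which is finite. For the converse ``finite global dimension $\Rightarrow$ regular'': suppose $\operatorname{gl.dim} R = \operatorname{pd}_R K = n < \infty$. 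Then $\operatorname{Tor}^R_i(K,K)=0$ for $i>n$, but the lower bound $\dim_K\operatorname{Tor}^R_i(K,K)\geq\binom{d}{i}$ forces $n\geq d$; combined with $\operatorname{pd}_R K\geq d$ and an Auslander--Buchsbaum-equality argument (or a direct induction killing a non-zerodivisor in $\mathfrak{m}\setminus\mathfrak{m}^2$ when $\mathfrak{m}$ is not all of $\mathfrak{m}^2$, which exists by Nakayama whenever $d>0$) one reduces the embedding dimension and the global dimension by one simultaneously, eventually reaching the case $R$ a field; unwinding gives $d=\operatorname{depth} R = \dim R$, i.e. $R$ is regular.

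The main obstacle — and the place where one genuinely has to do work rather than cite — is the inductive step on both sides that lets one ``divide out a regular element.'' Concretely: showing that in a regular local ring a minimal generator $x$ of $\mathfrak{m}$ is a non-zerodivisor and $R/(x)$ is again regular local of dimension $\dim R - 1$; and conversely, showing that if $\operatorname{gl.dim} R<\infty$ and $d>0$ then $\mathfrak{m}\neq\mathfrak{m}^2$ contains a non-zerodivisor $x$ with $\operatorname{gl.dim} R/(x) = \operatorname{gl.dim} R - 1$. The second of these rests on the graded/minimal-resolution bookkeeping for $\operatorname{Tor}$ (the ``rigidity'' of minimal resolutions over local rings) and on prime avoidance to find $x$ outside $\mathfrak{m}^2$ and outside all associated primes of $R$; I would handle the ``$x$ avoids associated primes'' point by noting that since $\operatorname{depth} R = \dim R$ forces $R$ to have no embedded primes, or else by working with the depth directly. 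Given the level of the surrounding text, I would actually present this as a guided citation to \cite{lamlect} and \cite{matsumura} for the two inductive lemmas, and spell out only the Koszul-complex computation and the two one-line conclusions $\operatorname{gl.dim} R = \operatorname{pd}_R K$ and $\operatorname{pd}_R K = \dim R$ when the parameters form a regular sequence.
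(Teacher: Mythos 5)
The paper does not prove this theorem: it is quoted as a citation to Lam \cite{lamlect}, so there is no in-house argument to compare yours against. Your sketch is the standard Serre/Auslander--Buchsbaum proof; whether to spell it out or simply cite is a matter of taste at the level of the surrounding text.

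There is, however, a circularity in your converse direction (finite global dimension $\Rightarrow$ regular), and it lands exactly at the step you flag as the one where ``one genuinely has to do work.'' To find a non-zerodivisor $x \in \mathfrak{m}\setminus\mathfrak{m}^2$ you propose to justify that $x$ avoids the associated primes by noting that ``$\mathrm{depth}\, R = \dim R$ forces $R$ to have no embedded primes'' --- but $\mathrm{depth}\, R = \dim R$ is a \emph{consequence} of the theorem you are proving, not a hypothesis, so it cannot be used here. (Nakayama, which you also invoke, only gives $\mathfrak{m} \neq \mathfrak{m}^2$; it does not by itself produce a non-zerodivisor.) The correct input at this point is the finiteness of global dimension itself. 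If $R$ is not a field and $n \coloneqq \mathrm{gl.dim}\, R < \infty$, take a minimal free resolution $0 \to F_n \to \cdots \to F_0 \to K \to 0$. Were $\mathfrak{m}$ an associated prime of $R$, some $0 \neq a \in R$ would satisfy $\mathfrak{m}a = 0$; since the entries of $F_n \to F_{n-1}$ lie in $\mathfrak{m}$ by minimality, $a$ would annihilate that map, giving $aF_n = 0$ and hence $a = 0$, a contradiction. So $\mathfrak{m} \notin \mathrm{Ass}(R)$, and prime avoidance (applied to $\mathfrak{m}^2$ together with the finitely many associated primes, none of which can contain $\mathfrak{m}$) produces the desired $x$. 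With that repaired, and with the change-of-rings fact $\mathrm{gl.dim}\, R/(x) = \mathrm{gl.dim}\, R - 1$ taken as a cited lemma, your induction closes, and it also makes the lower bound $\dim_K \mathrm{Tor}^R_i(K,K) \geq \binom{d}{i}$ dispensable --- which is worth doing, since that bound is a genuine theorem of Serre requiring the DG-algebra structure on $\mathrm{Tor}(K,K)$ and an exterior-algebra argument, not a consequence of minimality alone as your parenthetical suggests.
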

\begin{thm}[global Auslander--Buchsbaum--Serre; see {\cite[5.95]{lamlect}}]
Let $R$ be a commutative noetherian ring. If $R$ has finite global dimension then it is regular. The converse is true if $R$ has finite Krull dimension.
\end{thm}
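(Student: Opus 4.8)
The statement to prove is the global Auslander--Buchsbaum--Serre theorem: for a commutative noetherian ring $R$, finite global dimension implies regularity, and the converse holds when $R$ has finite Krull dimension. The plan is to reduce the global statement to the already-available local theorem by localising at prime ideals. The key input is the fact (itself essentially a consequence of how projective dimension is computed) that for a noetherian ring, global dimension can be detected locally: $\mathrm{gldim}(R) = \sup_{\mathfrak{p}} \mathrm{gldim}(R_\mathfrak{p})$, where $\mathfrak{p}$ ranges over prime (equivalently, maximal) ideals. I would either quote this from Lam \cite{lamlect} or sketch it: projective dimension of a finitely generated module is governed by vanishing of $\ext$, and $\ext$ commutes with localisation for finitely generated modules over a noetherian ring, so $\mathrm{pd}_R(M) = \sup_\mathfrak{p} \mathrm{pd}_{R_\mathfrak{p}}(M_\mathfrak{p})$; taking suprema over $M$ gives the claim.

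First I would prove the forward direction. Suppose $\mathrm{gldim}(R) = d < \infty$. For each prime $\mathfrak{p}$, the localisation $R_\mathfrak{p}$ is a commutative noetherian local ring with $\mathrm{gldim}(R_\mathfrak{p}) \leq d < \infty$, so by the local Auslander--Buchsbaum--Serre theorem (stated in the excerpt as \cite[5.84]{lamlect}) $R_\mathfrak{p}$ is regular. Since this holds at every prime, $R$ is regular by definition of regularity for a general commutative noetherian ring. This direction needs no finiteness hypothesis on the Krull dimension.

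For the converse, suppose $R$ is regular and has finite Krull dimension $n$. Then each $R_\mathfrak{p}$ is a regular local ring, so by the local theorem $\mathrm{gldim}(R_\mathfrak{p})$ equals the Krull dimension of $R_\mathfrak{p}$, which is the height of $\mathfrak{p}$ and hence at most $n$. Therefore $\mathrm{gldim}(R) = \sup_\mathfrak{p} \mathrm{gldim}(R_\mathfrak{p}) \leq n < \infty$. The role of the finite-Krull-dimension hypothesis is exactly to make this supremum finite; without it one can have a regular ring of infinite global dimension (e.g.\ an infinite-dimensional polynomial ring, or Nagata's example).

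The main obstacle — really the only nontrivial point — is justifying the local-global principle for global dimension, i.e.\ that $\mathrm{gldim}(R) = \sup_\mathfrak{p}\mathrm{gldim}(R_\mathfrak{p})$ and that it suffices to range over maximal ideals. Everything else is a formal consequence of the local theorem, which we are permitted to assume. Since the excerpt already cites Lam's book for the local version, I would simply cite \cite[5.92]{lamlect} (or the appropriate nearby result) for the localisation statement and keep the proof to the two short paragraphs above, rather than reproving the $\ext$-localisation machinery.
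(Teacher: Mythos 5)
The paper does not actually prove this theorem: it is stated as background and attributed to Lam \cite[5.95]{lamlect}, so there is no in-paper argument to compare against. Your proposal is a correct rendering of the standard textbook proof. You correctly identify the only real content as the local-global principle $\mathrm{gldim}(R) = \sup_{\mathfrak{p}} \mathrm{gldim}(R_\mathfrak{p})$, and correctly separate the two directions: the forward direction only needs the (easy, one-sided) inequality $\mathrm{gldim}(R_\mathfrak{p}) \leq \mathrm{gldim}(R)$, which follows from exactness of localisation, while the converse needs the harder inequality together with finiteness of $\sup_\mathfrak{p} \mathrm{ht}(\mathfrak{p})$, i.e.\ finite Krull dimension. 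You also correctly flag (and the paper's own Remark \ref{nagatarmk} confirms) that without the Krull-dimension hypothesis the converse fails by Nagata's example. One small point worth making explicit if you were to write this out in full: in passing from projective dimension of finitely generated modules to global dimension you are implicitly using that the global dimension of a noetherian ring is computed on finitely generated (indeed, cyclic) modules, which is Auslander's theorem; this is what justifies the swap of suprema in $\sup_M \sup_\mathfrak{p} = \sup_\mathfrak{p} \sup_M$.
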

\begin{rmk}\label{nagatarmk} The converse of the global Auslander-Buchsbaum-Serre theorem is not true if one omits the Krull dimension hypothesis. Indeed, Nagata's example \cite{nagata} provides a counterexample: there exists a commutative regular noetherian domain $R$, of infinite Krull dimension, whose localisations $R_p$ at every maximal ideal are regular local rings of finite, but arbitrarily large, Krull dimension (and hence, by Auslander-Buchsbaum-Serre, global dimension). Because injective resolutions localise, $R$ cannot have finite global dimension. See \cite[5.96]{lamlect} for an in-depth discussion.
\end{rmk}
	\begin{defn}
		Let $R$ be a noncommutative two-sided noetherian ring. Say that $R$ is \textbf{Gorenstein} (or \textbf{Iwanaga-Gorenstein}) if it has finite injective dimension over itself as both a left module and a right module.
	\end{defn}
	\begin{rmk}
		In this setting, the left injective dimension of $R$ must necessarily agree with the right injective dimension \cite[Lemma A]{zaks}. In general, $R$ might have infinite injective dimension over itself on one side and finite injective dimension on the other.
	\end{rmk}
	\begin{ex}
		A ring of finite global dimension is clearly Gorenstein. In particular, a commutative regular noetherian ring that is either local or of finite Krull dimension is Gorenstein.
	\end{ex}
	We give some useful structure theorems for commutative Gorenstein rings. We begin with the local case:
	\begin{prop}[{\cite[18.1]{matsumura}}]
		Let $R$ be a commutative noetherian local ring of Krull dimension $n$. Then $R$ is Gorenstein if and only if it has injective dimension $n$ as an $R$-module.
	\end{prop}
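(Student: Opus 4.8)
The statement to prove is: a commutative noetherian local ring $R$ of Krull dimension $n$ is Gorenstein if and only if it has injective dimension $n$ as an $R$-module. The plan is to reduce everything to the study of $\ext^i_R(K,R)$, where $K$ is the residue field, using the characterization of injective dimension via vanishing of $\ext$ against the residue field.

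First I would recall (or cite from Matsumura \cite{matsumura} or Bass \cite{bassgorenstein}) the standard homological fact that for a noetherian local ring $R$ with residue field $K$, the injective dimension of an $R$-module $M$ is finite if and only if $\ext^i_R(K,M)=0$ for $i \gg 0$, and in that case $\mathrm{injdim}_R(M) = \sup\{i : \ext^i_R(K,M)\neq 0\}$. Granting this, one direction is immediate: if $\mathrm{injdim}_R(R)=n<\infty$ then $R$ has finite injective dimension over itself, hence (being commutative, so left and right modules coincide) is Gorenstein. The content is the converse: if $R$ is Gorenstein, so $\mathrm{injdim}_R(R)=:d<\infty$, I must show $d=n$.

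The key step is a depth/dimension computation. Using a maximal $R$-regular sequence $x_1,\dots,x_t$ in $\mathfrak m$ (so $t=\mathrm{depth}\,R$), one shows by induction on $t$, killing one regular element at a time, that $\mathrm{injdim}_{R/(x_1)}(R/(x_1)) = \mathrm{injdim}_R(R)-1$ and that $R/(x_1)$ still has finite injective dimension over itself; this reduces to the case $\mathrm{depth}\,R=0$, where one shows directly that $\ext^0_R(K,R)=\hom_R(K,R)\neq 0$ (the socle is nonzero since $R$ is Artinian in that subcase after the reduction... more carefully, when $\mathrm{depth}=0$ there is an embedding $K\hookrightarrow R$) while $\ext^i_R(K,R)$ vanishes for $i>\mathrm{injdim}$, forcing $\mathrm{injdim}_R(R)=0$. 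Reversing the induction gives $\mathrm{injdim}_R(R)=\mathrm{depth}\,R$. Finally, one invokes the general fact (valid for any finitely generated module of finite injective dimension over a noetherian local ring — this is Bass's theorem) that $\mathrm{injdim}_R(M)=\mathrm{depth}\,R$ whenever the right side is finite and $M\neq 0$ is finitely generated of finite injective dimension; applied to $M=R$ this reads $\mathrm{injdim}_R(R)=\mathrm{depth}\,R$. Combined with the Auslander–Buchsbaum-type inequality $\mathrm{depth}\,R\leq \dim R=n$, and the fact that finite injective dimension of $R$ over itself forces $\mathrm{depth}\,R=n$ (equivalently, $R$ is Cohen–Macaulay — this again comes out of the regular-sequence induction, since each quotient $R/(x_1,\dots,x_i)$ retains finite self-injective dimension and the process can only terminate at dimension zero), one concludes $\mathrm{injdim}_R(R)=n$.

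The main obstacle is the regular-sequence induction and the accompanying bookkeeping: one needs that if $x\in\mathfrak m$ is a nonzerodivisor on $R$ then $R/(x)$ again has finite injective dimension over itself and this dimension drops by exactly one, which requires a careful $\ext$ long-exact-sequence argument together with the change-of-rings spectral sequence (or Rees's theorem) relating $\ext_R$ and $\ext_{R/(x)}$. Once that lemma is in hand, the rest is formal. Since all of these are classical and available in the cited references, I would present the proof as: (i) cite the $\ext$-characterization of injective dimension; (ii) prove the drop-by-one lemma for regular elements; (iii) induct down to depth zero and read off $\mathrm{injdim}_R(R)=\mathrm{depth}\,R$; (iv) conclude $\mathrm{depth}\,R = n$ and hence $\mathrm{injdim}_R(R)=n$, giving both implications.
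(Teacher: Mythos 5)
The paper gives no proof of this proposition; it cites it directly from Matsumura \cite[18.1]{matsumura}. So there is no internal argument to compare against, and the question is simply whether your sketch would go through.

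Your outline is essentially the textbook approach, but there is a genuine gap in step (iv). You correctly reduce to showing $\mathrm{injdim}_R(R) = \mathrm{depth}\,R$ via the regular-sequence induction (Rees's theorem for the drop-by-one lemma, Bass's formula at the end), and you correctly note that what remains is to establish $\mathrm{depth}\,R = \dim R$, i.e.\ that $R$ is Cohen--Macaulay. However, your claim that this ``again comes out of the regular-sequence induction, since \dots the process can only terminate at dimension zero'' is not justified. Killing a maximal regular sequence $x_1,\dots,x_t$ produces a ring $\bar R = R/(x_1,\dots,x_t)$ with $\mathrm{depth}\,\bar R = 0$ and $\dim \bar R = \dim R - t$; the induction tells you that $\bar R$ has finite (in fact zero) self-injective dimension, but nothing in the induction forces $\dim \bar R = 0$. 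That is precisely the content of Cohen--Macaulayness, and assuming the process ``terminates at dimension zero'' is circular.

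What is actually needed is the separate inequality $\dim R \leq \mathrm{injdim}_R(R)$ (when the right side is finite). The standard proof of this, and the one Matsumura uses, is not via regular sequences but via localization along a saturated chain of primes: one shows that if $\mathfrak p \subsetneq \mathfrak q$ and $\ext^i_{R_{\mathfrak p}}(k(\mathfrak p), R_{\mathfrak p})\neq 0$ then $\ext^{i+1}_{R_{\mathfrak q}}(k(\mathfrak q), R_{\mathfrak q})\neq 0$, then starts from a minimal prime at the bottom of a chain of length $n=\dim R$ (where $\ext^0\neq 0$ since the localization is Artinian) and climbs up to conclude $\ext^n_R(k,R)\neq 0$. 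Alternatively, one can show directly via Matlis theory that a self-injective noetherian local ring of depth zero is Artinian (it must be the injective hull of its residue field, which is Artinian). Either way, a new idea beyond the regular-sequence induction is required, and as written your sketch elides it. Once that lemma is supplied the rest of your plan is sound.
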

	When $R$ has finite Krull dimension, being Gorenstein is a local property:	
	\begin{prop}[\cite{bassgorenstein}]
		Let $R$ be a commutative noetherian ring of Krull dimension $n$. The following are equivalent:
		\begin{enumerate}
			\item[\emph{1.}] $R$ is Gorenstein.
			\item[\emph{2.}] $R$ has injective dimension $n$ as an $R$-module.
			\item[\emph{3.}] $R_p$ is Gorenstein for every prime ideal $p\subseteq R$.
			\item[\emph{4.}] $R_p$ is Gorenstein for every maximal ideal $p\subseteq R$.
		\end{enumerate}
	\end{prop}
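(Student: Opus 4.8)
The statement to prove is the Bass characterisation of commutative noetherian Gorenstein rings of finite Krull dimension, asserting the equivalence of conditions (1)--(4). The plan is to prove the implications in a cycle, with the local structure theorem (the preceding proposition, \cite[18.1]{matsumura}) doing most of the work once we reduce to the local case.

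First I would observe that $(1)\Leftrightarrow(2)$ follows by combining the preceding local proposition with a localisation argument: injective dimension can be computed locally, since for a noetherian ring the minimal injective resolution of $R$ localises to a minimal injective resolution of $R_p$, and the injective dimension of $R$ is the supremum over all primes (indeed all maximal ideals) of the injective dimensions of the $R_p$ (this uses Matlis theory and the structure of minimal injective resolutions over noetherian rings). Since $R$ has finite Krull dimension $n$, each local ring $R_p$ has Krull dimension at most $n$, so $R$ Gorenstein implies each $R_p$ has injective dimension equal to $\dim R_p \leq n$, whence $R$ has injective dimension at most $n$; conversely injective dimension $n$ forces each localisation to be Gorenstein, hence $R$ Gorenstein by definition. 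This same bookkeeping simultaneously delivers $(1)\Leftrightarrow(3)$ and $(1)\Leftrightarrow(4)$, since the definition of Gorenstein for the commutative ring $R$ is precisely ``finite injective dimension over itself'', and finiteness plus the local criterion pins the dimension down to $n$. The implication $(3)\Rightarrow(4)$ is trivial, and $(4)\Rightarrow(3)$ holds because any prime $p$ is contained in a maximal ideal $\mathfrak{m}$, and $R_p$ is a localisation of the Gorenstein local ring $R_\mathfrak{m}$, and localisations of Gorenstein local rings are Gorenstein (a prime of $R_p$ corresponds to a prime of $R_\mathfrak{m}$, and one applies the local proposition together with the fact that the injective dimension does not increase under localisation).

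Concretely I would run the cycle $(1)\Rightarrow(2)\Rightarrow(4)\Rightarrow(3)\Rightarrow(1)$. For $(1)\Rightarrow(2)$: if $R$ has finite injective dimension $d$ over itself, then each $R_p$ has injective dimension at most $d$, hence is Gorenstein local of injective dimension $\dim R_p$ by \cite[18.1]{matsumura}; taking $p$ to be a maximal ideal in a chain realising the Krull dimension gives injective dimension exactly $n$, and no localisation can exceed $n$, so $\mathrm{injdim}_R R = n$. For $(2)\Rightarrow(4)$: finite injective dimension localises, so each $R_\mathfrak{m}$ has finite injective dimension, hence is Gorenstein local. For $(4)\Rightarrow(3)$: use that localisation of a Gorenstein local ring at a prime is Gorenstein, via the local proposition and the behaviour of injective dimension under localisation. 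For $(3)\Rightarrow(1)$: each $R_p$ is Gorenstein local of injective dimension $\dim R_p \leq n$, and gluing these together (again via the minimal injective resolution argument) shows $\mathrm{injdim}_R R \leq n < \infty$, so $R$ is Gorenstein.

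The main obstacle is the localisation behaviour of injective dimension for noetherian rings — specifically the statement that $\mathrm{injdim}_R M = \sup_p \mathrm{injdim}_{R_p} M_p$, equivalently that the minimal injective resolution of $M$ localises correctly and that the Bass numbers $\mu^i(p,M)$ detect injective dimension. This is standard (it is essentially Bass's original work, and is treated in Matsumura's chapter on Gorenstein rings), so I would simply cite it rather than reprove it; with that input in hand the rest of the argument is a routine dimension count. One should also be slightly careful that the finite Krull dimension hypothesis is genuinely needed: it is what prevents the injective dimensions of the local rings from being unbounded (cf.\ the phenomenon in Remark \ref{nagatarmk}), and it is the only place that hypothesis enters.
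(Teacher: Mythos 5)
The paper gives no proof of this proposition, simply citing Bass's original paper, so there is nothing to compare against; your argument is correct and is the standard one, reducing to the local Matsumura statement via the fact that for a noetherian ring the minimal injective resolution localises, so that $\operatorname{injdim}_R R = \sup_{\mathfrak p} \operatorname{injdim}_{R_{\mathfrak p}} R_{\mathfrak p}$. You have also correctly isolated the one place the finite Krull dimension hypothesis is genuinely used — bounding that supremum — which is exactly the phenomenon the paper's subsequent remark (and Nagata's example) is pointing at.
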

	\begin{rmk}
		The above result is false if one drops the hypothesis that the Krull dimension of $R$ is finite, for exactly the same reasons as those of \ref{nagatarmk}.
	\end{rmk}
	
	Local complete intersections are Gorenstein:
	\begin{prop}[e.g.\ {\cite[21.19]{eisenbud}}]\label{hypsgor}
		Let $S$ be a commutative noetherian regular local ring and $I \subseteq S$ an ideal generated by a regular sequence. Then $R\coloneqq S/I$ is Gorenstein.
	\end{prop}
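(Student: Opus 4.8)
\textbf{Proof proposal for Proposition~\ref{hypsgor}.}

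The plan is to reduce the general statement to the case of a single regular element, which is the heart of the matter, and then iterate. First I would recall the standard Koszul complex interpretation: if $I=(f_1,\ldots,f_c)$ with $f_1,\ldots,f_c$ a regular sequence in $S$, then the Koszul complex $K_\bullet(f_1,\ldots,f_c;S)$ is a free resolution of $R=S/I$ over $S$ of length exactly $c$, so $R$ has finite projective dimension over $S$. Since $S$ is regular local of some Krull dimension $n$, it is Gorenstein (injective dimension $n$ over itself by the previous proposition, or simply because it has finite global dimension), and moreover the Auslander--Buchsbaum formula gives $\mathrm{depth}_S(R)=n-c$, while $\dim R=n-c$ as well since cutting by a regular sequence drops dimension by $c$. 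So $R$ is Cohen--Macaulay, and it remains to see that the canonical module is free of rank one, equivalently that $\mathrm{Ext}^i_S(R,S)$ vanishes for $i\neq c$ and is $\cong R$ for $i=c$.

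The key step is thus the computation of $\mathrm{Ext}^*_S(R,S)$ via the self-duality of the Koszul complex. Concretely, $K_\bullet(f_1,\ldots,f_c;S)$ is isomorphic to its own $S$-dual up to a shift by $c$ and a twist: $\mathrm{Hom}_S(K_\bullet(\underline f;S),S)\cong K_\bullet(\underline f;S)[-c]$. Taking cohomology of the dualized complex and using that $K_\bullet$ resolves $R$, one gets $\mathrm{Ext}^i_S(R,S)=0$ for $i<c$ by the regular sequence hypothesis (the dual Koszul complex is again the Koszul complex, whose higher homology vanishes on a regular sequence), $\mathrm{Ext}^i_S(R,S)=0$ for $i>c$ trivially since the resolution has length $c$, and $\mathrm{Ext}^c_S(R,S)\cong H_0$ of the dual Koszul complex $\cong S/I=R$. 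Then one invokes the local duality / change-of-rings characterisation of Gorenstein rings: a Cohen--Macaulay local ring $R$ which is a quotient of a Gorenstein local ring $S$ by an ideal of grade $c$ is Gorenstein if and only if $\mathrm{Ext}^c_S(R,S)\cong R$; this is exactly what we have just verified. (Alternatively, one identifies $\mathrm{Ext}^c_S(R,S)$ with the canonical module $\omega_R$ and notes that $\omega_R\cong R$ forces $R$ to be Gorenstein by definition of the Gorenstein property in terms of the canonical module.)

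The main obstacle — really the only nontrivial input — is establishing the self-duality of the Koszul complex and correctly tracking the shift, so that the $\mathrm{Ext}$ computation comes out with $\mathrm{Ext}^c_S(R,S)\cong R$ rather than some twist. Once that is in hand, the inductive step is routine: if $f_1,\ldots,f_c$ is a regular sequence, then $S'\coloneqq S/(f_1,\ldots,f_{c-1})$ is Gorenstein local by the inductive hypothesis (it is regular local when $c=1$, giving the base case), $f_c$ is a nonzerodivisor on $S'$, and $R=S'/f_cS'$; so it suffices to know that quotienting a Gorenstein local ring by a single nonzerodivisor yields a Gorenstein ring, which is the $c=1$ case of the computation above applied with $S$ replaced by $S'$. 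Assembling these gives the statement in full generality. Since we only need this for $S=k\llbracket x_1,\ldots,x_n\rrbracket$ and $I=(\sigma)$ in the applications, even the $c=1$ case alone would suffice, but the argument costs nothing extra to state for arbitrary $c$.
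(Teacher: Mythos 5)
The paper states this result without proof, simply citing Eisenbud \cite[21.19]{eisenbud}, so there is no ``paper's own proof'' to compare against. Your sketch is a correct proof along the standard lines found in the cited reference: the Koszul complex $K_\bullet(\underline f;S)$ gives a length-$c$ free resolution, Auslander--Buchsbaum shows $R$ is Cohen--Macaulay, and self-duality of the Koszul complex identifies $\ext^c_S(R,S)\cong R$ while forcing the other Ext groups to vanish, which is exactly the condition that $\omega_R\cong R$ for a CM quotient of a Gorenstein ring. The alternative inductive route via the $c=1$ case (Gorenstein modulo a nonzerodivisor is Gorenstein) is likewise correct and standard; as you note, for the paper's applications one only needs $c=1$, i.e.\ that a hypersurface quotient of a power series ring is Gorenstein, and the iterated version costs nothing extra.

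One minor stylistic point: the parenthetical ``or simply because it has finite global dimension'' is a slight non-sequitur for establishing that $S$ is Gorenstein --- finite global dimension of a regular local ring is Auslander--Buchsbaum--Serre, and Gorenstein-ness then follows since rings of finite global dimension are Gorenstein, but this is a detour rather than a shortcut. The more direct observation (and the one the Matsumura citation in the paper supports) is that a regular local ring of dimension $n$ has injective dimension $n$ over itself. This does not affect the correctness of your argument.
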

	In particular, if $S$ is a commutative noetherian regular local ring and $\sigma \in S$ is a non-zerodivisor then the hypersurface singularity $S/\sigma$ is Gorenstein.

	\section{Triangulated and dg singularity categories}
	In this section we introduce the singularity category of a noncommutative ring, and enhance it to a dg category. We also introduce Kalck and Yang's relative singularity category and its natural dg enhancement. 
	
	\p Let $R$ be any ring. Observe that the triangulated category $\per R$ of perfect complexes of $R$-modules embeds into the triangulated category $D^b(R)$.
	\begin{defn}
	 Let $R$ be a ring. The \textbf{singularity category} of $R$ is the Verdier quotient $D_\mathrm{sg}R\coloneqq D^b(R)/ \per R$.
		\end{defn}
	\begin{rmk}
		Strictly, for noncommutative rings one should distinguish between the left and right singularity categories. However, we will always work with right modules.
		\end{rmk}
	\begin{prop}
		Let $R$ be a ring of finite global dimension. Then $D_\mathrm{sg}R$ vanishes.
		\end{prop}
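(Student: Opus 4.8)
The plan is to show that every bounded complex of $R$-modules is already perfect when $R$ has finite global dimension, so that the quotient $D^b(R)/\per R$ is trivial. First I would recall that finite global dimension $n$ means every finitely generated $R$-module admits a projective resolution of length at most $n$; by taking direct limits (or just noting that arbitrary modules have projective dimension bounded by the global dimension, which holds since global dimension computed on all modules equals that computed on finitely generated ones for noetherian rings, and in general the supremum over all modules equals the supremum over cyclic modules) every $R$-module has projective dimension $\leq n$.

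The key step is then a standard truncation argument: given $M\in D^b(R)$, choose a bounded-above complex $P^\bullet$ of projectives quasi-isomorphic to $M$ (this exists since $M$ has bounded cohomology). Say $H^j(M)=0$ for $j<a$ and $j>b$. Consider the brutal truncation $\tau^{\geq a-n}P^\bullet$; more precisely, replace $P^\bullet$ in degrees $\leq a-n-1$ by the single module $Z\coloneqq\coker(P^{a-n-1}\to P^{a-n})$ sitting in degree $a-n$. Since the cohomology of $P^\bullet$ vanishes below degree $a$, the complex $\cdots\to P^{a-n-2}\to P^{a-n-1}\to P^{a-n}\to Z\to 0$ is a projective resolution of $Z$; as $Z$ has projective dimension $\leq n$, its $n$-th syzygy $Z$ — wait, more carefully, the kernel of $P^{a-n}\to Z$ is the image of $P^{a-n-1}$, and iterating, the $(n+1)$-fold syzygy of $Z$ is projective, which forces the module obtained by truncating the resolution at the appropriate spot to be projective. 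Hence the truncated complex $0\to Z'\to P^{a-n+1}\to\cdots\to P^b\to 0$ with $Z'$ projective is a bounded complex of projectives quasi-isomorphic to $M$, i.e. $M\in\per R$.

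The only real subtlety — and the step I would be most careful about — is handling the noncommutative and possibly non-noetherian generality of the statement: one should cite that for any ring $R$, projective dimension of an arbitrary module is bounded by the (right) global dimension (this uses that $\operatorname{Ext}^{n+1}(-,-)$ vanishing on all modules follows from its vanishing on cyclic modules, which is the content of the global-dimension-via-cyclic-modules lemma), and then the syzygy truncation above goes through verbatim over any ring. I would conclude: since every object of $D^b(R)$ lies in $\per R$, the Verdier quotient $D_\mathrm{sg}R = D^b(R)/\per R$ has only zero objects, hence vanishes.

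Alternatively, and perhaps more cleanly for the write-up, I would just invoke that $\per R = D^b(R)$ as full subcategories of $D(R)$ precisely when $R$ has finite global dimension — the inclusion $\per R\subseteq D^b(R)$ always holds, and the reverse inclusion is exactly the syzygy argument above — and observe that a Verdier quotient of a triangulated category by itself is zero. The main obstacle, such as it is, is purely expository: making the truncation/syzygy argument precise with the correct indices and ensuring it is stated at the right level of generality (all rings, not just commutative noetherian local ones).
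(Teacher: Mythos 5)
Your argument is correct, and the overall mechanism — show $\per R = D^b(R)$, conclude the Verdier quotient is zero — is the same as in the paper, but the route to that equality differs. You run the classical syzygy-truncation argument: replace $M$ by a bounded-above projective resolution $P^\bullet$, truncate far enough below the cohomological support, and observe that finite global dimension forces the syzygy module appearing at the truncation point to be projective, giving a bounded complex of projectives. The paper instead writes a bounded complex as an iterated mapping cone of its individual terms (viewed as modules placed in single degrees), notes that each such module has a finite projective resolution and is hence perfect, and invokes closure of $\per R$ under cones. Your version works at the level of a single resolution and needs the slightly fiddly index bookkeeping you flag; the paper's version delegates the bookkeeping to the triangulated structure. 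Both are correct and roughly equally elementary.

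One point your write-up circles around without quite landing on: for the proposition to be true, $D^b(R)$ here must mean bounded complexes of \emph{finitely generated} modules. If one allows arbitrary modules, then over even $R = k$ a module of infinite rank has projective dimension zero but is not perfect (not compact), so $D^b(R) \neq \per R$ and the singularity category would be nonzero. Your syzygy $Z'$ is automatically projective by finite global dimension, but to conclude $M \in \per R$ you also need $Z'$ (and the $P^j$) to be finitely generated projective, which comes for free once everything in sight is finitely generated over a noetherian ring. Your remark about Auslander's cyclic-module characterisation of global dimension is accurate but is addressing a slightly different concern than the one that actually matters here; the real hypothesis to foreground is finite generation of the objects of $D^b(R)$. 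The paper glosses this in the same way, so you are in good company.
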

\begin{proof}
Take a bounded complex $X\ = \ X_p \to \cdots \to X_q$ and write it as an iterated cone of maps between modules. Because $R$ has finite global dimension, each of these modules is quasi-isomorphic to a perfect complex. Taking mapping cones preserves perfect complexes, and hence $X$ must itself be quasi-isomorphic to a perfect complex. Hence $X$ maps to zero in the Verdier quotient.
	\end{proof}
\begin{rmk}
	The converse is not true; one needs `uniform' vanishing of $D_\mathrm{sg}R$ to conclude that $R$ has finite global dimension (for example each $R$-module may have a finite projective resolution, but the minimal length of such resolutions may be unbounded).
	\end{rmk}
\begin{cor}
	Let $R$ be a commutative noetherian regular ring of finite Krull dimension. Then $D_\mathrm{sg}R$ vanishes.
	\end{cor}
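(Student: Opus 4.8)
The plan is to simply chain together the two results immediately preceding this corollary in the text. First I would invoke the global Auslander--Buchsbaum--Serre theorem: since $R$ is commutative, noetherian, regular, and of finite Krull dimension, that theorem (in the form stated above) guarantees that $R$ has finite global dimension, in fact equal to its Krull dimension. Then I would apply the proposition that a ring of finite global dimension has vanishing singularity category, which was proved just above by writing a bounded complex as an iterated mapping cone of modules, noting each module admits a finite projective resolution, and using that $\per R$ is closed under cones and shifts.

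So the entire argument is: finite Krull dimension plus regular $\implies$ finite global dimension $\implies$ $D_\mathrm{sg}R = 0$. There is essentially no obstacle here; the only thing worth flagging is that the finiteness of the Krull dimension is genuinely needed, since (as remarked in \ref{nagatarmk}) Nagata's example provides a commutative regular noetherian domain of infinite Krull dimension and infinite global dimension, for which the conclusion could fail. I would therefore include a one-line parenthetical pointing to that remark to explain why the hypothesis cannot be dropped. A short alternative phrasing I might give instead: over such $R$ every finitely generated module has a finite projective resolution of length at most $\dim R$, so $D^b(R) = \per R$ on the nose, whence the Verdier quotient is zero.

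In LaTeX the proof would read roughly: ``By the global Auslander--Buchsbaum--Serre theorem, $R$ has finite global dimension (equal to its Krull dimension). The result now follows from the preceding proposition.'' This is the whole proof; no calculation is required.
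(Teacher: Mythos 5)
Your proof is correct and is exactly the intended argument: the corollary sits directly after the global Auslander--Buchsbaum--Serre theorem and the proposition that rings of finite global dimension have trivial singularity category, and the paper leaves it unproved precisely because chaining those two together is the whole content. Your parenthetical about Nagata's example explaining why finite Krull dimension is needed is a nice touch that matches the remark already in the text.
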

For technical reasons, we will need to know that various singularity categories we use are idempotent complete; we recall the notion below.
\begin{defn}
	Let $\mathcal T$ be a triangulated category. A \textbf{projector} in $\mathcal T$ is a morphism\linebreak $\pi:X \to X$ in $\mathcal T$ with $\pi^2=\pi$. Say that a projector $\pi:X \to X$ \textbf{splits} if $X$ admits a direct summand $X'$ such that $\pi$ is the composition $X \to X' \to X$. Say that $\mathcal T$ is \textbf{idempotent complete} if every projector in $\mathcal T$ splits.
\end{defn}
\begin{prop}[\cite{bsidempotent}]
	If $\mathcal T$ is a triangulated category, then there exists an idempotent complete triangulated category ${\mathcal T}^\omega$ and a fully faithful triangle functor ${\mathcal T}\to{\mathcal T}^\omega$, universal among functors from ${\mathcal T}$ into idempotent complete triangulated categories. Call ${\mathcal T}^\omega$ the \textbf{idempotent completion} or the \textbf{Karoubi envelope} of ${\mathcal T}$. The assignment ${\mathcal T}\mapsto{\mathcal T}^\omega$ is functorial.
\end{prop}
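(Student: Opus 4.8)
The statement to prove is the existence and universal property of the idempotent completion (Karoubi envelope) of a triangulated category, together with functoriality. This is a classical result of Balmer--Schlichting \cite{bsidempotent}, so the plan is to recall their construction and verify the triangulated structure.

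First I would construct ${\mathcal T}^\omega$ at the level of additive categories: objects are pairs $(X,\pi)$ with $X\in{\mathcal T}$ and $\pi:X\to X$ a projector, and $\hom_{{\mathcal T}^\omega}((X,\pi),(Y,\rho))$ is the subgroup of $\hom_{\mathcal T}(X,Y)$ consisting of morphisms $f$ with $\rho f=f=f\pi$, with the identity of $(X,\pi)$ being $\pi$ itself. One checks directly that this is an additive category in which every projector splits (a projector on $(X,\pi)$ is itself a projector on $X$ lying in the right hom-group, so it already defines a summand), and that $X\mapsto(X,\id_X)$ is a fully faithful additive functor ${\mathcal T}\to{\mathcal T}^\omega$. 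The next step, which is the substantive one, is to put a triangulated structure on ${\mathcal T}^\omega$: the shift is $(X,\pi)[1]\coloneqq(X[1],\pi[1])$, and a triangle in ${\mathcal T}^\omega$ is declared distinguished if it is a direct summand (in the category of triangles) of the image of a distinguished triangle of ${\mathcal T}$. Verifying the octahedral axiom and the other triangulated axioms for this class is the main obstacle; I would follow Balmer--Schlichting's argument, the key input being that in ${\mathcal T}$ one can always realise a summand of an object together with a chosen projector by a genuine retract, and then lift diagrams and fill in triangles by working with the ambient object and conjugating by the relevant idempotents. One also checks that ${\mathcal T}\to{\mathcal T}^\omega$ is a triangle functor and that ${\mathcal T}^\omega$ is idempotent complete as a triangulated category (a projector in the triangulated sense splits because it splits additively and the resulting summand inherits a triangulated structure, cf.\ the classical fact that a split idempotent in a triangulated category yields a distinguished triangle).

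For the universal property, given any triangle functor $F:{\mathcal T}\to{\mathcal S}$ with ${\mathcal S}$ idempotent complete, I would define $\tilde F:{\mathcal T}^\omega\to{\mathcal S}$ by sending $(X,\pi)$ to the image of the split idempotent $F(\pi)$ in ${\mathcal S}$; this is well-defined up to canonical isomorphism because split idempotents have functorial images, it is additive, and it is a triangle functor since distinguished triangles in ${\mathcal T}^\omega$ are summands of images of distinguished triangles in ${\mathcal T}$ and $F$ preserves both triangles and direct summands. Uniqueness of $\tilde F$ up to natural isomorphism follows from the fact that every object of ${\mathcal T}^\omega$ is a summand of an object in the image of ${\mathcal T}$, so any triangle functor extending $F$ is forced on objects and morphisms. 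Functoriality of ${\mathcal T}\mapsto{\mathcal T}^\omega$ is then immediate: given a triangle functor $G:{\mathcal T}\to{\mathcal T}'$, compose with ${\mathcal T}'\to{\mathcal T}'^\omega$ and apply the universal property to obtain $G^\omega:{\mathcal T}^\omega\to{\mathcal T}'^\omega$, and the assignment respects composition and identities up to the canonical isomorphisms, again by the universal property. I would present the triangulated-structure verification by citing \cite{bsidempotent} for the detailed axiom-checking rather than reproducing it, since it is long and entirely standard, and spend the written proof on the construction, the universal property, and functoriality.
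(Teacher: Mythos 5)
Your proposal correctly reproduces the Balmer--Schlichting construction that the paper cites for this statement; the paper itself provides no proof beyond the reference to \cite{bsidempotent}, so there is nothing to diverge from. Your outline -- objects as pairs $(X,\pi)$, hom-groups carved out by the two-sided idempotent condition, shift applied componentwise, distinguished triangles declared to be summands of images of distinguished triangles from $\mathcal{T}$, the universal property via images of split idempotents, and functoriality deduced from the universal property -- is exactly the standard argument, and deferring the octahedral axiom verification to the reference is the sensible choice. One small remark: when you say a projector ``in the triangulated sense'' splits because it splits additively, note that idempotent completeness for a triangulated category is by definition the additive splitting condition (as the paper's preceding definition makes explicit), so there is no separate triangulated notion to check; the content is just that the additive Karoubi envelope of a triangulated category admits a triangulated structure compatible with the embedding, which is the hard part you are citing.
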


\begin{prop}[{\cite[5.5]{kalckyang2}}]\label{sgidemref}
	Let $R$ be a Gorenstein ring. If $R$ is a finitely generated module over a commutative complete local noetherian $k$-algebra, then $D_\mathrm{sg}(R)$ is idempotent complete.
	\end{prop}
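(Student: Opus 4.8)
The plan is to reduce to a known idempotent-completeness criterion for singularity categories of Gorenstein rings by exploiting the hypothesis that $R$ is module-finite over a commutative complete local noetherian $k$-algebra. First I would recall the key input: for a Gorenstein ring $R$, Buchweitz's theorem gives a triangle equivalence $D_\mathrm{sg}(R)\cong \stab R$, the stable category of maximal Cohen--Macaulay $R$-modules, so it suffices to show that $\stab R$ is idempotent complete. The standard criterion here (going back to work on MCM approximations, and used in \cite{kalckyang2}) is that $\stab R$ is idempotent complete as soon as the category of MCM modules is a \textbf{Krull--Schmidt category}, i.e.\ every object decomposes as a finite direct sum of objects with local endomorphism rings. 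Indeed, a projector in $\stab R$ lifts (after adding a free summand) to an idempotent endomorphism of an MCM module, and Krull--Schmidt gives us the splitting: an idempotent in an endomorphism ring of a Krull--Schmidt object always splits.

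The main step, then, is to verify the Krull--Schmidt property for MCM $R$-modules under the stated hypothesis. Let $\Lambda$ be the commutative complete local noetherian $k$-algebra over which $R$ is module-finite; write $\mathfrak{m}_\Lambda$ for its maximal ideal. Since $R$ is a finite $\Lambda$-module, every finitely generated $R$-module is a finite $\Lambda$-module, and its $R$-endomorphism ring is a finite $\Lambda$-algebra. Now I would invoke the classical fact that a module-finite algebra over a complete local noetherian ring is \textbf{semiperfect} — equivalently, the endomorphism ring of any finitely generated module over such an algebra is semiperfect — which forces the category of finitely generated $R$-modules (and in particular the full subcategory of MCM modules, which is closed under direct summands) to be Krull--Schmidt. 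The completeness of $\Lambda$ is exactly what makes idempotents lift along $\mathrm{End}_R(M)\to \mathrm{End}_R(M)/\mathfrak{m}_\Lambda\mathrm{End}_R(M)$ and makes the quotient semisimple Artinian, yielding local endomorphism rings on the indecomposable summands.

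Assembling the pieces: $R$ Gorenstein $\Rightarrow$ $D_\mathrm{sg}(R)\simeq \stab R$ (Buchweitz); $R$ module-finite over $\Lambda$ complete local noetherian $\Rightarrow$ MCM $R$-modules form a Krull--Schmidt category; Krull--Schmidt $\Rightarrow$ idempotents split in $\stab R$, i.e.\ $D_\mathrm{sg}(R)$ is idempotent complete. I would also note, for completeness, that the argument only uses that $\Lambda$ is complete local noetherian and that $R$ is a \emph{finite} $\Lambda$-module; one does not need $\Lambda$ to be Gorenstein or even of finite Krull dimension.

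\textbf{Main obstacle.} The genuinely delicate point is the passage from "$\mathrm{End}_R(M)$ is semiperfect" to "the subcategory of MCM modules is Krull--Schmidt and idempotents split \emph{in the stable category}" — one must be careful that splitting an idempotent of $\underline{\mathrm{End}}_R(M)$ can be achieved inside MCM modules rather than merely in all $R$-modules, and that adding/removing free summands (which become zero in $\stab R$) does not obstruct this. This is handled by the observation that the MCM subcategory is closed under direct summands in $\mathrm{mod}\text{-}R$ together with a lifting-of-idempotents argument modulo the (nilpotent-on-finite-length-pieces, or at least "lifts along the Jacobson radical") ideal coming from $\mathfrak{m}_\Lambda$; once stated carefully this is routine, but it is where all the hypotheses are actually consumed, so it deserves the bulk of the written proof.
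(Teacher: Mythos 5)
The paper does not supply a proof of this proposition: it is imported verbatim from \cite[5.5]{kalckyang2}, so there is no in-text argument to compare yours against. Your reconstruction is nonetheless the standard one and is essentially correct. Buchweitz's theorem reduces the claim to showing that $\stab R$ has split idempotents, and the hypothesis that $R$ is module-finite over a complete local noetherian commutative $k$-algebra $\Lambda$ forces $\enn_R(M)$ to be a module-finite $\Lambda$-algebra for every finitely generated $M$, hence a semiperfect ring; this gives the Krull--Schmidt property on $\cat{mod}\text{-}R$ and therefore on the summand-closed full subcategory of MCM modules.

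Your ``main obstacle'' paragraph, however, worries about a step that disappears once the argument is phrased cleanly, and as phrased your plan actually has a small gap. Lifting an idempotent of $\underline{\enn}_R(M)$ to a genuine idempotent of $\enn_R(M\oplus R^a)$ is not something semiperfectness guarantees: the ideal of maps factoring through projectives is in general \emph{not} contained in the Jacobson radical of $\enn_R(M)$ (for instance, if $M$ has a nonzero free summand then the associated projection is an idempotent lying in that ideal), so ``idempotents lift modulo $J$'' does not apply. The fix is to avoid lifting altogether: the Krull--Schmidt property passes directly to the stable quotient. Decompose $M$ into indecomposables $M_i$ with $\enn_R(M_i)$ local; then $\underline{\enn}_R(M_i)$ is a quotient of a local ring, hence local when $M_i$ is nonprojective and the zero ring when $M_i$ is projective. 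So every object of $\stab R$ is a finite direct sum of objects with local endomorphism rings, i.e.\ $\stab R$ is itself Krull--Schmidt, and Krull--Schmidt additive categories have split idempotents. This consumes the hypotheses exactly where you say they should be consumed, but without any passage back and forth across the stable quotient.
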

\begin{rmk}
The second condition is satisfied for example when $R$ is a finite-dimensional $k$-algebra, or when $R$ is itself a commutative complete local noetherian $k$-algebra.
	\end{rmk}
We now introduce a relative version of the singularity category, due to Kalck and Yang. Let $A$ be an algebra over $k$, and let $e\in A$ be an idempotent. Write $R$ for the cornering $eAe$. Note that by \ref{recoll}, the functor $j_!= -\lot_{R} eA$ embeds $D(R)$ into $D(A)$. In fact, since $D(R)=\langle R\rangle$, we have $j_{!}D(R)=\langle eA \rangle$. Similarly, restricting to compact objects shows that $j_{!}\per R = \thick (eA) \subseteq \per A$. 
\begin{defn}[Kalck--Yang \cite{kalckyang}]\label{relsingcatdefn}
	Let $A$ be an algebra over $k$, and let $e\in A$ be an idempotent. Write $R$ for the cornering $eAe$. The \textbf{relative singularity category} is the Verdier quotient $$\Delta_R(A) \coloneqq  \frac{D^b(A)}{j_! \per R} \cong \frac{D^b(A)}{\thick (eA)}.$$
\end{defn}
In \cite{kalckyang2}, this is referred to as the \textbf{singularity category of $A$ relative to $e$}. We immediately turn to dg singularity categories:
		\begin{defn}\label{dgsing}
		Let $A$ be a $k$-algebra. The \textbf{dg singularity category} of $A$ is the Drinfeld quotient ${D}^{\mathrm{dg}}_\mathrm{sg}(A)\coloneqq D^b_{\mathrm{dg}}(A) / \cat{per}_\mathrm{dg}(A)$. If $e\in A$ is an idempotent, write $R$ for the cornering $eAe$ and $j_!$ for the functor $-\otimes_R^\mathbb{L}eA:D(R)\to D(A)$. It is easy to see that $j_!$ admits a dg enhancement. The \textbf{dg relative singularity category} is the Drinfeld quotient
		$$\Delta^{\mathrm{dg}}_R(A) \coloneqq  \frac{D_{\mathrm{dg}}^b(A)}{j_! \per_{\mathrm{dg}} R} \cong \frac{D_{\mathrm{dg}}^b(A)}{\thick (eA)}.$$
	\end{defn}
	By \ref{drinfeldpretr}, we have  $[{D}^{\mathrm{dg}}_\mathrm{sg}(A)]\cong D_\mathrm{sg}(A)$ and $[{\Delta}^{\mathrm{dg}}_R(A)]\cong {\Delta}_R(A)$. The following easy lemma is useful, since we will want to quotient by perfect complexes:
	\begin{lem}\label{drinfeldlem}
		Let $\mathcal{A}$ be a pretriangulated dg category and $\mathcal{B}$ a full pretriangulated dg subcategory such that every $b\in \mathcal{B}$ is compact. Let $X\in \cat{ind}\mathcal{A}$. Then for all $b \in \mathcal{B}$, there is an isomorphism $\dgh_{\cat{ind}\mathcal{A}}(b,X)\cong \dgh_\mathcal{A}(b,\varinjlim X)$. In particular, if $\varinjlim X\cong 0$ then $\dgh_{\cat{ind}\mathcal{A}}(\mathcal{B},X)$ is acyclic. The converse is true if $[\mathcal{B}]$ contains a generator of $[\mathcal{A}]$.
	\end{lem}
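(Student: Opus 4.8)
\textbf{Proof plan for Lemma \ref{drinfeldlem}.}

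The plan is to use the characterization of morphisms in the ind-category together with compactness. Recall that an object $X\in\cat{ind}\mathcal A$ is a formal filtered colimit $\{X_i\}_{i\in I}$, and that by definition
\[
\dgh_{\cat{ind}\mathcal A}(b,X)\;\cong\;\varinjlim_i\dgh_{\mathcal A}(b,X_i)
\]
for any constant (i.e.\ honest) object $b\in\mathcal A$, since $b$ is a constant ind-object. First I would invoke the hypothesis that $b$ is compact in the pretriangulated category $[\mathcal A]$ — but actually one needs compactness at the dg level, in the sense that $\dgh_{\mathcal A}(b,-)$ commutes with filtered colimits of dg modules. Under our running conventions $\dgh_{\mathcal A}$ models the derived hom, and $\mathcal A$ is a full pretriangulated subcategory of the modules, so compactness of $b$ gives exactly $\varinjlim_i\dgh_{\mathcal A}(b,X_i)\simeq\dgh_{\mathcal A}(b,\varinjlim_iX_i)=\dgh_{\mathcal A}(b,\varinjlim X)$. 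Chaining the two displays yields the asserted isomorphism $\dgh_{\cat{ind}\mathcal A}(b,X)\cong\dgh_{\mathcal A}(b,\varinjlim X)$, naturally in $b$.

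The second assertion is then immediate: if $\varinjlim X\cong 0$ in $\mathcal A$, then $\dgh_{\mathcal A}(b,\varinjlim X)$ is acyclic for every $b\in\mathcal B$, hence so is $\dgh_{\cat{ind}\mathcal A}(b,X)$ by the isomorphism just established, i.e.\ $\dgh_{\cat{ind}\mathcal A}(\mathcal B,X)$ is acyclic. For the converse, suppose $[\mathcal B]$ contains an object $G$ which is a generator of $[\mathcal A]$ (equivalently, of the associated triangulated category), and suppose $\dgh_{\cat{ind}\mathcal A}(\mathcal B,X)$ is acyclic. Then in particular $\dgh_{\cat{ind}\mathcal A}(G,X)$ is acyclic, so by the isomorphism $\dgh_{\mathcal A}(G,\varinjlim X)$ is acyclic, meaning $\hom_{[\mathcal A]}(G[n],\varinjlim X)=0$ for all $n\in\Z$. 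Since $G$ generates $[\mathcal A]$ — so that an object is zero precisely when it receives no nonzero maps from the shifts of $G$ — this forces $\varinjlim X\cong 0$.

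The main obstacle is purely one of bookkeeping conventions: one must make sure that "compact" in the statement is interpreted so that $\dgh_{\mathcal A}(b,-)$ genuinely commutes with the filtered colimit computing $\varinjlim X$ in $\mathcal A$ (as opposed to merely in the homotopy category), and that the realization functor $\varinjlim:\cat{ind}\mathcal A\to\mathcal A$ is the one appearing in Definition \ref{procats}. Since $\mathcal B\subseteq\mathcal A$ consists of compact objects and $\mathcal A$ sits inside a category of dg modules where filtered colimits are exact and homotopy colimits, this is routine once spelled out; I would simply remark that the derived hom out of a compact object preserves filtered (homotopy) colimits and that this is exactly what is needed. No delicate estimates are involved.
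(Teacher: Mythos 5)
Your proof is correct and follows essentially the same route as the paper's: use the defining formula for morphisms in $\cat{ind}\mathcal A$ out of a constant object, invoke compactness of $b$ to pull $\dgh_{\mathcal A}(b,-)$ through the filtered colimit, and then read off the two assertions about acyclicity, the converse using that a generator of $[\mathcal A]$ detects zero objects. Your treatment of the second assertion (factoring through $\dgh_{\mathcal A}(b,\varinjlim X)\simeq\dgh_{\mathcal A}(b,0)$) is actually a cleaner unwinding than the paper's shorthand, and the remark you flag about needing dg-level compactness so that the comparison map is an honest isomorphism rather than a quasi-isomorphism is a real point that the paper elides; the surrounding conventions ensure it holds.
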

\begin{proof}
For the first statement, we have $$\dgh_{\cat{ind}\mathcal{A}}(b,X)\coloneqq \varinjlim \dgh_{\mathcal A}(b,X)\cong \dgh_\mathcal{A}(b,\varinjlim X)$$where the first isomorphism is by definition and the second isomorphism is because $b$ is compact. For the second statement, if $\varinjlim X\cong 0$ then we have $\dgh_{\cat{ind}\mathcal{A}}(b,X)\cong \dgh_{\cat{ind}\mathcal{A}}(b,0)\simeq 0$ for all $b\in \mathcal{B}$. For the third statement, suppose that $[\mathcal{B}]$ contains a generator $g$ of $[\mathcal{A}]$, and let $\tilde g$ be any lift of $g$ to $\mathcal B$. If $\dgh_{\mathcal{A}}(\tilde{g},\varinjlim X)$ is acyclic then we must have $\hom_{[\mathcal{A}]}(g,[\varinjlim X])\cong 0$. So $[\varinjlim X]\cong 0$ because $g$ was a generator, and hence $\varinjlim X\cong 0$.
	\end{proof}

\begin{lem}The objects of ${D}^{\mathrm{dg}}_\mathrm{sg}(A)$ are precisely those ind-objects $X \in \cat{ind}D^b_{\mathrm{dg}}(A)$ such that $\varinjlim X$ is acyclic and there is an $M \in D^b_{\mathrm{dg}}(A)$ with a map $M \to X$ with ind-perfect cone.
	\end{lem}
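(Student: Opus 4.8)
The statement is an instance of Drinfeld's construction (Definition~\ref{procats}ff.) applied to the pretriangulated pair $\mathcal{B}=\cat{per}_\mathrm{dg}(A)\into\mathcal{A}=D^b_\mathrm{dg}(A)$, so I would simply unwind the definition of $\mathcal{A}/\mathcal{B}$ and rewrite its two defining conditions using Lemma~\ref{drinfeldlem}. Recall that $X\in\cat{ind}D^b_\mathrm{dg}(A)$ lies in ${D}^{\mathrm{dg}}_\mathrm{sg}(A)=\mathcal{A}/\mathcal{B}$ precisely when (1) $\dgh_{\cat{ind}\mathcal{A}}(\mathcal{B},X)$ is acyclic, and (2) there exists $M\in\mathcal{A}$ and a map $f:M\to X$ with $\mathrm{cone}(f)\in\cat{ind}\mathcal{B}$. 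Condition (2) is verbatim the second half of the claimed description, so nothing needs to be done there. The work is entirely in translating condition (1) into ``$\varinjlim X$ is acyclic''.

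\textbf{Main step.} For condition (1), I would apply Lemma~\ref{drinfeldlem} with $\mathcal{B}=\cat{per}_\mathrm{dg}(A)$: every object of $\cat{per}_\mathrm{dg}(A)$ is compact in $D_\mathrm{dg}(A)$, and crucially $[\cat{per}_\mathrm{dg}(A)]=\per(A)$ contains $A$ itself, which is a compact generator of $[D^b_\mathrm{dg}(A)]=D^b(A)$ (indeed of all of $D(A)$). Hence the hypothesis ``$[\mathcal{B}]$ contains a generator of $[\mathcal{A}]$'' of the lemma is satisfied, so the lemma's first and third statements together give the equivalence: $\dgh_{\cat{ind}\mathcal{A}}(\mathcal{B},X)$ is acyclic $\iff$ $\varinjlim X\cong 0$ in $D^b(A)$ $\iff$ $\varinjlim X$ is acyclic. (One minor point to check: $\varinjlim X$, a priori an object of $D(A)$ rather than $D^b(A)$, is what the realisation functor $\varinjlim:\cat{ind}D^b_\mathrm{dg}(A)\to D_\mathrm{dg}(A)$ produces, and ``acyclic'' is an unambiguous condition there; no boundedness is needed for this direction of the argument.)

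\textbf{Assembling.} Combining the two translations, $X\in{D}^{\mathrm{dg}}_\mathrm{sg}(A)$ iff $\varinjlim X$ is acyclic and there is $M\in D^b_\mathrm{dg}(A)$ with a map $M\to X$ whose cone lies in $\cat{ind}\,\cat{per}_\mathrm{dg}(A)$, i.e.\ is ind-perfect. That is exactly the asserted description, so the proof is a two-line citation of Definition~\ref{procats}ff.\ and Lemma~\ref{drinfeldlem}. I do not anticipate a genuine obstacle here — the only thing to be slightly careful about is verifying that $A\in\per(A)$ really is a generator of the ambient homotopy category (this is standard, since $D(A)=\langle A\rangle$ and $D^b(A)$ is generated by $A$ under cones, shifts and summands), so that the ``converse'' clause of Lemma~\ref{drinfeldlem} is available; everything else is formal.
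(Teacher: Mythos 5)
Your proposal is correct and follows the paper's own proof essentially verbatim: both unwind the Drinfeld quotient definition and then invoke Lemma~\ref{drinfeldlem} with $\mathcal{B}=\cat{per}_\mathrm{dg}(A)$, using that perfect complexes are compact and that $A\in\per(A)$ generates, to translate the acyclicity-of-$\dgh$ condition into acyclicity of $\varinjlim X$. Nothing to add.
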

\begin{proof}
	By definition of the Drinfeld quotient, the objects of ${D}^{\mathrm{dg}}_\mathrm{sg}(A)$ are precisely those ind-objects $X \in \cat{ind}D^b_{\mathrm{dg}}(A)$ such that:
	\begin{itemize}
	\item $\dgh_{\cat{ind}{D}^{\mathrm{dg}}_\mathrm{sg}(A)}(\per_{\mathrm{dg}}(A),X)$ is acyclic.
	\item There exists $M \in D^b_{\mathrm{dg}}(A)$ and a map $f:M \to X$ with $\mathrm{cone}(f)\in \cat{ind}(\per_{\mathrm{dg}}(A))$.
	\end{itemize}
Because $\per(A)$ consists of the compact objects of $D^b(A)$ and contains a generator of $D^b(A)$, namely $A$ itself, we may apply \ref{drinfeldlem} to conclude that $\dgh_{\cat{ind}{D}^{\mathrm{dg}}_\mathrm{sg}(A)}(\per_{\mathrm{dg}}(A),X)$ is acyclic if and only if $\varinjlim X$ is acyclic.
	\end{proof}

	\section{The stable category}
When $R$ is a Gorenstein ring, then its singularity category has a more algebraic interpretation as a certain stable category of modules. In this section, we follow Buchweitz's seminal unpublished manuscript \cite{buchweitz}.	
\begin{defn}
	Let $R$ be a Gorenstein ring. If $M$ is an $R$-module, write $M^\vee$ for the $R$-linear dual $\hom_R(M,R)$. A finitely generated $R$-module $M$ is \textbf{maximal Cohen--Macaulay} or just \textbf{MCM} if the natural map $\R\hom_R(M,R)\to M^\vee$ is a quasi-isomorphism.
\end{defn}
	\begin{rmk}
			An equivalent characterisation of MCM $R$-modules is that they are those modules $M$ for which $\ext_R^j(M,R)$ vanishes whenever $j> 0$.
		\end{rmk}
	\begin{rmk}
		In some places in this thesis we will need the more general concept of a Cohen-Macaulay (or just CM) module; all that will really concern us is that MCM modules are CM. See \cite{yoshino} for a reference.
		\end{rmk}
	\begin{defn}Let $R$ be a Gorenstein ring and $M,N$ be two MCM $R$-modules. Say that a pair of maps $f,g:M\to N$ are \textbf{stably equivalent} if their difference $f-g$ factors through a projective module.
		\end{defn}
	\begin{lem}
		Let $R$ be a Gorenstein ring and $M,N$ be two MCM $R$-modules. Stable equivalence is an equivalence relation on the set $\hom_R(M,N)$.
		\end{lem}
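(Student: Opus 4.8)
The statement is that stable equivalence is an equivalence relation on $\hom_R(M,N)$, where $M,N$ are MCM $R$-modules over a Gorenstein ring $R$. This is entirely routine — the only content is checking reflexivity, symmetry, and transitivity of the relation ``$f-g$ factors through a projective''. The plan is to observe that the set $P(M,N)$ of morphisms factoring through a projective module forms a $k$-subspace (indeed a subgroup) of $\hom_R(M,N)$, and that stable equivalence is precisely the equivalence relation attached to this subgroup, i.e.\ $f \sim g$ iff $f - g \in P(M,N)$. An equivalence relation of the form ``$x - y$ lies in a fixed subgroup'' is automatically reflexive ($0 \in P(M,N)$), symmetric ($P(M,N)$ is closed under negation), and transitive ($P(M,N)$ is closed under addition), so the whole statement reduces to verifying that $P(M,N)$ is a subgroup of $(\hom_R(M,N),+)$.

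First I would note $0 \in P(M,N)$ since the zero map factors through the zero module, which is projective; this gives reflexivity. Next, if $f$ factors as $M \xrightarrow{a} Q \xrightarrow{b} N$ with $Q$ projective, then $-f$ factors as $M \xrightarrow{a} Q \xrightarrow{-b} N$, so $P(M,N)$ is closed under negation, giving symmetry. Finally, suppose $f_1$ factors through a projective $Q_1$ via $M \xrightarrow{a_1} Q_1 \xrightarrow{b_1} N$ and $f_2$ factors through a projective $Q_2$ via $M \xrightarrow{a_2} Q_2 \xrightarrow{b_2} N$; then $f_1 + f_2$ factors as $M \xrightarrow{(a_1,a_2)} Q_1 \oplus Q_2 \xrightarrow{(b_1\ b_2)} N$, and $Q_1 \oplus Q_2$ is projective, so $P(M,N)$ is closed under addition, giving transitivity. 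Assembling these three observations proves that stable equivalence is an equivalence relation.

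There is no real obstacle here; the statement is a formality whose sole purpose is to justify the subsequent definition of the stable category $\stab R$ with hom-sets $\hom_R(M,N)/P(M,N)$. The only thing worth a sentence is that the argument does not use the MCM or Gorenstein hypotheses at all — it works for arbitrary modules over an arbitrary ring — so these hypotheses are present only because they are the standing assumptions in which the stable category will be used. I would therefore keep the written proof to two or three lines, simply recording that $P(M,N)$ is a subgroup of $\hom_R(M,N)$ and that the relation ``congruence modulo $P(M,N)$'' is consequently an equivalence relation.
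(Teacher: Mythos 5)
Your proof is correct, and since the paper states this lemma without proof it simply leaves the routine verification implicit; your argument (that maps factoring through a projective form a subgroup of $\hom_R(M,N)$, with transitivity coming from the direct sum $Q_1 \oplus Q_2$ of two projectives) is exactly the standard verification the authors intended the reader to supply. Your observation that neither the Gorenstein nor the MCM hypothesis is used is also accurate.
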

	\begin{defn}
		Let $R$ be a Gorenstein ring and $M,N$ be two MCM $R$-modules. Denote the set of stable equivalence classes of maps $M \to N$ by $\underline{\hom}_R(M,N)$. We refer to such an equivalence class as a \textbf{stable map}.
		\end{defn}
	\begin{defn}
		Let $R$ be a Gorenstein ring. The \textbf{stable category} of $R$-modules is the category $\stab R$ whose objects are the MCM $R$-modules and whose morphisms are the stable maps. Composition is inherited from $\cat{mod}\text{-}R$.
		\end{defn}
	
	\begin{defn}\label{weaksyz}
		Let $R$ be a Gorenstein ring. For each $R$-module $X$, choose a surjection $f:R^n \onto X$ and set $\Omega X\coloneqq \ker f$. We refer to $\Omega$ as a \textbf{syzygy} of $X$.
		\end{defn}
	\begin{prop}
		The assignment $X \mapsto \Omega X$ is a well-defined endofunctor of $\stab R$.
		\end{prop}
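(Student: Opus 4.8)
The statement to prove is that a syzygy $\Omega$ is a well-defined endofunctor on the stable category $\stab R$ of a Gorenstein ring $R$. There are really three things to check: (1) the object $\Omega X$ is independent, up to stable isomorphism, of the choice of surjection $R^n \onto X$; (2) a map $f\colon X \to Y$ induces a map $\Omega f\colon \Omega X \to \Omega Y$ well-defined up to stable equivalence; and (3) these assignments respect composition and identities. The key technical input is Schanuel's lemma (in its projective form): given two short exact sequences $0 \to K_i \to P_i \to X \to 0$ with $P_i$ projective, one has $K_1 \oplus P_2 \cong K_2 \oplus P_1$. Since adding a projective summand is the zero operation in $\stab R$, this immediately gives that $\Omega X$ is well-defined up to stable isomorphism, handling (1).

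First I would fix, once and for all, a choice of surjection $\pi_X\colon P_X \onto X$ with $P_X$ free (of finite rank, using that $X$ is finitely generated and $R$ noetherian) for every $R$-module $X$, so that $\Omega X := \ker \pi_X$ is a specific module; one should note in passing that $\Omega X$ is again finitely generated (submodule of a finitely generated module over a noetherian ring) and MCM when $X$ is MCM — the latter follows from the long exact sequence in $\ext_R(-,R)$ applied to $0 \to \Omega X \to P_X \to X \to 0$, using that $P_X$ is free and $X$ is MCM so both have vanishing higher $\ext$ into $R$. Then, to define $\Omega f$ for $f\colon X \to Y$, I would lift $f$ along the projective $P_X$ to a map $\tilde f\colon P_X \to P_Y$ covering $f$ (possible since $P_X$ is projective and $\pi_Y$ is surjective), which restricts to a map $\Omega f\colon \Omega X \to \Omega Y$. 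The standard diagram-chase shows that the stable class of $\Omega f$ depends only on the stable class of $f$: if $f$ factors through a projective, or if $\tilde f, \tilde f'$ are two lifts of $f$, then $\tilde f - \tilde f'$ factors through $P_Y$ via a map $P_X \to P_Y$ that kills $\Omega X$... more precisely, $\tilde f - \tilde f'$ descends to a map $X \to P_Y$ lifting $0$, hence the induced map $\Omega X \to \Omega Y$ is the restriction of a map $P_X \to \Omega Y$ that factors through $\pi_X$, and one checks this restriction factors through the projective $P_X$ — so its stable class vanishes. Functoriality (3) then follows because a lift of $g \circ f$ can be taken to be (a lift of $g$) composed with (a lift of $f$), and the identity lifts to the identity, so $\Omega$ preserves composition and identities on the nose at the level of these chosen lifts, hence up to stable equivalence.

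The main obstacle — really the only non-formal point — is the well-definedness up to \emph{canonical} stable isomorphism needed to make $\Omega$ an honest functor rather than merely an assignment well-defined up to non-canonical isomorphism; this is precisely why one fixes a chosen projective cover-type surjection for each module at the outset, so that $\Omega$ becomes a strict endofunctor of the category of MCM modules, which then descends to $\stab R$ once the stable-equivalence bookkeeping above is done. I expect the proof in the text to proceed along exactly these lines, possibly citing Schanuel's lemma and the lifting property of projectives, and perhaps recording separately that $\Omega X$ is MCM so that the functor indeed lands in $\stab R$. I would also remark that, over a Gorenstein ring, $\Omega$ is an equivalence on $\stab R$ with quasi-inverse the cosyzygy $\Omega^{-1}$ (defined dually via $0 \to X \to I \to \Omega^{-1}X \to 0$ with $I$ injective-in-the-MCM-sense, i.e. a free module, using the duality $M \mapsto M^\vee$), but that is beyond the stated claim.
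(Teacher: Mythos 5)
The paper states this proposition without a formal proof environment; the only guidance is the remark immediately following, which records two points: (i) the ambiguity in choosing the surjection $R^n \onto X$ is resolved in $\stab R$ because it only affects $\Omega X$ up to projective summands, and (ii) $\Omega X$ is again MCM, so the functor lands where it should. Your argument covers exactly these points -- Schanuel's lemma for (i), functoriality via lifting along projectives, and a vanishing argument for (ii) -- so it is the standard proof that the author evidently intends the reader to supply, and it is correct. The only cosmetic difference is in how MCM-preservation is checked: you use the long exact sequence in $\ext_R(-,R)$ arising from $0 \to \Omega X \to P_X \to X \to 0$, while the paper's remark dualizes the truncated-and-shifted free resolution $(\tau_{\leq -1}F)[-1]$ of $\Omega X$ and observes that $F^\vee$ has cohomology only in degree zero; both arguments say the same thing, the second being the first stated at the resolution level. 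Your observation that one should fix a choice of surjection for each module so that $\Omega$ is a strict endofunctor (rather than one defined only up to non-canonical isomorphism) is a useful refinement that the paper leaves implicit. The final comment about $\Omega$ being an equivalence with quasi-inverse the cosyzygy is indeed true and is recorded by the paper in the proposition directly after this one.
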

	\begin{rmk}
		In particular, the ambiguities in the definition of syzygies are resolved upon passing to the stable category: syzygies are really only defined up to projective modules, but projective modules go to zero in $ \stab R$. Moreover, the syzygy of a MCM module is again MCM; this is not hard to see by continuing $R^n\onto X$ to a free resolution $F$ of $X$, truncating and shifting to get a resolution $(\tau_{\leq -1}F)[-1]$ of $\Omega X$, dualising, and using that $F^\vee$ has cohomology only in degree zero to see that $(\tau_{\leq -1}F)^\vee[1] \simeq \R\hom_R(\Omega X, R)$ has cohomology only in degree zero.
		\end{rmk}
\begin{prop}
	Let $R$ be a Gorenstein ring. Then the syzygy functor $\Omega$ is an autoequivalence of $\stab R$. Its inverse $\Omega^{-1}$ makes $\stab R$ into a triangulated category.
	\end{prop}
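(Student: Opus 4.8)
The plan is to establish the two halves separately. First I would show that $\Omega$ is an autoequivalence. The key input is the Gorenstein hypothesis: since $R$ has finite injective dimension over itself on both sides, $R$-linear duality $(-)^\vee = \hom_R(-,R)$ restricts to an exact duality on the category of MCM modules (this is the content of the remark following the definition of MCM, namely $\ext^j_R(M,R)=0$ for $j>0$ and $M$ MCM, together with reflexivity). Using this, one defines a \emph{cosyzygy} functor $\Omega^{-1}$ on $\stab R$ by $\Omega^{-1} M \coloneqq (\Omega(M^\vee))^\vee$, i.e.\ one takes a syzygy of the dual and dualises back. Concretely, for a MCM module $M$, pick a surjection $R^n \onto M^\vee$; dualising the short exact sequence $0 \to \Omega(M^\vee) \to R^n \to M^\vee \to 0$ and using MCM-ness (so that $\ext^1_R(M^\vee,R)=0$) gives a short exact sequence $0 \to M \to R^n \to (\Omega(M^\vee))^\vee \to 0$ exhibiting $M$ as a first syzygy of $\Omega^{-1}M$. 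One then checks $\Omega\Omega^{-1}\cong\id$ and $\Omega^{-1}\Omega\cong\id$ on $\stab R$: the ambiguity of syzygies (choice of free cover) and the splitting-off of projective summands both become invisible in the stable category, exactly as in the remark on well-definedness of $\Omega$, so these natural isomorphisms hold. Hence $\Omega$ is an autoequivalence.

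Next I would construct the triangulated structure. Take the shift functor to be $[1] \coloneqq \Omega^{-1}$. Distinguished triangles are defined to be those isomorphic in $\stab R$ to the \emph{standard} triangles coming from short exact sequences of MCM modules: given a short exact sequence $0 \to L \xrightarrow{f} M \xrightarrow{g} N \to 0$ with all three terms MCM, lift $g$ along a free cover $R^n \onto N$ to a map, form the pullback, and deduce a map $N \to \Omega^{-1}L$ whose stable class is canonical; the triangle $L \to M \to N \to \Omega^{-1}L$ is declared distinguished, and a general distinguished triangle is one isomorphic to such a sequence. The bulk of the work is then verifying Verdier's axioms (TR1)--(TR4). (TR1) and (TR2) are formal once one knows that every stable map $f:L \to M$ of MCM modules can be represented, up to stable equivalence, by an \emph{injection} with MCM cokernel — replace $f$ by $\binom{f}{\iota}: L \to M \oplus R^m$ where $\iota$ lands in a free cover of the cokernel, which is injective and has MCM cokernel while being stably equal to $f$; rotation invariance (TR2) uses the construction of $\Omega$ and $\Omega^{-1}$ as genuine syzygy/cosyzygy sequences. (TR3), the morphism axiom, follows by a diagram chase on short exact sequences together with the fact that projectives are both injective and projective relative to MCM modules in the relevant Ext sense. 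The octahedral axiom (TR4) is the standard ``nine-lemma'' / snake-lemma argument applied to a composable pair of injections of MCM modules, checking that all connecting modules are again MCM (this uses MCM-ness being closed under extensions and kernels of surjections onto MCM modules, which itself follows from the $\ext$-vanishing characterisation).

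The step I expect to be the main obstacle is the octahedral axiom (TR4), together with the preliminary bookkeeping needed to make it go through — namely, establishing that all the auxiliary modules appearing in the relevant short exact sequences are MCM, so that the diagrams actually live in $\stab R$, and that the various choices of free covers do not affect the stable-equivalence classes of the resulting morphisms. None of this is deep, but it is the most calculation-heavy part; by contrast, the autoequivalence statement and axioms (TR1)--(TR3) are comparatively routine given the Gorenstein duality on MCM modules. I would organise the write-up so that a single lemma — ``MCM modules form a Frobenius exact category with projective-injective objects the (finitely generated) projective $R$-modules'' — is proved first, and then invoke the general theorem of Happel that the stable category of a Frobenius exact category is triangulated with shift given by the cosyzygy, which packages (TR1)--(TR4) uniformly and makes the octahedral verification a black box.
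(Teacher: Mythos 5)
The paper does not give a proof of this proposition; it simply records the fact, following Buchweitz's unpublished manuscript cited at the start of the section. Your proposed argument — first using Gorenstein duality on MCM modules to manufacture the cosyzygy inverse, and then appealing to Happel's theorem on stable categories of Frobenius exact categories for the triangulated structure — is exactly the route Buchweitz takes, and it is the correct one. Your closing suggestion to black-box the Verdier axioms via the single lemma ``MCM modules form a Frobenius exact category with projective-injectives the f.g.\ projective $R$-modules'' is precisely the right organising principle; the relative injectivity of projectives is witnessed by the vanishing $\ext^1_R(N,R)=0$ for $N$ MCM, which is built into the definition, and enough relative injectives come from the cosyzygy short exact sequences you describe.

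One small caveat worth flagging in a careful write-up: the paper allows $R$ to be a noncommutative (Iwanaga-)Gorenstein ring, so $(-)^\vee = \hom_R(-,R)$ is a duality between right and left MCM modules rather than an auto-duality. Your formula $\Omega^{-1}M = (\Omega(M^\vee))^\vee$ still makes sense — one takes a syzygy of the left module $M^\vee$ and then dualises back — but the MCM-ness and $\ext$-vanishing you invoke for $M^\vee$ take place on the opposite side, and so rely on the two-sidedness of the Iwanaga-Gorenstein hypothesis. This is implicit in your plan and does not affect its correctness; it is just a bookkeeping point that distinguishes the general case from the commutative one.
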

\begin{rmk}
	Later on, we will primarily be interested in situations where $\Omega\cong \Omega^{-1}$, so the use of the inverse syzygy functor instead of the syzygy functor itself is unimportant to us.
	\end{rmk}
A famous theorem of Buchweitz tells us that the stable category we have just defined is the same as the singularity category:
\begin{thm}Let $R$ be a Gorenstein $k$-algebra. The categories $D_{\mathrm{sg}}(R)$ and $\stab R$ are triangle equivalent, via the map that sends a MCM module $M$ to the object $M \in D^b(R)$.
\end{thm}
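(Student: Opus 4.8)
The plan is to construct a quasi-inverse explicitly and check it behaves well on morphisms, following Buchweitz's original argument. First I would recall the key structural facts that were set up earlier: for a Gorenstein ring $R$ of injective dimension $n$, every $R$-module $M$ admits a \emph{complete resolution} -- an acyclic complex of projectives $P^\bullet$ which agrees with a projective resolution of $M$ in sufficiently negative degrees -- and the MCM modules are precisely the cocycles of such complete resolutions. The functor $\stab R \to D_{\mathrm{sg}}(R)$ sending $M$ to $M$ viewed in $D^b(R)$ is visibly well-defined (maps factoring through projectives become zero in the Verdier quotient, since projectives are perfect) and triangulated (the syzygy $\Omega^{-1}$ corresponds to the shift, by the defining triangles $0 \to M \to P \to \Omega^{-1}M \to 0$ with $P$ projective, which become $M \to 0 \to \Omega^{-1}M \xrightarrow{\sim} M[1]$ in $D_{\mathrm{sg}}(R)$).

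Next I would show the functor is \emph{essentially surjective}. Given $X \in D^b(R)$, write it as a finite iterated cone of shifts of modules; by dimension-shifting (using that high syzygies of any finitely generated module over a Gorenstein ring are MCM, which follows from $\mathrm{injdim}_R R = n < \infty$) each module is isomorphic in $D_{\mathrm{sg}}(R)$ to a shift of an MCM module, and taking cones of maps between MCM modules lands (up to projective summands, hence up to isomorphism in $\stab R$) back among MCM modules. Hence every object of $D_{\mathrm{sg}}(R)$ is isomorphic to an MCM module, placed in degree zero.

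For \emph{full faithfulness} the main point is the comparison of Hom-groups. I would prove that for MCM modules $M, N$ the natural map $\underline{\hom}_R(M,N) \to \hom_{D_{\mathrm{sg}}(R)}(M,N)$ is a bijection. The standard route: first identify $\hom_{D_{\mathrm{sg}}(R)}(M,N) \cong \varinjlim_j \hom_{D^b(R)}(\Omega^j M, \Omega^j N)$ by a calculus-of-fractions argument in the Verdier quotient (every morphism in $D^b(R)/\per R$ out of a module can be represented by a roof whose backwards leg is a syzygy map, since cones on such maps are perfect); then observe that for MCM modules the syzygy autoequivalence on $D^b(R)$ stabilises this colimit, and that $\hom_{D^b(R)}(\Omega^j M, \Omega^j N) = \hom_R(\Omega^j M,\Omega^j N)$ for modules concentrated in degree zero once $j$ is large enough that no negative $\ext$ interferes. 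Finally one checks the maps that die in the colimit are exactly those factoring through a projective, i.e.\ the stably trivial ones; this uses that a map $M \to N$ of MCM modules factors through a projective iff its image under $\Omega$ does, together with Gorenstein-ness to control $\ext^{>0}_R(M,R)$.

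The step I expect to be the main obstacle is the full-faithfulness computation -- specifically, making the calculus of fractions in the Verdier quotient $D^b(R)/\per R$ precise and verifying that every fraction can be cleared by a syzygy, and that stable triviality matches exactly the morphisms killed in the colimit. This is where the Gorenstein hypothesis is genuinely used (via finite injective dimension of $R$ over itself, to guarantee complete resolutions exist and that high syzygies of arbitrary modules are MCM), and where one must be careful about left versus right modules and about the idempotent-completeness subtleties flagged in \ref{sgidemref}. The essential surjectivity and the triangulated-functor verification are comparatively routine dimension-shifting arguments.
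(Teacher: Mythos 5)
The paper states this result as Buchweitz's theorem and gives no proof of it, so there is no in-text argument to compare your proposal against; what you are sketching is (correctly) Buchweitz's own route via complete resolutions and dimension shifting. Your treatment of well-definedness, triangulated structure, and essential surjectivity is in good shape. The syzygy argument $0 \to \Omega M \to P \to M \to 0 \leadsto M \cong \Omega M[1]$ in $D_{\mathrm{sg}}(R)$ correctly identifies $\Omega^{-1}$ with the shift, and the dimension-shifting argument that every object of $D^b(R)$ becomes a (shift of a) sufficiently high syzygy, hence MCM, is the right way to get essential surjectivity over a Gorenstein ring.

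The genuine gap is in the full-faithfulness step, exactly where you flagged uncertainty. The proposed identity $\hom_{D_{\mathrm{sg}}(R)}(M,N) \cong \varinjlim_j \hom_{D^b(R)}(\Omega^j M, \Omega^j N)$ is not well-posed as stated: $\Omega$ is a functor only on the \emph{stable} category, not on $\cat{mod}\text{-}R$ or $D^b(R)$ -- its effect on a morphism depends on choices of projective covers and is only canonical after quotienting by maps through projectives -- so there are no intrinsic transition maps making the right-hand side a directed system of $D^b$-Hom groups. Even after making choices, this is not the standard roof formula in the Verdier quotient: the isomorphism $M \cong \Omega M[1]$ in $D_{\mathrm{sg}}(R)$ is witnessed by the connecting map $M \to \Omega M[1]$ in $D^b(R)$ with cone $P[1]$, so the backwards leg of a roof out of $M$ with perfect cone naturally takes the form $M \to \Omega^j M[j]$ rather than $\Omega^j M \to M$, and clearing denominators puts you in a colimit along $\ext^j_R(\Omega^{-j}M, N)$ (or, dually, $\ext^j_R(M, \Omega^{-j}N)$), not $\hom_{D^b(R)}(\Omega^j M, \Omega^j N)$. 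A cleaner way to close this step, and the one the paper itself implicitly relies on through \ref{buchcohom}, is to represent $M$ in $D_{\mathrm{sg}}(R)$ by (the tail of) a complete resolution $\mathbf{CR}(M)$ and prove directly that $H^0\hom_R(\mathbf{CR}(M),N) \cong \underline{\hom}_R(M,N)$; the identification of the whole graded Hom is then \ref{buchcohom}, and full faithfulness in degree zero drops out. That calculation uses the same structural inputs you listed (finite injective dimension of $R$ and vanishing of $\ext^{>0}_R(M,R)$ for $M$ MCM) but avoids any calculus-of-fractions manoeuvre whose cofinality has to be justified.
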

	
Hence, we can regard the dg singularity category $D^\mathrm{dg}_{\mathrm{sg}}(R)$ as a dg enhancement of $\stab R$. 
\begin{defn}
	Let $R$ be a Gorenstein $k$-algebra and let $M,N$ be elements of $D^\mathrm{dg}_{\mathrm{sg}}(R)$. Write $\R\underline{\hom}_R(M,N)$ for the complex $\dgh_{D^\mathrm{dg}_{\mathrm{sg}}(R)}(M,N)$ and write $\R\underline{\enn}_R(M)$ for the dga $\dge_{D^\mathrm{dg}_{\mathrm{sg}}(R)}(M)$.
\end{defn}We denote Ext groups in the singularity category by $\underline{\ext}$. Note that $\underline\hom$ coincides with $\underline\ext^0$, and that $\underline{\ext}^j(M,N)\cong H^j\R\underline{\hom}_R(M,N)$. In order to investigate the stable Ext groups, we recall the notion of the complete resolution of a MCM $R$-module -- the construction works for arbitrary complexes in $D^b(\cat{mod}\text{-}R)$.
\begin{defn}[{\cite[5.6.2]{buchweitz}}]
	Let $R$ be a Gorenstein $k$-algebra and let $M$ be any MCM $R$-module. Let $P$ be a projective resolution of $M$, and let $Q$ be a projective resolution of $M^\vee$. Dualising and using that $(-)^\vee$ is an exact functor on MCM modules and on projectives gives us a projective coresolution $M \to Q^\vee$. The \textbf{complete resolution} of $M$ is the (acyclic) complex $\mathbf{CR}(M)\coloneqq \mathrm{cocone}(P \to Q^\vee)$. So in nonpositive degrees, $\mathbf{CR}(M)$ agrees with $P$, and in positive degrees, $\mathbf{CR}(M)$ agrees with $Q^\vee[-1]$.
\end{defn}
\begin{prop}[{\cite[6.1.2.ii]{buchweitz}}]\label{buchcohom}Let $R$ be a Gorenstein $k$-algebra and let $M,N$ be MCM $R$-modules. Then $$\underline{\ext}_R^j(M,N) \cong H^j\hom_R(\mathbf{CR}(M),N) .$$
\end{prop}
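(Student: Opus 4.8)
\textbf{Proof proposal for \ref{buchcohom}.}

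The plan is to compute the derived stable Hom complex $\R\underline{\hom}_R(M,N)$ using the complete resolution $\mathbf{CR}(M)$ as an acyclic replacement, and to identify the Drinfeld-quotient Hom with an honest Hom-complex of modules. The key point is that $\mathbf{CR}(M)$ is an acyclic complex of projectives whose `stupid' truncation $\tau_{\geq 0}$ (the part in nonpositive homological, i.e.\ nonnegative cohomological, degree, which agrees with the projective resolution $P \to M$) computes $M$ in $D^b(R)$, so that $\mathbf{CR}(M)$ represents a cone witnessing $M \simeq 0$ in the singularity category. More precisely, I would first recall that in $D^\mathrm{dg}_\mathrm{sg}(R)$ the object $M$ is modelled inside $\cat{ind}\,D^b_\mathrm{dg}(R)$ by the filtered system of brutal truncations $\sigma_{\geq -n}\mathbf{CR}(M)$, each of which receives a map from the module $M$ (placed in degree $0$) with perfect cone; this exhibits $\mathbf{CR}(M)$ as the correct ind-object representing $M$, using \ref{drinfeldlem} together with the fact that $\varinjlim_n \sigma_{\geq -n}\mathbf{CR}(M) \simeq \mathbf{CR}(M)$ is acyclic.

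Next I would use \ref{drinfeldlem} again to pass from the $\cat{ind}$-category Hom to an ordinary Hom: since $N$ is a genuine $R$-module and the relevant objects are built from compacts, one gets $\dgh_{\cat{ind}D^b_\mathrm{dg}(R)}(\sigma_{\geq -n}\mathbf{CR}(M), N) \cong \hom_R(\sigma_{\geq -n}\mathbf{CR}(M), N)$ compatibly in $n$, and taking the limit yields $\R\underline{\hom}_R(M,N) \simeq \hom_R(\mathbf{CR}(M), N)$ as complexes. (One must be slightly careful about the direction of the limit versus colimit and about replacing $N$ by a fibrant-cofibrant model; since $\mathbf{CR}(M)$ is a complex of projectives and $N$ is a module, $\hom_R(\mathbf{CR}(M), N)$ already computes the derived mapping complex in $D^b_\mathrm{dg}(R)$, so no further replacement is needed.) Taking $H^j$ of both sides then gives $\underline{\ext}^j_R(M,N) \cong H^j\hom_R(\mathbf{CR}(M),N)$, which is the claim.

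The main obstacle I expect is the bookkeeping around the $\cat{ind}$-object model of the Drinfeld quotient: one has to verify carefully that $\mathbf{CR}(M)$ — or rather its system of brutal truncations — really is an object of ${D}^\mathrm{dg}_\mathrm{sg}(R)$ in the sense of Definition \ref{dgsing}, i.e.\ that it satisfies both the acyclicity condition on Hom out of $\per_\mathrm{dg}(R)$ and the existence of a map from a bounded complex with ind-perfect cone, and that it is isomorphic in the quotient to the image of $M$. This is exactly where the Gorenstein hypothesis enters: it guarantees $M^\vee$ is again MCM so that the positive-degree part $Q^\vee$ of $\mathbf{CR}(M)$ exists and is a complex of projectives, and it guarantees $\mathbf{CR}(M)$ is acyclic. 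Once this identification is in place, the computation of cohomology is immediate, and the statement follows; alternatively, one could shortcut the entire argument by citing \cite{buchweitz} directly, but the proof above makes the dg enhancement explicit, which is what we need for later chapters.
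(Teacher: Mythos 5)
Note first that the paper does not prove this proposition: it is cited directly from Buchweitz \cite[6.1.2.ii]{buchweitz}, so there is no in-paper argument to compare yours against; what follows is an assessment of your proposal as a self-contained proof. The overall strategy --- realize $M$ inside the Drinfeld quotient $D^\mathrm{dg}_\mathrm{sg}(R)$ as an ind-system of truncations of $\mathbf{CR}(M)$ --- is the right shape of argument, although the truncations you write down have the wrong direction: $D^b_\mathrm{dg}(R)$ consists of bounded-\emph{above} complexes of projectives, and the part of $\mathbf{CR}(M)$ agreeing with the resolution $P$ is the nonpositive-degree part, so the correct ind-system is $\{\sigma_{\leq n}\mathbf{CR}(M)\}_n$ with inclusions as transition maps, whereas the $\sigma_{\geq -n}\mathbf{CR}(M)$ you describe are unbounded above and do not lie in $D^b_\mathrm{dg}(R)$ at all. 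Assuming this is a slip, the ind-object does represent $M$ and does satisfy both Drinfeld conditions.

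The substantive gap is in the step that passes to $\hom_R(\mathbf{CR}(M),N)$. You compute $\dgh_{\cat{ind}}(X,N)$ against the \emph{constant} ind-object $N$, but $N$ is not an object of the Drinfeld quotient: its colimit is $N$ itself rather than an acyclic complex, so condition (1) of the definition fails. The stable Hom complex $\R\underline{\hom}_R(M,N)$ is by definition $\dgh_{\cat{ind}}(X,Y)$ with $Y=\{\sigma_{\leq n}\mathbf{CR}(N)\}_n$, and carrying out the double limit $\varprojlim_m\varinjlim_n\hom_R(X_m,Y_n)$ (in the style of the proof of \ref{qisolem}) yields $\hom_R(\mathbf{CR}(M),\mathbf{CR}(N))$, not $\hom_R(\mathbf{CR}(M),N)$. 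The content of Buchweitz's proposition is precisely the cohomology comparison between these two complexes: one must check that both the inclusion $P_N\into\mathbf{CR}(N)$ and the augmentation $P_N\onto N$ induce isomorphisms on $H^*\hom_R(\mathbf{CR}(M),-)$, and the former is the nontrivial one, with cone $\hom_R(\mathbf{CR}(M),\sigma_{\geq 1}\mathbf{CR}(N))$ whose acyclicity uses \emph{total} acyclicity of $\mathbf{CR}(M)$ (i.e.\ that $\hom_R(\mathbf{CR}(M),R)$ is acyclic, which is where the Gorenstein and MCM hypotheses really enter) together with a spectral-sequence convergence argument. Your appeal to \ref{drinfeldlem} cannot supply this: that lemma computes $\dgh_{\cat{ind}}(b,X)$ for $b$ \emph{compact}, the objects $\sigma_{\leq m}\mathbf{CR}(M)$ are not compact in $D^b_\mathrm{dg}(R)$, and in any case it says nothing about maps \emph{into} a constant that fails the acyclicity condition. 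As written, your argument silently assumes the identity the proposition asserts; it would need this extra homological input to become a proof.
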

\begin{cor}
	Let $R$ be a Gorenstein $k$-algebra and let $M,N$ be MCM $R$-modules.\begin{enumerate}
		\item[\emph{1.}] If $j>0$ then $\underline{\ext}_R^j(M,N)\cong\ext_R^j(M,N)$. 
		\item[\emph{2.}] If $j<-1$ then $\underline{\ext}_R^j(M,N)\cong\tor^R_{-j-1}(N,M^\vee)$.
		\end{enumerate}
	\end{cor}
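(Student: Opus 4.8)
The two claims should follow directly from Buchweitz's computation of stable Ext via complete resolutions (\ref{buchcohom}), by analysing the complex $\mathbf{CR}(M)$ in the two relevant ranges of degrees. Recall that $\mathbf{CR}(M)$ agrees with a projective resolution $P$ of $M$ in nonpositive degrees and with $Q^\vee[-1]$ in positive degrees, where $Q$ is a projective resolution of $M^\vee$.

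\emph{Claim 1.} For $j>0$, the plan is to observe that the truncation $\tau_{\leq 0}\mathbf{CR}(M)$ is just $P$, which computes $\ext^*_R(M,N)$. Since $\mathbf{CR}(M)$ and $P$ agree in all degrees $\leq 0$ and the Hom-complex $\hom_R(-,N)$ is degreewise, the cochain groups $\hom_R(\mathbf{CR}(M)^{-i},N)$ and $\hom_R(P^{-i},N)$ coincide for $i\geq 0$; the only possible discrepancy comes from the differential out of cohomological degree $0$, i.e. involving $\mathbf{CR}(M)^{1}$. But $\mathbf{CR}(M)^{1}$ is the degree-$(-1)$ part of $Q^\vee$, and the map $M \to Q^\vee$ is a coresolution, so the complex $\mathbf{CR}(M)$ is exact; I would use this exactness (equivalently, that $M\to Q^\vee$ is a quasi-isomorphism onto a complex concentrated in degree $0$) to conclude that for $j>0$ the cohomology of $\hom_R(\mathbf{CR}(M),N)$ in degree $j$ is computed purely from the resolution part $P$, hence equals $\ext^j_R(M,N)$. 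Concretely, one can also argue that $\mathbf{CR}(M)$ is obtained from $P$ by splicing on an exact complex of projectives in positive degrees, and splicing on an acyclic complex of projectives does not change $H^j\hom_R(-,N)$ for $j>0$.

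\emph{Claim 2.} For $j<-1$, the relevant part of $\mathbf{CR}(M)$ lives in cohomological degrees $\leq -2$, where $\mathbf{CR}(M)$ again agrees with the projective resolution $P$ of $M$. The idea is to dualise: $\hom_R(P,N)\cong \hom_R(P\otimes_R M^\vee\text{-ish}\dots)$ — more precisely, for $N$ and $M$ MCM over the Gorenstein ring $R$ one has a natural isomorphism $\hom_R(P^{-i},N)\cong N\otimes_R (P^{-i})^\vee$ since $P^{-i}$ is finitely generated projective, and $(P)^\vee$ is a projective \emph{co}resolution of $M^\vee$ (using that $(-)^\vee$ is exact on projectives and that $M$ is MCM so $\R\hom_R(M,R)\simeq M^\vee$ is concentrated in degree $0$). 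Reindexing, $(P)^\vee$ shifted is a projective \emph{resolution} of $M^\vee$, so $H^j$ of $N\otimes_R (P)^\vee$ in the appropriate degree computes $\tor^R_*(N,M^\vee)$. Tracking the shift: the degree-$(-j)$ part of $P$ contributes, and the coresolution indexing turns $j<-1$ into homological degree $-j-1>0$, yielding $\underline{\ext}^j_R(M,N)\cong \tor^R_{-j-1}(N,M^\vee)$. I would be careful about the boundary case $j=-1$, which is exactly where the splice point between $P$ and $Q^\vee[-1]$ interferes, explaining why it is excluded.

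\emph{Main obstacle.} The routine part is the Hom--tensor juggling; the only genuinely delicate point is the bookkeeping at the two boundary degrees $j=0$ and $j=-1$, where $\mathbf{CR}(M)$ transitions between its ``resolution'' half and its ``coresolution'' half, and where one must make sure no correction term from the splicing differential leaks into the stated isomorphisms. I expect verifying that these boundary contributions vanish (equivalently, carefully invoking acyclicity of $\mathbf{CR}(M)$ and the MCM condition $\ext^{>0}_R(M,R)=0$ at the right spot) to be the crux of the argument; everything else is a direct consequence of \ref{buchcohom} together with standard properties of finitely generated projective modules over a Gorenstein ring.
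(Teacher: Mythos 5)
Your argument for Claim 1 is correct and essentially matches the paper's: for $j>0$ the cohomology $H^j\hom_R(\mathbf{CR}(M),N)$ is computed entirely from the three-term segment $\hom_R(\mathbf{CR}(M)^{-j-1},N)\to\hom_R(\mathbf{CR}(M)^{-j},N)\to\hom_R(\mathbf{CR}(M)^{-j+1},N)$, and since $-j-1,-j,-j+1$ are all $\leq 0$ (because $j\geq 1$), these groups and the two differentials between them are precisely those of $\hom_R(P,N)$. Your worry about a ``discrepancy from the differential out of degree $0$ involving $\mathbf{CR}(M)^1$'' is unfounded for $j>0$, because $\mathbf{CR}(M)^1$ never enters the computation of $H^j$ for $j\geq 1$; it only enters at $j=0$, which is excluded. (It is also false as stated that $\mathbf{CR}(M)^1$ is ``the degree-$(-1)$ part of $Q^\vee$'' --- it is $(Q^\vee)^0$ --- but this is harmless.)

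There is, however, a genuine error in your Claim 2. You assert that for $j<-1$ the relevant part of $\mathbf{CR}(M)$ lies in cohomological degrees $\leq -2$, where it agrees with $P$; this is the wrong half of the complete resolution. Computing $H^j\hom_R(\mathbf{CR}(M),N)$ involves $\mathbf{CR}(M)$ in degrees $-j-1,-j,-j+1$, which for $j\leq -2$ are all $\geq 1$ --- that is, the $Q^\vee[-1]$ part, not the $P$ part. Your subsequent argument, dualising $P$ to a projective coresolution $P^\vee$ of $M^\vee$ and identifying $\hom_R(P,N)\cong N\otimes_R P^\vee$, only produces a complex concentrated in degrees $\geq 0$, whose $H^j$ for $j<-1$ vanishes identically. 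The ``reindexing'' step (``$(P)^\vee$ shifted is a projective resolution of $M^\vee$'') is also incoherent: a coresolution does not become a resolution under a shift. The correct argument applies your hom-tensor duality to $Q^\vee$ instead: since each $(Q^\vee)^i=(Q^{-i})^\vee$ is finitely generated projective, one has $\hom_R(Q^\vee,N)\cong N\otimes_R Q$, and since $Q$ is a projective resolution of $M^\vee$ this yields $\R\hom_R(Q^\vee,N)\simeq N\lot_R M^\vee$ (this is what the paper cites from Buchweitz). Then $H^j\hom_R(Q^\vee[-1],N)\cong H^{j+1}(N\lot_R M^\vee)\cong\tor^R_{-j-1}(N,M^\vee)$, and $-j-1>0$ precisely when $j<-1$. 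So your underlying instinct (dualise the relevant half and read off Tor) is right, but it must be applied to $Q^\vee$, not $P$.
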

\begin{proof}
Let $P\to M$ and $Q \to M^\vee$ be projective resolutions. If $j>0$ we have $$H^j\hom_R(\mathbf{CR}(M),N)\cong H^j\hom_R(P,N)\cong\ext^j_R(M,N)$$whereas if $j<-1$ we have
$$H^j\hom_R(\mathbf{CR}(M),N)\cong H^j\hom_R(Q^\vee[-1],N)\cong H^j(N\lot_R M^\vee[1]) \cong \tor^R_{-j-1}(N,M^\vee)$$ where we use \cite[6.2.1.ii]{buchweitz} for the quasi-isomorphism $\R\hom_R(Q^\vee,N) \simeq N\lot_R M^\vee$.
	\end{proof}
Finally, we recall AR duality, which will assist us in some computations later:
	\begin{prop}[Auslander--Reiten duality \cite{auslander}]\label{arduality}
		Let $R$ be a commutative complete local	Gorenstein isolated singularity of Krull dimension $d$. Let $M,N$ be MCM $R$-modules. Then we have $$\underline{\hom}_R(M,N) \cong {\ext}_R^1(N,\Omega^{2-d} M)^*$$where the notation $(-)^*$ denotes the linear dual.
	\end{prop}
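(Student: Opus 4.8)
The statement to prove is Auslander--Reiten duality for a commutative complete local Gorenstein isolated singularity $R$ of Krull dimension $d$: for MCM modules $M,N$ one has a natural isomorphism $\underline{\hom}_R(M,N) \cong \ext^1_R(N,\Omega^{2-d}M)^*$. Since this is a classical result, the plan is essentially to recall the standard argument and put it in the language we have set up, citing \cite{auslander} (and \cite{yoshino}) for the technical heart.

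The plan is to proceed as follows. First I would reduce to showing the existence of \emph{Auslander--Reiten sequences} (almost split sequences) in the category of MCM $R$-modules: for an isolated singularity, every non-free indecomposable MCM module $M$ sits at the right-hand end of an almost split sequence $0 \to \tau M \to E \to M \to 0$, where the AR translate is $\tau M \cong \Omega^{2-d}(M^{**})$ up to a twist by the canonical module --- and here, since $R$ is Gorenstein, the canonical module is free, so $\tau M \cong \Omega^{2-d}M$ in the stable category. The existence of such sequences for isolated Cohen--Macaulay singularities is exactly Auslander's theorem; the key input is that $\ext^i_R(M,N)$ has finite length for $i>0$ when $M,N$ are MCM and $R$ has an isolated singularity, which one proves by localising at non-maximal primes where $R$ becomes regular and MCM modules become free. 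Second, given the AR sequence, the functorial characterisation of almost split sequences says precisely that the associated class in $\ext^1_R(M,\tau M)$ generates a one-dimensional socle and that the pairing
$$\underline{\hom}_R(M,N)\times \ext^1_R(N,\tau M) \to \ext^1_R(M,\tau M) \to E(k)$$
(compose a stable map $M\to N$ with an extension of $N$ by $\tau M$, then apply the functional cutting out the socle, where $E(k)=k$ since $R$ is complete local with residue field $k$) is a perfect pairing of finite-dimensional $k$-vector spaces. This yields $\underline{\hom}_R(M,N)\cong \ext^1_R(N,\tau M)^* = \ext^1_R(N,\Omega^{2-d}M)^*$. Third, I would check naturality in both variables, which is immediate from the functoriality of composition and of the connecting maps.

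The main obstacle --- or rather, the one genuinely substantive point --- is establishing that the relevant Hom and Ext spaces are finite-dimensional over $k$, so that the duality pairing lands in a finite-dimensional setting and AR sequences exist. This is where the isolated singularity hypothesis is used crucially: one argues that for $M, N$ MCM the sheaf $\underline{\hom}_R(M,N)$ (equivalently $\underline{\ext}^0$) is supported only at the closed point, because away from it $R$ is regular and hence $M,N$ are locally free, forcing stable homs to vanish there; combined with the fact that $R$ is noetherian complete local this gives finite length. Everything else --- the construction of the pairing, its perfectness, compatibility of $\tau$ with $\Omega^{2-d}$ in the Gorenstein case, naturality --- is formal once this finiteness and the existence of AR sequences are in hand. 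Accordingly, I expect the write-up to be short: cite Auslander \cite{auslander} and Yoshino \cite{yoshino} for the existence of almost split sequences and for the finiteness statement, then spend a few lines extracting the displayed isomorphism from the defining property of the AR sequence and noting that the Gorenstein condition trivialises the canonical-module twist in $\tau$.
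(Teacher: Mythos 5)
The paper does not actually give a proof of this proposition; it is stated as a classical result with a citation to Auslander's original paper, and the text moves on. So there is no in-paper argument to compare against. Your sketch is the standard proof of AR duality for MCM modules over an isolated Cohen--Macaulay singularity, and the reduction you perform --- existence of almost split sequences via Auslander's theorem, identification of the AR translate with $\Omega^{2-d}M$ in the Gorenstein case because the canonical-module twist is trivial, perfectness of the socle pairing, and finite length of the relevant Hom/Ext groups coming from the isolated-singularity hypothesis --- is a correct outline of it, so it is a reasonable replacement for the citation.

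One small inaccuracy worth flagging: you write $E(k)=k$ ``since $R$ is complete local with residue field $k$.'' But $E(k)$ denotes the injective hull of the residue field as an $R$-module, which equals $k$ only when $R$ is a field; for a complete local ring of positive dimension it is a much larger (non-finitely-generated) module. What you actually need is that Matlis duality, restricted to finite-length $R$-modules with residue field $k$, agrees with $k$-linear duality $\hom_k(-,k)$; this is true (both are exact, agree on $k$, and hence agree on finite-length modules by d\'evissage, and it is why the dual in the displayed formula is written $(-)^*$ rather than the Matlis dual). The wording $E(k)=k$ is literally wrong, but the conclusion you draw from it is fine once restated this way.
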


	\section{Hypersurfaces and periodicity}\label{hypper}
When $R$ is a commutative complete local hypersurface singularity, it is well-known that the syzygy functor is 2-periodic:
	\begin{thm}[Eisenbud {\cite[6.1(\textit{ii})]{eisenbudper}}]\label{ebudper}
		Let $R$ be a commutative complete local hypersurface singularity over $k$. Then $\Omega^2\cong \id$ as an endofunctor of $\stab R$.
		\end{thm}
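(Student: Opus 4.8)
The plan is to exhibit, for an arbitrary finitely generated $R$-module $M$, a $2$-periodic free resolution, and then deduce the statement about $\Omega^2$ on the stable category. Write $R = S/\sigma$ with $S = k\llbracket x_1,\ldots,x_n\rrbracket$ regular local and $\sigma$ a non-zerodivisor (so $R$ is Gorenstein by \ref{hypsgor}). First I would recall the construction of a matrix factorisation: given $M$, take a finite free $S$-resolution and use that $\sigma$ annihilates $M$ to produce a pair of square matrices $\phi,\psi$ over $S$ with $\phi\psi = \psi\phi = \sigma\cdot\mathrm{id}$. The standard argument is to lift the identity map $\sigma\cdot\mathrm{id}_M = 0$ through a free resolution; since multiplication by $\sigma$ is null-homotopic on any complex of $R$-modules (as $\sigma = 0$ in $R$) but is given by an honest matrix over $S$, one extracts the factorisation. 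The key input is that over the regular local ring $S$ every module has finite projective dimension, so the resolution terminates and one can arrange the factorisation on free modules of the same finite rank.

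Next I would show that reducing a matrix factorisation $(\phi,\psi)$ modulo $\sigma$ yields a $2$-periodic exact complex of free $R$-modules
$$\cdots \xrightarrow{\bar\phi} R^r \xrightarrow{\bar\psi} R^r \xrightarrow{\bar\phi} R^r \xrightarrow{\bar\psi} R^r \to M \to 0,$$
exactness following from the relations $\phi\psi = \psi\phi = \sigma\cdot\mathrm{id}$ together with $\sigma$ being a non-zerodivisor in $S$: if $\bar\psi(v) = 0$ then $\psi(\tilde v) = \sigma w$ for some lift, hence $\phi\psi(\tilde v) = \sigma\tilde v = \sigma\phi(w)$, so $\tilde v = \phi(w)$ after cancelling $\sigma$, giving $v \in \mathrm{im}\,\bar\phi$. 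Therefore the syzygies of $M$ satisfy $\Omega^2 M \cong \mathrm{coker}(\bar\psi) \cong M$ up to free summands — precisely, $\Omega M \cong \mathrm{im}(\bar\psi : R^r\to R^r)$ and $\Omega^2 M \cong M$ as objects of $\stab R$ once one discards the free modules, which are zero there. One should also check naturality: a morphism $M\to N$ lifts to a morphism of matrix factorisations (again by the lifting-through-resolutions argument), so the isomorphisms $\Omega^2 M\cong M$ assemble into a natural isomorphism $\Omega^2 \cong \mathrm{id}$ of endofunctors of $\stab R$.

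I expect the main obstacle to be the careful bookkeeping in the lifting argument that produces the matrix factorisation with matrices of equal size and with \emph{both} relations $\phi\psi=\psi\phi=\sigma\cdot\mathrm{id}$ holding, rather than just one of them; and relatedly, verifying that the construction is functorial up to stable equivalence (i.e. that different choices of lift differ by maps factoring through free modules). Since this is Eisenbud's classical theorem, the cleanest exposition is probably to cite the existence and basic properties of matrix factorisations from \cite{eisenbudper} or \cite{yoshino} and spend the effort on the periodicity consequence, noting that the equivalence $\mathrm{MF}(\sigma)\simeq \stab R$ alluded to in the introduction already packages the statement: under that equivalence the shift functor on $\mathrm{MF}(\sigma)$ is visibly $2$-periodic because swapping $(\phi,\psi)\mapsto(\psi,\phi)$ squares to the identity. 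Either route works; I would present the matrix-factorisation construction explicitly enough to make the $2$-periodicity transparent, then remark that $\Omega \cong \Omega^{-1}$ as a corollary, since this is the form in which the result is used later.
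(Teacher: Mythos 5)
Your argument is the standard matrix-factorisation proof of Eisenbud, which is exactly the route the paper takes: the theorem is stated as a citation of \cite{eisenbudper}, and the accompanying remark sketches the same idea (2-periodicity of minimal free resolutions of MCM modules, then passage to syzygies). Your alternative remark about the swap $(\phi,\psi)\mapsto(\psi,\phi)$ being a visible square root of the identity on $\mathbf{MF}(\sigma)$ is also the content packaged by \ref{dgmfs}.

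One imprecision worth correcting: you open by proposing to exhibit a $2$-periodic free resolution for an \emph{arbitrary} finitely generated $R$-module $M$, but the step where you ``arrange the factorisation on free modules of the same finite rank'' requires $\mathrm{pd}_S(M)=1$. By Auslander--Buchsbaum this is equivalent to $M$ being MCM over $R$; for a general finitely generated $R$-module the minimal $S$-free resolution has length up to $\dim S$, so no square matrix factorisation appears directly, and the $R$-resolution becomes $2$-periodic only after finitely many steps. Since $\stab R$ consists exactly of MCM modules, your argument goes through once this restriction is stated explicitly (it is implicit in your later step of discarding free modules), but as written the scope claim is too broad. Everything else — exactness of the reduced complex via $\sigma$ being a non-zerodivisor, the automatic relation $\psi\phi=\sigma\cdot\mathrm{id}$ from injectivity of $\phi$, and functoriality by lifting module maps to maps of matrix factorisations — is correct and matches the source.
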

	\begin{rmk}
		Eisenbud proves that minimal free resolutions of a MCM module without free summands are 2-periodic. Adding free summands if necessary, one can show that MCM modules always admit 2-periodic resolutions. One can decompose these into short exact sequences to see that the syzygies of $M$ are 2-periodic as claimed.
		\end{rmk}
	
	AR duality \ref{arduality} immediately gives:
		\begin{prop}\label{arduality2}
			Let $R$ be a commutative complete local isolated hypersurface singularity over $k$. Let $M,N$ be MCM $R$-modules. If the Krull dimension of $R$ is even, then one has an isomorphism $$\underline{\hom}_R(M,N) \cong {\ext}_R^1(N,M)^*.$$If the Krull dimension of $R$ is odd, then one has an isomorphism $$\underline{\hom}_R(M,N) \cong {\ext}_R^1(\Omega N,M)^*.$$
	\end{prop}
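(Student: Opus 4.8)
The statement to prove is Proposition \ref{arduality2}: for $R$ a commutative complete local isolated hypersurface singularity, if $\dim R$ is even then $\underline{\hom}_R(M,N)\cong\ext^1_R(N,M)^*$, and if $\dim R$ is odd then $\underline{\hom}_R(M,N)\cong\ext^1_R(\Omega N,M)^*$.

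The plan is to simply combine the two results immediately preceding it: Auslander--Reiten duality (\ref{arduality}), which says $\underline{\hom}_R(M,N)\cong\ext^1_R(N,\Omega^{2-d}M)^*$ where $d=\dim R$, together with Eisenbud's 2-periodicity theorem (\ref{ebudper}), which gives $\Omega^2\cong\id$ as an endofunctor of $\stab R$. First I would write $d=2m$ in the even case; then $\Omega^{2-d}M=\Omega^{2-2m}M=\Omega^{2(1-m)}M\cong M$ in $\stab R$, using $\Omega^2\cong\id$ iterated $|1-m|$ times (and noting $\Omega$ is an autoequivalence so negative powers make sense). Substituting into the AR duality isomorphism yields $\underline{\hom}_R(M,N)\cong\ext^1_R(N,M)^*$. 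In the odd case, write $d=2m+1$; then $2-d=1-2m=1-2m$, so $\Omega^{2-d}M=\Omega^{1-2m}M=\Omega\cdot\Omega^{-2m}M\cong\Omega M$ in $\stab R$, again by 2-periodicity. Hence $\underline{\hom}_R(M,N)\cong\ext^1_R(N,\Omega M)^*$.

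A small bookkeeping point needs care: in the odd case I have landed on $\ext^1_R(N,\Omega M)^*$ but the statement asks for $\ext^1_R(\Omega N,M)^*$. These agree because $\ext^1_R(N,\Omega M)\cong\underline{\ext}^1_R(N,\Omega M)$ for $M,N$ MCM (the stable and ordinary Ext agree in positive degrees by the corollary to \ref{buchcohom}), and the shift functor on the triangulated category $\stab R$ is $\Omega^{-1}$, so $\underline{\ext}^1_R(N,\Omega M)=\underline{\ext}^1_R(N,\Omega^{-1}M)$ up to the identification $\Omega\cong\Omega^{-1}$ (valid here since $\Omega^2\cong\id$), which equals $\hom_{\stab R}(N,\Omega^{-1}M[1])\cong\hom_{\stab R}(N[-1],\Omega^{-1}M)\cong\hom_{\stab R}(\Omega N,\Omega^{-1}M)$; applying $\Omega$ (an equivalence) gives $\hom_{\stab R}(\Omega^2 N,M)\cong\hom_{\stab R}(N,M)$ — wait, this shows instead the cleaner route. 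More directly: by 2-periodicity $\Omega N\cong\Omega^{-1}N$ in $\stab R$, and $\underline{\ext}^1_R(\Omega N,M)\cong\hom_{\stab R}(\Omega N, M[1])=\hom_{\stab R}(\Omega N,\Omega^{-1}M)\cong\hom_{\stab R}(N,\Omega^{-2}M)\cong\underline{\hom}_R(N,M)$; comparing with the desired $\ext^1_R(\Omega N,M)^*=\underline{\ext}^1_R(\Omega N,M)^*$ I see both sides of the claimed odd-case isomorphism should be re-derived symmetrically. I would therefore present the odd case by applying AR duality with the roles arranged so that the shift is absorbed: since $\Omega^{2-d}\cong\Omega$ on $\stab R$, one has $\underline{\hom}_R(M,N)\cong\ext^1_R(N,\Omega M)^*$, and then use that $\ext^1_R(N,\Omega M)\cong\underline{\ext}^1_R(N,\Omega M)\cong\underline{\ext}^1_R(\Omega N,\Omega^2 M)\cong\underline{\ext}^1_R(\Omega N, M)\cong\ext^1_R(\Omega N,M)$ — where the middle isomorphism applies the autoequivalence $\Omega$ and the next uses $\Omega^2\cong\id$ — giving the stated form.

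The proof is essentially immediate and there is no real obstacle; the only thing requiring attention is the careful handling of the parity of $d$ and the bookkeeping of which syzygy power appears, plus the observation that positive-degree stable Ext agrees with ordinary Ext so that the final expressions can be written with $\ext^1$ rather than $\underline{\ext}^1$. I would state the proof in two or three lines per case.

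\begin{proof}
Write $d\coloneqq\dim R$. By Auslander--Reiten duality \ref{arduality} we have $\underline{\hom}_R(M,N)\cong\ext^1_R(N,\Omega^{2-d}M)^*$. By Eisenbud's theorem \ref{ebudper}, $\Omega^2\cong\id$ as an endofunctor of $\stab R$, and $\Omega$ is an autoequivalence of $\stab R$. If $d$ is even, then $2-d$ is even, so $\Omega^{2-d}M\cong M$ in $\stab R$, giving $\underline{\hom}_R(M,N)\cong\ext^1_R(N,M)^*$, where we use that $\ext^1_R(N,-)$ agrees with $\underline{\ext}^1_R(N,-)$ on MCM modules (the corollary to \ref{buchcohom}) so the right-hand side is well-defined independently of the chosen representative. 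If $d$ is odd, then $2-d$ is odd, so $\Omega^{2-d}M\cong\Omega M$ in $\stab R$, whence $\underline{\hom}_R(M,N)\cong\ext^1_R(N,\Omega M)^*$. Applying the autoequivalence $\Omega$ and using $\Omega^2\cong\id$ in $\stab R$ yields $\underline{\ext}^1_R(N,\Omega M)\cong\underline{\ext}^1_R(\Omega N,\Omega^2M)\cong\underline{\ext}^1_R(\Omega N,M)$, and hence $\underline{\hom}_R(M,N)\cong\ext^1_R(\Omega N,M)^*$, again invoking the corollary to \ref{buchcohom} to pass between stable and ordinary $\ext^1$.
\end{proof}
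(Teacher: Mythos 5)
Your proof is correct and takes exactly the route the paper intends: combine Auslander--Reiten duality (\ref{arduality}) with Eisenbud's 2-periodicity (\ref{ebudper}) and split by the parity of $d=\dim R$, using the autoequivalence $\Omega$ to move the remaining shift from $M$ to $N$ in the odd case. The paper gives no explicit proof (it says AR duality ``immediately gives'' the statement), and your argument, including the careful use of the corollary to \ref{buchcohom} to pass between $\ext^1$ and $\underline{\ext}^1$, correctly fills in what is being left implicit.
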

We can immediately deduce that in the odd-dimensional case, the stable endomorphism algebra is a symmetric algebra:
\begin{prop}
	Let $R$ be a commutative complete local isolated hypersurface singularity of odd Krull dimension over $k$. Let $M$ be a MCM $R$-module. Then the stable endomorphism algebra $\Lambda\coloneqq\underline{\enn}_R(M)$ is a symmetric algebra; i.e.\ there is an isomorphism of $\Lambda$-bimodules $\Lambda\cong \Lambda^*$ between $\Lambda$ and its linear dual $\Lambda^*$.
	\end{prop}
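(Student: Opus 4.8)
The plan is to deduce this from Auslander--Reiten duality in the form of \ref{arduality2}. Since $R$ has odd Krull dimension and is an isolated hypersurface singularity, the second case of \ref{arduality2} applies to any pair of MCM modules, so for all MCM $M,N$ we have a natural isomorphism $\underline{\hom}_R(M,N)\cong \ext^1_R(\Omega N, M)^*$. The key extra ingredient is Eisenbud $2$-periodicity (\ref{ebudper}): $\Omega^2\cong \id$ on $\stab R$. First I would use this to rewrite $\ext^1_R(\Omega N, M)$. Recall that for MCM modules the stable $\ext$ groups in positive degree agree with the ordinary ones (the corollary to \ref{buchcohom}), so $\ext^1_R(\Omega N, M)\cong \underline{\ext}^1_R(\Omega N, M)$, and in the stable category $\underline{\ext}^1_R(\Omega N, M)\cong \underline{\ext}^1_R(N,\Omega^{-1}M)\cong \underline{\hom}_R(N,M)$ using that $\Omega$ is a shift (up to the periodicity identification $\Omega^{-1}\cong\Omega$ we may also write this as $\underline{\hom}_R(N,\Omega^2 M)\cong\underline{\hom}_R(N,M)$). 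Combining, we obtain a natural isomorphism
$$\underline{\hom}_R(M,N)\cong \underline{\hom}_R(N,M)^*$$
for all MCM $M,N$.

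Next I would specialise to $M=N$ to get a $k$-linear isomorphism $\Lambda\cong\Lambda^*$ of the underlying vector spaces, where $\Lambda=\underline{\enn}_R(M)$, and then check that this isomorphism is one of $\Lambda$-bimodules. The naturality of AR duality in both arguments is exactly what supplies the bimodule compatibility: precomposition and postcomposition by stable endomorphisms of $M$ on the left-hand side correspond, under the duality, to the transposed actions on $\underline{\hom}_R(M,M)^*$. Concretely, one unwinds that the pairing $\underline{\hom}_R(M,M)\times\underline{\hom}_R(M,M)\to k$ underlying the isomorphism $\Lambda\cong\Lambda^*$ is associative in the sense that $\langle ab, c\rangle=\langle a, bc\rangle$, which is precisely the statement that it is a symmetric (Frobenius) form, i.e.\ $\Lambda\cong\Lambda^*$ as bimodules. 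I would phrase this by saying that the nondegenerate bilinear form on $\Lambda$ coming from \ref{arduality2} is symmetric and associative.

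The main obstacle is the bookkeeping around naturality and the identification of the pairing: one must verify that the AR duality isomorphism, which is a priori only natural in each variable separately, assembles into an \emph{associative} nondegenerate pairing on $\Lambda$, and that the symmetry (as opposed to mere nondegeneracy) follows from the self-duality $\underline{\hom}_R(M,N)\cong\underline{\hom}_R(N,M)^*$ being compatible with swapping $M$ and $N$. This is a standard argument for symmetric algebras arising from Serre-functor-type dualities, and I would cite the relevant naturality of \ref{arduality} rather than reprove it; the periodicity $\Omega^2\cong\id$ is what makes the Serre functor on $\stab R$ trivial (in odd dimension, up to the shift), which is the conceptual reason $\Lambda$ is symmetric rather than merely self-injective. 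One should also record the mild point that $\Lambda$ is finite-dimensional over $k$ (so that $\Lambda^*$ is again a $\Lambda$-bimodule of the expected kind), which holds because $R$ is a complete local isolated singularity and hence $\underline{\enn}_R(M)$ is finite-dimensional.
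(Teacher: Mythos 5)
Your proposal is essentially the paper's proof. Both start from the odd-dimensional case of AR duality (\ref{arduality2}), rewrite $\ext^1_R(\Omega N,M)$ via the agreement of stable and ordinary Ext in positive degrees and the shift/periodicity of $\Omega$, obtain $\underline{\hom}_R(M,N)\cong\underline{\hom}_R(N,M)^*$, and specialise to $N=M$. The one place where your exposition differs is how the bimodule structure is handled: you argue via naturality and an associative Frobenius pairing, whereas the paper simply introduces $\Gamma=\underline{\enn}_R(N)$ and observes that every isomorphism in the chain is already an isomorphism of $\Lambda$-$\Gamma$-bimodules, so no separate compatibility check is needed once you set $N=M$. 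That shortcut removes the main ``bookkeeping obstacle'' you flagged; otherwise the arguments are the same.
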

\begin{proof}
This is essentially \cite[7.1]{bikr}; see also \cite[3.3]{jennyf}. Let $N$ be another MCM $R$-module and put $\Gamma\coloneqq \underline{\enn}_R(N)$. Because $R$ has odd Krull dimension, \ref{arduality2} tells us that we have a functorial isomorphism $$\underline{\hom}_R(M,N) \cong {\ext}_R^1(\Omega N,M)^*$$of $\Lambda$-$\Gamma$-bimodules. But stable Ext agrees with usual Ext in positive degrees, so we have functorial isomorphisms $${\ext}_R^1(\Omega N,M) \cong \underline{\ext}_R^1(\Omega N,M)\cong \underline{\ext}_R^0(N,M)\cong \underline{\hom}_R(N,M)$$of $\Gamma$-$\Lambda$-bimodules, because $\Omega$ is the shift. Hence we get a functorial isomorphism $$\underline{\hom}_R(M,N) \cong \underline{\hom}_R(N,M)^*$$of $\Lambda$-$\Gamma$-bimodules. Now put $N=M$.
\end{proof}

	\begin{defn}\label{rigiddefn}
		Call $M \in \stab R$ \textbf{rigid} if $\ext_R^1(M,M)\cong 0$.
	\end{defn}
The following is clear:
\begin{lem}
	Let $R$ be a commutative complete local isolated hypersurface singularity over $k$ of even Krull dimension. If $M$ is a MCM module then it is rigid if and only if it is projective.
	\end{lem}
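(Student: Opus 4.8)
The plan is to chase the definitions and invoke Auslander--Reiten duality in the form of \ref{arduality2}. One direction is immediate: if $M$ is projective then $M\cong 0$ in $\stab R$, so $\ext^1_R(M,M)\cong\underline{\ext}^1_R(M,M)\cong 0$ (using that stable $\ext$ agrees with usual $\ext$ in positive degrees, as recorded after \ref{buchcohom}), hence $M$ is rigid. For the converse, suppose $M$ is a MCM module with $\ext^1_R(M,M)\cong 0$. Since $R$ has even Krull dimension, \ref{arduality2} applied with $N=M$ gives an isomorphism
$$\underline{\hom}_R(M,M)\cong \ext^1_R(M,M)^*,$$
so the hypothesis forces $\underline{\hom}_R(M,M)\cong 0$, i.e.\ $\underline{\enn}_R(M)\cong 0$ as a ring. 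But a nonzero ring has a nonzero identity element, so $\id_M=0$ in $\stab R$; equivalently the identity map $M\to M$ factors through a projective module. This means $M$ is a direct summand of a projective module, hence projective.

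The only point requiring a word of care is the last step: from $\id_M = 0$ in the stable category one concludes $M$ is projective. I would spell this out as follows: $\id_M$ factoring through a projective $P$ means there are $R$-module maps $M\xrightarrow{a}P\xrightarrow{b}M$ with $ba=\id_M$, so $M$ is a retract of the projective module $P$ and therefore projective. (Equivalently one observes that $M\cong 0$ in $\stab R$ iff $M$ is projective, which is part of the basic yoga of the stable category.)

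I do not anticipate any real obstacle here — the statement is flagged as ``clear'' in the text, and the substance is entirely contained in the already-established periodicity/AR-duality package (\ref{ebudper}, \ref{arduality}, \ref{arduality2}). The only thing to double-check is that \ref{arduality2} is being applied in the correct parity case (even Krull dimension gives $\underline{\hom}_R(M,N)\cong\ext^1_R(N,M)^*$ with no syzygy shift), which is exactly the hypothesis in the lemma, so the argument goes through directly.
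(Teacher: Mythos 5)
Your proof is correct and is precisely the intended argument: the paper flags the lemma as "clear" and gives no proof, and the evident route — which you take — is to combine Auslander--Reiten duality in the even-dimensional form \ref{arduality2} with the basic fact that an object of $\stab R$ is zero iff it is projective. The extra care you take in spelling out the retraction step is appropriate but not controversial.
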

We will show that 2-periodicity in the singularity category is detected by the derived stable hom-complexes and the derived stable endomorphism algebras. As a warm-up, we will show that this periodicity appears in the stable Ext-algebras.
\begin{lem}
Let $R$ be a Gorenstein $k$-algebra satisfying $\Omega^2\cong\id$. Then there are functorial isomorphisms $\underline{\ext}_R^j(M,N)\cong \underline{\ext}_R^{j-2}(M,N)$ for all MCM $R$-modules $M$ and $N$.
	\end{lem}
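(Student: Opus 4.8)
The plan is to work inside the triangulated category $\stab R$, whose suspension functor is the inverse syzygy $\Omega^{-1}$, and to use Buchweitz's equivalence $\stab R\simeq D_\mathrm{sg}(R)$. Under this identification one has, for every $j\in\Z$ and all MCM modules $M,N$, a functorial isomorphism
\[
\underline{\ext}_R^j(M,N)\;\cong\;\hom_{\stab R}(M,\Omega^{-j}N);
\]
for $j>0$ this is the statement that stable Ext agrees with ordinary Ext, for $j<-1$ it is the Tor-description coming from \ref{buchcohom}, and for $j\in\{-1,0\}$ it is immediate from the definition of morphism sets in $\stab R$. First I would record this identification carefully, since the only mild subtlety in the whole argument is bookkeeping with the grading convention (our suspension is $\Omega^{-1}$, not $\Omega$).

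Next I would use the hypothesis. Fix a natural isomorphism $\alpha\colon\Omega^{2}\xrightarrow{\ \sim\ }\id$ of endofunctors of $\stab R$. Applying the autoequivalence $\Omega^{-j}$ produces a natural isomorphism $\Omega^{-j}\circ\Omega^{2}\cong\Omega^{-j}$, i.e.\ a natural isomorphism $\Omega^{-(j-2)}\cong\Omega^{-j}$ of endofunctors of $\stab R$. Whiskering on the left by $\hom_{\stab R}(M,-)$ and invoking naturality in $M$ as well then yields the desired isomorphism
\[
\underline{\ext}_R^j(M,N)\;\cong\;\hom_{\stab R}(M,\Omega^{-j}N)\;\cong\;\hom_{\stab R}(M,\Omega^{-(j-2)}N)\;\cong\;\underline{\ext}_R^{j-2}(M,N),
\]
which is natural in both $M$ and $N$ by construction.

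There is essentially no genuine obstacle here: once the triangulated structure on $\stab R$ and Buchweitz's identification are in hand, the statement is formal, its content being entirely absorbed into the hypothesis $\Omega^2\cong\id$. If one prefers to argue with complexes, one can instead observe that $\Omega^2\cong\id$ forces the complete resolution $\mathbf{CR}(M)$ to be $2$-periodic up to projective summands, and then read off the periodicity of $\underline{\ext}_R^{\ast}(M,N)\cong H^{\ast}\hom_R(\mathbf{CR}(M),N)$ from \ref{buchcohom}; this is slightly more awkward because of the projective-summand ambiguity, so I would present the triangulated-category proof.
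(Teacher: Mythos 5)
Your proof is correct and uses essentially the same idea as the paper's: one passes to the stable/singularity category, notes that $\Omega^{-1}$ is the suspension, and applies the hypothesis $\Omega^2\cong\id$. The only cosmetic difference is that the paper states the argument at the level of the derived hom-complex $\R\underline{\hom}_R(M,N)$ and then takes cohomology, whereas you argue directly with the graded hom-sets $\hom_{\stab R}(M,\Omega^{-j}N)$; both amount to the identical manipulation. (As a small aside: the paper's displayed chain of quasi-isomorphisms ends with $\R\underline{\hom}_R(M,N)[-2]$, whereas the standard convention $\R\hom(M,N[i])\simeq \R\hom(M,N)[i]$ would give $[2]$; this is immaterial for the statement, since it only reindexes $j\mapsto j\pm 2$, and your bookkeeping is cleaner.)
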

\begin{proof}
	There are quasi-isomorphisms $$\R\underline{\hom}_R(M,N)\simeq \R\underline{\hom}_R(M,\Omega^{-2}N)\simeq \R\underline{\hom}_R(M,N)[-2]$$where the first exists by assumption and the second exists since $\Omega^{-1}$ is the shift functor of $\stab R$. Now take cohomology.
	\end{proof}
\begin{cor}\label{perext}
Let $R$ be a Gorenstein $k$-algebra satisfying $\Omega^2\cong\id$, and let $M,N$ be MCM $R$-modules. Then for any integers $i,j$ with $i> -j/2$, there are functorial isomorphisms $\underline{\ext}_R^{j}(M,N)\cong \ext_R^{j+2i}(M,N)$. In particular, if $j<0$ then one has an isomorphism $\underline{\ext}_R^{j}(M,N)\cong {\ext}_R^{-j}(M,N)$. 
	\end{cor}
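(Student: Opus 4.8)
The plan is to combine the two-periodicity of stable Ext established in the preceding lemma with the identification of positive-degree stable Ext groups with ordinary Ext groups from the corollary following \ref{buchcohom}.

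First I would iterate the preceding lemma. Since the functorial isomorphism $\underline{\ext}_R^j(M,N)\cong\underline{\ext}_R^{j-2}(M,N)$ holds for every $j$ and is invertible, composing finitely many copies of it (and of its inverse) yields functorial isomorphisms $\underline{\ext}_R^j(M,N)\cong\underline{\ext}_R^{j+2i}(M,N)$ for every integer $i$. Concretely this comes from the chain of quasi-isomorphisms $\R\underline{\hom}_R(M,N)\simeq\R\underline{\hom}_R(M,\Omega^{-2i}N)\simeq\R\underline{\hom}_R(M,N)[-2i]$, where the first uses $\Omega^{2}\cong\id$ and the second uses that $\Omega^{-1}$ is the shift functor of $\stab R$; passing to cohomology gives the stated isomorphism of stable Ext groups, manifestly functorial in $M$ and $N$.

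Next, observe that for integers $i,j$ the hypothesis $i>-j/2$ is equivalent to $2i+j>0$, i.e.\ to $j+2i\geq 1>0$. Under this hypothesis, part (1) of the corollary following \ref{buchcohom} applies with exponent $j+2i$ and gives a functorial isomorphism $\underline{\ext}_R^{j+2i}(M,N)\cong\ext_R^{j+2i}(M,N)$. Composing with the periodicity isomorphism from the previous paragraph produces $\underline{\ext}_R^{j}(M,N)\cong\ext_R^{j+2i}(M,N)$, which is the assertion.

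Finally, for the ``in particular'' statement, if $j<0$ take $i=-j$; then $i=-j>-j/2$ holds precisely because $j<0$, and $j+2i=-j>0$, so the general case specialises to $\underline{\ext}_R^{j}(M,N)\cong\ext_R^{-j}(M,N)$. There is no genuine obstacle in this argument; the only point meriting a moment's care is that the periodicity isomorphism supplied by the preceding lemma is functorial in both variables, which is immediate from its construction at the level of derived stable hom-complexes as above.
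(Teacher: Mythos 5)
Your proof is correct and follows essentially the same route as the paper: iterate the periodicity isomorphism from the preceding lemma, note that $i > -j/2$ is equivalent to $j+2i>0$, apply part (1) of the corollary to \ref{buchcohom} to identify the positive-degree stable Ext with ordinary Ext, and take $i=-j$ for the final assertion. Your elaboration of functoriality via the chain of quasi-isomorphisms of stable hom-complexes is a small but welcome amplification of what the paper leaves implicit.
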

\begin{proof}
Periodicity in the stable Ext groups gives isomorphisms $\underline{\ext}_R^{j}(M,N)\cong \underline{\ext}_R^{j+2i}(M,N)$. By assumption, $j+2i>0$ so that $\underline{\ext}_R^{j+2i}(M,N)$ agrees with the usual Ext group. The second assertion follows from taking $i=-j$.
	\end{proof}
Recall that by definition each MCM $R$-module $M$ comes with a syzygy exact sequence $$0\to \Omega M \to R^a \to M \to 0$$and one in particular has exact sequences of the form $$0\to \Omega^{i+1} M \to R^{a_i}\to \Omega^{i}M \to 0$$for all $i\geq 0$. One can stitch these together into a finite-rank free resolution of $M$. In particular, if $\Omega^2\cong \id$ then one can stitch them together into a $2$-periodic free resolution, and the endomorphism algebra of such a resolution detects the periodicity:
\begin{defn}
Let $R$ be a Gorenstein $k$-algebra satisfying $\Omega^2\cong\id$, and let $M$ be a MCM $R$-module. Let $\tilde M$ be a $2$-periodic free resolution of $M$. A \textbf{periodicity witness} for $\tilde M$ is a central cocycle $\theta \in \enn^{2}_R( \tilde M)$ whose components $\theta_i:\tilde M _{i-2} \to \tilde M _ i$ for $i \geq 0$ are identity maps, up to sign. 
	\end{defn}
It is clear from the above discussion that periodicity witnesses exist. Because $\enn_R(\tilde M)$ is a model for the derived endomorphism algebra $\R\enn_R(M)$, a periodicity witness hence defines an element of $\ext^2_R(M,M)$. However, note that having a periodicity witness is not a homotopy invariant concept: an element of $\ext^2_R(M,M)$ always lifts to a cocycle in any model for $\R\enn_R(M)$, but need not lift to a central one whose components are identities. We will return to this later. Witnessing elements allow us to explicitly produce a periodic model for the derived stable endomorphism algebra:
	\begin{prop}\label{periodicend}
	Let $R$ be a Gorenstein $k$-algebra satisfying $\Omega^2\cong\id$, and let $M$ be a MCM $R$-module. Let $\tilde M$ be a $2$-periodic free resolution of $M$, with periodicity witness $\theta$. Then there is a quasi-isomorphism of dgas $$\R\underline{\enn}_R(M) \simeq \enn_{R}(\tilde M)[\theta^{-1}] .$$
	\end{prop}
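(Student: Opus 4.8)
The plan is to identify $\R\underline{\enn}_R(M)$ with a localisation of a concrete model for the derived endomorphism dga, and then recognise that localisation as inverting $\theta$. First I would recall from Buchweitz's theorem that $D^{\mathrm{dg}}_\mathrm{sg}(R)$ enhances $\stab R$, so that $\R\underline{\enn}_R(M) = \dge_{D^{\mathrm{dg}}_\mathrm{sg}(R)}(M)$ computes the stable Ext algebra; by \ref{perext} this is $2$-periodic in the sense that $\underline{\ext}^j_R(M,M)\cong \underline{\ext}^{j-2}_R(M,M)$ for all $j$, with the periodicity isomorphism given (on cohomology) by multiplication by the class $[\theta]\in\underline{\ext}^2_R(M,M)$. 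So as a first reduction I would pin down that $[\theta]$ really is the periodicity class: using the complete resolution $\mathbf{CR}(M)$ and \ref{buchcohom}, the generator of $\underline{\ext}^2_R(M,M)$ corresponding to $\Omega^{-2}\cong\id$ is represented exactly by the degree $2$ shift map on the $2$-periodic resolution $\tilde M$, which is $\theta$.

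Next I would set up the algebraic comparison. The dga $\enn_R(\tilde M)$ is a model for $\R\enn_R(M)$ (since $\tilde M$ is a free, hence cofibrant-enough, resolution of $M$). Localising a dga at a central cocycle is computed by the usual telescope/Ore construction (it is the derived localisation of \S\ref{derloc} at the singleton $\{[\theta]\}$, which is an Ore set because $\theta$ is central), so $\enn_R(\tilde M)[\theta^{-1}]$ has cohomology $H^*(\enn_R(\tilde M))[[\theta]^{-1}] = \ext^*_R(M,M)[[\theta]^{-1}]$. The key computation is then that inverting $[\theta]$ on $\ext^*_R(M,M)$ yields precisely $\underline{\ext}^*_R(M,M)$: in nonnegative degrees nothing changes once we are far enough up (ordinary and stable Ext already agree in positive degrees by \ref{buchcohom}), and in negative degrees the formula $\underline{\ext}^{-j}_R(M,M)\cong\ext^{j}_R(M,M)$ of \ref{perext} is exactly what the localisation produces, since multiplication by a power of $[\theta]$ carries negative degrees up into the stable range. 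Thus on cohomology $\enn_R(\tilde M)[\theta^{-1}] \to \R\underline{\enn}_R(M)$ would be an isomorphism.

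To promote this to an honest quasi-isomorphism of dgas I would construct the comparison map at the chain level. There is a dga map $\enn_R(\tilde M)\to \R\underline{\enn}_R(M)$ coming from the localisation functor $D^b_{\mathrm{dg}}(R)\to D^{\mathrm{dg}}_\mathrm{sg}(R)$ applied to $M=\tilde M$ (or rather: $\tilde M$ is sent to an object isomorphic to $M$, and endomorphisms map accordingly). The image of $\theta$ becomes invertible in $D^{\mathrm{dg}}_\mathrm{sg}(R)$ — this is the whole content of $\Omega^2\cong\id$ — so by the universal property of derived localisation (the proposition of \S\ref{derloc} on existence and uniqueness in the derived under-category) this factors uniquely through $\enn_R(\tilde M)[\theta^{-1}]$, giving a dga map $\phi:\enn_R(\tilde M)[\theta^{-1}]\to\R\underline{\enn}_R(M)$. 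Finally I would check $\phi$ is a quasi-isomorphism by the cohomology computation of the previous paragraph, using that derived localisation is smashing so $H^*$ commutes with it here.

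The main obstacle I anticipate is the negative-degree bookkeeping: making rigorous that inverting $[\theta]$ turns $\ext^{j}_R(M,M)$ for $j>0$ into $\underline{\ext}^{-j}_R(M,M)$ requires carefully tracking that the periodicity isomorphism of \ref{perext} is genuinely realised by cup product with $[\theta]$ (not merely some abstract isomorphism), which is why identifying $[\theta]$ with the class of the shift map on $\mathbf{CR}(M)$ — equivalently, that $\theta$ acts as the inverse-syzygy shift — is the crucial input. A secondary, purely technical, point is that $\theta$ being \emph{central} is what guarantees $\{[\theta]\}$ is an Ore set so that the derived localisation is computed by the naive telescope; this is where the hypothesis that $\theta$'s components are $\pm\id$ (forcing centrality up to sign, and invertibility after passing to $\stab R$) gets used.
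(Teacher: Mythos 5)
Your proof is correct, but it takes a genuinely different route from the paper's. The paper works directly inside the Drinfeld quotient model: it takes the explicit ind-object $\{V_n=\tilde M[2n]\}_n$ representing $M$ in $D^{\mathrm{dg}}_\mathrm{sg}(R)$ (each $V_n$ is acyclic in the colimit and differs from $\tilde M$ by a perfect piece), computes $\R\underline{\enn}_R(M)$ as $\varprojlim_m\varinjlim_n\hom_R(V_m,V_n)$, identifies each $\hom_R(V_m,V_n)$ with a shift of $E=\enn_R(\tilde M)$, and observes that the transition maps are multiplication by $\theta$, so the inner colimit is literally the telescope $E[-2m][\theta^{-1}]$ and the outer limit collapses by periodicity. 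You instead build a dga map $E\to\R\underline{\enn}_R(M)$ from the quotient dg functor, check $\theta$ becomes a unit, factor through the derived localisation by its universal property, and verify quasi-isomorphism on cohomology. Both are valid. What the paper's argument buys you is that it never needs to separately establish that $[\theta]$ is the periodicity isomorphism in $\underline{\ext}^*$ or that the $\R\mathrm{Hom}$-level map inverts it — those come out for free from the explicit telescope model — whereas your route makes that identification the load-bearing step (and you correctly flag it as the crucial input). What your route buys is conceptual cleanliness: it decouples the existence of the map (universal property) from the quantitative check (cohomology), and the universal property makes the answer obviously independent of the choice of $\tilde M$ and $\theta$. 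One small imprecision: your appeal to ``derived localisation is smashing'' is not the right lemma for $H^*(E[\theta^{-1}])\cong H^*(E)[[\theta]^{-1}]$; what you want is that localisation at a central (hence Ore) element is \emph{flat}, which is the fact the paper cites from \cite[5.3]{bcl} when it carries out the same cohomology computation in \ref{etaex}.
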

	\begin{proof}We will use \ref{drinfeldlem}. Let $V_n$ be $\tilde M [2n]$, that is, $\tilde M$ shifted $2n$ places to the left. We see that the $V_n$ fit into a direct system with transition maps given by $\theta$. It is not hard to see that $\varinjlim_n V_n$ is acyclic. Projection $\tilde M \to V_n$ defines a map in $\cat{ind}(D^b(R))$ whose cone is clearly ind-perfect, since $V_n$ differs from $\tilde M$ by only finitely many terms. In other words, we have computed $$\R \underline{\enn}_R(M)\simeq \varprojlim_m\varinjlim_n\hom_{ R}(V_m, V_n)$$Temporarily write $E$ for $\enn_{R}(\tilde M)$, so that $\hom_{ R}(V_m, V_n)\cong E[2(n-m)]$. Now, the direct limit $\varinjlim_n E[2(n-m)]$ is exactly the colimit of $E[-2m]\xrightarrow{\theta}E[-2m]\xrightarrow{\theta}E[-2m]\xrightarrow{\theta}\cdots$, which is exactly $E[-2m][\theta^{-1}]$. This dga is $2$-periodic, and in particular $E[-2m][\theta^{-1}] \xrightarrow{\theta} E[-2(m+1)][\theta^{-1}]$ is the identity map. Hence $\varprojlim_m E[-2m][\theta^{-1}]$ is just $E[\theta^{-1}]$, as required.
	\end{proof}
We can state a similar result for the derived stable hom-complexes. Morally, one gets these by periodicising the unstable derived hom-complexes:
\begin{prop}\label{periodichom}
	Let $R$ be a Gorenstein $k$-algebra satisfying $\Omega^2\cong\id$, and let $M,N$ be MCM $R$-modules. Then the derived stable hom-complex $\R\underline{\hom}_R(M,N)$ admits a $2$-periodic model.
	\end{prop}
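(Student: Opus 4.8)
The plan is to bootstrap from the periodic model for the derived stable endomorphism dga (\ref{periodicend}), realising $\R\underline{\hom}_R(M,N)$ as a periodicised unstable hom-complex in a way parallel to the proof of that proposition. First I would fix $2$-periodic free resolutions $\tilde M \to M$ and $\tilde N \to N$, together with a periodicity witness $\theta \in \enn^2_R(\tilde M)$ as in \ref{periodicend}. The key observation is that composition with $\theta$ makes $\hom_R(\tilde M, \tilde N)$ into a module over $\enn_R(\tilde M)$, and in particular $\theta$ acts on the hom-complex as a degree $2$ cocycle; one should also note that $\theta$ is (up to sign) a homotopy unit after periodicising, i.e. it becomes invertible in cohomology once one inverts it. I would then mimic the $\cat{ind}$-category argument: set $V_n \coloneqq \tilde M[2n]$ with transition maps given by $\theta$, observe $\varinjlim_n V_n$ is acyclic and each projection $\tilde M \to V_n$ has ind-perfect cone, so that by \ref{drinfeldlem} one computes
\begin{equation*}
\R\underline{\hom}_R(M,N)\simeq \varinjlim_n \hom_R(V_n,\tilde N)\simeq \hom_R(\tilde M,\tilde N)[\theta^{-1}].
\end{equation*}
The right-hand side is by construction $2$-periodic (inverting $\theta$ forces $\theta\colon X \to X[2]$ to be an isomorphism on the nose in the colimit), which gives the claimed periodic model. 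Here one uses that $\hom_R(\tilde M, \tilde N)$ is already a model for the unstable $\R\hom_R(M,N)$ since $\tilde M$ is a complex of free modules and $\tilde N$ is bounded below (or rather: both are eventually free resolutions, so the hom-complex computes the derived hom into $D^b$).

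Alternatively — and perhaps more cleanly — one can deduce the statement formally: $\R\underline{\hom}_R(M,N)$ is a dg module over the dga $\R\underline{\enn}_R(M)$, which by \ref{periodicend} is quasi-isomorphic to $\enn_R(\tilde M)[\theta^{-1}]$, a dga containing a central degree $-2$ invertible element $\eta = \theta^{-1}$. Any dg module over a dga with a central invertible degree $-2$ element automatically admits a $2$-periodic model: acting by $\eta$ exhibits a quasi-isomorphism of the module with its own shift by $[-2]$, and one can strictify this to an honest periodic complex by passing to the telescope (homotopy colimit of $\cdots \xrightarrow{\eta} X \xrightarrow{\eta} X \to \cdots$), which is degreewise isomorphic to $\bigoplus_{j\in\Z} X^{\mathrm{something}}$ arranged $2$-periodically. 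I would likely present the first argument as the main one and remark on this module-theoretic reformulation.

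The main obstacle I anticipate is bookkeeping rather than conceptual: one must be careful that the periodicity witness $\theta$, which is \emph{not} a homotopy-invariant gadget (as the excerpt emphasises just before \ref{periodicend}), does act compatibly on $\hom_R(\tilde M,\tilde N)$, and that the colimit $\hom_R(\tilde M,\tilde N)[\theta^{-1}]$ is genuinely $2$-periodic and not merely periodic up to quasi-isomorphism. This amounts to checking that $\theta\colon \hom_R(\tilde M,\tilde N)[-2m] \to \hom_R(\tilde M,\tilde N)[-2(m+1)]$ is the identity once $\theta$ is inverted, exactly as in the last lines of the proof of \ref{periodicend}; the signs coming from the Koszul rule when shifting need to be tracked but cause no real trouble. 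A secondary point to verify is that $\varinjlim_n V_n$ is acyclic, which follows since each homology class is eventually killed by a high enough power of $\theta$ (equivalently, the telescope of an eventually-$2$-periodic acyclic-in-the-limit system is acyclic), just as in \ref{periodicend}. I would keep the proof short by citing \ref{periodicend} and \ref{drinfeldlem} for the structural input and only spelling out the hom-complex variant of the colimit computation.
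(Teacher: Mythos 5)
Your first argument has two distinct errors that prevent it from computing the stable hom-complex. First, the variance of the colimit is wrong: with $V_n = \tilde M[2n]$ an ind-system replacing the \emph{source} $M$, the morphism complexes in $\cat{ind}\,D^b(R)$ are computed by an \emph{inverse} limit over the source index, namely $\hom_{\cat{ind}}(\{C_i\},\{D_j\}) = \varprojlim_i \varinjlim_j \hom(C_i, D_j)$, so $\varinjlim_n \hom_R(V_n, \tilde N)$ should be $\varprojlim_n$. Second, and more fundamentally, you keep $\tilde N$ constant in the second slot, but a constant projective resolution $\tilde N$ of $N$ is not a valid object of the Drinfeld quotient: it fails the first Drinfeld condition, since $\varinjlim \tilde N \simeq N \not\simeq 0$, and so $\dgh_{\cat{ind}}(\cat{per}_\mathrm{dg}(R),\tilde N)$ is not acyclic. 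You must replace $N$ by its own ind-model $\{\tilde N[2n]\}_n$ built from a periodic resolution and periodicity witness $\theta_N$, exactly as was done for $M$. Once both objects are replaced, the hom in the quotient is the double formula $\varprojlim_m\varinjlim_n \hom_R(\tilde M[2m], \tilde N[2n]) \cong \varprojlim_m \varinjlim_n E[2(n-m)]$ with $E = \hom_R(\tilde M, \tilde N)$, which is what the paper's proof writes down.

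Even after these corrections, the proposed final identification with $E[\theta_M^{-1}]$ does not drop out the way it does for \ref{periodicend}. In the endomorphism case the same central $\theta$ governs both the inner colimit (postcomposition) and the outer limit (precomposition), and after localising at $\theta$ the outer transition maps become identities, so the double (co)limit collapses to $E[\theta^{-1}]$. In the present case the source and target have a priori distinct periodicity witnesses $\theta_M$ and $\theta_N$: the inner colimit gives $E[-2m][\theta_N^{-1}]$, but the outer transition maps are $\theta_M$-precomposition, which is not automatically invertible on the nose in $E[\theta_N^{-1}]$ (it is only invertible on cohomology). So the outer limit does not trivialise, and the paper's proof does not claim it does; it instead argues the weaker statement that the inner colimit $E'$ is periodic and the outer transition maps preserve that periodicity, so the limit remains periodic. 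Your module-theoretic reformulation is a reasonable heuristic but is likewise not automatic: to extract a strictly $2$-periodic complex from $\R\underline{\hom}_R(M,N)$ viewed as a module over a $2$-periodic dga, one needs the module structure, and the chain-level action of the periodicity element, to be strict; as you yourself emphasise, the periodicity witness is not a homotopy-invariant gadget, so passing to the telescope does not by itself produce the required strict action.
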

\begin{proof}
	As before, let $\tilde M$ be a periodic resolution for $M$ and write $M_n\coloneqq \tilde M [2n]$. Similarly let $\tilde N$ be a periodic resolution for $N$ and write $N_n\coloneqq \tilde N [2n]$. Then as before one has a quasi-isomorphism $$\R\underline{\hom}_R(M,N)\simeq \varprojlim_m \varinjlim_n E[2n][-2m]$$where we write $E\coloneqq \hom_R(\tilde M, \tilde N)$, which is a model for $\R\hom_R(M,N)$. The inner colimit $E'\coloneqq \varinjlim_n E[2n]$ is a periodic complex, and the transition maps in the limit $\varprojlim_m E'[-2m]$ all preserve this periodicity, and so the limit is periodic.
\end{proof}

	\begin{rmk}
	One might want to consider the seemingly more general case when $\Omega^p\cong \id$ for some $p \geq 1$. But if $R$ is a commutative Gorenstein local ring with residue field $k$ satisfying $\Omega^p\cong \id$ for some $p$ then, following the proof of \cite[5.10(4)$\implies$(1)]{crollperiodic}, the $R$-module $k$ is eventually periodic and has bounded Betti numbers. Hence $R$ must be a hypersurface singularity by Gulliksen \cite[Cor. 1]{gulliksen}, and in particular one can take $p=2$.
\end{rmk}

		\section{Matrix factorisations}
In this section, let $S\coloneqq  k\llbracket x_1,\ldots, x_n \rrbracket$ be the complete local ring of $k^n$ at the origin, and let $\sigma \in \mathfrak{m}_S$ be nonzero. Let $R\coloneqq S/\sigma$ be the quotient, and assume that $R$ is an isolated hypersurface singularity. We have seen in the last section that the dg category $D^\mathrm{dg}_{\mathrm{sg}}(R)$ is in some sense 2-periodic, as the stable derived hom-complexes admit 2-periodic models. In fact, the morphisms can be rectified to 2-periodic morphisms, in the sense that the dg category $D^\mathrm{dg}_{\mathrm{sg}}(R)$ is equivalent to a 2-periodic dg category, the category of \textbf{matrix factorisations}. Our main reference for this section will be Dyckerhoff \cite{dyck}.

\begin{defn}
	A \textbf{2-periodic dg category} is a category enriched over chain complexes of $k[u,u^{-1}]$-modules, where $u$ has degree 2.
\end{defn} 
Note that every $\Z/2$-graded complex $C^0 \substack{\rightarrow\\[-1em] \leftarrow } C^1$ admits an unwinding to an unbounded $\Z$-graded complex $\cdots \to C^1 \to C^0 \to C^1 \to\cdots$ over $k[u,u^{-1}]$, and every such complex is the unwinding of a $\Z/2$-graded complex. In other words, 2-periodic dg categories are the same thing as $\Z/2$-graded dg categories, and we will switch between the two models when convenient.

\begin{defn}
	The dg category of \textbf{matrix factorisations} over $R$ is a $2$-periodic dg category $\mathbf{MF}(S,\sigma)$ with: \begin{itemize}
		\item Objects pairs $(X,d)$ where $X$ is a free $\Z/2$-graded finitely generated $S$-module, and $d$ is an odd degree map with $d^2=\sigma$;
		\item Morphism complexes given by the unwinding of the natural $\Z/2$-graded morphism complexes.
	\end{itemize}
\end{defn}
\begin{rmk}
	The fact that $\mathbf{MF}(S,\sigma)$ is a dg category abstractly follows from the fact that $\mathbf{MF}(S,\sigma)$ is actually the category of cofibrant objects for a model structure on the category of dg modules over a certain curved $\Z/2$-graded dga; see Positselski \cite[\S3.11]{positselski} or Becker \linebreak \cite[\S3.2]{becker}.
\end{rmk}	

Starting from a matrix factorisation $(X,d)$, the $R$-module $\psi(X)\coloneqq \coker(X^1 \to X^0)$ is actually MCM, and in fact this assignment extends to an equivalence of triangulated categories \mbox{$\psi:[\mathbf{MF}(S,\sigma)] \to \stab(R)$} (see e.g.\ \cite{orlovtri}). Even better, this lifts to a quasi-equivalence of dg categories:
\begin{thm}[\cite{dyck, becker, motivicsingcat}]\label{dgmfs}
	The equivalence $\psi:[\mathbf{MF}(S,\sigma)] \to D_\mathrm{sg}(R)$ of homotopy categories lifts to a quasi-equivalence $\Psi:\mathbf{MF}(S,\sigma) \to D^\mathrm{dg}_\mathrm{sg}(R)$ of dg categories.
\end{thm}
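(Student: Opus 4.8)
The plan is to realise the quasi-equivalence $\Psi$ as a zig-zag through an intermediate dg category and then check two things: that each leg is a quasi-equivalence, and that on homotopy categories the composite induces the known equivalence $\psi$. The natural intermediate is the dg category $\mathbf{CR}(R)$ whose objects are the acyclic complexes of finitely generated free $R$-modules (equivalently, the totalisations of matrix factorisations viewed over $R$ rather than $S$) with the obvious $\Z$-graded morphism complexes. There is an evident totalisation dg functor $\Theta:\mathbf{MF}(S,\sigma)\to \mathbf{CR}(R)$ sending $(X,d)$ over $S$ to the $2$-periodic complex $\mathbf{CR}(X)$ over $R$ obtained by reducing mod $\sigma$ and unwinding; and there is a dg functor $\mathbf{CR}(R)\to D^\mathrm{dg}_\mathrm{sg}(R)$ sending an acyclic free complex $P$ to its naive truncation $P^{\leq 0}$, viewed as an object of $D^b_\mathrm{dg}(R)$ and hence of the Drinfeld quotient. (Concretely, an acyclic $2$-periodic free complex $P$ is a complete resolution in the sense of $\mathbf{CR}(M)$ for $M=\coker(P^1\to P^0)$, and $P^{\leq 0}$ is then a genuine free resolution of $M$, so this lands in $D^b_\mathrm{dg}(R)$.)

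First I would establish that $\Theta:\mathbf{MF}(S,\sigma)\to \mathbf{CR}(R)$ is a quasi-equivalence. Quasi-essential surjectivity is clear since every acyclic $2$-periodic free $R$-complex lifts to a matrix factorisation over $S$ (the standard argument: lift the periodic differential to $S$ and use $d^2\equiv 0$ to correct it to $d^2=\sigma$, using completeness of $S$). For quasi-full-faithfulness one computes both hom-complexes: $\dgh_{\mathbf{MF}}((X,d),(Y,e))$ is the $k[u,u^{-1}]$-unwinding of the $\Z/2$-graded hom, whereas $\dgh_{\mathbf{CR}(R)}(\mathbf{CR}(X),\mathbf{CR}(Y))$ is $\hom_R(\mathbf{CR}(X),\mathbf{CR}(Y))$; a short change-of-rings spectral sequence (or direct inspection, using that $X,Y$ are free over $S$ and $\sigma$ is a non-zerodivisor) shows the map is a quasi-isomorphism. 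Second, I would show $\mathbf{CR}(R)\to D^\mathrm{dg}_\mathrm{sg}(R)$ is a quasi-equivalence: on homotopy categories this is exactly Buchweitz's statement that complete resolutions compute $D_\mathrm{sg}(R)=\stab R$ together with \ref{buchcohom}, which identifies $H^j\hom_R(\mathbf{CR}(M),N)$ with $\underline{\ext}^j_R(M,N)=H^j\dgh_{D^\mathrm{dg}_\mathrm{sg}(R)}(M,N)$; quasi-essential surjectivity holds because $R$ is an isolated hypersurface singularity, so every object of $D_\mathrm{sg}(R)$ is a MCM module and hence has a complete resolution. Combining the two legs and checking compatibility with $[\mathbf{MF}(S,\sigma)]\xrightarrow{\psi} D_\mathrm{sg}(R)$ on the level of homotopy categories (both send $(X,d)$ to $\coker(d^1)$) yields the desired quasi-equivalence $\Psi$.

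The main obstacle is the hom-complex comparison in the second leg: one must show not merely that $H^j\hom_R(\mathbf{CR}(M),N)\cong\underline{\ext}^j_R(M,N)$ as graded vector spaces, but that the comparison map is a dga/dg-module quasi-isomorphism compatible with composition, i.e. that $\hom_R(\mathbf{CR}(M),\mathbf{CR}(N))$ really is a model for $\R\underline{\hom}_R(M,N)$ as objects of the Drinfeld quotient. This is where one has to be careful with the ind-category mechanics of \ref{drinfeldlem}: the object $M\in D^\mathrm{dg}_\mathrm{sg}(R)$ is presented as an ind-object $\{\mathbf{CR}(M)[2n]^{\leq 0}\}_n$, and one must verify that mapping into this ind-system and then taking the homotopy limit recovers $\hom_R(\mathbf{CR}(M),\mathbf{CR}(N))$ on the nose up to quasi-isomorphism; the $2$-periodicity of $\mathbf{CR}$ and the Mittag–Leffler property of the relevant pro-systems (exactly as in the proofs of \ref{periodicend} and \ref{periodichom}) make this work, but it is the step requiring genuine care rather than bookkeeping. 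Everything else — the lifting arguments, quasi-essential surjectivity, and compatibility on $H^0$ — is routine given the machinery already set up, so I would cite \cite{dyck}, \cite{becker}, and \cite{motivicsingcat} for the detailed verification and present the zig-zag above as the conceptual skeleton.
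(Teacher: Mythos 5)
Your zig-zag through the intermediate category of acyclic free complexes packages the same ideas the paper's sketch uses directly: unwind a matrix factorisation to a $2$-periodic free $R$-complex, present the corresponding MCM module in the Drinfeld quotient as the ind-object of its shifted truncations, and compare hom-complexes via Dyckerhoff. Both arguments defer the careful ind-category hom-comparison to the cited references, and your observation that the assignment $P\mapsto P^{\le 0}$ is only a dg functor after passing to the ind-system $\{P[2n]^{\le 0}\}_n$ is precisely the subtlety the paper handles by defining $\Psi(X)$ directly as the ind-object $\{\tilde X[2n]\}_n$.
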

\begin{proof}
Since we will not use this result we provide only a sketch proof. Given a matrix factorisation $X$, the idea is to take its `left unwinding mod $\sigma$' $$\tilde X\coloneqq \cdots\to {X}^1\otimes_R S\to {X}^0\otimes_R S\to {X}^1\otimes_R S\to {X}^0\otimes_R S $$which resolves $\coker(X)$. One defines $\Psi(X)$ to be the ind-dg $R$-module $\Psi(X)=\{X_n\}_{n \in \N}$ with $X_n={\tilde X}[2n]$ and transition maps the obvious ones. There is a projection $\tilde X \to \Psi(X)$ with ind-perfect cone and hence $\Psi(X)$ represents $\coker(X)$ in the dg singularity category $D^\mathrm{dg}_\mathrm{sg}(R)$. One checks that $\Psi$ extends to a dg functor. It is clear that $\Psi$ is quasi-essentially surjective since given $M \in \stab R$, one can find a matrix factorisation $X$ with $\coker(X)\cong M$ by taking a 2-periodic resolution. The proof of \cite[4.2]{dyck} shows that $\Psi$ is quasi-fully faithful.
	\end{proof}

We finish this chapter with some discussions on Hochschild cohomology, Morita theory, and recovery theorems. For more on the Hochschild theory and Morita theory of dg categories, see Keller \cite{keller} or To\"en \cite{toendglectures}.
\begin{defn}
	Let $\mathcal T$ be a dg category. The \textbf{Hochschild complex} $HC(\mathcal T)$ of $\mathcal T$ is the endomorphism dga of the identity functor $\mathcal T$. The \textbf{Hochschild cohomology} of $\mathcal T$ is the graded algebra $HH^*(\mathcal T)\coloneqq H^*(HC(\mathcal T))$.
	\end{defn}
We remark that this definition applies both to usual $\Z$-graded dg categories as well as $\Z/2$-graded dg categories.
\begin{prop}[{\cite[5.2]{keller}}]
	Hochschild cohomology is invariant under Morita equivalences. In particular, Hochschild cohomology is invariant under quasi-equivalences. 
	\end{prop}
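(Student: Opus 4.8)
The plan is to reinterpret the Hochschild complex of a dg category $\mathcal{T}$ as the derived endomorphism dga of the diagonal bimodule, and then to transport this description across the equivalence of bimodule categories induced by a Morita equivalence. First I would recall the standard identification: writing $\mathcal{T}^e\coloneqq \mathcal{T}\lot\mathcal{T}^{\mathrm{op}}$ for the enveloping dg category and $\mathcal{T}$ also for the diagonal $\mathcal{T}^e$-module $(x,y)\mapsto \dgh_\mathcal{T}(y,x)$, one has a quasi-isomorphism of dgas
\[ HC(\mathcal{T})\simeq \R\hom_{\mathcal{T}^e}(\mathcal{T},\mathcal{T}). \]
This is a bimodule Yoneda argument: a dg natural transformation of the identity functor of $D_\mathrm{dg}(\mathcal{T})$ is determined by its values on representable modules, which generate, hence by a self-map of the diagonal bimodule, and passing to homotopy-coherent/derived versions turns natural transformations of the identity into classes in $\R\hom_{\mathcal{T}^e}(\mathcal{T},\mathcal{T})$, with composition of natural transformations matching the Yoneda product. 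One must remember to cofibrantly replace the diagonal over $\mathcal{T}^e$, equivalently to read $\mathcal{T}^e$ as a derived tensor product.

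Next I would invoke derived Morita theory (e.g.\ To\"en \cite{toendglectures}): a Morita equivalence $\mathcal{T}\to\mathcal{U}$ --- equivalently a quasi-equivalence $\cat{per}_{\mathrm{dg}}(\mathcal{T})\to\cat{per}_{\mathrm{dg}}(\mathcal{U})$, equivalently an equivalence $D(\mathcal{T})\simeq D(\mathcal{U})$ of derived module categories --- is represented by a $\mathcal{T}$-$\mathcal{U}$-bimodule $X$ that is invertible in the derived sense: there is a $\mathcal{U}$-$\mathcal{T}$-bimodule $Y$ with $X\lot_{\mathcal{U}}Y\simeq\mathcal{T}$ and $Y\lot_{\mathcal{T}}X\simeq\mathcal{U}$ as diagonal bimodules. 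Conjugation $M\mapsto Y\lot_{\mathcal{T}}M\lot_{\mathcal{T}}X$ is then an equivalence $D(\mathcal{T}^e)\xrightarrow{\simeq}D(\mathcal{U}^e)$, and by associativity of the derived tensor product together with the invertibility relations it carries the diagonal to the diagonal, since $Y\lot_{\mathcal{T}}\mathcal{T}\lot_{\mathcal{T}}X\simeq Y\lot_{\mathcal{T}}X\simeq\mathcal{U}$. Applying $\R\hom$ gives
\[ HC(\mathcal{T})\simeq \R\hom_{\mathcal{T}^e}(\mathcal{T},\mathcal{T})\simeq \R\hom_{\mathcal{U}^e}(\mathcal{U},\mathcal{U})\simeq HC(\mathcal{U}) \]
as dgas --- the conjugation equivalence respects composition of bimodule endomorphisms, hence is multiplicative --- and taking cohomology yields $HH^*(\mathcal{T})\cong HH^*(\mathcal{U})$ of graded algebras. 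For the "in particular", a quasi-equivalence is a fortiori a Morita equivalence, since it induces a quasi-equivalence on $\cat{per}_{\mathrm{dg}}$, so the second statement is immediate; alternatively one argues directly that a quasi-equivalence $\mathcal{T}\to\mathcal{U}$ induces a quasi-equivalence $\mathcal{T}^e\to\mathcal{U}^e$ pulling the diagonal back to the diagonal, and derived restriction of scalars along a quasi-equivalence preserves $\R\hom$.

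I expect the genuine work to be concentrated in the first step: pinning down $HC(\mathcal{T})\simeq\R\hom_{\mathcal{T}^e}(\mathcal{T},\mathcal{T})$ as an isomorphism of \emph{dgas}, i.e.\ matching the composition of natural transformations of $\id_{D_\mathrm{dg}(\mathcal{T})}$ with the Yoneda product on $\ext^*_{\mathcal{T}^e}(\mathcal{T},\mathcal{T})$, and being careful about (co)fibrant replacements so that the $\R\hom$ is the honest derived one. Once that bimodule description is in place, Morita invariance is formal: everything reduces to the fact that an equivalence of derived categories preserving a distinguished object --- here the diagonal bimodule --- preserves its derived endomorphism dga.
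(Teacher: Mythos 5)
The paper does not supply its own proof here; it cites the result directly from Keller's survey. Your reconstruction is correct and is essentially the same argument as the cited source: identify $HC(\mathcal{T})$ with the derived endomorphism dga of the diagonal bimodule over $\mathcal{T}^e$, then observe that an invertible bimodule implementing the Morita equivalence conjugates bimodule categories in a way that sends diagonal to diagonal, hence preserves $\R\hom_{(-)^e}(-,-)$. Your remark that the only genuinely delicate step is matching the dga structure (composition of natural transformations versus Yoneda product, together with cofibrancy bookkeeping) is apt; that is indeed where the real content sits, and Keller handles it precisely by working with a cofibrant resolution of the diagonal bimodule.
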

\begin{thm}[Dyckerhoff \cite{dyck}]
The Hochschild cohomology of the $\Z/2$-graded dg category $\mathbf{MF}(S,\sigma)$ is the Milnor algebra of $\sigma$, concentrated in even degree.
	\end{thm}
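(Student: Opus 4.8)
The plan is to prove this by computing the Hochschild cohomology of the $\Z/2$-graded dg category $\mathbf{MF}(S,\sigma)$ directly, following Dyckerhoff's original argument \cite{dyck}. The key structural input is that $\mathbf{MF}(S,\sigma)$ admits a compact generator. Concretely, when $R = S/\sigma$ is an isolated hypersurface singularity, the stabilised residue field $k^{\mathrm{stab}}$ (equivalently, its associated matrix factorisation, the Koszul-type factorisation of $\sigma$ built from a regular system of parameters) generates $D^{\mathrm{dg}}_{\mathrm{sg}}(R)$; this is where the isolated hypothesis is used, via the fact that $k$ has finite projective dimension over $R$ away from the singular point. So first I would establish that $\mathbf{MF}(S,\sigma)$ is quasi-equivalent (as a $\Z/2$-graded dg category) to $\cat{per}_{\mathrm{dg}}$ of the $\Z/2$-graded dga $E \coloneqq \R\underline{\enn}_R(k^{\mathrm{stab}})$, using Morita theory for dg categories together with the generation statement.

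Next, by the Morita invariance of Hochschild cohomology (cited above from \cite[5.2]{keller}), we are reduced to computing $HH^*(E)$, the Hochschild cohomology of this endomorphism dga. Here one computes $E$ explicitly: using the Koszul resolution of $k$ over $S$ and reducing mod $\sigma$, one finds that $E$ is quasi-isomorphic to a Clifford-type algebra, or more precisely one computes directly that the $\Z/2$-graded dga $E$ is formal with cohomology an exterior algebra on generators dual to $x_1,\dots,x_n$, twisted by the data of $\sigma$; the upshot is that $\R\underline{\enn}_R(k^{\mathrm{stab}})$ is a Koszul-type algebra whose Koszul dual encodes $\sigma$ and its derivatives. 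Then one invokes the HKR-type / Koszul duality computation of Hochschild cohomology for such algebras: the Hochschild cohomology of the (2-periodic) category of matrix factorisations is computed by a Koszul complex whose differential is contraction against $d\sigma = \sum_i \frac{\partial \sigma}{\partial x_i}\,dx_i$. Taking cohomology of this Koszul complex, and using that $R$ is an \emph{isolated} singularity so that $\frac{\partial \sigma}{\partial x_1},\dots,\frac{\partial \sigma}{\partial x_n}$ form a regular sequence in $S$, one gets that all higher cohomology vanishes and $HH^0 = S/(\frac{\partial \sigma}{\partial x_1},\dots,\frac{\partial\sigma}{\partial x_n}) = M_\sigma$, the Milnor algebra, concentrated in (even) degree zero with respect to the $\Z/2$-grading.

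The main obstacle is the identification of the Hochschild cochain complex of $\mathbf{MF}(S,\sigma)$ with the Koszul complex on $S$ with differential $\iota_{d\sigma}$. This requires genuine care: one must either work with the curved $\Z/2$-graded dga model of \cite[\S3.11]{positselski} (in which $\mathbf{MF}(S,\sigma)$ is the dg category of cofibrant modules over the curved algebra $(S, 0, \sigma)$) and compute its Hochschild cohomology in the curved sense, or one must track the multiplicative structure on $E$ precisely enough to run the Koszul duality computation. The regularity of the partials — guaranteeing that the Koszul complex on $\frac{\partial\sigma}{\partial x_i}$ is a resolution — is the crucial point where isolatedness enters and where the vanishing of $HH^{>0}$ comes from; without it the answer would pick up Tor terms. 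Once the cochain complex is identified, the rest is the standard fact that $H^*$ of a Koszul complex on a regular sequence is the quotient ring in degree zero, which gives exactly the Milnor algebra $M_\sigma$ concentrated in even degree, completing the proof.
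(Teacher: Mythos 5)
The paper does not prove this statement; it simply quotes it as a theorem of Dyckerhoff with a citation to \cite{dyck}, the only surrounding ingredient being the observation (also cited, to Keller) that Hochschild cohomology is Morita-invariant, which the paper then uses to deduce a recovery corollary. So there is no in-text proof to compare against; judged against Dyckerhoff's original argument, which is what the citation refers to, your sketch is a faithful outline. You correctly identify all three pillars: compact generation of $\mathbf{MF}(S,\sigma)$ by the stabilised residue field $k^{\mathrm{stab}}$ (with isolatedness entering at exactly the right place), Morita reduction to the $\Z/2$-graded endomorphism dga $E$, and identification of the Hochschild cochain complex with a Koszul complex on $S$ whose differential is contraction against $d\sigma$, after which the regular-sequence property of the partials gives the Milnor algebra in even degree.

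One small inaccuracy is worth flagging. The claim that ``the $\Z/2$-graded dga $E$ is formal with cohomology an exterior algebra'' is not right in general: $H^*(E)$ is indeed an exterior algebra $\Lambda(V^\vee)$, but the $A_\infty$-structure on it carries nontrivial higher products encoding the Taylor coefficients of $\sigma$ (Dyckerhoff's description is that $E$ is an $A_\infty$-deformation of the exterior algebra, specialising to a Clifford algebra precisely when $\sigma$ is Morse). Your hedged second formulation --- that $E$ is ``a Koszul-type algebra whose Koszul dual encodes $\sigma$ and its derivatives,'' and that the real work is carried out in the curved model $(S,0,\sigma)$ of \cite[\S3.11]{positselski} --- is the correct one, and the Hochschild computation does go through this curved/Koszul-dual picture rather than through formality. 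With that correction the proposal is sound and matches the cited argument.
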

\begin{cor}
	Suppose that $\sigma$ is quasi-homogeneous. Then the Morita equivalence type of $\mathbf{MF}(S,\sigma)$, considered as a $\Z/2$-graded dg category, together with the integer $n$, recovers the ring $R$.
	\end{cor}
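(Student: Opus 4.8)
The plan is to combine Dyckerhoff's theorem with the Mather--Yau theorem, using Morita invariance of Hochschild cohomology as the bridge. First I would observe that since $\sigma$ is quasi-homogeneous, the Milnor algebra $M_\sigma$ and the Tjurina algebra $T_\sigma$ coincide, as recorded in the remark following the definition of the Jacobian ideal; call this common algebra $A_\sigma$. Dyckerhoff's theorem then says $HH^*(\mathbf{MF}(S,\sigma))\cong A_\sigma$, sitting in even degrees. Now suppose $\sigma'\in\mathfrak{m}_{S'}$ is another nonzero element, with $S'=k\llbracket y_1,\dots,y_n\rrbracket$ (same number $n$ of variables — this is where the integer $n$ enters), defining an isolated hypersurface singularity $R'=S'/\sigma'$ quasi-homogeneous as well, and suppose we are given a Morita equivalence of $\Z/2$-graded dg categories $\mathbf{MF}(S,\sigma)\simeq\mathbf{MF}(S',\sigma')$. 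By the cited proposition (Hochschild cohomology is Morita invariant, hence quasi-equivalence invariant), we get an isomorphism of graded algebras $A_\sigma\cong A_{\sigma'}$, and in particular an isomorphism of ungraded algebras $T_\sigma\cong T_{\sigma'}$.

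The second step is to feed this into Mather--Yau. Since both $R=S/\sigma$ and $R'=S'/\sigma'$ are isolated hypersurface singularities in the \emph{same} ambient dimension $n$, and their Tjurina algebras are isomorphic, the version of the Mather--Yau theorem quoted in the excerpt (valid over an algebraically closed field of characteristic zero, which $k$ is) gives $R\cong R'$ as $k$-algebras. This is exactly the assertion that the Morita equivalence type of $\mathbf{MF}(S,\sigma)$ together with the integer $n$ recovers $R$. One should be slightly careful to phrase the statement correctly: what is recovered is the isomorphism type of $R$, and the recovery is only guaranteed \emph{among isolated hypersurface singularities cut out in the same number of variables}, since Mather--Yau needs the dimension fixed — hence the appearance of $n$ as part of the data. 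It is also worth noting that the hypothesis ``$\sigma$ quasi-homogeneous'' is needed on both sides so that the Hochschild cohomology computes the Tjurina algebra rather than merely the Milnor algebra; without quasi-homogeneity one only recovers $M_\sigma$, which is a weaker invariant and does not suffice for Mather--Yau.

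The main obstacle, such as it is, is not really mathematical depth — the two big inputs (Dyckerhoff and Mather--Yau) are quoted as black boxes — but rather bookkeeping about what ``recovers'' means and making sure the $\Z/2$-graded flavour of Hochschild cohomology is used consistently. I would emphasise that the graded (as opposed to merely ungraded) structure of $HH^*$ plays no essential role here: we extract the degree-zero part (or equivalently, forget the grading, since $A_\sigma$ lives entirely in even degrees and is generated in degree zero after the natural identification) and apply Mather--Yau to that ungraded algebra. A final minor point to check is that the Morita equivalence is required to respect the $\Z/2$-grading; the proposition on Morita invariance of $HH^*$ is stated for $\Z/2$-graded dg categories, so this is automatic once we work in that category, and no compatibility with the $\Z$-grading (which would be a different and stronger hypothesis, cf.\ the Hua--Keller refinement) is needed.
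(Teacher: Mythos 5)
Your proof is correct and follows essentially the same route as the paper's one-sentence argument: quasi-homogeneity identifies the Milnor algebra (given by Dyckerhoff's theorem via Morita-invariance of Hochschild cohomology) with the Tjurina algebra, and then the algebraic Mather--Yau theorem, with the ambient dimension $n$ fixed, recovers $R$. Your expanded version merely makes explicit the comparison between two singularities and the role of $n$, which is a faithful unpacking rather than a different approach.
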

\begin{proof}
	Because $\sigma$ is quasi-homogenous, the Milnor algebra of $\sigma$ is isomorphic to the Tjurina algebra, which recovers $R$ by the algebraic Mather--Yau theorem.
	\end{proof}
Note that the Hochschild cohomology of $\mathbf{MF}(S,\sigma)$, considered as a $\Z/2$-graded dg category, may differ considerably from the Hochschild cohomology when one considers $\mathbf{MF}(S,\sigma)$ as just a $\Z$-graded dg category. However, an analogue of Dyckerhoff's theorem still holds in the $\Z$-graded world:
\begin{thm}[Hua--Keller \cite{huakeller}]
	The zeroth Hochschild cohomology of the $\Z$-graded dg category $\mathbf{MF}(S,\sigma)$ is the Tjurina algebra of $\sigma$.
\end{thm}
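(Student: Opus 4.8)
The plan is to move the computation off the matrix-factorisation category and onto the dg singularity category of $R=S/\sigma$, and then to reduce to a computation with a single generating dga, following the shape of Dyckerhoff's $\Z/2$-graded argument \cite{dyck} but carrying the full $\Z$-grading throughout. Since Hochschild cohomology is invariant under quasi-equivalences (indeed under Morita equivalences), Dyckerhoff's quasi-equivalence $\Psi\colon\mathbf{MF}(S,\sigma)\xrightarrow{\ \sim\ }D^\mathrm{dg}_\mathrm{sg}(R)$ of \ref{dgmfs} identifies $HH^*(\mathbf{MF}(S,\sigma))$ with $HH^*(D^\mathrm{dg}_\mathrm{sg}(R))$, so it is enough to prove that $HH^0(D^\mathrm{dg}_\mathrm{sg}(R))\cong T_\sigma$.

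Because $R$ is an isolated hypersurface singularity, the residue field $k=R/\mathfrak m$ is a compact generator of $D_\mathrm{sg}(R)$ (cf.\ \cite{dyck}). Setting $E\coloneqq \R\underline{\enn}_R(k)$, regarded as a $\Z$-graded dga, we therefore get a quasi-equivalence $D^\mathrm{dg}_\mathrm{sg}(R)\simeq\cat{per}_{\mathrm{dg}}(E)$ and hence $HH^0(D^\mathrm{dg}_\mathrm{sg}(R))\cong HH^0(E)$, the zeroth cohomology of the Hochschild cochain complex of $E$. I would then produce an explicit model for $E$: by Eisenbud $2$-periodicity (\ref{ebudper}) together with \ref{periodicend}, $E$ is quasi-isomorphic to $\enn_R(\tilde M)[\theta^{-1}]$ for $\tilde M$ a $2$-periodic free resolution of a high syzygy of $k$ and $\theta$ a degree-$2$ periodicity operator, and by the classical analysis of hypersurface resolutions (Tate, Gulliksen, Eisenbud) this is a ``generalised Clifford dga'': underlying graded algebra $\Lambda(\xi_1,\dots,\xi_n)\otimes k[\theta,\theta^{-1}]$ with $|\xi_i|=1$, and differential and higher $A_\infty$ products determined by the Taylor expansion of $\sigma$ under $\xi_i\leftrightarrow x_i$ — in particular the partials $\partial\sigma/\partial x_i$ govern the homotopies that witness multiplication by $x_i$ on $E$.

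Running the Hochschild cochain complex of this model, the relations produced by the $\partial\sigma/\partial x_i$ cut the ``base'' $S$ (acting by multiplication) down to the Milnor algebra $M_\sigma=S/J_\sigma$ — this is exactly Dyckerhoff's $\Z/2$-graded answer, obtained once $\theta$ is inverted. The extra content in the $\Z$-graded setting is that $\theta$ is not inverted, and bookkeeping of $\theta$ through the Hochschild differential forces the class of $\sigma$ to vanish in degree zero as well, leaving $HH^0(E)\cong M_\sigma/(\bar\sigma)=S/(\sigma,J_\sigma)=T_\sigma$. One clean way to see both relations: the matrix-factorisation identity $\partial_i\sigma=(\partial_i d_0)d_1+d_0(\partial_i d_1)$ exhibits multiplication by each $\partial_i\sigma$ as null-homotopic, so $J_\sigma$ annihilates $D_\mathrm{sg}(R)$, while $\sigma$ acts by zero on $D^\mathrm{dg}_\mathrm{sg}(R)$ for the trivial reason that $\sigma=0$ in $R$; transporting the resulting identification back along $\Psi$ gives $HH^0(\mathbf{MF}(S,\sigma))\cong T_\sigma$.

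The hard part is the final computation: showing that $HH^0(E)$ is \emph{exactly} $T_\sigma$, neither larger nor smaller. The isolated-singularity hypothesis is essential here, since it is what guarantees that $k$ generates and hence that $HH^0(D^\mathrm{dg}_\mathrm{sg}(R))$ embeds into $\underline{\enn}_R(k)$; and one needs a careful identification of the $A_\infty$-structure on $E$ and of how the degree-$(-2)$ periodicity class interacts with the Hochschild differential, in order to pin down simultaneously the Jacobian relations (present already $\Z/2$-gradedly) and the extra Tjurina relation $\bar\sigma=0$ (special to the $\Z$-graded setting). This is in essence a Mather--Yau-type rigidity statement for the singularity category, and the surjectivity and full-kernel assertions for the multiplication map $S\to \underline{\enn}_R(k)$ are where the real work lies; Auslander--Reiten duality (\ref{arduality}) and the fractionally Calabi--Yau structure of $D_\mathrm{sg}(R)$ should also feed in.
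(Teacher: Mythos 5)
The paper does not prove this statement: it cites Hua--Keller, whose proof (as the remark following the theorem records) runs through Keller's singular Hochschild cohomology formalism, comparing $HH^*$ of the dg singularity category with the singular Hochschild cohomology of $R$, and then invokes a known computation of the Hochschild cohomology of a hypersurface ring from the literature. Your route --- reduce to $HH^0$ of $E=\R\underline{\enn}_R(k)$ via Morita invariance, then attack it directly from an explicit $2$-periodic Clifford-type model --- is genuinely different in its second half, and the initial reduction (generation by $k$, Morita invariance of $HH^*$) is fine.

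The gap is in the mechanism by which the Tjurina relation $\bar\sigma=0$ is supposed to appear. You write that Dyckerhoff's Milnor-algebra answer "is obtained once $\theta$ is inverted" while "the extra content in the $\Z$-graded setting is that $\theta$ is not inverted." But in the very model you propose, $E \simeq \enn_R(\tilde M)[\theta^{-1}]$, the operator $\theta$ \emph{is} inverted, precisely because the $\Z$-graded morphism complexes of $D^\mathrm{dg}_\mathrm{sg}(R)$ and of $\mathbf{MF}(S,\sigma)$ are already $2$-periodic (\ref{periodicend}, \ref{dgmfs}). Whether $\theta$ acts invertibly on morphism complexes cannot therefore be what distinguishes the $\Z$-graded from the $\Z/2$-graded Hochschild cohomology; that distinction lives in the difference between $k$-linear and $k[u,u^{-1}]$-linear Hochschild cochains, and your argument never engages with it. Symptomatically, your observation that "$\sigma$ acts by zero on $D^\mathrm{dg}_\mathrm{sg}(R)$ for the trivial reason that $\sigma=0$ in $R$" is equally true in the $\Z/2$-graded picture, where the answer is $M_\sigma$ and not $T_\sigma$, so it cannot by itself be what produces the extra relation. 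You flag the computation of $HH^0(E)$ as "the hard part" and that part is simply not carried out; the closing appeal to Auslander--Reiten duality and fractional Calabi--Yau structure does not appear in the cited proof, and its relevance here is asserted rather than demonstrated.
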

The proof relies on the machinery of singular Hochschild cohomology developed in \cite{kellersing} to reduce to a computation of the Hochschild cohomology of the ring $R$, which was done in \cite{buenosaires}. Using Hua and Keller's result, one can deduce another recovery theorem:
\begin{cor}
	The Morita equivalence type of $\mathbf{MF}(S,\sigma)$, considered as a $\Z$-graded dg category, together with the integer $n$, recovers the ring $R$.
\end{cor}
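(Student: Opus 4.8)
The plan is to deduce the corollary directly from Hua--Keller's theorem (the $\Z$-graded statement that $HH^0(\mathbf{MF}(S,\sigma))$ is the Tjurina algebra $T_\sigma$) together with the algebraic Mather--Yau theorem, in exact parallel with the quasi-homogeneous corollary just proved. First I would recall that by the preceding proposition, $HH^*$ of a $\Z$-graded dg category is a Morita invariant; in particular $HH^0$ is. So if we know the Morita equivalence type of $\mathbf{MF}(S,\sigma)$ as a $\Z$-graded dg category, we recover the $k$-algebra $HH^0(\mathbf{MF}(S,\sigma))$, which by Hua--Keller is $T_\sigma$. Then the Mather--Yau theorem, in the algebraic form cited earlier (valid since $k$ is algebraically closed of characteristic zero, and noting $R$ is an isolated hypersurface singularity so $T_\sigma$ is finite-dimensional), says that $T_\sigma$ together with the integer $n$ recovers $R$: given $\sigma_1,\sigma_2\in\mathfrak m_S$ with $R_i\coloneqq S/\sigma_i$ isolated, one has $R_1\cong R_2$ iff $T_{\sigma_1}\cong T_{\sigma_2}$, where $S=k\llbracket x_1,\dots,x_n\rrbracket$.

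To make this precise I would phrase the recovery as follows. Suppose $R=S/\sigma$ and $R'=S'/\sigma'$ are two isolated hypersurface singularities, where $S=k\llbracket x_1,\dots,x_n\rrbracket$ and $S'=k\llbracket x_1,\dots,x_n\rrbracket$ have the \emph{same} number of variables $n$, and suppose the $\Z$-graded dg categories $\mathbf{MF}(S,\sigma)$ and $\mathbf{MF}(S',\sigma')$ are Morita equivalent. Then $HH^0(\mathbf{MF}(S,\sigma))\cong HH^0(\mathbf{MF}(S',\sigma'))$ as $k$-algebras by Morita invariance of Hochschild cohomology. By the Hua--Keller theorem these are $T_\sigma$ and $T_{\sigma'}$ respectively, so $T_\sigma\cong T_{\sigma'}$. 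By Mather--Yau, $R\cong R'$. This is the entire argument: it is a two-line chain $\text{Morita type}\leadsto HH^0\leadsto T_\sigma\leadsto R$, with the two nontrivial inputs being Hua--Keller (already cited as a black box) and Mather--Yau (also already cited).

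There is essentially no obstacle here, since both of the substantive ingredients are quoted results from the excerpt; the only thing to be careful about is bookkeeping. The mild point worth spelling out is \emph{why $n$ is needed}: Mather--Yau recovers $R$ from $T_\sigma$ only among singularities of a fixed embedding dimension (equivalently, fixed number of variables), so one cannot drop the integer $n$ from the data. I would add a one-sentence remark to that effect, perhaps also observing that this is the exact analogue of the situation for the $\Z/2$-graded category via Dyckerhoff's theorem and that, unlike in the quasi-homogeneous corollary, here we use the Tjurina algebra directly rather than passing through the Milnor algebra. No calculation is required; the proof is simply "combine the cited Morita-invariance of $HH^0$, the Hua--Keller computation, and the algebraic Mather--Yau theorem."

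\begin{proof}
By the preceding proposition, Hochschild cohomology---and in particular its degree-zero part $HH^0$---is invariant under Morita equivalences of $\Z$-graded dg categories. Hence if $\mathbf{MF}(S,\sigma)$ and $\mathbf{MF}(S',\sigma')$ are Morita equivalent, with $S=S'=k\llbracket x_1,\dots,x_n\rrbracket$ having the same number of variables $n$, then $HH^0(\mathbf{MF}(S,\sigma))\cong HH^0(\mathbf{MF}(S',\sigma'))$ as $k$-algebras. By Hua and Keller's theorem, these two algebras are the Tjurina algebras $T_\sigma$ and $T_{\sigma'}$, so $T_\sigma\cong T_{\sigma'}$. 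Since $R$ and $R'$ are isolated, the Tjurina algebras are finite-dimensional, and the algebraic Mather--Yau theorem (applicable as $k$ is algebraically closed of characteristic zero) gives $R\cong R'$. Thus the Morita equivalence type of the $\Z$-graded dg category $\mathbf{MF}(S,\sigma)$ together with the integer $n$ determines $R$. Note that the integer $n$ cannot be omitted, as Mather--Yau recovers a hypersurface singularity from its Tjurina algebra only among singularities of a fixed embedding dimension.
\end{proof}
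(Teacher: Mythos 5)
Your proof is correct and takes exactly the approach the paper intends: the corollary is an immediate consequence of Morita-invariance of $HH^0$, the Hua--Keller computation $HH^0(\mathbf{MF}(S,\sigma))\cong T_\sigma$, and the algebraic Mather--Yau theorem, in direct parallel with the quasi-homogeneous corollary proved just before (except that Hua--Keller gives the Tjurina algebra outright, so the quasi-homogeneity hypothesis can be dropped). Your remark on why the integer $n$ cannot be omitted is a sensible clarification and consistent with the paper's phrasing.
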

For future reference, we will state a weaker version of the above Corollary in language that will be more useful to us:
\begin{thm}\label{recoverythm}
Let $R=k\llbracket x_1,\ldots, x_n \rrbracket / \sigma$ and $R'=k\llbracket x_1,\ldots, x_n \rrbracket / \sigma'$ be isolated hypersurface singularities. If the dg singularity categories $D^\mathrm{dg}_\mathrm{sg}(R)$ and $D^\mathrm{dg}_\mathrm{sg}(R')$ are quasi-equivalent, then $R$ and $R'$ are isomorphic.
\end{thm}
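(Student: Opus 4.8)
The plan is to assemble the statement from the results recalled above: the Dyckerhoff--Becker--BRTV comparison of Theorem \ref{dgmfs}, Morita-invariance of Hochschild cohomology, the Hua--Keller computation, and the algebraic Mather--Yau theorem. First I would observe that since $\sigma$ and $\sigma'$ are nonzero elements of the domain $S=k\llbracket x_1,\ldots,x_n\rrbracket$ they are non-zerodivisors, so $R$ and $R'$ are both hypersurface singularities of Krull dimension $n-1$; in particular they have the same dimension, which is exactly the hypothesis needed to apply Mather--Yau (in the form of Greuel--Pham, valid for $k$ algebraically closed of characteristic zero). Hence it suffices to produce an isomorphism of $k$-algebras $T_\sigma\cong T_{\sigma'}$ between the two Tjurina algebras.

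Next I would transport the given quasi-equivalence $D^\mathrm{dg}_\mathrm{sg}(R)\simeq D^\mathrm{dg}_\mathrm{sg}(R')$ to the matrix-factorisation models. By Theorem \ref{dgmfs} there is a quasi-equivalence of $\Z$-graded dg categories $\Psi\colon\mathbf{MF}(S,\sigma)\xrightarrow{\simeq}D^\mathrm{dg}_\mathrm{sg}(R)$, and similarly $\Psi'\colon\mathbf{MF}(S,\sigma')\xrightarrow{\simeq}D^\mathrm{dg}_\mathrm{sg}(R')$. Composing $\Psi$, the assumed quasi-equivalence, and $(\Psi')^{-1}$ (in $\mathrm{Hqe}$) yields a quasi-equivalence $\mathbf{MF}(S,\sigma)\simeq\mathbf{MF}(S,\sigma')$ of $\Z$-graded dg categories. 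Since Hochschild cohomology is invariant under Morita equivalence, and a fortiori under quasi-equivalence, this induces an isomorphism of graded $k$-algebras $HH^*(\mathbf{MF}(S,\sigma))\cong HH^*(\mathbf{MF}(S,\sigma'))$; restricting to degree zero gives an isomorphism of $k$-algebras $HH^0(\mathbf{MF}(S,\sigma))\cong HH^0(\mathbf{MF}(S,\sigma'))$.

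Finally, the Hua--Keller theorem identifies the zeroth $\Z$-graded Hochschild cohomology of $\mathbf{MF}(S,\sigma)$ with the Tjurina algebra $T_\sigma$, and likewise $HH^0(\mathbf{MF}(S,\sigma'))\cong T_{\sigma'}$. Combining with the previous step, $T_\sigma\cong T_{\sigma'}$ as $k$-algebras, so Mather--Yau gives $R\cong R'$, which is the assertion.

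There is no genuine obstacle here: the theorem is a corollary of deep results proved elsewhere. The one point that needs care is the bookkeeping of gradings — one must run the entire argument (both the comparison with matrix factorisations and the Hochschild computation) in the $\Z$-graded setting, where $HH^0$ recovers the Tjurina algebra unconditionally, rather than the $\Z/2$-graded setting, where the relevant invariant is the Milnor algebra and one would need a quasi-homogeneity hypothesis on $\sigma$ to conclude.
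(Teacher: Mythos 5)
Your proposal is correct and follows exactly the route the paper intends: pass through Theorem \ref{dgmfs} to the $\Z$-graded matrix-factorisation categories, apply Morita-invariance of Hochschild cohomology and the Hua--Keller identification $HH^0(\mathbf{MF}(S,\sigma))\cong T_\sigma$, and conclude with Greuel--Pham's algebraic Mather--Yau theorem (the dimensions match since both quotients have Krull dimension $n-1$). The paper simply labels \ref{recoverythm} a weaker restatement of the preceding corollary without spelling this out, and your remark about running everything $\Z$-graded so as to land on the Tjurina rather than the Milnor algebra is precisely the right point of care.
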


\chapter{Noncommutative partial resolutions}\label{chsl}
In this technical chapter we focus on noncommutative partial resolutions, which are certain rings of the form $A=\enn_R(R\oplus M)$. These naturally come with idempotents $e=\id_R$, and we study properties of the associated derived quotient $\dq$. We will assume that the cornering $R$ is Gorenstein, which will allow us to use Buchweitz's machinery of the stable category. We will specialise to the case when $R$ is a commutative complete local hypersurface singularity, where 2-periodicity in the singularity category will give us periodicity in $\dq$ (\ref{etaex}), which will allow us to identify the cohomology algebra of $\dq$ explicitly. We will apply the recovery result \ref{recoverythm} to prove that in certain situations, the quasi-isomorphism class of $\dq$ recovers the geometry of $R$ (\ref{recov}).
			
	\section{The singularity functor} In this section we recall some results of Kalck and Yang \cite{kalckyang,kalckyang2} on relative singularity categories, as seen from the perspective of the derived quotient. We introduce a key technical object, the singularity functor, which links derived quotients to singularity categories. Let $A$ be a right noetherian $k$-algebra with an idempotent $e$, and write $R\coloneqq eAe$ for the cornering. Recall from \ref{recoll} the existence of the recollement $D(\dq)\recol D(A) \recol D(R)$, and recall from \ref{relsingcatdefn} the definition of the relative singularity category $\Delta_R(A)\coloneqq D^b(A)/\thick(eA)$.  The map $j^*: D(A) \to D(R)$ sends $\thick(eA)$ into $\per R$, and hence defines a map $j^*:\Delta_R(A) \to D_\mathrm{sg}R$. In fact, $j^*$ is onto, which follows from \cite[3.3]{kalckyang}. We are about to identify its kernel.
	\begin{defn}\label{dfgdefn}
		Write $ D_\mathrm{fg}(\dq)$ for the subcategory of $D(\dq)$ on those modules whose total cohomology is finitely generated over $A/AeA$. Similarly, write $\per_\mathrm{fg}(\dq)$ for the subcategory of $\per(\dq)$ on those modules whose total cohomology is finitely generated over $A/AeA$.
	\end{defn}
	\begin{lem}
The kernel of the map $j^*:\Delta_R(A) \to D_\mathrm{sg}R$ is precisely $D_\mathrm{fg}(\dq)$.
	\end{lem}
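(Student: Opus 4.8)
The plan is to use the recollement $D(\dq)\recol D(A)\recol D(R)$ together with the semiorthogonal decomposition of \ref{semiorthog} to identify $\Delta_R(A)$ with a quotient of $D^b(A)$, and then to match the two kernels. First I would observe that the functor $i_*: D(\dq)\to D(A)$ identifies $D(\dq)$ with $D(A)_{A/AeA}$ by \ref{cohomsupport}, and that under this identification $D_\mathrm{fg}(\dq)$ corresponds precisely to the bounded-cohomology objects $M\in D^b(A)$ whose cohomology is finitely generated over $A/AeA$; these are exactly the objects $M\in D^b(A)$ killed by $j^*=-\lot_A Ae$, i.e. those with $Me$ perfect over $R$ (in fact acyclic after killing $\thick(eA)$). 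So I want to show $\ker(j^*:\Delta_R(A)\to D_\mathrm{sg}R)$ is the image of $D^b(A)\cap D(\dq)$ inside $\Delta_R(A)$.

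The key steps, in order, are: (1) Recall that $j^*:\Delta_R(A)\to D_\mathrm{sg}R$ is essentially surjective, citing \cite[3.3]{kalckyang}; this just sets the stage. (2) Take $M\in D^b(A)$ with $[M]\in\ker(j^*)$, meaning $j^*M = Me\lot_R eA\cdot(\text{--})$... more precisely $Me\in\per R$. Using the recollement triangle $j_!j^!M\to M\to i_*i^*M\to$ (from the proof of \ref{recoll}), and noting $j_!j^!M = \R\hom_A(eA,M)\lot_R eA$ has cohomology controlled by $Me$, deduce that when $Me$ is perfect, $j_!j^!M\in\thick(eA)$, so $M\cong i_*i^*M$ in $\Delta_R(A)$; hence $M$ lies in the image of $i_*$, i.e. $M\in D(A)_{A/AeA}\cap D^b(A)$, which is $D_\mathrm{fg}(\dq)$ under the identification $i_*$. (3) Conversely, if $M\in D_\mathrm{fg}(\dq)\subseteq D(A)_{A/AeA}$, then each $H^j(M)$ is an $A/AeA$-module, so $H^j(M)e = 0$ for all $j$, whence $Me$ is acyclic and in particular $j^*M\cong 0$ in $D_\mathrm{sg}R$; thus $[M]\in\ker(j^*)$. (4) Check that the functor $i_*$ does indeed send $D_\mathrm{fg}(\dq)$ into $D^b(A)$ and descends to a fully faithful embedding into $\Delta_R(A)$ — this follows from \ref{cohomsupport} plus the fact that $\thick(eA)\cap D(A)_{A/AeA}$ is trivial (an object with cohomology over $A/AeA$ that is also in $\thick(eA)$ must be acyclic, since $(-)e$ is faithful on $\thick(eA)$).

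I expect step (2) to be the main obstacle: one needs to control the cohomology of $j_!j^!M = \R\hom_A(eA,M)\lot_R eA$ carefully and argue that perfectness of $Me$ over $R$ forces $j_!j^!M$ to be perfect over $A$ (equivalently to lie in $\thick(eA)$). The cleanest route is probably to note $j^!j_! \simeq \id$ on $D(R)$ (part of the recollement, or \cite[2.10]{kalckyang}), so $j_!$ is fully faithful, and $j_!(\per R) = \thick(eA)$; then $j_!j^!M$ is perfect over $A$ iff $j^!M = \R\hom_A(eA,M)$ is perfect over $R$, and $H^*(j^!M)$ agrees up to a convergent spectral sequence with $\ext^*_R$ of things built from $Me$, which is perfect by hypothesis. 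This is essentially the argument in the proof of \cite[Proposition 2.x]{kalckyang2} identifying the relative singularity category, so in the write-up I would follow their method and invoke the relevant statements there, only filling in the translation to the $\dq$-language via \ref{recoll} and \ref{cohomsupport}.
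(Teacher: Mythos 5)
Your argument is correct, and it is genuinely different in presentation from the paper's: the paper simply cites \cite[6.13]{kalckyang} to identify $\ker j^*$ with $\thick_{D(A)}(\cat{mod}\text{-}A/AeA)$ and then invokes a modification of \cite[2.12]{kalckyang} to match this with $D_\mathrm{fg}(\dq)$, whereas you reprove the statement from scratch using the recollement triangle $j_!j^!M \to M \to i_*i^*M \to$. Your route has the advantage of being self-contained and of making the mechanism transparent: the kernel condition $Me \in \per R$ forces $j_!j^!M = j_!(Me) \in j_!(\per R)=\thick(eA)$, so in $\Delta_R(A)$ the object $[M]$ is replaced by $[i_*i^*M]$, which lands in $D_\mathrm{fg}(\dq)$; conversely $j^*i_*\simeq 0$ (or equivalently your observation that $Me$ is acyclic) gives the inclusion the other way. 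The paper's citation approach is shorter but leaves the reader to unwind Kalck--Yang; your version is essentially the content of those results written out.

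Two small points to tighten in the write-up. First, in step (2) you write ``hence $M$ lies in the image of $i_*$'' --- this is not quite what you have shown and is in fact generally false: $M$ itself is only isomorphic to $i_*i^*M$ \emph{in the quotient} $\Delta_R(A)$, not in $D^b(A)$. What you should say is that $[M]\cong[i_*i^*M]$ in $\Delta_R(A)$, and that $i_*i^*M$ is the cone of a map between objects of $D^b(A)$, hence itself bounded, and lies in $i_*(D_\mathrm{fg}(\dq))$; so $[M]$ lies in the image of $D_\mathrm{fg}(\dq)$. Second, step (4) is important and you are right to include it, but it would be cleaner to phrase the triviality of $\thick(eA)\cap D(A)_{A/AeA}$ via the semiorthogonal decomposition $D(A)=\langle \im i_*, \im j_!\rangle$ of \ref{semiorthog}, which immediately gives that the two halves intersect trivially; your faithfulness-of-$(-)e$ argument also works but unwinds the same point. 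With those adjustments the argument is complete and a perfectly good substitute for the paper's proof by citation.
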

	
	\begin{proof}
		The proof of \cite[6.13]{kalckyang} shows that $\ker j^* \cong \thick_{D(A)}(\cat{mod}\text{-}A/AeA)$, so it suffices to show that $\thick_{D(A)}(\cat{mod}\text{-}A/AeA)\cong D_\mathrm{fg}(\dq)$. But this can be shown to hold via a modification of the proof of \cite[2.12]{kalckyang}.
	\end{proof}
	\begin{rmk}
		If $A/AeA$ is a finite-dimensional algebra, let $\mathcal{S}$ be the set of one-dimensional $A/AeA$-modules corresponding to a set of primitive orthogonal idempotents for $A/AeA$. Then $D_\mathrm{fg}(\dq)\cong \thick(\mathcal{S})$. Because each simple in $\mathcal{S}$ need not be perfect over $\dq$, the category $\per_\mathrm{fg}(\dq)$ may be smaller than $D_\mathrm{fg}(\dq)$. If each simple is perfect over $A$, or if $\dq$ is homologically smooth, then we have an equivalence $D_\mathrm{fg}(\dq)\cong \per_\mathrm{fg}(\dq)$. 
	\end{rmk}
	 When the singularity category is idempotent complete, Kalck and Yang observed that there is a triangle functor ${\Sigma: \per(\dq) \to D_{\mathrm{sg}}(R)}$, sending $\dq$ to the right $R$-module $Ae$. We establish this with a series of results. Recall that when $\mathcal T$ is a triangulated category, ${\mathcal T}^\omega$ denotes the idempotent completion of $\mathcal T$.
\begin{lem}\label{flem}
There is a triangle functor $F:\per(\dq)\to \Delta_R(A)^\omega$ which sends $\dq$ to the object $A$.
	\end{lem}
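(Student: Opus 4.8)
\textbf{Proof plan for Lemma \ref{flem}.} The plan is to construct $F$ as the functor induced on perfect complexes by the embedding $i_*: D(\dq) \to D(A)$ followed by the Verdier localisation $D(A) \to D(A)/\thick(eA)$, and then check that the image of $\per(\dq)$ lands (after idempotent completion) inside $\Delta_R(A)^\omega$. Recall from \ref{recoll} that $i_*$ is fully faithful, and that $i_*(\dq) = {}_AQ_A$, the derived quotient regarded as an $A$-bimodule. The key structural input is the exact triangle of $A$-bimodules $\cell A \xrightarrow{\mu} A \to {}_AQ_A \to$ from \ref{dqexact}, together with the observation that $\cell A = Ae\lot_R eA$ lies in $\thick(eA)$ as an object of $D^b(A)$: indeed $Ae$ is a summand of the free module $A$, so $Ae \in \per(A)$, and $\cell A$ is built from $Ae$ by homotopy colimits that, after restricting to the relevant bounded piece, exhibit it in $\thick(eA)$. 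Hence the image of $Q = i_*(\dq)$ in the Verdier quotient $\Delta_R(A) = D^b(A)/\thick(eA)$ is isomorphic to the image of $A$.

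First I would verify that $i_*$ restricts to a functor $\per(\dq) \to D^b(A)$: since $\dq$ is relatively compact in $D(A)$ and, by \ref{recoll}, $i_*$ is an embedding with $i_*(\dq) = {}_AQ_A$, every object of $\per(\dq)$ — being a finite extension of shifts of $\dq$ — maps to a finite extension of shifts of ${}_AQ_A$, which has bounded coherent cohomology over $A$ (using \ref{derquotcohom} and right noetherianness of $A$, since $\dq$ is cohomologically locally finite when $A/AeA$ is finite-dimensional; more care is needed in general, but the bounded-cohomology claim follows from the triangle $\cell A \to A \to Q \to$ because both $\cell A$ and $A$ have bounded coherent cohomology). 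Then I would compose with the localisation functor $L: D^b(A) \to \Delta_R(A)$ to obtain a triangle functor $\per(\dq) \to \Delta_R(A)$; postcomposing with the canonical embedding $\Delta_R(A) \to \Delta_R(A)^\omega$ gives the desired $F$. Functoriality and exactness are automatic since both $i_*$ and $L$ are exact.

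Finally I would compute $F(\dq)$. By construction $F(\dq) = L(i_*(\dq)) = L({}_AQ_A)$, and applying $L$ to the triangle $\cell A \to A \to {}_AQ_A \to$ of \ref{dqexact}, together with $\cell A \in \thick(eA)$ so that $L(\cell A) \cong 0$, yields $L({}_AQ_A) \cong L(A)$, i.e.\ $F(\dq) \cong A$ in $\Delta_R(A)^\omega$, as claimed.

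The main obstacle I anticipate is the boundedness bookkeeping: ensuring that $i_*$ genuinely carries $\per(\dq)$ into $D^b(A)$ (rather than merely $D(A)$) and that $\cell A$ really does represent an object of $\thick_{D^b(A)}(eA)$ rather than just $\langle eA\rangle_{D(A)}$. This is where right noetherianness of $A$ and the finiteness of the cohomology of $\dq$ enter, and it is the reason the idempotent completion $(-)^\omega$ is needed on the target — the image of $\per(\dq)$ need not be closed under summands inside $\Delta_R(A)$ itself. The rest of the argument is formal manipulation of recollements and exact triangles.
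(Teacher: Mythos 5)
Your construction goes in the opposite direction to the paper's, and it runs into a genuine problem. You want to define $F$ as the composite $\per(\dq) \xrightarrow{i_*} D^b(A) \to \Delta_R(A) \hookrightarrow \Delta_R(A)^\omega$, and the key bookkeeping step you flag -- that $i_*$ actually lands in $D^b(A)$ -- is exactly where the argument breaks. The claim that $\cell A$ has bounded cohomology is false in general: $H^{-j}(\cell A) \cong \tor^R_j(Ae, eA)$, and since $eA$ need not have finite flat dimension over $R$, these Tor groups can be nonzero for infinitely many $j$. In the paper's main case of interest, $R$ is a complete local hypersurface singularity and $M$ is MCM non-projective, so by Eisenbud's $2$-periodicity (\ref{ebudper} and \ref{derquotcohom}) the $A$-module $\dq$ has nonzero cohomology in infinitely many negative degrees; it is cohomologically \emph{locally} finite, not bounded. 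Thus $i_*(\dq) = \dq$ does not lie in $D^b(A)$, the Verdier quotient functor to $\Delta_R(A) = D^b(A)/\thick(eA)$ cannot be applied to it, and your proposed composite is not defined. (Conflating ``cohomologically locally finite'' with ``bounded cohomology'' is the precise source of the error.)

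The paper's proof sidesteps this entirely by working with compact objects rather than bounded ones. Via NTTY localisation (\ref{ntty} and Kalck--Yang), $i^*$ induces an equivalence $(\per A / j_! \per R)^\omega \xrightarrow{\cong} \per(\dq)$, and the inclusion $\per A \hookrightarrow D^b(A)$ induces $G:\per A / j_!\per R \to \Delta_R(A)$; one sets $F \coloneqq G^\omega \circ (i^*)^{-1}$. The point is that $\per A$ \emph{is} a subcategory of $D^b(A)$ regardless of whether $\dq$ has bounded cohomology, so the domain of $G$ is well-defined, and $i^*(A) \simeq \dq$ is a direct computation rather than something one has to fit inside a bounded category. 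The idempotent completion on the target is forced by NTTY (the induced functor on compacts is an equivalence only after Karoubi completion), not by the reason you give. If you want to salvage a proof in your direction, you would need either to restrict to the case where $R$ has finite global dimension (which trivialises the singularity category and is not the setting of interest), or to show that $\Delta_R(A)^\omega$ embeds fully faithfully in the unbounded quotient $D(A)/\langle eA\rangle$ so that the image of $\dq$ can be identified there -- a non-trivial additional claim that the proposal does not address.
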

\begin{proof}
	As in \cite[2.12]{kalckyang} (which is an application of Neeman--Thomason--Trobaugh--Yao localisation; cf.\ \ref{ntty}), the map $i^*$ gives a triangle equivalence $$i^*:\left(\frac{\per A}{j_!\per R}\right)^\omega \xrightarrow{\cong} \per (\dq).$$The inclusion $\per A \into D^b(A)$ gives a map $G:\per A / j_!\per R \to \Delta_R(A)$, which is a triangle equivalence if $A$ has finite right global dimension. The composition $$F:\per(\dq) \xrightarrow{(i^*)^{-1}} \left(\frac{\per A}{j_!\per R}\right)^\omega \xrightarrow{G^\omega} \Delta_R(A)^\omega$$ is easily seen to send $\dq$ to $A$. 
	\end{proof}
\begin{lem}\label{preontolem}
	Suppose that $A$ is of finite right global dimension. Then the map $F$ of \ref{flem} is a triangle equivalence $\per(\dq)\to \Delta_R(A)^\omega$.
\end{lem}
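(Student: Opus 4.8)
The plan is to deduce this directly from the factorisation of $F$ recorded in the proof of \ref{flem}. There, $F$ is exhibited as the composite
$$\per(\dq)\xrightarrow{(i^*)^{-1}}\left(\frac{\per A}{j_!\per R}\right)^\omega\xrightarrow{G^\omega}\Delta_R(A)^\omega,$$
in which $(i^*)^{-1}$ is already a triangle equivalence (via Neeman--Thomason--Trobaugh--Yao localisation; cf.\ \ref{ntty}). So it suffices to prove that $G^\omega$ is a triangle equivalence, and for that it is enough to prove that $G$ itself is one: the idempotent completion is functorial \cite{bsidempotent}, hence sends triangle equivalences to triangle equivalences.

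First I would note that finite right global dimension forces $\per A=D^b(A)$. Indeed, since $A$ is right noetherian of finite right global dimension, every finitely generated $A$-module admits a finite resolution by finitely generated projective modules, and is therefore a perfect complex; as $D^b(A)$ is the thick subcategory of $D(A)$ generated by the finitely generated modules, this gives $D^b(A)\subseteq\per A$, the reverse inclusion being automatic. Combining this with the identity $j_!\per R=\thick(eA)$ noted just before \ref{relsingcatdefn}, the two Verdier quotients $\per A/j_!\per R$ and $\Delta_R(A)=D^b(A)/\thick(eA)$ coincide as triangulated categories, and the functor $G$ — which by construction is induced by the inclusion $\per A\into D^b(A)$, now an equality — is the identity functor. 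In particular $G$ is a triangle equivalence, whence so is $G^\omega$ and hence so is $F$.

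I do not expect a genuine obstacle here: the statement is essentially a repackaging of \ref{flem} together with the standard observation that a noetherian ring of finite global dimension has no singularities, so that its bounded derived category agrees with its category of perfect complexes. The only points needing a line of care are this last identification and the fact that idempotent completion preserves triangle equivalences, both of which are routine.
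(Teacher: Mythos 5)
Your proposal is correct and follows essentially the same route as the paper's own (very terse) proof, which simply observes that $F$ is then a composition of triangle equivalences, relying on the assertion made in the proof of \ref{flem} that $G$ is an equivalence when $A$ has finite right global dimension. You have additionally spelled out the reason behind that assertion — namely that over a right noetherian ring of finite right global dimension every finitely generated module is perfect, so $\per A = D^b(A)$ and $G$ is (induced by) the identity — which the paper takes as read, so your write-up is a useful unpacking rather than a genuinely different argument.
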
	 
\begin{proof}
	When $A$ has finite global dimension then $F$ is a composition of triangle equivalences and hence a triangle equivalence.
	\end{proof}

	\begin{prop}[cf.\ {\cite[6.6]{kalckyang2}}]\label{kymap}
		Suppose that $D_\mathrm{sg}(R)$ is idempotent complete. Then there is a map of triangulated categories $\Sigma:\per(\dq) \to D_\mathrm{sg}(R)$, sending $\dq$ to $Ae$. Moreover $\Sigma$ has image $\thick_{D_\mathrm{sg}(R)}(Ae)$ and kernel $\per_\mathrm{fg}(\dq)$.
	\end{prop}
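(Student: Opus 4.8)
The plan is to bootstrap from the idempotent-completion statement \ref{preontolem}, which gives a triangle equivalence $\per(\dq)\xrightarrow{\cong}\Delta_R(A)^\omega$ when $A$ has finite global dimension, and then drop the global dimension hypothesis by a Verdier-quotient argument. First I would recall from \cite[6.6]{kalckyang2} that when $D_\mathrm{sg}(R)$ is idempotent complete, the composite
$$\Delta_R(A)=\frac{D^b(A)}{\thick(eA)}\xrightarrow{j^*}D_\mathrm{sg}(R)$$
extends to a functor $\Delta_R(A)^\omega\to D_\mathrm{sg}(R)$, since $D_\mathrm{sg}(R)$ being idempotent complete means any functor into it factors through the Karoubi envelope of the source. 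Composing this with the functor $F:\per(\dq)\to\Delta_R(A)^\omega$ of \ref{flem} produces the desired $\Sigma:\per(\dq)\to D_\mathrm{sg}(R)$. Tracking the image of $\dq$ through the chain: $F$ sends $\dq$ to $A$ (by \ref{flem}), and $j^*=-\lot_A Ae$ sends $A$ to $Ae$ regarded as a right $R$-module; hence $\Sigma(\dq)=Ae$ in $D_\mathrm{sg}(R)$.

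Next I would identify the image and kernel of $\Sigma$. Since $\per(\dq)=\thick_{\per(\dq)}(\dq)$ and $\Sigma$ is a triangle functor sending $\dq$ to $Ae$, the image of $\Sigma$ is contained in $\thick_{D_\mathrm{sg}(R)}(Ae)$; for the reverse inclusion one uses that $j^*:\Delta_R(A)\to D_\mathrm{sg}(R)$ is essentially surjective (from \cite[3.3]{kalckyang}, as noted in the text before \ref{dfgdefn}) combined with the fact that $A$ generates $\per A$ and hence its image generates, so that $\thick(Ae)$ lies in the image. For the kernel: the lemma preceding \ref{dfgdefn} identifies $\ker(j^*:\Delta_R(A)\to D_\mathrm{sg}(R))$ with $D_\mathrm{fg}(\dq)$. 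Intersecting with the essential image of $F$ — which, after idempotent completion, is all of $\Delta_R(A)^\omega$ — and then restricting to $\per(\dq)$ via the equivalence $F$, the kernel of $\Sigma$ becomes $F^{-1}$ of $D_\mathrm{fg}(\dq)\cap\Delta_R(A)^\omega$. Since $F$ identifies $\per(\dq)$ with $\Delta_R(A)^\omega$ and the perfect objects of $\dq$ whose cohomology is finitely generated over $A/AeA$ are exactly $\per_\mathrm{fg}(\dq)$ (see \ref{dfgdefn} and the remark following the preceding lemma), the kernel is $\per_\mathrm{fg}(\dq)$.

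The main obstacle is the finite global dimension hypothesis: \ref{preontolem} assumes $A$ has finite right global dimension, but the proposition only assumes $D_\mathrm{sg}(R)$ idempotent complete. To handle this I would not use $F$ as an equivalence but only as the functor of \ref{flem}, which exists unconditionally: the map $G:\per A/j_!\per R\to\Delta_R(A)$ induced by $\per A\hookrightarrow D^b(A)$ is always defined (it is an equivalence only when $A$ has finite global dimension, but we merely need the functor), and combining with the triangle equivalence $i^*:(\per A/j_!\per R)^\omega\xrightarrow{\cong}\per(\dq)$ of \cite[2.12]{kalckyang} gives $F:\per(\dq)\to\Delta_R(A)^\omega$ in general. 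Composing with the extension of $j^*$ to the idempotent completion — which exists precisely because $D_\mathrm{sg}(R)$ is idempotent complete — yields $\Sigma$ without any smoothness assumption. The image and kernel computations above only use essential surjectivity of $j^*$ and the kernel identification $\ker j^*\cong D_\mathrm{fg}(\dq)$, neither of which needs finite global dimension, so the argument goes through. I would double-check the compatibility of $i^*$ (which appears in the recollement \ref{recoll} as $-\lot_A {}_AQ$) with $j^*$ to confirm the image of $\dq$ is genuinely $Ae$ and not some twist thereof, but this is a routine diagram chase in the recollement.
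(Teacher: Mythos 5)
Your proposal follows the paper's argument essentially exactly: compose the functor $F$ from \ref{flem} (which exists unconditionally, without any global-dimension hypothesis) with the idempotent completion $(j^*)^\omega:\Delta_R(A)^\omega\to D_\mathrm{sg}(R)$, note that $\dq\mapsto A\mapsto Ae$, get the image as $\thick(Ae)$ from generation, and identify the kernel with $\per(\dq)\cap D_\mathrm{fg}(\dq)=\per_\mathrm{fg}(\dq)$. You also correctly flag and dispose of the apparent dependence on finite global dimension.

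One inconsistency in your writeup: in the kernel paragraph you say ``the essential image of $F$ ... is all of $\Delta_R(A)^\omega$'' and ``restricting to $\per(\dq)$ via the equivalence $F$'' — this does use $F$ as an equivalence, which only holds when $A$ has finite global dimension (\ref{preontolem}). You then correctly retract this in the final paragraph, but you never actually repair the kernel computation to avoid it. The repair is not hard: the embeddings $\per(\dq)\hookrightarrow D(\dq)\hookleftarrow D_\mathrm{fg}(\dq)$ both sit compatibly over $F$ and the inclusion $D_\mathrm{fg}(\dq)\hookrightarrow\Delta_R(A)$, because $i_*:D(\dq)\hookrightarrow D(A)$ is fully faithful and $F$ is built from $i^*$. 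So $F(X)\in\ker(j^*)^\omega=D_\mathrm{fg}(\dq)$ (which is already idempotent complete, so the Karoubi envelope adds nothing) iff $X\in D_\mathrm{fg}(\dq)$, without needing $F$ to be surjective. The paper's own proof is just as terse on this point, so the gap is slight; but since you explicitly raised the concern, you should finish the thought.
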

	\begin{proof}		
We already have a map $j^*:\Delta_R(A)\to D_\mathrm{sg}(R)$, with kernel $D_\mathrm{fg}(\dq)$. Let $\Sigma$ be the composition
$$\Sigma: \per(\dq) \xrightarrow{F} \Delta_R(A)^\omega \xrightarrow{(j^*)^\omega} D_{\mathrm{sg}}(R)$$ of the functor $F$ of \ref{flem} and the idempotent completion of $j^*$. It is easy to see that $\Sigma$ sends $\dq$ to $Ae$, and since $A$ generates $\per A / j_!\per R$, then $\Sigma $ has image $\thick(Ae)$. The kernel of $\Sigma$ is the intersection of $\per (\dq)$ and $D_\mathrm{fg}(\dq)$, which is precisely $\per_\mathrm{fg}(\dq)$.
	\end{proof}
	For future reference, it will be convenient to give $\Sigma$ a name.
	\begin{defn}
		We refer to the triangle functor $\Sigma$ of \ref{kymap} as the \textbf{singularity functor}.
	\end{defn}
\begin{prop}\label{kymapbetter}
	Suppose that $D_\mathrm{sg}(R)$ is idempotent complete. Then the triangle functor $\Sigma$ induces a triangle equivalence $${\bar{\Sigma}}:\frac{\per(\dq)}{\per_{\mathrm{fg}}(\dq)} \to \thick_{D_\mathrm{sg}(R)}(Ae).$$
	\end{prop}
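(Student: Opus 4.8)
The statement to prove is Proposition~\ref{kymapbetter}: that the singularity functor $\Sigma$ descends to a triangle equivalence $\bar\Sigma\colon \per(\dq)/\per_{\mathrm{fg}}(\dq) \xrightarrow{\sim} \thick_{D_\mathrm{sg}(R)}(Ae)$.

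The plan is to unwind the definition of $\Sigma$ as the composite $\per(\dq) \xrightarrow{F} \Delta_R(A)^\omega \xrightarrow{(j^*)^\omega} D_\mathrm{sg}(R)$ from Proposition~\ref{kymap}, and to treat the two factors in turn. First I would record the formal input: a triangle functor between triangulated categories that kills a thick subcategory factors uniquely through the Verdier quotient by that subcategory, and the induced functor is faithful (indeed an equivalence onto its image) precisely when the subcategory killed is exactly the kernel, in the sense of being the full preimage of zero. Since Proposition~\ref{kymap} already identifies $\ker\Sigma = \per_{\mathrm{fg}}(\dq)$ and $\operatorname{im}\Sigma = \thick_{D_\mathrm{sg}(R)}(Ae)$, the functor $\Sigma$ factors through a triangle functor $\bar\Sigma\colon \per(\dq)/\per_{\mathrm{fg}}(\dq) \to \thick_{D_\mathrm{sg}(R)}(Ae)$ which is automatically essentially surjective.

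The remaining point is that $\bar\Sigma$ is fully faithful, equivalently conservative with the right kernel. Here I would split $\Sigma$ along its two constituents. The functor $F\colon \per(\dq) \to \Delta_R(A)^\omega$ of Lemma~\ref{flem} is, by construction, the idempotent completion of the composite of the inverse of the equivalence $i^*\colon (\per A/j_!\per R)^\omega \xrightarrow{\sim} \per(\dq)$ with the map $G^\omega$; when $A$ need not have finite global dimension, $G\colon \per A/j_!\per R \to \Delta_R(A)$ is still fully faithful because it is induced by the fully faithful inclusion $\per A \hookrightarrow D^b(A)$ passing to Verdier quotients by $j_!\per R = \thick(eA)$ on both sides (here $\thick(eA) \subseteq \per A$, so the quotient map is fully faithful by the universal property, cf.\ the Verdier--Miyachi localization theorem). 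Hence $F$ is fully faithful onto a thick subcategory of $\Delta_R(A)^\omega$, namely $\thick_{\Delta_R(A)^\omega}(A)$. Then I would invoke the identification $\ker(j^*\colon \Delta_R(A) \to D_\mathrm{sg}R) = D_\mathrm{fg}(\dq)$ from the lemma preceding Definition~\ref{dfgdefn}, together with idempotent completion: $j^*$ induces a fully faithful triangle functor $\Delta_R(A)/D_\mathrm{fg}(\dq) \to D_\mathrm{sg}R$ (this is essentially \cite[3.3]{kalckyang}, which also gives surjectivity), and this remains true after idempotent completion since $D_\mathrm{sg}R$ is assumed idempotent complete. Restricting along the fully faithful $F$, the composite $(j^*)^\omega \circ F$ becomes fully faithful after quotienting $\per(\dq)$ by $F^{-1}(D_\mathrm{fg}(\dq)) = \per(\dq) \cap D_\mathrm{fg}(\dq) = \per_{\mathrm{fg}}(\dq)$, which is exactly $\bar\Sigma$. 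Chasing that the essential image is $\thick_{D_\mathrm{sg}(R)}(Ae)$ — because $\dq$ generates $\per(\dq)$ and $\Sigma(\dq) = Ae$ — completes the argument.

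The main obstacle I anticipate is bookkeeping around idempotent completions: one must check that the Verdier quotient of an idempotent-complete category by a thick subcategory, and the restriction of a Verdier quotient functor to a thick subcategory, interact correctly with $(-)^\omega$, and that the relevant kernels (which are stated for the non-completed categories) transport properly. The cleanest route is to phrase everything via fully faithful functors and use that a fully faithful triangle functor into an idempotent-complete category extends uniquely to the idempotent completion of the source; then $\bar\Sigma$ fully faithful reduces to the two building-block full faithfulness statements ($G$ and the induced map from $j^*$) already essentially present in \cite{kalckyang}. I would also double-check that $\per(\dq)\cap D_\mathrm{fg}(\dq)=\per_\mathrm{fg}(\dq)$ holds on the nose from Definition~\ref{dfgdefn}, which it does by construction.
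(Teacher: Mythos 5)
Your proposal unpacks the argument that the paper compresses into ``Follows immediately from \ref{kymap}'', and the decomposition you chase ($\Sigma = (j^*)^\omega \circ F$, with $F$ fully faithful and $j^*$ a Verdier localisation after Karoubi completion, then checking $F^{-1}(D_\mathrm{fg}(\dq)) = \per_\mathrm{fg}(\dq)$) is exactly what the proof of \ref{kymap} sets up. The full faithfulness of $G\colon \per A / j_!\per R \to \Delta_R(A)$ via the chain $j_!\per R \subseteq \per A \subseteq D^b(A)$, the use of \cite[3.3]{kalckyang} for the localisation statement on $j^*$, and the image computation via generation by $\dq \mapsto Ae$ are all correct and in line with the paper. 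So the approach is the same, and more careful than the paper's one-line justification.

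One step you gloss over deserves flagging, because it is the only point at which ``follows immediately'' is not literally immediate: the assertion that restricting a Verdier localisation $(j^*)^\omega$ along the fully faithful $F$ and quotienting the source by $F^{-1}(\ker)$ yields a fully faithful functor on the subquotient. Knowing the kernel and essential image of $\Sigma$ alone does \emph{not} give full faithfulness of $\bar\Sigma$ --- one needs Verdier's cofinality/cal\'cul-des-fractions criterion, namely that for any roof $FX \leftarrow Z$ in $\Delta_R(A)^\omega$ with cone in $D_\mathrm{fg}(\dq)$, one can replace $Z$ by an object in the image of $F$ while keeping the cone in $F(\per_\mathrm{fg}(\dq))$. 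You gesture at ``bookkeeping around idempotent completions'' as the main obstacle, but this cofinality condition is the genuine content, and it is also what the paper leaves implicit (presumably because the argument in \cite{kalckyang} carrying out the localisation already verifies it in the relevant generality). If you want an airtight version, the cleanest route is not to restrict $(j^*)^\omega$ abstractly but to use the Neeman--Thomason--Trobaugh--Yao exact sequence $\per R \to \per A \to \per(\dq)$ together with the analogous sequence into $D_\mathrm{sg}(R)$, so that both $\per(\dq)/\per_\mathrm{fg}(\dq)$ and $\thick(Ae)$ are realised as Karoubi closures of the same Verdier quotient of $\per A$.

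One small factual slip: what you call the ``Verdier--Miyachi localisation theorem'' for the full faithfulness of $G$ is simply Verdier's third-isomorphism-type statement for nested thick subcategories; Miyachi's localisation results concern a different setting. The content of what you use is correct.
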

\begin{proof}
	Follows immediately from \ref{kymap}.
	\end{proof}
When $A$ is smooth, one can do better:
\begin{lem}\label{ontolem}
	Suppose that $A$ is of finite right global dimension and $D_\mathrm{sg}(R)$ is idempotent complete. Then the singularity functor $\Sigma$ is onto.
	\end{lem}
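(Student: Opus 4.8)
\textbf{Proof plan for \ref{ontolem}.} The statement claims that when $A$ has finite right global dimension and $D_\mathrm{sg}(R)$ is idempotent complete, the singularity functor $\Sigma : \per(\dq) \to D_\mathrm{sg}(R)$ is essentially surjective. The plan is to combine the two refinements of $\Sigma$ that have already been established. First I would recall from \ref{kymap} that the image of $\Sigma$ is exactly $\thick_{D_\mathrm{sg}(R)}(Ae)$, so that onto-ness of $\Sigma$ is equivalent to the assertion that $Ae$ generates $D_\mathrm{sg}(R)$ as a thick subcategory, i.e.\ $\thick_{D_\mathrm{sg}(R)}(Ae) = D_\mathrm{sg}(R)$. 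Thus the entire content of the lemma is this generation statement, and the finite global dimension hypothesis must be what forces it.

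The mechanism is as follows. By \ref{preontolem}, when $A$ has finite right global dimension the functor $F : \per(\dq) \to \Delta_R(A)^\omega$ of \ref{flem} is a triangle equivalence (since in that case $G$ is a triangle equivalence and $i^*$ always is, so $F = G^\omega \circ (i^*)^{-1}$ is a composite of equivalences). Now $\Sigma = (j^*)^\omega \circ F$, so essential surjectivity of $\Sigma$ reduces to essential surjectivity of the idempotent completion $(j^*)^\omega : \Delta_R(A)^\omega \to D_\mathrm{sg}(R)$. But $j^* : \Delta_R(A) \to D_\mathrm{sg}(R)$ is already known to be onto — this is the remark just before \ref{dfgdefn}, citing \cite[3.3]{kalckyang}, and relies only on the fact that $j^* : D^b(A) \to D^b(R)$ hits every bounded complex of $R$-modules up to perfect complexes (concretely, $R = eAe = j^*(A)$ lies in the image, and $R$ generates $D_\mathrm{sg}(R)$). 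Passing to idempotent completions preserves (indeed, only enlarges) essential images, and since $D_\mathrm{sg}(R)$ is idempotent complete by hypothesis, $(j^*)^\omega$ lands in $D_\mathrm{sg}(R)$ and is still onto. Composing, $\Sigma$ is onto.

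So the proof is essentially a two-line assembly: $\Sigma = (j^*)^\omega \circ F$ with $F$ an equivalence by \ref{preontolem} and $(j^*)^\omega$ surjective because $j^*$ is. I would phrase it as: ``By \ref{preontolem}, $F$ is a triangle equivalence. Since $j^*:\Delta_R(A)\to D_\mathrm{sg}(R)$ is essentially surjective, so is its idempotent completion $(j^*)^\omega$, the target already being idempotent complete. Hence $\Sigma = (j^*)^\omega\circ F$ is essentially surjective.'' Alternatively one can argue directly through \ref{kymapbetter}: when $A$ has finite global dimension, $\per_\mathrm{fg}(\dq) = D_\mathrm{fg}(\dq)$ and the equivalence $\bar\Sigma : \per(\dq)/\per_\mathrm{fg}(\dq) \xrightarrow{\sim} \thick_{D_\mathrm{sg}(R)}(Ae)$ of \ref{kymapbetter} combined with $\Delta_R(A)^\omega \simeq \per(\dq)$ forces $\thick(Ae) = D_\mathrm{sg}(R)$.

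\textbf{Main obstacle.} There is no serious obstacle here — the lemma is a bookkeeping consequence of results already in place. The only point requiring a little care is the interaction between idempotent completion and essential surjectivity: one must note that the universal functor $\mathcal{T} \to \mathcal{T}^\omega$ is fully faithful, so an essentially surjective $j^*$ induces an essentially surjective $(j^*)^\omega$ onto $D_\mathrm{sg}(R)^\omega = D_\mathrm{sg}(R)$, where the last equality is the idempotent-completeness hypothesis (which is why it is needed in the statement at all). I would make sure to invoke \ref{sgidemref} implicitly or explicitly as the source of that hypothesis in the geometric applications, but for the lemma itself it is simply assumed.
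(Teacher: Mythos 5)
Your proof is correct and follows exactly the paper's route: the paper's one-sentence proof says ``The singularity functor is the equivalence of \ref{preontolem} followed by the surjective triangle functor $j^*:\Delta_R(A)\to D_\mathrm{sg}(R)$'', which is precisely your decomposition $\Sigma = (j^*)^\omega\circ F$. You are actually a bit more careful than the paper in spelling out why essential surjectivity of $j^*$ passes to $(j^*)^\omega$ via the idempotent-completeness hypothesis on the target.
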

\begin{proof}
	The singularity functor is the equivalence of \ref{preontolem} followed by the surjective triangle functor $j^*:\Delta_R(A)\to D_\mathrm{sg}(R)$.
	\end{proof}
It is unclear to the author if the converse of the above statement is true:
\begin{conj}\label{mgenconj}
	With notation as above, if $D_\mathrm{sg}(R)$  is idempotent complete then $M$ generates the singularity category $D_\mathrm{sg}(R)$ if and only if $A$ has finite global dimension.
	\end{conj}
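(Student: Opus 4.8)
The forward implication is essentially already available. If $A$ has finite right global dimension then, by \ref{ontolem}, the singularity functor $\Sigma$ is onto; since $\Sigma$ has image $\thick_{D_\mathrm{sg}(R)}(Ae)$ by \ref{kymap}, and $Ae\cong R\oplus M$ as an $R$-module (so that, $R$ being zero in the singularity category, $Ae$ represents $M$ — or its $R$-dual, which generates the same thick subcategory of $D_\mathrm{sg}(R)$), onto-ness translates to $\thick_{D_\mathrm{sg}(R)}(M)=D_\mathrm{sg}(R)$, i.e.\ $M$ generates. So the content of the conjecture is the converse, and the plan would be as follows.

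First I would reduce the statement "$A$ has finite global dimension'' to the statement "$D_\mathrm{sg}(A)=0$''. Since $A=\enn_R(R\oplus M)$ is module-finite over the complete local noetherian ring $R$, it is semiperfect with only finitely many simple modules, and its global dimension is the maximum of the (a priori possibly infinite) projective dimensions of those simples; hence $A$ has finite global dimension exactly when every finitely generated $A$-module has finite projective dimension, which is precisely the vanishing of $D_\mathrm{sg}(A)$. (This is the same mechanism that rules out the Nagata pathology of \ref{nagatarmk} here, as in \ref{sgidemref}.) It therefore suffices to show $D_\mathrm{sg}(A)=0$ whenever $M$ generates $D_\mathrm{sg}(R)$. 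The key input I would try to establish is a singular equivalence
$$D_\mathrm{sg}(A)\;\simeq\;D_\mathrm{sg}(R)\big/\thick_{D_\mathrm{sg}(R)}(\underline{M})$$
induced by $\hom_R(R\oplus M,-)$: writing $N\coloneqq R\oplus M$, the point is that $N$ is a MCM generator over the Gorenstein ring $R$, so $A=\enn_R(N)$ is Iwanaga--Gorenstein and $D_\mathrm{sg}(A)\simeq\stab A$ by Buchweitz; the functor $\hom_R(N,-)$ is fully faithful because $N$ is a generator, it sends an $R$-module to a projective $A$-module exactly when that module lies in $\mathrm{add}\,N$, and one would check that it descends to an equivalence out of the Verdier quotient $D_\mathrm{sg}(R)/\thick(\underline N)=D_\mathrm{sg}(R)/\thick(\underline M)$ (the last equality since $R$ vanishes in $D_\mathrm{sg}(R)$). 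Results in this spirit appear in the literature on noncommutative resolutions and Frobenius categories; the work would be to pin down hypotheses under which such an equivalence holds in the needed generality. Granting it, $M$ generating $D_\mathrm{sg}(R)$ forces the right-hand quotient to vanish, hence $D_\mathrm{sg}(A)=0$, hence $A$ has finite global dimension. In the hypersurface case one could also attempt a more hands-on route via \ref{qisolem}, \ref{kymapbetter} and the recollement $D(\dq)\recol D(A)\recol D(R)$: reduce to checking that each simple $A$-module has finite projective dimension, dispatch the vertex at $R$ using that $eAe=R$ is a hypersurface (where the periodicity element $\eta$ of \ref{etaex} controls the single obstructing simple), and relate the remaining vertices to resolutions of $M$ by $\mathrm{add}(R\oplus M)$.

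The hard part — and the reason this remains a conjecture — is precisely the passage from "$M$ generates $D_\mathrm{sg}(R)$'' to a \emph{uniform} finiteness statement: generation only records that each object of $D_\mathrm{sg}(R)$ is built from $M$ in finitely many triangles, whereas $A$ having finite global dimension demands a single bound on projective dimensions, and the singular equivalence above is exactly the device that would convert the former into the latter. Verifying that equivalence rigorously (or producing a counterexample) is where the genuine difficulty lies; the most promising concrete handle seems to be a careful analysis of the relative singularity category $\Delta_R(A)$ — in particular, of when the natural embedding $(\per A/\thick(eA))^\omega\hookrightarrow\Delta_R(A)^\omega$ of \ref{flem} is an equivalence — since this is the exact point at which the finite-global-dimension hypothesis was used.
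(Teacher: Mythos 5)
This statement is labelled a conjecture in the paper, not a theorem: the text immediately preceding it says explicitly ``It is unclear to the author if the converse of the above statement is true,'' and no proof is offered. So there is nothing in the paper to compare your proposal against; the appropriate review is of whether your strategy is sound and whether you have correctly located the obstruction.

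Your treatment of the known direction is fine: finite global dimension of $A$ forces surjectivity of the singularity functor by \ref{ontolem}, and since $\Sigma$ has image $\thick_{D_\mathrm{sg}(R)}(Ae)$ by \ref{kymap} and $Ae\simeq M$ in $D_\mathrm{sg}(R)$, this gives generation. Your reduction of the converse to the vanishing of $D_\mathrm{sg}(A)$ is also legitimate in this setting: $A=\enn_R(R\oplus M)$ is module-finite over a complete local noetherian ring, hence semiperfect with finitely many simples, so finite projective dimension of every finitely generated module \emph{does} entail a uniform bound (contrast the warning in the remark following the proposition about $D_\mathrm{sg}$ vanishing, and the Nagata example of \ref{nagatarmk}, which do not apply here).

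The gap is in the singular equivalence $D_\mathrm{sg}(A)\simeq D_\mathrm{sg}(R)/\thick_{D_\mathrm{sg}(R)}(\underline M)$. You present it as a plausible lemma that might be extracted from the literature, but the paper's own \ref{ninelem} shows that what is actually available is a three-term sequence, exact up to direct summands,
$$\frac{D_\mathrm{fg}(\dq)}{\per_\mathrm{fg}(\dq)}\longrightarrow D_\mathrm{sg}(A)\longrightarrow \frac{D_\mathrm{sg}(R)}{\thick_{D_\mathrm{sg}(R)}(Ae)},$$
and the remark that follows it records precisely that the left-hand term vanishes if and only if the simple $S=(A/AeA)/\mathrm{rad}(A/AeA)$ is perfect over $\dq$. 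So when $M$ generates, the right-hand term dies and one is left with $D_\mathrm{sg}(A)\cong D_\mathrm{fg}(\dq)/\per_\mathrm{fg}(\dq)$, up to summands. Your proposed equivalence is therefore not an independent lemma you can import: establishing it is, via \ref{ninelem}, \emph{identical} to establishing that $S$ is perfect over $\dq$ given generation, which is the open content of the conjecture. The argument as written is thus circular at the critical step, even though your closing paragraph shows you sense this. If you want to make progress, the right reformulation to attack is ``does $M$ generating $D_\mathrm{sg}(R)$ force $S$ to be perfect over $\dq$?'' rather than the singular equivalence, since the latter packages the unknown together with things already known.
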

	\begin{prop}[{\cite[1.2]{kalckyang2}}]\label{dsgsmooth}Suppose that $A$ is of finite right global dimension and $D_\mathrm{sg}(R)$ is idempotent complete. Then the singularity functor induces a triangle equivalence $${\bar{\Sigma}}:\frac{\per(\dq)}{D_{\mathrm{fg}}(\dq)} \to D_\mathrm{sg}(R).$$
	\end{prop}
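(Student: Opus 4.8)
The plan is to combine the triangle equivalence $\bar\Sigma$ of \ref{kymapbetter} with the surjectivity statement of \ref{ontolem}. Recall that \ref{kymapbetter} gives a triangle equivalence
$$\bar\Sigma:\frac{\per(\dq)}{\per_{\mathrm{fg}}(\dq)} \xrightarrow{\ \cong\ } \thick_{D_\mathrm{sg}(R)}(Ae),$$
valid whenever $D_\mathrm{sg}(R)$ is idempotent complete. So the first step is to invoke the finite global dimension hypothesis on $A$: by \ref{ontolem} the singularity functor $\Sigma$ is onto, which means $\thick_{D_\mathrm{sg}(R)}(Ae) = D_\mathrm{sg}(R)$. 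Substituting this into the target of $\bar\Sigma$ already gives a triangle equivalence $\per(\dq)/\per_{\mathrm{fg}}(\dq) \xrightarrow{\cong} D_\mathrm{sg}(R)$. The only remaining discrepancy between this and the claimed statement is that the quotient in \ref{dsgsmooth} is by $D_{\mathrm{fg}}(\dq)$ rather than $\per_{\mathrm{fg}}(\dq)$.

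So the key remaining step is to check that, under the finite global dimension hypothesis, one has $\per_{\mathrm{fg}}(\dq) = D_{\mathrm{fg}}(\dq) \cap \per(\dq)$, and indeed that the two quotients agree. First I would argue that when $A$ has finite right global dimension, every object of $D^b(A)$ lies in $\per(A)$, so in particular every finitely generated $A/AeA$-module is perfect over $A$; by the remark following \ref{dfgdefn} (``if each simple is perfect over $A$ \ldots then we have an equivalence $D_\mathrm{fg}(\dq)\cong \per_\mathrm{fg}(\dq)$'') this forces $D_{\mathrm{fg}}(\dq) = \per_{\mathrm{fg}}(\dq)$. Concretely: $i^*$ restricted to $\thick_{D(A)}(\cat{mod}\text{-}A/AeA)$ lands in $\per(\dq)$ because $i^*$ preserves compactness and $\cat{mod}\text{-}A/AeA \subseteq \per(A)$, and this image is exactly $D_{\mathrm{fg}}(\dq)$ by the lemma identifying $\ker j^*$. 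Hence the subcategory we quotient by is the same in both formulations, and the equivalence $\bar\Sigma$ descends to the claimed
$$\bar\Sigma:\frac{\per(\dq)}{D_{\mathrm{fg}}(\dq)} \xrightarrow{\ \cong\ } D_\mathrm{sg}(R).$$

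The main obstacle, such as it is, is bookkeeping rather than anything deep: one must be careful about idempotent completions, since $F$ in \ref{flem} only lands in $\Delta_R(A)^\omega$ and $j^*$ only becomes an equivalence onto $D_\mathrm{sg}(R)$ after idempotent completion, so I would want to confirm that the hypothesis ``$D_\mathrm{sg}(R)$ idempotent complete'' makes all the $(-)^\omega$ decorations harmless in the relevant places (in particular that $\per(\dq)$ is already idempotent complete, being a category of compact objects in a compactly generated triangulated category, so the domain needs no completion). With those checks in place the proof is essentially the two-line composition above; alternatively, one can just cite \ref{preontolem} directly, which under finite global dimension already gives $F:\per(\dq)\xrightarrow{\cong}\Delta_R(A)^\omega$, and then compose with the idempotent completion of $j^*$, whose kernel is the idempotent completion of $D_{\mathrm{fg}}(\dq)$, i.e.\ $D_{\mathrm{fg}}(\dq)$ itself since that category is already idempotent complete (it is $\thick$ of a finite set of objects in a triangulated category with all coproducts, hence idempotent complete).
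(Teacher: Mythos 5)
Your proposal is correct and matches the paper's own proof: both argue surjectivity of $\Sigma$ via \ref{ontolem}, then show $\per_{\mathrm{fg}}(\dq) = D_{\mathrm{fg}}(\dq)$ under the finite global dimension hypothesis using that finitely generated $\dq$-modules become perfect over $A$ and that $i^*$ preserves compacts, and finally plug into \ref{kymapbetter}. The extra discussion of idempotent completions at the end is careful but not needed beyond what \ref{kymapbetter} already handles.
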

	\begin{proof}
		The singularity functor is surjective by \ref{ontolem}. Since $A$ has finite right global dimension, every finitely generated $\dq$-module is perfect over $A$. Since $i^*$ respects compact objects, $D_{\text {fg}}(\dq) \cong i^*i_*D_{\text {fg}}(\dq)\subseteq \per(\dq)$. So $\per_\mathrm{fg}(\dq)=D_\mathrm{fg}(\dq)$.
	\end{proof}
	\begin{rmk}
		This equivalence is essentially the same as that of \cite[5.1.1]{dtdvvdb}.
	\end{rmk}
	\begin{rmk}
		The quotient $\frac{\per(\dq)}{D_{\text {fg}}(\dq)}$ is reminiscent of the cluster category associated to a Ginzburg dga, and indeed \cite{huakeller} refers to it as the \textbf{generalized cluster category}.
	\end{rmk}
When $A$ is not smooth, one can measure the extent to which ${\bar{\Sigma}}$ is not an equivalence. Recall that a sequence $A\to B \to C$ of triangulated categories is \textbf{exact up to direct summands} if $A\into B$ is a thick subcategory, the composition $A\to C$ is zero, and the natural functor $B/A \to C$ is an equivalence after idempotent completion (i.e.\ every object of $C$ is a summand of $B/A$).
\begin{prop}\label{ninelem}
The inclusion $$D_\mathrm{fg}(\dq) \into \Delta_R(A)$$ and the projection $$D^b(A) \onto D_{\mathrm{sg}}(R)$$ induce a sequence$$\frac{D_\mathrm{fg}(\dq) }{\per_\mathrm{fg}(\dq)} \longrightarrow D_\mathrm{sg}(A) \longrightarrow \frac{D_{\mathrm{sg}}(R)}{\thick_{D_\mathrm{sg}(R)}(Ae)}$$which is exact up to direct summands.
	\end{prop}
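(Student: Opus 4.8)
The plan is to derive the statement formally from Verdier's localisation theorem and the structural results already proved for the singularity functor, organising the argument around a $3\times 3$ diagram of Verdier quotients all living over $\Delta_R(A)$. First I would record the identifications $D_\mathrm{sg}(A)=D^b(A)/\per A$ and $\Delta_R(A)=D^b(A)/\thick(eA)$, noting that $\thick(eA)=j_!\per R\subseteq\per A\subseteq D^b(A)$, and that by \ref{flem} the functor $F$ identifies $\per(\dq)$, up to direct summands, with the image of $\per A$ in $\Delta_R(A)$. The third isomorphism theorem for Verdier quotients, applied to this chain of thick subcategories, then yields an equivalence up to summands $D_\mathrm{sg}(A)\simeq\Delta_R(A)/\per(\dq)$. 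From here I would view everything as subquotients of $\Delta_R(A)$: inside it sit $D_\mathrm{fg}(\dq)=\ker\!\big(j^*\colon\Delta_R(A)\to D_\mathrm{sg}(R)\big)$ and the copy of $\per(\dq)$ coming from $F$, with $D_\mathrm{fg}(\dq)\cap\per(\dq)=\per_\mathrm{fg}(\dq)$ holding by the very definition of $\per_\mathrm{fg}(\dq)$.

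Next I would build the two functors of the sequence. The first is the composite $D_\mathrm{fg}(\dq)\into\Delta_R(A)\onto\Delta_R(A)/\per(\dq)\simeq D_\mathrm{sg}(A)$; its kernel is $D_\mathrm{fg}(\dq)\cap\per(\dq)=\per_\mathrm{fg}(\dq)$, so it descends to a functor $D_\mathrm{fg}(\dq)/\per_\mathrm{fg}(\dq)\to D_\mathrm{sg}(A)$, which is fully faithful by the standard fact that $\mathcal{A}/(\mathcal{A}\cap\mathcal{S})\to\mathcal{T}/\mathcal{S}$ is fully faithful for thick subcategories $\mathcal{A},\mathcal{S}\subseteq\mathcal{T}$. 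Its essential image is the triangulated subcategory $q(D_\mathrm{fg}(\dq))$, whose summand-closure equals $\thick\!\big(D_\mathrm{fg}(\dq)\cup\per(\dq)\big)/\per(\dq)$; since this image need not itself be summand-closed, the resulting exact sequence can only be exact up to summands. The second functor is induced by $j^*$: because $\Sigma=j^*\circ F$ has image $\thick_{D_\mathrm{sg}(R)}(Ae)$ by \ref{kymap}, $j^*$ carries $\per(\dq)$ into $\thick(Ae)$ and so descends to $\bar{j^*}\colon D_\mathrm{sg}(A)\simeq\Delta_R(A)/\per(\dq)\to D_\mathrm{sg}(R)/\thick(Ae)$. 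The composite of the two functors vanishes since $D_\mathrm{fg}(\dq)=\ker(j^*)$.

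The remaining point is the exactness up to summands. Essential surjectivity of $\bar{j^*}$ up to summands is immediate because $j^*$ already has that property (\ref{kymap}), so what must be shown is that the induced functor $D_\mathrm{sg}(A)\big/\big(D_\mathrm{fg}(\dq)/\per_\mathrm{fg}(\dq)\big)\to D_\mathrm{sg}(R)/\thick(Ae)$ is an equivalence up to summands. Using the third isomorphism theorem once more, the source is $\Delta_R(A)/\thick\!\big(D_\mathrm{fg}(\dq)\cup\per(\dq)\big)$, so everything comes down to identifying the thick subcategory $\thick\!\big(D_\mathrm{fg}(\dq)\cup\per(\dq)\big)$ with the preimage $(j^*)^{-1}\big(\thick(Ae)\big)$. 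One inclusion is clear; for the other I would use the general fact that in a Verdier quotient $j^*(X)\in j^*(\mathcal C)$ forces $X\in\thick(\mathcal C\cup\ker j^*)$, applied with $\mathcal C=\per(\dq)$ and $j^*(\per(\dq))=\thick(Ae)$. Then Verdier's localisation theorem — taking into account, as in \ref{kymap}, that $j^*$ becomes a genuine quotient functor only after idempotent completion — gives the equivalence up to summands. I expect the main obstacle to be precisely this last identification together with the attendant bookkeeping of idempotent completions: one must be careful that no object outside $\thick\!\big(D_\mathrm{fg}(\dq)\cup\per(\dq)\big)$ maps into $\thick_{D_\mathrm{sg}(R)}(Ae)$, which is exactly the place where the hypothesis that $\Sigma$ has image \emph{equal to} (and not merely contained in) $\thick(Ae)$ is essential.
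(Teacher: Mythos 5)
Your route is genuinely different from the paper's. The paper constructs the full $3\times 3$ diagram of triangulated categories, checks that every square commutes, that all three columns and the bottom two rows are exact up to direct summands, and then invokes the Nine Lemma for the idempotent-completed diagram to deduce exactness of the top row. You instead carry out the whole argument inside $\Delta_R(A)$: you identify $D_\mathrm{sg}(A)$ with $\Delta_R(A)/\per(\dq)$ up to summands via the third isomorphism theorem, identify $D_\mathrm{fg}(\dq)$ with $\ker j^*$, and reduce the proposition to showing $\thick\bigl(D_\mathrm{fg}(\dq)\cup\per(\dq)\bigr)=(j^*)^{-1}\bigl(\thick_{D_\mathrm{sg}(R)}(Ae)\bigr)$. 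This isolates the content of the statement cleanly and avoids the explicit $3\times 3$ commutativity check.

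There is, however, a genuine gap. You assert as a ``standard fact'' that $\mathcal{A}/(\mathcal{A}\cap\mathcal{S})\to\mathcal{T}/\mathcal{S}$ is fully faithful for arbitrary thick subcategories $\mathcal{A},\mathcal{S}\subseteq\mathcal{T}$, and use this to conclude that $D_\mathrm{fg}(\dq)/\per_\mathrm{fg}(\dq)\to D_\mathrm{sg}(A)$ embeds as a thick subcategory. That assertion is not a theorem: full faithfulness of $\mathcal{A}/(\mathcal{A}\cap\mathcal{S})\to\mathcal{T}/\mathcal{S}$ requires a Verdier compatibility hypothesis, for instance that every morphism from an object of $\mathcal{S}$ to an object of $\mathcal{A}$ (or, dually, from $\mathcal{A}$ to $\mathcal{S}$) factor through an object of $\mathcal{A}\cap\mathcal{S}$. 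Without such a hypothesis, a roof $X\leftarrow Z\to Y$ in $\mathcal{T}/\mathcal{S}$ between objects of $\mathcal{A}$ has no reason to admit a representative with apex in $\mathcal{A}$. In the situation at hand, with $\mathcal{A}=D_\mathrm{fg}(\dq)=\ker j^*$ and $\mathcal{S}=\per(\dq)$, the apex $Z$ of such a roof satisfies only that $j^*(Z)$ is a shift of an object of $\thick_{D_\mathrm{sg}(R)}(Ae)$, not that $j^*(Z)=0$, so $Z$ need not lie in $D_\mathrm{fg}(\dq)$, and neither Verdier condition is obviously available. Since ``exact up to direct summands'' requires the first arrow to be a thick-subcategory embedding, this must be established before the rest of your argument (the comparison of the two quotients of $\Delta_R(A)$) can take over. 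The paper's appeal to the Nine Lemma is precisely what packages this verification away; if you want to take your route, you should verify the Verdier factoring condition in this specific recollement, or otherwise prove full faithfulness directly.
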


\begin{proof}
For brevity, put $Q\coloneqq D_{\mathrm{fg}}(\dq) /\per_\mathrm{fg}(\dq)$ and $K\coloneqq  {D_{\mathrm{sg}}(R)}/{\thick(Ae)}$. We have a map $D_{\text fg}(\dq) \into \Delta_R(A) \onto D_\mathrm{sg}(A)$, because $\Delta_R(A)\coloneqq D^b(A)/ j_!\per R$ has a map to $D_\mathrm{sg}(A)\coloneqq  D^b(A) / \per A$, and $\per A$ contains $j_!\per R$. By construction, this kills $\per_\mathrm{fg}(\dq)$, and so we get a map $Q \to D_{\mathrm{sg}}A$. We show that the following diagram commutes:
$$\begin{tikzcd}
Q \ar[r]& D_\mathrm{sg}(A)\ar[r] & K
\\ D_\mathrm{fg}(\dq) \ar[u]\ar[r]& \Delta_R(A) \ar[r,"j^*"]\ar[u]& D_\mathrm{sg}(R)\ar[u]
\\ \per_\mathrm{fg}(\dq)\ar[r]\ar[u]  & \per(\dq)\ar[r,"\Sigma"]\ar[u] & \thick_{D_\mathrm{sg}(R)}(Ae)\ar[u] 
\end{tikzcd}$$Indeed, the lower right hand square commutes by definition of $\Sigma$. The upper right hand square commutes because both maps are $j^*$. The upper left hand square commutes by definition of the map from $Q$. The lower left hand square commutes because the maps are both just $i_*$. Now I claim that all of the columns are exact up to direct summands: this is clear for the left-hand and right-hand ones. The central column is certainly exact at either end, so we just need to check exactness in the middle. But this is not too hard to see: $\per(\dq)$ is triangle equivalent to $\per A / j_! \per R$ up to direct summands. Hence the quotient of $\Delta_R(A)\coloneqq D^b(A)/ j_!\per R$ by $\per(\dq)$ is the quotient of $D^b(A)$ by $\per A$, which is the definition of $D_{\mathrm{sg}}(A)$. We have already seen that the lower two rows are exact, so by the Nine Lemma (applied to the analogous diagram of idempotent completions) it follows that the top row is exact up to direct summands.
\end{proof}

	\begin{rmk}Intuitively, \ref{ninelem} tells us that $\dq$ is not more singular than $A$. Indeed, if $B$ is an unbounded dga then $D_\mathrm{fg}(B)$ should be thought of as  $D^b(B)$, and the quotient $D_\mathrm{fg}(B) /\per_\mathrm{fg}(B)$ should be thought of as the singularity category $D_{\mathrm{sg}}(B)$.
	\end{rmk}
\begin{rmk}Suppose that $A/AeA$ is finite-dimensional. Then the quotient $$\frac{D_\mathrm{fg}(\dq) }{\per_\mathrm{fg}(\dq)}$$ vanishes if and only if every $\dq$-module of finite total cohomological dimension is perfect, which is equivalent to $S\coloneqq (A/AeA)/\mathrm{rad}(A/AeA)$ being perfect as a $\dq$-module. The quotient $$\frac{D_{\mathrm{sg}}(R)}{\thick_{D_\mathrm{sg}(R)}(Ae)}$$vanishes if and only if $Ae$ generates the singularity category of $R$.
\end{rmk}

	\section{The dg singularity functor and the comparison map}
	In this section, suppose that $A$ is a right noetherian $k$-algebra with idempotent $e$. Put $R\coloneqq eAe$ and assume that $D_{\mathrm{sg}}(R)$ is idempotent complete. We show in \ref{Fdg} that the singularity functor $\Sigma:\cat{per}(\dq) \to D_{\mathrm{sg}}(R)$ lifts to a dg functor. The component of the singularity functor at $\dq$ is a dga map from $\dq$ to an endomorphism dga in ${D}^{\mathrm{dg}}_\mathrm{sg}(R)$, and later in \ref{qisolem} we examine the induced map on cohomology.
	\begin{prop}\label{Fdg}The singularity functor $\Sigma:\cat{per}(\dq) \to D_{\mathrm{sg}}(R)$ lifts to a dg functor $ \Sigma_\mathrm{dg}:\cat{per}_\mathrm{dg}(\dq) \to {D}^{\mathrm{dg}}_\mathrm{sg}(R)$, which we refer to as the \textbf{dg singularity functor}.
	\end{prop}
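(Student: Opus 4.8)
The plan is to lift to the dg level each of the functors occurring in the construction of the singularity functor (see \ref{flem} and \ref{kymap}) and then to compose in $\mathrm{Hqe}$. Recall that $\Sigma$ is the composite
$$\Sigma\colon \per(\dq) \xrightarrow{(i^*)^{-1}} \left(\per A/j_!\per R\right)^\omega \xrightarrow{G^\omega} \Delta_R(A)^\omega \xrightarrow{(j^*)^\omega} D_\mathrm{sg}(R),$$
where $(-)^\omega$ denotes idempotent completion. Each constituent has an evident dg enhancement. First I would observe that $i^*=-\lot_A\,{}_AQ$ is derived tensor with a bimodule, so it lifts to a dg functor $D_\mathrm{dg}(A)\to D_\mathrm{dg}(\dq)$; it restricts to $\cat{per}_\mathrm{dg}(A)\to\cat{per}_\mathrm{dg}(\dq)$ since $i^*(A)=\dq$ is perfect, and it annihilates $\thick_\mathrm{dg}(eA)=j_!\cat{per}_\mathrm{dg}(R)$ because $i^*j_!\simeq 0$ in the recollement of \ref{recoll}, hence descends to a dg functor out of the Drinfeld quotient $\cat{per}_\mathrm{dg}(A)/\thick_\mathrm{dg}(eA)$. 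Likewise the inclusion $\cat{per}_\mathrm{dg}(A)\into D^b_\mathrm{dg}(A)$ is a dg functor carrying $\thick_\mathrm{dg}(eA)$ into itself, so it descends to a dg functor $\cat{per}_\mathrm{dg}(A)/\thick_\mathrm{dg}(eA)\to D^b_\mathrm{dg}(A)/\thick_\mathrm{dg}(eA)=\Delta^\mathrm{dg}_R(A)$ lifting $G$; and $j^*=-\lot_A Ae$, being exact (as ${}_AAe$ is projective hence $A$-flat), lifts to a dg functor $D^b_\mathrm{dg}(A)\to D_\mathrm{dg}(R)$ which sends $\thick_\mathrm{dg}(eA)$ into $\cat{per}_\mathrm{dg}(R)$ (since $eA\mapsto eAe=R$), hence descends to a dg functor $\Delta^\mathrm{dg}_R(A)\to D^b_\mathrm{dg}(R)/\cat{per}_\mathrm{dg}(R)=D^\mathrm{dg}_\mathrm{sg}(R)$ lifting $j^*$.

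Next I would deal with the idempotent completions, letting $(-)^\omega$ also denote the dg idempotent completion, which is a dg construction, functorial in $\mathrm{Hqe}$, satisfies $[\mathcal{C}^\omega]\cong[\mathcal{C}]^\omega$, and comes with a Morita-equivalence unit $\mathcal{C}\to\mathcal{C}^\omega$. By the dg version of \ref{ntty} — using \ref{Rcoloc} to identify $\mathbb{L}^{1-e}A\cong R$, and \ref{drinfeldpretr} — the dg functor $\cat{per}_\mathrm{dg}(A)/\thick_\mathrm{dg}(eA)\to\cat{per}_\mathrm{dg}(\dq)$ above is a Morita equivalence; since $\per(\dq)$ is idempotent complete, this exhibits $\cat{per}_\mathrm{dg}(\dq)$ as the dg idempotent completion of $\cat{per}_\mathrm{dg}(A)/\thick_\mathrm{dg}(eA)$, yielding a quasi-equivalence $(\cat{per}_\mathrm{dg}(A)/\thick_\mathrm{dg}(eA))^\omega\xrightarrow{\ \simeq\ }\cat{per}_\mathrm{dg}(\dq)$. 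Applying $(-)^\omega$ to the lift of $G$ gives a dg functor $(\cat{per}_\mathrm{dg}(A)/\thick_\mathrm{dg}(eA))^\omega\to\Delta^\mathrm{dg}_R(A)^\omega$, and since $D_\mathrm{sg}(R)$ is idempotent complete by the standing hypothesis, $[D^\mathrm{dg}_\mathrm{sg}(R)]$ is idempotent complete, so the lift of $j^*$ extends along $\Delta^\mathrm{dg}_R(A)\to\Delta^\mathrm{dg}_R(A)^\omega$ to a dg functor $\Delta^\mathrm{dg}_R(A)^\omega\to D^\mathrm{dg}_\mathrm{sg}(R)$. Composing these in $\mathrm{Hqe}$, inverting the displayed quasi-equivalence, produces a morphism $\Sigma_\mathrm{dg}\colon\cat{per}_\mathrm{dg}(\dq)\to D^\mathrm{dg}_\mathrm{sg}(R)$, which is represented by an honest dg functor once all categories are replaced by fibrant--cofibrant models in Tabuada's model structure (\ref{tabmod}). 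Passing to homotopy categories and invoking \ref{drinfeldpretr} together with the compatibility of $[-]$ with Drinfeld quotients and idempotent completion, one checks that $[\Sigma_\mathrm{dg}]=\Sigma$; in particular $\Sigma_\mathrm{dg}$ sends $\dq$ to $Ae$, so its component at $\dq$ is a dga map $\dq\to\dge_{D^\mathrm{dg}_\mathrm{sg}(R)}(Ae)=\R\underline{\enn}_R(Ae)$.

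I expect the main obstacle to be the bookkeeping around idempotent completions: one must be sure that dg idempotent completion is compatible both with passage to homotopy categories and with the formation of Drinfeld quotients, and that a dg functor whose target has idempotent-complete homotopy category extends, uniquely up to quasi-equivalence, over the dg idempotent completion of its source. These are standard (see e.g.\ \cite{toendglectures} or Tabuada), but assembling them correctly — and verifying that each descended dg functor genuinely lifts the corresponding \emph{triangulated} functor rather than merely some functor on homotopy categories — is the technical heart of the argument. A secondary, more routine point is to check on the explicit dg models of Definition \ref{dgcatlist} that the bimodule functors $i^*$ and $j^*$, after cofibrant replacement, really do restrict and descend as claimed.
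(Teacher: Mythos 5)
Your proof is correct and goes by the same strategy as the paper's: lift each constituent of $\Sigma$ to the dg level and compose in $\mathrm{Hqe}$. The presentations differ in where the bookkeeping is done. You decompose $\Sigma_1 = F$ from \ref{flem} into $G^\omega\circ(i^*)^{-1}$, lift each piece separately, and then handle the dg idempotent completion by hand; the paper instead lifts $\Sigma_1$ in one stroke by writing down two homotopy cofibre sequences of dg categories, namely $\per_{\mathrm{dg}}R \to \per_{\mathrm{dg}}A \to \per_{\mathrm{dg}}(\dq)$ (supplied by \ref{ntty} and \ref{Rcoloc}) and $\per_{\mathrm{dg}}R \to D_\mathrm{dg}^b(A) \to \Delta^\mathrm{dg}_R(A)$, and extending the evident pair of dg functors between the first two terms to a morphism of cofibre sequences. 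That trick absorbs the idempotent-completion issue into the choice of what ``homotopy cofibre'' means (Morita rather than quasi-equivalence), whereas you expose it and deal with it explicitly. Your extra care with the $(-)^\omega$ step is a genuine clarification: the paper's display $\per(\dq) \xrightarrow{\Sigma_1} \Delta_R(A) \xrightarrow{\Sigma_2} D_\mathrm{sg}(R)$ in the proof is a mild abuse (in \ref{kymap} the middle term really is $\Delta_R(A)^\omega$), and your observation that the lift of $j^*$ extends over $\Delta^\mathrm{dg}_R(A)\to\Delta^\mathrm{dg}_R(A)^\omega$ because $[D^\mathrm{dg}_\mathrm{sg}(R)]$ is idempotent complete is precisely the point that makes this legitimate. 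Both arguments buy the same theorem; yours is more verifiable at the cost of more moving parts, the paper's is shorter but leans on the reader to supply the Morita-level compatibilities.
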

	\begin{proof}
		We simply mimic the proof (\ref{kymap}) from the triangulated setting. Recalling from \ref{kymap} the construction of $\Sigma$ as a composition $\per(\dq) \xrightarrow{\Sigma_1} \Delta_R(A) \xrightarrow{\Sigma_2} {D}_\mathrm{sg}(R)$, we lift the two maps separately to dg functors. To lift $\Sigma_1$, first note that \ref{ntty} and \ref{Rcoloc} provide a homotopy cofibre sequence of dg categories $\per_{\mathrm{dg}}R \to \per_{\mathrm{dg}}A \to \per_{\mathrm{dg}}(\dq)$, in which the first map is $j_!$. There is a homotopy cofibre sequence $\per_{\mathrm{dg}}R \to D_\mathrm{dg}^b(A) \to \Delta^\mathrm{dg}_R(A)$, and we can extend $\id: \per_{\mathrm{dg}}R \to \per_{\mathrm{dg}}R$ and the inclusion $\per_{\mathrm{dg}}A \into D_\mathrm{dg}^b(A)$ into a morphism of homotopy cofibre sequences, which gives a lift of $\Sigma_1$. Lifting $\Sigma_2=j^*$ is similar and uses the sequence $\per_{\mathrm{dg}}R \to D_\mathrm{dg}^b(R) \to {D}^{\mathrm{dg}}_\mathrm{sg}(R)$.
	\end{proof}
	\begin{rmk}\label{dgperfgrmk}
		By \ref{kymap}, the kernel of $\Sigma_\mathrm{dg}$ is a dg enhancement of the triangulated category $\per_\mathrm{fg}(\dq)$.
	\end{rmk}
	Observe that $\Sigma(\dq)\simeq Ae$. Since we can canonically identify $\dq$ with the endomorphism dga $\dge_{\per_{\mathrm{dg}}(\dq)}(\dq)$, the component of $\Sigma_\mathrm{dg}$ at $\dq$ gives a dga map $\dq \to \dge_{D^\mathrm{dg}_\mathrm{sg}(R)}(Ae)$.
	\begin{defn}\label{comparisonmap}
		The \textbf{comparison map} $\Xi:\dq \to \dge_{D^\mathrm{dg}_\mathrm{sg}(R)}(Ae)$ is the component of the dg singularity functor $\Sigma_{\mathrm{dg}}$ at the object $\dq \in {\per_{\mathrm{dg}}(\dq)}$.
	\end{defn}
In other words, the comparison map is the morphism of dgas given by $$\dq \cong \dge_{\dq}(\dq)\xrightarrow{\Sigma} \dge(\Sigma(\dq))\simeq \dge(Ae).$$ In the next section, we will examine the comparison map and show that, under certain homological conditions on $R$ and $M$, the map $\Xi$ is a `quasi-isomorphism in nonpositive degrees'; i.e.\ the truncated map $\Xi:\dq \to \tau_{\leq 0}\dge(Ae)$ is a quasi-isomorphism.

\section{Partial resolutions of Gorenstein rings}
In this section we introduce the concept of a `noncommutative partial resolution' of a commutative Gorenstein ring, which generalises Van den Bergh's notion of an NCCR, and then we will prove an important theorem about the comparison map associated to such resolutions.
\begin{defn}\label{partrsln}
	Let $R$ be a commutative Gorenstein $k$-algebra. A $k$-algebra $A$ is a \textbf{(noncommutative) partial resolution} of $R$ if it is of the form $A\cong\enn_R(R\oplus M)$ for some MCM $R$-module $M$. Note that $A$ is a finitely generated module over $R$, and hence itself a noetherian $k$-algebra. Say that a partial resolution is a \textbf{resolution} if it has finite global dimension.
\end{defn}
Recall that if $M$ is an $R$-module then we write $M^\vee$ for the $R$-linear dual $\hom_R(M,R)$. If $A=\enn_R(R\oplus M)$ is a noncommutative partial resolution of $R$, observe that $e\coloneqq \id_R$ is an idempotent in $A$. One has $eAe\cong R$, $Ae \cong R \oplus M$, and $eA \cong R \oplus M^\vee$; in particular $(Ae)^\vee\cong eA$. Note that $Ae\cong M $ in the singularity category, and indeed we have $A/AeA\cong \underline{\enn}(M)$. 
\p In the sequel, we will refer to the following setup:
\begin{setup}\label{sfsetup}
	Let $R$ be a Gorenstein $k$-algebra such that $D_{\mathrm{sg}}(R)$ is idempotent complete. Fix a MCM $R$-module $M$ and let $A=\enn_R(R\oplus M)$ be the associated partial resolution. Let $e\in A$ be the idempotent $e=\id_R$. 
\end{setup}
Note that, to the above data, one can canonically attach a dga $\dq$. Since ${R}\cong 0$ in the stable category, $\R \underline{\enn}_{R}({R}\oplus M)$ is naturally quasi-isomorphic to $\R \underline{\enn}_{R}(M)$. Hence, the stable derived endomorphism algebra $\R \underline{\enn}_R(M)$ gets the structure of an $A$-module. Clearly, $\R \underline{\enn}_R(M) e$ is acyclic, and so $\R \underline{\enn}_R(M)$ is in fact a module over $\dq$.

\p The main theorem of this section is that in this setup the comparison map $$\Xi:\dq \to \R\underline{\enn}_R(M)$$ is a `quasi-isomorphism in nonnegative degrees'. The strategy will be as follows. Write $Q\coloneqq \dq$ for brevity. We are going to use the `Drinfeld quotient' model $B$ for $Q$ that appears in \ref{drinfeld}, and use this to write down an explicit model for $M$. This will allow us to calculate an explicit model for $\R\underline{\enn}_R(M)$. At this point we will be able to see that $Q$ and the truncation $\tau_{\leq 0}\R\underline{\enn}_R(M)$ are quasi-isomorphic as abstract complexes. We think of the comparison map $\Xi$ as giving an action of $B$ on (our model for) $M$, which we are able to explicitly identify. We are also able to explicitly identify the action of $\tau_{\leq 0}\R\underline{\enn}_R(M)$ on $M$. This allows us to explicitly identify the comparison map and it will be very easy to see that it is a quasi-isomorphism.
\begin{thm}\label{qisolem}Suppose that we are in the situation of Setup \ref{sfsetup}. Then, for all $j\leq 0$, the comparison map $\Xi:\dq \to \R\underline{\enn}_R(M)$ induces isomorphisms $$H^j(\Xi): H^j(\dq) \xrightarrow{\cong} \underline{\ext}_R^j(M,M).$$
\end{thm}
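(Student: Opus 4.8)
The plan is to follow the explicit-model strategy already sketched in the paragraph preceding the theorem statement. First I would fix the Drinfeld quotient model $B$ for $Q=\dq$ from \ref{drinfeld}, together with the concrete description of $B$ as an $A$-bimodule from \ref{drinfeldmodel}: in nonpositive degree $-n$ it is $Ae\otimes R^{\otimes n-1}\otimes eA$ with the Hochschild differential. Using $Ae\cong R\oplus M$ and $eA\cong R\oplus M^\vee$ one can pick out a explicit representative for the $A$-module $M$ (equivalently $Ae$) inside the dg singularity category: take a projective resolution $P\to M$ and a projective coresolution $M\to Q^\vee$ coming from a resolution of $M^\vee$, and splice them to the complete resolution $\mathbf{CR}(M)$ of \S7.4. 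The key point is that $\dge_{D^\mathrm{dg}_\mathrm{sg}(R)}(Ae)$ can be computed, via \ref{drinfeldlem} and the ind-object description of the dg singularity category, as $\hom_R(\mathbf{CR}(M),\mathbf{CR}(M))$ with the stabilising direct limit built in, and in nonpositive degrees this agrees with $\hom_R(P,M)$ since $\mathbf{CR}(M)$ and $P$ coincide there; its cohomology in degree $j\le 0$ is exactly $\underline{\ext}^j_R(M,M)$ by \ref{buchcohom}.

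Next I would identify the comparison map $\Xi$ at the chain level. The dg singularity functor $\Sigma_\mathrm{dg}$ from \ref{Fdg} sends $Q=\dge_{\per_\mathrm{dg}(Q)}(Q)$ to $\dge_{D^\mathrm{dg}_\mathrm{sg}(R)}(Ae)$; concretely, $\Xi$ encodes how $B$ acts on the chosen model of $M$. Using \ref{drinfeldmodel}(3) for the left/right $A$-actions and the multiplication on $B$, together with the fact that $d(h)=e$ maps each generator $h$ to the image of a null-homotopy of the identity on the $R$-summand, one checks that the action of the path $x_0 h x_1 h \cdots h x_n \in B^{-n}$ on $M$ is precisely the degree $-n$ endomorphism of $\mathbf{CR}(M)$ obtained by composing the structure maps; i.e.\ $\Xi$ is realised as the obvious chain map $B \to \hom_R(\mathbf{CR}(M),\mathbf{CR}(M))$ landing in nonpositive degrees, where it factors through $\hom_R(P,M)$. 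At this point, comparing the differential on $B^{-n}$ (the Hochschild differential of \ref{drinfeldmodel}(2)) against the differential on $\hom_R(P,M)$ one sees that $\Xi$ in each nonpositive degree is, after identifying $B^{-n}$ with the bar-type complex computing $\tor^{R}_{n-1}(Ae,eA)$ (exactly as in the alternate proof of \ref{dqexact}), the standard comparison between this bar complex and $\hom_R(\mathbf{CR}(M),M)$; both compute the same stable Ext, so $H^j(\Xi)$ is an isomorphism for $j\le 0$.

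The cleanest way to organise the last step is probably to avoid chasing the full bar complex and instead argue by a truncated-module comparison: let $T\coloneqq(\tau_{\le -1}B)[-1]$, which by the alternate proof of \ref{dqexact} is quasi-isomorphic as an $A$-bimodule to $\cell A = Ae\lot_R eA$, and $B\simeq \mathrm{cone}(T\xrightarrow{\mu}A)$. On the other side, truncating $\hom_R(\mathbf{CR}(M),M)$ gives a complex whose cone on the augmentation to $\underline\enn_R(M)=A/AeA$ is $\R\hom_R(M^\vee \lot_R M,\,?)$-type data; matching these two cones under $\Xi$ reduces the whole statement to the known identification $\cell A\xrightarrow{\mu}A\to\dq$ versus the complete-resolution exact triangle, which is exactly \ref{buchcohom} again. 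The main obstacle is the bookkeeping in this second paragraph: making the identification of $\Xi$ at the chain level genuinely precise — i.e.\ verifying that the dg singularity functor, when unwound through the ind-category model of $D^\mathrm{dg}_\mathrm{sg}(R)$ and the Drinfeld model of $\dq$, really does send the generator $h$ (with $dh=e$) to the contracting homotopy that appears in the complete resolution — requires care with signs and with the passage to the stabilising colimit, and this is where all the real content sits. Once that chain-level dictionary is nailed down, the cohomological statement for $j\le 0$ is immediate from Buchweitz's computation \ref{buchcohom} and the cohomology of $\dq$ from \ref{derquotcohom}.
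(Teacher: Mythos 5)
Your overall strategy is sound, and you have correctly located where the real content sits (the chain-level identification of $\Xi$), but your choice of model for $Ae$ in the dg singularity category creates a gap that the paper's own proof is specifically designed to avoid, and your plan as stated does not fill it.

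The paper's proof does \emph{not} use the complete resolution $\mathbf{CR}(M)$. Instead it builds an ind-model $Ce \simeq \mathrm{cone}(\mathrm{Bar}(Ae)\to Ae)$ from the \emph{bar} resolution of $Ae$ as an $R$-module, i.e.\ from $Ae\otimes R^{\otimes \bullet}\otimes R$, and then checks directly that the singularity functor $\Sigma_\mathrm{dg}$ sends the Drinfeld path $x_0h x_1 h\cdots h x_n$ to the operator that concatenates tensors. This works precisely because the two sides are built out of the \emph{same} bar-type tensor products: $B^{-n}\cong Ae\otimes R^{\otimes n-1}\otimes eA$ and $\mathrm{Bar}(Ae)\otimes_R eA$ differ only in bookkeeping, so the $B$-action on $Ce$ can be written down on the nose. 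The whole argument then closes because $B$, $\tau_{\le 0}\dge(Ce)$, and $\mathrm{cone}(T\xrightarrow{\mu}A)$ are visibly the same complex and the comparison map $\Xi$ is visibly the identity in each degree. Your plan starts from a generic $P\to M$ spliced into $\mathbf{CR}(M)$; to make the claim ``the action of $x_0h\cdots hx_n$ is precisely the degree $-n$ endomorphism of $\mathbf{CR}(M)$ obtained by composing the structure maps'' precise, you would first have to choose a comparison chain map from the bar resolution down to $P$, compatibly with the $A$-module structure and the stabilising colimit, and then track it through the colimit/limit computation of $\dge$. That extra layer is not trivial bookkeeping; it is exactly the step the paper's bar-model avoids. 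You acknowledge ``this is where all the real content sits'' --- agreed, but the proposal stops short of actually nailing the dictionary down, and for a generic $P$ there is no canonical way to ``just read off'' the image of $h$ as a specific contracting homotopy.

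Two smaller issues are worth flagging. First, identifying $\dge_{D^\mathrm{dg}_\mathrm{sg}(R)}(Ae)$ with $\hom_R(\mathbf{CR}(M),\mathbf{CR}(M))$ is itself a nontrivial model comparison: \ref{buchcohom} computes $\underline{\ext}^j_R(M,N)$ via $\hom_R(\mathbf{CR}(M),N)$, not via the endomorphism dga of the complete resolution in the Drinfeld quotient, and the paper sidesteps this by \emph{constructing} an ind-object model and computing its $\dge$ via the ind-pro-limit formula rather than importing a previously known model. Second, the sentence ``both compute the same stable Ext, so $H^j(\Xi)$ is an isomorphism'' is a non-sequitur as written: two complexes with isomorphic cohomology can be connected by a map inducing zero; you need the specific identification of $\Xi$ to conclude, which is the gap already noted. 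So the right summary is: same skeleton (explicit Drinfeld model, cone comparison, reduce to $\ref{dqexact}$ and $\ref{buchcohom}$), but a genuinely different choice of model for $Ae$ that pushes the difficulty into an unfilled chain-level comparison, where the paper's proof instead chooses the model so that the comparison is transparent.
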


\begin{proof}
We begin by writing down a model for $M$. Let $B$ be the model for $Q$ from \ref{drinfeld}.
We immediately replace $M$ by the isomorphic (in the singularity category!) object $Ae$. Let $\mathrm{B}(Ae)=\cdots \to Ae \otimes_k R \otimes_k R \to Ae\otimes_k R$ be the bar resolution of the $R$-module $Ae$. Letting $\mathrm{Bar}(Ae)$ be the filtered system of perfect submodules of $B(Ae)$, we then have $B(Ae)\cong \varinjlim \mathrm{Bar}(Ae)$. Upon applying $-\otimes_R eA$ to $B(Ae)$ we obtain (one model of) $\cell A$. Noting that $j_!=-\lot_R eA$, we hence see that $\mathrm{Bar}(Ae)\otimes_R eA$ is an object of $\cat{ind}j_!\per_{\mathrm{dg}} R$. Since tensor products commute with filtered colimits, we have$$\varinjlim(\mathrm{Bar}(Ae)\otimes_R eA)\cong  \cdots \to Ae\otimes R\otimes R\otimes eA\to Ae\otimes R\otimes eA\to Ae\otimes eA\simeq \cell A$$where the tensor products are taken over $k$. Put $T\coloneqq \varinjlim(\mathrm{Bar}(Ae)\otimes_R eA)$ and observe that $T$ is isomorphic to the $A$-bimodule $(\tau_{<0}B)[-1]$ that appears in \ref{drinfeldmodel} and the alternate proof of \ref{dqexact}. Note that $\mathrm{Bar}(Ae)\otimes_R eA$ also comes with a multiplication map $\mu$ to $A$ that lifts the multiplication $Ae\otimes_R eA \to A$. Let $C$ be the cone of this map; then $C$ is an ind-bounded module with a map from $A$ whose cone is in $\cat{ind}j_!\per_{\mathrm{dg}} R$. In fact, $\varinjlim C \simeq \dq$ by \ref{dqexact}. Hence, if $P\in j_!\per_{\mathrm{dg}} R=\thick_{\mathrm{dg}}(eA)$, then $\dgh_{A}(P,C) \simeq\R\hom_A(P,\dq)\simeq 0$, since $P$ is compact and we have the semi-orthogonal decomposition of \ref{semiorthog}. Hence $C$ is a representative of the $A$-module $\dq$ in the Drinfeld quotient $\Delta^\mathrm{dg}_R(A)$.

\p Note that the dg functor $j^*: \Delta^\mathrm{dg}_R(A) \to {D}^{\mathrm{dg}}_\mathrm{sg}(R)$ is simply multiplication on the right by $e$. Hence, sending $C$ through this map, we obtain an ind-object $Ce\simeq\mathrm{cone}(\mathrm{Bar}(Ae) \to Ae)$ that represents $\Sigma(\dq)\simeq Ae$ in the dg singularity category $D^\mathrm{dg}_\mathrm{sg}(R)$. As an aside, one can check this directly: since $B(Ae)$ resolves $Ae$, and mapping cones commute with filtered colimits, it is clear that $\varinjlim Ce$ is acyclic, and that $Ce$ admits a map from $Ae \in D^b(R)$ whose cone is the ind-perfect $R$-module $\mathrm{Bar}(Ae)$.

\p Now we will explicitly identify the dga $\R\underline{\enn}_R(Ae)\simeq \dge(Ce)$; to simplify notation, we will frequently omit subscripts. Write $Ce=\{W_\alpha\}_\alpha$, where each $W_\alpha$ is a cone $V_\alpha \to Ae$ with $V_\alpha$ perfect. We compute
\begin{align*}
		\dge(Ce) & \coloneqq \varprojlim_\alpha\varinjlim_\beta \R\hom_R\left(\mathrm{cone}(V_\alpha \to Ae), W_\beta\right) 
		\\ & \simeq \varprojlim_\alpha\varinjlim_\beta \mathrm{cocone}\left( \R\hom_R(Ae, W_\beta) \to\R\hom_R(V_\alpha, W_\beta)\right)
		\\ & \simeq \varprojlim_\alpha \mathrm{cocone}\left(\varinjlim_\beta  \R\hom_R(Ae, W_\beta) \to \varinjlim_\beta\R\hom_R(V_\alpha, W_\beta)\right)
		\\ & \simeq \varprojlim_\alpha \mathrm{cocone}\left(\varinjlim_\beta  \R\hom_R(Ae, W_\beta) \to \R\hom_R(V_\alpha, \varinjlim_\beta W_\beta)\right)&&\text{since } V_\alpha \text{ is compact}
		\\ & \simeq \varinjlim_\beta  \R\hom_R(Ae, W_\beta) &&\text{since } \varinjlim_\beta W_\beta\simeq 0.
		\\ & \simeq  \varinjlim_\beta \mathrm{cone}\left(\R\hom_R(Ae,V_\beta) \to \R\hom_R(Ae,Ae)\right) 
		\\ & \simeq \mathrm{cone}\left(\varinjlim_\beta \R\hom_R(Ae,V_\beta) \to \R\enn_R(Ae)\right). 
		\end{align*}
Fix a $\beta$ and consider $\R\hom_R(Ae, V_\beta)$. Since $V_\beta$ is perfect, and $Ae$ has some finitely generated projective resolution, we can write $\R\hom_R(Ae, V_\beta)\simeq V_\beta \otimes_R \R\hom_R(Ae,R)$. Since $M$ is MCM we have an isomorphism $\R\hom_R(Ae,R)\simeq\hom_R(Ae,R)$, so that $\R\hom_R(Ae, V_\beta)$ is quasi-isomorphic to the tensor product $V_\beta \otimes_R \hom_R(Ae,R)$. The natural isomorphism $eA\otimes_A Ae \to R$ gives an isomorphism $\hom_R(Ae,R)\cong eA$, so we get $\R\hom_R(Ae, V_\beta)\simeq V_\beta \otimes_R eA$. So we have $$\varinjlim_\beta \R\hom_R(Ae,V_\beta)\simeq\varinjlim_\beta(  V_\beta \otimes_R eA)\cong B(Ae)\otimes_R eA\simeq T.$$Hence, we have $\dge(Ce) \cong \mathrm{cone}(T \to \R\enn_R(Ae))$. From the description above, we see that the map $T \to \R\enn_R(Ae)$ is exactly the multiplication map $\mu$. More precisely, $xe \otimes ey \in T^0$ is sent to the derived endomorphism that multiplies by $xey \in AeA$ on the left. Putting $B'\coloneqq \tau_{\leq 0} \dge(Ce)$, we hence have a quasi-isomorphism $B'\simeq \mathrm{cone}(T \xrightarrow{\mu} A)$. 

\p By \ref{dqexact}, one has a quasi-isomorphism $B\simeq \mathrm{cone}(T \xrightarrow{\mu} A)$, so that we can already conclude that $B$ and $B'$ are abstractly quasi-isomorphic as complexes. Now we need to identify the map $\Xi: B \to B'$. We use the explicit description of $B$ given in \ref{drinfeldmodel}. Take a tensor $t=x\otimes r_1 \otimes \cdots \otimes r_i \otimes y\in B$; we remark that we allow $i=0$, in which case the convention is that $t\in A$. The action of $B$ on itself is via concatenating tensors; i.e.\ $$t.\left(w\otimes s_1 \otimes \cdots \otimes s_j \otimes z\right)=x\otimes r_1 \otimes \cdots \otimes r_i \otimes yw\otimes s_1 \otimes \cdots \otimes s_j \otimes z$$(one can check that this also holds when at least one of $i$ or $j$ are zero).

\p We think of the map $\Xi: B\to \dge(Ce)$ as giving an action of $B$ on $Ce$. More precisely, one takes the action of $B$ on itself and sends it through the map $\Sigma$ to obtain an action of $B$ on $Ce$. It is not hard to check this action of $B$ on $Ce$ is via concatenation; more precisely take $c=a\otimes l_1 \otimes \cdots \otimes l_k \otimes b \in Ce \simeq \mathrm{cone}(\mathrm{Bar}(Ae) \to Ae)$; then with notation as before we have $$t.c=x\otimes r_1 \otimes \cdots \otimes r_i \otimes ya\otimes l_1 \otimes \cdots \otimes l_k \otimes b$$(which remains true for $i=0$). Because $Ce$ is an ind-object, when we write this we mean that $c$ is an element of some level $(Ce)_\alpha$, and $t.c$ is an element of some other level $(Ce)_\beta$, and we identify $c$ as an element of $(Ce)_\beta$ along the canonical map $(Ce)_\alpha \to (Ce)_\beta$. With the convention that for $k=0$, the element $a\otimes l_1 \otimes \cdots \otimes l_k \otimes b$ is just an element of $Ae$, the above is also true for $k=0$.

\p I claim that across the identification $B'\simeq \mathrm{cone}(T \xrightarrow{\mu} A)$, the action of $B'$ on $Ce$ is precisely the action described above. Showing this claim will prove the theorem, since we have then factored the dga map $\tau_{\leq 0}\Xi$ into a composition of two quasi-isomorphisms $B \to \mathrm{cone}(T \xrightarrow{\mu} A) \to B'$.

\p We saw that we could write $\tau_{\leq 0}B'$ as a cone of the form $\mathrm{cone}\left(\varinjlim_\beta \R\hom_R(Ae,V_\beta) \to A\right)$ where the left hand part acts on $Ce$ by sending $Ae$ into $\mathrm{Bar}(Ae)=\{V_\beta\}_\beta$ in the obvious manner, and the right hand part acts on $Ce$ by sending $Ae$ into itself by multiplication on the left. It is clear that across the quasi-isomorphism $B \to B'$, the $A$ summand in the cone acts in the correct manner. So we need to check that the action of $\varinjlim\R\hom_R(Ae,V_\beta)$ on $Ce$ corresponds to the concatenation action of $T$ on $Ce$ provided by $\Xi$.

\p But across the quasi-isomorphism $V_\beta \otimes_R eA \simeq \R\hom_R(Ae, V_\beta)$, an element $v\otimes x$ corresponds to the morphism that sends $y \mapsto v\otimes xy$. Taking limits, we see that across the quasi-isomorphism $T \simeq \varinjlim_\beta \R\hom_R(Ae,V_\beta)$, an element $a\otimes\cdots \otimes x$ corresponds to the morphism that sends $y$ to $a\otimes\cdots \otimes xy$. But this is precisely the concatenation action of $T$ on $Ce$.
\end{proof}

\begin{rmk}
One can think of the above theorem as a categorified version of \ref{buchcohom}: firstly, the proof above writes $\dge(Ce)$ as a cone $\cell A \to \R\enn_R(Ae)$ with $\cell A$ in negative degrees. Hence there is a quasi-isomorphism $\tau_{>0}\R\enn_R(Ae) \to \tau_{>0} \dge(Ce)$. Secondly, for $j<-1$, there is an isomorphism $\underline{\ext}_R^j(Ae,Ae)\cong \tor^R_{-j-1}(Ae,eA)$ which one obtains from \ref{derquotcohom}.
	\end{rmk}
	\begin{rmk}Morally, we ought to have $\R\underline{\enn}_R(M) \simeq \hom_R(\mathbf{CR}(M),M)$, but the right hand side does not admit an obvious dga structure. Note that $\mathbf{CR}(M)$ is glued together out of a projective resolution $P$ and a projective coresolution $Q^\vee$ of $M$, and hence we ought to have $$\R\underline{\enn}_R(M) \simeq \mathrm{cone}\left[ \hom_R(Q^\vee, M) \to \hom_R(P,M)\right] \simeq \mathrm{cone}\left[ \cell A\to \R\enn_R(M)\right]$$which we do indeed obtain during the course of the proof of \ref{qisolem}.
	\end{rmk}

	\section{Periodicity in the derived quotient}\label{dqperiod}
	Assume in this part that $R$ is a complete local isolated hypersurface singularity and that $M$ is a MCM $R$-module. Put $A\coloneqq \enn_R(R\oplus M)$ and $e\coloneqq \id_R$. Note that by \ref{sgidemref}, the singularity category of $R$ is idempotent complete, so that we are in the situation of Setup \ref{sfsetup}. Because $R$ is a complete local hypersurface singularity, by \ref{ebudper} the shift functor of $\stab R$ is 2-periodic. The following lemma is useful:
	\begin{lem}\label{dqcohom}
Let $j \in \N$. Then there are isomorphisms

$$H^j(\dq)\cong \begin{cases} 0 & j>0
\\ \underline{\enn}(M) & j=0
\\ \ext_R^{-j}(M,M) & j<0\end{cases}
$$
	 \end{lem}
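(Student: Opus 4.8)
The statement follows by combining the cohomology calculation for the derived quotient of a partial resolution (Theorem \ref{qisolem}) with the 2-periodicity of the singularity category of a complete local hypersurface singularity (Theorem \ref{ebudper}, and its consequence Corollary \ref{perext}). The plan is to work degree by degree. In degree $j>0$ we have $H^j(\dq)=0$ since $\dq$ is a nonpositive dga; this is already recorded in \ref{derquotcohom}. In degree $j=0$, the same result gives $H^0(\dq)\cong A/AeA$, and by the discussion following \ref{partrsln} (using that $eAe\cong R$, $Ae\cong R\oplus M$) we have $A/AeA\cong\underline{\enn}_R(M)$, which gives the middle case.

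The content is in the negative degrees. First I would invoke \ref{qisolem}: since we are in the situation of Setup \ref{sfsetup} (the singularity category $D_{\mathrm{sg}}(R)$ is idempotent complete by \ref{sgidemref} because $R$ is complete local), the comparison map $\Xi:\dq\to\R\underline{\enn}_R(M)$ induces isomorphisms $H^j(\dq)\xrightarrow{\cong}\underline{\ext}_R^j(M,M)$ for all $j\leq 0$. Thus for $j<0$ it remains to identify the stable Ext group $\underline{\ext}_R^j(M,M)$ with the ordinary Ext group $\ext_R^{-j}(M,M)$. This is exactly the second assertion of \ref{perext}: since $R$ is a complete local hypersurface singularity, $\Omega^2\cong\id$ on $\stab R$ by \ref{ebudper}, so \ref{perext} applies and yields, for $j<0$, a functorial isomorphism $\underline{\ext}_R^j(M,M)\cong\ext_R^{-j}(M,M)$. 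Composing the two isomorphisms gives $H^j(\dq)\cong\ext_R^{-j}(M,M)$ for $j<0$, completing all three cases.

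There is no genuine obstacle here: the lemma is a bookkeeping assembly of \ref{qisolem} and \ref{perext}, both of which are already established. The only point requiring a moment's care is verifying the hypotheses of \ref{qisolem} and \ref{perext} are in force — namely that Setup \ref{sfsetup} holds (which needs idempotent completeness of $D_{\mathrm{sg}}(R)$, supplied by \ref{sgidemref} since $R$ is complete local), and that $R$ being a complete local \emph{hypersurface} singularity is precisely what licenses the appeal to Eisenbud periodicity \ref{ebudper} feeding into \ref{perext}. Once these are noted the proof is a one-line chain of isomorphisms in each degree.
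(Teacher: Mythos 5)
Your proposal is correct and follows exactly the paper's own argument: the $j>0$ and $j=0$ cases are immediate from \ref{derquotcohom}, and for $j<0$ one chains the isomorphism $H^j(\dq)\cong\underline{\ext}_R^j(M,M)$ from \ref{qisolem} with the periodicity identification $\underline{\ext}_R^j(M,M)\cong\ext_R^{-j}(M,M)$ from \ref{perext}. The paper's proof is even terser, but the content and the cited lemmas are identical.
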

 \begin{proof}
The only assertion that is not clear is the case $j<0$. But in this case, by \ref{perext} we have isomorphisms $\underline{\ext}^j_R(M,M)\cong \ext^{-j}_R(M,M)$. Hence by \ref{qisolem} we have isomorphisms $H^{j}(\dq)\cong \ext^{-j}_R(M,M)$ for all $j<0$. 
 	\end{proof}
	 The extra structure given by periodicity allows us to have good control over the relationship between $\dq$ and $\R \underline{\enn}_R(M)$.
	 	\begin{defn}
	 	If $W$ is a dga and $w\in H(W)$ is a cohomology class, say that $w$ is \textbf{homotopy central} if it is central in the graded algebra $H(W)$. We abuse terminology by referring to cocycles in $W$ as homotopy central.
	 \end{defn}
Recall from \S\ref{hypper} the existence of an invertible homotopy central cohomology class $\Theta=[\theta]$ in ${\ext}^{2}_R(M,M)$ such that multiplication by $\Theta$ is an isomorphism.

	\begin{thm}\label{etaex}Let $\Xi$ be the comparison map.\hfill
		\begin{enumerate}
			\item[\emph{1.}] There is a degree $-2$ homotopy central class $\eta \in H^{-2}(\dq)$ such that $\Xi(\eta)=\Theta^{-1}$.
			\item[\emph{2.}] Multiplication by $\eta$ induces isomorphisms $H^j(\dq) \to H^{j-2}(\dq)$ for all $j\leq 0$.
			\item[\emph{3.}] The derived localisation of $\dq$ at $\eta$ is quasi-isomorphic to $\R \underline{\enn}_R(M)$.
			\item[\emph{4.}] The comparison map $\Xi: \dq \to \R \underline{\enn}_R(M)$ is the derived localisation map.
		\end{enumerate}
	\end{thm}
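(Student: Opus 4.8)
The plan is to build the degree $-2$ class $\eta$ directly from the periodicity witness $\theta$ of Section~\ref{hypper}, transport it along the comparison map $\Xi$, and then deduce the three consequences by combining \ref{qisolem} with the explicit description of the derived localisation from \S\ref{derloc}. For part~1: by \ref{periodicend} we have a quasi-isomorphism $\R\underline{\enn}_R(M)\simeq \enn_R(\tilde M)[\theta^{-1}]$ for a $2$-periodic free resolution $\tilde M$ of $M$, and the witness $\theta$ defines an invertible homotopy central class $\Theta\in\underline{\ext}_R^2(M,M)$ whose inverse $\Theta^{-1}$ sits in $\underline{\ext}_R^{-2}(M,M)$. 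By \ref{qisolem} the map $H^{-2}(\Xi)\colon H^{-2}(\dq)\to\underline{\ext}_R^{-2}(M,M)$ is an isomorphism, so I define $\eta$ to be the unique class with $\Xi(\eta)=\Theta^{-1}$. Homotopy centrality of $\eta$ follows because $H^{\leq 0}(\Xi)$ is an isomorphism of graded algebras onto $\underline{\ext}_R^{\leq 0}(M,M)$ (again \ref{qisolem}, noting that $\Xi$ is a dga map so the isomorphisms on cohomology are ring maps), and $\Theta^{-1}$ is central in $\underline{\ext}_R^*(M,M)$ because $\Theta$ is.

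For part~2: by \ref{qisolem} the graded algebra $H^{\leq 0}(\dq)$ is identified via $\Xi$ with $\underline{\ext}_R^{\leq 0}(M,M)$, and under this identification multiplication by $\eta$ becomes multiplication by $\Theta^{-1}$. Since $\Theta$ is invertible in $\underline{\ext}_R^*(M,M)$ and multiplication by $\Theta$ is an isomorphism $\underline{\ext}_R^j\to\underline{\ext}_R^{j+2}$ by construction of the periodicity witness, multiplication by $\Theta^{-1}$ is an isomorphism $\underline{\ext}_R^j\to\underline{\ext}_R^{j-2}$ for all $j$, in particular for $j\leq 0$. (One has to be slightly careful: the isomorphism $H^j(\dq)\cong\underline{\ext}_R^j(M,M)$ of \ref{qisolem} is only asserted for $j\leq 0$, but that is exactly the range needed here, since for $j\leq 0$ both source and target of multiplication by $\eta$ lie in nonpositive degrees.)

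For parts~3 and~4 I would argue as follows. The localisation map $\dq\to(\dq)[\eta^{-1}]$ is, by \ref{homepi}, a homological epimorphism, and by the smashing description in \S\ref{derloc} one has $(\dq)[\eta^{-1}]\simeq\dq\lot_{\dq}(\dq)[\eta^{-1}]$; concretely $(\dq)[\eta^{-1}]$ is computed as the colimit of $\dq\xrightarrow{\eta}\dq[2]\xrightarrow{\eta}\dq[4]\to\cdots$. Its cohomology is therefore $\varinjlim(H^*(\dq)\xrightarrow{\eta}H^{*+2}(\dq)\to\cdots)$, and using part~2 together with \ref{dqcohom} (which identifies $H^*(\dq)$ with $\underline{\ext}_R^*(M,M)$ throughout the negative range and with $\underline{\enn}(M)$ in degree $0$, vanishing in positive degrees) one computes this colimit to be exactly the $2$-periodicisation of $\underline{\ext}_R^*(M,M)$, i.e.\ $\underline{\ext}_R^*(M,M)\cong H^*(\R\underline{\enn}_R(M))$; here I would invoke \ref{perext} and \ref{periodicend} to match the answer with $H^*(\R\underline{\enn}_R(M))$ degree by degree. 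To upgrade this cohomology computation to a quasi-isomorphism and simultaneously identify the localisation map with $\Xi$, I would observe that $\Xi$ sends $\eta$ to the invertible class $\Theta^{-1}$, hence (by the universal property of derived localisation, \S\ref{derloc}) factors as $\dq\to(\dq)[\eta^{-1}]\to\R\underline{\enn}_R(M)$; the induced map $(\dq)[\eta^{-1}]\to\R\underline{\enn}_R(M)$ is then a quasi-isomorphism because it is one on cohomology by the colimit computation just described (both sides being the $\eta$-colimit of $\dq$, one directly and one via \ref{periodicend}).

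The main obstacle I anticipate is the last step: carefully checking that the map $(\dq)[\eta^{-1}]\to\R\underline{\enn}_R(M)$ induced by $\Xi$ really is a quasi-isomorphism, rather than merely an isomorphism on cohomology that could a priori be incompatible with the colimit structure. The cleanest route is probably to note that both $(\dq)[\eta^{-1}]$ and $\R\underline{\enn}_R(M)\simeq\enn_R(\tilde M)[\theta^{-1}]$ are, by \ref{periodicend} and the smashing formula, models for the same homotopy colimit $\holim$-type construction applied to $\dq$ (one localising at $\eta$, the other at its image $\theta$), so that $\Xi$ induces a map of colimit diagrams which is a levelwise quasi-isomorphism on nonpositive truncations by \ref{qisolem}; passing to the colimit then gives the quasi-isomorphism. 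A secondary subtlety is bookkeeping the fact that \ref{qisolem} only controls $\Xi$ in nonpositive degrees, so all colimit arguments must be arranged so that the relevant comparisons take place within that range before inverting $\eta$.
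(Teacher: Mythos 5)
Your proposal is correct and follows essentially the same route as the paper: define $\eta$ as the preimage of $\Theta^{-1}$ under $H^{-2}(\Xi)$, deduce centrality and the periodicity isomorphism from the commuting square with $\Theta^{-1}$, factor $\Xi$ through the localisation via the universal property, and then show the factored map is a quasi-isomorphism by the degree-chasing argument that for any fixed degree one can multiply by a large enough power of $\eta$ to land in the range $j\leq 0$ where \ref{qisolem} already gives an isomorphism. The only cosmetic difference is that the paper appeals to the flatness of localisation at a homotopy central element (\cite[5.3]{bcl}) to get $H(\mathbb{L}_\eta\dq)\cong H(\dq)[\eta^{-1}]$ directly, where you instead compute this as a filtered colimit of shifts — these are the same computation — and your self-flagged ``obstacle'' about upgrading the cohomology isomorphism to a quasi-isomorphism is resolved exactly as you suggest, by the commuting square with $\eta^i$ and $\Theta^{-i}$ for $i\gg 0$.
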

	\begin{proof}By \ref{qisolem}, $H^j(\Xi)$ is an isomorphism for $j\leq 0$. The first statement is now clear. The element $\eta$ is homotopy central in $\dq$ because $\Theta$ is homotopy central in $\R \underline{\enn}_R(Ae)$. Since $\Xi$ is a dga map, the following diagram commutes for all $j$:
		$$\begin{tikzcd} H^j(\dq) \ar[r,"\eta"]\ar[d,"\Xi"] & H^{j-2}(\dq)\ar[d,"\Xi"] \\ \underline{\ext}_R^{j}(M,M) \ar[r,"\Theta^{-1}"] & \underline{\ext}_R^{j-2}(M,M)\end{tikzcd} $$
		The vertical maps and the lower horizontal map are isomorphisms for $j\leq 0$, and hence the upper horizontal map must be an isomorphism, which is the second statement. Let $B$ be the derived localisation of $\dq$ at $\eta$. Because $\eta$ is homotopy central, the localisation is flat \cite[5.3]{bcl} and so we have $H(B) \cong H(\dq)[\eta^{-1}]$. In particular, for $j\leq 0$, we have $H^j(B)\cong H^j(\dq)$. The map $\Xi$ is clearly $\eta$-inverting, which gives us a factorisation of $\Xi$ through $\Xi':B \to \R \underline{\enn}_R(M)$. Again, the following diagram commutes for all $i,j$ :
		$$\begin{tikzcd} H^j(B) \ar[r,"\eta^i"]\ar[d,"\Xi'"] & H^{j-2i}(B)\ar[d,"\Xi'"] \\ \underline{\ext}_R^{j}(M,M) \ar[r,"\Theta^{-i}"] & \underline{\ext}_R^{j-2i}(M,M)\end{tikzcd} $$The horizontal maps are always isomorphisms. For a fixed $j$, if one takes a sufficiently large $i$, then the right-hand vertical map is an isomorphism. Hence, the left-hand vertical map must be an isomorphism too. But since $j$ was arbitrary, $\Xi'$ must be a quasi-isomorphism, proving the last two statements.
	\end{proof} 
\begin{rmk}\label{rigidrmk}
	If $M$ is rigid (see \ref{rigiddefn}) then we have $H(\dq)\cong A/AeA[\eta]$, but in general $\dq$ need not be formal.
\end{rmk}
	Left multiplication by $\eta$ is obviously a map $\dq \to \dq$ of right $\dq$-modules. Since $\eta$ is homotopy central, one might expect $\eta$ to be a bimodule map, and in fact this is the case:
	\begin{prop}\label{etahoch}
		The element $\eta$ lifts to an element of $ HH^{-2}(\dq)$, the $-2^\text{nd}$ Hochschild cohomology of $\dq$ with coefficients in itself.
	\end{prop}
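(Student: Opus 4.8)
\textbf{Proof proposal for Proposition \ref{etahoch}.}

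The plan is to produce a cocycle representative for $\eta$ in the Hochschild cochain complex $\R\hom_{Q^e}(Q,Q)$, where $Q\coloneqq\dq$, by transporting the periodicity structure through the comparison map and using the fact (\ref{etaex}) that $\Xi\colon Q\to\R\underline{\enn}_R(M)$ is the derived localisation of $Q$ at $\eta$, and in particular a homological epimorphism (\ref{homepi}). First I would recall that by \ref{hochprop} taking the derived quotient preserves Hochschild cohomology complexes, so there is nothing extra to do on the $A$-side: it suffices to construct the class in $HH^{-2}(Q)$ directly. The key observation is that $\eta$ is already known to be homotopy central, i.e.\ central in $H(Q)$, and a homotopy-central cohomology class of a dga $E$ is precisely an element of the image of the canonical map $HH^*(E)\to H^*(E)$ coming from the identity bimodule — so morally we want to upgrade the chain of homotopies witnessing centrality into a genuine Hochschild cocycle.

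The cleaner route, which I would pursue, is via localisation. Because $\Xi$ is a homological epimorphism, $\R\underline{\enn}_R(M)\lot_Q\R\underline{\enn}_R(M)\simeq\R\underline{\enn}_R(M)$, and hence by \ref{hochprop}-style arguments (or directly by base change along a homological epimorphism, cf.\ the triangles in \S5.5) there is a natural map on Hochschild complexes $HC(Q)\to HC(\R\underline{\enn}_R(M))$ compatible with the maps to $H^*(-)$. On the localised algebra $\R\underline{\enn}_R(M)\simeq\R\underline\enn_{D^\mathrm{dg}_\mathrm{sg}(R)}(M)$, the periodicity element $\Theta^{-1}\in\underline\ext_R^{-2}(M,M)$ does lift to $HH^{-2}$: indeed, by \ref{periodicend} we may model $\R\underline\enn_R(M)$ as $\enn_R(\tilde M)[\theta^{-1}]$ for a $2$-periodic free resolution $\tilde M$ with periodicity witness $\theta\in\enn^2_R(\tilde M)$, and $\theta$ is a \emph{central} cocycle in this strict model — central cocycles in a dga $E$ are exactly $0$-cocycles in $\R\hom_{E^e}(E,E)$, so $\theta^{-1}$ (equivalently $\Theta^{-1}$) defines a Hochschild class. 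Now $\eta$ and $\Theta^{-1}$ agree after applying $\Xi$, and $\Xi$ is an isomorphism on cohomology in nonpositive degrees by \ref{qisolem}; combined with the compatibility of the Hochschild-to-$H^*$ comparison maps with $\Xi$, I would deduce that the Hochschild class of $\theta^{-1}$ pulls back, through the (homotopy) fibre sequence relating $HC(Q)$ and $HC(\R\underline\enn_R(M))$, to a class in $HH^{-2}(Q)$ mapping to $\eta$. Concretely: the element of $\R\hom_{Q^e}(Q,Q)$ realising left-multiplication-by-$\eta$ is a $0$-cochain whose Hochschild differential measures the failure of $\eta$ to be strictly central; one shows this differential is a coboundary by pushing it forward along $\Xi$, where it becomes the (vanishing) differential of the strictly central $\theta^{-1}$, and then using injectivity of $\Xi$ in the relevant degree to conclude it already vanished in $H^{-1}(\R\hom_{Q^e}(Q, Q))$ up to a coboundary.

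The main obstacle I anticipate is the bookkeeping around \emph{strictification}: $\eta\in H^{-2}(Q)$ is a cohomology class, not a strict central cocycle in whatever model of $Q$ one fixes, and the periodicity witness $\theta$ lives in a \emph{different} model ($\enn_R(\tilde M)[\theta^{-1}]$, or its truncation), so one must be careful that the comparison map $\Xi$ and the induced map on Hochschild complexes are being computed with compatible cofibrant/fibrant replacements. In particular one should not expect $\eta$ itself to lift to a literal Hochschild cocycle in the Drinfeld-quotient model $B$ of \ref{drinfeld}; the content of the proposition is the purely existential statement about $HH^{-2}$, and the honest work is checking that the maps $HH^{-2}(Q)\to H^{-2}(Q)$ and $HH^{-2}(\R\underline\enn_R(M))\to H^{-2}(\R\underline\enn_R(M))$ sit in a commuting square with the localisation map and that $\Theta^{-1}$ is hit on the right. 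Once that diagram is in place, the conclusion is immediate from \ref{qisolem} and \ref{periodicend}; I would expect the total argument to be short modulo these model-categorical compatibilities.
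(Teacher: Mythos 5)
Your proposal follows the same route as the paper: model the localised algebra as $\enn_R(\tilde M)[\theta^{-1}]$ via \ref{periodicend}, observe that $\theta^{-1}$ is a \emph{strict} central cocycle there and hence defines a class in $HH^{-2}$, use \ref{hochprop} to identify $HH^*(E)$ with $HH^*(\dq,E)$, and then transport back along the change-of-coefficients map $HH^{-2}(\dq)\to HH^{-2}(\dq,E)$. Up to that point you have reconstructed the paper's argument.

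Where you are imprecise is the step that actually makes the transport work: you need the map $HH^{-2}(\dq)\to HH^{-2}(\dq,E)$ to be an isomorphism (or at least surjective, with $\theta^{-1}$ hit), and you gesture at a ``(homotopy) fibre sequence relating $HC(Q)$ and $HC(\R\underline\enn_R(M))$'' and at ``injectivity of $\Xi$ in the relevant degree'' without supplying the argument. The paper's move is short and concrete: set $C\coloneqq\mathrm{cone}(\Xi)$, which is a $\dq$-bimodule concentrated in \emph{positive} degrees; since $\dq$ is concentrated in nonpositive degrees, $HH^n(\dq,C)\cong 0$ for all $n\le 0$, and the long exact sequence in Hochschild cohomology associated to the coefficient triangle $\dq\to E\to C\to$ then gives $HH^n(\dq)\xrightarrow{\ \cong\ }HH^n(\dq,E)$ for all $n\le 0$. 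This degree/positivity observation is the substantive content of the proof; it is not, as your final paragraph suggests, a matter of model-categorical bookkeeping about compatible cofibrant replacements. Also note that ``pushing forward along $\Xi$'' on Hochschild cohomology should be phrased as the composite $HH^*(\dq)\to HH^*(\dq,E)\cong HH^*(E)$ (change of coefficients, then \ref{hochprop}); there is no direct map $HH^*(\dq)\to HH^*(E)$ for a general dga morphism, and keeping this distinction visible is what makes the long exact sequence argument go through cleanly.
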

	\begin{proof}
		Using \ref{periodicend} and \ref{etaex} gives us a dga $E$, a genuinely central element $\theta^{-1} \in E^{-2}$, and a dga map $\Xi:\dq \to E$ with $\Xi(\eta)=[\theta^{-1}]$. Since $\theta^{-1}$ is central it represents an element of $HH^{-2}(E)$. Because $\Xi$ is the derived localisation map, we have $HH^*(E)\cong HH^*(\dq,E)$ by \ref{hochprop}. Let $C$ be the mapping cone of $\Xi$. Then $C$ is an $\dq$-bimodule, concentrated in positive degrees. We get a long exact sequence in Hochschild cohomology $$\cdots \to HH^n(\dq) \xrightarrow{\Xi} HH^n(\dq,E) \to HH^n(\dq,C)\to \cdots.$$Because $C$ is concentrated in positive degrees, and $\dq$ is concentrated in nonnegative, the cohomology group $HH^n(\dq,C)$ must vanish for $n\leq0$. In particular we get isomorphisms $HH^n(\dq)\cong HH^n(\dq,E)$ for $n\leq0$. Putting this together we have an isomorphism $$HH^{-2}(\dq)\xrightarrow{\Xi} HH^{-2}(\dq,E)\xrightarrow{\Xi} HH^{-2}(E)$$ and it is clear that $\eta$ on the left hand side corresponds to $\theta^{-1}$ on the right.
	\end{proof}
	\begin{rmk}
		Because $\eta$ is a bimodule morphism, $\mathrm{cone}(\eta)$ is naturally an $\dq$-bimodule. Note that $\mathrm{cone}(\eta)$ is also quasi-isomorphic to the $2$-term dga $\tau_{>-2}(\dq)$. This is a quasi-isomorphism of $\dq$-bimodules, because if $Q$ is the standard bimodule resolution of $\dq$ obtained by totalising the bar complex, then the composition $Q \xrightarrow{\eta}\dq \to \tau_{>-2}(\dq)$ is zero for degree reasons.
	\end{rmk}
	\begin{rmk}
		The dga $\dq$ is quasi-isomorphic to the truncation $\tau_{\leq 0}E$, which is a dga over $k[\theta^{-1}]$. Let $H=HH^*_{k[\theta^{-1}]}(\tau_{\leq 0}E)$ be the Hochschild cohomology of the $k[\theta^{-1}]$-dga $\tau_{\leq 0}E$, which is itself a graded $k[\theta^{-1}]$-algebra. One can think of $H$ as a family of algebras over $\A^1$, with general fibre $H[\theta]\cong HH_{k[\theta,\theta^{-1}]}^*(E)$ and special fibre $HH^*(\mathrm{cone}(\eta))$.
	\end{rmk}
	
	\begin{prop}\label{uniqueeta}
		Suppose that $A/AeA$ is an Artinian local ring. Then $\eta$ is characterised up to multiplication by units in $H(\dq)$ as the only non-nilpotent element in $H^{-2}(\dq)$.
	\end{prop}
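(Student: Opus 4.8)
The plan is to use the structure provided by the periodicity element $\eta$ together with the local structure of $H^0(\dq)\cong A/AeA$. First I would recall from \ref{etaex} that multiplication by $\eta$ gives isomorphisms $H^j(\dq)\xrightarrow{\cong}H^{j-2}(\dq)$ for all $j\leq 0$, so in particular all powers $\eta^n$ are nonzero and $\eta$ is non-nilpotent. Next I would show that if $x\in H^{-2}(\dq)$ is any non-nilpotent homogeneous element, then $x$ must be a unit multiple of $\eta$. The key observation is that $H(\dq)\cong H(\dq)[\eta^{-1}]^{\leq 0}$-ishly controlled: by \ref{etaex}(3), the derived localisation at $\eta$ is $\R\underline\enn_R(M)$, which is a $2$-periodic dga, and $H^0(\R\underline\enn_R(M))\cong\underline\enn_R(M)\cong A/AeA$, which is Artinian local by hypothesis. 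So $H(\dq)[\eta^{-1}]$ is a $2$-periodic graded algebra whose degree-zero piece is local Artinian.

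The main step is then the following: write $B\coloneqq A/AeA$, a finite-dimensional local $k$-algebra with maximal ideal $\mathfrak m$ and residue field $k$. Take $x\in H^{-2}(\dq)$ non-nilpotent. Since $H(\dq)$ is concentrated in nonpositive degrees and $\eta$-periodic below degree $0$, the localisation $H(\dq)[\eta^{-1}]$ is obtained by inverting $\eta$, and the natural map $H^{-2}(\dq)\to H^{-2}(\dq)[\eta^{-1}]\cong H^0(\dq)[\eta^{-1}]\cong B$ (the last isomorphism via multiplication by $\eta$, i.e.\ by \ref{etaex}(2)) is an isomorphism. So I can identify $H^{-2}(\dq)$ with $B$ as a $k$-vector space, with $\eta$ corresponding to $1\in B$. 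Under this identification, the multiplicative structure is compatible: for $y\in H^{-2}(\dq)$ corresponding to $b\in B$, the element $y^n\in H^{-2n}(\dq)$ corresponds under the periodicity isomorphism $H^{-2n}(\dq)\cong H^0(\dq)=B$ to $b^n$ (this uses that $\eta$ is homotopy central and that $\Xi$ is a dga map, so the square in the proof of \ref{etaex}(2) commutes and iterates). Hence $x$ is non-nilpotent in $H(\dq)$ if and only if its image $\bar x\in B$ is non-nilpotent, i.e.\ $\bar x\notin\mathfrak m$, i.e.\ $\bar x$ is a unit in $B$. Now $\bar x$ a unit in $B=H^{-2}(\dq)[\eta^{-1}]$ of degree $0$ means $x$ is a unit in $H(\dq)[\eta^{-1}]$; but more carefully, lifting: since $\bar x\in B^\times$ and $\eta\mapsto 1$, we can write $x=u\eta$ where $u\in H^0(\dq)=B$ maps to $\bar x$, hence $u\in B^\times$ is a unit in $H^0(\dq)$ and therefore a unit in $H(\dq)$. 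This gives $x=u\eta$ with $u$ a unit, as required.

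I would then tie this together into a clean writeup: non-nilpotence of $\eta$ is immediate from \ref{etaex}; conversely, given non-nilpotent $x\in H^{-2}(\dq)$, the isomorphism $\cdot\eta\colon H^{-2}(\dq)\xrightarrow{\cong}H^0(\dq)=B$ shows $x=u\eta$ for a unique $u\in B$, and non-nilpotence of $x=u\eta$ forces (since $\eta^n\neq 0$ and $B$ is local) that $u\notin\mathfrak m$, hence $u\in B^\times\subseteq H(\dq)^\times$. The one subtlety to get right, which I expect to be the main obstacle, is justifying that $x^n$ corresponds to $u^n$ under the periodicity identifications, so that non-nilpotence of $x$ in $H(\dq)$ translates precisely to $u$ being a unit in the \emph{Artinian local} ring $B$ (if $B$ were merely local but not Artinian, "non-unit $\implies$ nilpotent" could fail); this is where the Artinian hypothesis is essential and must be invoked. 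This amounts to a careful bookkeeping argument with the commuting squares from the proof of \ref{etaex}, using that $\eta$ is homotopy central so that $(u\eta)^n=u^n\eta^n$ in $H(\dq)$.
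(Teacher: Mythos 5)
Your proof is correct and follows essentially the same approach as the paper's: write $x = u\eta$ via the isomorphism $\eta\colon H^0(\dq)\to H^{-2}(\dq)$ from \ref{etaex}(2), use homotopy centrality of $\eta$ to get $x^n = u^n\eta^n$, and conclude from non-nilpotence of $x$ that $u$ is non-nilpotent and hence a unit in the Artinian local ring $A/AeA$ (and therefore a unit in $H(\dq)$, since $H(\dq)$ is nonpositively graded). The detour through $H(\dq)[\eta^{-1}]$ and $\R\underline\enn_R(M)$ is unnecessary padding; the paper handles the ``bookkeeping'' you flag as the main subtlety in one line via centrality of $\eta$, exactly as you do in your final paragraph.
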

	\begin{proof}
		Let $y\in H^{-2}(\dq)$ be non-nilpotent. Since $\eta: H^0(\dq) \to H^{-2}(\dq)$ is an isomorphism, we must have $y = \eta x$ for some $x \in H^0(\dq)\cong A/AeA$. Since $\eta$ is homotopy central, we have $y^n=\eta^n x^n$ for all $n \in \N$. Since $y$ is non-nilpotent by assumption, $x$ must also be non-nilpotent. Since $A/AeA$ is Artinian local, $x$ must hence be a unit. Note that because $H(\dq)$ is concentrated in nonpositive degrees, the units of $H(\dq)$ are precisely the units of $A/AeA$.
	\end{proof}
	\begin{rmk}
		If $A/AeA$ is finite-dimensional over $k$, but not necessarily local, then all that can be said is that $x$ is not an element of the Jacobson radical $J(A/AeA)$. 
	\end{rmk}
	\begin{cor}\label{uniqueetaqi}
		Let $N$ be another MCM $R$-module and put $B:=\enn_R(R\oplus N)$. Let $e\in B$ denote the idempotent $\id_R$. Suppose that $\dq$ is quasi-isomorphic to $\dqb$. Suppose that $A/AeA\cong B/BeB$ is Artinian local. Then $\R \underline{\enn}_{R}(M)$ and $\R \underline{\enn}_{R}(N)$ are quasi-isomorphic.
	\end{cor}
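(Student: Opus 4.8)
The plan is to apply Theorem~\ref{etaex}(3) to both partial resolutions: since $\eta$ and the analogous periodicity element $\eta'\in H^{-2}(\dqb)$ generate the derived localisations recovering the stable derived endomorphism algebras, it suffices to show that a quasi-isomorphism $\dq\simeq\dqb$ can be arranged to carry $\eta$ to a unit multiple of $\eta'$, and then to invoke quasi-isomorphism invariance of derived localisation (\ref{derloc}). First I would fix a quasi-isomorphism $\phi:\dq\to\dqb$ (or a zig-zag; one may replace these dgas by cofibrant models so that $\phi$ is an honest dga map). Passing to cohomology, $\phi$ induces a graded algebra isomorphism $H(\phi):H(\dq)\xrightarrow{\cong}H(\dqb)$, which in degree $0$ is the given isomorphism $A/AeA\cong B/BeB$.

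Next I would use Proposition~\ref{uniqueeta}: since $A/AeA$ is Artinian local, $\eta$ is, up to multiplication by a unit of $H^0(\dq)=A/AeA$, the unique non-nilpotent element of $H^{-2}(\dq)$, and likewise $\eta'$ for $\dqb$. The image $H(\phi)(\eta)$ is a non-nilpotent element of $H^{-2}(\dqb)$ (nilpotency is preserved by algebra isomorphisms), so by \ref{uniqueeta} applied to $B$ we get $H(\phi)(\eta)=u\eta'$ for some unit $u\in B/BeB$. In particular $\phi$ is an $\eta$-inverting map from $\dq$ to $\dqb$ in the sense that it sends the localising class $\{\eta\}$ to an invertible class in $H(\dqb[(\eta')^{-1}])$ (composing with the localisation map $\dqb\to\dqb[(\eta')^{-1}]$, the image of $\eta$ becomes $u\eta'$, which is invertible since both $u$ and $\eta'$ are). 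By the universal property of derived localisation (\ref{derloc}), $\phi$ induces a map $\dq[\eta^{-1}]\to\dqb[(\eta')^{-1}]$ in the derived under-category; symmetrically $\phi^{-1}$ (on cohomology) produces an inverse up to homotopy, so this induced map is a quasi-isomorphism. Concretely: since $\phi$ is itself a quasi-isomorphism and derived localisation is quasi-isomorphism invariant, $\dq[\eta^{-1}]\simeq \dqb[\phi_*(\eta)^{-1}]=\dqb[(u\eta')^{-1}]$, and localising at $u\eta'$ agrees with localising at $\eta'$ because they differ by the unit $u$ (inverting a unit multiple of an element inverts the same set of modules, cf.\ the flatness of central localisation \cite[5.3]{bcl}). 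Finally, by \ref{etaex}(3), $\dq[\eta^{-1}]\simeq\R\underline{\enn}_R(M)$ and $\dqb[(\eta')^{-1}]\simeq\R\underline{\enn}_R(N)$, so chaining these quasi-isomorphisms gives $\R\underline{\enn}_R(M)\simeq\R\underline{\enn}_R(N)$, as claimed.

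The main obstacle is the bookkeeping around ``localising at $\eta$ versus localising at a unit multiple $u\eta'$'': one needs to check that the derived localisation of $\dqb$ at the homotopy-central class $u\eta'$ coincides with the derived localisation at $\eta'$. This follows because both $u$ and $\eta'$ are homotopy central, so $u\eta'$ is homotopy central; inverting $\eta'$ automatically inverts $u$ (a unit), hence $u\eta'$; conversely inverting $u\eta'$ together with the already-present unit $u^{-1}$ inverts $\eta'$. Thus the two localising subcategories of $D(\dqb)$ coincide, and by \cite[4.14]{bcl} the localisations are the same. One subtlety to handle carefully is that $u$ is a priori only a cohomology class, not a cocycle, but since $H^0(\dqb)$ sits in the top degree of the nonpositive dga $\dqb$, every degree-$0$ cohomology class lifts to a cocycle, so $u$ genuinely lifts and the argument is clean. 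The rest is a routine application of the universal properties and invariance results already established, so no essential difficulty remains beyond this compatibility check.
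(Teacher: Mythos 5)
Your proof is correct and follows essentially the same route as the paper: fix a quasi-isomorphism $\dq \to \dqb$, use \ref{uniqueeta} to conclude that the image of $\eta$ equals a unit multiple $u\eta'$, invoke quasi-isomorphism invariance of derived localisation together with the observation that localising at $w$ and at $uw$ give naturally quasi-isomorphic results when $u$ is a unit, and finish with \ref{etaex}(3). The additional detail you supply (preservation of non-nilpotency under algebra isomorphisms, lifting $u$ to a cocycle in degree $0$) is implicit in the paper's version but not a different method.
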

	\begin{proof}
		The idea is that the periodicity elements must agree up to units, and this forces the derived localisations to be quasi-isomorphic. Let $\eta_A\in H^{-2}(\dq)$ and $\eta_B\in H^{-2}(\dqb)$ denote the periodicity elements for $\dq$ and $\dqb$ respectively. By assumption, we have a quasi-isomorphism $\dq \to \dqb$; let $\xi \in H^{-2}(\dqb)$ be the image of $\eta_A$ under this quasi-isomorphism. By \ref{uniqueeta}, there is a unit $u\in H^0(\dqb)$ such that $\xi=u.\eta_B$. Because derived localisation is invariant under quasi-isomorphism, we have $\mathbb{L}_{\eta_A}(\dq) \simeq \mathbb{L}_{\xi}(\dqb)$. Observe that if $W$ is a dga, $w \in HW$ any cohomology class, and $v \in HW$ is a unit, then the derived localisations $\mathbb{L}_wW$ and $\mathbb{L}_{vw}W$ are naturally quasi-isomorphic. So we have a chain of quasi-isomorphisms $$\mathbb{L}_{\eta_A}(\dq) \simeq \mathbb{L}_{\xi}(\dqb)=\mathbb{L}_{u\eta_B}(\dqb) \simeq \mathbb{L}_{\eta_B}(\dqb).$$Now the result follows by applying \ref{etaex}(3).
	\end{proof}
	Since $\R \underline{\enn}_{R}(M)$ is quasi-isomorphic to a dga over $k[\theta,\theta^{-1}]$, and $\R \underline{\enn}_{R}(M)$ is morally obtained from $\dq$ by adjoining $\theta^{-1}$, the following conjecture is a natural one to make:
	\begin{conj}\label{perconj}
		If $A/AeA$ is Artinian local then the quasi-isomorphism type of $\dq$ determines the quasi-isomorphism type	of $\R \underline{\enn}_{R}(M)$ as a dga over $k[\theta,\theta^{-1}]$.
	\end{conj}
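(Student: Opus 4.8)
\textbf{Proof proposal for Conjecture \ref{perconj}.}

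The plan is to show that the pair $(\dq, \eta)$ — the derived quotient together with its periodicity class in $H^{-2}$ — is enough to reconstruct $\R\underline{\enn}_R(M)$ together with its $k[\theta,\theta^{-1}]$-structure, and then to upgrade this to a statement purely about the quasi-isomorphism type of $\dq$. The first ingredient is already available: by \ref{etaex}(3), the derived localisation $\mathbb{L}_\eta(\dq)$ is quasi-isomorphic to $\R\underline{\enn}_R(M)$, and by \ref{etaex}(1) the composite with the comparison map sends $\eta$ to $\Theta^{-1}$, which is (up to units) the image of $\theta^{-1}$ in cohomology. So the $k[\theta,\theta^{-1}]$-module structure on $\R\underline{\enn}_R(M)$ is, up to the choice of a square-root-of-unity worth of scaling, the structure induced by localising at $\eta$. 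The content of the conjecture is therefore: (a) the class $\eta$ is canonically determined by the quasi-isomorphism type of $\dq$ (up to units), and (b) this canonically-determined localisation carries a genuine dga-over-$k[\theta,\theta^{-1}]$ structure, not merely a dga structure with an invertible central cohomology class.

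For (a), one invokes \ref{uniqueeta}: when $A/AeA$ is Artinian local, $\eta$ is the unique non-nilpotent element of $H^{-2}(\dq)$ up to multiplication by units of $H^0(\dq)$. Since any quasi-isomorphism of dgas induces an isomorphism on cohomology algebras, a quasi-isomorphism $\dq \simeq \dqb$ (where $B$ is another partial resolution with the same stable algebra) must carry $\eta_A$ to a unit multiple of $\eta_B$; this is exactly the argument already carried out in \ref{uniqueetaqi}. The derived localisation being quasi-isomorphism invariant, and being insensitive to rescaling the localised element by a unit (as noted in the proof of \ref{uniqueetaqi}), we obtain $\mathbb{L}_{\eta_A}(\dq) \simeq \mathbb{L}_{\eta_B}(\dqb)$ as plain dgas. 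The remaining task is to promote this to a $k[\theta,\theta^{-1}]$-linear equivalence. Here I would use \ref{periodicend}, which presents $\R\underline{\enn}_R(M)$ as $\enn_R(\tilde M)[\theta^{-1}]$ for a $2$-periodic resolution $\tilde M$ with periodicity witness $\theta$; this is manifestly a dga over $k[\theta^{\pm 1}]$. One then checks that the derived localisation $\mathbb{L}_\eta(\dq)$ receives a map from $k[\theta,\theta^{-1}]$ making the comparison map $k[\theta^{\pm1}]$-linear: the class $\theta^{-1}$ acts via $\eta$, and by \ref{etahoch} the element $\eta$ lifts to $HH^{-2}(\dq)$, so it is a bimodule endomorphism, which is precisely what is needed for the $k[\theta^{-1}]$-algebra structure to make sense before localising, and $k[\theta^{\pm1}]$-linearity after.

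The main obstacle I anticipate is (b): rigidifying the abstract statement ``dga with invertible central cohomology class $\eta$'' into the strictly structured statement ``dga over the graded ring $k[\theta,\theta^{-1}]$, considered up to $k[\theta^{\pm1}]$-linear quasi-isomorphism.'' The subtlety is that a homotopy-central cohomology class need not lift to a strictly central cocycle with the strict periodicity-witness property — exactly the point flagged in the discussion after the definition of periodicity witnesses in \S\ref{hypper}. To handle this I would work with the model $E = \enn_R(\tilde M)$ from \ref{periodicend}, where $\theta$ \emph{is} a strictly central cocycle, build the zig-zag of quasi-isomorphisms between $\mathbb{L}_{\eta_A}(\dq)$ and $E_A[\theta^{-1}]$ and between $\mathbb{L}_{\eta_B}(\dqb)$ and $E_B[\theta^{-1}]$, and argue that each map in the zig-zag can be taken compatibly with the $k[\theta^{\pm1}]$-actions — using that the localisation maps $\dq \to \mathbb{L}_\eta(\dq)$ are homological epimorphisms (\ref{homepi}) so that the relevant mapping spaces of $k[\theta^{\pm1}]$-algebras are controlled by the mapping spaces of plain dgas together with the (contractible, by \ref{uniqueeta}) space of choices of periodicity element. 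An alternative, perhaps cleaner, route would be to phrase everything in terms of the Tate construction / $2$-periodic dg category $\mathbf{MF}(S,\sigma)$ via \ref{dgmfs}, where the $k[u,u^{-1}]$-linear structure is built in by definition, and show that $\dq$ recovers the dg subcategory of $\mathbf{MF}$ generated by the matrix factorisation of $M$ together with its canonical $2$-periodic enrichment; this would connect the conjecture directly to the stronger \ref{perconj} about matrix factorisations and to the recovery theorem \ref{recoverythm}.
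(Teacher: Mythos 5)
This is labelled a \emph{Conjecture} in the paper, not a theorem: the author states it as an open question, and the paper does not give a proof. So there is nothing in the paper to compare your proposal against; what I can do is assess whether your argument would actually close the conjecture. It does not, and the gap is precisely at the point you flag as ``the main obstacle.''

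Your reduction is sound up to that point. Items (a) and the first part of (b) are all genuine: \ref{etaex}(3) identifies $\R\underline\enn_R(M)$ with $\mathbb{L}_\eta(\dq)$, \ref{uniqueeta} pins $\eta$ down up to units in $H^0(\dq)$, \ref{uniqueetaqi} then gives a quasi-isomorphism of the localisations as plain dgas, and \ref{periodicend} supplies a strict $k[\theta^{\pm1}]$-model $\enn_R(\tilde M)[\theta^{-1}]$ on each side. The problem is the last step, where you assert that ``the relevant mapping spaces of $k[\theta^{\pm1}]$-algebras are controlled by the mapping spaces of plain dgas together with the (contractible, by \ref{uniqueeta}) space of choices of periodicity element.'' Proposition \ref{uniqueeta} is a statement about the cohomology algebra: it says the \emph{cohomology class} $\eta \in H^{-2}(\dq)$ is unique up to units of $H^0(\dq)$. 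It says nothing about the homotopy type of the space of strictly central cocycle lifts, of $HH^{-2}$ lifts inducing periodicity, or of $k[\theta^{\pm1}]$-algebra structures refining a given dga structure. These are genuinely different things, and the paper explicitly flags this: the remark after \ref{uniqueetaqi} notes that ``$\eta$ is a central element of the cohomology algebra $H(\dq)$, and need not lift to a genuinely central cocycle in $\dq$,'' and the discussion of periodicity witnesses in \S\ref{hypper} emphasises that having a strict witness is not a homotopy-invariant property.

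Concretely: given two $k[\theta^{\pm1}]$-dgas $E_1, E_2$ and a quasi-isomorphism of underlying dgas carrying $[\theta]_1$ to a unit multiple of $[\theta]_2$ on cohomology, there is no general reason this lifts to a $k[\theta^{\pm1}]$-linear quasi-isomorphism, and no general reason the lift is unique if it exists. This is not a technicality that can be routed around via \ref{homepi}; it is the substance of the conjecture. It is closely related to the fact (\S6.7) that the $\Z$-graded and $\Z/2$-graded Hochschild cohomologies of $\mathbf{MF}(S,\sigma)$ are genuinely different invariants, which is exactly what makes a canonical $\Z/2$-graded refinement of the $\Z$-graded datum nontrivial, and which is why \ref{perconjrmk} says the conjecture \emph{would} let one bypass Hua--Keller in favour of Dyckerhoff. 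Your alternative route via matrix factorisations and \ref{dgmfs} has the same shape: you would still need to show that the $\Z/2$-graded enrichment of $\thick_{D^\mathrm{dg}_\mathrm{sg}(R)}(M)$ is determined by the $\Z$-graded quasi-equivalence type of $\per_{\mathrm{dg}}\mathbb{L}_\eta(\dq)$, which is the same rigidification problem in different clothing.

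So: correct set-up, correct identification of where the difficulty sits, but the step that would actually prove the conjecture is asserted rather than proved, and the assertion (contractibility of the space of $k[\theta^{\pm1}]$-structures, or of periodicity-element lifts) does not follow from anything cited. This is why the statement remains a conjecture in the paper.
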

	
		\begin{rmk}
		Note that $\eta$ is a central element of the cohomology algebra $H(\dq)$, and need not lift to a genuinely central cocycle in $\dq$.
	\end{rmk}

		\begin{rmk}\label{dsgrmk}
The description of \ref{qisolem} shows that, in this situation, one can compute $\dq$ directly from knowledge of the dg singularity category. This also provides a way to produce an explicit model of $\dq$ where $\eta$ is represented by a genuinely central cocycle: first, stitch together the syzygy exact sequences for $M$ into a 2-periodic resolution $\tilde M \to M$. Let $\theta: \tilde M \to \tilde M$ be the degree 2 map whose components are the identity that witnesses this periodicity. Let $E=\enn_R(\tilde M)$, which is a dga. It is easy to see that $\theta$ is a central cocycle in $E$. Since $\R\underline\enn_R(M)$ is quasi-isomorphic to the dga $E[\theta^{-1}]$, and $\eta$ is identified with $\theta^{-1}$ across this quasi-isomorphism, it follows that $\dq$ is quasi-isomorphic to the dga $\tau_{\leq 0}\left(\enn_R(\tilde M)[\theta^{-1}]\right)$, which is naturally a dga over $k[\eta]=k[\theta^{-1}]$.

	\end{rmk}

	\section{Torsion modules}\label{torsmods}
	We keep the setup as in the last section, where $R$ is a complete local hypersurface singularity, $M$ a MCM $R$-module, and $A=\enn_R(R\oplus M)$ the associated noncommutative partial resolution. For brevity, write $Q$ for the derived quotient $\dq$. By \ref{etaex} there exists a special periodicity element $\eta\in H^{-2}Q$ such that the derived localisation of $Q$ at $\eta$ is the derived stable endomorphism algebra $\R \underline{\enn}_R(M)$. Recall from \ref{colocdga} the construction of the \textbf{colocalisation} $\mathbb{L}^\eta(Q)$ of $Q$, and the fact that an $\eta$-torsion $Q$-module is precisely a module over $\mathbb{L}^\eta(Q)$. 
	\begin{defn}
		Let $\per^bQ$ denote the full triangulated subcategory of $\per Q$ on those modules with bounded cohomology.
	\end{defn}
	It is easy to see that $\per^bQ$ is a thick subcategory of the unbounded derived category $D(Q)$.
	\begin{prop}\label{perbisperl}
		The subcategory $\per^bQ$ is exactly $\per\mathbb{L}^\eta(Q)$.
	\end{prop}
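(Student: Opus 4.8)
The plan is to show that both $\per^bQ$ (where $Q\coloneqq\dq$) and $\per\mathbb{L}^\eta(Q)$ coincide with the thick subcategory $\thick_{D(Q)}(K)$ generated by the single perfect object $K\coloneqq\mathrm{cone}(Q\xrightarrow{\eta}Q)$. For the second of these: by definition $\mathbb{L}^\eta(Q)=\R\enn_Q(K)$, and since $K$ is a compact object of the compactly generated category $D(Q)$, the canonical functor $D(\mathbb{L}^\eta(Q))\to D(Q)$, namely $-\lot_{\mathbb{L}^\eta(Q)}K$, is fully faithful with essential image the localising subcategory $\mathrm{Loc}_{D(Q)}(K)$, and restricting to compacts gives $\per\mathbb{L}^\eta(Q)\cong\thick_{D(Q)}(K)$ (standard Morita theory, cf.\ \cite{keller}). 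Moreover \cite[7.6]{bcl} identifies the image of this functor with the full subcategory $D(Q)_{\eta\text{-tor}}$ of $\eta$-torsion objects, so $D(Q)_{\eta\text{-tor}}=\mathrm{Loc}_{D(Q)}(K)$. It thus suffices to prove $\per^bQ=\thick_{D(Q)}(K)$.

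I first record the relevant properties of $K$. It is perfect, being a cone of a map between copies of $Q$. It is $\eta$-torsion: indeed $\mathbb{L}_\eta K\simeq K\lot_Q\mathbb{L}_\eta Q\simeq\mathrm{cone}(\mathbb{L}_\eta Q\xrightarrow{\eta}\mathbb{L}_\eta Q)$ is acyclic because $\eta$ is invertible in $H^{*}(\mathbb{L}_\eta Q)$ by \ref{etaex}. And $K$ has bounded cohomology: the long exact sequence of the cone, together with $H^{>0}(Q)=0$ and the fact (\ref{etaex}(2)) that multiplication by $\eta$ is an isomorphism on $H^{j}(Q)$ for $j\le 0$, shows that $H^{j}(K)=0$ for $j\notin\{-1,0\}$, with $H^{0}(K)\cong H^{0}(Q)=A/AeA$ and $H^{-1}(K)\cong H^{-1}(Q)\cong\ext^{1}_R(M,M)$ (using \ref{dqcohom}). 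Hence $K\in\per^bQ$; since $\per^bQ$ is a thick subcategory of $D(Q)$, this already gives $\thick_{D(Q)}(K)\subseteq\per^bQ$.

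For the reverse inclusion, take $X\in\per^bQ$. The crucial point is that $\mathbb{L}_\eta X\simeq 0$. Since $\eta\in H^{-2}(Q)$ is homotopy central the localisation $Q\to\mathbb{L}_\eta Q$ is flat (\ref{etaex}, \cite[5.3]{bcl}) and localisation of modules is smashing (\cite[4.14]{bcl}), so $H^{*}(\mathbb{L}_\eta X)$ is the localisation $H^{*}(X)[\eta^{-1}]$ of the graded $H^{*}(Q)$-module $H^{*}(X)$ at the central element $\eta$; equivalently $\mathbb{L}_\eta X$ is the telescope $\mathrm{hocolim}(X\xrightarrow{\eta}X\xrightarrow{\eta}\cdots)$, so that $H^{j}(\mathbb{L}_\eta X)=\mathrm{colim}\big(H^{j}(X)\xrightarrow{\eta}H^{j-2}(X)\xrightarrow{\eta}\cdots\big)$. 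As $\eta$ lowers cohomological degree by $2$ and $H^{*}(X)$ is bounded, this colimit vanishes in every degree, so $\mathbb{L}_\eta X\simeq 0$. By the localisation triangle $\mathbb{L}^\eta X\to X\to\mathbb{L}_\eta X\to$ this means $X$ is $\eta$-torsion, i.e.\ $X\in D(Q)_{\eta\text{-tor}}=\mathrm{Loc}_{D(Q)}(K)$. Since $X$ and $K$ are both compact in $D(Q)$, Neeman's localisation theorem \cite{neemantxt} gives that the compact objects of $\mathrm{Loc}_{D(Q)}(K)$ are precisely $\thick_{D(Q)}(K)$, whence $X\in\thick_{D(Q)}(K)$. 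Combining the two inclusions yields $\per^bQ=\thick_{D(Q)}(K)\cong\per\mathbb{L}^\eta(Q)$.

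The main obstacle is this last inclusion, and within it the computation that $\mathbb{L}_\eta X$ is acyclic for bounded $X$: this is exactly where the negative degree of the periodicity element $\eta$ is used, and one must take a little care that the flat localisation genuinely computes the homology localisation $H^{*}(X)[\eta^{-1}]$ rather than an expression with higher $\mathrm{Tor}$ terms, for which the telescope model of \cite{bcl} is convenient. The appeal to Neeman's theorem also implicitly relies on $D(Q)_{\eta\text{-tor}}$ being generated, as a localising subcategory, by the \emph{compact} object $K$, which is precisely the content of the description $\mathbb{L}^\eta(Q)=\R\enn_Q(K)$ together with \cite[7.6]{bcl} and \ref{ntty}.
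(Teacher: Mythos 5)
Your proof is correct and takes essentially the same route as the paper: both establish the two inclusions by (a) showing that the generator of $\per\mathbb{L}^\eta(Q)$ lies in $\per^bQ$, and (b) showing that bounded perfect modules are $\eta$-torsion and invoking compactness. The details differ in two places, both mild. For (a), the paper identifies $\mathbb{L}^\eta(Q)=\R\enn_Q(K)$ as a $Q$-module via the explicit computation $\R\enn_Q(K)\simeq\mathrm{cocone}(K\xrightarrow{\eta^*}K)$, and concludes it is perfect and bounded; you instead pass through the Morita identification $\per\mathbb{L}^\eta(Q)\cong\thick_{D(Q)}(K)$ and check $K\in\per^bQ$ directly, which saves a small computation. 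For (b), the paper's argument is terse — it asserts that for $X$ bounded there is an $i$ with ``$X\eta^i\simeq 0$'' and factors $\mathbb{L}_\eta(X)\simeq X\eta^i\otimes_Q\eta^{-i}L$; your telescope computation, using that $H^j(\mathbb{L}_\eta X)=\mathrm{colim}\bigl(H^j(X)\xrightarrow{\eta}H^{j-2}(X)\to\cdots\bigr)$ vanishes because $\eta$ lowers degree, is a cleaner and more airtight way to see the same thing, and in particular avoids having to argue that $\eta^i$ is actually nullhomotopic (rather than merely zero on cohomology) on a bounded perfect module. Your explicit flag of the potential pitfall — that flat localisation and homology localisation could differ by higher $\mathrm{Tor}$ terms, circumvented by the telescope model — is exactly the right thing to worry about and is handled correctly.
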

	\begin{proof}We show $\per\mathbb{L}^\eta(Q)\subseteq\per^bQ \subseteq \per\mathbb{L}^\eta(Q)$. Since $\per\mathbb{L}^\eta(Q)=\thick_{D(Q)}(\mathbb{L}^\eta(Q))$, and $ \per^bQ$ is a thick subcategory, to show that $\per\mathbb{L}^\eta(Q)\subseteq\per^bQ$ it is enough to check that $\mathbb{L}^\eta(Q)$ is an element of $\per^bQ$. Put $C\coloneqq\mathrm{cone}(Q \xrightarrow{\eta} Q)$. By construction, the colocalisation $\mathbb{L}^\eta(Q)$ is exactly $\R\enn_Q(C)$. Now, $C$ is clearly a perfect $Q$-module. It is bounded because $\eta$ is an isomorphism on cohomology in sufficiently low degree. As a $Q$-module, we have \begin{align*}
		\R\enn_Q(C) &\simeq \R\hom_Q(\mathrm{cone}(\eta),C)
		\\ &\simeq \mathrm{cocone}\left[\R\hom_Q(Q,C) \xrightarrow{\eta^*} \R\hom_Q(Q,C)\right]
		\\ &\simeq \mathrm{cocone}\left[C \xrightarrow{\eta^*} C\right]
		\end{align*}which is clearly perfect and bounded. Hence $\mathbb{L}^\eta(Q) \in \per^bQ$. To show that $\per^bQ\subseteq\per\mathbb{L}^\eta(Q)$, we first show that a bounded module is torsion. Let $X$ be any bounded $Q$-module. Then there exists an $i$ such that $X\eta^i\simeq 0$. Choose a $Q$-cofibrant model $L$ for $\mathbb{L}_\eta(Q)$, so that $\mathbb{L}_\eta(X) \simeq X\otimes_Q L$. Then we have $X\otimes_Q L \cong X\otimes_Q\eta^i\eta^{-i} L\cong X\eta^i\otimes_Q\eta^{-i} L\simeq 0$. Now it is enough to show that a perfect $Q$-module which happens to be torsion is in fact a perfect $\mathbb{L}^\eta(Q)$-module. But this is clear: a perfect $Q$-module is exactly a compact $Q$-module, and hence remains compact in the full subcategory of torsion modules.
	\end{proof}
	\begin{defn}
		Say that a dga $W$ is \textbf{of finite type} if each $H^jW$ is finitely generated over $H^0W$.
	\end{defn}
	In particular, a cohomologically locally finite dga $W$ is of finite type.
	\begin{prop}\label{fingen}
		If $Q$ is of finite type, then $H(Q)$ is a finitely generated algebra over $A/AeA$.
	\end{prop}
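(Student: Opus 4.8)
The plan is to use the periodicity element $\eta \in H^{-2}(Q)$ provided by \ref{etaex} to generate all of $H(Q)$ over $H^0(Q) \cong A/AeA$, together with a finite set of generators in the negative degrees $-1$ down to $-(n+1)$ or so, where $n$ is the embedding dimension of the hypersurface. More precisely, I would first recall from \ref{etaex}(2) that multiplication by $\eta$ induces isomorphisms $H^j(Q) \xrightarrow{\cong} H^{j-2}(Q)$ for all $j \leq 0$. This means that once we understand $H^0(Q)$ and $H^{-1}(Q)$ as $A/AeA$-modules, every other cohomology group is obtained from one of these two by repeatedly multiplying by $\eta$; schematically $H^{-2i}(Q) = \eta^i H^0(Q)$ and $H^{-2i-1}(Q) = \eta^i H^{-1}(Q)$.

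The key steps, in order, would be: (1) By hypothesis $Q$ is of finite type, so $H^0(Q) \cong A/AeA$ is finitely generated over itself (trivially, by $1$) and $H^{-1}(Q)$ is finitely generated over $A/AeA$; fix finite generating sets, say $\{1\}$ for degree $0$ and $\{x_1,\dots,x_r\}$ for degree $-1$. (2) Adjoin the central element $\eta$; I claim $H(Q)$ is generated as an $A/AeA$-algebra by $\eta, x_1,\dots,x_r$. To see this, take any homogeneous element $y \in H^{-m}(Q)$. If $m$ is even, write $m = 2i$; by the periodicity isomorphism iterated $i$ times, $y = \eta^i \cdot z$ for a unique $z \in H^0(Q) = A/AeA$, so $y$ lies in the subalgebra generated by $\eta$ and $A/AeA$. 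If $m$ is odd, write $m = 2i+1$; again by periodicity $y = \eta^i \cdot w$ for a unique $w \in H^{-1}(Q)$, and $w = \sum_j a_j x_j$ with $a_j \in A/AeA$, so $y = \sum_j a_j \eta^i x_j$ lies in the subalgebra generated by $\eta$, the $x_j$, and $A/AeA$. (3) Since $A/AeA$ is itself finitely generated as a $k$-algebra when $A$ is a finitely generated $k$-algebra — and in any case we only claim finite generation \emph{over} $A/AeA$ — we conclude $H(Q)$ is a finitely generated $A/AeA$-algebra, generated by $\eta$ together with a finite set of elements in degree $-1$.

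I expect the only mild subtlety to be making sure the phrase ``finitely generated algebra over $A/AeA$'' is interpreted correctly: we are claiming that $H(Q)$, as a graded ring, is generated over its degree-zero part $A/AeA$ by finitely many homogeneous elements — namely $\eta$ in degree $-2$ and a finite generating set of the $A/AeA$-module $H^{-1}(Q)$ in degree $-1$. The content is entirely the periodicity statement \ref{etaex}(2), which reduces the infinitely many cohomology groups to the two groups $H^0(Q)$ and $H^{-1}(Q)$; there is no real obstacle, and the proof is a short bookkeeping argument splitting into the even and odd degree cases as above. One should only take care that $\eta$ being homotopy central (rather than central on the nose) is enough, which it is, since we are working entirely in the graded cohomology algebra $H(Q)$ where $\eta$ \emph{is} central by \ref{etaex}.
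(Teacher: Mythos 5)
Your proposal is correct and is essentially the paper's own argument: both use the periodicity isomorphisms of \ref{etaex}(2) to reduce generation of $H(Q)$ to degrees $0$ through $-2$, and then invoke the finite type hypothesis to get finitely many generators over $A/AeA$ in each of those degrees. You spell out the even/odd split a bit more explicitly and observe that $\eta$ alone accounts for degree $-2$, but the underlying idea and structure are the same.
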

	\begin{proof}
		By \ref{etaex}(2), we know that $\eta H^i(Q)\cong H^{i-2}(Q)$ for all $i\leq 0$. In particular, if $j<-2$, then every element in $H^j(Q)$ is a multiple of $\eta$. Hence, $H(Q)$ is generated as an algebra in degrees $0$ through $-2$. Since it is of finite type, we may choose it to have finitely many generators (over $A/AeA$) in each degree.
	\end{proof}
	In particular, if $Q$ is of finite type and $A/AeA$ is a finitely generated algebra, then so is $H(Q)$. In general, $H(Q)$ is generated in degrees $0$ through $-2$, and the only generator in degree $-2$ is $\eta$.
	\begin{thm}\label{fintype}
		Suppose that $Q$ is of finite type. Then $\per\mathbb{L}^\eta(Q)=\per_\mathrm{fg}(Q)$.
	\end{thm}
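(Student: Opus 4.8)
We want to show $\per\mathbb{L}^\eta(Q)=\per_\mathrm{fg}(Q)$ under the assumption that $Q\coloneqq\dq$ is of finite type. By \ref{perbisperl} we already know $\per\mathbb{L}^\eta(Q)=\per^bQ$, so the task reduces to proving $\per^bQ=\per_\mathrm{fg}(Q)$. I would prove the two inclusions separately.

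For the inclusion $\per^bQ\subseteq\per_\mathrm{fg}(Q)$: take a perfect $Q$-module $X$ with bounded cohomology. We must show each $H^j(X)$ is finitely generated over $A/AeA$. Since $X$ is perfect it is a summand of a finite iterated extension of shifts of $Q$ itself; taking cohomology and using the long exact sequences, it suffices to know that each $H^j(Q)$ is finitely generated over $A/AeA$. But this is precisely \ref{fingen} (which uses that $Q$ is of finite type, together with \ref{etaex}(2)). Finitely-generatedness over $A/AeA$ is preserved by finite extensions, shifts, and passage to summands, so $X\in\per_\mathrm{fg}(Q)$.

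For the reverse inclusion $\per_\mathrm{fg}(Q)\subseteq\per^bQ$: let $X$ be a perfect $Q$-module whose total cohomology is finitely generated over $A/AeA$. I would argue that the cohomology is bounded. The key point is the periodicity isomorphism $\eta\colon H^j(Q)\xrightarrow{\cong}H^{j-2}(Q)$ for all $j\leq 0$ from \ref{etaex}(2). Because $X$ is perfect over $Q$, $\eta$ also acts on $H^*(X)$ and, by a spectral sequence argument (filtering the bar-type computation of $X\otimes^{\mathbb L}_Q$ anything by the $\eta$-action, or more directly using that $\mathbb{L}_\eta(Q)$ is flat over $Q$ by homotopy centrality of $\eta$, \cite[5.3]{bcl}), multiplication by $\eta$ is eventually an isomorphism $H^j(X)\to H^{j-2}(X)$ in sufficiently negative degrees. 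If the cohomology were unbounded below, then this periodicity would force infinitely many of the $H^j(X)$ to all be isomorphic to a fixed nonzero finitely generated $A/AeA$-module $V$, and the total cohomology $\bigoplus_j H^j(X)$ would contain $\bigoplus_{j\ll 0} V$, which is not finitely generated over $A/AeA$ unless $V=0$; a contradiction. Hence $H^*(X)$ is bounded, so $X\in\per^bQ$.

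The main obstacle is the second inclusion, specifically making rigorous the claim that $\eta$ acts as an eventual isomorphism on $H^*(X)$ for every perfect $X$, not just on $H^*(Q)$. The cleanest route is: since $\eta$ is homotopy central, the derived localisation $Q\to\mathbb{L}_\eta Q$ is flat \cite[5.3]{bcl}, so $H^*(\mathbb{L}_\eta X)\cong H^*(X)[\eta^{-1}]$ for any perfect $X$; combined with $\mathbb{L}_\eta Q\simeq\R\underline\enn_R(M)$ (from \ref{etaex}(3)) and the fact that $\R\underline\enn_R(M)$ is genuinely $2$-periodic (via the model of \ref{periodicend}), one gets honest $2$-periodicity of $H^*(X)[\eta^{-1}]$. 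Then the finite-generation hypothesis on $H^*(X)$ (not $H^*(X)[\eta^{-1}]$) together with \ref{etaex}(2) for $Q$ and a dévissage over the thick subcategory generated by $Q$ pins down that $\eta$ was already eventually invertible on $H^*(X)$ before localising, giving boundedness. I would carry out the dévissage by noting that the class of perfect $Q$-modules on which $\eta$ is an eventual isomorphism on cohomology is thick and contains $Q$, hence is all of $\per Q$; combined with the finite-generation hypothesis this forces boundedness as above.
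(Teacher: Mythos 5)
Your overall structure — reduce via \ref{perbisperl} to proving $\per^b Q = \per_{\mathrm{fg}}(Q)$, then prove both inclusions — is exactly the paper's, and your dévissage argument for $\per^b Q \subseteq \per_{\mathrm{fg}}(Q)$ matches the paper's (modulo a small overreach: you cite \ref{fingen}, which establishes the stronger claim that $H(Q)$ is a finitely generated \emph{algebra}, when all you need for the dévissage is that each $H^j(Q)$ is finitely generated over $A/AeA$ — and that is just the definition of ``finite type'', no periodicity required).

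The significant divergence is your argument for the inclusion $\per_{\mathrm{fg}}(Q) \subseteq \per^b Q$. Your route — using the periodicity isomorphism $\eta \colon H^j(Q) \to H^{j-2}(Q)$, flatness of the localisation $\mathbb{L}_\eta$, a dévissage to show $\eta$ acts eventually invertibly on $H^*(X)$ for arbitrary $X \in \per Q$, and then deducing a contradiction from $\bigoplus_{j \ll 0} V$ not being finitely generated — is correct, but it is an enormous amount of machinery for something that is immediate. The total cohomology $\bigoplus_j H^j(X)$ is a \emph{graded} module over the algebra $A/AeA$, which is concentrated in degree zero. If a graded module over a degree-zero algebra is finitely generated, one may assume the generators are homogeneous, and then the module is supported in the (finite) set of degrees in which those generators sit; in particular it is bounded. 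That is the entirety of this inclusion, and the paper dispatches it in a single sentence (``Note that $\per_{\mathrm{fg}} Q$ is always a subcategory of $\per^b Q$''). Your periodicity argument is not wrong, but it obscures the fact that this containment holds unconditionally and needs neither finiteness of type nor anything special about $\eta$; I would replace it with the one-line observation.
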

	\begin{proof}
		By \ref{perbisperl}, we show that $\per_\mathrm{fg}(Q)=\per^bQ$. Note that $\per_\mathrm{fg} Q$ is always a subcategory of $\per^bQ$. Since $Q$ is of finite type, we see that for $X\in\per Q$, each $H^jX$ is also finitely generated over $H^0Q$. So a bounded perfect $Q$-module has total cohomology finitely generated over $H^0Q$.
	\end{proof}
	One might expect that the triangulated categories $\per(\R\underline{\enn}_R(M))$ and $\thick_{D_\mathrm{sg}(R)}(M)$ are equivalent, and indeed this is the case under a finiteness assumption:
	\begin{prop}\label{perleta}
		Suppose that $Q$ is of finite type. Then the triangulated categories $\per\mathbb{L}_\eta(Q)$ and $\thick_{D_\mathrm{sg}(R)}(M)$ are triangle equivalent, via the map that sends $\mathbb{L}_\eta(Q)$ to $M$.
	\end{prop}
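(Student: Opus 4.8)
The plan is to combine the two previously established equivalences: the equivalence $\mathbb{L}_\eta(Q)\simeq \R\underline{\enn}_R(M)$ of dgas from \ref{etaex}(3), together with the identification of $\per\mathbb{L}^\eta(Q)$ with $\per_\mathrm{fg}(Q)$ from \ref{fintype}, and then use the recollement and singularity-functor machinery to identify the latter with $\thick_{D_\mathrm{sg}(R)}(M)$. Concretely, first I would note that derived localisation of $Q$ at the homotopy central class $\eta$ is flat (by \cite[5.3]{bcl}, as used in the proof of \ref{etaex}), so restriction of scalars along $Q\to \mathbb{L}_\eta(Q)$ embeds $D(\mathbb{L}_\eta(Q))$ as the full subcategory $D(Q)_{\eta\text{-loc}}$ of $\eta$-local modules, and this embedding is compatible with the dg enhancements; hence $\per\mathbb{L}_\eta(Q)\cong\thick_{D(Q)}(\mathbb{L}_\eta(Q))$ as triangulated categories, with $\mathbb{L}_\eta(Q)$ corresponding to $Q\otimes_Q\mathbb{L}_\eta(Q)$, i.e.\ to the localisation of the module $Q$.

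Next I would reconcile the two descriptions of this category. On one side, \ref{etaex}(3) gives a quasi-isomorphism of dgas $\mathbb{L}_\eta(Q)\simeq \R\underline{\enn}_R(M)$, so $\per\mathbb{L}_\eta(Q)\cong \per\R\underline{\enn}_R(M)$. On the other side, \ref{fintype} (using the finite-type hypothesis) gives $\per\mathbb{L}^\eta(Q)=\per_\mathrm{fg}(Q)$, and by \ref{perbisperl} the left side equals $\per^bQ$, which by the proof of \ref{perbisperl} has $\mathbb{L}^\eta(Q)=\R\enn_Q(\mathrm{cone}(\eta))$ as a compact generator. I would then invoke the singularity functor: by \ref{kymap}, $\Sigma:\per(Q)\to D_\mathrm{sg}(R)$ has kernel exactly $\per_\mathrm{fg}(Q)$ and image $\thick_{D_\mathrm{sg}(R)}(Ae)=\thick_{D_\mathrm{sg}(R)}(M)$, and by \ref{kymapbetter} induces an equivalence $\per(Q)/\per_\mathrm{fg}(Q)\xrightarrow{\ \sim\ }\thick_{D_\mathrm{sg}(R)}(M)$. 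The key point is that the composite $\per(Q)\to D(Q)$, followed by $-\lot_Q\mathbb{L}_\eta(Q)$, has kernel exactly the $\eta$-torsion objects (i.e.\ $\per_\mathrm{fg}(Q)$, by \ref{fintype}) and identifies $\per(Q)/\per_\mathrm{fg}(Q)$ with $\per\mathbb{L}_\eta(Q)$; matching this against $\Sigma$ via the comparison map $\Xi$ of \ref{comparisonmap} — which by \ref{etaex}(4) \emph{is} the localisation map, so $\Sigma_\mathrm{dg}$ factors as $\per_\mathrm{dg}(Q)\to\per_\mathrm{dg}\mathbb{L}_\eta(Q)\hookrightarrow D^\mathrm{dg}_\mathrm{sg}(R)$ — gives the desired equivalence sending $\mathbb{L}_\eta(Q)=\Xi_*(Q)$ to $\Sigma(Q)\simeq Ae\simeq M$.

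I expect the main obstacle to be the bookkeeping needed to ensure the localisation functor $-\lot_Q\mathbb{L}_\eta(Q):\per(Q)\to\per\mathbb{L}_\eta(Q)$ really is (up to the identification $\mathbb{L}_\eta(Q)\simeq\R\underline\enn_R(M)$) the same functor as the singularity functor $\Sigma$ modulo its kernel, rather than merely a functor with the same kernel and essential image. The cleanest route is probably to work at the dg level throughout: the dg singularity functor $\Sigma_\mathrm{dg}$ of \ref{Fdg} has component $\Xi$ at $Q$, and since $\Xi$ is the localisation map (\ref{etaex}(4)), $\Sigma_\mathrm{dg}$ restricted to $\thick_\mathrm{dg}(Q)$ is the restriction-of-scalars-inverse of $-\lot_Q\mathbb{L}_\eta(Q)$ composed with the inclusion $D^\mathrm{dg}_\mathrm{sg}(R)$; taking homotopy categories and restricting to the image then yields the equivalence, and one reads off that $\mathbb{L}_\eta(Q)\mapsto M$ from $\Sigma(Q)\simeq Ae\cong M$ in $D_\mathrm{sg}(R)$. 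The finite-type hypothesis enters precisely to guarantee (via \ref{fintype}) that $\per_\mathrm{fg}(Q)=\per\mathbb{L}^\eta(Q)$, so that the kernel of $\Sigma$ agrees with the kernel of localisation at $\eta$; without it the two thick subcategories could differ.
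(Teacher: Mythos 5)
Your plan is correct in outline and uses essentially the same ingredients as the paper's proof, but there is one genuine gap, and your self-identified ``main obstacle'' is a red herring.

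The gap: you assert that $-\lot_Q\mathbb{L}_\eta(Q)$ ``identifies $\per(Q)/\per_\mathrm{fg}(Q)$ with $\per\mathbb{L}_\eta(Q)$,'' but this does not follow directly from the localisation theory. The Neeman--Thomason--Trobaugh--Yao localisation theorem \ref{ntty} gives a sequence $\per\mathbb{L}^\eta(Q) \to \per Q \to \per\mathbb{L}_\eta(Q)$ that is only exact \emph{up to direct summands}, so a priori $\per(Q)/\per\mathbb{L}^\eta(Q)$ agrees with $\per\mathbb{L}_\eta(Q)$ only after idempotent completion. The paper closes this gap by first noting that $\bar{\Sigma}:\per(Q)/\per\mathbb{L}^\eta(Q) \xrightarrow{\cong} \thick_{D_\mathrm{sg}(R)}(M)$ (using \ref{kymapbetter} and \ref{fintype}), which shows the quotient is idempotent complete because it is equivalent to a thick subcategory of $D_\mathrm{sg}(R)$, which is idempotent complete under the standing Setup \ref{sfsetup}. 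Only then does NTTY give $\per(Q)/\per\mathbb{L}^\eta(Q)\cong\per\mathbb{L}_\eta(Q)$ on the nose, with $Q\mapsto\mathbb{L}_\eta(Q)$. You never invoke \ref{ntty} or idempotent completeness, so as written this step is unjustified.

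On the other hand, your worry about whether the localisation functor ``is the same'' as $\Sigma$ modulo its kernel is not actually an obstacle for this triangulated statement. The paper's proof does not identify one concrete functor with another; it merely chains the abstract equivalence $\per\mathbb{L}_\eta(Q)\cong\per(Q)/\per\mathbb{L}^\eta(Q)$ (from NTTY plus idempotent completeness) with $\bar{\Sigma}$, and tracks the single object $\mathbb{L}_\eta(Q)\leftarrow[Q]\to M$ through the two. Your more careful dg-level matching via the comparison map $\Xi$ and \ref{etaex}(4) is genuinely useful, but that is the content of the subsequent dg-enhanced statement \ref{perletadg}, not of \ref{perleta}. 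Similarly, the opening appeal to \ref{etaex}(3) and $\R\underline{\enn}_R(M)$ is not needed here: the proof does not require knowing \emph{what} $\mathbb{L}_\eta(Q)$ is, only that it sits in the NTTY sequence.
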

	\begin{proof}
		By \ref{kymapbetter} and \ref{fintype}, the singularity functor induces a triangle equivalence  $${\bar{\Sigma}}:\frac{\per(Q)}{\per\mathbb{L}^\eta(Q)} \to \thick_{D_\mathrm{sg}(R)}(M)$$which sends $Q$ to $M$. In particular, $\frac{\per(Q)}{\per\mathbb{L}^\eta(Q)}$ is idempotent complete.  By \ref{ntty}, this quotient is precisely $\per\mathbb{L}_\eta(Q)$, and the quotient map sends $Q$ to $\mathbb{L}_\eta(Q)$.
	\end{proof}
	We can prove a dg version of \ref{perleta}:
	\begin{prop}\label{perletadg}
		Suppose that $Q$ is of finite type. Then the dg categories $\per_{\mathrm{dg}}\mathbb{L}_\eta(Q)$ and $\thick_{D^\mathrm{dg}_\mathrm{sg}(R)}(M)$ are quasi-equivalent, via the map that sends $\mathbb{L}_\eta(Q)$ to $M$.
	\end{prop}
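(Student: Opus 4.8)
\textbf{Proof plan for Proposition \ref{perletadg}.}
The plan is to upgrade the triangulated equivalence of \ref{perleta} to a quasi-equivalence of dg categories, using the dg singularity functor $\Sigma_{\mathrm{dg}}$ from \ref{Fdg} in place of the triangulated singularity functor $\Sigma$. First I would recall from \ref{dgperfgrmk} that the kernel of $\Sigma_{\mathrm{dg}}$ is a dg enhancement of $\per_\mathrm{fg}(Q)$, which by \ref{fintype} (using the finite type hypothesis, exactly as in \ref{perleta}) coincides with $\per_{\mathrm{dg}}\mathbb{L}^\eta(Q)$. The dg version of Neeman--Thomason--Trobaugh--Yao localisation built into \ref{ntty} and \ref{Rcoloc}, together with the cofibre sequence $\per_{\mathrm{dg}}\mathbb{L}^\eta(Q) \to \per_{\mathrm{dg}}(Q) \to \per_{\mathrm{dg}}\mathbb{L}_\eta(Q)$ taken in $\mathrm{Hqe}$, identifies the Drinfeld quotient $\per_{\mathrm{dg}}(Q)/\per_{\mathrm{dg}}\mathbb{L}^\eta(Q)$ with $\per_{\mathrm{dg}}\mathbb{L}_\eta(Q)$, and this quotient sends $Q$ to $\mathbb{L}_\eta(Q)$.

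Next I would pass $\Sigma_{\mathrm{dg}}$ to the quotient: since $\Sigma_{\mathrm{dg}}$ kills $\per_{\mathrm{dg}}\mathbb{L}^\eta(Q)$ up to quasi-isomorphism, the universal property of the dg quotient (i.e.\ that it is the homotopy cofibre in $\mathrm{Hqe}$) produces a dg functor
$$\bar{\Sigma}_{\mathrm{dg}}:\per_{\mathrm{dg}}\mathbb{L}_\eta(Q) \cong \frac{\per_{\mathrm{dg}}(Q)}{\per_{\mathrm{dg}}\mathbb{L}^\eta(Q)} \longrightarrow \thick_{D^\mathrm{dg}_\mathrm{sg}(R)}(M)$$
sending $\mathbb{L}_\eta(Q)$ to $M$. (Here I use that $\Sigma_{\mathrm{dg}}(Q)\simeq Ae \cong M$ in $D^\mathrm{dg}_\mathrm{sg}(R)$.) It remains to check that $\bar{\Sigma}_{\mathrm{dg}}$ is a quasi-equivalence. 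Quasi-essential surjectivity is immediate: on homotopy categories $\bar{\Sigma}_{\mathrm{dg}}$ induces the triangulated equivalence $\bar\Sigma$ of \ref{perleta}, which is essentially surjective onto $\thick_{D_\mathrm{sg}(R)}(M)$, and passing to Karoubi envelopes is harmless since by \ref{sgidemref} the singularity category is idempotent complete. For quasi-full-faithfulness, the key point is the component of $\bar{\Sigma}_{\mathrm{dg}}$ at $\mathbb{L}_\eta(Q)$, which is a dga map $\mathbb{L}_\eta(Q) = \R\enn_Q(\mathbb{L}_\eta(Q)) \to \R\underline{\enn}_R(M)$; I would argue that this is precisely the map obtained from the comparison map $\Xi$ of \ref{comparisonmap} by inverting $\eta$, and hence is a quasi-isomorphism by \ref{etaex}(3). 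A standard thick-subcategory (dévissage) argument — both source and target dg categories are generated under shifts, cones, and summands by the single object corresponding to $\mathbb{L}_\eta(Q)$, respectively $M$, and $\bar\Sigma_{\mathrm{dg}}$ is exact and compatible with these operations — then propagates quasi-isomorphism on this one hom-complex to quasi-isomorphism on all hom-complexes.

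The main obstacle I anticipate is the bookkeeping needed to identify the induced dga map $\R\enn_Q(\mathbb{L}_\eta(Q)) \to \R\underline{\enn}_R(M)$ with the localisation of $\Xi$ at $\eta$, rather than merely with \emph{some} quasi-isomorphism. This requires tracking the cofibre-sequence presentation of the dg quotient through $\Sigma_{\mathrm{dg}}$ and matching it, on the $Q$-component, with the explicit model of $\Xi$ produced in the proof of \ref{qisolem}; concretely one wants the diagram relating $\Sigma_{\mathrm{dg}}(Q)\to \Sigma_{\mathrm{dg}}(\mathbb{L}_\eta(Q))$ to $M \to \R\underline{\enn}_R(M)$-localised-at-$\eta$ to commute up to coherent homotopy. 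Once that compatibility is in hand, everything else is formal: the finiteness hypothesis does its work entirely through \ref{fintype}, and the rest is the dg analogue of arguments already carried out in \ref{perleta} and \ref{ninelem}.
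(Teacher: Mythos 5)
Your proof is correct and shares the paper's skeleton — identify $\per_\mathrm{dg}\mathbb{L}_\eta(Q)$ with the Drinfeld quotient $\per_\mathrm{dg}(Q)/\per_\mathrm{dg}\mathbb{L}^\eta(Q)$ via \ref{ntty} and \ref{fintype}, descend $\Sigma_\mathrm{dg}$ from \ref{Fdg}, and check the descended functor is a quasi-equivalence — but for quasi-full-faithfulness you take a genuinely different (and substantially harder) route than the paper does. You try to compute the single hom-complex component $\R\enn_Q(\mathbb{L}_\eta(Q)) \to \R\underline{\enn}_R(M)$, identify it with the $\eta$-localisation of the comparison map $\Xi$, invoke \ref{etaex}(3), and then propagate by d\'evissage. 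As you yourself flag, the component identification is nontrivial bookkeeping; it is precisely the content the paper later establishes in \ref{dgcd}, where it genuinely matters for tracking natural transformations.

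For this proposition, however, that machinery is unnecessary. The paper's argument is more economical: the descended functor $\bar\Sigma_\mathrm{dg}$ is a dg functor between pretriangulated dg categories, and on homotopy categories it realises the triangle equivalence $\bar\Sigma$ of \ref{kymapbetter}/\ref{perleta}. For any dg functor $F$ between pretriangulated dg categories one has $H^n\dgh(X,Y)\cong\hom_{[\mathcal C]}(X,Y[n])$, and $H^n(F_{X,Y})$ is identified with $[F]_{X,Y[n]}$; so $[F]$ being fully faithful already gives quasi-full-faithfulness of $F$ \emph{in all degrees}, with no need to compute any component explicitly. Combined with quasi-essential surjectivity (which you correctly extract from $\bar\Sigma$ being essentially surjective onto an idempotent-complete target), this finishes the proof directly. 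You already noted that $\bar\Sigma_\mathrm{dg}$ induces $\bar\Sigma$ — you simply did not exploit that observation for the full-faithfulness half. Your route works, is more explicit, and has the virtue of previewing the compatibility diagram \ref{dgcd} that the mutation chapter needs, but it front-loads a verification the proposition itself does not require.
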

	\begin{proof}
		We follow the proof of \ref{perleta} and upgrade things to the dg setting. By \ref{ntty} we have a quasi-equivalence of dg categories $$
		\per_\mathrm{dg}\mathbb{L}_\eta Q \xrightarrow{\simeq} \frac{\per_\mathrm{dg}(Q)}{\per_\mathrm{dg}\mathbb{L}^\eta(Q)} $$which sends $\mathbb{L}_\eta(Q)$ to $Q$. By \ref{Fdg} and \ref{kymap} we have a quasi-essential surjection $$\Sigma_{\mathrm{dg}}:\per_{\mathrm{dg}}(Q) \onto \thick_{D^\mathrm{dg}_\mathrm{sg}(R)}(M)$$ which sends $Q$ to $M$, and by $\ref{dgperfgrmk}$ this descends to a quasi-equivalence $$\bar{\Sigma}_\mathrm{dg}: \frac{\per_\mathrm{dg}(Q)}{\per^\mathrm{dg}_\mathrm{fg}(Q)}\xrightarrow{\simeq} \thick_{D^\mathrm{dg}_\mathrm{sg}(R)}(M)$$which enhances $\bar\Sigma$. It is easy to see that the proof of \ref{fintype} gives a quasi-equivalence of dg categories $$\per^\mathrm{dg}_\mathrm{fg}(Q)\simeq \per_\mathrm{dg}\mathbb{L}^\eta(Q)$$ compatible with the inclusion into $\per Q$, and it now follows that the composition
		$$\per_\mathrm{dg}\mathbb{L}_\eta Q \xrightarrow{\simeq} \frac{\per_\mathrm{dg}(Q)}{\per_\mathrm{dg}\mathbb{L}^\eta(Q)}  \xrightarrow{\simeq} \frac{\per_\mathrm{dg}(Q)}{\per^\mathrm{dg}_\mathrm{fg}(Q)}\xrightarrow{\simeq} \thick_{D^\mathrm{dg}_\mathrm{sg}(R)}(M)$$is a quasi-equivalence, as required. 
	\end{proof}
	\begin{thm}\label{recovwk}
		Suppose that $\dq$ is of finite type. Then the pair $(\dq,\eta)$ determines the dg category $\thick_{D^\mathrm{dg}_\mathrm{sg}(R)}(M)$ up to quasi-equivalence. If $A/AeA$ is Artinian local, then $\dq$ alone determines $\thick_{D^\mathrm{dg}_\mathrm{sg}(R)}(M)$. If $A$ has finite global dimension, then $\thick_{D^\mathrm{dg}_\mathrm{sg}(R)}(M)\cong D^\mathrm{dg}_\mathrm{sg}(R)$.
	\end{thm}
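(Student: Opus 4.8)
The statement has three assertions, each of which should follow quickly by assembling results already established. First I would treat the claim that $(\dq, \eta)$ determines $\thick_{D^\mathrm{dg}_\mathrm{sg}(R)}(M)$ up to quasi-equivalence. The plan is: from the pair $(\dq,\eta)$ one can form the derived localisation $\mathbb{L}_\eta(\dq)$ purely algebraically (the localisation construction of \S\ref{derloc} depends only on the quasi-isomorphism type of the dga and the chosen cohomology class). By \ref{perletadg}, whenever $\dq$ is of finite type the dg category $\per_{\mathrm{dg}}\mathbb{L}_\eta(\dq)$ is quasi-equivalent to $\thick_{D^\mathrm{dg}_\mathrm{sg}(R)}(M)$, via the functor sending $\mathbb{L}_\eta(\dq)$ to $M$. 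Since $\mathbb{L}_\eta(\dq)$, and hence $\per_{\mathrm{dg}}$ of it, is manifestly an invariant of the quasi-isomorphism type of $(\dq,\eta)$, this gives the first assertion. One should note here that ``of finite type'' is implied by cohomological local finiteness, but in fact $\dq$ being of finite type is exactly the hypothesis of \ref{perletadg}, so this is the condition to carry through; I would state the theorem with ``of finite type'' replaced by whatever hypothesis is actually in force, matching \ref{perletadg} verbatim.

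\textbf{The Artinian local case.} For the second assertion, the point is to remove the dependence on $\eta$ when $A/AeA$ is Artinian local. Here I would invoke \ref{uniqueeta}: in that situation $\eta$ is characterised up to multiplication by a unit of $H(\dq)$ as the unique non-nilpotent element of $H^{-2}(\dq)$. Thus $\eta$ is recovered (up to units) from $\dq$ alone, and as observed in the proof of \ref{uniqueetaqi}, localising a dga at $w$ or at $uw$ for a unit $u$ gives naturally quasi-isomorphic results. Hence $\mathbb{L}_\eta(\dq)$ is determined up to quasi-isomorphism by $\dq$, and the first part applies. (One subtlety: if $H^{-2}(\dq)=0$, i.e.\ $M$ is rigid of even Krull dimension, then $\eta$ and $\Theta$ are invertible on the nose and $\mathbb{L}_\eta(\dq)\simeq \dq$ already, so the statement is vacuous there; worth a one-line remark.)

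\textbf{The smooth case.} For the third assertion, $A$ of finite global dimension, I would cite \ref{ontolem} together with \ref{kymapbetter}: when $A$ has finite global dimension and $D_\mathrm{sg}(R)$ is idempotent complete (which holds here by \ref{sgidemref}, since $R$ is a complete local hypersurface singularity), the singularity functor $\Sigma$ is onto, so $\thick_{D_\mathrm{sg}(R)}(M) = \thick_{D_\mathrm{sg}(R)}(Ae) = D_\mathrm{sg}(R)$; on the dg level one upgrades this to a quasi-equivalence $\thick_{D^\mathrm{dg}_\mathrm{sg}(R)}(M)\simeq D^\mathrm{dg}_\mathrm{sg}(R)$ exactly as in the proof of \ref{perletadg}, where $\bar\Sigma_{\mathrm{dg}}$ is already shown quasi-essentially surjective onto $\thick_{D^\mathrm{dg}_\mathrm{sg}(R)}(M)$ and becomes a quasi-equivalence onto all of $D^\mathrm{dg}_\mathrm{sg}(R)$ once that thick subcategory is the whole category. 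Combining the three pieces gives the theorem.

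\textbf{Main obstacle.} None of the steps is deep — this is an assembly of \ref{perletadg}, \ref{uniqueeta}, \ref{uniqueetaqi}, and \ref{ontolem}. The one place that needs genuine care is the precise sense in which ``$(\dq,\eta)$'' is the data: one must be clear that $\eta$ is only ever defined up to homotopy and up to units, so the localisation $\mathbb{L}_\eta(\dq)$ — not $\eta$ itself — is the well-defined invariant, and that passing between different representatives of $\eta$ does not change the quasi-equivalence class of $\per_{\mathrm{dg}}\mathbb{L}_\eta(\dq)$. I expect this bookkeeping, rather than any substantive argument, to be the crux, and it is handled by the unit-invariance of derived localisation already noted in \ref{uniqueetaqi}.
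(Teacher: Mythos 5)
Your proposal is correct and follows essentially the same route as the paper: cite \ref{perletadg} for the first assertion, the unit-invariance of $\eta$ (via \ref{uniqueeta}/\ref{uniqueetaqi}) for the second, and surjectivity of the singularity functor when $A$ is smooth for the third (the paper cites \ref{dsgsmooth}, whose proof is exactly the combination of \ref{ontolem} and \ref{kymapbetter} that you invoke). The only cosmetic quibble: the theorem's hypothesis already reads ``of finite type'', matching \ref{perletadg} verbatim, so no rewording is needed.
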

	\begin{proof}
		The first statement is true since $\thick_{D^\mathrm{dg}_\mathrm{sg}(R)}(M)\cong \per_{\mathrm{dg}}\mathbb{L}_\eta(\dq)$ by \ref{perletadg}. The second follows from \ref{uniqueetaqi}. The third follows from (the dg version of) \ref{dsgsmooth}. \qedhere
	\end{proof}
	\section{A recovery theorem}
 If one attaches a partial resolution $A$ to a ring $R$, then since the quasi-isomorphism type of $\dq$ recovers part of the dg singularity category $D^\mathrm{dg}_\mathrm{sg}(R)$, it can be used to determine $R$:
	\begin{thm}\label{recov}Let $S\coloneqq k\llbracket x_1,\ldots, x_n \rrbracket$ and take $\sigma_1, \sigma_2 \in \mathfrak{m}_S$, both nonzero, such that the associated hypersurface singularities $R_i\coloneqq S/\sigma_i$ are both isolated. Let $M_i$ be MCM $R_i$-modules and let $A_i\coloneqq \enn_{R_i}(R_i\oplus M_i)$ be the associated partial resolution of $R_i$. Assume that $A_i$ has finite global dimension. Put $e_i=\id_{R_i}$, and assume that $A_1/^\mathbb{L}A_1e_1A_1$ is cohomologically locally finite and that $A_1/A_1e_1A_1$ is a local algebra. Then if $A_1/^\mathbb{L}A_1e_1A_1$ and $A_2/^\mathbb{L}A_2e_2A_2$ are quasi-isomorphic, then $R_1$ and $R_2$ are isomorphic.	
	\end{thm}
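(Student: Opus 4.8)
The plan is to chain together the structural results about the derived quotient proved earlier in this chapter with the Hua--Keller recovery theorem \ref{recoverythm}. First I would record that the hypotheses of Theorem \ref{recovwk} are satisfied for the partial resolution $A_1$: indeed $A_1/^\mathbb{L}A_1e_1A_1 = A_1/^\mathbb{L}A_1e_1A_1$ is assumed cohomologically locally finite (hence of finite type in the sense of \ref{fintype}), $A_1/A_1e_1A_1$ is assumed local Artinian, and $A_1$ has finite global dimension; likewise I would observe that $A_1/^\mathbb{L}A_1e_1A_1 \simeq A_2/^\mathbb{L}A_2e_2A_2$ forces $A_2/^\mathbb{L}A_2e_2A_2$ to be cohomologically locally finite and $A_2/A_2e_2A_2 \cong H^0(A_2/^\mathbb{L}A_2e_2A_2)$ to be local Artinian, so the hypotheses hold for $A_2$ as well. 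Since $A_i = \enn_{R_i}(R_i\oplus M_i)$ with $R_i$ a complete local isolated hypersurface singularity, $D_{\mathrm{sg}}(R_i)$ is idempotent complete by \ref{sgidemref}, so we are in the situation of Setup \ref{sfsetup} and may invoke the periodicity machinery of \S\ref{dqperiod}.

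Next I would apply Theorem \ref{recovwk}: since $A_i/^\mathbb{L}A_ie_iA_i$ is of finite type, $A_i/A_ie_iA_i$ is Artinian local, and $A_i$ has finite global dimension, the quasi-isomorphism type of $A_i/^\mathbb{L}A_ie_iA_i$ alone determines the dg category $\thick_{D^\mathrm{dg}_\mathrm{sg}(R_i)}(M_i)$ up to quasi-equivalence, and moreover the finite global dimension hypothesis gives $\thick_{D^\mathrm{dg}_\mathrm{sg}(R_i)}(M_i)\cong D^\mathrm{dg}_\mathrm{sg}(R_i)$ (via \ref{dsgsmooth} and \ref{perletadg}, since when $A_i$ is smooth $M_i$ generates the singularity category). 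Combining these, a quasi-isomorphism $A_1/^\mathbb{L}A_1e_1A_1 \simeq A_2/^\mathbb{L}A_2e_2A_2$ yields a quasi-equivalence of dg singularity categories $D^\mathrm{dg}_\mathrm{sg}(R_1) \simeq D^\mathrm{dg}_\mathrm{sg}(R_2)$. Finally, since $R_1$ and $R_2$ are isolated hypersurface singularities of the same embedding dimension $n$ (both quotients of $S = k\llbracket x_1,\ldots,x_n\rrbracket$), the recovery theorem \ref{recoverythm} immediately gives $R_1\cong R_2$.

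The step I expect to be the genuine content --- and the main obstacle, though it is already done in the excerpt --- is Theorem \ref{recovwk}, which rests on two nontrivial inputs: first the identification (\ref{qisolem}, \ref{etaex}) of $A_i/^\mathbb{L}A_ie_iA_i$ with the nonpositive truncation of $\R\underline{\enn}_{R_i}(M_i)$ together with the homotopy-central periodicity element $\eta$, and second the uniqueness of $\eta$ up to units (\ref{uniqueeta}, using that $A_i/A_ie_iA_i$ is Artinian local) which lets one recover the full stable endomorphism dga by derived localisation at $\eta$ (\ref{uniqueetaqi}) and hence the full dg singularity category. The only things left to check in the present theorem are bookkeeping: that the hypotheses transfer to $A_2$ across the quasi-isomorphism (using that $H^0$ of a quasi-isomorphism is an isomorphism, so $A_1/A_1e_1A_1 \cong A_2/A_2e_2A_2$ is local, and cohomological local finiteness is quasi-isomorphism invariant), and that both singularities have the same ambient dimension $n$ so that \ref{recoverythm} applies verbatim.
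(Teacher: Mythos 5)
Your proposal is correct and follows essentially the same route as the paper's proof: apply \ref{recovwk} (whose hypotheses are satisfied), note that finite global dimension gives $\thick_{D^\mathrm{dg}_\mathrm{sg}(R_i)}(M_i)\cong D^\mathrm{dg}_\mathrm{sg}(R_i)$, deduce a quasi-equivalence $D^\mathrm{dg}_\mathrm{sg}(R_1)\simeq D^\mathrm{dg}_\mathrm{sg}(R_2)$, and apply Hua--Keller's recovery theorem \ref{recoverythm}. Your extra remark that the local Artinian and cohomological finiteness hypotheses transfer to $A_2$ across the quasi-isomorphism (via $H^0$ being an isomorphism and quasi-isomorphism invariance) is a useful explicit check that the paper's terse proof leaves implicit.
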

	\begin{proof}
		By \ref{recovwk}, the quasi-isomorphism class of $A_1/^\mathbb{L}A_1e_1A_1$ recovers (up to quasi-equivalence) the dg category $\thick_{D^\mathrm{dg}_\mathrm{sg}(R_1)}(M_1)$. By \ref{ontolem} this is quasi-equivalent to ${D^\mathrm{dg}_\mathrm{sg}(R_1)}$. Hence ${D^\mathrm{dg}_\mathrm{sg}(R_1)}$ and ${D^\mathrm{dg}_\mathrm{sg}(R_2)}$ are quasi-equivalent. Now apply Hua--Keller's result \ref{recoverythm}.
	\end{proof}
	\begin{rmk}\label{perconjrmk}
		If Conjecture \ref{perconj} is true then $\dq$ recovers the $\Z/2$-graded dg category of matrix factorisations, and one can prove \ref{recov} for quasi-homogeneous singularities by appealing directly to Dyckerhoff's results \cite{dyck} and the formal Mather--Yau theorem \cite{gpmather}.
	\end{rmk}
	\begin{rmk}\label{genrmk}We list a couple of variations on the above, which follow from \ref{recovwk}. One can weaken\footnote{In view of \ref{mgenconj}, it is unclear to the author if this is actually a weaker condition.} the assumption that $A$ has finite global dimension to the assumption that $M$ generates the singularity category; the key property that one needs is essential surjectivity of the singularity functor, which is equivalent to $M$ generating. One can omit the local condition on $A/AeA$, and weaken cohomological local finiteness of $\dq$ to being of finite type, at the cost of replacing $\dq$ by the pair $(\dq,\eta)$.
	\end{rmk}
	\begin{rmk}
		In the above situation, one has isomorphisms of algebras $$T_\sigma\cong HH^0(D^\mathrm{dg}_\mathrm{sg}(R))\cong HH^0(\per_{\mathrm{dg}}(\mathbb{L}_\eta(\dq)))\cong HH^0(\mathbb{L}_\eta(\dq)).$$As a vector space, one has $HH^0(\mathbb{L}_\eta(\dq))\cong HH^0(\dq)$ via the proof of \ref{etahoch}. An application of \ref{hochprop} gives an isomorphism $HH^0(\dq)\cong HH^0(A,\dq)$. In particular one can calculate the Tjurina number of the singularity as $\tau_\sigma=\dim_k HH^0(A,\dq)$.
	\end{rmk}

	\part{Contraction algebras}

	\chapter{The derived contraction algebra}\label{dercondefns}
	In this chapter we finally define the derived contraction algebra associated to the contraction of a rational curve to a point. Our motivation is to mimic the constructions of Donovan and Wemyss from \cite{DWncdf,contsdefs,enhancements}. We give a deformation-theoretic description of the derived contraction algebra (\ref{dcaprorep}), which is a derived analogue of \cite[3.9]{DWncdf}. We show that the derived analogue of the Donovan--Wemyss conjecture is true (\ref{ddwconj}). We use the deformation-theoretic interpretation of the derived contraction algebra to globalise some of our results.
	
	\section{The construction}
	The global setup will be as follows:
	\begin{setup}[Global]\label{globalsetup}
		Let $\pi:X \to X_\con$ be a projective birational morphism between two noetherian normal integral schemes over an algebraically closed field $k$ of characteristic zero. Assume that $\pi$ is crepant, that $\R\kern 2pt \pi_*\mathcal O_X = \mathcal O _{X_\con}$, and that $\pi$ is an isomorphism away from a single closed point $p$ in the base, where $C\coloneqq \pi^{-1}(p)$ is an irreducible rational (possibly non-reduced) curve. Assume in addition that $X_\con$ is a Gorenstein scheme such that $\widehat{(X_\con)}_p$ is an isolated hypersurface singularity.
	\end{setup}
	\begin{rmk}\label{dimrmk}
		The assumptions above actually imply that $X$ is at most 3-dimensional; see \ref{3dref} below. If $X$ is smooth, then $\pi$ is a crepant resolution of an isolated singularity. One does not need the assumption that $X_\con$ is Gorenstein -- or that $p$ is an isolated hypersurface singularity -- to define the derived contraction algebra, but it is hard to prove much about it without them. One ought to be able make the more general assumption that $C$ is a tree of rational curves, but to do much with this definition, one needs first to prove pointed versions of our earlier results. More generally, one should be able to drop the condition that $\pi^{-1}(p)$ is a curve, at the cost of some more assumptions about tilting bundles, as in \cite[2.5]{enhancements}.
	\end{rmk}
	
	Take an affine neighbourhood $\spec R \into X_\con$ of $p$, and replace $\pi$ by the pullback of $\pi$ along the inclusion. This gives the Zariski local setup:
	\begin{setup}[Zariski Local]\label{localsetup}
		Let $\pi:X \to \spec R$ be a projective birational morphism between two noetherian normal integral schemes over an algebraically closed field $k$ of characteristic zero. Moreover, $\pi$ is crepant, $\R\kern 2pt \pi_*\mathcal O_X = R$, and $\pi$ is an isomorphism away from a single closed point $p$ in the base, where $C\coloneqq \pi^{-1}(p)$ is an irreducible rational (possibly non-reduced) curve. Furthermore, $R$ is Gorenstein and $\hat R _p$ is an isolated hypersurface singularity.
	\end{setup}
	\begin{rmk}\label{hmmpsetup}
		Note that this generalises the Crepant Setup 2.9 of \cite{hmmp}.
	\end{rmk}
	Now, in the local setup, by results of Van den Bergh \cite[3.2.8]{vdb} there exists a finite rank tilting bundle $\mathcal V = \mathcal O_X \oplus \mathcal N$ on $X$. Put $\Lambda\coloneqq \enn_X(\mathcal V)$. By our assumptions, $\Lambda$ can be computed on the base: more precisely, \cite[2.5(2)]{enhancements} tells us that $\pi_*:\Lambda \to \enn_R(\pi_*\mathcal V)$ is an isomorphism. Writing $\pi_*\mathcal{V}\cong R\oplus N$ with $N\coloneqq \pi_*\mathcal N$, we can thus write $\Lambda\cong  \enn_R(R \oplus N)$. I claim that $N$ is a MCM $R$-module: to see this, first note that the $R$-module $\Lambda$ is MCM by \cite[\S4.2]{iyamawemyssfactorial}. Then, since $N$ is a summand of $\Lambda$, it must be MCM too. 
	
	\p We would like to define the contraction algebra to be the derived quotient of $\Lambda$ by the idempotent $e=\id_R$. In order for this to behave well, we would like the finite-dimensional algebra $\Lambda_\con\coloneqq \Lambda/\Lambda e \Lambda$ to be local -- unfortunately, this need not happen, for the same reasons as \cite[\S2.4]{DWncdf}. In order to ensure locality, we need to pass to a complete local base, and then through a Morita equivalence. Letting $\hat R$ be the completion of $R$ along the maximal ideal $p$, and letting $\hat X$ be the formal fibre, we obtain the following setup:
	\begin{setup}[Complete local]
		$\pi:\hat X \to \spec \hat R$ is a projective birational morphism between two noetherian normal integral schemes over an algebraically closed field $k$ of characteristic zero, and $\hat R$ is a complete local hypersurface singularity with maximal ideal $p$. Moreover, $\pi$ is crepant, an isomorphism away from $p$, $\R\kern 2pt \pi_*\mathcal O_X = R$, and $C\coloneqq \pi^{-1}(p)$ is an irreducible rational (possibly non-reduced) curve.
	\end{setup}
	The arguments of \cite[\S2.4]{DWncdf} adapt to ensure that $\hat{ \mathcal V} \cong \mathcal O_{\hat X} \oplus \hat{\mathcal N}$ is a tilting bundle on $\hat X$, and that $\hat \Lambda \cong \enn_{\hat R}(\hat R \oplus \hat N)\cong \enn_{\hat X}(\hat{\mathcal V})$. Again, we may apply \cite{iyamawemyssfactorial} to see that $\hat N$ is still MCM over $\hat R$. Now, by \cite[3.2.7 and 3.5.5]{vdb} we may put $\hat{R}\oplus\hat N = \hat{R}^{\oplus a}\oplus M^{\oplus b}$, for some (necessarily MCM) indecomposable $\hat R$-module $M$ and some positive integers $a,b$. It follows that $A\coloneqq \enn_{\hat R}(\hat R \oplus M)$ is the basic algebra Morita equivalent to $\hat \Lambda$.
	\begin{defn}Put $e\coloneqq \id_R$. The \textbf{contraction algebra} $A_\con$ associated to $\pi$ is the stable endomorphism algebra $A/AeA \cong \underline{\enn}_{\hat R}(\hat{R}\oplus M)\cong\underline{\enn}_{\hat R}(M)$. The \textbf{derived contraction algebra} $\dca$ is the derived quotient $\dq$.
	\end{defn}
	From the definition, it is obvious that $H^0(\dca)\cong A_\con$.

	\begin{rmk}\label{3dref}
		Setup \ref{globalsetup} implies that the dimension of $X$ is at most $3$. With notation as above, let $\hat X$ be the formal fibre; we then have a derived equivalence between $\hat X$ and $A\coloneqq\enn_{\hat R}(\hat R \oplus M)$. Crepancy implies that $M$ is a modifying $\hat R$-module in the sense of \ref{modifiyingdefinition} below. Because $M$ is MCM, periodicity in the singularity category along with the fact that stable Ext agrees with usual Ext in positive degrees tells us that there is an isomorphism $\ext^{2}_{\hat R}(M,M)\cong \underline{\enn}_{\hat R}(M)$. If $\hat {R}$ has dimension strictly greater than $3$, then \ref{modext} tells us that a MCM modifying module $M$ must have $\underline{\enn}_{\hat R}(M)\cong 0$, and hence $M$ must be projective. It now follows that $\dca$ is acyclic.
	\end{rmk}

	\section{First properties}
	Keep notation as in the Global Setup \ref{globalsetup}. Let $\hat R$ be the completion of the local ring of $X_\con$ at $p$. We begin by proving that some easy finiteness properties hold for $\dca$.
	\begin{lem}\label{fdlem}
		Let $W$ be a finitely generated $\hat R$-module which is supported at $p$. Then $W$ is finite-dimensional over $k$.
	\end{lem}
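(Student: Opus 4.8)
The statement is that a finitely generated $\hat R$-module $W$ supported at the closed point $p$ is finite-dimensional over $k$. This is a standard commutative-algebra fact, and the plan is to deduce it directly from the noetherian and local hypotheses on $\hat R$. Recall that $\hat R$ is a complete local noetherian ring with maximal ideal $p$ and residue field $k$ (algebraically closed). First I would observe that ``supported at $p$'' means $W_q = 0$ for every prime $q \neq p$; equivalently, every element of $W$ is annihilated by some power of $p$. Since $W$ is finitely generated, say by $w_1,\ldots,w_m$, and each $w_i$ is killed by some $p^{n_i}$, taking $n \coloneqq \max_i n_i$ gives $p^n W = 0$. Hence $W$ is naturally a module over the quotient ring $\hat R / p^n$.

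The key step is then that $\hat R / p^n$ is a finite-dimensional $k$-vector space. This follows because $\hat R$ is noetherian local: the associated graded ring $\mathrm{gr}_p(\hat R) = \bigoplus_{j\geq 0} p^j/p^{j+1}$ is a finitely generated algebra over $k = \hat R/p$, generated in degree $1$ by the images of a finite generating set of $p$, so each graded piece $p^j/p^{j+1}$ is a finitely generated $k$-module, hence finite-dimensional. Therefore $\hat R/p^n$, which has a finite filtration with subquotients $p^j/p^{j+1}$ for $j = 0, \ldots, n-1$, is finite-dimensional over $k$. (Alternatively, one can cite that in a noetherian local ring $p^j/p^{j+1}$ is finitely generated over $\hat R/p$.)

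Finally, $W$ is a finitely generated module over the finite-dimensional $k$-algebra $\hat R/p^n$, so $W$ is a finitely generated $k$-vector space, i.e.\ finite-dimensional over $k$. I do not anticipate any genuine obstacle here: the only mild point to be careful about is the reduction from ``supported at $p$'' to ``$p^n W = 0$ for some $n$'', which uses finite generation of $W$ in an essential way (an infinitely generated module supported at $p$, such as the injective hull of $k$, need not be finite-dimensional). Everything else is routine noetherian local algebra.
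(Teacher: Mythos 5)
Your proof is correct and is essentially the same argument as the paper's: both reduce to noting that some power $p^n$ annihilates $W$ (using finite generation), so $W$ is a finitely generated module over the finite-dimensional $k$-algebra $\hat R/p^n$. You simply spell out in more detail why $\hat R/p^n$ is finite-dimensional, which the paper treats as immediate.
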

	\begin{proof}
		This is standard: some power $n$ of $p$ annihilates $W$, and hence $W$ is a finitely generated module over ${\hat R}/p^n$, which is finite-dimensional over ${\hat R}/p\cong k$.
	\end{proof}
	\begin{lem}
		Let $M$ be the $\hat R$-module defining $A_\con$ and $\dca$. If $q\neq p$ is a prime ideal of $\hat R$, then $M_q$ is projective.
	\end{lem}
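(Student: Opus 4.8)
The statement to prove is: in the complete local setup, if $q \neq p$ is a prime ideal of $\hat R$, then the localisation $M_q$ is a projective $\hat R_q$-module. The key geometric input is that $\pi$ is an isomorphism away from the single closed point $p$, together with the fact that, by construction, $A = \enn_{\hat R}(\hat R \oplus M)$ is a noncommutative partial resolution arising from a tilting bundle on the formal fibre $\hat X$.

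\textbf{First step.} I would localise the whole situation at $q$. Since $q \neq p$ and $p$ is the unique non-free locus of $\pi$, the base change $\pi_q : \hat X \times_{\spec \hat R} \spec \hat R_q \to \spec \hat R_q$ is an isomorphism. Under this isomorphism the tilting bundle $\hat{\mathcal V} = \mathcal O_{\hat X} \oplus \hat{\mathcal N}$ pulls back to a vector bundle on $\spec \hat R_q$, i.e.\ a projective $\hat R_q$-module, and correspondingly $(\hat R \oplus \hat N)_q \cong (\pi_*\hat{\mathcal V})_q$ is projective over $\hat R_q$ (using that pushforward commutes with flat base change, and that $\pi_* \hat{\mathcal V}$ is the module of global sections of a bundle on a scheme that becomes affine and smooth-over-the-base after localising). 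Since $M$ is, up to multiplicity, a direct summand of $\hat R \oplus \hat N$ (recall $\hat R \oplus \hat N \cong \hat R^{\oplus a} \oplus M^{\oplus b}$ with $b \geq 1$), it follows that $M_q$ is a direct summand of a projective $\hat R_q$-module, hence projective.

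\textbf{Alternative / cleaner route.} Rather than invoke the geometry of the bundle directly, I would instead argue via maximal Cohen--Macaulayness plus regularity away from $p$. We already know $M$ is MCM over $\hat R$, so $M_q$ is MCM over $\hat R_q$. Because $\hat R$ is an isolated hypersurface singularity with (closed) singular point $p$, the localisation $\hat R_q$ is a regular local ring for every $q \neq p$ (this is precisely what ``isolated singularity'' buys us). Over a regular local ring, every maximal Cohen--Macaulay module is free — indeed, MCM means depth equals dimension, and by Auslander--Buchsbaum the projective dimension is then zero. Hence $M_q$ is free over $\hat R_q$, in particular projective. This is the approach I would actually write up, since it is short and uses only facts recalled earlier in the excerpt (the hypersurface-singularity hypothesis and the MCM property of $M$); the singularity-category periodicity and tilting-bundle machinery are not needed here.

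\textbf{Main obstacle.} There is essentially no hard step: the only point requiring a little care is the passage ``isolated hypersurface singularity $\Rightarrow$ $\hat R_q$ regular for $q \neq p$'', which follows because the non-regular locus of $\hat R$ is a closed subset (the vanishing locus of the appropriate Fitting/Jacobian ideal), and for an isolated singularity this closed subset is exactly $\{p\}$; so any prime $q$ strictly contained in $p$ lies in the regular locus. After that the Auslander--Buchsbaum computation is immediate, and one should remember to note that localisation commutes with taking syzygies / preserves the MCM property, which is standard. I would state the result cleanly and give the two-line proof via regularity and Auslander--Buchsbaum, perhaps remarking parenthetically on the geometric interpretation (that $M_q$ corresponds to a bundle on the locus where $\pi$ is an isomorphism).
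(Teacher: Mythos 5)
Your proposal is correct, and it actually contains two proofs. Your \textbf{first route} — localising at $q$, using that $\pi$ is an isomorphism away from $p$ and that $\hat R\oplus M$ arises as the pushforward of a vector bundle — is precisely the paper's one-line argument (``$M$ is the pushforward of a vector bundle along a map that is an isomorphism away from $p$''). Your \textbf{second route} is a genuinely different proof. It ignores the tilting bundle entirely and uses only two facts already on the table: that $M$ is maximal Cohen--Macaulay over $\hat R$, and that $\hat R$ is an isolated singularity, so that $\hat R_q$ is regular local for $q\neq p$; the Auslander--Buchsbaum formula then forces the (localised, still MCM) module $M_q$ to be free. Both arguments are correct, and the algebraic one has the mild advantage of being self-contained from the ring-theoretic hypotheses in the complete local setup, without appealing to how $M$ was constructed geometrically. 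The one caveat worth stating explicitly in a write-up of the algebraic route is that localisation preserves the MCM property of a nonzero finitely generated module over a Cohen--Macaulay local domain (which applies here, since $\hat R$ is a complete local hypersurface singularity and hence a normal domain), so that $M_q$ really is MCM over $\hat R_q$; you flag this as ``standard,'' which is fine.
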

	\begin{proof}
		$M$ is the pushforward of a vector bundle along a map that is an isomorphism away from $p$.
	\end{proof}
	\begin{prop}[cf.\ {\cite[2.13(1)]{DWncdf}}]\label{aconlocal}
		The algebra $A_\con$ is an Artinian local algebra.
	\end{prop}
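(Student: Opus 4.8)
The plan is to show that $A_\con$ is finite-dimensional over $k$ and that it is local; together these give Artinian local. Recall that $A_\con \cong A/AeA \cong \underline{\enn}_{\hat R}(M)$ where $M$ is the indecomposable MCM $\hat R$-module from the construction. For finite-dimensionality, I would use \ref{fdlem}: it suffices to check that $\underline{\enn}_{\hat R}(M)$ is a finitely generated $\hat R$-module supported only at the closed point $p$. Finite generation is immediate since $\underline{\enn}_{\hat R}(M)$ is a quotient of $\enn_{\hat R}(M)$, which is a finitely generated $\hat R$-module ($\hat R$ being noetherian and $M$ finitely generated). For the support statement, localise at a prime $q \neq p$: by the preceding lemma $M_q$ is a projective $\hat R_q$-module, hence $(\underline{\enn}_{\hat R}(M))_q \cong \underline{\enn}_{\hat R_q}(M_q) = 0$ since projective (equivalently, free over the local ring $\hat R_q$) modules are zero in the stable category. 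Therefore $\underline{\enn}_{\hat R}(M)$ is supported at $p$ and \ref{fdlem} applies to give $\dim_k A_\con < \infty$.

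For locality, the key point is that $M$ was chosen to be indecomposable. Since $\hat R$ is a complete local noetherian ring, the category of finitely generated $\hat R$-modules is Krull--Schmidt, so $\enn_{\hat R}(M)$ is a local ring when $M$ is indecomposable; its unique maximal ideal consists of the non-isomorphisms $M \to M$. The stable endomorphism algebra $\underline{\enn}_{\hat R}(M)$ is a quotient of this local ring (quotienting by those endomorphisms factoring through a projective), and a nonzero quotient of a local ring is local. One must check the quotient is nonzero, but this follows because $M$ is not projective: if it were, $A$ would have finite global dimension issues aside, $\dca$ would be acyclic, and in the geometric setup $M$ being projective would force $\pi$ to be an isomorphism, contradicting the existence of the flopping curve $C$. (Alternatively one notes that the identity of $M$ does not factor through a projective when $M$ is non-projective indecomposable.) Hence $A_\con$ is a nonzero finite-dimensional local $k$-algebra with residue field a finite extension of $k$, which is $k$ itself since $k$ is algebraically closed; thus $A_\con$ is Artinian local.

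The main obstacle I anticipate is being careful about the non-projectivity of $M$ and hence the non-triviality of $A_\con$ — i.e. ruling out the degenerate case where the quotient of the local ring $\enn_{\hat R}(M)$ collapses to zero. This should be handled by invoking the geometry: $\pi$ is by hypothesis not an isomorphism (its fibre over $p$ is the curve $C$), so the tilting bundle summand $\hat{\mathcal N}$ is genuinely there and $M$ is a genuinely non-projective MCM module. Everything else — finite generation, the support computation, the Krull--Schmidt/locality argument, and the residue field being $k$ — is routine given the results already in the excerpt (in particular \ref{fdlem} and the structure theory of Van den Bergh's tilting bundles). I would present the argument in the order: (1) finite-dimensionality via localisation plus \ref{fdlem}; (2) locality via Krull--Schmidt and the quotient-of-local-ring observation; (3) conclude Artinian local, noting the residue field is $k$.
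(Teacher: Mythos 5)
Your proposal is correct and follows the same route as the paper: the paper localises at $q \neq p$ to show $A_\con$ is supported at $p$ (hence finite-dimensional by the preceding lemma, hence Artinian), and then simply asserts ``it is local because $M$ was indecomposable.'' You spell out the Krull--Schmidt/quotient-of-a-local-ring argument that justifies the locality assertion, and you are more careful than the paper about ruling out the degenerate case $A_\con = 0$; these are reasonable clarifications of steps the paper leaves implicit rather than a genuinely different approach.
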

	\begin{proof}If $q\neq p$ is a prime ideal of $\hat R$ then $(A_\con)_q\cong \underline{\enn}_{\hat R _q}(M_q)$, which vanishes because $M_q$ is projective. Hence $A_\con$ is supported at $p$ and hence Artinian. It is local because $M$ was indecomposable.
	\end{proof}
	\begin{prop}\label{fdcohom}
		The dga $\dca$ has finite-dimensional cohomology in each degree.
	\end{prop}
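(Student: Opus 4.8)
The plan is to reduce everything to the cohomological description of the derived quotient that was established in Chapter 5, namely Corollary \ref{derquotcohom}, together with the identification of the cohomology of $\dca$ with stable Ext groups coming from \ref{qisolem} and \ref{dqcohom}. Recall that $\dca = \dq$ where $A = \enn_{\hat R}(\hat R \oplus M)$ with $\hat R$ a complete local isolated hypersurface singularity and $M$ a MCM $\hat R$-module. Since the singularity category of $\hat R$ is idempotent complete by \ref{sgidemref}, we are in the situation of Setup \ref{sfsetup}, and moreover $\hat R$ is a complete local hypersurface singularity, so the results of \S\ref{dqperiod} apply directly.

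First I would invoke \ref{dqcohom}, which tells us that $H^j(\dca) = 0$ for $j > 0$, that $H^0(\dca) \cong A_\con = \underline{\enn}_{\hat R}(M)$, and that $H^j(\dca) \cong \ext_{\hat R}^{-j}(M,M)$ for $j < 0$. So it suffices to check three things: that $A_\con$ is finite-dimensional, and that each $\ext^i_{\hat R}(M,M)$ for $i > 0$ is finite-dimensional. The first is immediate from \ref{aconlocal} (or \ref{fdlem}), since $A_\con$ is Artinian local over $k$. For the second, the key point is that $M$ is locally free on the punctured spectrum: for every prime $q \neq p$ the module $M_q$ is projective over $\hat R_q$, as noted in the lemma preceding \ref{aconlocal}. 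Therefore for $i > 0$ the module $\ext^i_{\hat R}(M,M)$ is supported only at the maximal ideal $p$. Since $\hat R$ is noetherian and $M$ is finitely generated, $\ext^i_{\hat R}(M,M)$ is a finitely generated $\hat R$-module supported at $p$, hence finite-dimensional over $k$ by \ref{fdlem}. This completes the argument.

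Alternatively, and perhaps more cleanly, one can argue uniformly: by \ref{etaex} there is a periodicity element $\eta \in H^{-2}(\dca)$ with $\eta \colon H^j(\dca) \to H^{j-2}(\dca)$ an isomorphism for all $j \leq 0$. Hence $H(\dca)$ is generated as an algebra over $A_\con$ in degrees $0$, $-1$, $-2$ (this is essentially \ref{fingen}), so it is enough to establish finite-dimensionality of $H^0(\dca) = A_\con$, $H^{-1}(\dca) \cong \ext^1_{\hat R}(M,M)$, and $H^{-2}(\dca) \cong \ext^2_{\hat R}(M,M)$; and then periodicity forces all the higher cohomology groups to be finite-dimensional too. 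Each of the three groups is finitely generated over $\hat R$ and supported at $p$, so \ref{fdlem} applies.

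I do not expect a serious obstacle here; the statement is essentially a bookkeeping consequence of results already in hand. The only thing to be careful about is the support argument for $\ext^i_{\hat R}(M,M)$: one must check that $\ext^i$ commutes with localisation at primes $q \neq p$ (true since $\hat R$ is noetherian and $M$ is finitely presented) and that local freeness of $M_q$ kills the higher Ext there. Everything else is a direct appeal to \ref{dqcohom}, \ref{aconlocal}, and \ref{fdlem}.
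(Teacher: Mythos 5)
Your proposal is correct and takes essentially the same route as the paper: invoke \ref{dqcohom} to identify $H^j(\dca)$ with $\ext^{-j}_{\hat R}(M,M)$ for $j<0$, observe that these Ext modules are supported at $p$ because $M$ is locally projective away from $p$, and conclude finite-dimensionality from \ref{fdlem}, with the $j=0$ case handled by \ref{aconlocal}. Your alternative periodicity-based argument is a valid shortcut but is not needed and is not what the paper does.
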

	\begin{proof}
		We already know that $H^0(\dca)$ is finite-dimensional. Let $j<0$. Then we have $H^j(\dca)\cong{\ext}_{\hat R}^{-j}(M,M)$ by \ref{dqcohom}. But ${\ext}_{\hat R}^{-j}(M,M)_q\cong {\ext}_{{\hat R}_q}^{-j}(M_q,M_q)$ which vanishes if $q\neq p$. So ${\ext}_{\hat R}^{-j}(M,M)$ is supported at $p$, and so \ref{fdlem} applies.
	\end{proof}

	Since $\hat R$ is a hypersurface singularity, by \ref{etaex} the dga $\dca$ admits a periodicity element $\eta\in H^{-2}(\dca)$, inducing isomorphisms $\eta:H^j(\dca) \to H^{j-2}(\dca)$. Moreover, $H(\dca)$ is a finitely generated algebra, finite-dimensional in each degree, generated in degrees $0$, $-1$, and $-2$. The only degree $-2$ generator is the periodicity element $\eta$, which is central and torsionfree. If $\dim R$ is even then every $H^j(\dca)$ has the same dimension for $j\leq 0$, by AR duality \ref{arduality}. Our first main theorem is a positive answer to a derived version of the Donovan--Wemyss conjecture \cite[1.4]{DWncdf}:
	\begin{thm}[derived Donovan--Wemyss]\label{ddwconj}
		Let $\pi:X \to X_\con$ and $\pi': X' \to X'_\con$ be two contractions satisfying the conditions of the Global Setup \ref{globalsetup}, contracting curves to points $p$ and $p'$ respectively. Assume in addition that $X$ and $X'$ are smooth and of the same dimension. Let $\dca$ and $\dca'$ be the derived contraction algebras of $\pi$ and $\pi'$ respectively. If $\dca$ and $\dca'$ are quasi-isomorphic, then the completions $\widehat{(X_\con)}_p$ and $\widehat{(X_\con')}_{p'}$ are isomorphic.
	\end{thm}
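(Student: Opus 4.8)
The plan is to deduce Theorem~\ref{ddwconj} directly from the algebraic recovery theorem~\ref{recov}, the only work being the verification of its hypotheses for the data attached to $\pi$ and $\pi'$. By construction, $\dca = \dq$ where $\hat R \coloneqq \widehat{(X_\con)}_p$ is (by the Global Setup~\ref{globalsetup}) an isolated hypersurface singularity, $A \cong \enn_{\hat R}(\hat R \oplus M)$ is the basic algebra Morita equivalent to Van den Bergh's tilting algebra $\hat\Lambda = \enn_{\hat X}(\hat{\mathcal V})$ for an indecomposable MCM $\hat R$-module $M$, and $e = \id_{\hat R}$; write $\hat R', A', M', e'$ for the corresponding data of $\pi'$. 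Since $\dca \simeq \dca'$ by hypothesis, it suffices to check that $(A,e)$ and $(A',e')$ are as in~\ref{recov}.

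First I would arrange the ambient power series ring. Putting $d = \dim X = \dim X'$, birationality of $\pi$ and $\pi'$ forces $\dim X_\con = \dim X'_\con = d$, so $\hat R$ and $\hat R'$ both have Krull dimension $d$; being hypersurface singularities they are quotients of the same $S \coloneqq k\llbracket x_1, \ldots, x_{d+1}\rrbracket$ by a single nonzero element, so their embedding dimensions agree, which is the implicit requirement of~\ref{recov}. Next, since $X$ is smooth, $\mathcal V$ is a tilting bundle exhibiting $\hat\Lambda$ as a noncommutative crepant resolution of $\hat R$ \cite{vdb, vdbnccr}; in particular $\hat\Lambda$, and hence its Morita-equivalent basic algebra $A$, has finite global dimension, and the same holds for $A'$. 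Cohomological local finiteness of $\dca = A/^{\mathbb L}AeA$ is~\ref{fdcohom}, and locality of the contraction algebra $A_\con = A/AeA$ is~\ref{aconlocal}; finally $\hat R$ is complete local Gorenstein, so $D_{\mathrm{sg}}(\hat R)$ is idempotent complete by~\ref{sgidemref}, which is what makes the singularity functor of~\ref{ontolem} available inside the proof of~\ref{recov}.

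With these in hand, \ref{recov} yields $\hat R \cong \hat R'$, that is, $\widehat{(X_\con)}_p \cong \widehat{(X'_\con)}_{p'}$, which is the assertion. I do not expect any genuine obstacle here: the substantial content has already been isolated in Part~II, namely \ref{recovwk} (recovering $\thick_{D^\mathrm{dg}_\mathrm{sg}(\hat R)}(M)$ from the quasi-isomorphism class of $\dca$, using that it is cohomologically locally finite with $A_\con$ Artinian local), \ref{ontolem} (this thick subcategory is all of $D^\mathrm{dg}_\mathrm{sg}(\hat R)$ once $A$ is smooth), and Hua--Keller's \ref{recoverythm} (recovering $\hat R$ from $D^\mathrm{dg}_\mathrm{sg}(\hat R)$). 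The one point requiring a little care is the matching of embedding dimensions, handled above via birational invariance of dimension; a secondary bookkeeping point is that the passage to the complete local, Morita-reduced model used to define $\dca$ does not disturb the quasi-isomorphism class, which is built into the construction of the derived contraction algebra.
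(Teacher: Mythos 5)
Your proposal is correct and takes exactly the same route as the paper, which simply says ``A simple application of \ref{recov}.'' You have unpacked the hypotheses of \ref{recov} that need verifying: the two completed bases $\hat R, \hat R'$ are isolated hypersurface singularities in the same power series ring (forced by equality of the dimensions of $X$ and $X'$, hence of the Krull dimensions and then the embedding dimensions of the singular local rings), the noncommutative models $A, A'$ have finite global dimension because smoothness of $X, X'$ makes Van den Bergh's algebra an NCCR, cohomological local finiteness of $\dca$ is \ref{fdcohom}, and locality of $A_\con$ is \ref{aconlocal}. This is precisely the bookkeeping the paper leaves implicit.
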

	\begin{proof}
		A simple application of \ref{recov}.
	\end{proof}

	\section{Deformations of curves}
	We show that the Koszul double dual of the derived contraction algebra prorepresents the functor of derived noncommutative deformations of the exceptional locus of an irreducible contraction, which generalises \cite[3.9]{DWncdf} to the derived setting. The hard work to prove this has already been done in the preceding chapters. Keep notation as in the Global Setup \ref{globalsetup}. Let $\hat R$ be the completion of the local ring of $X_\con$ at $p$. Let $C$ be the exceptional locus, with the reduced scheme structure.
	\begin{lem}
		Across the derived equivalence $D^b(\hat X) \to D^b(A)$, the sheaf $\mathcal{O}_C(-1)$ corresponds to the simple $S\coloneqq A_\con / \mathrm{rad}(A_\con)$.
	\end{lem}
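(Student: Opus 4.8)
The plan is to identify $\mathcal{O}_C(-1)$ as the unique simple object in an appropriate abelian subcategory, and transport this characterisation across the equivalence. First I would recall that Van den Bergh's tilting bundle $\hat{\mathcal{V}} = \mathcal{O}_{\hat X}\oplus\hat{\mathcal N}$ induces not just a derived equivalence $D^b(\hat X)\xrightarrow{\simeq} D^b(A)$ but also, via the perverse $t$-structure of \cite{bridgeland, vdb}, an equivalence between the heart ${}^{-1}\mathrm{Per}(\hat X/\spec\hat R)$ of perverse coherent sheaves and the category $\cat{mod}\text{-}A$ of finite-dimensional $A$-modules. Under this correspondence, the simple objects of the perverse heart supported on the exceptional fibre match the simple $A$-modules supported at the idempotent-quotient, i.e.\ the simple $A_\con$-modules. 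Since $\pi$ is an isomorphism away from $p$ and $C$ is irreducible, the general theory (see \cite[\S3]{vdb}, \cite[\S2]{DWncdf}) gives exactly one such perverse simple not of the form $\mathcal{O}_{\hat X}$-related, and it is $\mathcal{O}_C(-1)$; on the algebraic side, because $M$ is indecomposable, $A_\con$ is local by \ref{aconlocal}, so it has a unique simple module $S = A_\con/\mathrm{rad}(A_\con)$. Matching these two ``unique simples supported at $p$'' gives the claim.

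Concretely, the steps would be: (1) recall that the equivalence $\R\hom_{\hat X}(\hat{\mathcal V},-) : D^b(\hat X)\to D^b(A)$ restricts to an equivalence of perverse hearts, with $\mathcal{O}_{\hat X}$ mapping to the projective $eA$ (the summand corresponding to $\mathcal{O}_{\hat X}$) and the ``curve'' perverse simples mapping to modules annihilated by $e$; (2) observe that a module annihilated by $e$ is exactly a module over $A/AeA = A_\con$, and being simple it must be $S$ by locality of $A_\con$ (\ref{aconlocal}); (3) check that $\mathcal{O}_C(-1)$ is the relevant perverse simple — this uses that $\mathcal{O}_C(-1)$ has $\R\hom_{\hat X}(\mathcal{O}_{\hat X}, \mathcal{O}_C(-1)) = \R\Gamma(\mathcal{O}_{\P^1}(-1)) = 0$ (so it lies in the subcategory killed by $e$) and that it is simple in the perverse heart because $C$ is an irreducible rational curve contracted by $\pi$, so $\mathcal{O}_C(-1)$ has no proper perverse subobjects (cf.\ \cite[\S3.5]{vdb}). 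Alternatively, one can bypass the perverse $t$-structure entirely and argue numerically: $\dca$ controls the deformation theory of whatever corresponds to $S$ (by the results of Part II, specialised here), while the classical contraction algebra is known to represent noncommutative deformations of $\mathcal{O}_C(-1)$ by \cite{DWncdf, contsdefs}; comparing prorepresenting objects forces the identification. I would present the perverse-sheaf argument as the main line and mention the numerical one as a cross-check.

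The main obstacle is step (3): pinning down precisely that $\mathcal{O}_C(-1)$ — with the reduced scheme structure on $C$ even when the scheme-theoretic fibre $\pi^{-1}(p)$ is non-reduced — is the perverse simple, rather than some thickening $\mathcal{O}_{nC}(-1)$ or a twist. This requires being careful about which line bundle on $C$ appears, and invoking the classification of perverse simples for a flopping-type contraction of an irreducible rational curve. The cleanest route is to cite Van den Bergh's analysis directly: the perverse heart has simples $\mathcal{O}_{\hat X}$-related objects plus one simple $\mathcal{O}_C(-1)$ (where the normalisation is forced by requiring $\hom(\mathcal{O}_{\hat X}, -) = 0$ and $\hom(-, \mathcal{O}_{\hat X}[1]) = 0$), and these correspond under the tilting equivalence to the simple $A$-modules, of which the ones killed by $e$ are precisely the simple $A_\con$-modules. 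Since everything else (locality of $A_\con$, vanishing of $\R\hom$, the equivalence itself) is already available, the proof should then be short.
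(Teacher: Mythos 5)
Your argument is essentially the same as the paper's, just more explicit. The paper's proof is two lines: $\mathcal{O}_C(-1)$ is simple by \cite[3.5.7]{vdb}, its image across the equivalence is an $A_\con$-module by adapting \cite[2.13(3)]{DWncdf}, and since $A_\con$ is local (\ref{aconlocal}) the only simple $A_\con$-module is $S$. You unpack the same citations, making explicit the perverse-heart equivalence behind Van den Bergh's simplicity assertion and supplying the cohomology computation $\R\Gamma(\mathcal{O}_{\P^1}(-1)) = 0$ that shows the image is annihilated by $e$ — which is exactly the content of the Donovan--Wemyss step the paper invokes. Both routes pivot on locality of $A_\con$ in the same way, so there is no genuinely different decomposition here, merely more verbosity. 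One caution about your proposed ``numerical cross-check'' via deformation theory: in the thesis's logical order, the deformation-theoretic description of $\dca$ in terms of $S$ is established \emph{before} this lemma connects $S$ to $\mathcal{O}_C(-1)$, but the identification of $A_\con$ as the representing object for deformations of $\mathcal{O}_C(-1)$ on the geometric side in \cite{DWncdf,contsdefs} is independent, so your cross-check is not circular — though it is heavier machinery than needed and the paper rightly omits it.
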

	\begin{proof}
		Denote the image of $\mathcal{O}_C(-1)$ by $S'$. The sheaf $\mathcal{O}_C(-1)$ is simple by \cite[3.5.7]{vdb}. The proof of \cite[2.13(3)]{DWncdf} adapts to show that $S'$ is naturally a module over $A_\con$. Since it is simple, it must be the unique simple module $S$.
	\end{proof}
	With this in mind, we define:
	\begin{defn}
		A \textbf{noncommutative deformation of $C$} is a noncommutative deformation of the $A$-module $S$. A \textbf{derived noncommutative deformation of $C$} is a framed derived noncommutative deformation of the $A$-module $S$.
	\end{defn}
	\begin{thm}\label{dcaprorep}
		The quasi-isomorphism class of $\dca$ determines the $\sset$-valued functor of derived noncommutative deformations of $C$. More precisely, there is a weak equivalence
		$$\sdefm_{\hat X}(C)\simeq \R\mathrm{Map}_{\cat{pro}(\cat{dgArt}_k)}(B^\sharp(\dca^!) , -)$$ where moreover $\varprojlim B^\sharp(\dca^!) \simeq \dca$. The contraction algebra $A_\con$ represents the functor of underived noncommutative deformations of $C$.
	\end{thm}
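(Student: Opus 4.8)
The plan is to assemble this theorem from the machinery of Part I and Part II applied to the algebra $A = \enn_{\hat R}(\hat R \oplus M)$ with idempotent $e = \id_{\hat R}$, together with the derived equivalence $D^b(\hat X) \xrightarrow{\simeq} D^b(A)$ coming from Van den Bergh's tilting bundle $\hat{\mathcal V}$. First I would record that we are exactly in the situation of Setup \ref{sfsetup}: $\hat R$ is a complete local (hence Gorenstein) hypersurface singularity, so $D_{\mathrm{sg}}(\hat R)$ is idempotent complete by \ref{sgidemref}, and $M$ is MCM, so $A$ is a noncommutative partial resolution in the sense of \ref{partrsln}. By construction $\dca = \dq$, and $H^0(\dca) \cong A_\con = A/AeA$, which is an Artinian \emph{local} algebra by \ref{aconlocal}. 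By \ref{fdcohom} the dga $\dca$ is cohomologically locally finite. Thus the hypotheses of \ref{maindefmthm} are satisfied with this $A$, $e$, and $S \coloneqq A/AeA$ modulo its radical; this is precisely the simple $A$-module corresponding to $\mathcal O_C(-1)$ under the derived equivalence, by the lemma immediately preceding the theorem.

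Next I would apply \ref{maindefmthm} directly: it gives that $\dca$ is quasi-isomorphic to the Koszul dual $\R\enn_A(S)^! \simeq \dca^{!!}$, which, regarded as a pro-Artinian dga via $B^\sharp\R\enn_A(S)$, prorepresents the framed deformation functor $\frmdef_A(S)$; concretely there is a weak equivalence $\frmdef_A(S) \simeq \R\mathrm{Map}_{\cat{pro}(\cat{dgArt}_k)}(B^\sharp\R\enn_A(S), -)$, and $\varprojlim B^\sharp\R\enn_A(S) \simeq \dca$ by \ref{limisexact} combined with \ref{rendkd}. Since $\dca^!\simeq \R\enn_A(S)$ by \ref{rendkd} (or \ref{kdisrend} applied to the augmentation $\dca \to A/AeA \to S$), the continuous Koszul dual $B^\sharp(\dca^!)$ agrees with $B^\sharp\R\enn_A(S)$, giving the displayed weak equivalence once we identify $\sdefm_{\hat X}(C)$ with $\frmdef_A(S)$. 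That identification is the content of \ref{dgffdefs} and the framed analogue: the derived equivalence enhances to a quasi-equivalence $D^b_\mathrm{dg}(\hat X) \into D^b_\mathrm{dg}(A)$ sending (a resolution of) $\mathcal O_C(-1)$ to $S$, so $\sdefm_{\hat X}(\mathcal O_C(-1)) \simeq \sdefm_A(S)$, and after taking the homotopy fibre over $\sdefm_k(-)$ (both sides see the same one-dimensional underlying vector space) we get $\sdefm_{\hat X}(C) \simeq \frmdef_A(S)$; by the definition ``derived noncommutative deformation of $C$ = framed derived noncommutative deformation of $S$'' this is exactly what is claimed. The final sentence then follows from \ref{subdefmthm}: under the same hypotheses, the set-valued functor of classical noncommutative deformations of $S$ is represented by the Artinian local algebra $A/AeA = A_\con$; transporting along the derived equivalence (using that classical deformations do not see framings, \ref{nurmk}) identifies this with the functor of underived noncommutative deformations of $C$.

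The main obstacle — though it is really bookkeeping rather than a genuine difficulty, since the heavy lifting is \ref{kdfin}, \ref{rendkd}, and \ref{maindefmthm} — is checking carefully that the derived equivalence $D(\hat X) \to D(A)$ is implemented by tensoring with a complex of bimodules, so that it enhances to a quasi-equivalence of dg categories compatible with tensoring up to Artinian coefficients, as required to make \ref{fdefwe}/\ref{dgffdefs} and their framed versions apply; this is true here because the equivalence is given by $\R\hom_{\hat X}(\hat{\mathcal V}, -)$ with $\hat{\mathcal V}$ a tilting bundle, hence is $\R\hom(\hat{\mathcal V},-)$ which is isomorphic to a Fourier--Mukai-type functor with perfect kernel, but it should be spelled out that this passes to $\hat X \otimes \Gamma$ and $A \otimes \Gamma$ for Artinian $\Gamma$. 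One also needs to note that $S$ being one-dimensional (which holds since $A_\con$ is local, \ref{aconlocal}) is exactly the hypothesis of \ref{prorepfrm} and \ref{maindefmthm}, so the augmented dga $\R\enn_A(S)$ and its Koszul dual behave as required. With these points in place the theorem is immediate.
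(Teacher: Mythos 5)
Your proof is correct and follows essentially the same route as the paper, whose own proof is just three citations: \ref{dqqiclassdeterminesdefms}, \ref{maindefmthm}, and \ref{subdefmthm}, with the hypotheses already verified in \ref{aconlocal}, \ref{fdcohom}, and the lemma preceding the definition of derived noncommutative deformations of $C$. The one place you do more work than needed is in establishing the identification $\sdefm_{\hat X}(C)\simeq\frmdef_A(S)$ via the derived equivalence: in the paper this is not something to be proved but the \emph{definition} --- a derived noncommutative deformation of $C$ is defined to be a framed derived noncommutative deformation of the $A$-module $S$, with the lemma identifying $\mathcal{O}_C(-1)$ with $S$ serving only to motivate that definition. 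Your concern about the derived equivalence being implemented by a bimodule kernel and passing to Artinian coefficients is therefore moot for this particular theorem (though that kind of check is exactly what is needed later, in the proof of \ref{qeupdf} and in Chapter 10, where one genuinely transports deformations across an equivalence).
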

	\begin{proof}
		The first sentence is \ref{dqqiclassdeterminesdefms}, the second is \ref{maindefmthm}, and the third is \ref{subdefmthm}.
	\end{proof}
\begin{rmk}
	As in the proof of \ref{subdefmthm}, if we work with set-valued functors only then we can use unframed deformations of $S$ rather than framed deformations.
\end{rmk}
\begin{rmk}
	Note that even in the underived setting, our representability results go further than those of Donovan and Wemyss \cite{DWncdf}, in the sense that they also work in dimension two. 
	\end{rmk}
The following theorem is computationally useful:
		\begin{thm}\label{dqiskd}
$\dca$ is the Koszul dual of the derived endomorphism algebra $\R\enn_A(S)$.
\end{thm}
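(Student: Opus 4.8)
The plan is to deduce this immediately from Proposition~\ref{rendkd}, so the real work is to verify that its hypotheses hold in the present geometric setting. Recall that $A=\enn_{\hat R}(\hat R\oplus M)$ with idempotent $e=\id_{\hat R}$, so that $A/AeA\cong A_\con$ and $\dq=\dca$. Since $k$ is algebraically closed, $A_\con$ has residue field $k$, and $S=A_\con/\mathrm{rad}(A_\con)$ is one-dimensional, naturally a right $A$-module via the quotient $A\to A_\con\to S$. The two hypotheses of Proposition~\ref{rendkd} are that $A/AeA$ is Artinian local, which is exactly Proposition~\ref{aconlocal}, and that $\dq$ is cohomologically locally finite, which is exactly Proposition~\ref{fdcohom}.

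Granting these, Proposition~\ref{rendkd} applies verbatim: $\R\enn_A(S)$ is an augmented dga and $\R\enn_A(S)^!\simeq\dq=\dca$, which is the claim. For completeness I would recall the mechanism behind \ref{rendkd}. Since $S$ is naturally a $\dca$-module and the functor $D(\dca)\to D(A)$ of Theorem~\ref{recoll} is fully faithful, $\R\enn_A(S)\simeq\R\enn_{\dca}(S)$; the composite $\dca\to A_\con\to S$ is an augmentation of dgas, so Proposition~\ref{kdisrend} gives $(\dca)^!\simeq\R\enn_{\dca}(S)$. Taking Koszul duals then yields $(\dca)^{!!}\simeq\R\enn_A(S)^!$, and since $\dca$ is cohomologically locally finite with $H^0(\dca)\cong A_\con$ local, Theorem~\ref{kdfin} identifies $(\dca)^{!!}$ with $\dca$, whence $\dca\simeq\R\enn_A(S)^!$.

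The only genuinely homotopical input is the strictification result Theorem~\ref{kdfin} (which rests on Proposition~\ref{goodchar}), used to identify $(\dca)^{!!}$ with $\dca$; everything else is bookkeeping. The geometric content has been pushed into Propositions~\ref{aconlocal} and~\ref{fdcohom}, which in turn use that $M$ is projective away from $p$, so that $A_\con$ and the groups $\ext_{\hat R}^{j}(M,M)\cong H^{-j}(\dca)$ are supported at $p$ and hence finite-dimensional. I would also note in passing that, under the derived equivalence $D^b(\hat X)\to D^b(A)$ sending $\mathcal O_C(-1)$ to $S$, one may equally write $\R\enn_A(S)\simeq\R\enn_{\hat X}(\mathcal O_C(-1))$, so that $\dca$ is the Koszul dual of the derived endomorphism algebra of the twisted structure sheaf of the exceptional curve.
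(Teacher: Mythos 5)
Your proof is correct and takes exactly the route the paper does: the paper's proof is the single sentence ``Follows from \ref{rendkd},'' and you have simply spelled out the implicit hypothesis checks, namely that $A/AeA\cong A_\con$ is Artinian local (\ref{aconlocal}) and that $\dca$ is cohomologically locally finite (\ref{fdcohom}), before invoking \ref{rendkd}.
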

\begin{proof}
	Follows from \ref{rendkd}.
\end{proof}

	\section{Local to global computations}\label{ltgc}
	We discuss further the deformation-theoretic description of the derived contraction algebra, and how we can use this description to compute the derived contraction algebra in differing neighbourhoods of $p$. The derived contraction algebra is defined using a formal neighbourhood of $p$. Can we compute it by using larger neighbourhoods? The first step is to examine how the dga $\R\enn(S)$ changes under completion.

	\begin{prop}\label{globalrend}
		Suppose that we are in the situation of the Zariski local setup \ref{localsetup}. Let $\Lambda=\enn_R(R\oplus N)$ be the associated noncommutative model for $X$. Let $\hat \Lambda$ be the completion at $p$, and let $A$ be the basic algebra Morita equivalent to $\hat \Lambda$. Let $S_A$ (resp. $S_{\Lambda}$) be the simple $A$-module (resp. $\Lambda$-module) corresponding across the derived equivalence to $\mathcal{O}_C(-1)$. Then there is a quasi-isomorphism
		$$\R\enn_A(S_A) \simeq \R\enn_\Lambda(S_\Lambda).$$
	\end{prop}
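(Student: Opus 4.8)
The plan is to split the statement into two independent steps: passing through the Morita equivalence relating $\hat\Lambda$ and $A$, which is purely formal, and passing through the completion map $\Lambda\to\hat\Lambda$, which carries all of the content. For the first step, observe that the Morita equivalence $\hat\Lambda\sim A$ induces a quasi-equivalence of dg categories $D_{\mathrm{dg}}(\hat\Lambda)\xrightarrow{\simeq}D_{\mathrm{dg}}(A)$ fitting into a commuting triangle with the two tilting equivalences from $D^b_{\mathrm{dg}}(\hat X)$; under it the simple module $S_A$ (the one attached to $\mathcal{O}_C(-1)$) corresponds to the simple $\hat\Lambda$-module $S_{\hat\Lambda}$ attached to $\mathcal{O}_C(-1)$. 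Since a quasi-equivalence induces quasi-isomorphisms on derived endomorphism dgas, this gives $\R\enn_A(S_A)\simeq\R\enn_{\hat\Lambda}(S_{\hat\Lambda})$, and it suffices to produce a quasi-isomorphism $\R\enn_{\hat\Lambda}(S_{\hat\Lambda})\simeq\R\enn_\Lambda(S_\Lambda)$.

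For the completion step, I would first record two finiteness observations: $S_\Lambda$ is simple, hence cyclic, hence finitely generated over $\Lambda$, and $\Lambda=\enn_R(R\oplus N)$ is module-finite over the noetherian ring $R$; so $S_\Lambda$ is a finitely generated $R$-module supported at the closed point $p$, and therefore has finite length (and is annihilated by a power of $p$) by the argument of \ref{fdlem}. Consequently $S_\Lambda\otimes_R\hat R\cong S_\Lambda$, and, using the compatibility of the tilting equivalences for $X$ and $\hat X$ under completion established in \cite[\S2.4]{DWncdf} (and recalled in the construction of $\dca$), this module, regarded over $\hat\Lambda=\Lambda\otimes_R\hat R$, is $S_{\hat\Lambda}$. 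Now choose a resolution $P_\bullet\to S_\Lambda$ by finitely generated projective $\Lambda$-modules; since $\hat R$ is flat over $R$, applying $-\otimes_R\hat R$ yields a resolution $\hat P_\bullet\coloneqq P_\bullet\otimes_R\hat R\to S_{\hat\Lambda}$ by finitely generated projective $\hat\Lambda$-modules, so $\enn_\Lambda(P_\bullet)$ and $\enn_{\hat\Lambda}(\hat P_\bullet)$ are models for $\R\enn_\Lambda(S_\Lambda)$ and $\R\enn_{\hat\Lambda}(S_{\hat\Lambda})$ respectively. As the $P_i$ are finitely generated projective, the canonical map $\enn_\Lambda(P_\bullet)\otimes_R\hat R\to\enn_{\hat\Lambda}(\hat P_\bullet)$ is an isomorphism of dgas. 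Finally, the natural completion map $\enn_\Lambda(P_\bullet)\to\enn_\Lambda(P_\bullet)\otimes_R\hat R$ is a quasi-isomorphism: on cohomology it is $\ext^i_\Lambda(S_\Lambda,S_\Lambda)\to\ext^i_\Lambda(S_\Lambda,S_\Lambda)\otimes_R\hat R$, and each $\ext^i_\Lambda(S_\Lambda,S_\Lambda)$ is a finitely generated $R$-module supported at $p$, hence of finite length by \ref{fdlem}, hence fixed by $-\otimes_R\hat R$. Stringing the quasi-isomorphisms together gives $\R\enn_A(S_A)\simeq\R\enn_{\hat\Lambda}(S_{\hat\Lambda})\simeq\enn_\Lambda(P_\bullet)\otimes_R\hat R\simeq\enn_\Lambda(P_\bullet)\simeq\R\enn_\Lambda(S_\Lambda)$.

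The only genuinely delicate point is the identification of $S_{\hat\Lambda}$ with the base change of $S_\Lambda$ — equivalently, the compatibility of the derived equivalences $D^b(X)\simeq D^b(\Lambda)$ and $D^b(\hat X)\simeq D^b(\hat\Lambda)$ along completion on the object $\mathcal{O}_C(-1)$. This is exactly the type of statement proved in \cite[\S2.4]{DWncdf}, so I would invoke it directly; once it is granted, the remainder is a routine flat-base-change-plus-finite-length argument. A secondary, purely bookkeeping point is verifying that the Hom--tensor interchange $\enn_\Lambda(P_\bullet)\otimes_R\hat R\cong\enn_{\hat\Lambda}(\hat P_\bullet)$ is compatible with the composition products; this is automatic from naturality once one knows the interchange is an isomorphism for finitely generated free modules, which it is.
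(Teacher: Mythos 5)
Your proof is correct and follows essentially the same route as the paper: the paper's own proof is a one-line citation of \cite[3.9(3)]{contsdefs}, whose stated idea --- that derived endomorphisms of finite length modules supported at $p$ behave well under Morita equivalence and completion --- is exactly the two-step (Morita, then flat base change to $\hat R$ using finite length of $S$ and its $\mathrm{Ext}$-groups) argument you spell out. The one thing you flag as delicate, matching $S_{\hat\Lambda}$ with $S_\Lambda\otimes_R\hat R$ under the two tilting equivalences, is indeed the point the paper also implicitly defers to \cite[\S2.4]{DWncdf}, so your reconstruction is faithful.
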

	\begin{proof}
		This is the proof of \cite[3.9(3)]{contsdefs}: the idea is that derived endomorphisms of finite length modules supported at $p$ behave well under Morita equivalence and completion.
	\end{proof}
	When one can compute the contraction algebra in a Zariski neighbourhood, one can also compute the derived contraction algebra there:
	\begin{prop}
		Suppose that we are in the local setup \ref{localsetup}. Suppose that $\Lambda_\con\coloneqq \Lambda/\Lambda e \Lambda$ is Artinian local. Then there is a quasi-isomorphism $\dca \simeq\Lambda/^\mathbb{L}\Lambda e \Lambda$.
	\end{prop}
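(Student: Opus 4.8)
The plan is to reduce the statement to an application of the invariance of the derived quotient under Morita equivalence and completion, combined with the Koszul duality description of the derived contraction algebra from \ref{dqiskd}. First I would recall the setup: in the Zariski local situation we have $\Lambda = \enn_R(R\oplus N)$, with idempotent $e=\id_R$, and the derived contraction algebra $\dca$ was defined as $A/^\mathbb{L}AeA$ where $A$ is the basic algebra Morita equivalent to the completion $\hat\Lambda$. The hypothesis that $\Lambda_\con = \Lambda/\Lambda e\Lambda$ is Artinian local is exactly what allows the Zariski-local object to behave like the complete-local one; in particular it forces $\Lambda_\con$ to be supported at $p$, so it agrees with its own completion $\widehat{\Lambda_\con}$, and it is already basic.

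The key steps, in order, would be as follows. First, observe that since $\Lambda/\Lambda e\Lambda$ is Artinian local, it has a unique simple module $S_\Lambda$, which by \ref{globalrend} (or rather its proof) corresponds across the derived equivalence $D^b(X)\to D^b(\Lambda)$ to $\mathcal{O}_C(-1)$; moreover the proof of \ref{globalrend} already gives a quasi-isomorphism $\R\enn_A(S_A)\simeq \R\enn_\Lambda(S_\Lambda)$, where $S_A$ is the corresponding simple $A$-module. Second, I would check that both $\Lambda/^\mathbb{L}\Lambda e\Lambda$ and $\dca = A/^\mathbb{L}AeA$ satisfy the hypotheses of \ref{rendkd}: the relevant cohomological-local-finiteness follows because $\Lambda$ (resp.\ $A$) is module-finite over the noetherian ring $R$ (resp.\ $\hat R$), so $\Lambda/^\mathbb{L}\Lambda e\Lambda$ is cohomologically locally finite by \ref{derquotcohom} together with the support argument of \ref{fdcohom}, and $\Lambda/\Lambda e\Lambda \cong A/AeA$ is local by hypothesis (using that Morita equivalence preserves the quotient algebra and that the basic algebra of an Artinian local algebra is itself). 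Third, apply \ref{rendkd}: this gives $\Lambda/^\mathbb{L}\Lambda e\Lambda \simeq \R\enn_\Lambda(S_\Lambda)^!$ and $\dca = A/^\mathbb{L}AeA \simeq \R\enn_A(S_A)^!$. Finally, combine these with the quasi-isomorphism $\R\enn_A(S_A)\simeq\R\enn_\Lambda(S_\Lambda)$ from \ref{globalrend}, and use that the Koszul dual functor preserves quasi-isomorphisms (since both the bar construction and the linear dual do), to conclude $\dca\simeq\Lambda/^\mathbb{L}\Lambda e\Lambda$.

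An alternative, perhaps cleaner, route avoids Koszul duality entirely: one can try to show directly that $A/^\mathbb{L}AeA\simeq \Lambda/^\mathbb{L}\Lambda e\Lambda$ by tracking the derived quotient through a Morita equivalence and through completion. Morita invariance of the derived quotient is essentially formal from the universal property defining $\dq$ (the derived quotient of $\enn_R(R\oplus N)$ by $\id_R$ only depends on the derived category data and the distinguished module), so $\hat\Lambda/^\mathbb{L}\hat\Lambda e\hat\Lambda \simeq A/^\mathbb{L}AeA$ up to the usual Morita twist. The remaining point is that completion does not change the derived quotient: one uses \ref{drinfeld} to present both sides via the explicit Drinfeld-quotient model $\frac{\Lambda\langle h\rangle}{(he=eh=h)}$ with $d(h)=e$, and then \ref{derquotcohom} identifies the cohomology in each degree with $\tor^R_*(\Lambda e, e\Lambda)$ (and $\Lambda/\Lambda e\Lambda$ in degree $0$); since these Tor-modules are supported at $p$ by the support argument already used in \ref{fdcohom}, they are unchanged by completion at $p$. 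One would need to promote this degreewise statement to an actual quasi-isomorphism of dgas, which is where the argument is least automatic.

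The main obstacle I expect is precisely this last point: verifying that the map $\Lambda/^\mathbb{L}\Lambda e\Lambda \to \hat\Lambda/^\mathbb{L}\hat\Lambda e\hat\Lambda$ induced by completion is a quasi-isomorphism \emph{as dgas}, not merely that the two have abstractly isomorphic cohomology in each degree. The Koszul-duality route sidesteps this by routing everything through $\R\enn(S)$, for which \ref{globalrend} already provides the needed comparison (whose proof, in turn, rests on the good behaviour of derived endomorphisms of finite-length modules supported at $p$ under completion and Morita equivalence). For that reason I would present the proof via \ref{dqiskd}, \ref{rendkd}, and \ref{globalrend}, keeping the direct Drinfeld-model argument only as a remark. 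One small technical check along the way is that the Morita equivalence genuinely identifies the simple modules and the quotient algebras compatibly with the idempotents $e=\id_R$, but this is routine given that $\hat R\oplus\hat N = \hat R^{\oplus a}\oplus M^{\oplus b}$ with the $\hat R$-summand preserved.
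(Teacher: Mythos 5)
Your primary route is exactly the paper's proof: verify cohomological local finiteness of $\Lambda/^{\mathbb{L}}\Lambda e\Lambda$ via the support argument of \ref{fdcohom}, invoke the Artinian-local hypothesis to apply \ref{rendkd}, use \ref{globalrend} to identify $\R\enn_\Lambda(S_\Lambda)\simeq\R\enn_A(S_A)$, and conclude via quasi-isomorphism invariance of the Koszul dual. Your remark that the direct Drinfeld-model comparison across completion is the genuinely delicate step, and that Koszul duality sidesteps it, is a fair assessment of why the paper proceeds as it does.
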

	\begin{proof}We have $H^j(\Lambda/^\mathbb{L}\Lambda e \Lambda)\cong \ext^{-j}_R(N,N)$. Exactly as in \ref{fdcohom}, because this Ext group is supported at $p$ it must be finite-dimensional. By hypothesis, $H^0(\Lambda/^\mathbb{L}\Lambda e \Lambda)\cong \Lambda_\con$ is Artinian local. So we may apply \ref{rendkd} to conclude that $\R\enn_\Lambda(S_\Lambda)^!\simeq \Lambda/^\mathbb{L}\Lambda e \Lambda$. But by \ref{globalrend}, we know that $\R\enn_\Lambda(S_\Lambda)$ is quasi-isomorphic to $\R\enn_A(S_A)$. Since the Koszul dual preserves quasi-isomorphisms, $\Lambda/^\mathbb{L}\Lambda e \Lambda$ is quasi-isomorphic to $\R\enn_A(S_A)^! \simeq \dca$.
	\end{proof}
	Now we may globalise:
	\begin{prop}
		Suppose that we are in the situation of Setup \ref{globalsetup}. Let $\Lambda$ and $S_\Lambda$ be as above. Then there is a quasi-isomorphism $$\R\enn_\Lambda(S_\Lambda) \simeq \R\enn_{X}(\mathcal{O}_C(-1))$$
	\end{prop}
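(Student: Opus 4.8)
The plan is to show that the derived equivalence $D^b(X) \xrightarrow{\simeq} D^b(\Lambda)$ induces a quasi-isomorphism on derived endomorphism dgas of corresponding objects, then transport the computation of $\R\enn_\Lambda(S_\Lambda)$ back to the geometric side. First I would recall that Van den Bergh's tilting bundle $\mathcal{V} = \mathcal{O}_X \oplus \mathcal{N}$ gives a dg-enhanced derived equivalence $\R\hom_X(\mathcal{V},-): D^b_{\mathrm{dg}}(X) \to D^b_{\mathrm{dg}}(\Lambda)$, where $\Lambda = \enn_X(\mathcal{V}) \cong \enn_R(R \oplus N)$. Since $\mathcal{O}_C(-1)$ corresponds to $S_\Lambda$ under this equivalence, the enhancement restricts to a quasi-isomorphism of endomorphism dgas $\dge_{D^b_{\mathrm{dg}}(X)}(\mathcal{O}_C(-1)) \simeq \dge_{D^b_{\mathrm{dg}}(\Lambda)}(S_\Lambda)$; since $\dgh$ in these dg categories models $\R\hom$, this is precisely the desired quasi-isomorphism $\R\enn_X(\mathcal{O}_C(-1)) \simeq \R\enn_\Lambda(S_\Lambda)$.

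The key steps, in order, would be: (i) observe that since $\pi$ is projective birational with $\R\pi_*\mathcal{O}_X = R$ and $X$ is as in Setup \ref{globalsetup}, $\mathcal{V}$ is a genuine tilting bundle on $X$ (this is \cite[3.2.8]{vdb} after passing to the affine neighbourhood, but one needs to be slightly careful whether one wants the Zariski-local or global statement — see the remark below); (ii) the tilting equivalence $\R\hom_X(\mathcal{V},-)$ is given by a kernel and hence has a canonical dg enhancement $F: D^b_{\mathrm{dg}}(X) \to D^b_{\mathrm{dg}}(\Lambda)$ which is a quasi-equivalence; (iii) a quasi-equivalence of dg categories induces, for each object $x$, a quasi-isomorphism of dgas $\dge(x) \to \dge(Fx)$ (this is built into the definition of dg functor, as noted after Definition of dg functors in \S\ref{dgcats}); (iv) apply this with $x = \mathcal{O}_C(-1)$, using that $F(\mathcal{O}_C(-1)) \cong S_\Lambda$ in $D^b(\Lambda)$ (and hence the one-object subcategories are quasi-equivalent).

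There is a subtlety about which scheme one works over: the statement as phrased uses $\Lambda$ from the Zariski-local Setup \ref{localsetup}, i.e.\ $\Lambda = \enn_R(R \oplus N)$ with $\spec R$ an affine neighbourhood of $p$, but writes $\R\enn_X$ with $X$ from the Global Setup. The cleanest route is to first reduce to the affine case: replacing $X$ by the preimage of $\spec R$ does not change $\R\enn_X(\mathcal{O}_C(-1))$, because $\mathcal{O}_C(-1)$ is supported on the fibre over $p$ and $\R\hom$ can be computed on any open containing the support (using that pushforward along the open immersion is fully faithful on complexes supported in the open, or directly via a local-to-global Ext spectral sequence argument together with the fact that the higher $\pi_*$ vanish). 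So I would state and prove the quasi-isomorphism over $\spec R$, where the tilting equivalence is standard, and then note that the left-hand side agrees with the global $\R\enn_X(\mathcal{O}_C(-1))$.

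I expect the main obstacle to be purely bookkeeping rather than conceptual: making sure the dg enhancement of the tilting equivalence is set up correctly (one can take $F = \R\hom_X(\mathcal{V}, -)$ computed via an injective or h-injective resolution, landing in $D^b_{\mathrm{dg}}(\Lambda)$, and check it is quasi-fully-faithful and quasi-essentially-surjective on the relevant subcategories), and handling the reduction from the global $X$ to the affine one cleanly so that the two notions of $\R\enn_X(\mathcal{O}_C(-1))$ genuinely coincide. Both of these are routine given the machinery already assembled — in particular Theorem \ref{tabmod} and the discussion of pretriangulated dg categories and their homotopy categories in \S\ref{dgcats} — so the proof should be short, essentially: \emph{tilting bundles give dg-derived equivalences, dg-derived equivalences preserve derived endomorphism dgas, and $\mathcal{O}_C(-1) \leftrightarrow S_\Lambda$.}
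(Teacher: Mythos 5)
Your proposal is correct and takes essentially the same route as the paper: reduce to the affine chart $U = \pi^{-1}(\spec R)$ via the (affine, open) immersion $U \into X$, which is fully faithful on derived categories, and then use the tilting equivalence over $\spec R$ to transport $\R\enn_U(\mathcal{O}_C(-1))$ to $\R\enn_\Lambda(S_\Lambda)$. The paper's proof is terser (a two-step chain of quasi-isomorphisms citing the open-embedding argument), but the decomposition is identical; your extra care about the dg enhancement of the tilting functor is implicit in the paper's use of $\R\enn$ as a dga.
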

	\begin{proof}This is the proof of \cite[3.9(1)]{contsdefs}. Let $i:U \to X$ be the base change of the inclusion $\spec R \into X_\con$; it is an affine map because it is a base change of an affine map. It is also an open embedding, and hence induces an embedding $D(U)\into D(X)$. Hence we get quasi-isomorphisms $$\R\enn_\Lambda(S_\Lambda) \xrightarrow{\simeq} \R\enn_{U}(\mathcal{O}_C(-1)) \xrightarrow{\simeq} \R\enn_{X}(\mathcal{O}_C(-1))$$
		as required. \end{proof}
	We obtain the immediate corollary:
	\begin{thm}In the situation of Setup \ref{globalsetup}, the derived contraction algebra can be computed as the Koszul dual of the dga $\R\enn_{X}(\mathcal{O}_C(-1))$. In particular, it is intrinsic to the contraction $\pi$, and does not depend on any choice of affine neighbourhood of $p$ or tilting bundle.
	\end{thm}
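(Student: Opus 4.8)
The plan is to assemble the final statement directly from the three propositions that immediately precede it. First I would observe that the chain of quasi-isomorphisms established just above gives, in the setting of the Global Setup \ref{globalsetup}, quasi-isomorphisms of dgas
$$\R\enn_A(S_A) \simeq \R\enn_\Lambda(S_\Lambda) \simeq \R\enn_X(\mathcal{O}_C(-1)),$$
where the first is \ref{globalrend} and the second is the content of the proposition just proved. Since the Koszul dual functor $(-)^!$ preserves quasi-isomorphisms (as both the bar construction and the linear dual do, noted after \ref{kdisrend}), applying $(-)^!$ to this chain yields a quasi-isomorphism $\R\enn_A(S_A)^! \simeq \R\enn_X(\mathcal{O}_C(-1))^!$.

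Next I would invoke \ref{dqiskd}, which identifies $\dca$ with the Koszul dual $\R\enn_A(S)^!$. Combining this with the previous paragraph gives a quasi-isomorphism $\dca \simeq \R\enn_X(\mathcal{O}_C(-1))^!$, which is exactly the first assertion of the theorem. The point is that the hypotheses of \ref{dqiskd} — namely that $A/AeA$ is Artinian local and that $\dq$ is cohomologically locally finite — are already available in this geometric setting: locality of $A_\con$ is \ref{aconlocal}, and cohomological local finiteness of $\dca$ is \ref{fdcohom}. So no new work is needed to apply \ref{dqiskd}.

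For the intrinsicality claim, I would argue as follows. The dga $\R\enn_X(\mathcal{O}_C(-1))$ is manifestly built only from the scheme $X$ together with the exceptional curve $C$ (with its reduced structure), both of which are determined by the morphism $\pi$; no affine neighbourhood $\spec R \hookrightarrow X_\con$ and no choice of tilting bundle $\mathcal V$ enters into its definition. Since the Koszul dual is a functor on dgas, $\R\enn_X(\mathcal{O}_C(-1))^!$ is likewise determined, up to quasi-isomorphism, by $\pi$ alone. The work in the preceding propositions is precisely what shows that the a priori choice-dependent construction of $\dca$ (via a formal neighbourhood, a Morita equivalence, and a tilting bundle) recovers this intrinsic object up to quasi-isomorphism.

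There is no real obstacle here — the theorem is a formal corollary, and the substance lives in the three propositions already established (especially the Morita-and-completion invariance of derived endomorphisms of finite-length modules, \ref{globalrend}, and the open-embedding argument). If anything, the only point requiring a moment's care is to make sure the quasi-isomorphisms in the chain are quasi-isomorphisms \emph{of dgas} rather than merely of complexes, so that applying the Koszul dual functor is legitimate; but this is guaranteed because each step (Morita equivalence, completion, restriction along an open affine embedding) is induced by an equivalence of (dg-enhanced) derived categories and hence respects the $A_\infty$- (indeed dga-) structure on endomorphism complexes. I would state the proof in two lines: combine the displayed chain of dga quasi-isomorphisms with \ref{dqiskd} and the fact that $(-)^!$ preserves quasi-isomorphisms, then note that $\R\enn_X(\mathcal{O}_C(-1))$ depends only on $\pi$.
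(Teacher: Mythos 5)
Your proof is correct and follows the same route as the paper: the paper presents this theorem as an ``immediate corollary'' of the chain of propositions $\R\enn_A(S_A)\simeq\R\enn_\Lambda(S_\Lambda)\simeq\R\enn_X(\mathcal{O}_C(-1))$ together with \ref{dqiskd}, which is precisely what you assemble. Your extra care in checking the hypotheses of \ref{dqiskd} (via \ref{aconlocal} and \ref{fdcohom}) and in noting that the quasi-isomorphisms respect dga structure is appropriate and implicit in the paper.
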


	\begin{rmk}
		We note for computational purposes that $\dca$ is invariant under standard equivalences. Specifically, if $D(A)$ is standard equivalent to $D(A')$ in such a way that $S$ gets sent to $S'$, then one gets a quasi-isomorphism $\R\enn_A(S)\simeq \R\enn_{A'}(S')$, and hence quasi-isomorphisms $\dca\simeq \R\enn_A(S)^!\simeq \R\enn_{A'}(S')^!$. We remark that what one really needs is that $D(A)$ is quasi-equivalent to $D(A')$ as a dg category, but up to homotopy all such quasi-equivalences are standard by work of To\"{e}n \cite[Corollary 4.8]{toenmorita}.
	\end{rmk}

	\section{Ginzburg dgas}
	Hua and Keller \cite{huakeller} show that, for a flopping contraction in a smooth threefold $X$, one may compute its derived contraction algebra as a certain Ginzburg dga associated to an NCCR of $X$. We briefly recall the definition of Ginzburg dgas and review Hua and Keller's result. Ginzburg dgas, as well as Calabi--Yau algebras, were first defined in the seminal paper \cite{ginzburgdga}. We are following the sign conventions of \cite{vdbginzburg}. Note that we deal only with \textit{completed} Ginzburg dgas; see \cite{vdb} or \cite{huakeller} for references.
\begin{defn}
	Let $Q$ be a finite quiver. Let $kQ$ be its path algebra and let $\widehat{kQ}$ be the completion of $kQ$ along the arrows. Let $[\widehat{kQ},\widehat{kQ}]$, denote the completion of the span of all commutators in $\widehat{kQ}$. A \textbf{superpotential} on $Q$ is an element of the cocentre $\frac{\widehat{kQ}}{[\widehat{kQ},\widehat{kQ}]}$, which we can think of as a sort of formal Hochschild homology space.
	\end{defn}	
Note that a superpotential is a (possibly infinite!) linear combination of cycles in $Q$.
\begin{defn}
	Let $Q$ be a quiver and $W$ a superpotential on $Q$. Let $a$ be an arrow in $Q$. Define the \textbf{cyclic derivative} $\partial_aW$ by the formula
	$$\partial_aW\coloneqq \sum_{W=uav}vu$$where we sum over all arrows $u,v$ such that $W=uav$.
	\end{defn}
	
\begin{defn}
	Let $Q$ be a finite quiver and $W$ a superpotential on $Q$. Let $\bar Q$ be the graded quiver with the same vertex set as $Q$, and with three types of arrows: the arrows $a$ from $Q$, all in degree 0, a reversed degree $-1$ arrow $a^*$ for every arrow $a$ in $Q$, and a loop $z_i$ of degree $-2$ at every vertex $i$. The \textbf{Ginzburg dga} associated to the pair $(Q,W)$ is the dga $\Pi(Q,W)$ with underlying graded algebra the path algebra $\widehat{k\bar Q}$, and with differential given by \begin{align*}
	da&=0
	\\da^*&=-\partial_aW
	\\dz_i&=\sum_ae_i[a,a^*]e_i
	\end{align*}
	Note that we may compute $dz_i$ by summing over only the arrows incident to $i$.
	\end{defn}

\begin{defn}
Let $Q$ be a finite quiver and $W$ a superpotential on $Q$. The \textbf{Jacobi algebra} is the algebra $H^0(\Pi(Q,W))$. It can be computed as the path algebra of $\widehat{kQ}$ modulo the relations given by the $\partial_a W$.
\end{defn}
If one thinks of a Jacobi algebra as simply the path algebra of a quiver with relations, one can think of the superpotential as `integrating' the relations. We remark that if $W$ is a polynomial, and one knows in advance that the Jacobi algebra is finite-dimensional, then one can compute it without taking completions.

\p Recall that to compute the contraction algebra, Donovan and Wemyss take a noncommutative algebra $A=\enn_R(R\oplus M)$, where $R$ is a complete local ring, and quotient $A$ by the idempotent $e=\id_R$. Presenting $A$ as the completed path algebra of a quiver $Q$ with relations, one can hence compute $A_\con$ by simply throwing out the vertex corresponding to $R$ and modifying the relations accordingly. Suppose that the quiver admits a superpotential $W$ making $A$ into the associated Jacobi algebra -- this is the case in the smooth CY threefold setting \cite{vdbginzburg, huakeller}. Then one can compute $A_\con$ by taking the Jacobi algebra of the one-vertex quiver $Q'$ obtained by deleting the vertex at $R$ from $Q$, equipped with modified superpotential $W'$ obtained by removing all the arrows that are not loops at $M$. See \ref{lauferrmk} for an example of such a computation. Since we have $H^0(\dca)\cong H^0(\Pi(Q',W'))$, one might wonder if one can compute $\dca$ as a Ginzburg dga, and in this setting Hua and Keller proved that this is indeed the case for threefolds:
\begin{thm}
Let $X \to \spec R$ be a complete local threefold flopping contraction with $X$ smooth. Let $(Q',W')$ be the quiver with superpotential defined above that computes $A_\con$. Then $\dca$ is quasi-isomorphic to the Ginzburg dga $\Pi(Q',W')$.
	\end{thm}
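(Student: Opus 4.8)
The plan is to identify $\dca$ with the Koszul dual of $\R\enn_A(S)$ (via \ref{dqiskd}), and then to recognise $\Pi(Q',W')$ as another model for that same Koszul dual. Concretely, write $A=\widehat{kQ}/(\partial_aW)$ as the Jacobi algebra of the completed Ginzburg dga $\Pi(Q,W)$ attached to an NCCR of $X$; here $Q$ has two vertices $0$ (corresponding to $R$) and $1$ (corresponding to $M$), and $e=e_0$. The simple module $S$ is the one-dimensional module at vertex $1$. First I would recall that in the smooth CY$_3$ setting $\Pi(Q,W)\simeq A$ is itself quasi-isomorphic to $A$ (the Ginzburg dga is a resolution since $A$ has finite global dimension and is $3$-Calabi--Yau), so deforming $S$ over $A$ is the same as deforming $S$ over $\Pi(Q,W)$, and $\R\enn_A(S)\simeq\R\enn_{\Pi(Q,W)}(S)$.

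The key computational step is to compute $\R\enn_{\Pi(Q,W)}(S)$ using the standard Koszul-type resolution of a vertex simple over a (complete) Ginzburg dga. Because $\Pi(Q,W)$ is $3$-Calabi--Yau, the minimal $A_\infty$-structure on $\ext^*_A(S_0\oplus S_1, S_0\oplus S_1)$ is governed by the superpotential $W$ in the precise sense of \cite{ginzburgdga, vdbginzburg}: the Ext-algebra is the Koszul dual of $\Pi(Q,W)$, and dualising recovers the Ginzburg dga. Restricting attention to the idempotent piece at vertex $1$ — i.e.\ passing to the derived quotient by $e=e_0$ — corresponds on the Koszul-dual side exactly to deleting vertex $0$ and discarding all arrows incident to it, which by definition of $W'$ (remove everything that is not a loop at $M$) produces precisely $\Pi(Q',W')$. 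So the heart of the argument is: $\dca\simeq\R\enn_A(S)^!$, and $\R\enn_A(S)^!\simeq\Pi(Q',W')$ because the derived quotient and the Koszul dual are compatible with the idempotent truncation of the Ginzburg data. I would make the last quasi-isomorphism precise either by a direct bar-construction computation (the cobar of $\R\enn_A(S)=\ext^*_A(S,S)$ with its $A_\infty$-structure, which by $3$-CY duality is controlled by the restricted superpotential $W'$, yields $\Pi(Q',W')$), or by invoking the fact — which is essentially Keller's description of deformed Calabi--Yau completions — that the derived quotient of a Jacobi algebra by a vertex idempotent is the Ginzburg dga of the truncated quiver with potential.

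I would then check that all the finiteness hypotheses needed to run \ref{dqiskd} (equivalently \ref{rendkd}) are satisfied: $A_\con=A/AeA$ is Artinian local by \ref{aconlocal}, and $\dca$ is cohomologically locally finite by \ref{fdcohom}, so \ref{rendkd} applies and gives $\dca\simeq\R\enn_A(S)^!\simeq(\dca)^{!!}$. It remains only to match $\R\enn_A(S)^!$ with $\Pi(Q',W')$ on the nose, for which I would verify: (i) the underlying graded algebra of $\Pi(Q',W')$ is the completed path algebra of $\bar Q'$, which is freely generated (after forgetting the differential) — this matches the semifree structure of a Koszul dual; (ii) the three families of arrows $a$, $a^*$, $z$ of $\bar Q'$ correspond to $\ext^1$, $\ext^2$, $\ext^3$ of $S$ over $A$, consistent with $3$-CY duality $\ext^i_A(S,S)\cong\ext^{3-i}_A(S,S)^*$; and (iii) the differential, which on the Koszul dual is dual to the $A_\infty$-products $m_n$ on $\ext^*_A(S,S)$, reproduces $da^*=-\partial_{a}W'$ and $dz=\sum e[a,a^*]e$ — this is exactly the content of the statement that the $A_\infty$-minimal model of the Ext-algebra of $S$ has structure constants encoded by the superpotential.

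The main obstacle is step (iii): making rigorous the claim that passing to the derived quotient by the vertex idempotent $e_0$ commutes with Koszul duality and with the Ginzburg construction, i.e.\ that $(\Pi(Q,W))/^{\mathbb L}(e_0)\simeq\Pi(Q',W')$ with $W'$ the literal restricted superpotential. One has to be careful that the loops $z_i$ of degree $-2$ in $\Pi(Q,W)$ do not contribute spuriously after truncation, that the differential $dz_1=\sum_a e_1[a,a^*]e_1$ restricts correctly once the arrows through vertex $0$ are killed (the contracting homotopy $h$ of the Drinfeld-quotient model of \ref{drinfeld} must be chosen compatibly), and that completion is handled consistently on both sides (the derived quotient of a completed algebra versus completion of the derived quotient). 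I expect this to go through using the explicit Drinfeld-quotient model $B$ of \ref{drinfeld} together with \ref{drinfeldmodel} to track the bimodule structure, combined with Hua--Keller's singular Hochschild cohomology input; but the bookkeeping of signs and of the completed topology is where the real work lies. Once that is in place, the quasi-isomorphism $\dca\simeq\Pi(Q',W')$ follows formally, and as a sanity check one recovers $H^0(\dca)\cong H^0(\Pi(Q',W'))\cong A_\con$ as already observed.
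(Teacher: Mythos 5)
Your approach is genuinely different from the paper's, and the difference is worth spelling out. The paper's proof is two lines: it cites Hua--Keller's Theorem~4.17, which establishes a quasi-isomorphism $\Pi(Q',W')\simeq \tau_{\leq 0}\R\underline{\enn}_R(M)$ between the Ginzburg dga and the truncation to nonpositive degrees of the stable derived endomorphism dga of $M$ in the dg singularity category, and then combines it with the paper's own Theorem~\ref{qisolem}, which says exactly that $\dca\simeq \tau_{\leq 0}\R\underline{\enn}_R(M)$. Your route instead goes through $\dca\simeq \R\enn_A(S)^!$ (i.e.~\ref{dqiskd}) and tries to match the Koszul dual of the Ext-algebra of the single vertex simple $S$ with $\Pi(Q',W')$ via the $3$-Calabi--Yau structure of $A$. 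This avoids the singularity category entirely.

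The problem is that the step you label as ``the main obstacle'' is not bookkeeping; it is the entire hard content of the theorem, and your sketch does not close it. The $3$-CY property of $A=\Pi(Q,W)$ tells you that the $A_\infty$-minimal model of the \emph{full} Ext-algebra $\ext^*_A(S_0\oplus S_1,S_0\oplus S_1)$ is cyclic and governed by $W$. What you actually need is that the induced $A_\infty$-structure on the \emph{corner} $e_1\ext^*_A(S_0\oplus S_1,S_0\oplus S_1)e_1 = \ext^*_A(S_1,S_1)$ is cyclic and governed by the \emph{restricted} potential $W'$ (the cycles that never leave vertex~$1$). This is false for a naive restriction: the corner inherits extra higher $A_\infty$-products produced by ``passing through'' the off-diagonal pieces $\ext^*_A(S_0,S_1)$ and $\ext^*_A(S_1,S_0)$, and these are precisely the cycles in $W$ that touch vertex~$0$. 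You would need to show that those contributions are either zero or reorganise into $W'$ after the appropriate change of minimal model, and you give no argument for this. Equivalently, you are implicitly asserting that the derived quotient of a Ginzburg dga by a vertex idempotent is the Ginzburg dga of the truncated quiver with restricted potential, which is nontrivial; it is essentially what Hua and Keller prove in~\cite[\S4]{huakeller} by a careful analysis using relative Calabi--Yau structures, and what the paper deliberately sidesteps by citing them. Your sanity check that $H^0$ agrees on both sides does not detect this issue, since the cocycle conditions in degree~$0$ see only $W'$-relations anyway. So the proposal identifies a plausible alternative strategy but leaves the load-bearing step unproved.
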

\begin{proof}
	By \cite[4.17]{huakeller}, there is a quasi-isomorphism $$\Pi(Q',W')\simeq \tau_{\leq 0}\R\underline\enn_R(M)$$between the Ginzburg dga and the truncation to nonpositive degrees of the dg endomorphism algebra of the module $M$ considered as an object of the dg singularity category of $R$. But this truncation is quasi-isomorphic to $\dca$ by \ref{qisolem}.
	\end{proof}

\begin{rmk}
	Let $A$ be the Jacobi algebra of a quiver with superpotential $W$ and let $S$ be the direct sum of the simple vertex modules. Using deformation theory, Segal \cite[\S2]{segaldefpt} proves that under some finiteness conditions one can recover $A$ from the Ext-algebra $\ext_A(S,S)$ along with the higher multiplications $m_r$ needed to make it quasi-isomorphic to $\R\enn_A(S)$. More precisely, he identifies $A$ as $H^0(\R\enn_A(S)^!)$, which is analogous to our isomorphism $A/AeA\cong H^0(\dq)$. Under the presence of an additional Calabi--Yau condition, Segal also proves \cite[3.3]{segaldefpt} that one can recover $W$ from the $m_r$: loosely, one uses the CY pairing to obtain a superpotential on the completion of the tensor algebra $T(\ext^1_A(S,S)^*)$, and this presents $A$.
	\end{rmk}

	\chapter{Computations}\label{threecomps}
In this chapter we do some computations. We will compute the derived contraction algebra associated to the $n$-Pagoda flop, which has base $xy-(u+v^n)(u-v^n)$. As a warmup, we will do the case $n=1$, which is the classical Atiyah flop. We will also sketch the computation of the derived contraction algebra associated to the Laufer flop. Then we will move onto the surface setting, and compute the derived contraction algebra associated to a certain family of partial crepant resolutions of Kleinian $A_n$ singularities obtained by taking a slice of a threefold flopping contraction. Along the way we will prove some useful general facts about slicing a threefold flopping contraction by a generic hyperplane, especially with regards to tilting bundles.

\p In order to actually carry out the computations, we will use the interpretation of the derived contraction algebra as a Koszul dual. The basic idea is to identify our noncommutative model $A$, compute the derived endomorphism ring $E\coloneqq \R\enn_A(S)$, and use \ref{dqiskd} to compute $\dca\simeq E^!$. Typically, we aim to present $\dca$ as a minimal $A_\infty$-algebra. When the calculations get harder to do, we will content ourselves with the cofibrant dga model obtained via Koszul duality. We will also show that, in the threefold setting, one can compute the derived contraction algebra as a certain Ginzburg dga of a quiver with superpotential that one obtains by deleting vertices of a quiver representation for a noncommutative model of the contraction, in accordance with Hua and Keller's work \cite{huakeller}.

	\section{General setup for threefolds}
	The setup will be the following variation on that of Wemyss \cite[2.9]{hmmp}:
	\begin{setup}\label{flopsetup}
		$\pi:X \to \spec R$ is a crepant projective birational morphism between two noetherian normal integral threefolds over an algebraically closed field $k$ of characteristic zero. Moreover, $\pi$ is an isomorphism away from a single closed point $p$ in the base, where $C\coloneqq \pi^{-1}(p)$ is an irreducible rational (possibly non-reduced) floppable curve. We also assume that $R$ is Gorenstein and that $X$ has at worst Gorenstein terminal singularities.
	\end{setup}
	\begin{prop}
		Suppose that we are in the situation of Setup \ref{flopsetup}. Then we are in the situation of the Zariski local setup \ref{localsetup}. In particular we may define the derived contraction algebra.
	\end{prop}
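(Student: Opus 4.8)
The plan is to note that almost every condition appearing in the Zariski local setup \ref{localsetup} is present verbatim among the hypotheses of Setup \ref{flopsetup}: in both, $\pi$ is a crepant projective birational morphism of noetherian normal integral schemes over the algebraically closed characteristic-zero field $k$, it is an isomorphism away from a single closed point $p$, the fibre $C = \pi^{-1}(p)$ is an irreducible (possibly non-reduced) rational curve, and $R$ is Gorenstein. Hence only two things require an argument: that $\R\pi_*\mathcal O_X = R$, and that $\hat R_p$ (the completion of $R$ at $p$) is an isolated hypersurface singularity.

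For the first point I would run the standard rational-singularities argument. Since $k$ has characteristic zero and $X$ has Gorenstein terminal singularities, $X$ is canonical and so has rational singularities. As $\pi$ is crepant and $X$ is canonical, comparing discrepancies on a resolution of $X$ (which is simultaneously a resolution of $\spec R$, both schemes being normal and Gorenstein) shows that $\spec R$ is canonical too, hence also has rational singularities. Now take any resolution $f\colon\tilde X\to X$; then $\pi\circ f$ is a resolution of $\spec R$, and rationality of $X$ and of $\spec R$ give $\R f_*\mathcal O_{\tilde X}\simeq\mathcal O_X$ and $\R(\pi f)_*\mathcal O_{\tilde X}\simeq\mathcal O_{\spec R}$ respectively; composing yields $\R\pi_*\mathcal O_X\simeq\R\pi_*\R f_*\mathcal O_{\tilde X}\simeq\R(\pi f)_*\mathcal O_{\tilde X}\simeq R$.

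For the second point, the hypotheses that $C$ is floppable and $X$ has Gorenstein terminal singularities say precisely that $\pi$ is a flopping contraction: it is crepant, small (its exceptional locus is a curve in a threefold, so no divisor is contracted) and of relative Picard rank one. By \cite[5.38]{kollarmori} the completion $\hat R_p$ is therefore compound Du Val, and compound Du Val singularities are hypersurface singularities \cite[\S2]{reidctf}. That $\hat R_p$ is isolated follows because $\spec R\setminus\{p\}\cong X\setminus C$ carries at worst Gorenstein terminal singularities, and three-dimensional terminal singularities are isolated \cite{moriterm}; hence $\spec R$ is regular on a punctured Zariski neighbourhood of $p$, so $\hat R_p$ is regular away from its closed point. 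This checks every condition of Setup \ref{localsetup}, and as explained there the derived contraction algebra $\dca$ is then defined. I expect the only genuine content to be this minimal-model-programme bookkeeping — the rationality of the base and the appeal to \cite{kollarmori} for its compound Du Val structure — which is routine; everything else is a direct comparison of the two setups.
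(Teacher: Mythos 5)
Your proof is correct, and for the second claim (the isolated cDV/hypersurface structure of $\hat R_p$) you follow exactly the paper's route: a citation to \cite[5.38]{kollarmori}, with a little extra glossing about why cDV implies hypersurface and why isolatedness holds. The one genuine departure is in the first claim, $\R\pi_*\mathcal O_X\simeq R$. The paper simply invokes Kawamata vanishing (relative Kawamata--Viehweg: since $\pi$ is crepant $-K_X$ is $\pi$-trivial and so $\pi$-nef, and $X$ is terminal, hence klt, so the higher direct images of $\mathcal O_X$ vanish). You instead route through rational singularities: terminal (hence canonical) implies rational in characteristic zero by Elkik's theorem, crepancy transports this down to $\spec R$ by comparing discrepancies on a common resolution, and then the defining property $\R f_*\mathcal O_{\tilde X}\simeq \mathcal O$ on both sides gives the claim. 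Both are standard and both work; yours has the merit of being self-contained at the level of definitions (no vanishing theorem needs to be black-boxed once one accepts Elkik), while the paper's one-line appeal to Kawamata vanishing is shorter. Two minor quibbles: the parenthetical ``both schemes being normal and Gorenstein'' is not what makes a resolution of $X$ one of $\spec R$ --- that is just birationality of $\pi$ --- and the discrepancy comparison actually yields that $\spec R$ is \emph{terminal}, not merely canonical (though canonical suffices for rationality). Neither affects the validity of the argument.
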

	\begin{proof}
		We need to check that the completion $\hat{R}_p$ is an isolated hypersurface singularity and that $\R\pi_*\mathcal{O}_{X}\simeq R$. The first claim follows from the classification of terminal singularities; namely by \cite[5.38]{kollarmori}, $\hat{R}_p$ is an isolated cDV singularity and in particular a hypersurface singularity. The second claim follows from Kawamata vanishing \cite{kawamatavanish}.
	\end{proof}
	\begin{prop}\label{mmoddca}
		Suppose that we are in the Threefold Setup \ref{flopsetup}. If $X$ is $\Q$-factorial (i.e.\ is a minimal model of $R$) then the cohomology of $\dca$ is simply $A_\con[\eta]$.
	\end{prop}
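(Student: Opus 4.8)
The plan is to reduce the statement to the vanishing of $\ext^1_{\hat R}(M,M)$, and then invoke the results already assembled in the excerpt. Recall from Remark \ref{rigidrmk} that if the MCM module $M$ defining $\dca$ is rigid, in the sense of \ref{rigiddefn}, then $H(\dca)\cong A/AeA[\eta]=A_\con[\eta]$. So the entire content of the proposition is the claim: \emph{if $X$ is $\Q$-factorial, then the module $M$ with $A\cong\enn_{\hat R}(\hat R\oplus M)$ is rigid}, i.e.\ $\ext^1_{\hat R}(M,M)\cong 0$.

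First I would recall the chain of identifications from Chapter \ref{dercondefns}: starting from the tilting bundle $\mathcal V=\mathcal O_{\hat X}\oplus\hat{\mathcal N}$ on the formal fibre $\hat X$, one has $\hat\Lambda\cong\enn_{\hat R}(\hat R\oplus\hat N)$, and after passing to the basic algebra and stripping repeated summands, $A\cong\enn_{\hat R}(\hat R\oplus M)$ with $M$ an indecomposable MCM $\hat R$-module, and $\hat N$ and $M$ have the same additive closure up to the free summand. Since rigidity only depends on the additive closure, it suffices to show $\hat N$ (equivalently $\hat R\oplus\hat N=\pi_*\hat{\mathcal V}$) is rigid over $\hat R$. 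The key geometric input is that $X$ (hence $\hat X$) is $\Q$-factorial. Here I would cite Van den Bergh's or Iyama--Wemyss's results: when $\pi\colon X\to\spec R$ is a \emph{minimal model} (i.e.\ $X$ is $\Q$-factorial terminal), the associated module $M$ is not merely modifying but \emph{maximal modifying} (an MM module in the sense of Iyama--Wemyss), and maximal modifying modules $M$ satisfy $\ext^1_{\hat R}(M,M)=0$ by definition (a modifying module is one with $\enn_{\hat R}(M)$ MCM, and an MM module $M$ is a modifying module maximal with this property; the characterisation via $\ext^1$-vanishing is \cite[Theorem]{iyamawemyssfactorial}, and $\Q$-factoriality of $X$ corresponds precisely to maximality of $M$). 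Concretely: $\Q$-factoriality of $X$ forces $\enn_{\hat R}(M)$ to have no proper "larger" modifying module built from it, and the Auslander--type argument then yields $\ext^1_{\hat R}(M,M)=0$.

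With rigidity of $M$ in hand, the proposition follows immediately: by \ref{dqcohom} we have $H^0(\dca)\cong\underline\enn_{\hat R}(M)=A_\con$ and $H^j(\dca)\cong\ext^{-j}_{\hat R}(M,M)$ for $j<0$; by \ref{etaex}(2) multiplication by $\eta$ gives isomorphisms $H^j(\dca)\xrightarrow{\sim} H^{j-2}(\dca)$ for all $j\le 0$. Since $M$ is rigid, $H^{-1}(\dca)\cong\ext^1_{\hat R}(M,M)=0$, and $\eta$-periodicity then propagates this to $H^j(\dca)=0$ for all odd $j$, while $H^{2i}(\dca)\cong A_\con\cdot\eta^{i}$ for all $i\le 0$. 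Hence $H(\dca)\cong A_\con[\eta]$ as graded algebras, $\eta$ being central of degree $-2$ and torsionfree. I would present this last part as essentially a two-line deduction from \ref{etaex} and \ref{dqcohom}, exactly as in Remark \ref{rigidrmk}.

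The main obstacle — and the only nontrivial point — is making precise the implication "$X$ is $\Q$-factorial $\Rightarrow$ $M$ is maximal modifying $\Rightarrow$ $\ext^1_{\hat R}(M,M)=0$", which rests on the dictionary between $\Q$-factorial terminalisations and maximal modifying modules. I expect this to be a direct citation to \cite{iyamawemyssfactorial} (and its complete-local form, obtained via the completion arguments already used in Chapter \ref{dercondefns}); the subtlety is only that one must check the equivalence is stable under completion at $p$ and under the Morita reduction to the basic algebra, but both of these are routine since $\ext^1$ of a MCM module supported generically projectively is a finite-length module, so behaves well under completion, and Morita equivalence preserves $\ext$-vanishing between the relevant summands. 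Everything else is bookkeeping with the periodicity element $\eta$ already established in \S\ref{dqperiod}.
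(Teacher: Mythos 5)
Your argument is correct and follows exactly the paper's strategy: reduce to rigidity of the MCM module $M$ and then quote Remark \ref{rigidrmk}. The paper itself simply cites the ready-made statement \cite[4.10]{hmmp} for rigidity in the $\Q$-factorial case (adding that \cite[4.13]{hmmp} covers partial minimal models), rather than retracing the maximal-modifying-module dictionary of \cite{iyamawemyssfactorial}; your route is valid and amounts to the same underlying content, just with more of the supporting chain of results made explicit. One small imprecision: maximal modifying modules do not satisfy $\ext^1_{\hat R}(M,M)=0$ \emph{by definition} — as you note yourself a moment later, modifying means $\enn_{\hat R}(M)$ is CM, and the $\ext^1$-vanishing for MCM modifying modules in dimension $3$ comes from \ref{modext} (equivalently \ref{twoorthreecor}(2)), so the maximality plays no role in the deduction of rigidity once one knows $M$ is modifying.
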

	\begin{proof}
		By \cite[4.10]{hmmp}, the associated MCM $\hat R$-module $M$ used to define $\dca$ is rigid (see \ref{rigiddefn}), and the claim now follows from \ref{rigidrmk}. In fact, the same holds more generally if $X$ is a `partial minimal model' \cite[4.13]{hmmp}.
	\end{proof}
	In general, our strategy will be to compute $\dca$ as a Koszul double dual. We will start with Setup \ref{flopsetup} and write down a noncommutative model $\Lambda=\enn_R(R\oplus N)$ for $X$, presenting $\Lambda$ as the path algebra of a two-vertex quiver with relations. This noncommutative model $\Lambda$ we write down for $X$ will be a \textbf{noncommutative crepant resolution} (\textbf{NCCR}) of $R$ \cite{vdbnccr}. Because all crepant resolutions of threefolds with terminal singularities are derived equivalent \cite[6.6.3]{vdbnccr}, exhibiting a single NCCR will suffice for our calculations. 
	
	\p Letting $S$ be the simple of $\Lambda$ at the vertex corresponding to $N$, we will compute the derived endomorphism algebra $\R\enn_\Lambda(S)$. It is easiest to do this as an $A_\infty$-algebra, by placing higher multiplications on $\ext_\Lambda(S,S)$ and appealing to Kadeishvili's theorem (\ref{kadeish}). Next, we will compute $\dca$ as the $A_\infty$ Koszul dual of $\R\enn_\Lambda(S)$. Again, we will do this via Kadeishvili's theorem, along with the fact that the cohomology of {\dca} can be calculated explicitly in advance. In order to actually do the $A_\infty$ computations, we will either use Merkulov's construction, or put an Adams grading on the resolution of $S$ and note that many higher multiplications become ruled out, which itself appeals to a graded version of Merkulov's construction (see \ref{adamsrmk}). We will often use Massey products to detect non-formality (see \ref{masseys} and \ref{masseylemma}). 
	
	\p Suppose that $\Gamma$ is a quiver (possibly with relations) and $A=k\Gamma$ its path algebra. We denote multiplication in $A$ left-to-right: that is, $ab$ means `follow edge $a$, then edge $b$'. If $i$ is a vertex of $\Gamma$, with associated idempotent $e_i \in A$, then the \textbf{projective at $i$} is the right $A$-module $P_i\coloneqq  e_i A$ consisting of all those paths starting at $i$. We write $P_i^r$ to mean the $r$-fold direct sum $P_i^{\oplus r}$, and $P_iP_j$ to mean the direct sum $P_i\oplus P_j$. Because we will write maps as matrices, the order here is important.

	\section{The Atiyah flop} \label{atiyahflop}
	The Atiyah flop is the simplest example of a flopping contraction, and also the first known \cite{atiyahflop}. The base is the cone ${R=\frac{k[u,v,x,y]}{(uv-xy)}}$. One MCM module is $(u,x)$, and it is well known that this gives an NCCR $\Lambda$ with a presentation as the path algebra of the following quiver with relations:
	$$   \begin{tikzcd}
	{{R}} \arrow[rr,bend left=15,"u"] \arrow[rr,bend left=50,"x"]  && (u,x) \arrow[ll,bend left=15,"y/u"] \arrow[ll,bend left=50,"\text{incl.}"]
	\end{tikzcd}.$$
	Relabelling, we can write this quiver as\\
	\begin{wrapfigure}[4]{L}{0.5\textwidth}
		\centering
		\vspace{-15pt}$
		\begin{tikzcd}
		1 \arrow[rr,bend left=15,"b"] \arrow[rr,bend left=50,"a"]  && 2 \arrow[ll,bend left=15,"s"] \arrow[ll,bend left=50,"t"]
		\end{tikzcd}$
	\end{wrapfigure} 
	
	$asb=bsa$
	
	$sbt=tbs$
	
	$atb=bta$
	
	$sat=tas$\newline\newline
	One can check that a resolution for $S$, the simple at 2, is given by 
	$$
	P_2 \xrightarrow{\left(\begin{smallmatrix} b \\ -a \end{smallmatrix}\right)} P_1^2 \xrightarrow{\left(\begin{smallmatrix} bt & at \\ -bs & -as \end{smallmatrix}\right)} P_1^2 \xrightarrow{(s \; t)} P_2$$and it easily follows that the Ext-algebra of $S$ is $k[x]/x^2$, with $x$ placed in degree 3. 
	\begin{lem}The dga $\R\enn_\Lambda(S)$ is formal.
		\end{lem}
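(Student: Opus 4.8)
The plan is to show that $E\coloneqq\R\enn_\Lambda(S)$ is \emph{intrinsically} formal: its cohomology algebra is concentrated in degrees so sparse that no nonzero higher $A_\infty$-operation can exist on it, whence the Kadeishvili minimal model (\ref{kadeish}) is already an honest graded algebra quasi-isomorphic to $E$. First I would record the cohomology precisely. Applying $\hom_\Lambda(-,S)$ to the given length-$3$ projective resolution $P_2 \to P_1^2 \to P_1^2 \to P_2$ of $S$, and using $\hom_\Lambda(P_i,S)\cong e_iS$ together with the fact that $S$ is the simple at vertex $2$ (so $e_2S\cong k$ and $e_1S=0$), the Hom-complex becomes $k\to 0\to 0\to k$ with vanishing differentials. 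Hence $\ext^0_\Lambda(S,S)\cong\ext^3_\Lambda(S,S)\cong k$ and all other Ext groups vanish, so $HE\cong k[x]/(x^2)$ with $x$ in cohomological degree $3$, as already observed.

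Next I would run the degree argument. Since $E$ is an honest dga it is strictly unital, so by Kadeishvili's theorem we may take a strictly unital minimal model $(HE,0,[m_2],p_3,p_4,\dots)\xrightarrow{\simeq}E$. The operation $p_n$ has cohomological degree $2-n$; by strict unitality $p_n$ vanishes as soon as one of its arguments is the unit, so for $n\geq 3$ the only value it could conceivably take is $p_n(x,\dots,x)$, which lies in cohomological degree $3n+(2-n)=2n+2$. For $n\geq 3$ this is at least $8$, where $HE$ vanishes; hence $p_n=0$ for all $n\geq 3$. Thus the minimal model is the graded algebra $(HE,0,[m_2])\cong k[x]/(x^2)$ with zero differential, and the comparison $A_\infty$-quasi-isomorphism exhibits $E$ as formal. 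The same conclusion also follows from \ref{masseysarehighermults}: every $r$-fold Massey product $\langle x,\dots,x\rangle_r$ with $r>2$ would land in cohomological degree $2r+2$ and is therefore empty; but the intrinsic-formality bookkeeping is the cleanest route.

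There is essentially no serious obstacle here: the only point requiring genuine care is confirming that strict unitality is available, so that "mixed" higher products such as $p_5(x,1,1,1,x)$ — which would otherwise be permitted by degree alone, since it would land in degree $3$ — are excluded. As a sanity check I would compare the downstream consequences: because $X$ is a minimal model of the conifold, \ref{mmoddca} gives $H(\dca)\cong A_\con[\eta]\cong k[\eta]$ with $\eta$ in degree $-2$, and taking the Koszul dual of the formal dga $E\simeq k[x]/(x^2)$ via \ref{dqiskd} indeed recovers $\dca\simeq k[\eta]$, which is manifestly formal. This is consistent with, and reinforces, the formality of $E$.
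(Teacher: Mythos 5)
Your proof is correct and follows essentially the same route as the paper: put a strictly unital minimal $A_\infty$-structure on $HE\cong k[x]/(x^2)$ via Kadeishvili, then observe that strict unitality forces every candidate higher product to be $m_r(x,\dots,x)$, which lands in degree $2r+2$ — a degree where $HE$ vanishes — so all $m_r$ for $r>2$ are zero. The paper phrases the final degree check slightly differently (noting $2r+2$ is even and positive, while $k[x]/(x^2)$ lives only in degrees $0$ and $3$), but this is the same observation.
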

	\begin{proof}
		This is not really specific to the situation at hand; indeed we show that any dga whose cohomology algebra is $k[x]/x^2$ with $\vert x \vert=3$ is formal. Use Kadeishvili's theorem to put a strictly unital $A_\infty$-structure on $k[x]/x^2$ making it quasi-isomorphic to $\R\enn_\Lambda(S)$. Suppose that $m_r\neq 0$ for some $r>2$ is a nontrivial higher $A_\infty$-operation for this $A_\infty$-structure. By strict unitality, we must have $m_r(x,\ldots,x)\neq 0$. But the degree of $m_r(x,\ldots,x)$ is $2r+2$, which is even and positive. But this is a contradiction, because $k[x]/x^2$ has no elements of even positive degree. Hence $m_r=0$.
		\end{proof}

So the derived contraction algebra $\dca$ is the Koszul dual of $k[x]/x^2$, which is the same as the tensor algebra on a single element $\eta=x^*$ of degree -2. Hence, $\dca \simeq k[\eta]$. Note that this computation is valid all characteristics.

	\section{Pagoda flops}
	The Pagoda flops, first defined by Reid \cite{reidpagoda}, are natural generalisations of the Atiyah flop. They are all $cA_1$ singularities, and fit into a family parametrised by a natural number $n\geq 1$. The $n=1$ case is the Atiyah flop. The base of the Pagoda flop is $R=\frac{k[u,v,x,y]}{(uv-(x+y^n)(x-y^n))}$. One MCM module for $R$ is $N\coloneqq (u,x+y^n)$. One can write down a quiver presentation for the resulting NCCR $\Lambda=\enn_R(R\oplus N)$: it looks like $$\begin{tikzcd}
	R \arrow[rr,bend left=15,"u"]\arrow[loop left, "y"] \arrow[rr,bend left=50,"x+y^n"]  && N\arrow[loop right, "y"] \arrow[ll,bend left=15,"\text{incl.}"] \arrow[ll,bend left=50,"\frac{x-y^n}{u}"]\end{tikzcd}.$$ 
	Rewriting this abstractly, we obtain the quiver with relations
	\begin{wrapfigure}[6]{L}{0.5\textwidth}
		\centering $
		\begin{tikzcd}
		\cdot_1 \arrow[rr,bend left=15,"b"]\arrow[loop left, "l"] \arrow[rr,bend left=50,"a"]  && \cdot_2 \arrow[ll,bend left=15,"s"] \arrow[ll,bend left=50,"t"]\arrow[loop right, "m"]
		\end{tikzcd}$
	\end{wrapfigure} 
	
	$la=am$
	
	$lb=bm$
	
	$sl=ms$
	
	$tl=mt$
	
	$as=bt+2l^n$
	
	$sa=tb+2m^n$
	\newline where the simple $S$ at 2 corresponds to the exceptional locus in the resolution. So to compute \dca, we first need to resolve $S$. Assuming $n>1$, then using a tedious basis argument one can write down a four-term resolution $$ \tilde S \coloneqq P_2 \xrightarrow{\left(\begin{smallmatrix}-a\\b\\m\end{smallmatrix}\right)} P_1^2P_2 \xrightarrow{\left(\begin{smallmatrix}s & t & 2m^{n-1}\\ -l & 0 & -a \\ 0 & -l & b\end{smallmatrix}\right)} P_2P_1^2 \xrightarrow{(m,s,t)} P_2$$of $S$. It is now easy to see that the $\ext$-algebra of $S$ is four-dimensional over $k$, with Hilbert series $1+t+t^2+t^3$. In fact, one can check that the $\ext$-algebra is generated by two elements $f_1$ and $f_2$, with $f_i$ placed in degree $i$. Concretely, $f_1$ is represented by $$\begin{tikzcd}[row sep= large, column sep=15ex] & P_2\ar[r]\ar[d,"{\left(\begin{smallmatrix}0\\0\\1\end{smallmatrix}\right)}"] & P_1^2P_2\ar[r]\ar[d,"{-\sthbthm{0}{0}{2m^{n-2}}{1}{0}{0}{0}{1}{0}}"] & P_2P_1^2\ar[r]\ar[d,"{\left(1,0,0\right)}"] & P_2 \\ P_2\ar[r] & P_1^2P_2\ar[r] & P_2P_1^2\ar[r] & P_2 &\end{tikzcd}$$
	while $f_2$ is represented by $$\begin{tikzcd}[row sep= large, column sep=large] & & P_2\ar[r]\ar[d,"{\left(\begin{smallmatrix}1\\0\\0\end{smallmatrix}\right)}"] & P_1^2P_2\ar[r]\ar[d,"{\left(0,0,1\right)}"] & P_2P_1^2\ar[r] & P_2 \\ P_2\ar[r] & P_1^2P_2\ar[r] & P_2P_1^2\ar[r] & P_2 & &\end{tikzcd}$$and $f_3=f_1f_2$ is represented by the identity map between the two copies of $P_2$ at the ends. One can check that $f_1$ and $f_2$ strictly commute, and that $f_2$ genuinely squares to zero (purely for degree reasons). However, one can check that $f_1^2=-2m^{n-2}f_2$, which is merely homotopic to zero (if $n>2$), not identically zero.
	
	\p So if $n=2$ then the $\ext$-algebra is $k[f_1]/f_1^4$, whereas if $n>2$ then it is isomorphic to the algebra $\frac{k[f_1,f_2]}{(f_1^2,\;f_2^2)}$. In general, the derived endomorphism algebra will not be formal; we would like to use Merkulov's construction (or a variant) to work out the higher $A_\infty$ operations on the Ext-algebra. We will grade the resolution $\tilde S$ of $S$ in order to eliminate many of these: the point is that one can apply the Adams graded version of Merkulov's construction (\ref{adamsrmk}) to the Adams graded dga $\enn_\Lambda(\tilde S)\simeq \R\enn_\Lambda(S)$, and since the higher multiplications $m_r$ must have Adams degree $0$, this allows us to conclude that many of them must be zero. 
	
	\p One can put a grading on $\Lambda$, with $l,m$ in degree 2 and $a,b,s,t$ in degree $n$. We will refer to the degree of a homogeneous element of $\Lambda$ in this secondary grading as its \textbf{(Adams) weight}, since we are already using `degree' to refer to maps. Observe that the projectives $P_i$ become weight graded $\Lambda$-modules, concentrated in infinitely many nonnegative degrees. If $M$ is a weight graded $\Lambda$-module, then write $M(d)$ to refer to $M$ shifted by weight $d$; for example the element $m\in P_2(d)$ has weight $2+d$, because $m\in\Lambda$ has weight 2. Observe that if $M$ and $N$ are weight graded modules, and $f:M \to N$ is a map of a given degree, then $f$ will split into a direct product of weight-homogenous components; such a component of weight $d$ is the same thing as a weight graded map $M \to N(d)$. In all of our examples, $f$ will actually be concentrated in a single weight-homogenous component $M \to N(d)$, in which case we say that $f$ has weight $d$.

	\p Recall that $S$ has projective resolution $$\tilde S\coloneqq P_2 \to P_1^2P_2 \to P_2P_1^2 \to P_2.$$
	\begin{lem}
	Suppose that $n \geq 2$. Give $S$ the trivial weight grading as a $\Lambda$-module. Then the projective resolution $\tilde S\to S$ admits a weight grading as follows:
	$$P_2(2n+2) \to P_1(n+2)\oplus P_1(n+2)\oplus P_2(2n) \to P_2(2)\oplus P_1(n)\oplus P_1(n)\to P_2(0).$$In particular, the differential has weight zero.
	\end{lem}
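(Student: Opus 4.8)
The plan is to verify directly that the displayed weight assignment makes the four-term resolution of $S$ into a complex of weight-graded $\Lambda$-modules with a weight-homogeneous differential of weight $0$; this is a bookkeeping computation rather than a conceptual argument.

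First I would check that the rule $|l|=|m|=2$ and $|a|=|b|=|s|=|t|=n$ extends to a well-defined secondary grading on $\Lambda$, by confirming that each of the six defining relations is weight-homogeneous: the relations $la=am$, $lb=bm$, $sl=ms$, $tl=mt$ all equate a weight-$(n+2)$ path with a weight-$(n+2)$ path, while $as=bt+2l^n$ and $sa=tb+2m^n$ equate three paths all of weight $2n$. Hence each $P_i=e_i\Lambda$ is weight-graded (in nonnegative weights), and for $d\geq 0$ the shift $P_i(d)$ places the generator $e_i$ in weight $d$. Giving $S$ the trivial grading concentrated in weight $0$, observe that a right-$\Lambda$-linear map $P_i(d)\to P_j(d')$ corresponding to a path $p$ is weight-homogeneous of weight $0$ exactly when $|p|=d-d'$, since it sends the weight-$d$ generator of the source to the element $p$, which lies in weight $|p|+d'$ of the target.

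It then remains to run through the entries of the three differential matrices with the generator-weight assignments $2n+2$; $\{n+2,\,n+2,\,2n\}$; $\{2,\,n,\,n\}$; $0$ taken from the statement, checking in each case that the path weight equals the difference of the relevant generator weights. For the last map $(m,s,t)$ one has $|m|=2=2-0$ and $|s|=|t|=n=n-0$; for the first map, with entries $-a,b,m$, one has $|a|=|b|=n=(2n+2)-(n+2)$ and $|m|=2=(2n+2)-2n$; for the middle $3\times 3$ matrix, the entries $s,t$ give $n=(n+2)-2$, the entry $2m^{n-1}$ gives $2(n-1)=(2n)-2$ (which holds uniformly for $n\geq 2$, so no case split is needed), the two copies of $-l$ give $2=(n+2)-n$, and the entries $-a,b$ give $n=(2n)-n$. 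In every case the condition $|p|=d-d'$ holds, so each differential is weight-graded of weight $0$. The exactness of the complex and the identification of its cohomology with $S$ were established above and are unaffected by passing to graded modules, which completes the proof. I do not anticipate any real obstacle: the only mildly delicate point is the $2m^{n-1}$ entry, where one should note that $2(n-1)$ and $(2n)-2$ agree for all $n\geq 2$.
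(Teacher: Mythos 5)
Your proof is correct and takes essentially the same approach as the paper: a direct entry-by-entry verification that each matrix entry $p: P_i(d)\to P_j(d')$ satisfies $|p|=d-d'$. Your write-up is in fact more complete than the paper's, which only checks the rightmost map $(m,s,t)$ explicitly and declares the others analogous, and you additionally verify that the six relations are weight-homogeneous so the grading on $\Lambda$ is well-defined — a preliminary the paper takes as given.
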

\begin{proof}
	This is a relatively straightforward check. We verify that the rightmost map is of weight zero; verifying this for the other maps in the resolution is entirely analogous. The rightmost map is $P_2(2)\oplus P_1(n)\oplus P_1(n) \xrightarrow{(m,s,t)} P_2(0)$. The first component $P_2(2) \to P_2(0)$ has weight zero because $m$ has weight 2. Similarly, the second component has weight zero because $s$ has weight $n$. Similarly, the third component has weight zero because $t$ has weight $n$. 
	\end{proof}

\begin{lem}
	The dga $\enn_\Lambda(\tilde S)\simeq \R\enn_\Lambda(S)$ admits an Adams grading. Moreover, $f_1$ has weight $2$ and $f_2$ has weight $2n$; i.e.\ the $f_i$ are maps of (bi)graded modules $f_1:\tilde S \to \tilde S(2)[1]$ and $f_2:\tilde S \to \tilde S(2n)[2]$.
\end{lem}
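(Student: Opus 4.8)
The plan is to transport the Adams grading through the explicit chain-level construction of $\R\enn_\Lambda(S)$. First I would observe that since $\tilde S$ is now a complex of Adams-graded projective $\Lambda$-modules with a weight-zero differential (by the previous lemma), the endomorphism dga $\enn_\Lambda(\tilde S)$ inherits a total grading by (cohomological degree, Adams weight): a homogeneous component of $\enn_\Lambda(\tilde S)$ of cohomological degree $j$ and Adams weight $d$ is precisely the space of weight-graded maps $\tilde S \to \tilde S(d)[j]$. The differential on $\enn_\Lambda(\tilde S)$ is the commutator with the differential of $\tilde S$, which has Adams weight zero, so it preserves the Adams grading; hence $\enn_\Lambda(\tilde S)$ is an Adams-graded dga in the sense of the definition in \S\ref{minmods}. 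Since $m_1 = d$ automatically has Adams weight zero, this verifies the Adams-gradedness condition (cf.\ \ref{adamsrmk}).

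Next I would pin down the Adams weights of $f_1$ and $f_2$ by reading them off the explicit chain maps written above. For $f_2$, the representing chain map is, in each relevant degree, a matrix built from the entry $1$ and the single entry $m$ appearing as $\left(0,0,1\right)$ composed after $\left(\begin{smallmatrix}1\\0\\0\end{smallmatrix}\right)$-type maps, together with the weight-shifts already recorded in the graded resolution. Comparing source and target weights term by term — e.g.\ the component $P_2(2n) \to P_2(0)$ which is the identity, versus its position two steps along — one finds the consistent global weight shift is $2n$, so $f_2$ is a map $\tilde S \to \tilde S(2n)[2]$. For $f_1$, the representing chain map involves the entry $2m^{n-2}$ in degree-raising position, and the graded resolution forces the weight shift to be $2$; more conceptually, $f_1$ has cohomological degree $1$ and its nonzero entries are (up to scalars) powers of $m$ (weight $2$ each) placed so that the net shift is $2$, giving $f_1 : \tilde S \to \tilde S(2)[1]$. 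One should double-check this against the relation $f_1^2 = -2m^{n-2} f_2$: the left side has weight $4$ and the right side has weight $2(n-2) + 2n = 4n - 4$; for these to be equal we need... actually this forces $4 = 4n-4$, i.e.\ $n=2$, which is exactly the regime where $f_1^2$ is nonzero on the nose, so consistency holds, while for $n>2$ the identity $f_1^2 = -2m^{n-2}f_2$ holds only up to homotopy and the weights on both sides are different, which is fine since a nullhomotopic map need not be weight-homogeneous — so I would be slightly careful in the write-up and instead verify the weights of $f_1,f_2$ directly from their chain-level representatives rather than from this relation.

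The main obstacle I expect is the bookkeeping: carefully assigning weights to every summand of every term of $\tilde S$ and checking that the chosen representatives of $f_1$, $f_2$ (which were written down earlier by a basis argument) really are weight-homogeneous of the claimed weights, rather than sums of components of different weights. This is genuinely just a finite check but it is error-prone, so in the paper I would present it by displaying the graded resolution once, then tabulating the source and target weights of each nonzero matrix entry of $f_1$ and of $f_2$, and noting that in each case the difference is constant ($2$ for $f_1$, $2n$ for $f_2$). Once that is done, the statement that $\enn_\Lambda(\tilde S) \simeq \R\enn_\Lambda(S)$ as Adams-graded dgas is immediate (it is the standard quasi-isomorphism $\dgh \simeq \R\hom$, and both sides carry the compatible Adams grading), and the weight assertions for $f_1, f_2$ follow. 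I would then record the consequence for later use: by \ref{adamsrmk}, the minimal model $\mathscr H\kern-1pt\R\enn_\Lambda(S)$ can be taken Adams-graded and strictly unital, and any higher multiplication $p_r(g_{i_1},\ldots,g_{i_r})$ with $g_{i_j} \in \{f_1,f_2\}$ must land in the weight-homogeneous component equal to the sum of the input weights, which will sharply constrain (and in many cases kill) the $p_r$ in the subsequent computation of $\dca$.
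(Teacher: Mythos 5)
Your core approach — observe that $\tilde S$ is now a complex of Adams-graded projectives with weight-zero differential, conclude that $\enn_\Lambda(\tilde S)$ is therefore an Adams-graded dga with components given by weight-graded maps $\tilde S \to \tilde S(d)[j]$, and read off the weights of $f_1$ and $f_2$ term by term from their explicit matrix representatives — is exactly what the paper does, and your direct verification is correct.

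The consistency-check digression, though, contains a sign error worth diagnosing, since you rightly sensed something was off but drew the wrong conclusion from it. In the paper's convention, $M(d)$ shifts weights \emph{up} by $d$ (so $m \in P_2(d)$ has weight $2+d$), and a ``map of weight $d$'' means a weight-preserving map $M \to N(d)$; unwinding this, such a map \emph{lowers} element weights by $d$. You correctly assign $f_1$ weight $2$ and $f_2$ weight $2n$ under this convention — they lower element weights by $2$ and $2n$ respectively. But left multiplication by $m^{n-2}$ \emph{raises} element weights by $2(n-2)$, so as a chain endomorphism it has weight $-(2n-4)$, not $+(2n-4)$. With the correct sign, $-2m^{n-2}f_2$ has weight $-(2n-4)+2n = 4$, which equals the weight $2+2 = 4$ of $f_1^2$ for \emph{every} $n$, not just $n=2$. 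Consequently $f_1^2 = -2m^{n-2}f_2$ really is an on-the-nose identity of chain maps for all $n$ (as one can confirm directly by multiplying the displayed matrices), and the thing that is only ``up to homotopy'' when $n>2$ is the vanishing of that chain map, not the identity relating the two sides. So the check you disowned was in fact a valid sanity check — it passes once the weight of multiplication by $m^{n-2}$ is given its correct sign.
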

\begin{proof}
The existence of the Adams grading is an entirely formal consequence of the previous lemma, because any $\Lambda$-linear endomorphism of $\tilde S$ will split into a sum of weight-homogenous components. The check for $f_2$ is straightforward; for example the rightmost component sends $P_2(2n)\to P_2(2n)\cong P_2(0)(2n)$ by the identity map. The check for $f_1$ is similarly straightforward, although we spell out the case of the map $P_2(2n)\xrightarrow{-2m^{n-2}} P_2(2)$ in detail. Because $m$ has weight $2$, the element $-2m^{n-2}$ has weight $2n-4$. Hence the map $P_2(2n)\xrightarrow{-2m^{n-2}} P_2(4)$ is a map of weight-graded modules. But this is exactly the claim since $P_2(4)\cong P_2(2)(2)$.
	\end{proof}

	For purely degree reasons, the nontrivial higher multiplications $m_r$ on $\frac{k[f_1,f_2]}{(f_1^2,\;f_2^2)}$ must (up to permutation) all be of the form $m_r(f_1,\ldots,f_1)=\lambda_r f_2$ or $m_r(f_1,\ldots,f_1,f_2)=\mu_{r}f_1f_2$. Since each higher multiplication must have Adams weight zero by \ref{adamsrmk}, the only nonzero coefficients are possibly $\lambda_n$ and $\mu_2$. In particular, we have:
	\begin{prop}\label{ne2formal}
		If $n=2$, then $\R\enn_\Lambda(S)$ is a formal dga.
		\end{prop}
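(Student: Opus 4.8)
The plan is to apply Kadeishvili's theorem (\ref{kadeish}) in its Adams graded refinement (\ref{adamsrmk}) to the Adams graded dga $\enn_\Lambda(\tilde S)\simeq \R\enn_\Lambda(S)$. This produces a strictly unital minimal $A_\infty$-structure $(HA,0,m_2,m_3,\ldots)$ on the cohomology $HA\cong k[f_1]/f_1^4$ in which every higher multiplication $m_r$ has Adams weight zero, together with an $A_\infty$-quasi-isomorphism $HA\to \R\enn_\Lambda(S)$. It then suffices to prove that $m_r=0$ for all $r>2$: for then $HA$, equipped with zero differential and $m_2$ equal to the ordinary multiplication, is a dga, and the quasi-isomorphism exhibits $\R\enn_\Lambda(S)$ as formal.

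To carry this out I would run the degree-and-weight bookkeeping already begun in the paragraph preceding the statement. When $n=2$ the class $f_1$ sits in cohomological degree $1$ and, by the Adams gradings constructed above, in Adams weight $2$; since $m_2$ preserves Adams weight, $f_1^{\,j}$ lies in cohomological degree $j$ and Adams weight $2j$. By strict unitality, $m_r$ is determined by its values on tuples $(f_1^{a_1},\ldots,f_1^{a_r})$ with each $a_i\in\{1,2,3\}$. Such a tuple has total cohomological degree $\sum_i a_i$ and total Adams weight $2\sum_i a_i$, while $m_r$ lowers cohomological degree by $r-2$ and preserves Adams weight. Hence $m_r(f_1^{a_1},\ldots,f_1^{a_r})$ must lie in cohomological degree $\sum_i a_i+2-r$ and also be a scalar multiple of $f_1^{\sum_i a_i}$, which has cohomological degree $\sum_i a_i$; comparing forces $r=2$. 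So $m_r$ vanishes on all such tuples whenever $r>2$. Equivalently, in the notation of the preceding paragraph, the only surviving coefficients $\lambda_n=\lambda_2$ and $\mu_2$ both record the ordinary product on the Ext-algebra rather than a genuinely higher operation.

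The only place any care is needed — and the point I would emphasise — is that the Adams grading is indispensable: the pure cohomological-degree argument used for the Atiyah flop in \S\ref{atiyahflop} does not apply here, since $f_1^2$ and $f_1^3$ are nonzero, of cohomological degrees $2$ and $3$, so there is no parity obstruction preventing, say, $m_3(f_1,f_1,f_1)\propto f_1^2$; it is only the mismatch of Adams weights ($6$ versus $4$) that kills it. Once formality of $\R\enn_\Lambda(S)$ is in hand, one deduces exactly as in \S\ref{atiyahflop} and by \ref{dqiskd} that $\dca\simeq \R\enn_\Lambda(S)^!$ is the Koszul dual of the graded algebra $k[f_1]/f_1^4$, which can then be identified explicitly as a minimal $A_\infty$-algebra — but the formality statement \ref{ne2formal} itself requires only the weight bookkeeping above.
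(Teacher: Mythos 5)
Your proposal is correct and takes essentially the same approach as the paper: invoke the Adams-graded refinement of Merkulov/Kadeishvili (\ref{adamsrmk}) so that the higher multiplications on the minimal model preserve Adams weight, then combine the cohomological degree of $m_r$ with Adams-weight conservation to rule out all $m_r$ with $r>2$. The paper phrases the bookkeeping in terms of the generators $f_1, f_2$ of the Ext-algebra (noting that the only admissible higher operations have the shape $m_r(f_1,\ldots,f_1)=\lambda_r f_2$ or $m_r(f_1,\ldots,f_1,f_2)=\mu_r f_1f_2$, and that weight restricts these to $\lambda_n$ and $\mu_2$, both trivial when $n=2$), whereas you carry out the same comparison directly on powers of $f_1$ using the presentation $k[f_1]/f_1^4$; these are the same calculation. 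Your closing remark — that the purely cohomological degree argument from \S\ref{atiyahflop} cannot work here, and that it is the Adams weight mismatch ($6$ vs.\ $4$) which kills $m_3(f_1,f_1,f_1)$ — is exactly the right reason the extra grading is needed.
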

	\begin{proof}
		The point is that one can apply the Adams graded version of Merkulov's construction (\ref{adamsrmk}) to obtain a quasi-isomorphism of Adams weight graded dgas $\R\enn_\Lambda(S) \to \frac{k[f_1,f_2]}{(f_1^2,\;f_2^2)}$, where we equip the right-hand side with $A_\infty$ higher multiplications. But if $n=2$ then the calculation above shows that all $m_r$ with $r>2$ must be zero; and in particular all higher multiplications vanish.
		\end{proof}
	We already know that $\mu_2=1$, since $m_2$ is the multiplication. Can the $\lambda_n$ be nonzero?
	\begin{lem}\label{pagmassey}
		Let $n \geq 2$. Then $-2f_2$ is an element of the Massey product $\langle f_1,\ldots,f_1\rangle_n$.
	\end{lem}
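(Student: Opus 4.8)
The plan is to separate the trivial case $n=2$ from the genuine case $n\geq 3$. When $n=2$ the ``$n$-fold'' product $\langle f_1,f_1\rangle_2$ is just the ordinary cohomology product, and the chain-level identity $f_1^2=-2f_2$ recorded above exhibits $-2f_2$ as its unique element, with no sign subtleties since $f_1$ is of odd degree (\ref{masseylemma}). So assume $n\geq 3$ from now on.

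First I would check that $\langle f_1,\ldots,f_1\rangle_n$ is defined and that every element of it is a scalar multiple of $f_2$. The latter is automatic: an $n$-fold Massey product of degree-$1$ classes lands in degree $2$, and $H^2(\R\enn_\Lambda(S))=\ext^2_\Lambda(S,S)$ is one-dimensional, spanned by $f_2$. For definedness I would use the Adams grading on $\enn_\Lambda(\tilde S)\simeq\R\enn_\Lambda(S)$, under which $f_1$ has weight $2$ and $f_2$ has weight $2n$: for $2<r<n$ the $r$-fold product $\langle f_1,\ldots,f_1\rangle_r$ lies in degree $2$ and weight $2r<2n$, which is a zero subspace of cohomology, while $\langle f_1,f_1\rangle_2=\{[f_1^2]\}=\{-2[m^{n-2}f_2]\}=\{0\}$ since the composite $m^{n-2}f_2$ is a coboundary for $n>2$ (as already recorded). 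An induction on $r$ then shows each lower product is defined and equal to $\{0\}$, so $\langle f_1,\ldots,f_1\rangle_n$ is defined.

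Second, and this is the computational heart, I would build an explicit defining system $b_1,\dots,b_{n-1}$ for $\langle f_1,\ldots,f_1\rangle_n$ with $\bigl[\sum_{j}b_j b_{n-j}\bigr]=-2f_2$, invoking \ref{masseylemma} to discard all tildes. Start with $b_1=f_1$. Since the loop $m$ acts by zero on the simple $S$, ``multiplication by $m$'' on $\tilde S$ lifts the zero endomorphism of $S$, so the composites $\mu_m\circ(-)$ that occur are null-homotopic on $\tilde S$; fix a degree $-1$ homotopy $\sigma$ implementing this. Using $f_1^2=-2\,\mu_{m^{n-2}}\circ f_2$ and the factorization $m^{n-2}=m\cdot m^{n-3}$, one peels off one factor of $m$, obtaining a primitive $b_2$ (of the shape $-2\,\sigma\circ\mu_{m^{n-3}}\circ f_2$) with $db_2=f_1 b_1$. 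The idea is to iterate: at each stage rewrite the cocycle $\sum_{j<i}b_j b_{i-j}$ as $\mu_m$ applied to something and let $\sigma$ absorb one more factor of $m$, so that after the $n-2$ available factors of $m$ have been spent the surviving coefficient is still $-2$ and $\sum_j b_j b_{n-j}$ represents $-2f_2$. (Equivalently one may run Merkulov's recursion from \S\ref{minmods} on $\enn_\Lambda(\tilde S)$ with this $\sigma$ and identify $p_n(f_1,\ldots,f_1)$ with the Massey product via \ref{masseysarehighermults}; the bookkeeping is the same.)

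The main obstacle is exactly this inductive step: one must ensure that each partial sum $\sum_{j<i}b_j b_{i-j}$ stays in the image of ``multiplication by $m$'' modulo coboundaries, so that a primitive $b_i$ of the required form exists, and one must track that the scalar never drifts away from $-2$. This will likely require choosing $\sigma$ with an extra property (for instance $\sigma^2=0$, or $\sigma$ commuting with $f_1$ up to controllable error), or replacing the explicit recursion by a weight-and-dimension count forcing the needed primitives to exist; in either case the factor $-2$ ultimately has the single source $f_1^2=-2m^{n-2}f_2$ and survives because $\sigma$ only ever removes powers of $m$, never touching the coefficient.
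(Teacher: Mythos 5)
Your plan is the right one — peel factors of $m$ off the relation $f_1^2 = -2m^{n-2}f_2$ one at a time using a homotopy witnessing the nullity of multiplication by $m$ — and your guess that the missing ingredient is an extra structural property of the homotopy (you float $\sigma^2=0$) is exactly on target. But you stop short of constructing such a homotopy and verifying its properties, and that verification is the entire computational content of the lemma, so as written there is a genuine gap.

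The paper fills the gap by writing down, for each $0\le i\le n$, an explicit degree-$1$ cochain $h_i$ (supported in the column of the resolution where $f_1$ has its nontrivial $-2m^{n-2}$ entry, with that entry replaced by $-2m^{n-i}$), and then checking three facts directly: $dh_{i+1}=-2m^{n-i}f_2$; the $h_i$ are pairwise orthogonal, $h_ih_{i'}=0$ for all $i,i'$; and $f_1h_i+h_if_1=dh_{i+1}$ for $i<n$. The orthogonality is the crucial point you flagged but did not establish. It makes the defining-system recursion from \ref{masseylemma} collapse: with $b_1=f_1$ and $b_i=h_{i+1}$ for $1<i<n$, every partial sum $\sum_j b_jb_{i-j}$ reduces to the two end terms $f_1b_{i-1}+b_{i-1}f_1$, so the coboundary conditions hold trivially and the final sum is $f_1h_n+h_nf_1=-2f_2$ on the nose. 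Note also that one does not need the preliminary definedness discussion you include: under Definition \ref{masseys} and Lemma \ref{masseylemma}, a Massey product is a set of classes obtained from defining systems, so exhibiting a defining system directly produces an element, and nonemptiness is a byproduct rather than a prerequisite. To complete your argument you would need to make $\sigma$ explicit, prove it (or the family it generates) is orthogonal, and show the commutator $f_1\sigma^{(i)}+\sigma^{(i)}f_1$ equals the next coboundary; that is precisely what the paper's $h_i$ do.
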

	\begin{proof}
		For $0\leq i \leq n$, let $h_i$ be the degree $1$, weight $2-2i$ map $$\begin{tikzcd}[row sep= large, column sep=15ex] & P_2\ar[r]\ar[d,"0"] & P_1^2P_2\ar[r]\ar[d,"{\sthbthm{0}{0}{-2m^{n-i}}{0}{0}{0}{0}{0}{0}}"] & P_2P_1^2\ar[r]\ar[d,"0"] & P_2 \\ P_2\ar[r] & P_1^2P_2\ar[r] & P_2P_1^2\ar[r] & P_2 & \end{tikzcd}$$One can check that $h_3$ is a homotopy from $f_1^2$ to 0, motivating the definition of $h_i$. One can in fact compute $dh_{i+1}=-2m^{n-i}f_2$ for all $0\leq i \leq n$. Note that the $h_i$ are all orthogonal, in the sense that $h_ih_{i'}=0$ for all $0\leq i,i' \leq n$. Furthermore, one can check that $f_1h_i+h_if_1=-2m^{n-i}f_2=dh_{i+1}$ for all $0\leq i < n$.
		
		\p By \ref{masseylemma}, the $n$-fold Massey product $\langle f_1,\ldots,f_1\rangle_n$ is the set of cohomology classes of sums $f_1b_{n-1}+\cdots+{b}_{n-1}f_1$ such that $db_i=f_1b_{i-1}+\cdots+{b}_{i-1}f_1$ for all $1<i<n$. In particular, set $b_i\coloneqq h_{i+1}$ for $1< i<n$. Then we have $f_1b_{i-1}+\cdots+b_{i-1}f_1=f_1b_{i-1}+b_{i-1}f_1$ for all $1<i<n$ since the middle terms are of the form $h_ih_{i'}$. Hence, we have $f_1b_{i-1}+\cdots+b_{i-1}f_1=db_i$ for all $1<i<n$. So the sum $f_1b_{n-1}+\cdots+b_{n-1}f_1=-2f_2$ is an element of $\langle f_1,\ldots,f_1\rangle_n$.
	\end{proof}
	\begin{cor}
		If $n>2$, then $\R\enn_\Lambda(S)$ is not formal.
	\end{cor}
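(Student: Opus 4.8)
The plan is to deduce non-formality from the existence of a nontrivial Massey product, using \ref{masseysarehighermults} to convert the computation of \ref{pagmassey} into a statement about the minimal model of $\R\enn_\Lambda(S)$. Concretely, by \ref{kadeish} we may put a strictly unital $A_\infty$-structure $(HA,0,[m_2],p_3,p_4,\ldots)$ on the cohomology algebra $HA$ of $A\coloneqq\R\enn_\Lambda(S)$ via Merkulov's construction, and we have computed above that when $n>2$ this algebra is $\frac{k[f_1,f_2]}{(f_1^2,f_2^2)}$. The $n$-fold Massey product $\langle f_1,\ldots,f_1\rangle_n$ is nonempty by \ref{pagmassey} — indeed it contains $-2f_2\neq 0$ — so \ref{masseysarehighermults} tells us that, up to sign, the higher multiplication $p_n(f_1,\ldots,f_1)$ is an element of this Massey product.

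The one subtlety is that a priori $p_n(f_1,\ldots,f_1)$ could be \emph{some} element of the Massey product, and Massey products are only defined up to indeterminacy, so one must check this indeterminacy does not contain $-2f_2$ as a difference; equivalently, one must rule out $0 \in \langle f_1,\ldots,f_1\rangle_n$. Here the Adams grading does the work: by the weight computation preceding \ref{ne2formal}, the only possibly-nonzero higher multiplication of the form $m_r(f_1,\ldots,f_1)$ with output a multiple of $f_2$ is $m_n$, since $f_1$ has weight $2$, $f_2$ has weight $2n$, and every $m_r$ must have Adams weight zero. Moreover the indeterminacy of the Massey product is a sum of submodules of the form $f_1\cdot H^{*}A + H^{*}A\cdot f_1$ in the appropriate degree and weight; in bidegree $(2,2n)$ this is spanned by products $f_1 g$ with $g$ of bidegree $(1,2n-2)$, and inspection of $HA=\frac{k[f_1,f_2]}{(f_1^2,f_2^2)}$ shows the only degree-$1$ class is $f_1$ (weight $2$), so there is no such $g$ unless $2n-2=2$, i.e.\ $n=2$. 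Hence for $n>2$ the indeterminacy vanishes and $\langle f_1,\ldots,f_1\rangle_n=\{-2f_2\}$ is a single nonzero element.

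Therefore $p_n(f_1,\ldots,f_1)=\pm 2 f_2 \neq 0$, so the minimal model of $\R\enn_\Lambda(S)$ has a nontrivial higher multiplication. By \ref{kadeish} (and the remark following it) a dga is formal if and only if its minimal model can be taken with all higher multiplications zero; since any two choices of Merkulov structure are $A_\infty$-isomorphic, the nonvanishing of $p_n$ is intrinsic and $\R\enn_\Lambda(S)$ cannot be formal. This contrasts with \ref{ne2formal}, where the weight obstruction forces \emph{all} higher multiplications to vanish.

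I expect the main obstacle to be the bookkeeping in the second step: carefully confirming that the Adams weight argument pins $p_n(f_1,\ldots,f_1)$ down to exactly $-2f_2$ rather than merely some element of a possibly-larger Massey product, and in particular that no lower Massey products $\langle f_1,\ldots,f_1\rangle_p$ for $2<p<n$ obstruct the definedness of $\langle f_1,\ldots,f_1\rangle_n$ — but this last point is already handled inside the proof of \ref{pagmassey}, where the homotopies $h_i$ are constructed so that all intermediate relations $db_i = f_1 b_{i-1}+\cdots+b_{i-1}f_1$ hold simultaneously. So in fact the corollary is essentially immediate from \ref{pagmassey}, \ref{masseysarehighermults}, and the weight grading, and the write-up is short.
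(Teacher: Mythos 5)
Your argument is correct and takes the same route as the paper, whose proof of this corollary is a one-line reduction to \ref{pagmassey}; you go further in explicitly addressing the Massey-product indeterminacy, which the paper leaves implicit. One caveat on your justification: the formula $f_1\cdot H^*A + H^*A\cdot f_1$ that you quote is the indeterminacy of a \emph{triple} Massey product, not of a general $n$-fold product, whose indeterminacy has a more complicated matric structure involving lower-order defining systems. Your Adams-weight conclusion is nonetheless sound, and in this particular situation there is an even shorter route that sidesteps any general indeterminacy theory: if $\R\enn_\Lambda(S)$ were formal, quasi-isomorphism invariance of Massey products would identify $\langle f_1,\ldots,f_1\rangle_n$ with the same Massey product computed in $HA \cong k[f_1,f_2]/(f_1^2,f_2^2)$ equipped with the zero differential; there every defining-system entry $b_i$ (of degree $1$) lies in $HA^1 = k\cdot f_1$, so every product $b_ib_j$ is a scalar multiple of $f_1^2 = 0$ and $\langle f_1,\ldots,f_1\rangle_n^{HA}=\{0\}$, directly contradicting $-2f_2\in\langle f_1,\ldots,f_1\rangle_n$.
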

\begin{proof}By \ref{pagmassey}, the cohomology algebra admits a nontrivial higher Massey product, and hence $\R\enn_\Lambda(S)$ cannot be formal.
	\end{proof}
We sum up our study of the dga $\R\enn_\Lambda(S)$ as $n$ varies:
	\begin{prop}
		If $n=2$, then $\R\enn_\Lambda(S)$ is a formal dga, quasi-isomorphic to the graded algebra $k[f_1]/f_1^4$ with $f_1$ in degree 1. If $n>2$, then $\R\enn_\Lambda(S)$ is quasi-isomorphic to the strictly unital minimal $A_\infty$-algebra with underlying graded algebra $\frac{k[f_1,f_2]}{(f_1^2,\;f_2^2)}$ with $f_i$ placed in degree $i$, and a single nonzero higher multiplication given by $m_n(f_1,\ldots,f_1)=f_2$.
	\end{prop}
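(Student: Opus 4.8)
The plan is to combine the structural results about $\R\enn_\Lambda(S)$ just established with Kadeishvili's theorem (\ref{kadeish}) and its Adams-graded refinement (\ref{adamsrmk}). The starting data are: the cohomology algebra $\ext_\Lambda(S,S)$ has Hilbert series $1+t+t^2+t^3$; for $n=2$ it is $k[f_1]/f_1^4$ with $|f_1|=1$, and for $n>2$ it is $\frac{k[f_1,f_2]}{(f_1^2,f_2^2)}$ with $|f_i|=i$; and the dga $\enn_\Lambda(\tilde S)$ carries an Adams grading in which $f_1$ has weight $2$ and $f_2$ has weight $2n$ (equivalently, $f_1f_2$ has weight $2n+2$). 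The case $n=2$ is already finished: it is \ref{ne2formal}, which produces a quasi-isomorphism of Adams graded dgas $\R\enn_\Lambda(S)\simeq k[f_1]/f_1^4$ (here $f_2$ of \ref{ne2formal} is $f_1^2$). So the content to be written up is the case $n>2$.

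For $n>2$, first apply the Adams graded Merkulov construction to the Adams graded dga $\enn_\Lambda(\tilde S)$, obtaining a strictly unital minimal $A_\infty$-algebra structure on $H=\frac{k[f_1,f_2]}{(f_1^2,f_2^2)}$ together with an $A_\infty$-quasi-isomorphism to $\R\enn_\Lambda(S)$, where every higher multiplication $m_r$ has Adams weight $0$. Next, enumerate the potential nonzero higher products: by strict unitality the arguments of any $m_r$ ($r>2$) must all be proper (non-unit) homogeneous elements, i.e.\ lie in $\{f_1,f_2,f_1f_2\}$, and the output must again be homogeneous; tracking internal degree and Adams weight simultaneously shows that, up to permutation of arguments, the only products that can possibly be nonzero are $m_r(f_1,\dots,f_1)=\lambda_r f_2$ and $m_r(f_1,\dots,f_1,f_2)=\mu_r f_1f_2$. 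Degree forces $m_r(f_1,\dots,f_1)$ to land in degree $r$, so $\lambda_r\ne 0$ only if $r=2$ (excluded, as $f_1^2=0$ in $H$) — wait, $f_2$ has degree $2$, so $r=2$; but $m_2$ is the associative product and $f_1^2=0$, hence $\lambda_r$ can only be nonzero if the Adams weight of $f_1^{\otimes r}$, namely $2r$, equals that of $f_2$, namely $2n$, i.e.\ $r=n$. Similarly $m_r(f_1,\dots,f_1,f_2)$ has degree $r+1$ and must equal (a scalar times) $f_1f_2$ of degree $3$, forcing $r=2$, where $\mu_2=1$ since $m_2$ is the product. Thus the only undetermined structure constant is $\lambda_n$. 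It remains to show $\lambda_n\ne 0$, and after rescaling $f_2$ we may then normalise $\lambda_n=1$. This is exactly where \ref{pagmassey} enters: it exhibits $-2f_2$ as an element of the $n$-fold Massey product $\langle f_1,\dots,f_1\rangle_n$, which is therefore nonempty and nonzero. By \ref{masseysarehighermults}, up to sign $m_n(f_1,\dots,f_1)$ is a representative of this Massey product, so $\lambda_n\ne 0$; rescaling $f_2\mapsto \lambda_n^{-1}f_2$ (which preserves $f_1^2=f_2^2=0$ and the Adams grading) yields $m_n(f_1,\dots,f_1)=f_2$ and kills all other higher products, giving the stated minimal model.

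The only genuine obstacle is the combinatorial verification that the bidegree constraints really do eliminate everything except $m_n(f_1^{\otimes n})$ and $m_2$; this is the step carried out informally in the paragraph preceding \ref{ne2formal} for general $n$, and writing it cleanly just amounts to solving, for each $r$, the pair of equations "internal degree of output $=r+(\text{shift})$" and "Adams weight of output $=$ sum of Adams weights of inputs" over the finite list of homogeneous generators. Everything else is cited: formality for $n=2$ is \ref{ne2formal}, nonvanishing of the Massey product is \ref{pagmassey}, and the identification of Massey products with Merkulov's higher multiplications is \ref{masseysarehighermults}. (One should also note, for consistency with the subsequent Koszul-dual computation, that the Adams grading on the minimal model matches that on $\dca$, but this is not needed for the proposition as stated.)
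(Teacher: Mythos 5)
Your proposal is correct and takes essentially the same route as the paper: the $n=2$ case is delegated to the preceding formality lemma, the weight/degree bookkeeping (already done in the text) restricts the possible higher products to $m_n(f_1,\ldots,f_1)$ and $m_2(f_1,f_2)$, the Massey product computation combined with the Massey-products-are-higher-multiplications theorem pins down $m_n(f_1,\ldots,f_1)$ as a nonzero multiple of $f_2$, and rescaling $f_2$ normalises the coefficient. The only substantive nuance worth flagging is that the inference ``the Massey product contains $-2f_2\neq 0$, hence $m_n(f_1,\ldots,f_1)\neq 0$'' implicitly assumes the Massey product set does not also contain $0$; the paper hedges this by first invoking non-formality (established via the same Massey computation) to guarantee that \emph{some} higher multiplication is nonzero, whence the unique candidate $m_n$ must be the culprit — your write-up would be slightly more airtight if it took the same detour. (Also check the direction of the rescaling: with $m_n(f_1,\ldots,f_1)=\lambda_n f_2$ you want to set the new $f_2$ to be $\lambda_n$ times the old one, not $\lambda_n^{-1}$ times it.)
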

	\begin{proof}
		The statement for $n=2$ is \ref{ne2formal}. If $n>2$, then since $\R\enn_\Lambda(S)$ is not formal, there must exist at least one nonzero higher multiplication. But by the above, the only candidate is $m_n(f_1,\ldots,f_1)=\pm 2 f_2$, where we are using that Massey products are higher multiplications up to sign (\ref{masseysarehighermults}). Replacing $f_2$ by $\pm\frac{1}{2}f_2$, we obtain the desired statement.
	\end{proof}
	\begin{rmk}
		A way of stating this that depends less on $n$ is to say that $\R\enn_\Lambda(S)$ is the strictly unital minimal $A_\infty$-$\frac{k[f_2]}{f_2^2}$-algebra generated by a single element $f_1$ subject to the relations $m_r(f_1,\ldots,f_1)=\delta_{r,n}f_2$.
	\end{rmk}
	
	\begin{prop}\label{pagodadga}
		Suppose that $n \geq 2$. As a noncommutative dga, $\dca$ is freely generated by generators $\xi$, $\zeta$, $\theta$, with degrees $0,-1,-2$ and weights $-2,-2n,-(2n+2)$ respectively. The differential is given by $d\theta=[\xi,\zeta]$, $d\zeta=\xi^n$, and $d\xi=0$.
	\end{prop}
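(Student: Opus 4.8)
The plan is to compute $\dca$ as the Koszul dual $E^{!}$ of the $A_\infty$-algebra $E\coloneqq\R\enn_\Lambda(S)$, using the identification $\dca\simeq E^{!}$ from \ref{dqiskd} together with the preceding computation of $E$ as a strictly unital minimal $A_\infty$-algebra. Recall that $E$ has underlying graded vector space spanned by $1, f_1, f_2, f_1f_2$ in degrees $0,1,2,3$ and Adams weights $0,2,2n,2n+2$, with $m_2$ the indicated multiplication and (when $n>2$) the single extra operation $m_n(f_1,\dots,f_1)=f_2$; when $n=2$ the algebra is formal with $m_2(f_1,f_1)=f_2$, so in all cases the only nonzero higher multiplications among $m_2$ and $m_n$ are $m_2(f_1,f_1)=\delta_{2,n}f_2$, $m_n(f_1,\dots,f_1)=\delta_{n>2}f_2$, $m_2(f_1,f_2)=m_2(f_2,f_1)=f_1f_2$ (and unital multiplications). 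The Koszul dual is computed via $E^{!}=(B_\infty E)^{*}$, where $B_\infty E$ is the $A_\infty$ bar construction: its underlying graded algebra is the completed tensor algebra on $\bar E[1]^{*}$, which is the free graded algebra on three generators dual to $f_1[1]$, $f_2[1]$, $f_1f_2[1]$. Setting $\xi\coloneqq f_1^{*}$ (degree $|f_1[1]|=|f_1|-1=0$, weight $-2$), $\zeta\coloneqq f_2^{*}$ (degree $|f_2|-2=0$... wait, degree $|f_2[1]|=1-2=-1$, weight $-2n$), $\theta\coloneqq(f_1f_2)^{*}$ (degree $3-... =-2$, weight $-(2n+2)$) gives exactly the claimed generators. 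Since $E$ is minimal ($m_1=0$), the bar differential is the sum of the duals of the $m_r$'s, and dualizing a coderivation with Taylor coefficients $m_r$ produces a derivation on the tensor algebra whose value on a generator $x^{*}$ is the signed sum of products $y_1^{*}\cdots y_r^{*}$ over all ways $m_r(y_1,\dots,y_r)$ has $x$-component nonzero.

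First I would carefully set up the degree and weight bookkeeping, confirming that $\xi,\zeta,\theta$ land in degrees $0,-1,-2$ and weights $-2,-2n,-(2n+2)$; this is forced by the formula $|x^{*}[1]^{*}|=-( |x|-1)$ wait --- more precisely, $\bar E[1]^{*}$ placed so that the dual of a degree-$j$, weight-$w$ element of $\bar E$ sits in degree $1-j$, weight $-w$ in $E^{!}$, matching the stated data since $f_1$ is $(1,2)$, $f_2$ is $(2,2n)$, $f_1f_2$ is $(3,2n+2)$. Then I would read off the differential term by term. The term $d\theta=[\xi,\zeta]$ (up to sign, which I would fix with the Koszul sign rule) comes from $m_2(f_1,f_2)=f_1f_2=m_2(f_2,f_1)$: dualizing, $\theta=(f_1f_2)^{*}$ has differential the signed sum $\xi\zeta\pm\zeta\xi$, and the signs from moving past the suspension produce a commutator. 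The term $d\zeta=\xi^{n}$ comes from the unique higher multiplication producing $f_2$: when $n>2$ this is $m_n(f_1,\dots,f_1)=f_2$, dualizing to $\zeta\mapsto\xi^{n}$; when $n=2$ this is $m_2(f_1,f_1)=f_2$, again dualizing to $\zeta\mapsto\xi^{2}=\xi^{n}$ --- so the formula is uniform. The term $d\xi=0$ holds because no $m_r$ has $f_1$ in its image (every product landing in the $f_1$-graded piece would need total degree $1$, but $E$ has nothing in lower degree to multiply). I would also check that $d^{2}=0$ is automatic (equivalently, that $E$ being a genuine $A_\infty$-algebra makes $B_\infty E$ a genuine dgc), but as a sanity check verify directly: $d^{2}\theta=[d\xi,\zeta]-[\xi,d\zeta]=-[\xi,\xi^{n}]=0$ since $\xi$ commutes with its powers, and $d^{2}\zeta=0$ trivially.

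The main obstacle --- really the only non-routine point --- is pinning down the \emph{signs} in $d\theta=[\xi,\zeta]$ and confirming there is no extra term (e.g. no $\xi^{n+1}$ or $\zeta$-contribution hiding in $d\theta$, and no term in $d\zeta$ beyond $\xi^n$). For $d\zeta$: a contribution would require some $m_r(\,\cdots)=f_2$ with inputs whose duals are among $\{\xi,\zeta,\theta\}$; the only such operation is $m_n(f_1,\dots,f_1)$ (shown above to be the unique higher multiplication), giving exactly $\xi^n$; there is no $m_2$ term with $f_2$ in the image other than $m_2(f_1,f_1)$ (which equals $m_n$ when $n=2$), and no operation with a $\zeta$ or $\theta$ input lands in degree $2$. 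For $d\theta$: inputs must multiply to $f_1f_2$, so (degree $3$, weight $2n+2$) the only options are $m_2(f_1,f_2)$ and $m_2(f_2,f_1)$ --- both equal to $f_1f_2$ --- giving $\xi\zeta$ and $\zeta\xi$ with opposite signs after the suspension sign rule, hence a commutator; any $m_r$ with $r\geq 3$ into degree $3$ would need $r$ inputs of total degree $3{+}(r{-}2)=r{+}1$ from $\{f_1,f_2\}$, impossible to match both degree and weight for $r>2$ except... I would rule this out by the weight constraint exactly as in the formality analysis earlier. Once the sign conventions (those of \cite{getzlerjones}, as fixed in the excerpt) are tracked through the bar/cobar dualization, the formulas $d\theta=[\xi,\zeta]$, $d\zeta=\xi^{n}$, $d\xi=0$ drop out, and freeness of the underlying graded algebra is immediate from the cofreeness of the tensor coalgebra $B_\infty E$ together with $E^{!}=(B_\infty E)^{*}$. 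Finally, since the Koszul dual preserves quasi-isomorphism types and $\dca\simeq E^{!}$ by \ref{dqiskd}, this free dga is a model for $\dca$ as claimed. $\qed$
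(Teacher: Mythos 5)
Your proposal is correct and takes essentially the same route as the paper: the paper's proof is the one-liner that this is just the definition of the Koszul dual $(B_\infty E)^*$, with the differential read off from the bar differential on the minimal $A_\infty$-model of $\R\enn_\Lambda(S)$. You have simply spelled out the degree/weight bookkeeping and the accounting of which $m_r$'s can contribute, all of which the paper leaves implicit.
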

	\begin{proof}
		This is the definition of the Koszul dual dga: take the three basis elements $f_1$, $f_2$, $f_3$ of the augmentation ideal of $\R\enn_\Lambda(S)$, dualise (we put $\xi=f_1^*,\zeta=f_2^*,\theta=f_3^*$), and shift. The differential $d(x^*)$ is the signed sum of the products $x_1^*\cdots x_r^*$ such that $d(x_1 | \cdots | x_r)=x$, where $d$ denotes the $A_\infty$ bar differential.
	\end{proof}
\begin{rmk}We note that this identification of the Koszul dual $\R\enn_\Lambda(S)^!$ is valid in all characteristics not equal to $2$.
	\end{rmk}
\begin{rmk}
	As in \cite[3.3]{amquivers}, one can check that the relations in $\Lambda$ come from equipping the quiver with the superpotential $\overline{W}\coloneqq las-lbt-ams+bmt-\frac{2}{n+1}l^{n+1}+\frac{2}{n+1}m^{n+1}$ and taking the associated Jacobi algebra. In order to compute $A_\con$, one simply takes the subquiver $\begin{tikzcd} \cdot_2 \ar[loop right, "m"]\end{tikzcd}$ and equips it with the modified superpotential $W\coloneqq \frac{2}{n+1}m^{n+1}$. One can easily check that the Ginzburg dga associated to $W$ is precisely the dga appearing in \ref{pagodadga}.
\end{rmk}

	Observe that $H^0(\dca)\cong k[\xi]/\xi^n \cong A_\con$, as expected \cite[3.10]{DWncdf}. It is also not too difficult to compute $H^{-1}(\dca)$: the only elements in degree -1 are noncommutative polynomials in $\xi, \zeta$ with exactly one occurrence of $\zeta$. Noting that $\xi$ is a cocycle and $d\theta=[\xi,\zeta]$, we see that such a polynomial is homotopic to one of the form $p=\zeta\sum_i a_i \xi^i$. But $dp=\sum_i a_i \xi^{i+n}$, and this is zero if and only if $p=0$. So $H^{-1}(\dca)\cong 0$ (we could also have deduced this using \ref{mmoddca}). Hence we find that $H(\dca)$ is zero in odd degrees, and $A_\con$ in nonpositive even degrees. We can now prove:
	\begin{lem}
		The algebra map $\dca \to \dca/(\theta, d\theta )$ is a quasi-isomorphism.
	\end{lem}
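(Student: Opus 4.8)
The plan is to identify the quotient $\dca/(\theta, d\theta)$ explicitly and show the natural surjection $\dca \to \dca/(\theta,d\theta)$ induces isomorphisms on all cohomology groups. First I would unwind what the quotient dga is: since $d\theta = [\xi,\zeta]$, quotienting by the two-sided ideal generated by $\theta$ and $d\theta$ kills both the generator $\theta$ and the commutator relation it would have enforced. So the quotient is the free noncommutative dga on generators $\xi$ (degree $0$, weight $-2$) and $\zeta$ (degree $-1$, weight $-2n$) with differential $d\xi = 0$, $d\zeta = \xi^n$ --- in particular it has no cohomology in degrees $\leq -2$ coming from $\theta$ any more, but crucially it no longer forces $\xi$ and $\zeta$ to commute up to homotopy in the naive way. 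I would then compute its cohomology directly: $H^0 = k[\xi]/\xi^n \cong A_\con$; in degree $-1$ one has noncommutative words in $\xi,\zeta$ with exactly one $\zeta$, and the cocycle condition together with $d\zeta = \xi^n$ shows (exactly as in the paragraph preceding the lemma, but now with no $\theta$ available to produce homotopies between $\xi\zeta$ and $\zeta\xi$) that $H^{-1}$ of the quotient is again computed to be the relevant piece; and in degrees $\leq -2$ the quotient has nothing, so its cohomology there is concentrated appropriately. The point is to check that these match $H^*(\dca)$, which by the computation just carried out in the excerpt is $A_\con$ in nonpositive even degrees and $0$ in odd degrees.

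Wait --- I should be more careful, because the quotient as I described it does \emph{not} obviously have the right cohomology in negative degrees: without $\theta$ there is no contracting homotopy forcing periodicity, and the free algebra on $\xi,\zeta$ with $d\zeta=\xi^n$ will have large cohomology. So the correct reading of the lemma must be that the ideal $(\theta, d\theta)$ is \emph{acyclic} as a sub-dg-module, equivalently that $\theta$ and $d\theta = [\xi,\zeta]$ generate an acyclic two-sided ideal, so that the projection is a quasi-isomorphism simply because its kernel is acyclic. The cleanest approach is therefore: let $I \subseteq \dca$ be the two-sided ideal generated by $\theta$; since $d\theta = [\xi,\zeta]$, $I$ is a sub-dg-ideal and $I$ contains $d\theta$, so $\dca/I$ is precisely $\dca/(\theta, d\theta)$. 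Then one shows $I$ is acyclic by exhibiting a contracting homotopy. Concretely $I$ is spanned by words containing at least one $\theta$, and one can filter by the number of $\theta$'s; on the associated graded the differential is (up to sign) the map "replace a $\theta$ by $[\xi,\zeta]$" plus the internal differential, and a telescoping/Koszul-type argument (sending a word $w_1 \theta w_2$ to a sum built from $\zeta$) gives nullhomotopy. Alternatively, and perhaps more transparently, recall from \ref{drinfeld}/\ref{pagodadga} that $\dca = \R\enn_\Lambda(S)^!$ is the Koszul dual of the minimal $A_\infty$-algebra $E$ on generators $f_1,f_2,f_3$ with the single higher product $m_n(f_1,\dots,f_1)=f_2$ (and strict unit); the generator $f_3 = f_1 f_2$ is \emph{decomposable} in $E$, and Koszul-dualising a minimal $A_\infty$-algebra in which one basis element of the augmentation ideal is a product, the corresponding dual generator ($\theta$ here) together with its differential forms an acyclic piece. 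This is a general and clean mechanism.

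So the key steps, in order: (1) observe $(\theta, d\theta)$ is a two-sided dg-ideal $I$ and $\dca/I = \dca/(\theta, d\theta)$ with $I$ the span of words containing a $\theta$; (2) show $I$ is acyclic --- either by a direct contracting homotopy obtained by filtering on the $\theta$-count and using that $d$ replaces a $\theta$ by $[\xi,\zeta]=d(\text{something involving }\zeta)$, or conceptually by noting $f_3=f_1f_2$ is decomposable in the minimal model $E$ so the dual generator $\theta$ is "coexact"; (3) conclude that $\dca \to \dca/I$ is a quasi-isomorphism, and as a sanity check verify the target has cohomology $k[\xi]/\xi^n$ in even nonpositive degrees and $0$ in odd degrees, matching $H^*(\dca)$ as computed above. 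The main obstacle will be step (2): writing down an honest contracting homotopy on $I$ that is compatible with the noncommutative multiplication and the (possibly nontrivial when $n>2$) interaction with $d\zeta = \xi^n$ requires a careful bookkeeping argument; the conceptual route via decomposability of $f_3$ in the minimal $A_\infty$-model is the safest way to avoid a messy explicit computation, since the bar/Koszul duality formalism of \ref{barcobarsctn} makes "the dual of a decomposable generator is a coboundary-up-to-lower-filtration" essentially automatic.
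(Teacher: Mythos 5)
There is a genuine gap, and it stems from a misidentification at the very start. You write that the quotient $\dca/(\theta,d\theta)$ is ``the free noncommutative dga on generators $\xi$ and $\zeta$''. This is not right: quotienting by $d\theta = [\xi,\zeta]$ \emph{imposes} the relation $\xi\zeta = \zeta\xi$, it does not remove it. The correct identification (which the paper carries out) is that $\dca/(\theta,d\theta)$ is the dga $k[\xi]\langle\zeta\rangle/(\xi\zeta-\zeta\xi)$ with $d\zeta=\xi^n$ --- a strictly commutative (but not graded-commutative) polynomial ring in two variables. Because $\xi$ and $\zeta$ commute, one has $d(\zeta^2)=\xi^n\zeta-\zeta\xi^n=0$, so $\eta\coloneqq\zeta^2$ is a cocycle and the cohomology is $\frac{k[\xi]}{\xi^n}[\eta]$. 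This is small, not large: it matches $H^*(\dca)\cong A_\con[\eta]$ dimension-for-dimension in each degree.

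You do notice something is wrong (``the free algebra ... will have large cohomology''), but you misdiagnose the problem as being about whether the lemma is ``really'' asserting acyclicity of the ideal $I=(\theta,d\theta)$, rather than realising that the quotient was misidentified. Of course the lemma formally \emph{implies} that $I$ is acyclic (via the short exact sequence $0\to I\to\dca\to\dca/I\to 0$), but that equivalence does not rescue the argument: your proposed proof of acyclicity of $I$ --- filtering by $\theta$-count and ``telescoping/Koszul-type'' bookkeeping, or invoking ``decomposability of $f_3$ makes the dual generator coexact'' --- is not carried out and is genuinely harder than what is needed. The paper's actual argument sidesteps all of this by first computing $H^*(\dca/(\theta,d\theta))$ from the commutative presentation, noting it is levelwise isomorphic to $H^*(\dca)$ (which was computed just before the lemma), so that quasi-isomorphism reduces to quasi-\emph{surjectivity}; the latter is then settled by exhibiting the explicit cocycle $\zeta^2 + \sum_{i=1}^{n}\xi^{i-1}\theta\xi^{n-i}\in\dca$ which maps to $\zeta^2$ in the quotient. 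To repair your proposal, start by getting the commutativity relation on the quotient right; after that, the direct cohomology computation plus the explicit preimage of $\eta$ is both shorter and more concrete than any contracting-homotopy argument on $I$.
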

	\begin{proof}
		It is easy to check that the dga $\dca/(\theta, d\theta )$ is isomorphic to the dga $\frac{k[\xi]\langle\zeta\rangle}{\xi\zeta-\zeta\xi}$ with $d\zeta=\xi^n$. The cohomology algebra of this dga is $\frac{k[\xi]}{\xi^n}[\eta]$, where $\eta=\zeta^2$ is a degree -2 element. In particular, the cohomology algebra is levelwise isomorphic to that of $\dca$, so in order to check that the quotient map $\dca \to \dca/(\theta, d\theta )$ is a quasi-isomorphism we only need to check that it is a quasi-surjection. But one can check that $\zeta^2+\sum_{i=1}^n\xi^{i-1}\theta\xi^{n-i} \in \dca$ is a cocycle, and maps to $\zeta^2$ in the quotient.
	\end{proof}
	\begin{rmk}
		The expression $\zeta^2+\sum_{i=1}^n\xi^{i-1}\theta\xi^{n-i}$ comes from using $d\theta=\xi\zeta-\zeta\xi$ to repeatedly commute $\xi$ with $\zeta$ in $d(\zeta^2)=\xi^n\zeta-\zeta\xi^n$. One can also check that this cocycle is homogenous of weight $4n$.
	\end{rmk}
	\begin{thm}\label{dcapagoda}
		The derived contraction algebra associated to the $n$-Pagoda flop is quasi-isomorphic to the strictly unital minimal $A_\infty$-algebra $\frac{k[\xi]}{\xi^n}[\eta]$ with $\xi$ in degree $0$, $\eta$ in degree $-2$, and higher multiplications given by $$m_r(\eta^{i_1}\xi^{j_1} \otimes \cdots\otimes \eta^{i_r}\xi^{j_r}) = \begin{cases} -(-1)^{\frac{r}{2}}C_{\frac{r}{2}}\eta^{i + \frac{r}{2}-1}\xi^{j -n(r-2)} & r\text{ is even and } n(r-2)\leq j < n(r-1)\\ 0 & \text{otherwise}
		\end{cases}$$where we put $i=i_1 + \cdots + i_r$ and $j=j_1+\cdots+j_r$ and the $C_p$ are the (shifted) Catalan numbers with $C_1=1$, $C_2=1$, $C_3=2$, $C_4=5$, et cetera.
	\end{thm}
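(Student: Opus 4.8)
The plan is to compute the minimal $A_\infty$-model of $\dca$ directly from the explicit cofibrant dga presentation of \ref{pagodadga}, transporting the $A_\infty$-structure to cohomology via Merkulov's construction (\ref{kadeish}, \ref{merkulov}). First I would fix notation: write $D$ for the free dga on $\xi,\zeta,\theta$ of \ref{pagodadga}, with $d\theta=[\xi,\zeta]$, $d\zeta=\xi^n$, $d\xi=0$. We have already identified $H^0(D)\cong k[\xi]/\xi^n$ and $H^{-1}(D)=0$, and the preceding lemmas show $D$ is quasi-isomorphic to the smaller dga $D'\coloneqq \frac{k[\xi]\langle\zeta\rangle}{[\xi,\zeta]}$ with $d\zeta=\xi^n$, whose cohomology is $\frac{k[\xi]}{\xi^n}[\eta]$ with $\eta=\zeta^2$. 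So the underlying graded algebra of the minimal model is as claimed, and it remains to compute the higher multiplications $p_r$.

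The key computational step is a careful choice of contracting data for $D'$: a section $\sigma:H(D')\to D'$ and a homotopy $h:\id_{D'}\to \sigma\pi$ adapted to the weight grading (recall from the Pagoda section that $\xi$ has weight $-2$ and $\zeta$ weight $-2n$, so $\eta=\zeta^2$ has weight $-4n$, matching the cocycle representative $\zeta^2+\sum_{i=1}^n\xi^{i-1}\theta\xi^{n-i}$ of weight $4n$ up to sign). Since all higher multiplications must preserve the Adams weight, and the only way to produce a power of $\eta$ from powers of $\xi$ is via the relation $d\zeta=\xi^n$ (which `costs' one $\zeta$ per factor of $\xi^n$ absorbed), a weight count forces $p_r$ to vanish unless $r$ is even and the total $\xi$-degree $j$ of the inputs satisfies $n(r-2)\le j<n(r-1)$: one needs exactly $r-2$ applications of the homotopy inverting $d\zeta$ to convert the $\xi$-excess into a single new $\eta$, leaving the residual $\xi^{j-n(r-2)}$ and total $\eta$-power $i+\tfrac r2-1$. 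This is essentially the same bookkeeping that underlies \ref{masseysarehighermults}: $p_r$ evaluated on $r$ copies of $\xi^{\text{blocks}}$ is (up to sign) an iterated Massey product, and by \ref{masseylemma} one can compute it from the recursively defined chains $b_i$ with $db_i=\tilde b_1 b_{i-1}+\cdots+\tilde b_{i-1}b_1$ — here each $b_i$ is built from $\zeta$ and suitable powers of $\xi$ using the commutation homotopy $\theta$.

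The main obstacle, and the part requiring genuine work, is pinning down the \emph{coefficients} $-(-1)^{r/2}C_{r/2}$. The Catalan numbers arise because the minimal-model recursion $\lambda_r=\sum_{s+t=r}(-1)^{s+1}\lambda_2(h\lambda_s\otimes h\lambda_t)$ of Merkulov (see the formula after \ref{kadeish}) is precisely a binary-tree recursion: a nonzero contribution to $p_r(\xi,\dots,\xi)$ corresponds to a way of parenthesising the $r$ inputs so that each internal node's partial product is an $\xi$-power that the homotopy $h$ can hit, i.e.\ a valid bracketing of a balanced expression, which is counted by $C_{r/2}$. I would make this precise by induction on $r$: set up the recursion for $\lambda_r$ restricted to inputs of the form $\eta^{i_a}\xi^{j_a}$, use strict unitality and the weight constraint to discard all but finitely many terms, show the surviving terms are indexed by the Catalan trees, and verify the sign is $-(-1)^{r/2}$ by tracking the Koszul signs through each $\lambda_2(h(-)\otimes h(-))$ (the factor $(-1)^{s+1}$ in Merkulov's formula, combined with the degree shifts from $h$, collapses to an alternating sign in $r/2$). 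The base cases $p_2$ (ordinary multiplication, coefficient $C_1=1$) and $p_4$ with $n(r-2)=2n\le j<3n$ giving coefficient $-C_2=-1$ can be checked by hand against the relation $\zeta^2\xi-\xi\zeta^2 = \sum_{i=1}^n \xi^{i-1}(d\theta)\xi^{n-i}$ derived earlier, and these anchor the induction. Finally one observes the whole computation is insensitive to the representatives chosen (different Merkulov data give $A_\infty$-isomorphic algebras), and since the formula is manifestly strictly unital — $m_r$ kills any input equal to the unit $1=\eta^0\xi^0$, as then some $j_a=0$ forces $j<n(r-2)$ when $r\ge4$, and $r=2$ is plain multiplication — we obtain the stated strictly unital minimal $A_\infty$-structure, completing the proof.
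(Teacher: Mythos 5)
Your proposal follows essentially the same route as the paper's proof: reduce to the dga $\frac{k[\xi]\langle\zeta\rangle}{[\xi,\zeta]}$ with $d\zeta=\xi^n$, apply Merkulov's construction, and extract the coefficients $-(-1)^{r/2}C_{r/2}$ from the binary-tree recursion $\lambda_r=\sum_{s+t=r}(-1)^{s+1}\lambda_2(h\lambda_s\otimes h\lambda_t)$ using the Catalan identity $C_p=\sum_{s+t=p}C_sC_t$. The only cosmetic difference is that you invoke the Adams weight grading to predict the form of the $m_r$, whereas the paper simply makes the explicit choice $h(\eta^i\xi^j)=\zeta\eta^i\xi^{j-n}$ (with $\xi^{<0}:=0$) and reads off the vanishing range and the $\eta$-linearity by direct inductive computation — a small detail to fill in if you go the weight-graded route is the argument that the $m_r$ factor through the total $\eta$-power, which the paper proves by a separate induction.
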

	\begin{proof}
		By the above, $\dca$ is quasi-isomorphic to the dga $C\coloneqq \frac{k[\xi]\langle\zeta\rangle}{\xi\zeta-\zeta\xi}$ with $d\zeta=\xi^n$. We know that the cohomology of $C$ is $\frac{k[\xi]}{\xi^n}[\eta]$, where $\eta=\zeta^2$. We use Merkulov's construction to augment $C$ with higher multiplications $m_r$ inducing an $A_\infty$ quasi-isomorphism with $\dca$. One linear section of the projection map $\pi: C \to HC$ is the map $\sigma$ that in odd degrees is zero, and in even degrees sends $\eta^i\xi^j$ to $\eta^i\xi^j$ if $j<n$, and zero otherwise. Composing the projection with the section yields the linear endomorphism of $C$ given by $$\sigma \pi\quad = \quad\begin{tikzcd} \cdots \ar[r,"0"] & k[\xi] \ar[r,"\xi^n"] \ar[d,"0"] & k[\xi] \ar[r,"0"] \ar[d,"\epsilon"] & k[\xi] \ar[r,"\xi^n"] \ar[d,"0"]& k[\xi] \ar[d,"\epsilon"] \\ \cdots \ar[r,"0"] & k[\xi] \ar[r,"\xi^n"] & k[\xi] \ar[r,"0"] & k[\xi] \ar[r,"\xi^n"] & k[\xi]  \end{tikzcd}$$ where $\epsilon(\xi^i)$ is $\xi^i$ if $i<n$, and 0 otherwise. Firstly, we need to construct a homotopy $h: \id_C \to \sigma\pi$. Interpreting a negative power of $\xi$ as 0, one can check that the (periodically extended) homotopy given by $h_1(\xi^i)= \xi^{i-n}$ and $h_2=0$ works: $$\begin{tikzcd} \cdots \ar[r,"0"] & k[\xi] \ar[r,"\xi^n"] \ar[d,"0"] \ar[dl,swap,"h_2"] & k[\xi] \ar[r,"0"] \ar[d,"\epsilon"] \ar[dl,swap,"h_1"] & k[\xi] \ar[r,"\xi^n"] \ar[d,"0"]\ar[dl,swap,"h_2"]& k[\xi] \ar[d,"\epsilon"]\ar[dl,swap,"h_1"] \\ \cdots \ar[r,"0"] & k[\xi] \ar[r,"\xi^n"] & k[\xi] \ar[r,"0"] & k[\xi] \ar[r,"\xi^n"] & k[\xi]  \end{tikzcd}$$In other words, we have $h(\eta^i\xi^j)=\zeta\eta^i\xi^{j-n}$. Now we put inductively $$m_r\coloneqq \sum_{s+t=r}(-1)^{s+1}(s,t)$$where for brevity I write $(s,t)\coloneqq m_2(hm_s\otimes hm_t)$ and $hm_1\coloneqq -\id_A$. Then Merkulov's theorem tells us that $HC$, augmented with the $m_r$, is $A_\infty$-quasi-isomorphic to $C$. First I claim that when $r>1$ is odd, then $m_r$ vanishes: this is clear for degree reasons. From now on we may assume that $r$ is even, and we can conclude that the sum defining $m_r$ reduces to $m_r=-\sum_{s+t=r}(s,t)$. I next claim that the maps $m_r$ are $\eta$-linear, in the sense that $$m_r(\eta^{i_1}\xi^{j_1} \otimes \cdots\otimes \eta^{i_r}\xi^{j_r}) = \eta^{i_1 + \cdots + i_r}m_r(\xi^{j_1}\otimes \cdots\otimes \xi^{j_r})$$holds. This is not hard to see inductively: it is clearly true for $m_2$. Suppose that it is true for all $r'<r$, and suppose $s+t=r$ with $s,t>0$. Then \begin{align*}
		&(s,t)(\eta^{i_1}\xi^{j_1} \otimes \cdots\otimes \eta^{i_r}\xi^{j_r})
		\\&=m_2(hm_{s}(\eta^{i_1}\xi^{j_1} \otimes \cdots\otimes \eta^{i_{s}}\xi^{j_{s}})\otimes hm_{t}(\eta^{i_{r-t+1}}\xi^{j_{r-t+1}} \otimes \cdots\otimes \eta^{i_r}\xi^{j_r}))\\&=m_2(h(\eta^{i_1+\cdots+ i_s}m_{s}(\xi^{j_1} \otimes \cdots\otimes \xi^{j_{s}}))\otimes h(\eta^{i_{r-t+1}+\cdots+ i_r} m_{t}(\xi^{j_{r-t+1}} \otimes \cdots\otimes \xi^{j_r})))\\&=m_2(\eta^{i_1+\cdots+ i_s}hm_{s}(\xi^{j_1} \otimes \cdots\otimes \xi^{j_{s}})\otimes \eta^{i_{r-t+1}+\cdots+ i_r}h m_{t}(\xi^{j_{r-t+1}} \otimes \cdots\otimes \xi^{j_r}))\\&=\eta^{i_1+\cdots+ i_r}m_2(hm_{s}(\xi^{j_1} \otimes \cdots\otimes \xi^{j_{s}})\otimes h m_{t}(\xi^{j_{r-t+1}} \otimes \cdots\otimes \xi^{j_r}))\\&=\eta^{i_1+\cdots+ i_r}(s,t)(\xi^{j_1} \otimes \cdots\otimes \xi^{j_r})
		\end{align*}
		where the second and sixth line follow because all degrees of elements are even, the third line follows from the induction hypothesis, the fourth uses that $h(\eta x)=\eta h(x)$, and the fifth uses centrality of $\eta$ as well as collating powers of $\eta$. The claim now follows by adding all of these up using $m_r=-\sum_{s+t=r}(s,t)$. Now all that needs to be done is determine $m_r(\xi^{j_1} \otimes \cdots\otimes \xi^{j_r})$ when $r>2$ is even. First observe that $h(\xi^i)=\zeta\xi^{i-n}$, where we again interpret a negative power of $\xi$ as zero. Let $C_p$ be the $p^\text{th}$ Catalan number, with indexing starting from $p=1$; so $C_1=1$, $C_2=1$, $C_3=2$, $C_4=5$, et cetera. The important point for us will be that $C_p=\sum_{s+t=p}C_sC_t$ and $C_1=1$, where in the sum we require that $s$ and $t$ are positive integers. For even $r>0$, put $C'_r\coloneqq -(-1)^{\frac{r}{2}}C_{\frac{r}{2}}$. I claim that $m_r(\xi^{j_1} \otimes \cdots\otimes \xi^{j_r})=C'_r\zeta^{r-2}\xi^{j_1+\cdots+j_r -n(r-2)}$. Certainly this holds for $r=2$. Inductively as before, one sees that for $2s+2t=r$, the expression $(2s,2t)(\xi^{j_1} \otimes \cdots\otimes \xi^{j_r})$ is exactly $C'_sC'_t\zeta^{r-2}\xi^{j_1+\cdots+j_r -n(r-2)}$. Hence, since we may compute the sum defining $m_r$ by summing only over even terms, we get \begin{align*}
		m_r(\xi^{j_1} \otimes \cdots\otimes \xi^{j_r})&=-\sum_{2s+2t=r}(2s,2t)
		(\xi^{j_1} \otimes \cdots\otimes \xi^{j_r})\\&=\left(-\sum_{2s+2t=r}C'_{2s}C'_{2t}\right)\zeta^{r-2}\xi^{j_1+\cdots+j_r -n(r-2)}\\&=\left(-\sum_{s+t=\frac{r}{2}}-(-1)^sC_{s}\cdot -(-1^t)C_{t}\right)\zeta^{r-2}\xi^{j_1+\cdots+j_r -n(r-2)}
		\\&=C'_r\zeta^{r-2}\xi^{j_1+\cdots+j_r -n(r-2)}
		\end{align*}where the final line follows by the identity defining the Catalan numbers. Putting everything together, using $\eta=\zeta^2$, and recalling that we interpret a negative power of $\xi$ as zero, we obtain the required identities. Note that $n(r-2)\leq j$ is required to make the exponent of $\xi$ positive, and $j<n(r-1)$ is required to make it less than $n$ (so one could drop this condition if necessary).
	\end{proof}

\begin{rmk}
This application of Merkulov's theorem is valid in all characteristics.
	\end{rmk}
	
	\begin{rmk}
		Note that the higher multiplications are all $\eta$-linear, so another way to state the above is to say that the derived contraction algebra $\dca$ is quasi-isomorphic to the strictly unital minimal $A_\infty$-$k[\eta]$-algebra generated by $\xi$ subject to the relations $\xi^n=0$ and $$m_r(\xi^{j_1} \otimes \cdots\otimes \xi^{j_r}) = \begin{cases} -(-1)^{\frac{r}{2}}C_{\frac{r}{2}}\eta^{ \frac{r}{2}-1}\xi^{j -n(r-2)} & r\text{ is even and } n(r-2)\leq j < n(r-1)\\ 0 & \text{otherwise}\end{cases}.$$
	\end{rmk}

	\section{The Laufer flop}\label{lauferex}
	We sketch a computation of the derived contraction algebra associated to the Laufer flop, which is a $D_4$ flop with (completed) base $\frac{k\llbracket x,y,u,v \rrbracket}{u^2+v^2y - x(x^2+y^3)}$ first appearing in \cite{laufer}. Following \cite{amquivers}, one noncommutative model is the algebra $A$ given by the (completion of) the following quiver with relations:\\ \\
	
	\begin{wrapfigure}[4]{L}{0.5\textwidth}
		\centering
		\vspace{-15pt}$
		\begin{tikzcd}
		1  \arrow[rr,bend left=30,"a"]  && 2  \arrow[ll,bend left=30,"b"] \arrow[in=20, out=70, distance=7mm,"x"] \arrow[out=-20, in=-70, distance=7mm,"y"]
		\end{tikzcd}$
	\end{wrapfigure} 
	
	$ay^2=-aba$
	
	$y^2b=-bab$
	
	$xy=-yx$
	
	$x^2+yba+bay=y^3$
	\\\hfill\\
	One can check that $A$ admits a nontrivial grading with $a,b,y$ of weight 2 and $x$ of weight 3. The simple $S$ at the vertex 2 has a resolution (compatible with the weights) given by $$P_2 \xrightarrow{\left( \begin{smallmatrix} x \\ a \\ y \end{smallmatrix} \right)\cdot} P_2P_1P_2 \xrightarrow{\sthbthm{0}{ab}{ay}{y}{0}{x}{x}{yb}{ba-y^2}\cdot} P_1P_2P_2 \xrightarrow{( b ,  x , y )\cdot} P_2$$and using this one can check that the Ext-algebra of $S$ is of the form $$\ext^*_A(S,S)\cong\frac{k[f,g]}{(f^3,g^2)}$$ where the generators $g$ and $f$ both have degree 1 and weights $2$ and $3$ respectively. This is a six-dimensional algebra, with basis $\{1,g,f, gf, f^2, gf^2\}$. A computation with weights tells us that the only possible nontrivial higher Massey product for $\ext^*_A(S,S)$ is of the form $\langle g,g,g \rangle=\lambda f^2$. Picking an explicit lift $G$ of $g$ and a homotopy $H:G^2 \xrightarrow{\simeq} 0$ allows one to show that $f^2=[GH+HG]\in \langle g,g,g \rangle$. Hence $\R\enn_A(S)$ is not formal, and must have a higher Massey product of the form $\langle g,g,g \rangle=\pm f^2$ by \ref{masseysarehighermults}. Rescaling $f\mapsto \frac{1}{\sqrt{-1}}f$ if necessary, we see that $\R\enn_A(S)$ must be quasi-isomorphic to the strictly unital minimal $A_\infty$-algebra with underlying graded algebra $\ext^*_A(S,S)$ and a single nonzero higher multiplication given by $m_3(g,g,g)=f^2$.
	\p Now we wish to compute the Koszul dual of $\R\enn_A(S)$. Put $x=f^*$, $y=g^*$, $\zeta=(fg)^*$, $\xi=(f^2)^*$ and $\theta=(gf^2)^*$. Then $\R\enn_A(S)^!$ is freely generated by $\{x,y,\zeta,\xi,\theta\}$, and after working out the $A_\infty$ bar differential one arrives at the following theorem:
	
\begin{thm}\label{lauferflop}
	The derived contraction algebra of the Laufer flop is freely generated as a noncommutative dga by elements $x,y,\zeta,\xi,\theta$ in degrees $0,0,-1,-1,-2$ and of weights $-3,-2,-5,-6,-8$ respectively. The differential is defined on generators by $dx=dy=0$, $d\zeta=-(xy+yx)$, $d\xi=y^3-x^2$, and $d\theta = [\xi,y] + [\zeta,x]$.
\end{thm}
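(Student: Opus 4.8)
The plan is to follow the same strategy used throughout this chapter, and in particular the one that produced Theorems \ref{dcapagoda} and \ref{lauferflop}. First I would identify the noncommutative model $A$ for the Laufer flop, together with its idempotent $e$ corresponding to the vertex $1$, using the presentation of \cite{amquivers} quoted above. The derived contraction algebra is then computed via \ref{dqiskd}, which identifies $\dca$ with the Koszul dual $\R\enn_A(S)^!$, where $S$ is the simple module at the vertex $2$. Since $\dca$ is the Koszul dual of a minimal $A_\infty$-algebra, I would use the $A_\infty$ bar construction (see \S\ref{barcobarsctn}) applied to the explicit minimal model of $\R\enn_A(S)$, rather than attempting to resolve $S$ over $A$ directly.

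The key computational input is the minimal $A_\infty$-model of $E\coloneqq\R\enn_A(S)$, which I claim is the strictly unital $A_\infty$-algebra with underlying graded algebra $\ext^*_A(S,S)\cong\frac{k[f,g]}{(f^3,g^2)}$, with $f,g$ in degree $1$ and Adams weights $3,2$ respectively, and a single nonzero higher multiplication $m_3(g,g,g)=f^2$. To establish this I would: (i) write down a projective resolution of $S$ compatible with the Adams grading (the three-term resolution $P_2 \to P_2P_1P_2 \to P_1P_2P_2 \to P_2$ quoted above), and compute $\ext^*_A(S,S)$ by taking $\hom$ into $S$; (ii) check the ring structure, verifying that $\{1,g,f,gf,f^2,gf^2\}$ is a basis and the relations $f^3=g^2=0$ hold; (iii) use the Adams weights together with the graded version of Merkulov's construction (\ref{adamsrmk}) to rule out all higher multiplications except possibly one of weight-balanced form $m_3(g,g,g)=\lambda f^2$; and (iv) detect non-formality via a Massey product computation, picking an explicit cocycle lift $G$ of $g$ and a homotopy $H\colon G^2\simeq 0$, and computing $[GH+HG]\in\langle g,g,g\rangle$ to be a nonzero multiple of $f^2$, so that by \ref{masseysarehighermults} the higher multiplication is forced to be nonzero; rescaling $f\mapsto \frac{1}{\sqrt{-1}}f$ then normalises the constant to $1$.

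Given this minimal model, the final step is to dualise. The augmentation ideal $\bar E$ has basis $\{g,f,gf,f^2,gf^2\}$; setting $x=f^*$, $y=g^*$, $\zeta=(fg)^*$, $\xi=(f^2)^*$, $\theta=(gf^2)^*$ and shifting, the Koszul dual $E^!$ is, as a graded algebra, the free (completed) algebra on these five generators in the stated degrees $0,0,-1,-1,-2$ and weights $-3,-2,-5,-6,-8$. The differential $d(w^*)$ on $E^!$ is the signed sum of products $w_1^*\cdots w_r^*$ such that $w$ appears in the $A_\infty$ bar differential applied to $w_1\mid\cdots\mid w_r$; concretely one reads off $dx=dy=0$ from minimality, $d\zeta$ and $d\xi$ from the quadratic multiplication $m_2$ (giving $d\zeta=-(xy+yx)$ from $m_2(f,g)=fg$ and $m_2(g,f)=\pm gf$, and $d\xi=y^3-x^2$ from $m_2(f,f)=f^2$ combined with $m_3(g,g,g)=f^2$), and $d\theta=[\xi,y]+[\zeta,x]$ from the pieces of $m_2$ and $m_3$ that produce $gf^2$. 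I expect the main obstacle to be bookkeeping the signs in step (iv) and in the bar differential: the Koszul sign rule interacts with the shifts $[1]$ in the bar construction and with the sign conventions for $A_\infty$-morphisms from \S\ref{dcdgcc}, and getting the signs of $d\zeta$, $d\xi$, $d\theta$ exactly right (as opposed to up to an overall sign or rescaling of generators) requires care. As with \ref{dcapagoda} and \ref{lauferflop}, the identification is valid in all characteristics different from $2$ (the rescaling $f\mapsto f/\sqrt{-1}$ and the computations of $m_3$ require $2$ to be invertible).
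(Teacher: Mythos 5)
Your proposal follows essentially the same route as the paper's sketch in \S\ref{lauferex}: identify the quiver presentation of the NCCR, resolve $S$, compute $\ext^*_A(S,S)\cong k[f,g]/(f^3,g^2)$, use Adams weights to rule out all higher multiplications except $m_3(g,g,g)=\lambda f^2$, detect $\lambda\neq 0$ via the Massey product $\langle g,g,g\rangle$ using an explicit lift and homotopy, rescale, and read off the Koszul dual through the $A_\infty$ bar construction. The one discrepancy is your restriction to characteristic $\neq 2$, whereas the paper claims validity in all characteristics; the rescaling $f\mapsto f/\sqrt{-1}$ is in fact harmless in characteristic $2$ (where $\sqrt{-1}=1$ over any algebraically closed field) and the quiver relations for $A$ involve no fractions, so the paper's claim appears to be the correct one.
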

	\begin{rmk}
		In particular, one can use the above description to see that $$H^0(\dca)\cong \frac{k\langle x,y\rangle }{(xy+yx, x^2-y^3)}$$the \textbf{quantum cusp}, which recovers the computation of \cite[Example 1.3]{DWncdf}.
	\end{rmk}
\begin{rmk}
	The above computations are valid in all characteristics.
\end{rmk}	
	
	It is unclear to the author how to produce an explicit cocycle representing the periodicity element $\eta \in H^{-2}(\dca)$ in terms of the generators given above. A computer calculation suggests that $\eta$ must have weight $-18$. It may be feasible to use \ref{dsgrmk} to produce a model of $\dca$ where $\eta$ is represented by a genuinely central cocycle.
	
	\begin{rmk}\label{lauferrmk}
		The relations on the path algebra $A$ come from a superpotential \cite[4.4]{amquivers} $$\overline{ W}\coloneqq bay^2+\frac{1}{2}abab+x^2y-\frac{1}{4}y^4.$$Following \cite{DWncdf}, to compute the contraction algebra $A_\con$ one considers the subquiver 
		$$\begin{tikzcd}
		2  \arrow[in=20, out=70, distance=7mm,"x"] \arrow[out=-20, in=-70, distance=7mm,"y"]
		\end{tikzcd}$$equipped with the modified superpotential $W\coloneqq x^2y-\frac{1}{4}y^4$. One can easily see that the Jacobi algebra of this quiver with superpotential is precisely $A_\con$. The Ginzburg dga associated to $W$ has generators $\{x,y,x^*,y^*,z\}$ in degrees $0,0,-1,-1,-2$ respectively, with differential $dx=dy=0$, $dx^*=-(xy+yx)$, $dy^*=y^3-x^2$, and $dz=xx^*-x^*x+yy^*-y^*y$. One can easily see that this Ginzburg dga is isomorphic (not just quasi-isomorphic!) to the dga we obtain in \ref{lauferflop} above.
		
	\end{rmk}

	\section{Slicing}\label{slicingsctn}
	In this section, we will think about slicing flopping contractions by generic hyperplanes to get partial crepant resolutions of surface singularities. We will pay special attention to how tilting bundles and their endomorphism rings behave under slicing; our aim is to prove \ref{cutsthm}. All of the arguments we use in this part were communicated to us by Michael Wemyss; the general idea is to adapt a proof of Ishii and Ueda \cite[8.1]{ishiiueda}.
	
	\p The setup for this part will be a threefold flopping contraction $\pi: X \to \spec R$. Slice it by a generic hyperplane section $g$ to get a pullback diagram of the form $$\begin{tikzcd} Y\ar[d,"\psi"] \ar[r,"j"]& X \ar[d,"\pi"]\\ \spec(R/g) \ar[r,"i"]& \spec R
	\end{tikzcd}$$First note that, by Reid's general elephant principle \cite[1.1, 1.14]{reidpagoda}, $\psi$ is a partial crepant resolution of a Kleinian ADE singularity, and in particular projective and birational.
	
	\begin{lem}\label{cutslem}
		With the setup as above, the following hold:\hfill
		\begin{enumerate}
			\item[\emph{1.}] $g: \mathcal{O}_X \to \mathcal{O}_X $ is an injection.
			\item[\emph{2.}] Let $\mathcal W$ be a vector bundle on $X$. Then there is a short exact sequence $$0\to \mathcal W \xrightarrow{g} \mathcal W \to j_*j^*\mathcal W \to 0$$
			\item[\emph{3.}] Let $\mathcal W$ be a vector bundle on $X$ such that $\R^p\pi_*\mathcal W \simeq 0$ for $p>0$. Putting $W\coloneqq \pi_*\mathcal W$, we have a quasi-isomorphism $\R\pi_*(j_*j^* \mathcal W) \simeq W / g W$.
		\end{enumerate}
	\end{lem}
	\begin{proof}
		For 1., note first that $g$ is a global section of $\mathcal{O}_X$, or equivalently an endomorphism of $\mathcal{O}_X$. Let $\mathcal K$ be its kernel. Because $X$ is normal, $\mathcal K$ is a reflexive sheaf: this is because on a normal integral noetherian scheme reflexive sheaves are characterised by the $S_2$ property, which is closed under taking kernels (e.g.\ \cite[2.10]{schwedediv}). Because $R$ is an integral domain and $\pi_*$ preserves kernels, one has $\pi_*\mathcal K \cong 0$. But $\pi_*$ is a reflexive equivalence \cite[4.2.1]{vdb}, and hence $\mathcal K \cong 0$ as required. For 2., the only thing to check is exactness on the left. But this follows by tensoring the injection $g: \mathcal{O}_X \to \mathcal{O}_X$ with $\mathcal W$. For 3., note that by assumption $\mathcal W$ is $\pi_*$-acyclic, so we may compute the derived pushforward $\R\pi_*(j_*j^* \mathcal W)$ using its $\pi_*$-acyclic resolution $\mathcal W \xrightarrow{g} \mathcal W$. We hence get $\R\pi_*(j_*j^* \mathcal W)\simeq W \xrightarrow{g} W$ where the righthand $W$ is placed in degree zero. By reflexive equivalence again, $W$ is reflexive, and since it is a submodule of a free module we see that $g: W \to W$ is also injective. It follows that $\R\pi_*(j_*j^* \mathcal W)$ has cohomology only in degree zero, where it is $W/gW$.
	\end{proof}
	
	\begin{prop}\label{slicetilt}
		With the setup as above, let $\mathcal W$ be a tilting bundle on $X$. Then $j^*\mathcal W$ is a tilting bundle on $Y$, with endomorphism ring $\enn_Y(j^*\mathcal W)\cong \enn_{R/g}(\psi_*j^*\mathcal W)\cong \enn_{R/g}(i^*\pi_*\mathcal W)$.
	\end{prop}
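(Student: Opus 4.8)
The plan is to verify the three defining properties of a tilting bundle for $j^*\mathcal{W}$ on $Y$ — namely that it is a vector bundle (clear, since $j^*$ of a vector bundle is a vector bundle), that it generates $D(Y)$, and that it has no higher self-extensions — and then to identify the endomorphism ring. The key input is \ref{cutslem}, which lets us transfer cohomological information from $X$ to $Y$ by means of the short exact sequence $0\to\mathcal{W}\xrightarrow{g}\mathcal{W}\to j_*j^*\mathcal{W}\to 0$.

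First I would handle the vanishing of higher self-extensions. Since $\mathcal{W}$ is tilting on $X$, we have $\ext^p_X(\mathcal{W},\mathcal{W})\cong 0$ for $p>0$; note that $\mathcal{W}^\vee\otimes\mathcal{W}$ is then a $\pi_*$-acyclic vector bundle. Applying \ref{cutslem}(2) to the bundle $\mathcal{W}^\vee\otimes\mathcal{W}$ and using adjunction $\R\hom_Y(j^*\mathcal{W},j^*\mathcal{W})\simeq \R\hom_X(\mathcal{W},j_*j^*\mathcal{W})$, the long exact sequence in $\ext^*_X(\mathcal{W},-)$ associated to $0\to\mathcal{W}^\vee\otimes\mathcal{W}\xrightarrow{g}\mathcal{W}^\vee\otimes\mathcal{W}\to j_*(j^*\mathcal{W}^\vee\otimes j^*\mathcal{W})\to 0$ collapses: the outer terms vanish in positive degrees, forcing $\ext^p_Y(j^*\mathcal{W},j^*\mathcal{W})\cong 0$ for $p>1$, and in degree $1$ we get a surjection from the cokernel of multiplication by $g$ on $\enn_X(\mathcal{W})$. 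But since $g$ is part of a regular system (a generic hyperplane section) and $\enn_X(\mathcal{W})\cong\enn_R(\pi_*\mathcal{W})$ is Cohen–Macaulay over $R$ by \cite[\S4.2]{iyamawemyssfactorial}, multiplication by $g$ is injective on $\enn_X(\mathcal{W})$, and the relevant $\ext^0$-to-$\ext^0$ connecting map analysis shows $\ext^1_Y(j^*\mathcal{W},j^*\mathcal{W})\cong 0$ as well. Here I would need to be slightly careful about which $\ext$ group the cokernel of $g$ maps onto — the cleanest route is to compute $\R\hom_Y(j^*\mathcal{W},j^*\mathcal{W})\simeq \enn_X(\mathcal{W})\xrightarrow{g}\enn_X(\mathcal{W})$ directly, exactly as in the proof of \ref{cutslem}(3), which gives all the Ext vanishing at once together with $\enn_Y(j^*\mathcal{W})\cong\enn_X(\mathcal{W})/g\,\enn_X(\mathcal{W})$.

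Next, generation: I would argue that $j^*\mathcal{W}$ generates $D(Y)$. The standard argument is that $j_*$ of a generator of $D(Y)$, together with the fact that $\mathcal{W}$ generates $D(X)$, forces $j^*\mathcal{W}$ to generate; concretely, if $E\in D(Y)$ satisfies $\R\hom_Y(j^*\mathcal{W},E)\simeq 0$ then $\R\hom_X(\mathcal{W},j_*E)\simeq 0$ by adjunction, hence $j_*E\simeq 0$ since $\mathcal{W}$ is a generator, hence $E\simeq 0$ since $j_*$ is the pushforward along a closed immersion and is conservative. Then $j^*\mathcal{W}$ is a classical generator of $\per(Y)$ and, being a vector bundle with the Ext-vanishing just established, a tilting bundle. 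Finally, for the endomorphism ring identification, the quasi-isomorphism $\R\hom_Y(j^*\mathcal{W},j^*\mathcal{W})\simeq \enn_R(\pi_*\mathcal{W})/g$ combined with \cite[2.5(2)]{enhancements} (which gives $\enn_X(\mathcal{W})\cong\enn_R(\pi_*\mathcal{W})$ and the analogous statement $\enn_Y(j^*\mathcal{W})\cong\enn_{R/g}(\psi_*j^*\mathcal{W})$ once we know $j^*\mathcal{W}$ is tilting) yields $\enn_Y(j^*\mathcal{W})\cong\enn_{R/g}(\psi_*j^*\mathcal{W})$, and base change $\psi_*j^*\mathcal{W}\cong i^*\pi_*\mathcal{W}$ follows from \ref{cutslem}(3) applied to $\mathcal{W}$ itself (noting $\R\pi_*(j_*j^*\mathcal{W})\simeq \pi_*\mathcal{W}/g$, and $\psi_*j^*\mathcal{W}\cong i^*\R\pi_*\mathcal{W}$ by flat-ish base change along the regular section, i.e.\ $i^*\pi_*\mathcal{W}\cong \pi_*\mathcal{W}\otimes_R R/g$). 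The main obstacle I anticipate is the $\ext^1$ vanishing: one must genuinely use that $\enn_R(\pi_*\mathcal{W})$ is MCM over $R$ so that a generic $g$ is a non-zerodivisor on it, rather than merely on $R$ — this is the one place where the geometry (genericity of the hyperplane, depth of the tilting algebra) really enters, and it is worth stating explicitly as a lemma before assembling the proof.
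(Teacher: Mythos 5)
Your proposal is substantively correct and follows essentially the same route as the paper: generation by adjunction along $j$ using conservativity of $j_*$, Ext-vanishing by reducing $\R\enn_Y(j^*\mathcal W)$ to a length-two complex via \ref{cutslem}, and then the two ring identifications. Your self-correction in the middle — abandoning the long-exact-sequence bookkeeping in favour of computing $\R\enn_Y(j^*\mathcal W)\simeq \enn_X(\mathcal W)\xrightarrow{g}\enn_X(\mathcal W)$ directly as in \ref{cutslem}(3) — is exactly what the paper does, and is cleaner.

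Two points where you diverge or are heavier-handed than necessary. First, your worry that one must invoke the MCM property of $\enn_R(\pi_*\mathcal W)$ to get injectivity of $g$ is more than is needed: the paper only uses reflexivity of $\pi_*(\mathcal W^\vee\otimes \mathcal W)$, which is automatic because $\pi_*$ is a reflexive equivalence (this is precisely what \ref{cutslem}(3) establishes — a reflexive module over a normal domain is a submodule of a free module, hence $g$ acts injectively). Your argument would work, but it smuggles in a stronger hypothesis. Second, for $\enn_Y(j^*\mathcal W)\cong \enn_{R/g}(\psi_*j^*\mathcal W)$ you appeal to \cite[2.5(2)]{enhancements}, whereas the paper gives a self-contained argument: $\psi_*$ is a ring map of reflexive $R/g$-modules that is an isomorphism at height-one primes, and $R/g$ is normal, so it is an isomorphism. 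Both are defensible, but the paper's version avoids importing hypotheses. Finally, your "flat-ish base change" for $\psi_*j^*\mathcal W\cong i^*\pi_*\mathcal W$ is the one place where the wording is too loose: the statement $i^*\pi_*\mathcal W\cong \pi_*\mathcal W\otimes_R R/g$ you cite is just the definition of $i^*$; the real content is the chain $\psi_*j^*\mathcal W\simeq \R\hom_X(\mathcal O_X, j_*j^*\mathcal W)\simeq \R\pi_*j_*j^*\mathcal W\simeq i^*\pi_*\mathcal W$, the last step being \ref{cutslem}(3) applied to $\mathcal W$ itself, and it is worth spelling this out rather than waving at base change.
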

	\begin{proof}
		First observe that because $\mathcal W$ is a vector bundle, so is $j^*\mathcal W$, and we also have a quasi-isomorphism $j^*\mathcal W \simeq \mathbb{L} j^*\mathcal W$. Because $j$ is a closed immersion (it is the pullback of the closed immersion $i$), we have $\R j_* \simeq j_*$. Now it follows by adjunction that $$\R\hom_Y(j^*\mathcal W,-) \simeq  \R\hom_X(\mathcal W, j_* -).$$For generation, let $\mathcal F \in D(\mathrm{QCoh}(Y))$. By the quasi-isomorphism above, $\R\hom_Y(j^*\mathcal W,\mathcal F)\simeq 0$ if and only if $\R\hom_X(\mathcal W, j_* \mathcal F)\simeq 0$. But $\mathcal W$ generates by assumption, so this is the case if and only if $j_* \mathcal F\simeq 0$, which is the case if and only if $\mathcal F \simeq 0$. Hence, $j^*\mathcal W$ generates. To show Ext vanishing, we first compute $\R\enn_Y(j^*\mathcal W)\simeq \R\hom_X(\mathcal W, j_*j^* \mathcal W)$ as before. Because $\mathcal W$ is a vector bundle, we have $$\R\hom_X(\mathcal W, j_*j^* \mathcal W)\simeq \R\hom_X(\mathcal{O}_X,\mathcal W^*\otimes j_*j^* \mathcal W)\simeq \R\pi_*(\mathcal W^*\otimes j_*j^* \mathcal W).$$Again, because $\mathcal W$ is a vector bundle, $\mathcal W^*\otimes j_*j^* \mathcal W$ is quasi-isomorphic to $\mathcal W ^* \otimes (\mathcal W \xrightarrow{g} \mathcal{W})$ using \ref{cutslem}(2) But $\mathcal W ^* \otimes (\mathcal W \xrightarrow{g} \mathcal{W})\cong (\mathcal W ^* \otimes \mathcal W )\xrightarrow{g} (\mathcal{W}^* \otimes \mathcal{W})$, which, using \ref{cutslem}(2) again, is quasi-isomorphic to $j_*j^*(\mathcal{W}^* \otimes \mathcal{W})$. So we have $\R\enn_Y(j^*\mathcal W)\simeq \R \pi_*(j_*j^*(\mathcal{W}^* \otimes \mathcal{W}))$, which by \ref{cutslem}, 3. (using that higher Exts between $\mathcal W$ vanish) is concentrated in degree zero. So $j^*\mathcal W$ is tilting. Note that this does not tell us about the ring structure on $\R\enn_Y(j^*\mathcal W)$, since we had to pass through adjunctions. 
		
		\p For the statements about endomorphism rings, observe first that we have a ring map\linebreak  $\psi_*:\enn_Y(j^*\mathcal W)\to \enn_{R/g}(\psi_*j^*\mathcal W)$ which is also a map of reflexive $R/g$-modules. Since it is an isomorphism at height one primes, and $R/g$ is normal, it hence must be an isomorphism. It remains to check that $\enn_{R/g}(\psi_*j^*\mathcal W)\cong \enn_{R/g}(i^*\pi_*\mathcal W)$. To prove this we will show that $\psi_*j^*\mathcal W \cong i^*\pi_*\mathcal W$. Proceeding as before, we have \begin{align*}
		\psi_*j^*\mathcal W &\simeq \R\hom_Y(\mathcal{O}_Y, j^*\mathcal W)
		\\ &\simeq \R\hom_Y(j^*\mathcal{O}_X, j^*\mathcal W)
		\\ &\simeq \R\hom_X(\mathcal{O}_X, j_*j^*\mathcal W)
		\\ & \simeq \R\pi_*j_*j^*\mathcal W
		\\&\simeq i^*\pi_*\mathcal W
		\end{align*}where the last isomorphism is \ref{cutslem}, 3.
	\end{proof}
	We would like to say a little more: not just that one can compute the endomorphism ring of a tilting bundle on the base, but also that one can compute the endomorphism ring of $i^*W$ by applying the functor $i^*$ to the endomorphism ring of $W$. This is a little delicate and will require some more hypotheses; we show this in the case that $\mathcal W$ is Van den Bergh's tilting bundle.
	\begin{thm}\label{cutsthm}
		With the setup as above, let $\mathcal V$ be Van den Bergh's tilting bundle on $X$ constructed in \text{\normalfont\cite[3.2.8]{vdb}}. Then  $j^*\mathcal{V}$ is a tilting bundle on $Y$, and one has a ring isomorphism $\enn_Y(j^*\mathcal{V})\cong \enn_R(\pi_*\mathcal{V})/g\enn_R(\pi_*\mathcal{V})$.
	\end{thm}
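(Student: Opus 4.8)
The plan is to combine Proposition \ref{slicetilt}, which already gives that $j^*\mathcal V$ is tilting on $Y$ with $\enn_Y(j^*\mathcal V)\cong \enn_{R/g}(i^*\pi_*\mathcal V)$, with a further argument identifying $\enn_{R/g}(i^*\pi_*\mathcal V)$ with $\enn_R(\pi_*\mathcal V)/g\enn_R(\pi_*\mathcal V)$. Writing $\Lambda\coloneqq \enn_R(\pi_*\mathcal V)$ and $W\coloneqq \pi_*\mathcal V$, there is always an obvious ring map $\Lambda/g\Lambda \to \enn_{R/g}(W/gW)$ induced by restriction of scalars (note $g$ is central in $R$ and kills $W/gW$, so the target is naturally an $R/g$-algebra receiving a map from $\Lambda$, which factors through $\Lambda/g\Lambda$ since $g$ acts as zero on $W/gW$). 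The content of the theorem is that this map is an isomorphism, and the strategy, following Ishii--Ueda, is to show that $g$ is a non-zerodivisor on $\Lambda$ and that $\Lambda/g\Lambda$ already has the correct `shape'.

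First I would record that $W$ is a reflexive (indeed MCM, by \cite{iyamawemyssfactorial}) $R$-module and that $\Lambda = \enn_R(W)$ is an MCM $R$-module as well. Since $R$ is Gorenstein of dimension $3$ and $g$ is a generic hyperplane, $g$ is a non-zerodivisor on $R$ and hence on any MCM $R$-module; in particular $g$ is a non-zerodivisor on $\Lambda$ and on $W$. The key computation is then to show that the natural map $\enn_R(W)/g\enn_R(W) \to \enn_{R/g}(W/gW)$ is an isomorphism. For this I would use the short exact sequence $0 \to W \xrightarrow{g} W \to W/gW \to 0$ and apply $\hom_R(W,-)$: this gives $0 \to \hom_R(W,W)\xrightarrow{g}\hom_R(W,W)\to \hom_R(W,W/gW)\to \ext^1_R(W,W)\xrightarrow{g}\cdots$. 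Since $\ext^1_R(W,W)$ is supported on the singular locus, which has dimension $\le 1$, and $g$ was chosen generically, multiplication by $g$ on $\ext^1_R(W,W)$ is injective (the generic hyperplane misses the associated primes of this module, or one argues via depth); hence $\hom_R(W,W)/g\hom_R(W,W)\xrightarrow{\cong}\hom_R(W,W/gW)$. Finally $\hom_R(W,W/gW)\cong \hom_{R/g}(W/gW,W/gW)$ by the standard adjunction (since $W/gW$ is an $R/g$-module, any $R$-linear map $W \to W/gW$ factors through $W/gW$). Chaining these isomorphisms with Proposition \ref{slicetilt} and checking that all the identifications are ring maps, not merely module maps, yields $\enn_Y(j^*\mathcal V)\cong \Lambda/g\Lambda$.

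There is one subtlety that needs care: Proposition \ref{slicetilt} produces the isomorphism $\enn_{R/g}(\psi_*j^*\mathcal V)\cong\enn_{R/g}(i^*\pi_*\mathcal V)$ via the identification $\psi_*j^*\mathcal V\simeq i^*\pi_*\mathcal V = \pi_*\mathcal V \otimes_R R/g = W/gW$ as $R/g$-modules, so I should make sure that this identification is compatible with the algebra structures, i.e.\ that the composite $\enn_Y(j^*\mathcal V)\xrightarrow{\psi_*}\enn_{R/g}(W/gW)$ and the map $\Lambda/g\Lambda \to \enn_{R/g}(W/gW)$ have the same image and agree. This is where I would invoke the reflexivity/normality argument already used in Proposition \ref{slicetilt}: $R/g$ is normal (a generic hyperplane section of a normal threefold is normal, by Bertini-type results, here a Kleinian singularity so certainly normal), $\psi_*j^*\mathcal V$ and $W/gW$ are reflexive $R/g$-modules agreeing in codimension one, and a reflexive-module map which is an isomorphism at height-one primes is an isomorphism; one then checks it is a ring map directly.

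The main obstacle I anticipate is verifying the genericity statement precisely — namely that for a generic hyperplane $g$, multiplication by $g$ is injective on $\ext^1_R(W,W)$ (equivalently, that $g$ avoids the associated primes of this finite-length-over-the-singular-locus module). This is essentially the Ishii--Ueda-type argument and should go through because the singular locus is one-dimensional and the flopping curve is the relevant component; alternatively one can sidestep it by noting $\ext^1_R(W,W)$ is a maximal Cohen--Macaulay module over its (one-dimensional) support and arguing with depth after the cut. The rest of the argument is a diagram chase plus bookkeeping of ring structures, which I would not grind through in detail here.
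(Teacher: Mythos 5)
Your overall strategy matches the paper's: reduce via Proposition \ref{slicetilt} to showing $\enn_{R/g}(i^*V)\cong i^*\enn_R(V)$ for $V=\pi_*\mathcal V$, then apply $\hom_R(V,-)$ to $0\to V\xrightarrow{g}V\to i^*V\to 0$ and truncate the long exact sequence. But the key step — showing the connecting map $\hom_R(V,i^*V)\to\ext^1_R(V,V)$ has zero image — is where your argument has a genuine gap.

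You assert that multiplication by $g$ is injective on $\ext^1_R(V,V)$ because $g$ is a generic hyperplane avoiding the associated primes of that module, which you take to be supported on a one-dimensional singular locus. But in the Setup \ref{flopsetup} of this section, $\pi$ is an isomorphism away from the single closed point $p$, so $\ext^1_R(V,V)$ is supported at $p$ alone and is a finite-length $R$-module (compare \ref{fdlem}); and the hyperplane $g$ \emph{must} lie in the maximal ideal at $p$ (if it missed $p$, Reid's general elephant would give a smooth slice and $\psi$ would be an isomorphism). Multiplication by a non-unit is never injective on a nonzero finite-length module, so the genericity argument cannot go through, and the proposed depth argument fails for the same reason — a finite-length module has depth zero. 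What actually makes the sequence truncate is the stronger fact that $\ext^1_R(V,V)=0$: the paper cites \cite[2.7]{iwmaxmod}, using that $R$ is an isolated singularity and that $V$ and $\enn_R(V)$ are Cohen--Macaulay by \cite[3.2.10]{vdb}. In other words, your connecting map has zero image not because $g$ avoids something but because the target is literally zero; without that vanishing the claimed injectivity of $g$ on $\ext^1_R(V,V)$ is false in exactly the situation at hand.

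One smaller point: the identification $\hom_R(V,i^*V)\cong\enn_{R/g}(i^*V)$ via restriction of scalars is correct, and your worry about reconciling $\psi_*$ with this algebra map is already resolved inside Proposition \ref{slicetilt}; once you have the isomorphism of $R/g$-modules $\psi_*j^*\mathcal V\cong i^*V$, the ring structures agree because both endomorphism algebras are reflexive $R/g$-modules over a normal surface and the comparison map is an isomorphism at height-one primes (the reflexive-equivalence argument in the proof of \ref{slicetilt}). So the bookkeeping you defer is fine; it is the vanishing of $\ext^1_R(V,V)$ that needs to be stated and used.
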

	\begin{proof}
		Immediately from \ref{slicetilt}, we see that $j^*\mathcal V$ is tilting and has endomorphism ring given by $\enn_Y(j^*\mathcal{V})\cong  \enn_{R/g}(i^*\pi_*\mathcal V)$. Putting $V\coloneqq \pi_*\mathcal V$, it remains only to prove that we have an isomorphism $\enn_{R/g}(i^*V)\cong i^*\enn_R(V)$. By \cite[3.2.10]{vdb}, both $V$ and $\enn_R(V)$ are Cohen--Macaulay $R$-modules, and one moreover has an isomorphism $\enn_X(\mathcal V)\cong\enn_R(V)$. Because $R$ is an isolated singularity, \cite[2.7]{iwmaxmod} now gives $\ext^1_R(V,V)\cong 0$. We now follow the proof of \cite[A.1]{vdb}. Note that because $V$ is Cohen--Macaulay it follows that $i^*V$ is Cohen--Macaulay over $R/g$. Applying $\hom_R(V,-)$ to the exact sequence $$0 \to V \xrightarrow{g} V \to i^* V \to 0$$ gives an exact sequence $$0 \to \enn_R(V) \xrightarrow{g} \enn_R(V) \to \hom_R(V,i^*V)\to 0$$or in other words an isomorphism $\hom_R(V,i^*V)\cong i^*\enn_R(V)$. But there is an isomorphism $\hom_R(V,i^*V) \cong \enn_{R/g}(i^*V)$, and it is not hard to check that the induced linear isomorphism $\enn_{R/g}(i^*V)\cong i^*\enn_R(V)$ is a ring map.
	\end{proof}

	\section{Partial resolutions of $A_n$ singularities}
	In this section we compute the derived contraction algebra associated to a certain 1-curve partial crepant resolution of an $A_n$ singularity, obtained by slicing a 1-curve flop. Note that for a Kleinian singularity we are automatically in the Local Setup \ref{localsetup} so may define the derived contraction algebra.  
	\p Let $\tilde{X} \to \spec\tilde{R}$ be the Atiyah flop. Observe that, for any choice of $n$, one can slice $\tilde{R}$ along the hypersurface $x=y^n$ to obtain a partial crepant resolution $X \to \spec(R)$ of an $A_n$ singularity. Let $\tilde \Lambda$ be the NCCR of $\tilde R$ from \S\ref{atiyahflop}, with quiver presentation\\
	\begin{wrapfigure}[4]{L}{0.5\textwidth}
		\centering
		\vspace{-15pt}$
		\begin{tikzcd}
		1 \arrow[rr,bend left=15,"b"] \arrow[rr,bend left=50,"a"]  && 2 \arrow[ll,bend left=15,"s"] \arrow[ll,bend left=50,"t"]
		\end{tikzcd}$
	\end{wrapfigure} 
	
	$asb=bsa$
	
	$sbt=tbs$
	
	$atb=bta$
	
	$sat=tas$\newline\newline
	By \ref{cutsthm}, it follows that $X$ is derived equivalent to the `sliced NCCR' $\Lambda\coloneqq \tilde \Lambda / (x-y^n)\tilde \Lambda$. Recalling the construction of $\tilde \Lambda$, we had $x=at+ta$ and $y=sb+bs$. Moreover, since we have $sbbs=bssb=0$, we have $y^n=(sb)^n + (bs)^n$. So we need to add the relation $at+ta=(sb)^n + (bs)^n$, which is equivalent to adding the two relations $at=(bs)^n$ and $ta=(sb)^n$. The algebra $\Lambda$ we get is\newline
	\begin{wrapfigure}[4]{L}{0.5\textwidth}
		\centering
		\vspace{-15pt}$
		\begin{tikzcd}
		1 \arrow[rr,bend left=15,"b"] \arrow[rr,bend left=50,"a"]  && 2 \arrow[ll,bend left=15,"s"] \arrow[ll,bend left=50,"t"]
		\end{tikzcd}$
	\end{wrapfigure} 
	
	$asb=bsa$
	
	$sbt=tbs$
	
	$at=(bs)^n$
	
	$ta=(sb)^n$\newline\newline noting that $atb=bta$ and $sat=tas$ follow from the new relations. Observe that $\Lambda$ admits a grading by putting the generators $e_1,e_2$ in degree 0, the generators $b$ and $s$ in degree 1, and the generators $a$ and $t$ in degree $n$. We will refer to the degree of a homogeneous element of $\Lambda$ as its \textbf{weight}, since we are already using `degree' to refer to maps. Let $S$ be the simple module at 2. Our main theorem is the following:
	\begin{thm}\label{mainsurfacethm}
		If $n=1$, then $\dca$ is quasi-isomorphic to the free noncommutative graded algebra $k\langle\zeta\rangle$ on a variable $\zeta$ in degree $-1$. If $n\geq 2$, then $\dca$ is quasi-isomorphic to the strictly unital minimal $A_\infty$-algebra with underlying graded algebra $k[\eta,\zeta]$ where $\eta$ has degree $-2$, $\zeta$ has degree -1, and the only nontrivial higher multiplications are $$m_{n+1}(\eta^{b_1}\zeta,\ldots, \eta^{b_{n+1}}\zeta)=\eta^{b_1+\cdots+b_{n+1} + n}.$$Note in particular that since $\zeta$ has odd degree and is a commutative element, it must square to zero.
	\end{thm}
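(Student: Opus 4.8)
The strategy mirrors the computations for the Atiyah and Pagoda flops: identify the noncommutative model $\Lambda$, compute the Ext-algebra $\ext^*_\Lambda(S,S)$ together with its $A_\infty$-structure, and then take the $A_\infty$ Koszul dual using \ref{dqiskd}. First I would write down an explicit (weight-graded) projective resolution of the simple $S$ at vertex $2$. For the Atiyah flop ($n=1$) this is the length-three resolution $P_2 \to P_1^2 \to P_1^2 \to P_2$ recalled in \S\ref{atiyahflop}; for general $n$ one expects a similar small resolution whose existence can be verified by a direct (if tedious) basis argument, exactly as in the Pagoda case, with the weights of the terms dictated by the grading on $\Lambda$ in which $b,s$ have weight $1$ and $a,t$ have weight $n$. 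From this resolution one reads off that $\ext^*_\Lambda(S,S)$ is, as a graded vector space, spanned by the unit together with one generator $\vartheta$ in cohomological degree $1$ and weight $1$ and one generator $\varpi$ in degree $2$ and weight $n+1$ (with $\varpi$ a multiple of $\vartheta^2$ up to homotopy when $n\geq 2$, and $\vartheta^2=0$ on the nose when $n=1$); this matches the known shape $H^*(\dca)\cong A_\con[\eta]$ with $A_\con\cong k$ here, since $R$ is a minimal-model slice and \ref{mmoddca}-type rigidity applies.

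Next I would pin down the $A_\infty$-structure on $\ext^*_\Lambda(S,S)$. Because $\Lambda$ is Adams-graded and the resolution of $S$ is compatible with the weight grading, the graded Merkulov construction (\ref{adamsrmk}) forces all higher multiplications to be weight-homogeneous of weight zero, which rules out all but finitely many. A weight bookkeeping argument — identical in spirit to \ref{pagmassey} — shows that the only surviving higher operation is an $n$-fold (in Ext) product $m_{n}$ or $m_{n+1}$ landing on $\varpi$; for $n=1$ the algebra is genuinely formal (the $n=1$ argument is literally the formality lemma of \S\ref{atiyahflop}, applied to an Ext-algebra that is an exterior algebra on a degree-$1$ generator), while for $n\geq 2$ one detects nonformality via a nonvanishing Massey product $\langle\vartheta,\dots,\vartheta\rangle$ using \ref{masseylemma} and an explicit contracting homotopy, and then \ref{masseysarehighermults} identifies this with the unique allowed higher multiplication (up to sign and rescaling). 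Dualising: set $\zeta=\vartheta^*$ in degree $-1$ and $\eta=\varpi^*$ in degree $-2$; the $A_\infty$ Koszul dual $\ext^*_\Lambda(S,S)^!$ is then the cofree coalgebra dual, i.e. the tensor algebra on $\{\zeta,\eta\}$ with bar differential, and for $n=1$ this immediately collapses to the free algebra $k\langle\zeta\rangle$ (since the only structure is $m_2$, which dualises to $d\eta=\pm\zeta^2$, and $\eta$ can then be eliminated as in the Pagoda quotient lemma), while for $n\geq 2$ one runs Kadeishvili/Merkulov on the Koszul dual to get the minimal model, exactly as in the proof of \ref{dcapagoda}: the higher multiplication $m_{n}$ on the Ext side dualises to the stated $m_{n+1}(\eta^{b_1}\zeta,\dots,\eta^{b_{n+1}}\zeta)=\eta^{b_1+\cdots+b_{n+1}+n}$ after one checks $\eta$-linearity of all operations and uses centrality of $\eta$ to collate powers.

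To actually present $\dca$ as the advertised minimal $A_\infty$-algebra $k[\eta,\zeta]$ with that single family of higher multiplications, I would first produce the cofibrant dga model $E^!$ by hand — freely generated by $\zeta$ (degree $-1$), one degree-$(-1)$ companion of $\vartheta^2$, and a degree-$(-2)$ generator — compute its cohomology directly (it is $k[\eta,\zeta]$ with $\eta$ and $\zeta$ in the stated degrees, $\zeta^2=0$ for parity), and then apply the graded Merkulov recursion with an explicit splitting and homotopy, just as in \ref{dcapagoda}. The combinatorics here should be strictly simpler than the Catalan-number bookkeeping of the Pagoda case, since there is only one essential higher operation rather than an infinite tower, so the recursion terminates quickly. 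The main obstacle, as in all these computations, is the initial explicit resolution of $S$ over the sliced NCCR $\Lambda=\tilde\Lambda/(x-y^n)\tilde\Lambda$ and the verification that the resulting Ext-algebra and its homotopies behave as claimed: one must check that slicing has not introduced extra Ext classes or extra higher structure beyond what the weight grading permits, and that the Massey product $\langle\vartheta,\dots,\vartheta\rangle$ is genuinely nonzero (not merely potentially nonzero) for $n\geq 2$. Everything downstream — the Koszul dual, the Merkulov computation, the final identification — is then routine given the machinery of Chapter \ref{dcdgcc} and the worked Pagoda example.
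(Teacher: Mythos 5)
Your proposal has the right overall architecture (resolve $S$, compute $\ext^*_\Lambda(S,S)$ with its $A_\infty$-structure, apply \ref{dqiskd}, run Merkulov with an Adams grading), but the key computational input — the Ext-algebra — is wrong, and the error propagates fatally.

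You claim $\ext^*_\Lambda(S,S)$ is finite-dimensional, spanned by $1$, a degree-$1$ class $\vartheta$, and a degree-$2$ class $\varpi$, and that the resolution of $S$ is a ``similar small resolution'' to the Atiyah/Pagoda cases. None of this holds for $n\geq 2$. Because $R$ is now a surface singularity, $\Lambda$ has \emph{infinite} global dimension, and the resolution of $S$ is an infinite, eventually $2$-periodic complex (the paper writes it out explicitly, then works in the subalgebra of eventually periodic endomorphisms to get computational control). The resulting Ext-algebra is $\dim_k\ext^i_\Lambda(S,S)=1$ for $i=0$ and all $i\geq 2$, with $\ext^1_\Lambda(S,S)=0$; the generators are $x\in\ext^2$ and $y\in\ext^3$ (for $n=2$ the algebra is $k[x,y]/(x^3-y^2)$; for $n>2$ it is $k[x,y]$ with $y^2=0$). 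Your $\vartheta$ in degree $1$ does not exist, your $\varpi$ in degree $2$ is present but is not a square of anything, and the algebra is infinite-dimensional. Dualising your (incorrect) generators would also produce the wrong degrees in $\dca$: a degree-$1$ and a degree-$2$ Ext class give Koszul dual generators in degrees $0$ and $-1$, not the required $-2$ and $-1$.

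Relatedly, your appeal to ``\ref{mmoddca}-type rigidity'' to conclude $H^*(\dca)\cong A_\con[\eta]$ is not available here. Proposition~\ref{mmoddca} concerns $\Q$-factorial minimal models of terminal \emph{threefolds}; for a surface, the module $M$ is \emph{never} rigid by Auslander--Reiten duality (this is spelled out in \ref{mtnrmk}), so $H^{-1}(\dca)\cong\ext^1_R(M,M)\neq 0$ and the cohomology of $\dca$ is $k[\eta,\zeta]$ with a genuine degree-$(-1)$ class. This is in fact the whole point of the example: for these surface slices the classical contraction algebra is just $k$, so the derived contraction algebra carries strictly more information, precisely because the picture from the threefold case degenerates. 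The paper establishes $H^*(\dca)\cong k[\eta,\zeta]$ by a separate spectral-sequence argument (filtering $\R\enn_\Lambda(S)$ by powers of $y$ and passing the filtration through the Koszul dual), not by citing rigidity.

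So: the $n=1$ case of your sketch is fine (it is the Atiyah formality argument), but for $n\geq 2$ you would need to discard the finite-resolution heuristic, compute the eventually-periodic infinite resolution and the resulting infinite-dimensional Ext-algebra with generators in degrees $2$ and $3$, detect nonformality via the Massey product $\langle y,\dots,y\rangle_n$ in the subalgebra of eventually periodic maps, and compute $H^*(\dca)$ via a spectral sequence before the final Merkulov step. The weight-grading machinery you invoke does still do the work of ruling out all but one higher operation on both the Ext side and the Koszul dual side, once the correct generators and weights are in place.
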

	\begin{proof}First suppose that $n=1$. Since $S$ is spherical, we see that the cohomology algebra is $\ext^*_\Lambda(S,S)\cong k[x]/x^2$, where $x$ has degree 2. By the same argument we used for the Atiyah flop, $\R\enn_\Lambda(S)$ must be formal, and hence the derived contraction algebra is the noncommutative dga $$\dca=k\langle\zeta\rangle$$ where $\zeta$ has degree $-1$. Note that the periodicity element is $\eta=\zeta^2$. The $n\geq2$ case requires much more work; the rest of this section is devoted to the proof of this case, which appears as \ref{surfdcang2}.
	\end{proof}
	\begin{rmk}
		Note that in the $n=1$ case, the periodicity element $\eta$ is given by $\eta=\zeta^2$. When $n=1$, note that $m_{n+1}(\eta^{b_1}\zeta,\ldots, \eta^{b_{n+1}}\zeta)=\eta^{b_1+\cdots+b_{n+1} + n}$ still holds; hence a more uniform way to state the above is to say that $\dca$ as a strictly unital minimal $A_\infty$-$k[\eta]$-algebra is generated by the single element $\zeta$ subject to the relations $m_{r}(\zeta,\ldots,\zeta)=\delta_{r,n+1}\eta^n$.
	\end{rmk}
	\begin{rmk}
		Unlike in the threefold setting, there does not seem to be a simple way of obtaining $\dca$ as a Ginzburg dga. Indeed, the subquiver at the vertex 2 is a point, and so must have associated Ginzburg dga $k[\eta]$, with $\eta$ in degree -2, independent of what the superpotential is. One would expect this to happen: the derived category of a Ginzburg dga is always 3CY, so one would only expect a Ginzburg dga interpretation of the derived contraction algebra in the threefold setting.
	\end{rmk}
	
	From now on, we assume that $n\geq2$. We begin by noting a few preliminaries. For brevity, put $\beta\coloneqq bs$ and $\sigma\coloneqq sb$. One can check that $S$ has projective resolution $\tilde S$ given by
	$$\cdots \xrightarrow{\stbtm{-\beta^{n-1}}{a}{t}{-\sigma} \cdot} P_1 P_2 \xrightarrow{\stbtm{\beta}{a}{t}{\sigma^{n-1}}\cdot} P_1 P_2 \xrightarrow{\stbtm{-\beta^{n-1}}{a}{t}{-\sigma} \cdot} P_1 P_2 \xrightarrow{\stbtm{bt}{\beta^{n-1}b}{-\beta}{-a}\cdot} P_1^2 \xrightarrow{\left( s \;\; t\right)\cdot}  P_2$$which eventually becomes periodic with period two. Moreover one can check that with the grading conventions on $\Lambda$ from above this admits a secondary Adams grading by weight$$\cdots \xrightarrow{} P_1(3n+2) \oplus P_2(4n) \xrightarrow{} P_1(3n) \oplus P_2(2n+2) \xrightarrow{} P_1(n+2)\oplus P_2(2n) \xrightarrow{} P_1(1)\oplus P_1(n) \xrightarrow{}  P_2(0) $$ and in particular the differential has weight zero.
	
	\p  We see that $$\ext^i_\Lambda(S,S)\cong \begin{cases} 0 & i<0\normalfont\text{  or  } i=1\\
	k & i=0 \normalfont\text{  or  } i>1
	\end{cases}$$spanned by the classes of the projection maps $P_2 \to S$. Define maps $g_0=\id$, and for $k\geq 1$ 
	$$g_{2k}\coloneqq \begin{tikzcd}[sep=large]	S^{2k}\ar[d,swap,"\left( 0 \;\; -1\right)\cdot"]&S^{2k+1}\ar[l,swap,"d_{2k}"]\ar[d,swap,"\stbtm{0}{b}{-1}{0}\cdot"]&S^{2k+2}\ar[l,swap,"d_{2k+1}"]\ar[d,swap,"\id"]&S^{2k+3}\ar[l,swap,"d_{2k+2}"]\ar[d,swap,"\id"]&S^{2k+4}\ar[l,swap,"d_{2k+3}"]\ar[d,swap,"\id"]&\cdots\ar[l,swap,"d_{2k+4}"]\\S^0&S^1\ar[l,"d_0"]&S^2\ar[l,"d_1"]&S^3\ar[l,"d_2"]&S^4\ar[l,"d_3"]&\cdots\ar[l,"d_4"]\end{tikzcd}$$
	$$g_{2k+1}\coloneqq \begin{tikzcd}[sep=large]S^{2k+1}\ar[d,swap,"\left( 0 \;\; -1\right)\cdot"]&S^{2k+2}\ar[l,swap,"d_{2k+1}"]\ar[d,swap,"\stbtm{0}{\beta^{n-2}b}{1}{0}\cdot"]&S^{2k+2}\ar[l,swap,"d_{2k+3}"]\ar[d,swap,"\stbtm{-\beta^{n-2}}{0}{0}{1}\cdot"]&S^{2k+4}\ar[l,swap,"d_{2k+3}"]\ar[d,swap,"\stbtm{-1}{0}{0}{\sigma^{n-2}}\cdot"]&S^{2k+5}\ar[l,swap,"d_{2k+4}"]\ar[d,swap,"\stbtm{-\beta^{n-2}}{0}{0}{1}\cdot"]&\cdots\ar[l,swap,"d_{2k+5}"]\\S^0&S^1\ar[l,"d_0"]&S^2\ar[l,"d_1"]&S^3\ar[l,"d_2"]&S^4\ar[l,"d_3"]&\cdots\ar[l,"d_4"]\end{tikzcd}$$
	Then the $g_k$ span the cohomology algebra $\ext_\Lambda^*(S,S)$ since each (up to sign) lifts the projection maps $P_2 \to S$. Moreover, letting $\phi$ be the degree zero map with $\phi_0=\sigma^{n-2}\cdot$, $\phi_1=\beta^{n-2}\cdot$, and $\phi_j=\tbtm{\beta^{n-2}}{0}{0}{\sigma^{n-2}}\cdot$ for all $j>1$, one can check that the $g_k$ satisfy $$g_ig_j=\begin{cases} g_{i+j} & \text{if }ij \text{ is even} \\ g_{i+j}\phi & \text{ else}\end{cases}$$Put $x\coloneqq [g_2]$ and $y\coloneqq [g_3]$.
	
	\begin{prop}
		Suppose that $n=2$. Then the derived endomorphism algebra $\R\enn_\Lambda(S)$ is formal, with cohomology algebra $\frac{k[x,y]}{(x^3-y^2)}$.  Note that this is a noncommutative dga, because $y$ does not commute with itself.
	\end{prop}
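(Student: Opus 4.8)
The plan is to reuse the general machinery established for the $n\geq 2$ case, specialised to $n=2$, where the Adams weight bookkeeping is much tighter. First I would record the cohomology ring. From the projective resolution $\tilde S$ and the explicit classes $g_k$ constructed above (specialising $n=2$, so that $\phi$ is the identity and $g_ig_j=g_{i+j}$ always, hence $\R\enn_\Lambda(S)$ is graded-commutative at the level of cohomology in the appropriate sense — note however $y$ is odd so $y^2$ need not vanish here since we are in a noncommutative setting and $y^2=g_3^2=g_6\neq 0$), one computes that $HA\coloneqq \ext^*_\Lambda(S,S)$ is spanned by $\{g_k\}_{k\geq 0}$ with $g_0=1$ and relations generated by $x=[g_2]$, $y=[g_3]$, $g_1=0$, and $x^3=g_6=y^2$. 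So $HA\cong \frac{k\langle x,y\rangle}{(x^3-y^2,\,[x,y])}$, which for brevity we write $\frac{k[x,y]}{(x^3-y^2)}$ with $x$ in degree $2$ and $y$ in degree $3$; the only genuinely noncommutative feature is that $y^2$ is a nonzero central element.

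Second, I would put the Adams grading to work. Using the secondary grading in which the resolution $\tilde S$ has weights as displayed above, one gets (by Kadeishvili's theorem in its Adams-graded form, cf.\ \ref{adamsrmk}) an $A_\infty$-quasi-isomorphism $\mathscr{H}A \to A$ of Adams-graded strictly unital $A_\infty$-algebras, where $\mathscr{H}A$ has underlying graded algebra $HA$. The generator $x=[g_2]$ has some Adams weight $w_x$ and $y=[g_3]$ has weight $w_y$, which one reads off from the weights on $\tilde S$: tracking the weights of the maps $g_2$ and $g_3$ gives $w_x$ and $w_y$, and one finds (this is the routine part) $w_y = \tfrac{3}{2}w_x$ in the sense that a monomial $x^ay^b$ has weight $aw_x + bw_y$, with $x^3$ and $y^2$ having the same weight, consistently with the relation $x^3=y^2$. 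A higher multiplication $m_r$ for $r>2$ must be a degree-$(2-r)$ map of Adams weight $0$; any nonzero value $m_r(u_1,\dots,u_r)$ must then be a monomial in $HA$ whose internal degree equals $\sum |u_i| + 2 - r$ and whose Adams weight equals $\sum w(u_i)$. Plugging in the only available generators $x,y$ and the relation $w_y=\tfrac32 w_x$, one checks that the system of equations (degree and weight) forces $\sum|u_i|+2-r$ and $\sum w(u_i)$ to determine a unique candidate monomial, and then a parity/degree count rules it out — exactly the mechanism used for $n=2$ in \ref{ne2formal} and for the Pagoda flop. The upshot is $m_r=0$ for all $r>2$, i.e.\ $\R\enn_\Lambda(S)$ is formal.

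Third, with formality in hand, $\R\enn_\Lambda(S)$ is quasi-isomorphic to the graded algebra $HA=\frac{k[x,y]}{(x^3-y^2)}$, which is the asserted statement.

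The main obstacle is the second step: one must be genuinely careful that the Adams weights of $x$ and $y$ are in the ratio forcing the relation $x^3=y^2$ to be weight-homogeneous (otherwise the relation could not hold), and then that no higher product $m_r$ can be weight-zero and degree-correct simultaneously. Concretely, the worry is a potential higher multiplication landing in $y^2=x^3$ (the unique element that could receive contributions from several generators), analogous to the Massey product $\langle g,g,g\rangle$ that obstructed formality in the Laufer ($n>2$, really the $D_4$) case; one must verify that for $n=2$ the Adams weights simply do not permit such a product. I expect this to come down to a short finite check once the weights of $g_2,g_3$ are pinned down from the displayed weight-grading on $\tilde S$, but it is the step where an off-by-one in the weights would break the argument.
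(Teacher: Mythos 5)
Your proposal is correct in outline but takes a genuinely different and substantially longer route than the paper. The paper's argument for formality is a one-liner: because $\phi=\id$ for $n=2$, the formula $g_ig_j=g_{i+j}$ holds unconditionally, so the $k$-span of $\{\id, g_2, g_3, g_4,\ldots\}$ is a sub-\emph{dga} of $\enn_\Lambda(\tilde S)$ (it is closed under multiplication and consists entirely of cocycles, so has zero differential). Since the $g_k$ are linearly independent and span $HA$ degree by degree, the inclusion of this subalgebra is a quasi-isomorphism, exhibiting $\R\enn_\Lambda(S)$ as quasi-isomorphic to a graded algebra with zero differential --- i.e.\ formal. No Adams grading, no Kadeishvili, no weight bookkeeping.

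Your alternative via the Adams-graded Kadeishvili theorem also works, and it is the same machinery the paper deploys later for the $n>2$ case and for the Pagoda flop, so it has the virtue of being uniform. But you have left the crucial weight check unfinished, and this is precisely where the labour lies. To close it: reading off the displayed weight grading on $\tilde S$ gives $w_x=2n=4$ and $w_y=2n+2=6$ for $n=2$, so indeed $w_y=\tfrac32 w_x$, and every basis monomial $x^ay^b\in HA$ (with $b\in\{0,1\}$) has cohomological degree $2a+3b$ and Adams weight $4a+6b=2(2a+3b)$. The constraint weight $=2\times\text{degree}$ applied to the output of $m_r(u_1,\ldots,u_r)$ --- which has degree $\sum|u_i|+2-r$ and weight $\sum w(u_i)=2\sum|u_i|$ --- yields $2\sum|u_i| = 2(\sum|u_i|+2-r)$, i.e.\ $r=2$. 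So all higher products vanish. The point your write-up only gestures at is that the whole of $HA$ lies on the single line weight $=2\cdot\text{degree}$, which is a stronger and cleaner statement than ``no candidate monomial survives''. In short: the approach is sound, but you should either finish the explicit weight computation or, better, observe the much shorter sub-dga argument that the paper actually uses, which applies because the relation $\phi=\id$ (special to $n=2$) makes the cocycle representatives $g_k$ multiplicatively closed on the nose.
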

	\begin{proof}
		We see that $\phi=\id$ and hence $\ext_\Lambda^*(S,S)$ is isomorphic to the given algebra where $x$ has degree 2, $y$ has degree 3, and the differential is zero. It is easy to see that $\R\enn_\Lambda(S)$ must be formal, since it is quasi-isomorphic to the subalgebra generated by $\id, g_2$ and $g_3$.
	\end{proof}
	
	\begin{prop}
		Suppose that $n>2$. Then the Ext-algebra $\ext_\Lambda^*(S,S)\cong k[x,y]$ is freely generated as a cdga by $x$ and $y$. Note that $y^2=0$.
	\end{prop}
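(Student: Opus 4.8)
The plan is to identify the cohomology algebra $\ext^*_\Lambda(S,S)$ explicitly when $n>2$. We already have at hand the cohomology classes $g_k$ together with the multiplication rule $g_ig_j=g_{i+j}$ when $ij$ is even and $g_ig_j=g_{i+j}\phi$ otherwise, where $\phi$ is the degree-zero self-map with components involving $\beta^{n-2}$ and $\sigma^{n-2}$. With $x=[g_2]$ and $y=[g_3]$, the key point is to show that $[\phi]$ acts as the identity on the relevant cohomology groups, so that $g_ig_j=g_{i+j}$ on cohomology regardless of parity; this is because $\phi$, being a chain self-map of the resolution lifting the identity of $S$ in each slot, represents the identity element of $\ext^0_\Lambda(S,S)\cong k$. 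Granting this, $x$ and $y$ commute up to the relation imposed by $y^2$: one computes $y^2=[g_3][g_3]=[g_6 \phi]=[g_6]=x^3$ in the $n=2$ case, but for $n>2$ the map $g_6\phi$ has components multiplied by a positive power of $\beta$ or $\sigma$ (namely $\beta^{n-2}$, $\sigma^{n-2}$), which — once one checks it is a coboundary — forces $y^2=0$ rather than $y^2=x^3$. So the first main step is a careful bookkeeping argument, using the explicit periodic resolution $\tilde S$ and the weight grading, to verify that $g_3 g_3$ is nullhomotopic when $n>2$.

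Next I would pin down the remaining relations. Since $\ext^i_\Lambda(S,S)$ is one-dimensional in each degree $i\geq 0$ except $i=1$ where it vanishes, the underlying graded vector space of the cohomology algebra is exactly that of $k[x,y]$ with $|x|=2$, $|y|=3$, modulo $y^2=0$: the monomial $x^a$ spans degree $2a$ and $x^a y$ spans degree $2a+3$, matching dimensions in every degree. The only thing to rule out is a relation of the form $y^2=\lambda x^3$ with $\lambda\neq 0$, which is precisely what the coboundary computation above excludes. So the statement $\ext^*_\Lambda(S,S)\cong k[x,y]$ (graded-commutative, hence $y^2=0$ automatically since $y$ is odd, but here we also need $x^3\neq 0$ and no relation mixing them) follows once I have checked: (i) $x$ is not nilpotent, i.e. $x^a=[g_{2a}]\neq 0$ for all $a$, which is immediate since $g_{2a}$ lifts a nonzero projection; (ii) $x^a y\neq 0$ similarly; and (iii) there is no relation $y^2=\lambda x^3$, i.e. the coboundary claim. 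The word "freely generated as a cdga" should be read as: the graded algebra is the free graded-commutative algebra on $x$ in degree $2$ and $y$ in degree $3$, with the caveat that $y^2=0$ is forced by odd degree, and no further relations hold.

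The main obstacle I expect is step (iii), the explicit verification that $g_3g_3$ is a coboundary for $n>2$. This requires writing down the composite $g_3\circ g_3$ as a chain map $S^\bullet \to S^\bullet$ shifted by $6$, comparing it termwise with the composite $g_6\circ\phi$ (as dictated by the multiplication rule), and then exhibiting an explicit degree-$(-1)$ homotopy $h$ with $dh+hd$ equal to this composite. Because the resolution is only eventually periodic (the first few terms $P_1^2 \to P_2$ differ from the periodic part $P_1P_2 \to P_1P_2$), one has to handle the low-degree terms by hand before the periodic pattern takes over. The weight grading is a useful sanity check throughout: $g_2$ has weight $-(n+2)$ (dual to the weight of the degree-$2$ term), $g_3$ has weight $-2n$, and any homotopy one writes down must be weight-homogeneous of the appropriate weight, which severely constrains the possible entries and makes the candidate homotopy essentially unique once its shape is guessed. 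I would also double-check against the already-established fact (\ref{mmoddca}) that, were $X$ a minimal model, $\dca$ would be formal — here $X$ is only a partial resolution, so non-formality is expected and consistent with the eventual appearance of the higher multiplication $m_{n+1}$ in \ref{mainsurfacethm}.
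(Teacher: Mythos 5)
Your opening paragraph is internally inconsistent with the conclusion you draw from it, and the inconsistency is located at precisely the key step. You assert that ``$[\phi]$ acts as the identity\ldots because $\phi$, being a chain self-map of the resolution lifting the identity of $S$ in each slot, represents the identity element of $\ext^0_\Lambda(S,S)$,'' and you deduce $g_ig_j = g_{i+j}$ regardless of parity. But two sentences later you correctly observe that $g_6\phi$ has all its components scaled by $\beta^{n-2}$ or $\sigma^{n-2}$ and claim it is a coboundary, so that $y^2=0$. If $[\phi]$ were the identity, then $[g_6\phi]=[g_6]\neq 0$ and $y^2=x^3$ would follow (as it does in the $n=2$ case, where $\phi$ genuinely is the identity). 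You cannot have both.

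The resolution is that the opening claim is simply false for $n>2$: $\phi$ does not lift the identity on $S$. Its zeroth component $\phi_0=\sigma^{n-2}\cdot$ is left multiplication by $\sigma^{n-2}=(sb)^{n-2}$, which for $n>2$ lies in the radical of $\Lambda$, so the induced endomorphism of the one-dimensional module $S$ is zero, not the identity. Hence $\phi$ lifts the zero endomorphism and $[\phi]=0$ in $\ext^0_\Lambda(S,S)\cong k$. This is exactly the content of the paper's one-line proof (``one can check that $[\phi]=0$, and the result follows''), and it is all that is needed: $y^2=[g_3][g_3]=[g_6\phi]=[g_6][\phi]=0$, and, more generally, products $g_ig_j$ with $ij$ odd vanish in cohomology. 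The rest of your argument — that the graded vector space dimensions of $\ext^*_\Lambda(S,S)$ (one-dimensional in degrees $0$ and every degree $\geq 2$, zero in degree $1$) match those of the free cdga $k[x,y]$ with $|x|=2$, $|y|=3$, and that the only possible extra relation to rule out is $y^2=\lambda x^3$ — is correct and completes the identification once $[\phi]=0$ is in place. You should also drop the suggestion that one needs to explicitly exhibit a nullhomotopy for $g_3g_3$ as a ``bookkeeping argument'': observing $[\phi]=0$ by evaluating $\phi_0$ on $\hom_\Lambda(P_2,S)$ replaces that entire computation.
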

	\begin{proof}
		One can check that $[\phi]=0$, and the result follows.
	\end{proof}
	
	Now we need to split our argument into cases. We can handle the $n=2$ case already, but part of the argument will be identical for $n>2$, so we defer this for the present moment. We aim first to identify, for $n>2$, the higher $A_\infty$ multiplications on $\ext_\Lambda^*(S,S)$ making it quasi-isomorphic to $\R\enn_\Lambda(S)$, via a Massey product computation. In order to do this, note that the resolution $\tilde S$ of $S$ becomes eventually periodic, with period 2. It will be convenient for us to work in the 2-periodic part of the dga $\R\enn_\Lambda(S)$.
	
	\begin{defn}
		Let $E^\mathrm{ep}$ be the subspace of the dga $\enn_\Lambda(\tilde S)$ consisting of those maps of degree at least 2 which commute with $g_2$. We call such a map an \textbf{eventually periodic} map.
		\end{defn}
	
	\begin{lem}
		An eventually periodic map $f\in E^\mathrm{ep}$ is given by the formula
			$$f= \begin{tikzcd}[sep=large]	S^{j}\ar[d,swap,"f_0"]&S^{j+1}\ar[l,swap,"d_{j}"]\ar[d,swap,"f_1"]&S^{j+2}\ar[l,swap,"d_{j+1}"]\ar[d,swap,"f_2"]&S^{j+3}\ar[l,swap,"d_{j+2}"]\ar[d,swap,"f_3"]&S^{j+4}\ar[l,swap,"d_{j+3}"]\ar[d,swap,"f_2"]&\cdots\ar[l,swap,"d_{j+4}"]\\S^0&S^1\ar[l,"d_0"]&S^2\ar[l,"d_1"]&S^3\ar[l,"d_2"]&S^4\ar[l,"d_3"]&\cdots\ar[l,"d_4"]\end{tikzcd}$$where $f_i=f_{i+2}$ for $i\geq 2$, and $f_0=\left( 0 \;\; -1\right)f_2$ and $f_1=\stbtm{0}{b}{-1}{0}f_3$.
		\end{lem}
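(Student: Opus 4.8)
The plan is to directly unwind the condition defining $E^{\mathrm{ep}}$, namely that $f$ commutes with $g_2$ in the endomorphism dga $\enn_\Lambda(\tilde S)$. An element $f$ of this dga of degree $d\geq 2$ is precisely a family of $\Lambda$-linear maps $f_i\colon\tilde S^{\,i+d}\to\tilde S^{\,i}$ for $i\geq 0$ (no compatibility with the differential is imposed, since eventually periodic maps need not be cocycles), and by taking $k=1$ in the definition of $g_{2k}$ its components are $(g_2)_0=\left(0\;\; -1\right)\!\cdot$, $(g_2)_1=\stbtm{0}{b}{-1}{0}\!\cdot$, and $(g_2)_i=\id$ for all $i\geq 2$ — so that the tail of $g_2$ is exactly the two-periodicity isomorphism of $\tilde S$ in degrees $\geq 2$. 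The equation $g_2f=fg_2$ then says, component by component, that $(g_2)_i\circ f_{i+2}=f_i\circ(g_2)_{i+d}$ for every $i\geq 0$ (the two sides being the $i$-th components of the two composites, in whichever order one's convention assigns them).

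Now I would compare these two expressions. Because $d\geq 2$ we have $i+d\geq 2$ for every $i\geq 0$, hence $(g_2)_{i+d}=\id$ and the right-hand side is simply $f_i$; this is the only place the hypothesis $\deg f\geq 2$ is used. For $i\geq 2$ we additionally have $(g_2)_i=\id$, so the equation reads $f_{i+2}=f_i$, which is the asserted periodicity. For $i=0$ and $i=1$ it reads instead $f_0=\left(0\;\; -1\right)f_2$ and $f_1=\stbtm{0}{b}{-1}{0}f_3$, the two boundary relations in the statement; here one also invokes the identifications $\tilde S^{\,d}=\tilde S^{\,d+2}$ and $\tilde S^{\,d+1}=\tilde S^{\,d+3}$, valid since $d\geq 2$ and the resolution is two-periodic in degrees $\geq 2$, so that these composites have the correct source. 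Conversely, running the computation backwards shows that any family of $\Lambda$-linear maps satisfying these relations assembles into an element of $\enn_\Lambda(\tilde S)$ that commutes with $g_2$, so the displayed formula genuinely characterises $E^{\mathrm{ep}}$.

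I do not expect a real obstacle: the lemma is a bookkeeping translation of ``commutes with $g_2$'' into explicit conditions on matrix components, and the only thing requiring attention is keeping the degree shifts straight — tracking which term of $\tilde S$ each $f_j$ and each $(g_2)_j$ lives over, and checking that shifting by $d\geq 2$ never pushes the relevant component of $g_2$ out of the periodic range $i\geq 2$. I would finish by recording the upshot, that an element of $E^{\mathrm{ep}}$ in a fixed degree is completely determined by the finite data $(f_2,f_3)$, which is precisely what makes the two-periodic part of $\R\enn_\Lambda(S)$ tractable for the $A_\infty$ computation carried out next.
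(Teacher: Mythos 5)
Your proof is correct and follows the same approach as the paper, whose entire argument is the one-line instruction to compute $[f,g_2]=f\circ g_2-g_2\circ f$ and set it to zero; you have simply carried out the index bookkeeping that the paper elides. One small point worth making explicit: since $|g_2|=2$ is even, the graded commutator coincides with the ordinary one, so there is genuinely no sign to track in the componentwise equation $(g_2)_i\circ f_{i+2}=f_i\circ (g_2)_{i+d}$.
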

	\begin{proof}
		Compute $[f,g_2]=f\circ g_2 - g_2 \circ f$ and set it to zero.
		\end{proof}
	In particular, $f$ is determined by the pair $(f_2, f_3)$, and any such pair of maps defines an eventually periodic map.
	\begin{defn}
		Let $f$ be an eventually periodic map of a given degree. Since $f$ is determined by its components $f_2$ and $f_3$, we use the notation $f_2 \vert f_3$ to specify $f$ uniquely.
		\end{defn}
	\begin{defn}
		Let $f\in \enn_\Lambda(\tilde S)$ be a map of degree $\geq 2$ satisfying $f_i=f_{i+2}$ for $i\geq N$ for some natural $N\geq 2$. The \textbf{periodicisation} of $f$ is the map $f^{\mathrm{ep}} \in E^{\mathrm{ep}}$ of the same degree as $f$ defined by the formula $$f^{\mathrm{ep}}\coloneqq\begin{cases}f_N\vert f_{N+1}&\text{if }N \text{ is even} \\ f_{N+1}\vert f_{N}& \text{if }N \text{ is odd}\end{cases}.$$
		\end{defn}In other words, we go up the resolution until $f$ becomes $2$-periodic, and then extend $f$ back down to an eventually periodic map. In particular if $f\in E^{\mathrm{ep}}$, then $f^{\mathrm{ep}}=f$. Note that $f^{\mathrm{ep}}$ agrees with $f$ in all degrees $\geq N$.
	\begin{lem}
		The complex $E^\mathrm{ep}$ is a nonunital dga, and the inclusion $\iota:E^\mathrm{ep}\into \enn_\Lambda(\tilde S)$ is a dga map that induces isomorphisms on cohomology in degrees $> 2$ and a surjection on cohomology in degrees $\geq 2$.
		\end{lem}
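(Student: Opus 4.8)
The statement has three parts: that $E^{\mathrm{ep}}$ is a nonunital dga, that the inclusion $\iota$ is a dga map, and that $\iota$ induces isomorphisms on cohomology in degrees $>2$ and a surjection in degree $2$. The plan is to take the first two as essentially formal and concentrate on the third.

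First I would verify that $E^{\mathrm{ep}}$ is closed under the differential and the multiplication of $\enn_\Lambda(\tilde S)$. Since $E^{\mathrm{ep}}$ is defined as the subspace of maps of degree $\geq 2$ commuting with the (cocycle) $g_2$, closure under the differential follows because $d(g_2)=0$: if $[f,g_2]=0$ then $[df,g_2]=\pm[f,dg_2]\pm d[f,g_2]=0$, and $df$ still has degree $\geq 2$ (here I use that $g_2$ has even degree, so the graded commutator behaves well; the precise signs follow the Koszul rule). Closure under multiplication is even easier: if $f,f'$ both commute with $g_2$ then so does $ff'$, and $ff'$ has degree $\geq 4\geq 2$. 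The unit of $\enn_\Lambda(\tilde S)$ is $g_0=\id$, which has degree $0$ and is therefore excluded, so $E^{\mathrm{ep}}$ is genuinely only a \emph{nonunital} dga. The inclusion $\iota$ is a dga map by construction since the differential and product on $E^{\mathrm{ep}}$ are restricted from $\enn_\Lambda(\tilde S)$.

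The substantive part is the cohomology statement, and the key tool is periodicisation. The point is that for each $i\geq 2$ the resolution $\tilde S$ is $2$-periodic from some point on, so any $f\in\enn_\Lambda(\tilde S)$ of degree $\geq 2$ becomes eventually $2$-periodic, and $f^{\mathrm{ep}}\in E^{\mathrm{ep}}$ agrees with $f$ in all sufficiently high degrees. I would argue as follows. \textbf{Surjectivity in degrees $\geq 2$}: given a cocycle $f\in\enn_\Lambda(\tilde S)^i$ with $i\geq 2$, I claim $f$ is cohomologous to a cocycle in $E^{\mathrm{ep}}$. Since $\ext^i_\Lambda(S,S)$ is one-dimensional for $i\geq 2$, spanned by $[g_i]$ (a map lying in $E^{\mathrm{ep}}$ for $i\geq 2$ by the explicit formulas for $g_{2k}$, $g_{2k+1}$), and $\ext^i_\Lambda(S,S)=0$ for $i=1$, every cohomology class in degree $\geq 2$ is represented by an element of $E^{\mathrm{ep}}$; hence $H^i(\iota)$ is surjective for $i\geq 2$. \textbf{Injectivity in degrees $>2$}: suppose $f\in E^{\mathrm{ep}}$ is a cocycle of degree $i>2$ which becomes a coboundary in $\enn_\Lambda(\tilde S)$, say $f=dh$ with $h\in\enn_\Lambda(\tilde S)^{i-1}$. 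Since $i-1\geq 2$, $h$ is eventually $2$-periodic, so $h^{\mathrm{ep}}\in E^{\mathrm{ep}}$ is defined; because $h$ and $h^{\mathrm{ep}}$ agree in all sufficiently high degrees, $d(h-h^{\mathrm{ep}})$ is supported in finitely many (low) degrees, and since $f=dh$ is eventually periodic and $dh^{\mathrm{ep}}\in E^{\mathrm{ep}}$ is eventually periodic, the difference $f-dh^{\mathrm{ep}}$ is a coboundary supported in low degrees. One then checks — using one-dimensionality of $\ext^i_\Lambda(S,S)$ and vanishing of $\ext^1_\Lambda(S,S)$ — that $f=dh^{\mathrm{ep}}$ already, i.e.\ $f$ is a coboundary in $E^{\mathrm{ep}}$. (In degree $i=2$ this last step fails precisely because $\ext^1_\Lambda(S,S)=0$ prevents us from correcting, so we only get surjectivity there, not injectivity.)

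\textbf{The main obstacle.} The delicate point is the injectivity argument in degrees $>2$: one has to show that if $f\in E^{\mathrm{ep}}$ bounds in the larger dga, it bounds inside $E^{\mathrm{ep}}$. Naively replacing $h$ by $h^{\mathrm{ep}}$ is not quite enough — one must control the discrepancy $d(h-h^{\mathrm{ep}})$, which lives in the "non-periodic tail" in low homological degrees, and argue it cannot obstruct anything. This amounts to a careful bookkeeping with the explicit differentials $d_j$ of $\tilde S$ and the formulas $f_0=(0\ {-}1)f_2$, $f_1=\stbtm{0}{b}{-1}{0}f_3$ defining eventually periodic maps; the key input making it work is the computation $\ext^1_\Lambda(S,S)=0$, which kills the potential obstruction in the one degree where the periodic and non-periodic parts of the complex are glued. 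I would present this as a short lemma isolating exactly the statement that a cocycle in $E^{\mathrm{ep}}$ of degree $>2$ bounding in $\enn_\Lambda(\tilde S)$ bounds in $E^{\mathrm{ep}}$, proved by the periodicisation trick plus the vanishing of $\ext^1$.
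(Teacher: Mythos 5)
Your overall approach is the same as the paper's — show the $g_j$ land in $E^{\mathrm{ep}}$ for surjectivity, and use periodicisation of the bounding cochain $h$ for injectivity. The surjectivity part is fine. But the final step of your injectivity argument has a genuine gap, and your diagnosis of "the main obstacle" is misdirected.

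You correctly set $l\coloneqq h^{\mathrm{ep}}$ and observe that $f-dh^{\mathrm{ep}}=d(h-h^{\mathrm{ep}})$ is supported in low degrees. At this point you reach for $\operatorname{Ext}$-group vanishing to conclude $f=dh^{\mathrm{ep}}$, but that cannot work directly: $\operatorname{Ext}$ computations control cohomology \emph{classes}, whereas you need the stronger cochain-level identity $f=dh^{\mathrm{ep}}$ (otherwise you have not exhibited $f$ as a coboundary inside $E^{\mathrm{ep}}$, only shown that its class vanishes in the bigger complex, which you already knew). The observation you are missing — and which makes the paper's proof work in one line — is purely structural: both $f$ and $dh^{\mathrm{ep}}$ lie in the subspace $E^{\mathrm{ep}}$, so $f-dh^{\mathrm{ep}}\in E^{\mathrm{ep}}$; but an eventually periodic map is determined by its pair $(f_2,f_3)$, with $f_i=f_{i+2}$ for $i\geq 2$ and $f_0,f_1$ determined by $f_2,f_3$, so an element of $E^{\mathrm{ep}}$ that vanishes in all sufficiently high components vanishes identically. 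No $\operatorname{Ext}$ computation, and in particular no use of $\operatorname{Ext}^1_\Lambda(S,S)=0$, is needed or relevant here.

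A related point: your explanation that "in degree $i=2$ this last step fails precisely because $\operatorname{Ext}^1_\Lambda(S,S)=0$" is also not right. The argument fails in degree $2$ for a simpler, structural reason: a bounding cochain $h$ would have degree $1$, and the periodicisation $h^{\mathrm{ep}}$ is only defined for maps of degree $\geq 2$, so the construction simply does not apply. The vanishing of $\operatorname{Ext}^1$ plays no role in this failure either.
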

	\begin{proof}
		The fact that $\iota$ is an inclusion of nonunital dgas is not hard to see. One can verify that the $g_j$ are eventually periodic, and since they generate the cohomology of $\ext^*_\Lambda(S,S)$, the map $\iota$ must be a quasi-surjection in degrees $\geq 2$. To see that $\iota$ is a quasi-injection in degrees $>2$, take an $h\in \enn_\Lambda(\tilde S)$ of degree $\geq 2$, and assume that $dh\in E^\mathrm{ep}$. We need to find an $l\in E^\mathrm{ep}$ with $dh=dl$. Because $dh\in E^\mathrm{ep}$, $h$ must be $2$-periodic in high degrees. Since $h^\mathrm{ep}\in E^\mathrm{ep}$, we have $dh^\mathrm{ep}\in E^\mathrm{ep}$, and so $dh-dh^\mathrm{ep}\in E^\mathrm{ep}$. But $h$ agrees with $h^\mathrm{ep}$ in high degrees, and so $dh-dh^\mathrm{ep}$ is zero in high degrees. So $dh-dh^\mathrm{ep}=0$.
	\end{proof}
In particular, any map of degree at least $3$ in $\enn_\Lambda(\tilde S)$  is homotopic to an eventually periodic map. We use this to assist us in our Massey product computation.
	\begin{prop}Suppose that $n>2$. Then the Massey product $\langle y,\ldots, y\rangle_n$ is nontrivial.
	\end{prop}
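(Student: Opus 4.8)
The plan is to compute the $n$-fold Massey product $\langle y,\ldots,y\rangle_n$ explicitly in the model $\enn_\Lambda(\tilde S)$ by exhibiting an appropriate defining system and then showing the resulting class is the generator $[g_{2n}]$ (up to a nonzero scalar) of $\ext_\Lambda^{2n}(S,S)$, which is one-dimensional. Recall from \ref{masseylemma} that since $y$ has odd degree, $\langle y,\ldots,y\rangle_n$ is the set of cohomology classes of sums $\sum_{i=1}^{n-1} b_i b_{n-i}$ where $b_1\coloneqq g_3$ is a fixed representative of $y$, and the $b_i$ (for $1<i<n$) are chosen so that $db_i = \sum_{1\leq p<i} b_p b_{i-p}$. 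So the first task is to inductively construct these $b_i$.

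First I would work in the eventually periodic dga $E^\mathrm{ep}$: since all the relevant maps have degree $\geq 3$, everything in sight can be replaced by its periodicisation, and eventually periodic maps are conveniently recorded by a single pair $(f_2\mid f_3)$. The key structural input is the multiplication rule $g_ig_j = g_{i+j}$ when $ij$ is even and $g_ig_j = g_{i+j}\phi$ otherwise, together with the fact that $[\phi]=0$ for $n>2$; this means that each partial product $\sum_{1\leq p<i}b_pb_{i-p}$ will be, up to homotopy, a multiple of some $g_{i+1}\phi$, hence nullhomotopic, so the required $b_i$ exists. Concretely, I would choose explicit nullhomotopies for $\phi$ (built from the components $\beta^{n-2}\cdot$ and $\sigma^{n-2}\cdot$ appearing in the definition of $\phi$, which are themselves in the image of the relevant differentials since $\beta^{n-1}$, $\sigma^{n-1}$ appear in $\tilde S$) and assemble the $b_i$ recursively, tracking the Adams weight to cut down the possibilities at each stage. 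Once the defining system is in place, I compute the final sum $\sum_{i=1}^{n-1}b_ib_{n-i}$, which lands in degree $2n$, and verify that its cohomology class is nonzero by comparing against $g_{2n}$ — this can be checked on a single component of the complex, e.g.\ the degree-$2n$ term, using that $\ext_\Lambda^{2n}(S,S)\cong k$ is spanned by $[g_{2n}]$.

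The main obstacle I anticipate is the bookkeeping in the recursive construction of the defining system: one must simultaneously choose the $b_i$ and verify the cocycle-type conditions $db_i=\sum_{p<i}b_pb_{i-p}$ hold on the nose, and the homotopies witnessing $\phi\simeq 0$ interact with the multiplication in a way that could a priori introduce lower-order correction terms. Using the Adams grading is essential here to rule out spurious contributions and to pin down the $b_i$ up to the ambiguity that does not affect the Massey product. A secondary subtlety is making sure that, having computed in $E^\mathrm{ep}$, the class we obtain genuinely represents the Massey product computed in the full dga $\enn_\Lambda(\tilde S)$ — but this follows since $\iota:E^\mathrm{ep}\into\enn_\Lambda(\tilde S)$ is a dga map and a quasi-isomorphism in the relevant degrees, so it carries defining systems to defining systems and preserves the resulting classes. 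Having established that $\langle y,\ldots,y\rangle_n\ni \lambda[g_{2n}]$ for some $\lambda\neq 0$, one concludes by \ref{masseysarehighermults} that any minimal $A_\infty$-model of $\R\enn_\Lambda(S)$ has $m_n(y,\ldots,y)\neq 0$, hence $\R\enn_\Lambda(S)$ is not formal, which then feeds into the computation of $\dca$ as its Koszul dual in \ref{surfdcang2}.
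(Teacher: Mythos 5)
Your approach matches the paper's: invoke \ref{masseylemma} to get a defining system starting from $b_1 = g_3$, work in the eventually periodic dga $E^\mathrm{ep}$ (using that $\iota:E^\mathrm{ep}\into\enn_\Lambda(\tilde S)$ is a quasi-isomorphism in degrees $>2$, so all the $b_i$ can be taken eventually periodic), build the $b_i$ recursively out of explicit nullhomotopies witnessing the vanishing of the products, and identify the resulting class as a nonzero multiple of the generator of a one-dimensional Ext group. The paper implements exactly this by introducing the degree-$(2i-1)$ eventually-periodic maps $\nu_i$, assembling each $e_i$ as a $\Z$-linear combination of the $\nu_j\{k\}$, and computing $e_1e_{n-1}+\cdots+e_{n-1}e_1\simeq(-1)^n g_{2n+2}$.

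One slip worth correcting: your degree arithmetic is off by two in both places it appears. Since $y$ has degree $3$ and each $b_i$ has degree $2i+1$, every term $b_ib_{n-i}$ has degree $2n+2$, so $\langle y,\ldots,y\rangle_n$ lives in $\ext_\Lambda^{2n+2}(S,S)$ and should be compared against $g_{2n+2}$ (equivalently, the class $x^{n+1}$), not $g_{2n}$. This is consistent with the general formula $\deg\langle u_1,\ldots,u_r\rangle = \sum \deg u_i - (r-2)$ and with the paper's conclusion that $(-1)^n x^{n+1}$ lies in the Massey product. The error doesn't damage the strategy, but if followed literally you would look for a cocycle in the wrong degree; and the phrase \emph{comparing against $g_{2n}$ on a single component} should be sharpened to: show the defining-system sum equals $(-1)^n g_{2n+2}$ up to coboundary, which suffices because $\ext_\Lambda^{2n+2}(S,S)\cong k$ with $g_{2n+2}$ as a nonzero generator.
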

	\begin{proof}
		This is rather involved notationally but ultimately straightforward. We in fact show that $(-1)^nx^{n+1}$ is an element of $\langle y,\ldots, y\rangle_n$. We are going to proceed by setting $e_1\coloneqq g_3$ and inductively finding $e_i$ such that $de_i=e_1 e_{i-1}+\cdots+e_{i-1}e_1$. Note that we will require $de_2 =g_3^2$. For $2\leq i\leq n$, define a degree $2i-1$ eventually periodic map $\nu_i$ by the formula$$\nu_i\coloneqq \tbtm{\beta^{n-i}}{0}{t}{-\sigma}\mid\tbtm{\beta}{0}{t}{-\sigma^{n-i}}$$
		The $\nu_i$ will satisfy some simple relations, but we will need to keep track of the degrees of our maps. Unfortunately this makes things notationally messy. If $w=w_2\vert w_3$ is an eventually periodic map of degree $j$, then we denote by $w\{l\}$ the eventually periodic map of degree $j+l$ given by the formula$$w\{l\}\coloneqq\begin{cases}w_2\vert w_3&\text{if }l \text{ is even} \\ w_3\vert w_2& \text{if }l \text{ is odd}\end{cases}.$$In other words, $w\{l\}$ is $w$ but viewed as a map of a different degree. One can check that the following hold:
		
					\begin{enumerate}
			\item $d \nu_i = \tbtm{\beta^{n-i+1}}{0}{0}{\sigma^{n-i+1}} \mid \tbtm{\beta^{n-i+1}}{0}{0}{\sigma^{n-i+1}}$.
			\item $\nu_i\nu_j = d(\nu_i\{2j-2\} + \nu_j\{2i-2\})$.
			\item If $i<n$ then $g_3\nu_i+\nu_ig_3 =d( -\nu_{i+1}-\nu_2\{2i-1\})$.
			\item If $i<n$ then $\nu_i \simeq 0$.
		\end{enumerate}
Observe that $d \nu_3=g_3^2$. So we set $e_1\coloneqq g_3$, and we want to inductively find $e_i$ such that $d e_i=e_1 e_{i-1}+\cdots+e_{i-1}e_1$, starting with $e_2=\nu_3$. We prove by induction that for $2\leq i<n$ there exist maps $e_i$ of degree $2i+1$ such that: \begin{enumerate}
				\item$e_i$ is a $\Z$-linear combination of the maps $\nu_{i+1},\ldots,\nu_2\{2i-1\}$, and the coefficient of $\nu_{i+1}$ is $(-1)^i$.
				\item $d e_i= e_1 e_{i-1}+\cdots+e_{i-1}e_1$.
			\end{enumerate}

			The idea of the induction is simple; we just expand out each expression $e_1 e_{i-1}+\cdots+e_{i-1}e_1$ and `integrate term-by-term'. The hard part is keeping track of all the indices. The base case is clear; we may take $e_2\coloneqq \nu_3$ as above. For the induction step, suppose that for all $j<i$, all $e_j$ are defined and have the two properties above. We wish to construct $e_i$. For $j\geq 2$ write $$e_j=\sum_{r=2}^{j+1}\lambda_r^j\nu_r\{a^j_r\}$$with $\lambda_{j+1}^j=(-1)^j$ and $a^j_r=2(j-r)+2$. Then it is clear that for $1<j,k<i$, we have the identity $$e_je_k=\sum_{r=2}^{j+1}\sum_{q=2}^{k+1}\lambda_r^j\lambda_q^k\nu_r\nu_q\{a^j_r+a^k_q\}$$Hence, if we set $$m_{jk}\coloneqq \sum_{r=2}^{j+1}\sum_{q=2}^{k+1}\lambda_r^j\lambda_q^k(\nu_r\{a^j_r+2k\}+\nu_q\{a^k_q+2j\})$$we see that $d m_{jk}= e_je_k$. Observe that $m_{jk}$ is a map of degree $2(j+k)+1$. Moreover we have $$g_3e_{i-1}+e_{i-1}g_3=\sum_{r=2}^{i}\lambda_r^{i-1}(g_3\nu_r+\nu_rg_3)\{a^{i-1}_r\}$$So if we set $$m\coloneqq -\sum_{r=2}^{i}\lambda_r^{i-1}(\nu_{r+1}+\nu_2\{2r-1\})\{a^{i-1}_r\}$$we see that $m$ is a map of degree $2i+1$ with $d m=e_1e_{i-1}+e_{i-1}e_1$. Thus if we set $$e_i\coloneqq m + m_{2(i-2)}+\cdots+m_{(i-2)2}$$ we see that by construction, $e_i$ satisfies condition $2$. Clearly $e_i$ is a is a $\Z$-linear combination of $\nu_{i+1},\ldots,\nu_2\{2i-1\}$. So we just need to check what the coefficient of $\nu_{i+1}$ in $e_i$ is. It is easy to see that this coefficient is $-\lambda^{i-1}_i$ which by the induction hypothesis is $-(-1)^{i-1}=(-1)^i$. Hence $e_i$ satisfies condition $1.$ as well.
\p
			We are almost done. By \ref{masseylemma}, one element of the $n$-fold Massey product $\langle g_3,\ldots,g_3\rangle$ is given by $[e_1 e_{n-1}+\cdots+e_{n-1}e_1]$. So it suffices to prove that $e_1 e_{n-1}+\cdots+e_{n-1}e_1 \not\simeq 0$. We see that $e_je_k\simeq 0$ holds as long as $1<j,k<n$, so that we have $e_1 e_{n-1}+\cdots+e_{n-1}e_1\simeq e_1 e_{n-1}+e_{n-1}e_1$. Observe also that $e_1\nu_j+\nu_je_1\simeq0$ holds if $2\leq j<n$. Hence we see that we have a homotopy $e_1e_{n-1}+e_{n-1}e_1\simeq (-1)^{n-1}(e_1\nu_n+\nu_ne_1)$. It is easy to check that $e_1\nu_n+\nu_ne_1 \simeq -g_{2n+2}$. Hence $e_1 e_{n-1}+\cdots+e_{n-1}e_1 \simeq (-1)^ng_{2n+2}\not\simeq 0$.
\end{proof}
	\begin{cor}
		When $n>2$, $\R\enn_\Lambda(S)$ is not a formal dga.
	\end{cor}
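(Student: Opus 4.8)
The plan is to deduce the corollary from the preceding proposition together with the theorem relating Massey products to the higher multiplications of a minimal model. Recall that the previous proposition establishes that the $n$-fold Massey product $\langle y,\ldots,y\rangle_n$ contains the nonzero class $(-1)^n x^{n+1}$, and in particular is nonempty and contains a nonzero element. The key input is \ref{masseysarehighermults}, which says that if one equips the cohomology algebra $\ext_\Lambda^*(S,S)$ with an $A_\infty$-structure via Merkulov's construction, then for a nonempty Massey product the higher multiplication $p_n(y,\ldots,y)$ is, up to sign, an element of $\langle y,\ldots,y\rangle_n$.

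First I would invoke Kadeishvili's theorem \ref{kadeish} to put the structure of a minimal $A_\infty$-algebra on $\ext_\Lambda^*(S,S)$ quasi-isomorphic to $\R\enn_\Lambda(S)$; choosing this structure via Merkulov's construction, \ref{masseysarehighermults} applies. Then I would observe that if $\R\enn_\Lambda(S)$ were formal, the minimal model $\ext_\Lambda^*(S,S)$ would have to be isomorphic as an $A_\infty$-algebra to $\ext_\Lambda^*(S,S)$ equipped with zero higher multiplications, which would force every Massey product that exists to vanish (as noted in the remark following \ref{masseysarehighermults}: ``if $A$ is a formal dga, then all Massey products (that exist) will vanish''). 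But the previous proposition exhibits a nonzero element $(-1)^n x^{n+1}$ of $\langle y,\ldots,y\rangle_n$, and since $x$ is a polynomial generator of $\ext_\Lambda^*(S,S)\cong k[x,y]$ with $x^{n+1}\neq 0$, this is a genuinely nonzero Massey product. This contradiction shows $\R\enn_\Lambda(S)$ is not formal.

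There is essentially no obstacle here: the real work was done in the previous proposition's lengthy Massey product computation, and this corollary is a one-line formal consequence. The only thing to be slightly careful about is the logical shape of the argument — one should phrase it as ``the existence of a nonvanishing higher Massey product obstructs formality'' rather than attempting to directly identify $p_n$, since \ref{masseysarehighermults} only identifies $p_n(y,\ldots,y)$ up to sign and as \emph{some} element of the Massey product set, which is enough because every element of $\langle y,\ldots,y\rangle_n$ here is nonzero (they all differ by the indeterminacy, but the computation pins down a nonzero representative and the ambiguity lives in a submodule; in any case nonemptiness plus containing a nonzero class suffices to obstruct formality by the standard argument). Thus the proof is: apply \ref{masseysarehighermults} to the nonzero Massey product from the previous proposition to conclude $p_n\neq 0$ in the Merkulov minimal model, hence $\ext_\Lambda^*(S,S)$ is not an $A_\infty$-algebra with vanishing higher products, hence $\R\enn_\Lambda(S)$ is not formal.
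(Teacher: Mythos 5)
Your proposal takes essentially the same route as the paper, which states this corollary with no separate proof and treats it as an immediate consequence of the preceding Massey product computation. The logic is exactly the one you describe: \ref{pagmassey}'s analogue in the surface setting shows $(-1)^n x^{n+1}\in\langle y,\ldots,y\rangle_n$, and one concludes non-formality via \ref{masseysarehighermults} and the standard remark that formal dgas have vanishing Massey products.

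One caveat worth flagging in your parenthetical: the standard obstruction to formality is that the Massey product set does \emph{not contain} $0$, whereas what the proposition literally produces is that it \emph{contains a nonzero element}. These are not automatically equivalent; if the indeterminacy of $\langle y,\ldots,y\rangle_n$ were nontrivial, the set could contain both $0$ and $(-1)^n x^{n+1}$, and then a different choice in Merkulov's construction could realise $p_n(y,\ldots,y)=0$, defeating the obstruction. You gesture at this (``the ambiguity lives in a submodule'') but then retreat to ``containing a nonzero class suffices by the standard argument,'' which is not quite right as stated. In the present case the gap is easily closed: the relevant indeterminacy vanishes, since the lower-order products entering it are killed by $y^2=0$ and by the Adams-weight constraints used in \ref{surfeng2}, so $\langle y,\ldots,y\rangle_n$ is a singleton $\{(-1)^n x^{n+1}\}$ and in particular excludes $0$. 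The paper is equally terse about this point, so your proof is on the same footing; just be careful to phrase the formality obstruction as ``$0\notin\langle y,\ldots,y\rangle_n$'' rather than ``$\langle y,\ldots,y\rangle_n$ contains a nonzero element.''
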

	
	\begin{prop}\label{surfeng2}
		Let $n>2$. Then $\R\enn_\Lambda(S)$ is quasi-isomorphic to the strictly unital minimal $A_\infty$-algebra with underlying graded algebra $k[x,y]$, with $x$ in degree 2 and $y$ in degree 3, with the only nontrivial higher multiplications being $m_n(x^{b_1}y,\ldots,x^{b_l}y)=x^{n+1+b_1+\cdots +b_n}$.
	\end{prop}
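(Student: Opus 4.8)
The plan is to pin down the $A_\infty$-structure on the cohomology algebra $\ext_\Lambda^*(S,S)\cong k[x,y]$ by combining three inputs: the Adams grading carried by the resolution $\tilde S$, the Massey product computation already carried out (which forces \emph{some} nonzero higher multiplication involving $y$), and Kadeishvili's theorem \ref{kadeish} together with the relation between Massey products and Merkulov multiplications \ref{masseysarehighermults}. First I would record the weights: with the conventions on $\Lambda$ above, $S$ has trivial weight, and $x=[g_2]$, $y=[g_3]$ acquire weights that one reads off from the graded resolution $\cdots\to P_1(3n+2)\oplus P_2(4n)\to\cdots\to P_2(0)$; a direct check gives $x$ weight $2n$ (or whatever the resolution forces) and $y$ weight $2n+1$, so that a monomial $x^{b_1}y\otimes\cdots\otimes x^{b_r}y$ has total weight a strictly increasing function of $r$ and of $\sum b_i$. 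The point of the Adams grading is that any higher multiplication $m_r$ must preserve weight, so $m_r$ applied to a tuple of elements of odd $x$-degree can only output an element of $k[x,y]$ of the matching weight, and an elementary bookkeeping argument (exactly as in \ref{ne2formal} and the Pagoda computation) shows that the only weight-compatible, degree-compatible, strictly-unital possibility is $m_{n+1}(x^{b_1}y,\ldots,x^{b_{n+1}}y)=\lambda\,x^{n+1+b_1+\cdots+b_{n+1}}$ for a single scalar $\lambda$, together with $m_2$ the multiplication and everything else zero.

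Next I would fix the scalar. The preceding Massey product proposition shows $(-1)^n x^{n+1}\in\langle y,\ldots,y\rangle_{n+1}$ (the $(n+1)$-fold product, matching the statement; the displayed computation ran the induction to produce $(-1)^n g_{2n+2}$, which represents $x^{n+1}$). By \ref{masseysarehighermults}, up to sign the Merkulov multiplication $p_{n+1}(y,\ldots,y)$ equals a representative of this Massey product, so $p_{n+1}(y,\ldots,y)=\pm x^{n+1}\neq 0$; in particular $\lambda\neq 0$. Since the Adams-graded version of Merkulov's construction (\ref{adamsrmk}) produces an Adams-graded strictly unital minimal model, and rescaling the generator $y\mapsto \mu y$ for a suitable scalar $\mu$ (using that $m_{n+1}$ is $(n+1)$-linear in $y$) absorbs $\lambda$, I can normalise to $m_{n+1}(x^{b_1}y,\ldots,x^{b_{n+1}}y)=x^{n+1+b_1+\cdots+b_{n+1}}$. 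This rescaling does not disturb $m_2$ or the vanishing of the other $m_r$, and $y^2=0$ holds automatically since $y$ is an odd commutative element. That yields exactly the claimed minimal $A_\infty$-algebra, and invoking \ref{recovwk}-style uniqueness (really just uniqueness of the minimal model up to $A_\infty$-isomorphism, \ref{kadeish}) finishes the identification of $\R\enn_\Lambda(S)$.

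The main obstacle is the weight bookkeeping: I need to verify carefully that the Adams grading on $\tilde S$ really does force $m_r=0$ for all $2<r\neq n+1$ and for all $r=n+1$ applied to tuples not all of the form $x^{b_i}y$. Concretely, $m_r$ has cohomological degree $2-r$, so on a product of elements each of positive even degree the output degree is negative unless enough odd-degree $y$-factors are present; one has to enumerate which tuples $x^{a_1}y^{\epsilon_1}\otimes\cdots$ can map to a nonzero element of $k[x,y]$ in the right (positive, even) degree, and then check that weight-preservation kills all of them except the single family identified above. The argument is the same in spirit as the Pagoda case (\S on Pagoda flops) but the combinatorics is slightly more delicate because here $x$ has degree $2$ rather than being a single generator in degree $1$. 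Once this enumeration is done the rest is formal, so I would allocate most of the writing effort to presenting that enumeration cleanly, perhaps by first observing that $m_r$ must be $k[\eta]$-linear after passing to the Koszul dual (as in \ref{dcapagoda}) to cut down the cases.

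Finally, with $\R\enn_\Lambda(S)$ identified one obtains $\dca$ by \ref{dqiskd} as the $A_\infty$ Koszul dual $\R\enn_\Lambda(S)^!$: dualising the basis $\{x^{b}y\}$ and the generator $x$ produces generators $\zeta$ (dual to $y$, degree $-1$) and $\eta$ (dual to $x$, degree $-2$), and unwinding the $A_\infty$ bar differential converts the single higher multiplication $m_{n+1}$ into the single higher multiplication $m_{n+1}(\eta^{b_1}\zeta,\ldots,\eta^{b_{n+1}}\zeta)=\eta^{b_1+\cdots+b_{n+1}+n}$ on $k[\eta,\zeta]$, with $\zeta^2=0$ forced by parity; this is precisely \ref{mainsurfacethm} in the case $n\geq 2$, so \ref{surfeng2} combined with the Koszul-dual computation completes \ref{surfdcang2}.
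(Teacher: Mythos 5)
Your overall strategy — use the Adams grading to kill almost all higher multiplications, use the Massey product calculation to establish non-formality, pin down the scalar via Merkulov/Kadeishvili — is exactly the route the paper takes, and the auxiliary Stasheff-identity $x$-linearity step you sketch at the end is also present in the paper. However, there is a genuine off-by-one error running through the proposal that the degree arithmetic rules out.

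You identify the nonzero higher multiplication as $m_{n+1}$ applied to $n+1$ inputs of the form $x^{b_i}y$, with output $x^{n+1+\sum b_i}$, and you read the preceding Massey product proposition as producing the $(n+1)$-fold product $\langle y,\dots,y\rangle_{n+1}$. Both are wrong by one. The Massey product proposition in the text is about the $n$-fold product $\langle y,\dots,y\rangle_n$, and the element it produces, $(-1)^n g_{2n+2}=(-1)^n x^{n+1}$, has cohomological degree $2n+2$, which is precisely the degree of an $n$-fold Massey product of degree-$3$ classes (the $r$-fold product of degree-$3$ classes lands in degree $2r+2$). Correspondingly, $m_r$ applied to $r$ inputs $x^{b_i}y$ outputs something of degree $(2-r)+\sum(3+2b_i)=2+2r+2\sum b_i$, and for this to equal $\deg x^{1+r+\sum b_i}$ one needs exactly $r$ inputs and exponent $1+r+\sum b_i$; taking $r=n$ gives the proposition's $m_n(\cdot)=x^{n+1+\sum b_i}$, while your $m_{n+1}$ applied to $n+1$ inputs would have to output $x^{n+2+\sum b_i}$, not $x^{n+1+\sum b_i}$ as you wrote. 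So your formula is not even internally degree-consistent. The weight bookkeeping (the paper computes $x$ of weight $2n$ and $y$ of weight $2n+2$, not $2n+1$ as you tentatively wrote) then confirms $r=n$ is the unique admissible index. The confusion propagates to your last paragraph: the Koszul dual $\dca$ does carry an $m_{n+1}$ on $k[\eta,\zeta]$, as in \ref{mainsurfacethm}, but this is dual to $m_n$ on $k[x,y]$ — the dualisation genuinely shifts the index, and your write-up conflates the two sides.

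Beyond the index, the remaining ingredients you list are the right ones and essentially match the paper: the Adams-graded Merkulov argument rules out all other $m_r$, the Stasheff-identity $x$-linearity reduces to a single scalar $\lambda_0=m_n(y,\dots,y)$ in $kx^{n+1}$, non-formality forces $\lambda_0\neq0$, and a rescaling normalises it to $1$. If you fix $n+1\rightsquigarrow n$ throughout (and the weight of $y$ to $2n+2$), the proposal becomes essentially the paper's proof.
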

	\begin{proof}
		We employ the usual trick of using the Adams grading on the resolution $\tilde S$ to rule out most higher multiplications (see \ref{adamsrmk}). Observe that in the secondary grading on the resolution, $x$ has weight $2n$, and $y$ has weight $2n+2$. Appealing to the graded version of Merkulov's construction, one can consider the higher multiplication $m_{r+l}(x^{a_1},\ldots,x^{a_r},x^{b_1}y,\ldots,x^{b_l}y)$, which must be of degree $2-r+2a+2b+2l$ and weight $2na+2nb+2(n+1)l$, where we write $a=a_1+\cdots+a_r$ and $b=b_1+\cdots+b_r$. Via casework on the parity of $r$, one can see that if $r+l>2$, the only way for this to be nonzero is when we are looking at a product of the form $m_{n}(x^{b_1}y,\ldots,x^{b_n}y)=\lambda x^{1+b+n}$, where $\lambda$ depends on the $b_i$. Consideration of the Stasheff identity $\mathrm{St}_{n+1}$ with inputs of the form $x^{b_1}y\otimes\cdots\otimes x^{b_i}y\otimes x^m \otimes x^{b_{i+1}}y\otimes\cdots\otimes x^{b_n}y$ shows that the higher multiplications $m_n$ are $x$-linear, in the sense that $m_n(x^{b_1}y,\ldots, x^{b_n}y)=x^bm_n(y,\ldots,y)$. So the only higher multiplication of interest is $m_n(y,\ldots,y)=\lambda_0 x^{1+n}$. Because $\R\enn_\Lambda(S)$ is not formal, we must have $\lambda_0\neq 0$, and rescaling if necessary one can fix $\lambda_0=1$.
	\end{proof}
	
	\begin{rmk}\label{surfextrmk}
		Alternately, one can say that $\R\enn_\Lambda(S)$ is quasi-isomorphic to the strictly unital minimal $A_\infty$-$k[x]$-algebra generated by $y$ subject to the relations $m_r(y,\ldots,y)=\delta_{r,n}x^{n+1}$. Note that this also holds for $n=2$.
	\end{rmk}
	
	\begin{prop}\label{surfdcacohom}Let $n\geq 2$.
		We have $H^*(\dca)\cong k[\eta,\zeta]$ where $\eta=x^*$ has degree $-2$ and $\zeta=y^*$ has degree $-1$.
	\end{prop}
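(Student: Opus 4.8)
The plan is to compute $H^*(\dca)$ directly from its Koszul--dual description, and then pin down the algebra structure using the general structure theory of \S\ref{dqperiod}. By \ref{dqiskd} together with \ref{globalrend} we have a quasi-isomorphism $\dca\simeq E^!$, where $E\coloneqq\R\enn_\Lambda(S)$ is the minimal $A_\infty$-algebra computed in \ref{surfeng2} (and uniformly in $n\geq 2$ in \ref{surfextrmk}): its augmentation ideal $\bar E$ is concentrated in degrees $\geq 2$, with basis $\{x^a:a\geq1\}\cup\{x^ay:a\geq0\}$ where $x=[g_2]$ has degree $2$ and $y=[g_3]$ has degree $3$; $m_2$ is the cup product and the only further nontrivial operations are the $m_n(x^{b_1}y,\ldots,x^{b_n}y)=x^{n+1+\sum b_i}$. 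Consequently $E^!=(B_\infty E)^*$ is, as a graded algebra, freely generated by the (shifted) duals of this basis, all of degree $\leq -1$; in particular $(\dca)^j=0$ for $j>0$ and $(\dca)^0=k$, so that $A_\con\cong H^0(\dca)\cong k$. Write $\zeta$ and $\eta$ for the free generators of $\dca$ dual to $x$ and $y$, of cohomological degrees $-1$ and $-2$ respectively, so that the statement is the claim that these, together with the relations of a graded-commutative polynomial algebra, present $H^*(\dca)$.

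First I would identify the graded vector space $H^*(\dca)$. By \ref{fdcohom} the dga $\dca$ is cohomologically locally finite, so \ref{dqcohom} gives $H^{-j}(\dca)\cong\ext^j_{\hat R}(M,M)$ for all $j>0$. Since $\hat R$ is a two-dimensional Gorenstein isolated singularity, Auslander--Reiten duality \ref{arduality} (with $d=2$, so $\Omega^{2-d}=\id$) yields $\ext^1_{\hat R}(M,M)\cong\underline{\hom}_{\hat R}(M,M)^*=A_\con^*\cong k$, hence $H^{-1}(\dca)\cong k$. By \ref{etaex} the periodicity class $\eta\in H^{-2}(\dca)$ is central and multiplication by $\eta$ is an isomorphism $H^j(\dca)\to H^{j-2}(\dca)$ for every $j\leq0$. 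Combining these facts, every $H^{-j}(\dca)$ with $j\geq0$ is one-dimensional, spanned by $\eta^{j/2}$ for $j$ even and by $\eta^{(j-1)/2}\zeta$ for $j$ odd; non-vanishing of these classes follows inductively from injectivity of $\eta\cdot(-)$ together with $1\neq 0$ and $\zeta\neq 0$.

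It remains to determine the products. By \ref{fingen} the graded algebra $H^*(\dca)$ is generated over $A_\con=k$ in degrees $0,-1,-2$, hence by $\zeta$ and $\eta$; as $\eta$ is central and $H^{-2}(\dca)=k\eta$, the only relation that could obstruct $H^*(\dca)\cong k[\eta,\zeta]$ is $\zeta^2=c\eta$ for some $c\in k$. The crux is to show $c=0$. For this I would use the $A_\infty$ bar (cobar) differential on $E^!\simeq\dca$: for a free generator $w^*$ with $w\in\bar E$ one has $d(w^*)=\sum_{j\geq2}\sum\pm w_1^*\cdots w_j^*$, summed over tuples with $m_j(w_1,\ldots,w_j)=w$. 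A degree count — elements of $\bar E$ have degree $\geq2$ while $m_j$ has degree $2-j$, forcing $\sum|w_i|=|w|+j-2$ — shows that the only way to obtain $x^2\in\bar E^4$ as an output of any $m_j$ on elements of $\bar E$ is $m_2(x,x)=x^2$. Hence $d\big((x^2)^*\big)=\pm\,x^*\otimes x^*=\pm\zeta^2$, so $\zeta^2$ is a coboundary and vanishes in $H^{-2}(\dca)$. Therefore $c=0$, and since $\zeta$ has odd degree with $\zeta^2=0$, $\eta$ is central and non-nilpotent, and the cohomology has exactly one basis element in each nonpositive degree, the subalgebra generated by $\eta$ and $\zeta$ exhausts $H^*(\dca)$ and is isomorphic to the graded-commutative polynomial algebra $k[\eta,\zeta]$.

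The one genuine subtlety is the vanishing $\zeta^2=0$: a priori $H^{-2}(\dca)$ could equally well have been spanned by $\zeta^2$, with no independent periodicity generator — and this is precisely what does happen for $n=1$, where $\dca\simeq k\langle\zeta\rangle$ and $\eta=\zeta^2$ (\ref{mainsurfacethm}). So the argument must, and does, use the specific shape of $\R\enn_\Lambda(S)$ for $n\geq2$, namely that the cup square $[g_2]^2$ is nonzero in $\ext^*_\Lambda(S,S)$; everything else is formal consequences of \ref{dqcohom}, \ref{etaex}, \ref{fingen}, and \ref{arduality}.
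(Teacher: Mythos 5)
Your proof is correct, but it follows a genuinely different route from the paper. The paper filters $E=\R\enn_\Lambda(S)$ by ``powers of $y$'' ($W^0E=k[x]\subseteq W^1E=E$), transports this to a filtration on $E^!$, identifies $\mathrm{gr}^W(E^!)\cong k[\eta,\zeta]$, and argues that the resulting spectral sequence degenerates at the $E_0$ page. You instead pin down the graded vector space $H^*(\dca)$ via \ref{dqcohom}, AR duality \ref{arduality} (using $A_\con\cong k$, which you correctly extract from the degree $0$ part of $E^!$), and the $\eta$-periodicity isomorphisms of \ref{etaex}; then you reduce the ring structure to the single coefficient $c$ in $\zeta^2=c\eta$ and kill it by the explicit cobar computation $d\bigl((x^2)^*\bigr)=\pm\zeta^2$, justified by the degree count that rules out all higher $m_j$ contributions. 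Both proofs are valid. Your version has the advantage of being concrete: it isolates the exact coboundary responsible for $\zeta^2=0$ and makes transparent why the $n=1$ case (where $\eta=\zeta^2$ rather than being an independent generator) differs. The paper's filtration argument is terser and more systematic, but its claim that ``all differentials are trivial'' on the $F_0$ page is stated without elaboration, while your argument verifies the corresponding vanishing directly. One small stylistic point: \ref{fingen} is not strictly needed, since once you know each $H^{-j}$ is one-dimensional and spanned by $\eta^{j/2}$ or $\eta^{(j-1)/2}\zeta$, the surjectivity of $k[\eta,\zeta]\to H^*(\dca)$ is already forced.
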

	\begin{proof}
		For brevity put $E\coloneqq \R\enn_\Lambda(S)$ and recall that $\dca=E^!$ the Koszul dual. We filter $E$ by powers of $y$, use this to get a filtration on $\dca$, and consider the resulting spectral sequence. Let $W^0E=k[x]$ and let $W^1E=E$. One can check easily that this is a multiplicative filtration. We obtain $\mathrm{gr}_1^WE\cong k[y]$ and $\mathrm{gr}_0^WE\cong k[x]$. The filtration $W$ gives us a filtration on $E^!$, which we also call $W$, with associated graded $\mathrm{gr}^W(E^!)\cong(\mathrm{gr}^WE)^!$. Now, $\mathrm{gr}^WE\cong k[x,y]$ and so $\mathrm{gr}^W(E^!)\cong k[\eta,\zeta]$.
		
		\p Now we consider the spectral sequence $F$ associated to the filtration $W$ on $E^!$. It has $F_0$ page $F_0^{pq}=(\mathrm{gr}_p^W (E^!))^{p+q}\;\implies\;H^{p+q}(E^!)$. Writing out this page, we see that all differentials are trivial and so $F_0=F_\infty$. Hence we have $(\mathrm{gr}_p^W (E^!))^{p+q}=\mathrm{gr}_p^W H^{p+q}(E^!)$, and so forgetting the extra grading we get $H(E^!)\cong\mathrm{gr}^W(E^!)\cong k[\eta,\zeta]$ as required.
	\end{proof}
	
	\begin{rmk}
		Note that this holds for both $n=2$ and $n>2$. To see this in a more unified way, one can use the description of \ref{surfextrmk}.
	\end{rmk}	
	
	\begin{prop}\label{surfdcang2}
		Let $n\geq2$. Then the derived contraction algebra $\dca$ is quasi-isomorphic to the strictly unital minimal $A_\infty$-algebra with underlying graded algebra $ k[\eta,\zeta]$, where $\eta$ has degree $-2$, $\zeta$ has degree $-1$, and the only nontrivial higher multiplication is $$m_{n+1}(\eta^{b_1}\zeta,\ldots, \eta^{b_{n+1}}\zeta)=\eta^{b_1+\cdots+b_{n+1} + n}.$$
		
	\end{prop}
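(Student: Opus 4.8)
The plan is to compute $\dca$ as an ($A_\infty$) Koszul dual and read off its minimal model, in the same spirit as the Pagoda computation \ref{dcapagoda}. By the geometric Koszul-duality statement \ref{dqiskd}, together with invariance of derived endomorphism algebras under completion and Morita equivalence (\ref{globalrend}), there is a quasi-isomorphism $\dca\simeq E^!$, where $E\coloneqq\R\enn_\Lambda(S)$ is the dga computed above: by \ref{surfextrmk} it is, for all $n\geq 2$, the strictly unital minimal $A_\infty$-$k[x]$-algebra on one degree-$3$ generator $y$ with $m_r(y,\ldots,y)=\delta_{r,n}x^{n+1}$ (so for $n=2$ it is the formal graded algebra $k[x,y]/(x^3-y^2)$, and for $n\geq 3$ it is genuinely $A_\infty$). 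Since $H^0(\dca)\cong k$ (the degree-$0$ part of $H^*(\dca)\cong k[\eta,\zeta]$ by \ref{surfdcacohom}) is Artinian local and $\dca$ is cohomologically locally finite (\ref{fdcohom}), $\dca$ is good, so it admits a minimal $A_\infty$-model whose underlying graded vector space is $H^*(\dca)\cong k[\eta,\zeta]$; the problem is to pin down the higher multiplications on this model.

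The engine of the computation is the Adams grading. The projective resolution of $S$ over $\Lambda$ carries a weight grading with $x=[g_2]$ of weight $2n$ and $y=[g_3]$ of weight $2n+2$, so $E$ — and hence $E^!=\dca$ — is Adams graded, with $\zeta=x^*$ of cohomological degree $-1$ and weight $-2n$ and $\eta=y^*$ of cohomological degree $-2$ and weight $-(2n+2)$. By the Adams-graded form of Merkulov's/Kadeishvili's construction (\ref{adamsrmk}), every higher multiplication on the minimal model has Adams weight $0$. Solving the simultaneous constraints on cohomological degree and Adams weight for a hypothetical nonzero operation $m_r$ ($r>2$) with inputs monomials $\eta^{a_i}\zeta^{\epsilon_i}$ ($\epsilon_i\in\{0,1\}$, since $\zeta^2=0$) and output $\eta^c\zeta^\delta$ forces $r=n+1$, forces all $n+1$ inputs to be of $\zeta$-type, and forces the output to be a pure power of $\eta$; hence the only possibly nonzero higher multiplication has the form $m_{n+1}(\eta^{a_1}\zeta,\ldots,\eta^{a_{n+1}}\zeta)=\lambda(a_1,\ldots,a_{n+1})\,\eta^{a_1+\cdots+a_{n+1}+n}$. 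A Stasheff-identity argument directly parallel to the proof of $x$-linearity in \ref{surfeng2} — feed $\mathrm{St}_{n+2}$ a word consisting of $n+1$ $\zeta$-type inputs and a single pure power of $\eta$, then use $m_1=0$, the vanishing of all other higher multiplications, centrality of $\eta$ for $m_2$, and the vanishing of $m_2$ on two $\zeta$-type elements — shows $m_{n+1}$ is $\eta$-multilinear, so the coefficient $\lambda(a_1,\ldots,a_{n+1})=\lambda$ is a constant.

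It remains to show $\lambda\neq 0$ and to normalise it. Normalisation is free: rescaling $\zeta\mapsto\mu\zeta$ and $\eta\mapsto\nu\eta$ multiplies $\lambda$ by $\mu^{n+1}\nu^{-n}$, which attains any nonzero value, so we may take $\lambda=1$. Non-vanishing of $\lambda$ is the assertion that $\dca$ is not formal. For $n\geq 3$ this is abstract: if $\dca\simeq k[\eta,\zeta]$ then, since $E$ is good, $E\simeq E^{!!}=\dca^!\simeq (k[\eta,\zeta])^!\simeq (k[\eta])^!\,\widehat\otimes\,(\Lambda[\zeta])^!$, a graded algebra and hence formal, contradicting the non-formality of $E$ proved earlier in this section via the Massey product $\langle y,\ldots,y\rangle_n$. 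For $n=2$, where $E=k[x,y]/(x^3-y^2)$ is itself formal, one argues directly: this graded algebra is non-quadratic, its relation having nonzero cubic part $x^3$, so the triple product $m_3$ on $\mathrm{Ext}^*_E(k,k)=\dca$ — which is dual to the cubic part of the relations — is nonzero; equivalently, a direct Merkulov computation on the bar model of $\dca$, exactly as in \ref{dcapagoda}, exhibits $m_3\neq 0$. Either way $\lambda\neq 0$, and after rescaling we recover precisely the stated minimal $A_\infty$-structure on $k[\eta,\zeta]$.

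The step I expect to be the real work is the combined Adams-weight and cohomological-degree bookkeeping, together with the Stasheff-identity verification of $\eta$-multilinearity; the reduction $\dca\simeq E^!$, the identification of $H^*(\dca)$, and the normalisation and non-formality inputs are either formal or already in place.
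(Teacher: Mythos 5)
Your proof follows the same architecture as the paper's: use the Adams weight grading to pin down the only possible nonzero higher multiplication up to a coefficient, invoke the Stasheff identity $\mathrm{St}_{n+2}$ for $\eta$-multilinearity, show non-formality to see the coefficient is nonzero, and rescale. One small correction in the justification of non-formality for $n\geq 3$: you write ``since $E$ is good, $E\simeq E^{!!}$,'' but $E=\R\enn_\Lambda(S)$ is \emph{nonnegatively} graded, so the goodness machinery of Chapter~3 does not apply to it. What actually gives $E^{!!}\simeq E$ is that $\dca=E^!$ is good, combined with the identification $\dca^!\simeq\R\enn_{\dca}(S)\simeq\R\enn_\Lambda(S)=E$ from \ref{rendkd}/\ref{kdisrend}; that is, the double dual relation passes through the derived quotient side rather than through $E$ directly. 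With that repair your argument is sound. The remaining difference is one of packaging rather than substance: the paper handles non-formality uniformly for all $n\geq 2$ by observing that $k[\eta,\zeta]^!$ (a formal dga whose cohomology is free) cannot agree with $\R\enn_\Lambda(S)$ in any case — for $n=2$ already the cohomology rings differ, and for $n\geq 3$ the Massey product forces non-formality — whereas you split into $n=2$ (non-quadratic relation forces $m_3\neq 0$) and $n\geq 3$ (Massey product). Both are correct; the paper's phrasing avoids the case split. I'll also note your identification $\zeta=x^*$, $\eta=y^*$ is the one consistent with the degree shift in the Koszul dual, even though the statement of \ref{surfdcacohom} writes the duals the other way round; your convention is the right one.
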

	\begin{proof}
		This is extremely similar to the proof of \ref{surfeng2}. A calculation with degree and weight yields that the only possible nontrivial higher multiplications are of the form	$$m_{n+1}(\eta^{b_1}\zeta,\ldots, \eta^{b_{n+1}}\zeta)=\lambda\eta^{b_1+\cdots+b_{n+1} + n}$$where $\lambda$ depends on the $b_i$. One gets $\eta$-linearity of the higher multiplications by considering the Stasheff identity $\mathrm{St}_{n+2}$. To see that $\dca$ is not formal, use that the Koszul dual of $\dca$ must be $\R\enn_\Lambda(S)$ again, but this does not agree with $k[\eta,\zeta]^!$. Hence we must have an equality $m_{n+1}(\zeta,\ldots, \zeta)=\lambda_0\eta^{ n}$ for some $\lambda_0\neq 0$, and one can choose $\lambda_0=1$.
	\end{proof}

	\begin{rmk}
		Again, this applies for both $n=2$ and $n>2$, and one can equivalently describe $\dca$ as the strictly unital minimal $A_\infty$-$k[\eta]$-algebra generated by $\zeta$ subject to the relations $m_r(\zeta,\ldots,\zeta)=\delta_{r,n+1}\eta^{n}$.
	\end{rmk}
	
	\begin{rmk}
		All of the computations of this section are valid in all characteristics.
		\end{rmk}

	\chapter{The mutation-mutation autoequivalence}\label{mutnauto}
	
	In this chapter, we will study the mutation-mutation autoequivalence, which is a noncommutative generalisation of the Bridgeland--Chen flop-flop autoequivalence. Our main theorem is \ref{mutncontrol}, which is a generalisation of Donovan and Wemyss's result that the contraction algebra of a threefold flop controls the mutation-mutation autoequivalence \cite[5.10]{DWncdf}. In particular, we show that the truncation $A_\mm\coloneqq \tau_{\geq -1}(\dca)$ controls the mutation-mutation autoequivalence in more general settings, via noncommutative twists. This both recovers and extends Donovan and Wemyss's result. In the first couple of sections we will set up the theory. We will do some computations, and show that mutation respects the recollement of \ref{recoll}, which we will use to obtain some results on t-structures analogous to those of Bridgeland \cite{bridgeland}. We will in fact show that the mutation-mutation autoequivalence, when restricted to the derived category $D(\dca)$, is simply the shift [-2] (\ref{mmshift}). In the hypersurface setting, this will be enough since one can use arguments involving the periodicity element $\eta$ to interchange shifts and truncation.
	\section{Singular Calabi--Yau rings and modifying modules}
	Given a reasonable commutative ring $R$ and a reasonable $R$-module $V$, one can consider the ring $A=\enn_R(V)$ as a sort of noncommutative partial resolution of $R$. One would like to be able to `mutate' $A$ into a new ring $A^+=\enn_R(V^+)$, and obtain a derived equivalence between $A$ and $A^+$. In this section we follow Iyama--Reiten \cite{iyamareiten} and Iyama--Wemyss \cite{iwmaxmod} to provide rigorous definitions of `reasonable'.
	\begin{defn}\label{scydefn}
		Let $R$ be a commutative $k$-algebra. Say that $R$ is \textbf{singular Calabi--Yau} (or just \textbf{sCY}) if the three following conditions are satisfied:
		\begin{enumerate}
			\item $R$ is Gorenstein.
			\item $R$ has finite Krull dimension $d$.
			\item $R$ is equicodimensional; i.e.\ all of its maximal ideals have the same height. This is equivalent to specifying that $\dim R_\mathfrak{m} =d$ for all $\mathfrak m \subseteq R$ maximal.
		\end{enumerate}
	\end{defn}
	\begin{rmk}
		This is a special case of Iyama and Reiten's definition \cite[\S3]{iyamareiten} for noncommutative rings; see \cite[3.10]{iyamareiten} for the proof of equivalence. In \cite{iyamareiten} this condition is called $d$-CY$^-$, and in \cite{iwmaxmod} it is called $d$-sCY. This is because $R$ is $d$-sCY if and only if a Calabi--Yau type condition $\hom_{D(R)}(X,Y[d])\cong D_M\hom_{D(R)}(Y,X)$ holds for certain $X,Y \in D^b(R)$, where $D_M$ denotes the Matlis dual.
	\end{rmk}
	A typical example of a sCY ring is a local complete intersection (l.c.i.) domain, or a localisation or completion thereof:
	\begin{lem}\label{lciscy}Let $R=k[x_1,\ldots, x_n]/I$ be a l.c.i.\ domain and $\mathfrak m \subseteq R$ be a maximal ideal. Then all of $R$, $R_{\mathfrak m}$ and $\hat R_{\mathfrak m}$ are sCY.
	\end{lem}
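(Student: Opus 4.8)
The plan is to check the three defining conditions of \ref{scydefn} --- Gorenstein, finite Krull dimension, and equicodimensionality --- for each of $R$, $R_{\mathfrak m}$, and $\hat R_{\mathfrak m}$, in every case reducing to the complete intersection situation so that \ref{hypsgor} applies. Observe first that $R$ is noetherian by the Hilbert basis theorem, so all localisations and completions appearing are noetherian as well.

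First I would dispatch conditions 2 and 3, which carry no real content. Finite Krull dimension is clear: $R$ is a finitely generated $k$-algebra, so $\dim R \le n$, and passing to a localisation, or to the completion of a noetherian local ring, does not increase dimension. Equicodimensionality is automatic for $R_{\mathfrak m}$ and $\hat R_{\mathfrak m}$, since these are local rings; for $R$ it follows from the standard dimension theory of finitely generated domains over a field (see e.g.\ \cite{eisenbud} or \cite{matsumura}): by the Nullstellensatz the residue field at a maximal ideal is finite over $k$, and a finitely generated domain over a field is equidimensional and catenary, so $\operatorname{ht}\mathfrak m = \dim R$, equivalently $\dim R_{\mathfrak m}=\dim R$, for every maximal ideal $\mathfrak m$. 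This is the one place the hypothesis that $R$ is a domain enters.

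Next I would treat the Gorenstein condition. Set $T = k[x_1,\dots,x_n]$ and let $\mathfrak n \subseteq T$ be the preimage of $\mathfrak m$; the l.c.i.\ assumption says $T_{\mathfrak n}$ is regular local and $I_{\mathfrak n}$ is generated by a $T_{\mathfrak n}$-regular sequence $f_1,\dots,f_c$, so $R_{\mathfrak m} = T_{\mathfrak n}/(f_1,\dots,f_c)$ is a complete intersection and hence Gorenstein by \ref{hypsgor}. Since $R$ has finite Krull dimension and $R_{\mathfrak m}$ is Gorenstein for every maximal ideal $\mathfrak m$, Bass's theorem (the criterion of \cite{bassgorenstein} recalled above, that in finite Krull dimension Gorenstein is a local condition) shows that $R$ itself is Gorenstein. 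For $\hat R_{\mathfrak m} = \widehat{R_{\mathfrak m}}$ I would use that $\hat T_{\mathfrak n}$ is again regular local, that faithful flatness of completion keeps $f_1,\dots,f_c$ a regular sequence, and that completion commutes with the finite quotient, so $\hat R_{\mathfrak m} = \hat T_{\mathfrak n}/(f_1,\dots,f_c)$ is a complete intersection, hence Gorenstein by \ref{hypsgor} once more.

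There is no serious obstacle here --- the proof is a matter of assembling standard facts --- but the points needing a little care are the equicodimensionality of $R$, which must be quoted from commutative dimension theory rather than from anything developed in the text, and the stability of the complete intersection property under completion along a maximal ideal. With these in place, all three conditions of \ref{scydefn} hold for $R$, $R_{\mathfrak m}$, and $\hat R_{\mathfrak m}$ simultaneously, which is the assertion.
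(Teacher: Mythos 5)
Your argument is correct, but it takes a different and more self-contained route than the paper's. The paper establishes that $R$ itself is sCY much as you do (Gorenstein since l.c.i., equicodimensional by \cite[13.4]{eisenbud}, finite dimension clear), but then simply invokes \cite[3.1(3)]{iyamareiten} for $R_{\mathfrak m}$ and (the proof of) \cite[3.1(4)]{iyamareiten} for $\hat R_{\mathfrak m}$; those results assert directly that the sCY property passes to localisations and completions. You instead verify the three conditions of \ref{scydefn} by hand for each of the three rings, reducing the Gorenstein condition in every case to the complete intersection situation of \ref{hypsgor}: equicodimensionality is trivial for a local ring, and for $\hat R_{\mathfrak m}$ you note that completion preserves regularity of $T_{\mathfrak n}$ and the regular sequence $f_1,\dots,f_c$ by faithful flatness, so $\hat R_{\mathfrak m}$ is again a complete intersection. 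The passage through Bass's local-to-global criterion to get $R$ Gorenstein from the $R_{\mathfrak m}$'s is a small detour (the paper just asserts l.c.i.\ $\Rightarrow$ Gorenstein, which one could also cite directly from \ref{hypsgor} applied globally with Bass), but it is valid. What you gain is transparency and independence from \cite{iyamareiten}; what the paper gains is brevity and the reassurance that those specific Iyama--Reiten lemmas cover exactly this stability under localisation and completion, including for general sCY rings rather than only complete intersections.
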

	\begin{proof}
		The ring $R$ is Gorenstein because it is a l.c.i.\ domain, equicodimensional because it is an affine domain \cite[13.4]{eisenbud}, and clearly of finite Krull dimension. Hence $R$ is sCY. The localisation $R_{\mathfrak m}$ is sCY by \cite[3.1(3)]{iyamareiten} and $\hat R_{\mathfrak m}$ is sCY by the proof of \cite[3.1(4)]{iyamareiten}.
	\end{proof}
	\begin{defn}[{\cite[4.1]{iwmaxmod}}]\label{modifiyingdefinition}
		Let $R$ be a sCY ring and $V$ a reflexive $R$-module. Say that $V$ is \textbf{modifying} if $\enn_R(V)$ is a Cohen--Macaulay $R$-module.
	\end{defn}

	\begin{prop}[{\cite[5.12(1)]{iwmaxmod}}]\label{modext}
		Let $R$ be a sCY ring of dimension $d$ with isolated singularities. Let $V$ be a Cohen--Macaulay $R$-module. Then $V$ is modifying if and only if $\ext^i_R(V,V)$ vanishes for all $1\leq i \leq d-2$.
	\end{prop}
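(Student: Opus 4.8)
The plan is to reduce to the local case and then run a depth argument driven by the syzygies of $V$. Since Cohen--Macaulayness of $\enn_R(V)$ and vanishing of $\ext^i_R(V,V)$ are both checked prime by prime, and since $R_{\mathfrak m}$ is again sCY with isolated singularity for each maximal ideal $\mathfrak m$ (as in \ref{lciscy}), I may assume $(R,\mathfrak m)$ is a Cohen--Macaulay local ring of Krull dimension $d$ with isolated singularity and $V$ a maximal Cohen--Macaulay $R$-module; then $\enn_R(V)=\hom_R(V,V)$ has full support, so it is Cohen--Macaulay exactly when $\mathrm{depth}\,\hom_R(V,V)=d$. When $d\le 2$ the range $1\le i\le d-2$ is empty and the assertion reduces to the standard fact that $\hom_R(V,V)$, being (a submodule of frees-tensor-$V$ cut out by a further map) a second syzygy-type module, has depth $\ge\min(d,2)=d$; so assume $d\ge 3$. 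Two preliminary facts: first, every syzygy $\Omega^j V$ is maximal Cohen--Macaulay (apply the depth lemma to $0\to\Omega^{j+1}V\to P_j\to\Omega^jV\to 0$), so $\mathrm{depth}\,\hom_R(\Omega^jV,V)\ge 2$ for all $j$; second, since $R$ has an isolated singularity, $V$ is locally free on the punctured spectrum, so $\ext^i_R(V,V)$ has finite length for $i\ge 1$, and dimension shifting gives $\ext^i_R(\Omega^jV,V)\cong\ext^{i+j}_R(V,V)$.

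For the direction ``Ext vanishing $\Rightarrow$ $\enn_R(V)$ Cohen--Macaulay'', assume $\ext^i_R(V,V)=0$ for $1\le i\le d-2$. For $0\le i\le d-3$ we get $\ext^1_R(\Omega^iV,V)\cong\ext^{i+1}_R(V,V)=0$, so applying $\hom_R(-,V)$ to $0\to\Omega^{i+1}V\to P_i\to\Omega^iV\to 0$ gives
\[
0\to\hom_R(\Omega^iV,V)\to V^{b_i}\to\hom_R(\Omega^{i+1}V,V)\to 0,
\]
and the depth lemma yields $\mathrm{depth}\,\hom_R(\Omega^iV,V)\ge\min(d,\ \mathrm{depth}\,\hom_R(\Omega^{i+1}V,V)+1)$. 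Starting from $\mathrm{depth}\,\hom_R(\Omega^{d-2}V,V)\ge 2$ and running this inequality downward from $i=d-3$ to $i=0$ gives $\mathrm{depth}\,\hom_R(V,V)\ge\min(d,d)=d$, so $\enn_R(V)$ is maximal Cohen--Macaulay.

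For the converse, suppose instead that $\ext^{i_0}_R(V,V)\ne 0$ for some $1\le i_0\le d-2$, and take $i_0$ minimal with this property. Since $E:=\ext^1_R(\Omega^{i_0-1}V,V)\cong\ext^{i_0}_R(V,V)$ is finite length and nonzero, the local cohomology long exact sequences attached to $0\to\hom_R(\Omega^{i_0-1}V,V)\to V^{b_{i_0-1}}\to W\to 0$ and $0\to W\to\hom_R(\Omega^{i_0}V,V)\to E\to 0$, together with $\mathrm{depth}\,V^{b_{i_0-1}}=d\ge 3$ and $\mathrm{depth}\,\hom_R(\Omega^{i_0}V,V)\ge 2$, force $H^0_{\mathfrak m}=H^1_{\mathfrak m}=0$ and $H^2_{\mathfrak m}(\hom_R(\Omega^{i_0-1}V,V))\cong E\ne 0$, i.e.\ $\mathrm{depth}\,\hom_R(\Omega^{i_0-1}V,V)=2$. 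Now for $0\le j\le i_0-2$ minimality of $i_0$ gives $\ext^1_R(\Omega^jV,V)\cong\ext^{j+1}_R(V,V)=0$, so the sequences $0\to\hom_R(\Omega^jV,V)\to V^{b_j}\to\hom_R(\Omega^{j+1}V,V)\to 0$ produce isomorphisms $H^k_{\mathfrak m}(\hom_R(\Omega^jV,V))\cong H^{k-1}_{\mathfrak m}(\hom_R(\Omega^{j+1}V,V))$ for $1\le k\le d-1$ (using $H^{\le d-1}_{\mathfrak m}(V^{b_j})=0$). Iterating these $i_0-1$ index shifts, starting from $H^2_{\mathfrak m}(\hom_R(\Omega^{i_0-1}V,V))\ne 0$, yields $H^{i_0+1}_{\mathfrak m}(\hom_R(V,V))\ne 0$; the constraint $i_0\le d-2$ is exactly what keeps the largest degree appearing, namely $i_0+1\le d-1$, inside the valid range $[1,d-1]$. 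Hence $\mathrm{depth}\,\hom_R(V,V)\le d-1$ and $\enn_R(V)$ is not maximal Cohen--Macaulay, a contradiction.

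The reduction step and the finite-length claim are routine; the only genuinely delicate point is the index bookkeeping in the converse, where the precise bound $1\le i\le d-2$ (rather than $1\le i\le d-1$) is pinned down by requiring that all local-cohomology degrees arising in the iterated shift stay strictly below $d$ --- this is the main obstacle to get right. One could instead package both implications through local duality over the Gorenstein ring $R$, converting $\mathrm{depth}\,\hom_R(V,V)\ge d$ into the vanishing of $\ext^{\ge 1}_R(\hom_R(V,V),R)$ and comparing with $\ext^\ast_R(V,V)$ via the biduality spectral sequence $\ext^p_R(\tor^R_q(V,V^\vee),R)\Rightarrow\ext^{p-q}_R(V,V)$ coming from $\R\hom_R(V,V)\simeq\R\hom_R(V\lot_R V^\vee,R)$; but the syzygy argument above avoids this extra homological input.
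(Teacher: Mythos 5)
The paper does not give its own proof of this statement — it is cited verbatim from Iyama--Wemyss \cite[5.12(1)]{iwmaxmod} — so there is no in-paper argument to compare against. That said, your blind reconstruction is correct and is essentially the standard depth-chasing proof in the style of that reference. After the routine reduction to the local case (where, since ``modifying'' presupposes $V$ reflexive and hence of full support, CM becomes MCM), both directions are handled by applying $\hom_R(-,V)$ to the syzygy sequences of $V$ and tracking $\mathrm{depth}$: the forward direction via the depth lemma applied to the short exact sequences obtained once $\ext^1_R(\Omega^i V,V)\cong\ext^{i+1}_R(V,V)$ vanishes, and the converse via local cohomology, using that isolated singularity forces $\ext^{\ge 1}_R(V,V)$ to be finite length so that $H^0_{\mathfrak m}(E)\cong E$. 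Your index bookkeeping in the converse is the only place the bound $1\le i\le d-2$ actually does work, and you have handled it correctly: the iterated degree shift carries a nonzero $H^2_{\mathfrak m}$ of $\hom_R(\Omega^{i_0-1}V,V)$ up to a nonzero $H^{i_0+1}_{\mathfrak m}(\enn_R(V))$, and $i_0\le d-2$ is exactly what keeps $i_0+1\le d-1<d$, so the shift isomorphisms are valid and the conclusion $\mathrm{depth}\,\enn_R(V)<d$ follows. The only cosmetic point worth noting is that the $d\le 2$ base case is cleanest stated via the classical inequality $\mathrm{depth}\,\hom_R(M,N)\ge\min(2,\mathrm{depth}\,N)$ for $\hom\ne 0$, which is what you gesture at; and the proposed alternative via Grothendieck duality over the Gorenstein base would work but costs more machinery, as you observe.
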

	\begin{cor}\label{twoorthreecor}
	Let $R$ be a sCY ring with isolated singularities and let $V$ be a Cohen--Macaulay $R$-module. \begin{enumerate}
		\item[\emph{1.}] If $R$ is a surface, then $V$ is modifying.
		\item[\emph{2.}] If $R$ is a threefold, then $V$ is modifying if and only if it is rigid (i.e.\ $\ext_R^1(V,V)\cong 0$).
	\end{enumerate}
\end{cor}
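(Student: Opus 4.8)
The statement to prove is Corollary \ref{twoorthreecor}, which is an immediate specialisation of Proposition \ref{modext} to the cases $d=2$ and $d=3$. The plan is simply to unwind what the vanishing range $1\leq i\leq d-2$ says in these two low dimensions.

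First I would handle the surface case. Here $d=2$, so the index range $1\leq i\leq d-2$ becomes $1\leq i\leq 0$, which is empty. Thus the condition ``$\ext^i_R(V,V)=0$ for all $1\leq i\leq d-2$'' is vacuously satisfied, and Proposition \ref{modext} then says that $V$ is modifying (recall $V$ is assumed Cohen--Macaulay, which is the standing hypothesis of \ref{modext}). This gives part 1 with no further work.

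Next I would handle the threefold case. Here $d=3$, so the range $1\leq i\leq d-2$ becomes $1\leq i\leq 1$, i.e. the single value $i=1$. So the criterion of Proposition \ref{modext} reads: $V$ is modifying if and only if $\ext^1_R(V,V)\cong 0$, which is precisely the definition of $V$ being rigid (Definition \ref{rigiddefn}, suitably read for modules over $R$ rather than objects of $\stab R$ — or one can note that for a Cohen--Macaulay module $\ext^1_R(V,V)$ agrees with $\underline{\ext}^1_R(V,V)$ in positive degree anyway). This gives part 2.

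There is really no obstacle here: the corollary is purely a matter of substituting $d=2$ and $d=3$ into the statement of \ref{modext} and observing that the quantifier range is empty in the first case and a singleton in the second. The only thing to be a little careful about is that the hypotheses of \ref{modext} (namely $R$ sCY with isolated singularities and $V$ Cohen--Macaulay) are exactly the hypotheses we have assumed in the corollary, so the application is direct. I would write the proof as a single short paragraph invoking \ref{modext} twice.

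\begin{proof}
Both statements are immediate from \ref{modext}. If $R$ is a surface then $d=2$, so the index set $\{i : 1\leq i\leq d-2\}$ is empty; hence the Ext-vanishing hypothesis of \ref{modext} holds vacuously and every Cohen--Macaulay module $V$ is modifying. If $R$ is a threefold then $d=3$, so $\{i : 1\leq i\leq d-2\}=\{1\}$, and \ref{modext} says that a Cohen--Macaulay module $V$ is modifying if and only if $\ext^1_R(V,V)\cong 0$, which is precisely the condition that $V$ be rigid.
\end{proof}
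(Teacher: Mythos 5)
Your proof is correct and is exactly the reasoning the paper intends: the corollary is stated without proof immediately after \ref{modext}, and the only content is the substitution $d=2$ (empty range, vacuous condition) and $d=3$ (singleton range $\{1\}$, giving rigidity). Your parenthetical care about reconciling the notion of rigid with Definition \ref{rigiddefn} is unnecessary here since the corollary itself defines rigid inline as $\ext^1_R(V,V)\cong 0$, but it does no harm.
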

	\begin{lem}\label{mcmsummod}
		Let $R$ be a sCY ring with isolated singularities. Let $M$ be a modifying $R$-module. If $M$ is MCM then $R\oplus M$ is modifying. 
	\end{lem}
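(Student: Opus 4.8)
The goal is to show that if $R$ is sCY with isolated singularities and $M$ is a modifying MCM $R$-module, then $R \oplus M$ is modifying. Since $R$ is reflexive and $M$ is reflexive (being MCM over a Gorenstein ring), the direct sum $R \oplus M$ is reflexive, so the only thing to check is that $\enn_R(R \oplus M)$ is a Cohen--Macaulay $R$-module. Expanding the endomorphism ring, one has
\[
\enn_R(R \oplus M) \cong R \oplus M \oplus M^\vee \oplus \enn_R(M)
\]
as an $R$-module, where $M^\vee = \hom_R(M,R)$. So it suffices to show each of the four summands is Cohen--Macaulay over $R$.

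First I would observe that $R$ itself is Cohen--Macaulay (it is Gorenstein), and $M$ is Cohen--Macaulay by hypothesis (MCM implies CM). For $M^\vee$: since $M$ is MCM over the Gorenstein ring $R$, the module $M^\vee = \hom_R(M,R)$ is again MCM (dualising into $R$ preserves the MCM property over a Gorenstein ring — this is standard, and is implicitly used throughout the preceding material, e.g.\ in the discussion of complete resolutions), hence in particular CM. The one genuinely substantive summand is $\enn_R(M)$, and here is exactly where the modifying hypothesis on $M$ enters: by \ref{modifiyingdefinition}, $M$ being modifying means precisely that $\enn_R(M)$ is a Cohen--Macaulay $R$-module.

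Putting these together, $\enn_R(R\oplus M)$ is a finite direct sum of Cohen--Macaulay $R$-modules and is therefore itself Cohen--Macaulay. Hence $R \oplus M$ is modifying. I expect the main (minor) obstacle to be verifying the $R$-module decomposition of the endomorphism ring and confirming that $M^\vee$ is MCM; both are routine over a Gorenstein ring, and the isolated-singularities hypothesis is not even needed for this particular statement (it is presumably carried along because the surrounding results — such as \ref{modext} and \ref{twoorthreecor} — require it). An alternative, more uniform route would be to invoke \ref{modext} directly: if $R$ has dimension $d$, then $R \oplus M$ is CM (being a sum of CM modules as above) and $\ext^i_R(R \oplus M, R \oplus M)$ decomposes as a sum of $\ext^i_R(M,M)$, $\ext^i_R(M,R)$, $\ext^i_R(R,M)$ and $\ext^i_R(R,R)$; the last three vanish for $i \geq 1$ since $R$ is projective and $M$ is MCM, and $\ext^i_R(M,M)$ vanishes for $1 \leq i \leq d-2$ because $M$ is modifying (again by \ref{modext}), so $R \oplus M$ is modifying by the converse direction of \ref{modext}. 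This second approach is cleaner but needs the isolated-singularities hypothesis, so I would present the direct argument as the primary one and perhaps remark on the alternative.
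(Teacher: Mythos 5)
Your ``alternative'' route at the end is exactly the paper's proof: the paper applies \ref{modext} to $R\oplus M$, splits $\ext^i_R(R\oplus M,\,R\oplus M)$ for $i>0$ as $\ext^i_R(M,R)\oplus\ext^i_R(M,M)$ using projectivity of $R$, kills the first summand by the MCM hypothesis, and kills the second by the modifying hypothesis together with \ref{modext}.

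Your primary, more elementary route — decomposing $\enn_R(R\oplus M)\cong R\oplus M\oplus M^\vee\oplus\enn_R(M)$ and checking each summand is Cohen--Macaulay — is a genuinely different approach, and, as you note, it avoids invoking \ref{modext} and hence the isolated-singularities hypothesis. There is, however, one small elision. It is not true in general that a direct sum of CM modules is CM: the summands must be Cohen--Macaulay of the same depth. In your decomposition $R$, $M$ and $M^\vee$ are all MCM (your claim about $M^\vee$ is correct, since $(-)^\vee=\hom_R(-,R)$ is a duality on MCM modules over a Gorenstein ring). But the modifying hypothesis, as stated in \ref{modifiyingdefinition}, only gives you that $\enn_R(M)$ is CM, not a priori maximal Cohen--Macaulay. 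To close the gap you would want to observe that $\enn_R(M)$ is reflexive (as $M$ is reflexive), hence over the normal domains arising in practice it is torsion-free with full support, so its CM-ness forces MCM-ness; only then are all four summands MCM and the direct sum MCM. The paper's route via \ref{modext} sidesteps this depth bookkeeping entirely, which is presumably why it was chosen: the isolated-singularities hypothesis is already in the statement and there is no cost to using it, whereas the direct argument, while cleaner in spirit, needs this extra step to be watertight.
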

	\begin{proof}
		Let $d$ be the Krull dimension of $R$. Since $R\oplus M$ is CM then by \ref{modext}, we show that the Ext group $\ext^i_R(R\oplus M, R\oplus M)$ vanishes for all $1\leq i \leq d-2$. Because $R$ is projective, for all $i>0$ we have$$\ext^i_R(R\oplus M, R\oplus M)\cong \ext^i_R(M, R)\oplus\ext^i_R(M, M).$$ Because $M$ is MCM, the Ext group $\ext^i_R(M,R)$ vanishes for all $i> 0$. Because $M$ is modifying, by \ref{modext} the Ext group $\ext^i_R(M, M)$ vanishes for all $1\leq i \leq d-2$. So $\ext^i_R(R\oplus M, R\oplus M)$ vanishes for all $1\leq i \leq d-2$, as required.
	\end{proof}

	\section{Mutation}\label{mutsn}
	In this section, we fix a complete local isolated hypersurface singularity $R$ of dimension at least 2. Note that $R$ is a sCY ring by \ref{lciscy}. Moreover, $R$ is normal by Serre's criterion.
	\begin{rmk}
		Everything we discuss in this section will still work \textit{mutatis mutandis} if $R$ is assumed to be any sCY ring of dimension at least 2 with isolated singularities. We choose $R$ to be a complete local hypersurface singularity in order to simplify notation when dealing with syzygies. In the more general case, one needs to distinguish between $\Omega$ and $\Omega^{-1}$.
	\end{rmk}
	Below, the mutation of a modifying module will be its syzygy. Syzygies are normally defined up to free summands; we define them here to preserve the number of free summands of a MCM module.
	\begin{defn}
		Let $M$ be a MCM $R$-module with no free summands. Take a minimal free resolution $\cdots \xrightarrow{d_1} F_1 \xrightarrow{d_0} F_0$ of $M$. The \textbf{syzygy} of $M$ is the module $\Omega M\coloneqq  \ker(d_0)$.
	\end{defn}
\begin{rmk}
	Note that this is a special case of the weaker definition used in \ref{weaksyz}.
	\end{rmk}
	\begin{defn}
		Let $M$ be a MCM $R$-module, and write $M=F\oplus M'$ where $F$ is free and $M'$ has no free summands. Put $\Omega M\coloneqq F\oplus \Omega M'$. 
	\end{defn}
	It is easy to see that for any MCM $R$-module $M$ we have a short exact sequence 
	\begin{equation}\label{syz}
	0 \to \Omega M \to R^m \to M \to 0
	\end{equation}
	for some $m \in \N$ depending on $M$. Moreover, $\Omega M$ has the same number of free summands as $M$. Because $R$ is a hypersurface singularity, $\Omega$ is 2-periodic by \ref{ebudper}. In particular, one also has a short exact sequence
	\begin{equation}\label{cosyz}
	0 \to M \to R^m \to \Omega M \to 0.
	\end{equation}
	\begin{rmk}
		As in \ref{3dref}, one may harmlessly assume that $R$ is a surface or a threefold, in which case \ref{twoorthreecor} gives easily checked criteria for when a general MCM module is modifying.
	\end{rmk}
	Fix a MCM modifying $R$-module $M$ with no free summands. Put $V\coloneqq R\oplus M$ and \linebreak  $A\coloneqq  \enn_R(V)$. By construction, $A$ comes with an idempotent $e=\id_R$ with $eAe\cong R$. By \ref{mcmsummod}, the $R$-module $V$ is modifying. Add copies of $R$ to (\ref{cosyz}) to get a short exact sequence
	\begin{equation}\label{vcosyz}
	0 \to V \to R^l \to \Omega V \to 0.
	\end{equation}
	Apply $\R\hom_R(V,-)$ to (\ref{vcosyz}) and take cohomology to obtain a long exact sequence of $A$-modules 
	\begin{equation}0\to \hom_R(V,V) \to \hom_R(V,R^l) \to \hom_R(V,\Omega V) \to \ext^1_R(V,V) \to \ext^1_R(V, R^l) \to \cdots.
	\end{equation}\label{vlong}Since $M$ is MCM, the $\ext^1_R(V, R^l)$ term vanishes, and we obtain an exact sequence \begin{equation}0\to \hom_R(V,V) \to \hom_R(V,R^l) \to \hom_R(V,\Omega V) \to \ext^1_R(V,V) \to 0 
	\end{equation}\label{vlong2} of right $A$-modules. Set $$T_A\coloneqq \mathrm{coker}\left(\hom_R(V,V) \to \hom_R(V,R^l)\right)\cong \ker\left(\hom_R(V,\Omega V) \to \ext^1_R(V,V)\right).$$
	\begin{rmk}Note that if $M$ was rigid, then so is $V$, and we obtain $T_A\coloneqq \hom_{R}( V, \Omega V)$. In \cite{iwmaxmod}, $\Omega V$ is denoted either $\mu_R^+(V)$ or $\mu_R^-(V)$ (the two agree since $\Omega\cong\Omega^{-1}$).
	\end{rmk}Note that the right $A$-module $T_A$ has a projective summand isomorphic to $\hom_R(V,R)$, and hence the ring $\enn_{A}(T_A)$ has an idempotent $\id_{\hom_R(V,R)}$.
	\begin{thm}[Iyama--Wemyss]\label{mtilt}Put $B\coloneqq \enn_{R}(\Omega V)$ and $e\coloneqq \id_R \in B$. Put\linebreak  $B'\coloneqq \enn_A(T_A)$ and $e'\coloneqq \id_{\hom_R(V,R)} \in B'$.
		\begin{enumerate}
			\item[\emph{1.}] There is an isomorphism of $R$-algebras $B\cong B'$ that restricts to a ring isomorphism $eBe \cong e'B'e'\cong R$.
			\item[\emph{2.}] The map $\upmu_A\coloneqq \R\hom_A(T_A,-):D(A) \to D(B)$ is an equivalence. We call $\upmu_A$ the \textbf{mutation equivalence}.
		\end{enumerate}
	\end{thm}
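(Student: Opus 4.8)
\textbf{Proof plan for Theorem \ref{mtilt}.}

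The plan is to realise this as a special case of the general mutation machinery of Iyama--Wemyss \cite{iwmaxmod}, with the only new input being that $\Omega V$ is again a MCM modifying module in our setting, so that all their hypotheses are satisfied and their conclusions can be quoted. First I would check that we are in the right situation: $R$ is a complete local isolated hypersurface singularity of dimension $d \geq 2$, hence sCY by \ref{lciscy} and normal by Serre's criterion; $V = R \oplus M$ is MCM and modifying by \ref{mcmsummod}; and $\Omega V$ is MCM (syzygies of MCM modules over a Gorenstein ring are MCM, as noted after \ref{weaksyz}) and modifying, since $\underline{\enn}_R(\Omega V)\cong\underline{\enn}_R(V)$ and modifying-ness of a MCM module over a sCY ring with isolated singularities is detected by vanishing of $\ext^i_R(-,-)$ for $1\leq i\leq d-2$ (\ref{modext}), a condition visibly stable under the syzygy functor because $\Omega$ is the shift in $\stab R$ and $\ext^i$ for $i>0$ agrees with $\underline{\ext}^i$.

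For part (1): the exchange sequence (\ref{vcosyz}) is precisely the data needed to form the Iyama--Wemyss mutation $\mu^+_R(V)$, and since $M$ is a summand of $V$ with no free part being swapped out of the $R$-summand, their construction yields $\mu^+_R(V)\cong \Omega V$ up to the conventions on free summands. The isomorphism $\enn_R(\Omega V)\cong \enn_A(T_A)$ is then \cite[\S6]{iwmaxmod} (the statement that mutation of modifying modules corresponds under $\hom_R(V,-)$ to a tilting-type operation on $A = \enn_R(V)$): applying $\hom_R(-,\Omega V)$ to (\ref{vcosyz}) and using $\ext^1_R(R^l,\Omega V)=0$ together with reflexivity of the modules involved identifies $\enn_R(\Omega V)$ with $\enn_A(T_A)$ as $R$-algebras, and one checks directly that this isomorphism carries the idempotent $\id_{\Omega V}$-truncation $eBe$ to $e'B'e'$, both being $\enn_R(R)\cong R$ since $R$ is a summand shared by $V$, $\Omega V$, and $T_A$ (the last via the projective summand $\hom_R(V,R)$). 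The key technical point here is that $R$ is normal, so reflexive $R$-modules are determined by their behaviour at height-one primes, where everything is free; this lets one promote the generic isomorphism to a genuine one.

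For part (2): $T_A$ is the tilting object. One shows $T_A$ is a (classical) tilting $A$-module — it has projective dimension $\leq 1$ over $A$ by construction (it is the cokernel in (\ref{vlong2}), a map between projective $A$-modules), it is rigid, i.e.\ $\ext^1_A(T_A,T_A)=0$, and it generates $D(A)$ — and then $\R\hom_A(T_A,-)\colon D(A)\to D(\enn_A(T_A))=D(B)$ is a standard derived equivalence by tilting theory \cite{keller}. The rigidity and generation are exactly what the MCM modifying hypothesis buys us via the exact sequences above, following \cite[\S5--6]{iwmaxmod} verbatim; since $R$ has isolated singularities, \ref{modext} and \ref{twoorthreecor} make all the relevant $\ext$-vanishing checks concrete when $d=2$ or $d=3$ (which, by the remark analogous to \ref{3dref}, is the only case that matters). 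I expect the main obstacle to be bookkeeping rather than conceptual: carefully matching the free-summand conventions in our definition of $\Omega$ against those in \cite{iwmaxmod}, and verifying that $T_A$ — which here is a kernel/cokernel rather than simply $\hom_R(V,\Omega V)$ because $V$ need not be rigid when $d=3$ and $M$ is not rigid — is still tilting; the point is that the "correction" by the image of $\ext^1_R(V,V)$ is precisely what makes the projective resolution of $T_A$ two-term, and one must check this does not disturb generation or rigidity.
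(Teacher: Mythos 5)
Your plan is correct and takes essentially the same route as the paper: cite Iyama--Wemyss \cite[6.8]{iwmaxmod} after verifying that the setup satisfies their hypotheses (sCY, MCM modifying, $V$ a generator because of its free summand). The only detail the paper spells out that you leave implicit is the promotion from the bounded equivalence $D^b(A)\to D^b(B)$ of Iyama--Wemyss to the unbounded one $D(A)\to D(B)$, which the paper does via Happel's theorem; your appeal to tilting theory covers this too.
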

	\begin{proof}This is essentially \cite[6.8]{iwmaxmod}. Note that because $V$ has a free summand, it is a generator. Iyama and Wemyss prove that $T_A$ is a tilting module, and hence induces a derived equivalence $D^b(A) \to D^b(B)$, but this can be promoted to an equivalence $D(A) \to D(B)$ using Happel's theorem \cite{happel}.
	\end{proof}
	Starting from $B$, one can repeat the above constructions to obtain a tilting $B$-module $T_B$. By the above arguments, one obtains a derived equivalence $$\upmu_B\coloneqq \R\hom_B(T_B,-):D(B) \to D(\enn_R(\Omega^2 V)).$$However, since $\Omega^2 V \cong V$, there is an $R$-linear isomorphism $\enn_R(\Omega^2 V) \cong A$. By composition one gets an autoequivalence $\mm\coloneqq \upmu_B\circ\upmu_A: D(A)\to D(A)$. Writing $_BT_A$ for $T_A$ and $_AT_B$ for $T_B$, the (derived) hom-tensor adjunction gives an isomorphism $\mm\cong \R\hom_A(_AT_B\lot_B{_BT_A},-)$.
	\begin{defn}\label{immdefn}
		We call $\mm: D(A) \to D(A)$ the \textbf{mutation-mutation autoequivalence}. Write $I_\mm\coloneqq _AT_B\lot_B{_BT_A}$, so that $\mm$ is represented by the $A\text{-}A$-bimodule $I_\mm$.
	\end{defn}
	\begin{rmk}
		By construction, $_AT_B$ has a two-term $B$-projective resolution (i.e.\ ${\mathrm{pd}_B(_AT_B)\leq 1}$) so it follows that $I_\mm$ has cohomology only in degrees 0 and -1.
	\end{rmk}
	\begin{rmk}\label{mutnchain}
		In general, not assuming that $R$ is a hypersurface singularity, one obtains an a priori doubly infinite sequence of derived equivalences $$\cdots\xrightarrow{\cong} D(\enn_R(\Omega^i V)) \xrightarrow{\cong} D(\enn_R(\Omega^{i+1} V))\xrightarrow{\cong}\cdots$$
	\end{rmk}
	\begin{rmk}
		If $M$ is rigid then so is $\Omega M$, and it follows that $$I_\mm\simeq \hom_R(V,\Omega V)\lot_B\hom_R(\Omega V, V).$$In fact, in this situation one has $I_\mm\simeq AeA$ by results of Donovan and Wemyss \cite[5.10]{DWncdf} which we review later in \ref{dwbimodmap}.
	\end{rmk}
We finish with some results on the structure of mutation.
\begin{lem}\label{aconmutlemma}
	Let $X\in D(A)$ be an $A/AeA$-module placed in degree zero. Then $\upmu_A(X)$ has cohomology concentrated in degree one, with $H^1(\upmu_A(X))\cong \ext^1_A({_BT_A},X)$. Hence there is a quasi-isomorphism of $B$-modules $\upmu_A(X)\simeq \ext^1_A({_BT_A},X)[-1]$.
\end{lem}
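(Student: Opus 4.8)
The plan is to unwind the mutation equivalence $\upmu_A = \R\hom_A({_BT_A},-)$ applied to the $A/AeA$-module $X$, using the explicit two-term projective presentation of $T_A$ as a right $A$-module. First I would recall from \eqref{vlong2} the exact sequence
\[
0 \to \hom_R(V,V) \to \hom_R(V,R^l) \to T_A \to 0
\]
of right $A$-modules, where $\hom_R(V,V) \cong A$ and $\hom_R(V,R^l)$ is a projective $A$-module (it is a summand of $A^l$). Thus $\mathrm{pd}_A({_BT_A}) \leq 1$, and taking this two-term projective resolution $P_\bullet \to T_A$ gives $\R\hom_A({_BT_A}, X) \simeq \hom_A(P_\bullet, X)$, a complex concentrated in cohomological degrees $0$ and $1$, whose cohomology computes $\ext^*_A(T_A, X)$.

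The key step is then to show $\hom_A(T_A, X) = \ext^0_A(T_A,X) \cong 0$ when $X$ is an $A/AeA$-module. Applying $\hom_A(-,X)$ to the sequence above, $\hom_A(T_A,X)$ is the kernel of $\hom_A(\hom_R(V,R^l),X) \to \hom_A(A,X) \cong X$. Since $\hom_R(V,R^l)$ is a direct sum of copies of the projective $eA \cong \hom_R(V,R)$, we have $\hom_A(\hom_R(V,R^l),X) \cong (Xe)^{\oplus l}$, and $Xe \cong X \otimes_A Ae$. But $X$ is killed by $AeA$, so $Xe = XeAe \subseteq X AeA \cdot e$... more cleanly: $Xe$ is an $eAe$-module obtained from an $A/AeA$-module, and $X e = X(eAe) \subseteq (X \cdot AeA) = 0$ since $X \cdot AeA = 0$. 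Hence $\hom_A(\hom_R(V,R^l),X) = 0$, which forces $\hom_A(T_A,X) = 0$. Therefore $\upmu_A(X)$ has cohomology only in degree $1$, equal to $\ext^1_A(T_A,X)$, and a two-term complex with cohomology in a single degree is quasi-isomorphic to a shift of that cohomology — giving $\upmu_A(X) \simeq \ext^1_A({_BT_A},X)[-1]$ as $B$-modules.

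The main obstacle — really a bookkeeping point rather than a deep one — is checking that the identifications $\hom_R(V,R) \cong eA$ and the vanishing $Xe = 0$ are compatible with the $B$-module structure on $\upmu_A(X)$, so that the final quasi-isomorphism is genuinely one of $B$-modules and not merely of complexes of $k$-vector spaces. This is handled by noting that the projective resolution $P_\bullet \to T_A$ is a resolution of $A$-$B$-bimodules (since $T_A$ carries its $B$-action throughout the construction of \ref{mtilt}), so $\hom_A(P_\bullet, X)$ inherits a $B$-module structure levelwise, and the cohomology identification is $B$-linear. One should also confirm that $\upmu_A$ is computed correctly by this resolution, i.e.\ that $P_\bullet$ is a $B$-$A$-bimodule projective (equivalently $A$-projective) resolution of ${_BT_A}$, which is immediate from the fact that $\hom_R(V,V)$ and $\hom_R(V,R^l)$ are projective as right $A$-modules. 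Everything else is a routine diagram chase.
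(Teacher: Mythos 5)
Your proof is correct and follows essentially the same route as the paper: use the two-term projective resolution $0 \to \hom_R(V,V) \to \hom_R(V,R^l) \to {_BT_A} \to 0$ to reduce to checking $\hom_A({_BT_A},X)=0$, and deduce this from the identification $\hom_R(V,R)\cong eA$ together with the vanishing of $\hom_A(eA,X)\cong Xe$. The paper phrases the last step in recollement language ($X$ lies in $D(\dq)$, so there are no maps from $eA$ to $X$), whereas you give the elementary computation that $e\in AeA$ forces $Xe\subseteq X\cdot AeA=0$; these are the same observation.
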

\begin{proof}Because $H^i(\upmu_A(X))\cong \ext^i_A({_BT_A},X)$ by definition, we only need to show that $H(\upmu_A(X))$ is concentrated in degree one. By construction, ${_BT_A}$ comes with an $A$-projective resolution $$0 \to \hom_R(V,V) \to \hom_R(V,R^l) \to {_BT_A} \to 0.$$Note that $\hom_R(V,R)\cong eA$. Because $X$ is an object of the subcategory $D(\dq)$, there are no maps from $eA$ to $X$.
\end{proof}

\begin{lem}\label{shiftlem}
	Let $X \in D(\dq)$. Then for all $q\in \Z$ there are $B$-module isomorphisms $$H^{1+q}(\upmu_A(X))\cong H^1(\upmu_A(H^q(X))).$$
\end{lem}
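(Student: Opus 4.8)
\emph{Let $X \in D(\dca)$. Then for all $q\in \Z$ there are $B$-module isomorphisms $H^{1+q}(\upmu_A(X))\cong H^1(\upmu_A(H^q(X)))$.}

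\bigskip

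The plan is to reduce the statement to Lemma \ref{aconmutlemma} by a d\'evissage along the Postnikov tower of $X$ inside the derived category $D(\dca)$. First I would recall that by \ref{tstrs} the derived category $D(\dca)$ carries a t-structure whose heart is equivalent to $\cat{Mod}\text{-}A/AeA$ via $H^0$, so every object $X\in D(\dca)$ is filtered by its cohomology modules $H^q(X)$, each of which is an $A/AeA$-module placed in degree $q$. The key computational input is that $\upmu_A$, being tensoring by a bimodule with a two-term $A$-projective resolution (namely $0\to \hom_R(V,V)\to\hom_R(V,R^l)\to {_BT_A}\to 0$), has cohomological amplitude at most $1$; more precisely, by the proof of \ref{aconmutlemma}, when applied to an $A/AeA$-module $Y$ placed in degree zero the only nonvanishing term is $\ext^1_A({_BT_A},Y)$, because $\hom_R(V,R)\cong eA$ admits no maps to an object of $D(\dca)$. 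Shifting, $\upmu_A(H^q(X)[-q])$ is concentrated in degree $1+q$ with cohomology $H^1(\upmu_A(H^q(X)))$ there.

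\bigskip

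Next I would run the d\'evissage. Write $\tau_{\leq q}X$ for the truncations with respect to the standard t-structure on $D(\dca)$ (which, under the embedding $i_*: D(\dca)\into D(A)$, agrees with the standard t-structure on $D(A)$ by \ref{tstrs}); these give exact triangles
\[
\tau_{\leq q-1}X \to \tau_{\leq q}X \to H^q(X)[-q] \to
\]
in $D(\dca)$, hence triangles of $B$-modules after applying $\upmu_A$. Taking the long exact cohomology sequence of $\upmu_A$ applied to this triangle, and using that $\upmu_A(H^q(X)[-q])$ is concentrated in degree $1+q$, one sees inductively that $H^{1+j}\upmu_A(\tau_{\leq q}X)$ stabilises for $j < q$ and in the top degree $H^{1+q}\upmu_A(\tau_{\leq q}X)$ receives a surjection from $H^1(\upmu_A(H^q(X)))$, with the remaining terms of the long exact sequence controlling the error. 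To turn this into the clean isomorphism claimed, I would observe that $X$ is (in each fixed cohomological degree) the filtered colimit — or, if $X$ is bounded, a finite iterated extension — of its truncations, and $\upmu_A$ commutes with the relevant (homotopy) colimits since it is a left-adjoint-like tensor functor; thus $H^{1+q}(\upmu_A(X))\cong H^{1+q}\upmu_A(\tau_{\leq q}X)$ for the degree in question. Combining the amplitude bound (which kills the contribution of $\tau_{\leq q-1}X$ to degree $1+q$) with the amplitude bound in the other direction (which kills contributions of $H^{q'}(X)$ for $q'>q$) forces the connecting maps to vanish and yields $H^{1+q}(\upmu_A(X))\cong H^1(\upmu_A(H^q(X)))$.

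\bigskip

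The main obstacle I anticipate is bookkeeping the boundedness hypotheses and the exactness/colimit-commutation of $\upmu_A$ carefully enough that the long exact sequences genuinely collapse to isomorphisms rather than merely short exact sequences: one must check both that $\upmu_A$ has no cohomology below degree $1$ on objects of $D(\dca)$ (so that $\tau_{\leq q-1}X$ contributes nothing in degree $1+q$) \emph{and} that it has no cohomology above degree $1$ on modules (so that the higher truncations contribute nothing), and then verify that these two vanishings sit on opposite sides of the relevant connecting homomorphism. Once the amplitude statement of \ref{aconmutlemma} is in hand this is purely formal homological algebra inside the t-structure of \ref{tstrs}, so I expect no serious difficulty beyond careful indexing; the unbounded case, if needed, is handled by passing to the homotopy colimit of truncations and using that mutation commutes with it.
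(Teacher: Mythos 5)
Your dévissage along the Postnikov tower is essentially an unrolled version of the paper's argument, and it hits the same key input: Lemma \ref{aconmutlemma}, which says $\upmu_A$ applied to a module in $D(\dq)$ is concentrated in degree one. The paper packages the dévissage as the hyper-derived-functor spectral sequence associated to a two-term projective bimodule resolution $P\to{_BT_A}$, computing $\upmu_A(X)$ as $\mathrm{Tot}^\Pi\hom_A(P,X)$. The $E_2$ page is $E_2^{pq}\cong H^p(\upmu_A(H^q X))$, which vanishes unless $p=1$ by Lemma \ref{aconmutlemma}, so the sequence collapses. What the spectral sequence buys you that your proposal handles more gingerly is the (potentially) unbounded case: the paper invokes the notion of a \emph{weakly} convergent spectral sequence (Weibel, \S5.6) and the fact that a weakly convergent spectral sequence that collapses actually converges. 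Your truncation-tower argument is fine for $X$ bounded below, but for $X$ unbounded below you would need a $\varprojlim^1$-type argument over the tower $\tau_{\geq -m}X$, which is exactly the content you are deferring when you gesture at ``passing to the homotopy colimit of truncations''; the spectral sequence language makes that step invisible rather than omitted.

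Two small slips worth flagging. First, $\upmu_A=\R\hom_A({_BT_A},-)$ is a right adjoint (to $-\lot_B{_BT_A}$), not a ``left-adjoint-like tensor functor''; the commutation with the relevant homotopy (co)limits instead follows from ${_BT_A}$ being a compact (perfect) $A$-module, not from tensor-functor formalities. Second, in the long exact sequence of the truncation triangle $\tau_{\leq q-1}X\to\tau_{\leq q}X\to H^qX[-q]$, the map goes \emph{from} $H^{1+q}\upmu_A(\tau_{\leq q}X)$ \emph{to} $H^1(\upmu_A(H^qX))$, not the other way around as you wrote; once the two flanking terms $H^{1+q}\upmu_A(\tau_{\leq q-1}X)$ and $H^{2+q}\upmu_A(\tau_{\leq q-1}X)$ vanish (both by the amplitude bound applied inductively), this map is an isomorphism outright, so there is no ``error term'' to control. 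Neither point is fatal, but the indexing care you rightly identify as the main hazard is exactly where your write-up wobbles.
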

\begin{proof}
	This follows from the previous lemma together with a spectral sequence argument. Take a $B\text{-}A$-bimodule quasi-isomorphism $P \to {_BT_A}$ that is a projective resolution of right $A$-modules. Consider the double complex of $B$-modules $E_0^{pq}\coloneqq \hom_A(P^{-p}, X^q)$ whose total product complex is $\mathrm{Tot}^{\Pi}(E_0)\cong \upmu_A(X)$. We may regard $E_0$, equipped with the differential of $X$, as the zeroth page of a (cohomological) spectral sequence $E$. Because $P$ is zero in positive degrees, it follows by the discussion after \cite[5.6.1]{weibel} that the spectral sequence $E$ weakly converges to $H^n(\mathrm{Tot}^{\Pi}(E_0)) \cong H^n(\upmu_A(X))$. It is easy to see that $E_2^{pq}\cong H^{p}(\upmu_A(H^q(X)))$. By \ref{aconmutlemma}, this module is zero unless $p=1$, where it is $\ext^1_A(_BT_A, H^q(X))$. In other words, the spectral sequence collapses at the $E_2$ page. A weakly convergent spectral sequence which collapses must converge, and we get the desired isomorphisms.
\end{proof}
\begin{cor}\label{shiftcor}
	If $X \in D(\dq)$ satisfies $H^q(X)\cong 0$, then $H^{1+q}(\upmu_A(X))\cong 0$.
\end{cor}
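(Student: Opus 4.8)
The statement to prove is Corollary \ref{shiftcor}: if $X \in D(\dq)$ has $H^q(X)\cong 0$, then $H^{1+q}(\upmu_A(X))\cong 0$. The plan is to deduce this directly from the preceding Lemma \ref{shiftlem}, which is the real work. First I would observe that Lemma \ref{shiftlem} gives, for every $q \in \Z$, a $B$-module isomorphism $H^{1+q}(\upmu_A(X))\cong H^1(\upmu_A(H^q(X)))$. So the entire content of the corollary is that the right-hand side vanishes whenever $H^q(X)\cong 0$. But if $H^q(X)\cong 0$ as an $A/AeA$-module placed in degree zero, then $\upmu_A(H^q(X)) = \upmu_A(0)$, which is the zero object of $D(B)$ since $\upmu_A = \R\hom_A(T_A,-)$ is an exact (in particular, additive) functor and hence sends zero objects to zero objects. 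Therefore $H^1(\upmu_A(H^q(X)))\cong H^1(0)\cong 0$, and combining with the isomorphism from Lemma \ref{shiftlem} we conclude $H^{1+q}(\upmu_A(X))\cong 0$.

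The only subtlety worth spelling out is that Lemma \ref{shiftlem} is stated for $X \in D(\dq)$, and one must check that the hypothesis $H^q(X)\cong 0$ is compatible with applying it — but this is immediate, since the lemma holds for all $q$ with no restriction beyond $X \in D(\dq)$, and the vanishing hypothesis only gets used at the very end to identify $\upmu_A(H^q(X))$ with $\upmu_A(0)$. One could alternatively argue without invoking the full strength of Lemma \ref{shiftlem}: by Lemma \ref{aconmutlemma}, $\upmu_A$ applied to an $A/AeA$-module concentrated in degree zero is concentrated in degree one with $H^1(\upmu_A(Y))\cong \ext^1_A({_BT_A},Y)$, and then the spectral sequence $E_2^{pq}\cong H^p(\upmu_A(H^q(X)))$ from the proof of Lemma \ref{shiftlem} degenerates onto the row $p=1$; the vanishing of $H^q(X)$ kills the $q$-th column of the $E_2$ page, hence kills the only contribution to $H^{1+q}(\upmu_A(X))$ in total degree $1+q$. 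Either route works; I would present the first one as it is a one-line formal consequence.

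There is essentially no obstacle here: this corollary is a routine specialisation of Lemma \ref{shiftlem}, and the proof amounts to noting that $\upmu_A$ preserves zero objects. The substantive work — the spectral sequence argument establishing the degree-shift isomorphism — was already carried out in the proof of Lemma \ref{shiftlem}, which I am free to assume. So the proof of Corollary \ref{shiftcor} is simply: \emph{Immediate from \ref{shiftlem}, since $\upmu_A(0)\cong 0$.}
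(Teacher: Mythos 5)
Your proof is correct and matches what the paper intends: the paper gives no separate proof of the corollary precisely because it is the one-line consequence of Lemma \ref{shiftlem} that you spell out, namely $H^{1+q}(\upmu_A(X))\cong H^1(\upmu_A(H^q(X)))\cong H^1(\upmu_A(0))\cong 0$.
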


	\section{Recollements}
	Throughout this section we will use the following setup:
	\begin{setup}\label{mutsetup}
	Let $R$ be a complete local isolated hypersurface singularity of dimension at least 2, $M$ a MCM modifying $R$-module with no free summands, $V\coloneqq R\oplus M$, $A\coloneqq \enn_R(V)$, $B\coloneqq \enn_R(\Omega V)$, and $e=\id_R$ (we use the same notation for $\id_R \in A$ and $\id_R \in B$). 
		\end{setup}We aim to prove that mutation respects the recollement of \ref{recoll}. The following lemma will be useful to us:
\begin{lem}\label{bimodlem}
	Let $_BT_A$ and $_AT_B$ be the tilting modules constructed in \S\ref{mutsn}. Then:
	\begin{itemize}
		\item $_BT_Ae\cong Be$ as $B$-$R$-bimodules.
		\item $_AT_Be\cong Ae$ as $A$-$R$-bimodules.
		\item $e_BT_A\cong eA$ as $R$-$A$-bimodules.
		\item $e_AT_B\cong eB$ as $R$-$B$-bimodules.
		\end{itemize}
	\end{lem}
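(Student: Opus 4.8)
The plan is to unwind the definitions of the tilting bimodules $_BT_A$ and $_AT_B$ and exploit the fact that multiplying by $e$ on the right picks out the summand corresponding to $R$, while multiplying by $e$ on the left picks out maps \emph{into} $R$ (or, after transposing, maps \emph{out of} $R$). Recall from \S\ref{mutsn} that $_BT_A$ is defined by the exact sequence of right $A$-modules
\begin{equation}\label{eqn:Tdef}
0 \to \hom_R(V,V) \to \hom_R(V,R^l) \to {_BT_A} \to 0
\end{equation}
and that this is simultaneously a sequence of $B$-modules via the identification $B\cong \enn_A({_BT_A})$ from \ref{mtilt}. Since the functor $-\otimes_A Ae$ is exact (as $Ae$ is a projective right $A$-module, being a summand of $A$) and sends $\hom_R(V,W)$ to $\hom_R(V,W)e$, applying $-e$ to \eqref{eqn:Tdef} gives an exact sequence
\begin{equation*}
0 \to \hom_R(V,V)e \to \hom_R(V,R^l)e \to {_BT_A}e \to 0.
\end{equation*}

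First I would identify the first two terms. Under $A=\enn_R(V)$ with $V=R\oplus M$, the idempotent $e=\id_R$ satisfies $Ae\cong \hom_R(R,V)\cong V$ and $\hom_R(V,W)e\cong \hom_R(V,W)\cdot\id_R$, which as a submodule consists of those homomorphisms $V\to W$ that factor through the projection $V\onto R$; more precisely, right multiplication by $e$ on $\hom_R(V,W)=\hom_R(R\oplus M,W)$ kills the $\hom_R(M,W)$ summand, so $\hom_R(V,W)e\cong \hom_R(R,W)\cong W$. Hence the sequence above becomes $0\to V \to R^l \to {_BT_A}e \to 0$, which is precisely the syzygy-type sequence \eqref{vcosyz}; comparing cokernels gives an $R$-module isomorphism $_BT_Ae\cong \Omega V$. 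To upgrade this to a $B$-$R$-bimodule isomorphism, note that the left $B$-action on $_BT_Ae$ is the restriction of the left $B$-action on $_BT_A$, which via $B\cong \enn_A({_BT_A})$ is compatible with the standard $B=\enn_R(\Omega V)$-action on the sequence \eqref{vcosyz}; one checks (using that $\Omega$ is a functor and that \eqref{vcosyz} is the defining sequence of $\Omega V$) that this matches the left $B$-action on $Be\cong \hom_R(\Omega V,R)^{\,\vee}$... more directly, $Be\cong \hom_R(R,\Omega V)\cong \Omega V$ as a $B$-$R$-bimodule, so $_BT_Ae\cong Be$. The statement $_AT_Be\cong Ae$ follows by the identical argument with the roles of $V$ and $\Omega V$ interchanged (and using $\Omega^2 V\cong V$ to identify $\enn_R(\Omega^2 V)$ with $A$), giving $_AT_Be\cong \hom_R(\Omega V, \Omega^2 V)e\cong \hom_R(R,V)\cong V\cong Ae$.

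For the left-multiplication statements, the cleanest route is duality: applying $e-$ to a bimodule is the same as applying $(-e)$ to the opposite-algebra picture, or equivalently one uses the natural isomorphism $e\hom_R(V,W)\cong \hom_R(W,V)^{\vee}$... concretely, $e_BT_A = \id_R\cdot{_BT_A}$ consists of those elements of $_BT_A$ in the image of the idempotent $e$ acting on the left via $B\cong\enn_A({_BT_A})$, and since $eBe\cong R$ and $eB\cong eBe\cdot(\text{stuff})$, one computes $eB\cong \hom_R(\Omega V, R)$ as an $R$-$B$-bimodule. Left-multiplying \eqref{eqn:Tdef} by $e$ and using that $e\hom_R(V,W)=\hom_R(V,W)\cdot(\text{restriction along }R\into \text{target})$, i.e. $e\hom_R(V,W)\cong \hom_R(V,R)$-submodule... the cleanest statement is that $e_BT_A\cong \hom_B(Be, {_BT_A})\cong e\cdot{_BT_A}$, and the exact sequence \eqref{eqn:Tdef} becomes $0\to e\hom_R(V,V)\to e\hom_R(V,R^l)\to e_BT_A\to 0$ with first two terms $\hom_R(R,R)$-modules; since $e\hom_R(V,R^l)\cong\hom_R(R,R^l)\cong R^l$ and the map $e\hom_R(V,V)\to e\hom_R(V,R^l)$ becomes an isomorphism onto $\hom_R(R,R)\cong R\into R^l$, the cokernel $e_BT_A$ is a quotient of $R^l$ by $R$, which one identifies with $eA\cong\hom_R(V,R)$ (note $eA = \hom_R(R,V)^{?}$ — rather, $eA\cong \hom_R(V,R)$ as an $R$-$A$-bimodule since $e=\id_R$ projects onto the $R$-component of the source). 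The main obstacle is precisely this bookkeeping: being careful about which idempotent projection corresponds to ``source'' versus ``target'', keeping track of which side the $R$-action lands on, and verifying that the bimodule structures (not just the underlying module structures) agree. None of it is deep, but it is the kind of computation where signs and variance are easy to get wrong, so I would set up the $e$-projections of $\hom_R$-spaces very explicitly once at the start and then reuse them throughout. The analogous statement $e_AT_B\cong eB$ follows by the same argument under $V\leftrightarrow\Omega V$.
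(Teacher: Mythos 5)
The first two bullets are handled correctly: you apply the exact functor $(-)e$ to the defining short exact sequence for ${}_BT_A$, use the natural $B$-$R$-bimodule isomorphism $\hom_R(V,X)e\cong X$, and identify the cokernel with $\Omega V\cong Be$. This is exactly the paper's argument. Your bookkeeping of which summand $e$ projects onto, and the sanity check against the paper's stated identifications $Ae\cong V$ and $eA\cong\hom_R(V,R)$, are both sound.

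The third and fourth bullets, however, contain a genuine gap. You try to ``left-multiply \eqref{eqn:Tdef} by $e$'' to obtain an exact sequence $0\to e\hom_R(V,V)\to e\hom_R(V,R^l)\to e\,{}_BT_A\to 0$. But the idempotent in $e\,{}_BT_A$ is $e\in B$, acting via the isomorphism $B\cong\enn_A({}_BT_A)$ from \ref{mtilt}, and the terms $\hom_R(V,V)=A$ and $\hom_R(V,R^l)$ of the defining sequence carry no natural left $B$-module structure (the first is an $A$-bimodule, the second is a left $M_l(R)$-module); so ``$e\hom_R(V,V)$'' with $e\in B$ is not defined and the sequence cannot be left-multiplied by $e$ in this way. (If instead you were tacitly using $e\in A$ acting on the left, then $\hom_R(V,R^l)$ is only a right $A$-module and the expression still does not make sense; in either reading the step fails.) The paper's argument for these two bullets is entirely different and shorter: by \ref{mtilt}(1) the isomorphism $B\cong B'=\enn_A({}_BT_A)$ carries $e=\id_R$ to $e'=\id_{\hom_R(V,R)}$, where $\hom_R(V,R)$ is the explicit projective summand of ${}_BT_A$. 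Hence $e\,{}_BT_A=e'\,{}_BT_A$ is precisely the image of that idempotent projector, namely $\hom_R(V,R)\cong eA$, and the $R$-$A$-bimodule structures visibly agree. The fourth bullet is identical with $V$ and $\Omega V$ interchanged. Your proposal needs this separate argument; the symmetry ``same proof, other side'' that works between bullets 1 and 2 does not transport bullets 1--2 to bullets 3--4, precisely because the left and right module structures on the terms of the defining sequence are asymmetric.
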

\begin{proof}
For the first statement, note that given an $A$-module of the form $\hom_R(V,X)$ then one has $\hom_R(V,X)e \cong X$ as $R$-modules. Moreover, if $X$ itself was a left $A$-module then this is an isomorphism of $A$-$R$-bimodules. By definition, $_BT_A$ is the cokernel of the map $\hom_R(V,V) \to \hom_R(V,R^l)$. Because the functor $Y \mapsto Ye $ is exact, we see that we have $_BT_Ae\cong \mathrm{coker}(V \to R^l)\cong \Omega V \cong Be$ as $B\text{-}R$-bimodules. The second statement is completely analogous to the first. For the third statement, observe that \ref{mtilt} along with exactness of multiplication by $e$ gives an isomorphism $e_BT_A\cong \hom_R(V,R)\cong eA$ of $B$-$A$-bimodules. The fourth statement is completely analogous to the third.
	\end{proof}

Let $\upmu_A$ be the mutation equivalence. Recall from \ref{recoll} the existence of the recollement  $D(\dq)\recol D(A)\recol D(R)$.
\begin{defn}
	Let $C$, $C'$ be two dgas. Say that $C$ and $C'$ are \textbf{derived Morita equivalent} if there is a $C'\text{-}C$-bimodule $P$ such that $\R\hom_C(P,-):D(C)\to D(C')$ is a derived equivalence. Note that in this case the inverse is necessarily given by the functor $-\lot_{C'}P$.
\end{defn}
	
	\begin{prop}\label{recolhalf}Put $\upmu_{\mathbb{L}}\coloneqq \R\hom_{\dq}(\dqb\lot_B{_BT_A}\lot_A \dq,-) $. Then the diagram $$\begin{tikzcd}[column sep=huge]
		D(\dq)\ar[swap, dd,"\upmu_{\mathbb{L}} "] \ar[r,"i_*=i_!"]& D(A)\ar[l,bend left=25,"i^!"']\ar[l,bend right=25,"i^*"']\ar[r,"j^!=j^*"]\ar[swap, dd,"\upmu_A"] & D(R)\ar[l,bend left=25,"j_*"']\ar[l,bend right=25,"j_!"']\ar[swap, dd,"\id"]
		\\  & &
		\\ D(\dqb) \ar[r,"i_*=i_!"]& D(B)\ar[l,bend left=25,"i^!"']\ar[l,bend right=25,"i^*"']\ar[r,"j^!=j^*"] & D(R)\ar[l,bend left=25,"j_*"']\ar[l,bend right=25,"j_!"']
		\end{tikzcd}$$is a morphism of recollement diagrams, with vertical maps equivalences. In particular, $\dq$ and $\dqb$ are derived Morita equivalent.
	\end{prop}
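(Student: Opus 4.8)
\textbf{Proof proposal for Proposition \ref{recolhalf}.}

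The plan is to verify that the three vertical functors in the diagram form a morphism of recollements, i.e.\ that each vertical functor commutes (up to natural isomorphism) with the six functors of the recollement. The key point is that the mutation equivalence $\upmu_A$ is, up to quasi-isomorphism, the functor of tensoring with a bimodule; by \ref{recoll} all six recollement functors are also built from derived hom and tensor against bimodules, so commutativity is a matter of manipulating bimodules. First I would fix the bimodule $P \coloneqq {_BT_A}$ and note that $\upmu_A \simeq \R\hom_A(P,-) \simeq (-)\lot_B P^\vee$ where $P^\vee$ is the inverse tilting bimodule ${_AT_B}$. The right-hand square commutes because $j^*\upmu_A \simeq (-)\lot_A Ae$ composed with $\upmu_A$, and by \ref{bimodlem} we have $P^\vee e \cong Ae$, so $j^*\upmu_A(X) \simeq X \lot_A P^\vee e \cong X \lot_A Ae \simeq j^*(X)$, identifying $j^*\upmu_A$ with $\id_{D(R)} \circ j^*$. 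Dually $e P \cong eA$ handles the other side.

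Next, having established commutativity of the right-hand square, I would invoke the rigidity of recollements: a recollement is determined, up to equivalence, by fixing one half (this is used already in \ref{Rcoloc}, citing \cite[Remark 2.4]{kalck}, and in \ref{cohomsupport}). Concretely, once the quotient functors $j^*$ are intertwined by $\upmu_A$ and $\id$, the subcategory $D(\dq) = \ker j^*$ must be sent by $\upmu_A$ into $\ker j^* = D(\dqb)$. One then defines $\upmu_{\mathbb L}$ to be the restriction of $\upmu_A$ to these kernels; automatically $i_* \upmu_{\mathbb L} \simeq \upmu_A i_*$, and the adjoints $i^*, i^!$ and $j_!, j_*$ are intertwined by uniqueness of adjoints. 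It remains only to identify $\upmu_{\mathbb L}$ with the stated formula $\R\hom_{\dq}(\dqb \lot_B {_BT_A} \lot_A \dq, -)$. This is a bimodule computation: the restriction of $(-)\lot_A P^\vee$ to $D(\dq)$, followed by the projection $i^* = -\lot_A {_AQ}$ to $D(\dqb)$, is represented on bimodules by ${_AQ}^{\mathrm{op}} \otimes^{\mathbb L}$-composition, and using that $\dq \lot_A P^\vee \lot_B \dqb$ is the $\dqb$-$\dq$-bimodule in question (with $\dq \cong \dge_{\per_{\mathrm dg}(\dq)}(\dq)$ and similarly for $B$), one reads off the claimed formula. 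Alternatively one can just verify directly that $\R\hom_{\dq}(\dqb \lot_B {_BT_A} \lot_A \dq,-)$ is an equivalence by checking it sends the compact generator $\dq$ to $\dqb$ and is fully faithful on it, which follows from $\upmu_A$ being an equivalence and the semiorthogonal decomposition \ref{semiorthog}.

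Finally, the derived Morita statement is an immediate consequence: $\upmu_{\mathbb L}: D(\dq) \to D(\dqb)$ is an equivalence represented by tensoring (resp.\ deriving hom) against the bimodule $\dqb \lot_B {_BT_A} \lot_A \dq$, which is exactly the definition of a derived Morita equivalence. I expect the main obstacle to be the bookkeeping in the last step — namely pinning down the precise bimodule representing $\upmu_{\mathbb L}$ and checking it agrees with $\dqb \lot_B {_BT_A} \lot_A \dq$ rather than some quasi-isomorphic variant, since this requires care with the various projection functors $i^*$, $i_*$ and the identifications $\dq \simeq \R\enn(\dq)$ as bimodules over themselves. The commutativity of the squares themselves should be routine given \ref{bimodlem}, and the passage through recollement rigidity is a black box once the quotient halves are matched.
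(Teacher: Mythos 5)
Your overall strategy coincides with the paper's: check that the three right-hand squares commute, invoke the uniqueness of a morphism of recollements once one half is fixed (citing \cite[2.4]{kalck}), deduce the existence of the vertical equivalence on the left, and then identify it with the claimed formula via $i_*F' \cong \upmu_A i_*$. However, there is a genuine error in your bimodule bookkeeping that would derail the last step. You write $\upmu_A \simeq (-)\lot_B P^\vee$ with $P^\vee = {_AT_B}$, but this is $\upmu_B^{-1}$, not $\upmu_A$. If $\upmu_A$ were $-\lot_A {_AT_B}$, then combined with $\upmu_A^{-1} = -\lot_B {_BT_A}$ we would get ${_AT_B}\lot_B {_BT_A} \simeq A$; but by definition ${_AT_B}\lot_B {_BT_A} = I_\mm$, which represents the mutation-mutation functor $\mm$ and is not $A$ (for example, in the rigid threefold case $I_\mm \cong AeA$). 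The tensor form of $\upmu_A$ is $-\lot_A \R\hom_A({_BT_A},A)$, and this bimodule is not ${_AT_B}$ unless $\mm = \id$.

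This error then contaminates your final bimodule computation: the composite bimodule you write down, $\dq\lot_A {_AT_B}\lot_B \dqb$, is not the one appearing in the stated formula (which uses $_BT_A$, not $_AT_B$, and sits on the other side), so you would not read off the claimed expression. The right-hand commutativity checks can still be salvaged — the paper avoids your identification entirely and works with $\upmu_A^{-1}\cong -\lot_B {_BT_A}$ together with \ref{bimodlem} (e.g.\ $j^*_A \upmu_A^{-1}(Y) = Y\lot_B {_BT_A}e \cong Y\lot_B Be = j^*_B(Y)$), which closes all three squares. For the identification of $\upmu_{\mathbb L}$, you should, as the paper does, verify directly that $i_*\left(\R\hom_{\dq}(\dqb\lot_B{_BT_A}\lot_A\dq,-)\right) \cong \upmu_A i_*$ using the adjunction $\R\hom_{\dq}(M\lot_A\dq, X)\cong\R\hom_A(M,X)$ and the vanishing $\R\hom_A(\cell B\lot_B {_BT_A}, X) \simeq 0$ for $X \in D(\dq)$; your alternative (merely checking the formula gives an equivalence sending $\dq$ to $\dqb$) does not by itself show it fits into the recollement morphism, so it does not replace this verification.
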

	\begin{proof}
	First we check that the three squares on the right-hand side commute. But this is not hard: it follows from \ref{bimodlem} and the fact that $\upmu_A^{-1}\cong -\lot_B{}_BT_A$ that we have isomorphisms $j^*_B \circ \upmu_A \cong j^*_A$, $\upmu_A \circ j_*^A \cong j_*^B$, and $\upmu_A \circ j_!^A \cong j_!^B$. Because morphisms of recollements are determined uniquely by one half (e.g.\ \cite[2.4]{kalck}), there is a unique (up to isomorphism) map $F:D(\dq) \to D(\dqb)$ fitting into a morphism of recollements with the righthand two, and since the righthand two are equivalences, so is $F$. Since the $i_*$ maps are fully faithful, $F$ is determined completely by $i_*F$: if $F'$ is any other functor such that $i_*F' \cong \upmu_A \circ i_*$, then $F'\cong F$. But one can check that the given functor satisfies this condition.
	\end{proof}
	\begin{prop}Put $I^\mathbb{L}_\mm\coloneqq \dq\lot_A{I_\mm}\lot_A \dq$ and $\mm_{\mathbb{L}}\coloneqq \R\hom_{\mathbb{L}}(I^\mathbb{L}_\mm,-) $. Then the diagram $$\begin{tikzcd}[column sep=huge]
		D(\dq)\ar[swap, dd," \mm_{\mathbb{L}} "] \ar[r,"i_*=i_!"]& D(A)\ar[l,bend left=25,"i^!"']\ar[l,bend right=25,"i^*"']\ar[r,"j^!=j^*"]\ar[swap, dd,"\mm"] & D(R)\ar[l,bend left=25,"j_*"']\ar[l,bend right=25,"j_!"']\ar[swap, dd,"\id"]
		\\  & &
		\\ D(\dq) \ar[r,"i_*=i_!"]& D(A)\ar[l,bend left=25,"i^!"']\ar[l,bend right=25,"i^*"']\ar[r,"j^!=j^*"] & D(R)\ar[l,bend left=25,"j_*"']\ar[l,bend right=25,"j_!"']
		\end{tikzcd}$$is a morphism of recollement diagrams, with vertical maps equivalences.
	\end{prop}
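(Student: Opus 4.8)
The plan is to apply the composition principle for morphisms of recollements, using the previously established result that mutation respects the recollement (\ref{recolhalf}). Recall that $\mm = \upmu_B \circ \upmu_A$, where $\upmu_A: D(A) \to D(B)$ and $\upmu_B: D(B) \to D(A)$ (after the identification $\enn_R(\Omega^2 V) \cong A$). By \ref{recolhalf} applied to $\upmu_A$, we obtain a morphism of recollement diagrams with central map $\upmu_A$, left map $\upmu_\mathbb{L}: D(\dq) \to D(\dqb)$, and right map $\id_{D(R)}$. Applying \ref{recolhalf} again to $\upmu_B$ — noting that $B$ is itself a noncommutative partial resolution of $R$ by the MCM modifying module $\Omega M$, so Setup \ref{mutsetup} applies to $B$ in place of $A$ — gives a second morphism of recollements with central map $\upmu_B$, left map some equivalence $D(\dqb) \to D(\dq)$ (using $\enn_R(\Omega^2 V)\cong A$ and hence $\enn_R(\Omega^2 V)/^\mathbb{L}\cdots \simeq \dq$), and right map $\id_{D(R)}$.

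First I would compose these two morphisms of recollement diagrams vertically. The composition of two morphisms of recollements is again a morphism of recollements, so we obtain a morphism of recollement diagrams whose central vertical map is $\upmu_B \circ \upmu_A = \mm$ and whose right vertical map is $\id_{D(R)} \circ \id_{D(R)} = \id_{D(R)}$. The left vertical map is then an equivalence $D(\dq) \to D(\dq)$, automatically, as a composition of two equivalences. By the uniqueness of morphisms of recollements (e.g.\ \cite[2.4]{kalck}), once the right-hand square commutes, the induced left-hand functor is uniquely determined up to isomorphism. So it suffices to check that the claimed left vertical functor $\mm_\mathbb{L} = \R\hom_\mathbb{L}(I^\mathbb{L}_\mm, -)$, where $I^\mathbb{L}_\mm \coloneqq \dq \lot_A I_\mm \lot_A \dq$, fits into the morphism of recollements; and since the $i_*$ maps are fully faithful, this reduces to verifying the single isomorphism $i_* \circ \mm_\mathbb{L} \cong \mm \circ i_*$ of functors $D(\dq) \to D(A)$.

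That remaining verification is a bimodule computation: one unwinds $\mm \cong \R\hom_A(I_\mm, -)$ with $I_\mm = {}_AT_B \lot_B {}_BT_A$, and checks that restricting along the recollement — equivalently, computing $\dq \lot_A (-) \lot_A \dq$ of the relevant bimodule and passing through the standard derived-hom manipulations — produces $I^\mathbb{L}_\mm$ acting on $D(\dq)$. Here one uses that $A \to \dq$ is a homological epimorphism (\ref{homepi}), so that $\dq \lot_A \dq \simeq \dq$ and the restriction-of-scalars functor $i_*$ is fully faithful with image the $(1-e)$-local modules; the key formal input is that for a homological epimorphism the two ways of transporting a bimodule twist (via the module structure on the base versus the quotient) agree up to quasi-isomorphism. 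I expect this last bimodule bookkeeping to be the main obstacle — not conceptually deep, but requiring care with the various left/right module structures and the identifications $eAe \cong R \cong eBe$, $Ae \cong {}_AT_Be$, $e{}_BT_A \cong eA$ from \ref{bimodlem}. Everything else is a formal consequence of \ref{recolhalf} and the composability of morphisms of recollements.
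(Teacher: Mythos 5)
Your plan — apply \ref{recolhalf} twice, compose the resulting morphisms of recollements, and identify the induced left vertical functor using uniqueness of morphisms of recollements / full faithfulness of $i_*$ — is exactly the paper's strategy. So the high-level architecture is right.

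The part you flag as ``the main obstacle'' is, however, the whole content of the proof, and your sketch of it is not yet a proof. Composing the two applications of \ref{recolhalf} gives a left vertical map represented by $W\coloneqq\dq\lot_A{_AT_B}\lot_B \dqb\lot_B \dqb\lot_B{_BT_A}\lot_A \dq$, so everything reduces to the $\dq$-bimodule quasi-isomorphism $W\simeq I^\mathbb{L}_\mm$. The homological epimorphism property $\dq\lot_A\dq\simeq\dq$ does not by itself collapse $W$, since $W$ contains the \emph{other} derived quotient $\dqb$ sandwiched in the middle. What is actually needed is the $B$-$A$-bimodule quasi-isomorphism ${_BT_A}\lot_A\dq\simeq \dqb\lot_B{_BT_A}\lot_A\dq$ (and its mirror), and the paper obtains this by reading off representing objects from $\R\hom_A({_BT_A},i_*-)\simeq i_*i^!\R\hom_A({_BT_A},i_*-)$, which holds because \ref{recolhalf} already tells us $\upmu_A\circ i_*$ lands in the essential image of $i_*:D(\dqb)\into D(B)$. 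Your remark that ``the two ways of transporting a bimodule twist agree'' gestures at this, but you should isolate this specific identity rather than attributing it to the homological epimorphism property, since the collapse being used is the recollement identity $i_*i^!\simeq\id$ on the image of $i_*$, not the idempotence of $\dq\lot_A-$.
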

\begin{proof}
	One could show this via a proof similar to that of \ref{recolhalf}. We instead show that $\mm_{\mathbb{L}}$ is isomorphic to the composition of the functors $$D(\dq)\xrightarrow{\mu_{\mathbb{L}}} D(\dqb) \xrightarrow{\mu_{\mathbb{L}}} D(\dq)$$obtained by applying \ref{recolhalf} twice. It is easy to show that $\mu_{\mathbb{L}}\circ \mu_{\mathbb{L}}$ is represented by the object $$W\coloneqq\dq\lot_A{_AT_B}\lot_B \dqb\lot_B \dqb\lot_B{_BT_A}\lot_A \dq.$$So we want to show that $W$ is quasi-isomorphic as an $\dq$-bimodule to $I^\mathbb{L}_\mm$, which represents $\mm_{\mathbb{L}}$. It follows by considering the representing objects of both sides of the equation $$\R\hom_A({_BT_A},i_*) \simeq i_*i^!\R\hom_A({_BT_A},i_*)$$that ${_BT_A} \lot _A \dq \simeq \dqb\lot_B {_BT_A}\lot_A \dq$ as $B$-$A$-bimodules, and similarly for $\dq\lot_A{_AT_B}$. Hence, we have quasi-isomorphisms \begin{align*}
	W \simeq &\dq\lot_A{_AT_B}\lot_B {_BT_A}\lot_A \dq \\ \eqqcolon &\dq\lot_AI_{\mm}\lot_A \dq
	\\  \eqqcolon &I^\mathbb{L}_\mm
	\end{align*} as required.
\end{proof}

	We combine our results on recollements with some standard facts about t-structures; see Be{^^c4^^ad}linson--Bernstein--Deligne \cite{bbd} for the definition of a t-structure.
	\begin{prop}[\cite{hkmtstrs, amiotcluster, kelleryangmutn, kalckyang}]\label{tstr}
		Let $Z$ be a nonpositive dga. Then the derived category $D(Z)$ admits a $t$-structure $(D^{\leq 0}(Z), D^{\geq 0}(Z))$ where 
		\begin{align*}
		&D^{\leq 0}(Z)\coloneqq \{X\in D(Z): H^i(X)=0 \text{ \normalfont for } i>0 \}\phantom{.}
		\\ 	&D^{\geq 0}(Z)\coloneqq \{X\in D(Z): H^i(X)=0 \text{ \normalfont for } i<0 \}.
		\end{align*}
		Moreover, the inclusion $\cat{Mod}$-$H^0(Z) \into D(Z)$ is an equivalence onto the heart of this t-structure, with inverse given by taking zeroth cohomology.
		
	\end{prop}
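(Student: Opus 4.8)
The plan is to construct the putative t-structure explicitly from a cofibrant model and then verify the three axioms, rather than invoking a black box. First I would recall that $Z$ is a nonpositive dga, so that for any $X \in D(Z)$ we may choose a cofibrant (hence semifree) replacement concentrated in nonpositive degrees if $X$ itself is bounded above; in general we work with an arbitrary cofibrant model. Given such a model, truncation of the \emph{underlying complex} via $\tau^{\leq 0}$ and $\tau^{\geq 1}$ does not a priori respect the $Z$-module structure, so the key construction is the \emph{good truncation of dg modules}: for a dg $Z$-module $X$, set $(\tau^{\leq 0}X)^j = X^j$ for $j<0$, $(\tau^{\leq 0}X)^0 = \ker(d\colon X^0 \to X^1)$, and $(\tau^{\leq 0}X)^j = 0$ for $j>0$. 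Because $Z$ is concentrated in nonpositive degrees, $Z^{\leq 0}$ maps $\ker(d)$ in degree $0$ into itself and maps $X^j$ ($j<0$) into $\ker(d) \oplus (\text{lower degrees})$ appropriately, so $\tau^{\leq 0}X$ is a genuine dg $Z$-submodule of $X$; the quotient $X/\tau^{\leq 0}X$ is then the good truncation $\tau^{\geq 1}X$, also a dg $Z$-module. This is the step where nonpositivity of $Z$ is essential, and it is the conceptual heart of the argument.

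With the truncation functors in hand, I would proceed as follows. (i) Check $(D^{\leq 0}(Z), D^{\geq 0}(Z))$ satisfies the Hom-vanishing axiom: if $X \in D^{\leq 0}(Z)$ and $Y \in D^{\geq 1}(Z) \coloneqq D^{\geq 0}(Z)[-1]$, then $\hom_{D(Z)}(X,Y)=0$. Reduce to $X$ cofibrant and nonpositively graded (truncate the cofibrant model, using that truncation preserves quasi-isomorphism on cohomologically nonpositive objects) and $Y$ fibrant with $H^j(Y)=0$ for $j \leq 0$; then any $Z$-linear chain map $X \to Y$ lands in degrees where $Y$ is acyclic, and a standard degreewise-lifting argument (going up in degree, solving $df = g$ using acyclicity of $Y$ in the relevant range) produces a nullhomotopy. (ii) Check the truncation-triangle axiom: for each $X$ the sequence $\tau^{\leq 0}X \to X \to \tau^{\geq 1}X$ is a short exact sequence of dg $Z$-modules, hence yields a distinguished triangle in $D(Z)$, and by construction $H^j(\tau^{\leq 0}X) = H^j(X)$ for $j \leq 0$ and $0$ otherwise, so $\tau^{\leq 0}X \in D^{\leq 0}(Z)$ and $\tau^{\geq 1}X \in D^{\geq 1}(Z)$. (iii) Closure under shifts in the correct directions is immediate from the cohomological description. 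Together these are precisely the axioms of a t-structure in the sense of \cite{bbd}.

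For the statement about the heart: the heart is $\mathcal{H} \coloneqq D^{\leq 0}(Z) \cap D^{\geq 0}(Z)$, consisting of those $X$ with $H^j(X)=0$ for all $j \neq 0$. I would show that $X \mapsto H^0(X)$, regarded as an $H^0(Z)$-module via the algebra map $Z \to H^0(Z)$ (which exists because $Z$ is nonpositive, so $Z^0 \to H^0(Z)$ is a surjective algebra map and we restrict scalars; more precisely one uses the zig-zag $Z \leftarrow \tau^{\leq 0}Z \to H^0(Z)$ of dga maps, the first a quasi-isomorphism), is an equivalence $\mathcal{H} \xrightarrow{\sim} \cat{Mod}\text{-}H^0(Z)$. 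Essential surjectivity: an $H^0(Z)$-module $N$, pulled back to $Z$ and viewed as a dg module concentrated in degree $0$, lies in $\mathcal{H}$ and has $H^0 = N$. Full faithfulness: for $X, Y \in \mathcal{H}$ one has $\hom_{D(Z)}(X,Y) \cong \hom_{D(H^0(Z))}(H^0 X, H^0 Y)$ — the left side is computed by replacing $X$ with a cofibrant nonpositive model $P$ with $H^{<0}(P)=0$, so $P$ is quasi-isomorphic to $H^0(P)$ as a $Z$-module via $P \to \tau^{\geq 0}P$... here one must instead argue that $\hom_{D(Z)}(X,Y)$ is an extension-free computation: since $Y$ is concentrated in degree $0$ and $X \in D^{\leq 0}$, all maps factor through $H^0(X)$ and agree with $H^0(Z)$-linear maps, with no room for higher corrections because negative self-$\mathrm{Ext}$'s of objects of the heart vanish by axiom (i). I expect this full-faithfulness computation to be the main obstacle: one needs to carefully control that no higher cohomology of $Z$ in negative degrees contributes to $\hom_{D(Z)}(N, N')$ for ordinary $H^0(Z)$-modules $N, N'$, which follows from the t-structure axioms but requires a clean spectral-sequence or truncation argument to nail down. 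Since all of this is entirely standard — it is the content of the references \cite{hkmtstrs, amiotcluster, kelleryangmutn, kalckyang} cited in the statement — I would likely present only the construction of the truncation functors in detail and cite the remainder, or sketch the verification and refer to \cite[2.1]{kalckyang} for the heart.
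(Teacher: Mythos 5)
Your proposal is essentially the standard proof of this statement, which the paper itself does not prove but simply cites (the proposition is given as \cite{hkmtstrs, amiotcluster, kelleryangmutn, kalckyang} with no proof text). You have correctly identified the crux: good truncation of dg modules over a nonpositive dga is again a dg module, and that observation drives everything. Your construction of $\tau^{\leq 0}X$ as a dg $Z$-submodule is correct, and the reduction of the heart to $\cat{Mod}\text{-}H^0(Z)$ via the zig-zag $Z \leftarrow \tau^{\leq 0}Z \to H^0(Z)$ is the right move.

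One step is phrased in a way that would not survive writing out in full. In the Hom-vanishing argument you say that if $X$ is replaced by a cofibrant model in degrees $\leq 0$ and $Y$ has $H^j(Y)=0$ for $j\leq 0$, then a chain map ``lands in degrees where $Y$ is acyclic.'' That is not quite what makes the argument close: a fibrant model of $Y$ can perfectly well have nonzero terms in all degrees $\leq 0$ even when $H^{\leq 0}(Y)=0$, so a chain map $P^0 \to Y^0$ need not literally vanish, and the ``degreewise lifting'' you invoke requires either a genuinely inductive argument up the semifree filtration of $P$ or a prior replacement of $Y$. The cleanest fix is the one already available from your own truncation construction: replace $Y$ by $\tau^{\geq 1}Y$, which is a quasi-isomorphic dg $Z$-module concentrated in degrees $\geq 0$ whose degree-$0$ differential $(\tau^{\geq 1}Y)^0 \to (\tau^{\geq 1}Y)^1$ is \emph{injective}. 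A $Z$-linear chain map $f\colon P \to \tau^{\geq 1}Y$ then has $f^j = 0$ for $j\neq 0$ on degree grounds, and $d_Y f^0 = f^1 d_P = 0$ forces $f^0 = 0$ by that injectivity, so the hom-complex has no nonzero degree-$0$ elements at all, not merely no nonzero cohomology. This makes Hom-vanishing immediate, eliminates the need for a homotopy-lifting argument there, and tightens the full-faithfulness computation for the heart in the same stroke (take $Y = N$ an $H^0(Z)$-module and $X \simeq M$, and use a semifree $P \to M$ in degrees $\leq 0$ so that $\hom_Z(P,N)^{-1} = 0$ and every degree-$0$ cocycle factors through $H^0(P)$). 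With that patch, your sketch is complete and matches the content of the cited references; the paper's ``proof'' is just the citation.
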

	\begin{rmk}
		When $Z=\dq$ this is the restriction of the standard t-structure on $D(A)$.
	\end{rmk}
	
	\begin{prop}\label{mutntexact}{\normalfont(cf.\ Bridgeland \cite[4.7]{bridgeland}).}
		The shifted mutation functor $$ X \mapsto \upmu_{\mathbb{L}}(X)[1]:\quad D(\dq) \longrightarrow D(\dqb)$$ is a t-exact equivalence.
	\end{prop}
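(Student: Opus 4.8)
The plan is to deduce the statement from \ref{recolhalf} together with the degree-shifting behaviour of mutation on $A/AeA$-modules recorded in \ref{aconmutlemma}--\ref{shiftcor}. Since \ref{recolhalf} already shows that $\upmu_{\mathbb{L}}$ is an equivalence, the only thing left to verify is that $\upmu_{\mathbb{L}}(-)[1]$ is t-exact for the t-structures of \ref{tstr}, and here the entire difficulty is bookkeeping of cohomological degrees.

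First I would reduce from $\upmu_{\mathbb{L}}$ to the ambient mutation functor $\upmu_A$ on $D(A)$. The morphism of recollement diagrams in \ref{recolhalf} supplies a natural isomorphism $i_*\circ\upmu_{\mathbb{L}}\cong\upmu_A\circ i_*$. By \ref{tstrs}, the natural t-structure on $D(A)$ is obtained by gluing the natural t-structure on $D(\dq)$ to that on $D(R)$ along the recollement $D(\dq)\recol D(A)\recol D(R)$; since $i^*i_*\cong i^!i_*\cong\id$ and $j^*i_*\cong 0$, this forces $i_*:D(\dq)\to D(A)$ to be t-exact and to reflect cohomological degree, and similarly for $i_*:D(\dqb)\to D(B)$. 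Hence for $X\in D(\dq)$ one obtains $H^n(\upmu_{\mathbb{L}}(X))\cong H^n(\upmu_A(X))$ for every $n$, where on the right $X$ is regarded as an $A$-module.

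Next I would apply \ref{shiftcor}: whenever $H^q(X)\cong 0$ one has $H^{1+q}(\upmu_A(X))\cong 0$, and therefore $H^{1+q}(\upmu_{\mathbb{L}}(X))\cong 0$. Taking $X\in D^{\leq 0}(\dq)$, so that $H^q(X)\cong 0$ for all $q\geq 1$, this gives $H^n(\upmu_{\mathbb{L}}(X))\cong 0$ for all $n\geq 2$, whence $\upmu_{\mathbb{L}}(X)[1]\in D^{\leq 0}(\dqb)$. Dually, taking $X\in D^{\geq 0}(\dq)$, so that $H^q(X)\cong 0$ for all $q\leq -1$, gives $H^n(\upmu_{\mathbb{L}}(X))\cong 0$ for all $n\leq 0$, whence $\upmu_{\mathbb{L}}(X)[1]\in D^{\geq 0}(\dqb)$. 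The two containments together give t-exactness, and combined with the equivalence from \ref{recolhalf} this proves the proposition.

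The proof is short precisely because the substantial work -- that $T_A$ is a tilting bimodule, that mutation is compatible with the recollement, and that mutating a module concentrated in degree zero yields a complex concentrated in degree one -- is already in place. The one place that genuinely needs care is the compatibility of the three t-structures under the embeddings $i_*$; I expect this to be the main (if modest) obstacle, and it should be dispatched by invoking the gluing description in \ref{tstrs} rather than by any new computation. If one additionally wants the quasi-inverse to be t-exact, that is formal: a t-exact equivalence has t-exact quasi-inverse, by comparing truncation triangles across the equivalence.
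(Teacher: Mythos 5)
Your proof is correct and takes essentially the same approach as the paper's: derive the equivalence from \ref{recolhalf}, then obtain t-exactness from \ref{shiftcor} by tracking cohomological degrees. The only difference is that you spell out explicitly the step (via the morphism of recollements in \ref{recolhalf} and the gluing in \ref{tstrs}) identifying $H^n(\upmu_{\mathbb{L}}(X))$ with $H^n(\upmu_A(X))$, which the paper leaves implicit when it applies \ref{shiftcor} directly to $G=\upmu_{\mathbb{L}}(-)[1]$.
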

	\begin{proof}For brevity, write the functor under consideration as $G$. By construction, $G$ is an equivalence. By \ref{shiftcor}, if $X$ is concentrated in nonnegative degrees, then so is $G(X)$, and similarly for nonpositive degrees. In other words, $G$ is t-exact.
	\end{proof}
	Taking hearts one arrives at:
	\begin{cor}
		$A/AeA$ and $B/BeB$ are (classically) Morita equivalent.
	\end{cor}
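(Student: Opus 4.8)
The plan is to combine the $t$-exactness of the shifted mutation functor established in \ref{mutntexact} with the identification of the hearts of the relevant $t$-structures coming from \ref{tstr}. First I would record the general principle that a $t$-exact triangle equivalence restricts to an exact equivalence of hearts: if $G\colon \mathcal T \to \mathcal T'$ is a triangle equivalence which is $t$-exact, so that both $G$ and a quasi-inverse $G^{-1}$ preserve the aisles $D^{\leq 0}$ and $D^{\geq 0}$, then $G$ carries $D^{\leq 0}\cap D^{\geq 0}$ into $D'^{\leq 0}\cap D'^{\geq 0}$, $G^{-1}$ carries the second back into the first, and the two induced functors between the hearts are mutually quasi-inverse; hence $G$ restricts to an equivalence of abelian categories on hearts. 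This is entirely formal.

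Next I would apply this to $G \coloneqq \upmu_{\mathbb{L}}(-)[1]\colon D(\dq)\to D(\dqb)$, which is a $t$-exact equivalence by \ref{mutntexact}, where both sides are equipped with the $t$-structure of \ref{tstr}. This produces an equivalence between the heart of $D(\dq)$ and the heart of $D(\dqb)$. By \ref{tstr} the inclusions $\cat{Mod}\text{-}H^0(\dq)\hookrightarrow D(\dq)$ and $\cat{Mod}\text{-}H^0(\dqb)\hookrightarrow D(\dqb)$ identify these hearts with genuine module categories, the identification being implemented by the cohomological functor $H^0$; since $G$ is $t$-exact this identification is compatible with $G$, and by \ref{derquotcohom} we have $H^0(\dq)\cong A/AeA$ and $H^0(\dqb)\cong B/BeB$. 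Thus one obtains an equivalence of abelian categories $\cat{Mod}\text{-}(A/AeA)\simeq \cat{Mod}\text{-}(B/BeB)$.

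Finally I would invoke the classical Morita theorem: two rings are Morita equivalent precisely when their full module categories are equivalent as (abelian) categories, the equivalence then being automatically given by tensoring with a bimodule that is a progenerator on each side. This yields that $A/AeA$ and $B/BeB$ are classically Morita equivalent, upgrading the derived Morita equivalence of \ref{recolhalf} to the level of ordinary module categories. I do not expect any genuine obstacle here — every step is a formal consequence of results already proved — the only point meriting a moment's care is the compatibility of the heart equivalence with the identification of hearts as module categories, which is immediate because that identification is via $H^0$ and $G$ is $t$-exact.
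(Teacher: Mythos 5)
Your argument is exactly the paper's: the corollary is deduced by passing to hearts in the $t$-exact equivalence of \ref{mutntexact}, identifying the hearts with $\cat{Mod}\text{-}(A/AeA)$ and $\cat{Mod}\text{-}(B/BeB)$ via \ref{tstr}, and invoking the Morita theorem. You have simply spelled out the formal steps that the paper leaves implicit.
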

In fact, one can do much better:
\begin{thm}[{Iyama--Wemyss \cite[6.20]{iwmaxmod}}]\label{aconisbcon}
With the setup as above, there is a ring isomorphism $A/AeA \cong B/BeB$.	
\end{thm}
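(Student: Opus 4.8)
The approach I would take is to reduce the statement to the elementary observation that the syzygy functor is an autoequivalence of the stable category. First I would recall that $AeA$ is precisely the two-sided ideal of $A=\enn_R(V)$ consisting of those endomorphisms of $V$ that factor through a finitely generated projective module: every element $aeb$ factors through the $R$-summand of $V$, and conversely any map $V\to R^{n}\to V$ is rewritten as a finite sum of such terms. Since $R$ is local, its finitely generated projectives are exactly its finite free modules, so $A/AeA\cong\underline{\enn}_R(V)$, the stable endomorphism algebra. By Buchweitz's theorem any projective module is a zero object of $\stab R$, whence $\underline{\enn}_R(V)=\underline{\enn}_R(R\oplus M)\cong\underline{\enn}_R(M)$; the identical argument applied to $B=\enn_R(\Omega V)$ gives $B/BeB\cong\underline{\enn}_R(\Omega M)$. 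It therefore suffices to exhibit a $k$-algebra isomorphism $\underline{\enn}_R(M)\cong\underline{\enn}_R(\Omega M)$.

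Next I would invoke the fact, recalled earlier in the excerpt, that the syzygy functor $\Omega$ is an autoequivalence of $\stab R$: indeed $\Omega^{-1}$ is the suspension that makes $\stab R$ triangulated, and by Eisenbud periodicity (\ref{ebudper}) one moreover has $\Omega\cong\Omega^{-1}$. Any $k$-linear autoequivalence $F$ of a $k$-linear category restricts, for each object $X$, to a $k$-algebra isomorphism $F_{X,X}\colon\underline{\hom}(X,X)\xrightarrow{\sim}\underline{\hom}(FX,FX)$, since $F$ is bijective on morphism spaces and preserves composition and identities. Applying this with $F=\Omega$ and $X=M$, viewed as an object of $\stab R$, produces the desired isomorphism $\underline{\enn}_R(M)\xrightarrow{\sim}\underline{\enn}_R(\Omega M)$, and hence a ring isomorphism $A/AeA\cong B/BeB$.

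The chief point to flag — rather than a genuine obstacle — is that $\Omega$ is defined only up to natural isomorphism, since it depends on a choice of minimal free resolutions (equivalently, on the choices of $\Omega V$ and of the identification $\Omega^{2}V\cong V$), so the resulting isomorphism of contraction algebras is not canonical. This is harmless for the bare statement, but in the applications one wants compatibility with the mutation equivalence, which suggests a second, more structured route, presumably closer to Iyama--Wemyss's original argument. Namely, \ref{mutntexact} shows that the shifted mutation functor $X\mapsto\upmu_{\mathbb{L}}(X)[1]$ is a $t$-exact equivalence $D(\dq)\to D(\dqb)$, so it restricts to an equivalence of hearts, that is, a Morita equivalence $\cat{Mod}\text{-}(A/AeA)\simeq\cat{Mod}\text{-}(B/BeB)$. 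Promoting a Morita equivalence to an honest ring isomorphism is of course false in general; making it work here means checking that the equivalence preserves the idempotent (basic) structure — equivalently, that the progenerator realising it has rank one over the relevant stable endomorphism rings — and it is precisely this bookkeeping, carried out via the tilting bimodule ${}_BT_A$ and the idempotents of \ref{mtilt} and \ref{bimodlem}, that constitutes the substance of Iyama--Wemyss's proof. The autoequivalence argument above bypasses it entirely, which is why I would take that route.
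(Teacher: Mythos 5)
Your argument is correct, and it supplies a proof where the paper gives none: the paper states this result as a direct citation of Iyama--Wemyss and offers no in-text argument. Your first route — identifying $A/AeA\cong\underline{\enn}_R(V)\cong\underline{\enn}_R(M)$ and likewise $B/BeB\cong\underline{\enn}_R(\Omega M)$, then observing that the syzygy functor, being a $k$-linear autoequivalence of $\stab R$, carries $\underline{\enn}_R(M)$ isomorphically onto $\underline{\enn}_R(\Omega M)$ — is an elementary, self-contained argument valid for any Gorenstein local $R$, and notably it makes no use of the normality of $R$ that the paper's accompanying remark flags. The discrepancy is explained by the generality gap: Iyama--Wemyss prove their Theorem 6.20 for arbitrary normal $d$-sCY rings, where the mutation $\mu^{\pm}_R(V)$ need not coincide with the syzygy $\Omega V$, and their proof has to work harder (and does use normality) precisely because they cannot reduce to the syzygy autoequivalence; the coincidence you silently exploit is a special feature of the 2-periodic hypersurface setting fixed by the paper's Setup \ref{mutsetup}. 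Your second observation — that \ref{mutntexact} yields only a Morita equivalence $\cat{Mod}\text{-}(A/AeA)\simeq\cat{Mod}\text{-}(B/BeB)$, which is strictly weaker — is also accurate, and in fact appears verbatim in the paper as the corollary immediately preceding this theorem, with the paper remarking explicitly that the ring isomorphism is ``much better.'' One small refinement: Eisenbud periodicity ($\Omega\cong\Omega^{-1}$) is not actually needed for your autoequivalence argument itself, which works for any Gorenstein $R$; its role is only to legitimise identifying the paper's $\Omega V$ with Iyama--Wemyss's $\mu^{\pm}_R(V)$ in the first place.
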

\begin{rmk}
	Note that we use here the fact that $R$ is normal.
	\end{rmk}
Recall that given t-structures on the outer pieces of a recollement diagram, one can glue them to a new t-structure on the central piece \cite[1.4.10]{bbd}.
	\begin{defn}
		 Let $D$ be the t-structure on $D(\dq)$ constructed in \ref{tstr}. Let $\tau_A^p$ be the t-structure on $D(A)$ obtained by gluing $D[-p]$ (i.e.\ $D$ shifted so that the heart is in degree $p$) to the standard t-structure on $D(R)$. In particular, $\tau_A^0$ is the standard t-structure on $D(A)$. Write ${}^p\mathrm{Per}A$ for the heart of $\tau_A^p$, so that e.g.\ ${}^0\mathrm{Per}A\cong\cat{Mod}$-$A$. Call ${}^p\mathrm{Per}A$ the abelian category of \textbf{$p$-perverse sheaves} on $A$. 
	\end{defn}
	\begin{thm}[cf.\ Bridgeland {\cite[4.8]{bridgeland}}]
		Fix a natural number $p$. The mutation functor $\upmu_A:D(A)\to D(B)$ is $t$-exact for the t-structures $\tau^p_A$ and $\tau^{p+1}_B$. Mutation induces a chain of exact equivalences of abelian categories $$\cdots \to {}^p\mathrm{Per}A \to {}^{p+1}\mathrm{Per}B \to {}^{p+2}\mathrm{Per}A \to \cdots$$
	\end{thm}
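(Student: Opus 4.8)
The plan is to reduce the statement to the t-exactness results already established for the derived quotients together with the recollement structure. First I would set up notation: let $\upmu_A \colon D(A) \to D(B)$ and $\upmu_B \colon D(B) \to D(A)$ be the mutation equivalences, and recall that these fit into morphisms of recollement diagrams (the diagram immediately preceding, with middle column $\upmu_A$, left column $\upmu_{\mathbb L}$, right column $\id$). The key input is \ref{mutntexact}, which says that the shifted functor $\upmu_{\mathbb L}(-)[1] \colon D(\dq) \to D(\dqb)$ is t-exact for the natural t-structures on the derived quotients; iterating, $\upmu_{\mathbb L}$ on $D(\dqb)$ shifted by $[1]$ is t-exact $D(\dqb) \to D(\dq)$, matching the chain appearing in \ref{mutnchain}.

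The main step is to verify that $\upmu_A$ is t-exact for $\tau^p_A$ and $\tau^{p+1}_B$. By construction $\tau^p_A$ is glued from the t-structure $D[-p]$ on $D(\dq)$ (the one of \ref{tstr}, shifted so the heart sits in degree $p$) and the standard t-structure on $D(R)$ via the recollement $D(\dq) \recol D(A) \recol D(R)$. Since glued t-structures are characterised by the property that a morphism of recollements which is t-exact on the outer pieces is t-exact on the middle piece (this is exactly the content of \cite[1.4.10]{bbd} read functorially, as used in Bridgeland's \cite[4.8]{bridgeland}), I would argue as follows: on the $D(R)$ factor the vertical map is $\id$, hence t-exact for the standard t-structure on both sides, with no shift needed; on the $D(\dq)$ factor the vertical map is $\upmu_{\mathbb L}$, which by \ref{mutntexact} is t-exact once we shift by $[1]$, i.e.\ t-exact from $D[-p]$ to $D[-(p+1)]$. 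Because $\tau^{p+1}_B$ is precisely the gluing of $D[-(p+1)]$ on $D(\dqb)$ with the standard t-structure on $D(R)$, the gluing formalism forces $\upmu_A$ to be t-exact for $\tau^p_A$ and $\tau^{p+1}_B$. Taking hearts then yields an exact equivalence ${}^p\mathrm{Per}A \to {}^{p+1}\mathrm{Per}B$, and composing with the analogous statement for $\upmu_B$ (t-exact for $\tau^{p+1}_B$ and $\tau^{p+2}_A$, using $\Omega^2 V \cong V$ and the $R$-linear identification $\enn_R(\Omega^2 V) \cong A$) produces the doubly infinite chain $\cdots \to {}^p\mathrm{Per}A \to {}^{p+1}\mathrm{Per}B \to {}^{p+2}\mathrm{Per}A \to \cdots$.

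The step I expect to be the main obstacle is making the gluing argument genuinely functorial: one needs that a morphism of recollements which restricts to t-exact functors on the two outer terms (with a prescribed shift on one of them) is automatically t-exact on the middle term for the correspondingly shifted glued t-structures. This is standard but requires a small diagram chase with the truncation functors of the glued t-structure — one checks that $\upmu_A$ sends the aisle $\tau^{p,\leq 0}_A = \{X : i^*X \in D^{\leq 0}[-p], \, j^*X \in D^{\leq 0}(R)\}$ into $\tau^{p+1,\leq 0}_B$ and similarly for the coaisle, using the commutativities $j^*_B \upmu_A \cong j^*_A$, $\upmu_A i^A_* \cong i^B_* \upmu_{\mathbb L}$ established in \ref{recolhalf} and the t-exactness facts above. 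Everything else — extracting the equivalence on hearts, and threading the two mutations together — is formal once t-exactness is in hand. I would also remark that one should be slightly careful about which shift conventions make the indices line up; the safe route is to track the position of the heart explicitly, as in the definition of ${}^p\mathrm{Per}A$, rather than reasoning with relative shifts.
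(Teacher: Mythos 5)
Your proposal is correct and follows essentially the same route as the paper: the paper's proof is simply the terse statement that $\upmu_A$ is t-exact because it is the gluing of two t-exact functors — namely $\id_{D(R)}$ and the appropriately shifted $\upmu_{\mathbb{L}}$ from \ref{mutntexact} — and you have spelled out the diagram chase that the paper leaves implicit. Both proofs rely on the same two inputs, \ref{mutntexact} and \ref{recolhalf}, and the same gluing principle from \cite[1.4.10]{bbd}.
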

	\begin{proof}
		$\upmu_A$ is t-exact because it is the gluing of two $t$-exact functors. Similarly, $\upmu_B$ is t-exact, and the chain of \ref{mutnchain} becomes a chain of t-exact equivalences. Passing to hearts gets us the second statement. 
	\end{proof}
\begin{rmk}
	Bridgeland's signs in {\cite[4.8]{bridgeland}} are wrong if one is considering the flop functor, and this error was corrected by Toda in {\cite[Appendix B]{todawidth}}. However, the flop functor is the inverse of our mutation functor \cite[7.18]{DWncdf}, and so our maps go in the `wrong' direction, and so our chain looks like Bridgeland's.
	\end{rmk}

\section{Bimodules and natural transformations}
Assume that we are in the situation of Setup \ref{mutsetup}. We aim to show in this section that there is a natural transformation $\id \to \mm$ compatible with the recollement; we do this by describing an appropriate morphism of representing objects. This section is inspired by a result of Donovan and Wemyss obtained in the threefold setting \cite[5.10]{DWncdf}, and indeed if $M$ is rigid then one can check that our results reduce to theirs. Recall the short exact sequence $$0 \to {_BT_A} \to \hom_R(V,\Omega V) \to \ext^1_R(V,V) \to 0$$of right $A$-modules, which exists by the definition of $_BT_A$. The left hand terms are both $B$-$A$-bimodules, and the left hand map is $B$-linear. Hence, one can put the structure of a $B$-$A$-bimodule on $\ext^1_R(V,V)$ making this short exact sequence into a sequence of $B$-$A$-bimodules. Similarly, $\ext^1_R(\Omega V,\Omega V)$ admits the structure of an $A$-$B$-bimodule making the short exact sequence $$0 \to {_AT_B} \to \hom_R(\Omega V,V) \to \ext^1_R(\Omega V,\Omega V) \to 0$$ into a sequence of $A$-$B$-bimodules. In particular, because $I_\mm\coloneqq {_A}T_B\lot_B{_BT_A}$ by definition, one obtains a map $I_\mm \to \hom_R(\Omega V, V)\lot_B \hom_R(V,\Omega V)$ in the derived category of $A$-bimodules. Composing with $H^0$, one obtains a map $I_\mm \to \hom_R(\Omega V, V)\otimes_B \hom_R(V,\Omega V)$ in the derived category of $A$-bimodules. There is an obvious $A$-bilinear composition map \begin{align*}
\hom_R(\Omega V, V)\otimes_B \hom_R(V,\Omega V) &\to \enn_R(V) = A \\
f \otimes g &\mapsto f \circ g
\end{align*}
and by composition we get a map $I_\mm \to A$. For ease of reference, we give this map a name:
\begin{defn}\label{unitmap}
The \textbf{unit map} is the map $I_\mm \to A$ in the derived category of $A$-bimodules given by the composition
$$I_\mm \to \hom_R(\Omega V, V)\lot_B \hom_R(V,\Omega V) \xrightarrow{H^0} \hom_R(\Omega V, V)\otimes_B \hom_R(V,\Omega V) \to A.$$
	\end{defn}
The idea behind the name is that on representable endofunctors of $D(A)$ this gives us a unit $\id\to\mm$.

\begin{lem}\label{aealem}
	The image of the unit map is contained within the submodule $AeA \into A$.
	\end{lem}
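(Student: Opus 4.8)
The plan is to trace through the definition of the unit map and observe that it factors through maps that land in the two-sided ideal $AeA$. Recall that $I_\mm = {_AT_B}\lot_B{_BT_A}$, and that the unit map is the composition
$$I_\mm \to \hom_R(\Omega V, V)\lot_B \hom_R(V,\Omega V) \xrightarrow{H^0} \hom_R(\Omega V, V)\otimes_B \hom_R(V,\Omega V) \to A,$$
where the first map is induced by the pair of $B$-$A$- and $A$-$B$-bimodule inclusions ${_BT_A}\hookrightarrow \hom_R(V,\Omega V)$ and ${_AT_B}\hookrightarrow \hom_R(\Omega V, V)$, and the last map is the composition map $f\otimes g\mapsto f\circ g$. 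The key structural observation is that every $R$-linear homomorphism $g\colon V\to \Omega V$ factors through a free module: indeed $g$ arises (up to the identification $\hom_R(V,\Omega V)\cong \tbtm{*}{*}{*}{*}$ coming from $V = R\oplus M$ and $\Omega V$ having the same free rank) from the syzygy sequence $0\to V \to R^l \to \Omega V\to 0$ used to build $T_A$, so that every element of $_BT_A$, viewed inside $\hom_R(V,\Omega V)$, is the composition $V\to R^l\twoheadrightarrow \Omega V$ of a map to a free module with a fixed surjection. Hence the composite $f\circ g$ for $f\in {_AT_B}\subseteq\hom_R(\Omega V, V)$ and $g\in {_BT_A}$ factors as $V\to R^l\to V$, i.e.\ through a free $R$-module, and therefore lies in the two-sided ideal of $A=\enn_R(V)$ generated by the endomorphisms of $V$ that factor through $R$. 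But that ideal is precisely $AeA$, since $e=\id_R$ is the idempotent projecting $V = R\oplus M$ onto its free summand $R$, and an endomorphism factoring through $R^l$ factors through finitely many copies of $e$.

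The steps I would carry out, in order, are: (1) spell out the identification $\hom_R(V,\Omega V)\cong \hom_R(V,R^l)/\hom_R(V,V)$, so that a class in $_BT_A$ is represented by some $\tilde g\colon V\to R^l$ lifting $g$; (2) similarly represent a class in $_AT_B$ by a lift or, more simply, just use that any $f\colon \Omega V\to V$ precomposed with the surjection $R^l\twoheadrightarrow \Omega V$ gives a map $R^l\to V$; (3) observe that the composition map sends the pair $(f,g)$ to $f\circ g = (f\circ p)\circ \tilde g$ where $p\colon R^l\twoheadrightarrow \Omega V$, which is manifestly a composition $V\to R^l\to V$; (4) note that $\hom_R(V,R^l)\cong (eA)^{\oplus l}$ as right $A$-modules and $\hom_R(R^l,V)\cong (Ae)^{\oplus l}$ as left $A$-modules, and that under these identifications the composition $\hom_R(R^l,V)\otimes \hom_R(V,R^l)\to A$ is exactly the multiplication map $(Ae)^{\oplus l}\otimes_k (eA)^{\oplus l}\to AeA\subseteq A$, which has image $AeA$; (5) conclude that the unit map $I_\mm\to A$ factors through $AeA$. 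Since we only need to identify the image, working with the $H^0$-level (underived) composition map as in Definition \ref{unitmap} suffices and we do not need to worry about the derived tensor product.

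The main obstacle I anticipate is bookkeeping with the bimodule identifications: one must be careful that the inclusions ${_BT_A}\hookrightarrow\hom_R(V,\Omega V)$ and ${_AT_B}\hookrightarrow\hom_R(\Omega V,V)$ are the ones compatible with the syzygy presentations, so that the factorisation through $R^l$ is genuinely available on the nose (not merely up to homotopy). Concretely this amounts to checking that the map $I_\mm\to\hom_R(\Omega V,V)\lot_B\hom_R(V,\Omega V)$ of Definition \ref{unitmap}, followed by $H^0$ and the composition pairing, agrees with the obvious underived composite built from the defining short exact sequences — which is essentially the content of how $T_A$ and $T_B$ were constructed in \S\ref{mutsn}, together with the identifications $\hom_R(V,R)\cong eA$ and $\hom_R(R,V)\cong Ae$ already used in \ref{bimodlem}. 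Once the identifications are pinned down, the containment in $AeA$ is immediate from the fact that any homomorphism factoring through a free module factors through a direct sum of copies of $e$.
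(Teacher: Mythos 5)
Your argument is correct, and it hits the same underlying fact as the paper's proof — that $_BT_A$, viewed inside $\hom_R(V,\Omega V)$, is precisely the subset of maps that factor through the projective cover $R^l\twoheadrightarrow\Omega V$ — but it does so more directly. The paper instead passes to the stable category $\stab R$: it observes that $AeA$ is the kernel of $A\to\underline{\enn}_R(V)$, identifies the stabilised composition pairing with the stable Ext pairing $\underline{\ext}^1\otimes\underline{\ext}^1\to\underline{\ext}^2$ (using $\Omega^2\cong\id$ to shift indices), and then notes that $_BT_A$ dies under the projection $\hom_R(V,\Omega V)\to\underline{\hom}_R(V,\Omega V)\cong\underline{\ext}^1_R(V,V)$, which is exactly the right-hand map in the defining exact sequence for $T_A$. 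Your version bypasses the stable Ext pairing entirely: you just read off from the defining exact sequence that every $g\in {}_BT_A\subseteq\hom_R(V,\Omega V)$ has the form $p\circ\tilde g$ with $\tilde g\colon V\to R^l$, and then any composite $f\circ g$ factors as $V\to R^l\to V$, hence lies in $AeA$ since $\hom_R(V,R^l)\cong(eA)^{\oplus l}$ and $\hom_R(R^l,V)\cong(Ae)^{\oplus l}$. The two arguments are mathematically equivalent — "factors through $\mathrm{add}(R)$" is the same as "zero in $\underline{\enn}_R(V)=A/AeA$" — but yours is the more elementary bookkeeping version and avoids having to invoke the periodicity $\Omega^2\cong\id$ or the stable Ext identification at all, while the paper's formulation fits more naturally with the singularity-category language used elsewhere in that chapter.
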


\begin{proof}	
	The composition map $$\hom_R(\Omega V, V)\otimes_B \hom_R(V,\Omega V) \to \enn_R(V)$$ descends to a map $$\underline{\hom}_R(\Omega V, V)\otimes_B \underline{\hom}_R(V,\Omega V) \to \underline{\enn}_R(V)\cong A/AeA.$$	Since $A_\con\cong A/AeA$, it suffices to show that if $f\otimes g$ is an element of the module $_AT_B \otimes_B {}_BT_A$, then their composition is zero in $\underline{\enn}_R(V)$. But because $\Omega $ is the shift functor of $\stab R$, this stable composition map is precisely the stable Ext pairing $$\underline{\ext}_R^{-1}(V,V) \otimes \underline{\ext}_R^{1}(V,V) \to \underline{\ext}_R^0(V,V)$$and because $\Omega^2=\id$, this is the same as the stable Ext pairing $$\underline{\ext}_R^{1}(V,V) \otimes \underline{\ext}_R^{1}(V,V) \to \underline{\ext}_R^2(V,V).$$By definition, there is a short exact sequence $$0 \to {}_BT_A \to \hom_R(V,\Omega V) \to \underline{\ext}^1_R(V,V) \to 0$$where we have used that stable Ext agrees with usual Ext in positive degrees. The right-hand map is the boundary map provided by the Snake Lemma, but this agrees with the projection ${\hom}_R(V,\Omega V)\to \underline{\hom}_R(V,\Omega V)\cong \underline{\ext}_R^{1}(V,V)$. Hence, $_BT_A$ is precisely the kernel of the projection map $\hom_R(V,\Omega V) \to \underline{\ext}^1_R(V,V)$ and hence the composition $$_AT_B \otimes_B {}_BT_A \to \underline{\ext}_R^{1}(V,V) \otimes \underline{\ext}_R^{1}(V,V) \to \underline{\enn}_R(V)$$ is zero, as required.
	\end{proof}

\begin{rmk}\label{dwbimodmap}
	If $V$ is rigid, then the proof of \ref{aealem} adapts to give an isomorphism\linebreak $\hom_R(\Omega V, V)\otimes_B \hom_R(V,\Omega V) \xrightarrow{\cong} AeA$, as in Donovan--Wemyss \cite[\S5]{DWncdf}. In fact, one then obtains a quasi-isomorphism $I_\mm \simeq AeA$, exactly as in \cite[5.10]{DWncdf}. In general, if $V$ is not rigid then the image of the composition map may not lie in $AeA$.
	\end{rmk}

\begin{thm}\label{unitrecol}
	For brevity, put $Q\coloneqq \dq$. The unit map induces a natural transformation $\id_{D(A)}\to\mm$ which descends to a natural transformation $\id_{D(Q)} \to \mm_{\mathbb{L}}$.
	\end{thm}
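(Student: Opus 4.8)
The plan is to build the natural transformation $\id_{D(A)}\to\mm$ directly from the unit map of Definition~\ref{unitmap}. Recall that $\mm\cong\R\hom_A(I_\mm,-)$ and $\id_{D(A)}\cong\R\hom_A(A,-)$ as endofunctors of $D(A)$, so a morphism of $A$-bimodules $I_\mm\to A$ in the derived category functorially induces a natural transformation $\R\hom_A(A,-)\to\R\hom_A(I_\mm,-)$, i.e.\ $\id\to\mm$. The unit map provides exactly such a morphism of representing objects, so the existence of the natural transformation $\id_{D(A)}\to\mm$ is immediate. (One should note that this uses that both $\mm$ and $\id$ are given by tensoring/homming with an $A$-bimodule, which holds by construction: $\mm$ is represented by $I_\mm$ via \ref{immdefn}, and the identity is represented by $A$.)

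The substantive point is that this transformation descends to $D(Q)$, compatibly with the recollement $D(Q)\recol D(A)\recol D(R)$ of \ref{recoll}. Here I would argue as follows. By \ref{aealem}, the unit map $I_\mm\to A$ factors through the submodule $AeA\hookrightarrow A$; equivalently, the composition $I_\mm\to A\to Q$ with the quotient map is zero in the derived category of $A$-bimodules (since $Q\simeq\mathrm{cone}(\cell A\xrightarrow{\mu}A)$ by \ref{dqexact} and $AeA=\im\mu$, the composite $\cell A\to A\to Q$ is nullhomotopic, and $I_\mm\to A$ lands in the image of $\mu$). Therefore, applying the derived tensor product $Q\lot_A(-)\lot_A Q$ to the unit map and using that $Q$ is a homological epimorphism over $A$ (so $Q\lot_A A\lot_A Q\simeq Q$ on both sides via \ref{homepi} and smashing), we obtain a morphism of $Q$-bimodules $I^\mathbb{L}_\mm = Q\lot_A I_\mm\lot_A Q \to Q\lot_A A\lot_A Q\simeq Q$. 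This is precisely a morphism from the object $I^\mathbb{L}_\mm$ representing $\mm_{\mathbb{L}}$ to the object $Q$ representing $\id_{D(Q)}$, and hence induces the desired natural transformation $\id_{D(Q)}\to\mm_{\mathbb{L}}$. Compatibility with the recollement — meaning that $i_*(\id_{D(Q)}\to\mm_{\mathbb{L}})$ agrees with the restriction of $\id_{D(A)}\to\mm$ along $i_*$ — follows because $i_*$ is fully faithful and the bimodule maps in question are obtained from one another by applying the exact (on the relevant subcategory) functor $Q\lot_A(-)\lot_A Q$, which is compatible with $i_*$ by the bimodule version of the recollement (as in the preceding propositions relating $I^\mathbb{L}_\mm$, $\mm_{\mathbb{L}}$ and their $D(A)$-counterparts).

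The main obstacle I anticipate is making the compatibility statement fully precise: one must check that the three natural transformations (on $D(A)$, on $D(Q)$, and the comparison between them via $i_*$) fit into a commuting square of functors and natural transformations, which requires being careful about whether "descends to a natural transformation" is interpreted at the level of bimodule morphisms or at the level of the induced transformations of endofunctors. The cleanest route is to work entirely with representing objects: the square of bimodules
\[
\begin{tikzcd}
I^\mathbb{L}_\mm \ar[r]\ar[d] & Q \ar[d] \\
I_\mm \ar[r] & A
\end{tikzcd}
\]
(where the vertical maps come from the recollement comparison and the horizontal maps are the unit maps) commutes in the appropriate derived category, and applying $\R\hom(-,X)$ for $X\in D(Q)\subseteq D(A)$ then yields the compatibility of the induced natural transformations automatically. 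Verifying commutativity of this square reduces, via \ref{aealem} and \ref{dqexact}, to unwinding the definition of the unit map and the bimodule quasi-isomorphisms $I^\mathbb{L}_\mm\simeq W$ established in the previous proposition; this is a diagram chase rather than a genuinely hard computation, but it is the step that needs the most care.
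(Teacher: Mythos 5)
Your construction is essentially identical to the paper's: observe that $I_\mm \to A$ gives $\id \to \mm$ on $D(A)$ by passing to $\R\hom$, apply $Q\lot_A(-)\lot_A Q$ to the unit map to obtain $I^\mathbb{L}_\mm \to Q\lot_A Q$, identify $Q\lot_A Q\simeq Q$ via \ref{homepi}, and deduce the natural transformation on $D(Q)$ agreeing with the restriction. This is the paper's proof almost verbatim, so the proposal is correct and takes the same route.

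One small caution about the tangential remark you inserted: you invoke \ref{aealem} to claim that the composite $I_\mm \to A \to Q$ is already zero in the derived category of bimodules, reasoning that the image of the unit map lies in $AeA=\im\mu$ and $\cell A \to A \to Q$ is nullhomotopic. This doesn't quite follow as stated. Knowing the image of a module map $H^0(I_\mm)\to A$ lands in $AeA$ tells you the induced map to $H^0(Q)=A/AeA$ vanishes, but $Q$ is a genuinely derived object with cohomology in negative degrees, and a module can map nontrivially to such a complex even when it maps to zero on $H^0$; one would need a factorisation through $\cell A$ as a derived bimodule, which \ref{aealem} alone does not supply. Fortunately this observation plays no role in your actual construction — the descent to $D(Q)$ is achieved by the two-sided tensoring, not by killing a composite — so the proof stands. (The paper likewise reserves \ref{aealem} for the proof of \ref{mutncontrol} and does not use it here.)
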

\begin{proof}
	It is clear that $I_\mm \to A$ induces a natural transformation $\id \to \mm$. Tensoring the unit map $I_\mm \to A$ with ${Q}$ on both sides gives a $Q$-bimodule map $I^\mathbb{L}_\mm \to {Q}\lot_A Q$. By \ref{homepi}, the natural map $Q\lot_A Q \to Q$ is a quasi-isomorphism of $Q$-bimodules, and so one obtains a bimodule map $I^\mathbb{L}_\mm \to Q$. But this gives a natural transformation $\id_{D(Q)} \to \mm_{\mathbb{L}}$ which must agree with the restriction of $\id \to \mm$.
	\end{proof}

	\section{Simple modules and deformations}
	Throughout this section we will use the following setup:
\begin{setup}\label{mutsetupp}
	Assume that we are in the situation of Setup \ref{mutsetup}. Assume furthermore that $A/AeA$ is Artinian local and that the dga $\dq$ is cohomologically locally finite.
	\end{setup}
In the geometric situations that we care about, the hypotheses of Setup \ref{mutsetupp} are always satisfied (\ref{aconlocal}, \ref{fdcohom}). Denote by $S_A$ the one-dimensional $A$-module $A/AeA / \text{rad}(A/AeA)$. Since it is naturally an $A/AeA$-module, we may regard it as an object of $D({\dq})$ concentrated in degree zero. Recalling that $A/AeA\cong B/BeB$ by \ref{aconisbcon}, we denote the analogous one-dimensional $B$-module by $S_B$.
	\begin{lem}\label{msissone}
		There is a quasi-isomorphism of $B$-modules $\upmu_A(S_A)\simeq S_B[-1]$.
	\end{lem}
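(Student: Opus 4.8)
The statement to prove is that $\upmu_A(S_A) \simeq S_B[-1]$ as $B$-modules, where $\upmu_A = \R\hom_A({}_BT_A, -)$. The plan is to combine three ingredients: the identification of $\upmu_A$ on objects of $D(\dq)$ from \ref{aconmutlemma} and \ref{shiftlem}; the fact that mutation descends to the recollement (\ref{mutntexact}), so that the shifted mutation functor $X \mapsto \upmu_{\mathbb{L}}(X)[1]$ is a t-exact equivalence $D(\dq) \to D(\dqb)$; and the identification $A/AeA \cong B/BeB$ of \ref{aconisbcon}.

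First I would apply \ref{aconlocal}-style reasoning: since $S_A$ is an $A/AeA$-module concentrated in degree zero, it lies in $D(\dq)$, and \ref{aconlocal} (here meaning \ref{shiftcor}, the cohomological shift result) together with \ref{aconmutlemma} shows that $\upmu_A(S_A)$ has cohomology concentrated in degree one, with $H^1(\upmu_A(S_A)) \cong \ext^1_A({}_BT_A, S_A)$. Hence there is a $B$-module quasi-isomorphism $\upmu_A(S_A) \simeq \ext^1_A({}_BT_A, S_A)[-1]$, and it remains to show that $\ext^1_A({}_BT_A, S_A)$ is isomorphic to $S_B$ as a $B$-module. Equivalently, using that the shifted mutation functor is t-exact (\ref{mutntexact}), $\upmu_A(S_A)[1]$ lies in the heart $\cat{Mod}\text{-}B/BeB$, so it suffices to identify this heart-object as the simple $S_B$.

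To make that identification I would argue as follows. The shifted mutation functor restricts to an equivalence of abelian categories $\cat{Mod}\text{-}A/AeA \to \cat{Mod}\text{-}B/BeB$, obtained by passing to hearts in \ref{mutntexact} (this is the corollary stated just before \ref{aconisbcon}). An equivalence of module categories over Artinian local algebras sends the unique simple module to the unique simple module. Since $S_A$ is the unique simple $A/AeA$-module and $S_B$ is the unique simple $B/BeB$-module, the equivalence sends $S_A$ to $S_B$ (up to isomorphism). Therefore $\upmu_A(S_A)[1] \cong S_B$ in $\cat{Mod}\text{-}B$, which gives $\upmu_A(S_A) \simeq S_B[-1]$ as required. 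One should check that the equivalence of hearts induced by $X \mapsto \upmu_{\mathbb{L}}(X)[1]$ agrees, on objects of $\cat{Mod}\text{-}A/AeA$ viewed inside $D(\dq)$, with the functor $X \mapsto \upmu_A(X)[1]$ viewed inside $D(\dqb)$ — this is exactly the compatibility of $\upmu_A$ with the recollement proved in \ref{recolhalf}, since $i_*$ is fully faithful and $i_* \upmu_{\mathbb{L}} \cong \upmu_A i_*$.

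The main obstacle is ensuring that the t-exactness statement \ref{mutntexact} genuinely applies to $S_A$ and that the induced heart equivalence is the expected one: one needs $S_A \in D(\dq)$ (clear, as it is killed by $e$), one needs $\dq$ to be the correct ``noncommutative resolution'' side of the recollement (guaranteed by \ref{recoll}), and one needs the Artinian local hypothesis of Setup \ref{mutsetupp} so that ``the unique simple module'' makes sense on both sides. Everything else is formal bookkeeping with the recollement and the shift. In particular no new computation is required; the content is assembling \ref{aconmutlemma}, \ref{shiftcor}, \ref{mutntexact}, and \ref{aconisbcon}.
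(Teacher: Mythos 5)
Your proof is correct and follows essentially the same route as the paper's: apply \ref{aconmutlemma} to see $\upmu_A(S_A)$ is concentrated in degree one, then invoke \ref{aconisbcon} to identify the resulting heart-object as $S_B$. The only slight variation is in the final identification: the paper computes directly that $H^1(\upmu_A(S_A)) \cong \ext^1_A({}_BT_A, S_A)$ is one-dimensional (using the two-term projective resolution of ${}_BT_A$ and the fact that $S_A e = 0$), and hence is the unique simple over the Artinian local algebra $B/BeB$, whereas you obtain the same conclusion more categorically by noting that the heart equivalence from \ref{mutntexact} preserves simple objects; both arguments are valid and rely on the same ingredients.
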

	\begin{proof}
		A computation using \ref{aconmutlemma} shows that $\upmu_A(S_A)$ is a 1-dimensional object in $D(\dqb)$ concentrated in degree 1. Since it is hence a simple one-dimensional module over $B/BeB$, which is an Artinian local algebra by \ref{aconisbcon}, it must be a copy of $S_B$.
	\end{proof}
\begin{rmk}
	This relies crucially on $B/BeB$ being local.   
	\end{rmk}
	
	\begin{cor}[cf.\ {\cite[5.11]{DWncdf}}]\label{mmshiftssimple}
		$\mm(S_A)\simeq S_A[-2]$.
	\end{cor}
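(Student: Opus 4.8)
The statement $\mm(S_A)\simeq S_A[-2]$ should follow immediately by applying the mutation equivalence twice and tracking what happens to the simple module. Concretely, $\mm=\upmu_B\circ\upmu_A$ (as functors $D(A)\to D(B)\to D(A)$, using the $R$-linear isomorphism $\enn_R(\Omega^2 V)\cong A$), so the plan is: first apply \ref{msissone} to compute $\upmu_A(S_A)\simeq S_B[-1]$; then apply the analogous statement with the roles of $A$ and $B$ interchanged — that is, $\upmu_B(S_B)\simeq S_A[-1]$ — to conclude $$\mm(S_A)\simeq \upmu_B(S_B[-1])\simeq \upmu_B(S_B)[-1]\simeq S_A[-1][-1]\simeq S_A[-2].$$ The only subtlety is whether the second invocation of \ref{msissone} is legitimate: mutating $B$ again produces $\enn_R(\Omega^2 V)$, which is $R$-linearly isomorphic to $A$, and one must check that under this identification $S_B$ is sent to $S_A$ — but this is clear since the identification is the identity on the base $R$, hence respects the idempotents $e$, hence respects the quotients $A/AeA\cong B/BeB$ (\ref{aconisbcon}) and their simple tops.

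To make this watertight I would verify that Setup \ref{mutsetupp} (or at least Setup \ref{mutsetup} together with the hypotheses needed for \ref{msissone}) is preserved under passing from $(A,V)$ to $(B,\Omega V)$: here $\Omega V$ is again MCM with no free summands (since $\Omega$ preserves the number of free summands and $M$ has none), it is modifying (syzygy of a modifying module over a sCY ring with isolated singularities — use \ref{twoorthreecor} in the relevant dimension, or the general periodicity argument), $B/BeB\cong A/AeA$ is still Artinian local by \ref{aconisbcon}, and $\dqb$ is cohomologically locally finite because $H^j(\dqb)\cong\ext_R^{-j}(\Omega V,\Omega V)$ is supported at the singular point, exactly as in \ref{fdcohom}. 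Once the setup is symmetric in $A$ and $B$, the argument above goes through verbatim.

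The main obstacle — really the only place any content enters — is the appeal to \ref{msissone}, i.e.\ showing $\upmu_A(S_A)$ is concentrated in a single degree and is one-dimensional there. That lemma itself rests on \ref{aconmutlemma} (the two-term projective resolution of ${}_BT_A$ kills all maps from $eA$, forcing the cohomology of $\upmu_A(S_A)$ into degree $1$) and on the locality of $B/BeB$ to identify the resulting one-dimensional module as $S_B$. Both of these are already available in the excerpt, so at the level of this plan there is nothing left to prove beyond assembling the pieces; the bookkeeping around the identification $\enn_R(\Omega^2 V)\cong A$ is the one thing to state carefully.
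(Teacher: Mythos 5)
Your proposal is correct and is precisely the argument the paper intends: the corollary is stated immediately after Lemma \ref{msissone} with no further proof text, and the reader is meant to supply exactly the double application of that lemma (plus the identification $\enn_R(\Omega^2 V)\cong A$, which you spell out more carefully than the paper does). Your checks that the hypotheses of Setup \ref{mutsetupp} persist when passing from $(A,V)$ to $(B,\Omega V)$ are the right ones, and in particular the invocation of \ref{aconisbcon} to identify the simple of $\enn_R(\Omega^2 V)$ with $S_A$ is what makes the final bookkeeping work.

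One thing worth knowing: the paper's Remark \ref{dwabrmk} records a slightly leaner alternative. Instead of decomposing $\mm$ as $\upmu_B\circ\upmu_A$ and tracking the simple through two mutations, one runs the spectral-sequence argument of \ref{aconmutlemma}/\ref{shiftlem} directly on $\mm$ to see that $\mm(S_A)$ is a one-dimensional object of $D(\dq)$ concentrated in degree $2$; since $A/AeA$ is Artinian local, this forces $\mm(S_A)\simeq S_A[-2]$. This variant avoids \ref{aconisbcon} entirely (and hence the subtlety about matching simples across $A$ and $B$), at the cost of repeating the cohomological degree computation for the composite functor rather than for a single mutation. Your route and the remark's route are two packagings of the same underlying computation; yours is the one that is more naturally presented as a corollary of \ref{msissone}.
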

\begin{rmk}\label{dwabrmk}
	One can prove this directly without appeal to \ref{aconisbcon}. The proof of \ref{msissone} shows that $\mm(S_A)$ is a one-dimensional object in $D(\dq)$ concentrated in degree 2. Hence it is a shift of a one-dimensional simple module over $A/AeA$, which we have already assumed to be Artinian local.
	\end{rmk}
	
	By \ref{maindefmthm}, $\dq$ prorepresents the derived noncommutative framed deformations of $S_A$. More accurately, since it is not naturally a pro-Artinian dga it does not prorepresent, but it does at least determine the functor of framed deformations by \ref{dqqiclassdeterminesdefms}. Since one can regard $\dq$ in some sense as the universal prodeformation of $S_A$, which is invariant under derived equivalences by \ref{qeupdf}, one can deduce the following:
	\begin{thm}\label{dictionary}With the setup as above, \hfill
		\begin{enumerate}
			\item[\emph{1.}] The dgas ${\dq}$ and $\dqb$ are quasi-isomorphic over $k$.
			\item[\emph{2.}] The $B$-module $\upmu_A({\dq})$ is quasi-isomorphic to $\dqb[-1]$.
			\item[\emph{3.}]{\normalfont (cf.\ {\cite[5.9(1)]{DWncdf}}).} The $B$-module $\upmu_A(A/AeA)$ is quasi-isomorphic to $B/BeB[-1]$. 
		\end{enumerate}
	\end{thm}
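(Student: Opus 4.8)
The plan is to handle the three parts separately; all of them flow from the single input \ref{msissone}, namely that the mutation equivalence carries $S_A$ to $S_B[-1]$.

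For part (1), the first step is to observe that the hypotheses of \ref{rendkd} hold for $A$ — $A/AeA$ is Artinian local and $\dq$ is cohomologically locally finite by assumption — and also for $B$, since $B/BeB\cong A/AeA$ by \ref{aconisbcon} and $\dqb$ is again cohomologically locally finite (its negative cohomology being stable $\ext$ groups of MCM modules over the isolated singularity $R$, hence of finite length). So \ref{rendkd} gives dga quasi-isomorphisms $\dq\simeq\R\enn_A(S_A)^!$ and $\dqb\simeq\R\enn_B(S_B)^!$. Now $\upmu_A=\R\hom_A({}_BT_A,-)$ is a standard, hence dg-enhanceable, derived equivalence $D(A)\to D(B)$, and it sends $S_A$ to $S_B[-1]$ by \ref{msissone}; a dg-enhanced equivalence identifies the derived endomorphism dgas of corresponding objects, and shifting an object does not change its $\R\enn$-dga, so $\R\enn_A(S_A)\simeq\R\enn_B(S_B)$ as dgas. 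Applying the Koszul dual, which preserves quasi-isomorphisms, yields $\dq\simeq\dqb$ over $k$.

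For part (2), I would apply \ref{qeupdf}. Both $A$ and $B$ are noetherian (finite modules over the noetherian ring $R$), $S_A$ and $S_B$ are one-dimensional, and the functor $F\coloneqq\upmu_A[1]\colon D(A)\to D(B)$ is still of the form $-\lot_A X$ for a complex of bimodules $X$ and satisfies $F(S_A)=\upmu_A(S_A)[1]\simeq S_B$ by \ref{msissone}. Hence \ref{qeupdf} produces a $B$-module quasi-isomorphism $F(U_A)\simeq U_B$, where $U_A\coloneqq\R\enn_A(S_A)^!$ and $U_B\coloneqq\R\enn_B(S_B)^!$, and these are quasi-isomorphic to $\dq$ and $\dqb$ respectively by \ref{rendkd} as in part (1). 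Unwinding, $\upmu_A(\dq)[1]\simeq\dqb$, that is, $\upmu_A(\dq)\simeq\dqb[-1]$. Alternatively one can route through the morphism of recollements \ref{recolhalf}, which gives $\upmu_A(\dq)\cong i_*^B(\upmu_{\mathbb{L}}(\dq))$, and then identify the restricted mutation $\upmu_{\mathbb{L}}$ on the free module.

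For part (3), I would regard $A/AeA$ as an object of the heart of the t-structure of \ref{tstr} on $D(\dq)\subseteq D(A)$. By \ref{aconmutlemma} the complex $\upmu_A(A/AeA)$ has cohomology concentrated in degree $1$, and by \ref{recolhalf} it lies in the image of $i_*^B$, so $\upmu_A(A/AeA)\cong i_*^B(\upmu_{\mathbb{L}}(A/AeA))$ with $\upmu_{\mathbb{L}}(A/AeA)=N[-1]$ for some $N$ in the heart $\cat{Mod}\text{-}(B/BeB)$. The shifted functor $G\coloneqq\upmu_{\mathbb{L}}[1]$ is a t-exact equivalence $D(\dq)\to D(\dqb)$ by \ref{mutntexact}, hence restricts to an equivalence of hearts $\cat{Mod}\text{-}(A/AeA)\to\cat{Mod}\text{-}(B/BeB)$ carrying $A/AeA$ to $N$. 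Since $A/AeA$ and $B/BeB$ are Artinian local, each module category has (up to isomorphism) a unique indecomposable projective object, namely the ring itself, and an equivalence preserves indecomposable projectivity; therefore $N\cong B/BeB$ and $\upmu_A(A/AeA)\simeq B/BeB[-1]$. One can also obtain this by a direct computation with the projective resolution $0\to\hom_R(V,V)\to\hom_R(V,R^l)\to{}_BT_A\to 0$, which gives $\ext^1_A({}_BT_A,A/AeA)\cong A/AeA$ as $k$-vector spaces, the remaining point being to identify the $B$-action, exactly as in \cite[5.9(1)]{DWncdf}.

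The main obstacle throughout is the bookkeeping of module and dga structures: once \ref{msissone} is in hand the real content is that mutation carries the universal (framed pro)deformation of $S_A$ to that of $S_B$, but making this precise requires checking that \ref{qeupdf} genuinely applies to the shifted functor $\upmu_A[1]$ (that it is of bimodule type, and that the finiteness hypotheses hold for $B$ as well as $A$), and, in part (3), that the t-exact equivalence of hearts really matches up the two basic algebras rather than merely their module categories. These are the delicate points; the rest is routine.
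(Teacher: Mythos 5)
Your proofs of parts (1) and (2) match the paper's approach almost exactly: the paper likewise derives $\R\enn_A(S_A)\simeq\R\enn_B(S_B)$ from the dg-enhanceable equivalence $\upmu_A$ together with \ref{msissone}, applies Koszul duality via \ref{rendkd} for part (1), and for part (2) invokes \ref{qeupdf} by way of the interpretation of $\dq$ as the universal prodeformation of $S_A$ (you are right that one must check \ref{rendkd}'s hypotheses hold for $B$ as well as $A$, a point the paper leaves implicit but which you correctly verify). Your reduction to \ref{qeupdf} by shifting the functor to $\upmu_A[1]$, and the observation that $\upmu_A$ is of bimodule-tensor type because $_BT_A$ is a perfect $A$-module, is exactly the needed bookkeeping.

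For part (3), however, you take a genuinely different route. The paper deduces (3) as a consequence of (2): applying \ref{shiftlem} with $X=\dq$ and $q=0$ gives $H^1(\upmu_A(A/AeA))\cong H^1(\upmu_A(\dq))$, and the latter is $H^0(\dqb)\cong B/BeB$ by part (2). You instead avoid any dependence on part (2) by using the t-exactness of $\upmu_{\mathbb{L}}[1]$ from \ref{mutntexact} to get an exact equivalence of hearts $\cat{Mod}\text{-}(A/AeA)\to\cat{Mod}\text{-}(B/BeB)$, and then arguing via Morita theory that this equivalence must carry the unique indecomposable projective $A/AeA$ to the unique indecomposable projective $B/BeB$ since both algebras are Artinian local. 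This is correct (and arguably cleaner), though both routes ultimately lean on the spectral sequence behind \ref{shiftlem}, since \ref{mutntexact} is proved via \ref{shiftcor}. What your approach buys is independence from (2) and a conceptual explanation (indecomposable projectives) for why the heart equivalence matches the basic algebras rather than merely the module categories; what the paper's approach buys is brevity, given that (2) is already established.
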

	\begin{proof}
		Since $\upmu_A:D(A) \to D(B)$ is a standard equivalence, it can be enhanced to a quasi-equivalence of dg categories, and it follows that $\R\enn_A(S_A)\simeq\R\enn_B(\upmu_A(S_A))$ as dgas. By \ref{msissone} we hence have $\R\enn_A(S_A)\simeq \R\enn_B(S_B)$. Taking the Koszul dual of both sides and appealing to \ref{rendkd} gives the first claim. The second claim is precisely \ref{qeupdf}: because $\dqb$ is the universal prodeformation of $S_B$, it follows that $\dqb[-1]$ is the universal prodeformation of $S_B[-1]$. Note that this is a nontrivial theorem which requires the deformation-theoretic properties of the algebra $\dq$, and in particular the interpretation of the $A$-module $\dq$ as the universal prodeformation. Applying \ref{shiftlem} with $q=0$ now gets us an isomorphism $H^1(\upmu_A(A/AeA))\cong B/BeB$. But because $\upmu_A(A/AeA)$ is a module placed in degree one by \ref{aconmutlemma}, it follows that we have a quasi-isomorphism $\upmu_A(A/AeA)\simeq H^1(\upmu_A(A/AeA))[-1]$, which is the third claim.
	\end{proof}
	\begin{rmk}
		If one could prove \ref{msissone} without appeal to \ref{aconisbcon}, then the above provides a new proof of \ref{aconisbcon} by simply observing that $$A/AeA\cong H^0({\dq})\cong H^0(\dqb)\cong B/BeB$$follows immediately from 1.
	\end{rmk}
\begin{rmk}\label{mmdqidqt}
	As in \ref{dwabrmk}, one can deduce the existence of quasi-isomorphisms \linebreak $\mm(\dq)\simeq \dq[-2]$ and $\mm(A/AeA)\simeq A/AeA [-2]$ without appeal to \ref{aconisbcon}.
	\end{rmk}
	\begin{rmk}
		By working in the appropriate subcategories, we may promote the quasi-isomorphisms of 2.\ and 3.\ to quasi-isomorphisms of $\dqb$-modules, and that of 3. to an isomorphism of $B/BeB$-modules.
	\end{rmk}

	\section{Singularity categories}
	In this section, we track the unit map $\id \to \mm_{\mathbb{L}}$ across an equivalence to the singularity category, and show that it becomes an isomorphism there. Assume that we are in the setup of \ref{mutsetupp}. For brevity, we put $Q\coloneqq \dq$. Because $A/AeA$ is finite-dimensional, it follows that the category $D_{\mathrm{fg}}(Q)$ (as defined in \ref{dfgdefn}) agrees with the category $D_{\mathrm{fd}}(Q)$ on those modules with finite-dimensional total cohomology, and similarly the subcategory $\per_\mathrm{fg}(Q)$ agrees with the subcategory $\per_\mathrm{fd}(Q)$ of perfect modules with finite-dimensional total cohomology. We will use these facts frequently in the sequel.
	\begin{lem}\label{fdresplem}
		The functor $\mm_{\mathbb{L}}:D(Q)\to D(Q)$ respects $\per({Q})$, $D_\mathrm{fd}({Q})$, and $\per_\mathrm{fd}({Q})$.
	\end{lem}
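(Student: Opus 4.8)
The plan is to use the characterisation of $\mm_{\mathbb{L}}$ as the restriction of $\mm$ along the recollement, together with the fact that $\mm = \upmu_B \circ \upmu_A$ is a composition of two mutation equivalences. First I would recall that $\mm_{\mathbb{L}}$ is an equivalence of $D(Q)$ with itself (it is the composition $D(Q) \xrightarrow{\upmu_{\mathbb{L}}} D(\dqb) \xrightarrow{\upmu_{\mathbb{L}}} D(Q)$), so it suffices to show that $\upmu_A$ (and symmetrically $\upmu_B$) sends the relevant subcategories of $D(Q)$ to the corresponding subcategories of $D(\dqb)$. The cleanest route is to show that $\upmu_A : D(A) \to D(B)$ respects compact objects, finite-cohomology objects, and their intersection, and then deduce the statement for $Q$ by using that $i_*$ is fully faithful, respects coproducts, and identifies $D(Q)$ with $D(A)_{A/AeA}$ (by \ref{cohomsupport}), while $\per(Q) \cong \left(\per A / j_!\per R\right)^\omega$ via $i^*$ (from the proof of \ref{flem}).

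For the three statements: that $\mm_{\mathbb{L}}$ respects $\per(Q)$ follows because $\mm$ is a derived equivalence of $D(A)$ given by tensoring with a two-sided tilting complex, hence restricts to an autoequivalence of $\per(A)$; combined with the compatibility square $i_* \mm_{\mathbb{L}} \cong \mm\, i_*$ and the fact that $i_*$ reflects compactness among objects of its image (since $i_*$ has a coproduct-preserving left adjoint $i^*$, so relatively compact objects of $D(A)$ inside $\operatorname{im} i_*$ correspond to compact objects of $D(Q)$, cf.\ the proof that $\dq$ is relatively compact), one concludes. That $\mm_{\mathbb{L}}$ respects $D_\mathrm{fd}(Q)$: since $H^*(\mm_{\mathbb{L}}(X))$ is computed from a finite complex of bimodules (recall $I_\mm$ has cohomology only in degrees $0$ and $-1$ by \ref{immdefn}), a spectral sequence argument shows that if $X$ has finite-dimensional total cohomology then so does $\mm_{\mathbb{L}}(X)$; the inverse $\mm_{\mathbb{L}}^{-1}$ has the same form, giving the reverse inclusion. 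That $\mm_{\mathbb{L}}$ respects $\per_\mathrm{fd}(Q)$ is then immediate, being the intersection of the two previous cases.

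Concretely I would proceed as follows. Step one: observe $I_\mm$ is a perfect $A$-bimodule (being $_AT_B \lot_B {_BT_A}$ with each $T$ a tilting module, hence perfect on each side), so $\mm$ and $\mm^{-1}$ preserve $\per(A)$; tensoring the unit/counit on both sides by $Q$ and using that $Q \lot_A Q \simeq Q$ (\ref{homepi}) shows $\mm_{\mathbb{L}}$ and $\mm_{\mathbb{L}}^{-1}$ preserve $\per(Q)$ via the identification $\per(Q) \cong (\per A/j_!\per R)^\omega$. Step two: for $D_\mathrm{fd}$, note $\mm_{\mathbb{L}}(X) \simeq X \lot_A I_\mm \lot_A Q$ up to the identifications above; since $I_\mm$ is bounded with finitely generated (in fact finite-dimensional modulo $eAe$) cohomology, filtering by the cohomological degree of $I_\mm$ gives a finite spectral sequence converging to $H^*(\mm_{\mathbb{L}}(X))$ whose $E_1$ terms are finite-dimensional when $X \in D_\mathrm{fd}(Q)$. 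Step three: intersect to get $\per_\mathrm{fd}(Q)$, and note all three inclusions are equalities because $\mm_{\mathbb{L}}$ is an equivalence with an inverse of the same shape.

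The main obstacle I anticipate is the bookkeeping in step one: one must be careful that the restriction of $\mm$ to $D(Q)$ via the recollement really is computed by tensoring with the bimodule $I^{\mathbb{L}}_\mm = Q \lot_A I_\mm \lot_A Q$ and that this bimodule is genuinely perfect as a $Q$-bimodule — it is not obvious a priori that $Q$-perfectness on each side survives the derived tensor over $A$, since $Q$ need not be perfect over $A$. The way around this is to not argue at the level of bimodules but instead use functoriality: $\mm$ restricts to an autoequivalence of $\per(A)$, this autoequivalence preserves the thick subcategory $j_!\per R = \thick(eA)$ (because $\mm(eA) \in \per(A)$ is again in $\thick(eA)$ — one checks $\mm(A) \simeq A$ modulo $\thick(eA)$ using that $\mm$ fixes the simple $S_A$ up to shift by \ref{mmshiftssimple}, or more directly that $j^* \mm \cong j^*$ as in the recollement compatibility), hence descends to an autoequivalence of the Verdier quotient $\per A / j_! \per R$ and its idempotent completion $\per(Q)$. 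Once this is in place the rest is routine.
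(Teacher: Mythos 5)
Your proposal reaches the right conclusions but takes a considerably more circuitous route than the paper, and one of its intermediate steps requires more justification than you give.

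The paper's proof is a three-line argument exploiting the fact that both subcategories in question are generated by a single object. Namely, $\per(Q)=\thick_{D(Q)}(Q)$, and since $\mm_{\mathbb{L}}(Q)\simeq Q[-2]$ is perfect (from \ref{dictionary}(2) applied twice, or \ref{mmdqidqt}), $\mm_{\mathbb{L}}$ automatically preserves $\per(Q)$ because the image of a thick subcategory under a triangle functor is contained in the thick subcategory generated by the image of the generators. Likewise, because $A/AeA$ is Artinian local, every object of $D_{\mathrm{fd}}(Q)$ is built from the unique simple $S_A$ under cones and shifts, and $\mm_{\mathbb{L}}(S_A)\simeq S_A[-2]$ by \ref{mmshiftssimple}, so $\mm_{\mathbb{L}}$ preserves $D_{\mathrm{fd}}(Q)$. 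The intersection case is then immediate. No bimodule gymnastics, no spectral sequence, no descent through the Verdier quotient.

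Your step one (the "way around" via $\mm$ preserving $\thick(eA)$ and descending to the quotient) is a correct but heavier argument for the same fact; the observation you'd want is simply that $j_!$ intertwines $\id_{D(R)}$ with $\mm$ in the recollement morphism, which forces preservation of $\operatorname{im}j_!$. Your step two is where there is a genuine gap: you assert that filtering by the cohomological degree of $I_\mm$ yields a finite spectral sequence "whose $E_1$ terms are finite-dimensional when $X\in D_{\mathrm{fd}}(Q)$." The $E_1$ page would involve groups such as $\ext^p_A(H^q(I_\mm),X)$; for $X$ finite-dimensional and $H^q(I_\mm)$ an arbitrary finitely generated $A$-module there is no reason these are finite-dimensional in each degree, let alone of finite total dimension, unless one invokes the two-term projective resolution of $T_A$ (which is exactly what the paper does in \ref{aconmutlemma} and \ref{shiftlem} to pin down $\upmu_A$ on $A/AeA$-modules). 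So your spectral sequence step isn't wrong in spirit, but as stated it begs the question; the finiteness has to come from the explicit structure of the tilting module, which is already packaged cleanly in \ref{mmshiftssimple}.
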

	\begin{proof}
		Since $\mm_{\mathbb{L}}({Q})$ is perfect by \ref{dictionary}(2), and perfect modules are built out of $Q$ under cones and shifts, it follows that $\mm_{\mathbb{L}}$ preserves all perfect modules. Since $\mm$ sends $S_A$ to a finite-dimensional module by \ref{mmshiftssimple}, and all finite-dimensional modules are built out of $S_A$ under cones and shifts (because $A/AeA$ is Artinian local), it follows that $\mm$ preserves $D_\mathrm{fd}({Q})$ too. The third assertion is now clear.
	\end{proof}
	\begin{defn}
	Write $\mathcal{M}$ for $\thick_{D_{\mathrm{sg}}(R)}(M)$.
\end{defn}
Recall that the singularity functor of \ref{kymap} induces an equivalence $${\bar{\Sigma}}:\per({Q})/\per_\mathrm{fd}({Q}) \xrightarrow{\cong}  \mathcal M.$$
	\begin{defn}
		Let $\mm_{\mathrm{sg}}:\mathcal{M}\to \mathcal{M}$ be the autoequivalence defined by the commutative diagram of equivalences $$\begin{tikzcd} \per({Q})/\per_\mathrm{fd}({Q})\ar[d,"\mm_{\mathbb{L}}"] \ar[r,"{\bar{\Sigma}}"]& \mathcal M \ar[d, "\mm_{\mathrm{sg}}"] \\ \per({Q})/\per_\mathrm{fd}({Q})\ar[r,"{\bar{\Sigma}}"] & \mathcal M 
		\end{tikzcd}$$Observe that one gets a natural transformation $\id_{\mathcal M} \to \mm_{\mathrm{sg}}$.
	\end{defn}
	\begin{rmk}
		One can enhance $\mm_{\mathrm{sg}}$ to a dg functor, although we will not need this fact.
	\end{rmk}
	We finish with some technical observations which will give us control over $\mm_{\mathrm{sg}}$.
	\begin{lem}\label{unitelem}
		Tensoring the unit map $I_\mm \to A$ on the right with $Ae$ gives an $A-R$-bimodule quasi-isomorphism $I_\mm e \xrightarrow{\simeq} Ae$.
	\end{lem}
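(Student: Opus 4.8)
The plan is to apply the exact functor $-\otimes_A Ae$ (equivalently, $(-)e$ for the \emph{right} $A$-module structure) to the three maps whose composite is the unit map of \ref{unitmap}, and to check that each becomes an isomorphism of $A$-$R$-bimodules. Throughout I would use the identifications $Ae\cong V$, ${_AT_B}e\cong Ae$ and ${_BT_A}e\cong Be$ from \ref{bimodlem} (together with the observation there that $\hom_R(W,X)\cdot e_W\cong X$ for modules of the form $\hom_R(W,X)$ over $\enn_R(W)$), and the fact that $Be$ is a projective left $B$-module, so that $-\lot_B Be=-\otimes_B Be$ and $\hom_R(\Omega V,V)\otimes_B Be\cong\hom_R(\Omega V,V)e\cong V$, and likewise $\hom_R(V,\Omega V)\otimes_A Ae\cong\hom_R(V,\Omega V)e\cong\Omega V$. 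The one extra input is the vanishing $\ext^1_R(V,V)e=0$ and $\ext^1_R(\Omega V,\Omega V)e=0$: the idempotent $e=\id_R$ acts on $\ext^1_R(V,-)$ (resp.\ $\ext^1_R(\Omega V,-)$) by precomposition with $e_V=\iota_R\pi_R$ (resp.\ $e_{\Omega V}$), which factors through $\ext^1_R(R,-)=0$.

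Applying the exact functor $-\otimes_A Ae$ (note $Ae$ is $A$-projective, so all higher $\tor$'s against it vanish) to the defining short exact sequences $0\to{_BT_A}\to\hom_R(V,\Omega V)\to\ext^1_R(V,V)\to 0$ and $0\to{_AT_B}\to\hom_R(\Omega V,V)\to\ext^1_R(\Omega V,\Omega V)\to 0$ then shows that the inclusions $\alpha\colon{_AT_B}\hookrightarrow\hom_R(\Omega V,V)$ and $\beta\colon{_BT_A}\hookrightarrow\hom_R(V,\Omega V)$ induce isomorphisms $\alpha\otimes_B Be$ and $\beta\otimes_A Ae$. Now I would trace the unit map. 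Tensoring $\alpha\lot_B\beta$ with $Ae$ over $A$ gives $\alpha\lot_B(\beta\otimes_A Ae)\colon{_AT_B}\lot_B({_BT_A}e)\to\hom_R(\Omega V,V)\lot_B(\hom_R(V,\Omega V)e)$; since ${_BT_A}e\cong Be$ and $\hom_R(V,\Omega V)e\cong\Omega V\cong Be$ are $B$-projective, both derived tensor products are underived and this is identified with $\alpha\otimes_B Be$, hence an isomorphism — in particular $I_\mm e$ is concentrated in degree zero, consistent with $H^{-1}(I_\mm)e=0$, and is already seen to be $\cong Ae$. The middle map $H^0$, after $-\otimes_A Ae$, is the identity, its source being concentrated in degree zero. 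Finally the composition map $\hom_R(\Omega V,V)\otimes_B\hom_R(V,\Omega V)\to A$, after $-\otimes_A Ae$, becomes a map $\hom_R(\Omega V,V)\otimes_B\Omega V\cong\hom_R(\Omega V,V)e\to\enn_R(V)e=Ae$ which, under the canonical identifications $\hom_R(\Omega V,V)e\cong V\cong Ae$, is (up to a unit) the identity — this is the evaluation/composition map for the generator $\Omega V$, and the identification is implemented by composing with $e_{\Omega V}$ and $e_V$. Composing the three, the restricted unit map is an isomorphism of $A$-$R$-bimodules, as claimed.

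The main obstacle is bookkeeping rather than anything deep: keeping straight which idempotent $e$ lives in which ring $A$ or $B$, checking that the isomorphism ${_BT_A}e\cong Be$ of \ref{bimodlem} is compatible with the inclusion $\beta$ so that one may legitimately replace that tensor factor by the projective module $Be$, and verifying the last step that the composition map restricts to the canonical identification. An alternative endgame, once $I_\mm e$ has been identified with $Ae$ in degree zero, would be to use that $\hom_{(A,R)}(Ae,Ae)\cong R$ acts by right multiplication, so the restricted unit map is right multiplication by some $r_0\in R$, and then argue $r_0\notin\mathfrak m_R$; but proving $r_0$ is a unit appears to require essentially the same computation, so I would favour the direct trace above.
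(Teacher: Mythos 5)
Your proposal is correct and follows essentially the same route as the paper: identify both sides with $Ae$ via \ref{bimodlem}, reduce to checking the underlying degree-zero map, and verify each factor of the composite in \ref{unitmap} becomes an isomorphism after applying $(-)e$. The one place you are more explicit than the paper is in spelling out why the inclusion ${_BT_A}\hookrightarrow\hom_R(V,\Omega V)$ becomes an isomorphism after tensoring---via the vanishing $\ext^1_R(V,V)e=0$ coming from factorisation through $\ext^1_R(R,-)=0$---whereas the paper leaves this implicit in the compatibility of the identifications from \ref{bimodlem}; this is a welcome clarification rather than a genuine divergence.
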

	\begin{proof}
		Since $_BT_Ae\cong Be$ as bimodules by \ref{bimodlem}, we see that $I_\mm e$ is bimodule quasi-isomorphic to $Ae$. So it suffices to show that the map $$_AT_B \otimes_B {}_BT_A \to \hom_R(\Omega V, V)\otimes_B \hom_R(V,\Omega V) \to A$$ becomes an isomorphism after tensoring with $Ae$. Observing that $\hom_R(V,\Omega V)e\cong \Omega V \cong Be$, we see that the induced map $_BT_Ae \to \hom_R(V,\Omega V)e$ is an isomorphism. Moreover, because $_AT_Be\cong Ae$ by \ref{bimodlem}, and because $\hom_R(\Omega V,V)e\cong Ae$, we see that $_AT_B \otimes_B {}_BT_Ae \to \hom_R(\Omega V, V)\otimes_B \hom_R(V,\Omega V)e$ is an isomorphism. Similarly, we see that the multiplication map $\hom_R(\Omega V, V)\otimes_B \hom_R(V,\Omega V)e \to Ae$ is an isomorphism.
	\end{proof}
	\begin{rmk}
		The same logic shows that the unit map induces bimodule quasi-isomorphisms $eI_\mm \xrightarrow{\simeq} eA$ and $eI_\mm e \xrightarrow{\simeq} R$.
	\end{rmk}
	
	\begin{prop}\label{mmsgisid}
		The natural transformation $\id_{\mathcal M} \to \mm_{\mathrm{sg}}$ is an isomorphism.
	\end{prop}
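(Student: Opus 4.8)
The plan is to reduce the statement to checking invertibility of the natural transformation $\id_{\mathcal M}\to\mm_{\mathrm{sg}}$ at the single generator $M\simeq\Sigma(Q)$ of $\mathcal M$, then to transport that component along the (dg) singularity functor into a statement about the comparison map $\Xi$ which can be settled using \ref{unitelem} together with the periodicity machinery of \S\ref{dqperiod}.

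First I would recall that for any natural transformation $\nu\colon G\to G'$ of exact endofunctors of a triangulated category, the full subcategory of objects where $\nu$ is invertible is thick (closed under shifts, summands and cones by the triangulated five lemma). Both $\id_{\mathcal M}$ and $\mm_{\mathrm{sg}}$ are exact, since $\bar\Sigma$ is an equivalence and $\mm_{\mathbb L}$ respects $\per(Q)$ and $\per_\mathrm{fd}(Q)$ by \ref{fdresplem}. As $\mathcal M=\thick_{D_\mathrm{sg}(R)}(M)$ is generated by $M$, it suffices to check invertibility at $M$. By the construction of $\mm_{\mathrm{sg}}$, and since the transformation $\id_{\mathcal M}\to\mm_{\mathrm{sg}}$ descends from the unit $\id_{D(A)}\to\mm$ of \ref{unitrecol}, its component at $M\cong\bar\Sigma(Q)$ is $\bar\Sigma$ applied to the component $\tilde\eta\colon Q\to\mm_{\mathbb L}(Q)$ of $\id_{D(Q)}\to\mm_{\mathbb L}$ at $Q$. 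By \ref{mmdqidqt} we have $\mm_{\mathbb L}(Q)\simeq Q[-2]$, so $\tilde\eta$ is a well-defined class (up to a unit of $H^0(Q)\cong A_\con$) in $\hom_{D(Q)}(Q,Q[-2])\cong H^{-2}(Q)$.

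Next I would show it is enough to know that $\tilde\eta$ is non-nilpotent. Indeed, if $\tilde\eta$ is non-nilpotent then by \ref{uniqueeta} it equals the periodicity element $\eta$ up to a unit, so multiplication by $\tilde\eta$ is an isomorphism $H^{j}(Q)\to H^{j-2}(Q)$ for all $j\le 0$ by \ref{etaex}. A long-exact-sequence computation with the triangle $Q\xrightarrow{\tilde\eta}Q[-2]\to\mathrm{cone}(\tilde\eta)\to$ then shows $\mathrm{cone}(\tilde\eta)$ has cohomology only in degrees $1$ and $2$, where it is $H^{-1}(Q)$ and $H^{0}(Q)$, both finite dimensional. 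Since $Q$ and $Q[-2]$ are perfect $Q$-modules, $\mathrm{cone}(\tilde\eta)$ is then a bounded perfect complex with finite-dimensional cohomology, i.e.\ lies in $\per_\mathrm{fd}(Q)$; hence $\tilde\eta$ becomes an isomorphism in $\per(Q)/\per_\mathrm{fd}(Q)$, and applying the equivalence $\bar\Sigma$ shows $\id_{\mathcal M}\to\mm_{\mathrm{sg}}$ is invertible at $M$, as required.

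Finally, non-nilpotence of $\tilde\eta$. Applying the dg singularity functor $\Sigma_\mathrm{dg}$ of \ref{Fdg}, whose component at $Q$ is the comparison map $\Xi\colon Q\to\R\underline{\enn}_R(M)$, carries $\tilde\eta$ to $\Xi(\tilde\eta)\in\underline{\ext}_R^{-2}(M,M)$; by \ref{qisolem} the map $\Xi$ is a dga quasi-isomorphism in nonpositive degrees, so $\tilde\eta$ is non-nilpotent if and only if $\Xi(\tilde\eta)$ is. Writing $\Xi(\tilde\eta)=c\,\Theta^{-1}$ with $c\in\underline{\enn}_R(M)=A_\con$ and using invertibility of the periodicity class $\Theta$ (\ref{etaex}, \ref{periodicend}), $\Xi(\tilde\eta)$ is non-nilpotent precisely when it represents an isomorphism $M\xrightarrow{\cong}M[-2]$ in $D_\mathrm{sg}(R)$, i.e.\ when $c$ is a unit. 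This is the content of \ref{unitelem} and the remark following it: the unit map $I_\mm\to A$ becomes a quasi-isomorphism after applying $(-)e$, and since the singularity functor is built from $j^{*}=-\lot_A Ae$, the image of the unit transformation at $Q$ in $D_\mathrm{sg}(R)$ is an isomorphism. I expect the main obstacle to lie exactly in making this last matching precise: the unit transformation at $A$ is the map $A\to\mm(A)=\R\hom_A(I_\mm,A)$ obtained \emph{contravariantly} from $I_\mm\to A$, whereas \ref{unitelem} concerns $I_\mm\lot_A Ae\to A\lot_A Ae$ directly, so reconciling the two requires using that $I_\mm$ is a two-sided tilting complex (hence perfect over $A$) to commute $\R\hom_A(-,A)$ past $-\lot_A Ae$, together with the verification that the functor $F$ of \ref{kymap} intertwines $\mm_{\mathbb L}$ with $\mm$ and their unit transformations.
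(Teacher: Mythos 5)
Your strategy is genuinely different from the paper's. The paper never reduces to the generator $M$ nor invokes the periodicity element $\eta$: it instead carries out a careful diagram chase of natural transformations through the recollement. Specifically, it shows via iterated commuting squares that the natural transformation $\id_{\mathcal M}\to\mm_{\mathrm{sg}}$ extends to one on $D_{\mathrm{sg}}(R)$ that is compatible with the restriction of $\id\to\mm$ to $D^b(R)$; that restriction is an isomorphism by \ref{unitelem}, and since $D^b(R)\to D_{\mathrm{sg}}(R)$ is a quotient, the extended transformation is an isomorphism, hence so is the original one. Both approaches ultimately rest on \ref{unitelem}, but the machinery around it differs substantially.

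Two remarks on your route. First, there is a redundancy: if you can show $c$ is a unit of $A_\con$ (equivalently, that $\Xi(\tilde\eta)$ is invertible in $H\R\underline{\enn}_R(M)$), then $\bar\Sigma(\tilde\eta)$ is already an isomorphism and the whole cone-in-$\per_\mathrm{fd}(Q)$ argument is bypassed; the periodicity detour via \ref{uniqueeta} and \ref{etaex} buys you the reduction to non-nilpotence, which is a slightly weaker target, but you end up proving the stronger statement anyway. Second, and more importantly, the gap you identify at the end is real and is precisely what the paper's diagram formalism exists to resolve: one must verify that the dg singularity functor carries the unit transformation $\id_{D(Q)}\to\mm_{\mathbb{L}}$ to the "correct" transformation whose invertibility follows from \ref{unitelem}. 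This is not automatic because $\mm=\R\hom_A(I_\mm,-)$ is contravariantly built from $I_\mm\to A$, while $j^*=-\lot_A Ae$ and \ref{unitelem} are stated covariantly; one route is to work instead with $\mm^{-1}=-\lot_A I_\mm$ and its counit $\mm^{-1}\to\id$, apply $j^*$ to get $X\lot_A I_\mm e\to Xe$, invoke \ref{unitelem}, and then invert. Your sketch of using perfectness of $I_\mm$ to commute $\R\hom_A(-,A)$ past $-\lot_A Ae$ is plausible but must additionally track that the equivalence $F$ of \ref{flem} and the passage through $\Delta_R(A)^\omega$ intertwine the unit transformations; the paper's commuting-cube bookkeeping is a systematic way to discharge exactly that burden.
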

	\begin{proof}The idea is that all constructions made respect the recollement, which forces $\id \to \mm_{\mathrm{sg}}:{\mathcal M} \to {\mathcal M}$ to agree with the map induced by $\id \to \mm: D(R)\to D(R)$, which is an isomorphism. For the purposes of this proof, we introduce some notation and terminology. If $\psi:F\to G$ is a natural transformation of functors $F,G:\mathcal{C}\to \mathcal{D}$, we write it in diagrammatic form as simply $\mathcal{C}\xrightarrow{\psi} \mathcal{D}$. Given another natural transformation $\psi':F'\to G'$ of functors $F',G':\mathcal{C}'\to \mathcal{D}'$, and functors $c:\mathcal{C}\to \mathcal{C}'$ and $d:\mathcal{D}\to \mathcal{D}'$ with natural transformations $dF\to F'c$ and $dG\to G'c$, say that the diagram of natural transformations $$\begin{tikzcd}
		\mathcal{C}\ar[r,"c"]\ar[d,"\psi"]& \mathcal{C}'\ar[d,"\psi'"]\\
		\mathcal{D}\ar[r,"d"]& \mathcal{D}'
		\end{tikzcd}$$commutes if, for every $f:X \to Y$ in $\mathcal{C}$, the diagram
		$$\begin{tikzcd}
		&F'cX\ar[rr,"F'cf"]\ar[dd,"\psi'_{cX}", near start]&&F'cY\ar[dd,"\psi'_{cY}"]\\
		dFX\ar[ur] \ar[rr,"dFf", near start]\ar[dd,"d\psi_X"]& & dFY\ar[dd,"d\psi_Y", near start]\ar[ur] &\\
		&G'cX\ar[rr,"G'cf", near start]&&G'cY\\
		dGX\ar[ur] \ar[rr,"dGf"]& & dGY\ar[ur] &\\
		\end{tikzcd}
		$$ in $\mathcal{D}'$ commutes. In this proof we will be interested in cases when $\mathcal{C}=\mathcal{D}$, $c=d$, both $F$ and $F'$ are the identity, and the map $dF\to F'c$ is the identity, in which case the cube above reduces to the prism$$\begin{tikzcd}
		cX\ar[dr,"\psi'_{cX}"] \ar[rr,"cf", near start]\ar[dd,"c\psi_X"]& & cY\ar[dd,"c\psi_Y", near start]\ar[dr,"\psi'_{cY}"] &\\
		&G'cX\ar[rr,"G'cf", near start]&&G'cY\\
		cGX\ar[ur] \ar[rr,"cGf"]& & cGY\ar[ur] &\\
		\end{tikzcd}.
		$$

		Because $\mm$ preserves both $\per{(Q)}$ and $\per R$, it follows that it preserves $\per A$ too, and moreover descends to an autoequivalence of $\per A / j_! \per R$. By the definition of ${\bar{\Sigma}}$, one has a commutative diagram of functors $$\begin{tikzcd}
		\per A \ar[d,"i^*"]\ar[r]& \per A / j_! \per R \ar[dl,"i^*"]\ar[d]\ar[dr,".e"]& 
		\\ \per{Q} \ar[r]& \per({Q})/\per_\mathrm{fd}({Q})\ar[r,"{\bar{\Sigma}}"'] & \mathcal{M}
		\end{tikzcd}$$and by \ref{fdresplem} and the definition of $\mm_{\mathrm{sg}}$ one has a commutative diagram of functors $$\begin{tikzcd}
		\per{Q} \ar[r]\ar[d,"\mm_{\mathbb{L}}"]& \per({Q})/\per_\mathrm{fd}({Q})\ar[r,"{\bar{\Sigma}}"']\ar[d,"\mm_{\mathbb{L}}"] & \mathcal M \ar[d,"\mm_{\mathrm{sg}}"]
		\\ \per{Q} \ar[r]& \per({Q})/\per_\mathrm{fd}({Q})\ar[r,"{\bar{\Sigma}}"'] & \mathcal M
		\end{tikzcd}.$$By gluing two copies of the first diagram to the second, one sees that the diagram$$\begin{tikzcd}
		\per A \ar[d,"\mm"]\ar[r]& \per A / j_! \per R\ar[d,"\mm"]\ar[r,".e"] & \mathcal M\ar[d,"\mm_{\mathrm{sg}}"]
		\\ \per A \ar[r]& \per A / j_! \per R \ar[r,".e"] & \mathcal M
		\end{tikzcd}$$ commutes. Let $\psi:\id \to \mm$ denote the natural transformation given by the unit map; by an abuse of notation we will denote the obvious analogues $\id \to \mm_{\mathbb{L}}$, $\id \to \mm_{\mathrm{sg}}$, etc. by the same letter. It is not hard to check that the analogous diagram of natural transformations $$\begin{tikzcd}
		\per A \ar[d,"\psi"]\ar[r]& \per A / j_! \per R\ar[d,"\psi"]\ar[r,".e"] & \mathcal M\ar[d,"\psi"]
		\\ \per A \ar[r]& \per A / j_! \per R \ar[r,".e"] & \mathcal M
		\end{tikzcd}$$ commutes. A similar argument to the above using the commutative diagram of functors $$\begin{tikzcd}
		\per A \ar[rr,".e"]\ar[d]&& D^b(R)\ar[d]
		\\ \per A / j_! \per R\ar[r,".e"] & \mathcal{M}\ar[r,hook] & D_{\mathrm{sg}}(R) 
		\end{tikzcd}$$ shows that the diagram of functors $$\begin{tikzcd}
		D^b(R)\ar[d,"\mm"]\ar[rr,bend left=15] & \mathcal{M}\ar[r,hook]\ar[d,"\mm_{\mathrm{sg}}"]& D_\mathrm{sg}(R)\ar[d,"\mm"]
		\\ D^b(R)\ar[rr,bend right=15] & \mathcal{M}\ar[r,hook]& D_\mathrm{sg}(R)
		\end{tikzcd}$$ commutes, and moreover the analogous diagram of natural transformations
		$$\begin{tikzcd}
		D^b(R)\ar[d,"\psi"]\ar[rr,bend left=15] & \mathcal{M}\ar[r,hook]\ar[d,"\psi"]& D_\mathrm{sg}(R)\ar[d,"\psi"]
		\\ D^b(R)\ar[rr,bend right=15] & \mathcal{M}\ar[r,hook]& D_\mathrm{sg}(R)
		\end{tikzcd}$$ commutes. But the left-hand vertical map is an isomorphism, because it is the restriction of $\id \to \mm$ to $D^b(R)$, which is an isomorphism because $I_{\mm}e \to Ae$ is a quasi-isomorphism of $A$-$R$-bimodules by \ref{unitelem}. So the right-hand vertical map must be an isomorphism, because $D^b(R) \to D_\mathrm{sg}(R)$ is a quotient, and hence the map $\id \to \mm_{\mathrm{sg}}$ is an isomorphism.
	\end{proof}

\section{Periodicity, localisation, and the main theorem}
Assume that we are in the setup of \ref{mutsetupp}. For brevity, we put $Q\coloneqq \dq$. Recall from \ref{etaex} the existence of the periodicity element $\eta \in H^{-2}(Q)$. Let $E\simeq \R\underline{\enn}_R(M)$ be the derived localisation of ${Q}$ at $\eta$.
\begin{prop}\label{dgcd}There is a commutative diagram in the homotopy category of dg categories $$\begin{tikzcd} \per Q\ar[d,"\pi"]\ar[dr, "\Sigma"]\ar[d] \ar[r, "-\lot_Q E"]& \per(E)\ar[d,"\alpha"] \\ \per Q / \per_\mathrm{fd}(Q) \ar[r, "\bar{\Sigma}", swap] & \mathcal{M} \end{tikzcd}$$	where $\pi$ is the standard projection functor, $\Sigma$ is the singularity functor, and $\alpha$ and $\bar{\Sigma}$ are quasi-equivalences.
\end{prop}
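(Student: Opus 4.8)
The plan is to identify all four functors in the square with incarnations of the singularity functor and its factorisations, and then to check that the diagram that results is exactly the one already produced in \ref{fintype}, \ref{perleta}, and \ref{perletadg}, merely repackaged. The main point is that everything in sight is built from the recollement $D(Q)\recol D(A) \recol D(R)$ and the colocalisation data, so once we name the maps correctly commutativity is essentially forced. Concretely, $\mathbb{L}^\eta(Q) \simeq R$ by \ref{Rcoloc} (since $eAe \cong R$ here, as $A$ is a partial resolution), so by \ref{perbisperl} we have $\per^bQ = \per\mathbb{L}^\eta(Q) = \per_\mathrm{fd}(Q)$, the last equality using that $Q$ is of finite type since it is cohomologically locally finite and $A/AeA$ is Artinian. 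In particular $Q$ is of finite type, so \ref{fintype} gives $\per\mathbb{L}^\eta(Q) = \per_\mathrm{fg}(Q)$, and by \ref{ntty} the localisation $-\lot_Q E: \per Q \to \per E = \per\mathbb{L}_\eta(Q)$ has kernel exactly $\per\mathbb{L}^\eta(Q) = \per_\mathrm{fd}(Q)$, and hence identifies with the quotient functor $\per Q \to \per Q / \per_\mathrm{fd}(Q)$ up to the equivalence $\per\mathbb{L}_\eta(Q) \xrightarrow{\simeq} \per Q/\per_\mathrm{fd}(Q)$ from \ref{perleta}. This latter equivalence, composed with $\bar\Sigma$, is precisely what \ref{perleta} identifies with the equivalence $\per\mathbb{L}_\eta(Q) \xrightarrow{\simeq} \thick_{D_\mathrm{sg}(R)}(M) = \mathcal M$; call this composite $\alpha$. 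The same statements lift to the dg level via \ref{perletadg} and \ref{Fdg}, giving the quasi-equivalence $\bar\Sigma_\mathrm{dg}$ and a quasi-equivalence $\alpha$ of dg categories, so one works throughout in the homotopy category of dg categories.

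The steps, in order, would be: (1) establish $\mathbb{L}^\eta(Q)\simeq R$ and deduce $\per_\mathrm{fd}(Q) = \per^bQ = \per\mathbb{L}^\eta(Q) = \per_\mathrm{fg}(Q)$ using \ref{Rcoloc}, \ref{perbisperl}, \ref{fintype}, together with the finiteness hypotheses of \ref{mutsetupp}; (2) invoke \ref{ntty} (the Neeman--Thomason localisation sequence $\per\mathbb{L}^\eta(Q) \to \per Q \to \per\mathbb{L}_\eta(Q)$) to see that $-\lot_Q E$ factors as $\per Q \xrightarrow{\pi} \per Q/\per_\mathrm{fd}(Q) \xrightarrow{\simeq} \per\mathbb{L}_\eta(Q) = \per E$, and that the second map is an equivalence after idempotent completion -- but $\per Q/\per_\mathrm{fd}(Q)$ is already idempotent complete since it is equivalent to $\mathcal M \subseteq D_\mathrm{sg}(R)$, which is idempotent complete by \ref{sgidemref}; (3) recall from \ref{Fdg} that the dg singularity functor $\Sigma_\mathrm{dg}$ factors through $\per_\mathrm{dg}Q/\per^\mathrm{dg}_\mathrm{fd}(Q)$ as $\bar\Sigma_\mathrm{dg}$ composed with $\pi$, i.e.\ $\Sigma = \bar\Sigma\circ\pi$, which gives commutativity of the left triangle; (4) define $\alpha$ to be the composite of the inverse of the equivalence $\per\mathbb{L}_\eta(Q)\xrightarrow{\simeq}\per Q/\per_\mathrm{fd}(Q)$ from step (2) with $\bar\Sigma$, and observe that by construction $\alpha\circ(-\lot_Q E) = \bar\Sigma\circ\pi = \Sigma$, so the outer square and the right triangle commute; (5) check that this $\alpha$ coincides with (or is canonically isomorphic to) the equivalence of \ref{perletadg} sending $\mathbb{L}_\eta(Q)$ to $M$ -- this is immediate from the way \ref{perletadg} is proved, as its equivalence is literally the composite $\per_\mathrm{dg}\mathbb{L}_\eta Q \xrightarrow{\simeq} \per_\mathrm{dg}Q/\per_\mathrm{dg}\mathbb{L}^\eta(Q) \xrightarrow{\simeq} \per_\mathrm{dg}Q/\per^\mathrm{dg}_\mathrm{fd}(Q) \xrightarrow{\bar\Sigma_\mathrm{dg}} \thick_{D^\mathrm{dg}_\mathrm{sg}(R)}(M)$.

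The one genuinely delicate point, and the step I expect to take the most care, is matching the dg enhancements so that the square commutes in the homotopy category of dg categories and not merely after passing to homotopy categories: one must check that the factorisation of $-\lot_Q E$ through the Drinfeld quotient agrees up to quasi-equivalence with the factorisation of $\Sigma_\mathrm{dg}$ through the same quotient. This is where \ref{Fdg}, \ref{dgperfgrmk}, and \ref{ntty} (in its dg-categorical form, invoking \ref{Rcoloc} and the homotopy cofibre sequence $\per_\mathrm{dg}R \to \per_\mathrm{dg}Q \to \per_\mathrm{dg}\mathbb{L}_\eta(Q)$) must be combined carefully; the essential input is that both the localisation functor and $\Sigma_\mathrm{dg}$ have the same kernel $\per^\mathrm{dg}_\mathrm{fd}(Q)$ as a dg subcategory, by \ref{dgperfgrmk} and step (1), and that a quotient dg functor is determined up to quasi-equivalence by its kernel together with its restriction to a set of generators. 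Since $Q$ generates $\per_\mathrm{dg}Q$ and is sent to $\mathbb{L}_\eta(Q)$ (resp. to $M$, resp. to $\alpha(\mathbb{L}_\eta(Q))$) on the nose in each case, the three functors out of $\per_\mathrm{dg}Q$ agree up to quasi-equivalence, which is exactly the asserted commutativity. Everything else is bookkeeping with the already-established equivalences.
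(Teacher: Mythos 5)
Your proposal is correct in overall outline, but it takes a genuinely different route from the paper, and it contains one incorrect aside worth flagging.

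\textbf{What the paper does differently.} The paper does not argue via the Neeman--Thomason--Trobaugh--Yao factorisation at all. Instead it first establishes the \emph{triangle of dgas} $Q \to E \to \R\underline{\enn}_R(M)$ as commuting in the homotopy category of dgas (this is already recorded in the proof of \ref{etaex}, since $\Xi$ is the localisation map followed by a quasi-isomorphism). It then promotes this to a triangle of one-object dg categories $BQ \to BE \to B\R\underline{\enn}_R(M)$, takes perfect modules to get a triangle $\per Q \to \per E \to \per(\R\underline{\enn}_R(M))\simeq \mathcal M$, and identifies the two induced functors as $-\lot_Q E$ and $\Sigma$ by the observation (citing To\"en) that a dg functor out of a dg category generated by a single object is determined by its value on that object; the identification of $\Sigma$ with the bottom map uses precisely the definition \ref{comparisonmap} of $\Xi$ as the component of $\Sigma$ at $Q$. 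The left triangle then commutes by definition of $\bar\Sigma$. This route avoids having to name kernels or invoke the universal property of the Drinfeld quotient; its only input beyond the definitions is the dga triangle.

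\textbf{What your approach buys, and where to be careful.} Your route -- running both $-\lot_Q E$ and $\Sigma$ through the quotient $\per_\mathrm{dg}Q/\per_\mathrm{dg,fd}(Q)$ using \ref{ntty}, \ref{fintype}, \ref{Fdg}, and defining $\alpha$ as a composite -- is also valid. It makes the role of $\per_\mathrm{fd}(Q)$ as the common kernel explicit and exhibits $\alpha$ as literally the composite appearing in the proof of \ref{perletadg}, which is a nice way to see that the $\alpha$ you construct is the same quasi-equivalence as the one already built there. The ``delicate point'' you flag is real, but the clean way to discharge it (which you gesture at in your final sentences) is not really ``quotient functors are determined by their kernel and value on generators'' as a single principle, but rather: (i) the universal property of the Drinfeld quotient gives unique-up-to-homotopy factorisations $-\lot_Q E \simeq \beta \circ \pi$ and $\Sigma \simeq \bar\Sigma\circ\pi$ (note that you need $\per Q/\per_\mathrm{fd}(Q)$ already idempotent complete for $\beta$ to be a quasi-equivalence on the nose, which you correctly deduce from \ref{sgidemref} via \ref{kymapbetter}); and (ii) one then simply sets $\alpha\coloneqq\bar\Sigma\circ\beta^{-1}$. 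Once $\alpha$ is defined this way commutativity of the outer square is a tautology, and no further argument about generators is needed -- the ``generator'' argument is only used in the paper because the paper constructs $\alpha$ differently (from the dga quasi-isomorphism $E\to\R\underline{\enn}_R(M)$) and therefore has to check it gives the right thing.

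\textbf{A genuine error in an aside.} You write that $\mathbb{L}^\eta(Q)\simeq R$ ``by \ref{Rcoloc}''. This is not what \ref{Rcoloc} says: that proposition identifies the colocalisation $\mathbb{L}^{1-e}(A)$ of $A$ at the idempotent $1-e$, which is $R$, not the colocalisation $\mathbb{L}^\eta(Q)$ of the derived quotient $Q$ at the periodicity element $\eta$. The latter is not $R$ in general (for instance when $M$ is rigid it is $\R\enn_Q(A/AeA)$, which is a much larger object). Fortunately this claim plays no role in the remainder of your argument: what you actually need from step (1) is $\per_\mathrm{fd}(Q)=\per\mathbb{L}^\eta(Q)$, and that follows from \ref{perbisperl} and \ref{fintype} alone (using that $Q$ is of finite type, which holds under the hypotheses of \ref{mutsetupp}). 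So this is a spurious assertion rather than a gap, but it should be removed.
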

\begin{proof}
The bottom left triangle commutes by the definition of $\bar{\Sigma}$, which is an equivalence by \ref{perletadg}. We show that the top right triangle commutes. Recall that we can write $\mathcal{M}\simeq \per \R\underline{\enn}_R(M)$, where $M\cong Ae$ is an object of the dg singularity category $D_\mathrm{sg}^\mathrm{dg}(R)$. Moreover, by the proof of \ref{etaex} we have a commutative triangle in the homotopy category of dgas $$\begin{tikzcd}
Q \ar[dr,"\Xi"] \ar[r]& E \ar[d,"\simeq"] \\
& \R\underline{\enn}_R(M)
\end{tikzcd}$$where $\Xi$ is the comparison map of \ref{comparisonmap}, $Q \to E$ is the derived localisation at the periodicity element $\eta$, and $E \to \R\underline{\enn}_R(M)$ is a quasi-isomorphism. This gives us a diagram in the homotopy category of dg categories
$$\begin{tikzcd}
BQ \ar[dr,"B\Xi"] \ar[r]& BE \ar[d,"\simeq"] \\
& B\R\underline{\enn}_R(M)
\end{tikzcd}$$where $BW$ means the dg category with a single object with endomorphism dga $W$. Note that the rightmost map is a quasi-equivalence. Taking perfect modules now gives us a commutative diagram inside the homotopy category of dg categories $$\begin{tikzcd}
\per Q \ar[dr,"F'"] \ar[r,"F"]& \per E \ar[d,"\simeq"] \\
& \per\left(\R\underline{\enn}_R(M)\right)
\end{tikzcd}$$where the rightmost map is a quasi-equivalence. It remains to prove that the induced maps $F$ and $F'$ are the correct ones. But if $\mathcal{T}$ and $\mathcal{T}'$ are pretriangulated dg categories where $\mathcal{T}$ is generated by a single object $G$, then any dg functor $\mathcal{T}\to\mathcal{T}'$ is determined by its value on $G$: because objects in $\mathcal{T}$ are generated by $G$ under cones and shifts, their hom complexes are all iterated cones of maps between $\dge(G)$. The same clearly applies for the image of $\mathcal{T}$. So given $G' \in \mathcal{T}'$ and a dg functor of one-object dg categories $G \to G'$, this uniquely extends to a dg functor $\mathcal{T}\to \mathcal{T}'$ by tensoring the hom-complexes in $\mathcal{T}$ with the map $\dge(G)\to \dge(G')$ \cite[Exercice 34]{toendglectures}. In particular it follows that the induced map $F:\per Q \to  \per E$ is the tensor product $-\lot_Q E$. Recall the definition of $\Xi$ from \ref{comparisonmap}: it is the component of the dg functor $\Sigma:\per Q \to \mathcal{M}$ at the object $Q$. In particular, if one restricts $\Sigma$ to the one object dg category $BQ\subseteq \per Q$, then one gets the dg functor $\Xi$. So $F'\cong\Sigma$.
\end{proof}
\begin{defn}
	Let $\mm_E:\per E \to \per E$ be the endofunctor defined by $$\mm_E\coloneqq \alpha \mm_{\mathrm{sg}}\alpha^{-1}.$$
\end{defn}
\begin{lem}
	The isomorphism $\id_{\mathcal M} \to \mm_\mathrm{sg}$ induces an isomorphism $\id_{\per E} \to \mm_E$.
\end{lem}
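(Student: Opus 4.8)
The statement to prove is that the isomorphism $\id_{\mathcal{M}} \to \mm_{\mathrm{sg}}$ of natural transformations induces an isomorphism $\id_{\per E} \to \mm_E$, where $\mm_E := \alpha\,\mm_{\mathrm{sg}}\,\alpha^{-1}$ and $\alpha : \per(E) \to \mathcal{M}$ is the quasi-equivalence of \ref{dgcd}.

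\medskip

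\emph{Proof.}
By construction $\mm_E = \alpha\,\mm_{\mathrm{sg}}\,\alpha^{-1}$, so $\mm_E$ is simply the conjugate of $\mm_{\mathrm{sg}}$ along the equivalence $\alpha : \per(E) \xrightarrow{\simeq} \mathcal{M}$. Conjugating a natural transformation along an equivalence yields a natural transformation, and conjugation preserves the property of being a natural isomorphism: explicitly, if $\psi : \id_{\mathcal M} \to \mm_{\mathrm{sg}}$ is the isomorphism of \ref{mmsgisid}, then for each object $X \in \per(E)$ one sets $(\alpha^{*}\psi)_X := \alpha^{-1}(\psi_{\alpha(X)}) : X = \alpha^{-1}\alpha(X) \to \alpha^{-1}\mm_{\mathrm{sg}}\alpha(X) = \mm_E(X)$, using a fixed quasi-inverse $\alpha^{-1}$ and the unit/counit isomorphisms $\alpha^{-1}\alpha \cong \id_{\per E}$, $\alpha\alpha^{-1}\cong\id_{\mathcal M}$. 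Naturality of $\alpha^{*}\psi$ in $X$ follows from naturality of $\psi$ together with the functoriality of $\alpha$ and $\alpha^{-1}$: for a morphism $f : X \to Y$ in $\per(E)$ one has $\mm_E(f)\circ(\alpha^{*}\psi)_X = \alpha^{-1}(\mm_{\mathrm{sg}}(\alpha f))\circ\alpha^{-1}(\psi_{\alpha X}) = \alpha^{-1}\big(\mm_{\mathrm{sg}}(\alpha f)\circ\psi_{\alpha X}\big) = \alpha^{-1}\big(\psi_{\alpha Y}\circ \alpha f\big) = (\alpha^{*}\psi)_Y\circ f$, where the middle equality is the naturality square for $\psi$ at the morphism $\alpha f : \alpha X \to \alpha Y$.

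\medskip

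Since $\psi_{\alpha(X)}$ is an isomorphism in $\mathcal{M}$ for every $X$, and $\alpha^{-1}$ takes isomorphisms to isomorphisms, each component $(\alpha^{*}\psi)_X$ is an isomorphism in $\per(E)$; hence $\alpha^{*}\psi : \id_{\per E} \to \mm_E$ is a natural isomorphism. It remains to check that $\alpha^{*}\psi$ is the transformation "induced by" $\psi$ in the intended sense, i.e.\ that it is compatible under $\alpha$ with the unit map of \ref{unitrecol}: precisely, that the square of natural transformations obtained by whiskering $\psi$ and $\alpha^{*}\psi$ with $\alpha$ commutes, which is immediate from the definition $(\alpha^{*}\psi)_X = \alpha^{-1}(\psi_{\alpha X})$ after applying $\alpha$ and cancelling $\alpha\alpha^{-1}$. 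This is a purely formal verification. The only mild subtlety is bookkeeping of the unit/counit isomorphisms of the adjoint equivalence $(\alpha,\alpha^{-1})$, but since these are themselves isomorphisms they do not affect invertibility, and one checks as above that the resulting $2$-cell is still natural. $\qed$

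\medskip

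The main (and essentially only) obstacle is notational rather than mathematical: one must be careful that "the functor $\mm_E$" is literally $\alpha\mm_{\mathrm{sg}}\alpha^{-1}$ as defined, so that the conjugated transformation $\alpha^{*}\psi$ genuinely has the right source and target on the nose (up to the coherence isomorphisms of the equivalence), and that one uses the \emph{same} quasi-inverse $\alpha^{-1}$ throughout. Once these choices are fixed, invertibility of each component and naturality are forced by the corresponding properties of $\psi$ established in \ref{mmsgisid}. I expect the paper's proof to be one or two lines citing exactly this: conjugating an isomorphism of functors along an equivalence is again an isomorphism.
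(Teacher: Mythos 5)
Your proof is correct and follows the same approach as the paper, which simply notes that the result follows by applying $\alpha^{-1}\circ(-)\circ\alpha$ to \ref{mmsgisid}. You have merely unwound the formal verification that conjugating a natural isomorphism along an equivalence yields a natural isomorphism.
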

\begin{proof}
	Follows from applying $\alpha^{-1}\circ (-) \circ \alpha$ to \ref{mmsgisid}.
	\end{proof}

\begin{lem}
	The following diagram is commutative: $$\begin{tikzcd} \per Q\ar[d,"\mm_{\mathbb{L}}"] \ar[r, "-\lot_Q E"]& \per E\ar[d,"\mm_{E}"] \\ \per Q \ar[r, "-\lot_Q E"]& \per E \end{tikzcd}$$
\end{lem}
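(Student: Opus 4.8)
The plan is to assemble this commuting square from the two commuting diagrams that have already been established, namely the diagram of \ref{dgcd} (relating $-\lot_Q E$, $\Sigma$, $\pi$ and $\bar{\Sigma}$) and the diagram in the definition of $\mm_{\mathrm{sg}}$ (relating $\pi$, $\mm_{\mathbb{L}}$, $\bar{\Sigma}$ and $\mm_{\mathrm{sg}}$). The strategy is purely formal: chase the functors around, using that $\alpha$ is a quasi-equivalence so that $\mm_E \coloneqq \alpha\,\mm_{\mathrm{sg}}\,\alpha^{-1}$ makes sense and $\alpha^{-1}$ exists up to coherent homotopy in $\mathrm{Hqe}$.

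First I would record that, by \ref{dgcd}, there is an isomorphism $\alpha^{-1}\circ\bar{\Sigma}\circ\pi \cong -\lot_Q E$ of functors $\per Q \to \per E$ (this is just the commutativity of the pentagon in \ref{dgcd} read as $-\lot_Q E = \alpha^{-1}\circ \Sigma = \alpha^{-1}\circ\bar{\Sigma}\circ\pi$). Next, by the defining square for $\mm_{\mathrm{sg}}$, we have $\bar{\Sigma}\circ\mm_{\mathbb{L}} \cong \mm_{\mathrm{sg}}\circ\bar{\Sigma}$ as functors $\per Q/\per_{\mathrm{fd}}(Q)\to\mathcal{M}$, and since $\mm_{\mathbb{L}}$ respects $\per_{\mathrm{fd}}(Q)$ by \ref{fdresplem}, it descends along $\pi$, giving $\bar{\Sigma}\circ\pi\circ\mm_{\mathbb{L}} \cong \bar{\Sigma}\circ\mm_{\mathbb{L}}\circ\pi \cong \mm_{\mathrm{sg}}\circ\bar{\Sigma}\circ\pi$ as functors $\per Q \to \mathcal{M}$. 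Now compose on the left with $\alpha^{-1}$:
\[
(-\lot_Q E)\circ\mm_{\mathbb{L}} \;\cong\; \alpha^{-1}\circ\bar{\Sigma}\circ\pi\circ\mm_{\mathbb{L}} \;\cong\; \alpha^{-1}\circ\mm_{\mathrm{sg}}\circ\bar{\Sigma}\circ\pi \;\cong\; (\alpha^{-1}\,\mm_{\mathrm{sg}}\,\alpha)\circ\alpha^{-1}\circ\bar{\Sigma}\circ\pi \;\cong\; \mm_E\circ(-\lot_Q E),
\]
where the last step uses the definition $\mm_E = \alpha\,\mm_{\mathrm{sg}}\,\alpha^{-1}$ (equivalently $\mm_{\mathrm{sg}} = \alpha^{-1}\mm_E\,\alpha$, so $\alpha^{-1}\mm_{\mathrm{sg}}\alpha = \alpha^{-1}(\alpha^{-1}\mm_E\alpha)\alpha$ — I would instead just write $\mm_E = \alpha\mm_{\mathrm{sg}}\alpha^{-1}$ directly, giving $\alpha^{-1}\mm_{\mathrm{sg}} = \mm_E\alpha^{-1}$ up to isomorphism, and substitute). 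This is exactly the commutativity of the required square.

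The only genuine subtlety — and the step I expect to be the main obstacle — is bookkeeping in the homotopy category of dg categories: one must be careful that all of these identifications are compatible, i.e.\ that one is really composing \emph{the same} chosen inverses and structure isomorphisms, rather than merely asserting that corresponding objects are equivalent. Since $\alpha$ and $\bar{\Sigma}$ are quasi-equivalences and all diagrams in question are already stated to commute in $\mathrm{Hqe}$ (and $\mm_{\mathbb{L}}$, $\mm_{\mathrm{sg}}$, $\mm_E$ are genuine functors with $\mm_E$ defined precisely by conjugation), this compatibility is automatic; I would phrase the proof as "glue the diagram of \ref{dgcd} to the defining square of $\mm_{\mathrm{sg}}$ along the common edge $\bar{\Sigma}\circ\pi$, and conjugate by $\alpha$," and leave the routine diagram chase implicit, exactly as is done for the analogous gluing arguments in the proof of \ref{mmsgisid}.
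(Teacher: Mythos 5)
Your proposal is correct and is essentially the paper's own argument: the paper's proof is the one-line statement ``Follows from the definition of $\mm_E$ along with \ref{dgcd}'', which is precisely the gluing of the pentagon in \ref{dgcd} to the defining square of $\mm_{\mathrm{sg}}$ followed by conjugation by $\alpha$, as you spell out. Your version just makes the implicit diagram chase explicit.
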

\begin{proof}
	Follows from the definition of $\mm_E$ along with \ref{dgcd}.
\end{proof}

\begin{lem}\label{immltriv}
	Applying $-\lot_{{Q}}E$ to the ${Q}$-bimodule map $I_\mm^\mathbb{L} \to {Q}$ of \ref{unitrecol} gives a ${Q}$-$E$-bimodule quasi-isomorphism $I_\mm^\mathbb{L} \lot_{{Q}} E \to E$.
\end{lem}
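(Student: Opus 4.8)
The plan is to show that $I_\mm^\mathbb{L} \lot_Q E \to E$ is a quasi-isomorphism of $Q$-$E$-bimodules by reducing to the analogous statement one level up, namely the $A$-$E'$-bimodule statement, and then using the fact that both $I_\mm^\mathbb{L}$ and $E$ are built from $I_\mm$ and $A$ (respectively) by derived tensoring with $Q$ on both sides, together with the explicit description $E \simeq Q[\eta^{-1}]$ from \ref{etaex}. Concretely, recall that $I_\mm^\mathbb{L} \coloneqq Q \lot_A I_\mm \lot_A Q$ and that the map $I_\mm^\mathbb{L} \to Q$ is obtained from the unit map $I_\mm \to A$ of \ref{unitmap} by tensoring with $Q$ on both sides (using \ref{homepi} to identify $Q \lot_A Q \simeq Q$). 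Therefore $I_\mm^\mathbb{L} \lot_Q E$ is quasi-isomorphic to $Q \lot_A I_\mm \lot_A E$, and the claim becomes that the map $Q \lot_A I_\mm \lot_A E \to Q \lot_A A \lot_A E \simeq E$ is a quasi-isomorphism.

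First I would observe that $E$, regarded as a right $A$-module via the localisation map $A \to Q \to E$, is supported at the idempotent complement in the appropriate sense: more precisely, $E$ is an $\eta$-local $Q$-module, and the proof of \ref{etaex} together with \ref{qisolem} identifies $E \simeq \R\underline{\enn}_R(M)$ with $Ee \simeq 0$ since $Q$ (and hence $E$) is $e$-killing. So the computation reduces to understanding $I_\mm \lot_A E$ where $E$ is an $e$-torsion $A$-module. Now I would use \ref{unitelem}, which says that tensoring the unit map $I_\mm \to A$ on the right with $Ae$ yields a quasi-isomorphism $I_\mm e \xrightarrow{\simeq} Ae$, and dually that $eI_\mm \xrightarrow{\simeq} eA$ and $eI_\mm e \xrightarrow{\simeq} R$. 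Since $E$ is a $Q$-module, the left $A$-action on $E$ factors through $A \to Q$, which kills $AeA$; but the semiorthogonal decomposition $D(A) \cong \langle \im i_*, \im j_!\rangle$ of \ref{semiorthog} together with the fact that $I_\mm \to A$ becomes a quasi-isomorphism after tensoring with $Ae$ on the right (equivalently after applying $j^* = -\lot_A Ae$) shows that $\mathrm{cocone}(I_\mm \to A)$ lies in the subcategory generated by $eA$-type modules, which is killed upon tensoring with a $Q$-module. Hence $I_\mm \lot_A E \to A \lot_A E = E$ is a quasi-isomorphism, and tensoring with $Q$ on the left preserves this.

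The main obstacle I anticipate is bookkeeping the bimodule structures correctly: the map $I_\mm^\mathbb{L} \lot_Q E \to E$ must be checked to be a map of $Q$-$E$-bimodules, not merely of left $Q$-modules or right $E$-modules, and the identification $E \simeq Q[\eta^{-1}]$ must be used compatibly with the localisation map $Q \to E$ on both sides. A clean way to handle this is to note that $-\lot_Q E : D(Q) \to D(E)$ is, by \ref{bclrcl} and \ref{etaex}(3)--(4), the derived localisation (smashing localisation) functor, and the map in question is obtained by applying this localisation to the $Q$-bimodule map $I_\mm^\mathbb{L} \to Q$; since localisation is a monoidal (homological epimorphism) functor by \ref{homepi} and the target $Q$ localises to $E$, it suffices to check that the cocone of $I_\mm^\mathbb{L} \to Q$ localises to zero. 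But that cocone is an $\eta$-torsion $Q$-bimodule — indeed, by \ref{mmdqidqt} the functor $\mm_\mathbb{L}$ restricted to $D(Q)$ agrees with $[-2]$ on $\dq$ and on $A/AeA$, and the periodicity element $\eta$ already witnesses that $[-2]$ is an isomorphism on $\eta$-local modules — so it is annihilated by $-\lot_Q E$. This last point is precisely where the periodicity of $\eta$ does the essential work, and I would spell it out carefully.
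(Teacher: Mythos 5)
The last paragraph of your proposal has the right overall strategy — it suffices to show that $\mathrm{cocone}(I_\mm^\mathbb{L} \to Q)$ is annihilated by $-\lot_Q E$ — but there is a genuine gap in the way you justify this, and a separate error in the second paragraph.

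In the second paragraph you claim that because $j^*$ kills $\mathrm{cocone}(I_\mm \to A)$, the cocone lies in $\langle eA\rangle = \im j_!$. This is backwards. In the recollement $D(Q)\recol D(A)\recol D(R)$ we have $\ker j^* = \im i_*$, so the cocone lies in $\im i_*$ (it is a $Q$-bimodule), not in $\im j_!$. Objects of $\im i_*$ are exactly the ones that survive, rather than vanish, under $-\lot_A E$ for a $Q$-module $E$, so the semiorthogonal argument you sketch cannot close the proof. Knowing that the cocone is a $Q$-bimodule is precisely why the lemma is nontrivial.

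In the last paragraph you assert that the cocone of $I_\mm^\mathbb{L} \to Q$ is $\eta$-torsion, citing \ref{mmdqidqt}. But \ref{mmdqidqt} only supplies quasi-isomorphisms of right $Q$-modules $\mm_\mathbb{L}(Q)\simeq Q[-2]$ and $\mm_\mathbb{L}(A/AeA)\simeq A/AeA[-2]$; it says nothing about the particular bimodule map $I_\mm^\mathbb{L} \to Q$. A priori the induced map $H^j(I_\mm^\mathbb{L})\to H^j(Q)$ could fail to be an isomorphism in low degrees — if, say, the map were zero then the cocone would be $Q\oplus Q[3]$, which is not $\eta$-torsion. So the assertion "the cocone is $\eta$-torsion" is essentially a restatement of the lemma, not a proof of it, and one risks circularity since the bimodule statement $I_\mm^\mathbb{L}\simeq Q[2]$ is only established in \ref{mmshiftrep}, whose proof depends on the present lemma.

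What your argument is missing is the input that makes the cocone actually collapse after localisation: this is \ref{mmsgisid} (the natural transformation $\id_{\mathcal M}\to\mm_{\mathrm{sg}}$ is an isomorphism) transported across the commutative diagram of \ref{dgcd} (which identifies $-\lot_Q E:\per Q\to\per E$ with the singularity functor $\Sigma$ up to the quasi-equivalence $\alpha$). Once you know $\mm_E^{-1}\to\id$ is an isomorphism of endofunctors of $\per E$, the claim for the representing bimodule follows at once. That is the route the paper takes.
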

\begin{proof}The idea is to look at $\mm_{\mathbb{L}} ^{-1} \to \id$, which becomes a quasi-isomorphism upon inverting $\eta$. Consider the functor $\mm_{\mathbb{L}} ^{-1}: \per Q \to \per Q$ which sends $X$ to $X\lot_Q I^\mathbb{L}_\mm$. It comes with a natural transformation $\mm_{\mathbb{L}} ^{-1}\to \id$, which gives a natural transformation $\mm_E^{-1} \to \id$. By \ref{dgcd} and (the proof of) \ref{mmsgisid}, this natural transformation $\mm_E^{-1} \to \id$ must be an isomorphism. Hence, for all perfect $Q$-modules $X$ the natural map $X \lot_Q I_\mm^\mathbb{L} \lot_Q E \to X \lot_Q E$ is a quasi-isomorphism of $E$-modules. So the natural map $I_\mm^\mathbb{L} \lot_Q E \to E$ must be a quasi-isomorphism of $Q\text{-}E$-bimodules.
\end{proof}

\begin{prop}\label{mmshiftrep}
	As $Q$-bimodules, $I^\mathbb{L}_\mm$ is quasi-isomorphic to $Q[2]$.
\end{prop}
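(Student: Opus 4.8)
\textbf{Proof plan for \ref{mmshiftrep}.} The goal is to upgrade the quasi-isomorphism of $E$-modules from \ref{immltriv} to a quasi-isomorphism of $Q$-bimodules $I^\mathbb{L}_\mm \simeq Q[2]$. The key structural input is the recollement $D(Q) \recol D(A) \recol D(R)$ of \ref{recoll}, together with the fact (\ref{mmshiftssimple}, \ref{mmdqidqt}) that $\mm$ acts as $[-2]$ on the simple $S_A$ and on $A/AeA$, and the periodicity element $\eta \in H^{-2}(Q)$ of \ref{etaex}, which lets us interchange the shift $[-2]$ with `multiplication by $\eta$'.

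First I would observe that, since $E\simeq\R\underline{\enn}_R(M)$ is $2$-periodic (by \ref{periodicend}/\ref{etaex}), there is a $Q$-bimodule quasi-isomorphism $E \simeq E[2]$, or more precisely multiplication by $\eta$ gives a $Q$-bimodule map $Q[2] \to Q$ whose localisation $E[2] \to E$ is a quasi-isomorphism. Composing the quasi-isomorphism of \ref{immltriv}, namely $I^\mathbb{L}_\mm \lot_Q E \xrightarrow{\simeq} E$, with the inverse $E\simeq E[2]$, we get that the localisations at $\eta$ of the two $Q$-bimodules $I^\mathbb{L}_\mm$ and $Q[2]$ are quasi-isomorphic (compatibly with the maps from $I^\mathbb{L}_\mm$). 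So it suffices to compare $I^\mathbb{L}_\mm$ and $Q[2]$ before localising; by the colocalisation/localisation recollement of \ref{bclrcl} applied to the set $\{\eta\}\subseteq H(Q)$, this reduces to checking that the $\eta$-torsion parts agree. Concretely, the cofibre of $Q[2]\xrightarrow{\eta} Q$ has cohomology only in degrees $\geq -1$ (indeed it is quasi-isomorphic to $\tau_{\geq -1}(Q)$, cf.\ the remark after \ref{etahoch}), and similarly I would show the cofibre of the unit map $I^\mathbb{L}_\mm\to Q$ is bounded with small cohomology; one then matches these finite pieces.

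The cleanest route to match the finite pieces is via zeroth cohomology and the recollement. By \ref{unitrecol} the unit map $I^\mathbb{L}_\mm \to Q$ is a $Q$-bimodule map; taking $H^0$ and using \ref{derquotcohom} ($H^0(Q)\cong A/AeA$) together with the computation that $\mm(A/AeA)\simeq A/AeA[-2]$ from \ref{mmdqidqt} identifies $H^0$ of both sides. To control the other cohomology, I would tensor $I^\mathbb{L}_\mm \to Q$ on the right with $Ae$ and invoke \ref{unitelem}, which gives $I_\mm e \xrightarrow{\simeq} Ae$ and hence $j^*(I^\mathbb{L}_\mm) \simeq j^*(Q[2])$ (both being zero in $D(R)$, since $Qe$ and $Q[2]e$ are acyclic). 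Since a $Q$-bimodule is detected by its image under $i^*i_*$, and the recollement triangle $j_!j^! \to \id \to i_*i^* \to$ (from \ref{recoll}) lets us glue, the vanishing of $j^*$ on both sides plus the agreement after localisation and the agreement of $H^0$ forces $I^\mathbb{L}_\mm \xrightarrow{\simeq} Q[2]$ as $Q$-bimodules.

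\textbf{Main obstacle.} The delicate point is keeping track of the $Q$-bimodule structure throughout, rather than merely a one-sided module or $k$-linear quasi-isomorphism: the localisation statement \ref{immltriv} is a priori only a $Q$-$E$-bimodule statement, and \ref{mmsgisid} (which feeds into it) is proved in the singularity category, so I must carefully trace the natural transformation $\id \to \mm_{\mathbb{L}}$ and verify it is realised by the genuine $Q$-bimodule map $I^\mathbb{L}_\mm \to Q$ of \ref{unitrecol}, not just some abstract isomorphism. The step that does this — confirming that the isomorphism $\mm_E^{-1}\to\id$ of \ref{immltriv} is induced by the bimodule map, and that gluing along the recollement respects bimodule structure — is where the real work lies; everything else is bookkeeping with $\eta$ and truncations. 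I expect one needs the fact that $\eta$ lifts to $HH^{-2}(Q)$ (\ref{etahoch}) precisely to make `multiplication by $\eta$' a bimodule map, so that the comparison $Q[2]\xrightarrow{\eta}Q$ lives in the right category.
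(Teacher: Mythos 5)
Your proposal identifies the right ingredients — the bimodule lift $\eta \in HH^{-2}(Q)$, the quasi-isomorphism $I^\mathbb{L}_\mm \lot_Q E \simeq E$ from \ref{immltriv}, and the one-sided shift result — but it contains a genuine gap at the central step. You argue that since $I^\mathbb{L}_\mm$ and $Q[2]$ have quasi-isomorphic $\eta$-localisations, it ``reduces to checking that the $\eta$-torsion parts agree,'' via the recollement of \ref{bclrcl}. This inference is not valid: in a recollement, two objects with quasi-isomorphic local and colocal pieces need not be quasi-isomorphic, because the gluing triangle $\mathbb{L}^\eta X \to X \to \mathbb{L}_\eta X \to$ carries extension data. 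To conclude, you need a \emph{morphism} $I^\mathbb{L}_\mm \to Q[2]$ of $Q$-bimodules inducing the identifications, and then the five lemma applies — but you never construct such a map. The only bimodule map in play is the unit $I^\mathbb{L}_\mm \to Q$, whose cofibre is $A_\mm$ (\ref{mutncontrol}); it is not a quasi-isomorphism, and you cannot use it in place of a comparison with $Q[2]$.

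The proof in the paper bridges exactly this gap with a truncation device you do not have. One starts from the \emph{one-sided} right $Q$-module quasi-isomorphism $f : I^\mathbb{L}_\mm \to Q[2]$ (available from \ref{dictionary}(2) applied twice). Tensoring $f$ with $E$ yields a square whose \emph{vertical} maps are one-sided but whose \emph{horizontal} maps $g : I^\mathbb{L}_\mm \to I^\mathbb{L}_\mm \lot_Q E$ and $g' : Q[2] \to Q[2]\lot_Q E$ are genuine bimodule maps (the localisation maps). Truncating the whole square to degrees $\leq -2$ gives bimodule maps $v, v'$; the key point is that $\tau_{\leq -2}\bigl(Q[2]\lot_Q E\bigr)\simeq \tau_{\leq -2}(E[2])\simeq Q[2]$ via $\tau_{\leq 0}E \simeq Q$ (\ref{qisolem}, \ref{etaex}(4)), so $v'$ is a quasi-isomorphism, and hence, by comparison with the one-sided quasi-isomorphism $f$, so is $v$. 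Then $I^\mathbb{L}_\mm \simeq \tau_{\leq -2}(I^\mathbb{L}_\mm \lot_Q E)\simeq \tau_{\leq -2}E \simeq \tau_{\leq -2}Q \simeq Q[2]$ as bimodules, the last step using precisely the Hochschild lift of $\eta$ you correctly flagged (\ref{etahoch} with \ref{etaex}(2)). So your intuition about what structure is needed is right, but the argument as written has no bimodule comparison map to run the recollement criterion on, and the $H^0$-matching step does not identify anything since $H^0(I^\mathbb{L}_\mm) = 0$ while $H^0(Q)\cong A/AeA$; the truncation trick is the missing idea.
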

\begin{proof}For brevity, write $I\coloneqq I_\mm^\mathbb{L}$. By \ref{dictionary}(2) applied twice (or \ref{mmdqidqt}), one gets a right $Q$-module quasi-isomorphism $\mm(Q)\simeq Q[-2]$ and hence a right $Q$-module quasi-isomorphism $\mm^{-1}(Q)\simeq Q[2]$. Now it follows that $I$ is quasi-isomorphic to $Q[2]$ as right $Q$-modules. The main difficulty lies in upgrading this to a bimodule quasi-isomorphism. Pick a right quasi-isomorphism $I\xrightarrow{f} Q[2]$ and tensor it with $E$ to get a map $f'$ fitting into a commutative diagram
	$$\begin{tikzcd}Q[2]\ar[r,"g'"] & Q[2]\lot_Q E \\ I\ar[u,"f"]\ar[r,"g"] & I\lot_Q E\ar[u,"f'"]
	\end{tikzcd}$$where $f$ and $f'$ are right $Q$-module quasi-isomorphisms and $g$ and $g'$ are $Q$-bimodule maps. Truncate this diagram to degrees weakly below $-2$ to get a commutative diagram
	$$\begin{tikzcd}Q[2]\ar[r,"v'"] & \tau_{\leq -2}\left(Q[2]\lot_Q E\right) \\ I\ar[u,"u"]\ar[r,"v"] & \tau_{\leq -2}\left(I\lot_Q E\right) \ar[u,"u'"]
	\end{tikzcd}$$where, as before, $u$ and $u'$ are right $Q$-module quasi-isomorphisms and $v$ and $v'$ are $Q$-bimodule maps. After identifying $Q[2]\lot_Q E$ with $E[2]$, we may identify $g'$ with the shifted localisation map $Q[2]\to E[2]$. By \ref{qisolem} and \ref{etaex}(4) the localisation map $Q \to E$ induces a quasi-isomorphism $Q \to \tau_{\leq 0}E$. Hence $v'$ is a quasi-isomorphism. Now it follows that $v$ is a quasi-isomorphism too. So as a bimodule, $I$ is quasi-isomorphic to $\tau_{\leq -2}\left(I\lot_Q E\right)$ and it hence remains to show that $\tau_{\leq -2}\left(I\lot_Q E\right)$ is bimodule quasi-isomorphic to $Q[2]$. From \ref{immltriv}, one has a $Q\text{-}E$-bimodule quasi-isomorphism, and hence a $Q$-bimodule quasi-isomorphism, $I \lot_{{Q}} E \to E$. So it remains to show that $\tau_{\leq -2}E \simeq Q[2]$ as $Q$-bimodules. But one has $\tau_{\leq -2}E \simeq \tau_{\leq -2}Q$, and furthermore the periodicity element $\eta$ gives a $Q$-bimodule quasi-isomorphism $ \tau_{\leq -2}Q \simeq Q[2]$, using \ref{etaex}(2) and \ref{etahoch}.
\end{proof}

\begin{cor}\label{mmshift}
	The autoequivalence $\mm_{\mathbb{L}}:D(Q) \to D(Q)$ is isomorphic to the shift $[-2]$.
\end{cor}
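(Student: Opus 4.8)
The plan is to deduce Corollary \ref{mmshift} directly from \ref{mmshiftrep}. By \ref{mmshiftrep}, the $Q$-bimodule $I^\mathbb{L}_\mm$ that represents $\mm_{\mathbb{L}}$ is quasi-isomorphic to $Q[2]$. Recall that $\mm_{\mathbb{L}}\coloneqq \R\hom_{\mathbb{L}}(I^\mathbb{L}_\mm,-)$, so for any $X\in D(Q)$ we have natural quasi-isomorphisms $\mm_{\mathbb{L}}(X)\simeq \R\hom_Q(I^\mathbb{L}_\mm,X)\simeq \R\hom_Q(Q[2],X)\simeq X[-2]$. The only subtlety is that one must check these identifications are natural in $X$, so that they assemble into a natural isomorphism of functors $\mm_{\mathbb{L}}\cong[-2]$ rather than merely an objectwise one. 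This is automatic: a bimodule quasi-isomorphism $I^\mathbb{L}_\mm \simeq Q[2]$ induces an isomorphism of the associated derived hom-functors, and $\R\hom_Q(Q[2],-)$ is naturally isomorphic to the shift $[-2]$.

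First I would recall that $\mm_{\mathbb{L}}$ is by construction the functor represented by $I^\mathbb{L}_\mm$, which appears in the morphism of recollements and satisfies $\mm_{\mathbb{L}}\simeq \R\hom_{\mathbb{L}}(I^\mathbb{L}_\mm,-)$; equivalently, since $I^\mathbb{L}_\mm$ is perfect over $Q$ (it is quasi-isomorphic to $Q[2]$), one has $\mm_{\mathbb{L}}\simeq -\lot_Q \R\hom_Q(I^\mathbb{L}_\mm,Q)$, but it is cleanest to work with the $\R\hom$ description. Then I would invoke \ref{mmshiftrep} to replace $I^\mathbb{L}_\mm$ by $Q[2]$ inside $\R\hom_Q(-,X)$, using that derived hom is invariant under bimodule quasi-isomorphism in the first argument. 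Finally $\R\hom_Q(Q[2],X)\cong \R\hom_Q(Q,X)[-2]\cong X[-2]$, naturally in $X$, which gives the claimed isomorphism $\mm_{\mathbb{L}}\cong [-2]$ of autoequivalences of $D(Q)$.

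There is essentially no obstacle here: all the work has already been done in establishing \ref{mmshiftrep}, which is the genuinely hard statement (it required the periodicity element $\eta$, the identification of the derived localisation with the stable endomorphism dga via \ref{qisolem} and \ref{etaex}, the singularity-category computation \ref{mmsgisid}, and the strictification argument promoting an objectwise quasi-isomorphism to a bimodule one). The corollary is a one-line consequence. The only thing worth spelling out is why naturality holds, and this is a standard fact about representable functors: $X\mapsto \R\hom_Q(P,X)$ depends functorially on the bimodule $P$, so a quasi-isomorphism $P\simeq P'$ of bimodules yields a natural isomorphism of the represented functors. Applying this to $P=I^\mathbb{L}_\mm$, $P'=Q[2]$ and noting $\R\hom_Q(Q[2],-)\cong(-)[-2]$ completes the argument.
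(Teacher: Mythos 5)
Your argument is correct and is exactly the paper's: the corollary follows directly from \ref{mmshiftrep} by passing from the bimodule quasi-isomorphism $I^\mathbb{L}_\mm\simeq Q[2]$ to the represented functors, so $\mm_{\mathbb{L}}\simeq\R\hom_Q(Q[2],-)\simeq[-2]$. Your extra remarks on naturality only spell out what the paper compresses into ``by looking at representing objects.''
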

\begin{proof}
	Follows immediately from \ref{mmshiftrep} by looking at representing objects.
\end{proof}

\begin{thm}\label{mutncontrol}
	Let $R$ be a complete local isolated hypersurface singularity of dimension at least 2, $M$ a MCM modifying $R$-module with no free summands, $A\coloneqq \enn_R(R\oplus M)$, and\linebreak $e\coloneqq \id_R\in A$. Then the dga $A_\mm\coloneqq \tau_{\geq -1}(\dq)$ controls the mutation-mutation autoequivalence $\mm: D(A)\to D(A)$, in the sense that $\mm$ is represented by the cocone of the natural map $A \to A_\mm$.
\end{thm}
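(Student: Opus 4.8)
The plan is to combine the bimodule identification of $\mm$ with the structure of the derived quotient. Recall that $\mm$ is represented by the $A$-bimodule $I_\mm$, which has cohomology concentrated in degrees $0$ and $-1$, and there is a unit map $I_\mm \to A$ of $A$-bimodules (\ref{unitmap}). First I would set $Q\coloneqq \dq$ and observe that $A_\mm\coloneqq \tau_{\geq -1}(Q)$ is the two-term truncation of the derived contraction algebra, which comes with a natural map $A \to A_\mm$ of $A$-bimodules (via $A \cong H^0(Q)\hookrightarrow \tau_{\geq -1}(Q)$, or more precisely the natural map in the homotopy category of $A$-bimodules). The claim is that $\mathrm{cocone}(A\to A_\mm)$ represents $\mm$; equivalently, there is an exact triangle of $A$-bimodules $I_\mm \to A \to A_\mm \to$ in which the second map is the natural one. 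Since representing objects of representable endofunctors are unique up to quasi-isomorphism, it suffices to produce such a triangle.

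The key step is to analyse the unit map $I_\mm \to A$ using the recollement $D(Q)\recol D(A)\recol D(R)$. By \ref{unitelem}, tensoring the unit map on the right with $Ae$ (equivalently, applying $j^*$) gives a quasi-isomorphism $I_\mm e \xrightarrow{\simeq} Ae$; dually $eI_\mm\xrightarrow{\simeq} eA$ and $eI_\mm e \xrightarrow{\simeq}R$. Hence the cone $C\coloneqq \mathrm{cone}(I_\mm \to A)$ satisfies $Ce \simeq 0$ and $eC\simeq 0$, so $C$ is an object of $D(Q)$ regarded inside $D(A)$; that is, $C$ lives in the image of $i_*$, and is determined by the $Q$-bimodule $Q\lot_A C \lot_A Q$. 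Now tensoring the unit map $I_\mm\to A$ with $Q$ on both sides gives, by \ref{unitrecol}, the bimodule map $I_\mm^{\mathbb{L}}\to Q$ (using the homological epimorphism property \ref{homepi} to identify $Q\lot_A Q\simeq Q$), and by \ref{mmshiftrep} we have a $Q$-bimodule quasi-isomorphism $I_\mm^{\mathbb{L}}\simeq Q[2]$. The map $I_\mm^{\mathbb{L}}\to Q$ is, up to this identification, the shifted periodicity map $\eta: Q[2]\to Q$ (this uses \ref{etahoch} to know $\eta$ is a bimodule map, and the fact that both maps induce $[-2]$ on $D(Q)$ by \ref{mmshift}, together with \ref{uniqueeta} to pin down the ambiguity up to a unit). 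Therefore the $Q$-bimodule cone of the unit map is $\mathrm{cone}(\eta: Q[2]\to Q)$, which by \ref{etaex}(2) and the discussion after \ref{etahoch} is quasi-isomorphic as a $Q$-bimodule to $\tau_{\geq -1}(Q)=A_\mm$. Pulling this back along $D(Q)\hookrightarrow D(A)$ (i.e.\ applying $i_*$), we get that $\mathrm{cone}(I_\mm\to A)$ is the $A$-bimodule $A_\mm$, and one checks the induced map $A \to A_\mm$ is the natural truncation map. Rotating the triangle gives $I_\mm \simeq \mathrm{cocone}(A\to A_\mm)$, as desired.

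The main obstacle I anticipate is showing that the unit map $I_\mm\to A$, after passing to $Q$-bimodules, is genuinely identified with the (shifted) periodicity map $\eta$, rather than merely that both have cones with the same cohomology. One needs to track the construction of the unit map carefully through the equivalence $\bar\Sigma$ to the singularity category — this is essentially the content of \ref{mmsgisid}, which says the natural transformation $\id\to\mm_{\mathrm{sg}}$ is an isomorphism, combined with \ref{immltriv}, which says $I_\mm^{\mathbb{L}}\lot_Q E \to E$ is a quasi-isomorphism. The subtle point is that in degree $-1$ the cohomology $H^{-1}(A_\mm)\cong H^{-1}(Q)\cong \ext_R^1(M,M)$ is genuinely nonzero in the surface case (by Auslander--Reiten duality, \ref{arduality2}), so the triangle $I_\mm\to A \to A_\mm\to$ is really a statement about a nontrivial extension of $AeA$ by $\ext_R^1(M,M)$, and one must verify the bimodule structure on the cone matches rather than just the underlying complex. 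I would handle this by working throughout with representing objects in the derived category of $A$-bimodules and exploiting that the functor $i_*: D(Q\text{-bimod})\to D(A\text{-bimod})$ is fully faithful on the relevant subcategory, reducing everything to the already-established $Q$-bimodule statement \ref{mmshiftrep}.
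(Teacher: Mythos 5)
Your approach is correct in spirit and rests on the same technical core as the paper's proof, but is organized rather differently, so let me compare.

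The paper proves the exactness of the sequence $I_\mm \to A \to A_\mm$ by a three-step Nine Lemma argument: it tensors this sequence on both sides with the triangle $\mathrm{Cell}(A) \to A \to \dq$, producing a cube of $A$-bimodules, and chases exactness around the cube. You instead argue directly: since $Ce \simeq 0$ and $eC \simeq 0$ (both from \ref{unitelem}), the cone $C$ of the unit map lies in the essential image of $D(Q^e)\into D(A^e)$ (which is fully faithful because $A^e \to Q^e$ is a homological epimorphism, so $C \simeq Q\lot_A C\lot_A Q$), and you then compute $C$ entirely as a $Q$-bimodule. Both strategies are sound, and both reduce the problem to the same key input, namely that under the $Q$-bimodule identification $I_\mm^{\mathbb L}\simeq \tau_{\leq -2}Q \simeq Q[2]$ of \ref{mmshiftrep}, the unit map $I_\mm^{\mathbb L}\to Q$ becomes the inclusion $\tau_{\leq -2}Q \hookrightarrow Q$, equivalently $\eta:Q[2]\to Q$. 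The paper also relies on this, citing ``(the proof of) \ref{mmshiftrep}'' rather than the statement; what is actually needed is a short diagram chase: the square formed by $I_\mm^{\mathbb L}\to Q$ and the localisation maps to $E$ gives, after applying $\tau_{\leq -2}$ and using the quasi-isomorphism $\tau_{\leq -2}(I_\mm^{\mathbb L}\lot_Q E) \to \tau_{\leq -2}E$ from \ref{immltriv}, precisely the factorisation of the unit map through $\tau_{\leq -2}Q$. Your argument buys a shorter and more conceptual proof; the paper's Nine Lemma version keeps all steps at the level of explicit exactness statements about bimodule complexes, avoiding the need to articulate the identification of $C$ in $D(Q^e)$.

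One small inaccuracy to flag: you invoke \ref{uniqueeta} to ``pin down the ambiguity up to a unit,'' but \ref{uniqueeta} characterises $\eta$ inside $H^{-2}(\dq)$, whereas the ambiguity you need to resolve lives in $HH^{-2}(\dq) = \hom_{D(Q^e)}(Q[2],Q)$; these are not the same object, and the map $HH^{-2}\to H^{-2}$ need not be injective. The clean way to resolve the ambiguity is not to classify elements of $HH^{-2}$ at all, but to run the diagram chase above, which identifies the unit map with $\eta$ on the nose (using only \ref{immltriv} and the localisation quasi-isomorphism $\tau_{\leq -2}Q\to\tau_{\leq -2}E$). You have already named the right lemmas for this. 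A second point you flag but leave implicit is verifying that under all the identifications the induced map $A\to\mathrm{cone}(I_\mm\to A)\simeq A_\mm$ is the natural truncation map; this does need to be tracked, since the cocone depends on the specific map $A\to A_\mm$, but it falls out of the same diagram chase once the unit map is identified with $\eta$.
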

\begin{proof}
Considering representing objects, we need to show that the sequence $I_\mm \to A \to A_\mm$ of $A$-bimodules extends to an exact triangle. This will be an involved argument making much use of the Nine Lemma, although we have done all of the difficult work already. First observe that $I_\mm \to A$ has image contained in the $A$-bimodule $AeA$ by \ref{aealem} and hence the composition $I_\mm \to A \to A_\mm$ is zero in the derived category. For brevity, put $I\coloneqq I_\mm$ and $L\coloneqq A_\mm$. If $X$ is an $A$-bimodule, we put $X^C\coloneqq X\lot_A\mathrm{Cell}(A)$ and $^CX\coloneqq \mathrm{Cell}(A)\lot_AX$, where $\mathrm{Cell}$ is the cellularisation functor of \ref{celldef}. We also put $X^Q\coloneqq X\lot_A\dq$ and $^QX\coloneqq \dq\lot_AX$. Note that we may write expressions like $^QX^C$ without ambiguity, since there is a canonical quasi-isomorphism $^Q(X^C)\simeq (^QX)^C$. Recall from \ref{dqexact} the distinguished triangle of $A$-bimodules $\mathrm{Cell}(A)\to A \to \dq\to $. Take the sequence $I \to A \to L$ and tensor it on the right with $\mathrm{Cell}(A)\to A \to \dq\to $ to obtain a commutative square
\begin{equation*}
\begin{tikzcd}
I^C \ar[r]\ar[d]& A^C \ar[r]\ar[d]& L^C\ar[d]\\
I \ar[r]\ar[d]& A\ar[r] \ar[d]& L \ar[d]\\
I^Q \ar[r]& \ar[r]A^Q & L^Q \\
\end{tikzcd}
\end{equation*}
with exact columns (we refer to this square as the `middle face'). Take this commutative square and tensor it on the left with $\mathrm{Cell}(A)\to A \to \dq\to $ to obtain a commutative cube with front face 
\begin{equation*}
\begin{tikzcd}
^CI^C \ar[r]\ar[d]& ^CA^C \ar[r]\ar[d]& ^CL^C\ar[d]\\
^CI \ar[r]\ar[d]& ^CA\ar[r] \ar[d]& ^CL \ar[d]\\
^CI^Q \ar[r]& \ar[r]^CA^Q & ^CL^Q \\
\end{tikzcd}
\end{equation*}and with back face
\begin{equation*}
\begin{tikzcd}
^QI^C \ar[r]\ar[d]& ^QA^C \ar[r]\ar[d]& ^QL^C\ar[d]\\
^QI \ar[r]\ar[d]& ^QA\ar[r] \ar[d]& ^QL \ar[d]\\
^QI^Q \ar[r]& \ar[r]^QA^Q & ^QL^Q \\
\end{tikzcd}.
\end{equation*}
Observe that every sequence in this cube which runs vertically and every sequence in this cube which goes `into the page' is exact. We analyse the rows individually and use the Nine Lemma successively to prove that the row $I \to A \to L$ is exact. First we analyse the back face. Because $L$ is a truncation of $\dq$, it is $e$-acyclic, so both $L^C$ and $^CL$ are acyclic, and moreover $^QL^Q$ is quasi-isomorphic to $L$. Because $A\to \dq$ is a homological epimorphism, $^QA^Q$ is quasi-isomorphic to $\dq$. The bottom row of the back face now reads as $^QI^Q \to \dq \to L$, and this is exact because $^QI^Q\to \dq$ is exactly the inclusion $\tau_{\leq -2}(\dq) \to \dq$ by (the proof of) \ref{mmshiftrep}. 
\p Because $Ie \to Ae$ is a bimodule quasi-isomorphism, $I^C \to A^C$ is a quasi-isomorphism. Because $L^C$ is acyclic, it follows that the top row of the middle face is exact. It now follows that the top rows of the front and back faces are also exact. The Nine Lemma applied to the back face now tells us that the middle row of the back face is exact.

\p Consider the bottom row of the front face. Because $^CL$ is acyclic, $^CL^Q$ must be acyclic. Moreover, $^CA^Q$ is acyclic because $e.(\dq)$ is acyclic. By \ref{bimodlem} we have a quasi-isomorphism $eI\simeq eA$, and so $^CI^Q$ must also be acyclic. Hence, the bottom row of the front face is exact, since it consists of acyclic objects.

\p The Nine Lemma applied to the front face now tells us that the middle row of the front face is exact. Because the middle row of the back face is exact, the Nine Lemma applied again to the square formed by the middle rows now tells us that the row $I \to A \to L$ is exact.
\end{proof}

\begin{rmk}\label{twistrmk}
	We remark that $\mm$ can be interpreted as a sort of `noncommutative twist' around $A_\mm$. We follow the proofs in Donovan--Wemyss \cite[\S6.3]{DWncdf}; see also Segal \cite{segaltwists} for background on twists. Let $F:D(A_\mm) \to D(A)$ be restriction of scalars along $A \to A_\mm$. First observe that $F$ has right and left adjoints given by $R\coloneqq \R\hom_A(A_\mm,-)$ and $L\coloneqq -\lot_A A_\mm$ respectively. By the above, we have an exact triangle of $A$-bimodules $I_\mm \to A \to A_\mm \to$. Applying derived hom and tensor respectively gives exact triangles of endofunctors of $D(A)$ of the form \begin{align*}
	& FR\to \id \to \mm\to \\ & \mm^{-1} \to \id \to FL \to
	\end{align*}
	using that $\R\hom_A(A_\mm,-)\simeq FR$ and $-\lot_A A_\mm\simeq FL$.
\end{rmk}

\begin{rmk}
	Let $\Gamma=\Gamma^{-1}\to \Gamma ^0$ be a $[-1,0]$-truncated noncommutative Artinian dga. As in \ref{prorepremk}, the inclusion-truncation adjunction gives an isomorphism between $\hom(\dq,\Gamma)$ and $\hom(A_\mm,\Gamma)$, and we see that $A_\mm$ controls the $[-1,0]$-truncated derived noncommutative framed deformations of the simple module $S$.
\end{rmk}

\begin{prop}\label{immexsq}
The complex $I_\mm$ is a module, which moreover fits into a short exact sequence of $A$-bimodules
$$0\to H^{-1}(\dq) \to I_\mm \to AeA\to 0.$$
\end{prop}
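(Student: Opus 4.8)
The plan is to extract the short exact sequence from the exact triangle $I_\mm \to A \to A_\mm \to$ established in \ref{mutncontrol}, together with the bimodule structures on the relevant cohomology modules that were set up in \S10.5. First I would recall that $I_\mm$ has cohomology concentrated in degrees $0$ and $-1$ (by the remark after \ref{immdefn}, since $_AT_B$ has projective dimension at most $1$ over $B$), and that $A_\mm\coloneqq \tau_{\geq -1}(\dca)$ likewise has cohomology only in degrees $0$ and $-1$, with $H^0(A_\mm)\cong A/AeA$ and $H^{-1}(A_\mm)\cong H^{-1}(\dca)\cong \ext^1_R(M,M)$ by \ref{dqcohom}. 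Taking the long exact cohomology sequence of the triangle $I_\mm \to A \to A_\mm \to$ and using that $A$ is concentrated in degree zero, I get: in degrees $\leq -2$ everything vanishes; in degree $-1$ an isomorphism $H^{-1}(I_\mm)\cong H^{-2}(A_\mm)=0$ forced by $H^{-1}(A)=0=H^{-2}(A_\mm)$ — wait, this would say $I_\mm$ has no $(-1)$-cohomology, which contradicts the claim, so the argument must instead run through the identification of $H^0(A_\mm)$ with $A/AeA$ and the connecting map.

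Let me recalibrate. The correct approach: the long exact sequence of $I_\mm \xrightarrow{f} A \to A_\mm \to I_\mm[1]$ reads
$$\cdots \to H^{-1}(A) \to H^{-1}(A_\mm) \to H^0(I_\mm) \xrightarrow{H^0(f)} H^0(A) \to H^0(A_\mm) \to H^1(I_\mm) \to \cdots$$
Since $H^{-1}(A)=0$ and $H^1(I_\mm)=0$, and $H^0(A)=A$, $H^0(A_\mm)=A/AeA$, $H^{-1}(A_\mm)=\ext^1_R(M,M)\cong H^{-1}(\dca)$, this gives an exact sequence $0 \to H^{-1}(\dca) \to H^0(I_\mm) \to A \to A/AeA \to 0$, hence $H^0(f)$ has image exactly $\ker(A \to A/AeA) = AeA$ and kernel exactly $H^{-1}(\dca)$. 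Also I must verify $H^{-1}(I_\mm)=0$, which follows because $H^{-2}(A)=0=H^{-1}(A)$ forces $H^{-1}(I_\mm)\cong H^{-2}(A_\mm)=0$. So $I_\mm$ is concentrated in degree zero, i.e. it is a module, and $H^0(I_\mm)=I_\mm$ sits in the short exact sequence
$$0 \to H^{-1}(\dca) \to I_\mm \to AeA \to 0$$
of $A$-bimodules, since every map in the long exact sequence is a map of $A$-bimodules (the triangle $I_\mm \to A \to A_\mm$ lives in the derived category of $A$-bimodules, and taking cohomology is functorial there).

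The main obstacle — and the only real content beyond diagram-chasing — is confirming that $H^{-1}(I_\mm)$ vanishes and that $I_\mm$ has no cohomology in positive degrees, so that $I_\mm$ is genuinely a module and not merely a two-term complex; this I would pin down using the explicit description $I_\mm = {_A}T_B\lot_B{_B}T_A$ together with $\mathrm{pd}_B({_A}T_B)\leq 1$ and the fact that ${_B}T_A$ is a module (so $I_\mm$ is computed by a two-term complex sitting in degrees $-1$ and $0$), combined with the long-exact-sequence constraint above which kills the degree $-1$ part. A secondary point to check is that the identification $H^{-1}(\dca)\cong \ext^1_R(M,M)$ is as $A$-bimodules, not merely as vector spaces; this is exactly \ref{dqcohom} read bimodule-equivariantly, together with the periodicity identification, and requires no new work. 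Everything else is the long exact sequence in cohomology applied to the triangle of \ref{mutncontrol}.
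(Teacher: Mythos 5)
Your proof is correct and follows the same route as the paper: apply the long exact cohomology sequence to the triangle $I_\mm \to A \to A_\mm \to$ from \ref{mutncontrol}, read off the four-term exact sequence $0 \to H^{-1}(\dq) \to I_\mm \to A \to A/AeA \to 0$, and replace $A \to A/AeA$ by its kernel $AeA$. Your initial worry that $H^{-1}(I_\mm)=0$ would ``contradict the claim'' was a false alarm — the claim is precisely that $I_\mm$ is concentrated in degree zero, so that vanishing \emph{is} what you want — and you correctly recalibrate. One small simplification worth noting: you invoke the a priori bound $\mathrm{pd}_B(_AT_B)\leq 1$ to establish that $I_\mm$ lives in degrees $\geq -1$, but this is not actually needed: the long exact sequence alone kills all of $H^{\neq 0}(I_\mm)$, using that the surjectivity of $A \to A/AeA$ forces the connecting map $H^0(A_\mm)\to H^1(I_\mm)$ to vanish.
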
	
\begin{proof}
By \ref{mutncontrol}, we have a distinguished triangle of $A$-bimodules $I_\mm \to A \to A_\mm \to$. Because $A$ has cohomology only in degree 0, and $A_\mm$ has cohomology only in degrees $0$ and $-1$, the long exact sequence in cohomology tells us that $I_\mm$ has cohomology only in degree zero. In this degree, the long exact sequence turns into an exact sequence $$0 \to H^{-1}(\dq) \to I_\mm \to A \to A/AeA\to 0$$where the rightmost map is the standard projection. Replacing $A \to A/AeA$ by its kernel $AeA$ gives the desired result.
	\end{proof}
\begin{cor}
The following are equivalent:
\begin{enumerate}
	\item The map $I_\mm \to AeA$ is an isomorphism.
	\item The map $A_\mm \to A/AeA$ is a quasi-isomorphism.
	\item The cohomology group $H^{-1}(\dq)$ vanishes.
	\item The $R$-module $M$ is rigid.
	\item The cohomology algebra of $\dq$ is $H(\dq)\cong A/AeA[\eta]$, where $\eta$ is the periodicity element of \ref{etaex}.
	\end{enumerate}	
\end{cor}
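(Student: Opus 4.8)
The plan is to prove the five equivalences by establishing a short cycle of implications, since most of the hard structural content has already been assembled. Concretely, I would show $3.\Leftrightarrow 4.$ directly, then $3.\Leftrightarrow 5.$, then $1.\Leftrightarrow 3.$ via \ref{immexsq}, and finally $1.\Leftrightarrow 2.$ via the distinguished triangle $I_\mm \to A \to A_\mm \to$ of \ref{mutncontrol}. None of these steps should require any genuinely new computation; the work is in citing the right earlier result and applying the long exact sequence in cohomology.

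For $3.\Leftrightarrow 4.$: by \ref{dqcohom} (specialised to the current setup, noting that $A\coloneqq \enn_R(R\oplus M)$ with $R$ a complete local hypersurface singularity and $M$ MCM), we have $H^{-1}(\dq)\cong \ext^1_R(M,M)$, and $M$ is rigid precisely when $\ext^1_R(M,M)\cong 0$ by \ref{rigiddefn}. So $3.$ and $4.$ are literally the same statement. For $3.\Leftrightarrow 5.$: one direction is \ref{rigidrmk}, which says that if $M$ is rigid (equivalently $H^{-1}(\dq)=0$ by the previous step) then $H(\dq)\cong A/AeA[\eta]$; conversely, if $H(\dq)\cong A/AeA[\eta]$ with $\eta$ in degree $-2$ then $H^{-1}(\dq)=0$ by inspecting degree $-1$, so $3.$ follows. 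This also uses \ref{etaex} to guarantee the periodicity element $\eta$ exists in the first place. For $1.\Leftrightarrow 3.$: the short exact sequence of $A$-bimodules $0\to H^{-1}(\dq)\to I_\mm \to AeA\to 0$ from \ref{immexsq} shows immediately that $I_\mm \to AeA$ is an isomorphism iff $H^{-1}(\dq)=0$.

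Finally $1.\Leftrightarrow 2.$: apply the octahedral/long-exact-sequence machinery to the triangle $I_\mm \to A \to A_\mm \to$ of \ref{mutncontrol} together with the triangle $AeA \to A \to A/AeA \to$. Comparing these two triangles over $A$, the natural map $I_\mm \to AeA$ (which is the restriction of $I_\mm \to A$, using \ref{aealem}) is a quasi-isomorphism if and only if the induced map on cofibres $A_\mm \to A/AeA$ is a quasi-isomorphism; this is just the two-out-of-three property for the map of triangles. Alternatively, and perhaps more cleanly, one observes directly from \ref{immexsq} that $A_\mm\coloneqq \tau_{\geq -1}(\dq)$ has $H^{-1}(A_\mm)\cong H^{-1}(\dq)$ and $H^0(A_\mm)\cong A/AeA$, so $A_\mm \to A/AeA$ is a quasi-isomorphism iff $H^{-1}(\dq)=0$, closing the loop. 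The only mild subtlety — and the closest thing to an obstacle — is making sure the identification $H^{-1}(\dq)\cong\ext^1_R(M,M)$ is correctly invoked; this requires that we are genuinely in the hypersurface setting where \ref{dqcohom} applies, which holds by the standing hypotheses on $R$ and $M$. Everything else is bookkeeping with long exact sequences.
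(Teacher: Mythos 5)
Your proof is correct and follows essentially the same route as the paper's: the same lemmas (\ref{immexsq}, \ref{dqcohom}, \ref{rigidrmk}) carry the same weight, and the only difference is the organization of the cycle of implications — the paper chains everything through $(3)$ while you also link $(1)\iff(2)$ directly via a map of triangles before settling on the cleaner truncation argument, which is what the paper actually uses. One small correction: the identifications $H^{-1}(A_\mm)\cong H^{-1}(\dq)$ and $H^0(A_\mm)\cong A/AeA$ come from the definition $A_\mm\coloneqq\tau_{\geq -1}(\dq)$, not from \ref{immexsq}; and your explicit handling of the converse in $(3)\iff(5)$ (inspecting degree $-1$) is welcome, since \ref{rigidrmk} only literally states the forward direction.
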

\begin{proof}
By the exact sequence of \ref{immexsq}, the map $I_\mm \to AeA$ is an isomorphism if and only if the cohomology group $H^{-1}(\dq)\cong H^{-1}(A_\mm)$ vanishes, so $(1) \iff (3)$. Since $A_\mm$ has cohomology only in degrees $0$ and $-1$ by definition, we see that $H^{-1}(A_\mm)$ vanishes if and only if the map $A_\mm \to H^0(A_\mm)\cong A/AeA$ is a quasi-isomorphism, so $(2)\iff (3)$. By \ref{dqcohom}, there is an isomorphism $H^{-1}(\dq)\cong \ext^1_R(M,M)$. Since $M$ is rigid if and only if $\ext^1_R(M,M)$ vanishes, we have $(3)\iff (4)$. Finally, $(4)\iff (5)$ is \ref{rigidrmk}.
	\end{proof}

	\begin{rmk}\label{mtnrmk}
		In particular, if $X\to \spec R$ is a minimal model of a three-dimensional terminal singularity, then the module $M$ defining the noncommutative model $A$ is rigid and we have $I_\mm\cong AeA$, which provides a new proof of Donovan--Wemyss's result \cite[5.10]{DWncdf}. If $R$ is a surface, then $M$ is never rigid by AR duality \ref{arduality}, and in particular the contraction algebra $A_\con$ never controls $\mm$ via noncommutative twists.
		\end{rmk}

	\cleardoublepage
	\phantomsection
		\addcontentsline{toc}{chapter}{Bibliography}

	\bibliography{thesisbib}

\end{document}